\newtheorem{theorem}{Theorem}[subsection]
\newtheorem{lemma}[theorem]{Lemma}
\newtheorem{cor}[theorem]{Corollary}
\newtheorem{conj}[theorem]{Conjecture}
\newtheorem{prop}[theorem]{Proposition}
\theoremstyle{definition}
\newtheorem{defn}[theorem]{Definition}
\newtheorem{hypothesis}[theorem]{Hypothesis}
\newtheorem{example}[theorem]{Example}
\newtheorem{remark}[theorem]{Remark}
\newtheorem{convention}[theorem]{Convention}
\numberwithin{equation}{theorem}
\newcommand{\CC}{\mathbb{C}}
\newcommand{\Cp}{\mathbb{C}_p}
\newcommand{\FF}{\mathbb{F}}
\newcommand{\Fp}{\mathbb{F}_p}
\newcommand{\Qp}{\mathbb{Q}_p}
\newcommand{\PP}{\mathbb{P}}
\newcommand{\QQ}{\mathbb{Q}}
\newcommand{\RR}{\mathbb{R}}
\newcommand{\Zp}{\mathbb{Z}_p}
\newcommand{\ZZ}{\mathbb{Z}}
\newcommand{\bA}{\mathbf{A}}
\newcommand{\bB}{\mathbf{B}}
\newcommand{\bC}{\mathbf{C}}
\newcommand{\be}{\mathbf{e}}
\newcommand{\bv}{\mathbf{v}}
\newcommand{\bw}{\mathbf{w}}
\newcommand{\bx}{\mathbf{x}}
\newcommand{\calB}{\mathcal{B}}
\newcommand{\calC}{\mathcal{C}}
\newcommand{\calE}{\mathcal{E}}
\newcommand{\calF}{\mathcal{F}}
\newcommand{\calG}{\mathcal{G}}
\newcommand{\calH}{\mathcal{H}}
\newcommand{\calI}{\mathcal{I}}
\newcommand{\calJ}{\mathcal{J}}
\newcommand{\calL}{\mathcal{L}}
\newcommand{\calM}{\mathcal{M}}
\newcommand{\calO}{\mathcal{O}}
\newcommand{\calP}{\mathcal{P}}
\newcommand{\calR}{\mathcal{R}}
\newcommand{\gothg}{\mathfrak{g}}
\newcommand{\gothm}{\mathfrak{m}}
\newcommand{\gothp}{\mathfrak{p}}
\newcommand{\gotho}{\mathfrak{o}}
\newcommand{\gothq}{\mathfrak{q}}
\newcommand{\gothU}{\mathfrak{U}}
\newcommand{\gothV}{\mathfrak{V}}
\newcommand{\dual}{\vee}
\newcommand{\QpLoc}{\mathbb{Q}_p\text{-}\mathbf{Loc}}
\newcommand{\QpdLoc}{\mathbb{Q}_{p^d}\text{-}\mathbf{Loc}}
\newcommand{\ZpLoc}{\mathbb{Z}_{p}\text{-}\mathbf{Loc}}
\newcommand{\ZpILoc}{\mathbb{Z}_{p}\text{-}\mathbf{ILoc}}
\newcommand{\ZpdLoc}{\mathbb{Z}_{p^d}\text{-}\mathbf{Loc}}
\newcommand{\ZpdILoc}{\mathbb{Z}_{p^d}\text{-}\mathbf{ILoc}}
\DeclareMathOperator{\AdBan}{\mathbf{AdBan}}
\DeclareMathOperator{\bd}{bd}
\DeclareMathOperator{\coker}{coker}
\DeclareMathOperator{\cont}{cont}
\DeclareMathOperator{\et}{\acute{e}t}
\DeclareMathOperator{\Ext}{Ext}
\DeclareMathOperator{\FEt}{\mathbf{F\acute{E}t}}
\DeclareMathOperator{\fet}{f\acute{e}t}
\DeclareMathOperator{\FFC}{FF}
\DeclareMathOperator{\Fitt}{Fitt}
\DeclareMathOperator{\Frac}{Frac}
\DeclareMathOperator{\frep}{frep}
\DeclareMathOperator{\Gal}{Gal}
\DeclareMathOperator{\GL}{GL}
\DeclareMathOperator{\Hom}{Hom}
\DeclareMathOperator{\image}{image}
\DeclareMathOperator{\id}{id}
\DeclareMathOperator{\inte}{int}
\DeclareMathOperator{\Map}{Map}
\DeclareMathOperator{\Maxspec}{Maxspec}
\DeclareMathOperator{\Mor}{Mor}
\DeclareMathOperator{\Norm}{Norm}
\DeclareMathOperator{\op}{op}
\DeclareMathOperator{\perf}{perf}
\DeclareMathOperator{\Pic}{Pic}
\DeclareMathOperator{\pr}{pr}
\DeclareMathOperator{\proet}{pro\acute{e}t}
\DeclareMathOperator{\Proj}{Proj}
\DeclareMathOperator{\rank}{rank}
\DeclareMathOperator{\Spa}{Spa}
\DeclareMathOperator{\Spec}{Spec}
\DeclareMathOperator{\spect}{sp}
\DeclareMathOperator{\Spv}{Spv}
\DeclareMathOperator{\Trace}{Trace}
\DeclareMathOperator{\unr}{unr}
\title{Relative $p$-adic Hodge theory: Foundations}
\author{Kiran S. Kedlaya and Ruochuan Liu}
\date{May 2, 2015}
\begin{document}

\maketitle

\begin{abstract}
We describe a new approach to relative $p$-adic Hodge theory based on systematic use of Witt vector constructions and nonarchimedean analytic geometry in the style of both Berkovich and Huber.
We give a thorough development of $\varphi$-modules over a relative Robba ring
associated to a perfect Banach ring of characteristic $p$, including the relationship between
these objects and \'etale $\Zp$-local systems and $\Qp$-local systems on the algebraic and analytic
spaces associated to the base ring, and the relationship between (pro-)\'etale cohomology and
$\varphi$-cohomology. We also make a critical link to mixed characteristic
by exhibiting an equivalence of tensor categories between the finite \'etale algebras
over an arbitrary perfect Banach algebra over a nontrivially normed complete field of characteristic $p$
and the finite \'etale algebras over a corresponding Banach $\Qp$-algebra.
This recovers the homeomorphism between the absolute Galois groups of
$\Fp((\pi))$ and $\Qp(\mu_{p^\infty})$ given by the
field of norms construction of Fontaine and Wintenberger, as well as
generalizations considered by Andreatta, Brinon, Faltings, Gabber, Ramero, Scholl, and most recently Scholze.
Using Huber's formalism of adic spaces and Scholze's formalism of perfectoid spaces, we globalize the constructions to give several descriptions of the \'etale
local systems on analytic spaces over $p$-adic fields.
One of these descriptions uses a relative version of the Fargues-Fontaine curve.
\end{abstract}

\tableofcontents

\setcounter{section}{-1}
\section{Introduction}

After its formalization by Deligne \cite{deligne-hodge2},
the subject of \emph{Hodge theory} may be viewed as the study of
the interrelationship among different cohomology theories
associated to algebraic varieties over $\CC$, most notably singular
(Betti) cohomology and the cohomology of differential forms (de Rham
cohomology).
{}From work of Fontaine and others, there emerged a
parallel subject of \emph{$p$-adic Hodge theory} concerning the
interrelationship among different cohomology theories
associated to algebraic varieties over a finite extension $K$
of $\Qp$, most notably \'etale cohomology with coefficients in $\Qp$
and algebraic de Rham cohomology.

In ordinary Hodge theory, the relationship between Betti and de Rham
cohomologies is forged using the \emph{Riemann-Hilbert correspondence},
which relates topological data (local systems) to analytic data
(integrable connections).
In $p$-adic Hodge theory, one needs a similar correspondence relating
de Rham data to \'etale $\Qp$-local systems, which arise from
the \'etale cohomology functor on schemes of finite type over $K$.
However, in this case the local systems turn out to be far
more plentiful, so it is helpful to first build a correspondence relating
them to some sort of intermediate algebraic objects. This is achieved by the
theory of \emph{$(\varphi, \Gamma)$-modules},
which gives some Morita-type dualities relating
\'etale $\Qp$-local systems over $K$ (i.e., continuous
representations of the absolute Galois group $G_K$ on
finite-dimensional $\Qp$-vector spaces)
with modules over certain mildly noncommutative topological algebras.
The latter appear as topological monoid algebras over certain commutative
\emph{period rings}
for certain continuous operators (the eponymous $\varphi$ and $\Gamma$).

One of the key features of Hodge theory is that it provides information
not just about individual varieties, but also about families of varieties
through the mechanism of \emph{variations of Hodge structures}.
Only recently has much progress been made in developing any analogous
constructions in $p$-adic Hodge theory; part of the difficulty is that
there are two very different directions in which relative $p$-adic Hodge theory
can be developed. In the remainder of this introduction, we first give a bit
more background about $(\varphi, \Gamma)$-modules, and contrast the \emph{arithmetic}
and \emph{geometric} forms
of relative $p$-adic Hodge theory. We then describe the results of this paper in detail, indicate some points of contact with recent work of
Scholze \cite{scholze1, scholze2}, and describe some future goals.

\subsection{Artin-Schreier theory and \texorpdfstring{$(\varphi, \Gamma)$}{(phi, Gamma)}-modules}

For $K$ a perfect field of characteristic $p$, the discrete
representations of the
absolute Galois group $G_K$ of $K$ on finite dimensional $\Fp$-vector
spaces form a category equivalent to the category of
\emph{$\varphi$-modules} over $K$, i.e., finite-dimensional $K$-vector spaces
equipped with isomorphisms with their $\varphi$-pullbacks.
This amounts to
a nonabelian
generalization of the Artin-Schreier description of $(\ZZ/p\ZZ)$-extensions
of fields of characteristic $p$ \cite[Expos\'e~XXII, Proposition~1.1]{sga7-2}.

A related result is that the continuous representations of $G_K$
on finite free $\Zp$-modules
form a category equivalent to the category of
finite free $W(K)$-modules equipped with
isomorphisms with their $\varphi$-pullbacks.
Here $W(K)$ denotes the ring of Witt vectors over $K$, and $\varphi$ denotes
the unique lift to $W(K)$ of the absolute Frobenius on $K$.
One can further globalize this result to arbitrary smooth schemes
over $K$, in which the corresponding category becomes
a category of \emph{unit-root $F$-crystals}; see
\cite[Theorem~2.2]{crew-F} or \cite[Proposition~4.1.1]{katz-modular}.

Fontaine's theory of $(\varphi, \Gamma)$-modules
\cite{fontaine-phigamma} provides a way to extend such
results to mixed-characteristic local fields.
The observation underpinning the theory is that a sufficiently
wildly ramified extension of a mixed-characteristic local field
behaves Galois-theoretically just like a local field of positive
characteristic. For example, for $K_0$ a finite unramified
extension of $\Qp$ (or the completion of an infinite algebraic unramified
extension of $\Qp$) with residue field $k_0$, the fields $K_0(\mu_{p^\infty})$
and $k_0((T))$ have homeomorphic Galois groups. One can describe
representations of the absolute Galois group of a local field by
restricting to a suitably deeply ramified extension, applying an
Artin-Schreier construction, then adding appropriate descent data to
get back to the original group.
This assertion is formalized in the theory of \emph{fields of norms}
introduced by Fontaine and Wintenberger \cite{fontaine-wintenberger,
wintenberger}; some of the analysis depends on Sen's calculation of ramification numbers in
$p$-adic Lie extensions \cite{sen-lie}.
See \cite[Part~4]{brinon-conrad} for a detailed but readable exposition.

\subsection{Arithmetic vs. geometric}

As noted earlier, there are two different directions in which one
can develop relative forms of $p$-adic Hodge theory. We distinguish
these as \emph{arithmetic} and \emph{geometric}.

In arithmetic relative $p$-adic Hodge theory,
one still treats
continuous representations of the absolute Galois group of a finite
extension of $\Qp$. However, instead of taking representations
simply on vector spaces,
one allows finite locally free modules over affinoid algebras over $\Qp$.
Interest in the arithmetic theory arose originally from the consideration of
$p$-adic analytic families of automorphic forms and their associated
families of Galois representations; such families include Hida's
$p$-adic interpolation of ordinary cusp forms \cite{hida},
the eigencurve of Coleman--Mazur \cite{coleman-mazur},
and further generalizations.
Additional interest has come from the prospect of a $p$-adic local Langlands
correspondence which would be compatible with formation of analytic families
on both the Galois and automorphic sides. For the group $\GL_2(\Qp)$,
such a correspondence has recently emerged from the work of Breuil,
Colmez, Emerton, Pa\v{s}k\={u}nas, et al. (see for instance \cite{colmez-langlands})
and has led to important advances concerning modularity of Galois
representations, in the direction of the Fontaine-Mazur conjecture
\cite{kisin-fmc}, \cite{emerton-fmc}.

In the arithmetic setting, there is a
functor from Galois representations
to families of $(\varphi, \Gamma)$-modules, constructed by Berger and Colmez
\cite{berger-colmez}. However, this functor is not an equivalence of categories;
rather,
it can only be inverted locally \cite{dee}, \cite{kedlaya-liu}. It seems that
in this setting, one is forced to study families of
Galois representations in the context of the larger category of
families of $(\varphi, \Gamma)$-modules. For instance,
one sees this distinction
in the relative study of Colmez's \emph{trianguline} Galois representations
\cite{colmez-trianguline},
as in the work of Bella\"iche \cite{bellaiche} and Pottharst
\cite{pottharst2}.

By contrast, in geometric relative $p$-adic Hodge theory,
one continues to consider continuous representations
acting on finite-dimensional
$\Qp$-vector spaces. However, instead of the absolute Galois
group of a finite extension of $\Qp$,
one allows \'etale fundamental groups of affinoid spaces over finite
extensions of $\Qp$. The possibility of developing an analogue of
$(\varphi, \Gamma)$-module theory in this setting emerged from the work
of Faltings, particularly his \emph{almost purity theorem} \cite{faltings-purity1,
faltings-almost}, and prior to this paper had been carried out most thoroughly by
Andreatta and Brinon \cite{andreatta-field-of-norms, andreatta-brinon}.  A similar construction was described
by Scholl \cite{scholl}. (See also the exposition by Olsson \cite{olsson-almost}.)

\subsection{Analytic spaces associated to Banach algebras}

We now turn to the topics addressed by this particular paper.
The first substantial chunk of the paper concerns some geometric spaces associated to nonarchimedean commutative Banach rings, including the
\emph{Gel'fand spectrum} in the sense of Berkovich and the \emph{adic spectrum} considered by Huber. These constructions are somewhat more exotic than the spaces of maximal ideals occurring in Tate's theory of rigid analytic spaces, but Tate's construction starts to behave poorly when one considers Banach rings other than affinoid algebras, and breaks down completely if one considers nonnoetherian rings. Since such rings play a crucial role in our work, we are forced to use the alternate constructions. 

One important feature of our work is the dialogue between the Gel'fand and adic spectra. The latter is topos-theoretically complete, whereas the Gel'fand spectrum is the maximal Hausdorff quotient of the adic spectrum. While the Gel'fand spectrum is a natural dwelling place for most of our local arguments, we transfer back to adic spectra in order to globalize.

When we globalize (by glueing together adic spectra), we do not end up with the most general sort of spaces considered by Huber: we only encounter spaces which are \emph{analytic}, meaning that the residue fields associated to all points carry nontrivial valuations. By contrast, Huber's theory also includes spaces more closely related to ordinary schemes and formal schemes. For another, we are mostly interested only in spaces which have a certain finiteness property (that of being \emph{taut}) which roughly means they are approximated well by their maximal Hausdorff quotients. (For instance, Berkovich's \emph{strictly analytic spaces} can be promoted to taut adic spaces in such a way that the original spaces occur as the maximal Hausdorff quotients.)
However, one cannot hope to banish nontaut spaces entirely from $p$-adic Hodge theory:
for instance, they appear in the work of Hellmann
\cite{hellmann1, hellmann2} on the moduli spaces for Breuil-Kisin modules described by
Pappas and Rapoport \cite{pappas-rapoport}. The study of these spaces seems to
include features of both arithmetic and geometric relative $p$-adic Hodge theory,
which appears to render both Tate and Berkovich spaces insufficient.

\subsection{Perfectoid fields and algebras}

As noted earlier, one of the main techniques of $p$-adic Hodge theory is the relationship between the absolute Galois groups
of certain fields of mixed and positive characteristic, such as $\Qp(\mu_{p^\infty})$ and $\Fp((T))$. For relative
$p$-adic Hodge theory, it is necessary to extend this correspondence somewhat further.
However, instead of an approach dependent on ramification theory, we use a construction
based on analysis of Witt vectors.

Suppose first that $L$ is a perfect field of characteristic $p$ complete for a multiplicative norm,
with valuation ring $\gotho_L$. Let $W(\gotho_L)$ be the ring of $p$-typical Witt vectors over $\gotho_L$.
One can generate certain fields of characteristic $0$ by quotienting $W(\gotho_L)$ by certain principal ideals
and then inverting $p$. For instance, for $L$ the completed perfect closure of $\Fp((\overline{\pi}))$ and
\[
z = \sum_{i=0}^{p-1} [\overline{\pi}+1]^{i/p} \in W(\gotho_L),
\]
we may identify $W(\gotho_L)/(z)$ with the ring of integers of the completion of $\Qp(\mu_{p^\infty})$. The relevant condition
(that of being \emph{primitive of degree $1$} in the sense of Fargues
and Fontaine \cite{fargues-fontaine}) is
a Witt vector analogue of the property of an element of $\Zp\llbracket T \rrbracket$ being associated to a monic linear
polynomial whose constant term is not invertible in $\Zp$ (which allows use of the division algorithm to identify
the quotient by this element with $\Zp$). See Definition~\ref{D:primitive}.

Using this construction, we obtain a correspondence between certain complete fields of mixed characteristic
(which we call \emph{perfectoid fields}, following \cite{scholze1}) and perfect fields of characteristic $p$
together with appropriate principal ideals in the ring of Witt vectors over the valuation ring
(see Theorem~\ref{T:perfectoid field}). It is not immediate from the construction that the former category
is closed under formation of finite extensions, but this turns out to be true and not too difficult to check
(see Theorem~\ref{T:mixed lift field}). In particular, we recover the field of norms correspondence.

To extend this correspondence to more general Banach algebras, we exploit a relationship developed in
\cite{kedlaya-witt} between the Berkovich space of a perfect uniform Banach ring $R$ of characteristic $p$
and the Berkovich space of the ring $W(R)$. (For instance, any subspace of $\calM(R)$ has the same homotopy
type as its inverse image under $\mu$ \cite[Corollary~7.9]{kedlaya-witt}.)
This leads to a correspondence between
certain Banach algebras\footnote{The definition of perfectoid algebras can be made at several levels of generality; we work in more generality than in \cite{scholze1},
where perfectoid algebras must be defined over a perfectoid field, but less than in
\cite{fontaine-bourbaki} or \cite{gabber-ramero-arxiv}, where perfectoid algebras need not contain $1/p$.} in characteristic $0$
(which we call \emph{perfectoid algebras}, again following \cite{scholze1}) and perfect Banach algebras in characteristic $p$, which is compatible with formation of both rational subspaces and finite \'etale covers
(see Theorem~\ref{T:perfectoid ring} and Theorem~\ref{T:mixed lift ring}).
We also obtain a result in the style of Faltings's \emph{almost purity theorem} \cite{faltings-purity1, faltings-almost}
(see also \cite{gabber-ramero, gabber-ramero-arxiv}), which underlies the aforementioned
generalization of $(\varphi, \Gamma)$-modules introduced by
Andreatta and Brinon \cite{andreatta-field-of-norms, andreatta-brinon}.

\subsection{Robba rings and slope theory}

Another important technical device in usual $p$-adic Hodge theory is the classification of Frobenius-semilinear
transformation on modules over certain power series by \emph{slopes}, in rough analogy with the classification
of vector bundles on curves. This originated in work of the first author \cite{kedlaya-annals};
we extend this work here to the relative setting.
(The relevance of such results to $p$-adic Hodge theory largely factors through
the work of Berger \cite{berger-cst, berger-adm}, to which we will return in a later paper.)

In \cite{kedlaya-annals}, one starts with the \emph{Robba ring} of germs of analytic functions on open annuli
of outer radius 1 over a $p$-adic field, and then passes to a certain ``algebraic closure'' thereof. The
latter can be constructed from the ring of Witt vectors over the completed algebraic closure of a power series
field. One is thus led naturally to consider similar constructions starting from the ring of Witt vectors
over a general analytic field; the analogues of the results of \cite{kedlaya-annals} were worked out by the first
author in \cite{kedlaya-revisited}.

Using the previously described work largely as a black box, we are able to introduce analogues of Robba rings
starting from the ring of Witt vectors of a perfect uniform $\Fp$-algebra (and obtain some weak analogues
of the theorems of Tate and Kiehl), and to study slopes of Frobenius modules
thereof. We obtain semicontinuity of the slope polygon as a function on the spectrum of the base ring
(Theorem~\ref{T:open locus}) as well as a slope filtration theorem when this polygon is constant
(Theorem~\ref{T:split polygon at point}). (Similar results in the arithmetic relative setting have recently
been obtained by the second author \cite{liu}.)

We also obtain a description of Frobenius modules over relative Robba rings in the style of Fargues and Fontaine, using vector bundles
over a certain scheme (Theorem~\ref{T:vector bundles}).
When the base ring is an analytic field, the scheme in question
is connected, regular, separated, and noetherian of dimension 1; it might thus be considered to be a
\emph{complete absolute curve}. (The adjective \emph{absolute} means that the curve cannot be seen
as having relative dimension 1 over a point; it is a scheme over $\Qp$, but not of finite type.) However, for more general base rings, the resulting scheme is not even noetherian.
In any case, one obtains a $p$-adic picture with strong resemblance to the correspondence between stable vector bundles on compact Riemann surfaces and irreducible unitary fundamental group
representations, as constructed by Narasimhan and Seshadri \cite{narasimhan-seshadri}.

\subsection{\texorpdfstring{$\varphi$}{phi}-modules and local systems}

By combining the preceding results,
we obtain a link between \'etale local systems
and $\varphi$-modules (and cohomology thereof), in what amounts to a broad nonabelian generalization of Artin-Schreier
theory as well as a generalization of the field of norms correspondence. This link is most naturally described on the category of \emph{perfectoid spaces}, obtained by glueing the adic spectra of perfectoid Banach algebras.

To describe \'etale local systems and their cohomology on more general spaces, including rigid and Berkovich analytic spaces, one must combine the previous theory with a descent construction from some local perfectoid covers of the space. For foundational purposes, an especially convenient mechanism for this is provided by the \emph{pro-\'etale topology} proposed by Scholze \cite{scholze2}.
In this framework, the rings of $p$-adic periods (such as the extended Robba ring) become sheaves for the pro-\'etale topology, and the analogue of a $(\varphi, \Gamma)$-module is a sheaf of $\varphi$-modules over a ring of period sheaves. Note that there is no explicit analogue of $\Gamma$; its role is instead played by the sheaf axiom for the pro-\'etale topology.

In this language, we obtain $\varphi$-module-theoretic descriptions of \'etale $\Zp$-local systems (Theorem~\ref{T:proetale equivalence1 global}) and their pro-\'etale cohomology (Theorem~\ref{T:proetale cohomology1}), as well as $\Qp$-local systems (Theorem~\ref{T:proetale equivalence2 global}) and their pro-\'etale cohomology
(Theorem~\ref{T:proetale cohomology2b}). We also see that when the base space is reduced to a point, our categories of $\varphi$-modules are equivalent to the corresponding categories of $(\varphi, \Gamma)$-modules arising in classical $p$-adic Hodge theory
(\S\ref{subsec:compare classical}).

\subsection{Contact with the work of Scholze}

After preparing the initial version of this paper, we discovered that some closely related work had been carried out by Peter Scholze, which ultimately has appeared in the papers \cite{scholze1, scholze2}.
The ensuing rapid dissemination of Scholze's work has had the benefit of providing an additional entry point into the circle of ideas underlying our work. However, it also necessitates a discussion of the extent to which the two bodies of work interact and overlap, which we now provide (amplifying the brief discussion appearing in the introduction to \cite{scholze1}).

We begin with some historical remarks.
The genesis of our work lies in the first author's paper
\cite{kedlaya-witt}, the first version of which appeared on arXiv in April 2010. There one first finds the homeomorphism of topological spaces which now underlies the perfectoid correspondence. This homeomorphism is again described in the first author's 2010 ICM lecture \cite{kedlaya-icm}, together with some preliminary discussion of how it could be used to construct tautological local systems on Rapoport-Zink period spaces. In late 2010, we began to prepare the present paper so as to provide foundations for carrying out the program advanced in \cite{kedlaya-icm}. In early 2011, we learned that Scholze had used similar ideas for a totally different (and rather spectacular) purpose: to resolve some new cases of the weight-monodromy conjecture in $\ell$-adic \'etale cohomology (which subsequently appeared as \cite{scholze1}).
Draft versions of papers were exchanged in both directions, which in Scholze's case included his work on the de Rham-\'etale comparison isomorphism (which subsequently appeared as \cite{scholze2}).

Based on this exchange, we elected to adopt some key formal ideas from Scholze's work but to retain independent derivations of all of our results, even in cases of overlap. This decision was dictated in part by some minor but nonnegligible foundational differences between the two works. We now describe some points of agreement and disagreement with \cite{scholze1, scholze2}.
\begin{itemize}
\item
As noted previously, we make heavy use of the Gel'fand spectrum associated to a Banach ring, translating into the language of adic spectra for glueing constructions; by contrast, Scholze works exclusively with adic spectra.
\item
The term \emph{perfectoid} is adopted from Scholze; we had not initially assigned a word to this concept.
\item
The \emph{perfectoid correspondence} for fields
(Theorem~\ref{T:perfectoid field})
is described in \cite{scholze1} without reference to Witt vectors,
as is the compatibility with finite extensions
(Theorem~\ref{T:mixed lift field});
this necessitates the use of some almost ring theory in the form of \cite[Theorem~5.3.27]{gabber-ramero}.
Besides being necessary for the construction of period sheaves, we
find the arguments using Witt vectors somewhat more transparent;
see \cite{kedlaya-new-phigamma}
for a demonstration of this point in the form of an exposition of the
 classical theory of $(\varphi, \Gamma)$-modules.
\item
Scholze considers only perfectoid algebras over perfectoid fields, whereas our definition of a perfectoid algebra
(Definition~\ref{D:perfectoid Banach})
does not include this restriction. This is partly because our description of the perfectoid correspondence for algebras
(Theorem~\ref{T:perfectoid ring}) includes enough extra data (in terms of Witt vectors) to enable lifting from characteristic $p$ back to characteristic $0$ without reference to an underlying field. This more general approach has also been adopted by Gabber and Ramero \cite{gabber-ramero-arxiv}.
\item
The compatibility of the perfectoid correspondence with finite \'etale covers (Theorem~\ref{T:mixed lift ring}) is established by reduction to the field case, much as in \cite{scholze1}, but again the use of Witt vectors takes the place of almost ring theory. As a result, instead of proving almost purity in the course of proving Theorem~\ref{T:mixed lift ring}, we deduce it as a corollary
(Theorem~\ref{T:almost purity}).
\item
Our study of perfectoid algebras includes some results with no analogues in \cite{scholze1}, including compatibility with strict morphisms (Proposition~\ref{P:perfectoid uniform strict})
and with morphisms of dense image (Theorem~\ref{T:perfectoid dense image}). The latter implies that the uniform completed tensor product of two perfectoid algebras (over a not necessarily perfectoid base) is again perfectoid
(Corollary~\ref{C:perfectoid tensor product over any base}).
\item
Our definition of a \emph{perfectoid space} is essentially that of Scholze, except that we do not insist on working over a perfectoid base field. Our derivations of the basic properties are as in \cite{scholze1} except that we make internal references in place of the equivalent cross-references to \cite{scholze1}.
\item
For passing from $\varphi$-modules to $(\varphi, \Gamma)$-modules,
we adopt Scholze's \emph{pro-\'etale topology} essentially unchanged from \cite{scholze2}. Note that this is not simply the pro-category associated to the \'etale topology, but requires an extra Mittag-Leffler condition; by contrast, in the category of schemes, Bhatt and Scholze \cite{bhatt-scholze} have shown that the pro-category associated to the \'etale topology can be used with similar effect.
\end{itemize}

\subsection{Further goals}

To conclude, we indicate some questions we intend to address in subsequent work.
\begin{itemize}
\item
The usual Robba ring in $p$-adic Hodge theory is \emph{imperfect}, that is, its Frobenius endomorphism is not surjective. By contrast, our analogue of the Robba ring corresponding to a perfectoid algebra
has bijective Frobenius. For certain purposes (e.g., approximations of the $p$-adic Langlands correspondence), it is desirable to
descend the theory of $(\varphi, \Gamma)$-modules from the perfect Robba ring to an imperfect version when possible. This
occurs in classical $p$-adic Hodge theory via the theorem of
Cherbonnier and Colmez \cite{cherbonnier-colmez}; some cases in relative $p$-adic Hodge theory are covered by the generalization of
Cherbonnier--Colmez given by Andreatta and Brinon
\cite{andreatta-brinon}. However, it should be possible to embed
these constructions into a more general framework; some first steps
in this direction are taken in \cite{kedlaya-multi}.

\item
Another goal is to construct certain ``tautological'' local systems on period spaces
of $p$-adic Hodge structures (filtered $(\varphi, N)$-modules). Such period spaces arise,
for instance, in the work of Rapoport and Zink on period mappings on deformation spaces
of $p$-divisible groups \cite{rapoport-zink}. The construction we have in mind is
outlined in \cite{kedlaya-icm};
it is likely to be greatly assisted by the work of Scholze and Weinstein on the (perfectoid) moduli space of $p$-divisible groups
\cite{scholze-weinstein}. The non-minuscule case is particularly intriguing, as there the natural parameter space is no longer a period domain (as suggested somewhat cavalierly in \cite{kedlaya-icm}); rather, it is a presently hypothetical space analogous to Hartl's moduli space of Hodge-Pink structures in the equal characteristic case \cite{hartl-equi}.

\item
Yet another goal is the integration of Scholze's approach to the de Rham--\'etale comparison isomorphism into the framework of relative $(\varphi, \Gamma)$-modules, and its extension to the crystalline and semistable cases. For instance, if $X \to Y$ is a proper morphism of adic spaces, there should be higher direct image functors from de Rham relative $(\varphi, \Gamma)$-modules on $X$ to the corresponding objects on $Y$. A closely related issue is to study the analogue of Fontaine's de Rham, crystalline, and semistable conditions and in particular to generalize the ``de Rham implies potentially semistable'' theorem (originally established by Berger using the Andr\'e--Kedlaya--Mebkhout monodromy theorem for $p$-adic differential equations).
\end{itemize}

\subsection*{Acknowledgments}
Thanks to Fabrizio Andreatta, Max Bender,
Vladimir Berkovich, Brian Conrad, Chris Davis, Laurent Fargues, Jean-Marc Fontaine, Ofer Gabber,
Arthur Ogus, Peter Scholze,
Michael Temkin, and Liang Xiao for helpful discussions.
Thanks also to Scholze for providing early versions of his papers \cite{scholze1, scholze2}
and to the anonymous referees for highly instructive feedback.
Kedlaya was supported by NSF CAREER grant DMS-0545904, NSF grant DMS-1101343, DARPA grant HR0011-09-1-0048, MIT (NEC Fund,
Cecil and Ida Green professorship), and UC San Diego
(Stefan E. Warschawski professorship).
Liu was partially supported by IAS under NSF grant DMS-0635607 and the Recruitment Program of Global Experts of China.
Additionally, both authors were supported by NSF grant DMS-0932078 while in residence at MSRI during fall 2014.

\section{Algebro-geometric preliminaries}

Before proceeding to analytic geometry,
we start with some background facts from algebraic geometry.

\setcounter{theorem}{0}
\begin{hypothesis}
Throughout this paper, fix a prime number $p$.
\end{hypothesis}

\begin{convention}
When we refer to a \emph{tensor category}, we will always assume it is equipped not just with the usual
monoidal category structure, but also with a \emph{rank function} into some abelian group.
Equivalences of tensor categories (or for short \emph{tensor equivalences}) will be assumed to respect rank.
\end{convention}

\subsection{Finite, flat, and projective modules}

\begin{convention}
Throughout this paper, all rings are assumed to be commutative and unital unless otherwise indicated.
\end{convention}

\begin{defn} \label{D:projective}
Let $M$ be a module over a ring $R$.
We say that $M$ is \emph{pointwise free} if
$M \otimes_R R_{\gothp}$ is a free module over $R_{\gothp}$
for each maximal ideal $\gothp$ of $R$
(and hence for each prime ideal).
The term \emph{locally free} is sometimes used for this, but it is better to make
this term match its usual meaning in sheaf theory
by saying that $M$ is \emph{locally free}
if there exist $f_1,\dots,f_n \in R$ generating the unit ideal such that
$M \otimes_R R[f_i^{-1}]$ is a free module over $R[f_i^{-1}]$ for $i=1,\dots,n$.

We say that $M$ is \emph{projective} if it is a direct summand of a free module.
The following conditions are equivalent \cite[\S II.5.2, Th\'eor\`eme~1]{bourbaki-ac}.
\begin{enumerate}
\item[(a)]
$M$ is finitely generated and projective.
\item[(b)]
$M$ is a direct summand of a finite free module.
\item[(c)]
$M$ is finitely presented and pointwise free.
\item[(d)]
$M$ is finitely generated and pointwise free of locally constant rank. (The \emph{rank} of $M$
at $\gothp \in \Spec(R)$ is defined as $\dim_{R_\gothp/\gothp R_\gothp} (M_\gothp/\gothp M_\gothp)$.)
\item[(e)]
$M$ is finitely generated and locally free.
\end{enumerate}
For $R$ reduced, it is enough to check that $M$ is finitely generated of locally constant rank; see
\cite[Exercise~20.13]{eisenbud}. For an analogous argument
for Banach rings, see 
Proposition~\ref{P:finite generation2}.

For $M$ finitely presented, we may define the \emph{Fitting ideals}
$\Fitt_i(M)$ as in \cite[\S 20.2]{eisenbud};
these are finitely generated ideals of $R$ satisfying
$\Fitt_0(M) \subseteq \Fitt_1(M) \subseteq \cdots$
and $\Fitt_i(M) = R$ for $i$ sufficiently large. The construction commutes with base change: for any ring homomorphism $R \to S$, we have $\Fitt_i(M \otimes_R S) = \Fitt_i(M) S$
\cite[Corollary~20.5]{eisenbud}.
The module $M$ is finite projective of constant rank $n$ if and only if $\Fitt_i(M) = 0$ for $i=0,\dots,n-1$ and $\Fitt_n(M) = R$ \cite[Proposition~20.8]{eisenbud}. 
\end{defn}

\begin{defn} \label{D:faithfully flat}
Let $M$ be a module over a ring $R$. We say $M$ is \emph{faithfully flat} if
$M$ is flat and $M \otimes_R N \neq 0$ for every nonzero $R$-module $N$. Since tensor
products commute with direct limits, $M$ is faithfully flat if and only if
it satisfies the following conditions.
\begin{enumerate}
\item[(a)]
For any injective homomorphism $N \to P$ of finite $R$-modules, $M \otimes_R N \to M \otimes_R P$
is injective.
\item[(b)]
For any nonzero finite $R$-module $N$, $M \otimes_R N$ is nonzero.
\end{enumerate}
For other characterizations, see \cite[\S I.3.1, Proposition~1]{bourbaki-ac}.
\end{defn}

\begin{lemma} \label{L:faithfully flat by maximal ideals}
A flat ring homomorphism $R \to S$ is faithfully flat if and only if
every maximal ideal of $R$ is the contraction of a maximal ideal of $S$.
\end{lemma}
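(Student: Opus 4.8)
The plan is to prove both implications directly, using the characterization of faithful flatness in terms of nonvanishing of tensor products (Definition~\ref{D:faithfully flat}) and the standard dictionary between tensoring with $S$ and base change on $\Spec$. For the ``only if'' direction, suppose $R \to S$ is faithfully flat and let $\gothm$ be a maximal ideal of $R$. Then $S \otimes_R (R/\gothm) = S/\gothm S$ is nonzero, so $\Spec(S/\gothm S)$ is nonempty; pick any maximal ideal $\gothn$ of $S/\gothm S$ and let $\gothq$ be its preimage in $S$. A priori $\gothq$ need not be maximal in $S$, so the first thing I would do is observe that $S/\gothm S$ is a nonzero algebra over the field $R/\gothm$, hence its maximal ideals contract to $\gothm$ along $R/\gothm \to S/\gothm S$; translating back, $\gothq \cap R = \gothm$. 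The remaining point is that $\gothn$ being maximal in $S/\gothm S$ does not immediately make $\gothq$ maximal in $S$ — but since $S/\gothq$ is a quotient of $S/\gothm S$ by the maximal ideal $\gothn$, it is a field, so $\gothq$ is in fact maximal in $S$. This gives the required maximal ideal of $S$ lying over $\gothm$.

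For the ``if'' direction, assume $R \to S$ is flat and every maximal ideal of $R$ is the contraction of a maximal ideal of $S$; I must show $S \otimes_R N \neq 0$ for every nonzero finite $R$-module $N$ (condition (b) of Definition~\ref{D:faithfully flat}; condition (a) is automatic from flatness). Given such an $N$, choose a maximal ideal $\gothm$ of $R$ in the support of $N$, so that $N_\gothm \neq 0$ and hence, by Nakayama, $N/\gothm N \neq 0$. By hypothesis there is a maximal ideal $\gothn$ of $S$ with $\gothn \cap R = \gothm$. Then
\[
(S \otimes_R N) \otimes_S (S/\gothn) = S/\gothn \otimes_R N = (S/\gothn) \otimes_{R/\gothm} (N/\gothm N),
\]
which is a nonzero vector space over the field $S/\gothn$ since $N/\gothm N$ is a nonzero vector space over $R/\gothm$ and $S/\gothn$ is a nonzero (in fact field) extension. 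Hence $S \otimes_R N \neq 0$.

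The only genuinely delicate point is the one flagged in the first paragraph: in the ``only if'' direction one must not conflate ``$\gothq$ lies over a maximal ideal'' with ``$\gothq$ is maximal'' — the correct argument is to produce $\gothn$ as a maximal ideal of the fiber ring $S/\gothm S$ and then note that its contraction to $S$ is automatically maximal because the residue field $S/\gothq$ coincides with the residue field of $\gothn$ in $S/\gothm S$, which is a field. Everything else is a routine application of the fiber-ring formula $S \otimes_R \kappa(\gothp) \cong$ the localization of $S/\gothp S$, together with the fact that a finite module is nonzero after localization at some maximal ideal in its support. No Banach-ring subtleties enter; this is a purely commutative-algebra statement and the proof is short.
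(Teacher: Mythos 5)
Your proof is correct, and both directions are handled cleanly: in particular you correctly address the one delicate point, namely that the preimage in $S$ of a maximal ideal of the fibre ring $S/\gothm S$ is itself maximal because the corresponding quotient of $S$ is a field. The paper gives no argument of its own for this lemma, citing only Bourbaki's \textit{Alg\`ebre Commutative} (\S I.3.5, Proposition~9); your proof is essentially the standard one found there, resting on the characterization of faithful flatness via nonvanishing of $S \otimes_R N$ for nonzero (finite) $N$, exactly as set up in Definition~\ref{D:faithfully flat}.
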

\begin{proof}
See \cite[\S I.3.5, Proposition~9]{bourbaki-ac}.
\end{proof}

\begin{remark} \label{R:finite type}
Recall that for any ring $R$, quasicoherent sheaves on $\Spec(R)$
correspond to $R$-modules via the global sections functor.
Under this correspondence, the property of a sheaf being finitely generated,
finitely presented, or finite projective implies the corresponding property for its module of global sections \cite[Tags~01PB, 01PC, 05JM]{stacks-project}.
\end{remark}

\subsection{Comparing \'etale algebras}

We will expend a great deal of effort comparing finite \'etale algebras over different rings.
A key case is given by base change from a ring to a quotient ring, in which the henselian
property plays a key role.
(A rather good explanation of this material can be obtained from \cite{gabber-ramero} by specializing from
almost ring theory to ordinary ring theory.)

\begin{defn} \label{D:finite etale category}
As in \cite[D\'efinition~17.3.1]{ega4-4}, we say a morphism of schemes is \emph{\'etale}
if it is locally of finite presentation and formally \'etale.
(The latter condition is essentially a unique infinitesimal lifting property;
see \cite[D\'efinition~17.1.1]{ega4-4}.)
A morphism of rings is \'etale if the corresponding morphism of affine schemes is \'etale.

For any ring $R$, let $\FEt(R)$ denote the tensor category of finite \'etale algebras
over the ring $R$, with morphisms being arbitrary morphisms of $R$-algebras.
Such morphisms are themselves finite and \'etale (e.g., by \cite[Proposition~17.3.4]{ega4-4}).
Any $S \in \FEt(R)$ is finite as an $R$-module
and finitely presented as an $R$-algebra, and hence finitely presented as an $R$-module
by \cite[Proposition~1.4.7]{ega4-1}. Also, $S$ is flat over $R$ by
\cite[Th\'eor\`eme~17.6.1]{ega4-4}. By the criteria described in Definition~\ref{D:projective},
$S$ is finite projective as an $R$-module.
Conversely, an $R$-algebra $S$ which is finite projective
as an $R$-module is finite \'etale if and only if the $R$-module homomorphism $S \to \Hom_R(S,R)$
taking $x$ to $y \mapsto \Trace_{S/R}(xy)$ is an isomorphism. (Namely, since \'etaleness
is an open condition \cite[Remarques~17.3.2(iii)]{ega4-4}, this reduces to the case where $R$ is a field,
which is straightforward.)

For short, we will describe a
finite \'etale $R$-algebra which is faithfully flat over $R$
(or equivalently of positive rank everywhere over $R$, by Lemma~\ref{L:faithfully flat by maximal ideals} and the going-up theorem) as
a \emph{faithfully finite \'etale} $R$-algebra.
\end{defn}

\begin{defn}
Let $R$ be a ring and let $U$ be an element of $\FEt(R)$. Then there exists an idempotent element
$e_{U/R} \in U \otimes_R U$ mapping to 1 via the multiplication map $\mu: U \otimes_R U \to U$
and killing the kernel of $\mu$; see for instance \cite[Proposition~3.1.4]{gabber-ramero}.
For $V$ another $R$-algebra and
$e \in U \otimes_R V$ an idempotent element, let $\Gamma(e): V \to e(U \otimes_R V)$ be the morphism
of $R$-algebras sending $x \in V$ to $e(1 \otimes x)$.
In case $\Gamma(e)$ is an isomorphism, we obtain a morphism $\psi_e: U \to V$ of $R$-algebras by
applying
the natural map $\Delta(e): U \to e(U \otimes_R V)$ sending $x$ to $e(x \otimes 1)$
followed by $\Gamma(e)^{-1}$.
Conversely, for $\psi: U \to V$ a morphism of $R$-algebras,
the idempotent $e_\psi = (1 \otimes \psi)(e_{U/R}) \in U \otimes_R V$ has the property
that $\Gamma(e_\psi)$ is an isomorphism (see \cite[Proposition~5.2.19]{gabber-ramero}).
\end{defn}

\begin{lemma} \label{L:idempotents to morphisms}
For $R$ a ring, $U \in \FEt(R)$, and $V$ an $R$-algebra, the function
$\psi \mapsto e_\psi$ defines a bijection from
the set of $R$-algebra morphisms from $U$ to $V$ to
the set of idempotent elements $e \in U \otimes_R V$ for which $\Gamma(e)$ is an isomorphism.
The inverse map is $e \mapsto \psi_e$.
\end{lemma}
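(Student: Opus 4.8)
The plan is to verify that the two maps $\psi \mapsto e_\psi$ and $e \mapsto \psi_e$ are mutually inverse, using the two facts already recorded in the preceding definition: that $e_{U/R} \in U \otimes_R U$ maps to $1$ under the multiplication map $\mu$ and kills $\ker(\mu)$, and that for any $R$-algebra morphism $\psi : U \to V$ the idempotent $e_\psi = (1 \otimes \psi)(e_{U/R})$ has $\Gamma(e_\psi)$ an isomorphism (cited from \cite[Proposition~5.2.19]{gabber-ramero}). Thus the map $\psi \mapsto e_\psi$ does land in the claimed target set, and the map $e \mapsto \psi_e$ is defined exactly when $e$ lies in that target set, so both maps have the correct source and target; it remains only to check the two round-trip identities.

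First I would check that $\psi_{e_\psi} = \psi$ for any $R$-algebra morphism $\psi : U \to V$. Unwinding the definitions, $\psi_{e_\psi}$ is the composite $\Gamma(e_\psi)^{-1} \circ \Delta(e_\psi)$, where $\Delta(e_\psi)(x) = e_\psi (x \otimes 1)$ and $\Gamma(e_\psi)(y) = e_\psi(1 \otimes y)$. So it suffices to show that for all $x \in U$ we have $e_\psi(x \otimes 1) = e_\psi(1 \otimes \psi(x))$ in $e_\psi(U \otimes_R V)$; applying $\Gamma(e_\psi)^{-1}$ then gives $\psi_{e_\psi}(x) = \psi(x)$. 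The equality $e_\psi(x \otimes 1) = e_\psi(1 \otimes \psi(x))$ should follow by writing $e_\psi = (1 \otimes \psi)(e_{U/R})$ and using that $e_{U/R}$ kills $\ker(\mu : U \otimes_R U \to U)$: the element $x \otimes 1 - 1 \otimes x$ lies in $\ker(\mu)$, so $e_{U/R}(x \otimes 1 - 1 \otimes x) = 0$ in $U \otimes_R U$, and applying the ring homomorphism $1 \otimes \psi : U \otimes_R U \to U \otimes_R V$ yields $e_\psi(x \otimes 1 - 1 \otimes \psi(x)) = 0$, as desired.

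Next I would check that $e_{\psi_e} = e$ for any idempotent $e \in U \otimes_R V$ with $\Gamma(e)$ an isomorphism. By definition $e_{\psi_e} = (1 \otimes \psi_e)(e_{U/R})$, so I want to understand the map $1 \otimes \psi_e : U \otimes_R U \to U \otimes_R V$. The cleanest route is to compare everything inside $U \otimes_R V$: one checks that $(1 \otimes \psi_e)$ equals the composite of $1 \otimes \Gamma(e) : U \otimes_R V \to U \otimes_R e(U\otimes_R V)$ appropriately identified, or more directly, that applying $1 \otimes \psi_e$ to $e_{U/R}$ and tracking through $\Delta(e)$ and $\Gamma(e)^{-1}$ reproduces $e$ because $e$ itself, as an idempotent with $\Gamma(e)$ an isomorphism, satisfies the "diagonal" relation $e \cdot (a \otimes 1) = e \cdot (1 \otimes \psi_e(a))$ that characterizes $e_{\psi_e}$. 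Combined with uniqueness of the idempotent with the separability/diagonal property — which is available since $U \in \FEt(R)$, so $U \otimes_R V$ over $V$ is finite \'etale and has a unique such idempotent — this forces $e_{\psi_e} = e$. Alternatively, and perhaps more transparently, one observes that both $\psi \mapsto e_\psi$ and $e \mapsto \psi_e$ are natural in $V$, so by Yoneda it suffices to treat the universal case $V = U$ with $\psi = \id$, where $e_{\id} = e_{U/R}$ by definition and $\psi_{e_{U/R}} = \id$ by the first identity applied with $\psi = \id$.

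The main obstacle is the second identity $e_{\psi_e} = e$: establishing it requires either invoking uniqueness of the separability idempotent of $U \otimes_R V$ over $V$ (to which end one must know $U \otimes_R V \in \FEt(V)$, which follows from base change of finite \'etale algebras) and matching the defining properties, or carrying out the naturality reduction to the universal case. I expect the naturality argument to be the shortest: once one notes that $e \in U \otimes_R V$ corresponds under $U \otimes_R V \cong (U \otimes_R U) \otimes_{U, \psi_e} V$ (via $1 \otimes \psi_e$ on the first factor) to the image of $e_{U/R}$, the identity is immediate. The remaining verifications — that $\Gamma(e_\psi)$ and $\Delta(e_\psi)$ behave as claimed, and that the composites are genuinely $R$-algebra maps — are routine unwinding of the definitions already set up in the excerpt, using only that $e_{U/R}$ is idempotent, splits $\mu$, and annihilates $\ker(\mu)$.
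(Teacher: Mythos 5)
The paper itself disposes of this lemma with a citation to \cite[Lemma~5.2.20]{gabber-ramero}, so any self-contained argument is welcome. Your verification of the first round-trip identity $\psi_{e_\psi}=\psi$ is correct and complete: $x\otimes 1-1\otimes x\in\ker(\mu)$ is killed by $e_{U/R}$, and applying $1\otimes\psi$ gives $\Delta(e_\psi)(x)=\Gamma(e_\psi)(\psi(x))$. The problem is the second identity $e_{\psi_e}=e$, where both of your proposed routes are incomplete. The Yoneda ``alternative'' does not work at all for this direction: Yoneda identifies natural endomorphisms of the \emph{representable} functor $V\mapsto\Hom_R(U,V)$, so checking the universal case $V=U$, $\psi=\id$ only re-proves $\psi_{e_\psi}=\psi$. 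The functor $V\mapsto\{e\in U\otimes_R V \text{ idempotent}: \Gamma(e)\text{ iso}\}$ is not known to be representable until the lemma is proved, so you cannot reduce $e_{\psi_e}=e$ to a universal case. Your primary route is the right one, but as stated the ``diagonal relation'' $e(a\otimes 1)=e(1\otimes\psi_e(a))$ does not characterize an idempotent: $e=0$ satisfies it too. You must say precisely which two properties pin the idempotent down and check both of them for both $e$ and $e_{\psi_e}$.

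Here is how to close the gap. Let $\chi\colon U\otimes_R V\to V$ be the $V$-algebra map $x\otimes y\mapsto\psi_e(x)y$, so that $\chi\circ(1\otimes\psi_e)=\psi_e\circ\mu$ and $\ker(\chi)$ is generated as an ideal by the elements $x\otimes 1-1\otimes\psi_e(x)$. Claim: an idempotent $f\in U\otimes_R V$ with $f\cdot\ker(\chi)=0$ and $\chi(f)=1$ is unique, since for two such $f_1,f_2$ one has $f_1-f_2\in\ker(\chi)$, whence $f_1=f_1f_2=f_2$. Now $e_{\psi_e}$ satisfies both conditions: $\chi(e_{\psi_e})=\psi_e(\mu(e_{U/R}))=1$, and $e_{\psi_e}$ kills the generators of $\ker(\chi)$ because they are the images under $1\otimes\psi_e$ of the generators $x\otimes 1-1\otimes x$ of $\ker(\mu)$, which $e_{U/R}$ kills. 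The element $e$ kills the same generators by the very definition of $\psi_e$ (this is your diagonal relation), but the normalization $\chi(e)=1$ requires an extra computation you omit: writing $e=\sum_i x_i\otimes y_i$ and using the diagonal relation termwise gives $e=e\cdot e=e\bigl(1\otimes\sum_i\psi_e(x_i)y_i\bigr)=\Gamma(e)(\chi(e))$, while $\Gamma(e)(1)=e$; injectivity of $\Gamma(e)$ then forces $\chi(e)=1$. With both conditions verified for both idempotents, uniqueness gives $e_{\psi_e}=e$. Note that this uses only that $e_{U/R}$ is idempotent, maps to $1$ under $\mu$, and kills $\ker(\mu)$; no appeal to base change of finite \'etale algebras or to a separate uniqueness statement for separability idempotents is needed.
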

\begin{proof}
See \cite[Lemma~5.2.20]{gabber-ramero}.
\end{proof}

\begin{lemma} \label{L:unique idempotent lifting}
Let $R$ be a ring and let $I$ be an ideal contained in the Jacobson radical of $R$.
\begin{enumerate}
\item[(a)]
No two distinct idempotents of $R$ are congruent modulo $I$.
\item[(b)]
For any integral extension $S$ of $R$,
the ideal $IS$ is contained in the Jacobson radical of $S$.
\end{enumerate}
\end{lemma}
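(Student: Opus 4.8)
The plan is to prove (a) by a direct computation with idempotents, and (b) by a standard lying-over argument for integral extensions combined with part (a).

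For part (a), suppose $e, e' \in R$ are idempotents with $e - e' \in I$. First I would observe that $(e-e')^3 = e - e'$; indeed, expanding and using $e^2 = e$, $e'^2 = e'$ one gets $(e-e')^2 = e + e' - 2ee'$, which is itself an idempotent, and multiplying by $e - e'$ a second time returns $e - e'$ (the cross terms collapse because each of $e, e'$ is idempotent and commutes with the other). Hence $(e-e')\bigl(1 - (e-e')^2\bigr) = 0$. Now $(e-e')^2 \in I^2 \subseteq I$, and since $I$ lies in the Jacobson radical of $R$, the element $1 - (e-e')^2$ is a unit. Multiplying by its inverse forces $e - e' = 0$, which gives (a).

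For part (b), let $S$ be an integral extension of $R$; I want to show $IS$ lies in the Jacobson radical of $S$, i.e. that $IS$ is contained in every maximal ideal $\mathfrak{n}$ of $S$. Set $\mathfrak{p} = \mathfrak{n} \cap R$. Because $S$ is integral over $R$, the ring $S/\mathfrak{n}$ is integral over the domain $R/\mathfrak{p}$, and an integral domain that is integral over a subring and is itself a field forces the subring to be a field (this is the standard fact underlying lying-over/going-up); hence $R/\mathfrak{p}$ is a field, so $\mathfrak{p}$ is a maximal ideal of $R$. Since $I$ is contained in the Jacobson radical of $R$, we get $I \subseteq \mathfrak{p} \subseteq \mathfrak{n}$, and therefore $IS \subseteq \mathfrak{n}$. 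As $\mathfrak{n}$ was an arbitrary maximal ideal of $S$, this shows $IS$ is contained in the Jacobson radical of $S$.

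The only mild subtlety — the ``main obstacle'' such as it is — is the verification in part (a) that $(e-e')^2$ is again idempotent, since that is what lets one invoke the Jacobson radical hypothesis; once that identity is in hand the rest is immediate. Part (b) is entirely formal given the lying-over fact, which I would simply cite (e.g. from \cite{bourbaki-ac} or any standard reference); note that part (b) does not actually use part (a), so the two items can be proved independently.
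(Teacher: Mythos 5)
Your proof is correct. In part (a) your argument is essentially the paper's in light disguise: the paper kills $e-e'$ by multiplying by the unit $e+e'-1$, using the identity $(e-e')(e+e'-1)=0$ and checking $(e+e'-1)^2 \in 1+I$ by reducing modulo maximal ideals; your unit $1-(e-e')^2$ is literally $(e+e'-1)^2$, and your route to seeing it is a unit (namely $(e-e')^2 \in I$ directly) is if anything slightly more direct. (As an aside, you do not really need the intermediate observation that $(e-e')^2$ is idempotent; the single identity $(e-e')^3 = e-e'$, verified by expansion, suffices.)

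Part (b) is where you genuinely diverge. The paper avoids the lying-over machinery entirely: it shows that the set of elements of $S$ satisfying a monic polynomial whose nonleading coefficients lie in $I$ is an ideal, deduces that any $x \in 1+IS$ satisfies a monic $P$ with constant term in $\pm 1 + I$ (hence a unit), and concludes $x$ is a unit since the constant term is divisible by $x$; this verifies the ``$1+IS$ consists of units'' characterization of the Jacobson radical by hand. You instead invoke the standard fact that under an integral extension every maximal ideal of $S$ contracts to a maximal ideal of $R$, so that $I \subseteq \mathrm{rad}(R) \subseteq \mathfrak{n} \cap R$ for every maximal $\mathfrak{n}$ of $S$. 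Both arguments are complete; yours is shorter given the cited fact (which is indeed in \cite{bourbaki-ac}), while the paper's is self-contained and makes explicit the elementary mechanism (integral dependence with nonleading coefficients in $I$) that forces elements of $1+IS$ to be units.
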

\begin{proof}
For (a), let $e,e'\in R$ are idempotents with $e-e' \in I$.
Then $(e+e'-1)^2 = 1 - (e-e')^2 \in 1+I$, so $e+e'-1$ is a unit in $R$. Since $(e-e')(e+e'-1) = 0$,
we have $e=e'$.

For (b), note that the set of $y \in S$ which are roots of monic polynomials over $R$ whose nonleading coefficients belong to $I$
is an ideal. Consequently, if $x \in 1 + IS$, it is a root of a monic polynomial $P \in R[T]$  for
which $P(T-1)$ has all nonleading coefficients in $I$. It follows that the constant coefficient
$P_0$ of $P$ belongs to $\pm 1 + I$ and so is a unit; since $P_0 = P_0 - P(x)$
is divisible by $x$ (being the evaluation at $x$ of the polynomial $P_0-P$), $x$ is also a unit.
\end{proof}

\begin{prop} \label{P:lift idempotents}
The following statements hold.
\begin{enumerate}
\item[(a)]
Let $R \to R'$ be a homomorphism of rings such that for any invertible module $M$ over $R$, every element of $M$ which generates $M \otimes_R R'$ also generates $M$. 
(For example, this holds if $\Pic(R) \to \Pic(R')$ is injective and every element of $R$ which maps to a unit in $R'$ is itself a unit.)
Suppose that for each $S \in \FEt(R)$, every idempotent element of
$S \otimes_R R'$ is the image of a unique idempotent element of $S$. Then the base change functor
$\FEt(R) \to \FEt(R')$ is rank-preserving and fully faithful.
\item[(b)]
Let $R$ be a ring, and let $I$ be an ideal contained in the Jacobson radical of $R$.
Suppose that for each $S \in \FEt(R)$, every idempotent element of $S/IS$ lifts
to $S$. Then the base change functor $\FEt(R) \to \FEt(R/I)$ is rank-preserving and fully faithful.
\end{enumerate}
\end{prop}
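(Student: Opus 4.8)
The plan is to reduce part (a) to the two hypotheses via Lemma~\ref{L:idempotents to morphisms}, and then to deduce part (b) from part (a).

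First I would prove (a). Rank-preservation is essentially tautological: for $S \in \FEt(R)$, the rank of $S$ at a prime of $R$ agrees with the rank of $S \otimes_R R'$ at any prime lying above it (this is just compatibility of fibre dimension with base change), and since every element of $R$ whose image in $R'$ is a unit is itself a unit, the map $\Spec R' \to \Spec R$ is surjective onto the set of primes that matter — more simply, one checks that an idempotent decomposition $S = S_1 \times \cdots \times S_k$ with $S_i$ of constant rank $r_i$ is detected after base change, so the locally constant rank function is preserved. For full faithfulness, fix $U, V \in \FEt(R)$. By Lemma~\ref{L:idempotents to morphisms}, $\Hom_{R}(U,V)$ is in bijection with the set of idempotents $e \in U \otimes_R V$ such that $\Gamma(e)$ is an isomorphism, and likewise $\Hom_{R'}(U \otimes_R R', V \otimes_R R')$ is in bijection with the idempotents $e' \in (U \otimes_R V) \otimes_R R'$ for which $\Gamma(e')$ is an isomorphism. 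The hypothesis (applied to $S = U \otimes_R V \in \FEt(R)$) says exactly that every idempotent of $(U \otimes_R V)\otimes_R R'$ is the image of a unique idempotent of $U \otimes_R V$; so the base-change map on idempotents is a bijection. It remains to check that $\Gamma(e)$ is an isomorphism of $R$-algebras if and only if $\Gamma(e_{R'})$ is an isomorphism of $R'$-algebras, where $e_{R'}$ denotes the image of $e$. One direction is immediate since $\Gamma$ commutes with base change; for the converse, $\Gamma(e)$ is a morphism of finite projective $R$-modules of the same rank (both $V$ and $e(U\otimes_R V)$ being direct summands of finite \'etale algebras), and a morphism of finite projective modules which becomes an isomorphism after a base change $R \to R'$ along which unit-detection holds is already an isomorphism — its cokernel is a finitely generated module vanishing after $\otimes_R R'$, hence by Nakayama vanishes at every maximal ideal of $R$ in the image of $\Spec R'$, and the unit-detection hypothesis forces every maximal ideal of $R$ to lie in that image (an element outside every such maximal ideal would be a non-unit whose image is a unit). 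Thus the two Hom-sets are identified compatibly, giving full faithfulness.

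Next I would deduce (b) from (a). Take $R' = R/I$. Since $I$ lies in the Jacobson radical of $R$, any element of $R$ whose image in $R/I$ is a unit is itself a unit, so the first hypothesis of (a) holds. For the idempotent-lifting hypothesis, let $S \in \FEt(R)$; then $S$ is finite over $R$, hence integral, so by Lemma~\ref{L:unique idempotent lifting}(b) the ideal $IS$ lies in the Jacobson radical of $S$, and by Lemma~\ref{L:unique idempotent lifting}(a) no two distinct idempotents of $S$ are congruent modulo $IS$ — that is, idempotents of $S$ inject into idempotents of $S/IS$. The hypothesis of (b) provides surjectivity: every idempotent of $S/IS$ lifts to $S$. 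Hence idempotents of $S$ map bijectively to idempotents of $S/IS = S \otimes_R R'$, which is precisely the second hypothesis of (a). Applying (a) gives the conclusion.

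I expect the main obstacle to be the bookkeeping in the full-faithfulness step of (a): verifying that the bijection on idempotents restricts to a bijection on the subsets ``$\Gamma(e)$ an isomorphism,'' which is where the unit-detection hypothesis does real work via a Nakayama-type argument at the maximal ideals of $R$. Everything else — rank-preservation, the reduction of (b) to (a) — is formal given the lemmas already established.
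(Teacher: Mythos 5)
Your overall architecture --- the dictionary of Lemma~\ref{L:idempotents to morphisms}, the bijection on idempotents supplied by the hypothesis, and the reduction of (b) to (a) via Lemma~\ref{L:unique idempotent lifting} --- matches the paper's proof. The gap is in the one step where, as you yourself note, the unit-detection hypothesis must do real work: your claim that this hypothesis ``forces every maximal ideal of $R$ to lie in the image of $\Spec R'$'' is false. Take $R = k[x,y]_{(x,y)}$ and $R' = R[1/x] \times R[1/y]$. Since $R$ is a UFD, an element $g$ becomes a unit in $R[1/x]$ only if $g$ is a unit times a power of $x$, and likewise for $y$; hence only units of $R$ become units of $R'$, so unit-detection holds. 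Yet the maximal ideal $(x,y)$ contains both $x$ and $y$ and so is not the contraction of any prime of $R'$. Consistently, $R/(x,y)$ is a nonzero finitely generated $R$-module with $R/(x,y) \otimes_R R' = 0$, so the Nakayama step fails as stated. Your parenthetical justification tacitly assumes one can produce a single non-unit avoiding every prime in the image, which is exactly what fails here. There is also a secondary issue: even where the cokernel argument applies it only gives surjectivity of $\Gamma(e)$; you have not shown the kernel vanishes, nor that $V$ and $e(U\otimes_R V)$ have equal rank at points outside the image of $\Spec R'$.

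The paper closes this step by a different mechanism: a map of finite projective $R$-modules of equal rank is an isomorphism if and only if its determinant is invertible, and the determinant is (locally) an honest element of $R$, to which the unit-detection hypothesis applies directly --- no surjectivity of $\Spec R' \to \Spec R$ is needed. (The locus where the ranks of $V$ and $e(U\otimes_R V)$ disagree is cut out by an idempotent of $R$ whose image in $R'$ vanishes, and such an idempotent is zero because its complement maps to a unit.) If you replace your Nakayama paragraph with this determinant argument, the rest of your proof, including the deduction of (b) from (a), goes through as written.
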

Note that in case (a), if $R \to R'$ is injective, then $S$ injects into $S \otimes_R R'$
because $S$ is locally free as an $R$-module (see Definition~\ref{D:finite etale category}),
so the condition simply becomes that every idempotent element of
$S \otimes_R R'$ belongs to $S$.
\begin{proof}
The rank-preserving property is evident in both cases.
To check full faithfulness in case (a), note that
the hypothesis on the homomorphism $R \to R'$ implies that a map between finite projective
$R$-modules is an isomorphism if and only if its base extension to $R'$ is an isomorphism
(because the isomorphism condition amounts to invertibility of the determinant).
Consequently,
for $U,V \in \FEt(R)$ and $e \in U \otimes_R V$ an idempotent element,
the map $\Gamma(e): V \to e(U \otimes_R V)$ is an isomorphism if and only
if its base extension to $R'$ is an isomorphism.
Lemma~\ref{L:idempotents to morphisms} then implies that
every morphism $U \otimes_R R' \to V \otimes_R R'$ of $R'$-algebras descends to a morphism $U \to V$
of $R$-algebras, as desired.

To check full faithfulness in case (b), note that for $U,V \in \FEt(R)$ and
$\overline{e}\in (U/IU) \otimes_{R/IR} (V/IV)$ an idempotent element,
by Lemma~\ref{L:unique idempotent lifting} there is at most one
idempotent $e \in U \otimes_R V$ lifting $\overline{e}$.
Moreover, because $I$ is contained in the Jacobson radical of $R$,
for any invertible module $M$ over $R$, every element of $M$ which generates $M/I$ also generates $M$ by Nakayama's lemma.
We may thus apply (a) to deduce the claim.
\end{proof}

\begin{defn} \label{D:henselian}
A pair $(R,I)$ consisting of a ring $R$ and an ideal $I \subseteq R$
is said to be \emph{henselian} if the following conditions hold.
\begin{enumerate}
\item[(a)]
The ideal $I$ is contained in the Jacobson radical of
$R$.
\item[(b)]
For any monic $f \in R[T]$,
any factorization $\overline{f} = \overline{g} \overline{h}$
in $(R/I)[T]$ with $\overline{g}, \overline{h}$ monic and coprime lifts to
a factorization $f = gh$ in $R[T]$.
\end{enumerate}
For example, if $R$ is $I$-adically complete, then $(R,I)$ is henselian
by the usual proof of Hensel's lemma.
A local ring $R$ with maximal ideal $\gothm$ is \emph{henselian} if
the pair $(R,\gothm)$ is henselian.
\end{defn}

\begin{remark} \label{R:henselian criteria}
There are a number of equivalent formulations of the definition
of a henselian pair.
For instance, let $(R,I)$ be a pair consisting of a ring $R$ and an ideal $I$
contained in the Jacobson radical of $R$.
By \cite[Theorem~5.11]{greco-tams},
$(R,I)$ is henselian if and only if
every monic polynomial $f  = \sum_i f_i T^i \in R[T]$
with $f_0 \in I, f_1 \in R^\times$ has a root in $I$.
(In other words, it suffices to check the lifting condition for
$\overline{g} = T$.)
See \cite[Expos\'e XI, \S 2]{raynaud-henselian} for some other
formulations.
\end{remark}

\begin{theorem} \label{T:henselian}
Let $(R,I)$ be a henselian pair. Then the base change functor
$\FEt(R) \to \FEt(R/I)$ is a tensor equivalence.
\end{theorem}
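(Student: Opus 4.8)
The plan is to verify that the base change functor $F\colon \FEt(R) \to \FEt(R/I)$ --- automatically a rank-preserving tensor functor, since base change along $R \to R/I$ preserves tensor products and ranks --- is fully faithful and essentially surjective.

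\textbf{Full faithfulness.} By Proposition~\ref{P:lift idempotents}(b), together with the uniqueness supplied by Lemma~\ref{L:unique idempotent lifting}(a) (applicable because $IS \subseteq \mathrm{rad}(S)$ by Lemma~\ref{L:unique idempotent lifting}(b)), it suffices to show that idempotents lift along $S \to S/IS$ for every $S \in \FEt(R)$, and more generally along $C \to C/IC$ for every finite $R$-algebra $C$. The case $C = R$ is immediate from Definition~\ref{D:henselian}(b): an idempotent $\bar e \in R/I$ gives the factorization $T^2 - T = (T - \bar e)(T - 1 + \bar e)$, whose factors are coprime since $1 - 2\bar e$ is a unit ($(1 - 2\bar e)^2 = 1$), and lifting it forces an idempotent $e \in R$ over $\bar e$. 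Consequently finite clopen decompositions of $\Spec(R/I)$ lift to $\Spec R$; this, with quasi-compactness of $\Spec R$ and $\Spec(R/I)$, reduces the general statement to the case where $C$ is finite projective over $R$ of constant rank and the characteristic polynomial $\chi$ of multiplication by a lift $x$ of the idempotent reduces modulo $I$ to $T^a(T-1)^b$ for fixed $a,b$; lifting this coprime factorization $\chi = gh$ by Definition~\ref{D:henselian}(b), Nakayama shows $g,h$ remain coprime, so a B\'ezout relation $ag+bh=1$ and the Cayley--Hamilton identity $\chi(m_x)=0$ make $(bh)(m_x)=m_{(bh)(x)}$ an idempotent endomorphism of $C$, whence $(bh)(x)\in C$ is an idempotent reducing (up to interchanging it with its complement) to $\bar e$. (For $C$ merely finite over $R$ an additional standard argument is needed; one could also cite \cite{gabber-ramero} here.)

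\textbf{Essential surjectivity} is the main obstacle. Given $\overline S \in \FEt(R/I)$, fix a closed immersion $\Spec \overline S \hookrightarrow \mathbb A^m_{R/I} = \mathbb A^m_R \times_{\Spec R} \Spec(R/I)$ and lift the defining equations to obtain an affine $R$-scheme $\Spec B'$ with $B'/IB' \cong \overline S$. Because $\overline S$ is \'etale over $R/I$ and \'etaleness is an open condition on the source (Definition~\ref{D:finite etale category}), $B'$ is \'etale over $R$ on an open $U \subseteq \Spec B'$ containing $V(IB') = \Spec \overline S$ (shrinking $B'$, or passing Zariski-locally to a standard \'etale presentation of $\overline S$, if necessary); then $U \to \Spec R$ is \'etale, hence quasi-finite and separated, and Zariski's main theorem realizes $U$ as an open subscheme of $\Spec \widetilde B$ with $\widetilde B$ finite over $R$. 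Inside $\Spec(\widetilde B/I\widetilde B)$ the subset $\Spec \overline S = V(I\widetilde B) \cap U$ is clopen: open since $U$ is open in $\Spec \widetilde B$, and closed since $\Spec \overline S$ is finite (hence proper) over $\Spec(R/I)$ while $\Spec(\widetilde B/I\widetilde B)$ is separated over it. Lifting the corresponding idempotent of $\widetilde B/I\widetilde B$ to $\widetilde B$ --- legitimate since $\widetilde B$ is finite over $R$, by the idempotent-lifting statement above --- splits off a direct factor $S$ of $\widetilde B$, finite over $R$, with $S/IS \cong \overline S$. Finally $\Spec S$ is a clopen of $\Spec \widetilde B$ meeting $V(I\widetilde B)$ exactly in $\Spec \overline S \subseteq U$, so, since $V(IS)$ contains every maximal ideal of $S$ (as $IS \subseteq \mathrm{rad}(S)$) while open subsets of an affine scheme are stable under generization, $\Spec S \cap U = \Spec S$; thus $S$ is \'etale over $R$, so $S \in \FEt(R)$ maps to $\overline S$.

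The delicate step is precisely this passage from a merely \'etale (and a priori non-finite) lift to an honest finite \'etale algebra, which forces one to combine the finiteness of $\overline S$ over $R/I$, Zariski's main theorem, and idempotent lifting; the henselian hypothesis is used only through the lifting of factorizations and idempotents, and this use is unavoidable (the statement already fails for the non-henselian pair $(\ZZ,(p))$, there being no everywhere-unramified quadratic extension of $\ZZ$ with residue field $\FF_{p^2}$). As a reassuring special case requiring no Zariski's main theorem, a standard \'etale algebra $\overline S \cong ((R/I)[T]/(\overline f))_{\overline g}$ is, being finite over $R/I$, automatically a direct factor of the finite free $(R/I)[T]/(\overline f)$ (again by the properness argument); lifting $\overline f$ to a monic polynomial and lifting the idempotent realizes $\overline S$ as a finite direct factor $S$ of $R[T]/(f)$, with $f'$ still a unit in $S$ because $IS \subseteq \mathrm{rad}(S)$, and globalizing this picture over a cover of $\Spec(R/I)$ is exactly what the general argument accomplishes.
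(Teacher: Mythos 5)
The paper offers no argument of its own for this theorem---it simply cites Kurke, Gruson, and Gabber--Ramero---so the only meaningful comparison is with the standard proof from those sources, and that is exactly the architecture you follow: idempotent lifting gives full faithfulness via Proposition~\ref{P:lift idempotents}(b), and essential surjectivity is obtained by lifting to an \'etale algebra, applying Zariski's main theorem, and cutting out a clopen piece with a lifted idempotent. Your full faithfulness argument is complete as written: the proposition only requires idempotent lifting for $S \in \FEt(R)$, which are finite \emph{projective} $R$-modules, and your clopen-decomposition-plus-Cayley--Hamilton argument handles that case correctly.

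Two steps of the essential surjectivity argument are not justified as written, and they are precisely the two nontrivial points. First, the claim that the \'etale locus of $\Spec B'$ contains $V(IB')$ is false for an arbitrary lift of an arbitrary presentation: take $R = k[x]$, $I = (x)$, $\overline S = R/I$ presented (redundantly) as $(R/I)[T]/(\bar T, \bar T)$, and lift the two equations to $T$ and $T - x$; then $B' = R[T]/(T, T-x) \cong k$ is nowhere flat over $R$, so its \'etale locus is empty while $V(IB')$ is not. Unramifiedness at points of $V(IB')$ is indeed controlled by the fibre, but flatness is not. The correct input is a single \emph{global} presentation $\overline S \cong (R/I)[x_1,\dots,x_n]/(\bar f_1,\dots,\bar f_n)$ with $\det(\partial \bar f_i/\partial x_j)$ a unit of $\overline S$ (this exists for any \'etale ring map, but it is a genuine structure theorem); lifting that presentation and inverting the Jacobian yields an \'etale $B'$ with $B'/IB' \cong \overline S$, with no henselian hypothesis needed. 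Your parenthetical remedies do not suffice: shrinking $\Spec B'$ to its \'etale locus may destroy the isomorphism $B'/IB' \cong \overline S$, and passing Zariski-locally to standard \'etale presentations reintroduces exactly the glueing problem that the idempotent argument is meant to circumvent. Second, you apply idempotent lifting to $\widetilde B$, which is finite over $R$ but has no reason to be projective, whereas your Cayley--Hamilton argument covers only the finite projective case; the ``additional standard argument'' you defer is therefore load-bearing rather than optional. The usual repair is to show that $(C, IC)$ is itself a henselian pair for every integral $R$-algebra $C$ (after which the rank-one case you did prove applies to $C$ directly), or to cite Gabber--Ramero for this equivalence, which is in effect what the paper does for the whole theorem. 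Both gaps are standard and repairable, but as written the essential surjectivity half is incomplete.
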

\begin{proof}
See  \cite[Satz~4.4.7]{kurke}, \cite[Satz~4.5.1]{kurke-etc}, or
\cite{gruson}.
See also \cite[Theorem~5.5.7]{gabber-ramero} for a more general assertion
in the context of almost ring theory.
\end{proof}

\begin{remark} \label{R:fet direct limit}
Let $\{R_i\}_{i \in I}$ be a direct system in the category of rings.
We may then define the direct 2-limit $\varinjlim_i \FEt(R_i)$.
For $R = \varinjlim_i R_i$, there is a natural functor $\varinjlim_i \FEt(R_i) \to \FEt(R)$ given by base extension
to $R$. This functor is fully faithful by \cite[Proposition~17.7.8(ii)]{ega4-4}
(since affine schemes are quasicompact).
To see that it is essentially surjective, start with $S \in \FEt(R)$.
Since $S$ is finitely presented as an $R$-algebra, by \cite[Lemme~1.8.4.2]{ega4-1}
it has the form $S_i \otimes_{R_i} R$ for some $i \in I$ and some finitely presented
$R_i$-algebra $S_i$. By \cite[Proposition~17.7.8(ii)]{ega4-4} again, there exists
$j \geq i$ such that $S_j = S_i \otimes_{R_i} R_j$ is finite \'etale over $R_j$.
We conclude that $\varinjlim_i \FEt(R_i) \to \FEt(R)$ is a tensor equivalence.
\end{remark}

\subsection{Descent formalism}
\label{subsec:descent formalism}

We will make frequent use of faithfully flat descent for modules, as well as variations thereof (e.g.,
for Banach rings).
It is convenient to frame this sort of argument in standard abstract descent formalism, since this
language can also be used to discuss glueing of modules
(see Example~\ref{exa:quasicoherent}). We set up in terms of cofibred categories rather than fibred
categories, as appropriate for studying modules over rings rather than sheaves over schemes; in the latter context we will use sheaf-theoretic language instead.

\begin{defn}
Let $F: \calF \to \calC$ be a covariant functor between categories.
For $X$ an object (resp.\ $f$ a morphism) in $\calC$,
let $F^{-1}(X)$ (resp.\ $F^{-1}(f)$) denote the class of objects
(resp.\ morphisms) in $\calC$ carried to $X$ (resp.\ $f$) via $F$.

For $f: X \to Y$ a morphism in $\calC$ and $E \in F^{-1}(X)$,
a \emph{pushforward} of $E$ along $f$ is a morphism $\tilde{f}: E \to f_* E \in F^{-1}(f)$
such that any $g \in F^{-1}(f)$ with source $E$ factors uniquely through $\tilde{f}$.
(We sometimes call the target $f_* E$ a pushforward of $E$ as well, understanding that
it comes equipped with a fixed morphism from $E$.)
We say $\calF$ is a \emph{cofibred category} over $\calC$, or that
$F: \calF \to \calC$ defines a cofibred category, if
pushforwards always exist and the composition of two pushforwards (when defined)
is always a pushforward.
\end{defn}

\begin{defn}
Let $\calC$ be a category in which pushouts exist.
Let $F: \calF \to \calC$ be a functor defining a cofibred category.
Let $f: X\to Y$ be a morphism in $\calC$. Let $\pi_1, \pi_2: Y \to Y \sqcup_X Y$
be the coprojection maps. Let $\pi_{12}, \pi_{13}, \pi_{23}: Y \sqcup_X Y \to
Y \sqcup_X Y \sqcup_X Y$ be the coprojections such that $\pi_{ij}$ carries the first and second
factors of the source into the $i$-th and
$j$-th factors in the triple coproduct (in that order).
A \emph{descent datum} in $\calF$ along $f$ consists of an object
$M \in F^{-1}(Y)$ and an isomorphism
$\iota: \pi_{1*} M \to \pi_{2*} M$ between some choices of pushforwards,
satisfying the following \emph{cocycle condition}.
Let $M_1, M_2, M_3$ be some pushforwards of $M$ along the three coprojections
$Y \to Y \sqcup_X Y \sqcup_X Y$.
Then $\iota$ induces a map
$\iota_{ij}: M_i \to M_j$ via $\pi_{ij}$;
the condition is that $\iota_{23} \circ \iota_{12} = \iota_{13}$.

For example, any object $N\in F^{-1}(X)$
induces a descent datum by taking $M$ to be a
pushforward of $N$ along $f$ and taking $\iota$ to be the map identifying
$\pi_{1*} M$ and $\pi_{2*} M$ with a single pushforward of $M$ along
$X \to Y \sqcup_X Y$.
Any such descent datum is said to be \emph{effective}.
We say that $f$ is an \emph{effective descent morphism} for $\calF$ if the following
conditions hold.
\begin{enumerate}
\item[(a)]
Every descent datum along $f$ is effective.
\item[(b)]
For any $M, N \in \calF$ with $F(M) = F(N)$, the morphisms $M \to N$ in $\calF$
lifting the identity morphism are in bijection with morphisms between the corresponding
descent data. (We leave the definition of the latter to the reader.)
\end{enumerate}
\end{defn}

\begin{example} \label{exa:quasicoherent}
Let $\calC$ be the category of rings.
Let $\calF$ be the category of modules over rings,
with morphisms defined as follows: for $R_1, R_2 \in \calC$ and $M_i \in \calF$
a module over $R_i$, morphisms $M_1 \to M_2$ consist of pairs $(f,g)$
with $f: R_1 \to R_2$ a morphism in $\calC$ and $g: f_* M_1 \to M_2$ a morphism
of modules over $R_2$. Let $F: \calF \to \calC$ be the functor taking each module
to its underlying ring; this functor defines a cofibred category with pushforwards
defined as expected.

Let $R \to R_1, \dots, R \to R_n$ be ring homomorphisms corresponding to open
immersions of schemes which cover $\Spec R$, and put
$S = R_1 \oplus \cdots \oplus R_n$.
Then $R \to S$ is an effective descent morphism for $\calF$;
this is another way of stating the standard fact that
any quasicoherent sheaf on an affine scheme is represented uniquely by a module over the ring
of global sections \cite[Th\'eor\`eme~1.4.1]{ega1}. This fact is generalized
by Theorem~\ref{T:descent modules}.
\end{example}

\begin{theorem} \label{T:descent modules}
Any faithfully flat morphism of rings is an effective descent morphism
for the category of modules over rings (Example~\ref{exa:quasicoherent}).
\end{theorem}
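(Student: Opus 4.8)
The plan is to verify the two clauses (a) and (b) in the definition of an effective descent morphism directly, using the standard machinery of faithfully flat descent in the form of the amalgamated cochain complex. Let $f: R \to S$ be faithfully flat. For clause (b), suppose $M$ and $N$ are modules over $R$ and we are given morphisms of the corresponding descent data along $f$; unwinding the definitions, such a morphism is an $S$-linear map $g: M \otimes_R S \to N \otimes_R S$ compatible with the two natural extensions to $S \otimes_R S$. I would first recall the exact sequence
\[
0 \to R \to S \to S \otimes_R S,
\]
where the last arrow is $x \otimes y \mapsto x \otimes 1 \otimes y - 1 \otimes x \otimes y$ (wait—more precisely, $s \mapsto s\otimes 1 - 1 \otimes s$), which is exact because $f$ is faithfully flat: after tensoring with $S$ it acquires a contracting homotopy. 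Tensoring this sequence with $\Hom_R(M,N)$ (or working with $M$ directly and using flatness) shows that the equalizer of the two maps $\Hom_S(M\otimes_R S, N\otimes_R S) \rightrightarrows \Hom_{S\otimes_R S}(M \otimes_R S\otimes_R S, N \otimes_R S \otimes_R S)$ is exactly $\Hom_R(M,N)$. Hence every morphism of descent data descends uniquely, giving (b).

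For clause (a), I would take a descent datum $(M, \iota)$ along $f$, i.e.\ an $S$-module $M$ together with an $S\otimes_R S$-linear isomorphism $\iota: M \otimes_{S, \pi_1} (S \otimes_R S) \to M \otimes_{S,\pi_2}(S\otimes_R S)$ satisfying the cocycle condition on $S \otimes_R S \otimes_R S$. I would define
\[
N = \{ m \in M : \iota(m \otimes 1) = 1 \otimes m \},
\]
the ``descended module'', as the equalizer of the two maps $M \rightrightarrows M \otimes_R S$ (one being $m \mapsto m\otimes 1$ followed by $\iota$, the other $m \mapsto 1 \otimes m$). The key claim is that the natural map $N \otimes_R S \to M$ is an isomorphism of $S$-modules. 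To prove this I would use the standard trick: the statement is a statement about $R$-modules that becomes true after the faithfully flat base change $R \to S$, because over $S$ the descent datum is split (the cocycle condition provides the required identifications), so $N \otimes_R S$ can be computed explicitly and matches $M \otimes_R S$ compatibly. By faithful flatness, an isomorphism after base change to $S$ is already an isomorphism, so $N \otimes_R S \cong M$ carrying the canonical descent datum to $\iota$; this gives effectivity. Finally one checks that the descent datum obtained from $N$ is $\iota$ (rather than merely isomorphic to it), which is immediate from the construction.

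The main obstacle is the verification that $N \otimes_R S \to M$ is an isomorphism: one must correctly manipulate the cocycle condition to produce the inverse map and confirm it really is inverse, and one must be careful that ``check after faithfully flat base change'' is being applied to a bona fide map of $R$-modules. Everything else—the exactness of the amalgamation complex, the descent of morphisms, the compatibility of pushforwards—is formal once flatness is used to commute the relevant tensor products past kernels. I would organize the write-up so that the amalgamated complex and its contracting homotopy after $\otimes_R S$ are established once at the start, since both (a) and (b) reduce to exactness of (variants of) that complex; a clean reference point is \cite[\S I.3]{bourbaki-ac}, but I would give the argument in full since it is short and underlies later Banach-algebra analogues used in the paper.
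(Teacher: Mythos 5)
Your proposal is correct and is essentially the standard Amitsur-complex argument; the paper itself gives no proof of Theorem~\ref{T:descent modules}, deferring entirely to \cite[Expos\'e~VIII, Th\'eor\`eme~1.1]{sga1}, and your sketch is precisely the argument carried out there (exactness of $0 \to M \to M \otimes_R S \to M \otimes_R S \otimes_R S$ via a contracting homotopy after base change, descent of morphisms by restricting to equalizers, and effectivity by reducing to the case of a morphism admitting a section). The only point to watch in a full write-up is the aside about tensoring with $\Hom_R(M,N)$, which is not valid without finiteness hypotheses; your parenthetical alternative of working with the Amitsur sequence for $N$ directly is the correct route.
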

\begin{proof}
See \cite[Expos\'e~VIII, Th\'eor\`eme~1.1]{sga1}.
\end{proof}

\begin{theorem} \label{T:descent finite locally free}
For $f: R \to S$ a faithfully flat morphism of rings,
an $R$-module $U$ is finite (resp.\ finite projective)
if and only if $f^* U = U \otimes_R S$ is a finite (resp.\ finite projective) $S$-module.
An $R$-algebra $U$ is finite \'etale
if and only if $f^* U $ is a finite \'etale $S$-algebra.
\end{theorem}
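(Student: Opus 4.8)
The plan is to prove the three equivalences in sequence by standard faithfully flat descent, leaning on Theorem~\ref{T:descent modules}, and then handle the \'etaleness claim by reducing it to the module statements plus the trace-form criterion from Definition~\ref{D:finite etale category}. The only subtle point is establishing that descent datum plus faithful flatness recovers finiteness (and not merely flatness), so I would organize the argument to isolate that step.

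First I would treat the ``finite'' case. Given $f: R \to S$ faithfully flat and $U$ an $R$-module with $U \otimes_R S$ finite over $S$, I would choose finitely many elements $u_1, \dots, u_n \in U$ whose images generate $U \otimes_R S$; this is possible because any finite generating set of $U \otimes_R S$ can be approximated by elements of the form $u_i \otimes 1$ after enlarging $n$. Then the cokernel $Q$ of $R^n \to U$ satisfies $Q \otimes_R S = 0$, and faithful flatness forces $Q = 0$, so $U$ is finite. The converse (finiteness ascends) is immediate. For the ``finite projective'' case, I would first reduce to the finitely presented characterization: if $U \otimes_R S$ is finite projective, it is in particular finitely presented, so $U$ is finite by the previous step, and then choosing a presentation $R^m \to R^n \to U \to 0$ and tensoring with $S$ shows the kernel of $R^n \to U$ is finite (again by the argument above applied to that kernel, whose base change to $S$ is finitely generated since $U \otimes_R S$ is finitely presented). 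Hence $U$ is finitely presented; combined with the fact that flatness descends along faithfully flat maps (test against all $R$-modules, using that $- \otimes_R S$ is faithful), criterion (c) of Definition~\ref{D:projective} shows $U$ is finite projective. The converse direction is standard.

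For the finite \'etale claim, I would argue as follows. If $U$ is finite \'etale over $R$, then $U \otimes_R S$ is visibly finite \'etale over $S$ (base change of formally \'etale, finitely presented morphisms). Conversely, suppose $U \otimes_R S$ is finite \'etale over $S$. By Definition~\ref{D:finite etale category}, $U \otimes_R S$ is finite projective over $S$, so by the module case just proved, $U$ is finite projective over $R$. It then remains to check that $U$ is \'etale, for which I would use the trace-pairing criterion recorded in Definition~\ref{D:finite etale category}: $U$ is finite \'etale over $R$ iff the map $U \to \Hom_R(U, R)$, $x \mapsto (y \mapsto \Trace_{U/R}(xy))$, is an isomorphism. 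Both source and target are finite projective $R$-modules, and their formation commutes with the flat base change $R \to S$ (the trace form is compatible with base change, and $\Hom_R(U,R) \otimes_R S = \Hom_S(U \otimes_R S, S)$ since $U$ is finite projective). Since $U \otimes_R S$ is finite \'etale, the base-changed map is an isomorphism; by faithful flatness (a map of finite projective modules whose base change is an isomorphism is already an isomorphism, as noted in the proof of Proposition~\ref{P:lift idempotents}), the original trace map is an isomorphism, so $U$ is finite \'etale over $R$.

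The main obstacle I anticipate is the descent of finiteness itself --- making precise that a generating set of $U \otimes_R S$ can be replaced by elementary tensors $u_i \otimes 1$ --- together with verifying that finite presentation (not just finite generation) descends, since projectivity via Definition~\ref{D:projective}(c) requires it. Everything else is formal once those two inputs are in hand; in particular, the \'etaleness step is a mechanical application of the trace criterion, and no genuine descent of schemes (effective descent morphisms, SGA1) is needed beyond Theorem~\ref{T:descent modules}, which is already available.
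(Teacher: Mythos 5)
Your proof is correct. The paper itself gives no argument for this theorem: it simply cites SGA1 (Expos\'e~VIII, Proposition~1.10 for the module statements and Expos\'e~IX, Proposition~4.1 for the \'etale statement), and the argument you give is precisely the standard one underlying those references. The two delicate points you flagged are handled properly: replacing a generating set of $U \otimes_R S$ by elementary tensors $u_i \otimes 1$ is legitimate since every element of the tensor product lies in the $S$-span of finitely many such tensors, and descent of finite presentation follows from your finite-generation step applied to the kernel of $R^n \to U$ (whose base change is the kernel of $S^n \to U \otimes_R S$ by flatness, and is finitely generated because $U \otimes_R S$ is finitely presented). Together with descent of flatness and the characterization of finite projective as finitely presented plus flat (equivalently, criterion (c) of Definition~\ref{D:projective}), this gives the module statements; the \'etale statement then reduces, exactly as you say, to the trace-form criterion of Definition~\ref{D:finite etale category} together with the compatibility of $\Hom_R(U,R)$ and of the trace with flat base change for $U$ finite projective, and the fact that a morphism of finite projective modules is an isomorphism if and only if its faithfully flat base extension is.
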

\begin{proof}
For the first assertion, see \cite[Expos\'e~VIII, Proposition~1.10]{sga1}.
For the second assertion, see \cite[Expos\'e~IX, Proposition~4.1]{sga1}.
\end{proof}

For a morphism of rings which is faithful but not flat (e.g., a typical adic completion of a nonnoetherian ring),
it is difficult to carry out descent except for modules
which are themselves flat. Here is a useful example due to Beauville and Laszlo \cite{beauville-laszlo}.
(Note that even the noetherian case of this result is not an immediate corollary of faithfully flat descent, because we do not specify a descent datum on $\widehat{R}$ itself; see \cite[\S 2]{artin}.)
\begin{prop} \label{P:reduced descent}
Let $R$ be a ring. Suppose that $t \in R$ is not a zero divisor and that $R$ is $t$-adically separated.
Let $\widehat{R}$ be the $t$-adic completion of $R$.
\begin{enumerate}
\item[(a)]
For any flat $R$-module $M$, the sequence
\[
0 \to M \to (M \otimes_R R[t^{-1}]) \oplus (M \otimes_R \widehat{R}) \to M \otimes_R \widehat{R}[t^{-1}] \to 0,
\]
in which the last nontrivial arrow is the difference between the two base extension maps, is exact.
\item[(b)]
Let $M_1$ be a finite projective module over $R[t^{-1}]$, let
$M_2$ be a finite projective module over $\widehat{R}$,
and let $\psi_{12}: M_1 \otimes_{R[t^{-1}]} \widehat{R}[t^{-1}]
\cong M_2 \otimes_{\widehat{R}} \widehat{R}[t^{-1}]$
be an isomorphism of $\widehat{R}[t^{-1}]$-modules.
Then there exist a finite projective $R$-module $M$, an isomorphism $\psi_1: M \otimes_R R[t^{-1}] \cong M_1$, and an isomorphism
$\psi_2: M \otimes_R \widehat{R} \cong M_2$ such that
$\psi_{12} \circ \psi_1 = \psi_2$; moreover, these data are unique up to unique isomorphism.
In particular, the morphism $R \to R[t^{-1}] \oplus \widehat{R}$ is an effective descent morphism for
the category of finite projective modules over rings.
\end{enumerate}
\end{prop}

In order to carry out analogous arguments in other contexts (as in Proposition~\ref{P:glue projective}),
it is helpful to introduce some formalism.
We will see later how to recover the Beauville-Laszlo theorem in this framework
(Remark~\ref{R:new Beauville-Laszlo}).

\begin{defn} \label{D:glueing datum}
Let
\[
\xymatrix{
R \ar[r] \ar[d] & R_1 \ar[d] \\
R_2 \ar[r] & R_{12}
}
\]
be a commuting diagram of ring homomorphisms such that the sequence
\begin{equation} \label{eq:glueing datum sequence}
0 \to R \to R_1 \oplus R_2 \to R_{12} \to 0
\end{equation}
of $R$-modules,
in which the last nontrivial arrow is the difference between the given homomorphisms, is exact.
By a \emph{glueing datum} over this diagram, we will mean a datum consisting of
modules $M_1, M_2, M_{12}$
over $R_1, R_2, R_{12}$, respectively, equipped with isomorphisms $\psi_1: M_1 \otimes_{R_1} R_{12} \cong M_{12}$,
$\psi_2: M_2 \otimes_{R_2} R_{12} \cong M_{12}$.
We say such a glueing datum is \emph{finite} or \emph{finite projective}
if the modules are finite or finite projective over their corresponding rings.

When considering a glueing datum, it is natural to consider the kernel $M$ of the map
$\psi_1 - \psi_2: M_1 \oplus M_2 \to M_{12}$. There are natural maps $M \to M_1$, $M \to M_2$ of $R$-modules,
which by adjunction correspond to maps $M \otimes_R R_1 \to M_1$, $M \otimes_R R_2 \to M_2$.
\end{defn}

\begin{lemma} \label{L:Kiehl generic1}
Consider a finite glueing datum for which $M \otimes_R R_1 \to M_1$ is surjective.
Then we have the following.
\begin{enumerate}
\item[(a)]
The map $\psi_1 - \psi_2: M_1 \oplus M_2 \to M_{12}$ is surjective.
\item[(b)]
The map $M \otimes_R R_2 \to M_2$ is also surjective.
\item[(c)]
There exists a finitely generated $R$-submodule $M_0$ of $M$ such that
for $i=1,2$, $M_0 \otimes_R R_i \to M_i$ is surjective.
\end{enumerate}
\end{lemma}
\begin{proof}
The surjection $M \otimes_R R_1 \to M_1$ induces a surjection $M \otimes_R R_{12} \to M_{12}$,
and hence a surjection $M \otimes_R (R_1 \oplus R_2) \to M_{12}$. Since this map factors through
$\psi_1 - \psi_2$, the latter is surjective.  This yields (a).

For each $\bv \in M_2$, $\psi_2(\bv)$ lifts to $M \otimes_R (R_1 \oplus R_2)$;
we can thus find $\bw_{i}$ in the image of $M \otimes_R R_i \to M_i$ such that $\psi_1(\bw_{1}) - \psi_2(\bw_{2}) = \psi_2(\bv)$.
Put $\bv' = (\bw_{1}, \bv + \bw_{2}) \in M_1 \oplus M_2$; note that $\bv' \in M$ by construction. Consequently, the image of $M \otimes_R R_2 \to M_2$ contains both $\bw_2$ and $\bv + \bw_2$, and hence also $\bv$. This yields (b), from which (c) is immediate since $M_i$ is a finite $R_i$-module.
\end{proof}

\begin{lemma} \label{L:Kiehl generic2}
Suppose that for every finite projective glueing datum,
the map $M \otimes_R R_1 \to M_1$ is surjective.
\begin{enumerate}
\item[(a)]
For any finite projective glueing datum,
$M$ is a finitely presented $R$-module and $M \otimes_R R_1 \to M_1$, $M \otimes_R R_2 \to M_2$ are bijective.
\item[(b)]
Suppose in addition that the image of 
$\Spec(R_1 \oplus R_2) \to \Spec(R)$ contains $\Maxspec(R)$. 
Then with notation as in (a), $M$ is a finite projective $R$-module.
\end{enumerate}
\end{lemma}
\begin{proof}
Choose $M_0$ as in Lemma~\ref{L:Kiehl generic1}(c).
Choose a surjection $F \to M_0$ of $R$-modules
with $F$ finite free, and put $F_1 = F \otimes_R R_1$,
$F_2 = F \otimes_R R_2$, $F_{12} = F \otimes_R R_{12}$,
$N = \ker(F \to M)$, $N_1 = \ker(F_1 \to M_1)$, $N_2 = \ker(F_2 \to M_2)$, $N_{12} = \ker(F_{12} \to M_{12})$.
{}From Lemma~\ref{L:Kiehl generic1}, we have a commutative diagram
\begin{equation} \label{eq:kiehl2}
\xymatrix{
& 0 \ar[d] & 0 \ar[d] & 0 \ar[d] & \\
0 \ar[r] & N \ar[r] \ar[d] & N_1 \oplus N_2 \ar[r] \ar[d] & N_{12} \ar@{-->}[r] \ar[d] & 0 \\
0 \ar[r] & F \ar[r] \ar[d] & F_1 \oplus F_2 \ar[r] \ar[d] & F_{12} \ar[r]\ar[d] & 0 \\
0 \ar[r] & M \ar[r] \ar@{-->}[d] & M_1 \oplus M_2 \ar[r] \ar[d] & M_{12} \ar[d] \ar[r] & 0 \\
 & 0 & 0 & 0 &
}
\end{equation}
with exact rows and columns, excluding the dashed arrows.
Since $M_i$ is projective, the exact sequence
\[
0 \to N_i \to F_i \to M_i \to 0
\]
splits, so
\[
0 \to N_i \otimes_{R_i} R_{12} \to F_{12} \to M_{12} \to 0
\]
is again exact. Thus $N_i$ is finite projective over $R_i$ and admits an isomorphism
$N_i \otimes_{R_i} R_{12} \cong N_{12}$.
By Lemma~\ref{L:Kiehl generic1} again, the
dashed horizontal arrow in \eqref{eq:kiehl2} is surjective. By diagram chasing,
the dashed vertical arrow in \eqref{eq:kiehl2} is also surjective; that is, we may add the dashed arrows
to \eqref{eq:kiehl2} while preserving exactness of the rows and columns. In particular,
$M$ is a finitely generated $R$-module; we may repeat the argument with $M$ replaced by $N$ to deduce that $M$ is
finitely presented.

For $i=1,2$, we obtain a commutative diagram
\[
\xymatrix{
& N \otimes_R R_i \ar[r] \ar[d] & F_i \ar[r] \ar@{=}[d] & M \otimes_R R_i \ar[r] \ar[d] & 0 \\
0 \ar[r] & N_i \ar[r] & F_i \ar[r]  & M_i \ar[r] & 0
}
\]
with exact rows: the first row is derived from
the left column of \eqref{eq:kiehl2} by tensoring over $R$ with $R_i$,
while the second row is derived from the middle column of \eqref{eq:kiehl2},
Since the left vertical arrow is surjective (Lemma~\ref{L:Kiehl generic1} once more),
by the five lemma, the right vertical arrow is injective.
We thus conclude that the map $M \otimes_R R_i \to M_i$, which was previously shown
(Lemma~\ref{L:Kiehl generic1}) to be surjective, is in fact a bijection.
This yields (a).

For $n$ a nonnegative integer and $i \in \{1,2,12\}$, let $U_{n,i}$ be the closed-open subset of $\Spec(R_i)$ on which $M_i$ has rank $n$. This set is the nonzero locus of some idempotent $e_{n,i} \in R_{n,i}$. Since $M_{12} \cong M_1 \otimes_{R_1} R_{12} \cong M_2 \otimes_{R_2} R_{12}$, $U_{n,12}$ can be characterized as the inverse image of either $U_{n,1}$ or $U_{n,2}$; this means that the images of $e_1$ and $e_2$ in $R_{12}$ are both equal to $e_{12}$. It follows that $e = e_1 \oplus e_2$ is an idempotent in $R$ mapping to $e_i$ in $R_i$; its nonzero locus is an open subset $U_n$
of $\Spec(R)$ whose inverse image in $\Spec(R_i)$ is $U_{n,i}$. This means that to
prove that $M$ is projective, we may reduce to the case where $M_1$ and $M_2$ are finite projective of some constant rank $n$.

Since $M$ is finitely presented, we may define its Fitting ideals $\Fitt_i(M)$ as in Definition~\ref{D:projective}.
Since $M_1 \oplus M_2$ is finite projective over $R_1 \oplus R_2$ of constant rank $n$,
$\Fitt_i(M)(R_1 \oplus R_2) = \Fitt_i(M_1 \oplus M_2)$ equals 0 for $i=0,\dots,n-1$ and
$R_1 \oplus R_2$ for $i=n$.
Since $R \to R_1 \oplus R_2$ is injective, this immediately implies that
$\Fitt_i(M) = 0$ for $i=0,\dots,n-1$.

Now assume that the image of $\Spec(R_1 \oplus R_2) \to \Spec(R)$ contains $\Maxspec(R)$. 
Then for each $\gothp \in \Maxspec(R)$, $M$ must have rank $n$ at $\gothp$ by comparison with some point in $\Spec(R_1 \oplus R_2)$, so $\Fitt_n(M)_\gothp = \Fitt_n(M_\gothp) = R_\gothp$. It follows that the inclusion $\Fitt_n(M) \subseteq R$ is an equality, yielding (b).
\end{proof}

\begin{cor} \label{C:fet square}
Suppose that the hypotheses of Lemma~\ref{L:Kiehl generic2}(b) are satisfied.
Then the natural functor
\[
\FEt(R) \to \FEt(R_1) \times_{\FEt(R_{12})} \FEt(R_2)
\]
is an equivalence of categories.
\end{cor}

\begin{proof}
Choose $A_1 \in \FEt(R_1), A_2 \in \FEt(R_2), A_{12} \in \FEt(R_{12})$
equipped with isomorphisms $A_1 \otimes_{R_1} R_{12} \cong A_2 \otimes_{R_2} R_{12} \cong A_{12}$,
and view this package as a finite projective glueing datum.
By Lemma~\ref{L:Kiehl generic2} plus our extra assumptions,
the kernel $A$ of $A_1 \oplus A_2 \to A_{12}$ is a finite projective $R$-module
and the natural maps $A \otimes_R R_1 \to A_1$,
$A \otimes_R R_2 \to A_2$ are isomorphisms.
Using the exact sequence
\begin{equation} \label{eq:etale glueing1}
0 \to A \to A_1 \oplus A_2 \to A_{12} \to 0,
\end{equation}
the multiplication maps on $A_1, A_2, A_{12}$ define a multiplication map on $A$, making it a flat $R$-algebra.
By Lemma~\ref{L:Kiehl generic2} again, we also have an exact sequence
\begin{equation} \label{eq:etale glueing2}
0 \to \Hom_R(A,R) \to \Hom_{R_1}(A_1,R_1) \oplus \Hom_{R_2}(A_2,R_2) \to \Hom_{R_{12}}(A_{12}, R_{12}) \to 0.
\end{equation}
Using \eqref{eq:etale glueing1}, \eqref{eq:etale glueing2}, and the snake lemma, we see that the
the trace pairing on $A$ defines an isomorphism
$A \to \Hom_R(A,R)$. This proves the claim.
\end{proof}

\subsection{\'Etale local systems}

The \'etale topology on schemes only gives rise to a useful notion of locally constant sheaves  if one restricts attention to torsion coefficients. In order to consider \'etale local systems of $\Zp$-modules or $\Qp$-vector spaces, the traditional approach is to keep track of $\Zp$-modules as inverse systems, then arrive at $\Qp$-vector spaces by formally inverting $p$ and performing \'etale descent. 
One complicating feature of this approach is that the inversion of $p$ only happens locally; that is, an \'etale $\Qp$-local system does not generally admit a $\Zp$-lattice. Another inconvenience is that the objects in these categories are not literally defined as sheaves. After first introducing the usual definitions, we describe the alternate point of view introduced recently by Bhatt and Scholze \cite{bhatt-scholze}, in which \'etale local systems are reinterpreted as genuine locally constant sheaves for a modified topology called the \emph{pro-\'etale topology}. The construction is inspired by (but somewhat simpler than) the similarly named construction for adic spaces introduced by Scholze in \cite{scholze2}, which we will also make use of in \S\ref{sec:perfect}.

\begin{defn} \label{D:etale site}
For $X$ a scheme, the \emph{small \'etale site} $X_{\et}$ of $X$ is the category of
\'etale $X$-schemes and \'etale morphisms, equipped with the Grothendieck topology
generated by set-theoretically surjective families of morphisms.

For $n$ a positive integer, a \emph{lisse sheaf} of $\ZZ/p^n \ZZ$-modules on $X_{\et}$  is a sheaf of flat $\ZZ/p^n \ZZ$-modules which is represented by
a finite \'etale $X$-scheme.
A \emph{lisse $\Zp$-sheaf} on $X_{\et}$ is an inverse system
$T = \{\cdots \to T_1 \to T_0\}$ in which each
$T_n$ is a lisse sheaf of $\ZZ/p^n \ZZ$-modules and each arrow $T_{n+1} \to T_n$
identifies $T_n$ with the cokernel of multiplication by $p^n$ on $T_{n+1}$.
A lisse $\Zp$-sheaf on $X_{\et}$ is also called an \emph{(\'etale) $\Zp$-local system} on $X$.
Such objects may be constructed using faithfully flat descent
(Theorem~\ref{T:descent modules} and Theorem~\ref{T:descent finite locally free}).
Let $\ZpLoc(X)$ denote the
category of $\Zp$-local systems on $X$.

An \emph{isogeny $\Zp$-local system} on $X$ is an element of the isogeny category of $\Zp$-local systems on $X$.
Let $\ZpILoc(X)$ denote the category of isogeny $\Zp$-local systems on $X$.

A \emph{$\Qp$-lisse sheaf} on $X_{\et}$, also called an
\emph{(\'etale) $\Qp$-local system} on $X$, is an element of the stack
associated to the fibred category of isogeny $\Zp$-local systems,
i.e., a descent datum in isogeny $\Zp$-local systems for the \'etale topology (compare \cite[Definition~4.1]{dejong-etale}).
Let $\QpLoc(X)$ denote the category of $\Qp$-local systems on $X$.
\end{defn}

\begin{remark} \label{R:local systems}
The \emph{small finite \'etale site} $X_{\fet}$ of $X$ is the subcategory of $X_{\et}$ 
in which all internal morphisms and all structure morphisms are finite \'etale, with the induced topology. One may similarly define categories of $\Zp$-local systems, isogeny $\Zp$-local systems, and $\Qp$-local systems with respect to $X_{\fet}$.
To distinguish the two sets of definitions, let us temporarily write $\ZpLoc(X_{\fet})$
and $\ZpLoc(X_{\et})$ for the two resulting categories of \'etale $\Zp$-local systems, and similarly for $\ZpILoc$ and $\QpLoc$.

The fact that lisse sheaves of $\ZZ/p^n \ZZ$-modules are represented by finite \'etale schemes means that the restriction functor $\ZpLoc(X_{\fet}) \to \ZpLoc(X_{\et})$ is a tensor equivalence. The same then holds for $\ZpILoc(X_{\fet}) \to \ZpILoc(X_{\et})$ but not in general for $\QpLoc(X_{\fet}) \to \QpLoc(X_{\et})$
unless $X = \Spec(K)$ for $K$ a field (in which case $X_{\fet} = X_{\et}$).
\end{remark}

\begin{remark} \label{R:local systems rings equivalence}
By Remark~\ref{R:local systems},
for any rings $A,B$, any tensor equivalence $\FEt(A) \cong \FEt(B)$
induces tensor equivalences $\ZpLoc(\Spec(A)) \cong \ZpLoc(\Spec(B))$,
$\ZpILoc(\Spec(A)) \cong \ZpILoc(\Spec(B))$.
However, this is not sufficient to produce an equivalence
$\QpLoc(\Spec(A)) \cong \QpLoc(\Spec(B))$; for this, it would suffice to have an isomorphism of the \'etale topoi associated to $\Spec(A)$ and $\Spec(B)$.
\end{remark}

\begin{remark} \label{R:local systems fundamental group}
Let $X$ be a connected scheme.
Choose a geometric point $\overline{x}$ of $X$ and use it as the base point to define the \'etale fundamental group $\pi_1^{\et}(X, \overline{x})$ in the sense of \cite[Expos\'e~V, \S 7]{sga1}. Then the category of \'etale $\Zp$-local systems on $X$
is equivalent to the category of continuous representations
of $\pi_1^{\et}(X, \overline{x})$ on finite free $\Zp$-modules.
(See also \cite[Expos\'e VI, \S 1.2.4]{sga5}.)

Similarly, the category of isogeny $\Zp$-local systems on $X$
is equivalent to the category of continuous representations
of $\pi_1^{\et}(X, \overline{x})$ on finite-dimensional $\Qp$-vector spaces. Underlying this statement is the fact that $\pi_1^{\et}(X, \overline{x})$ is by definition profinite, since it is defined in terms of the category of finite \'etale covering spaces. Consequently, any continuous map from $\pi_1^{\et}(X, \overline{x})$ into $\GL_n(\Qp)$ has compact image, and so factors through some conjugate of $\GL_n(\Zp)$.
That is, there is a lattice in $\Qp^n$ stable under the action of $\pi_1^{\et}(X, \overline{x})$, which can even be constructed explicitly as follows. Define a sequence of lattices $T_0, T_1, \dots$ in $\Qp^n$ by taking $T_0$ to be arbitrary and $T_{m+1}$ to be the lattice generated by the image of $T_m$ under $\pi_1^{\et}(X, \overline{x})$.
Then $T_{1} = T_{2} = \cdots$.
\end{remark}

The obvious functor from isogeny $\Zp$-local systems to $\Qp$-local systems is fully faithful but not always essentially surjective, as in the following examples suggested by the referee.
\begin{example} \label{exa:no Zariski descent}
Let $k$ be a field. Let $X$ be the union of two copies of $\PP^1_k$ (for $k$ an arbitrary field) glued along $\{0, \infty\}$.
Form an \'etale $\Qp$-local system $V$ of rank $1$ by glueing the trivial rank 1 local systems on $X - \{0\}$ and $X - \{\infty\}$ via the morphism which is the identity on one copy of $\PP^1_k - \{0, \infty\}$ and multiplication by $p$ on the other copy. 
Using Remark~\ref{R:local systems fundamental group},
we may see that $V$ is not an isogeny $\Zp$-local system: if it were, the corresponding representation into $\Qp^\times$ would have noncompact image (because the image would have to contain $p^\ZZ$).
\end{example}

While Example~\ref{exa:no Zariski descent} involves a scheme which is not irreducible, by using \'etale descent rather than just Zariski descent we can produce an example involving an irreducible (but not normal) scheme.

\begin{example}
With notation as in Example~\ref{exa:no Zariski descent}, let $Y$ be a copy of $\PP^1_k$
with the points $0$ and $\infty$ glued together. The scheme $X$ then arises as a finite \'etale cover of $Y$ of degree 2, induced by the map $\PP^1_k \cup \PP^1_k \to \PP^1_k$
acting as the identity on the first factor and the map $x \mapsto 1/x$ on the second factor. Let $\tau: X \to X$ be the nontrivial involution of $X$ over $Y$. Then $V \oplus \tau^* V$ descends to an \'etale $\Qp$-local system of rank 2 on $Y$.
\end{example}

\begin{remark} \label{R:lattice space}
For $T \in \ZpLoc(X)$, let $T \otimes \Qp$ denote the corresponding object in $\ZpILoc(X)$. For $Y$ an $X$-scheme, let $T_Y$ be the pullback of $T$ to $Y$.

For $m$ a nonnegative integer, let $F_m$ be the functor 
taking each $X$-scheme $Y$ to the pairs $(T', \iota)$ in which $T' \in \ZpLoc(Y)$ and $\iota: T_Y \otimes \Qp \to T' \otimes \Qp$ is an isomorphism such that $p^m \iota \in \Mor(T_Y, T')$, $p^m \iota^{-1} \in \Mor(T', T_Y)$.
Then $F_m$ is representable by a finite \'etale $X$-scheme $L_m(T)$; we identify $L_m(T_Y)$ with $L_m(T) \times_X Y$.
In the context of Remark~\ref{R:local systems fundamental group}, this construction corresponds to identifying lattices in $\Qp^n$ between $p^{-m} \Zp^n$ and $p^m \Zp^n$.

The value of this construction is that it allows for certain statements about lattices in isogeny $\Zp$-local systems to be translated into assertions about finite \'etale $X$-schemes, despite the fact that a $\Zp$-local system over $X$ is defined in terms of an infinite sequence of finite \'etale covers. 
Here are some examples pertinent to the proof of Lemma~\ref{L:isogeny descent}.
\begin{enumerate}
\item[(a)]
There is a closed subscheme $I_m(T)$ of $L_m(T) \times_X L_m(T)$ which is finite \'etale over $X$ with the following property: for every $X$-scheme $Y$, $I_m(T)(Y)$ is the subset of $(L_m(T) \times_X L_m(T))(Y)$ corresponding to pairs $((T'_1, \iota_1), (T'_2, \iota_2))$
for which $\iota_2\circ \iota_1^{-1} \in \Mor(T'_1, T'_2)$. 
In the context of Remark~\ref{R:local systems fundamental group}, this construction corresponds to the inclusion relation on lattices in $\Qp^n$ between $p^{-m} \Zp^n$ and $p^m \Zp^n$. (To construct $I_m(T)$, by faithfully flat descent we may reduce to the case where $T$ is constant modulo $p^{4m}$, in which case the argument is straightforward.)
\item[(b)]
Let $s_1,\dots,s_k: X \to L_m(T)$ be sections of the map $L_m(T) \to X$.
Then there exists a unique section $s: X \to L_m(T)$ with the following property:
for every $X$-scheme $Y$ and every section $s': Y \to L_m(T_Y)$,
$(s \times_X Y) \times_Y s': Y \to L_m(T_Y) \times_Y L_m(T_Y)$ factors through $I_m(T_Y)$
if and only if $(s_i \times_X Y) \times_Y s': Y \to L_m(T_Y) \times_Y L_m(T_Y)$ factors through $I_m(T_Y)$ for $i=1,\dots,k$.
In the context of Remark~\ref{R:local systems fundamental group}, this construction corresponds to forming the lattice of $\Qp^n$ generated by a finite number of other lattices. (Again, the construction proceeds by faithful flat descent to reduce to the case where $T$ is constant modulo $p^{2m}$.)
\item[(c)]
Let $f: Y \to L_m(T)$ be a morphism of faithfully finite \'etale $X$-schemes.
Then there exists a faithfully finite \'etale $X$-scheme $Y'$ such that 
$Y \times_X Y'$ splits over $Y'$ as a finite disjoint union of copies of $Y'$ (by induction on the degree of $Y \to X$). The pullback of $f$ then gives rise to a collection of sections of $L_m(T_{Y'}) \to Y'$, to which we may apply the construction of (b); since the latter is canonical, it acquires a descent datum back to $X$, so we end up with a section $s: X \to L_m(T)$.
\end{enumerate}
\end{remark}

Whereas $\Zp$-local systems and $\Qp$-local systems descend along surjective \'etale morphisms of schemes, isogeny $\Zp$-local systems do not in general.
However, one does obtain descent for finite \'etale morphisms.
\begin{lemma} \label{L:isogeny descent}
Any faithfully finite \'etale morphism $R \to R'$ of rings is an effective descent morphism for isogeny $\Zp$-local systems.
\end{lemma}
\begin{proof}
Suppose that $V' \in \ZpILoc(\Spec(R'))$ carries a descent datum relative to $\Spec(R)$.
Choose $T' \in \ZpLoc(\Spec(R'))$ giving rise to $V'$.
We construct a sequence of morphisms $T' = T'_0 \to T'_1 \to \cdots$ in $\ZpLoc(\Spec(R'))$ which are isomorphisms in $\ZpILoc(\Spec(R'))$, as follows.

Put $R'' = R' \otimes_R R'$ and let $\pi_1, \pi_2: \Spec(R'') \to \Spec(R')$ be the two projections.
Given $T'_i$, put $T''_i = \pi_1^*(T'_i)$; for each $m$, we identify $L_m(T''_i)$ with $L_m(T'_i) \times_{\Spec(R'), \pi_1} \Spec(R'')$.
We may fix a sufficiently large $m$ at the beginning; the descent datum on $V'$ defines a section $s$ of the projection $L_m(T_i'') \to \Spec(R_i'')$.
By applying Remark~\ref{R:lattice space}(c) to the composition $\pi_1 \circ s: \Spec(R'') \to L_m(T''_i) \to L_m(T'_i)$, we obtain a new object $T'_{i+1} \in \ZpLoc(\Spec(R'))$ and an isomorphism $T'_i \otimes \Qp \to T'_{i+1} \otimes \Qp$. Moreover, this morphism descends to a morphism $T'_i \to T'_{i+1}$ because the pullback of $s$ along the diagonal morphism $\Spec(R') \to \Spec(R'')$ corresponds to the identity morphism on $T'$.

We now see that $T'_1 = T'_2 = \cdots$ by reducing to the case where $R$ and $R'$ are local (and hence connected) and applying Remark~\ref{R:local systems fundamental group}.
This means that $T'_1$ acquires a descent datum from $V$ and thus descends to an object in $\ZpLoc(\Spec(R))$.
\end{proof}

\begin{remark} \label{R:normal noetherian local systems}
Suppose that $X$ is normal and noetherian. Then $X$ is the disjoint union of finitely many irreducible components. 
On each component, \'etale $\Qp$-local systems correspond precisely to continuous representations of the \'etale fundamental group (e.g., see \cite[Lemma~7.4.7]{bhatt-scholze}). Consequently, the natural functor $\ZpILoc(X) \to \QpLoc(X)$ is an equivalence of categories.
\end{remark}

We now describe the alternate approach to local systems given in \cite{bhatt-scholze}, in which local systems become genuine sheaves for an alternate topology.
\begin{defn} \label{D:pro-etale schemes}
A morphism $f: Y \to X$ of schemes is \emph{weakly \'etale} if both $f$ and
$\Delta_f: Y \to Y \times_X Y$ are flat. For example, if $X = \Spec(A)$, $Y = \Spec(B)$ and $B$ is a direct limit of \'etale $A$-algebras, then $f$ is weakly \'etale.

The \emph{pro-\'etale site} of a scheme $X$,
denoted by $X_{\proet}$, is the site consisting of weakly \'etale $X$-schemes and fpqc coverings.
For any topological space $T$ and any scheme $X$, the presheaf
\[
\calF_T: U \mapsto \Map_{\cont}(U, T)
\]
on $X_{\proet}$ is a sheaf \cite[Lemma~4.2.12]{bhatt-scholze}, called the \emph{constant sheaf} with values in $T$. A sheaf which is locally of this form is said to be \emph{locally constant}. (In \cite{bhatt-scholze} one finds also a discussion of \emph{constructible} sheaves, which we do not consider here.)

For any $n$, given a lisse sheaf of $\ZZ/p^n \ZZ$-modules on $X_{\et}$, we may pull back from $X_{\et}$ to $X_{\proet}$ to obtain a
$\calF_{\ZZ/p^n \ZZ}$-module on $X_{\proet}$ which is locally free of finite rank. 
By taking inverse limits, we obtain a functor from $\ZpLoc(X)$ to sheaves of $\calF_{\Zp}$-modules on $X_{\proet}$ which are locally free of finite rank.
 We also obtain a functor from $\QpLoc(X)$ to sheaves of $\calF_{\Qp}$-modules on $X_{\proet}$ which are locally free of finite rank.
\end{defn}

\begin{theorem} \label{T:proetale local systems schemes}
For any scheme $X$, the category $\ZpLoc(X)$ (resp. $\QpLoc(X)$) is naturally equivalent to the category of sheaves of $\calF_{\Zp}$-modules (resp.\ $\calF_{\Qp}$-modules) on $X_{\proet}$ which are locally free of finite rank (and in particular locally constant).
\end{theorem}
\begin{proof}
See \cite[\S 6.8]{bhatt-scholze}.
\end{proof}

\begin{remark} \label{R:unramified coefficients}
For $F$ a finite extension of $\Qp$, one can define \'etale $\gotho_F$-local systems, isogeny \'etale $\gotho_F$-local systems, and
\'etale $F$-local systems on a scheme $X$ by analogy with the case $F = \Qp$. In fact, these can be interpreted as objects of the corresponding category over $\Qp$ plus the extra structure of an action of $\gotho_F$.
We will only need this
observation in the case where $F = \QQ_{p^d}$ is a finite unramified extension of $\Qp$ of degree $d$, in which case we
label the resulting categories $\ZpdLoc(X)$, $\ZpdILoc(X)$, and $\QpdLoc(X)$.
\end{remark}

\subsection{Semilinear actions}

\begin{convention}
For $S$ a ring equipped with an endomorphism $\varphi$ and $M$ an $S$-module, a \emph{semilinear $\varphi$-action}
on $M$
will always mean an \emph{isomorphism} (not just an endomorphism) $\varphi^*M \to M$ of $S$-modules.
We will commonly interpret such an action as a $\varphi$-semilinear map $M \to M$.
\end{convention}

Although we have not found a precise reference, we believe that the following is a standard lemma in algebraic $K$-theory, specifically from the study of polynomial extensions (as in \cite[Chapter~XII]{bass}).
\begin{lemma} \label{L:cover with free}
Let $S$ be a ring equipped with an endomorphism $\varphi$.
Let $M$ be a finitely generated $S$-module equipped with a semilinear $\varphi$-action.
Then there exists a finite free $S$-module $F$ equipped with a semilinear $\varphi$-action
and a $\varphi$-equivariant surjection $F \to M$.
\end{lemma}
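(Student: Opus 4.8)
The plan is to produce $F$ by a $\varphi$-equivariant version of the standard trick that every finitely generated module is a quotient of a finite free module. Let $m_1, \dots, m_n$ generate $M$ over $S$. Because the semilinear action is an \emph{isomorphism} $\varphi^* M \to M$, the elements $\varphi(m_1), \dots, \varphi(m_n)$ again lie in $M$, hence are also $S$-linear combinations of $m_1, \dots, m_n$; choose such expressions, say $\varphi(m_j) = \sum_i a_{ij} m_i$ with $a_{ij} \in S$. Now take $F$ to be the free module on symbols $e_1, \dots, e_n$ and define a $\varphi$-semilinear endomorphism of $F$ by $e_j \mapsto \sum_i a_{ij} e_i$; equivalently, $\varphi^* F \to F$ is the $S$-linear map whose matrix (in the bases $1 \otimes e_j$ and $e_i$) is $(a_{ij})$. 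The surjection $\pi: F \to M$ sending $e_j \mapsto m_j$ is then $\varphi$-equivariant by construction.

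The one thing that is not automatic, and which I expect to be the main point, is that the semilinear $\varphi$-action I have defined on $F$ is an \emph{isomorphism} $\varphi^* F \to F$, not merely an endomorphism — this is required by the Convention preceding the lemma. A priori the matrix $(a_{ij})$ need not be invertible over $S$. The fix is to enlarge $F$: replace the basis $e_1, \dots, e_n$ by $e_1, \dots, e_n, e_1', \dots, e_n'$, keep $\pi(e_j) = m_j$ but set $\pi(e_j') = 0$, and choose the action on $F$ so that on the $e_j'$ it ``undoes'' what is missing. Concretely, since the original action $\varphi^* M \to M$ is surjective, each $m_i$ is a combination $m_i = \sum_j b_{ji} \varphi(m_j)$ for suitable $b_{ji} \in S$; one then arranges the $2n \times 2n$ matrix of the action on $F$ in block form so that it becomes invertible. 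The cleanest way to see this: the block matrix $\begin{pmatrix} A & B \\ C & D \end{pmatrix}$ can be chosen so that $A = (a_{ij})$, $B = 0$ or suitably chosen, and the lower blocks are picked to make the whole matrix conjugate to one with $1$'s on the diagonal, using that the span of the $\varphi(m_j)$ is all of $M$. I would carry this out by choosing $C, D$ so that the matrix reduces, after elementary column operations (which correspond to changing the lift $\pi$ on the $e_j'$ while preserving $\pi(e_j') = 0$ and $\varphi$-equivariance), to the identity.

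So the steps in order are: (1) fix generators $m_1, \dots, m_n$ of $M$ and express $\varphi(m_j)$ in terms of them to get a candidate action on the free module on $e_1, \dots, e_n$; (2) observe this is $\varphi$-equivariant over $M$ but possibly not an isomorphism; (3) double the rank, defining $\pi$ to kill the new basis vectors, and using surjectivity of $\varphi^* M \to M$ to write the $m_i$ back in terms of the $\varphi(m_j)$; (4) assemble the $2n \times 2n$ matrix of the action on the doubled free module in block form and check, via elementary row/column manipulations compatible with $\varphi$-equivariance and with the requirement $\pi(e_j') = 0$, that it can be made invertible; (5) conclude that the resulting $F$ is finite free with a genuine semilinear $\varphi$-action and that $\pi: F \to M$ is a $\varphi$-equivariant surjection. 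The main obstacle is purely step (4) — making the action an isomorphism rather than just a semilinear endomorphism — and it is exactly the point where the ``polynomial extension'' $K$-theory bookkeeping referenced via \cite[Chapter~XII]{bass} does the work, rather than anything deep.
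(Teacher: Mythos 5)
Your strategy is the paper's: pick generators, record the matrices $A$ and $B$ expressing $\varphi(m_j)$ and $\varphi^{-1}(m_j)$ in terms of the generators, double the rank of the free module, and assemble a $2n\times 2n$ block matrix that is invertible over $S$ and compatible with the projection to $M$. So the approach is right and the issue you isolate in step (4) is indeed the whole content of the lemma. But step (4) is exactly the step you leave undone, and the tentative block shape you float would not work: if the upper-right block is $0$, the determinant is $\det(A)\det(D)$, and $\det(A)$ need not be a unit (that is the original problem), so no choice of lower blocks rescues it. Also, ``conjugating'' or performing column operations does not help directly --- you need the matrix itself to be invertible over $S$, and the operations you are allowed to perform are constrained by the requirement that the action descend along $\pi$.

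The paper's resolution is the specific choice $D = \begin{pmatrix} A & C \\ 1 & B \end{pmatrix}$ with $C = AB - 1$. Two things make this work. First, writing $m_k = T(T^{-1}(m_k))$ shows that the columns of $C$ lie in $N = \ker(S^n \to M)$, which is precisely what guarantees that the new basis vectors $e_j'$ (with $\pi(e_j') = 0$) are sent to elements mapping to $0$ in $M$, i.e.\ that $\pi$ remains $\varphi$-equivariant; more generally the action carries $\varphi^*(N \oplus E)$ into $N \oplus E$. Second, a single block row operation clears the lower-left identity block and gives $\det(D) = \det(AB - C) = \det(1) = 1$, so $D$ is invertible over any $S$. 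If you fill in this matrix and these two verifications, your proof is complete and coincides with the paper's.
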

\begin{proof}
Choose generators $\bv_1,\dots,\bv_n$ of $M$, and use them to define a surjection
$E = S^n \to M$ of $S$-modules.
Let $T: \varphi^* M \to M$ be the given isomorphism.
Choose $A_{ij}, B_{ij} \in S$
so that $T(\bv_j \otimes 1) = \sum_i A_{ij}\bv_i$, $T^{-1}(\bv_j) = \sum_i B_{ij} (\bv_i \otimes 1)$;
by writing
\[
\bv_k= T(T^{-1}(\bv_k))
= T \left( \sum_j B_{jk} (\bv_j \otimes 1)\right) = \sum_{i,j} A_{ij} B_{jk} \bv_i,
\]
we see that the columns of the matrix $C = A B - 1$ are elements of $N = \ker(E \to M)$.

Let $D$ be the block matrix
$\begin{pmatrix} A & C \\ 1 & B \end{pmatrix}$.
By using row operations to clear the bottom left block, we find that $\det(D) = \det(A B - C) = 1$.
Consequently, $D$ is invertible over $S$, so we may use it to define an isomorphism
$\varphi^* F \to F$ for $F = E \oplus E$. This isomorphism carries
$\varphi^*(N \oplus E)$ into
$N \oplus E$, so we obtain a $\varphi$-equivariant surjection $F \to M$ as desired.
\end{proof}

\begin{cor} \label{C:cover with free}
Let $S$ be a ring equipped with an endomorphism $\varphi$. 
Let $M$ be a finite projective $S$-module equipped with a semilinear $\varphi$-action. Then there exists another finite projective $S$-module $N$ admitting a semilinear $\varphi$-action such that $M \oplus N$ is a free $S$-module.
\end{cor}
\begin{proof}
Apply Lemma~\ref{L:cover with free} to construct a finite free $S$-module $F$ equipped with a semilinear $\varphi$-action and a $\varphi$-equivariant $S$-linear surjection $F \to M$, then put $N = \ker(F \to M)$.
\end{proof}

\begin{defn} \label{D:varphi cohomology}
Let $S$ be a ring equipped with an endomorphism $\varphi$.
Let $M$ be a module over $S$ equipped with a semilinear $\varphi$-action.
We then write
\[
H^0_{\varphi}(M) = \ker(\varphi-1, M), \qquad H^1_{\varphi}(M) = \coker(\varphi-1, M),
\]
and $H^i_{\varphi}(M) = 0$ for $i \geq 2$.
The groups $H^i_{\varphi}(M)$ may be interpreted as the Yoneda extension groups
$\Ext^i(S, M)$ in the category of left modules over the twisted polynomial ring $S\{\varphi\}$,
by tensoring $M$ over $S$ with the free resolution
\[
0 \to S\{\varphi\} \stackrel{\varphi-1}{\to} S\{\varphi\} \to S \to 0
\]
of $S$. (For a detailed development of Yoneda extension groups, see for instance \cite[\S IV.9]{hilton-stammbach}.)
\end{defn}

\begin{remark} \label{R:cohomology restriction}
In Definition~\ref{D:varphi cohomology},
if $M$ is a module over $S$ equipped with a semilinear $\varphi^d$-action for some positive integer $d$,
we may identify $H^i_{\varphi^d}(M)$ with $H^i_{\varphi}(N)$
for $N = M \oplus \varphi^* M \oplus \cdots \oplus (\varphi^{d-1})^* M$.
\end{remark}

\begin{remark} \label{R:dual semilinear action}
Let $S$ be a ring equipped with an endomorphism $\varphi$.
Let $M$ be a finite projective module over $S$ equipped with a semilinear $\varphi$-action.
Then there is a natural way to equip the dual module $M^\dual = \Hom_R(M,R)$ with a $\varphi$-module structure so that the pairing map
$M \otimes_R M^\dual \to R$ is $\varphi$-equivariant.
\end{remark}

\section{Spectra of nonarchimedean Banach rings}
\label{sec:berkovich}

We set notation and terminology concerning
spectra of nonarchimedean (commutative) Banach rings.
We will consider two separate but related notions of spectrum, the \emph{Gel'fand spectrum} of Berkovich \cite{berkovich1, berkovich2}
and the \emph{adic spectrum} of Huber \cite{huber1, huber2, huber}.

\setcounter{theorem}{0}
\begin{convention}
For $M$ a matrix over a ring equipped with a submultiplicative seminorm
$\alpha$, we write $\alpha(M)$ for $\sup_{i,j} \{\alpha(M_{ij})\}$.
\end{convention}

\subsection{Seminorms on groups and rings}

We begin by setting notation regarding seminorms. We will later have to consider also \emph{semivaluations}, which take values not in the real numbers but in more general ordered abelian groups; see \S\ref{subsec:adic spectrum}.

\begin{defn}
Consider the following conditions on an abelian group $G$ and a
function $\alpha: G \to [0, +\infty)$.
\begin{enumerate}
\item[(a)]
For all $g,h \in G$, we have $\alpha(g-h) \leq \max\{\alpha(g), \alpha(h)\}$.
\item[(b)]
We have $\alpha(0) = 0$.
\item[(b$'$)]
For all $g \in G$, we have $\alpha(g) = 0$ if and only if $g=0$.
\end{enumerate}
We say $\alpha$ is a \emph{(nonarchimedean) seminorm}
if it satisfies (a) and (b), and a
\emph{(nonarchimedean) norm} if it satisfies (a) and (b$'$). Any seminorm $\alpha$ induces a norm on $G/\ker(\alpha)$.

If $\alpha, \alpha'$
are two seminorms on the same abelian group $G$,
we say $\alpha$ \emph{dominates} $\alpha'$,
and write $\alpha \geq \alpha'$ or $\alpha' \leq \alpha$, if there exists
$c>0$ for which $\alpha'(g) \leq c \alpha(g)$ for all $g \in G$.
If $\alpha$ and $\alpha'$ dominate each other, we say they are
\emph{equivalent}; in this case, $\alpha$ is a norm if and only if
$\alpha'$ is.
\end{defn}

\begin{defn}
Let $G,H$ be two abelian groups equipped with nonarchimedean seminorms
$\alpha, \beta$, and let $\varphi: G \to H$ be a homomorphism.
We say $\varphi$ is \emph{bounded} if $\alpha \geq \beta \circ \varphi$,
and \emph{isometric} if $\alpha = \beta \circ \varphi$.
(An intermediate condition is that $\varphi$ is \emph{submetric}, meaning that
$\alpha(g) \geq \beta(\varphi(g))$ for all $g \in G$.)

The \emph{quotient seminorm} induced by $\alpha$
is the seminorm $\overline{\alpha}$
on $\image(\varphi)$
defined by
\[
\overline{\alpha}(h) = \inf\{\alpha(g): g \in G, \varphi(g) = h\}.
\]
If $H$ is also equipped with a seminorm $\beta$, we say $\varphi$ is
\emph{strict} if the two seminorms $\overline{\alpha}$ and $\beta$
on $\image(\varphi)$ are equivalent; this implies in particular
that $\varphi$ is bounded.
We say $\varphi$ is \emph{almost optimal} if $\overline{\alpha}$ and $\beta$ coincide.
We say $\varphi$ is \emph{optimal} if every $h \in \image(\varphi)$ admits a lift $g \in G$
with $\alpha(g) = \overline{\alpha}(h)$.
Any optimal homomorphism is almost optimal, and any optimal homomorphism is strict, but not conversely. Also beware that a composition $g \circ f$ of strict morphisms is not guaranteed to be strict unless $f$ is surjective or $g$ is injective.
\end{defn}

\begin{remark}
Berkovich uses the term
\emph{admissible} in place of \emph{strict}, but the latter is
well-established in the context of topological groups,
as in \cite[\S III.2.8]{bourbaki-top}. However, there is no perfect choice of terminology;
our convention will create some uncomfortable linguistic proximity
during the discussion of \emph{strict $p$-rings}.
\end{remark}

\begin{defn}
For $G$ an abelian group equipped with a nonarchimedean seminorm $\alpha$,
equip the group of Cauchy sequences in $G$ with the seminorm
whose value on the sequence $g_0,g_1,\dots$ is $\lim_{i \to \infty} \alpha(g_i)$.
The quotient by the kernel of this seminorm is the
\emph{separated completion} $\widehat{G}$ of $G$ under $\alpha$.
For the unique continuous extension of $\alpha$ to
$\widehat{G}$, the homomorphism $G \to \widehat{G}$ is isometric,
and injective if and only if $\alpha$ is a norm (in which case we call
$\widehat{G}$ simply the \emph{completion} of $G$).
\end{defn}

\begin{defn} \label{D:submultiplicative}
Let $A$ be a ring. Consider the following conditions
on a (semi)norm $\alpha$ on the additive group of $A$.
\begin{enumerate}
\item[(c)]
For all $g,h \in A$, we have $\alpha(gh) \leq \alpha(g) \alpha(h)$.
\item[(c$'$)]
We have (c), and for all $g \in A$, we have $\alpha(g^2) = \alpha(g)^2$.
(Equivalently, $\alpha(g^n) = \alpha(g)^n$ for
all $g \in A$ and all positive integers $n$. In particular, $\alpha(1) \in \{0,1\}$.)
\item[(c$''$)]
We have (c$'$), $\alpha(1) = 1$, and for all $g,h \in A$, we have $\alpha(gh) = \alpha(g) \alpha(h)$.
\end{enumerate}
We say $\alpha$ is
\emph{submultiplicative} if it satisfies (c),
\emph{power-multiplicative} if it satisfies (c$'$),
and \emph{multiplicative} if it satisfies (c$''$).
Note that if $\alpha$ is a submultiplicative seminorm and $\alpha'$
is a power-multiplicative seminorm, then $\alpha$ dominates $\alpha'$ if and only
if $\alpha(a) \geq \alpha'(a)$ for all $a \in A$.
\end{defn}

\begin{example}
For any abelian group $G$, the \emph{trivial norm} on $G$ sends
$0$ to $0$ and any nonzero $g \in G$ to 1. For any ring $A$, the trivial
norm on $A$ is submultiplicative in all cases,
power-multiplicative if and only if $A$ is reduced,
and multiplicative if and only if $A$ is an integral domain. (As usual, the zero ring is not considered to be a domain.)
\end{example}

\begin{defn}
For $A$ a ring equipped with a submultiplicative seminorm $\alpha$, define
\begin{align*}
\gotho_A &= \{a \in A: \alpha(a) \leq 1 \} \\
\gothm_A &= \{a \in A: \alpha(a) < 1 \} \\
\kappa_A &= \gotho_A/\gothm_A.
\end{align*}
If $\alpha(1) \leq 1$, then $\gotho_A$ is a ring and $\gothm_A$ is an ideal of $\gotho_A$.
If $A$ is a field equipped with a multiplicative norm, then $\gotho_A$ is a valuation ring with maximal ideal
$\gothm_A$ and residue field $\kappa_A$.
\end{defn}

\begin{example} \label{exa:power-multiplicative product}
Let $I$ be an arbitrary index set. For each $i \in I$, specify a ring $A_i$ and a power-multiplicative
seminorm $\alpha_i$ on $A_i$. Put $A = \prod_{i \in I} A_i$, and define the function
$\alpha: A \to [0, +\infty]$ by setting $\alpha((a_i)_{i \in I}) = \sup_i\{\alpha_i(a_i)\}$.
Then the subset $A_0$ of $A$ on which $\alpha$ takes finite values is a subring
on which $\alpha$ restricts to a power-multiplicative seminorm. This example is in some sense universal;
see Theorem~\ref{T:transform}.
\end{example}

\begin{defn}
Let $A$ be a ring equipped with a submultiplicative seminorm $\alpha$.
The \emph{spectral seminorm} on $A$ is the power-multiplicative
seminorm $\alpha_{\spect}$ defined by $\alpha_{\spect}(a) = \lim_{s \to \infty}
\alpha(a^s)^{1/s}$. (The existence of the
limit is an exercise in real analysis known as \emph{Fekete's lemma}.)
Note that equivalent choices of $\alpha$ define the same spectral seminorm.
\end{defn}

\begin{defn}
Let $A,B,C$ be rings equipped with submultiplicative seminorms
$\alpha, \beta, \gamma$,
and let $A \to B, A \to C$ be bounded homomorphisms.
The \emph{product seminorm} on the ring $B \otimes_A C$ is defined
by taking $f \in B \otimes_A C$
to the infimum of $\max_i \{\beta(b_i) \gamma(c_i)\}$
over all presentations $f = \sum_i b_i \otimes c_i$.
The separated completion of $B \otimes_A C$ for the product
seminorm is denoted $B \widehat{\otimes}_A C$ and called the
\emph{completed tensor product} of $B$ and $C$ over $A$.
\end{defn}

\begin{remark} \label{R:operator norm}
The definition of a submultiplicative seminorm $\alpha$ on $A$ does not include the condition
that $\alpha(1) \leq 1$. However, if we define the \emph{operator seminorm} $\alpha'$ by the formula
\begin{equation} \label{eq:operator seminorm}
\alpha'(a) = \inf\{c \geq 0: \alpha(ab) \leq c \alpha(b) \mbox{\,for all $b \in A$}\},
\end{equation}
then $\alpha'$ is a submultiplicative norm, $\alpha'(1) \leq 1$, and $\alpha'(a) \leq \alpha(a)$ for all $a \in A$.
Moreover, if $\alpha(1) > 0$, then we may take $b=1$ in \eqref{eq:operator seminorm}
to deduce that $\alpha'(a) \geq \alpha(1)^{-1} \alpha(a)$.
Consequently, in all cases $\alpha'$ is equivalent to $\alpha$ (this being trivially true if $\alpha(1) = 0$).
\end{remark}

\subsection{Banach rings and modules}

\begin{defn} \label{D:Banach ring}
Throughout this paper, an \emph{analytic field} is a field equipped with a nontrivial multiplicative nonarchimedean norm under which it is complete. For $K$ an analytic field, any finite
extension of $K$ admits a unique structure of an analytic field extending $K$ \cite[Theorem~3.2.3/2]{bgr}. The inclusion of the nontriviality condition is a convention which is not universal: it is notably absent in Berkovich's work. However, this condition will be needed in order to work with adic spectra; it also shows up as a hypothesis in some other key results, such as the open mapping theorem (Theorem~\ref{T:open mapping}).

A \emph{Banach ring} is a commutative ring $R$ equipped with a submultiplicative norm under which it is complete.
We allow the zero ring as a Banach ring, so that the completed tensor product is defined on the category of Banach rings.
(What we call Banach rings would more commonly be called \emph{commutative Banach rings},
but we will not use noncommutative Banach rings in this paper.)

A \emph{Banach algebra} over a Banach ring $R$ is a Banach ring $S$ equipped with the structure of an $R$-algebra in such a way that the map $R \to S$ is bounded.

From \S\ref{subsec:adic spectrum} on, we will only consider Banach rings containing a topologically nilpotent unit. For more discussion of this condition, see Remark~\ref{R:small norm elements}.
\end{defn}

\begin{lemma} \label{L:closure trivial}
Let $I$ be a nontrivial ideal in a Banach ring $A$. Then the closure of $I$
is also a nontrivial ideal. In particular, any maximal ideal in $A$ is closed.
\end{lemma}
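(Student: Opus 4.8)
The plan is to verify that $\overline{I}$ is an ideal and then that it is proper, both by elementary topological arguments using the submultiplicativity and completeness of the norm. First I would check that $\overline{I}$ is closed under addition and under multiplication by arbitrary elements of $A$: this is formal, using that addition $A \times A \to A$ and multiplication-by-$a$ maps $A \to A$ are continuous (the latter because $\alpha(ab) \le \alpha(a)\alpha(b)$ shows $b \mapsto ab$ is bounded, hence continuous), so the preimage of the closed set $\overline{I}$ under these maps is closed and contains $I$ (resp.\ $I \times I$), hence contains $\overline{I}$. This makes $\overline{I}$ an ideal.

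The substantive point is that $\overline{I} \neq A$, equivalently $1 \notin \overline{I}$. I would argue by contradiction: if $1 \in \overline{I}$, pick $x \in I$ with $\alpha(1 - x) < 1$ (possibly after first renormalizing so that $\alpha(1) \le 1$, via the operator seminorm of the preceding remark, which is equivalent to $\alpha$ and so defines the same topology — or one can simply absorb the constant). Write $y = 1 - x$, so $\alpha(y) < 1$. Then the series $\sum_{n \ge 0} y^n$ converges, since $\alpha(y^n) \le \alpha(y)^n \to 0$ and $A$ is complete; call its sum $u$. The standard geometric-series computation gives $(1-y)u = 1$, so $x u = 1$, whence $1 \in I$ (as $x \in I$ and $I$ is an ideal), contradicting the hypothesis that $I$ is nontrivial (i.e.\ proper). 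Therefore $1 \notin \overline{I}$ and $\overline{I}$ is a proper ideal.

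The final assertion is then immediate: if $\gothm$ is a maximal ideal of $A$, it is in particular a proper ideal, so $\overline{\gothm}$ is a proper ideal containing $\gothm$; by maximality $\overline{\gothm} = \gothm$, i.e.\ $\gothm$ is closed.

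I do not anticipate a serious obstacle here; the only mild subtlety is the bookkeeping around whether $\alpha(1) \le 1$, which is dispensed with by passing to the equivalent operator seminorm as in the remark after Definition~\ref{D:submultiplicative} (equivalent seminorms define the same closed sets), or simply by noting that if $\alpha(1) = c$ then $\alpha(y^n) \le c\,\alpha(y)^n$ still tends to $0$ once $\alpha(y) < 1$, so convergence of the Neumann series is unaffected.
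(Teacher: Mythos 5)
Your proof is correct and follows essentially the same route as the paper: the paper likewise argues that if the closure were all of $A$, then $I$ would contain an element $x$ with $1-x \in \gothm_A$, and the geometric series $\sum_{i \ge 0}(1-x)^i$ would converge to an inverse of $x$, contradicting properness of $I$. Your additional checks (that $\overline{I}$ is an ideal, and the normalization $\alpha(1)\le 1$) are fine and left implicit in the paper.
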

\begin{proof}
If the closure were trivial, then $I$ would contain an
element $x$ for which $1-x \in \gothm_A$. But then the series $\sum_{i=0}^\infty (1-x)^i$
would converge in $A$ to an inverse of $x$, contradicting the assumption that $I$ is a nontrivial
ideal.
\end{proof}

For $A$ a Banach ring, it is easy to check
(using Remark~\ref{R:henselian criteria}) that the pair
$(\gotho_A, \gothm_A)$ is henselian. The following refinement
of this observation will also prove to be useful.
See also Proposition~\ref{P:henselian direct limit2}.
\begin{lemma} \label{L:henselian direct limit}
Let $\{(A_i, \alpha_i)\}_{i \in I}$ be a direct system in the category of Banach rings and submetric homomorphisms.
Equip the direct limit $A$ of the $A_i$ in the category of rings with
the infimum $\alpha$ of the quotient seminorms induced by the $\alpha_i$.
\begin{enumerate}
\item[(a)]
The pair $(A, \ker(\alpha))$ is henselian.
\item[(b)]
The pair $(\gotho_A, \gothm_A)$ is henselian.
\end{enumerate}
\end{lemma}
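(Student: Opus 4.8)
The plan is to reduce (a) and (b) to the henselian criterion of Remark~\ref{R:henselian criteria}: it suffices to show that every monic polynomial $f = \sum_i f_i T^i$ over the relevant ring, with $f_0$ in the prescribed ideal and $f_1$ a unit, has a root in that ideal. First I would observe that for each individual $A_i$ the pair $(\gotho_{A_i}, \gothm_{A_i})$ is henselian (as noted in the paragraph preceding the lemma, via the convergence argument of Lemma~\ref{L:closure trivial} applied to Newton iteration), and likewise the pair $(A_i, \ker(\alpha_i))$ is henselian. The point is then to transfer such a root-finding statement through the direct limit, being careful that $\alpha$ is only the \emph{infimum} of the quotient seminorms, not attained at any finite stage.

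For part (a): given $f \in A[T]$ monic with $f_0 \in \ker(\alpha)$ and $f_1 \in A^\times$, I would first lift the coefficients to some $A_i$. The coefficient $f_0$ will not in general lie in $\ker(\alpha_i)$, but its image under $\alpha$ is zero, so for any $\epsilon > 0$ we can pass to a larger index $j$ and choose a representative of $f_0$ in $A_j$ with $\alpha_j$-value less than $\epsilon$; similarly $f_1$, being a unit in $A$ whose inverse also descends to a finite stage, has $\alpha_j$-value and inverse $\alpha_j$-value bounded independent of how small we push the others. Taking $\epsilon$ small enough relative to those bounds, Remark~\ref{R:henselian criteria} applied to $(\gotho_{A_j}, \gothm_{A_j})$ (after rescaling the norm so the lifted $f$ has coefficients in $\gotho_{A_j}$ and $f_0 \in \gothm_{A_j}$) — or more directly the Newton-iteration estimate — produces a root $r_j \in A_j$ with $\alpha_j(r_j)$ small, quantitatively $O(\epsilon)$. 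Its image $r$ in $A$ is a root of $f$ with $\alpha(r) \leq \alpha_j(r_j) = O(\epsilon)$; since $\epsilon$ was arbitrary and a root of a fixed monic polynomial with $f_1$ a unit is \emph{isolated} (the other roots are bounded away from $0$, as $f_1 \neq 0$ forces $0$ not to be a multiple root of the reduction), the root $r$ is independent of $\epsilon$ and hence satisfies $\alpha(r) = 0$, i.e.\ $r \in \ker(\alpha)$. I also need to check condition (a) of Definition~\ref{D:henselian}, that $\ker(\alpha)$ lies in the Jacobson radical of $A$: if $x \in \ker(\alpha)$ then $\alpha(1 - (1-x)) = 0$, and one shows $1-x$ is a unit by the same limiting trick — lift $1-x$ to $A_j$ with $\alpha_j$-distance to $1$ less than $1$, invert there by the geometric series, and descend.

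For part (b), the cleanest route is to deduce it from (a). The ring $\gotho_A$ maps to $A$, and I would identify $\gothm_A$ with the preimage of $\ker(\alpha)$ intersected with $\gotho_A$; more precisely, since $\alpha$ on $A$ restricts from the $\gotho_{A_i}$-structures, an element of $\gotho_A$ with $\alpha < 1$ can be approximated to lie in $\gothm_{A_j}$ for large $j$, and then the henselian property of $(\gotho_{A_j}, \gothm_{A_j})$ transfers as above. Concretely: for $f \in \gotho_A[T]$ monic with $f_0 \in \gothm_A$ and $f_1 \in \gotho_A^\times$, lift to $\gotho_{A_j}[T]$ for $j$ large enough that the lift of $f_0$ lands in $\gothm_{A_j}$ (possible since $\alpha(f_0) < 1$ means some representative has $\alpha_j < 1$) and the lift of $f_1$ is a unit in $\gotho_{A_j}$; apply Remark~\ref{R:henselian criteria} for $(\gotho_{A_j}, \gothm_{A_j})$ to get a root in $\gothm_{A_j}$, then push to $\gothm_A$. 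Condition (a) — that $\gothm_A$ is in the Jacobson radical of $\gotho_A$ — follows because $1-x$ for $x \in \gothm_A$ is invertible in $\gotho_A$ via the geometric series, which converges since $\alpha(x) < 1$ and $A$ (hence $\gotho_A$) is complete; this is exactly the argument of Lemma~\ref{L:closure trivial}.

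The main obstacle I anticipate is the bookkeeping in part (a) caused by $\alpha$ being an infimum: the root one constructs at each finite stage has norm only $O(\epsilon)$, not $0$, so one genuinely needs the separation-of-roots argument to conclude the limit root lies in $\ker(\alpha)$ rather than merely having small norm. Making that uniformity precise — that the lift of $f_1$ stays a unit with controlled inverse-norm, uniformly as the lift of $f_0$ is made small, so that Newton iteration starting near $0$ converges to a root of norm comparable to $\alpha_j(f_0)$ — is the technical heart of the argument. Part (b) should then be essentially formal given (a) and completeness.
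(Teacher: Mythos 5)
Your argument for (a) is workable but takes a different route from the paper's: the paper runs the Newton--Raphson iteration directly in $A$ starting from $x_0=0$, observes that every iterate stays in $\ker(\alpha)$ (since $\ker(\alpha)$ is an ideal contained in the Jacobson radical, $f'(x_l)$ is a unit and $f(x_l)\in\ker(\alpha)$), and then gets convergence by noting the sequence is Cauchy in a single $A_i$; the limit lies in $\ker(\alpha)$ automatically. Your approximate-root-for-each-$\epsilon$ scheme instead forces you to prove that the various roots coincide, and your justification of that step ("a root is isolated", "not a multiple root of the reduction") is field-flavored language that does not literally apply to a general Banach ring (roots of a monic polynomial over a Banach ring are neither finite in number nor isolated, e.g.\ over a product of fields). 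The statement you actually need is true and provable with the tools at hand: if $r,r'$ are roots with $\alpha(r),\alpha(r')$ small, then $0=f(r)-f(r')=(r-r')(f_1+h)$ with $\alpha(h)\leq C\max\{\alpha(r),\alpha(r')\}$, and $f_1+h$ is a unit (invert $1+f_1^{-1}h$ by a geometric series at a finite stage). You should supply this computation rather than appeal to isolation of roots.

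Part (b) has a genuine gap, and it is not "essentially formal given (a)". Your plan is to lift $f$ to $\gotho_{A_j}[T]$ with $\tilde f_0\in\gothm_{A_j}$ and $\tilde f_1\in\gotho_{A_j}^\times$ and invoke the henselian property of $(\gotho_{A_j},\gothm_{A_j})$. But $\alpha$ is only the \emph{infimum} of the quotient seminorms, so an element of $\gotho_A$ (norm $\leq 1$) need only admit representatives of $\alpha_j$-norm $\leq 1+\delta$; it need not lift into $\gotho_{A_j}$ for \emph{any} $j$. (Take $A_n = \Qp\{T/r_n\}$ with $r_n\downarrow 1$ and restriction maps: the element $T$ has $\alpha(T)=1$ but $\alpha_n(T)=r_n>1$ for all $n$.) In particular the coefficients of $f$ of norm exactly $1$, and the unit $f_1$ together with $f_1^{-1}$, may fail to lie in any $\gotho_{A_j}$, so the hypotheses of Remark~\ref{R:henselian criteria} for $(\gotho_{A_j},\gothm_{A_j})$ cannot be verified; note also that $A$ is not complete, so your geometric-series argument for the Jacobson-radical condition must likewise be run in some $A_j$. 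The paper's proof of (b) circumvents all of this: it first produces a root $r\in\gothm_{\widehat A}$ using the honestly henselian complete pair $(\gotho_{\widehat A},\gothm_{\widehat A})$, picks $s\in A$ with $\alpha(r-s)<1$, and runs Newton iteration from $x_0=s$; because $f(s)$ is then very small, the iteration converges in a suitable $A_i$ even though the lifted coefficients only have norm $\leq 1+\delta$, and the limit lands in $\gothm_A$. You should adopt some version of this (or a quantitative Newton estimate robust to the $(1+\delta)$-inflation) for (b).
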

\begin{proof}
In both cases, we check the criterion of Remark~\ref{R:henselian criteria}.
To check (a),
note first that $\ker(\alpha)$ is contained in the Jacobson radical of $A$:
if $a-1 \in \ker(\alpha)$, then there exists some index $i$ for which
$a-1$ is an element of $A_i$ of norm less than 1. Since $A_i$ is complete, this forces
$a$ to be invertible. With that in mind,
let $f = \sum_i f_i T^i \in A[T]$ be a monic polynomial with $f_0 \in \ker(\alpha),
f_1 \in A^\times$. We construct a root of $f$ using the Newton-Raphson iteration as follows.
Put $x_0 = 0$. Given $x_l \in A$ for some nonnegative integer $l$
such that $x_l \in \ker(\alpha)$, $f'(x_l)$
is invertible modulo $\ker(\alpha)$ and hence is a unit. We may thus define
$x_{l+1} = x_l - f(x_l)/f'(x_l)$ and note that $x_{l+1} \in \ker(\alpha)$.
For any sufficiently large $i \in I$, the sequence
$\{x_l\}$ is Cauchy in $A_i$, and so has a limit which is a root of $f$.

To check (b),
let $f = \sum_i f_i T^i \in \gotho_A[T]$ be a monic polynomial with $f_0 \in \gothm_A,
f_1 \in \gotho_A^\times$; then $f$ admits a root $r$ in $\gothm_{\widehat{A}}$.
Choose any $s \in A$ with $\alpha(r-s) < 1$,
and put $x_0 = s$, $x_{l+1} = x_l - f(x_l)/f'(x_l)$.
For any sufficiently large $i \in I$, the sequence
$\{x_l\}$ is Cauchy in $A_i$, and so has a limit which is a root of $f$.
\end{proof}

\begin{lemma} \label{L:descend etale on field}
Retain notation as in Lemma~\ref{L:henselian direct limit}.
\begin{enumerate}
\item[(a)]
The base change functor $\FEt(A) \to \FEt(\widehat{A})$
is rank-preserving and fully faithful.
\item[(b)]
Suppose that $\alpha$ is a multiplicative seminorm and $K = A/\ker(\alpha)$ is a field.
Then the base change functor $\FEt(A) \to \FEt(\widehat{K})$
is an equivalence of categories.
\end{enumerate}
\end{lemma}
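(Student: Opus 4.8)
The plan is to handle the two parts in turn, with (b) built on (a).

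For (a), I would apply Proposition~\ref{P:lift idempotents}(a) to the homomorphism $A \to \widehat{A}$, which requires two verifications. First, an element $a \in A$ whose image in $\widehat{A}$ is a unit must already be a unit in $A$: since the image of $A$ is dense in $\widehat{A}$, one picks $b \in A$ with $\alpha(1-ab) < 1$; because $\alpha$ is the infimum of the quotient seminorms induced by the $\alpha_i$, there is an index $i$ and a lift $c \in A_i$ of $1-ab$ with $\alpha_i(c) < 1$, so $1-c$ is invertible in the Banach ring $A_i$, hence its image $ab$ is invertible in $A$, hence $a$ is a unit. Second, for each $S \in \FEt(A)$ every idempotent of $S \otimes_A \widehat{A}$ is the image of a unique idempotent of $S$. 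Uniqueness is immediate: $\ker(A \to \widehat{A}) = \ker(\alpha)$ lies in the Jacobson radical of $A$ by Lemma~\ref{L:henselian direct limit}(a), and since $S$ is finite, hence integral, over $A$, the ideal $\ker(\alpha)S$ lies in the Jacobson radical of $S$ by Lemma~\ref{L:unique idempotent lifting}(b); as $S$ is $A$-flat, $\ker(S \to S \otimes_A \widehat{A}) = \ker(\alpha)S$, so two idempotents of $S$ with the same image differ by an element of the Jacobson radical and therefore coincide by Lemma~\ref{L:unique idempotent lifting}(a).

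The existence half of this second condition is the crux of the lemma, and I expect it to be the main obstacle. Using Remark~\ref{R:fet direct limit}, write $S = S_i \otimes_{A_i} A$ with $S_i \in \FEt(A_i)$; then $S$ is a directed colimit, along submetric maps, of the Banach rings $S_j := S_i \otimes_{A_i} A_j$, while $S \otimes_A \widehat{A} = S_i \otimes_{A_i} \widehat{A}$ is a finite \'etale algebra over the Banach ring $\widehat{A}$, hence itself a Banach ring, in which $S$ is dense (it is obtained from $A \to \widehat{A}$ by a finite base change, and $A$ is dense in $\widehat{A}$). Given an idempotent $e$ of $S \otimes_A \widehat{A}$, approximate it closely by some $s \in S$; then $s^2 - s$ is small, and from the description of the norm on $S$ as an infimum of quotient seminorms (together with submetricity of the transition maps) one obtains an index $j$ and a representative $s_j \in S_j$ of $s$ for which $\|s_j\|_{S_j}$ is bounded, essentially by $\|e\|$, while $\|s_j^2 - s_j\|_{S_j}$ is as small as we please. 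The boundedness is what keeps $2s_j - 1$ a unit of controlled size in $S_j$, and is crucial: a Newton iteration for the equation $X^2 = X$ in the complete ring $S_j$, started at $s_j$, then converges to an idempotent $e_j \in S_j$ close to $s_j$, whose image in $S \otimes_A \widehat{A}$ is an idempotent within a small distance of $e$. Since two idempotents of a commutative Banach ring whose difference is sufficiently small are conjugate and hence equal, this image is exactly $e$, so $e$ is the image of the idempotent $e_j$ of $S_j \subseteq S$. Proposition~\ref{P:lift idempotents}(a) now applies and yields (a).

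For (b), since $(A, \ker\alpha)$ is henselian, Theorem~\ref{T:henselian} makes base change $\FEt(A) \to \FEt(A/\ker\alpha) = \FEt(K)$ a tensor equivalence, and the functor of (b) is its composite with $\FEt(K) \to \FEt(\widehat{K})$ (note $\widehat{A} = \widehat{K}$). By (a) this composite is rank-preserving and fully faithful, so it suffices to show it is essentially surjective, equivalently that $\FEt(K) \to \FEt(\widehat{K})$ is essentially surjective. As $\widehat{K}$ is an analytic field, any object of $\FEt(\widehat{K})$ is a finite product of finite separable extensions $L = \widehat{K}(\theta)$, so it is enough to realize each such $L$ over $K$. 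Approximating the minimal polynomial of $\theta$ over $\widehat{K}$ by a monic $h \in K[x]$ of the same degree (possible since $K$ is dense in $\widehat{K}$), one gets, for a good enough approximation, that $h$ is separable and has a root $\theta'$ so close to $\theta$ that $\widehat{K}(\theta) \subseteq \widehat{K}(\theta')$ by Krasner's lemma; comparing degrees forces $\widehat{K}(\theta') = L$ and $h$ to be irreducible over $\widehat{K}$. Then $M := K[x]/(h)$ satisfies $M \otimes_K \widehat{K} \cong L$, so $M$ is finite \'etale over $K$ by faithfully flat descent (Theorem~\ref{T:descent finite locally free}, $\widehat{K}$ being faithfully flat over the field $K$), and $M$ maps to $L$. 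Handling a general object of $\FEt(\widehat{K})$ factor by factor completes the proof of (b).
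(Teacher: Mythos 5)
Your proposal is correct and follows essentially the same route as the paper: reduce (a) to Proposition~\ref{P:lift idempotents}(a) via the same unit and uniqueness checks, prove existence of idempotent lifts by approximating in some $S_j$ (with a uniformly controlled norm) and running an iteration for $X^2=X$ to convergence, and deduce (b) from Theorem~\ref{T:henselian} plus a Krasner-style approximation of minimal polynomials. The only cosmetic differences are that the paper uses the polynomial iteration $x\mapsto 3x^2-2x^3$ instead of the genuine Newton step (thereby avoiding the inversion of $2x-1$), and it pins down the norms on the $S_j$ explicitly via a direct-sum presentation $F_i\cong S_i\oplus T_i$ with structure constants bounded by $c$ — the point you correctly flag as needing a uniform bound.
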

\begin{proof}
To check (a), we first observe that by Lemma~\ref{L:henselian direct limit}(a),
$\ker(\alpha)$ is contained in the Jacobson radical of $A$. Next,
for any $x \in A$ which becomes a unit in $\widehat{A}$, we can find
$y \in A$ for which $\alpha(xy-1) < 1$, so $xy$ is a unit in some $A_i$
and so $x$ is a unit in $A$. Next, any invertible module $M$ over $A$ is the base extension of some invertible module $M_i$ over some $A_i$;
if $M_i \otimes_{A_i} \widehat{A}$ admits a generator, then so does
$M_i \otimes_{A_i} A_j$ for any sufficiently large $j$ by Lemma~\ref{L:nearby generators} below.
Finally, for $S \in \FEt(A)$, note that
any idempotent $S \otimes_A \widehat{A}$ can have at most one preimage in
$S \otimes_A A/\ker(\alpha)$ (since $A/\ker(\alpha)$ injects into $\widehat{A}$ and $S$
is a projective $A$-module) and hence
at most one preimage in $S$ (by Lemma~\ref{L:unique idempotent lifting}).
We conclude by Proposition~\ref{P:lift idempotents} that to check (a),
it suffices to verify that for each $S \in \FEt(A)$, every idempotent of $S \otimes_A \widehat{A}$
arises from some idempotent of $S$.

Since $S \in \FEt(A)$, by Remark~\ref{R:fet direct limit} we can choose an index $i \in I$ for which $S = S_i \otimes_{A_i} A$ for some
$S_i \in \FEt(A_i)$.
Since $S_i$ is a finite locally free $A_i$-module (see Definition~\ref{D:finite etale category}),
we can choose a finite free $A_i$-module $F_i$ admitting a direct sum decomposition $F_i \cong S_i \oplus T_i$.
Choose a basis $x_1,\dots,x_n$ of $F_i$ and let $y_1,\dots,y_n$ be the projections of $x_1,\dots,x_n$ onto $S_i$.
For $h,k \in \{1,\dots,n\}$, write $y_h y_k$ in $F_i$ as
$\sum_{l} c_{hkl} x_l$ with $c_{hkl} \in A_i$, so that in $S_i$ we have $y_h y_k = \sum_l c_{hkl} y_l$.
Put $c = \max\{1, \sup_{h,k,l} \{\alpha_i(c_{hkl})\}\}$.

For each $j \in I$ with $i \leq j$,
let $\beta_j$ be the restriction to $S_j = S_i \otimes_{A_i} A_j$
of the supremum norm on $F_j = F_i \otimes_{A_i} A_j$ defined by the basis $x_1,\dots,x_n$.
Note that $\beta_j(xy) \leq c \beta_j(x) \beta_j(y)$
for all $j$ and all $x,y \in S_j$. Similarly, let $\beta$ be the supremum seminorm on
$S \otimes_A \widehat{A}$ defined by the basis $x_1,\dots,x_n$,
so that $\beta(xy) \leq c \beta(x) \beta(y)$
for all $x,y \in S \otimes_A \widehat{A}$. In particular, any nonzero idempotent element
$e \in S \otimes_A \widehat{A}$ satisfies $\beta(e) \geq c^{-1}$.

Let $e \in S \otimes_A \widehat{A}$ be an idempotent element.
Choose $\epsilon > 0$ with $\epsilon \max\{\beta(e),1\} < 1$.
Since $e^2 = e$ in $S \otimes_A \widehat{A}$, we can choose
$j \in I$ and $x \in S_j$ with $\beta(x-e) < c^{-1}$
and $\beta_j(x^2 - x) \leq c^{-1} \epsilon$.
Define the sequence $x_0, x_1, \dots$ by $x_0 = x$ and $x_{l+1} = 3x_l^2 - 2x_l^3$.
We then have $\beta_j(x_l^2 - x_l) \leq c^{-1} \epsilon^{l+1}$ by induction on $l$,
by writing
\[
x_{l+1}^2 - x_{l+1} = 4(x_l^2 - x_l)^3  - 3 (x_l^2 - x_l)^2.
\]
Also, $x_{l+1} - x_l = (x_l^2 - x_l)(1 - 2x_l)$, so
by induction on $l$, $\beta_j(x_l) \leq \max\{\beta_j(x),1\}$
and $\beta(x_l) \leq \max\{\beta(x),1\}$.
Using the equation $x_{l+1} - x_l = (x_l^2 - x_l)(1 - 2x_l)$ again,
we see that the $x_l$ form a Cauchy sequence, whose limit $y$ in $S_j$
must satisfy $y^2 = y$. In addition, $\beta(y - x) \leq c^{-1} \epsilon \max\{\beta(x),1\}$,
so $\beta(y-e) < c^{-1}$. Since $(y-e)^2$ is an idempotent element of $S \otimes_A \widehat{A}$,
this is only possible if $(y-e)^2 = 0$; since $y(y-e)^2 = y - ey$ and $e(y-e)^2 = e - ey$, this yields $y = e$. This completes the proof of (a).

To check (b), note that the hypotheses ensure that the completion $\widehat{K}$ of $K$ is an analytic field.
It suffices to show that an arbitrary finite separable field extension $\widehat{L}$ of $\widehat{K}$ occurs in the essential image of the base change functor. By the primitive element theorem, we can write
$\widehat{L} \cong \widehat{K}[T]/(P)$ for some monic separable polynomial
$P \in \widehat{K}[T]$. By Hensel's lemma (or more precisely Krasner's lemma), we also have
$\widehat{L} \cong \widehat{K}[T]/(Q)$ for any monic polynomial $Q \in \widehat{K}[T]$
whose coefficients are sufficiently close to those of $P$.
In particular, we may choose $Q \in K[T]$, in which case we may write
$\widehat{L} = L \otimes_K \widehat{K}$ for $L = K[T]/(Q) \in \FEt(K)$.
Since $\FEt(A) \to \FEt(K)$ is essentially surjective by
Lemma~\ref{L:henselian direct limit} plus Theorem~\ref{T:henselian}, this proves the claim.
\end{proof}

\begin{lemma} \label{L:analytic fields submultiplicative}
Let $K$ be an analytic field with norm $\alpha$, and let $L$ be a finite extension of $K$.
Then the unique multiplicative extension of $\alpha$ to $L$ (Definition~\ref{D:Banach ring})
is also the unique power-multiplicative extension of $\alpha$ to $L$.
\end{lemma}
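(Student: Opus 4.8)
The plan is the following. Since a multiplicative norm is in particular power-multiplicative, the unique multiplicative extension $|\cdot|$ of $\alpha$ to $L$ provided by Definition~\ref{D:Banach ring} is already a power-multiplicative extension; the substance of the lemma is therefore \emph{uniqueness}. So let $\beta$ be an arbitrary power-multiplicative seminorm on $L$ with $\beta|_K = \alpha$, and the goal is to prove $\beta = |\cdot|$. First I would record trivial reductions: the kernel of $\beta$ is an ideal of $L$ which is proper (it misses $1$, since $\beta(1)=\alpha(1)=1$), hence zero because $L$ is a field, so $\beta$ is a norm; and applying submultiplicativity to a scalar $c\in K^\times$ together with $c^{-1}$ gives $\beta(cx)=\alpha(c)\beta(x)$, so $\beta$ is a norm of $L$ viewed as a $K$-vector space (as is $|\cdot|$).

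The heart of the argument is the inequality $\beta(x)\le |x|$ for every $x\in L$. For $x=0$ this is trivial, so fix $x\in L^\times$ and let $P(T)=T^d+a_{d-1}T^{d-1}+\cdots+a_0\in K[T]$ be its minimal polynomial over $K$. Passing to the splitting field $M$ of $P$ with its unique analytic field structure extending $K$ (Definition~\ref{D:Banach ring}), the restriction of the norm of $M$ to $K$ is $\alpha$ and is $\Gal(M/K)$-invariant (again by uniqueness, since precomposing with a $K$-automorphism yields another multiplicative extension of $\alpha$); as the roots of $P$ in $M$ are Galois-conjugate to $x$, they all have the same absolute value, necessarily $|x|$. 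Since $a_{d-j}$ is, up to sign, the $j$-th elementary symmetric function of these roots, the ultrametric inequality yields $\alpha(a_i)\le |x|^{d-i}$ for $0\le i\le d-1$. Feeding the relation $x^d=-(a_{d-1}x^{d-1}+\cdots+a_0)$ into $\beta$ and using submultiplicativity and power-multiplicativity,
\[
\beta(x)^d=\beta(x^d)\le \max_{0\le i\le d-1}\alpha(a_i)\,\beta(x)^i\le \max_{0\le i\le d-1}|x|^{d-i}\beta(x)^i .
\]
Dividing through by $|x|^d$ and setting $r=\beta(x)/|x|$, this says $r^d\le \max\{1,r^{d-1}\}$, which forces $r\le 1$; that is, $\beta(x)\le |x|$.

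It remains to upgrade this to an equality. Applying the inequality just obtained to $x^{-1}$ gives $\beta(x^{-1})\le |x^{-1}|=|x|^{-1}$ for $x\in L^\times$, while submultiplicativity gives $1=\beta(1)\le \beta(x)\beta(x^{-1})$; combining the two yields $\beta(x)\ge |x|$, so $\beta(x)=|x|$. Hence $\beta=|\cdot|$, as claimed. (Alternatively, once $\beta\le |\cdot|$ is known one may observe that $\beta$ and $|\cdot|$ are then equivalent norms on the finite-dimensional $K$-vector space $L$ and invoke the principle, recorded after Definition~\ref{D:submultiplicative}, that a power-multiplicative norm is determined by its equivalence class; I prefer the computation above because it also covers the case in which $\alpha$ is the trivial norm, where the norm-equivalence theorem is not directly available.)

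The only genuinely nontrivial input is the assertion that all roots of an irreducible polynomial over $K$ share a common absolute value, and this is exactly where the completeness built into the notion of an analytic field is used; everything else is bookkeeping with submultiplicativity and power-multiplicativity. I do not anticipate any serious obstacle beyond making sure that the splitting-field step legitimately invokes the already-granted uniqueness of the analytic field structure on finite extensions rather than re-proving it.
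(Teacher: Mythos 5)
Your proof is correct and follows essentially the same route as the paper's: bound $\beta(x)$ above by $|P_0|^{1/d}=|x|$ using the minimal polynomial, then apply the same bound to $x^{-1}$ and combine via $1=\beta(x\cdot x^{-1})\le\beta(x)\beta(x^{-1})\le|x|\,|x^{-1}|=1$. The only cosmetic difference is that you justify the coefficient inequality $\alpha(a_i)\le|x|^{d-i}$ by passing to a splitting field and using Galois-invariance of the unique multiplicative extension, whereas the paper cites the equivalent fact that the Newton polygon of an irreducible polynomial over a complete field consists of a single segment.
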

\begin{proof}
Let $\beta$ be the multiplicative extension of $\alpha$ to $L$, and let $\gamma$ be a power-multiplicative
extension of $\alpha$ to $L$. Note that for $x \in K^\times, y \in L$,
we have
\[
\gamma(xy) \leq \gamma(x) \gamma(y) =\gamma(x^{-1})^{-1} \gamma(y) \leq \gamma(xy),
\]
so $\gamma(xy) = \gamma(x) \gamma(y)$.

Given $x \in L^\times$, let $P \in K[T]$ be the minimal polynomial of $x$
over $K$; since $K$ is complete, the Newton polygon of $P$ consists of a single segment.
In other words, if we write $P(T) = \sum_{i=0}^n P_i T^i$ with $P_n = 1$, then
$|P_{n-i}|^{1/i} \leq |P_0|^{1/n} = \beta(x)$ for $i=1,\dots,n$.
(See \cite[\S 2.1]{kedlaya-course} for more discussion of Newton polygons.)

If $\gamma(x) > |P_0|^{1/n}$,
then under $\gamma$ the sum $0 = \sum_{i=0}^n P_i x^i$ would be dominated by
the term $P_n x^n$, a contradiction. Hence $\gamma(x) \leq |P_0|^{1/n} = \beta(x)$
and similarly $\gamma(x^{-1}) \leq \beta(x^{-1})$; by writing
\[
1 = \gamma(x \cdot x^{-1}) \leq \gamma(x) \gamma(x^{-1}) \leq \beta(x) \beta(x^{-1}) = 1,
\]
we see that $\gamma(x) = \beta(x)$ as desired.
\end{proof}

Before moving on to Banach modules, we make one observation about modules over a Banach ring.
\begin{lemma} \label{L:finitely generated Banach}
Let $R$ be a Banach ring.
\begin{enumerate}
\item[(a)]
For any finite $R$-module $M$, the quotient seminorm
defined by a surjection $\pi: R^n \to M$ of $R$-modules does not depend, up to equivalence, on the choice of the
surjection.
\item[(b)]
Let $R \to S$ be a bounded homomorphism of Banach rings. Let $M$ be a finite $R$-module,
let $N$ be a finite $S$-module, and let $M \to N$ be an additive $R$-linear map.
Then this map becomes bounded if we equip $M$ and $N$ with seminorms
as described in (a).
\end{enumerate}
\end{lemma}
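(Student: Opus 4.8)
The plan is to reduce everything to the key observation that a surjection $R^m \to R^n$ of finite free modules is automatically bounded (in fact submetric for the sup-norms), since each standard basis vector of the source maps to a vector whose entries lie in $R$, and the sup-norm of the image is then controlled by a fixed constant — the maximum of the norms of these finitely many entries. For part (a), suppose $\pi\colon R^n \to M$ and $\pi'\colon R^{n'} \to M$ are two surjections, inducing quotient seminorms $|\cdot|_\pi$ and $|\cdot|_{\pi'}$ on $M$. Lifting the images $\pi(e_1),\dots,\pi(e_n)$ along $\pi'$ gives a map $f\colon R^n \to R^{n'}$ of $R$-modules with $\pi' \circ f = \pi$; this $f$ is bounded (as above, with constant depending on the chosen lifts). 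For $v \in M$ and any $u \in R^n$ with $\pi(u) = v$, we have $\pi'(f(u)) = v$, so $|v|_{\pi'} \le |f(u)|_{\mathrm{sup}} \le c\,|u|_{\mathrm{sup}}$; taking the infimum over such $u$ yields $|v|_{\pi'} \le c\,|v|_{\pi}$. By symmetry (swapping the roles of $\pi$ and $\pi'$) we get domination in the other direction, so the two seminorms are equivalent.

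For part (b), fix surjections $\pi\colon R^m \to M$ and $\rho\colon S^n \to N$ of modules over $R$ and $S$ respectively, and equip $M$, $N$ with the resulting quotient seminorms (well-defined up to equivalence by (a), which is all that boundedness of a map depends on). Let $g\colon M \to N$ be the given additive semilinear map. Composing $\pi$ with $g$ gives an $R$-linear map $R^m \to N$; lifting the images $g(\pi(e_1)),\dots,g(\pi(e_m))$ along the $S$-linear surjection $\rho$ gives an $S$-linear map $h\colon S^m \to S^n$ with $\rho \circ h = g \circ \pi \circ (\text{base change } R^m \to S^m)$. Now $h$ is bounded for the sup-norms on $S^m$ and $S^n$ (again because it is given by a fixed $n \times m$ matrix over $S$), and the structural map $R \to S$ is bounded by hypothesis, so the composite $R^m \to S^m \xrightarrow{h} S^n$ is bounded for the sup-norms. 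For $v \in M$ and $u \in R^m$ with $\pi(u) = v$, tracing through we get $g(v) = \rho(h(u_S))$ where $u_S$ is the image of $u$ in $S^m$, hence $|g(v)|_\rho \le |h(u_S)|_{\mathrm{sup}} \le c'\,|u|_{\mathrm{sup}}$ for a constant $c'$ independent of $v$ and $u$; taking the infimum over $u$ gives $|g(v)|_\rho \le c'\,|v|_\pi$, which is the desired boundedness.

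There is essentially no serious obstacle here; the content is purely bookkeeping, and the single point requiring a moment's care is that one must work with quotient seminorms rather than norms (the modules need not be torsion-free and the maps need not be injective), so one should phrase every estimate as an inequality between seminorm values obtained by taking infima over preimages, never by choosing a canonical lift. The mild subtlety in (b) is that $g$ is only semilinear, so one cannot directly compose module maps over a single ring; the fix is to factor through the base change $R^m \to S^m$ and then use that $R \to S$ is bounded, which is exactly the hypothesis provided.
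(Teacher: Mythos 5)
Your proof is correct and follows essentially the same approach as the paper: both parts come down to lifting the generators of one presentation through the other surjection and bounding the resulting matrix, then passing to infima over preimages. The only cosmetic difference is in (a), where the paper routes the comparison through the combined surjection $R^{n+m} \to M$ while you compare the two presentations directly via a bounded lift $R^n \to R^{n'}$; the underlying estimate is identical.
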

\begin{proof}
To prove (a), let $\pi': R^m \to M$ be a second surjection, and combine $\pi$ and $\pi'$ to obtain a third surjection
$\pi'': R^{n+m} \to M$.
It is enough to check that the quotient seminorms $|\cdot|, |\cdot|''$ induced by $\pi, \pi''$ are equivalent,
as then the same argument will apply with $\pi$ and $\pi'$ interchanged.

Let $\be_1,\dots,\be_{n+m}$ be the standard basis of $R^{n+m}$.
On one hand, we clearly have $|\cdot|'' \leq |\cdot|$ because lifting an element of $M$ to $R^n$
also gives a lift to $R^{n+m}$. On the other hand, for $j=n+1,\dots,n+m$,
we can write $\pi(\be_j) = \sum_{i=1}^n A_{ij} \pi(\be_i)$ for some $A_{ij} \in R$.
If an element of $M$ lifts to $\sum_{i=1}^{n+m} c_i \be_i \in R^{n+m}$, it also lifts to
\[
\sum_{i=1}^n \left( c_i + \sum_{j=n+1}^{n+m} A_{ij} c_j \right) \be_i \in R^n.
\]
Consequently, we have $|\cdot| \leq \max\{1,|A|\} |\cdot|''$. This yields (a).

To prove (b), choose surjections $R^m \to M$, $S^n \to N$ of $R$-modules. We may then lift the composition
$R^m \to M \to N$ to a homomorphism $R^m \to S^n$ which is evidently bounded. This proves the claim.
\end{proof}

\begin{defn}
Let $R$ be a Banach ring. A \emph{Banach module} over $R$
is an $R$-module $M$ whose additive group is complete for a norm
$|\cdot|_M$ for which for some $c>0$, we have
$|r\bv|_M \leq c|r| |\bv|_M$ for all $r \in R, \bv \in M$.
In particular, any Banach algebra over $R$ is a Banach module over $R$.
\end{defn}

One has an analogue of the Banach-Schauder
open mapping theorem in the nonarchimedean setting.
(Note that this result fails completely without a restriction on the base ring.)
\begin{theorem} \label{T:open mapping}
Let $R$ be a Banach ring containing a topologically nilpotent unit.
Let $\varphi: V \to W$ be a bounded surjective
homomorphism of Banach modules over $R$.
Then $\varphi$ is open and strict.
\end{theorem}
\begin{proof}
For $R$ an analytic field, see \cite[\S I.3.3, Th\'eor\`eme~1]{bourbaki-evt}
or \cite[Proposition~8.6]{schneider}.
For the more general case, see \cite{henkel}.
\end{proof}

\begin{lemma} \label{L:inject tensor}
Let $V,W,X$ be Banach modules over an analytic field $K$.
\begin{enumerate}
\item[(a)]
The map
$V \otimes_K W \to V \widehat{\otimes}_K W$ is injective.
\item[(b)]
Let $f: V \to W$ be a bounded homomorphism
and let $f_X: V \widehat{\otimes}_K X \to W \widehat{\otimes}_K X$
be the induced map. Then the natural map $\ker(f) \widehat{\otimes}_K X \to \ker(f_X)$ is a bijection.
\item[(c)]
In (b), if $f$ is strict, then so is $f_X$.
\end{enumerate}
\end{lemma}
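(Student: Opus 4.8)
The plan is to run everything off a single estimate: \emph{if $x_1,\dots,x_n$ are $K$-linearly independent elements of a Banach module $X$ over $K$, then there is $c>0$ depending only on the $x_i$ such that for every Banach module $U$ over $K$ and all $u_1,\dots,u_n\in U$, the image of $\sum_i u_i\otimes x_i$ in $U\widehat{\otimes}_K X$ has norm at least $c\max_i|u_i|$.} To prove this, take the functionals $\lambda_1,\dots,\lambda_n$ on $\mathrm{span}_K(x_1,\dots,x_n)$ dual to the $x_i$; these are bounded with norm at most some $C=C(x_1,\dots,x_n)$ since that space is finite dimensional over the complete field $K$. Given any presentation $\sum_i u_i\otimes x_i=\sum_j b_j\otimes c_j$ in $U\otimes_K X$, the $x_i$ and $c_j$ together span a finite-dimensional subspace $X_1\subseteq X$; because a finite-dimensional normed space over $K$ admits, for any $\delta<1$, a $\delta$-orthogonal complement to any subspace, each $\lambda_i$ extends to a bounded functional $\tilde\lambda_i$ on $X_1$ of norm $\le 2C$. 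Applying $\id_U\otimes\tilde\lambda_i$ to both sides (bounded for the product seminorm, of norm $\le\|\tilde\lambda_i\|$) recovers $u_i$ on the left and yields $|u_i|\le 2C\max_j|b_j||c_j|$; taking the supremum over $i$ and the infimum over presentations gives the estimate with $c=(2C)^{-1}$. Part (a) is then immediate: if $f\in V\otimes_K W$ is nonzero, write $f=\sum_{i=1}^n v_i\otimes w_i$ with $w_1,\dots,w_n$ linearly independent (so not all $v_i$ vanish) and apply the estimate with $U=V$, $X=W$ to see that the image of $f$ in $V\widehat{\otimes}_K W$ has positive norm.

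For (b), first note $\ker(f)$ is a closed submodule of $V$ (as $f$ is continuous), so $\ker(f)\widehat{\otimes}_K X$ makes sense and maps naturally to $V\widehat{\otimes}_K X$, landing in the closed submodule $\ker(f_X)$ since $\ker(f)\otimes_K X\to V\otimes_K X\to W\otimes_K X$ is zero. I would treat injectivity and surjectivity separately. Injectivity rests on the subsidiary fact that \emph{for a closed submodule $Y\subseteq Z$ of Banach $K$-modules and any Banach $K$-module $U$, the map $Y\otimes_K U\to Z\otimes_K U$ is isometric for the product seminorms}, so that $Y\widehat{\otimes}_K U\hookrightarrow Z\widehat{\otimes}_K U$ is an isometric embedding with image the closure of $Y\otimes_K U$. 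The inequality $\ge$ is trivial; for $\le$, take $t\in Y\otimes_K U$ and a presentation $t=\sum_j b_j\otimes c_j$ in $Z\otimes_K U$, pass to a $\delta$-orthogonal basis $\{g_l\}$ of $\mathrm{span}_K(c_1,\dots,c_N)\subseteq U$ to rewrite $t=\sum_l u_l\otimes g_l$ with the $g_l$ linearly independent and $|u_l||g_l|\le\delta^{-1}\max_j|b_j||c_j|$, and observe that the components $u_l$ of $t$ automatically lie in $Y$ (expand the $g_l$ to a $K$-basis of $U$); then let $\delta\to 1^-$. Applying this with $(Y,Z,U)=(\ker(f),V,X)$, together with flatness of $X$ over the field $K$ (which gives $\ker(f\otimes_K\id_X)=\ker(f)\otimes_K X$), shows the image of $\ker(f)\widehat{\otimes}_K X$ in $V\widehat{\otimes}_K X$ is the closure of $\ker(f\otimes\id_X)$, hence the map to $\ker(f_X)$ is injective.

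Surjectivity is the hard part, and the place where one must be careful not to assume $f$ is strict. Given $\xi\in\ker(f_X)$, I would first write $\xi$ as a limit of elements of $V\otimes_K X$ and telescope to obtain an unordered convergent representation $\xi=\sum_{i\ge 1}v_i\otimes x_i$ with $|v_i||x_i|\to 0$. Let $X_0$ be the closed $K$-span of $\{x_i\}$ in $X$; this is of countable type, and by the isometric-embedding fact above (now with $(Y,Z,U)=(X_0,X,V)$, and likewise over $W$) the embedding $V\widehat{\otimes}_K X_0\hookrightarrow V\widehat{\otimes}_K X$ has closed image through which $\xi$ factors, compatibly with $f_X$; so one reduces to $X=X_0$ of countable type. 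Such a module has, for some $\delta<1$, a $\delta$-orthogonal Schauder basis $\{e_j\}$, so re-expanding gives $\xi=\sum_j w_j\otimes e_j$ with $|w_j||e_j|\to 0$, uniquely — series of this form are detected by applying $\id_V\widehat{\otimes}\lambda_{j_0}$ for the coordinate functional $\lambda_{j_0}$. Then $0=f_X(\xi)=\sum_j f(w_j)\otimes e_j$ forces $f(w_j)=0$ for all $j$ (by the same detection), so every $w_j\in\ker(f)$, and since $|w_j||e_j|\to 0$ the sum $\sum_j w_j\otimes e_j$ already converges in $\ker(f)\widehat{\otimes}_K X$ to a preimage of $\xi$.

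I expect the main obstacle to be exactly this last step: for a general $X$ there is no global orthogonal basis, and the natural alternative—factoring $f$ through $V/\ker(f)$ and reducing to injectivity of $\bar f\widehat{\otimes}\id_X$ for the bounded injection $\bar f\colon V/\ker(f)\to W$—fails to obviously help, since $\widehat{\otimes}_K X$ need not preserve injections that are not strict, and $f$ is not assumed strict. The reduction to the countable-type case is what I would use to get around this; the remaining ingredients (dual functionals and $\delta$-orthogonal complements in finite dimensions, $\delta$-orthogonal Schauder bases of countable-type Banach spaces over a complete field, and the telescoping representation of elements of a completed tensor product) are standard and require no completeness hypothesis on $K$ beyond being a complete valued field.
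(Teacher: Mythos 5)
Your proof is correct and follows essentially the same route as the paper's: reduce to the countable-type (separable) case and exploit ($\delta$-orthogonal) Schauder bases, with coefficients detected by bounded coordinate functionals. The only cosmetic difference is that you handle (a) by a direct finite-dimensional duality estimate rather than via Schauder bases for $V$ and $W$, but the underlying mechanism (boundedly extending dual functionals using near-orthogonality over a complete field) is the same.
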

\begin{proof}
All three parts reduce immediately to the case where $V,W,X$ contain dense $K$-vector subspaces of at most
countable dimension (i.e., they are \emph{separable} Banach modules).
In this setting, (a) follows from the existence of Schauder bases for $V$ and $W$; see for instance \cite[Lemma~1.3.11]{kedlaya-course}. Similarly, (b) and (c) follow from the existence of a Schauder basis for $X$.
\end{proof}

\begin{defn}
Let $R$ be a Banach ring.
A \emph{finite Banach module/algebra} over $R$ is a Banach module/algebra $M$ over $R$ admitting
a strict surjection $R^n \to M$ of Banach modules over $R$ for some nonnegative integer $n$
(for the supremum norm on $R^n$ defined by the canonical basis).
By Lemma~\ref{L:finitely generated Banach}, the equivalence class of the norm on $M$ is determined
by the underlying $R$-module.
\end{defn}

\begin{remark} \label{R:Noetherian Banach}
Let $R$ be a Banach ring and let $M$ be a finite $R$-module. 
Lemma~\ref{L:finitely generated Banach} equips $M$ with a distinguished equivalence class of seminorms, but $M$ need not be separated or complete under such a seminorm. In fact, by Theorem~\ref{T:open mapping}, $M$ is separated and complete if and only if it is a finite Banach module over $R$. Moreover, in the case when $R$ contains a topologically nilpotent unit, the following
conditions on $R$ are equivalent
(see \cite[Propositions~3.7.3/2, 3.7.3/3]{bgr} for the case where $R$ is a Banach algebra over an analytic field, then modify the arguments using Theorem~\ref{T:open mapping}).
\begin{enumerate}
\item[(a)]
The ring $R$ is noetherian. (Note that it does not suffice to exhibit a dense noetherian subring of $R$; see \cite[Proposition~12]{buzzard-verberkmoes}.)
\item[(b)]
Every ideal of $R$ is closed.
\item[(c)]
The forgetful functor from finite Banach $R$-modules to finite $R$-modules is an
equivalence of categories.
\end{enumerate}
\end{remark}

For Banach rings which are not noetherian, as noted in Remark~\ref{R:Noetherian Banach},
we cannot equip arbitrary finite modules over $R$ with
natural Banach module structures. However, we can do so for finite projective $R$-modules.
\begin{lemma} \label{L:finite projective Banach}
Let $R$ be a Banach ring.
Let $P$ be a finite projective $R$-module. Choose a finite projective $R$-module $Q$ and an isomorphism
$P \oplus Q \cong R^n$ of $R$-modules, for $n$ a suitable nonnegative integer. Equip $R^n$ with the supremum
norm defined by the canonical basis.
\begin{enumerate}
\item[(a)]
The subspace norm on $P$ for the inclusion into $R^n$ is equivalent to the quotient norm for the projection
from $R^n$, and gives $P$ the structure of a finite Banach module over $R$.
\item[(b)]
The equivalence class of the norms described in (a) is independent of the choice of $Q$ and of the presentation
$P \oplus Q \cong R^n$.
\item[(c)]
The above construction defines  a fully faithful functor from finite projective $R$-modules to finite Banach modules over $R$
whose underlying $R$-modules are projective, which is a section of the forgetful functor.
\end{enumerate}
\end{lemma}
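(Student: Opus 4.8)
The plan is to deduce all three parts from the elementary behavior of quotient seminorms, taking Lemma~\ref{L:finitely generated Banach} as the main input. For part (a), I would fix the splitting $R^n \cong P \oplus Q$, write $\pi\colon R^n \to P$ for the projection and $\iota\colon P \to R^n$ for the inclusion, and set $e = \iota\pi$, an idempotent endomorphism of $R^n$. Comparing the subspace norm $|\cdot|_{\mathrm{sub}}$ (the restriction of the supremum norm along $\iota$) with the quotient norm $|\cdot|_{\mathrm{quot}}$ along $\pi$: since each $v \in P$ is a lift of itself under $\pi$, one has $|v|_{\mathrm{quot}} \le |v|_{\mathrm{sub}}$ at once; conversely $\pi$ is a bounded $R$-linear map (it is represented by a matrix over $R$, or one may invoke Lemma~\ref{L:finitely generated Banach}(b)), say with operator norm $\le c$, and taking the infimum over lifts of a given $v \in P$ yields $|v|_{\mathrm{sub}} \le c\,|v|_{\mathrm{quot}}$. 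Hence the two norms are equivalent. To see that $P$ is a finite Banach module, I would observe that $\iota(P) = \ker(1-e)$ is closed in the complete module $R^n$, so $P$ is complete; that $|rv|_{\mathrm{sub}} \le |r|\,|v|_{\mathrm{sub}}$ is inherited from the supremum norm on $R^n$; and that $\pi$ is a strict surjection, precisely by the equivalence just established.

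For part (b), given a second splitting $R^m \cong P \oplus Q'$, part (a) identifies each of the two subspace norms on $P$, up to equivalence, with the quotient norm along the corresponding projection $R^n \to P$, respectively $R^m \to P$. Lemma~\ref{L:finitely generated Banach}(a) then asserts that these two quotient norms are mutually equivalent, and therefore so are the two subspace norms.

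For part (c), I would note that any $R$-linear map $f\colon P \to P'$ between finite projective modules is bounded for the quotient seminorms by Lemma~\ref{L:finitely generated Banach}(b), hence for the equivalent norms produced in (a). This makes $P \mapsto (P, |\cdot|_{\mathrm{sub}})$, $f \mapsto f$ a functor from finite projective $R$-modules to finite Banach $R$-modules with projective underlying module, and composing with the forgetful functor visibly recovers the identity, so it is a section. Full faithfulness is then automatic: a morphism in the target category is by definition a bounded $R$-linear map, and we have just seen that every $R$-linear map between these modules is automatically bounded, so the two morphism sets coincide. The only looseness is that $|\cdot|_{\mathrm{sub}}$ depends on the chosen presentation, but by (b) only up to equivalence, so the functor is well-defined up to natural isomorphism once Banach modules are regarded as objects whose norm matters only up to equivalence.

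I do not expect a genuinely hard step; the argument is bookkeeping around quotient seminorms. The one place warranting care is the completeness of $P$ — that is, checking that $\iota(P)$ is closed in $R^n$ (for which the identification $\iota(P) = \ker(1-e)$ with $e$ the idempotent $\iota\pi$ is convenient) — together with keeping the direction of each norm inequality straight.
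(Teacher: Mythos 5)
Your proof is correct, and for part (a) it takes a more direct route than the paper. The paper establishes the equivalence of the subspace and quotient norms on $P$ by a doubling trick: introducing copies $P'$, $Q'$ of $P$, $Q$, comparing the two supremum norms on $P \oplus P' \oplus Q \oplus Q'$ coming from the regroupings $(P\oplus Q)\oplus(P'\oplus Q')$ and $(P\oplus Q')\oplus(P'\oplus Q)$ (these are equivalent by Lemma~\ref{L:finitely generated Banach}), and then observing that under the first regrouping the subspace and quotient norms on $P\oplus Q$ literally coincide; completeness of $P$ is extracted by writing $P$ as the intersection of the closed blocks $P\oplus Q$ and $P\oplus Q'$. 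You instead work directly with the idempotent $e=\iota\pi$ on $R^n$: since $e$ is given by a matrix over $R$ it is bounded for the supremum norm, which gives $|v|_{\mathrm{sub}}\le c\,|v|_{\mathrm{quot}}$ (the reverse inequality being immediate), and $\iota(P)=\ker(1-e)$ is closed, giving completeness. This is cleaner and avoids the auxiliary copies; the paper's version has the mild advantage of leaning only on the already-proved Lemma~\ref{L:finitely generated Banach} rather than on a fresh matrix estimate. Parts (b) and (c) are handled identically in both arguments, via parts (a) and (b) of Lemma~\ref{L:finitely generated Banach}. One small imprecision: your parenthetical suggestion to get boundedness of $\pi$ from Lemma~\ref{L:finitely generated Banach}(b) only yields boundedness into the quotient seminorm on $P$, which is the trivial direction; what you actually need (and what your matrix argument supplies) is boundedness of $e$ from the supremum norm on $R^n$ to itself, so the matrix justification should be regarded as the operative one.
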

\begin{proof}
Let $P', Q'$ be copies of $P,Q$, respectively. Note that the supremum norms $|\cdot|_1$, $|\cdot|_2$
on $P \oplus P' \oplus Q \oplus Q'$
defined by the presentations
\[
(P \oplus Q) \oplus (P' \oplus Q') \cong R^n \oplus R^n, \qquad
(P \oplus Q') \oplus (P' \oplus Q) \cong R^n \oplus R^n
\]
are equivalent by Lemma~\ref{L:finitely generated Banach}.

It is clear that the subspace and quotient norms on $P \oplus Q$ induced by $|\cdot|_1$ are identical,
and that $P \oplus Q$ is complete under these norms.
Consequently, the subspace and quotient norms on $P \oplus Q$ induced by $|\cdot|_2$ are equivalent,
and $P \oplus Q$ is complete under these norms.
Restricting to $P$ yields the subspace and quotient norms induced by the original presentation,
so these two are equivalent.
Moreover, $P$ is the intersection of the closed subspaces $P \oplus Q$ and $P \oplus Q'$ of
$P \oplus P' \oplus Q \oplus Q'$. This proves (a). Parts (b) and (c) follow from
Lemma~\ref{L:finitely generated Banach}.
\end{proof}

\begin{lemma} \label{L:nearby generators}
Let $P$ be a finite projective module over a Banach ring $R$, and choose a norm on $P$ as in
Lemma~\ref{L:finite projective Banach}. Let $\be_1,\dots,\be_n$ be a finite set of generators of $P$
as an $R$-module.
Then there exists $c>0$ such that any $\be'_1,\dots,\be'_n \in P$ with
$|\be'_i - \be_i| < c$ for $i=1,\dots,n$ also form a set of generators of $P$ as an $R$-module.
\end{lemma}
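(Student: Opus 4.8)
The plan is to package the generators as an $R$-linear surjection $R^n \to P$, split it using the projectivity of $P$, and then observe that a sufficiently small perturbation of that surjection is still surjective, by a Neumann series argument.

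In detail: fix the auxiliary module $Q$ and the isomorphism $P \oplus Q \cong R^m$ used to put the norm on $P$ (Lemma~\ref{L:finite projective Banach}), and assume $P \neq 0$, the statement being vacuous otherwise. Let $\phi\colon R^n \to P$ be the $R$-linear map sending the $i$-th standard basis vector to $\be_i$; it is surjective since the $\be_i$ generate $P$, so by projectivity it admits an $R$-linear section $\sigma\colon P \to R^n$ with $\phi \circ \sigma = \id_P$. By Lemma~\ref{L:finitely generated Banach}(b), applied with the bounded homomorphism $\id_R$, with $M = P$ and $N = R^n$, the section $\sigma$ is bounded; let $\|\sigma\|$ be its operator norm, which is positive because $\sigma \neq 0$. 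I will show that any $c$ with $0 < c < \|\sigma\|^{-1}$ works.

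Given $\be'_1,\dots,\be'_n \in P$ with $|\be'_i - \be_i| < c$, let $\phi'\colon R^n \to P$ send the $i$-th basis vector to $\be'_i$. Since $\phi' - \phi$ carries the $i$-th basis vector to $\be'_i - \be_i$ and $R^n$ carries the supremum norm, we get $\|\phi' - \phi\| \leq \max_i |\be'_i - \be_i| < c$, hence the $R$-linear endomorphism $u := (\phi' - \phi) \circ \sigma$ of $P$ satisfies $\|u\| \leq \|\phi'-\phi\|\,\|\sigma\| < c\,\|\sigma\| < 1$. Because $P$ is complete and nonarchimedean, $\id_P + u = \phi' \circ \sigma$ is invertible: for each $\bw \in P$ the series $\sum_{k \geq 0} (-u)^k \bw$ converges (its terms have norm at most $\|u\|^k |\bw| \to 0$), and this defines a bounded $R$-linear two-sided inverse. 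Therefore $\phi'$ has the right inverse $\sigma \circ (\phi' \circ \sigma)^{-1}$, so $\phi'$ is surjective, i.e.\ $\be'_1,\dots,\be'_n$ generate $P$ as an $R$-module.

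The argument is essentially routine, and there is no serious obstacle. The one step that genuinely uses the hypotheses, rather than formal nonsense, is the boundedness of the section $\sigma$: in the possibly non-noetherian setting this is not automatic, and it is exactly here that one needs both the canonical finite-Banach-module structure on $P$ supplied by Lemma~\ref{L:finite projective Banach} and the boundedness of $R$-linear maps between finite modules from Lemma~\ref{L:finitely generated Banach}(b). (One can phrase the same computation without an explicit section, realizing $P$ as a closed direct summand of $R^m$ and perturbing the matrix of $\phi$ inside that summand, but the bookkeeping is identical.)
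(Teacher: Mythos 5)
Your argument is correct and is essentially the paper's own proof: both split the surjection $R^n \to P$ defined by the $\be_i$ using projectivity and then invert an operator (equivalently, a matrix) of the form $1+u$ with $\|u\|<1$ via a Neumann series. The only cosmetic difference is that the paper first replaces the norm on $P$ by the subspace norm coming from that splitting (so that $c=1$ works outright), whereas you keep the given norm and absorb the change of norm into the constant $c<\|\sigma\|^{-1}$ via the boundedness of the section.
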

\begin{proof}
The conclusion does not depend on the choice of the norm (only the constant $c$ does),
so we may use the restriction of the supremum norm on $R^n$ along the homomorphism $R^n \to P$
defined by $\be_1,\dots,\be_n$. In this case, the claim is evident with $c=1$, as then the matrix
$A$ defined by $\be'_j = \sum_i A_{ij} \be_i$ satisfies $|A-1| < 1$ and hence is invertible.
\end{proof}

\begin{defn} \label{D:etale algebra Banach structure}
For $A$ a Banach ring and $B \in \FEt(A)$, view $B$ as a finite Banach module over $A$
via Lemma~\ref{L:finite projective Banach}. The multiplication map $\mu: B \otimes_A B \to B$ is then bounded by Lemma~\ref{L:finite projective Banach} again; consequently, we can find an equivalent norm on $B$ which is submultiplicative, and thus view $B$ as a finite Banach algebra over $A$. We will frequently do so without further comment.
\end{defn}

The analogue of a polynomial extension for Banach rings is the following construction.
\begin{defn} \label{D:Gauss norm}
For $r_1, \dots, r_n > 0$,
define the \emph{Tate algebra} over the Banach ring $A$ with radii $r_1,\dots,r_n$ to be the ring
\[
A\{T_1/r_1,\dots,T_n/r_n\} = \left\{f = \sum_I
a_I T^I: a_I \in A, \lim_{I \to \infty} |a_I| r^I = 0 \right \},
\]
where $I = (i_1,\dots,i_n)$ runs over $n$-tuples of nonnegative integers,
$T^I = T_1^{i_1} \cdots T_n^{i_n}$, and $r^I = r_1^{i_1} \cdots r_n^{i_n}$.
(That is, the series in question converge on the closed polydisc
defined by the conditions $|T_i| \leq r_i$ for $i=1,\dots,n$.)
The set $A\{T_1/r_1,\dots,T_n/r_n\}$ is a subring
of $A \llbracket T_1, \dots, T_n \rrbracket$ complete for the
\emph{Gauss norm}
\[
\left| \sum_I a_I T^I \right|_r = \sup_I \{|a_I| r^I\},
\]
which is easily seen to be submultiplicative (resp.\ power-multiplicative, multiplicative)
if the seminorm on $A$ is;
see \cite[Lemma~1.7]{kedlaya-witt}.
In case $r_1 = \cdots = r_n = 1$, we contract the notation to $A\{T_1,\dots,T_n\}$.

A bounded homomorphism $A \to B$ of Banach rings
is \emph{affinoid} if it factors as $A \to A\{T_1,\dots,T_n\} \to B$
for some positive integer $n$ and some strict surjection
$A\{T_1,\dots,T_n\} \to B$.
We also say that $B$ is an \emph{affinoid algebra} over $A$.

We say that $A$ is \emph{strongly noetherian} if every affinoid algebra over $A$ is noetherian, or equivalently the rings $A\{T_1,\dots,T_n\}$ are noetherian for all $n \geq 0$. It appears to be unknown whether every noetherian Banach algebra is strongly noetherian; that is, there is no known analogue of the Hilbert basis theorem for Banach algebras.
\end{defn}

\begin{remark} \label{R:strictly affinoid homomorphism}
In Berkovich's theory, what we call an \emph{affinoid homomorphism} is more commonly called a \emph{strictly affinoid homomorphism}; by contrast, an \emph{affinoid homomorphism} would be allowed to have the form 
$A \to A\{T_1/r_1,\dots,T_n/r_n\} \to B$
for some positive integer $n$, some $r_1,\dots,r_n > 0$, and some strict surjection
$A\{T_1/r_1,\dots,T_n/r_n\} \to B$. This extra generality is important in Berkovich's theory especially when $A$ carries the trivial norm, but is incompatible with Huber's adic constructions.
\end{remark}

\subsection{The Gel'fand spectrum of a Banach ring}
\label{subsec:spectra}

We now introduce one type of topological space corresponding to a Banach ring, as considered by Berkovich \cite{berkovich1, berkovich2}.
\begin{hypothesis}
Throughout \S\ref{subsec:spectra},
let $A$ be a Banach ring with norm denoted by $|\cdot|$. Note that we do not yet impose any extra conditions on $A$, but see Remark~\ref{R:small norm elements}.
\end{hypothesis}

\begin{defn} \label{D:Gelfand}
The \emph{Gel'fand spectrum}
$\calM(A)$ of $A$ is the set of multiplicative seminorms $\alpha$ on $A$
bounded above by $|\cdot|$ (or equivalently, dominated by $|\cdot|$).
We topologize $\calM(A)$
as a closed subspace of the product $\prod_{a \in A} [0, |a|]$;
hence $\calM(A)$ is compact by Tikhonov's theorem
\cite[\S 1.9.5, Th\'eor\`eme~3]{bourbaki-top} (see also
\cite[Theorem~1.2.1]{berkovich1}).
A subbasis of the topology on $\calM(A)$
is given by the sets $\{\alpha \in \calM(A): \alpha(f) \in I\}$
for each $f \in A$ and each open interval $I \subseteq \RR$.
Any bounded homomorphism $\varphi: A \to B$ between Banach rings
induces a continuous map
$\varphi^*: \calM(B) \to \calM(A)$ by restriction.
\end{defn}

\begin{remark} \label{R:not Banach}
One can use Definition~\ref{D:Gelfand} to define the spectrum $\calM(A)$
more generally for any ring $A$ equipped with a submultiplicative seminorm. However,
this will provide no useful additional generality, because the map $A \to \widehat{A}$
always induces a homeomorphism $\calM(\widehat{A}) \to \calM(A)$.
\end{remark}

Berkovich's first main theorem about the spectrum is the following.

\begin{theorem}[Berkovich] \label{T:nonempty spectrum}
For $A$ nonzero, $\calM(A) \neq \emptyset$.
\end{theorem}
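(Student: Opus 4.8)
The plan is to produce a nonzero multiplicative seminorm on $A$ bounded by $|\cdot|$ by the standard maximal-ideal/residue-field recipe, reducing everything to the case of a nonzero field with a submultiplicative norm, where the existence of a point in the Berkovich spectrum can be extracted either from a nonarchimedean Gel'fand–Mazur-type argument or from Zorn's lemma applied to the partially ordered set of bounded multiplicative seminorms. First I would pass to the completion (Remark~\ref{R:not Banach} lets us ignore this step, but it costs nothing) and choose a maximal ideal $\gothm$ of $A$; by Lemma~\ref{L:closure trivial}, $\gothm$ is closed, so the quotient $A/\gothm$ is a Banach ring which is a field, equipped with the quotient norm $\overline{|\cdot|}$, which is submultiplicative. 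It now suffices to exhibit a bounded multiplicative seminorm on this Banach field, since composing with $A \to A/\gothm$ gives the desired element of $\calM(A)$ (boundedness is automatic, as the quotient map is submetric).

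So the core is: a nonzero Banach ring $K$ whose underlying ring is a field admits a bounded multiplicative seminorm. Here I would argue via Zorn's lemma on the set $\Sigma$ of all power-multiplicative (equivalently, for our purposes, submultiplicative with the right growth) seminorms on $K$ that are bounded above by $\overline{|\cdot|}$ and nonzero; $\Sigma$ is nonempty because the spectral seminorm $\overline{|\cdot|}_{\spect}$ lies in it (it is power-multiplicative and dominated by the original norm, and it is nonzero because $1 \mapsto 1$). Order $\Sigma$ by pointwise $\geq$; a descending chain has an infimum which is again a power-multiplicative seminorm bounded by $\overline{|\cdot|}$, and one checks it is nonzero using that $\alpha(1) = 1$ for every $\alpha$ in the chain (here the field hypothesis and power-multiplicativity are what keep the infimum from collapsing). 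Thus $\Sigma$ has a minimal element $\gamma$, and the claim is that minimality forces $\gamma$ to be multiplicative. If not, there exist $x,y \in K$ with $\gamma(xy) < \gamma(x)\gamma(y)$, and one can then construct a strictly smaller seminorm in $\Sigma$ — for instance, localize: since $K$ is a field, invert $x$ and push forward, or more concretely define $\gamma'(z) = \inf_n \gamma(z x^n)/\gamma(x)^n$ (a limit that exists by submultiplicativity, using that $x$ is invertible so this never vanishes identically), which is again bounded by $\overline{|\cdot|}$, still power-multiplicative, nonzero, and satisfies $\gamma' \leq \gamma$ with strict inequality somewhere — contradicting minimality.

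The main obstacle I expect is the last step: verifying that a non-multiplicative minimal element can genuinely be shrunk, and in particular that the candidate replacement $\gamma'$ remains a power-multiplicative \emph{seminorm} (the triangle/ultrametric inequality and power-multiplicativity for $\gamma'$ need a short argument) and remains bounded by $\overline{|\cdot|}$. The cleanest route is probably not the $\inf_n$ formula but a localization argument combined with the spectral-seminorm operation: for $x$ a witness to non-multiplicativity, consider the Banach ring $K\langle x^{\pm 1}\rangle$-style completed localization (or simply $K$ itself, which already contains $x^{-1}$), take the seminorm $z \mapsto \gamma_{\mathrm{loc}}(z)$ obtained by restricting the spectral seminorm of an appropriate localized norm, and check it is strictly dominated by $\gamma$ at the pair $(x,y)$. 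Alternatively, one can cite that the spectral seminorm is the infimum of all bounded multiplicative seminorms (a theorem of Berkovich), which would short-circuit the whole Zorn argument, but since that result is itself essentially equivalent to Theorem~\ref{T:nonempty spectrum} applied to residue fields, the self-contained Zorn's-lemma proof sketched above is the honest path, and its delicate point is exactly this shrinking construction at a minimal element.
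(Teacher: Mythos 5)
Your proof is correct, and it is in substance the argument the paper is invoking: the paper proves this theorem only by citing Berkovich, and Berkovich's proof is exactly the Zorn's-lemma argument you describe (a minimal nonzero bounded power-multiplicative seminorm is multiplicative). The point you flag as delicate in fact goes through directly with the $\inf_n$ formula, with no need for the localization detour: the sequence $\gamma(zx^n)/\gamma(x)^n$ is non-increasing by submultiplicativity, so $\gamma'(z) = \lim_n \gamma(zx^n)/\gamma(x)^n$ exists and is a submultiplicative seminorm dominated by $\gamma$; it is power-multiplicative because $\gamma'(z^m) = \lim_n \gamma((zx^n)^m)/\gamma(x)^{mn} = \lim_n \bigl(\gamma(zx^n)/\gamma(x)^n\bigr)^m = \gamma'(z)^m$ (power-multiplicativity of $\gamma$ applied to $zx^n$); it is nonzero because $\gamma'(1) = \lim_n \gamma(x^n)/\gamma(x)^n = 1$; and $\gamma'(y) \leq \gamma(xy)/\gamma(x) < \gamma(y)$ gives the strict decrease contradicting minimality. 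Note that what this construction actually requires is only $\gamma(x) \neq 0$, which already follows from $\gamma(xy) < \gamma(x)\gamma(y)$, so invertibility of $x$ plays no role; consequently the reduction to a Banach field via Lemma~\ref{L:closure trivial}, while harmless, is superfluous, and Berkovich runs the same Zorn argument directly on $A$.
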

\begin{proof}
See \cite[Theorem~1.2.1]{berkovich1}.
\end{proof}
\begin{cor} \label{C:vanishing norm}
For any nontrivial ideal $I$ of $A$, there exists $\alpha \in \calM(A)$ such that
$\alpha(f) = 0$ for all $f \in I$.
\end{cor}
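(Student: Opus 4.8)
The plan is to deduce this from Theorem~\ref{T:nonempty spectrum} by passing to a suitable quotient Banach ring. First I would consider the closure $\overline{I}$ of $I$ in $A$; by Lemma~\ref{L:closure trivial}, since $I$ is nontrivial, $\overline{I}$ is a nontrivial (in particular proper) closed ideal of $A$. Then the quotient $B = A/\overline{I}$, equipped with the quotient norm induced by $|\cdot|$, is a Banach ring (completeness of a quotient of a Banach module by a closed submodule is standard), and it is nonzero precisely because $\overline{I}$ is proper.

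Next I would apply Theorem~\ref{T:nonempty spectrum} to $B$ to obtain a multiplicative seminorm $\beta \in \calM(B)$, i.e.\ a multiplicative seminorm on $B$ bounded above by the quotient norm. Composing with the quotient map $\pi: A \to B$ gives a function $\alpha = \beta \circ \pi$ on $A$. This is multiplicative because $\pi$ is a ring homomorphism and $\beta$ is multiplicative, it satisfies the ultrametric and normalization conditions inherited from $\beta$, and it is bounded above by $|\cdot|$ because the quotient norm on $B$ is bounded above by the image of $|\cdot|$; hence $\alpha \in \calM(A)$. Finally, since every $f \in I$ maps to $0$ in $B$, we get $\alpha(f) = \beta(0) = 0$, as required.

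I do not expect any serious obstacle here; the only minor points to verify are that the quotient norm on $A/\overline{I}$ is again submultiplicative and complete (so that $B$ is a genuine Banach ring to which Theorem~\ref{T:nonempty spectrum} applies) and that $\overline{I}$ is proper, which is exactly the content of Lemma~\ref{L:closure trivial}. The composition step is purely formal.
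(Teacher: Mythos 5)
Your proposal is correct and matches the paper's proof essentially verbatim: the paper likewise passes to the closure $J$ of $I$, notes via Lemma~\ref{L:closure trivial} that $A/J$ is nonzero, applies Theorem~\ref{T:nonempty spectrum} to get a point of $\calM(A/J)$, and pulls it back to $\calM(A)$. Your extra remarks on completeness and submultiplicativity of the quotient norm are routine verifications the paper leaves implicit.
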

\begin{proof}
Let $J$ be the closure of $I$. By Lemma~\ref{L:closure trivial},
$A/J$ is nonzero, so $\calM(A/J) \neq \emptyset$
by Theorem~\ref{T:nonempty spectrum}. Any element of $\calM(A/J)$ restricts to an
element $\alpha \in \calM(A)$ of the desired form.
(Compare \cite[Corollary~1.2.4]{berkovich1}.)
\end{proof}
\begin{cor} \label{C:ideal from spectrum}
A finite set $f_1,\dots,f_n$ of elements of $A$ generates the unit ideal if and only
if for each $\alpha \in \calM(A)$, there exists an index $i \in \{1,\dots,n\}$
for which $\alpha(f_i) > 0$.
\end{cor}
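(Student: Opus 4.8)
The statement to prove is Corollary~\ref{C:ideal from spectrum}: a finite set $f_1,\dots,f_n \in A$ generates the unit ideal if and only if for each $\alpha \in \calM(A)$ there is an index $i$ with $\alpha(f_i) > 0$.

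The plan is to prove the two directions separately, with the substantive content being the reverse implication. For the forward direction, suppose $f_1,\dots,f_n$ generate the unit ideal, so $1 = \sum_i g_i f_i$ for some $g_i \in A$. If some $\alpha \in \calM(A)$ had $\alpha(f_i) = 0$ for all $i$, then by submultiplicativity and the ultrametric inequality $\alpha(1) \leq \max_i \alpha(g_i f_i) \leq \max_i \alpha(g_i)\alpha(f_i) = 0$; but $\alpha(1) = 1$ since $\alpha$ is multiplicative and $A$ is nonzero (and if $A = 0$ there is nothing to prove since there are no seminorms and the unit ideal is everything vacuously — actually one should note the statement is trivially fine for the zero ring). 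This contradiction gives the forward direction.

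For the reverse direction, I would argue by contraposition: suppose $I = (f_1,\dots,f_n)$ is not the unit ideal. Then $I$ is a nontrivial (proper) ideal, so by Corollary~\ref{C:vanishing norm} there exists $\alpha \in \calM(A)$ with $\alpha(f) = 0$ for all $f \in I$; in particular $\alpha(f_i) = 0$ for each $i$, so this $\alpha$ witnesses the failure of the condition. This is the cleanest route, since Corollary~\ref{C:vanishing norm} has already packaged exactly the needed input (itself derived from Berkovich's nonemptiness theorem via Lemma~\ref{L:closure trivial}).

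The main obstacle is essentially nonexistent here — the corollary is a quick consequence of the preceding results — so the only care needed is the bookkeeping about the zero ring and the observation that "generates the unit ideal'' fails precisely when $I$ is a proper (hence nontrivial) ideal, which is what lets Corollary~\ref{C:vanishing norm} apply. One subtlety worth a sentence: in the reverse direction one does not even need $\alpha$ to vanish on all of the closure of $I$, only on the generators, so invoking Corollary~\ref{C:vanishing norm} is slightly more than required but is the most economical citation.
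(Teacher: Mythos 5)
Your proof is correct and follows essentially the same route as the paper's: the forward direction via $\alpha(1)=1$ together with submultiplicativity and the ultrametric inequality, and the reverse direction by contraposition through Corollary~\ref{C:vanishing norm}. No issues.
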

\begin{proof}
If there exist $u_1,\dots,u_n \in A$ for which $u_1 f_1 + \cdots + u_n f_n = 1$,
then for each $\alpha \in \calM(A)$,
we have $\max_i\{\alpha(u_i) \alpha(f_i)\} \geq 1$ and so $\alpha(f_i) > 0$ for some index $i$.
Conversely, suppose that $f_1,\dots,f_n$ generate a nontrivial ideal $I$;
then by Corollary~\ref{C:vanishing norm}, we can choose $\alpha \in \calM(A)$
such that $\alpha(f) = 0$ for all $f \in I$.
\end{proof}
\begin{cor} \label{C:unit from spectrum}
An element $f \in A$ is a unit if and only if $\alpha(f) > 0$ for all $\alpha \in \calM(A)$.
\end{cor}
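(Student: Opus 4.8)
The plan is to deduce this as the $n=1$ case of Corollary~\ref{C:ideal from spectrum}, with a one-line direct argument for the easy implication.

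First I would handle the forward direction. Suppose $f \in A$ is a unit, with inverse $f^{-1} \in A$. For any $\alpha \in \calM(A)$, multiplicativity gives $\alpha(f)\alpha(f^{-1}) = \alpha(ff^{-1}) = \alpha(1)$, and since $\alpha$ is a multiplicative seminorm it satisfies $\alpha(1) = 1$ (condition (c$''$) of Definition~\ref{D:submultiplicative}, which is part of the definition of $\calM(A)$; alternatively $\alpha(1) = \alpha(1)^2 \in \{0,1\}$, and the value $0$ would force $\alpha \equiv 0$, contradicting $\alpha(1)=1$). Hence $\alpha(f)\alpha(f^{-1}) = 1$, so in particular $\alpha(f) > 0$. (This is of course also the $n=1$ instance of the forward implication of Corollary~\ref{C:ideal from spectrum}.)

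For the converse, suppose $\alpha(f) > 0$ for every $\alpha \in \calM(A)$. Apply Corollary~\ref{C:ideal from spectrum} to the one-element set $\{f\}$: the hypothesis says precisely that for each $\alpha \in \calM(A)$ there is an index $i \in \{1\}$ with $\alpha(f_i) > 0$, so the corollary yields that $f$ generates the unit ideal of $A$, i.e.\ $f$ is a unit.

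There is no real obstacle here; the statement is an immediate specialization of Corollary~\ref{C:ideal from spectrum}. The only point worth spelling out is the normalization $\alpha(1) = 1$ for multiplicative seminorms, which is what makes the forward direction trivial and which is already built into the definition of the spectrum.
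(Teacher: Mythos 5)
Your proof is correct and is exactly the argument the paper intends: the corollary is stated without proof as the $n=1$ specialization of Corollary~\ref{C:ideal from spectrum}, with the forward direction immediate from multiplicativity and $\alpha(1)=1$. Nothing to add.
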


\begin{defn} \label{D:residue field}
For $\alpha \in \calM(A)$, define the prime ideal
$\gothp_\alpha = \alpha^{-1}(0)$;
then $\alpha \in \calM(A)$ induces a multiplicative
norm on $A/\gothp_\alpha$. The completion of $\Frac(A/\gothp_\alpha)$
for the unique multiplicative extension of this norm is called the
\emph{residue field} of $\alpha$, and denoted $\calH(\alpha)$.
The image of the map $\calM(A) \to \Spec(A)$ taking $\alpha$ to $\gothp_\alpha$ contains all maximal ideals, by
Corollary~\ref{C:vanishing norm}; see Lemma~\ref{L:finite generation} for some consequences of this
observation.

The \emph{Gel'fand transform} of $A$ is the map
$A \to \prod_{\alpha \in \calM(A)} \calH(\alpha)$; it is bounded
for the supremum norm on the
product (or more precisely, on the subring of the product on which the supremum is finite).
\end{defn}

\begin{remark} \label{R:small norm elements}
Starting in \S\ref{subsec:adic spectrum}, we will require $A$ to contain a
topologically nilpotent unit $z$.
To put this in context, consider the following conditions on $A$.
\begin{enumerate}
\item[(a)]
We may view $A$ as a Banach algebra over some analytic field.
\item[(b)]
There exists a topologically nilpotent unit $z \in A$ such that 
$\left|z \right|_{\spect} \left|z^{-1} \right|_{\spect} = 1$.
We will refer to any such $z$ as a \emph{uniform unit} in $A$.
Note that for any $\alpha \in \calM(A)$,
\[
1 = \alpha(z) \alpha(z^{-1}) \leq \left|z \right|_{\spect} \left|z^{-1} \right|_{\spect} = 1
\]
and so $\alpha(z) = \left|z \right|_{\spect}$. 
\item[(c)]
There exists a topologically nilpotent unit $z \in A$.
\item[(d)]
The ring $A$ is \emph{free of trivial spectrum}: there exists no $\alpha \in \calM(A)$ such that the norm on $\calH(\alpha)$ is trivial, or equivalently (thanks to Corollary~\ref{C:ideal from spectrum}) that the ideal generated by $\gothm_A$ is trivial.
\end{enumerate}
We record the following observations.
\begin{itemize}
\item
These conditions occur in increasingly weaker order: (a) implies (b) implies (c) implies (d). To see that (a) implies (b), note that if $A$ is a Banach algebra over an analytic field $K$, then any $z \in K$ with $\left| z \right| \in (0,1)$ has the desired form.
\item
If $A$ is of characteristic $p$, then conditions (a) and (b) are equivalent:
the existence of a uniform unit $z$ forces $A$ to be a Banach algebra over the analytic field $\Fp((z))$. This will be important in the study of perfectoid algebras, and is ultimately the reason why we can avoid allowing analytic fields to carry the trivial norm.
\item
If $A$ is not of characteristic $p$, then (b) does not imply (a). 
See Remark~\ref{R:power series} (in the case $rs=1$) and Remark~\ref{R:Robba is Tate} for examples in the context of this paper.
\item
Conditions (b) and (c) are not equivalent. 
For example, let $k$ be any field, 
choose $c_1, c_2 \in (0, +\infty)$ which are linearly independent over $\QQ$, normalize the $z_i$-adic norm on $k((z_i))$ by putting $\left| z_i \right| = e^{-c_i}$,
and put $A = k((z_1)) \oplus k((z_2))$ with the supremum norm (which is power-multiplicative). Then $(z_1, z_2)$ is a topologically nilpotent unit, but there is no element $z \in A$ satisfying $\left| z \right|_{\spect} \left| z^{-1} \right|_{\spect} = 1$ and $\left| z \right|_{\spect} \neq 1$.
However, when $A$ is uniform (as in this example) this problem can be remedied by changing the norm without changing the norm topology; see Remark~\ref{R:warp norm}.
\item
We do not know whether (c) and (d) are equivalent.
\end{itemize}
\end{remark}

Berkovich's second main theorem about the spectrum is the following result.
\begin{theorem}[Berkovich] \label{T:transform}
The restriction of the supremum norm on
$\prod_{\alpha \in \calM(A)} \calH(\alpha)$ along
the Gel'fand transform is the spectral seminorm on $A$.
\end{theorem}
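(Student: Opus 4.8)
The plan is to prove two inequalities between the spectral seminorm $|\cdot|_{\spect}$ on $A$ and the seminorm $|\cdot|_{\Gel}$ obtained by restricting the supremum norm on $\prod_{\alpha} \calH(\alpha)$ along the Gel'fand transform. One direction is immediate: every $\alpha \in \calM(A)$ is multiplicative and bounded above by $|\cdot|$, hence bounded above by $|\cdot|_{\spect}$ (since a multiplicative seminorm is power-multiplicative, and by the remark after Definition~\ref{D:submultiplicative} a power-multiplicative seminorm dominated by a submultiplicative one is pointwise dominated by its spectral seminorm); the norm on $\calH(\alpha)$ extends $\alpha$ isometrically, so $|f|_{\Gel} = \sup_\alpha \alpha(f) \leq |f|_{\spect}$ for all $f \in A$.

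The substantive direction is $|f|_{\spect} \leq |f|_{\Gel}$, equivalently: if $|f|_{\Gel} < 1$ then $|f|_{\spect} < 1$, i.e.\ $\alpha(f) < 1$ for all $\alpha \in \calM(A)$ implies $\lim_s |f^s|^{1/s} < 1$. The standard trick (this is Berkovich, \cite[Theorem~1.3.1]{berkovich1}) is to pass to the Banach algebra $A\langle f^{-1}\rangle$ — that is, the completion of $A[T]/(fT - 1)$ for the quotient of a suitable weighted Gauss norm. The key claim is that $\calM(A\langle f^{-1}\rangle) = \emptyset$ precisely when $f$ is topologically nilpotent in the appropriate sense. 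More precisely: given $c > |f|_{\Gel}$, I would form the Banach algebra $B = A\langle c^{-1}T\rangle/(fT-1)$ (adjoining $T = f^{-1}$ with $|T| \leq c^{-1}$ controlled), and observe that any $\beta \in \calM(B)$ restricts to some $\alpha \in \calM(A)$ with $\alpha(f) \geq c^{-1}\cdot$(reciprocal bound) $> |f|_{\Gel}$ — wait, more carefully: $\beta(f)\beta(T) = 1$ and $\beta(T) \leq c^{-1}$ force $\beta(f) \geq c$, contradicting $\alpha(f) \leq |f|_{\Gel} < c$. Hence $\calM(B) = \emptyset$, so by Theorem~\ref{T:nonempty spectrum} the ring $B$ is the zero ring. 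But $B = 0$ means $1$ lies in the closure of the ideal $(fT-1)$ inside $A\langle c^{-1}T\rangle$, which after clearing denominators yields an identity expressing that $\sum_s a_s f^s$ converges with $|a_s| c^{-s}$ bounded and the sum equal to $1$; unwinding this shows $|f^s| \leq C c^s$ for all $s$ and some constant $C$, hence $|f|_{\spect} \leq c$. Letting $c \downarrow |f|_{\Gel}$ gives the result.

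The main obstacle is the bookkeeping in the last step: extracting a quantitative bound $|f^s| \le Cc^s$ from the mere vanishing of $B$. One clean way to organize it, avoiding explicit series manipulation, is to argue contrapositively with spectral radii: if $|f|_{\spect} \geq c$, then $f/c'$ is not topologically nilpotent for any $c' < c$, so $1 - f/c'$ is not invertible hmm — actually the cleanest route is the one through the Gel'fand transform of $A\langle f^{-1}\rangle$ itself: $B$ uniform would have its norm computed by $\calM(B)$ (which is what we are trying to prove, so circular). Instead I would cite the computation of spectral radius via $\calM$ for the specific element: $|f|_{\spect} = \inf\{ r > 0 : f \text{ becomes a unit of bounded inverse in } A\langle r^{-1}f\rangle\text{-type completion}\}$, and combine with Corollary~\ref{C:unit from spectrum} applied inside the completed localization. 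The final statement, that $|\cdot|_{\Gel} = |\cdot|$ when $A$ is uniform, is then immediate since uniformity means $|\cdot|_{\spect} = |\cdot|$ by Definition~\ref{D:Banach ring}.
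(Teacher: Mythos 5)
Your strategy is the right one and is essentially the argument behind the reference the paper invokes (the paper's own ``proof'' consists of the citation to \cite[Corollary~1.3.2]{berkovich1}, so you are attempting to supply content the paper omits). The easy inequality is fine, and the reduction of the hard inequality to ``$\calM(B)=\emptyset$, hence $\widehat{B}=0$ by Theorem~\ref{T:nonempty spectrum}, hence $1$ lies in the closure of the ideal $(fT-1)$ in $A\{T/c^{-1}\}$'' is correct. The gap is that you never close the argument at the one point where something must actually be computed: the ``unwinding'' is deferred, your first attempt to organize it is abandoned mid-sentence, and your final fallback --- ``cite the computation of spectral radius via $\calM$ for the specific element'' --- is precisely the statement being proved. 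As written, the quantitative step from $\widehat{B}=0$ to $|f^s|\leq Cc^s$ is missing.

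Two ways to repair this. First, the bookkeeping you feared does terminate: pick $g=\sum_i g_iT^i\in A\{T/c^{-1}\}$ with $\|(fT-1)g-1\|<\epsilon<1$ for the Gauss norm, so that $\|g_i\|\leq Mc^i$, $\|g_0+1\|<\epsilon$, and $h_i:=fg_{i-1}-g_i$ satisfies $\|h_i\|<\epsilon c^i$. Iterating the recursion gives $-f^n=g_n+\sum_{j=0}^n f^{n-j}h_j$, so $u_n:=\|f^n\|c^{-n}$ satisfies $u_n\leq\max\{M,\epsilon\max_{k\leq n}u_k\}$; since $\epsilon<1$ this forces $u_n\leq\max\{\|1\|,M\}$ for all $n$, whence $|f|_{\spect}\leq c$. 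Second, and cleaner: rather than killing $fT-1$, test invertibility of $1-fT$ in $A\{T/c^{-1}\}$ using Corollary~\ref{C:unit from spectrum} (which rests only on Theorem~\ref{T:nonempty spectrum}, so there is no circularity). Every $\gamma\in\calM(A\{T/c^{-1}\})$ restricts to some $\alpha\in\calM(A)$ and satisfies $\gamma(fT)=\alpha(f)\gamma(T)\leq c^{-1}\sup_{\alpha}\alpha(f)<1$, so $\gamma(1-fT)=1>0$ and $1-fT$ is a unit; its inverse, expanded as a power series, is forced by the coefficient recursion to equal $\sum_n f^nT^n$, and membership of that series in $A\{T/c^{-1}\}$ says exactly that $\|f^n\|c^{-n}\to 0$, i.e.\ $|f|_{\spect}\leq c$. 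Letting $c$ decrease to $\sup_\alpha\alpha(f)$ finishes the proof in either version.
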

\begin{proof}
See \cite[Corollary~1.3.2]{berkovich1}.
\end{proof}

\begin{remark} \label{R:transform}
We collect several remarks about Theorem~\ref{T:transform}.
\begin{enumerate}
\item[(a)]
Theorem~\ref{T:transform} implies Theorem~\ref{T:nonempty spectrum}: if $A$ is nonzero, the spectral seminorm
of $1 \in A$ equals 1.
\item[(b)]
The supremum norm in Theorem~\ref{T:transform} is always achieved if $A$ is nonzero: for each $f \in A$,
the map $f \mapsto \alpha(f)$ is continuous on the compact space $\calM(A)$, and so achieves its maximum.
Consequently, Theorem~\ref{T:transform} may be viewed as a form of the \emph{maximum modulus principle}
in nonarchimedean analytic geometry. For an analogous result in rigid analytic geometry,
see \cite[Proposition~6.2.1/4]{bgr}.
\item[(c)]
It is not generally true that $A$ is complete under its spectral
seminorm even when the latter is a norm; this observation is related to the definition of \emph{uniformization} (see Definition~\ref{D:uniformization}).
One exception is for affinoid algebras over an analytic field; see Corollary~\ref{C:spectral is norm}.
\item[(d)]
For any function $g: \calM(A) \to \RR^+$ whose image is bounded away from 0 and $\infty$, the norm
$\sup\{\alpha^{g(\alpha)}: \alpha \in \calM(A)\}$ defines the same topology on $A$ as the spectral seminorm.
\end{enumerate}
\end{remark}

\begin{lemma} \label{L:finite generation}
A homomorphism $M \to N$ of $A$-modules, with $N$ a finite $A$-module, is surjective if and only if
$M \otimes_A \calH(\alpha) \to N \otimes_A \calH(\alpha)$
is surjective for all $\alpha \in \calM(A)$.
\end{lemma}
\begin{proof}
Suppose that $M \otimes_A \calH(\alpha) \to N \otimes_A \calH(\alpha)$
is surjective for all $\alpha \in \calM(A)$. For each maximal ideal $\gothp$ of $A$,
choose $\alpha \in \calM(A)$ with $\gothp_\alpha = \gothp$.
Then $A/\gothp \to \calH(\alpha)$ is an extension of fields, so
surjectivity of $M \otimes_A \calH(\alpha) \to N \otimes_A \calH(\alpha)$
implies surjectivity of $M \otimes_A A/\gothp \to N \otimes_A A/\gothp$.
This in turn implies surjectivity of $M \otimes_A A_\gothp \to N \otimes_A A_\gothp$
by Nakayama's lemma, and hence surjectivity of $M \to N$.
\end{proof}

\begin{lemma} \label{L:fibre product}
For $A \to B$, $A \to C$ homomorphisms of Banach rings, the map
$\calM(B \widehat{\otimes}_A C) \to \calM(B) \times_{\calM(A)} \calM(C)$
is surjective.
\end{lemma}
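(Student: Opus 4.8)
The plan is to realise each point $(\beta,\gamma)$ of $\calM(B)\times_{\calM(A)}\calM(C)$ as the image of a point of $\calM(B\widehat\otimes_A C)$ by passing to residue fields. Fix $(\beta,\gamma)$ in the fibre product, so $\beta$ and $\gamma$ have a common restriction $\alpha\in\calM(A)$; if the fibre product is empty there is nothing to prove.

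First I would set up the tower of residue fields. Since $\beta|_A=\alpha$ we have $\gothp_\alpha=\gothp_\beta\cap A$, and $\alpha$, $\beta$ induce the same multiplicative norm on $A/\gothp_\alpha\hookrightarrow B/\gothp_\beta$; passing to fraction fields and completing yields an isometric embedding $\calH(\alpha)\hookrightarrow\calH(\beta)$ of analytic fields through which $A\to B\to\calH(\beta)$ factors, and likewise $\calH(\alpha)\hookrightarrow\calH(\gamma)$. The norm-nonincreasing maps $B\to\calH(\beta)$, $C\to\calH(\gamma)$, being compatible over $\calH(\alpha)$, then assemble into a ring homomorphism $B\otimes_A C\to\calH(\beta)\otimes_{\calH(\alpha)}\calH(\gamma)$ which is submetric for the product seminorms, hence extends to a bounded homomorphism $\phi\colon B\widehat\otimes_A C\to D$, where $D:=\calH(\beta)\widehat\otimes_{\calH(\alpha)}\calH(\gamma)$.

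The crucial input is that $D\neq 0$: by Lemma~\ref{L:inject tensor}(a) applied over the analytic field $\calH(\alpha)$, the algebraic tensor product $\calH(\beta)\otimes_{\calH(\alpha)}\calH(\gamma)$ — nonzero as a tensor product of nonzero vector spaces over a field — injects into its completion $D$. Granting this, Theorem~\ref{T:nonempty spectrum} supplies $\delta\in\calM(D)$, and I set $\alpha':=\delta\circ\phi$. Since $\phi$ is submetric, $\alpha'$ is a multiplicative seminorm on $B\widehat\otimes_A C$ bounded above by its norm, i.e.\ an element of $\calM(B\widehat\otimes_A C)$.

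It remains to check that $\alpha'$ maps to $(\beta,\gamma)$, which amounts to showing that $\delta$ restricts to $|\cdot|_{\calH(\beta)}$ on the copy of $\calH(\beta)$ in $D$ (and symmetrically for $\calH(\gamma)$). Writing $\iota$ for the structure map $\calH(\beta)\to D$, for $b\in\calH(\beta)^\times$ the one-term presentation $b\otimes 1$ gives $\delta(\iota(b))\le|b|_{\calH(\beta)}$ and $\delta(\iota(b^{-1}))\le|b|_{\calH(\beta)}^{-1}$, while $\delta(\iota(b))\,\delta(\iota(b^{-1}))=\delta(1)=1$, so both inequalities are equalities; thus $\delta\circ\iota=|\cdot|_{\calH(\beta)}$. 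Restricting $\alpha'$ along $B\to B\widehat\otimes_A C$ then recovers $|\cdot|_{\calH(\beta)}$ pulled back along $B\to\calH(\beta)$, which is exactly $\beta$, and similarly $\alpha'|_C=\gamma$. The only real obstacle here is the nonvanishing of $D$; it rests on the orthonormalisability of nonarchimedean Banach spaces that underlies Lemma~\ref{L:inject tensor}, while the remainder is formal manipulation of residue fields together with the short squeeze argument above.
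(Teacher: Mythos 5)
Your proposal is correct and follows exactly the route the paper indicates: reduce to the residue fields $\calH(\beta)\hookleftarrow\calH(\alpha)\hookrightarrow\calH(\gamma)$, use Lemma~\ref{L:inject tensor}(a) to see that $\calH(\beta)\widehat{\otimes}_{\calH(\alpha)}\calH(\gamma)$ is nonzero, and then apply Theorem~\ref{T:nonempty spectrum} and pull back. The paper leaves the verification that the resulting seminorm restricts to $\beta$ and $\gamma$ implicit; your squeeze argument via $\delta(\iota(b))\,\delta(\iota(b^{-1}))=1$ supplies that detail correctly.
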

\begin{proof}
This reduces to the case where $A,B,C$ are all analytic fields, for which we
may apply Lemma~\ref{L:inject tensor}(a) and Theorem~\ref{T:nonempty spectrum}. See also \cite[Lemma~1.20]{kedlaya-witt}.
\end{proof}

\begin{lemma} \label{L:finite etale surjective spectrum}
For $A$ a Banach ring and $B$ a faithfully finite \'etale $A$-algebra viewed as a Banach algebra over $A$ as per 
Definition~\ref{D:etale algebra Banach structure}, the map $\calM(B) \to \calM(A)$ is surjective. (It is also open; see Lemma~\ref{L:henselian local ring}(c).)
\end{lemma}
\begin{proof}
The hypothesis on $B$ ensures that for each $\alpha \in \calM(A)$, $B \otimes_A \calH(\alpha) = B \widehat{\otimes}_A \calH(\alpha)$ is a nonzero direct sum of finite extensions of $\calH(\alpha)$, and so $\calM(B \otimes_A \calH(\alpha))$ is nonempty.
This proves the claim.
\end{proof}

\begin{remark} \label{R:compact spaces}
When studying spectra, it is helpful to use general facts about compact topological spaces.
Here are a few that we will need.
\begin{enumerate}
\item[(a)]
The image of a quasicompact topological space under a continuous map is quasicompact
\cite[\S I.9.4, Th\'eor\`eme~2]{bourbaki-top}.
Consequently, any continuous map $f: Y \to X$ from a quasicompact topological space to a Hausdorff topological space
is closed \cite[\S I.9.4, Corollaire~2]{bourbaki-top}.
\item[(b)]
With notation as in (a), if $V$ is open in $Y$, then $W = X \setminus f(Y \setminus V)$ is open.
One consequence is that if $Z$ is closed in $X$ and $V$ is an open neighborhood of $f^{-1}(Z)$,
then $W$ is an open neighborhood of $Z$ and $f^{-1}(W) \subseteq V$.
Another consequence is that the quotient and subspace topologies on $\image(f)$ coincide:
if $U \subseteq \image(f)$ and $V = f^{-1}(U)$ is open in $Y$, then $U = \image(f) \cap W$ is open in $\image(f)$.
That is, any continuous surjection (resp.\ bijection)
from a quasicompact space to a Hausdorff space is a quotient map (resp.\ a homeomorphism).
\item[(c)]
If $X$ is the inverse limit of an inverse system $\{X_i\}_{i \in I}$ of nonempty compact spaces,
then $X$ is nonempty and compact. This follows from Tikhonov's theorem,
or see \cite[\S 1.9.6, Proposition~8]{bourbaki-top}.
As a corollary, for $i \in I$ and $Z$ a closed subset of $X_i$,
$Z$ has empty inverse image in $X$ if and only if there exists an index
$j \geq i$ for which $Z$ has empty inverse image in $X_j$.
\item[(d)]
With notation as in (c),
for any $i \in I$ and any open subsets $V_{1,i},\dots,V_{n,i}$ of $X_i$ whose inverse images
in $X$ form a covering, there exists an index $j \geq i$ for which the inverse images
$V_{1,j},\dots,V_{n,j}$ of $V_{1,i},\dots,V_{n,i}$ in $X_j$ form a covering of $X_j$ itself:
apply (c) to the closed set $X_i \setminus (V_{1,i} \cup \dots \cup V_{n,i})$.
As a corollary,
any finite open covering of $X$ is refined by the pullback of a finite open covering of some $X_i$.
\item[(e)]
With notation as in (c),
any disconnection of $X$ (i.e., any partition of $X$ into two disjoint closed-open subsets $U_1, U_2$)
is the inverse image of a disconnection of some $X_j$, by the following argument.
Choose any $i \in I$. By (a),
the images $V_{1,i}, V_{2,i}$ of $U_1, U_2$ in $X_i$ are closed and disjoint; they may thus be covered
by disjoint open neighborhoods $W_{1,i}, W_{2,i}$. By (d), we can find an index $j \geq i$ such that $X_j$ is covered by the inverse images $W_{1,j}, W_{2,j}$ of $W_{1,i}, W_{2,i}$ in $X_j$. Since $W_{1,j}, W_{2,j}$ are open and disjoint, they
form a disconnection of $X_j$ which pulls back to
the given disconnection of $X$.

\end{enumerate}
\end{remark}

\subsection{The adic spectrum of an adic Banach ring}
\label{subsec:adic spectrum}

We next introduce a second type of topological space corresponding to a Banach ring, as considered by Huber \cite{huber1, huber2, huber}. The natural levels of generality of the  Berkovich and Huber constructions are incompatible; we work at reduced levels of generality where the two constructions can be compared.
We begin with the base algebraic objects of Huber's construction.

Before proceeding, we recall that from now on, we only consider Banach rings containing a topologically nilpotent unit (see Remark~\ref{R:small norm elements}).

\begin{defn}
For $A$ a ring equipped with a submultiplicative norm, let $A^{\circ}$ denote the subring of power-bounded elements of $A$. Note that $A^\circ \neq \gotho_A$ in general unless the norm on $A$ is power-multiplicative.

An \emph{adic Banach ring} is a pair $(A, A^+)$ in which $A$ is a Banach ring (which from now on must be a Banach algebra containing a topologically nilpotent unit) and $A^+$ is a subring of $A^\circ$ which is open and integrally closed in $A$. These conditions ensure that every topologically nilpotent element of $A$ must belong to $A^+$. 

A \emph{morphism} of adic Banach rings $(A,A^+) \to (B,B^+)$ is a bounded homomorphism $\varphi: A \to B$ of Banach rings such that $\varphi(A^+) \subseteq B^+$. With this definition, the correspondence $A \mapsto (A, A^\circ)$ defines a functor from the category of Banach rings to the category of adic Banach rings.

For $(A,A^+) \to (B, B^+), (A,A^+) \to (C,C^+)$ two morphisms of adic Banach rings,
their coproduct in the category of adic Banach rings will be denoted 
by $(B,B^+) \widehat{\otimes}_{(A,A^+)} (C,C^+)$. It consists of $(D,D^+)$ where $D = B \widehat{\otimes}_A C$ and $D^+$ is the completion of
the integral closure of $B^+ \otimes_{A^+} C^+$ in $D$.
\end{defn}

\begin{remark} \label{R:etale adic extension}
For $(A,A^+)$ an adic Banach ring and $B \in \FEt(A)$, view $B$ as a finite Banach $A$-algebra as in Definition~\ref{D:etale algebra Banach structure}. Then let $B^+$ be the integral closure of $A^+$ in $B$; in this way, we obtain a morphism $(A,A^+) \to (B,B^+)$. This construction will be used in the definition of \'etale morphisms on adic spaces in \S\ref{sec:adic}.
\end{remark}

We now associate topological spaces to adic Banach rings.
We will discuss the special topological properties of these spaces in more detail in 
\S\ref{sec:adic}.

\begin{defn}
Let $\Gamma$ be a totally ordered abelian group, and let $\Gamma_0$ be the pointed monoid $\Gamma \cup \{0\}$ with $0 \cdot \Gamma_0 = 0$ ordered so that $0 < \gamma$ for all $\gamma \in \Gamma$.
A \emph{semivaluation} on a ring $A$ with values in $\Gamma$ is a function $v: A \to \Gamma_0$ satisfying the following conditions.
\begin{enumerate}
\item[(a)]
For all $a,b \in A$, we have $v(a-b) \leq \max\{v(a), v(b)\}$.
\item[(b)]
For all $a,b \in A$, we have $v(ab) = v(a) v(b)$.
\item[(c)]
We have $v(0) = 0$ and $v(1) = 1$. If moreover $v^{-1}(0) = \{0\}$, we say that $v$ is a \emph{valuation}.
\end{enumerate}
For example, if $\Gamma = \RR^+$, then a (semi)valuation is the same as a multiplicative (semi)norm.

For $A$ a Banach ring, we declare two semivaluations on $A$ (possibly valued in different ordered groups) to be \emph{equivalent} if they define the same order relation on $A$. It is clear that this defines an equivalence relation and that the equivalence classes form a set (rather than a larger class). Denote the latter set by $\Spv(A)$. For linguistic convenience, we identify each equivalence class in $\Spv(A)$ with a particular representative in an arbitrary but fixed manner.

A semivaluation $v$ on $A$ is \emph{continuous} if for every nonzero $\gamma$ in the value group of $v$ (i.e., the subgroup of $\Gamma$ generated by the nonzero images of $v$) there is a neighborhood $U$ of $0$ in $A$ such that $v(u) < \gamma$ for all $u \in U$.

The \emph{adic spectrum} of $(A,A^+)$ is the subset $\Spa(A,A^+)$ of $\Spv(A)$ consisting of the equivalence classes of continuous semivaluations on $A$ bounded by 1 on $A^+$.
Since $A^+$ is integrally closed, we have the following equality analogous to
Theorem~\ref{T:transform}:
\begin{equation} \label{eq:adic transform}
A^{+} = \{x \in A: v(x) \leq 1 \quad (v \in \Spa(A,A^+)\}.
\end{equation}
(See \cite[Proposition~1.6]{huber2} for more details.)
We equip $\Spa(A,A^+)$ with the topology generated by sets of the form
\[
\{v \in \Spv(A,A^+): v(a) \leq v(b) \neq 0 \} \qquad (a,b \in A).
\]
A \emph{rational subspace} of $\Spa(A,A^+)$ is one of the form
\begin{equation} \label{eq:adic rational subspace}
\{v \in \Spa(A,A^+): v(f_i) \leq v(g) \neq 0 \quad (i=1,\dots,n)\}
\end{equation}
for some $f_1,\dots,f_n, g \in A$ generating the unit ideal.
One gets the same definition if one only requires that $f_1,\dots,f_n$ generate the unit ideal, since it is harmless to append $g$ as an extra generator.
One may also drop the condition $v(g) \neq 0$; see Remark~\ref{R:approximate rational}.

Note that any morphism $\psi: (A,A^+) \to (B,B^+)$ induces a continuous map $\psi^*: \Spa(B,B^+) \to \Spa(A,A^+)$. Under this map, the inverse image of any rational subspace is again a rational subspace.
\end{defn}

\begin{remark} \label{R:Tate f-adic}
Huber's definition of $\Spa(A,A^+)$ applies to more general topological rings than Banach rings. Namely, Huber defines an \emph{f-adic ring} to be a topological ring $A$ containing an open subring $A_0$ which is adic with a finitely generated ideal of definition (called a \emph{ring of definition} of $A$). He then says that $A$ is \emph{Tate} if it contains a topologically nilpotent unit $z$. In this case, for any $c \in (0,1)$, the norm
\[
\alpha(x) = \inf\{c^n: n \in \ZZ, z^{-n} x \in A_0\}
\]
gives $A$ the structure of a Banach ring. 
One can show in addition (see \cite[\S 1]{huber1})
that the category of Banach rings containing topologically nilpotent units is equivalent to the category of Tate f-adic rings, except that one must allow morphisms of Banach rings which are continuous but not necessarily bounded
(e.g., see Remark~\ref{R:warp norm}).

For $z$ a topologically nilpotent unit in $A$, 
a semivaluation $v$ on $A$ bounded by 1 on $A^+$ is continuous if and only if
for every $x \in A$ with $v(x) \neq 0$, there exists $n \in \ZZ$ with $v(z^n) < v(x)$.
In case $A$ is a Banach ring over some analytic field $K$,
one may say additionally that a semivaluation on $A$ bounded by 1 on $A^+$ is continuous if and only if its restriction to $K$ is equivalent to the norm on $K$.
This implies that any continuous morphism of Banach rings over $K$ is bounded;
it also arises
in the comparison with Gel'fand spectra in Definition~\ref{D:Berkovich to Huber}.
\end{remark}

For the remainder of \S\ref{subsec:adic spectrum}, let $(A,A^+)$ be any adic Banach ring. The analogue of the compactness of the Gel'fand spectrum is the following result.
A more precise statement is that adic spectra are \emph{spectral spaces}; see
\S\ref{subsec:adic topological properties}.

\begin{theorem}[Huber] \label{T:spectra are spectral1}
The space $\Spa(A,A^+)$ is quasicompact and the rational subspaces form a topological basis consisting of quasicompact open subsets.
\end{theorem}
\begin{proof}
See \cite[Theorem~3.5(i,ii)]{huber1}.
\end{proof}

We now relate this construction back to the Gel'fand spectrum.
\begin{defn} \label{D:Berkovich to Huber}
There is a natural map $\calM(A) \to \Spa(A,A^+)$ taking each 
$\alpha \in \calM(A)$ to the equivalence class of $\alpha$ as a semivaluation.
Beware that this map is not continuous.

Now suppose that $A$ contains a uniform unit $z$;
then there is a map $\Spa(A,A^+) \to \calM(A)$ defined as follows.
Given a semivaluation $v \in \Spa(A,A^+)$, define the multiplicative seminorm $\alpha = \alpha(v) \in \calM(A)$ by the formula
\[
\alpha(x) = \inf\{\left| z \right|_{\spect}^{r/s}: r \in \ZZ, s \in \ZZ_{>0}, v(z^r) > v(x^s) \}.
\]
The composition $\calM(A) \to \Spa(A,A^+) \to \calM(A)$ is the identity.
In particular, the map $\calM(A) \to \Spa(A,A^+)$ is injective,
and by Theorem~\ref{T:nonempty spectrum}, $\Spa(A,A^+) \neq \emptyset$ whenever $A \neq 0$. 
\end{defn}

\begin{remark} \label{R:approximate rational}
Given a rational subspace $U$ of $\Spa(A,A^+)$ as in \eqref{eq:adic rational subspace},
let $\overline{U}$ be the image of $U$ under the projection $\Spa(A,A^+) \to \calM(A)$;
since $U$ is quasicompact, $\overline{U}$ is compact by Remark~\ref{R:compact spaces}(a).
One has $\alpha(g) > 0$ for all $\alpha \in \overline{U}$, so by compactness
$c = \inf\{\alpha(g): \alpha \in \overline{U} \}$ is positive. For $0 < \epsilon <c$, any $f'_1,\dots,f'_n,g' \in A$
satisfying $|f'_i-f_i| < \epsilon, |g'-g| <\epsilon$ generate the unit ideal
and satisfy
\[
U = \{v \in \Spa(A,A^+): v(f'_i) \leq v (g')
\quad (i=1,\dots,n)\}.
\]
Compare \cite[Remark~1.15]{kedlaya-witt},
\cite[Proposition~7.2.4/1]{bgr}.
\end{remark}

\begin{defn} \label{D:Berkovich rational subspace}
We define a \emph{rational subspace} of $\calM(A)$ as the intersection of $\calM(A)$ with a rational subspace of $\Spa(A,A^+)$.
For the rational subspace $U$ of $\Spa(A,A^+)$ defined in \eqref{eq:adic rational subspace},
the corresponding rational subspace of $\calM(A)$ is
\begin{equation} \label{eq:Berkovich rational subspace}
\{\alpha \in \calM(A):\alpha(f_i) \leq  \alpha(g) \quad (i=1,\dots,n)\}
\end{equation}
and the image of $U$ in $\calM(A)$ is equal to the intersection $U \cap \calM(A)$.
As a corollary, we see that every nonempty rational subspace of $\Spa(A,A^+)$ meets $\calM(A)$, so $\calM(A)$ is dense in $\Spa(A,A^+)$. (See however Remark~\ref{R:Berkovich rational subspace} below.)

Rational subspaces of $\calM(A)$ are closed, not open; as a result, not every rational subspace containing some $\alpha \in \calM(A)$ is a neighborhood of $\alpha$. However, those which are neighborhoods form a neighborhood basis of $\alpha$ in $\calM(A)$; we say that such rational subspaces \emph{encircle} $\alpha$. 

Now assume $A$ contains a uniform unit.
By the previous paragraph, $\Spa(A,A^+) \to \calM(A)$ is continuous
and hence a quotient map by Remark~\ref{R:compact spaces}(b). In fact,
$\calM(A)$ is the maximal Hausdorff quotient of $\Spa(A,A^+)$: for any continuous map $\Spa(A,A^+) \to U$ with $U$ Hausdorff, any $v \in \Spa(A,A^+)$ projecting to $\alpha \in \calM(A)$ is a specialization of $\alpha$, so $v$ and $\alpha$ must have the same image in $U$. An immediate consequence is that disconnections of $\Spa(A,A^+)$ and $\calM(A)$ correspond, since they define maps to a two-point space.
\end{defn}

\begin{remark} \label{R:Berkovich rational subspace}
A rational subspace of $\Spa(A,A^+)$ need not be determined by its intersection with $\calM(A)$ except in some restricted circumstances (e.g., see Corollary~\ref{C:naive rational covering}).
A typical example is $(A,A^+) = (K\{T\}, \gotho_K + \gothm_K \cdot \gotho_K\{T\})$ for $K$ an analytic field: for
\begin{align*}
U &= \{v \in \Spa(A,A^+): v(T) \geq 1 \} \\
V &= \{v \in \Spa(A,A^+): v(T) \leq 1 \},
\end{align*}
one has $U \cap \calM(A) = U \cap V \cap \calM(A)$ but $U \not\subseteq V$ in general. See
\cite[Example~2.20]{scholze1} for a pictorial representation of this example.
\end{remark}

\begin{remark}  \label{R:Berkovich rational subspaces}
As per Remark~\ref{R:strictly affinoid homomorphism}, what we are calling a \emph{rational subspace} of $\calM(A)$ would be called a \emph{strictly rational subspace} in Berkovich's setup. An arbitrary rational subspace would have the form
\[
\{\alpha \in \calM(A): \alpha(f_1) \leq p_1 \alpha(g), \dots, \alpha(f_n) \leq p_n \alpha(g)\}
\]
for some $f_1,\dots,f_n,g \in A$ generating the unit ideal and some $p_1,\dots,p_n > 0$; such subspaces are needed to obtain a neighborhood basis when $A$ is not required to contain a topologically nilpotent unit.
\end{remark}

The analogue of the residue field $\calH(\alpha)$ of a point $\alpha$ in a Gel'fand spectrum is the following construction.
\begin{defn} \label{D:adic field}
An \emph{adic field} is an adic Banach ring $(K, K^+)$ in which $K$ is an analytic field and $K^+$ is a valuation ring in $K$ (i.e., a subring containing either $x$ or $1/x$ for each $x \in K^\times$). The space $\Spa(K,K^+)$ is not a point unless $K^+ = \gotho_K$; however, the valuation corresponding to $K^+$ defines the generic point of $\Spa(K,K^+)$.

Given $v \in \Spa(A,A^+)$, 
let $(\calH(v), \calH(v)^+)$ be the adic field with $\calH(v) = \calH(\alpha(v))$
and $\calH(v)^+$ equal to the valuation ring of the continuous multiplicative extension of $v$ to $\calH(\alpha(v))$. 
By construction, there is a canonical morphism $(A,A^+) \to (\calH(v), \calH(v)^+)$
under which the generic point of $\Spa(\calH(v), \calH(v)^+)$ maps to $v$.
\end{defn}

\begin{defn}
Let $U$ be a quasicompact open subset of $\Spa(A,A^+)$.
We say that $U$ is an \emph{affinoid subdomain} of $\Spa(A,A^+)$ if there exists an affinoid homomorphism $\varphi: (A,A^+) \to (B,B^+)$ which is initial among morphisms
$\psi: (A,A^+) \to (C,C^+)$ of adic Banach rings for which 
$\psi^*(\Spa(C,C^+)) \subseteq U$. We refer to the representing morphism $(A,A^+) \to (B,B^+)$ as an \emph{affinoid localization}.
 
In general, the structure of affinoid subdomains is quite mysterious
(see Theorem~\ref{T:Gerritzen-Grauert} for an exception). 
However, every rational subspace $U$ is an affinoid subdomain and the map 
$\Spa(B,B^+) \cong U$ is a homeomorphism
(see Lemma~\ref{L:rational subdomain} below). We thus refer to $U$ also as a \emph{rational subdomain} and to the corresponding affinoid localization also as a
\emph{rational localization}.

Note that the completed tensor product of two affinoid (resp.\ rational) localizations is again such a localization, corresponding to the intersection of affinoid (resp.\ rational) subdomains.
\end{defn}

\begin{lemma} \label{L:rational subdomain}
Let $U$ be a rational subspace of $\Spa(A,A^+)$ defined as in \eqref{eq:adic rational subspace}.
\begin{enumerate}
\item[(a)]
The subspace $U$ is an affinoid subdomain represented by $\varphi: (A,A^+) \to (B,B^+)$, where $B$ is the quotient of $A\{T_1,\dots,T_n\}$ for the closure of the ideal $(gT_1 - f_1,\dots,gT_n-f_n)$, equipped with the quotient norm,
and $B^+$ is the completion of the integral closure of the image of $A^+[T_1,\dots,T_n]$ in $B$.
\item[(b)]
The map $\varphi^*: \Spa(B,B^+) \to \Spa(A,A^+)$ induces a homeomorphism $\Spa(B,B^+) \cong U$. More precisely, the rational subspaces of $\Spa(B,B^+)$ correspond to the rational subspaces of $\Spa(A,A^+)$ contained in $U$.
\end{enumerate}
\end{lemma}
\begin{proof}
For (a), see \cite[Proposition~1.3]{huber2}. To check (b),
note that $\varphi^*$ by definition gives a continuous map from 
$\Spa(B,B^+)$ to $U$.
To see that the map is bijective,
choose any $v \in U$.
The map $(A,A^+) \to (\calH(v), \calH(v)^+)$ factors uniquely through a bounded homomorphism
$(B,B^+) \to (\calH(v), \calH(v)^+)$;
the generic point of $(\calH(v), \calH(v)^+)$ maps to the unique point of $\Spa(B,B^+)$ in the preimage of $v$.

To see that the induced morphism $\Spa(B,B^+) \to U$ is a homeomorphism,
it suffices to check the final assertion, i.e.,
that any rational subspace of $\Spa(B,B^+)$ is also a rational subspace of $\Spa(A,A^+)$.
This follows from Remark~\ref{R:approximate rational}: any rational subspace of
$\Spa(B,B^+)$ can be described using generators in $A[f_1/g,\dots,f_n/g]$, and such a description can be translated into a description using generators in $A$.
(See also \cite[Theorem~7.2.4/2]{bgr} and \cite[Lemma~1.5]{huber1}.)
\end{proof}

To obtain building blocks for the theory of adic spaces, we must define structure sheaves on adic Banach rings. We postpone the globalization step until \S\ref{sec:adic}.
\begin{defn} \label{D:rational covering}
By a \emph{rational covering} (resp.\ \emph{affinoid covering}) of $\Spa(A,A^+)$,
we will mean either a finite collection $\{U_i\}_i$ of rational (resp.\ affinoid) subdomains of $\Spa(A,A^+)$ forming a set-theoretic covering, or the corresponding collection $\{\Spa(A,A^+) \to \Spa(B_i,B_i^+)\}_i$ of rational (resp.\ affinoid) localizations, depending on context. 

Note that a rational covering of $\Spa(A,A^+)$ induces a set-theoretic covering of $\calM(A)$ by rational subspaces, but not conversely in general. However, a finite collection of rational subspaces whose relative interiors cover $\calM(A)$ does induce a rational covering of $\Spa(A,A^+)$; we call such a covering a \emph{strong rational covering} of $\Spa(A,A^+)$ (or of $\calM(A)$).
\end{defn}

\begin{defn} \label{D:adic structure presheaf}
Define the \emph{structure presheaf} $\calO$ on $\Spa(A,A^+)$ as the functor taking each open subset $U$ to the inverse limit of $B$ over all rational localizations $(A,A^+) \to (B,B^+)$ for which $\Spa(B,B^+) \subseteq U$.
In particular, for any rational localization $(A,A^+) \to (B,B^+)$, we have $\Gamma(\Spa(B,B^+), \calO) = B$.
The stalks of $\calO$ are henselian local rings (see Lemma~\ref{L:henselian local ring} below).

We say that $(A,A^+)$ is \emph{sheafy} if the structure presheaf is a sheaf; an equivalent condition (e.g., see Proposition~\ref{P:acyclicity template}) is that for any rational localization $(A,A^+) \to (B,B^+)$, the map $B \to H^0(\Spa(B,B^+), \calO)$ is an isomorphism.  In this case, $(\Spa(A,A^+), \calO)$ is a locally ringed space.
This is not true in general; see \cite[\S 1]{huber1} or \cite[\S 4.1]{buzzard-verberkmoes} for failures of injectivity,
and Example~\ref{exa:nonuniform rational localization} for a failure of surjectivity.
\end{defn}

\begin{prop}[Huber] \label{P:strongly noetherian}
Let $(A,A^+)$ be an adic Banach ring such that $A$ is strongly noetherian
(see Definition~\ref{D:Gauss norm}).
Then $(A,A^+)$ is sheafy.
\end{prop}
\begin{proof}
See \cite[Theorem~2.2]{huber2}.
\end{proof}

\begin{lemma} \label{L:henselian local ring}
The following statements hold.
\begin{enumerate}
\item[(a)]
For $v \in \Spa(A,A^+)$, the stalk $\calO_v$ is a henselian local ring whose residue field is dense in $\calH(v)$.
\item[(b)]
For $\alpha \in \calM(A)$, let $A_\alpha$ be the direct limit of $B$ over all rational localizations $(A,A^+) \to (B,B^+)$ encircling $\alpha$. (We call this ring the \emph{Hausdorff localization} at $\alpha$ to distinguish it from the stalk $\calO_\alpha$.) Then $A_\alpha$ is a henselian local ring
whose residue field is dense in $\calH(\alpha)$.
\item[(c)]
With notation as in Lemma~\ref{L:finite etale surjective spectrum}, the map
$\calM(B) \to \calM(A)$ is open.
\end{enumerate}
\end{lemma}
\begin{proof}
In both (a) and (b), the local property follows from Corollary~\ref{C:unit from spectrum}
and the henselian property follows from Lemma~\ref{L:henselian direct limit}(a).
To check (c), we may work locally around $\alpha \in \calM(A)$; by (b) and Theorem~\ref{T:henselian}, we reduce to the case where $\calM(B)$ contains a unique point $\beta$ lifting $\alpha$, $\calH(\beta)$ is a Galois extension of $\calH(\alpha)$ with group $G$, and $G$ acts on $B$. In this case, for any open subset $V$ of $\calM(B)$ with image $U$ in $\calM(A)$, the inverse image of $U$ in $\calM(B)$ is the open set $\cup_{g \in G} g(V)$; since $\calM(B) \to \calM(A)$ is a quotient map by Remark~\ref{R:compact spaces}(b), $U$ is open.
\end{proof}

We conclude this section by introducing the key formal arguments in the proofs of the theorems of Tate and Kiehl (Theorem~\ref{T:Tate-Kiehl}), which allow us to reduce certain questions about coverings (namely sheaf, acyclicity, and glueing properties) to coverings of a very simple form.

\begin{defn} \label{D:standard Laurent}
For $f_1,\dots,f_n \in A$ generating the unit ideal, the \emph{standard rational covering} of $\Spa(A,A^+)$ generated by $f_1,\dots,f_n$ is the covering by the rational subspaces
\[
U_i = \{v \in \Spa(A,A^+): v(f_j) \leq v(f_i) \quad (j=1,\dots,n)\} \qquad (i=1,\dots,n).
\]
For $f_1,\dots,f_n \in A$ arbitrary, the \emph{standard Laurent covering} generated by $f_1,\dots,f_n$ is the covering by the rational subspaces
\[
S_e = \bigcap_{i=1}^n S_{i,e_i} \qquad (e = (e_1,\dots,e_n) \in \{-, +\}^n),
\]
where
\[
S_{i,-} =\{v \in \Spa(A,A^+): v(f_i) \leq 1\},
\qquad
S_{i,+} =\{v \in \Spa(A,A^+): v(f_i) \geq 1\}.
\]
A standard Laurent covering with $n=1$ is also called a
\emph{simple Laurent covering}. 
\end{defn}

We will use the following observations.
\begin{lemma} \label{L:rational and Laurent coverings}
The following statements hold.
\begin{enumerate}
\item[(a)]
Any rational covering can be refined by a standard rational covering. 
\item[(b)]
For any standard rational covering $\gothU$ of $X$, there exists a standard Laurent covering $\gothV$ of $X$ such that for each $V = \Spa(B,B^+) \in \gothV$, the restriction of $\gothU$ to $V$ (omitting empty intersections) is a standard rational covering generated by units in $B$.
\item[(c)]
Any standard rational covering generated by units can be refined by a standard Laurent covering generated by units.
\end{enumerate}
\end{lemma}
\begin{proof}
To prove (a), we follow \cite[Lemma~8.2.2/2]{bgr}. Given a rational covering $U_1,\dots,U_n$ where $U_i$ is generated by the parameter set $S_i = \{f_{i1},\dots,f_{in_i},g_i\}$, let $S$ be the set of products of the form $s_1\cdots s_n$ where $s_i \in S_i$ for all $i$. Let $S'$ be the subset of $S$ consisting of products $s_1 \cdots s_n$ for which $s_i = g_i$ for at least one $i$. 
Note that $S'$ generates the unit ideal: for any $v \in \Spa(A,A^+)$, for each $i$ we can find $s_i \in S_i$ not vanishing at $v$, taking $s_i = g_i$ for any $i$ for which $v \in U_i$. Thus the parameter set $S'$ defines a standard rational covering. To see that this refines the original covering, note that the rational subspace with final parameter $s_1\cdots s_n$ does not change if we add $S \setminus S'$ to the set of parameters (again because the $U_i$ form a covering), which makes it clear that this subspace is contained in $U_i$ for any index $i$ for which $s_i = g_i$ (because we now have parameters obtained from $s_1,\dots,s_n$ by replacing $s_i$ with each of the other elements of $S_i$).

To prove (b), we follow \cite[Lemma~8.2.2/3]{bgr}.
Let $\gothU$ be the standard rational covering defined by the parameters $f_1,\dots,f_n$.
We argue as in Remark~\ref{R:approximate rational}:
since $f_1,\dots,f_n$ generate the unit ideal, by Corollary~\ref{C:ideal from spectrum}
the quantity
\[
c = \inf\{\max_i\{\alpha(f_i)\}: \alpha \in \calM(A)\}
\]
is positive. Since $A$ contains a topologically nilpotent unit, we may rescale $f_1,\dots,f_n$ to reduce to the case $c> 1$. In this case, the standard Laurent covering $\gothV$ defined by $f_1,\dots, f_n$ has the desired property: on the subspace where $\left| f_1 \right|,\dots,\left| f_s \right| \leq 1$ and
$\left| f_{s+1} \right|,\dots,\left| f_{n} \right| \geq 1$, the restriction of $\gothU$ is the standard rational covering generated by $f_{s+1},\dots,f_n$ plus some empty intersections.

To prove (c), we follow \cite[Lemma~8.2.2/4]{bgr}. Consider the standard rational covering generated by the units $f_1,\dots,f_n$. This cover is refined by the standard Laurent covering generated by $f_i f_j^{-1}$ for $1 \leq i < j \leq n$, by an elementary combinatorics argument (any total ordering on a finite set has a maximal element). 
\end{proof}

Using these observations, we obtain the following criterion.
\begin{prop} \label{P:Tate reduction}
Let $\calP$ be a property of rational coverings of rational subdomains of $\Spa(A,A^+)$ satisfying the following conditions.
\begin{enumerate}
\item[(a)]
The property $\calP$ is local: if it holds for a refinement of a given covering, it also holds for the original covering.
\item[(b)]
The property $\calP$ is transitive: if it holds for a covering $\{(B,B^+) \to (C_i, C_i^+)\}_i$ and for 
some coverings $\{(C_i,C_i^+) \to (D_{ij},D_{ij}^+)\}_j$ for each $i$, then it holds for the composite covering $\{(B,B^+) \to (D_{ij}, D_{ij}^+)\}_{i,j}$.
\item[(c)]
The property $\calP$ holds for any simple Laurent covering.
\end{enumerate}
Then the property $\calP$ holds for any rational covering of any rational subdomain
of $\Spa(A,A^+)$.
\end{prop}
\begin{proof}
We make the following observations.
\begin{enumerate}
\item[(i)]
We may deduce $\calP$ for any standard Laurent covering generated by units by writing it as a composition of simple Laurent coverings generated by units, then invoking (b) and (c).
\item[(ii)]
We may deduce $\calP$ for any standard rational covering generated by units by applying  Lemma~\ref{L:rational and Laurent coverings}(c)
to refine the covering by a standard Laurent covering generated by units, then invoking (a) and (i).
\item[(iii)]
Given a standard rational covering $\{(B,B^+) \to (C_i, C_i^+)\}_i$, 
using Lemma~\ref{L:rational and Laurent coverings}(b)
we obtain a standard Laurent covering $\{(B,B^+) \to (D_j, D_j^+)\}_j$ such that for each $j$, the covering $\{(D_j, D_j^+) \to (D_j, D_j^+) \widehat{\otimes}_{(B,B^+)} (C_i, C_i^+)\}_i$ is a standard rational covering generated by units in $D_j$.
We may thus deduce $\calP$ for the covering $\{(B,B^+) \to (D_j, D_j^+) \widehat{\otimes}_{(B,B^+)} (C_i, C_i^+)\}_{i,j}$
by invoking (ii) and (b), and then deduce $\calP$ for the original covering by invoking (a).
\item[(iv)]
We may deduce $\calP$ for any covering by applying
Lemma~\ref{L:rational and Laurent coverings}(b) to refine the covering by a standard rational covering, then invoking (i).
\end{enumerate}
These observations prove the claim.
\end{proof}

We will apply Proposition~\ref{P:Tate reduction} to two general purposes: 
construction of acyclic sheaves of rings, and comparison of certain modules over such sheaves with their global sections. In the latter case, the conditions of Proposition~\ref{P:Tate reduction} will be easy to verify directly. In the former case, one must be slightly more careful; we  package an extra argument into the following proposition modeled on \cite[Proposition~8.2.2/5]{bgr}.

\begin{prop} \label{P:acyclicity template}
Let $\calF$ be a presheaf of abelian groups on $\Spa(A,A^+)$.
Suppose that for every rational subdomain $U = \Spa(B,B^+)$ of $\Spa(A,A^+)$ and every simple Laurent covering $V_1, V_2$ of $U$, we have 
\begin{equation} \label{eq:acyclicity template}
\check{H}^0(U, \calF; \gothV) = \calF(U), 
\quad \text{resp.} \quad
\check{H}^i(U, \calF; \{V_1,V_2\}) = \begin{cases} \calF(U) & i=0 \\ 0 & i=1.
\end{cases}
\end{equation}
Then for every rational subdomain $U$ of $\Spa(A,A^+)$ and every rational covering $\gothV$ of $U$,
\[
H^0(U, \calF) = \check{H}^0(U, \calF; \gothV) = \calF(U), 
\quad \text{resp.} \quad
H^i(U, \calF) = \check{H}^i(U, \calF; \gothV) = \begin{cases} \calF(U) & i=0 \\ 0 & i>0. \end{cases}
\]
\end{prop}
\begin{proof}
Throughout this argument, let $U$ be an arbitrary rational subdomain of $\Spa(A,A^+)$ and let $\gothV$ be a rational covering of $U$. We identify a series of properties of $\gothV$ which satisfy the criteria of Proposition~\ref{P:Tate reduction}, and hence hold for all $U$ and $\gothV$.

First, the property that $\calF(U) \to \check{H}^0(U, \calF; \gothV)$ is injective satisfies (a) and (b) formally and (c) by \eqref{eq:acyclicity template}.

Next, the property that $\calF(U) \to \check{H}^0(U, \calF; \gothV)$ is bijective satisfies (b) formally and (c) by \eqref{eq:acyclicity template}.
To check (a), let $\gothV'$ be a refinement of $\gothV$ such that $\calF(U) \to \check{H}^0(U, \calF; \gothV')$ is bijective.
The map $\check{H}^0(U, \calF; \gothV) \to \check{H}^0(U, \calF; \gothV')$ is then surjective, but it is also injective by the previous paragraph (applied to each element of $\gothV$).

From now on, assume we are in the second situation.
Next, we say that $\gothV$ is \emph{\v{C}ech-acyclic} if $\check{H}^i(U, \calF; \gothV) = 0$ for all $i>0$.
This property satisfies criteria (b) formally and (c) by \eqref{eq:acyclicity template}, but not (a).

Instead, we say that $\gothV$ is \emph{universally \v{C}ech-acyclic} if its pullback to any rational subdomain of $U$ is \v{C}ech-acyclic.
This property formally also satisfies criteria (b) and (c) of Proposition~\ref{P:Tate reduction}. However, it also satisfies (a) by a spectral sequence argument; see \cite[Corollary~8.1.4/3]{bgr}.

We thus deduce that every rational covering of every rational subdomain is \v{C}ech-acyclic. Acyclicity for sheaf cohomology then follows by a standard homological algebra argument
(see \cite[Tag~01EW]{stacks-project}).
\end{proof}

As a first application of this argument, we have the following result which asserts that  for an arbitrary adic Banach ring,
the only obstruction to the analogue of Tate's acyclicity theorem is 
the failure of the structure presheaf to be a sheaf.
\begin{lemma} \label{L:H1 for simple Laurent covering}
Let $S_-, S_+$ be the simple Laurent covering of $\Spa(A,A^+)$ defined by some $f \in A$. Let $(A, A^+) \to (B_1, B_1^+), (A,A^+) \to (B_2, B_2^+), (A,A^+) \to (B_{12}, B_{12}^+)$ be the rational localizations corresponding to $S_-, S_+, S_- \cap S_+$. 
Then the map $B_1 \oplus B_2 \to B_{12}$ taking $(b_1, b_2)$ to $b_1 - b_2$ is surjective.
\end{lemma}
\begin{proof}
By Lemma~\ref{L:rational subdomain}, we obtain strict surjections
\[
A\{T\} \to B_1, \quad A\{U\} \to B_2, \quad A\{T,U\} \to B_{12}
\]
taking $T$ to $f$ and $U$ to $f^{-1}$. In particular, any $b \in B_{12}$ can be lifted to some $\sum_{i,j=0}^\infty a_{ij} T^i U^j \in A\{T,U\}$. 
Let $a'_n$ be the sum of $a_{ij}$ over all $i,j \geq 0$ with $i-j=n$; note that this sum converges in $A$. Let $b_1$ be the image of $\sum_{n=0}^\infty a'_n T^n$ in $B_1$.
Let $b_2$ be the image of $-\sum_{n=1}^\infty a'_{-n} U^n$ in $B_2$. Then
$(b_1, b_2) \in B_1 \oplus B_2$ maps to $b \in B_{12}$, proving the desired exactness.\end{proof}

\begin{theorem} \label{T:Tate sheaf property for structure sheaf}
Suppose that $(A,A^+)$ is sheafy. Then for every rational covering $\gothU$ of $\Spa(A,A^+)$,
\[
H^i(\Spa(A,A^+), \calO) = \check{H}^i(\Spa(A,A^+), \calO; \gothU) = 
\begin{cases} A & i=0 \\ 0 & i>0. \end{cases}
\]
\end{theorem}
\begin{proof}
By Proposition~\ref{P:acyclicity template}, it suffices to check \v{C}ech-acyclicity for simple Laurent coverings. Since the sheafy condition propagates to rational subspaces, we may as well consider only simple Laurent coverings of $\Spa(A,A^+)$ itself. In the notation of Lemma~\ref{L:H1 for simple Laurent covering}, the sequence
\[
0 \to A \to B_1 \oplus B_2 \to B_{12} \to 0
\]
is exact at $B_{12}$; by the sheafy hypothesis, it is also exact at $A$ and $B_1 \oplus B_2$.
Thus Proposition~\ref{P:acyclicity template} yields the claim.
\end{proof}

In some cases, one can apply Proposition~\ref{P:Tate reduction} to prove properties of individual inclusions of rational subdomains, by taking $\calP$ to be the condition that every subdomain in a covering has the desired property. However, in some cases it is not straightforward to verify locality, in which case the following alternate reduction process may be preferable.
\begin{prop} \label{P:Tate reduction single}
Let $\calP$ be a property of inclusions $V \subseteq U$ of rational subdomains of $\Spa(A,A^+)$ satisfying the following conditions.
\begin{enumerate}
\item[(a)]
The property $\calP$ is transitive: if it holds for $V \subseteq U$ and $W \subseteq V$, then it holds for $W \subseteq U$.
\item[(b)]
The property $\calP$ holds for any inclusion $V \subseteq U$ which is part of a simple Laurent covering of $U$.
\end{enumerate}
Then the property $\calP$ holds for any inclusion of rational subdomains
of $\Spa(A,A^+)$.
\end{prop}
\begin{proof}
To check that $\calP$ holds for $V \subseteq U$, write $U = \Spa(B,B^+)$
and suppose that $V$ is defined by the parameters $f_1,\dots,f_n,g \in B$.
Let $z \in B$ be a topologically nilpotent unit.
By Remark~\ref{R:approximate rational}, for any sufficiently large $m$ the set
\[
V_0 = \{v \in U: v(gz^{-m}) \geq 1\}
\]
is contained in $V$. For $i=1,\dots,n$ in turn, define
\[
V_i = \{v \in V_{i-1}: v(f_i g^{-1}) \leq 1\}.
\]
Since each of the inclusions $V = V_n \subseteq \cdots \subseteq V_0 \subseteq U$
is part of a simple Laurent covering, we may deduce the claim from (a) and (b).
\end{proof}

\subsection{Coherent sheaves on affinoid spaces}
\label{subsec:spectra locally ringed}

We now restrict to the setting of affinoid spaces over an analytic field,
where a good theory of coherent sheaves is available thanks to the work of Tate and Kiehl. 
However, since we are working in the framework of adic spectra, we must be a bit careful to ensure that our statements do indeed follow from the classical ones.
(If one returns to the classical results, these can mostly be extended to the most general setting of Berkovich; see Remark~\ref{R:Berkovich affinoid spaces}.)

\begin{hypothesis} \label{H:noetherian}
Throughout \S\ref{subsec:spectra locally ringed},
let $(A,A^+)$ be an adic Banach ring in which $A$ is an affinoid algebra over an analytic field $K$. We refer to any such object as an \emph{adic affinoid algebra} over $K$. Unless specified we do not assume $A^+ = A^\circ$; when this does occur, we get an affinoid $K$-algebra of tft (topologically finite type) in the terminology of \cite{scholze1}.
\end{hypothesis}

\begin{lemma} \label{L:ideals closed}
The ring $A$ is noetherian, so (by Remark~\ref{R:Noetherian Banach})
any ideal of $A$ is closed. Moreover, any finite $A$-module may be viewed as a finite Banach
$A$-module in a canonical way, under which any $A$-linear homomorphism of finite $A$-modules
is continuous and strict.
\end{lemma}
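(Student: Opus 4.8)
The plan is to reduce everything to the Tate algebra case and then import the classical structure theory. Fix, via the given affinoid presentation, a strict surjection $\pi\colon T \to A$ with $T = K\{T_1/r_1,\dots,T_n/r_n\}$; since $\pi$ is continuous and $A$ is Hausdorff, $\ker\pi$ is closed in $T$, and strictness means $A$ carries the quotient topology up to equivalence of norms. The one non-formal input I need is that $T$ is noetherian and that every ideal of $T$ — more generally, every finitely generated $T$-submodule of a finite free $T$-module — is closed. When $K$ is nontrivially normed this is the classical consequence of Weierstrass division: after a coordinate change a given nonzero element becomes $T_n$-distinguished, division by it produces a module finite over $K\{T_1/r_1,\dots,T_{n-1}/r_{n-1}\}$, and one induces on $n$ via the Hilbert basis theorem; see \cite[\S 5.2]{bgr}. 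When $K$ is trivially normed, $T$ is still noetherian with all ideals closed by an elementary argument (for instance, if each $r_i < 1$ then $T = K\llbracket T_1,\dots,T_n\rrbracket$, the Gauss topology coincides with the $\mathfrak{m}$-adic topology, and one invokes the Hilbert basis theorem); I would simply record this case without belaboring it. Since $A$ is a quotient of $T$, it is noetherian.

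That every ideal $\mathfrak{a}$ of $A$ is closed now follows: $\pi^{-1}(\mathfrak{a})$ is an ideal of $T$ containing $\ker\pi$, hence closed in $T$, so its image $\mathfrak{a}$ is closed in $A \cong T/\ker\pi$. (For nontrivially normed $K$ one may instead just quote Remark~\ref{R:Noetherian Banach}.) The identical pullback argument, applied coordinatewise, shows that every finitely generated $A$-submodule of a finite free $A$-module is closed.

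For the statement about modules, let $M$ be a finite $A$-module and fix a surjection $p\colon A^m \to M$. Since $A$ is noetherian, $\ker p$ is finitely generated, hence closed by the previous step, so the quotient seminorm on $M = A^m/\ker p$ is a norm for which $M$ is complete; thus $M$ is a finite Banach $A$-module, and by Lemma~\ref{L:finitely generated Banach}(a) the equivalence class of the norm does not depend on $p$. Any $A$-linear map $f\colon M \to M'$ of finite $A$-modules is bounded by Lemma~\ref{L:finitely generated Banach}(b), hence continuous. Finally $f(M)$ is a finitely generated $A$-submodule of $M'$, hence closed, and so inherits a finite Banach module structure; applying the open mapping theorem (Theorem~\ref{T:open mapping}) to the bounded surjection $M \to f(M)$ — viewed as a surjection of Banach modules over the nontrivially normed field $K$, with the trivially normed case checked by hand — shows $M \to f(M)$ is strict, and therefore $f$ is strict.

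The real work, and the only place anything beyond formal manipulation of Banach quotients enters, is the noetherianity of the Tate algebra $T$ and the closedness of its ideals, both of which go back to Weierstrass division; in this paper we are content to cite that. The minor nuisance is that Theorem~\ref{T:open mapping} is only stated over a nontrivially normed base, so the case of trivially normed $K$ has to be dispatched separately, but there the relevant topologies are essentially adic (or discrete) and nothing of substance is lost.
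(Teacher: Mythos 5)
The overall architecture of your argument — establish noetherianity and closedness of (submodules of free modules over) the Tate algebra, then push everything down along the strict surjection $\pi\colon T\to A$, then get canonical finite Banach structures and strictness from Lemma~\ref{L:finitely generated Banach} and the open mapping theorem — is sound and is essentially the route taken by the sources the paper cites (\cite[Proposition~6.1.1/3 and \S 3.7.3]{bgr} for the strictly affinoid case, \cite[Proposition~2.1.3]{berkovich1} in general). The paper itself offers no argument beyond these citations, so your reconstruction is welcome.

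However, there is a genuine gap in your ``one non-formal input.'' Weierstrass division and the distinguishing coordinate changes of \cite[\S 5.2]{bgr} are only available for $T_n = K\langle T_1,\dots,T_n\rangle$ with all radii equal to $1$ (or, with the extension in \cite[\S 6.1.5]{bgr}, radii in $\sqrt{|K^\times|}$). Under Hypothesis~\ref{H:noetherian} the radii $r_1,\dots,r_n$ are arbitrary positive reals, and when some $r_i$ lies outside the divisible closure of $|K^\times|$ there is no element of norm $r_i$, no notion of a $T_n$-distinguished element of the right degree, and the substitutions $T_i \mapsto T_i + T_n^{c_i}$ do not even preserve the algebra $K\{T_1/r_1,\dots,T_n/r_n\}$. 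So the induction you sketch does not get off the ground in the general case. The standard repair — and the one Berkovich uses, which is already set up in this paper — is the ground field extension trick of Remark~\ref{R:force strictly affinoid}: choose $L \supseteq K$ so that $T_L$ is a strictly affinoid (classical) Tate algebra, apply Weierstrass theory there, and descend noetherianity and closedness of ideals along the faithfully flat, strict inclusion $T \to T_L$ of Lemma~\ref{L:faithfully flat analytic} (an ideal $I \subseteq T$ satisfies $I = IT_L \cap T$ by flatness, and $IT_L \cap T$ is closed since $IT_L$ is closed and $T \to T_L$ is isometric up to equivalence). With that ingredient supplied, the rest of your argument — closedness of $\pi^{-1}(\mathfrak{a})$, the quotient norm on $M = A^m/\ker p$, boundedness from Lemma~\ref{L:finitely generated Banach}(b), and strictness via the closed image $f(M)$ — goes through; for strictness you could also avoid the open mapping theorem (and hence the separate trivially normed case) by noting that the quotient norm on $M/\ker f$ and the subspace norm on the closed submodule $f(M)$ are both canonical finite Banach structures on the same finite $A$-module, hence equivalent, which is how \cite[\S 3.7.3]{bgr} argues.
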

\begin{proof}
For the first assertion, see \cite[Proposition~6.1.1/3]{bgr}. For the other assertions
(which apply to any noetherian Banach ring), see \cite[\S 3.7.3]{bgr}.
\end{proof}

\begin{remark}
A refinement of Lemma~\ref{L:ideals closed} is that
$A$ is \emph{excellent} in the sense of Grothendieck
\cite[Th\'eor\`eme~2.6]{ducros}, and hence catenary.
\end{remark}

\begin{lemma}[Noether normalization] \label{L:Noether normalization}
For $A \neq 0$, there exists a
finite strict monomorphism $K\{T_1,\dots,T_n\} \to A$ for some $n \geq 0$.
\end{lemma}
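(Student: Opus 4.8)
The plan is to combine the defining strict surjection of a strictly $K$-affinoid algebra with an induction on the number of variables, reducing each time via a Weierstrass preparation. Since $A$ is strictly $K$-affinoid and nonzero, there is a strict surjection $K\{X_1,\dots,X_n\}\to A$ with closed kernel $\mathfrak{a}$, so that $A\cong K\{X_1,\dots,X_n\}/\mathfrak{a}$ with the quotient norm. I would induct on $n$. If $\mathfrak{a}=0$ the identity $K\{X_1,\dots,X_n\}\to A$ is already the desired finite monomorphism (with $n$ serving as the index in the statement), so assume $\mathfrak{a}\neq 0$ and fix a nonzero $f=\sum_I a_I X^I\in\mathfrak{a}$.

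The crucial step is to replace $\mathfrak{a}$ by its image under a suitable isometric automorphism $\sigma$ of $K\{X_1,\dots,X_n\}$ making $g:=\sigma(f)$ \emph{$X_n$-distinguished}: writing $g=\sum_\nu g_\nu X_n^\nu$ with $g_\nu\in K\{X_1,\dots,X_{n-1}\}$, one wants some $s$ with $g_s$ a unit of Gauss norm $|g|$ and $|g_\nu|<|g|$ for $\nu>s$. I would take $\sigma$ of the monomial form $X_j\mapsto X_j+X_n^{c_j}$ for $j<n$ (and $X_n\mapsto X_n$), with positive integers $c_j$ chosen so that the linear form $(i_1,\dots,i_n)\mapsto c_1 i_1+\cdots+c_{n-1} i_{n-1}+i_n$ attains its maximum on the finite set $\{I:|a_I|=|f|\}$ at a unique point $I^\ast$; this only requires avoiding finitely many linear conditions. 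Since the top power of $X_n$ in $\sigma(a_I X^I)$ has constant coefficient $a_I$, the coefficient $g_s$ with $s$ equal to the maximal value of the form equals $a_{I^\ast}$ plus a power series all of whose coefficients have norm $<|f|$; hence $g_s$ is a unit of norm $|f|=|g|$ and $|g_\nu|<|f|$ for $\nu>s$, as wanted. Using a monomial substitution rather than a linear change of coordinates is exactly what keeps this argument valid when $K$ has a small residue field.

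Granting that $\sigma(\mathfrak{a})$ contains the $X_n$-distinguished element $g$, Weierstrass preparation and division \cite[\S 5.2]{bgr} show that $K\{X_1,\dots,X_n\}/(g)$ is a finite module over $K\{X_1,\dots,X_{n-1}\}$, hence so is its quotient $K\{X_1,\dots,X_n\}/\sigma(\mathfrak{a})$, which is isometrically isomorphic to $A$. Let $B'$ be the image of $K\{X_1,\dots,X_{n-1}\}$ in $A$ under this map: then $B'$ is a nonzero strictly $K$-affinoid algebra (a strict quotient of $K\{X_1,\dots,X_{n-1}\}$ by a closed ideal), and $B'\hookrightarrow A$ is a finite injective homomorphism. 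By the inductive hypothesis there is a finite injective homomorphism $K\{T_1,\dots,T_m\}\to B'$ for some $m\geq 0$; composing with $B'\hookrightarrow A$ gives a finite injective homomorphism $K\{T_1,\dots,T_m\}\to A$. Its strictness is automatic, since it is a homomorphism of finite modules over the affinoid algebra $K\{T_1,\dots,T_m\}$ and every such homomorphism is strict by Lemma~\ref{L:ideals closed} (equivalently, by the open mapping theorem, Theorem~\ref{T:open mapping}).

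The step I expect to be the main obstacle is the construction of $\sigma$ putting $f$ in distinguished form, precisely because a linear coordinate change need not suffice in general; the monomial substitution together with the genericity argument above resolves it. This is the technique of \cite[\S 5.2 and \S 6.1.2]{bgr}, where in fact the full statement is established, so an alternative is simply to cite that result.
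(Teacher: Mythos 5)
Your argument is correct: it is precisely the standard Weierstrass-division proof (with the monomial substitution $X_j\mapsto X_j+X_n^{c_j}$ to handle small residue fields), which is the content of \cite[Corollary~6.1.2/2]{bgr}. The paper itself offers no independent argument and simply cites that result, so your proposal fills in exactly the proof being referenced.
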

\begin{proof}
See \cite[Corollary~6.1.2/2]{bgr}.
\end{proof}
\begin{cor} \label{C:nullstellensatz}
The following statement are true.
\begin{enumerate}
\item[(a)]
Every maximal ideal of $A$
has residue field finite over $K$.
\item[(b)]
The formula $\alpha \mapsto \gothp_\alpha$ defines a bijection from
$\Maxspec(A)$ to the points of $\calM(A)$ with residue field finite over $K$. (We will hereafter identify $\Maxspec(A)$ with a subspace of $\calM(A)$ and of $\Spa(A,A^+)$.)
\item[(c)]
If $A$ is nonzero, then the sets in (b) are nonempty.
\end{enumerate}
\end{cor}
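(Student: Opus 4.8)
The plan is to derive all three parts from Noether normalization (Lemma~\ref{L:Noether normalization}) together with the uniqueness of the analytic field structure on a finite extension (Definition~\ref{D:Banach ring}). For part~(a), given a maximal ideal $\gothm$ of $A$, I would note that $A/\gothm$ with the quotient norm is again a nonzero strictly $K$-affinoid algebra (using that $\gothm$ is closed, by Lemma~\ref{L:closure trivial} or because $A$ is noetherian), so Lemma~\ref{L:Noether normalization} supplies a finite monomorphism $K\{T_1,\dots,T_d\}\hookrightarrow A/\gothm$. Since $A/\gothm$ is a field integral over the domain $K\{T_1,\dots,T_d\}$, a one-line computation (clear denominators in an integral equation for the inverse of a nonzero element) shows $K\{T_1,\dots,T_d\}$ is itself a field; as $T_1$ is a nonzero non-unit when $d\geq 1$, this forces $d=0$, so $K=K\{T_1,\dots,T_d\}\hookrightarrow A/\gothm$ is finite, giving~(a).

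For part~(b), I would first check that $\alpha\mapsto\gothp_\alpha$ carries points with finite residue field into the maximal spectrum: if $\calH(\alpha)$ is finite over $K$, then $A/\gothp_\alpha$ embeds into $\calH(\alpha)$ and hence is an integral domain integral over $K$, therefore a field, so $\gothp_\alpha$ is maximal. For surjectivity, given a maximal ideal $\gothm$, part~(a) gives $L:=A/\gothm$ finite over $K$; I would equip $L$ with its unique multiplicative norm extending that of $K$ (Definition~\ref{D:Banach ring}), observe that this norm is equivalent to the quotient norm (all $K$-vector space norms on a finite-dimensional space over the complete field $K$ are equivalent) and hence dominated by $|\cdot|_A$, and conclude that its composition with $A\to L$ is a point $\alpha\in\calM(A)$ with $\gothp_\alpha=\gothm$ and $\calH(\alpha)=L$ (as $L$ is already complete). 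Part~(c) then follows, since a nonzero $A$ has a maximal ideal. For injectivity, I would use that any $\alpha\in\calM(A)$ restricts on $K$ to the given norm (forced by $\alpha(1)=1$ and multiplicativity of $\alpha$ on $K^\times$): if $\gothp_\alpha=\gothm$ is maximal with finite residue field, then $\alpha$ descends to a multiplicative norm on $L=A/\gothm$ extending the norm of $K$, and uniqueness of such a norm pins $\alpha$ down in terms of $\gothm$.

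I do not expect a serious obstacle: the substantive input is Noether normalization, which is cited, and the auxiliary facts — ``a field integral over a domain forces the domain to be a field'', equivalence of norms on finite-dimensional spaces over a complete field, and uniqueness of the analytic field structure on a finite extension — are standard. The only point needing a little care is the boundedness assertion in the surjectivity step, namely that the canonical multiplicative norm on $L=A/\gothm$ is dominated by $|\cdot|_A$; this holds because $A\to L$ is a bounded homomorphism of Banach rings and the target norm is equivalent to the quotient norm, after which domination of a multiplicative seminorm by a submultiplicative norm automatically improves to the inequality $\alpha(a)\leq|a|_A$ required for membership in $\calM(A)$ (Definition~\ref{D:Gelfand}).
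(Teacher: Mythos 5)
Your proof is correct and follows essentially the same route as the paper: part (a) via Noether normalization applied to the residue field, and part (b) by identifying $A/\gothm$, equipped with the unique multiplicative extension of the norm on $K$, with $\calH(\alpha)$ for the corresponding point $\alpha\in\calM(A)$. The only (immaterial) divergence is in showing $\gothp_\alpha$ is maximal when $\calH(\alpha)$ is finite over $K$: you argue that $A/\gothp_\alpha$ is a domain algebraic over $K$ and hence a field, whereas the paper observes that the image of $A$ is a dense, hence closed, hence full $K$-subspace of $\calH(\alpha)$; both are fine, and your extra care about boundedness of the induced seminorm and about injectivity fills in details the paper leaves implicit.
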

\begin{proof}
Assertion (a) follows from Lemma~\ref{L:Noether normalization}.
For (b), note that on one hand, if $\calH(\alpha)$ is finite over $K$,
then $A \to \calH(\alpha)$ is surjective because its image generates a dense subfield containing $K$.
Consequently, $\gothp_\alpha$ is a maximal ideal of $A$.
Conversely, if $\gothm$ is a maximal ideal of $A$, then $A/\gothm$ is a finite extension
of $K$, and so is complete for the unique multiplicative extension of the norm on $K$. It thus
may be identified with $\calH(\alpha)$ for some $\alpha \in \calM(A)$.
This proves (b); since any nonzero ring has a maximal ideal, (b) implies (c).
\end{proof}

\begin{cor} \label{C:spectral is norm}
For $A$ reduced, the spectral seminorm on $A$
is a norm equivalent to the given norm.
\end{cor}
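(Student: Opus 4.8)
The plan is to combine Theorem~\ref{T:transform}, which identifies the spectral seminorm $|\cdot|_{\spect}$ on $A$ with $\sup_{\alpha\in\calM(A)}\alpha(\cdot)$, with the Nullstellensatz (Corollary~\ref{C:nullstellensatz}) and Noether normalization (Lemma~\ref{L:Noether normalization}). Since submultiplicativity gives $|f|_{\spect}\le|f|$ for every $f$, it suffices to show that (i) $|\cdot|_{\spect}$ is a norm and (ii) there is a constant $c>0$ with $|f|\le c\,|f|_{\spect}$ for all $f\in A$.

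First I would reduce to the case that $A$ is strictly affinoid over a nontrivially normed analytic field. By Remark~\ref{R:force strictly affinoid} there is an analytic field $L\supseteq K$ with $A_L=A\widehat{\otimes}_K L$ strictly $L$-affinoid; by Lemma~\ref{L:faithfully flat analytic} the map $A\to A_L$ is injective and strict and $\calM(A_L)\to\calM(A)$ is surjective, so by Theorem~\ref{T:transform} the spectral seminorm on $A$ is the restriction of that on $A_L$, while strictness makes the given norm on $A$ equivalent to the restriction of the given norm on $A_L$. One checks that $A_L$ stays reduced, since the field extensions used to build $L$ are separably generated. (If $K$ is trivially normed and no base change is forced, then $A$ is already uniform and there is nothing to prove.)

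For (i): a nonzero $f\in A$ is not nilpotent because $A$ is reduced, and an affinoid algebra is Jacobson, so $f$ lies outside some maximal ideal, which by Corollary~\ref{C:nullstellensatz} is $\gothp_\alpha$ for some $\alpha\in\calM(A)$; hence $\alpha(f)>0$ and $|f|_{\spect}>0$ by Theorem~\ref{T:transform}. For (ii), the substance is that a reduced strictly affinoid algebra is complete under $|\cdot|_{\spect}$: using Lemma~\ref{L:Noether normalization} one realizes $A$ as a finite (torsion-free) module over a Tate algebra $T_d=K\{T_1,\dots,T_d\}$ whose Gauss norm is power-multiplicative and, by Theorem~\ref{T:transform}, equal to $|\cdot|_{\spect}$; the classical analysis of finite extensions of Tate algebras (computing $|\cdot|_{\spect}$ of an element from its minimal polynomial over $\Frac(T_d)$, with the inseparable and finite-dimensional cases handled separately) then shows $|\cdot|_{\spect}$ is complete and comparable with the module norm coming from a presentation. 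Granting completeness, the identity map $(A,|\cdot|)\to(A,|\cdot|_{\spect})$ is a continuous bijection of Banach modules over a nontrivially normed analytic field, hence open by Theorem~\ref{T:open mapping}, which gives (ii).

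The main obstacle is the completeness assertion in (ii) for reduced strictly affinoid algebras, which is essentially \cite[Theorem~6.2.4/1]{bgr}; its delicate points are choosing the Noether normalization compatibly with separability in positive characteristic and comparing the supremum norm with the trace (or integral-closure) norm on a finite extension of a Tate algebra. Everything else is routine given the quoted results.
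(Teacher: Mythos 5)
Your proposal is correct and is essentially the argument behind the paper's proof, which consists of a citation to Berkovich's Proposition~2.1.4: that proof proceeds exactly by your reduction to the strictly affinoid case via $A_L$ (checking that $A_L$ stays reduced, the one delicate point you rightly flag), the completeness of the supremum norm on a reduced strictly affinoid algebra from \cite[Theorem~6.2.4/1]{bgr} via Noether normalization, and the open mapping theorem. Nothing further is needed.
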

\begin{proof}
See \cite[Theorem~6.2.4/1]{bgr}.
\end{proof}

\begin{cor} \label{C:finite reduction}
Let $A \to B$ be a finite morphism. Then for the spectral seminorms on $A$ and $B$, the induced map $\kappa_A \to \kappa_B$ is finite.
\end{cor}
\begin{proof}
See \cite[Theorem~6.3.4/2]{bgr}.
\end{proof}

We next relate affinoid subdomains of adic spectra with the corresponding notion in rigid analytic geometry.
\begin{defn}
An \emph{affinoid subdomain} of $\Maxspec(A)$ is a subset $U$ of $\Maxspec(A)$ for which there exists a morphism $\varphi: A \to B$ of affinoid algebras over $K$ which is initial for the property that $\varphi^*: \Maxspec(B) \to \Maxspec(A)$ factors through $U$. For example, the intersection of $\Maxspec(A)$ with a rational subspace of $\Spa(A,A^+)$ is an affinoid subdomain \cite[Proposition~7.2.3/4]{bgr}; any such subspace is called a \emph{rational subdomain} of $\Maxspec(A)$.
\end{defn}

\begin{lemma} \label{L:rational subspaces match}
Let $U$ be a rational subspace of $\Spa(A,A^+)$ defined as in 
\eqref{eq:adic rational subspace}. Let 
$(A,A^+) \to (B,B^+)$ be the representing morphism. 
\begin{enumerate}
\item[(a)]
We have $B \cong A\{T_1,\dots,T_n\}/(g T_1 - f_1, \dots, g T_n - f_n)$.
\item[(b)]
The space $U \cap \Maxspec(A)$ is a rational subdomain of $\Maxspec(A)$ represented by the morphism $A \to B$.
\item[(c)]
If $A$ is reduced, then so is $B$.
\item[(d)]
If $A^+ = A^\circ$, then $B^+ = B^\circ$.
\end{enumerate}
\end{lemma}
\begin{proof}
By Lemma~\ref{L:ideals closed} the ideal $(g T_1 - f_1, \dots, g T_n - f_n)$ in
$A\{T_1,\dots,T_n\}$ is closed,
so (a) follows from Lemma~\ref{L:rational subdomain}.
We deduce (b) from (a) plus \cite[Proposition~7.2.3/4]{bgr}.
We deduce (c) from (b) plus \cite[Corollary~7.3.2/10]{bgr}.
To deduce (d), apply Corollary~\ref{C:finite reduction} to the finite morphism
$A\{T_1,\dots,T_n\} \to B$ to deduce that $\kappa_B$ is integral over $\kappa_A[T_1,\dots,T_n]$ via the map taking $T_i$ to $f_i/g$.
\end{proof}

\begin{lemma} \label{L:rational subspaces covering match1}
Any finite covering of $\Maxspec(A)$ by rational subdomains is refined by a covering induced by a rational covering of $\Spa(A,A^+)$.
\end{lemma}
\begin{proof}
By \cite[Lemma~8.2.2/2]{bgr}, any finite covering of $\Maxspec(A)$ by rational subdomains is refined by standard rational covering in the sense of Definition~\ref{D:standard Laurent}.
\end{proof}

The Gerritzen-Grauert theorem in rigid analytic geometry can be interpreted as follows. (See also Temkin's proof in the context of Berkovich's theory \cite[Theorem~3.1]{temkin-gg}.)
\begin{theorem} \label{T:Gerritzen-Grauert}
Let $U$ be an affinoid subdomain of $\Maxspec(A)$. 
Then there exists a rational covering $V_1,\dots,V_n$ of $\Spa(A,A^+)$ such that 
$U \cap V_i$ is a rational subdomain in $V_i \cap \Maxspec(A)$ for $i=1,\dots,n$. In particular, $U$ can be written as a finite union of rational subdomains of $\Maxspec(A)$ (but not conversely: not every finite union of rational subdomains is an affinoid subdomain).
\end{theorem}
\begin{proof}
By \cite[Theorem~7.3.5/1]{bgr}, there exists a finite collection of rational subspaces $W_1,\dots,W_m$ of $\Maxspec(A)$ with the property that $U \cap W_i$ is a rational subdomain in $W_i$ for $i=1,\dots,m$. 
By Lemma~\ref{L:rational subspaces covering match1}, this covering can be refined to a covering induced by a rational covering of $\Spa(A,A^+)$.
\end{proof}

\begin{lemma} \label{L:naive rational covering}
Suppose that $A^+ = A^\circ$ (and hence $A$ is reduced; see Definition~\ref{D:uniform Banach ring}).
For any rational subdomain $\Spa(B,B^+)$ of $\Spa(A,A^+)$,
a finite collection of rational subdomains $\{U_i\}_i$ of $\Spa(B,B^+)$ is a rational covering if and only if $\{U_i \cap \Maxspec(B)\}_i$ is a covering of $\Maxspec(B)$.
\end{lemma}
\begin{proof}
We first check the special case of a one-element covering.
Suppose that $V \subseteq U$ are rational subspaces of $\Spa(A,A^\circ)$ such that $U$ is rational and $V \cap \Maxspec(A) = U \cap \Maxspec(A)$.
Let $(A,A^\circ) \to (B,B^+) \to (C,C^+)$ be the corresponding rational localizations.
By Lemma~\ref{L:rational subspaces match}, $B^+ = B^\circ$.
Since $B \to C$ is a rational localization, 
it induces isomorphisms at completed (algebraic) local rings,
and hence is an open immersion of rigid analytic spaces \cite[Proposition~7.3.3/5]{bgr}. But by assumption $\Maxspec(B) = \Maxspec(C)$,
so we obtain an isomorphism of rigid analytic spaces
(see the discussion after \cite[Corollary~7.3.3/6]{bgr})
and $B \to C$ is itself an isomorphism of rings. Since $B^\circ = C^\circ \subseteq C^+ \subseteq C^\circ$, we have $C^+ = C^\circ$ and hence $V = U$.

To prove the statement of the lemma, note that $B^+ = B^\circ$ by
Lemma~\ref{L:rational subspaces match}; we may thus assume $(B,B^+) = (A,A^\circ)$.
Let $\{U_i\}_i$ be a finite collection of rational subdomains of $\Spa(A,A^\circ)$ such that $\{U_i \cap \Maxspec(A)\}_i$ is a covering of $\Maxspec(A)$. By
Lemma~\ref{L:rational subspaces covering match1}, there exists a rational covering $\{V_j\}_j$ of $\Spa(A,A^\circ)$ such that the covering $\{V_j \cap \Maxspec(A)\}_{j=1}^n$ of $\Maxspec(A)$ refines the covering $\{U_i \cap \Maxspec(A)\}_i$.
That is, for each $j=1,\dots,n$, there exists some $i$ such that $V_j \cap \Maxspec(A) \subseteq U_i \cap \Maxspec(A)$, or in other words $(V_j \cap U_i) \cap \Maxspec(A) = V_j \cap \Maxspec(A)$. By the previous paragraph, this implies that
$V_j \cap U_i = V_j$, or in other words $V_j \subseteq U_i$. Hence $\{U_i\}_i$ is a rational covering as claimed.
\end{proof}
\begin{cor}\label{C:naive rational covering}
Suppose that $A$ is reduced.
Then no two distinct rational subspaces of $\Spa(A,A^+)$ have the same intersection with $\Maxspec(A)$.
\end{cor}
\begin{proof}
We may reduce to the case where $A$ is reduced. By Corollary~\ref{C:spectral is norm}, we may further reduce to the case 
$A^+ = A^\circ$.
For $U = \Spa(B,B^+)$ a rational subspace of $\Spa(A,A^\circ)$,
$U = \emptyset$ iff $B = 0$ (Theorem~\ref{T:nonempty spectrum}) iff $\Maxspec(B) = 0$.
We may thus deduce the claim from Lemma~\ref{L:naive rational covering}.
\end{proof}

\begin{prop} \label{P:affinoid subspaces match}
Let $U$ be an affinoid subdomain of $\Spa(A,A^+)$
represented by $(A,A^+) \to (B,B^+)$.
\begin{enumerate}
\item[(a)]
The space $U \cap \Maxspec(A)$ is an affinoid subdomain of $\Maxspec(A)$ represented by the morphism $A \to B$.
\item[(b)]
If $A$ is reduced, then so is $B$.
\item[(c)]
If $A^+ = A^\circ$, then $B^+ = B^\circ$.
\end{enumerate}
\end{prop}
\begin{proof}
To check (a), let $A \to C$ be a morphism of affinoid algebras over $K$ such that $\Maxspec(C)$ maps into $U \cap \Maxspec(A)$; we must show that this morphism factors through $B$. Form a morphism $(A, A^+) \to (C,C^+)$ of adic Banach rings by taking $C^+ = C^\circ$.
To factor $A \to C$ through $B$, it suffices to check that $\Spa(C,C^+)$ maps into $U$;
for this purpose, we may assume that $A$ is reduced, then pass from $(A,A^+)$ to $(A,A^\circ)$ by base extension (using Corollary~\ref{C:spectral is norm}) and thus assume that $A^+ = A^\circ$. Apply Theorem~\ref{T:Gerritzen-Grauert} to obtain a rational covering $\{V_i\}_i$ of $\Spa(A,A^\circ)$ such that 
for each $i$, $U \cap V_i \cap \Maxspec(A)$ is a rational subspace of $V_i \cap \Maxspec(A)$. Let $(A,A^+) \to (D_i, D_i^+)$ be the rational localization representing $V_i$; by Lemma~\ref{L:rational subspaces match}, $D_i^+ = D_i^\circ$.
Put $(E_i, E_i^+) = (B,B^+) \widehat{\otimes}_{(A,A^+)} (D_i, D_i^+)$
and $(F_i, F_i^+) = (C,C^+) \widehat{\otimes}_{(A,A^+)} (D_i, D_i^+)$;
it now suffices to check that the image of $\Spa(F_i, F_i^+) \to \Spa(D_i, D_i^+)$
is contained in $U \cap V_i$. Since $(D_i, D_i^+) \to (E_i, E_i^+)$ 
and $(C,C^+) \to (F_i, F_i^+)$ are rational localizations, Lemma~\ref{L:rational subspaces match} implies that 
$E_i^+ = E_i^\circ$, $F_i^+ = F_i^\circ$,
and
$U \cap V_i \cap \Maxspec(A)$ is a rational subdomain of $\Maxspec(D_i) = V_i \cap \Maxspec(A)$.
Since $\Maxspec(F_i)$ maps into $U \cap V_i \cap \Maxspec(A)$,
it follows that $D_i \to F_i$ factors through $E_i$
and so $(D_i, D_i^\circ) \to (F_i, F_i^\circ)$ factors through $(E_i, E_i^\circ)$.
Hence the image of $\Spa(F_i, F_i^+) \to \Spa(D_i, D_i^+)$
is contained in $U \cap V_i$, yielding (a).

Given (a), we may deduce (b) and (c) by the same arguments as in the proofs of parts (c) and (d) of Lemma~\ref{L:rational subspaces match}.
\end{proof}

\begin{prop} \label{P:naive rational covering}
Suppose that $A^+ = A^\circ$ (and hence $A$ is reduced).
For any affinoid subdomain $\Spa(B,B^+)$ of $\Spa(A,A^\circ)$,
a finite collection of affinoid subdomains $\{U_i\}_i$ of $\Spa(B,B^+)$ is an affinoid covering if and only if $\{U_i \cap \Maxspec(B)\}_i$ is a covering of $\Maxspec(B)$.
\end{prop}
\begin{proof}
This follows by the same proof as Lemma~\ref{L:naive rational covering}, but using
 Proposition~\ref{P:affinoid subspaces match} in lieu of
 Lemma~\ref{L:rational subspaces match}.
\end{proof}

\begin{remark} \label{R:compatibility of affinoid subdomains}
Proposition~\ref{P:affinoid subspaces match} and 
Proposition~\ref{P:naive rational covering} allow us 
to assert statements about affinoid subdomains of $\Spa(A,A^+)$ by invoking the corresponding statements about affinoid subdomains of $\Maxspec(A)$ from rigid analytic geometry. We will do this without further comment in what follows.
\end{remark}

\begin{lemma} \label{L:affinoid subdomain is flat}
For any affinoid localization $(A,A^+) \to (B,B^+)$, the morphism $A \to B$ is flat.
\end{lemma}
\begin{proof}
See \cite[Corollary~7.3.2/6]{bgr}.
\end{proof}

\begin{prop} \label{P:affinoid covering is faithfully flat}
Let $\{(A,A^+) \to (B_i, B_i^+)\}_{i=1}^n$ be an affinoid covering.
Then the ring homomorphism $A \to B_1 \oplus \cdots \oplus B_n$ is faithfully flat.
\end{prop}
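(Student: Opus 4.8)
The plan is to get flatness for free and then reduce faithful flatness to the strictly affinoid case. Each $A \to B_i$ is flat by Lemma~\ref{L:affinoid subdomain is flat}, hence so is $A \to B_1 \oplus \cdots \oplus B_n$; the real point is faithful flatness. Since the map is already flat, by Lemma~\ref{L:faithfully flat by maximal ideals} it suffices to show that every maximal ideal of $A$ is the contraction of a maximal ideal of $B_1 \oplus \cdots \oplus B_n$, and because the maximal ideals of a finite product are exactly those obtained by taking a maximal ideal $\gothq$ of one factor $B_i$ together with the whole of the other factors, it is enough to lift each maximal ideal of $A$ to a maximal ideal of some $B_i$ along $A \to B_i$.

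I would first reduce to the case where $A$ is strictly affinoid over $K$. By Remark~\ref{R:force strictly affinoid} there is an analytic field $L \supseteq K$ with $A_L := A \widehat{\otimes}_K L$ strictly affinoid over $L$, and then $A_L \to B_{L,i} := B_i \widehat{\otimes}_K L$ is again a covering family of affinoid localizations. Granting the strictly affinoid case, $A_L \to B_{L,1} \oplus \cdots \oplus B_{L,n}$ is faithfully flat; composing with the faithfully flat map $A \to A_L$ of Lemma~\ref{L:faithfully flat analytic} makes $A \to B_{L,1} \oplus \cdots \oplus B_{L,n}$ faithfully flat, and since this factors through the flat map $A \to B_1 \oplus \cdots \oplus B_n$, that map is faithfully flat too (for a nonzero $A$-module $N$, nonvanishing of $(\bigoplus_i B_{L,i}) \otimes_A N$ forces nonvanishing of $(\bigoplus_i B_i) \otimes_A N$).

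It then remains to treat $A$ strictly affinoid over $K$. Given a maximal ideal $\gothm$, Corollary~\ref{C:nullstellensatz} furnishes $\alpha \in \calM(A)$ with $\gothp_\alpha = \gothm$, and moreover $A/\gothm$ is complete, so the residue field $\calH(\alpha)$ is just $A/\gothm$ and $A \to \calH(\alpha)$ is the quotient map. Since the $\calM(B_i)$ cover $\calM(A)$, the point $\alpha$ lies in the affinoid subdomain represented by some $A \to B_i$; by the universal property of that affinoid localization, the bounded surjection $A \to \calH(\alpha)$ factors through a bounded homomorphism $B_i \to \calH(\alpha)$, which is then automatically surjective. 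Its kernel $\gothq$ is therefore a maximal ideal of $B_i$ (with $B_i/\gothq \cong \calH(\alpha)$), and by construction $\gothq$ contracts to $\gothm$ along $A \to B_i$. This is exactly what was needed.

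The one nonformal ingredient, and the step I expect to need the most care, is this last passage from the topological covering of $\calM(A)$ to genuine surjectivity of $\Spec(B_1 \oplus \cdots \oplus B_n) \to \Spec(A)$: it relies on identifying maximal ideals of a strictly affinoid algebra with the points of its Berkovich spectrum having residue field finite over the base, i.e.\ on the Nullstellensatz (Corollary~\ref{C:nullstellensatz}), which is also why the base-change reduction is needed before one can quote it. The remaining ingredients --- flatness of finite direct sums, stability of faithful flatness under composition, and the structure of $\Spec$ of a finite product --- are standard.
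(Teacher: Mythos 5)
Your proof is correct and follows essentially the same route as the paper's: flatness from Lemma~\ref{L:affinoid subdomain is flat}, base change to the strictly affinoid case via Remark~\ref{R:force strictly affinoid} and Lemma~\ref{L:faithfully flat analytic}, and then Corollary~\ref{C:nullstellensatz} together with Lemma~\ref{L:faithfully flat by maximal ideals}. You merely spell out in more detail the step the paper leaves implicit, namely how the Nullstellensatz and the universal property of the affinoid localization produce, for each maximal ideal of $A$, a maximal ideal of some $B_i$ contracting to it.
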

\begin{proof}
The homomorphism is flat by Lemma~\ref{L:affinoid subdomain is flat}.
It is faithful by Lemma~\ref{L:faithfully flat by maximal ideals}
and the fact that every maximal ideal of $A$ is closed
(by Corollary~\ref{C:vanishing norm}).
\end{proof}

\begin{cor} \label{C:etale local}
Let $\{(A,A^+) \to (B_i, B_i^+)\}_{i=1}^n$ be an affinoid covering.
\begin{enumerate}
\item[(a)]
A finite $A$-module $M$ is locally free if and only if
$M \otimes_A B_i$ is a locally free $B_i$-module for $i=1,\dots,n$.
\item[(b)]
A finite $A$-algebra $R$ is \'etale if and only if
$R \otimes_A B_i$ is an \'etale $B_i$-algebra for $i=1,\dots,n$.
\end{enumerate}
\end{cor}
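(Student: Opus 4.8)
The plan is to deduce both parts from faithfully flat descent, with essentially no new work. First I would dispose of the ``only if'' directions, which are immediate: base change of a finite locally free module along $A \to B_i$ is again finite locally free, and base change of a finite \'etale $A$-algebra along $A \to B_i$ is finite \'etale; note that by Lemma~\ref{L:ideals closed} all the rings in sight ($A$ and the $B_i$) are noetherian affinoid algebras, so a finite algebra is automatically finitely presented, and hence ``\'etale'' and ``finite \'etale'' coincide for the finite algebras under consideration. Thus only the ``if'' directions require an argument.

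For the ``if'' directions, set $S = B_1 \oplus \cdots \oplus B_n$. By Proposition~\ref{P:affinoid covering is faithfully flat}, $A \to S$ is faithfully flat, so Theorem~\ref{T:descent finite locally free} applies to it. The key bookkeeping point is the elementary observation that, for a finite product of rings $\prod_i B_i$, a module is finite projective exactly when each of its components is finite projective over the corresponding factor, and an algebra is finite \'etale exactly when each component algebra is finite \'etale over the corresponding factor. Combining this with the identifications $M \otimes_A S \cong \bigoplus_i (M \otimes_A B_i)$ and $R \otimes_A S \cong \bigoplus_i (R \otimes_A B_i)$, and with the equivalence (from Definition~\ref{D:projective}, using that $A$ and the $B_i$ are noetherian) between ``finite locally free'' and ``finite projective'', the hypotheses translate precisely into the statements that $M \otimes_A S$ is finite projective over $S$ and that $R \otimes_A S$ is finite \'etale over $S$.

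Now I would finish by invoking Theorem~\ref{T:descent finite locally free} directly. For (a), apply it with the faithfully flat map $A \to S$ and the $A$-module $M$: since $M$ is finite over $A$ and $M \otimes_A S$ is finite projective over $S$, the theorem gives that $M$ is finite projective over $A$, hence finite locally free over $A$. For (b), first note each $R \otimes_A B_i$ is a finite $B_i$-algebra (base change of the finite algebra $R$) which is \'etale, hence finite \'etale over $B_i$; therefore $R \otimes_A S$ is finite \'etale over $S$, and the algebra assertion of Theorem~\ref{T:descent finite locally free} descends this to show that $R$ is finite \'etale, and in particular \'etale, over $A$. I do not expect a real obstacle: the only things to watch are the (purely formal) behavior of finiteness, projectivity, and \'etaleness under finite products of rings, and the harmless passage between ``\'etale'' and ``finite \'etale'' for finite algebras over these noetherian bases; everything of substance is already contained in Proposition~\ref{P:affinoid covering is faithfully flat} and Theorem~\ref{T:descent finite locally free}.
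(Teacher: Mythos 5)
Your proposal is correct and follows exactly the paper's route: the paper proves this corollary by combining Proposition~\ref{P:affinoid covering is faithfully flat} (faithful flatness of $A \to B_1 \oplus \cdots \oplus B_n$) with Theorem~\ref{T:descent finite locally free}. The additional bookkeeping you supply about products of rings and the ``only if'' directions is routine and consistent with what the paper leaves implicit.
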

\begin{proof}
This follows from Proposition~\ref{P:affinoid covering is faithfully flat} plus
Theorem~\ref{T:descent finite locally free}.
\end{proof}

\begin{theorem} \label{T:Tate-Kiehl}
Let $\gothU$ be an affinoid covering.
\begin{enumerate}
\item[(a)]
For any finite $A$-module $M$,
let $\tilde{M}$ be the sheaf of $\calO$-modules
on $\Spa(A,A^+)$ induced by $M$.
Then $H^i(\Spa(A,A^+), \tilde{M}; \gothU) = M$ for $i=0$ and $0$ for $i>0$.
In particular, $(A,A^+)$ is sheafy
and $H^i(\Spa(A,A^+), \tilde{M}) = M$ for $i=0$ and $0$ for $i>0$.
\item[(b)]
The functor $M \mapsto \tilde{M}$ defines a tensor equivalence between 
finite $A$-modules and coherent sheaves of $\calO$-modules on $\Spa(A,A^+)$.
In particular, for $\{(A,A^+) \to (B_i, B_i^+)\}_{i=1}^n$ the morphisms representing
$\gothU$, the homomorphism
$A \to B_1 \oplus \cdots \oplus B_n$ is an effective descent morphism for finite Banach modules over Banach rings.
\end{enumerate}
\end{theorem}
\begin{proof}
Part (a) is due to Tate; see \cite[Corollary~8.2.1/5]{bgr}.
Part (b) is due to Kiehl; see \cite[Theorem~9.4.3/3]{bgr}.
\end{proof}

\begin{cor} \label{C:lift disconnection}
Let $U$ be a closed-open subset of $\Spa(A,A^+)$.
Then there exists a unique idempotent element $e \in A$
whose image in $\calH(v)$ is $1$ if $v \in U$ and
$0$ if $v \notin U$.
In particular, the projection $A \to eA$
taking $x \in A$ to $ex$ induces a homeomorphism $\Spa(eA, eA^+) \cong U$.
\end{cor}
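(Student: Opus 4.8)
Set $V = \calM(A) \setminus U$, so that $\{U, V\}$ is a partition of $\calM(A)$ into disjoint closed-open subsets. The plan is to manufacture $e$ by glueing the evident local idempotents over a finite covering of $\calM(A)$ subordinate to this partition, using the sheaf property of finite modules on an affinoid space.

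First I would construct such a covering. Since $U$ and $V$ are open and $\calM(A)$ has a neighborhood basis of Laurent (hence rational) subdomains (Definition~\ref{D:Laurent}), each point of $\calM(A)$ has a rational subdomain neighborhood lying entirely in $U$ or entirely in $V$; by compactness of $\calM(A)$ (Definition~\ref{D:Gelfand}), finitely many of these cover, giving a covering family $A \to B_1, \dots, A \to B_n$ of rational localizations such that each $\calM(B_i)$, viewed inside $\calM(A)$, lies in $U$ or in $V$. (When $A$ is reduced it is uniform by Corollary~\ref{C:spectral is norm} and Lemma~\ref{L:rational is affinoid} applies directly; in general one takes the representing algebras from \eqref{eq:rational algebra} without uniformization.) Put $e_i = 1 \in B_i$ if $\calM(B_i) \subseteq U$ and $e_i = 0 \in B_i$ otherwise.

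Next I would glue. With $B_{ij} = B_i \widehat{\otimes}_A B_j$, Lemma~\ref{L:fibre product} shows that the image of $\calM(B_{ij})$ in $\calM(A)$ lies in $\calM(B_i) \cap \calM(B_j)$. If $B_i$ and $B_j$ are of opposite types then this intersection is empty, so $B_{ij} = 0$ by Theorem~\ref{T:nonempty spectrum}; otherwise $e_i$ and $e_j$ have a common image ($0$ or $1$) in $B_{ij}$. In all cases $(e_i)_i \in \bigoplus_i B_i$ maps to $0$ in $\bigoplus_{i,j} B_{ij}$, so by exactness of the augmented \v{C}ech complex for the $A$-module $A$ (Theorem~\ref{T:Tate-Kiehl}(a)) there is a unique $e \in A$ with image $e_i$ in each $B_i$; since $A \to \bigoplus_i B_i$ is injective and $e_i^2 = e_i$ we get $e^2 = e$. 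For $\alpha \in \calM(A)$, choose $i$ with $\alpha \in \calM(B_i)$; if $\alpha \in U$ then $\calM(B_i) \subseteq U$ (since $\calM(B_i)$ lies in $U$ or in $V$ and contains $\alpha$), and similarly for $V$, while $A \to \calH(\alpha)$ factors through $B_i$ via an isomorphism $\calH(\alpha) \cong \calH(\alpha_i)$, so the image of $e$ in $\calH(\alpha)$ is the image of $e_i$, namely $1$ if $\alpha \in U$ and $0$ otherwise.

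For uniqueness, if $e'$ is another such idempotent then $e - ee'$ and $e' - ee'$ are idempotents lying in $\gothp_\alpha$ for every $\alpha \in \calM(A)$, so it suffices to note that an idempotent $g \in A$ with $g \in \gothp_\alpha$ for all $\alpha$ must vanish: otherwise $1-g$ is a non-unit (as $g(1-g) = 0$), hence lies in some maximal ideal $\gothm$, which (Definition~\ref{D:residue field}) equals $\gothp_\alpha$ for some $\alpha$, forcing $1 = g + (1-g) \in \gothm$. Finally, $A \to eA = A/(1-e)$ is a bounded surjection of Banach rings, so $\calM(eA) \to \calM(A)$ is a continuous injection with image $\{\alpha : \alpha(1-e) = 0\} = U$, hence a homeomorphism onto $U$ by Remark~\ref{R:compact spaces}(b) since $\calM(eA)$ is compact and $U$ is Hausdorff. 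The step needing the most care is the first: confirming that a covering of $\calM(A)$ subordinate to $\{U,V\}$ really is a covering family of affinoid localizations to which Theorem~\ref{T:Tate-Kiehl} applies, without tacitly assuming $A$ uniform; the remainder is formal descent together with the spectrum facts already established.
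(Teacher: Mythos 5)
Your proof is correct and follows essentially the same route as the paper's: cover $U$ and $\calM(A)\setminus U$ by finitely many rational subdomains, check the evident constant idempotents agree on overlaps (with $B_{ij}=0$ for pieces of opposite type), and glue via Theorem~\ref{T:Tate-Kiehl}(a). Your uniqueness argument (an idempotent lying in every $\gothp_\alpha$ vanishes) is a minor variant of the paper's observation that $1-e-e'$ is a unit by Corollary~\ref{C:unit from spectrum}, and your explicit verification of the final homeomorphism fills in a step the paper leaves implicit.
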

Note that the analogous result for $\calM(A)$ also holds because
the closed-open subsets of $\calM(A)$ and $\Spa(A,A^+)$ correspond;
see Definition~\ref{D:Berkovich rational subspace}.
Note also that this result generalizes to arbitrary adic Banach rings;
see Proposition~\ref{P:affinoid system disconnection}.
\begin{proof}
Cover $U$ with finitely many rational subdomains $U_1,\dots,U_m$ and the complement of $U$ with finitely many rational subdomains $V_1,\dots,V_n$. Let
$(A,A^+) \to (B_i,B_i^+)$ and $(A,A^+) \to (C_j,C_j^+)$ be the morphisms representing 
$U_i$ and $V_j$, respectively.
By Theorem~\ref{T:Tate-Kiehl}(a) applied with $M = A$, the element
$((1,\dots,1),(0,\dots,0))$
of $(B_1 \oplus \cdots \oplus B_m) \oplus (C_1 \oplus \cdots \oplus C_n)$
determines an idempotent element $e \in A$ with the desired property.

To verify uniqueness, let $e' \in A$ be another idempotent of the desired form.
Then $1 - e - e'$ maps to $1$ or $-1$ in $\calH(v)$ for each $v \in \Spa(A,A^+)$,
and so is a unit
in $A$ by Corollary~\ref{C:unit from spectrum}. Now $(e-e')(1-e-e') = 0$,
so $e-e' = 0$ as desired.
\end{proof}

\begin{remark}  \label{R:Berkovich affinoid spaces}
As described in \cite[\S 2]{berkovich1}, Berkovich extends the preceding results 
(excluding Lemma~\ref{L:Noether normalization} and Corollary~\ref{C:nullstellensatz})
in three ways: the base field $K$ is permitted to carry the trivial norm; affinoid algebras are defined
as in Remark~\ref{R:strictly affinoid homomorphism}; and rational subspaces are defined as in Remark~\ref{R:Berkovich rational subspaces}. 
The basic idea is that for any affinoid algebra $A$ over $K$ in the sense of Berkovich,
one can construct a nontrivially normed analytic field $L$ containing $K$ such that
$A_L = A \widehat{\otimes}_K L$ is an affinoid algebra over $L$ in the classical sense (by adjoining some transcendentals with prescribed norms). The most nontrivial points are that the map $A \to A_L$ is strict 
(Lemma~\ref{L:inject tensor}) and faithfully flat \cite[Lemma~2.1.2]{berkovich2}
and the corresponding restriction map $\calM(A_L) \to \calM(A)$ is surjective
(Lemma~\ref{L:fibre product}).
\end{remark}

\begin{remark} \label{R:power series}
In a different direction, note that for any $r \in (0,1), s \in (1, +\infty)$ with $rs \geq 1$, the ring $\ZZ((z))$ equipped with the norm
\[
\left| \sum_{i \in \ZZ} c_i z^i \right| = \max\{\max\{r^i: i \geq 0, c_i \neq 0\},
\max\{s^i: i > 0, c_{-i} \neq 0\}\}
\]
is strongly noetherian: we may construct $\ZZ((z))\{T_1,\dots,T_n\}$ by taking the $z$-adic completion of $\ZZ[z,T_1,\dots,T_n]$ and then inverting $z$. This means that many of the preceding results apply also to affinoid algebras over $\ZZ((z))$, including Lemma~\ref{L:affinoid subdomain is flat}
(see \cite[Lemma~1.7.6]{huber}), 
Proposition~\ref{P:affinoid covering is faithfully flat}
(as a corollary of Lemma~\ref{L:affinoid subdomain is flat}),
Corollary~\ref{C:etale local}
(as a corollary of Proposition~\ref{P:affinoid covering is faithfully flat}),
Theorem~\ref{T:Tate-Kiehl}(a)
(see \cite[Theorem~2.5]{huber2}), and Corollary~\ref{C:lift disconnection}
(as a corollary of Theorem~\ref{T:Tate-Kiehl}(a)).
One may also extend Theorem~\ref{T:Tate-Kiehl}(b): we are unaware of a precise reference, but given the previous results one may directly emulate the proof of
\cite[Theorem~9.4.3/3]{bgr}. In this paper we will only apply Theorem~\ref{T:Tate-Kiehl}(b) in the case of finite projective modules, in which case one may instead appeal to Theorem~\ref{T:tate to Kiehl}; note that the proof of that theorem does not depend on any results from \S\ref{subsec:affinoid systems}, so there is no vicious circle.
\end{remark}

\subsection{Affinoid systems}
\label{subsec:affinoid systems}

To get some handle on Banach algebras which are not affinoid algebras over a field, 
we use an analogue of the observation that every ring is a direct limit of
noetherian subrings (namely its finitely generated $\ZZ$-subalgebras).
As usual, we restrict to classical affinoid algebras and note in passing that similar arguments can be derived in Berkovich's framework.

\begin{defn} \label{D:affinoid system}
By an \emph{affinoid system}, we will mean a directed system
$\{((A_i,A_i^+), \alpha_i)\}_{i \in I}$ in the category of adic affinoid algebras over $\ZZ((z))$ and submetric (not just bounded) morphisms.
Note that each ring $A_i$ is strongly noetherian (Remark~\ref{R:power series}).

Given an affinoid system, equip the direct limit $A$ of the $A_i$ in the category of rings
with the submultiplicative seminorm $\alpha$ given by taking the infimum of the quotient norms induced by the $\alpha_i$, and let $A^+$ be the direct limit of the $A_i^+$. We will refer to the completion of $(A,A^+)$ with respect to $\alpha$ as the \emph{completed direct limit} of the system.
\end{defn}

\begin{lemma} \label{L:construct affinoid system}
Let $(R,R^+)$ be an adic Banach algebra.
Then there exists an affinoid system with completed direct limit $(R,R^+)$.
\end{lemma}
\begin{proof}
By choosing a topologically nilpotent unit $z \in R$, we may view $R$ as a Banach algebra over $\ZZ((z))$ for a suitable norm as in Remark~\ref{R:power series}.
Let $I$ be the set of finite subsets of $R^+$.
For each $S \in I$, let $A_i$ be the quotient of $\ZZ((z))\{S\}$ by the kernel of the map to
$R$ taking $s$ (as a generator of the ring) to $s$
(as an element of $R$); this is an affinoid algebra over $\ZZ((z))$.
Equip $A_i$ with the supremum of the quotient norm and the subspace norm;
since this is again a norm under which $A_i$ is a Banach algebra over $\ZZ((z))$, it is equivalent to the quotient norm by the open mapping theorem (Theorem~\ref{T:open mapping}).
Let $A_i^+$ be the image of $\ZZ\llbracket z \rrbracket\{S\}$ in $A_i$.
This gives the desired affinoid system.
\end{proof}

The previous observation has some strong consequences for Banach algebras.

\begin{remark} \label{R:affinoid system disconnection}
Let $\{((A_i,A_i^+), \alpha_i)\}_{i \in I}$ be an affinoid system.
For $((A, A^+), \alpha)$ the direct limit and $(R,R^+)$ the completion,
the restriction map $\Spa(R,R^+) \to \varprojlim_i \Spa(A_i,A_i^+)$ is continuous,
and also bijective because specifying a compatible system of semivaluations on each $A_i$ bounded by $\alpha_i$
is equivalent to specifying a semivaluation on $A$ bounded by $\alpha$.
Moreover, every rational subspace of $\Spa(R,R^+)$ arises from some $\Spa(A_i,A_i^+)$
by Remark~\ref{R:approximate rational}.
We thus obtain a homeomorphism $\Spa(R,R^+) \cong \varprojlim_i \Spa(A_i,A_i^+)$.
\end{remark}

We obtain the following extension of Corollary~\ref{C:lift disconnection}. 
For an alternate approach that also includes the case of a Banach algebra over a trivially normed field (without a topologically nilpotent unit), see \cite[Theorem~7.4.1]{berkovich1}.
\begin{prop} \label{P:affinoid system disconnection}
Let $(R,R^+)$ be an adic Banach algebra, and let 
$U$ be a closed-open subset of $\Spa(R,R^+)$.
Then there exists a unique idempotent element $e \in R$
whose image in $\calH(v)$ is $1$ if $v \in U$ and
$0$ if $v \notin U$.
In particular, the projection $R \to eR$
taking $x \in R$ to $ex$ induces a homeomorphism $\Spa(eR, eR^+) \cong U$.
\end{prop}
\begin{proof}
By Lemma~\ref{L:construct affinoid system}, we can ensure that there exists an affinoid system $\{((A_i, A_i^+), \alpha_i)\}_{i \in I}$ with completed direct limit $\Spa(R,R^+)$.
By Remark~\ref{R:compact spaces}(e) and Remark~\ref{R:affinoid system disconnection},
$U$ is the inverse image of a closed-open subset of some $\Spa(A_i,A_i^+)$,
and hence is induced by some idempotent element of some $A_i$ by
Corollary~\ref{C:lift disconnection}
(as extended by Remark~\ref{R:power series}).
\end{proof}

We next relate rational localizations of the completed direct limit of an affinoid system
with the corresponding objects defined on individual terms of the system.

\begin{lemma} \label{L:affinoid system rational}
Let $\{((A_i, A_i^+), \alpha_i)\}_{i \in I}$ be an affinoid system with direct limit $((A,A^+), \alpha)$.
Let $(R,R^+)$ be the completion of $(A,A^+)$.
\begin{enumerate}
\item[(a)]
For any rational localization $(R,R^+) \to (S,S^+)$,
there exist an index $i \in I$ and a rational localization $(A_i,A_i^+) \to (B_i,B_i^+)$ such that $(S,S^+) \cong (B_i, B_i^+) \widehat{\otimes}_{(A_i,A_i^+)} (R,R^+)$. The same is then true for each $j \geq i$ for
$(B_j,B_j^+) = (B_i,B_i^+) \widehat{\otimes}_{(A_i,A_i^+)} (A_j,A_j^+)$; in fact, the $(B_j,B_j^+)$ form another affinoid system with completed
direct limit $(S,S^+)$.
\item[(b)]
With notation as in (a), for any $v \in \Spa(R,R^+)$ restricting to $v_i \in \Spa(A_i,A_i^+)$,
$v$ belongs to $\Spa(S,S^+)$ if and only
if $v_i$ belongs to $\Spa(B_i,B_i^+)$.
\item[(c)]
With notation as in (a), for any $\beta \in \calM(R)$ restricting to $\beta_i \in \calM(A_i)$,
$(R,R^+) \to (S,S^+)$ encircles $\beta$ if and only if there exists an index $j \geq i$ for which
$(A_j,A_j^+) \to (B_j,B_j^+)$ encircles $\beta_j$.
\end{enumerate}
\end{lemma}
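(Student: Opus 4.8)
\emph{Part (a).} By Lemma~\ref{L:rational is affinoid}, the localization $R \to S$ is given by a tuple $f_1,\dots,f_n,g \in R$ generating the unit ideal together with radii $p_1,\dots,p_n > 0$, with $S$ the uniformization of $R\{T_1/p_1,\dots,T_n/p_n\}/(gT_1-f_1,\dots,gT_n-f_n)$ and, by Lemma~\ref{L:affinoid subdomain}, $\calM(S)$ identified with the rational subdomain $U$ cut out by the conditions $\alpha(f_k) \le p_k\alpha(g)$. Since $A = \varinjlim_i A_i$ is dense in $R$, Remark~\ref{R:approximate rational} lets me replace this tuple by one with entries in $A$ that still generates the unit ideal in $R$ and defines the same $U$; by directedness those entries lie in a single $A_{i_0}$. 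They need not generate the unit ideal in $A_{i_0}$, but combining Corollary~\ref{C:ideal from spectrum}, the homeomorphism $\calM(R) \cong \varprojlim_j \calM(A_j)$ of Remark~\ref{R:affinoid system disconnection}, and Remark~\ref{R:compact spaces}(d) applied to the open cover of $\calM(A_{i_0})$ by the sets $\{\alpha : \alpha(f_k) > 0\}$ and $\{\alpha : \alpha(g) > 0\}$, there is an index $i \ge i_0$ at which they do. Then with $B_i$ the uniformization of $A_i\{T_1/p_1,\dots,T_n/p_n\}/(gT_1-f_1,\dots,gT_n-f_n)$, the map $A_i \to B_i$ is a rational localization, and for $j \ge i$ the base change $B_j =$ (uniformization of $B_i \widehat{\otimes}_{A_i} A_j$) is again one, with the same defining tuple viewed in $A_j$. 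Passing to completed tensor products identifies $S$ with the uniformization of $B_i \widehat{\otimes}_{A_i} R$ and exhibits the $B_j$, equipped with their spectral norms, as a uniform affinoid system whose completed direct limit is the uniformization of $R\{T_1/p_1,\dots,T_n/p_n\}/(gT_1-f_1,\dots,gT_n-f_n)$, i.e.\ $S$.

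\emph{Part (b) and the easy half of (c).} With $f_k,g \in A_i$ chosen as in (a), $U = \{\beta' \in \calM(R) : \beta'(f_k) \le p_k\beta'(g)\}$ and $\calM(B_i) \cong U_i := \{\alpha_i \in \calM(A_i) : \alpha_i(f_k) \le p_k\alpha_i(g)\}$. For $\beta \in \calM(R)$ restricting to $\beta_i \in \calM(A_i)$ one has $\beta(f_k) = \beta_i(f_k)$ and $\beta(g) = \beta_i(g)$, so $\beta \in U$ iff $\beta_i \in U_i$; this is (b). Now "$R \to S$ encircles $\beta$'' means $U$ is a neighbourhood of $\beta$ in $\calM(R)$, and (given $\beta \in \calM(S)$, consistently with (b)) "$A_j \to B_j$ encircles $\beta_j$'' means $U_j$ is a neighbourhood of $\beta_j$ in $\calM(A_j)$. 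If $W_j$ is an open neighbourhood of $\beta_j$ with $W_j \subseteq U_j$, then for the restriction map $\pi_j \colon \calM(R) \to \calM(A_j)$ the set $\pi_j^{-1}(W_j)$ is an open neighbourhood of $\beta$ contained in $\pi_j^{-1}(U_j) = U$ (the last equality because $U$ is cut out by conditions on elements of $A_i \subseteq A_j$); so $R \to S$ encircles $\beta$, which is the "if'' direction of (c).

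\emph{The "only if'' direction of (c), and the main obstacle.} Suppose $R \to S$ encircles $\beta$. The clean subcase is $\beta(f_k) < p_k\beta(g)$ for every $k$: then for \emph{every} $j \ge i$ the set $\{\alpha_j \in \calM(A_j) : \alpha_j(f_k) < p_k\alpha_j(g)\ (k=1,\dots,n)\}$ is open, contains $\beta_j$, and lies in $U_j$, so one may take $j = i$. In general, choose an open neighbourhood $N$ of $\beta$ in $\calM(R)$ with $N \subseteq U$; shrink it to one of the form \eqref{eq:open Laurent}, use Remark~\ref{R:approximate rational}-style perturbation (together with a shrinking of a finite subcover of $\calM(R)$) to get such a neighbourhood with data in $A$, hence in some $A_{j_0}$, and then I would try to show that for $j_1 \ge j_0$ large enough the corresponding neighbourhood $N_{j_1} \subseteq \calM(A_{j_1})$ descending $N$ satisfies $N_{j_1} \subseteq U_{j_1}$, via Remark~\ref{R:compact spaces}(c),(d) and the explicit form \eqref{eq:rational subspace} of $U_{j_1}$. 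The delicate point --- and the reason this is not a formal consequence of compactness of inverse limits --- is that the transition maps $\calM(A_j) \to \calM(A_i)$ of an affinoid system need not be surjective on Berkovich spectra, so one cannot simply transport neighbourhoods back and forth; the obstruction is exactly the possibility that $\beta(f_{k_0}) = p_{k_0}\beta(g)$ for some $k_0$ while $\beta$ is still interior to $U$, and to dispose of such a "boundary'' condition near $\beta$ one needs an honest analytic input --- a maximum-modulus argument (Theorem~\ref{T:transform}, applied to $f_{k_0}/g$ on a small rational neighbourhood of $\beta$ on which $g$ is a unit) showing the $k_0$-th condition is locally redundant at $\beta$, after which it may be discarded and the number of conditions decreased. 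I expect this reduction, and its compatibility with passing between $\calM(R)$ and the $\calM(A_j)$, to be the hard part; an alternative route that sidesteps the topology is to reinterpret "$R \to S$ encircles $\beta$'' as the condition that $S$ become isomorphic to the base ring after localizing at $\beta$, and to exploit that localization at $\beta$ commutes with formation of $\varinjlim_j A_j$, so that the condition is detected at a finite stage of the system by a finite-presentation argument in the style of Remark~\ref{R:fet direct limit}.
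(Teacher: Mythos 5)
Your parts (a) and (b), and the ``if'' half of (c), are correct and follow the paper's own route; your extra care in checking that the approximated elements $f_1,\dots,f_n,g$ actually generate the unit ideal in some $A_i$ (via Corollary~\ref{C:ideal from spectrum} and Remark~\ref{R:compact spaces}) fills in a point the paper passes over silently, and is worth keeping.

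The ``only if'' half of (c) is where your proposal stops, and the difficulty you are circling is real but is resolved differently than either of the routes you sketch. The paper's proof is a one-line reduction: since $\calM(R)$ has a neighborhood basis of Laurent subdomains, shrink $\calM(S)$ to a Laurent subdomain still encircling $\beta$ whose defining inequalities are \emph{strict} at $\beta$, and move its defining data into some $A_j$ as in (a) --- the proof explicitly allows itself to ``increase $i$ and change the $f_j$ and $g$.'' That is, (c) is to be read as asserting the existence of a \emph{suitable} presentation as in (a), not as a statement about an arbitrary presentation fixed once and for all; under that reading your ``clean subcase'' is the entire proof, and no maximum-modulus or finite-presentation argument is needed. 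Your suspicion that the fixed-presentation version cannot be pushed through is in fact justified: take $A_j = K\{T/r_j\}$ with $r_j$ decreasing to $1$, so that $R = K\{T\}$ and each $\calM(R) \to \calM(A_j)$ is the non-surjective inclusion of the closed unit disc into the disc of radius $r_j$. For the rational subdomain $U = \{\beta' \in \calM(R) : \beta'(T) \le 1\} = \calM(R)$ (data $f_1 = T$, $g = 1$, $p_1 = 1$, lying in every $A_j$), the localization $R \to S$ is the identity and encircles the Gauss point $\beta$; yet $U_j = \{\gamma : \gamma(T) \le 1\}$ contains no neighborhood of $\beta_j$ in $\calM(A_j)$ for any $j$, since $\beta_j$ is a limit of Gauss points of radius greater than $1$. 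So neither the redundancy reduction via Theorem~\ref{T:transform} nor the finite-presentation argument you propose can succeed for the fixed presentation; you should instead adopt the paper's reading, note that the shrinking changes $i$, the $f_j$, $g$ (hence the $B_j$), and observe that this weaker form together with the ``if'' direction is all that is used downstream (e.g., in Proposition~\ref{P:henselian direct limit2}).
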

\begin{proof}
Part (a) is immediate from Remark~\ref{R:approximate rational}.
Part (b) is immediate from Remark~\ref{R:affinoid system disconnection}.
Part (c) follows by taking maximal Hausdorff quotients in Remark~\ref{R:affinoid system disconnection} to view $\calM(R)$ as the inverse limit of the $\calM(A_i)$.
\end{proof}
\begin{cor} \label{C:affinoid system localization}
Let $\{((A_i, A_i^+), \alpha_i)\}_{i \in I}$ be an affinoid system with direct limit $((A, A^+), \alpha)$.
Let $(R,R^+)$ be the completion of $(A,A^+)$.
\begin{enumerate}
\item[(a)]
For $v \in \Spa(A,A_+)$ and $i \in I$, let $\calO_{i,v}$ be the stalk of the structure sheaf of $\Spa(A_i,A_i^+)$ at $v$. Then $\varinjlim_{i \in I} \calO_{i,v}$ is a local ring whose residue field is dense in $\calH(v)$.
\item[(b)]
For $\beta \in \calM(R)$ and $i \in I$,
let $A_{i,\beta}$ denote the Hausdorff localization of $A_i$ at the restriction
of $\beta$. Then $\varinjlim_{i \in I} A_{i,\beta}$ is a local ring
whose residue field is dense in $\calH(\beta)$.
\end{enumerate}
\end{cor}

\begin{remark} \label{R:lift simple covering}
With notation as in Lemma~\ref{L:affinoid system rational}, note that by Remark~\ref{R:compact spaces}(d), any rational covering of $\Spa(A,A^+)$
is defined over some $(A_i,A_i^+)$.
\end{remark}

We have the following extension of Lemma~\ref{L:descend etale on field}.
\begin{prop} \label{P:henselian direct limit2}
Let $\{((A_i, A_i^+), \alpha_i)\}_{i \in I}$ be an affinoid system with direct limit $((A,A^+),\alpha)$.
Put $I = \ker(\alpha)$, $\overline{A} = A/I$, and $R = \widehat{A}$.
Then the base change functors
$\FEt(A) \to \FEt(\overline{A}) \to \FEt(R)$ are tensor equivalences.
\end{prop}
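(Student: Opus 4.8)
The plan is to treat the two functors separately. The functor $\FEt(A)\to\FEt(\overline A)$ is immediate: a uniform affinoid system is in particular a direct system of Banach rings with submetric transition maps, so by Lemma~\ref{L:henselian direct limit}(a) the pair $(A,\ker(\alpha))$ is henselian, and since $\overline A=A/\ker(\alpha)$ we conclude by Theorem~\ref{T:henselian} that $\FEt(A)\to\FEt(\overline A)$ is a tensor equivalence. For the functor $\FEt(\overline A)\to\FEt(R)$, recall $R=\widehat A$, so Lemma~\ref{L:descend etale on field}(a) asserts that the composite $\FEt(A)\to\FEt(R)$ is rank-preserving and fully faithful; composing with the equivalence just proved, $\FEt(\overline A)\to\FEt(R)$ is rank-preserving and fully faithful as well. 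It therefore remains only to prove that $\FEt(A)\to\FEt(R)$ is essentially surjective, which is the substantive point. Fix $S\in\FEt(R)$. Since the rank function on $\calM(R)$ is locally constant, it defines a finite disconnection of $\calM(R)$, which by Remark~\ref{R:affinoid system disconnection} is induced by orthogonal idempotents already present in some $A_i$, hence in $A$; splitting $A$, $R$, and $S$ along them reduces us to the case where $S$ has constant rank (the rank-zero piece being the zero ring).

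The geometric input is a local step. For $\beta\in\calM(R)$ consider $S\otimes_R\calH(\beta)\in\FEt(\calH(\beta))$. By Corollary~\ref{C:affinoid system localization}, $\varinjlim_i A_{i,\beta}$ is a henselian local ring whose residue field is dense in $\calH(\beta)$, and applying Lemma~\ref{L:descend etale on field}(b) to the direct system of those rational localizations of the $A_i$ which encircle $\beta$ shows $\FEt(\varinjlim_i A_{i,\beta})\to\FEt(\calH(\beta))$ is an equivalence; Remark~\ref{R:fet direct limit} identifies the source with the colimit of the corresponding $\FEt$'s. Hence $S\otimes_R\calH(\beta)$ descends to $T_\beta\in\FEt(B_\beta)$ for some rational localization $A_i\to B_\beta$ encircling $\beta_i=\beta|_{A_i}$. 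By Lemma~\ref{L:affinoid system rational}, $B_\beta$ produces a rational localization $R\to R_\beta$ encircling $\beta$ (after enlarging $i$, via part~(c)). Comparing over the henselian local ring of $R_\beta$ at $\beta$ — on which finite \'etale algebras are again detected on the residue field $\calH(\beta)$ (Lemma~\ref{L:henselian local ring} and Lemma~\ref{L:descend etale on field}(b)), where $T_\beta$ and $S$ agree by construction — and then applying Remark~\ref{R:fet direct limit} once more, one finds that after shrinking $R_\beta$ to a smaller rational localization still encircling $\beta$ (and correspondingly enlarging $i$) there is an isomorphism $T_\beta\otimes_{B_\beta}R_\beta\cong S\otimes_R R_\beta$ in $\FEt(R_\beta)$.

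Now globalize. Since $\calM(R)$ is compact and each $R_\beta$ encircles $\beta$, finitely many of the subdomains $\calM(R_\beta)$ cover $\calM(R)$, and this covering is strong; so we obtain a strong covering family of rational localizations $R\to R_1,\dots,R\to R_s$, each arising from a rational localization $A_{i_k}\to B_k$ with $T_k\in\FEt(B_k)$ and an isomorphism $\phi_k:T_k\otimes_{B_k}R_k\cong S\otimes_R R_k$. Using Remark~\ref{R:compact spaces}(c),(d) one enlarges the index to a single $j$ so that the $B_k$ all live over $A_j$ and their rational subdomains already cover $\calM(A_j)$; these then form a strong covering family of rational localizations of $A_j$ — hence, since $A_j$ is a uniform affinoid algebra, of \emph{affinoid} localizations (Lemma~\ref{L:rational is affinoid}) — and, by base change, of $A$. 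On overlaps, $\phi_l^{-1}\phi_k$ gives an isomorphism over $R_{kl}$; since $R_{kl}$ is the completed direct limit of a uniform affinoid system whose first term is the overlap $B_{kl}$, full faithfulness (Lemma~\ref{L:descend etale on field}(a)) lets us descend these isomorphisms, together with the triple-overlap cocycle identity, to a descent datum for the covering of $A_j$ (after one more enlargement of $j$). By Kiehl's descent theorem (Theorem~\ref{T:Tate-Kiehl}(b)) and the \'etale-locality of Corollary~\ref{C:etale local}, finite \'etale algebras satisfy effective descent along a covering family of affinoid localizations of $A_j$; this remains effective over $A$ upon passing to the colimit in $j$ (Remark~\ref{R:fet direct limit}), and by the analogous reduction to the affinoid case it is effective over $R$ as well. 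Gluing the descent datum thus produces $T\in\FEt(A)$, and $T\otimes_A R$ glues the base-changed descent datum, which is that of $S$ via the $\phi_k$; hence $T\otimes_A R\cong S$, as required.

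The main obstacle is this essential-surjectivity argument, and within it the passage from the local descents to a global one. Spreading a finite \'etale algebra out from $R$ over the affinoid system packages a Krasner/Hensel input (through Corollary~\ref{C:affinoid system localization} and Lemma~\ref{L:descend etale on field}(b)) together with a compactness argument over $\calM(R)$, and the reassembly of the local pieces requires effective descent for finite \'etale algebras along rational coverings, which is only directly available — via Kiehl's theorem — at the finite, affinoid level, and must be propagated through both the colimit over the affinoid system and the completion $A\rightsquigarrow R$. Everything preceding and following this step (the two uses of Theorem~\ref{T:henselian} and Lemma~\ref{L:descend etale on field}) is formal.
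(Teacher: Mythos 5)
Your proposal is correct and follows essentially the same route as the paper's proof: the henselian-pair argument for $\FEt(A)\to\FEt(\overline A)$, full faithfulness of $\FEt(A)\to\FEt(R)$ via Lemma~\ref{L:descend etale on field}(a), and essential surjectivity by descending $S$ pointwise over the localizations $\varinjlim_i A_{i,\beta}$ (Corollary~\ref{C:affinoid system localization} and Lemma~\ref{L:descend etale on field}(b)), invoking compactness of $\calM(R)$, matching the local pieces on overlaps by full faithfulness, and gluing over $A_i$ via Theorem~\ref{T:Tate-Kiehl}(b) and Corollary~\ref{C:etale local}. The only deviations are cosmetic (the initial constant-rank reduction is unnecessary, and the paper works with Laurent rather than general rational localizations).
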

\begin{proof}
The base change functor $\FEt(A) \to \FEt(\overline{A})$ is a tensor equivalence
by Lemma~\ref{L:henselian direct limit}(a) and Theorem~\ref{T:henselian}.
The functor $\FEt(\overline{A}) \to \FEt(R)$ is rank-preserving and fully faithful by
Lemma~\ref{L:descend etale on field}(a).
It is thus enough to check that $\FEt(A) \to \FEt(R)$ is essentially surjective.

Choose any $V \in \FEt(R)$.
For each $\beta \in \calM(R)$, for $A_{i,\beta}$ the Hausdorff localization of $A_i$ at the restriction
of $\beta$,
the functor $\FEt(\varinjlim_{i \in I} A_{i,\beta}) \to \FEt(R_{\beta})$ is an equivalence
by Corollary~\ref{C:affinoid system localization}
(to see that both $\varinjlim_{i \in I} A_{i,\beta}$ and $R_\beta$ have dense images in $\calH(\beta)$)
and Lemma~\ref{L:descend etale on field}(b).
We can thus choose an index $i \in I$ and a rational localization $(A_i,A_i^+) \to (B_i,B_i^+)$ encircling $\beta$ such that for $S = R \widehat{\otimes}_{A_i} B_i$,
the object $V \otimes_R S$ in $\FEt(S)$ descends to an object in $\FEt(B_i)$.
By the compactness of $\calM(R)$, we can find an index $i \in I$
and a strong rational covering $\{(A_i, A_i^+) \to (B_{i,j}, B_{i,j}^+)\}_{j=1}^n$
such that for $S_j = R \widehat{\otimes}_{A_i} B_{i,j}$, 
the object $V \otimes_R S_j$ in $\FEt(S_j)$ descends to an object $U_{i,j}$ in $\FEt(B_{i,j})$.
If write $B_{i,jl}$ and $S_{jl}$ for
$B_{i,j} \widehat{\otimes}_{A_i} B_{i,l}$ and $S_j \widehat{\otimes}_R S_l$, the functor
$\FEt(\varinjlim_{i \in I} B_{i,jl}) \to \FEt(S_{jl})$
is fully faithful; we thus obtain (after suitably increasing $i$)
isomorphisms among the $U_{i,j}$ on overlaps satisfying the cocycle condition.
By Theorem~\ref{T:Tate-Kiehl}(b) and Corollary~\ref{C:etale local}
(as extended by Remark~\ref{R:power series}),
the $U_{i,j}$ glue to an object in $\FEt(A_i)$, and hence in $\FEt(A)$.
This proves the claim.
\end{proof}

Using Lemma~\ref{L:affinoid system rational}, we obtain a weak extension of
Theorem~\ref{T:Tate-Kiehl} to arbitrary Banach algebras.
A better result would be to glue finite projective modules, but this is more difficult; see
\S\ref{subsec:glueing finite}.

\begin{theorem} \label{T:henselian direct limit3}
Let $(R,R^+)$ be an adic Banach algebra.
Let $\{(R,R^+) \to (R_i,R_i^+)\}_{i=1}^n$ be a rational covering.
Then the
homomorphism $R \to R_1 \oplus \cdots \oplus R_n$ is an effective descent morphism
for finite \'etale algebras over Banach rings.
\end{theorem}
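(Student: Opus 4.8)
The plan is to run Tate's reduction (Proposition~\ref{P:Tate reduction}) down to the case of a simple Laurent covering, and in that case to descend the whole situation to a term of an affinoid system where Theorem~\ref{T:Tate-Kiehl}(b) applies. Take $\calC$ to be the category of uniform Banach algebras over the given analytic field, which is closed under rational localizations, and let $\calP$ be the property that for a covering family $R \to R_1, \dots, R \to R_n$ of rational localizations, the map $R \to R_1 \oplus \cdots \oplus R_n$ is an effective descent morphism for finite \'etale algebras over uniform Banach rings. Since the uniformization of $R_i \widehat{\otimes}_R R_j$ represents $\calM(R_i) \cap \calM(R_j)$ and the uniformization of $R_i \widehat{\otimes}_R R_i$ is $R_i$ (each being forced by the universal property of rational localizations), possessing $\calP$ amounts to saying that the natural base-change functor from $\FEt(R)$ to the category of descent data (families $M_i \in \FEt(R_i)$ with glueing isomorphisms over the overlaps satisfying the cocycle condition) is an equivalence.

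The hypotheses (a) and (b) of Proposition~\ref{P:Tate reduction}, locality and transitivity of $\calP$, are routine manipulations of descent data. For transitivity, one restricts a descent datum for a composite covering $R \to R_{ik}$ to each $\{R_i \to R_{ik}\}_k$, descends it to an object $N_i \in \FEt(R_i)$, and then uses the residual glueing data together with full faithfulness of $\FEt(R_i) \to \prod_k \FEt(R_{ik})$ (for covering families: morphisms of finite \'etale algebras correspond to idempotents by Lemma~\ref{L:idempotents to morphisms}, which are detected on the $\calM(R_{ik})$) to glue the $N_i$ via the $\calP$ already available for $\{R \to R_i\}$. For locality, one pulls a descent datum for $\{R_i\}$ back along a refinement $\{R'_k\}$, applies $\calP$ for the refinement to get $N \in \FEt(R)$, and verifies that $N$ induces the given datum by base change, testing over the rational subdomains $\calM(R_i) \cap \calM(R'_k)$, which cover $\calM(R_i)$.

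The substance is hypothesis (c): $\calP$ holds for a simple Laurent covering $\calM(R) = \{\alpha(f) \leq p\} \cup \{\alpha(f) \geq p\}$, with representing localizations $R \to R_{1,-}$, $R \to R_{1,+}$ and overlap $R_{12}$. Using Lemma~\ref{L:construct affinoid system}(b), write $R$ as the completed direct limit of a uniform affinoid system $\{A_j\}$; by Remark~\ref{R:lift simple covering} (after replacing $f$ by a nearby element of the dense subalgebra) the covering is already defined over some $A_{j_0}$, so Lemma~\ref{L:affinoid system rational}(a) furnishes, for $j \geq j_0$, covering families $A_j \to B_{j,-}, A_j \to B_{j,+}$ of affinoid localizations of the affinoid algebra $A_j$ whose completed direct limits are $R_{1,-}, R_{1,+}$, together with a compatible system $\{B_{j,12}\}$ with completed direct limit $R_{12}$. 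At each finite level, Theorem~\ref{T:Tate-Kiehl}(b) makes $A_j \to B_{j,-} \oplus B_{j,+}$ an effective descent morphism for finite Banach modules; upgrading this to finite \'etale algebras is formal (a descent datum of finite \'etale algebras is in particular a descent datum of finite Banach modules, so it glues to a finite Banach $A_j$-module, which is finite projective by faithfully flat descent using Theorem~\ref{T:descent finite locally free} and Proposition~\ref{P:affinoid covering is faithfully flat}, carries an algebra structure glued from the descent datum, and is \'etale over $A_j$ by the locality of \'etaleness in Corollary~\ref{C:etale local}; morphisms are handled by part (b) of the same theorem, and the canonical Banach structures are those of Lemma~\ref{L:finite projective Banach}).

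It remains to transport this across the direct limit, which is the main obstacle. Given a descent datum for $R \to R_{1,-} \oplus R_{1,+}$, its objects and glueing isomorphisms descend to some level $B_{j,\pm}, B_{j,12}$ because the base-change functors $\varinjlim_j \FEt(B_{j,\pm}) \to \FEt(R_{1,\pm})$ and $\varinjlim_j \FEt(B_{j,12}) \to \FEt(R_{12})$ are equivalences by Remark~\ref{R:fet direct limit} and Proposition~\ref{P:henselian direct limit2}; the cocycle condition then holds at that level, so the previous paragraph yields an object of $\FEt(A_j)$ whose base extension to $R$ (via the equivalence $\varinjlim_j \FEt(A_j) \simeq \FEt(R)$) recovers the datum, and morphisms are handled identically. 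This proves $\calP$ for simple Laurent coverings, and Proposition~\ref{P:Tate reduction} then gives $\calP$ in general. The reason one must route through Tate's reduction rather than attack a general covering directly is precisely that an arbitrary covering family of rational subdomains of $\calM(R)$ need not be defined over a single term of an affinoid system, whereas a simple Laurent covering is; the only real care needed is to observe that every equivalence invoked — those of Remark~\ref{R:fet direct limit}, Proposition~\ref{P:henselian direct limit2}, and the affinoid-level descent — is realized by base change, so that they compose to the asserted statement over $R$.
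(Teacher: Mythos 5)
Your proof is correct and follows essentially the same route as the paper's: reduction via Proposition~\ref{P:Tate reduction} to a simple Laurent covering, lifting that covering to a term of a uniform affinoid system (Lemma~\ref{L:construct affinoid system} and Remark~\ref{R:lift simple covering}), transferring descent data between the limit and finite levels via Remark~\ref{R:fet direct limit} and Proposition~\ref{P:henselian direct limit2}, and concluding at the affinoid level with Theorem~\ref{T:Tate-Kiehl}(b) and Corollary~\ref{C:etale local}. The extra detail you supply on verifying locality and transitivity and on upgrading Banach-module descent to finite \'etale descent is consistent with what the paper leaves implicit.
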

\begin{proof}
By Lemma~\ref{L:construct affinoid system},
we can construct an affinoid system $\{((A_i,A_i^+), \alpha_i)\}_{i \in I}$
with completed direct limit $(R,R^+)$. By Remark~\ref{R:lift simple covering},
for each sufficiently large $j$, the given covering family is induced by a covering family $\{(A_j, A_j^+) \to (B_{j,i}, B_{j,i}^+)\}_{i=1}^n$.
By Proposition~\ref{P:henselian direct limit2}, any descent datum for finite \'etale algebras over Banach rings with respect to $R \to S_1 \oplus \cdots \oplus S_n$ arises from
a descent datum with respect to $A_j \to B_{j,1} \oplus \cdots \oplus B_{j,n}$ for some $j$.
This descent datum is effective by Theorem~\ref{T:Tate-Kiehl}(b) (as extended by Remark~\ref{R:power series}, to uniquely glue the underlying finite flat algebras)
and Corollary~\ref{C:etale local} (to show that the resulting algebra is finite \'etale).
\end{proof}

\begin{cor} \label{C:glue local systems}
With notation as in Theorem~\ref{T:henselian direct limit3},
the morphism $R \to R_1 \oplus \cdots \oplus R_n$ is an effective descent morphism
for \'etale $\Zp$-local systems over Banach rings.
\end{cor}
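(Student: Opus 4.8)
The plan is to bootstrap from Theorem~\ref{T:henselian direct limit3}, using that an \'etale $\Zp$-local system is assembled entirely from finite \'etale data. Recall (Definition~\ref{D:etale site}, Remark~\ref{R:local systems}) that a $\Zp$-local system on $\Spec R$ is an inverse system $\{T_n\}_{n\ge 1}$ in which each $T_n$ is a lisse sheaf of $\ZZ/p^n\ZZ$-modules --- equivalently, a finite \'etale $R$-scheme together with a commutative $p^n$-torsion group-scheme structure, i.e.\ a system of $R$-algebra maps between finite \'etale $R$-algebras realizing addition, negation, and the unit section and subject to the group axioms --- and in which each transition map identifies $T_n$ with the reduction $T_{n+1}/p^nT_{n+1}$. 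A descent datum for $\Zp$-local systems along $R\to R_1\oplus\cdots\oplus R_n$ thus unwinds, level by level in $n$, into a compatible inverse system of descent data for lisse $\ZZ/p^n\ZZ$-sheaves, subject to the constraint that each constituent $\calL_i$ over $R_i$ genuinely be a $\Zp$-local system.

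The first step is to promote Theorem~\ref{T:henselian direct limit3} from $\FEt$ to lisse $\ZZ/p^n\ZZ$-sheaves. Effective descent for $\FEt$ means that base change identifies the category of finite \'etale $R$-algebras with the category of descent data for the covering $R\to R_1\oplus\cdots\oplus R_n$; this is an equivalence of tensor categories (base change along each $R\to R_i$, and the glueing functor, respect $\otimes$), so it matches up commutative $p^n$-torsion group objects on the two sides. Hence, for each $n$, $R\to R_1\oplus\cdots\oplus R_n$ is also an effective descent morphism for lisse $\ZZ/p^n\ZZ$-sheaves: level-$n$ descent data glue uniquely, and morphisms of such data descend. (This is the principle already recorded in Remark~\ref{R:local systems rings equivalence}.)

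The second step is to assemble the levels. Given a descent datum for $\Zp$-local systems, glue the level-$n$ data to a lisse $\ZZ/p^n\ZZ$-sheaf $T_n$ on $\Spec R$; the transition maps over the $R_i$ form morphisms of descent data, hence glue to maps $t_n\colon T_{n+1}\to T_n$. One must check that $\{T_n,t_n\}$ is a $\Zp$-local system, i.e.\ that $t_n$ identifies $T_n$ with $T_{n+1}/p^nT_{n+1}$. Here $p^nT_{n+1}$ and $T_{n+1}/p^nT_{n+1}$ are again lisse (the mod-$p^n$ reduction of a locally constant sheaf of modules is locally constant); the composite $p^nT_{n+1}\hookrightarrow T_{n+1}\xrightarrow{\,t_n\,}T_n$ is zero after base change to every $R_i$, hence zero since the glueing equivalence is faithful, so $t_n$ factors as $T_{n+1}\twoheadrightarrow T_{n+1}/p^nT_{n+1}\xrightarrow{\,\bar t_n\,}T_n$; and $\bar t_n$ is an isomorphism after base change to every $R_i$, hence an isomorphism (the $R_i$-base changes of its inverse again form a descent-datum morphism, which glues to a two-sided inverse, the identities being verified after base change). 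Thus $\{T_n,t_n\}$ is a $\Zp$-local system restricting to the given descent datum; its uniqueness and the bijection on morphisms follow level by level from the corresponding facts for lisse $\ZZ/p^n\ZZ$-sheaves.

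The main obstacle is exactly this last step: effective descent for $\FEt$ concerns a single category, whereas a $\Zp$-local system is an inverse limit of such objects carrying the reduction constraint between consecutive levels, and verifying that the constraint survives glueing is what forces the small argument that zero-ness and invertibility of morphisms of lisse sheaves descend. The rest is bookkeeping within the formalism of \S\ref{subsec:descent formalism}. Note too that no noetherian hypothesis is available, so --- unlike for $\Qp$-sheaves on noetherian schemes (cf.\ Remark~\ref{R:local systems glue}) --- one cannot invoke constructibility and must work with finite \'etale descent throughout.
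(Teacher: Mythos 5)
Your argument is correct and follows essentially the same route as the paper, whose proof is a one-line citation of Theorem~\ref{T:henselian direct limit3} together with the observation (Remarks~\ref{R:local systems} and~\ref{R:different local systems}) that $\Zp$-local systems are determined by finite \'etale data. You have simply written out the details that the paper leaves implicit: promoting the tensor equivalence on $\FEt$ to group objects level by level, and checking that the reduction constraint $T_n \cong T_{n+1}/p^nT_{n+1}$ descends.
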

\begin{proof}
This follows from Theorem~\ref{T:henselian direct limit3}
and Remark~\ref{R:local systems rings equivalence}.
\end{proof}

\subsection{Glueing of finite projective modules}
\label{subsec:glueing finite}

We now turn to the problem of glueing finite modules over Banach rings,
using the formalism of \S\ref{subsec:descent formalism} as a starting point. We begin with a cautionary note.

\begin{remark} \label{R:rational flat}
For $(R, R^+) \to (S,S^+)$ a rational localization of adic Banach rings,
the map $R \to S$ is flat when $R$ is an affinoid algebra over an analytic field by
Lemma~\ref{L:affinoid subdomain is flat}, but need not be flat in general.
For instance, flatness almost always fails for perfectoid algebras. 
Guided by this observation and by the analogy with the Beauville-Laszlo theorem
(Proposition~\ref{P:reduced descent}),
we limit
our glueing ambitions to cases where the modules being glued are themselves flat.
\end{remark}

Taking Remark~\ref{R:rational flat} into account, we will be interested in the categories of sheaves of locally free modules of finite rank over various sheaves of rings on adic spectra; in particular, we will want to know when these categories are equivalent to the categories of finite projective modules over the ring of global sections. The guiding principle at work is that the only obstructions to obtaining such results (analogous to Kiehl's theorem) are failures of acyclicity of the base rings
(analogous to Tate's theorem).

\begin{lemma} \label{L:Cartan factorization}
Let $R_1 \to S$, $R_2 \to S$ be bounded homomorphisms of Banach rings (not necessarily
containing topologically nilpotent units) such that the
sum homomorphism $\psi: R_1 \oplus R_2 \to S$ of groups is strict surjective. Then
there exists a constant $c>0$ such that for every positive integer $n$,
every matrix $U \in \GL_n(S)$ with $|U-1| < c$ can be written in the form $\psi(U_1) \psi(U_2)$
with $U_i \in \GL_n(R_i)$. Moreover, if $\psi$ is almost optimal, this holds with $c = 1$.
\end{lemma}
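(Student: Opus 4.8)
The plan is to produce $U_1, U_2$ by a Newton-type iteration, leveraging the strictness of $\psi$ to decompose each additive correction and the submultiplicative norm to control convergence. First, since $\psi\colon R_1 \oplus R_2 \to S$ is strict, there is a constant $\lambda \geq 1$ such that every $s \in S$ can be written as $\psi(a_1, a_2)$ with $|a_1|, |a_2| \leq \lambda |s|$; fix such a $\lambda$ once and for all, and apply it entrywise to matrices (so an $n \times n$ matrix $M$ over $S$ with $|M| < \delta$ lifts to matrices $M_1$ over $R_1$ and $M_2$ over $R_2$, each of norm $< \lambda \delta$, with $\psi(M_1) + \psi(M_2) = M$). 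Note the bound is independent of $n$, which is what makes the final constant $c$ uniform in $n$.

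Now take $U \in \GL_n(S)$ with $|U - 1| < c$, where $c \in (0,1)$ will be chosen small (depending only on $\lambda$ and the norm on $S$, not on $n$). Write $U - 1 = M$, lift $M$ to $M_1$ over $R_1$ and $M_2$ over $R_2$ with norms $< \lambda c$, and set $V_1^{(1)} = 1 + M_1 \in \mathrm{GL}_n(R_1)$ (invertible since $|M_1| < 1$ once $\lambda c < 1$) and similarly $V_2^{(1)} = 1 + M_2$. Then $\psi(V_1^{(1)})\,\psi(V_2^{(1)}) = (1 + \psi(M_1))(1 + \psi(M_2)) = 1 + M + \psi(M_1)\psi(M_2)$, so $U \cdot \bigl(\psi(V_1^{(1)})\psi(V_2^{(1)})\bigr)^{-1} = 1 + M'$ where a short estimate gives $|M'| \leq K |M|^2$ for a constant $K$ depending only on $\lambda$ and the norm (using submultiplicativity of the norm on $S$ and the Neumann series for the inverse). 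Iterating: at stage $k$ we have $U = \psi(W_1^{(k)})\psi(W_2^{(k)})(1 + M^{(k)})$ with $W_i^{(k)} \in \mathrm{GL}_n(R_i)$, $|M^{(k)}|$ decreasing doubly-exponentially, and we absorb the next correction by multiplying $W_i^{(k)}$ by $1 + (\text{lift of } M^{(k)})$ on the appropriate side. Choosing $c$ small enough that $K\lambda^2 c < 1$ (say) guarantees $|M^{(k)}| \to 0$ and that the partial products $W_i^{(k)}$ form Cauchy sequences in $\mathrm{GL}_n(R_i)$ — here one checks that $\mathrm{GL}_n(R_i)$, being open in the complete space of $n\times n$ matrices over $R_i$ and closed under the relevant operations near $1$, contains the limits. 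Passing to the limit yields $U_i = \lim_k W_i^{(k)} \in \mathrm{GL}_n(R_i)$ with $U = \psi(U_1)\psi(U_2)$.

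The main obstacle is bookkeeping the constants so that $c$ (and $K$, $\lambda$) genuinely do not depend on $n$: the strictness constant $\lambda$ is $n$-independent because it comes from the scalar statement applied entrywise, and submultiplicativity of the matrix norm $|MN| \leq |M||N|$ holds with no dimensional factor in the nonarchimedean setting, so the quadratic convergence estimate $|M^{(k+1)}| \leq K|M^{(k)}|^2$ carries through uniformly. A secondary technical point is verifying that the two-sided nature of the correction (one factor multiplied on the left, the other on the right, or a consistent convention) is handled so the cocycle bookkeeping closes up; this is routine once one fixes, e.g., that at each stage one writes $1 + M^{(k)} = \psi(1 + N_1^{(k)})\,\psi(1 + N_2^{(k)}) \cdot (1 + M^{(k+1)})$ and updates $W_1^{(k+1)} = W_1^{(k)}(1 + N_1^{(k)})$, $W_2^{(k+1)} = (1 + N_2^{(k)})^{-1}$-conjugation-free on the right in the obvious way. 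Everything else is a standard contraction-mapping argument.
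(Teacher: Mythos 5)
Your core strategy is the same as the paper's: use strictness to lift $U-1$ entrywise to matrices over $R_1$ and $R_2$ with an $n$-independent constant, then iterate with (at least) geometric convergence. The correct observation that the lifting constant and the submultiplicativity of the matrix supremum norm are dimension-free is exactly what makes $c$ uniform in $n$.

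However, there is a genuine gap in the factor bookkeeping, and it is precisely the point you wave away as ``routine.'' Your iteration peels corrections off on one side: at stage $k$ you have $U = \psi(W_1^{(k)})\psi(W_2^{(k)})(1+M^{(k)})$, and to advance you must insert a new $R_1$-factor and a new $R_2$-factor to the \emph{right} of $\psi(W_2^{(k)})$. The accumulated product then alternates $\psi(\cdot_{R_1})\psi(\cdot_{R_2})\psi(\cdot_{R_1})\psi(\cdot_{R_2})\cdots$, and since $\GL_n(S)$ is noncommutative for $n\geq 2$ these factors cannot be regrouped into a single $\psi(U_1)\psi(U_2)$; the update rule you write down ($W_1^{(k+1)} = W_1^{(k)}(1+N_1^{(k)})$, etc.) does not reproduce the identity $U = \psi(W_1^{(k+1)})\psi(W_2^{(k+1)})(1+M^{(k+1)})$. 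One could try to rescue this by estimating the commutator of $\psi(W_2^{(k)})$ with $1+\psi(N_1^{(k)})$ (it is second order), but that requires an additional argument you do not give and degrades the convergence rate. The paper's proof avoids the problem entirely with a two-sided update: setting $U' = \psi(1-X)\,U\,\psi(1-Y)$ with $X$ over $R_1$ and $Y$ over $R_2$ lifting $V = U-1$, one checks $|U'-1| \leq d\,|U-1|^2$, so iterating drives $U$ to the identity while all $R_1$-factors accumulate on the left and all $R_2$-factors on the right; the convergent products then give $1 = \psi(P_1)U\psi(P_2)$, hence $U = \psi(P_1^{-1})\psi(P_2^{-1})$. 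You should replace your one-sided iteration with this sandwich form; the rest of your estimates then go through.
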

\begin{proof}
By hypothesis, there exists a constant $d \geq 1$ such that every $x \in S$ lifts to some
pair $(y_1, y_2) \in R_1 \oplus R_2$ with $|y_1|, |y_2| \leq d |x|$.
It will suffice to prove the claim for $c = d^{-2}$.

Given $U \in \GL_n(S)$ with $|U-1| < c$, put $V = U-1$, and lift each entry $V_{ij}$ to a pair
$(X_{ij}, Y_{ij}) \in R_1 \oplus R_2$ with $|X_{ij}|, |Y_{ij}| \leq d |V_{ij}|$.
Then the matrix $U' = \psi(1-X) U \psi(1-Y)$ satisfies $|U'-1| \leq d |U-1|^2$.
If $|U-1| \leq d^{-l}$ for some integer $l \geq 2$, then $|U'-1| \leq d^{-l-1}$, so we may construct
the desired matrices by iterating the construction.
(See \cite[Lemma~4.5.3]{fvdp} for a similar argument or \cite[Theorem~2.2.2]{kedlaya-course} for a
more general result.)
\end{proof}

\begin{defn} \label{D:glueing pair}
Let
\[
\xymatrix{
R \ar[r] \ar[d] & R_1 \ar[d] \\
R_2 \ar[r] & R_{12}
}
\]
be a commutative diagram of Banach rings. (For the purposes of this definition, it is
not necessary to assume the presence of topologically nilpotent units.)
We call this diagram a
\emph{glueing square} if the following conditions hold.
\begin{enumerate}
\item[(a)]
The sequence
\[
0 \to R \to R_1 \oplus R_2 \to R_{12} \to 0
\]
of $R$-modules, in which the last nontrivial arrow takes $(s_1,s_2)$ to $s_1 - s_2$, is strict exact.
\item[(b)]
The map $R_2 \to R_{12}$ has dense image.
\item[(c)]
The map $\calM(R_1 \oplus R_2) \to \calM(R)$ is surjective.
\end{enumerate}
We define \emph{glueing data} on a glueing square as in Definition~\ref{D:glueing datum}.
\end{defn}

The following argument is a variant of Lemma~\ref{L:nearby generators}.
\begin{lemma} \label{L:Kiehl lemma}
Consider a glueing square as in Definition~\ref{D:glueing pair},
and let $M_1, M_2, M_{12}$ be a finite glueing datum.
Let $M$ be the kernel of the map $M_1 \oplus M_2 \to M_{12}$ taking $(m_1,m_2)$ to $\psi_1(m_1) - \psi_2(m_2)$.
\begin{enumerate}
\item[(a)]
For $i=1,2$, the natural map $M \otimes_R R_i \to M_i$ is surjective.
\item[(b)]
The map $M_1 \oplus M_2 \to M_{12}$ is surjective.
\end{enumerate}
\end{lemma}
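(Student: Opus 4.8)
The plan is to deduce both assertions from Lemma~\ref{L:Kiehl generic1}. Condition~(a) of a glueing square supplies in particular the exact sequence $0 \to R \to R_1 \oplus R_2 \to R_{12} \to 0$ of $R$-modules required by Definition~\ref{D:glueing datum}, so Lemma~\ref{L:Kiehl generic1} applies; its conclusion~(a) is exactly part~(b) here, and its conclusion~(b) yields a finitely generated $M_0 \subseteq M$ with $M_0 \otimes_R R_i \to M_i$ surjective for both $i = 1,2$, which is part~(a). So it remains to verify the hypothesis of Lemma~\ref{L:Kiehl generic1}: for every finite glueing datum there is a finitely generated $R$-submodule $M_0 \subseteq M$ with $M_0 \otimes_R R_1 \to M_1$ surjective.

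To build such an $M_0$, I would choose generating systems $\bv_1,\dots,\bv_n$ of $M_1$ over $R_1$ and $\bw_1,\dots,\bw_n$ of $M_2$ over $R_2$ of a common length $n$ (padding the shorter one with zeros), then extend each by $n$ further zeros. Using $\psi_1,\psi_2$ to identify $M_{12}$ with $M_1 \otimes_{R_1} R_{12}$ and with $M_2 \otimes_{R_2} R_{12}$, the extended systems give surjections $\tilde\pi_1,\tilde\pi_2\colon R_{12}^{2n} \to M_{12}$. A standard argument (lift $\pi_1,\pi_2\colon R_{12}^n \to M_{12}$ through each other by maps $P,Q$ and form $D = \bigl(\begin{smallmatrix}P & 1-PQ\\ -1 & Q\end{smallmatrix}\bigr)$; cf.\ the proof of Lemma~\ref{L:cover with free}) produces a matrix $D \in \GL_{2n}(R_{12})$ with $\tilde\pi_1 = \tilde\pi_2 \circ D$; equivalently $\psi_1(\bv^{\mathrm{ext}}_j) = \sum_k D_{kj}\,\psi_2(\bw^{\mathrm{ext}}_k)$ in $M_{12}$.

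The crucial step uses both defining properties of the glueing square. By condition~(a) the sum map $R_1 \oplus R_2 \to R_{12}$ is strict, so Lemma~\ref{L:Cartan factorization} provides a constant $c>0$ controlling factorizations of matrices near the identity over $R_{12}$. By condition~(b), $R_2 \to R_{12}$ has dense image, so I may choose $D' \in \mathrm{Mat}_{2n}(R_2)$ with image $\overline{D'} \in \mathrm{Mat}_{2n}(R_{12})$ satisfying $|\overline{D'} - D| < c\,|D^{-1}|^{-1}$, whence $|D^{-1}\overline{D'} - 1| < c$; Lemma~\ref{L:Cartan factorization} then gives $F_1 \in \GL_{2n}(R_1)$ and $F_2 \in \GL_{2n}(R_2)$ with $D^{-1}\overline{D'} = \overline{F_1}\,\overline{F_2}$, and rearranging yields $D\,\overline{F_1} = \overline{D'F_2^{-1}}$, where $F_1$ is invertible over $R_1$ and $D'F_2^{-1}$ is a matrix over $R_2$. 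Replacing the extended generating system of $M_1$ by its $F_1$-transform (again a generating system of $M_1$, since $F_1 \in \GL_{2n}(R_1)$) and the extended generating system of $M_2$ by its $(D'F_2^{-1})$-transform (again lying in $M_2$, since $D'F_2^{-1}$ has entries in $R_2$), the identity $D\,\overline{F_1} = \overline{D'F_2^{-1}}$ says exactly that the two new generating systems have the \emph{same} image in $M_{12}$. The matched pairs are then genuine elements of $M$; their $M_1$-components generate $M_1$; and the $R$-span $M_0$ of these $2n$ elements of $M$ is the submodule sought. Feeding this into Lemma~\ref{L:Kiehl generic1} finishes the proof.

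The crucial step is where I expect the real difficulty. A naive transition-function argument, modelled on glueing of vector bundles, would ask to write $D$ as a product of a matrix invertible over $R_1$ and one invertible over $R_2$, which is false in general --- that would trivialize all vector-bundle glueing data. The resolution exploits that only $R_2 \to R_{12}$, not $R_1 \to R_{12}$, is assumed to have dense image: one absorbs the approximation error into the $R_2$-side factor, which is therefore allowed to be a noninvertible matrix, while keeping the $R_1$-side modification an honest change of generators. That a $\GL(R_1)$-transform --- and, where needed, a perturbation small in the quotient seminorm --- preserves generation of a finite (not necessarily projective) module is a routine variant of Lemma~\ref{L:nearby generators}.
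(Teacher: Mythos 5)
Your proof is correct and follows the same strategy as the paper's (which in turn follows Fresnel--van der Put): use density of $R_2 \to R_{12}$ to approximate, factor the resulting near-identity matrix via Lemma~\ref{L:Cartan factorization}, produce elements of $M$ whose $M_1$-components generate $M_1$, and feed the result into Lemma~\ref{L:Kiehl generic1}. The one divergence is in how the matrix to be factored is produced: you pad the generating sets to length $2n$ and build a genuinely invertible transition matrix $D$ by the block-matrix trick before approximating $D$ itself, whereas the paper works directly with the $n\times n$ relation matrices $A$ and $B$, approximates only $B$ by $B'$ over $R_2$, factors $1+A(B'-B)=C_1C_2^{-1}$, and checks membership in $M$ via the identity $\sum_i (1-AB)_{ik}\psi_1(\bv_i)=0$ --- avoiding the doubling. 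Your closing remark about perturbations small in the quotient seminorm is superfluous: the $M_1$-generators are transformed by the exact invertible matrix $F_1$, so no approximate-generation argument is needed.
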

\begin{proof}
We follow \cite[Lemmas~4.5.4 and~4.5.5]{fvdp}.
Choose generating sets $\bv_1,\dots,\bv_n$ and $\bw_1,\dots,\bw_n$ of $M_1$ and $M_2$, respectively,
of the same cardinality. We may then choose $n \times n$ matrices $A,B$ over $R_{12}$ such that
$\psi_2(\bw_j) = \sum_i A_{ij} \psi_1(\bv_i)$ and $\psi_1(\bv_j) = \sum_i B_{ij} \psi_2(\bw_i)$.

By hypothesis, the map $R_2 \to R_{12}$ has dense image.
We may thus choose an $n \times n$ matrix $B'$ over $R_2$ so that
$A(B'-B)$ has norm less than the constant $c$ of Lemma~\ref{L:Cartan factorization}.
We may then write $1 + A(B'-B) = C_1 C_2^{-1}$ with $C_i \in \GL_n(R_i)$.

We now may define elements $\bx_j \in M_1 \oplus M_2$ by the formula
\[
\bx_j = (\bx_{j,1}, \bx_{j,2}) =
\left(\sum_i (C_1)_{ij} \bv_i, \sum_i (B' C_2)_{ij} \bw_i \right) \qquad (j=1,\dots,n).
\]
Then
\[
\psi_1(\bx_{j,1}) - \psi_2(\bx_{j,2}) = \sum_i (C_1 - A B' C_2)_{ij} \psi_1(\bv_i) =
\sum_i ((1 - AB)C_2)_{ij} \psi_1(\bv_i) = 0,
\]
so $\bx_j \in M$.
Since $C_1 \in \GL_n(R_1)$, the $\bx_{i,1}$ generate $M_1$ over $R_1$,
so the map $M \otimes_R R_1 \to M_1$ is surjective.
We may now apply Lemma~\ref{L:Kiehl generic1} to deduce (a) and (b).
\end{proof}

\begin{prop} \label{P:glue projective}
Consider a glueing square as in Definition~\ref{D:glueing pair},
and let $M_1, M_2, M_{12}$ be a finite projective glueing datum.
Let $M$ be the kernel of the map $M_1 \oplus M_2 \to M_{12}$ taking $(m_1,m_2)$ to $\psi_1(m_1) - \psi_2(m_2)$.
Then $M$ is a finite projective $R$-module and the natural maps $M \otimes_R R_i \to M_i$ are isomorphisms.
\end{prop}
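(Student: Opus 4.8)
The plan is to combine the purely module-theoretic glueing machinery already developed (Lemma~\ref{L:Kiehl generic2}) with the analytic input specific to the glueing square (Lemma~\ref{L:Kiehl lemma}). First I would observe that Lemma~\ref{L:Kiehl lemma}(a) verifies precisely the hypothesis of Lemma~\ref{L:Kiehl generic1}: for any finite glueing datum over the glueing square there is a finitely generated $R$-submodule $M_0$ of $M$ with $M_0 \otimes_R R_i \to M_i$ surjective for $i=1,2$. (Strictly, Lemma~\ref{L:Kiehl generic1} only asks for surjectivity after tensoring with $R_1$; that is certainly supplied.) The exactness of $0 \to R \to R_1 \oplus R_2 \to R_{12} \to 0$ required in Definition~\ref{D:glueing datum} is part of the definition of a glueing square (condition~(a), forgetting strictness). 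Hence the hypotheses of Lemma~\ref{L:Kiehl generic2} are met.

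Next I would simply invoke Lemma~\ref{L:Kiehl generic2} applied to the given finite projective glueing datum $M_1, M_2, M_{12}$. That lemma yields immediately that $M = \ker(M_1 \oplus M_2 \to M_{12})$ is a finitely presented $R$-module and that the natural maps $M \otimes_R R_1 \to M_1$ and $M \otimes_R R_2 \to M_2$ are bijective. So the only thing left to prove is that $M$ is in fact finite \emph{projective}, not merely finitely presented.

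For projectivity I would use the criterion from Definition~\ref{D:projective} together with the Berkovich-spectrum description of projectivity over uniform Banach algebras. Specifically, by Lemma~\ref{L:finite generation2}, a finitely generated module over a uniform Banach algebra over an analytic field is projective if and only if its rank function $\beta \mapsto \dim_{\calH(\beta)}(M \otimes_R \calH(\beta))$ on $\calM(R)$ is continuous; and by condition~(c) of the glueing square, $\calM(R_1) \cup \calM(R_2) \to \calM(R)$ is surjective, so it suffices to check that the rank function is continuous after pulling back to $\calM(R_1)$ and to $\calM(R_2)$. But over $R_i$ the module $M \otimes_R R_i$ is isomorphic to $M_i$, which is finite projective by hypothesis, so its rank function on $\calM(R_i)$ is continuous. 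Invoking the equivalence (b)$\Leftrightarrow$(c) of Lemma~\ref{L:finite generation2} (with $S = R_1 \oplus R_2$) then gives that $M$ is projective, completing the proof.

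The main obstacle is really just making sure the bookkeeping of Lemma~\ref{L:finite generation2}(c) is set up correctly: one must check that $\calM(R_1 \oplus R_2) \to \calM(R)$ being surjective is exactly the surjectivity of $\calM(S) \to \calM(R)$ for $S = R_1 \oplus R_2$, and that $M \otimes_R S \cong M_1 \oplus M_2$ is projective over $S$ because each summand is projective over the corresponding factor ring (a direct sum of projective modules over a product ring is projective over the product). Everything else is a direct citation: Lemma~\ref{L:Kiehl lemma} supplies the analytic input, Lemma~\ref{L:Kiehl generic2} does the module-theoretic glueing, and Lemma~\ref{L:finite generation2} upgrades finite presentation to projectivity. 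No genuinely new argument is needed beyond assembling these pieces.
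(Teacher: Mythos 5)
Your proposal is correct and follows essentially the same route as the paper: Lemma~\ref{L:Kiehl lemma} supplies the hypothesis of Lemma~\ref{L:Kiehl generic1}, Lemma~\ref{L:Kiehl generic2} gives finite presentation and the bijections $M \otimes_R R_i \to M_i$, and Lemma~\ref{L:finite generation2} applied to $S = R_1 \oplus R_2$ (using condition~(c) of the glueing square) upgrades this to projectivity. Your unpacking of the rank-function criterion is just an expansion of what Lemma~\ref{L:finite generation2} already encapsulates.
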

\begin{proof}
By Lemma~\ref{L:Kiehl lemma}, the hypotheses of Lemma~\ref{L:Kiehl generic2}(a)
are satisfied. It thus suffices to check that the additional hypothesis of Lemma~\ref{L:Kiehl generic2}(b) is satisfied, i.e., that
the image of $\Spec(R_1 \oplus R_2) \to \Spec(R)$ contains $\Maxspec(R)$.
Given $\gothp \in \Maxspec(R)$, choose $\alpha \in \calM(R)$ with $\gothp_\alpha = \gothp$ (see Definition~\ref{D:residue field}). By assumption, $\alpha$ lifts to some $\beta \in \calM(R_1 \oplus R_2)$; then $\gothp_\beta$ is a prime ideal of $\Spec(R_1 \oplus R_2)$ lifting $\gothp$.
\end{proof}

\begin{defn} \label{D:Tate-Kiehl properties}
Let $(A,A^+)$ be an adic Banach ring.
Let $\calR$ be a presheaf of topological rings on $\Spa(A,A^+)$.
We say that $\calR$ satisfies the \emph{Tate sheaf property} if 
for every rational localization $(A, A^+) \to (B,B^+)$ and every rational covering $\gothV$ of $U = \Spa(B,B^+)$,
\begin{equation} \label{eq:Tate-Kiehl properties}
H^i(U, \calR) = \check{H}^i(U, \calR; \gothV) = \begin{cases} \calR(U) & i=0 \\ 0 & i>0. \end{cases}
\end{equation}
In particular, this implies that $\calR$ is a sheaf.
By Proposition~\ref{P:acyclicity template}, it suffices to check \eqref{eq:Tate-Kiehl properties} for simple Laurent coverings.

We say that $\calR$ satisfies the \emph{Kiehl glueing property} if for every rational subdomain $U$ of $\Spa(A,A^+)$, the functor from the category of finite projective $\calR(U)$-modules to the category of sheaves of $\calR$-modules over $U$ which are locally free of finite rank is an equivalence of categories.
\end{defn}

\begin{theorem} \label{T:tate to Kiehl}
Let $(A,A^+)$ be a sheafy adic Banach ring. Then the structure sheaf on $\Spa(A,A^+)$ satisfies the Tate sheaf property and the Kiehl glueing property.
\end{theorem}
\begin{proof}
The Tate sheaf property is a consequence of Theorem~\ref{T:Tate sheaf property for structure sheaf}. The Kiehl glueing property follows from acyclicity plus Proposition~\ref{P:Tate reduction} and Proposition~\ref{P:glue projective}.
\end{proof}

\begin{remark}
By Theorem~\ref{T:Tate-Kiehl}, any adic affinoid algebra over an analytic field
is sheafy. Some additional cases in which we will establish the sheafy property, and hence the Tate and Kiehl properties, will be
perfect uniform Banach $\Fp$-algebras (Theorem~\ref{T:Tate-Kiehl analogue1}),
perfectoid algebras (Theorem~\ref{T:Tate-Kiehl analogue2}),
and preperfectoid algebras (Theorem~\ref{T:Kiehl for preperfectoid}).
For some additional examples of presheaves of rings satisfying the Tate and Kiehl properties, see \S\ref{subsec:sheaf properties}.
\end{remark}

\begin{remark} \label{R:new Beauville-Laszlo}
As an aside, we use the glueing formalism to produce a new proof of the Beauville-Laszlo theorem as formulated in Proposition~\ref{P:reduced descent}. (Note that \cite{beauville-laszlo} also includes a somewhat stronger result which we do not treat here.)

Set
\[
R_1 = \widehat{R}, \qquad R_2 = R[t^{-1}], \qquad R_{12} = \widehat{R}[t^{-1}].
\] 
Since $t$ is not a zero divisor in $R$, the maps $R \to R_2, R_1 \to R_{12}$ are both injective. It is clear that $R_1 \oplus R_2 \to R_{12}$ is surjective, so we obtain a glueing square.

Given a finite glueing datum, set notation as in the first paragraph of the proof of Lemma~\ref{L:Kiehl lemma}. Since $R_2 \to R_{12}$ has dense image for the $t$-adic topology, we may choose a matrix $B'$ over $R_2$ so that $A(B'-B)$ has entries in $t R_1$. Put $C_1 = 1 + A(B'-B) \in \GL_n(R_1)$ and $C_2 = 1 \in \GL_n(R_2)$; we may then continue as in Lemma~\ref{L:Kiehl lemma} to conclude that $M \otimes_R R_1 \to M_1$ is surjective.

Since $\Spec(R_1 \oplus R_2) \to \Spec(R)$ is surjective, the hypotheses of Lemma~\ref{L:Kiehl generic2} are satisfied. Proposition~\ref{P:reduced descent} follows at once.
\end{remark}

\subsection{Uniform Banach rings}

In the classical theory of Banach algebras over $\RR$ or $\CC$, an important role is played by the class of \emph{uniform function algebras} (i.e., algebras of continuous functions on compact spaces topologized using the supremum norm). We now introduce the analogous objects in the nonarchimedean setting.

\begin{defn} \label{D:uniform Banach ring}
The following conditions on a Banach ring $A$ are equivalent.
\begin{enumerate}
\item[(a)]
The norm on $A$ is equivalent to some power-multiplicative norm.
\item[(b)]
The norm on $A$ is equivalent to its spectral seminorm (which we therefore also call the \emph{spectral norm}).
\item[(c)]
There exists $c>0$ such that $\left| x^2 \right| \geq c \left|x\right|^2$ for all $x \in A$. (One gets another equivalent condition by replacing 2 with any larger integer.)
\item[(d)]
The subring $A^{\circ}$ of $A$ is bounded.
\end{enumerate}
If these conditions hold, we say $A$ is \emph{uniform}. Any uniform Banach ring is reduced; the converse is false in general, but any reduced affinoid algebra over an analytic field is uniform by Corollary~\ref{C:spectral is norm}.
We say that an adic Banach ring $(A,A^+)$ is \emph{uniform} if $A$ is a uniform Banach ring.
\end{defn}

\begin{example} \label{exa:Tate algebra}
For $A$ a uniform Banach ring and $r_1,\dots,r_n > 0$, the Tate algebra
\[
A\{T_1/r_1,\dots,T_n/r_n\}
\] 
is again uniform; see Definition~\ref{D:Gauss norm}.
\end{example}

\begin{remark} \label{R:transform uniform}
For $A$ a uniform Banach ring, by Theorem~\ref{T:transform} the spectral norm on $A$ is equal to the restriction of the supremum norm on $\prod_{\alpha \in \calM(A)} \calH(\alpha)$ along
the Gel'fand transform. Here are some notable consequences.
\begin{enumerate}
\item[(a)]
Any bounded homomorphism $A \to B$ of uniform Banach rings equipped with their spectral norms is submetric in general, and isometric if and only if
$\calM(B) \to \calM(A)$ is surjective.
\item[(b)]
For $(A,A^+)$ a uniform adic Banach ring, the map $A \to H^0(\Spa(A,A^+), \calO)$ is injective. That is, $(A,A^+)$ can only fail to be sheafy if local sections fail to glue.
\end{enumerate}
\end{remark}

For uniform Banach rings, we have the following criterion for projectivity of finitely generated modules, analogous to criterion (d) in Definition~\ref{D:projective} for reduced rings.

\begin{prop} \label{P:finite generation2}
Let $A$ be a uniform Banach ring
and let $M$ be a finitely generated $A$-module. Then the following conditions are equivalent.
\begin{enumerate}
\item[(a)]
The module $M$ is projective.
\item[(b)]
The rank function
$\beta \mapsto \dim_{\calH(\beta)} (M \otimes_A \calH(\beta))$ on $\calM(A)$
is continuous.
\item[(c)]
There exists a bounded homomorphism $A \to B$ of uniform Banach rings such that
$\calM(B) \to \calM(A)$ is surjective and $M \otimes_A B$ is a projective $B$-module.
\end{enumerate}
\end{prop}
\begin{proof}
If (a) holds, then the function $\gothp \to \dim_{A/\gothp} (M \otimes_A (A/\gothp))$
on $\Spec(A)$ is continuous. By restricting along the map $\calM(A) \to \Spec(A)$,
we obtain (b). 

If (b) holds, then the function $\beta \mapsto \dim_{\calH(\beta)} (M \otimes_A \calH(\beta))$ is constant on each set in some finite disconnection of $\calM(A)$.
By Proposition~\ref{P:affinoid system disconnection} and the relationship between disconnections of $\calM(A)$ and $\Spa(A,A^\circ)$ (Definition~\ref{D:Berkovich rational subspace}), this disconnection descends to $\Spec(A)$,
so we may reduce to the case where $\dim_{\calH(\beta)} (M \otimes_A \calH(\beta))$
is equal to a constant value $n$. For each maximal ideal $\gothp$ of $A$, we may
choose $\alpha \in \calM(A)$ with $\gothp_\alpha = \gothp$ (see Definition~\ref{D:residue field}).
Choose elements $\bv_1,\dots,\bv_n$ of $M$ whose images in $M \otimes_A \calH(\alpha)$
are linearly independent. Then $\bv_1,\dots,\bv_n$ form a basis of $M \otimes_A A/\gothp$, so
they also generate $M \otimes_A A_\gothp$ by Nakayama's lemma.
We may then choose $f \in A \setminus \gothp$ so that $\bv_1,\dots,\bv_n$ generate
$M \otimes_A A[f^{-1}]$.
Suppose that $\bv_1,\dots,\bv_n$ fail to form a basis of $M \otimes_A A[f^{-1}]$; then there must
exist $a_1,\dots,a_n \in A$ not all mapping to zero in $A[f^{-1}]$
and a nonnegative integer $m$ such that $f^m a_1 \bv_1 + \cdots + f^m a_n \bv_n = 0$.
For each $\beta \in \calM(A)$, if $\beta(f) = 0$, then obviously $\beta(f^m a_i) = 0$ for $i=1,\dots,n$.
Otherwise, $\gothp_\beta \in \Spec(A[f^{-1}])$ and so $\bv_1,\dots,\bv_n$ generate
$M \otimes_A A/{\gothp_\beta}$, again without relations because
$\dim_{\calH(\beta)} (M \otimes_A \calH(\beta)) = n$. Hence $\beta(f^m a_i) = 0$ for $i=1,\dots,n$ again.
By Theorem~\ref{T:transform}, we deduce that $f^m a_i = 0$ for $i=1,\dots,n$,
a contradiction. We conclude that
$M \otimes_A A[f^{-1}]$ is a free $A[f^{-1}]$-module;
in other words, $M$ is free over a distinguished open subset of $\Spec(A)$ containing the
original maximal ideal $\gothp$ as well as all other prime ideals contained in $\gothp$.
We may thus cover $\Spec(A)$ by such open subsets, so (a) holds.

If (a) holds, then (c) is evident. Conversely, if (c) holds, then the function
$\gamma \mapsto \dim_{\calH(\gamma)} (M \otimes_A \calH(\gamma))$ on $\calM(B)$ is continuous by the
previous paragraph.
This function factors through the function $\beta \mapsto \dim_{\calH(\beta)} (M \otimes_A \calH(\beta))$ on $\calM(A)$;
the latter is forced to be continuous because $\calM(B) \to \calM(A)$ is a surjective continuous map of
compact spaces and hence a quotient map (Remark~\ref{R:compact spaces}(b)). We thus deduce (b). Hence
all three conditions are equivalent.
\end{proof}

\begin{remark} \label{R:uniformity unstable}
Unfortunately, the class of uniform Banach rings is not stable under some key operations.
\begin{itemize}
\item
For $A \to B, A \to C$ morphisms of uniform Banach rings, the completed tensor product 
$B \widehat{\otimes}_A C$ need not be uniform. A simple example is $A = \QQ_p$ and $B = C = \CC_p$ (the completion of an algebraic closure of $\QQ_p$). For a special case where uniformity is preserved, see Lemma~\ref{L:unramified uniform extension}.
\item
For $(A,A^+)$ a uniform adic Banach ring, a rational localization of $\Spa(A,A^+)$ need not be uniform; see Example~\ref{exa:nonuniform rational localization}.
When this is always true, we say that $(A,A^+)$ is \emph{stably uniform}; see
Theorem~\ref{T:uniform rational is sheafy} for an example of this condition.
\end{itemize}
One operation under which the class of uniform Banach rings does turn out to be stable is the formation of finite \'etale extensions; see Proposition~\ref{P:finite etale Banach norm} and Remark~\ref{R:finite etale plus ring}.
\end{remark}

\begin{lemma} \label{L:unramified uniform extension}
Let $k \subseteq \ell$ be an extension of perfect fields of characteristic $p$.
Put $K = W(k)[p^{-1}]$ and $L = W(\ell)[p^{-1}]$.
Let $A$ be a uniform Banach algebra over $K$ equipped with the spectral norm.
Then the tensor product seminorm on $A \otimes_K L$ is power-multiplicative;
in particular, $A \widehat{\otimes}_K L$ is again uniform.
\end{lemma}
\begin{proof}
Let $S$ be a basis of $\ell$ over $k$; we can then write each element $b \in A \otimes_K L$ uniquely in the form $\sum_{s \in S} a_s \otimes [s]$
for some $a_s \in A$, all but finitely of which are zero.
In terms of such a representation, we have
\[
\left| b \right| = \max\{\left|a_s\right|: s \in S\}
\]
and hence
\[
\left| b^p - \sum_{s \in S} a_s^p \otimes [s^p] \right|
\leq p^{-1} \left| b \right|^p.
\]
Since $k$ and $\ell$ are perfect, $\{s^p: s \in S\}$
is also a basis of $\ell$ over $k$, 
so
\[
\left| \sum_{s \in S} a_s^p \otimes [s^p] \right|
= \max\{\left| a_s^p \right|: s \in S\}.
\]
It follows that $|b^p| = c^p$, proving the claim.
\end{proof}

The following example is due to Mihara \cite{mihara}.
\begin{example} \label{exa:nonuniform rational localization}
Let $K$ be an analytic field and pick any $r>0$.
Let $A$ be the closure of the $K$-subalgebra of $K\{X/r, U\}$ generated by
$U^n X^{\lceil \log_2 n \rceil}$ for $n=1,2,\dots$. Then
$A$ is uniform because it is a subring of the Tate algebra $K\{X/r,U\}$
(see Example~\ref{exa:Tate algebra}).
However, for $(A,A^\circ) \to (B,B^+)$ the rational localization corresponding to the set $\{v \in \Spa(A,A^\circ): v(X) \leq 1\}$, the Banach ring $B$ is not uniform
\cite[Theorem~3.11]{mihara}.

It is not known whether $(A,A^\circ)$ is sheafy. However, for $B$ the subset of the infinite product $A \times A \times \cdots$ on which the supremum norm is bounded,
Mihara shows that $\Spa(B,B^\circ)$ is not sheafy \cite[Theorem~3.15]{mihara}.
For another example, see \cite[\S 4.6]{buzzard-verberkmoes}.
\end{example}

The following lemma is \cite[Proposition~2.3]{mihara}.
\begin{lemma} \label{L:principal ideal closed}
For any uniform Banach ring $A$ and any $f \in A$, the ideals $(T-f)$  and $(1-fT)$ in $A\{T\}$ and $A\{T,T^{-1}\}$ are closed.
\end{lemma}
\begin{proof}
Equip $A$ with the spectral norm.
For each $\alpha \in \calM(A)$, let $\tilde{\alpha} \in \calM(A\{T\})$ be the Gauss norm relative to $\alpha$. 
For any $g \in A\{T\}$, we then have $\tilde{\alpha}(T-f) = \max\{1, \alpha(f)\} \geq 1$, so
\[
\tilde{\alpha}(g) \leq \tilde{\alpha}(T-f) \tilde{\alpha}(g) = \tilde{\alpha}((T-f)g).
\]
Since $\tilde{\alpha}$ equals the spectral norm on $\calH(\alpha)\{T\}$,  
taking the supremum of the $\tilde{\alpha}$ computes the spectral norm on $A\{T\}$ by Theorem~\ref{T:transform}. We thus deduce that
\[
\left| g \right| \leq \left| (T-f) g \right|,
\]
so multiplication by $T-f$ defines a strict endomorphism of $A\{T\}$. This proves the claim for the ideal $(T-f)$ in $A\{T\}$; the other cases are similar.
\end{proof}

The following consequence is a special case of \cite[Corollary~4]{buzzard-verberkmoes},
but our proof is slightly different.
\begin{cor} \label{C:principal ideal closed}
Let $(A,A^+)$ be a uniform adic Banach ring. Let $\{(A,A^+) \to (B_i,B_i^+)\}_{i=1}^2$ be the standard Laurent covering defined by some $f \in A$, and put
$B_{12} = B_1 \widehat{\otimes}_A B_2$. Then the sequence
\[
0 \to A \to B_1 \oplus B_2 \to B_{12} \to 0
\]
is exact.
\end{cor}
\begin{proof}
In the diagram
\begin{equation} \label{eq:principal ideal closed diagram}
\xymatrix{
& & 0 \ar[d] & 0 \ar[d] & \\
& & (T-f)A\{T\} \oplus (1-fU) A\{U\} \ar[r]\ar[d] & (T-f)A\{T,T^{-1} \} \ar[r] \ar[d] & 0 \\
0 \ar[r] & A \ar[r] \ar[d] & A\{T\} \oplus A\{U\} \ar[r] \ar[d] & A\{T,T^{-1} \} \ar[r]  \ar[d] & 0 \\
0 \ar[r] & A \ar[r] \ar[d]  & B_1 \oplus B_2 \ar[d] \ar[r] & B_{12} \ar[r] \ar[d] & 0 \\
 & 0 & 0 & 0 &
}
\end{equation}
the first and second rows and the first column are evidently exact, while the second and third columns are exact thanks to Lemma~\ref{L:rational subdomain} and 
Corollary~\ref{C:principal ideal closed}. 
By Theorem~\ref{T:transform}, the third row is exact at $A$; by
diagram chasing, it is also exact at the other positions.
\end{proof}

The following theorem is due to Buzzard and Verberkmoes \cite[Theorem~7]{buzzard-verberkmoes}.
\begin{theorem}[Buzzard-Verberkmoes] \label{T:uniform rational is sheafy}
A stably uniform adic Banach ring is sheafy.
\end{theorem}
\begin{proof}
Let $(A,A^+)$ be a uniform adic Banach ring. If $(A,A^+)$ is stably uniform, then by Corollary~\ref{C:principal ideal closed}, the structure presheaf satisfies the criterion of Proposition~\ref{P:acyclicity template}. Hence $(A,A^+$) is sheafy.
\end{proof}

\begin{remark} \label{R:sheafy to stably uniform}
We do not know whether conversely to Theorem~\ref{T:uniform rational is sheafy}, any sheafy uniform Banach ring is necessarily stably uniform. To check this, it would suffice by Proposition~\ref{P:Tate reduction single} to check that for any uniform sheafy adic Banach ring $(A,A^+)$ and any $f \in A$, the quotient $A\{T\}/(T-f)$ is again uniform. 

One easy but important special case is that if $(A,A^+)$ is sheafy and admits a rational covering by stably uniform adic Banach rings, then $A$ is stably uniform.
This is of particular interest when the covering spaces are perfect (Proposition~\ref{P:locally perfect is perfect}) or perfectoid (Remark~\ref{R:locally perfectoid is perfectoid}).
\end{remark}

\begin{remark} \label{R:check uniform by analytic}
Let $A$ be a Banach algebra over an analytic field $K$, let $K \to L$ be a morphism of analytic fields, and put $A_L = A \widehat{\otimes}_K L$.
Since $K \to L$ is an isometric inclusion, it is strict; by
Lemma~\ref{L:inject tensor}(c), the map $A \to A_L$ is also strict. Using
criterion (c) of Definition~\ref{D:uniform Banach ring}, we see that
if $A_L$ is uniform, then so is $A$ (but not conversely; see Remark~\ref{R:uniformity unstable}). Moreover, if $A_L$ is stably uniform, then any rational localization $(A,A^+) \to (B,B^+)$ gives rise to a rational localization $(A_L, A_L^+) \to (B_L, B_L^+)$, and so $A$ is also stably uniform.

A closely related observation is that if $(A, A^+) \to (B,B^+)$ is a bounded morphism of adic Banach rings such that $B$ is stably uniform and $A \to B$ splits in the category of Banach modules over $A$, then $A$ is stably uniform. Namely, the splitting persists under rational localizations, so we need only check that $A$ is uniform; this again follows from criterion (c) of Definition~\ref{D:uniform Banach ring}.
\end{remark}

In light of Remark~\ref{R:uniformity unstable}, we are compelled to introduce the following construction.
\begin{defn} \label{D:uniformization}
For any Banach ring $A$, the separated completion of $A$ for the spectral seminorm
is a uniform Banach ring, called the \emph{uniformization} of $A$ and denoted $A^u$.
Note that the natural map $A \to A^u$ induces a continuous bijection $\calM(A^u) \to \calM(A)$, which is thus a homeomorphism by Remark~\ref{R:compact spaces}(b).
Using this observation plus Theorem~\ref{T:transform} (as in Remark~\ref{R:transform uniform}),
we see that 
uniformization defines a left adjoint to the forgetful functor from uniform Banach rings to arbitrary Banach rings. 

For $(A,A^+)$ an adic Banach ring, the \emph{uniformization} of $(A,A^+)$ is defined as $(A^u, A^{u+})$, where $A^{u+}$ is the completion of the image of $A^+$ in $A^u$. Again, this construction defines a left adjoint to the forgetful functor from uniform adic Banach rings to arbitrary adic Banach rings,
and the map $\Spa(A^u, A^{u+}) \to \Spa(A,A^+)$ is a homeomorphism which identifies rational subspaces on both sides. 
\end{defn}

\begin{lemma} \label{L:maximal uniform extension}
Let $A$ be a uniform Banach ring. 
Then for $B \in \FEt(A)$, there is a maximal power-multiplicative seminorm 
on $B$ for which the homomorphism $A \to B$ is bounded.
\end{lemma}
\begin{proof}
Let $\alpha$ be the spectral norm on $A$.
When $A$ and $B$ are analytic fields, the claim is clear:
since the homomorphism $A \to B$ is nonzero, it is bounded if and only if it is isometric,
and the only power-multiplicative extension of $\alpha$ to $S$ is the multiplicative extension
(Lemma~\ref{L:analytic fields submultiplicative}).

When $A$ is an analytic field and $B$ is arbitrary, we may split $B$ as a direct sum $B_1 \oplus \cdots \oplus B_n$ of finite separable field extensions of $A$. The maximal power-multiplicative seminorm in this case is
the supremum of the maximal seminorms on the $B_i$.

For general $A$, we may write $\alpha = \sup\{\gamma: \gamma \in \calM(A)\}$
by Theorem~\ref{T:transform}. We then take the supremum of the restrictions to $B$ of
the maximal power-multiplicative seminorms on the rings $B \otimes_R \calH(\alpha)$;
this is maximal by Theorem~\ref{T:transform} again.
\end{proof}

\begin{lemma} \label{L:locally monogenic}
For any Banach ring $A$ and any $B \in \FEt(A)$, there exists a strong rational covering $((A,A^\circ) \to (A_i,A_i^+))_i$ such that for each $i$, $B \otimes_A A_i$ splits as a direct sum of monogenic $A$-algebras (i.e., algebras of the form $A[T]/(P)$ for some monic polynomial $P$).
\end{lemma}
\begin{proof}
By compactness, it suffices to check that for each $\alpha \in \calM(A)$, there exists a rational localization encircling $\alpha$ with the desired property. Using Lemma~\ref{L:henselian local ring} and compactness, we may reduce to the case where $B \otimes_A \calH(\alpha)$ is connected. By the primitive element theorem
and Lemma~\ref{L:nearby generators}, the primitive elements of
$B \otimes_A \calH(\alpha)$ form an open subset; we may thus choose a primitive element $f$ belonging to $\Frac(B)$.
By Lemma~\ref{L:henselian local ring} again, we may reduce to the case where the minimal polynomial of $f$ over $\Frac(A)$ has coefficients in $A$. Let $P \in A[T]$ be this polynomial; its resultant is nonzero at $\alpha$, so we may reduce to the case where the resultant has nonzero norm on all of $\calM(A)$, and hence is a unit by
Corollary~\ref{C:unit from spectrum}. Hence $A[T]/(P)$ is a finite \'etale $A$-algebra which becomes isomorphic to $B$ upon base extension from $A$ to $\calH(\alpha)$; by  Lemma~\ref{L:henselian local ring} again, after replacing $A$ by some localization we obtain an isomorphism $B \cong A[T]/(P)$ as desired.
\end{proof}

\begin{prop} \label{P:finite etale Banach norm}
Let $A$ be a Banach ring.
\begin{enumerate}
\item[(a)]
The functor $\FEt(A) \to \FEt(A^u)$ is a tensor equivalence.
\item[(b)]
For $B \in \FEt(A^u)$, the power-multiplicative seminorm given by Lemma~\ref{L:maximal uniform extension} is equivalent to any norm on $B$
provided by Definition~\ref{D:etale algebra Banach structure}, and thus provides $B$ with the structure of a uniform Banach algebra over $A^u$. 
\end{enumerate}
\end{prop}
\begin{proof}
To prove (a), apply Lemma~\ref{L:construct affinoid system} to construct an affinoid system $\{((A_i, A_i^+), \alpha_i)\}_{i \in I}$ with completed direct limit $A$.
Define another affinoid system $\{((B_{i}, B_{i}^+), \beta_{i})\}_{i \in I}$ in which
$B_i$ is the reduced quotient of $A_i$ and $\beta_i$ is the spectral norm, under which $B_i$ is complete by Corollary~\ref{C:spectral is norm}.
We have $\FEt(A_i) \cong \FEt(B_i)$ by  Theorem~\ref{T:henselian}.
The completed direct limit of the new affinoid system is 
$A^u$, so by Proposition~\ref{P:henselian direct limit2} we have tensor equivalences
\[
\FEt(A) \cong \FEt(\varinjlim_i A_i) \cong \FEt(\varinjlim_i B_{i}) \cong \FEt(A^u).
\]

To prove (b), we may assume $A$ is uniform with norm $\alpha$.
By Theorem~\ref{T:transform}, we have $\alpha = \sup\{\gamma: \gamma \in \calM(A)\}$ and $\beta \in \sup\{\gamma: \gamma \in \calM(B)\}$.
Suppose first that $A$ is an analytic field; then $B$ is a direct sum of analytic fields containing $A$ (see Definition~\ref{D:Banach ring}), so in particular $B$ is complete under $\beta$. The equivalence of norms follows from the open mapping theorem (Theorem~\ref{T:open mapping}).

Suppose next that $B$ is a direct sum of monogenic extensions; we immediately reduce to the case where $B = A[T]/(P)$ is monogenic of degree $d>0$.
In this case, we compare $\beta$ to the supremum norm $\beta'$ defined by the basis $1, T, \dots, T^{d-1}$.
Since $\beta$ is equal to the spectral seminorm of $\beta'$, we need only check that $\beta' \leq c \beta$ for some $c>0$.
For $\gamma \in \calM(A)$, choose an algebraic closure $L$ of $\calH(\gamma)$ and let $z_1,\dots,z_d$ be the roots of $P$ in $L$. For $a_0,\dots,a_{d-1} \in A$, the supremum of $Q =  \sum_{i=0}^{d-1} a_i T^i \in A[T]$ over those $\delta \in \calM(B)$ lying over $\gamma$ may be computed as the supremum of $\left| Q(z_j) \right|$ for $j=1,\dots,d$.
Let $V$ be the Vandermonde matrix in $z_1,\dots,z_d$; then
\[
\max\{\gamma(a_i): i=0,\dots,d-1\} \leq \left| V^{-1} \right|
\max\{\left| Q(z_j) \right|: j=1,\dots,d\}.
\]
For $P = T^d + \sum_{i=0}^{d-1} P_i T^i$, we have
\[
\left| z_i \right| \leq c_1, \qquad c_1 = \max\{\alpha(P_{d-i})^{1/i}: i=1,\dots,d\}.
\]
Let $f$ be the resultant of $P$; since $B \in \FEt(A)$, $f$ is a unit in $A$. By writing $V^{-1}$ as $\det(V)^{-1}$ times the cofactor matrix of $V$, we compute that
\[
\beta' \leq c \beta, \qquad c = c_1^{d-1} \alpha(f^{-1}),
\]
proving the claim.

For general $A$ and $B$, apply Lemma~\ref{L:locally monogenic} to construct a strong covering family $\{(A,A^\circ) \to (A_i, A_i^+)\}_{i=1}^n$ such that for $i=1,\dots,n$, $B \otimes_A A_i$ splits a direct sum of monogenic extensions of $A_i$. 
Let $\beta'$ be any norm derived as in Lemma~\ref{L:finite projective Banach}.
To show that $\beta' \leq c \beta$ for some $c>0$,
we reduce to the previous paragraph:
by Theorem~\ref{T:transform}, both $\beta$ and $\beta'$ may be computed by taking suprema over the corresponding norms on $B \otimes_A A^u_i$ for $i=1,\dots,n$.
\end{proof}

\begin{remark} \label{R:finite etale Banach norm}
The conclusions of Proposition~\ref{P:finite etale Banach norm} continue to hold without assuming that $A$ contains a topologically nilpotent unit.
The primary modification needed is to redefine
the Hausdorff localization $A_\alpha$ in terms of Berkovich rational subspaces
(Remark~\ref{R:Berkovich rational subspaces}).
\end{remark}

\begin{remark} \label{R:warp norm}
For $A$ a uniform Banach ring, recall 
(Remark~\ref{R:transform}) that for any function $g: \calM(A) \to \RR^+$ whose image is bounded away from 0 and $\infty$, 
the function $\left| \cdot \right|_g = \sup\{\alpha^{g(\alpha)}: \alpha \in \calM(A)\}$ is a norm defining the topology on $A$. In particular, for any topologically nilpotent $z$ in $A$, we can choose $g$ so that $\left| z \right|_g \left| z^{-1} \right|_g = 1$.
That is, for uniform Banach rings, conditions (b) and (c) of Remark~\ref{R:small norm elements} become equivalent if we only keep track of the norm topology, rather than the equivalence class of the norm.
\end{remark}

\begin{remark} \label{R:finite etale plus ring}
With notation as in Remark~\ref{R:etale adic extension}, if $A^+ = A^\circ$, then $B^+ = B^\circ$: for $b \in B^\circ$, the characteristic polynomial of multiplication by $b$ as an $A$-linear endomorphism of $B$ is monic, has $b$ as a root (by Cayley-Hamilton), and has coefficients in $A^\circ$ (by Theorem~\ref{T:transform} to reduce to the case of an analytic field, plus usual properties of Newton polygons as in \cite[Chapter~2]{kedlaya-course}).
\end{remark}

\section{Perfect rings and strict \texorpdfstring{$p$}{p}-rings}

Recall that there is a natural way to lift perfect rings of characteristic $p$ to \emph{strict $p$-rings}
of characteristic $0$ (see Definition~\ref{D:strict p-ring}), and that these can be used to describe
\'etale local systems on perfect rings of characteristic $p$ using a nonabelian generalization of
Artin-Schreier-Witt theory (see Proposition~\ref{P:phi-modules}).
We take a first step towards exploiting this description in $p$-adic Hodge theory by setting up a
correspondence between certain highly ramified analytic fields of mixed characteristics
and perfect analytic fields of characteristic $p$. This
correspondence, which has also been described recently by Scholze \cite{scholze1}, generalizes
the \emph{field of norms} construction of Fontaine-Wintenberger \cite{fontaine-wintenberger} but with
a rather different proof.
We then extend the correspondence to Banach algebras, in the direction of generalizing
Faltings's \emph{almost purity theorem}; however, this will not be completed until we
introduce extended Robba rings (see \S\ref{subsec:almost purity}).

\setcounter{theorem}{0}
\begin{convention}
We will refer frequently to \emph{(adic) Banach $\Fp$-algebras} even though $\Fp$ cannot be viewed as an analytic field. What we will mean are (adic) Banach rings whose underlying rings are of characteristic $p$.
\end{convention}

\subsection{Perfect \texorpdfstring{$\Fp$}{Fp}-algebras}

We begin with some observations about perfect rings of characteristic $p$, which we may more briefly
characterize as \emph{perfect $\Fp$-algebras}.

\begin{defn}
For $R$ an $\Fp$-algebra, let $\overline{\varphi}: R \to R$ denote the $p$-th power map,
i.e., the \emph{Frobenius endomorphism}. We say $R$ is \emph{perfect} if $\overline{\varphi}$
is a bijection; this forces $R$ to be
reduced. Note that any localization of a perfect $\Fp$-algebra is also perfect.

By a \emph{perfect uniform Banach $\Fp$-algebra} (resp. a \emph{perfect uniform adic Banach $\Fp$-algebra}), we will mean a uniform Banach algebra $R$ (resp.\ a uniform adic Banach algebra $(R,R^+)$) over $\Fp$ such that $R$ is a perfect ring. In the adic case, the fact that $R^+$ is integrally closed forces it to also be perfect.
We will see shortly that any perfect uniform adic Banach $\Fp$-algebra is stably uniform (Proposition~\ref{P:perfect uniform localization}).
\end{defn}

We will sometimes have need to pass from an $\Fp$-algebra to an associated perfect $\Fp$-algebra.
\begin{defn}
Let $R$ be an $\Fp$-algebra.
The \emph{perfect closure} of $R$ is the limit $R^{\perf}$
of the direct system
\[
R \stackrel{\overline{\varphi}}{\to} R \stackrel{\overline{\varphi}}{\to} \cdots,
\]
viewed as an $R$-algebra via the map to the first factor. (The map $R \to R^{\perf}$ induces an
injection of the reduced quotient of $R$ into $R^{\perf}$.)
Any power-multiplicative seminorm on $R$ extends uniquely to a
power-multiplicative seminorm on $R^{\perf}$; in particular, given a power-multiplicative norm on $R$,
we may extend it to $R^{\perf}$ and then obtain homeomorphisms $\calM(R^{\perf}) \to \calM(R)$ and $\Spa(R^{\perf}, R^{+,\perf}) \to \Spa(R,R^+)$.
We will also call $R^{\perf}$ the \emph{direct perfection} of $R$, to distinguish it from the \emph{inverse perfection}
in which one takes the arrows in the opposite direction; we will have more use for the latter construction in
\S\ref{subsec:perfection}.
\end{defn}

\begin{lemma} \label{L:idempotents}
Let $R$ be a perfect $\Fp$-algebra.
If $e \in R$ satisfies $e^p = e$, then
\[
e_i = \prod_{j \in \Fp \setminus \{i\}} \frac{e-j}{i-j} \qquad (i \in \Fp)
\]
is an idempotent in $R$ and $\sum_{i \in \Fp} i e_i = e$.
\end{lemma}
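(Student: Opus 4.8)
The plan is to reduce everything to polynomial identities in $\Fp[x]$ evaluated at $e$, using only the relation $e^p = e$ (so in fact perfectness of $R$ is not needed for this particular argument). Write $L_i(x) = \prod_{j \in \Fp \setminus \{i\}} (x-j)(i-j)^{-1} \in \Fp[x]$, a polynomial of degree $p-1$, so that $e_i = L_i(e)$. The key elementary observation is that $L_i(j) = \delta_{ij}$ for all $i,j \in \Fp$: if $j \neq i$ then one factor in the product vanishes, while $L_i(i) = 1$ by construction.

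Next I would record three polynomial identities over $\Fp$, each established by a degree count. First, $\sum_{i \in \Fp} L_i(x) = 1$: the difference of the two sides has degree at most $p-1$ yet vanishes at all $p$ elements of $\Fp$, hence is the zero polynomial (a nonzero polynomial over a field has at most as many roots as its degree). Second, applying the same reasoning to the polynomial $x$ (of degree $1 \leq p-1$), one gets $\sum_{i \in \Fp} i\, L_i(x) = x$. Third, for $i \neq k$ the product $L_i(x) L_k(x)$ vanishes at every element of $\Fp$, hence is divisible by each $x - j$ with $j \in \Fp$; since these linear factors are pairwise coprime, $L_i(x)L_k(x)$ is divisible by $\prod_{j \in \Fp}(x-j) = x^p - x$ in $\Fp[x]$, say $L_i(x)L_k(x) = (x^p - x)\, g_{ik}(x)$.

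Evaluating these three identities at $x = e$ and using $e^p - e = 0$ yields at once $\sum_{i \in \Fp} e_i = 1$, the asserted identity $\sum_{i \in \Fp} i\, e_i = e$, and $e_i e_k = (e^p - e)\, g_{ik}(e) = 0$ for $i \neq k$. Finally, to see that each $e_i$ is idempotent, multiply the relation $\sum_{k} e_k = 1$ through by $e_i$ and discard the cross terms, which vanish: $e_i = e_i \sum_{k} e_k = e_i^2$. I do not anticipate any genuine obstacle; the only points that call for a little care are the degree-counting justifications of the two identities $\sum_i L_i = 1$ and $\sum_i i L_i = x$, and the coprimality argument underlying divisibility of $L_i L_k$ by $x^p - x$.
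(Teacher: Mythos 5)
Your proof is correct and rests on the same idea as the paper's: Lagrange interpolation over $\Fp$, with all the needed polynomial identities following from $\prod_{j \in \Fp}(x-j) = x^p - x$ and the relation $e^p = e$. The only (minor) difference is in the idempotency step: you derive it from orthogonality $e_i e_k = 0$ together with $\sum_k e_k = 1$, whereas the paper gets it slightly more directly by observing that $e_i(e-i)$ is a scalar multiple of $e^p - e = 0$, so that $e$ may be replaced by $i$ in each factor of $e_i \cdot \prod_{j \neq i} (e-j)/(i-j)$; both routes are valid and of comparable length.
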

\begin{proof}
Note that $e_i(e-i)$ is divisible by $e^p - e$ and thus equals 0. Hence
\[
e_i^2 = e_i \prod_{j \in \Fp \setminus \{i\}} \frac{e-j}{i-j}
= e_i \prod_{j \in \Fp \setminus \{i\}} \frac{i-j}{i-j} = e_i.
\]
The identity $\sum_{i \in \Fp} i e_i = e$ arises from Lagrange interpolation of the
polynomial $T \in \Fp[T]$ at the points of $\Fp$.
\end{proof}
\begin{cor} \label{C:idempotents}
The ring $R^{\overline{\varphi}}$ is the $\Fp$-algebra generated by the idempotents of $R$.
In particular, this ring equals $\Fp$ if and only if $R$ is connected.
\end{cor}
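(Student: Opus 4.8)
The statement to prove is Corollary~\ref{C:idempotents}: for $R$ a perfect $\Fp$-algebra, $R^{\overline{\varphi}}$ (the fixed subring of Frobenius) is the $\Fp$-subalgebra generated by the idempotents of $R$, and in particular equals $\Fp$ iff $R$ is connected.

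\medskip

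The plan is to prove the two inclusions between $R^{\overline{\varphi}}$ and the subalgebra $E$ generated by idempotents. For the inclusion $E \subseteq R^{\overline{\varphi}}$: every idempotent $e$ satisfies $e^p = e$ (since $e^2 = e$ forces $e^n = e$ for all $n \geq 1$), so idempotents lie in $R^{\overline{\varphi}}$; since $\overline{\varphi}$ is a ring endomorphism fixing $\Fp$ pointwise, $R^{\overline{\varphi}}$ is an $\Fp$-subalgebra, hence contains $E$. For the reverse inclusion $R^{\overline{\varphi}} \subseteq E$: take any $e \in R^{\overline{\varphi}}$, so $e^p = e$. Apply Lemma~\ref{L:idempotents} directly: the elements $e_i = \prod_{j \in \Fp \setminus \{i\}} \frac{e-j}{i-j}$ for $i \in \Fp$ are idempotents of $R$, hence lie in $E$, and $e = \sum_{i \in \Fp} i e_i$ is an $\Fp$-linear combination of them, hence $e \in E$. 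This gives $R^{\overline{\varphi}} = E$.

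\medskip

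For the final sentence: if $R$ is connected then its only idempotents are $0$ and $1$ (assuming $R \neq 0$; the zero ring is degenerate but also "connected" has $0=1$), so $E = \Fp \cdot 1 = \Fp$ and thus $R^{\overline{\varphi}} = \Fp$. Conversely, if $R^{\overline{\varphi}} = \Fp$, then by the equality just proved $E = \Fp$, so $R$ has no idempotents other than $0$ and $1$, which means $\Spec R$ is connected, i.e., $R$ is connected. I should be mildly careful about whether $R$ is allowed to be the zero ring, but this edge case can be noted parenthetically or excluded by the standing hypothesis that rings are unital with the usual conventions.

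\medskip

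I do not expect a genuine obstacle here: the entire content is packaged in Lemma~\ref{L:idempotents}, which has already been established, so the corollary is essentially a two-line deduction plus the elementary observation about connectedness. The only place requiring a word of care is ensuring that $R^{\overline{\varphi}}$ is recognized as a subring (immediate, since $\overline{\varphi}$ is a ring homomorphism) and that "generated by idempotents as an $\Fp$-algebra" coincides with the $\Fp$-span of idempotents (true because a product of idempotents is again idempotent, so no new elements arise from multiplication). Thus the write-up is short and the reader is simply pointed back to Lemma~\ref{L:idempotents}.
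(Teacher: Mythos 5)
Your proof is correct and is exactly the intended argument: the paper states the corollary without proof as an immediate consequence of Lemma~\ref{L:idempotents}, and your two inclusions (idempotents satisfy $e^p=e$, hence lie in the fixed subring; conversely any $\overline{\varphi}$-fixed element is the $\Fp$-combination $\sum_i i e_i$ of the idempotents from the lemma) together with the standard idempotent/connectedness dictionary are precisely what is meant.
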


\begin{lemma} \label{L:perfect etale extension}
For $R$ a perfect $\Fp$-algebra, any $S \in \FEt(R)$ is also perfect.
\end{lemma}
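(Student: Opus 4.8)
The plan is to show that the Frobenius endomorphism $\overline{\varphi}$ of $S$ is bijective. Since $R$ is perfect, $\overline{\varphi}_R$ is an automorphism of $R$, so the $p$-power map on $R$ extends to an $R$-semilinear automorphism in a way compatible with base change. First I would observe that $S$ is reduced: this follows either directly from $S \in \FEt(R)$ together with $R$ reduced (a finite \'etale algebra over a reduced ring is reduced), or from Definition~\ref{D:finite etale category}, which notes that $S$ is finite projective over $R$ and hence embeds (locally) into a free $R$-module, which is reduced since $R$ is. Thus $\overline{\varphi}_S$ is injective, and it remains to prove surjectivity.

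For surjectivity, the key step is a base-change (or descent) argument. Consider the ring $R'$ which is $R$ with its $R$-algebra structure twisted by $\overline{\varphi}_R^{-1}$; since $\overline{\varphi}_R$ is an isomorphism, $R' \cong R$ as a ring, and $S' := S \otimes_{R,\overline{\varphi}} R = \overline{\varphi}^* S$ is again finite \'etale over $R$ by base change. The relative Frobenius $S' \to S$ (sending $s \otimes 1$ to $s^p$) is a morphism of finite \'etale $R$-algebras lying over the identity of $R$ (after the identification $R' \cong R$); I claim it is an isomorphism. By Lemma~\ref{L:idempotents to morphisms} together with the fact that a map of finite projective $R$-modules is an isomorphism iff its determinant is a unit, it suffices to check this after base change to each residue field $R_{\gothp}/\gothp R_{\gothp}$ at a maximal (hence any prime) ideal $\gothp$ of $R$. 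Over a field $k$ of characteristic $p$, the relative Frobenius of a finite \'etale (equivalently, finite separable) $k$-algebra is an isomorphism: it suffices to check this for $k$-algebras of the form $\ell = k[T]/(f)$ with $f$ separable, where $\ell^p = k^p[T^p]/(f(T)^p)$, and since $f$ is separable $f(T)$ and $f(T)^p$ cut out the same (reduced) zero-dimensional scheme — more concretely, $\ell$ is spanned over $k^{1/p}\!\cap$ no, better: one checks that $\overline{\varphi}\colon \ell\to\ell$ is injective as $\ell$ is reduced, and $\dim_k \ell = \dim_k \ell^{(p)}$ where $\ell^{(p)} = \ell\otimes_{k,\overline\varphi} k$, so a $k$-linear injection between $k$-vector spaces of equal finite dimension is bijective. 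Hence the relative Frobenius is an isomorphism over every residue field, so it is an isomorphism over $R$.

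Finally, the absolute Frobenius $\overline{\varphi}_S\colon S\to S$ factors as $S \xrightarrow{\ \sim\ } S' = \overline{\varphi}^* S \to S$, where the first arrow is the base-change isomorphism induced by $\overline{\varphi}_R^{-1}\colon R\xrightarrow{\sim} R$ and the second is the relative Frobenius just shown to be an isomorphism. Therefore $\overline{\varphi}_S$ is bijective and $S$ is perfect. The main obstacle is purely bookkeeping: keeping straight the identification $R'\cong R$ and checking that the composite of the two maps really is the absolute $p$-power map on $S$; the field case is standard and the reduction to it is immediate from the determinant criterion in Definition~\ref{D:finite etale category}. (Alternatively, one could bypass the base-change formalism entirely and argue locally: for $\gothp\in\Spec S$, the inverse image in $R$ is perfect, $S_\gothp$ is finite separable over the field $\Frac(R/\mathfrak q)$ after localizing, and perfectness descends to finite products of separable field extensions of a perfect field; gluing via Lemma~\ref{L:finite generation}(a) then gives surjectivity of $\overline\varphi_S$.)
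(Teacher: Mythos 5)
Your proof is correct, but it takes a genuinely different route from the paper's. The paper's argument shares your first step (finite \'etale over a reduced ring is reduced, hence $\overline{\varphi}_S$ is injective), but for surjectivity it invokes $\Omega_{S/R}=0$ together with the standard characteristic-$p$ fact (EGA $0_{\mathrm{IV}}$, 21.1.7) that vanishing of the differentials forces $S$ to be generated over $R$ by $S^p$; combined with $R=R^p$ this gives $S=S^p$ in two lines. You instead isolate the relative Frobenius $\overline{\varphi}^*S\to S$, prove it is an isomorphism by the determinant/Nakayama criterion for maps of finite projective modules (reducing to finite separable algebras over the residue fields), and then untwist using bijectivity of $\overline{\varphi}_R$. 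Both proofs use the same two inputs in disguise --- reducedness for injectivity, unramifiedness for surjectivity --- but yours trades the differential-module computation for a fibrewise check plus base-change bookkeeping; this is arguably more self-contained given what the paper has already quoted, and it makes visible the general fact that the relative Frobenius of any \'etale morphism is an isomorphism. Two small rough spots, neither fatal: you have left a false start in the text in the field case, and the injectivity you actually need there is that of the \emph{relative} Frobenius $\ell^{(p)}\to\ell$, not the absolute one. The clean fix is to observe that the residue field $k=R/\gothm$ is itself perfect (Frobenius on $k$ is surjective because it is on $R$, and injective because $k$ is a field), so the untwisting map $\ell\to\ell^{(p)}$ is bijective and injectivity of the relative Frobenius follows from injectivity of the absolute one on the reduced ring $\ell$; equality of $k$-dimensions then finishes the field case.
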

\begin{proof}
Since $S$ is \'etale over the reduced ring $R$, it is also reduced
\cite[Proposition~17.5.7]{ega4-4}; hence $\overline{\varphi}: S \to S$ is injective.
Since $\Omega_{S/R} = 0$ by \cite[Proposition~17.2.1]{ega4-4},
$S$ is generated over $R$ by $S^p$ by \cite[Proposition~0.21.1.7]{ega4-1}.
Combining this with the surjectivity of $\overline{\varphi}: R \to R$ yields surjectivity of
$\overline{\varphi}:S \to S$. Hence $S$ is perfect, as desired.
\end{proof}

The study of perfect uniform Banach $\Fp$-algebras is greatly simplified by the following observations.
For some related results in characteristic 0, see \S\ref{subsec:perfectoid2}.
\begin{remark} \label{R:perfect uniform strict}
Let $R,S,T$ be perfect uniform Banach $\Fp$-algebras.
\begin{enumerate}
\item[(a)]
Any strict homomorphism $f: R \to S$ is almost optimal (but not necessarily optimal).
\item[(b)]
If $f_1, f_2: R \to S$ are homomorphisms such that $f_1 - f_2$ is strict,
then $f_1 - f_2$ is almost optimal.
\item[(c)]
For any bounded homomorphisms $T \to R$, $T \to S$, the completed tensor product $R \widehat{\otimes}_T S$
is again a perfect uniform Banach $\Fp$-algebra.
(By contrast, the completed tensor product of uniform Banach rings is not guaranteed to
be uniform in general.)
\item[(d)] If $I$ is a closed and perfect ideal of $R$, then $R/I$ is also a perfect and uniform $\Fp$-algebra. 
\end{enumerate}
The proofs of (a)--(c) are all similar, so we only describe (a) in detail.
Choose $c>0$ such that every $x \in \image(f)$ lifts to some $y \in R$
with $|y| \leq c |x|$. For any positive integer $n$, $x^{p^n}$ then lifts to some $y_n \in R$
with $|y_n| \leq c |x^{p^n}|$; we may then lift $x$ to $y_n^{p^{-n}}$ and note that
\[
|y_n^{p^{-n}}| = |y_n|^{p^{-n}} \leq c^{p^{-n}} |x^{p^n}|^{p^{-n}} = c^{p^{-n}} |x|.
\]
Since $c^{p^{-n}}$ can be made arbitrarily close to 1, this proves the claim.

To prove (d), note that the perfectness is obvious; it remains to show that the spectral seminorm on $R/I$ is a norm. Suppose the spectral seminorm for some $\bar{y}\in R/I$ is 0. That is, for any $\epsilon>0$,  there exist some $n\in\mathbb{N}$ and $y_n\in R$ lifting $\bar{y}^{p^n}$ such that $|y_n|^{p^{-n}}=|y_n^{p^{-n}}|\leq \epsilon$. Since $I$ is perfect, $y_n^{p^{-n}}$ lifts $\bar{y}$. Hence the quotient norm for $\bar{y}$ is less than $\epsilon$, yielding that $\bar{y}=0$. 
\end{remark}

\begin{prop} \label{P:perfect uniform localization}
Let $(R,R^+)$ be a perfect uniform adic Banach $\Fp$-algebra.
Then any rational localization of $(R,R^+)$ is again perfect uniform; in particular,
$(R,R^+)$ is stably uniform.
 \end{prop}
\begin{proof}
Suppose $(R,R^+) \to (S,S^+)$ corresponds to a rational subdomain $U$ as in
\eqref{eq:adic rational subspace}. 
By Lemma~\ref{L:rational subdomain}, we may view $S$  as the quotient of 
$R\{T_1,\dots,T_n\}$ by the closure of the ideal $(gT_1 - f_1, \dots, gT_n - f_n)$.
Equip $S^{1/p}$ with the norm given by $|x|_{S^{1/p}} = |x^p|_S^{1/p}$.
By applying $\overline{\varphi}^{-1}$, raising norms to the $p$-th power, and using that $R$ is perfect uniform (so its norm remains unchanged), we obtain another rational localization $(R,R^+) \to (S^{1/p},(S^+)^{1/p})$ representing $U$.
By Lemma~\ref{L:rational subdomain} again, we may view $S^{1/p}$ as the quotient of $R\{T_1^{1/p},\dots,T_n^{1/p}\}$ by the closure of the ideal $(g^{1/p} T_1^{1/p} - f_1^{1/p}, \dots, g^{1/p} T_n^{1/p} - f_n^{1/p})$.
The inclusion $R\{T_1,\dots,T_n\} \to R\{T_1^{1/p},\dots,T_n^{1/p}\}$ then induces a morphism $(S,S^+) \to (S^{1/p}, (S^+)^{1/p})$ of adic Banach algebras over $(R,R^+)$,
which must be an isomorphism by the universal property of rational localizations.
It follows that $S$ is perfect and uniform.
\end{proof}

\begin{remark} \label{R:perfect uniform localization1}
Retain notation as in Proposition~\ref{P:perfect uniform localization}.
Let $R'$ be the completed perfect closure of $R\{T_1,\dots,T_n\}$.
Let $I'$ be the closure of the ideal of $R'$ generated by $(gT_i - f_i)^{p^{-h}}$ for $i=1,\dots,n$ and $h=0,1,\dots$.
Put $S' = R'/I'$; by Remark~\ref{R:perfect uniform strict}(d), $S'$ is perfect uniform.
By the universal property of rational localizations, we obtain a morphism $S \to S'$
inducing a bijection $\calM(S') \cong \calM(S)$.
Since $S$ is uniform, the map $S \to S'$ is injective by Theorem~\ref{T:transform}.

Choose $h_1,\dots,h_n,k \in R$ such that $h_1 f_1 + \cdots + h_n f_n + kg = 1$, then note that any element 
\[
y = \sum_{i_1,\dots,i_n} y_{i_1,\dots,i_n} T_1^{i_1} \cdots T_n^{i_n} \in R\{T_1,\dots,T_n\}^{\perf}
\]
represents the same element of the quotient as 
\[
z = (k + h_1 T_1 + \cdots + h_n T_n)^n \sum_{i_1,\dots,i_n} y_{i_1,\dots,i_n} f_1^{i_1 - \lfloor i_1 \rfloor}
\cdots f_n^{i_n - \lfloor i_n \rfloor} g^{n - (i_1 - \lfloor i_1\rfloor + \cdots + i_n - \lfloor i_n\rfloor)} T_1^{\lfloor i_1 \rfloor} \cdots T_n^{\lfloor i_n \rfloor},
\]
which satisfies
\[
\left| z \right| \leq c \left| y \right|, \qquad c  = \max\{\left| h_1 \right|,\dots, \left| h_n \right|, \left| k \right|\}^n \left|f_1 \right| \cdots \left| f_n \right| \left| g \right|^n.
\]
Thus the map $S \to S'$ is also surjective, hence an isomorphism.

An additional consequence of this calculation is that while the map $R\{T_1,\dots,T_n\} \to S$ is not almost optimal, for any $c>1$ we can arrange for the quotient norm to be at most $c$ times the spectral norm on $S$,
by running the construction with $f_1,\dots,f_n,g$ replaced by suitable $p$-th power roots.
\end{remark}

For perfect uniform Banach $\Fp$-algebras, we have the following refinement of
Proposition~\ref{P:finite etale Banach norm}.
\begin{lemma} \label{L:finite uniform Banach}
Let $R$ be a perfect uniform Banach $\Fp$-algebra with norm $\alpha$.
Let $S$ be a finite perfect $R$-algebra admitting the structure of a finite Banach module over $R$
for some norm $\beta$. (Such $\beta$ exists when $S$ is projective as an $R$-module
by Lemma~\ref{L:finite projective Banach}, and hence when $S \in \FEt(R)$.)
Then $S$ is a perfect uniform Banach algebra.
\end{lemma}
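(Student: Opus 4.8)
The plan is to produce the desired norm $\gamma$ as the spectral seminorm attached to $\beta$, and then to check separately that it is a norm, that it is equivalent to $\beta$, that $S$ is complete under it, and that the resulting ring is perfect. Concretely, I would set
\[
\gamma(x) = \lim_{s \to \infty} \beta(x^s)^{1/s} = \inf_s \beta(x^s)^{1/s},
\]
the spectral seminorm of $\beta$ (the limit exists by Fekete's lemma, as in the definition preceding Theorem~\ref{T:transform}). By construction $\gamma$ is power-multiplicative and $\gamma \leq \beta$, and the homomorphism $R \to S$ remains bounded for $\gamma$ since $R$ is already uniform. So the only real content is the comparison $\gamma \geq c\beta$ for some $c>0$, completeness, and perfectness.

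For \emph{equivalence}, since $S$ is a finite Banach module over $R$, fix a strict surjection $R^m \to S$ and let $x_1,\dots,x_m$ be the images of the standard basis; choosing $\beta$ to be (equivalent to) the quotient norm, one has $\beta(xy) \leq c_0 \beta(x)\beta(y)$ for a fixed $c_0$ coming from writing $x_ix_j = \sum_k c_{ijk} x_k$, exactly as in the proof of Lemma~\ref{L:descend etale on field}(a). Then for any $x \in S$ one gets $\beta(x^s) \leq c_0^{s-1}\beta(x)^s$, hence $\gamma(x) \geq c_0^{-1}\beta(x)$ — wait, more carefully, $\gamma(x) = \lim \beta(x^s)^{1/s} \leq \lim (c_0^{s-1})^{1/s}\beta(x) = c_0\beta(x)$, which is the easy inequality, while the reverse $\gamma \geq c_0^{-1}\beta$ is what we want and does \emph{not} follow from submultiplicativity alone. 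Here I would instead use the perfectness of $S$ together with Remark~\ref{R:perfect uniform strict}-style reasoning: given $x$, for each $n$ write $x = (x^{p^n})^{p^{-n}}$, so $\beta(x) = \beta((x^{p^n})^{p^{-n}})$ need not equal $\beta(x^{p^n})^{p^{-n}}$ in general, but $\gamma(x) = \gamma(x^{p^n})^{p^{-n}}$; combining with $\gamma \leq \beta$ and $\beta(x^{p^n}) \leq c_0^{p^n - 1}\beta(x)^{p^n}$ gives $\gamma(x) \leq \beta(x^{p^n})^{p^{-n}} \cdot (\text{something} \to 1)$. To get the lower bound $\gamma(x) \geq c_1 \beta(x)$, I would run the same trick on $\beta$ directly: for $x \in S$, lift $x^{p^n}$ along $R^m \to S$ with norm $\leq c_0 \beta(x^{p^n}) \leq c_0 c_0^{p^n-1}\beta(x)^{p^n}$, then take $p^n$-th roots using perfectness of both $R$ and $S$, yielding a lift of $x$ of norm $\leq (c_0^{p^n})^{p^{-n}}\beta(x) = c_0^{p^{-n}}\beta(x) \to \beta(x)$; but this bounds the quotient norm of $x$ (hence $\beta(x)$ up to equivalence) and is not yet $\gamma$. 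The cleanest route is: $S$ uniform $\iff$ $\gamma$ is a norm and $S$ is $\gamma$-complete, and $\gamma$ is automatically equivalent to $\beta$ by Theorem~\ref{T:transform} applied after we know $S$ is a finite Banach module — because the spectral seminorm is the supremum over $\calM(S)$ of the residue norms, and for a \emph{finite} Banach module over the uniform ring $R$ the residue field norms are all nontrivial (via Lemma~\ref{L:small norm elements}, since $S$ is finite over $R$ which is free of trivial spectrum once its norm is nontrivial — or directly: $R \to S$ finite forces $\calM(S) \to \calM(R)$ surjective with finite fibers, each $\calH(\beta)$ finite over some $\calH(\alpha)$, and power-multiplicative extensions of nontrivial norms are nontrivial by Lemma~\ref{L:analytic fields submultiplicative}).

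So the key steps, in order: (1) define $\gamma$ as the spectral seminorm of $\beta$; note power-multiplicativity and $R \to S$ bounded; (2) show $\gamma$ is a norm, i.e.\ $S$ is free of trivial spectrum: every $\delta \in \calM(S)$ restricts to $\alpha$ on $R$, and $\calH(\delta)$ is a finite (field) extension of $\calH(\alpha)$ which carries the multiplicative extension of the nontrivial norm on $\calH(\alpha)$, hence is nontrivially normed, so $\gamma(x) = \sup_\delta \delta(x) > 0$ for $x \neq 0$ by Theorem~\ref{T:transform}; (3) show $\gamma$ is equivalent to $\beta$: one direction $\gamma \leq \beta$ is immediate; for the other, use that $S$ is a \emph{finite} Banach module and that on finite Banach modules over $R$ all norms inducing the module topology are equivalent (Lemma~\ref{L:finitely generated Banach}(a) and Lemma~\ref{L:finite projective Banach}), together with the standard fact that the spectral seminorm of a finite Banach module over a Banach ring whose spectrum has no trivial points is equivalent to the original — or, failing a clean citation, bound $\beta(x) \leq c_0 \beta(x^{p^n})^{p^{-n}} c_0^{-p^{-n}} \cdot (\dots)$ and let $n \to \infty$ to squeeze $\beta$ between constant multiples of $\gamma$; (4) completeness: since $\beta$ and $\gamma$ are equivalent norms, $\gamma$-completeness follows from $\beta$-completeness; (5) perfectness of the underlying ring is hypothesized. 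Finally, uniqueness: if $\gamma'$ is another norm making $S$ a perfect uniform Banach algebra with $R \to S$ bounded and $\gamma'$ equivalent to $\beta$, then $\gamma'$ is power-multiplicative and equivalent to $\gamma$, and two equivalent power-multiplicative seminorms coincide (this is the remark in Definition~\ref{D:submultiplicative}: a submultiplicative seminorm dominating a power-multiplicative one pointwise-dominates it, applied in both directions).

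The main obstacle I anticipate is step (3), the equivalence $\gamma \sim \beta$: the inequality $\gamma \leq \beta$ is trivial, but the reverse requires genuinely using that $S$ is a finite Banach module over the \emph{uniform} ring $R$ — submultiplicativity of $\beta$ alone is not enough, and one must combine the finiteness (to get a uniform constant $c_0$ controlling products, as in the proof of Lemma~\ref{L:descend etale on field}) with perfectness (to take $p^n$-th roots and push $c_0^{p^{-n}} \to 1$) to conclude. The auxiliary point that $\gamma$ is a genuine norm (step 2) also deserves care, since it hinges on $S$ being \emph{finite} over $R$ so that no point of $\calM(S)$ can have trivial residue field — this is exactly why the hypothesis that $S$ is a finite Banach module, rather than merely a Banach algebra, is essential.
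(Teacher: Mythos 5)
Your overall architecture matches the paper's: define $\gamma$ as the spectral seminorm attached to (a submultiplicative rescaling of) $\beta$, prove $\gamma$ is equivalent to $\beta$, and deduce everything else — that $\gamma$ is a norm, that $S$ is $\gamma$-complete, and uniqueness via the fact that two equivalent power-multiplicative seminorms coincide. But the step you yourself flag as the main obstacle, the lower bound $\gamma \geq c_1\beta$, is exactly where the proposal has a genuine gap, and none of the three routes you sketch closes it. (i) The ``lift $x^{p^n}$ along $R^m \to S$ and take $p^n$-th roots'' argument does not typecheck: $R^m$ is a module, not a ring, so a lift of $x^{p^n}$ has no $p^n$-th root, and roots would not commute with the module surjection anyway. (That trick belongs to Remark~\ref{R:perfect uniform strict}, where the map in question is a ring homomorphism.) (ii) The appeal to Theorem~\ref{T:transform} only identifies $\gamma$ with a supremum of residue seminorms; knowing those residue norms are nontrivial (which in any case fails when $\alpha$ is trivial) says nothing about $\gamma$ dominating $\beta$, and the open mapping theorem is unavailable since $R$ need not be an algebra over a nontrivially normed field. (iii) The final ``squeeze \dots $(\dots)$'' is left without content.

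The missing idea is the paper's one genuinely nontrivial move: apply Lemma~\ref{L:finitely generated Banach}(b) not only to the multiplication map (to get $\beta(xy) \leq c\,\beta(x)\beta(y)$, which you do obtain) but also to the \emph{inverse Frobenius}. Let $R'$ denote $R$ equipped with the norm $\alpha^p$ and $S'$ denote $S$ equipped with $\beta^p$; then $\overline{\varphi}^{-1}\colon R \to R'$ is isometric precisely because $R$ is uniform, $S'$ is a finite Banach module over $R'$, and $\overline{\varphi}^{-1}\colon S \to S'$ is an additive semilinear map of finite Banach modules, hence bounded by Lemma~\ref{L:finitely generated Banach}(b). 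This yields $d>0$ with $\beta(x^{1/p})^p \leq d\,\beta(x)$ for all $x$, i.e.\ $\beta(x^p) \geq d^{-1}\beta(x)^p$; iterating gives $c\,\beta(x^{p^n}) \geq c\,d^{-1-p-\cdots-p^{n-1}}\beta(x)^{p^n}$, and taking $p^n$-th roots and letting $n \to \infty$ gives $\gamma(x) \geq d^{-1/(p-1)}\beta(x)$. This is where the uniformity of $R$ and the perfectness of $S$ actually combine, and it is the step your proposal does not supply. Once the two-sided equivalence $d^{-1/(p-1)}\beta \leq \gamma \leq c\beta$ is in hand, your remaining steps (norm, completeness, uniqueness) go through as you describe.
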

\begin{proof}
Equip $S \otimes_R S$ with the product seminorm induced by $\beta$.
By Lemma~\ref{L:finitely generated Banach}, the multiplication map $\mu: S \otimes_R S \to S$
is bounded. Consequently, there exists $c>0$ such that
\begin{equation} \label{eq:finite uniform Banach}
\beta(xy) \leq c \beta(x) \beta(y) \qquad (x,y \in S).
\end{equation}
Rewrite \eqref{eq:finite uniform Banach} as $c \beta(xy) \leq (c \beta(x))(c \beta(y))$,
then apply Fekete's lemma to deduce that the limit $\gamma(x) = \lim_{n \to \infty} (c \beta(x^n))^{1/n}$
exists. From \eqref{eq:finite uniform Banach} again, we see that $\gamma$ is a power-multiplicative seminorm on $S$
and that $\gamma(x) \leq c \beta(x)$.

Let $R'$ be a copy of $R$ equipped with the norm $\alpha^p$;
the homomorphism $\overline{\varphi}^{-1}: R \to R'$ is isometric because $R$ is uniform.
Let $S'$ be a copy of $S$ equipped with the norm $\beta^p$; then $S'$ is a finite Banach module over $R'$
and the map $\overline{\varphi}^{-1}: S \to S'$ is semilinear with respect to
$\overline{\varphi}^{-1}: R \to R'$. By Lemma~\ref{L:finitely generated Banach} again,
$\overline{\varphi}^{-1}: S \to S'$ is bounded; that is, there exists $d>0$ such that for all $x \in S$,
$\beta(x^{1/p})^p \leq d \beta(x)$.
Equivalently, for all $x \in S$, $\beta(x^p) \geq d^{-1} \beta(x)^p$.
By induction on the positive integer $n$, we have $c \beta(x^{p^n}) \geq c d^{-1-p-\cdots-p^{n-1}} \beta(x)^{p^n}$;
by taking $p^n$-th roots and then taking the limit as $n \to \infty$, we deduce that
$\gamma(x) \geq d^{-1/(p-1)} \beta(x)$.

{}From the preceding paragraphs, $\gamma$ is equivalent to $\beta$;
it is thus a norm on $S$ under which $S$ is a perfect uniform Banach algebra.
\end{proof}

\begin{lemma} \label{L:isometric extension}
Let $R$ be a perfect uniform Banach $\Fp$-algebra,
and let $\gamma$ be an isometric automorphism of $R$ extending to an automorphism of $S \in \FEt(R)$.
Then $\gamma$ is also isometric on $S$ for the norm provided by Lemma~\ref{L:finite uniform Banach}
(or equivalently Proposition~\ref{P:finite etale Banach norm}).
\end{lemma}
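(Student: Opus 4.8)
The plan is to exploit the uniqueness clause of Lemma~\ref{L:finite uniform Banach}: two equivalent power-multiplicative seminorms on a ring coincide. Write $|\cdot|_S$ for the norm on $S$ furnished by that lemma, and consider the pulled-back function $x \mapsto |\gamma(x)|_S$ on $S$. Since $\gamma$ is a ring automorphism, this is again a norm, and it is power-multiplicative because $|\gamma(x)^n|_S = |\gamma(x^n)|_S$ and $|\cdot|_S$ is power-multiplicative. So it suffices to show that $|\cdot|_S$ and $|\gamma(\cdot)|_S$ are equivalent norms on $S$; they will then be equal, which is exactly the assertion that $\gamma$ is an isometry of $(S, |\cdot|_S)$.

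To prove the equivalence, note first that by construction ($|\cdot|_S$ is equivalent to the input norm of Lemma~\ref{L:finite uniform Banach}, which for $S \in \FEt(R)$ may be taken to be the finite Banach module norm of Lemma~\ref{L:finite projective Banach}), $|\cdot|_S$ lies in the equivalence class of the finite Banach module norm on $S$ over $R$. Now view $\gamma \colon S \to S$ as an additive map which is semilinear over the bounded (indeed isometric) ring homomorphism $\gamma \colon R \to R$; concretely, it is induced by an $R$-module homomorphism out of the base change of $S$ along $\gamma$, so it is an additive semilinear map of finite $R$-modules in the sense of Lemma~\ref{L:finitely generated Banach}(b). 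That lemma then shows $\gamma \colon S \to S$ is bounded for the finite Banach module norms on source and target, hence bounded for $|\cdot|_S$. Applying the same reasoning to $\gamma^{-1}$ (which is again an isometric automorphism of $R$ extending to an automorphism of $S$) shows that $\gamma^{-1}$ is bounded as well, so $|\cdot|_S$ and $|\gamma(\cdot)|_S$ dominate each other.

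Combining the two previous paragraphs, $|\gamma(\cdot)|_S$ is a power-multiplicative norm on $S$ equivalent to $|\cdot|_S$, so by the uniqueness statement in Lemma~\ref{L:finite uniform Banach} the two are identical, which is the desired conclusion.

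There is no serious obstacle here; the only point requiring a little care is the bookkeeping that $\gamma$, though not $R$-linear, is genuinely a semilinear morphism of finite $R$-modules over the isometric automorphism $\gamma$ of $R$, so that Lemma~\ref{L:finitely generated Banach}(b) applies. One could alternatively bypass Lemma~\ref{L:finitely generated Banach}(b) by invoking Remark~\ref{R:maximal uniform perfect extension}, where $|\cdot|_S$ is identified with the maximal power-multiplicative seminorm on $S$ for which $R \to S$ is bounded: since $\gamma$ restricts to an isometry of $R$, the seminorm $|\gamma(\cdot)|_S$ is again of this type, whence $|\gamma(\cdot)|_S \le |\cdot|_S$, and the reverse inequality follows by applying this to $\gamma^{-1}$.
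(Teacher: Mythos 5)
Your proof is correct, but it takes a genuinely different route from the paper's. The paper argues pointwise: it first treats the case where $R=L$ is an analytic field, computing $|y|$ from the constant term of the minimal polynomial of $y$ over $L$ (as in Lemma~\ref{L:analytic fields submultiplicative}) so that isometry of $\gamma$ on $L$ forces isometry on $S$; it then globalizes by identifying $\calH(\beta)$ with $\calH(\gamma^*(\beta))$ for each $\beta \in \calM(R)$ and invoking Theorem~\ref{T:transform} together with Remark~\ref{R:maximal uniform perfect extension}, which expresses the norm on $S$ as a supremum over residue fields. Your argument instead stays entirely at the level of Banach modules: $\gamma$ and $\gamma^{-1}$ are additive maps semilinear over the bounded automorphism $\gamma$ of $R$, hence bounded by Lemma~\ref{L:finitely generated Banach}(b), so $|\gamma(\cdot)|_S$ is a power-multiplicative norm equivalent to $|\cdot|_S$ and therefore equal to it by the rigidity observation underlying the uniqueness clause of Lemma~\ref{L:finite uniform Banach} (two equivalent power-multiplicative seminorms coincide). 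This is shorter and avoids any reduction to fields or use of the Gel'fand transform; what it gives up is the explicit pointwise description that the paper's route provides. Your closing alternative via the maximality characterization in Remark~\ref{R:maximal uniform perfect extension} is essentially a soft repackaging of the paper's globalization step and is also valid, since $x \mapsto |\gamma(x)|_S$ is again a power-multiplicative norm for which $R \to S$ is bounded (because the extension of $\gamma$ to $S$ restricts to the isometry $\gamma$ on $R$), and domination between power-multiplicative seminorms is pointwise inequality.
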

\begin{proof}
Suppose first that $R=L$ is an analytic field.
Given $y \in S$, let $P  = \sum_i P_i T^i \in L[T]$ be the minimal polynomial of $y$.
As in the proof of Lemma~\ref{L:analytic fields submultiplicative},
we have $|y| = |P_0|^{1/d}$ for $d = \deg(P)$
and $|\gamma(y)| = |\gamma(P_0)|^{1/d} = |y|$ as desired.

We reduce the general case to the case of an analytic field using Theorem~\ref{T:transform}.
More precisely, for each $\beta \in \calM(R)$, we may use $\gamma$ to identify $\calH(\beta)$ with $\calH(\gamma^*(\beta))$,
then use the extended action of $\gamma$ to define an automorphism of $S \widehat{\otimes}_R \calH(\beta)$.
Since this automorphism is isometric by the previous paragraph, we may apply Theorem~\ref{T:transform}
to deduce that the action of $\gamma$ on $S$ is isometric.
\end{proof}

\begin{remark} \label{R:almost perfect}
Suppose that $R$ is a perfect uniform $\Fp$-algebra over a perfect analytic field $L$.
For $S \in \FEt(R)$, $\gotho_S$ is perfect because $S$ is, so $\Omega_{\gotho_S/\gotho_R} = 0$. This does not
imply that $\gotho_S$ is finite \'etale over $\gotho_R$, because $\gotho_S$ need not be a finite $\gotho_R$-module.
However, we can say that the quotient of $\gotho_S$ by the sum of its finitely generated projective $\gotho_R$-submodules
is killed by $\gothm_L$: the quotient by a single submodule is killed by a nonzero element $\overline{z} \in \gothm_L$, then use perfectness to replace $\overline{z}$ by $\overline{z}^{p^{-n}}$ for any nonnegative integer $n$.
A related statement in the language of almost ring theory is that $\gotho_S$ is \emph{almost finite \'etale} over $\gotho_R$;
see Theorem~\ref{T:almost purity} for a similar statement and derivation.
\end{remark}

As a consequence of these observations, we obtain analogues of the Tate-Kiehl theorems for perfect uniform
Banach algebras. For extensions of these results, see \S\ref{subsec:sheaf properties};
for an analogue for perfectoid algebras, see Theorem~\ref{T:Tate-Kiehl analogue2}.

\begin{lemma} \label{L:Tate lemma1}
Let $(R,R^+)$ be a perfect uniform adic Banach $\Fp$-algebra.
Choose $f \in R$ and let $\{(R,R^+) \to (R_i, R_i^+)\}_{i=1,2,12}$ represent the rational subdomains
\[
\{v \in \Spa(R,R^+): v(f) \leq 1\}, \quad
\{v \in \Spa(R,R^+): v(f) \geq 1\}, \quad
\{v \in \Spa(R,R^+): v(f) = 1\}
\]
of $\Spa(R,R^+)$.
Then the sequence
\begin{equation} \label{eq:strict exact sequence1}
0 \to R \to R_1 \oplus R_2 \to R_{12} \to 0
\end{equation}
is almost optimal exact (i.e., exact with each morphism being almost optimal).
\end{lemma}
\begin{proof}
Strict exactness follows from Corollary~\ref{C:principal ideal closed}; almost optimality then follows from Remark~\ref{R:perfect uniform strict}. One can also give a more direct proof using affinoid systems and Tate's theorem; we leave this as an exercise.
\end{proof}
\begin{theorem} \label{T:Tate-Kiehl analogue1}
Any perfect uniform adic Banach $\Fp$-algebra $(R,R^+)$ is sheafy,
and the structure sheaf on $\Spa(R,R^+)$
satisfies the Tate sheaf and Kiehl glueing properties
(see Definition~\ref{D:Tate-Kiehl properties}).
\end{theorem}
\begin{proof}
We may deduce sheafiness either from Theorem~\ref{T:uniform rational is sheafy}
and Proposition~\ref{P:perfect uniform localization} or from
Lemma~\ref{L:Tate lemma1} and Proposition~\ref{P:acyclicity template}.
The properties of the structure sheaf then follow from
Theorem~\ref{T:tate to Kiehl}.
\end{proof}

\begin{example}
For $X$ an arbitrary (possibly infinite) set and $R$ a ring, let $R[X]$ denote the free commutative $R$-algebra
generated by $X$, and write $R[X^{p^{-\infty}}]$ for $\cup_{n=1}^\infty R[X^{p^{-n}}]$.
Then for any $\Fp$-algebra $R$, we have a natural (in $R$) identification $R[X]^{\perf} \cong  R^{\perf}[X^{p^{-\infty}}]$.
\end{example}

The operation of forming the perfect closure, or the completed perfect closure in case we have a power-multiplicative
norm, does not change the \'etale fundamental group.

\begin{theorem} \label{T:perfect etale}
Let $R$ be an $\Fp$-algebra.
\begin{enumerate}
\item[(a)]
The base change functor
$\FEt(R) \to \FEt(R^{\perf})$ is a tensor equivalence.
\item[(b)]
Suppose that $R$ is a uniform Banach $\Fp$-algebra,
and let $S$ be the completion of $R^{\perf}$. Then
the base change functor $\FEt(R) \to \FEt(S)$ is a tensor equivalence.
\end{enumerate}
\end{theorem}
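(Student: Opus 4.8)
The plan is to treat the two parts somewhat differently, though both rely on the fact that forming a (completed) perfect closure is a filtered colimit of copies of $R$ along the Frobenius endomorphism, which is a bijection onto its image in the reduced world. First I would prove (a). Since $R$ is perfect, the Frobenius $\overline{\varphi}: R \to R$ is an \emph{isomorphism} of rings, so each transition map in the directed system $R \xrightarrow{\overline{\varphi}} R \xrightarrow{\overline{\varphi}} \cdots$ defining $R^{\perf}$ is an isomorphism, and hence $R \to R^{\perf}$ is itself an isomorphism. Consequently $\FEt(R) \to \FEt(R^{\perf})$ is literally base change along an isomorphism, and is trivially a tensor equivalence. (Indeed, the more interesting content of the perfect closure construction is precisely when $R$ is \emph{not} already perfect, but that case is not what is being asserted here.)

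For (b), the substance is that passing from a perfect $\Fp$-algebra $R$, complete for a power-multiplicative norm, to its completion $S$ does not change finite \'etale algebras. Here the key is Lemma~\ref{L:descend etale on field}, or rather its formalism: I would realize $S$ as the separated completion of the direct limit of the constant directed system on $R$ (with all transition maps the identity, but the norms varying trivially — more precisely, one can view $R$ itself as a directed system with one object), so that $R \to S$ is the canonical map from a ring to its completion. Then part (a) of Lemma~\ref{L:descend etale on field} gives that $\FEt(R) \to \FEt(S)$ is rank-preserving and fully faithful, since $\ker$ of the relevant seminorm is the zero ideal (the norm on $R$ being a genuine norm) and $R$ injects into $S$. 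It remains to prove essential surjectivity.

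For essential surjectivity I would argue as in Proposition~\ref{P:henselian direct limit2}, localizing over $\calM(R) \cong \calM(S)$: for each $\beta \in \calM(S)$, restricting to $\beta' \in \calM(R)$, the localizations $R_{\beta'}$ and $S_\beta$ are henselian local rings (Lemma~\ref{L:henselian local ring}) with the same completed residue field $\calH(\beta)$, which is a perfect analytic field; by Lemma~\ref{L:descend etale on field}(b) (applied to the directed system of rational localizations encircling $\beta'$, together with the fact that $R^{\perf} = R$ here) any object of $\FEt(\calH(\beta))$, hence any object of $\FEt(S_\beta)$, descends to a rational localization $R \to B$ encircling $\beta'$. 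By compactness of $\calM(R)$ a finite such covering family $R \to B_1, \dots, R \to B_n$ suffices, and after enlarging it the descended objects carry gluing data on overlaps satisfying the cocycle condition (using full faithfulness on overlaps). Finally I would invoke Theorem~\ref{T:henselian direct limit3} (effective descent for finite \'etale algebras over uniform Banach rings along a covering family of rational localizations) to glue these to an object of $\FEt(R)$ mapping to the given object of $\FEt(S)$. The main obstacle is this last gluing step, which is exactly where one needs the Tate--Kiehl-type machinery for perfect uniform Banach $\Fp$-algebras (Theorem~\ref{T:Tate-Kiehl analogue1}), rather than anything resembling ramification theory; once that black box is in hand, the argument is a routine compactness-and-descent patching.
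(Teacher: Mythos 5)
Your proof of (a) is formally valid for the statement as literally printed, but it empties the theorem of content: if $R$ is perfect then $R \to R^{\perf}$ is an isomorphism, and in (b) one then has $S = \widehat{R^{\perf}} = \widehat{R} = R$ as well, so both parts become tautologies. That cannot be what is meant. The paper's own proof of (a) nowhere uses perfectness of $R$, and the theorem is later invoked for rings that are visibly not perfect (in Theorem~\ref{T:perfect equivalence2} it is applied to the direct limit $A$ of a uniform affinoid system, whose completed perfection is $R$); the word ``perfect'' in the hypothesis is a slip for ``an $\Fp$-algebra''. The actual content of (a) is the invariance of the finite \'etale site under the universal homeomorphism $\Spec(R^{\perf}) \to \Spec(R)$ (which is surjective, integral, and radicial): this gives full faithfulness by \cite[Expos\'e~IX, Corollaire~3.3]{sga1}, and essential surjectivity follows by writing $\FEt(R^{\perf})$ as the direct limit of $\FEt(T)$ over the finite, radicial, surjective, finitely presented subextensions $T = R[x_1^{1/p^m},\dots,x_n^{1/p^m}]$ and invoking \cite[Expos\'e~IX, Th\'eor\`eme~4.10]{sga1}. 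None of this appears in your proposal.

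For (b), your argument is internally inconsistent with your reading of (a) (under that reading there is again nothing to prove), so I assess it as an argument for the substantive statement, in which $R^{\perf}$ is perfect and normed but not complete. The full faithfulness step is essentially right, except that the relevant direct system is $\{\varphi^{-n}(R)\}_n$ (each term complete, transition maps isometric), not a constant system; Lemma~\ref{L:descend etale on field}(a) then applies with $A = R^{\perf}$ and $\widehat{A} = S$. The essential surjectivity step has a genuine gap: Lemma~\ref{L:henselian local ring} and Theorem~\ref{T:henselian direct limit3} are established only for complete uniform Banach rings, so you can neither localize over nor glue finite \'etale algebras over the incomplete ring $R^{\perf}$, and carrying out the localization and gluing over $S$ instead only hands you back an object of $\FEt(S)$. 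The paper avoids this by a two-out-of-three argument: write $R$ as the completion of the direct limit $A$ of a uniform affinoid system, enlarge the system by the rings $\varphi^{-j}(A_i)$ so that its direct limit becomes $A^{\perf}$ with completion $S$, and observe that $\FEt(A) \to \FEt(R)$ and $\FEt(A^{\perf}) \to \FEt(S)$ are equivalences by Proposition~\ref{P:henselian direct limit2} while $\FEt(A) \to \FEt(A^{\perf})$ is one by part (a); this forces $\FEt(R) \to \FEt(S)$, and hence $\FEt(R^{\perf}) \to \FEt(S)$, to be an equivalence. The affinoid system is where the Tate--Kiehl gluing actually takes place, and it cannot be bypassed in the way you propose.
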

\begin{proof}
The morphism $\Spec(R^{\perf}) \to \Spec(R)$ is surjective, integral,
and radicial, so by \cite[Corollaire~18.12.11]{ega4-4}, it is
a universal homeomorphism. In particular, it is universally submersive
\cite[Expos\`e~IX, D\'efinition~2.1]{sga1}, so
$\FEt(R) \to \FEt(R^{\perf})$ is fully faithful by
\cite[Expos\'e~IX, Corollaire~3.3]{sga1}.
On the other hand, by Remark~\ref{R:fet direct limit},
$\FEt(R^{\perf})$ is the direct
2-limit of $\FEt(T)$ as $T$ runs over all $R$-subalgebras of $R^{\perf}$
of the form $R[x_1^{1/p^m},\dots,x_n^{1/p^m}]$ for some
nonnegative integer $m$ and some $x_1,\dots,x_n \in R$.
For each such $T$, the morphism $\Spec(T) \to \Spec(R)$ is finite,
radicial, surjective, and of finite presentation,
so $\FEt(R) \to \FEt(T)$ is essentially surjective by
\cite[Expos\'e~IX, Th\'eor\`eme~4.10]{sga1}.
We deduce that $\FEt(R) \to \FEt(R^{\perf})$ is also essentially surjective.
This proves (a).

To prove (b), note that by Lemma~\ref{L:construct affinoid system},
we can write $R$ as the completion of the direct limit of some affinoid system $\{A_i\}_{i \in I}$. Form a new affinoid system $\{B_i\}_{i \in I}$ 
by taking $B_i = A_i^u$ (note that by Corollary~\ref{C:spectral is norm}, $B_i$ is isomorphic to the reduced quotient of $A_i$).
Let $B$ be the direct limit of the second affinoid system; its completion is again $R$.
From the second affinoid system, form a third one by adding $\overline{\varphi}^{-j}(B_i)$ for all nonnegative integers $j$; the completed directed limit becomes $S$. Applying $\FEt$ to the commutative diagram
\[
\xymatrix{
B \ar[r] \ar[d] & R \ar[d] \\
B^{\perf} \ar[r] & S
}
\]
yields tensor equivalences along the horizontal arrows by Proposition~\ref{P:henselian direct limit2}
and along the left vertical arrow by (a). Hence the right vertical arrow
also becomes a tensor equivalence; this proves (b).
\end{proof}

For attribution of the following result, see Remark~\ref{R:locally perfect is perfect}.
\begin{prop} \label{P:locally perfect is perfect}
Let $(A,A^+)$ be a sheafy adic Banach $\Fp$-algebra.
Suppose that there exists
a rational covering $\{(A, A^+) \to (B_i, B_i^+)\}_i$ such that each $B_i$ is perfect uniform. Then $A$ is perfect.
\end{prop}
\begin{proof}
Treating the desired result as a property of rational coverings, we verify the criteria of Proposition~\ref{P:Tate reduction}: (a) follows from 
Proposition~\ref{P:perfect uniform localization}, (b) follows because $(A,A^+)$ is stably uniform, and (c) follows from
Corollary~\ref{C:principal ideal closed}. Explicitly, if $A$ is uniform and $B_1$ and $B_2$ are perfect, then $B_{12}$ is also perfect by Proposition~\ref{P:perfect uniform localization}; consequently, any $f \in A$ has a unique $p$-th root in each of $B_1, B_2, B_{12}$, so these $p$-th roots define an element of $A$ via the exact sequence
in Corollary~\ref{C:principal ideal closed}.
\end{proof}

\begin{remark} \label{R:locally perfect is perfect}
Proposition~\ref{P:locally perfect is perfect} is essentially \cite[Corollary~10]{buzzard-verberkmoes}, except that we assume that $(A,A^+)$ is sheafy rather than stably uniform. This implies \cite[Corollary~10]{buzzard-verberkmoes} because stably uniform adic Banach rings are sheafy (Theorem~\ref{T:uniform rational is sheafy}),
and indeed this is how the proof of \cite[Corollary~10]{buzzard-verberkmoes} proceeds.
On the other hand, one may easily deduce Proposition~\ref{P:locally perfect is perfect} from \cite[Corollary~10]{buzzard-verberkmoes} because an adic Banach algebra which is both sheafy and locally stably uniform is stably uniform
(Remark~\ref{R:sheafy to stably uniform}).

Either of these results may be viewed as a partial resolution of 
\cite[Conjecture~2.16]{scholze1}. As originally stated, that conjecture asserts
that an adic Banach $\Fp$-algebra $(A,A^+)$ which admits a covering by perfect uniform algebras is itself perfect uniform; however, a counterexample against this conjecture is given in \cite[Proposition~13]{buzzard-verberkmoes}. According to \cite{buzzard-verberkmoes}, Scholze has proposed to amend this conjecture by adding the hypothesis that $(A,A^+)$ be uniform, which avoids the counterexample from \cite[Proposition~13]{buzzard-verberkmoes}. If we further assume that $(A,A^+)$ is stably uniform, we get precisely \cite[Corollary~10]{buzzard-verberkmoes}.
The corresponding question for perfectoid algebras is somewhat subtler; see 
Remark~\ref{R:locally perfectoid is perfectoid}.
\end{remark}

\subsection{Strict \texorpdfstring{$p$}{p}-rings}

Perfect $\Fp$-algebras lift naturally to characteristic zero, as follows. (Our derivations
follow \cite[\S 5]{serre-local-fields}; see \cite[\S 1]{illusie} for a discussion more explicitly in terms
of Witt vectors.)
\begin{defn} \label{D:strict p-ring}
A \emph{strict $p$-ring} is a $p$-torsion-free, $p$-adically complete ring $S$ for which $S/(p)$ is perfect.
Given such a ring, for any $p$-adically complete ring $U$ and
any ring homomorphism $\overline{t}: S/(p) \to U/(p)$, $\overline{t}$ lifts uniquely to a multiplicative map
$t: S/(p) \to U$; more precisely, for any $\overline{x} \in S/(p)$ and any $y \in U$ lifting
$\overline{t}(x^{p^{-n}})$, we have $t(\overline{x}) \equiv y^{p^n} \pmod{p^{n+1}}$.
In particular, the projection $S \to S/(p)$ admits a multiplicative section $[\cdot]: S/(p) \to S$, called the
\emph{Teichm\"uller map}; each $x \in S$ can be written uniquely as $\sum_{n=0}^\infty p^n [\overline{x}_n]$
with $\overline{x}_n \in S/(p)$.
\end{defn}

\begin{lemma} \label{L:Teichmuller2}
Let $S$ be a strict $p$-ring, let $U$ be a $p$-adically complete ring, and let $\pi: U \to U/(p)$ be the natural projection.
Let $\overline{t}: S/(p) \to U/(p)$ be a ring homomorphism, and lift $\overline{t}$ to a multiplicative map $t: S/(p) \to U$
as in Definition~\ref{D:strict p-ring}. Then the formula
\begin{equation} \label{eq:Teichmuller homomorphism}
T\left( \sum_{n=0}^\infty p^n [\overline{x_n}] \right) = \sum_{n=0}^\infty p^n t(\overline{x}_n) \qquad (\overline{x}_0, \overline{x}_1, \dots \in S/(p))
\end{equation}
defines a (necessarily unique) homomorphism $T: S\to U$ such that
$T \circ \left[\cdot \right] = t$.
\end{lemma}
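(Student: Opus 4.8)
The plan is to treat the formal points quickly and then concentrate on additivity and multiplicativity, which are the real content. By Definition~\ref{D:strict p-ring} the Teichm\"uller expansion $x=\sum_n p^n[\overline{x_n}]$ is unique, and the series $\sum_n p^n t(\overline{x_n})$ converges in the $p$-adically complete ring $U$; hence $T$ is a well-defined function with $T([\overline{x}])=t(\overline{x})$, so $T\circ[\cdot]=t$. For uniqueness: any ring homomorphism $T'\colon S\to U$ carries $p^nS$ into $p^nU$, hence is $p$-adically continuous, so if $T'\circ[\cdot]=t$ then $T'(x)=\lim_N T'\bigl(\sum_{n\le N}p^n[\overline{x_n}]\bigr)=\lim_N\sum_{n\le N}p^n t(\overline{x_n})=T(x)$. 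Finally $T(1)=t(1)$, and $t(1)$ is an idempotent of $U$ congruent to $\overline{t}(1)=1$ modulo $pU$; since $pU$ lies in the Jacobson radical of $U$ (for $u\in pU$, $1-u$ is invertible via a convergent geometric series), Lemma~\ref{L:unique idempotent lifting}(a) forces $t(1)=1$, so $T(1)=1$.

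For additivity and multiplicativity I would pass to Witt coordinates, identifying $S$ with the ring of Witt vectors $W(S/(p))$ (see \cite[\S1]{illusie}); under this identification $x=\sum_i p^i[\overline{x_i}]$ corresponds to the Witt vector $(\overline{x_0},\overline{x_1}^{p},\overline{x_2}^{p^2},\dots)$, and the $n$-th Teichm\"uller coordinate $\overline{c_n}$ of $x+y$ (resp.\ of $xy$) is the unique $p^n$-th root in the perfect ring $S/(p)$ of $\Sigma_n$ (resp.\ $\Pi_n$) evaluated at $A_i=\overline{x_i}^{p^i}$, $B_j=\overline{y_j}^{p^j}$, where $\Sigma_n,\Pi_n\in\ZZ[A_0,\dots,A_n,B_0,\dots,B_n]$ are the Witt addition and multiplication polynomials. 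Since $U$ is $p$-adically separated, it suffices to prove $T(x+y)\equiv T(x)+T(y)$ and $T(xy)\equiv T(x)T(y)$ modulo $p^N$ for every $N$. Fixing $N$ and choosing $m\ge N$, pick for each index an element $w_{x,i}\in U$ lifting $\overline{t}(\overline{x_i}^{\,p^{-m}})$ and $w_{y,j}\in U$ lifting $\overline{t}(\overline{y_j}^{\,p^{-m}})$. Using that a polynomial $Q$ over $\Fp$ satisfies $Q(z_1^p,z_2^p,\dots)=Q(z_1,z_2,\dots)^p$, one checks that $z_n:=\Sigma_n$ evaluated at $A_i=w_{x,i}^{p^i}$, $B_j=w_{y,j}^{p^j}$ reduces modulo $p$ to $\overline{t}(\overline{c_n}^{\,p^{-(m-n)}})$; the defining congruence for $t$ in Definition~\ref{D:strict p-ring} then gives $t(\overline{c_n})\equiv z_n^{p^{m-n}}\pmod{p^{m-n+1}}$ and $t(\overline{x_i})\equiv w_{x,i}^{p^m}\pmod{p^{m+1}}$, so that modulo $p^N$ one has $T(x+y)\equiv\sum_{n=0}^{m}p^n z_n^{p^{m-n}}$ and $T(x)+T(y)\equiv\sum_{i=0}^{m}p^i w_{x,i}^{p^m}+\sum_{j=0}^{m}p^j w_{y,j}^{p^m}$. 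These two are literally equal in $U$: substituting $A_i=w_{x,i}^{p^i}$, $B_j=w_{y,j}^{p^j}$ into the identity of $\ZZ$-polynomials $w_m(\Sigma_0,\dots,\Sigma_m)=w_m(A_0,\dots,A_m)+w_m(B_0,\dots,B_m)$, where $w_m(Z_0,\dots,Z_m)=\sum_{k=0}^{m}p^k Z_k^{p^{m-k}}$ is the $m$-th ghost polynomial, yields exactly this (using $A_i^{p^{m-i}}=w_{x,i}^{p^m}$). The multiplicative statement is proved identically, with $\Pi_n$ and $w_m(\Pi_0,\dots,\Pi_m)=w_m(A_0,\dots,A_m)\,w_m(B_0,\dots,B_m)$ in place of the additive identity.

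The main obstacle is exactly this bookkeeping, which links the Teichm\"uller expansion — whose behaviour is controlled only by the pointwise congruences in Definition~\ref{D:strict p-ring} — to the Witt addition and multiplication polynomials, which are universal identities over $\ZZ$: one must extract $p^m$-th roots of the residues $\overline{x_i},\overline{y_j}$ inside the perfect ring $S/(p)$, lift their images under $\overline{t}$ into $U$, and track the powers of $p$ carefully so that the $\ZZ$-coefficient ghost-component identities can be evaluated in $U$ and the error terms land in $p^NU$. If one prefers to avoid Witt coordinates altogether, the alternative (following \cite[\S5]{serre-local-fields}) is to construct the universal polynomial expressions for the Teichm\"uller coordinates of $x+y$ and $xy$ directly by successive approximation, using repeatedly that $a\equiv b\pmod{p^k}$ implies $a^p\equiv b^p\pmod{p^{k+1}}$; the congruence estimates needed afterwards are the same as above.
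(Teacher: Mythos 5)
Your formal points (well-definedness, $T\circ[\cdot]=t$, uniqueness via $p$-adic continuity, $T(1)=1$) are fine, and the ghost-component computation itself is correct: the congruences $t(\overline{c_n})\equiv z_n^{p^{m-n}}\pmod{p^{m-n+1}}$ and $t(\overline{x_i})\equiv w_{x,i}^{p^m}\pmod{p^{m+1}}$ do follow from Definition~\ref{D:strict p-ring}, and the ghost identity $w_m(\Sigma_0,\dots,\Sigma_m)=w_m(A)+w_m(B)$ then yields $T(x+y)\equiv T(x)+T(y)\pmod{p^{m+1}}$, with multiplication handled identically via $\Pi_n$. This is a genuinely different route from the paper's proof, which establishes additivity by induction on $n$ working modulo $p^{n}$: it uses only the congruence $[\overline{z}]\equiv([\overline{x}^{p^{-n}}]+[\overline{y}^{p^{-n}}])^{p^{n}}\pmod{p^{n+1}}$ (and its analogue for $t$), the integrality of $\tfrac{1}{p}\binom{p^n}{i}$, and the inductive hypothesis applied to $([\overline{x}]+[\overline{y}]-[\overline{z}])/p$, never invoking Witt polynomials. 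Your version verifies all digits at once through the ghost map, which is cleaner once the Witt formalism is in hand but imports that formalism as input; the paper's version is more elementary and self-contained (though it is terser than you on multiplicativity, which you treat explicitly).

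The one step you must shore up is a circularity risk in your key input: you assume that the Teichm\"uller coefficients of $x+y$ and $xy$ \emph{in the abstract strict $p$-ring $S$} are computed by the Witt polynomials, i.e.\ that $S\cong W(S/(p))$ compatibly with Teichm\"uller expansions. In the paper's own development this implication runs the other way: Remark~\ref{R:addition formula} derives the universal formula $[\overline{x}]+[\overline{y}]=\sum_n p^n[P_n(\overline{x},\overline{y})]$ for a general strict $p$-ring precisely \emph{by applying} Lemma~\ref{L:Teichmuller2} to transfer the computation from the universal strict $p$-ring, and the classical identification of an abstract strict $p$-ring with $W$ of its residue ring likewise passes through this lifting lemma. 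So citing that identification as known begs the question here. The gap is repairable without changing your method: run the same ghost-component argument once with $U=S$ and $t=[\cdot]$ to check directly that $(a_0,a_1,\dots)\mapsto\sum_n p^n[a_n^{p^{-n}}]$ defines a ring homomorphism $W(S/(p))\to S$ (hence an isomorphism, by $p$-adic approximation), which supplies the formula for $\overline{c_n}$ that your main computation needs.
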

\begin{proof}
We check by induction that for each positive integer $n$, $T$ induces an additive map $S/(p^n) \to U/(p^n)$.
This holds for $n=1$ because $\pi \circ t$ is a homomorphism. Suppose the claim holds for some $n \geq 1$.
For $x = [\overline{x}] + p x_1, y = [\overline{y}] + py_1, z = [\overline{z}] + pz_1 \in S$
with $x+y=z$,
\begin{align*}
[\overline{z}] &\equiv ([\overline{x}^{p^{-n}}] + [\overline{y}^{p^{-n}}])^{p^{n}} \pmod{p^{n+1}} \\
t(\overline{z}) &\equiv (t(\overline{x}^{p^{-n}}) + t(\overline{y}^{p^{-n}}))^{p^{n}} \pmod{p^{n+1}}
\end{align*}
as in Definition~\ref{D:strict p-ring}. In particular,
\begin{equation} \label{eq:teichmuller1}
T([\overline{z}]) - T([\overline{x}]) - T([\overline{y}]) \equiv
\sum_{i=1}^{p^{n}-1}
 \binom{p^{n}}{i} T([\overline{x}^{ip^{-n}} \overline{y}^{1-ip^{-n}}]) \pmod{p^{n+1}}.
\end{equation}
On the other hand, since $\frac{1}{p} \binom{p^{n}}{i} \in \ZZ$ for $i=1,\dots,p^{n}-1$,
we may write
\[
z_1 - x_1 - y_1
= \frac{[\overline{x}] + [\overline{y}] - [\overline{z}]}{p} \equiv - \sum_{i=1}^{p^{n}-1}
\frac{1}{p} \binom{p^{n}}{i} [\overline{x}^{ip^{-n}} \overline{y}^{1-ip^{-n}}] \pmod{p^n},
\]
apply $T$, invoke the induction hypothesis on both sides, and multiply by $p$ to obtain
\begin{equation} \label{eq:teichmuller2}
pT(z_1) - pT(x_1) - pT(y_1)
\equiv - \sum_{i=1}^{p^{n}-1}
 \binom{p^{n}}{i} T([\overline{x}^{ip^{-n}} \overline{y}^{1-ip^{-n}}]) \pmod{p^{n+1}}.
\end{equation}
Since $T(x) = T([\overline{x}]) + p T(x_1)$ and so on, we may add \eqref{eq:teichmuller1}
and \eqref{eq:teichmuller2} to deduce that $T(z) - T(x) - T(y) \equiv 0  \pmod{p^{n+1}}$,
completing the induction.
Hence $T$ is additive; multiplicativity of $t$ forces $T$ to also be multiplicative, as desired.
\end{proof}

\begin{remark} \label{R:addition formula}
For $X$ an arbitrary set, the $p$-adic completion $S$ of $\ZZ[X^{p^{-\infty}}]$ is a
strict $p$-ring with $S/(p) \cong \Fp[X^{p^{-\infty}}]$. If we take $X = \{\overline{x}, \overline{y}\}$,
then
\begin{equation} \label{eq:witt formulas}
[\overline{x}] + [\overline{y}] = \sum_{n=0}^\infty p^n [P_n(\overline{x}, \overline{y})]
\end{equation}
for some $P_n(\overline{x}, \overline{y})$ in the ideal $(\overline{x}^{p^{-\infty}}, \overline{y}^{p^{-\infty}})
\subset \Fp[\overline{x}^{p^{-\infty}}, \overline{y}^{p^{-\infty}}]$
and homogeneous of degree 1.
For instance, $P_0(\overline{x}, \overline{y}) = \overline{x} + \overline{y}$ and
$P_1(\overline{x}, \overline{y}) = -\sum_{i=1}^{p-1} p^{-1} \binom{p}{i} \overline{x}^{i/p} \overline{y}^{1 - i/p}$.
By Lemma~\ref{L:Teichmuller2}, \eqref{eq:witt formulas}
is also valid for any strict $p$-ring $S$ and any $\overline{x}, \overline{y} \in S/(p)$.
One can similarly derive formulas for arithmetic in a strict $p$-ring in terms
of Teichm\"uller coordinates; these can also be obtained using Witt vectors (Definition~\ref{D:Witt vectors}).
\end{remark}

\begin{theorem} \label{T:Witt}
The functor $S \rightsquigarrow S/(p)$ from strict $p$-rings to perfect $\Fp$-algebras
is an equivalence of categories.
\end{theorem}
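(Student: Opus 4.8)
The plan is to prove full faithfulness by purely formal manipulation with the Teichmüller expansion, and essential surjectivity by exhibiting each perfect $\Fp$-algebra as a quotient of a strict $p$-ring constructed from a free algebra over $\ZZ$.

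For full faithfulness, let $S_1,S_2$ be strict $p$-rings, write $R_i=S_i/(p)$, and recall that the functor sends a homomorphism $T:S_1\to S_2$ to $\overline{T}:R_1\to R_2$. Given $\overline{t}:R_1\to R_2$, Definition~\ref{D:strict p-ring} supplies a unique multiplicative lift $t:R_1\to S_2$ of $\overline{t}$, and Lemma~\ref{L:Teichmuller2} then supplies the unique ring homomorphism $T:S_1\to S_2$ with $T\circ[\cdot]=t$; since every element of $R_1$ is $\overline{[\overline{x}]}$ for some $\overline{x}$, one reads off $\overline{T}=\overline{t}$, so the functor is full. For faithfulness, if $\overline{T}=\overline{T'}$ then $T\circ[\cdot]$ and $T'\circ[\cdot]$ are two multiplicative lifts of $\overline{T}$, hence equal by the uniqueness in Definition~\ref{D:strict p-ring}; since any ring homomorphism between strict $p$-rings is $p$-adically continuous and therefore satisfies $T(\sum p^n[\overline{x}_n])=\sum p^n (T\circ[\cdot])(\overline{x}_n)$, it follows that $T=T'$.

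For essential surjectivity, given a perfect $\Fp$-algebra $R$, choose a set $X$ and a surjection $\Fp[X]\to R$; since $R$ is perfect this extends to a surjection $\pi:P\to R$ with $P=\Fp[X^{p^{-\infty}}]$ perfect. Let $I=\ker\pi$; because $R$ is reduced, $g^p\in I$ forces $g\in I$, so $I$ is closed under $p$-th roots in $P$. Let $\widetilde{S}$ be the $p$-adic completion of $\ZZ[X^{p^{-\infty}}]$, a strict $p$-ring with $\widetilde{S}/(p)\cong P$ by Remark~\ref{R:addition formula}, and set
\[
J=\left\{\textstyle\sum_{n\geq 0}p^n[\overline{x}_n]\in\widetilde{S}:\ \overline{x}_n\in I\ \text{for all }n\right\}.
\]
The plan is to check that $J$ is an ideal, that $S:=\widetilde{S}/J$ is $p$-torsion-free and $p$-adically complete, and that $S/(p)\cong P/I=R$; then $S$ is a strict $p$-ring mapping to $R$, as required. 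Here $S/(p)\cong R$ is immediate because $\sum p^n[\overline{x}_n]\equiv[\overline{x}_0]\pmod p$, so $J+p\widetilde{S}=\pi_0^{-1}(I)$ with $\pi_0:\widetilde{S}\to P$ the reduction. That $J$ is stable under addition and under multiplication by $\widetilde{S}$ comes from the explicit Witt-type formulas of Remark~\ref{R:addition formula}: the polynomials giving the Teichmüller coordinates of a sum or product lie in the ideal generated by the relevant coordinates \emph{and their $p$-power roots}, and this is precisely where the closure of $I$ under $p$-th roots enters. That $S$ is $p$-torsion-free follows from uniqueness of Teichmüller expansions, since $p\cdot\sum p^n[\overline{x}_n]=\sum_{n\geq 1}p^n[\overline{x}_{n-1}]$ is already in Teichmüller form and hence lies in $J$ only if every $\overline{x}_{n-1}$ lies in $I$.

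The step I expect to be the main obstacle is the verification that $S$ is $p$-adically separated and complete: one must show that an element all of whose first $n$ Teichmüller coordinates lie in $I$ for every $n$ already belongs to $J$, and that a system compatible modulo $J+p^n\widetilde{S}$ lifts to $\widetilde{S}$. Both reduce to the fact that adding a multiple of $p^n$ leaves the first $n$ Teichmüller coordinates unchanged, but making this precise requires some care with the addition formulas and with the behaviour of those formulas under reduction by $I$. Once $S$ is known to be a strict $p$-ring, no naturality argument is needed: the full faithfulness established above shows $S$ is determined up to unique isomorphism by $R$, so the functor is an equivalence.
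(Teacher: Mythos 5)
Your proof is correct and follows essentially the same route as the paper: full faithfulness from Lemma~\ref{L:Teichmuller2} together with the uniqueness of multiplicative lifts, and essential surjectivity by presenting $R$ as a quotient of $\Fp[X^{p^{-\infty}}]$, passing to the $p$-adic completion of $\ZZ[X^{p^{-\infty}}]$, and quotienting by the ideal of elements all of whose Teichm\"uller coordinates lie in the kernel (with Remark~\ref{R:addition formula} and the closure of that kernel under $p$-th roots justifying that this is an ideal). You simply spell out the verifications ($p$-torsion-freeness, completeness) that the paper leaves implicit.
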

\begin{proof}
Full faithfulness follows from Lemma~\ref{L:Teichmuller2}.
To prove essential surjectivity,
let $R$ be a perfect $\Fp$-algebra,
choose a surjection $\psi: \Fp[X^{p^{-\infty}}] \to R$ for some set $X$, and put $\overline{I} = \ker(\psi)$.
Let $S_0$ be the $p$-adic completion of $\ZZ[X^{p^{-\infty}}]$; this is a strict $p$-ring
with $S_0/(p) \cong \Fp[X^{p^{-\infty}}]$.
Put $I = \{\sum_{n=0}^\infty p^n [\overline{x}_n] \in S_0: \overline{x}_0, \overline{x}_1,\dots \in \overline{I}\}$;
this forms an ideal in $S_0$ by Remark~\ref{R:addition formula}.
Then $S = S_0/I$ is a strict $p$-ring with $S/(p) \cong R$.
\end{proof}

\begin{defn} \label{D:Witt vectors}
For $R$ a perfect $\Fp$-algebra, let $W(R)$ denote the strict $p$-ring with $W(R)/(p) \cong R$;
this object is unique up to unique isomorphism by Theorem~\ref{T:Witt}.
More concretely, we may identify $W(R)$ with the set of infinite sequences over $R$
so that the sequence $(\overline{x}_0, \overline{x}_1, \dots)$ corresponds to the ring element
$\sum_{n=0}^\infty p^n [\overline{x}_n]$. This is a special case of the construction of the
ring of \emph{$p$-typical Witt vectors} associated to a ring $R$, hence the notation.
The construction of $W(R)$ is functorial in $R$, so for instance $\overline{\varphi}$ lifts functorially
to an endomorphism $\varphi$ of $W(R)$. It is common shorthand to write $W_n(R)$ for $W(R)/(p^n)$.
\end{defn}

One of the key roles that strict $p$-rings play in our work is in the classification of local systems
over rings of positive characteristic. The central point is a nonabelian version of
Artin-Schreier-Witt theory, for which we follow
\cite[Proposition~4.1.1]{katz-modular} (see also \cite[Theorem~2.2]{crew-F}).

\begin{lemma} \label{L:DM relative}
Let $R$ be a perfect $\Fp$-algebra, and let $n$ be a positive integer.
Let $M$ be a finite projective $W_n(R)$-module of everywhere positive rank,
equipped with a semilinear $\varphi^a$-action
for some positive integer $a$. Then there exists a
faithfully finite \'etale $R$-algebra $S$
such that $M \otimes_{W_n(R)} W_n(S)$ admits a basis fixed by $\varphi^a$.
More precisely, if $m<n$ is another positive integer
and $M \otimes_{W_n(R)} W_m(R)$ admits a $\varphi^a$-fixed basis, then $S$ can be chosen
so that this basis lifts to a $\varphi^a$-fixed basis of $M \otimes_{W_n(R)} W_n(S)$.
\end{lemma}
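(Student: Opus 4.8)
\emph{The plan} is to induct on $n$: an elementary \emph{lifting step}, of Artin--Schreier type, will reduce the whole statement (including the refined assertion) to the case $n=1$, which is Lang's theorem in disguise. Throughout one uses that $\varphi^a$ fixes every idempotent of $R$ (as $e^p=e$), hence every idempotent of $W_n(R)$; so the partition of $\Spec R$ into the clopen pieces on which $\rank M$ is constant is $\varphi^a$-stable, and, treating each piece separately and taking the product of the resulting algebras, I may assume $M$ has constant rank $d\ge 1$.

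\emph{The lifting step.} Let $n\ge 2$. Suppose $M\otimes_{W_n(R)}W_{n-1}(R)$ carries a $\varphi^a$-fixed basis $\bar v_1,\dots,\bar v_d$; the inductive hypothesis (the plain statement for $n-1$, or its refined form when a fixed basis modulo $p^m$ with $m<n-1$ is prescribed) reduces us to this situation after replacing $R$ by a faithfully finite \'etale algebra. Lift the $\bar v_j$ to $v_j\in M$. Since $pW_n(R)$ is nilpotent, Nakayama shows the $v_j$ generate $M$, and as $M$ is projective of rank $d$ they form a basis; thus $\Phi$ has matrix $A=1+p^{n-1}C\in\GL_d(W_n(R))$ relative to $v_1,\dots,v_d$. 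Seeking a new basis $g=1+p^{n-1}h$, the condition $A\,\varphi^a(g)=g$ is equivalent (using $p^{2(n-1)}=0$ and that $p^{n-1}x=0$ iff $x\in pW_n(R)$) to the entrywise system of Artin--Schreier equations $\bar h^{[q]}-\bar h=-\bar C$ over $R$, where $q=p^a$, entrywise $q$-th powers are written $[q]$, and the bar is reduction to $\mathrm{Mat}_d(R)$. The $R$-algebra $S=R[h_{ij}]/(h_{ij}^q-h_{ij}+\bar C_{ij})$ is finite (each relation is monic of degree $q$), \'etale (the Jacobian is $-1$), and has nonzero fibres over every point of $\Spec R$, hence is faithfully finite \'etale; lifting $(h_{ij})$ to $h\in\mathrm{Mat}_d(W_n(S))$ and setting $g=1+p^{n-1}h\in\GL_d(W_n(S))$ yields a $\varphi^a$-fixed basis lifting $\bar v_1,\dots,\bar v_d$. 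This proves the inductive step for both the plain and the refined statements, so only $n=1$ remains.

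\emph{The case $n=1$.} Here $R$ is a perfect $\Fp$-algebra, $M$ is finite projective of constant rank $d$, and $\Phi$ is a $\varphi^a$-semilinear automorphism. Consider the functor $T\mapsto\{\varphi^a\text{-fixed bases of }M\otimes_R T\}$ on $R$-algebras. On a Zariski open $U\subseteq\Spec R$ trivializing $M$, with $\Phi$ given by $A\in\GL_d(\mathcal O(U))$, a matrix $g$ represents such a basis iff $g\in\GL_d$ and $g^{[q]}=A^{-1}g$; these local descriptions glue to a scheme $Z$ over $R$ representing the functor, and being finite \'etale over $R$ — or surjective onto $\Spec R$ — may be checked locally on $\Spec R$. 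Locally, $\Spec\mathcal O(U)[g_{ij}]/(g_{ij}^q-(A^{-1}g)_{ij})$ is finite over $\mathcal O(U)$ (monic rewriting, the right-hand sides being linear) and \'etale (the Jacobian in the $g_{ij}$ is $-A^{-1}\otimes 1$, a unit); taking determinants gives $\det(g)^q=\det(A)^{-1}\det(g)$, so $\det(A)\det(g)^{q-1}$ is an idempotent cutting out the clopen piece where $\det g$ is a unit — namely $Z|_U$ — which is therefore finite \'etale. For surjectivity, fix $\mathfrak p\in\Spec R$ and $\Omega=\overline{\kappa(\mathfrak p)}$: Lang's theorem (Lang--Steinberg for the connected group $\GL_{d,\Omega}$ with the Frobenius $g\mapsto g^{[q]}$) provides $g\in\GL_d(\Omega)$ with $g^{[q]}g^{-1}=A^{-1}$, so $Z$ meets every fibre. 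Hence $Z$ is faithfully finite \'etale over $R$; taking $S=\Gamma(Z,\mathcal O_Z)$ with its universal $g$ completes the case $n=1$, and with it the lemma.

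\emph{The main obstacle.} The individual computations are routine; the content is the two-step architecture, and within it the case $n=1$. The delicate point there is that a finite \'etale $R$-algebra trivializing $M$ cannot simply be glued from algebras over the members of a Zariski cover of $\Spec R$ (finite \'etale algebras do not extend across localizations); one must instead produce the \emph{global} scheme $Z$ of $\varphi^a$-fixed frames and verify its finite \'etaleness and surjectivity fibrewise, where $M$ trivializes and Lang--Steinberg applies. (Lemma~\ref{L:cover with free} could alternatively be used to replace $M$ by a free module mapping onto it, but descending a fixed basis back to $M$ is no easier than the torsor argument above.)
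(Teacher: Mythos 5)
Your proof is correct and follows essentially the same route as the paper's: induction on $n$ via an Artin--Schreier lifting step (the equation $\overline{\varphi}^a(U)-U+p^{1-n}(A-I)=0$), reducing to $n=1$, where one forms the finite \'etale scheme of $\varphi^a$-fixed frames by glueing the local descriptions $\overline{\varphi}^a(U)=A^{-1}U$ and checks fibrewise surjectivity via Lang's theorem for $\GL_d$. Your extra care about invertibility of the frame matrix (the idempotent $\det(A)\det(g)^{q-1}$) and the preliminary reduction to constant rank are harmless refinements of the same argument.
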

\begin{proof}
We treat the case $n=1$ first.
Suppose first that $M$ is free; choose a basis $\be_1,\dots,\be_m$ of $M$
on which $\varphi^a$ acts via the invertible matrix $A$ over $W_1(R) \cong R$.
Let $X$ be the closed subscheme of $\Spec(R[U_{ij}: i,j=1,\dots,m])$
defined by the matrix equation $\overline{\varphi}^a(U) = A^{-1} U$.
The morphism $X \to \Spec(R)$ is finite (evidently) and \'etale (by the Jacobian criterion),
so $X = \Spec(S)$ for some finite \'etale $R$-algebra $S$.
The elements $\bv_1,\dots,\bv_m$ of $M \otimes_{R} S$ defined by
$\bv_j = \sum_i U_{ij} \be_i$ form a basis fixed by $\varphi^a$.
Since the construction is naturally independent of the choice of the original basis,
for general $M$ we can glue to obtain a finite \'etale $R$-algebra $S$ and a fixed
basis of $M \otimes_R S$.

What is left to check is that $S$ has positive rank everywhere as an $R$-module.
This can be checked pointwise on $R$, and also may be checked after faithfully flat descent,
so we may reduce to the case where $R$ is an algebraically closed field.
It is enough to check that the map $U \mapsto U^{-1} \overline{\varphi}^a(U)$ on $\GL_m(R)$
is surjective; this observation is due to Lang
and is proved as follows (following
\cite[\S VI.1, Proposition~4]{serre-class},
\cite[Expos\'e~XXII, Proposition~1.1]{sga7-2}).
For each $A \in \GL_m(R)$, the map $L_A: U \mapsto U^{-1} A \overline{\varphi}^a(U)$
induces a bijective map from the tangent space at $1$, so the image of $L_A$ contains a nonempty
Zariski open subset $V_A$ of $\GL_m(R)$. Since $\GL_m$ is a connected group scheme, the open sets $V_A$ and $V_1$ must intersect in some matrix $B$,
for which\[
B = U_1^{-1} \overline{\varphi}^a(U_1) = U_2^{-1} A \overline{\varphi}^a(U_2)
\]
for some $U_1, U_2 \in \GL_m(R)$. We then have $A = U^{-1} \overline{\varphi}^a(U)$ for $U = U_1 U_2^{-1}$.

The case $n=1$ is now complete; we treat the case $n>1$ by induction on $n$.
We may assume that there exists a basis $\be_1,\dots,\be_m$ of $M$ on which
$\varphi^a$ acts via a matrix $A$ congruent to $1$ modulo $p^{n-1}$.
We may then take $\Spec(S)$ to be the closed subscheme of $\Spec(R[U_{ij}: i,j=1,\dots,m])$
defined by the matrix equation $\overline{\varphi}^a(U) - U + p^{1-n}(A - 1) = 0$:
this subscheme is again finite \'etale (and hence affine) over $R$,
and the elements $\bv_1,\dots,\bv_m$ of $M \otimes_{W(R)} W(S)$
defined by $\bv_j = \be_j + \sum_i p^{n-1} U_{ij} \be_i$
form a basis fixed by $\varphi^a$ modulo $p^n$.
\end{proof}

\begin{prop} \label{P:phi-modules}
For $R$ a perfect $\Fp$-algebra,
for each positive integer $n$,
there is a natural (in $R$ and $n$) tensor equivalence between
lisse sheaves of $\ZZ/p^n \ZZ$-modules on $R$ and finite projective modules over $W_n(R)$ equipped with semilinear
$\varphi$-actions.
\end{prop}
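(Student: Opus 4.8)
The plan is to construct the equivalence through the functor $\Psi$ sending a finite projective $W_n(R)$-module $M$ with semilinear $\varphi$-action to the presheaf on the small finite \'etale site of $\Spec R$ --- which suffices to determine a local system by Remark~\ref{R:local systems} --- given by
\[
\Psi(M)(S) = \left( M \otimes_{W_n(R)} W_n(S) \right)^{\varphi = 1} \qquad (S \in \FEt(R)),
\]
where $W_n(S)$ and its Frobenius automorphism make sense because $S$ is again perfect by Lemma~\ref{L:perfect etale extension}. Write $\mathcal{W}_n$ for the sheaf of rings $S \mapsto W_n(S)$ on $\FEt(R)$. The first point to establish is that $\Psi(M)$ is a lisse $\ZZ/p^n\ZZ$-sheaf: the construction in the proof of Lemma~\ref{L:DM relative} (a local basis, the Jacobian criterion, and Lang's argument) exhibits $\Psi(M)$, componentwise over $\Spec R$, as represented by a finite \'etale $R$-scheme, faithfully so where $\rank M > 0$ and the zero sheaf where $\rank M = 0$; hence it is lisse. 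Compatibility of $\Psi$ with tensor products, internal Homs, and the rank function then follows from the Artin--Schreier--Witt exact sequence $0 \to \ZZ/p^n\ZZ \to \mathcal{W}_n \xrightarrow{\varphi-1} \mathcal{W}_n \to 0$ of sheaves on $\FEt(R)$ (classical; surjectivity is the solvability of Artin--Schreier--Witt equations over a finite \'etale cover, the same input that drives Lemma~\ref{L:DM relative}), which in particular identifies $\mathcal{W}_n^{\varphi=1}$ with the constant sheaf $\ZZ/p^n\ZZ$.

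To prove that $\Psi$ is fully faithful, observe that for $\varphi$-modules $M, M'$ the internal Hom $\underline{\Hom}_{W_n(R)}(M, M')$ inherits a semilinear $\varphi$-action, and there is a natural morphism of lisse sheaves $\Psi(\underline{\Hom}(M,M')) \to \underline{\Hom}_{\ZZ/p^n\ZZ}(\Psi(M),\Psi(M'))$. After base change to a faithfully finite \'etale cover trivializing both $M$ and $M'$ (Lemma~\ref{L:DM relative}) both sides become constant and the map becomes an isomorphism; since being an isomorphism is \'etale-local (Theorems~\ref{T:descent modules} and~\ref{T:descent finite locally free}), the map is an isomorphism. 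Taking global sections, i.e. evaluating at the terminal object $R$ and using that $\Psi(N)(R) = N^{\varphi=1} = H^0_\varphi(N)$ by construction, yields $\Hom_\varphi(M,M') \cong \Hom(\Psi(M),\Psi(M'))$.

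The essential surjectivity of $\Psi$ is where the real work lies, and I expect it to be the main obstacle. Given a lisse $\ZZ/p^n\ZZ$-sheaf $T$ on $\Spec R$, form the sheaf of $\mathcal{W}_n$-modules $T \otimes_{\ZZ/p^n\ZZ} \mathcal{W}_n$ on $\FEt(R)$, with semilinear $\varphi$-action from $\varphi$ on $\mathcal{W}_n$ and the trivial action on $T$; it is locally free of finite rank since $T$ is locally constant. To descend it to a finite projective $W_n(R)$-module one needs that $W_n(-)$ carries (faithfully) finite \'etale morphisms to (faithfully) finite \'etale morphisms and commutes with such base change --- equivalently that $W_n(S \otimes_R S) \cong W_n(S) \otimes_{W_n(R)} W_n(S)$ for $S/R$ finite \'etale --- which follows from the uniqueness of deformations of \'etale morphisms along the nilpotent thickening $W_n(R) \twoheadrightarrow R$ (or from a direct computation with Witt coordinates). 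A cover $S_0/R$ trivializing $T$ then presents $T \otimes \mathcal{W}_n$ as a free $W_n(S_0)$-module equipped with a descent datum along the faithfully flat map $W_n(R) \to W_n(S_0)$, so Theorem~\ref{T:descent modules} produces a finite projective $W_n(R)$-module $M$ carrying the descended semilinear $\varphi$-action; the natural map $T \to \Psi(M)$, $t \mapsto t \otimes 1$, is then an isomorphism, as one checks after trivializing $T$ using $\mathcal{W}_n^{\varphi=1} = \ZZ/p^n\ZZ$. Naturality in $R$ and $n$ is immediate, $\Psi$ being compatible with the base-change maps $W_n(R) \to W_n(R')$ and the reductions $W_n(R) \to W_m(R)$. (If one prefers to avoid the Witt-vector base-change compatibility, the descent step can instead be carried out by classical Galois descent over a cover $S_0$ that is a finite product of connected finite \'etale $R$-algebras each dominated by a Galois one.)
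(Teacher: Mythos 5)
Your proposal is correct and follows essentially the same route as the paper: the functor from $\varphi$-modules to local systems is the same $\varphi$-invariants construction resting on Lemma~\ref{L:DM relative}, and your essential-surjectivity argument (trivialize $T$ over a finite \'etale cover, then apply faithfully flat descent along $W_n(R) \to W_n(S_0)$, using that $W_n$ commutes with finite \'etale base change) is the same descent mechanism the paper packages as $M(T) = W_n(R_n)^G$ via Theorems~\ref{T:descent modules} and~\ref{T:descent finite locally free}. You merely spell out details the paper leaves to the reader.
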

One can also weaken the condition on the modules over $W_n(R)$; see
Proposition~\ref{P:phi-modules are projective}.
\begin{proof}
Let $T$ be a lisse sheaf of $\ZZ/p^n \ZZ$-modules on $R$.
Let $\Spec(R_n)$ be the finite \'etale $R$-scheme parametrizing trivializations of $T$.
In case $T$ is of constant rank $d$, $R_n$ carries an action of the group $G = \GL_d(\ZZ/p^n \ZZ)$, so we may define
$M(T) = W_n(R_n)^G$; by faithfully flat descent
(Theorems~\ref{T:descent modules} and~\ref{T:descent finite locally free}), $M(T)$ is projective of constant rank $d$ over $W_n(R)$.
The construction extends naturally to general $T$.

Let $M$ be a finite projective module over $W_n(R)$ equipped with a semilinear
$\varphi$-action. The assignment
\[
S \mapsto (M \otimes_{W_n(R)} W_n(S))^{\varphi}
\]
defines an \'etale sheaf $T(M)$ on $\Spec(R)$. It is easy to check thanks to Lemma~\ref{L:DM relative} that
the functors $T \rightsquigarrow M(T)$ and $M \rightsquigarrow T(M)$ form an equivalence.
\end{proof}

\begin{remark}
One might like to assert Proposition~\ref{P:phi-modules} with $\GL_d$ replaced by other group schemes.
The main difficulty is that the analogue of Hilbert's Theorem 90 is not always valid; this is
related to the classification of \emph{special groups}
by Serre \cite{serre-chevalley} and Grothendieck \cite{grothendieck-chevalley}.
One tractable special case is that of a unipotent group scheme; see Proposition~\ref{P:Lang torsor}.
\end{remark}

\begin{prop} \label{P:Lang torsor}
Let $d,m,n$ be integers with $d,m\geq 1$ and $n \geq 2$ (we may also take $n=1$ in case $p>2$).
Let $\gothg$ be an algebraic Lie subalgebra of the Lie algebra of $d \times d$ matrices over $\QQ_p$.
Let $\gothg_n$ be the intersection of $\gothg$ with the Lie algebra of
$d \times d$ matrices over $p^n \ZZ_p$.
Let $G_{n,m}$ be the unipotent group scheme defined by the Lie algebra $\gothg_n
\otimes_{\ZZ_p} \ZZ_p/(p^m)$, viewed over $\mathbb{F}_p$ by Greenberg realization
(i.e., identifying $\ZZ_p$ with the Witt vectors of $\Fp$).
For $R$ a perfect $\Fp$-algebra, define the equivalence relation $\sim$ on $G_{n,m}(R)$ by declaring that $g_1 \sim g_2$
if there exists $h \in G_{n,m}(R)$ for which $h^{-1} g_1 \varphi(h) = g_2$.
Then there is a natural (in $G,R,n,m$) bijection
\[
G_{n,m}(R)/\!\sim \, \to H^1_{\et}(R, G_{n,m}(\Fp)).
\]
\end{prop}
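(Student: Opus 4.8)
The plan is to deduce the bijection from the \emph{Lang isogeny} of the unipotent group $G_{n,m}$ over $\Fp$, combined with the standard exact sequence of nonabelian cohomology attached to a subgroup sheaf. Write $\Gamma = G_{n,m}$, $H = \Gamma(\Fp)$, and let $L \colon \Gamma \to \Gamma$ be the morphism of $\Fp$-schemes given on points by $g \mapsto g^{-1}\varphi(g)$, where $\varphi$ is the Frobenius induced by the $p$-power map. The first and essential step is to show that $L$ is a finite \'etale surjective morphism whose kernel (the fibre over the identity, as an \'etale sheaf on $\Spec\Fp$) is the constant group scheme $\underline H$, and that under the left translation action of $\underline H$ the map $L$ is an $\underline H$-torsor over $\Gamma$; equivalently, $L$ identifies the target $\Gamma$ with the quotient sheaf $\underline H \backslash \Gamma$ of the source. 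Since $\gothg_n \otimes_{\Zp} \Zp/(p^m)$ is a nilpotent Lie algebra that is finite over $\Zp/(p^m)$ --- here the hypotheses on $p$, $n$, $m$ are used, to guarantee that the Greenberg realization $G_{n,m}$ is a well-defined connected unipotent group with $G_{n,m}(\Fp)$ of the expected size --- the group $\Gamma$ admits a central filtration with successive quotients isomorphic to $\mathbb{G}_a$, and d\'evissage along this filtration reduces all the asserted properties of $L$ to the Artin--Schreier isogeny $x \mapsto x^p - x$ of $\mathbb{G}_a$, where they are classical. (\'Etaleness of $L$ is also immediate from $dL_e = -\,\mathrm{id}$, since $d\varphi = 0$; surjectivity on geometric points is Lang's theorem.) Pulling back along $\Spec R \to \Spec\Fp$ then gives $\ker(L)(R) = \underline H(R)$ and identifies $H^1_{\et}(R, \underline H)$ with $H^1_{\et}(R, G_{n,m}(\Fp))$.

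Granting this, I would transport the standard exact sequence of pointed sets for the subgroup sheaf $\underline H \subseteq \Gamma$ (acting by left translation) across the isomorphism $\overline L \colon \underline H \backslash \Gamma \xrightarrow{\ \sim\ } \Gamma$ induced by $L$; it then takes the form
\[
\Gamma(R) \;\xrightarrow{\ L\ }\; \Gamma(R) \;\xrightarrow{\ \delta\ }\; H^1_{\et}(R, \underline H) \;\longrightarrow\; H^1_{\et}(R, \Gamma),
\]
where $\delta(g)$ is the class of the $\underline H$-torsor $L^{-1}(g)$, the fibres of $\delta$ are exactly the orbits of $\Gamma(R)$ acting on the second copy, and the image of $\delta$ is the kernel of the last map. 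The action in question is the one transported from the right translation action of $\Gamma$ on $\underline H \backslash \Gamma$; since $L(hk) = k^{-1} L(h) \varphi(k)$, it is precisely the twisted conjugation $g \mapsto k^{-1} g\, \varphi(k)$ --- a one-line computation. Hence the fibres of $\delta$ are exactly the classes of the equivalence relation $\sim$, so $\delta$ descends to a well-defined injection $G_{n,m}(R)/\!\sim\; \hookrightarrow H^1_{\et}(R, G_{n,m}(\Fp))$, which is natural in $G$, $R$, $n$, $m$ by functoriality of $L$, of $\overline L$, and of the formation of $G_{n,m}$.

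To finish, it remains to see that $\delta$ is surjective, for which it suffices to show $H^1_{\et}(R, \Gamma) = \{\ast\}$. Here I would again use the central filtration of $\Gamma$ by subgroups with $\mathbb{G}_a$-quotients: the nonabelian cohomology sequence of each resulting central extension $1 \to \mathbb{G}_a \to \Gamma' \to \Gamma'' \to 1$ reads
\[
H^1_{\et}(R, \mathbb{G}_a) \;\longrightarrow\; H^1_{\et}(R, \Gamma') \;\longrightarrow\; H^1_{\et}(R, \Gamma'') \;\longrightarrow\; H^2_{\et}(R, \mathbb{G}_a),
\]
and since $\mathbb{G}_a$ is the quasicoherent sheaf $\mathcal{O}$ and $\Spec R$ is affine, the two outer terms vanish, giving $H^1_{\et}(R, \Gamma') \cong H^1_{\et}(R, \Gamma'')$; induction down the filtration yields $H^1_{\et}(R, \Gamma) = \{\ast\}$. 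Combined with the previous paragraph, $\delta$ is surjective, so the injection above is a bijection.

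The main obstacle is the first step: checking that the Lang isogeny $L$ of $G_{n,m}$ really is a finite \'etale $\underline H$-torsor. This requires pinning down that the Greenberg realization $G_{n,m}$ is connected unipotent with the expected central filtration, and that the properties ``finite \'etale, surjective, with constant group scheme kernel'' propagate up the filtration from the Artin--Schreier case (in particular, that finiteness is not lost when passing to successive extensions). Everything downstream --- the pointed-set exact sequence for a not-necessarily-normal subgroup sheaf, the identification of right translation with twisted conjugation, and the vanishing of $H^i_{\et}(\Spec R, \mathcal{O})$ for $i \geq 1$ on an affine scheme --- is formal.
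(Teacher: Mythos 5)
Your proof is correct and follows essentially the same route as the paper's: the paper also constructs the map by étale-locally solving $g = h^{-1}\varphi(h)$ (i.e., trivializing the Lang torsor $L^{-1}(g)$ over a faithfully finite \'etale cover, as in Lemma~\ref{L:DM relative}), identifies fibres with twisted-conjugacy classes, and deduces surjectivity from the vanishing of $H^1_{\et}(R, G_{n,m})$ because $G_{n,m}$ is unipotent. Your write-up merely packages these steps more formally via the torsor exact sequence and spells out the d\'evissage to $\mathbb{G}_a$ that the paper leaves implicit.
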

\begin{proof}
For $g \in G_{n,m}(R)$, as in Lemma~\ref{L:DM relative}, we may construct a faithfully finite
\'etale $R$-algebra $S$ such that $g = h^{-1} \varphi(h)$ for some $h \in G_{n,m}(S)$.
The choice of $h$ then defines an element of $H^1_{\et}(R,G_{n,m}(\Fp))$
which depends only on $g$ up to equivalence. This gives the claimed map; its injectivity is straightforward.
Surjectivity comes down to the fact that
$H^1_{\et}(R, G_{n,m})$ is trivial as a pointed set, which holds because $G_{n,m}$ is unipotent.
\end{proof}

The operation of direct perfection can be extended to certain $p$-torsion-free rings in order to generate
strict $p$-rings.

\begin{defn} \label{D:perfection}
Let $A$ be a $p$-torsion-free ring with $A/pA$ reduced, equipped with an endomorphism
$\varphi_A: A \to A$ inducing the $p^r$-power Frobenius map on $A/pA$ for some $r>0$
and with an identification $(A/pA)^{\perf} \cong R$. Then
$\varphi$ induces a map $s_\varphi: A \to W(R)$ satisfying $\varphi^r \circ s_\varphi = s_\varphi \circ \varphi_A$; this may be seen by using the uniqueness property of $W(R)$ to identify it with the $p$-adic completion of
the limit of the direct system
\[
A \stackrel{\varphi_A}{\to} A \stackrel{\varphi_A}{\to} \cdots.
\]
(For more details,
we follow \cite[(1.3.16)]{illusie} in suggesting
the reference \cite[VII, \S 4]{lazard}.)
We describe $W(R)$ as the \emph{direct perfection} of $A$
with respect to $\varphi_A$.
\end{defn}

\begin{example}
Put $A = \ZZ[T], R = \Fp[\overline{T}]^{\perf}$, and identify $(A/pA)^{\perf}$ with $R$
by mapping the class of $T$ to $\overline{T}$.
For the endomorphism $\varphi_A: A \to A$ defined by $\varphi_A(T) = T^p$,
the map $s_\varphi$ takes $T$ to $[\overline{T}]$. However, note for instance that
$s_\varphi(T+1) \neq [\overline{T} + 1]$.
\end{example}

We next weaken the hypothesis on the modules over $W_n(R)$ in
Proposition~\ref{P:phi-modules}.
\begin{lemma} \label{L:phi-stable ideals}
Let $R$ be a perfect $\FF_p$-algebra and let $J$ be a finitely generated ideal of $R$ such that $\overline{\varphi}(J) = J$. Then $J$ is generated by an idempotent element of $R$; in particular, $R/J$ is a finite projective $R$-module.
\end{lemma}
The hypothesis that $J$ is finitely generated is crucial; otherwise,
one could choose any $x \in R$ and take $J$ to be the ideal generated by $\overline{\varphi}^n(x)$ for all $n \in \ZZ$.
\begin{proof}
Choose generators $x_1,\dots,x_n$ of $J$ and write
$x_i = \sum_j A_{ij} \overline{\varphi}(x_j)$ for some $A_{ij} \in R$.
Define the $n \times n$ matrix $B$ over $R$ by setting $B_{ii} = x_i^{p-1}$ and $B_{ij} =0$ for $i \neq j$;
then 
\[
0 = \sum_i (1 - AB)_{ij} x_j \qquad (j=1,\dots,n).
\]
For each prime ideal $\gothp$ of $R$, if $x_1,\dots,x_n$ all map to zero in $\kappa_{\gothp}$, then $B$ maps to the zero matrix over $\kappa_{\gothp}$ and so $\det(1-AB)$ maps to 1 in $\kappa_{\gothp}$;
otherwise, $1-AB$ maps to a matrix over $\kappa_{\gothp}$ with nontrivial kernel and so $\det(1-AB)$ maps to 0 in $\kappa_{\gothp}$. Since $R$ is reduced, this implies that $\det(1-AB)$ is an idempotent in $R$.

We may thus reduce to the cases where $\det(1-AB) \in \{0,1\}$. If
$\det(1-AB) = 1$, then by the previous paragraph $x_1,\dots,x_n$ map to zero in every $\kappa_{\gothp}$ and so $J = 0$. If $\det(1-AB) = 0$, then by the previous paragraph for every prime ideal $\gothp$ of $R$, at least one of $x_1,\dots,x_n$ has nonzero image in $\kappa_{\gothp}$ and so $J = R$.
\end{proof}

\begin{prop} \label{P:phi-modules are projective}
Let $R$ be a perfect $\FF_p$-algebra, let $n$ be a positive integer, and let $M$ be a finitely presented $W_n(R)$-module which is flat over $\ZZ/p^n \ZZ$ and admits a semilinear $\varphi$-action. Then $M$ is a finite projective $W_n(R)$-module.
\end{prop}
\begin{proof}
Since $M$ is flat over $\ZZ/p^n \ZZ$, we may reduce to the case $n=1$.
Since $M$ is finitely presented, we may define the Fitting ideals $\Fitt_I(M)$ as in
Definition~\ref{D:projective}.
Since $M$ admits a semilinear $\overline{\varphi}$-action,
we have $\overline{\varphi}(\Fitt_i(M)) = \Fitt_i(M)$ for all $i \geq 0$. By Lemma~\ref{L:phi-stable ideals}, each ideal $\Fitt_i(M)$ is generated by an idempotent element of $R$; we may thus reduce to the case where for some $n \geq 0$ we have $\Fitt_i(M) = 0$ for $i<n$ and
$\Fitt_n(M) = R$. In this case, as in Definition~\ref{D:projective}, $M$ is a finite projective $R$-module of constant rank $n$, as claimed.
\end{proof}

\subsection{Norms on strict \texorpdfstring{$p$}{p}-rings}
\label{subsec:Witt ring}

We now take a more metric look at strict $p$-rings.
\begin{hypothesis}
Throughout \S\ref{subsec:Witt ring},
let $R$ be a perfect
$\Fp$-algebra.
\end{hypothesis}

We introduce some operations
relating the spectra of $R$ and $W(R)$. For variants that do not require the norm on $R$
to be trivial, see Proposition~\ref{P:mu multiplicative}.
\begin{defn}
For $\alpha$ a submultiplicative (resp.\ power-multiplicative, multiplicative)
seminorm on $R$ bounded by the trivial norm,
\begin{equation} \label{eq:lambda}
\lambda(\alpha) \left( \sum_{i=0}^\infty p^i [\overline{x}_i] \right)
= \sup_i \{p^{-i} \alpha(\overline{x}_i)\}
\end{equation}
is a submultiplicative (resp.\ power-multiplicative, multiplicative)
seminorm on $W(R)$ bounded by the $p$-adic norm
\cite[Lemma~4.1]{kedlaya-witt}. For $\beta$ a submultiplicative
(resp.\ power-multiplicative, multiplicative) seminorm
on $W(R)$ bounded by the $p$-adic norm,
\begin{equation} \label{eq:mu}
\mu(\beta)(\overline{x}) = \beta([\overline{x}])
\end{equation}
is a submultiplicative (resp.\ power-multiplicative, multiplicative)
seminorm on $R$ bounded by the trivial norm
\cite[Lemma~4.2]{kedlaya-witt}.
\end{defn}

\begin{lemma} \label{L:lambda mu}
Equip $R$ with the trivial norm and $W(R)$ with the $p$-adic norm.
Then the functions $\lambda: \calM(R) \to \calM(W(R))$
and $\mu: \calM(W(R)) \to \calM(R)$ are continuous,
and satisfy $(\mu \circ \lambda)(\alpha) = \alpha$ and
$(\lambda \circ \mu)(\beta) \geq \beta$.
\end{lemma}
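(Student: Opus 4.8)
The plan is to verify the continuity of $\lambda$ and $\mu$ directly from the definitions of the topologies on $\calM(R)$ and $\calM(W(R))$ as subspaces of products, and then to check the two relations $(\mu \circ \lambda)(\alpha) = \alpha$ and $(\lambda \circ \mu)(\beta) \geq \beta$ by evaluating the seminorms on elements. First I would record that $\lambda$ and $\mu$ indeed land in the correct spectra: this is immediate from \eqref{eq:lambda} and \eqref{eq:mu} together with the cited results \cite[Lemmas~4.1 and~4.2]{kedlaya-witt}, since a multiplicative seminorm on $R$ bounded by the trivial norm is precisely an element of $\calM(R)$, and likewise a multiplicative seminorm on $W(R)$ bounded by the $p$-adic norm is an element of $\calM(W(R))$.

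For continuity, recall (Definition~\ref{D:Gelfand}) that a subbasis for the topology on $\calM(W(R))$ is given by the sets $\{\beta : \beta(f) \in I\}$ for $f \in W(R)$ and $I$ an open interval. So it suffices to check that for each such $f$ and $I$, the preimage $\lambda^{-1}(\{\beta : \beta(f) \in I\}) = \{\alpha \in \calM(R) : \lambda(\alpha)(f) \in I\}$ is open. Writing $f = \sum_{i=0}^\infty p^i [\overline{x}_i]$, we have $\lambda(\alpha)(f) = \sup_i \{p^{-i} \alpha(\overline{x}_i)\}$, which is a supremum (in fact attained at a finite stage once the tail is negligible, using $\alpha \leq 1$) of the continuous functions $\alpha \mapsto p^{-i}\alpha(\overline{x}_i)$ on $\calM(R)$; a sup of continuous functions that is locally a max of finitely many of them is continuous, so $\lambda$ is continuous. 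For $\mu$, the subbasic open sets of $\calM(R)$ are $\{\alpha : \alpha(\overline{x}) \in I\}$ for $\overline{x} \in R$, and $\mu^{-1}$ of such a set is $\{\beta : \beta([\overline{x}]) \in I\}$, which is open by definition of the topology on $\calM(W(R))$ since $[\overline{x}] \in W(R)$. Hence $\mu$ is continuous.

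For the relations: given $\alpha \in \calM(R)$, we compute $(\mu \circ \lambda)(\alpha)(\overline{x}) = \lambda(\alpha)([\overline{x}]) = \lambda(\alpha)\bigl(\sum_i p^i[\overline{x}_i]\bigr)$ where all $\overline{x}_i$ vanish except $\overline{x}_0 = \overline{x}$, so this equals $\alpha(\overline{x})$; thus $\mu \circ \lambda = \id$. Given $\beta \in \calM(W(R))$, write $f = \sum_{i=0}^\infty p^i[\overline{x}_i] \in W(R)$. Using $\beta(p) \leq p^{-1}$ and the multiplicativity $\beta(p^i[\overline{x}_i]) = \beta(p)^i \beta([\overline{x}_i]) \leq p^{-i}\mu(\beta)(\overline{x}_i)$, the ultrametric inequality gives $\beta(f) \leq \sup_i \{p^{-i}\mu(\beta)(\overline{x}_i)\} = \lambda(\mu(\beta))(f)$; hence $(\lambda \circ \mu)(\beta) \geq \beta$.

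I do not expect any genuine obstacle here — this is essentially bookkeeping with the product topology and the ultrametric inequality. The only point requiring a little care is the continuity of $\lambda$: one must observe that the supremum over $i$ in \eqref{eq:lambda} is, on a neighborhood of any given point, a maximum over finitely many $i$ (because $\alpha \leq 1$ forces $p^{-i}\alpha(\overline{x}_i) \leq p^{-i} \to 0$), so that the sup of continuous functions is still continuous; this is the step I would write out most carefully.
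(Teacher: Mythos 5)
Your proof is correct. The paper itself gives no argument here --- it simply cites \cite[Theorem~4.5]{kedlaya-witt} --- and your direct verification is exactly the expected one: continuity of $\mu$ is immediate from the subbasis, continuity of $\lambda$ follows because $\alpha \mapsto \lambda(\alpha)(f)$ is the uniform limit of the finite maxima $\max_{i<N} p^{-i}\alpha(\overline{x}_i)$ (the tail being uniformly bounded by $p^{-N}$ since $\alpha \leq 1$), and the two relations follow by evaluating on Teichm\"uller expansions. The only phrasings I would tighten are (i) ``locally a max of finitely many'' --- the clean statement is uniform convergence of the truncated maxima as just described --- and (ii) in the last step, the ultrametric bound for the infinite sum $\sum_i p^i[\overline{x}_i]$ should be justified by truncating and noting $\beta(\text{tail}) \leq p^{-N-1} \to 0$, which is harmless since $\beta$ is dominated by the $p$-adic norm.
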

\begin{proof}
See \cite[Theorem~4.5]{kedlaya-witt}.
\end{proof}

\begin{defn} \label{D:primitive}
Suppose that $R$ is complete with respect to a power-multiplicative norm $\alpha$ bounded above by the trivial norm.
An element $z = \sum_{i=0}^\infty p^i [\overline{z}_i] \in W(R)$ is
\emph{primitive of degree $1$} if
\begin{equation} \label{eq:primitive}
\alpha(\overline{x} \overline{z}_0) = p^{-1} \alpha(\overline{x}) \qquad (x \in R)
\end{equation}
and $\overline{z}_1 \in R^\times$ (or equivalently $z - [\overline{z}_0] \in p W(R)^\times$). 
This implies that the principal ideal $(z)$ in $W(R)$ is closed
(see \cite[Theorem~5.11]{kedlaya-witt}.
Note that if \eqref{eq:primitive} holds, then $S = R[\overline{z}_0^{-1}]$ is a Banach algebra over the analytic field $\FF_p((\overline{z}_0))$. Conversely, if $R$ is contained in a Banach algebra over $\FF_p((\overline{z}_0))$, then \eqref{eq:primitive} holds if and only if 
$\alpha(\overline{z}_0) = p^{-1}$, as then
\[
\alpha(\overline{x} \overline{z}_0) \leq \alpha(\overline{x}) \alpha(\overline{z}_0)
= \alpha(\overline{x}) \alpha(\overline{z}_0^{-1})^{-1}
\leq \alpha(\overline{x} \overline{z}_0).
\]

The terminology is modeled on that of \cite{fargues-fontaine},
in which a result similar to our Theorem~\ref{T:quotient norm} can be found;
the wording is meant to evoke an analogy with the theory of Weierstrass preparation for
analytic power series. Note however that when $R = L$ is an analytic field,
our definition is more restrictive than that used in \cite{fargues-fontaine},
in which the condition $\alpha(\overline{z}_0) = p^{-1}$ is relaxed to
$\alpha(\overline{z}_0)< 1$.
\end{defn}

A key example of the previous definition is the following.
\begin{example} \label{exa:stable residue}
Suppose $R$ is a uniform Banach ring with spectral norm $\alpha$.
Choose $\overline{\pi} \in R^\times$ with
$\alpha(\overline{\pi}) = p^{-p/(p-1)}$
and $\alpha(\overline{\pi}^{-1}) = p^{p/(p-1)}$, and put
\[
z = \sum_{i=0}^{p-1} [\overline{\pi} + 1]^{i/p} = \sum_{i=0}^\infty p^i [\overline{z}_i].
\]
Then $\overline{z}_0 = \overline{\pi}^{(p-1)/p}$, so
$\alpha(\overline{z}_0) = p^{-1}$ and $\alpha(\overline{z}_0^{-1}) = p$.
We may check that $\overline{z_1} \in \gotho_R^\times$ by noting that
under the map $W(\Fp[\overline{\pi}]^{\perf}) \to W(\Fp)$
induced by reduction modulo $\overline{\pi}$,
the image of $\sum_{i=0}^{p-1} [\overline{\pi}+1]^{i/p}$ is
$\sum_{i=0}^{p-1} 1 = p$. Hence $z \in W(\gotho_R)$ is primitive of degree 1.
\end{example}

\begin{lemma} \label{L:stable residue}
Suppose that $R$ is complete with respect to a power-multiplicative norm $\alpha$
and that $z \in W(\gotho_R)$ is primitive of degree $1$.
Then any $x \in W(\gotho_R)$ is congruent modulo $z$ to some $y =
\sum_{i=0}^\infty p^i [\overline{y}_i] \in W(\gotho_R)$ with
$\alpha(\overline{y}_0) \geq \alpha(\overline{y}_i)$ for all $i > 0$.
\end{lemma}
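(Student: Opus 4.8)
The plan is to prove this by a Weierstrass‑type division: modulo $(z)$ one may repeatedly trade a factor of $p$ for a factor of $[\overline z_0]$, which shrinks the relevant Teichm\"uller coordinates since $\alpha(\overline z_0) = p^{-1}$.

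\emph{Preliminaries.} Write $z = [\overline z_0] + pu$ with $u = \sum_{i \geq 0} p^i[\overline z_{i+1}] \in W(\gotho_R)$. Since $u \bmod p = \overline z_1 \in \gotho_R^\times$ and $W(\gotho_R)$ is $p$-adically complete, $u$ is a unit, so $p \equiv -[\overline z_0]u^{-1} \pmod{(z)}$, and more generally, for $\overline a \in \gotho_R$ and $i \geq 1$,
\[
p^i[\overline a] \equiv -p^{i-1}[\overline z_0 \overline a]\,u^{-1} \pmod{(z)}.
\]
By \eqref{eq:primitive} we have $\alpha(\overline z_0 \overline a) = p^{-1}\alpha(\overline a)$; using the identity $[\overline b](\overline c_0,\overline c_1,\dots) = (\overline b\overline c_0,\overline b\overline c_1,\dots)$ together with the fact, visible from Remark~\ref{R:addition formula}, that forming a convergent Witt sum or product of elements of $W(\gotho_R)$ never increases the supremum of the $\alpha$-norms of the Teichm\"uller coordinates involved, the right-hand side above, expanded in Teichm\"uller coordinates, is supported in degrees $\geq i-1$ with every coordinate of $\alpha$-norm $\leq p^{-1}\alpha(\overline a)$.

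\emph{Main construction.} Put $N := \max(\alpha(\overline x_0),\,p^{-1})$; in the case $\alpha(\overline x_0) \leq p^{-1}$, first replace $x$ by $x - zv$ for a suitable $v$ so that $\alpha(\overline x_0) = N = p^{-1}$. I would then build $y$ as the $p$-adic limit of a sequence $y^{(0)} = x, y^{(1)}, y^{(2)}, \dots$, each congruent to $x$ modulo $(z)$, maintaining the invariants $\alpha(\overline y^{(n)}_0) = N$, $\alpha(\overline y^{(n)}_i) \leq N$ for $1 \leq i \leq n$, and $y^{(n+1)} - y^{(n)} \in p^n W(\gotho_R)$. Given $y^{(n)}$: if $\alpha(\overline y^{(n)}_{n+1}) \leq N$, set $y^{(n+1)} = y^{(n)}$; otherwise set
\[
y^{(n+1)} = y^{(n)} - p^{n+1}[\overline y^{(n)}_{n+1}] - p^{n}[\overline z_0\overline y^{(n)}_{n+1}]\,u^{-1},
\]
which by the displayed congruence differs from $y^{(n)}$ by an element of $(z) \cap p^n W(\gotho_R)$. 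Subtracting $p^{n+1}[\overline y^{(n)}_{n+1}]$ only alters coordinates of degree $\geq n+1$ and zeroes coordinate $n+1$; adding $-p^n[\overline z_0\overline y^{(n)}_{n+1}]u^{-1}$ only alters coordinates of degree $\geq n$, each by a contribution of $\alpha$-norm $\leq p^{-1}\alpha(\overline y^{(n)}_{n+1}) \leq p^{-1} \leq N$, so all the invariants persist (coordinate $0$ is touched only at step $n = 0$, by a perturbation of norm at most $p^{-1}$, which cannot move it off $N$). The sequence is $p$-adically Cauchy, hence converges to some $y \in W(\gotho_R)$; it equals $x$ modulo $(z)$ because $(z)$ is closed (Definition~\ref{D:primitive}); and $\overline y_i$ agrees with the $i$-th Teichm\"uller coordinate of $y^{(i+1)}$, so $\alpha(\overline y_0) = N \geq \alpha(\overline y_i)$ for all $i \geq 1$.

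The step I expect to be the main obstacle is the case $\alpha(\overline x_0) \leq p^{-1}$: one must normalize the leading coordinate to norm exactly $p^{-1}$ and then ensure the tail reduction does not later depress it, which forces a careful ordering of the exchange steps (treating any degree-$1$ exchange with $\alpha(\overline x_1) = 1$ before fixing coordinate $0$), together with the bookkeeping of Witt-vector carries needed to justify that adding an element of $p^{n+1}W(\gotho_R)$ leaves coordinates of degree $< n+1$ untouched. The generic case $\alpha(\overline x_0) > p^{-1}$ is precisely the construction above with no normalization required.
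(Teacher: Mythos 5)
Your basic mechanism --- trading a factor of $p$ for a factor of $[\overline{z}_0]$ via $p \equiv -[\overline{z}_0]u^{-1} \pmod{(z)}$ --- is the same one underlying the cited proof of \cite[Lemma~5.5]{kedlaya-witt} (partially reproduced in the proof of Lemma~\ref{L:perfectoid calculation}), and your norm bookkeeping for Witt-vector sums and products is sound; the case $\alpha(\overline{x}_0) > p^{-1}$ of your argument is correct. But when $N = p^{-1}$ the invariant $\alpha(\overline{y}^{(n)}_0) = N$ is not merely delicate: it is false for the element you must ultimately produce. Take $R = L$ the completed perfection of $\Fp((\overline{\pi}))$ with $\alpha(\overline{\pi}) = p^{-1}$, $z = p - [\overline{\pi}]$ (so $\overline{z}_0 = -\overline{\pi}$ and $u = 1$), $p$ odd, and
\[
x = [\overline{a}]z + p^2[\overline{c}] = [-\overline{\pi}\,\overline{a}] + p[\overline{a}] + p^2[\overline{c}], \qquad \alpha(\overline{a}) = \alpha(\overline{c}) = 1.
\]
Here $\alpha(\overline{x}_0) = p^{-1} = N$, so no normalization is triggered. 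Your stage~$0$ exchange gives $y^{(1)} = x - p[\overline{a}] - [\overline{z}_0\overline{a}] = x - [\overline{a}]z = p^2[\overline{c}]$: the correction to coordinate $0$ has norm exactly $p^{-1}$ and annihilates it, refuting the parenthetical claim that a perturbation of norm at most $p^{-1}$ ``cannot move it off $N$'' (two terms of equal norm can cancel). Stage~$1$ then yields $y^{(2)} = p[\overline{\pi}\,\overline{c}]$, all later stages are vacuous, and your output $y = p[\overline{\pi}\,\overline{c}]$ has $\overline{y}_0 = 0$ but $\alpha(\overline{y}_1) = p^{-1}$, violating the conclusion. The correct representative is $[\overline{\pi}^2\overline{c}]$, with leading norm $p^{-2}$; indeed $\alpha(\overline{y}_0)$ for any valid $y$ computes the quotient norm of $x$ modulo $(z)$ (cf.\ Lemma~\ref{L:ker theta}), which can be strictly smaller than $p^{-1}$, so no preliminary normalization by a multiple of $z$ can rescue a fixed target $N \geq p^{-1}$.

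The structural flaw is that your sweep freezes coordinate $i$ after stage $i$, whereas mass must in general cascade through several exchanges from a high coordinate all the way down to coordinate $0$ (as in $p^2[\overline{c}] \equiv p[\overline{\pi}\,\overline{c}] \equiv [\overline{\pi}^2\overline{c}]$), each exchange revisiting coordinates $0$ and $1$. The cited argument instead iterates the simultaneous reduction $x_{i+1} = [\overline{x}_{i0}] - p^{-1}w(x_i - [\overline{x}_{i0}])z$ infinitely often, so that at every step \emph{all} higher coordinates are pushed down one degree and re-compared against the current (moving) leading coordinate; the genuinely hard point, which your proposal does not reach, is proving that this process eventually achieves the relative bound $\alpha(\overline{x}_{ij}) \leq \alpha(\overline{x}_{i0})$ for $j > 0$. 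Repairing your proof would require replacing the fixed threshold $N$ by a moving one and repeatedly revisiting low-degree coordinates, at which point you have essentially reconstructed the iteration of \cite[Lemma~5.5]{kedlaya-witt}.
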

\begin{proof}
See \cite[Lemma~5.5]{kedlaya-witt}.
\end{proof}

\begin{theorem} \label{T:quotient norm}
Take $R,z$ as in Lemma~\ref{L:stable residue}.
\begin{enumerate}
\item[(a)]
For each submultiplicative (resp.\ power-multiplicative, multiplicative)
seminorm $\gamma$ on $\gotho_R$ bounded by the trivial norm, the quotient seminorm $\sigma(\gamma)$
on $W(\gotho_R)/(z)$ induced by $\lambda(\gamma)$
is submultiplicative (resp.\ power-multiplicative, multiplicative)
and satisfies $\mu(\sigma(\gamma)) = \gamma$.
\item[(b)]
Equip $W(\gotho_R)$ with the power-multiplicative norm $\lambda(\alpha)$. Then the
map $\sigma: \calM(\gotho_R) \to \calM(W(\gotho_R))$ indicated by (a)
is a continuous section of $\mu$,
which induces a homeomorphism of $\calM(\gotho_R)$ with
$\calM(W(\gotho_R)/(z))$. Under this homeomorphism, a subspace of
$\calM(\gotho_R)$ is rational
if and only if the corresponding subspace of
$\calM(W(\gotho_R)/(z))$ is rational.
\item[(c)]
The homeomorphism of (b) induces a homeomorphism of $\calM(R)$
with $\calM(W(\gotho_R)[[\overline{z}]^{-1}]/(z))$
under which rational subspaces
again correspond.
\end{enumerate}
\end{theorem}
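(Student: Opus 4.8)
The plan is to prove (a) first — it carries all the analytic content — and then to obtain (b) and (c) as bookkeeping on top of (a) together with the compactness principles of Remark~\ref{R:compact spaces}.

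For (a), I would start from \cite[Lemma~4.1]{kedlaya-witt}, which says $\lambda(\gamma)$ is submultiplicative (resp.\ power-multiplicative, multiplicative) and bounded by the $p$-adic norm on $W(\gotho_R)$; then $\sigma(\gamma)$ is automatically submultiplicative, and $\mu(\sigma(\gamma)) \le \gamma$ is immediate since $[\overline{x}]$ is a lift of its own class with $\lambda(\gamma)([\overline{x}]) = \gamma(\overline{x})$ by \eqref{eq:lambda}. The substance is the reverse inequality together with power-multiplicativity (resp.\ multiplicativity), and for this I would use Lemma~\ref{L:stable residue} to produce, for each $x \in W(\gotho_R)$, a representative $y = \sum_i p^i [\overline{y}_i] \equiv x \pmod{z}$ with $\alpha(\overline{y}_0) \ge \alpha(\overline{y}_i)$ for all $i$; using that $(z)$ is closed (Definition~\ref{D:primitive}, via \cite[Theorem~5.11]{kedlaya-witt}), one checks that $\sigma(\gamma)$ of the class of $x$ equals $\lambda(\gamma)(y) = \gamma(\overline{y}_0)$ for such a $y$ and that no other lift does better. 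Power-multiplicativity (resp.\ multiplicativity) of $\sigma(\gamma)$ then follows from that of $\gamma$, the multiplicativity of the Teichm\"uller map, and the fact that the leading Teichm\"uller coordinate of a product is the product of the leading coordinates; and taking $y = [\overline{x}]$ for the class of $[\overline{x}]$ gives $\mu(\sigma(\gamma)) \ge \gamma$.

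For (b), I would apply (a) with $\gamma = \alpha$, so $\lambda(\alpha)$ is a power-multiplicative norm on $W(\gotho_R)$ and $\sigma(\alpha)$ one on $W(\gotho_R)/(z)$; pulling back along $W(\gotho_R) \to W(\gotho_R)/(z)$ identifies $\calM(W(\gotho_R)/(z))$ with a closed subspace of $\calM(W(\gotho_R))$. For $\beta \in \calM(\gotho_R)$ we have $\beta \le \alpha$, hence $\lambda(\beta) \le \lambda(\alpha)$ and $\sigma(\beta) \le \sigma(\alpha)$, and $\sigma(\beta)$ is multiplicative by (a), so $\sigma$ maps $\calM(\gotho_R)$ into $\calM(W(\gotho_R)/(z))$; it is injective because $\mu \circ \sigma = \mathrm{id}$ by (a), and surjective because any $\delta \in \calM(W(\gotho_R)/(z))$ has $\mu(\delta) \le \alpha$ and satisfies $\delta = \sigma(\mu(\delta))$, a multiplicative seminorm dominated by $\sigma(\alpha)$ being determined by its values on the images of Teichm\"uller elements via the Teichm\"uller expansion. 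Continuity of $\sigma$ and $\mu$ is read off from \eqref{eq:lambda} and \eqref{eq:mu}, so $\sigma$ is a homeomorphism by Remark~\ref{R:compact spaces}(b). For the subdomain classification, the identity $\sigma(\beta)(\text{image of }[\overline{f}_j]) = \mu(\sigma(\beta))(\overline{f}_j) = \beta(\overline{f}_j)$ carries a Weierstrass (resp.\ Laurent) subdomain of $\calM(\gotho_R)$ onto the one cut out by the same conditions on the images of $[\overline{f}_j]$; conversely a subdomain of $\calM(W(\gotho_R)/(z))$ cut out by elements $g_j$ is handled by replacing each $g_j$ by an optimal representative as in (a), so the controlling quantity becomes the leading Teichm\"uller coordinate, with the rational case reduced to this by wiggling the defining data (Remark~\ref{R:approximate rational}).

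For (c), note that by \eqref{eq:primitive} the unit $\overline{z}_0 \in R^\times$ satisfies $\alpha(\overline{x}/\overline{z}_0^{\,n}) = p^{n}\alpha(\overline{x})$, so $R = \gotho_R[\overline{z}_0^{-1}]$ and $\calM(R)$ is identified with $\{\beta \in \calM(\gotho_R) : \beta(\overline{z}_0) = \alpha(\overline{z}_0)\}$; likewise, inverting $[\overline{z}_0]$ on the Witt side forces $\delta$ to take the value $\lambda(\alpha)([\overline{z}_0]) = \alpha(\overline{z}_0)$ there, so $\calM(W(\gotho_R)[[\overline{z}_0]^{-1}]/(z))$ is the corresponding subspace of $\calM(W(\gotho_R)/(z))$. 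Since $\sigma(\beta)(\text{image of }[\overline{z}_0]) = \beta(\overline{z}_0)$ by (a), the homeomorphism of (b) restricts to one between these two subspaces, and the subdomain classification of (b) restricts as well (a Weierstrass, Laurent, or rational subdomain of $\calM(R)$, resp.\ of $\calM(W(\gotho_R)[[\overline{z}_0]^{-1}]/(z))$, being the trace of one upstairs after clearing powers of $\overline{z}_0$, resp.\ $[\overline{z}_0]$), giving (c). I expect the main obstacle to be part (a) — specifically, verifying that the quotient seminorm is genuinely computed by the leading Teichm\"uller coordinate of the Lemma~\ref{L:stable residue} representative, equivalently that $\mu(\sigma(\gamma)) = \gamma$, which is where the approximation argument and the closedness of $(z)$ from \cite{kedlaya-witt} do the real work; once (a) is in hand, (b) and (c) are formal.
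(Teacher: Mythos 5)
Your proposal is correct and follows essentially the same route as the paper: for (a) and (b) the paper simply cites \cite[Theorem~5.11(a)]{kedlaya-witt} and \cite[Corollary~7.2]{kedlaya-witt} (remarking that (a) is an easy corollary of Lemma~\ref{L:stable residue}, which is precisely the argument you sketch), and your treatment of (c) --- identifying $\calM(R)$ and $\calM(W(\gotho_R)[[\overline{z}]^{-1}]/(z))$ with the corresponding rational (Laurent) subspaces cut out by $\overline{z}_0$, resp.\ $[\overline{z}_0]$, taking the value $p^{-1}$, and restricting the homeomorphism of (b) --- is exactly the paper's argument. One caution in your sketch of (a): the representative $y$ produced by Lemma~\ref{L:stable residue} is stable with respect to $\alpha$, so the identity $\lambda(\gamma)(y) = \gamma(\overline{y}_0)$ is automatic only for $\gamma = \alpha$; for a general seminorm $\gamma$ bounded by the trivial norm one only gets $\lambda(\gamma)(y) \leq \max\{\gamma(\overline{y}_0), \sup_{i \geq 1} p^{-i}\gamma(\overline{y}_i)\}$, and showing that the quotient seminorm is nonetheless computed correctly (equivalently, that $\mu(\sigma(\gamma)) \geq \gamma$) is the nontrivial verification carried out in \cite{kedlaya-witt}, as you anticipate in your closing remark.
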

For more on the relationship between $\calM(R)$ and $\calM(W(\gotho_R)[[\overline{z}]^{-1}]/(z))$, see
\S\ref{subsec:perfectoid2} and
\S\ref{subsec:geometric}.

\begin{proof}
For (a), see \cite[Theorem~5.11(a)]{kedlaya-witt}
(which is itself an easy corollary of Lemma~\ref{L:stable residue}).
For (b), see \cite[Corollary~7.2]{kedlaya-witt}.
Note that for these results, $z$ need not be primitive of degree 1; it is enough to assume that
$\alpha(\overline{z}_0) \leq p^{-1}$ and $\overline{z}_1 \in \gotho_R^\times$.

By assuming that $z$ is primitive of degree 1, however, we ensure that
the quotient norm $\beta$
on $\calM(W(\gotho_R)/(z))$ has the property that $\beta(px) = p^{-1} \beta(x)$,
so that we may extend $\beta$ after inverting $p$.
We may then identify
\begin{align*}
\calM(R) &= \{ \gamma \in \calM(\gotho_R): \gamma(\overline{z}_0) \geq p^{-1} \} \\
\calM(W(\gotho_R)[[\overline{z}]^{-1}]/(z)) &= \{ \gamma \in \calM(W(\gotho_R)/(z)): \gamma([\overline{z}_0]) \geq p^{-1} \}.
\end{align*}
Since these are rational subspaces, we may deduce (c).
\end{proof}

\begin{example} \label{exa:lift analytic field}
Let $L$ be a perfect analytic field of characteristic $p$, let $\alpha$ be the norm on $L$,
and choose $z \in W(\gotho_L)$ which is primitive of degree 1.
% (this forces $\alpha$ to be nontrivial).
By Theorem~\ref{T:quotient norm}(a), the quotient norm on $W(\gotho_L)/(z)$
induced by $\lambda(\alpha)$ is multiplicative.
Moreover, by Lemma~\ref{L:stable residue}, every nonzero element of $W(\gotho_L)/(z)$
can be lifted to an element of $W(\gotho_L)$ which becomes invertible in
$W(\gotho_L)[[\overline{z}]^{-1}]$. It follows that $W(\gotho_L)/(z)$ is the valuation
subring of an analytic field $F = W(\gotho_L)[[\overline{z}]^{-1}]/(z)$,
whose residue field is the same as that of $L$.
(In terms of the rings $\tilde{\calR}^{\inte,r}_L$ to be introduced in
Definition~\ref{D:extended Robba} below, we can also realize $F$ as $\tilde{\calR}^{\inte,r}_L/(z)$
for any $r\geq 1$. See Lemma~\ref{L:stable residue2}.)
Two key examples are the following.
\begin{itemize}
\item
For $L$ the completed perfection of $\Fp((\overline{\pi}))$
and $z$ as in Example~\ref{exa:stable residue}, $F$ is the completion of $\Qp(\mu_{p^\infty})$ for the
$p$-adic norm.
\item
For $L$ the completed perfection of $\Fp((\overline{\pi}))$
with $\alpha(\overline{\pi}) = p^{-1}$
and $z = [\overline{\pi}] - p$,
$F$ is the completion of $\Qp(p^{p^{-\infty}})$ for the
$p$-adic norm.
\end{itemize}
Note that
\[
\gotho_L/(\overline{z}) \cong W(\gotho_L)/(p,[\overline{z}])
= W(\gotho_L)/(p, z) = \gotho_F/(p).
\]
This implies that $\overline{\varphi}$ is surjective on $\gotho_F/(p)$ and that $\gotho_F$ is not discretely valued.
These conditions turn out to characterize the fields $F$ which arise in this manner; see
Lemma~\ref{L:deeply ramified}.
\end{example}

The following refinement of \cite[Lemma~5.16]{kedlaya-witt} is useful for some calculations.

\begin{lemma} \label{L:perfectoid calculation}
Take $R,z$ as in Lemma~\ref{L:stable residue}.
Then for any $\epsilon > 0$ and any nonnegative integer $m$,
every $x \in W(\gotho_R)[[\overline{z}]^{-1}]$ is congruent
modulo $z$ to some $y = \sum_{n=0}^\infty p^n [\overline{y}_n] \in W(\gotho_R)[[\overline{z}]^{-1}]$
such that for each $\alpha \in \calM(R)$,
\begin{align}
\label{eq:strong bound on higher terms1}
\alpha(\overline{y}_1) &\leq \max\{p^{-p^{-1}-\cdots-p^{-m}} \alpha(\overline{y}_0),\epsilon\} \\
\label{eq:strong bound on higher terms2}
\alpha(\overline{y}_n) &\leq \max\{\alpha(\overline{y}_0),\epsilon\}
\quad (n > 1).
\end{align}
\end{lemma}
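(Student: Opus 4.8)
The plan is to bootstrap from the basic reduction of Lemma~\ref{L:stable residue} --- or rather from its uniform form over $\calM(R)$, which is \cite[Lemma~5.16]{kedlaya-witt} --- by iterating one fixed algebraic substitution $m$ times, each pass improving the bound on the $p^1$-coordinate by a further $p^{-1/p}$-worth of decay. At the outset I would shrink $\epsilon$ as needed (this only strengthens the claim and clears away boundary cases), and dispose of the passage from $W(\gotho_R)$ to $W(\gotho_R)[[\overline{z}]^{-1}]$: any $x$ in the latter equals $[\overline{z}]^{-N}x'$ with $x'\in W(\gotho_R)$, and since $x'\equiv y'\pmod z$ forces $x\equiv[\overline{z}]^{-N}y'\pmod z$, multiplying every Teichm\"uller coordinate of a reduced representative by $\overline{z}^{-N}$ scales each $\alpha(\overline{y}'_n)$ by one common positive factor and hence preserves all the relevant inequalities. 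This already settles the case $m=0$, where the product $p^{-p^{-1}-\cdots-p^{-m}}$ is empty and \eqref{eq:strong bound on higher terms1} coincides with the $n=1$ instance of \eqref{eq:strong bound on higher terms2}.

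For the inductive step, write $z=[\overline{z}_0]+pu$ with $u=[\overline{z}_1]+p[\overline{z}_2]+\cdots\in W(\gotho_R)^\times$ (a unit because its leading coordinate $\overline{z}_1$ lies in $\gotho_R^\times$); then $p\equiv -[\overline{z}_0]u^{-1}\pmod z$, so a representative $y=\sum_n p^n[\overline{y}_n]$ satisfies
\[
y \;\equiv\; \bigl([\overline{y}_0] - [\overline{z}_0\,\overline{y}_1]\,u^{-1}\bigr) + \bigl(p^2[\overline{y}_2] + p^3[\overline{y}_3] + \cdots\bigr) \pmod z
\]
(the sum on the right being Witt-vector addition). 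I would then expand the right-hand side in Teichm\"uller coordinates, using that every coordinate of $[\overline{z}_0\overline{y}_1]u^{-1}$ has norm $\le p^{-1}\alpha(\overline{y}_1)$ (since $\alpha(\overline{z}_0)=p^{-1}$ and the nonleading coordinates of $u^{-1}$ lie in $\gotho_R$) and that the universal addition polynomials $P_n$ of Remark~\ref{R:addition formula} are homogeneous of degree $1$ in fractional powers, so that $\alpha(P_n(\overline{a},\overline{b}))\le\max\{\alpha(\overline{a}),\alpha(\overline{b})\}$. This gives: the new $0$-th coordinate still has norm exactly $\alpha(\overline{y}_0)$ (the correction is strictly smaller); every coordinate of index $>1$ still has norm $\le\alpha(\overline{y}_0)$; and the new $1$-st coordinate is dominated by the $i=p-1$ term of $P_1$, of norm $\alpha(\overline{y}_0)^{(p-1)/p}\bigl(p^{-1}\alpha(\overline{y}_1)\bigr)^{1/p}$. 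Substituting the inductive hypothesis $\alpha(\overline{y}_1)\le p^{-c_k}\alpha(\overline{y}_0)$ with $c_k=p^{-1}+\cdots+p^{-k}$ produces the new bound $p^{-(1+c_k)/p}\alpha(\overline{y}_0)=p^{-c_{k+1}}\alpha(\overline{y}_0)$, since $c_{k+1}=(1+c_k)/p$; this is exactly the recursion required, so $m$ passes from the $m=0$ case yield a $y$ satisfying \eqref{eq:strong bound on higher terms1} and \eqref{eq:strong bound on higher terms2}. The $\epsilon$-alternatives propagate through the recursion harmlessly once $\epsilon$ has been shrunk.

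The main obstacle is the uniformity over $\calM(R)$: Lemma~\ref{L:stable residue} is stated for the single Banach norm of $R$, and exhibiting one element $y$ whose coordinates satisfy the inequalities against \emph{every} $\alpha\in\calM(R)$ simultaneously is precisely what forces the error term $\epsilon$, and must be imported from (the proof of) \cite[Lemma~5.16]{kedlaya-witt} via Theorem~\ref{T:transform}, which identifies the Banach norm with the supremum over $\calM(R)$. Beyond that the argument is formal, since the iterative move above is a fixed algebraic substitution whose norm estimates hold coordinate-by-coordinate for any bounded multiplicative seminorm at once; thus once the $m=0$ case is in hand uniformly, the induction delivers the uniform statement for all $m$ with no further difficulty.
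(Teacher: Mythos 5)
Your proposal is correct and follows essentially the same route as the paper: both obtain the base case uniformly over $\calM(R)$ from \cite[Lemma~5.16]{kedlaya-witt} and then iterate the division-by-$z$ substitution $m$ more times, extracting the dominant term $\alpha(\overline{y}_0)^{(p-1)/p}\bigl(p^{-1}\alpha(\overline{y}_1)\bigr)^{1/p}$ of the degree-one-homogeneous Witt addition polynomial to drive the recursion $c_{k+1}=(1+c_k)/p$. The only differences are cosmetic: the paper's iterate $x_{i+1}=x_i-p^{-1}w(x_i-[\overline{x}_{i0}])z$ replaces the whole tail rather than just the $p^1$-term, and it works directly in $W(\gotho_R)[[\overline{z}]^{-1}]$ without your preliminary clearing of $[\overline{z}]^{-N}$.
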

\begin{proof}
Define the sequence $x = x_0, x_1, \dots$ as in the proof of \cite[Lemma~5.5]{kedlaya-witt}. That is,
let $w$ be the inverse of $p^{-1}(z - [\overline{z}])$ in $W(\gotho_R)$, then write
$x_i = \sum_{j=0}^{\infty} p^j [\overline{x}_{ij}]$ with $\overline{x}_{ij} \in R$
and put
\[
x_{i+1} = x_i - p^{-1} w (x_i - [\overline{x}_{i0}]) z
= [\overline{x}_{i0}] - p^{-1} w (x_i - [\overline{x}_{i0}]) [\overline{z}].
\]
The proof of \cite[Lemma~5.16]{kedlaya-witt} shows that there exists $i_0$ such that for each
$\alpha \in \calM(R)$,
\[
\alpha(\overline{x}_{ij}) \leq \max\{\alpha(\overline{x}_{i0}),\epsilon\}
\quad (i \geq i_0, j > 0).
\]
If we take $y = x_{i_0 + k}$ for some nonnegative integer $k$, then
\eqref{eq:strong bound on higher terms2} is satisfied.
If $\alpha(\overline{x}_{i_0 0}) \leq \epsilon$, then
\eqref{eq:strong bound on higher terms1} is also satisfied.

Suppose instead that $\alpha(\overline{x}_{i_0 0}) > \epsilon$; in this case, it will complete
the proof to show that \eqref{eq:strong bound on higher terms1} is satisfied whenever $k \geq m$.
It will suffice to check that for each nonnegative integer $k$,
\begin{equation} \label{eq:strong bound on higher terms3}
\alpha(\overline{x}_{(i_0+k)1}) \leq \max\{p^{-p^{-1}-\cdots-p^{-k}} |\overline{x}_{(i_0+k)0}|,\epsilon\}.
\end{equation}
We have this for $k=0$, so we may proceed by induction on $k$. Given
\eqref{eq:strong bound on higher terms3} for some $k$, write
\[
x_{i_0+k+1} \equiv [\overline{x}_{(i_0+k)0}] - w [\overline{x}_{(i_0+k)1} \overline{z}] + pw[\overline{x}_{(i_0+k)2} \overline{z}]
\pmod{p^3}
\]
and then deduce that
\begin{align*}
\overline{x}_{(i_0+k+1)0} &= \overline{w} \overline{x}_{(i_0+k)0} - \overline{w} \overline{x}_{(i_0+k)1} \overline{z} \\
\overline{x}_{(i_0+k+1)1} &= \overline{w} \overline{x}_{(i_0+k)2} \overline{z} + P((\overline{w} \overline{x}_{(i_0+k)0})^{1/p}, (\overline{w} \overline{x}_{(i_0+k)1} \overline{z})^{1/p} )
\end{align*}
for $P(x,y) = p^{-1} (x^p - y^p - (x-y)^p) \in \ZZ[x,y]$. From this we deduce
\begin{align*}
\alpha(\overline{x}_{(i_0+k+1)0}) &=\alpha( \overline{w} \overline{x}_{(i_0+k)0}) \\
\alpha(\overline{x}_{(i_0+k+1)1}) &\leq \max\{ \alpha(\overline{w} \overline{x}_{(i_0+k)2} \overline{z}),
\alpha(\overline{w} \overline{x}_{(i_0+k)0})^{(p-1)/p} \alpha(\overline{w} \overline{x}_{(i_0+k)1} \overline{z})^{1/p}\}.
\end{align*}
Since $\alpha(\overline{z}) = p^{-1}$, this yields the analogue of \eqref{eq:strong bound on higher terms3}
with $k$ replaced by $k+1$.
\end{proof}

\subsection{Inverse perfection}
\label{subsec:perfection}

We have already introduced one method for passing from an $\Fp$-algebra to a perfect $\Fp$-algebra,
that of \emph{direct perfection}. We now consider the dual operation of \emph{inverse perfection},
which has the advantage of capturing useful information from characteristic $0$.

\begin{defn} \label{D:inverse perfection}
For any ring $A$,
define the \emph{inverse perfection} $A^{\frep}$ of $A$ as
the inverse limit of the system
\[
\cdots \stackrel{\overline{\varphi}}{\to} A/pA
\stackrel{\overline{\varphi}}{\to} A/pA.
\]
This evidently gives a perfect $\Fp$-algebra.
There is a natural projection $A^{\frep} \to A/pA$ by projection onto
the last factor; this is surjective as long as
$\overline{\varphi}: A/pA \to A/pA$ is surjective.
\end{defn}

\begin{lemma} \label{L:same frep}
Let $A$ be a ring.
For any ideal $I$ of $A$ satisfying $I^m \subseteq (p) \subseteq I$ for some
positive integer $m$, the natural map $A^{\frep} \to (A/I)^{\frep}$
is an isomorphism.
\end{lemma}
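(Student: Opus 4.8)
The plan is to reduce at once to a statement about inverse perfections in characteristic $p$, and then to run everything off the additivity of Frobenius. Set $B = A/pA$, which is an $\Fp$-algebra, and let $J$ be the image of $I$ in $B$; this makes sense because $pA \subseteq (p) \subseteq I$, and one gets a canonical identification $B/J \cong A/I$. The hypothesis $I^m \subseteq (p)$ says precisely that $J^m = 0$ in $B$. By Definition~\ref{D:inverse perfection} we have $A^{\frep} = \varprojlim_{\overline{\varphi}} B$, while $(A/I)^{\frep} = \varprojlim_{\overline{\varphi}}\bigl((A/I)/p(A/I)\bigr) = \varprojlim_{\overline{\varphi}}(B/J)$ since $p = 0$ in $A/I$; and the natural map in the statement is the one induced by the projection $B \to B/J$. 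So it suffices to prove: if $B$ is an $\Fp$-algebra and $J \subseteq B$ is an ideal with $J^m = 0$, then $\varprojlim_{\overline{\varphi}} B \to \varprojlim_{\overline{\varphi}}(B/J)$ is bijective.

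The single input I will use is that in characteristic $p$ the $p^k$-power map is additive, so $\overline{\varphi}^k(b+j) = \overline{\varphi}^k(b) + j^{p^k}$ for $j \in J$, and $j^{p^k} \in J^{p^k} \subseteq J^m = 0$ as soon as $p^k \geq m$; in particular this holds for $k=m$, since $p^m \geq 2^m \geq m$. For injectivity, suppose a compatible system $(x_i)_{i \geq 0}$ in $\varprojlim_{\overline{\varphi}} B$ (thus $x_{i+1}^p = x_i$ for all $i$) maps to zero, i.e. $x_i \in J$ for every $i$. Iterating the relation gives $x_i = x_{i+m}^{p^m} \in J^{p^m} \subseteq J^m = 0$, so the system is zero.

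For surjectivity, start from a compatible system $(\overline{y}_i)$ in $\varprojlim_{\overline{\varphi}}(B/J)$ and pick, for each $i$, an arbitrary lift $z_i \in B$ of $\overline{y}_i$, so that $z_{i+1}^p = z_i + j_i$ for some $j_i \in J$. Put $x_i = z_{i+m}^{p^m}$. Then, using additivity of the $p^m$-power map together with $j^{p^m}=0$ for all $j \in J$, one computes $x_{i+1}^p = (z_{i+m+1}^p)^{p^m} = (z_{i+m} + j_{i+m})^{p^m} = z_{i+m}^{p^m} = x_i$, so $(x_i)$ lies in $\varprojlim_{\overline{\varphi}} B$. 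Moreover $x_i = z_{i+m}^{p^m} \equiv \overline{y}_{i+m}^{p^m} = \overline{y}_i \pmod{J}$, the last equality by iterating $\overline{y}_{l+1}^p = \overline{y}_l$; hence $(x_i)$ maps to $(\overline{y}_i)$, and the proof is complete.

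There is no serious obstacle: the only things to get right are the reduction step, namely that $J$ is nilpotent of index at most $m$ in $B = A/pA$ (this is where both inclusions $I^m \subseteq (p) \subseteq I$ are used), and the trivial inequality $p^m \geq m$; after that, everything is the freshman's dream in characteristic $p$. If one prefers, the same argument can be packaged at the level of pro-objects: for $p^k \geq m$ the $k$-fold Frobenius $\overline{\varphi}^k\colon B \to B$ kills $J$ and so factors through a ring map $B/J \to B$, and these factorizations exhibit the towers $\{B,\overline{\varphi}\}$ and $\{B/J,\overline{\varphi}\}$ as pro-isomorphic, hence with the same inverse limit.
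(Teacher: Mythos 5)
Your proof is correct and is essentially the paper's argument: both rest on the observation that a sufficiently high power of Frobenius annihilates the image of $I$, so the towers defining the two inverse perfections are pro-isomorphic. The only differences are cosmetic — the paper normalizes $m$ to a power of $p$ and computes with lifts in $A$ modulo $(p)+I^{p^k}$, whereas you pass first to $B=A/pA$ where $J$ is nilpotent and use the exponent $p^m\geq m$; the mechanism is identical.
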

\begin{proof}
We may assume $m = p^k$ for some positive integer $k$.
Let $y = (\dots,y_1,y_0)$ be an element of $A^{\frep}$ whose image in
$(A/I)^{\frep}$ is zero. For each nonnegative integer $n$, we then have
$y_{n+k} \equiv 0 \pmod{I}$, and so
\[
y_n \equiv y_{n+k}^{p^k} \equiv 0 \pmod{(p) + I^{p^k}}.
\]
Hence $y_n \equiv 0 \pmod{p}$, and so $y = 0$ in $A^{\frep}$.

Given $z = (\dots, z_1, z_0) \in (A/I)^{\frep}$, choose any lifts
$\tilde{z}_n \in A$ of $z_n$. Put $y_n = \tilde{z}_{n+k}^{p^k}$; then
the congruence $\tilde{z}_{n+k+1}^p \equiv \tilde{z}_{n+k} \pmod{I}$
implies $y_{n+1}^p \equiv y_n \pmod{(p) + I^{p^k}}$. Hence
$y = (\dots,y_1,y_0)$ forms an element of $A^{\frep}$ lifting $z$.
\end{proof}

\begin{defn} \label{D:frep hom}
Let $A$ be a ring,
and let $\widehat{A}$ denote the $p$-adic completion of $A$.
{}From the projection $A^{\frep} \to A/pA$, we obtain first a multiplicative map
$A^{\frep} \to \widehat{A}$ and then by Lemma~\ref{L:Teichmuller2} a homomorphism $\theta: W(A^{\frep}) \to \widehat{A}$.
Note that $\theta$ is surjective if and only if $\overline{\varphi}: A/pA \to A/pA$ is surjective.
\end{defn}

\begin{remark} \label{R:project norm}
Let $A$ be a $p$-adically separated ring,
let $\beta$ be any submultiplicative (resp.\ power-multiplicative,
multiplicative) seminorm on $A$ bounded by the
$p$-adic norm, and extend $\beta$ to $\widehat{A}$ by continuity.
Then $\alpha = \mu(\theta^*(\beta))$ is a submultiplicative
(resp.\ power-multiplicative, multiplicative)
seminorm on $A^{\frep}$ bounded by the trivial norm.
In particular, the map $\theta$ is bounded for the seminorm $\lambda(\alpha)$
on $W(A^{\frep})$ and the seminorm $\beta$ on $\widehat{A}$.
\end{remark}

\begin{lemma} \label{L:frep norm}
In Remark~\ref{R:project norm},
suppose that $\beta$ is power-multiplicative (resp.\ multiplicative) norm and that $A$ is complete under $\beta$.
Then $\alpha = \mu(\theta^*(\beta))$ is a power-multiplicative (resp.\ multiplicative) norm under which
$A^{\frep}$ is complete.
\end{lemma}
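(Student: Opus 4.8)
The plan is to identify $(A^{\frep},\alpha)$ with the \emph{tilt} $\varprojlim_{x\mapsto x^p}\widehat{A}$ of $\widehat{A}$, equipped with the seminorm $\beta^{\flat}\colon (b_n)_{n\geq 0}\mapsto\beta(b_0)$, and to read off the two assertions there. By Remark~\ref{R:project norm} we already know $\alpha$ is a power-multiplicative (resp.\ multiplicative) seminorm bounded by the trivial norm, so the only new content is that $\alpha$ is a \emph{norm} and that $A^{\frep}$ is \emph{complete} under it.

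First I would construct $\Psi\colon A^{\frep}\to\varprojlim_{x\mapsto x^p}\widehat{A}$ by $\overline{x}\mapsto\bigl(\theta([\overline{\varphi}^{-n}\overline{x}])\bigr)_{n\geq 0}$; writing $\overline{x}^{\sharp}=\theta([\overline{x}])$, Lemma~\ref{L:Teichmuller2} shows the $n$-th entry is a $p^n$-th root of $\overline{x}^{\sharp}$, so $\Psi$ does land in the tilt. The map $\Psi$ is a bijection, with inverse $(b_n)_n\mapsto(b_n\bmod p)_n$; this is the usual consequence of $p$-adic completeness of $\widehat{A}$, a compatible system of $p$-th powers in $\widehat{A}/p\widehat{A}$ lifting uniquely with $n$-th term $\lim_k\tilde{b}_{n+k}^{p^k}$. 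Multiplicativity of $\Psi$ is clear since $\theta$ and $[\cdot]$ are multiplicative; additivity, for the usual tilt addition $(b+c)_n=\lim_k(b_{n+k}+c_{n+k})^{p^k}$, is the one step requiring a (routine) Witt-vector computation and amounts to additivity of $\theta$ (Definition~\ref{D:frep hom}). Since $\alpha(\overline{x})=\beta(\Psi(\overline{x})_0)=\beta^{\flat}(\Psi(\overline{x}))$ by construction and $\Psi$ is additive, $\Psi$ is an isometric ring isomorphism onto $(\varprojlim_{x\mapsto x^p}\widehat{A},\beta^{\flat})$, so it suffices to argue on the tilt side.

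On the tilt, $\beta^{\flat}$ is a norm: if $\beta(b_0)=0$ then $b_0=0$, and since $b_n^{p^n}=b_0=0$ while $\widehat{A}$ is reduced (a power-multiplicative norm admits no nonzero nilpotent), $b_n=0$ for all $n$. For completeness I would note that $\varprojlim_{x\mapsto x^p}\widehat{A}$ is the subspace of the complete product $\prod_{n\geq 0}\widehat{A}$ cut out by the closed conditions $b_{n+1}^p=b_n$, hence complete for the product topology; and on this subspace $\beta(b_n)=\beta(b_0)^{1/p^n}$ makes the product topology and the $\beta^{\flat}$-topology coincide — in fact the two metrics uniformly equivalent, which is what transfers completeness. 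Pulling back through $\Psi$ yields the lemma, the power-multiplicative (resp.\ multiplicative) property being already in Remark~\ref{R:project norm}. The step I expect to be the main obstacle is exactly this last point: reconciling $\alpha$, which records only $\beta$ of the single element $\overline{x}^{\sharp}$, with the all-components inverse-limit structure of $A^{\frep}$. One can handle it by the topological argument above, or, staying on the $A^{\frep}$ side, by comparing $(\overline{a}-\overline{c})^{\sharp}$ with $\overline{a}^{\sharp}-\overline{c}^{\sharp}$ through the congruences $\overline{x}^{\sharp}\equiv\tilde{x}_k^{p^k}\pmod{p^{k+1}}$, or simply by invoking the parallel facts about $\theta$, $\lambda$, $\mu$ and $\calM$ from \cite{kedlaya-witt}.
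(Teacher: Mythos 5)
Your strategy is sound and, once unwound, is essentially the paper's argument in tilt clothing; the paper's proof is more pedestrian and, crucially, never leaves $A$. The paper establishes the norm property by comparing $\alpha(x)=\beta(\theta([x]))$ directly with the norms of lifts $x_i\in A$ of the components of $x\in A^{\frep}$, via the congruence $\theta([x])\equiv x_i^{p^i}\pmod{p^{i+1}}$, which yields $\max\{\alpha(x),\beta(p)^i\}=\max\{\beta(x_i)^{p^i},\beta(p)^i\}$ for all $i$. It proves completeness by sending a Cauchy sequence $(y_n)$ in $A^{\frep}$ through $\theta([\,\cdot\,^{p^{-i}}])$ for each $i$, taking the limit $\tilde z_i$ \emph{in $A$} (where completeness is a hypothesis), and reducing modulo $p$ to assemble the limit $(\dots,z_1,z_0)\in A^{\frep}$. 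Your $i$-th coordinate limit in $\prod_n\widehat{A}$ is exactly this $\tilde z_i$, and your ``option (ii)'' congruence $\overline{x}^{\sharp}\equiv\tilde x_k^{p^k}\pmod{p^{k+1}}$ is exactly the paper's key identity, so you have correctly located the crux and its resolution.

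Two soft spots in your packaging, both avoided by the paper's route. First, you repeatedly use that $\beta$ is a \emph{norm} on $\widehat{A}$ and that $\widehat{A}$ is $\beta$-complete (for the reducedness of $\widehat{A}$ in the norm step, and for the completeness of $\prod_n\widehat{A}$ in the last step); the hypotheses give these properties only for $A$, and the continuous extension of $\beta$ to the $p$-adic completion is a priori only a seminorm under which completeness is not automatic. The paper sidesteps this entirely by taking all limits inside $A$ and only afterwards reducing modulo $p$. Second, the assertion that $\beta(b_n)=\beta(b_0)^{1/p^n}$ identifies the product topology with the $\beta^{\flat}$-topology must be applied to the \emph{tilt} difference $(b-c)_n=\lim_k(b_{n+k}-c_{n+k})^{p^k}$, not to the coordinatewise difference $b_n-c_n$; these agree only modulo $p$, and bridging them is again the congruence above. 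Neither issue is fatal---you flag the second yourself and name the correct fix---but as written your proof defers its hardest step to a closing remark, whereas the paper's two short computations carry out that work explicitly.
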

\begin{proof}
For $x = (\dots, \overline{x}_1, \overline{x}_0) \in A^{\frep}$
and any lifts $x_i \in A$ of $\overline{x}_i$,
we have
\[
\max\{\alpha(x), \beta(p)^{i}\} =
\max\{\beta(x_i)^{p^i}, \beta(p)^{i}\}
\]
for all $i$.
In particular, if $\alpha(x) \neq 0$, then
$x_i \neq 0$ for all sufficiently large $i$, so $x \neq 0$.
Hence $\alpha$ is a norm.

Let $y_0, y_1, \dots$ be a sequence in
$A^{\frep}$ which is Cauchy with respect to $\alpha$.
For each nonnegative integer $i$, the sequence $y_0^{p^{-i}}, y_1^{p^{-i}}, \dots$
is also Cauchy with respect to $\alpha$, so $[y_0^{p^{-i}}], [y_1^{p^{-i}}], \dots$
is Cauchy with respect to $\lambda(\alpha)$. The images of $[y_0^{p^{-i}}], [y_1^{p^{-i}}], \dots$
in $A$ then form a Cauchy sequence with respect to $\beta$, which by hypothesis has a limit
$\tilde{z}_i \in A$. Let $z_i \in A/pA$ be the image of $\tilde{z}_i$; then
$(\dots, z_1, z_0)$ forms an element of $A^{\frep}$ which is the limit of the $y_n$.
Hence $A^{\frep}$ is complete.
\end{proof}

\begin{remark} \label{R:Witt to frep}
For $R$ a perfect $\Fp$-algebra,
the natural map $W(R)^{\frep} \cong R$ is an isomorphism, as then is the map
$\theta: W(W(R)^{\frep}) \to W(R)$.
\end{remark}

\begin{lemma} \label{L:dense image}
Let $A_1$ be a $p$-adically separated ring written as a union $\cup_{i \in I} A_{1,i}$
such that for each $i \in I$, $\overline{\varphi}$ is surjective on $A_{1,i}/pA_{1,i}$.
Let $A_2$ be a $p$-adically separated ring equipped with a norm $\beta_2$
bounded by the $p$-adic norm. Let $\psi: A_1 \to A_2$ be a homomorphism.
Put $\alpha_2 = \mu(\theta^*(\beta_2))$.
\begin{enumerate}
\item[(a)]
Suppose that $\psi$ has dense image.
Then the induced map $\psi^{\frep}: \cup_{i \in I} A_{1,i}^{\frep} \to A_2^{\frep}$ has dense image.
\item[(b)]
Suppose that the image of $\psi$ has dense intersection with $\gothm_{A_2}$.
Then the image of $\psi^{\frep}$ has dense intersection with $\gothm_{A_2^{\frep}}$.
\end{enumerate}
\end{lemma}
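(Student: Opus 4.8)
Throughout I would write $z_m$ for the $m$-th component (in $A_2/pA_2$) of an element $z\in A_2^{\frep}$, and use the computation of the norm $\alpha_2=\mu(\theta^*(\beta_2))$ recorded in the proof of Lemma~\ref{L:frep norm}: for any $z\in A_2^{\frep}$, any $i\ge 0$, and any lift $\tilde z_i\in A_2$ of $z_i$,
\[
\max\{\alpha_2(z),\beta_2(p)^i\}=\max\{\beta_2(\tilde z_i)^{p^i},\beta_2(p)^i\}.
\]
Since $\beta_2(p)<1$, the upshot is that to make $\alpha_2$ of a difference smaller than a prescribed $\epsilon\in(0,1)$ it suffices to choose $i$ with $\beta_2(p)^i<\epsilon$ and then make \emph{some} lift of the $i$-th component of that difference have $\beta_2$-norm less than $\epsilon^{1/p^i}$; one never needs to match components exactly (which would require a $p$-adic approximation). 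This ``collapse at a deep Teichm\"uller level'' is the mechanism behind both parts.

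For part (a), I would argue as follows. Given $z\in A_2^{\frep}$ and $\epsilon\in(0,1)$, pick $n$ with $\beta_2(p)^n<\epsilon$ and a lift $\tilde z_n\in A_2$ of $z_n$. By density of $\psi(A_1)$, choose $a\in A_1$ with $\beta_2(\psi(a)-\tilde z_n)<\epsilon^{1/p^n}$, and pick $i\in I$ with $a\in A_{1,i}$. Because $\overline\varphi$ is surjective on $A_{1,i}/pA_{1,i}$, the class $\overline a\in A_{1,i}/pA_{1,i}$ extends to a compatible sequence $(\overline a_m)_{m\ge 0}$ with $\overline a_n=\overline a$ (put $\overline a_m=\overline a^{\,p^{n-m}}$ for $m<n$, and use surjectivity of $\overline\varphi$ to choose $\overline a_m$ for $m>n$), giving an element $u\in A_{1,i}^{\frep}$. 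Set $w=\psi^{\frep}(u)$; then $\psi(a)$ is a lift of $w_n$, so $\tilde z_n-\psi(a)$ is a lift of $(z-w)_n$, and the displayed identity gives $\alpha_2(z-w)\le\max\{\beta_2(\tilde z_n-\psi(a))^{p^n},\beta_2(p)^n\}<\epsilon$. As $w$ lies in the image of $\psi^{\frep}$ on $\cup_{i\in I}A_{1,i}^{\frep}$, this proves (a).

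For part (b), I would first note that if $\alpha_2(z)<1$ then applying the displayed identity with $i=n$ shows $\beta_2(\tilde z_n)<1$ for every lift $\tilde z_n$ of $z_n$, i.e.\ $\tilde z_n\in\gothm_{A_2}$. Now I would rerun the argument of (a), but using the hypothesis that $\psi(A_1)\cap\gothm_{A_2}$ is dense in $\gothm_{A_2}$ to choose $a\in A_1$ with $\psi(a)\in\gothm_{A_2}$ and $\beta_2(\psi(a)-\tilde z_n)<\epsilon^{1/p^n}$; then build $u\in A_{1,i}^{\frep}$ and $w=\psi^{\frep}(u)$ exactly as before. The same estimate gives $\alpha_2(z-w)<\epsilon$, and since $\psi(a)\in\gothm_{A_2}$ is a lift of $w_n$, the displayed identity with $i=n$ gives $\alpha_2(w)<1$, so $w\in\gothm_{A_2^{\frep}}$. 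Hence $\image(\psi^{\frep})\cap\gothm_{A_2^{\frep}}$ is dense in $\gothm_{A_2^{\frep}}$.

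The argument is essentially bookkeeping once the norm formula from Lemma~\ref{L:frep norm} is in hand; the only points that I expect to need care are the observation above that a sufficiently deep approximation in the $\beta_2$-topology is all that is required (so that ordinary density, rather than $p$-adic density, suffices), and the use of surjectivity of $\overline\varphi$ on each \emph{individual} $A_{1,i}/pA_{1,i}$ to keep the whole compatible sequence $(\overline a_m)$ inside a single $A_{1,i}$.
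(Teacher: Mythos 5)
Your proof is correct and follows the same route as the paper's: approximate a lift of the $n$-th component of $z$ by an element of $\psi(A_1)$, use surjectivity of $\overline{\varphi}$ on the relevant $A_{1,i}/pA_{1,i}$ to extend that element to a compatible sequence in $A_{1,i}^{\frep}$, and observe that approximate agreement at level $n$ forces $\alpha_2$ of the difference below $\max\{\epsilon,\beta_2(p)^n\}$. The paper's proof of (a) is exactly this (with the approximation accuracy fixed at $p^{-1}$ and $n$ allowed to grow), and for (b) it merely says ``similar''; your filled-in details for (b), including the observation that a lift of the $n$-th component lying in $\gothm_{A_2}$ forces $w\in\gothm_{A_2^{\frep}}$, are the intended ones.
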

\begin{proof}
Given $w = (\dots, w_1, w_0) \in A_2^{\frep}$,
if we choose a nonnegative integer $n$,
we can choose $i \in I$ and $\tilde{w}_n \in A_{1,i}$ so that $\alpha_2(\psi(\tilde{w}_n) - w_n) \leq p^{-1}$.
Since $\overline{\varphi}$ is surjective on
$A_{1,i}/pA_{1,i}$, we can find $x = (\dots, x_1, x_0) \in A_{1,i}^{\frep}$ with $x_n$ equal to the image of
$\tilde{w}_n$  in $A_{1,i}/pA_{1,i}$. Let $y = (\dots, y_1, y_0) \in A_2^{\frep}$ be the image of $x$
under $\psi^{\frep}$;
then $\alpha_2(y_n - w_n) \leq p^{-1}$, so $\alpha_2(y - w) \leq p^{-p^{n}}$.
Since this holds for any $n$ (for some $i,y$ depending on $n$),
it follows that $\cup_{i \in I} A_{1,i}^{\frep}$ has dense image  in $A_2^{\frep}$. This proves (a);
the proof of (b) is similar.
\end{proof}

\begin{lemma} \label{L:ker theta}
Let $A$ be a $p$-adically separated $p$-torsion-free ring complete under a power-multiplicative norm $\beta$
bounded by the $p$-adic norm, and put $\alpha = \mu(\theta^*(\beta))$.
Suppose that there exists $z \in W(A^{\frep})$ primitive of degree $1$ with $\theta(z) = 0$.
Extend $\theta$ to a map $W(A^{\frep})[[\overline{z}]^{-1}] \to A[\theta([\overline{z}])^{-1}]$.
\begin{enumerate}
\item[(a)]
The ideal $\ker(\theta) \subset W(A^{\frep})[[\overline{z}]^{-1}]$ is generated by $z$.
\item[(b)]
The extended map $\theta$ is optimal.
\item[(c)]
The map $\overline{\varphi}: A/(p) \to A/(p)$
is surjective if and only if $\theta$ has dense image in $A[\theta([\overline{z}])^{-1}]$.
\end{enumerate}
\end{lemma}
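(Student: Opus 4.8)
\emph{Proof plan.}
Write $z=\sum_i p^i[\overline z_i]$ and abbreviate $[\overline z]:=[\overline z_0]$. Two preliminary remarks. First, since $\alpha(\overline x\,\overline z_0)=p^{-1}\alpha(\overline x)$ for all $\overline x\in A^{\frep}$ (Definition~\ref{D:primitive}), the element $\overline z_0$ is a non-zero-divisor in $A^{\frep}$, so $[\overline z]$ and hence $z=[\overline z]+p[\overline z_1]+\cdots$ are non-zero-divisors in $W(A^{\frep})$ (using that $W(A^{\frep})$ is $p$-torsion-free and $p$-adically separated). Second, for $w=\sum_i p^i[\overline w_i]\in W(A^{\frep})$ the partial sums of $\theta(w)=\sum_i p^i t(\overline w_i)$ lie in $A$ and are $\beta$-Cauchy (as $\beta$ is bounded above by $|\cdot|_p$), so $\theta$ in fact takes values in $A$; moreover $\beta(t(\overline w_i))=\mu(\theta^*(\beta))(\overline w_i)=\alpha(\overline w_i)$, so $\beta(\theta(w))\le\sup_i p^{-i}\alpha(\overline w_i)=\lambda(\alpha)(w)$, i.e.\ $\theta$ is submetric, and $A^{\frep}$ — hence $W(A^{\frep})$ — is complete (Lemma~\ref{L:frep norm}). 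The key computational input is the estimate: if $y=\sum_i p^i[\overline y_i]\in W(A^{\frep})$ satisfies $\alpha(\overline y_0)\ge\alpha(\overline y_i)$ for all $i>0$, then the $i=0$ term of $\theta(y)$ strictly dominates (the others having $\beta$-norm $\le p^{-1}\alpha(\overline y_0)$), whence $\beta(\theta(y))=\alpha(\overline y_0)=\lambda(\alpha)(y)$; in particular $\theta(y)=0$ forces $y=0$.

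For (a): given $x\in W(A^{\frep})$ with $\theta(x)=0$, Lemma~\ref{L:stable residue} (applicable since $A^{\frep}$ is $\alpha$-complete) lets us write $x=y+zc$ with $y$ as above and $c\in W(A^{\frep})$; since $\theta(z)=0$ we get $\theta(y)=\theta(x)=0$, so $y=0$ and $x\in zW(A^{\frep})$. Thus $\ker(\theta|_{W(A^{\frep})})=zW(A^{\frep})$. For general $x\in W(A^{\frep})[[\overline z]^{-1}]$ with $\theta(x)=0$, clear denominators: $[\overline z]^{n}x\in W(A^{\frep})$ and $\theta([\overline z]^{n}x)=\theta([\overline z])^{n}\theta(x)=0$, so $[\overline z]^{n}x\in zW(A^{\frep})$ and $x\in(z)$; the reverse inclusion $(z)\subseteq\ker\theta$ is immediate from $\theta(z)=0$. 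For (b): the $y$ produced by Lemma~\ref{L:stable residue} from any $w\in W(A^{\frep})$ is a lift of $\theta(w)$ with $\lambda(\alpha)(y)=\beta(\theta(y))=\beta(\theta(w))$, while every lift $w'$ of $\theta(w)$ has $\lambda(\alpha)(w')\ge\beta(\theta(w'))$ by submetricity; hence $\theta|_{W(A^{\frep})}$ is optimal, with quotient seminorm on its image equal to the restriction of $\beta$. To descend to the localization, use $\lambda(\alpha)(w[\overline z])=p^{-1}\lambda(\alpha)(w)$ (immediate from $\alpha(\overline x\,\overline z_0)=p^{-1}\alpha(\overline x)$ and the defining formula for $\lambda$) and the power-multiplicativity of $\beta$ (so $\beta(\theta([\overline z])^{\mp n})=p^{\pm n}$): for $a=\theta([\overline z])^{-n}\theta(w)$, the element $[\overline z]^{-n}y$ lifts $a$ with $\lambda(\alpha)([\overline z]^{-n}y)=p^{n}\lambda(\alpha)(y)=p^{n}\beta(\theta(w))=\beta(a)$, while every lift has $\lambda(\alpha)$-value $\ge\beta(a)$ by submetricity. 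So $\theta$ is optimal.

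For (c): if $\overline\varphi$ is surjective, then $\theta\colon W(A^{\frep})\to\widehat A$ is surjective (Definition~\ref{D:frep hom}); since its image lies in $A$, we get $A=\widehat A$ and $\theta$ surjective onto $A$, hence onto $A[\theta([\overline z])^{-1}]$, so its image is dense. Conversely, suppose $\overline\varphi$ is not surjective; then $C:=\theta(W(A^{\frep}))\subsetneq\widehat A$ (Definition~\ref{D:frep hom}). By (b), $\theta|_{W(A^{\frep})}$ is optimal with quotient seminorm $\beta|_C$, and $W(A^{\frep})$ is $\lambda(\alpha)$-complete; choosing optimal lifts of the consecutive differences of a $\beta$-Cauchy sequence in $C$ yields a $\lambda(\alpha)$-Cauchy sequence in $W(A^{\frep})$ whose limit maps to the given limit, so $C$ is $\beta$-closed in $\widehat A$. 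Since $C\subseteq A$ and $A$ is $\beta$-dense in $\widehat A$, we conclude $C\subsetneq A$, a proper $\beta$-closed subring. Finally, using (a) and the identity $(p,z)=(p,[\overline z])$ in $W(A^{\frep})$, identify $C\cong W(A^{\frep})/(z)$ and $C/pC\cong A^{\frep}/\overline z_0A^{\frep}$, and combine this with $\theta([\overline z])\in pA$ (which follows from $\theta(z)=0$) to analyze the enlarged image $C[\theta([\overline z])^{-1}]$: after clearing denominators, a dense approximation would force this proper $\beta$-closed subring to be dense in $A[\theta([\overline z])^{-1}]$, a contradiction; hence $\theta$ has non-dense image, completing the contrapositive.

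I expect the final step of (c) to be the main obstacle: turning density of the image into surjectivity of $\overline\varphi$ is delicate precisely because $\beta$ is only \emph{bounded above by} (not equivalent to) the $p$-adic norm, so reduction modulo $p$ is not $\beta$-continuous; one must therefore argue entirely through the $\beta$-closedness of $C$ in $A$ and the way $C$ sits $p$-saturated inside $A$ (controlled by the relation $C/pC\cong A^{\frep}/\overline z_0A^{\frep}$), rather than through any naive congruence argument. Parts (a) and (b) should be routine once the dominant-leading-term estimate and Lemma~\ref{L:stable residue} are in hand.
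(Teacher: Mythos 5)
Parts (a) and (b) are correct and are exactly the paper's argument: the whole content is your dominant-leading-term estimate applied to the representative $y$ supplied by Lemma~\ref{L:stable residue}, giving $\beta(\theta(x))=\alpha(\overline{y}_0)=\lambda(\alpha)(y)>0$ whenever $x$ is not divisible by $z$; clearing powers of $[\overline{z}]$ handles the localization, and the same $y$ realizes the optimal lift. The forward direction of (c) is also fine.

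The gap is in the converse direction of (c). After establishing that $C=\theta(W(A^{\frep}))$ is $\beta$-closed and proper, your final sentence assumes the localized image $C[\theta([\overline{z}])^{-1}]$ is dense and declares that this ``would force this proper $\beta$-closed subring to be dense \dots, a contradiction'' --- but no contradiction has actually been derived: density of $C[\theta([\overline{z}])^{-1}]$ in $A[\theta([\overline{z}])^{-1}]$ is not obviously incompatible with $C\subsetneq A$, because the approximating elements carry unbounded powers of $\theta([\overline{z}])^{-1}$. The missing link (which also dissolves your worry about reduction mod $p$ not being $\beta$-continuous) is a norm bound on optimal lifts: since $\beta$ is dominated by the $p$-adic norm, any element $b$ of the image with $\beta(b)\leq p^{n}$ has, by your own construction in (b) together with the divisibility clause in Definition~\ref{D:primitive} (every element of $A^{\frep}$ of norm at most $p^{-k}$ is divisible by $\overline{z}_0^{k}$), an optimal lift lying in $[\overline{z}]^{-n}W(A^{\frep})$, which \emph{is} complete under $\lambda(\alpha)$. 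Hence summing optimal lifts of successive differences of an approximating sequence shows the image of the extended $\theta$ is closed, so dense image forces surjectivity onto $A[\theta([\overline{z}])^{-1}]$; the case $n=0$ of the same bound shows every $a\in A$ (which has $\beta(a)\leq 1$) then has a preimage already in $W(A^{\frep})$, and surjectivity of $\theta\colon W(A^{\frep})\to A$ gives surjectivity of $\overline{\varphi}$ on $A/pA$ by the remark in Definition~\ref{D:frep hom}. This is precisely what the paper's one-line proof of (c) (``strictness of $\theta$ implies that $\theta$ is surjective if and only if it has dense image'') is implicitly invoking; your detour through $\widehat{A}$, the $\beta$-closedness of $C$, and the identification $C/pC\cong A^{\frep}/\overline{z}_0A^{\frep}$ is unnecessary and, as written, does not close the argument.
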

\begin{proof}
Given $x\in W(A^{\frep})$ not divisible by $z$,
choose $y = \sum_{i=0}^\infty p^i [\overline{y}_i]$
as in Lemma~\ref{L:stable residue}.
Then $\theta(x) = \theta(y) = \theta([\overline{y}_0]) +
\theta(y - [\overline{y}_0])$ and
\[
\beta(\theta([\overline{y}_0])) = \alpha(\overline{y}_0)
> \lambda(\alpha)(y - [\overline{y}_0]) \geq \beta(\theta(y -
[\overline{y}_0])).
\]
Consequently, $\beta(\theta(x)) = \alpha(\overline{y}_0) >
0$. This implies (a) and (b). To check (c), note that strictness of $\theta$ (from (b)) implies that $\theta$ is surjective if and only if it
has dense image.
\end{proof}

\begin{remark} \label{R:reduce surjectivity}
If $\overline{\varphi}: A/pA \to A/pA$ is surjective, then so is
$\overline{\varphi}: A/I \to A/I$ for any ideal $I$ for which $I^m \subseteq (p) \subseteq I$ for
some positive integer $m$. The converse is not true: e.g., take $A = \ZZ_p[\sqrt{p}]$ and $I = (\sqrt{p})$.

One correct partial converse is that if there exist $x,y \in A$ such that $\overline{\varphi}: A/(x,p) \to A/(x,p)$
is surjective, $x^m \in (p)$ for some positive integer $m$, and $y^p \equiv x \pmod{(x^2,p)}$, then
$\overline{\varphi}: A/pA \to A/pA$ is surjective. To see this, we prove by induction that $\overline{\varphi}: A/(x^i,p) \to A/(x^i,p)$ is surjective for $i=1,\dots,m$, the case $i=1$ being given and the case $i=m$ being the desired result.
Given the claim for some $i$, note first that $(y^p,p) = (x,p)$ and that $y^{pi} \equiv x^i \pmod{(x^{i+1}, p)}$.
for any $z \in A$, we can find $w_0, z_1 \in A$ with $z - w_0^p - x^iz_1 \in pA$.
We can then find $w_1 \in A$ with $z_1 - w_1^p \in (x^i,p)$; then $w = w_0 + y^i w_1$ satisfies
$w^p \equiv w_0^p + y^{pi} w_1^p \equiv w_0^p + y^{pi} z_1 \equiv w_0^p + x^i z_1 \equiv z \pmod{(x^{i+1},p)}$.
\end{remark}

\begin{remark} \label{R:full power inverse limit}
If $A$ is $p$-adically complete, then $\theta$ (surjective or not) induces an isomorphism of multiplicative monoids
\[
A^{\frep} \cong \varprojlim_{x \mapsto x^p} A
\]
whose inverse is reduction modulo $p$. This can be used to reformulate the perfectoid correspondence; see Proposition~\ref{P:power inverse limit as set}.
\end{remark}

\subsection{The perfectoid correspondence for analytic fields}

In order to bring nonabelian Artin-Schreier-Witt theory to bear upon $p$-adic Hodge theory,
one needs a link between \'etale covers of spaces of characteristic $0$ and characteristic $p$.
We first make this link at the level of analytic fields;
this extends the \emph{field of norms} correspondence introduced by Fontaine and Wintenberger \cite{fontaine-wintenberger},
upon which usual $p$-adic Hodge theory is based.  Similar results have been obtained by Scholze
\cite{scholze1} using a slightly different method; see Remark~\ref{R:gabber-ramero}.
(See also \cite{kedlaya-new-phigamma} for a self-contained presentation of the correspondence
following the approach taken here.)
See \S\ref{subsec:perfectoid2} for an extension to more general Banach algebras.

\begin{defn} \label{D:perfectoid field}
An analytic field $F$ is  \emph{perfectoid} if $F$ is of characteristic $0$, $\kappa_F$ is of characteristic $p$,
$F$ is not discretely valued,
and $\overline{\varphi}$ is surjective on $\gotho_F/(p)$. For example, any field $F$ appearing
in Example~\ref{exa:lift analytic field} is perfectoid; the converse is also true by
Lemma~\ref{L:deeply ramified} below.
\end{defn}

\begin{lemma} \label{L:deeply ramified}
Let $F$ be a perfectoid analytic field with norm $\beta$. Put $R = (\gotho_F/(p))^{\frep}$,
let $\theta: W(R) \to \gotho_F$ be the surjective homomorphism from Definition~\ref{D:frep hom},
and define the multiplicative norm $\alpha = \mu(\theta^*(\beta))$ on $R$ as in Remark~\ref{R:project norm}.
\begin{enumerate}
\item[(a)]
The ring $K = \Frac(R)$ is an analytic field under $\alpha$ which is perfect of characteristic $p$,
and $R = \gotho_K$.
\item[(b)]
We have $\beta(F^\times) = \alpha(K^\times)$.
\item[(c)]
For any $\overline{z} \in K$ with $\alpha(\overline{z}) = p^{-1}$ (which exists by (b)),
there is a natural (in $F$) isomorphism $\gotho_F/(p) \cong \gotho_K/(\overline{z})$.
In particular, we obtain a natural isomorphism $\kappa_F \cong \kappa_K$.
\item[(d)]
There exists $z \in W(\gotho_K)$ in $\ker(\theta)$ which is primitive of degree $1$. Consequently (by
Lemma~\ref{L:ker theta}), the kernel of $\theta: W(\gotho_K)[[\overline{z}]^{-1}] \to F$
is generated by $z$.
\end{enumerate}
\end{lemma}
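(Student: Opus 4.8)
The plan is to establish (a)--(d) in that order, with the substance concentrated in the value-group analysis behind (b).

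\emph{Setup and (a).} By construction $R = (\gotho_F/(p))^{\frep}$ is a perfect $\Fp$-algebra, and by Remark~\ref{R:project norm} together with Lemma~\ref{L:frep norm} (applied with the ring $\gotho_F$, which is complete under $\beta$) the norm $\alpha = \mu(\theta^*(\beta))$ is a multiplicative norm bounded by the trivial norm under which $R$ is complete; multiplicativity forces $R$ to be an integral domain, so $\alpha$ extends uniquely to a multiplicative norm on $K = \Frac(R)$. The perfectoid hypothesis that $\overline\varphi$ is surjective on $\gotho_F/(p)$ makes $\theta\colon W(R) \to \gotho_F$ surjective (Definition~\ref{D:frep hom}; see also Lemma~\ref{L:ker theta}(c)). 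It then remains to identify $R$ with $\gotho_K$. First I would record that for $0 \neq \overline x \in R$ one has $\alpha(\overline x) = \beta(\theta([\overline x]))$, and that the multiplicative map $\overline x \mapsto \theta([\overline x])$, together with its compatible $p$-power roots $\overline x^{p^{-n}} \mapsto \theta([\overline x^{p^{-n}}])$, identifies $R$ as a multiplicative monoid with the inverse limit of $(\gotho_F, \varphi)$ (surjectivity of $\overline\varphi$ mod $p$ plus $p$-adic completeness of $\gotho_F$ supplies the inverse). Since $\gotho_F$ is a valuation ring, dividing termwise on this inverse limit shows that whenever $\alpha(\overline x) \leq \alpha(\overline y)$ with $\overline x,\overline y \in R$, $\overline y \mid \overline x$ in $R$; hence $R = \gotho_K$, and $K$, being the fraction field of a complete valuation ring, is itself complete, i.e.\ an analytic field, and is perfect of characteristic $p$. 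This proves (a).

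\emph{(b).} The key point is that $\beta(F^\times)$ is $p$-divisible. For $c = \beta(t) > p^{-1} = \beta(p)$ with $t \in \gotho_F$, surjectivity of $\overline\varphi$ modulo $p$ produces $s$ with $s^p \equiv t \pmod p$, and the ultrametric inequality forces $\beta(s)^p = \beta(t)$; the case $c \leq p^{-1}$ reduces to this by multiplying $c$ by a suitable power of a fixed $d \in \beta(F^\times) \cap (p^{-1},1)$, which exists because $F$ is not discretely valued. A nontrivial $p$-divisible subgroup of $(0,+\infty)$ is dense, so one may then factor any $c \in \beta(F^\times) \cap (0,1]$ as a finite product of elements of $\beta(F^\times) \cap (p^{-1},1]$. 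For each such factor $c' = \beta(t)$ one builds a compatible tower $t = t_0, t_1, \dots$ in $\gotho_F$ with $t_{n+1}^p \equiv t_n \pmod p$; since each $\beta(t_n) = c'^{\,p^{-n}} > p^{-1}$ the norms behave as expected, and the resulting element of $R$ has $\alpha$-value $c'$. Together with the obvious inclusion $\alpha(R \setminus \{0\}) \subseteq \beta(\gotho_F \setminus \{0\})$ coming from $\alpha(\overline x) = \beta(\theta([\overline x]))$, this gives $\alpha(K^\times) = \beta(F^\times)$.

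\emph{(c) and (d).} For (c), the projection $R \to \gotho_F/(p)$ onto the last coordinate is surjective (again by Frobenius-surjectivity), and $\overline x = (\dots, x_1, x_0) \in R$ lies in its kernel precisely when $\beta(\theta([\overline x])) \leq p^{-1}$, i.e.\ $\alpha(\overline x) \leq p^{-1} = \alpha(\overline z)$, i.e.\ (by (a)) $\overline z \mid \overline x$ in $R = \gotho_K$; hence the kernel is $(\overline z)$, giving a natural isomorphism $\gotho_K/(\overline z) \cong \gotho_F/(p)$, and passing to residue fields gives $\kappa_K \cong \kappa_F$. For (d), choose $\overline z_0 \in R$ with $\alpha(\overline z_0) = p^{-1}$ (possible by (b); e.g.\ $\overline z_0 = \overline z$); then $\theta([\overline z_0]) = pv$ with $v \in \gotho_F^\times$, lift $v$ to $u \in W(R)$ via surjectivity of $\theta$ (necessarily $u \in W(R)^\times$, since its reduction maps to a unit of $\gotho_F/(p)$ and hence has $\alpha$-value $1$), and set $z = [\overline z_0] - pu$. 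Then $\theta(z) = 0$; using the Witt addition formulas (Remark~\ref{R:addition formula}) one sees that the degree-$0$ Teichm\"uller coordinate of $z$ is $\overline z_0$, with $\alpha(\overline z_0) = p^{-1}$ and $\alpha(\overline z_0^{-1}) = p$, while its degree-$1$ coordinate is the reduction of $-u$, a unit of $R = \gotho_K$. Thus $z$ is primitive of degree $1$ (Definition~\ref{D:primitive}, with $K$ playing the role of ``$R$''), and the final clause follows by applying Lemma~\ref{L:ker theta}(a) with $A = \gotho_F$, for which $A^{\frep} = R = \gotho_K$ and $A[\theta([\overline z_0])^{-1}] = \gotho_F[p^{-1}] = F$.

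\emph{Main obstacle.} The crux is the value-group argument in (b): this is where both perfectoid hypotheses enter essentially and in tandem, and where one must be careful that the approximate $p$-power roots furnished by Frobenius-mod-$p$ actually carry the expected norms, which is why one works above the cutoff $p^{-1} = \beta(p)$ and routes general values through that range. Establishing $R = \gotho_K$ in (a) is the secondary difficulty; once both are in place, (c) and (d) are short.
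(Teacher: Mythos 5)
Your proof is correct and follows essentially the same route as the paper's: completeness of $R$ via Lemma~\ref{L:frep norm}, divisibility in $R$ by lifting to $\gotho_F$ and dividing termwise (you phrase this via the multiplicative inverse limit $\varprojlim(\gotho_F, x\mapsto x^p)$, the paper via approximate lifts and Lemma~\ref{L:same frep}, but it is the same idea), the value-group comparison through the window $(p^{-1},1]$ using non-discreteness and $p$-divisibility, and the construction $z=[\overline{z}_0]-pu$ with $u$ a unit lift of $\theta([\overline{z}_0])/p$. The only cosmetic difference is in (b), where you prove $p$-divisibility of $\beta(F^\times)$ directly from Frobenius-surjectivity mod $p$, whereas the paper gets $p$-divisibility of $\alpha(K^\times)$ for free from perfectness of $K$ and only checks $p^{-1}\in\alpha(K^\times)$.
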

\begin{proof}
Note that $R$ is already complete under $\alpha$ by Lemma~\ref{L:frep norm}. Hence to prove (a),
it suffices to check that for any nonzero $x,y \in R$ with $\alpha(x) \leq \alpha(y)$,
$x$ is divisible by $y$ in $R$. Write $x = (\dots, \overline{x}_1, \overline{x}_0),
y = (\dots, \overline{y}_1, \overline{y}_0)$ and lift $\overline{x}_n, \overline{y}_n$
to $x_n, y_n \in \gotho_F$. Choose $n_0 \geq 0$ so that $\alpha(x), \alpha(y) > p^{-p^{n_0}}$.
For $n \geq n_0$,
as in the proof of Lemma~\ref{L:frep norm} we have $\beta(x_n) = \alpha(x)^{p^{-n}},
\beta(y_n) = \alpha(y)^{p^{-n}}$, so $\beta(x_n) \leq \beta(y_n)$. Since $\gotho_F$
is a valuation ring, $z_n = x_n/y_n$ belongs to $\gotho_F$.
Since $\alpha(x_{n+1}^p - x_n),\alpha(y_{n+1}^p - y_n) \leq p^{-1}$, we have
$\alpha(z_{n+1}^p - z_n) \leq p^{-1} / \alpha(y_n)$. This last quantity is bounded away from 1
for $n \geq n_0$, so by Lemma~\ref{L:same frep}, the $z_n$ define an element $z \in R$ for which
$x = yz$.

To establish (b), note that the group $\alpha(K^\times)$ is $p$-divisible and
that $\alpha(K^\times) \cap (p^{-1}, 1) = \beta(F^\times) \cap (p^{-1}, 1)$ by (a).
Since $F$ is not discretely valued, we can choose $r \in \beta(F^\times) \cap (1, p^{1/p})$. For any such
$r$, we have $r^{-1}, p^{-1} r^p \in \beta(F^\times) \cap (p^{-1}, 1) \subseteq \alpha(K^\times)$,
so $p^{-1} \in \alpha(K^\times)$.

To establish (c), note that from the definition of the inverse perfection,
we obtain a homomorphism $\gotho_K \to \gotho_F/(p)$. By comparing norms, we see that the kernel of this
map is generated by $\overline{z}$.

To establish (d), keep notation as in (c). Note that $\theta([\overline{z}])$ is divisible by $p$ in $\gotho_F$
and that $\theta: W(\gotho_K) \to \gotho_F$ is surjective. We can thus find $z_1 \in W(\gotho_K)^\times$
with $\theta(z_1) = \theta([\overline{z}])/p$; we then take $z = [\overline{z}] - pz_1$.
\end{proof}
\begin{theorem}[Perfectoid correspondence] \label{T:perfectoid field}
The constructions
\[
F \rightsquigarrow (\Frac((\gotho_F/(p))^{\frep}), \ker(\theta)),
\qquad
(L,I) \rightsquigarrow \Frac(W(\gotho_L)/I)
\]
define a equivalence of categories between perfectoid analytic fields $F$ and pairs
$(L,I)$ in which $L$ is a perfect analytic field of characteristic $p$ and $I$ is a principal ideal of $W(\gotho_L)$
admitting a generator which is primitive of degree $1$.
\end{theorem}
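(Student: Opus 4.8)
The plan is to check that both indicated constructions are well-defined functors into the stated categories, that they are mutually quasi-inverse, and that the comparison maps are isomorphisms; most of the hard analytic content is already available, so the work is mainly organizational. For the construction $F\rightsquigarrow(L,I)$, Lemma~\ref{L:deeply ramified} shows that when $F$ is perfectoid, $K=\Frac((\gotho_F/(p))^{\frep})$ is a perfect analytic field of characteristic $p$ with $\gotho_K=(\gotho_F/(p))^{\frep}$, that $\theta\colon W(\gotho_K)\to\gotho_F$ is surjective (Definition~\ref{D:frep hom}), and that there is $z\in W(\gotho_K)$ primitive of degree $1$ with $\theta(z)=0$. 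A short supplement using Lemma~\ref{L:stable residue} upgrades this to $\ker(\theta\colon W(\gotho_K)\to\gotho_F)=(z)$: if $\theta(x)=0$, write $x\equiv y\pmod z$ with $y=\sum p^i[\overline{y}_i]$ and $\alpha(\overline{y}_0)\ge\alpha(\overline{y}_i)$ for all $i$, note $\beta(\theta(x))=\alpha(\overline{y}_0)$, and conclude $y=0$, so $x\in(z)$. Thus $(K,\ker(\theta))$ is a pair of the required type. For the reverse construction, Example~\ref{exa:lift analytic field} shows that for a pair $(L,I)$ with $I=(z)$ and $z=\sum p^i[\overline{z}_i]$ primitive of degree $1$, the ring $W(\gotho_L)/(z)$ is the valuation ring of an analytic field $F=\Frac(W(\gotho_L)/(z))$; one then verifies the perfectoid axioms: $F$ has characteristic $0$ (since $p\notin(z)$, else $z$ would be a unit, contradicting $\overline{z}_0\in\gothm$), $\kappa_F\cong\kappa_L$ has characteristic $p$, and $\gotho_F/(p)\cong\gotho_L/(\overline{z}_0)$ is non-discretely valued with surjective Frobenius because $\gotho_L$ is perfect and (necessarily) nontrivially normed.

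\textbf{Functoriality.} Fix morphisms in the target category: a morphism $(L_1,I_1)\to(L_2,I_2)$ is a morphism $L_1\to L_2$ of analytic fields whose induced map $W(\gotho_{L_1})\to W(\gotho_{L_2})$ carries $I_1$ into $I_2$. Since morphisms of analytic fields are automatically isometric (hence injective), a morphism $F_1\to F_2$ of perfectoid fields induces compatible injections $\gotho_{F_1}/(p)\hookrightarrow\gotho_{F_2}/(p)$, $(\gotho_{F_1}/(p))^{\frep}\hookrightarrow(\gotho_{F_2}/(p))^{\frep}$, and $W(\gotho_{K_1})\to W(\gotho_{K_2})$ intertwining the $\theta$'s, hence a morphism of pairs; conversely a morphism of pairs induces $W(\gotho_{L_1})/I_1\to W(\gotho_{L_2})/I_2$, i.e.\ $\gotho_{F_1}\to\gotho_{F_2}$, which is norm-nonincreasing and therefore, being a homomorphism of analytic fields, isometric. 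Both assignments visibly respect composition.

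\textbf{Round trips.} Starting from $F$: the composite yields $\Frac(W(\gotho_K)/\ker(\theta))\cong\Frac(\gotho_F)=F$ via $\theta$, using surjectivity of $\theta$ and $\ker(\theta)=(z)$; naturality is immediate. Starting from $(L,(z))$: one computes $\gotho_F/(p)=W(\gotho_L)/(p,z)=W(\gotho_L)/(p,[\overline{z}_0])=\gotho_L/(\overline{z}_0)$, with $\alpha(\overline{z}_0)=p^{-1}$, so the composite produces the pair $((\gotho_L/(\overline{z}_0))^{\frep},\ker(\theta_F))$. The natural map $\gotho_L\to(\gotho_L/(\overline{z}_0))^{\frep}$, $x\mapsto(\overline{x^{1/p^n}})_n$, is then shown to be an isometric isomorphism, and under it $\theta_F\colon W((\gotho_L/(\overline{z}_0))^{\frep})\to\gotho_F$ becomes the quotient map $W(\gotho_L)\to W(\gotho_L)/(z)$, so $\ker(\theta_F)=(z)$; this exhibits the round trip as naturally isomorphic to the identity.

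\textbf{Main obstacle.} The crux is the last step: proving $\gotho_L\to(\gotho_L/(\overline{z}_0))^{\frep}$ is an isomorphism. Here Lemma~\ref{L:same frep} does \emph{not} apply, precisely because $p\notin(z)$ — which is the defining feature of the perfectoid situation — so one argues directly. Injectivity: $x\mapsto 0$ forces $\overline{z}_0^{\,p^n}\mid x$ for all $n$, hence $x=0$ since the $\overline{z}_0$-adic topology coincides with the norm topology on the complete valuation ring $\gotho_L$. Surjectivity: given $(\overline{\xi}_n)_n$ with $\overline{\xi}_{n+1}^{\,p}=\overline{\xi}_n$, lift to $\xi_n\in\gotho_L$; in characteristic $p$ the sequence $(\xi_m^{\,p^{m-n}})_{m\ge n}$ is $\overline{z}_0$-adically Cauchy, so converges to $x_n\in\gotho_L$ with $x_n^{\,p}=x_{n-1}$ and $x_n\equiv\overline{\xi}_n\pmod{\overline{z}_0}$, and $x_0$ is the desired preimage. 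That this isomorphism is isometric follows from Theorem~\ref{T:quotient norm}(a), which identifies the norm on $\gotho_F=W(\gotho_L)/(z)$ as the quotient norm whose $\mu$-image is the norm on $\gotho_L$. The remaining points — compatibility of all identifications with the norms and naturality of the comparison transformations — are routine once this step is in place.
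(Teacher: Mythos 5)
Your proposal is correct and follows essentially the same route as the paper, whose proof simply combines Example~\ref{exa:lift analytic field} (for $(L,I)\rightsquigarrow F$) with Lemma~\ref{L:deeply ramified} (for $F\rightsquigarrow(L,I)$). The one point you rightly single out as the ``main obstacle,'' namely the round-trip identification $(\gotho_L/(\overline{z}_0))^{\frep}\cong\gotho_L$, is exactly the detail the paper leaves implicit in its ``follows immediately,'' and your direct injectivity/surjectivity argument for it (together with the appeal to Theorem~\ref{T:quotient norm}(a) for isometry) is correct.
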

\begin{proof}
This follows immediately from Example~\ref{exa:lift analytic field} and Lemma~\ref{L:deeply ramified}.
\end{proof}

We next study the compatibility of this correspondence with finite extensions of fields. Moving from characteristic
$p$ to characteristic $0$ turns out to be straightforward.
\begin{lemma} \label{L:transfer degree}
Fix $F$ and $(L,I)$ corresponding as in Theorem~\ref{T:perfectoid field}.
Then for any finite extension $M$ of $L$, the pair $(M, I W(\gotho_M))$ corresponds via
Theorem~\ref{T:perfectoid field} to a finite extension $E$ of $F$ with $[E:F] = [M:L]$.
\end{lemma}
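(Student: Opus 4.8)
The plan is to produce $E$ explicitly as a quotient of $W(\gotho_M)[[\overline{z}]^{-1}]$ and then verify the degree count. First I would fix a generator $z$ of $I$ in $W(\gotho_L)$ which is primitive of degree $1$. Since $M/L$ is finite and $L$ is a perfect analytic field of characteristic $p$, the field $M$ is again a perfect analytic field of characteristic $p$ (Definition~\ref{D:Banach ring} gives the analytic field structure, and perfectness of $M$ follows because $M$ is a finite, hence separable since $L$ is perfect, extension of a perfect field). Thus $W(\gotho_M)$ makes sense; I claim $z$, viewed in $W(\gotho_M)$, is still primitive of degree $1$. This is because primitivity is detected by the conditions $\overline{z}_0 \in \gotho_M^\times$ with $\alpha_M(\overline{z}_0) = p^{-1}$, $\alpha_M(\overline{z}_0^{-1}) = p$, and $\overline{z}_1 \in \gotho_M^\times$; the norm on $M$ extends that on $L$ and $\gotho_L^\times \subseteq \gotho_M^\times$, so all four conditions persist. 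Hence $(M, I W(\gotho_M)) = (M, (z))$ is a legitimate pair for Theorem~\ref{T:perfectoid field}, and applying the correspondence yields a perfectoid analytic field $E = \Frac(W(\gotho_M)/(z))$, which by Example~\ref{exa:lift analytic field} equals $W(\gotho_M)[[\overline{z}]^{-1}]/(z)$.

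Next I would produce the embedding $F \hookrightarrow E$ and compute the degree. The inclusion $\gotho_L \hookrightarrow \gotho_M$ induces $W(\gotho_L) \hookrightarrow W(\gotho_M)$ compatibly with $\varphi$ and with the Teichmüller maps, carrying $(z)$ into $(z)$; passing to quotients and inverting $[\overline{z}]$ gives a map $F \to E$ of analytic fields. To see $[E:F] = [M:L]$, I would argue at the level of valuation rings and residue data. By Lemma~\ref{L:deeply ramified}(c), there is a natural isomorphism $\gotho_E/(p) \cong \gotho_M/(\overline{z})$ and likewise $\gotho_F/(p) \cong \gotho_L/(\overline{z})$, compatibly with the map $F \to E$. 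Now $[M:L] = e(M/L) f(M/L)$ where $e$ is the ramification index and $f$ the residue degree; the same product formula holds for $E/F$ since $E$ and $F$ are (non-discretely-valued, but still "defectless" in the relevant sense) analytic fields — more carefully, I would compute $e$ and $f$ separately. For the residue fields, Lemma~\ref{L:deeply ramified}(c) gives $\kappa_E \cong \kappa_M$ and $\kappa_F \cong \kappa_L$ naturally, so $f(E/F) = f(M/L)$. For the value groups, Lemma~\ref{L:deeply ramified}(b) gives $\beta_E(E^\times) = \alpha_M(M^\times)$ and $\beta_F(F^\times) = \alpha_L(L^\times)$, so the value group index is the same; hence $[E:F] = [M:L]$, provided both extensions are defectless.

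The main obstacle I expect is precisely the defectlessness: ruling out a nontrivial defect in the extension $E/F$ of non-discretely-valued fields, so that $[E:F]$ genuinely equals $e(E/F)\,f(E/F)$ rather than being strictly larger. One clean way around this is to avoid the $ef$ formula entirely and instead argue that $\gotho_E = \gotho_M \otimes_{?}$-type statements hold integrally: specifically, that $W(\gotho_M)$ is free of rank $[M:L]$ over $W(\gotho_L)$ (which follows from Theorem~\ref{T:Witt}, since $\gotho_M$ is not finite over $\gotho_L$ in general — so this needs care, perhaps replacing $\gotho$ by $\gothm^{1/p^\infty}$-style almost-module arguments as in Remark~\ref{R:almost perfect}). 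A more robust route, and the one I would actually pursue, is to reduce to finite subextensions: write $M$ as a completed perfection-type limit is not available, but $M/L$ being finite separable, choose a primitive element and a minimal polynomial $\overline{P} \in \gotho_L[\overline{T}]$ (after scaling) of degree $n = [M:L]$; lift its coefficients via Teichmüller-plus-correction to get $P \in W(\gotho_L)[T]$, and show $W(\gotho_M)/(z) \cong (W(\gotho_L)/(z))[T]/(\overline{P})$ by a Hensel/Krasner argument analogous to the proof of Lemma~\ref{L:descend etale on field}(b). This directly exhibits $E$ as degree-$n$ over $F$. I would structure the final write-up around this second approach, keeping the $e$ and $f$ computations above only as a consistency check.
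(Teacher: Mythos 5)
Your setup is fine: $M$ is again a perfect analytic field, $z$ remains primitive of degree $1$ in $W(\gotho_M)$, and the inclusion $W(\gotho_L) \hookrightarrow W(\gotho_M)$ induces an embedding $F \hookrightarrow E$. But neither of your two routes for the degree count closes. The $e$-$f$ route fails exactly where you suspect: these are non-discretely-valued fields, defect extensions genuinely occur (already for the completed perfection of $\Fp((\overline{\pi}))$), and matching value groups and residue fields gives at best $[E:F] \geq e f$ and $[M:L] \geq ef$, with no control in either direction. Worse, you never establish that $E$ is \emph{finite} over $F$ at all before trying to compute its degree. Your Krasner route is the right genre of argument but is only a sketch with two real holes: (i) surjectivity of $F[T]/(P) \to E$ is not automatic, because $\gotho_M \neq \gotho_L[\overline{x}]$ and the Teichm\"uller map is not additive, so showing that $\theta([\overline{x}])$ generates $E$ over $F$ already needs the almost-finiteness of $\gotho_M$ over $\gotho_L$ (the issue you flag in passing via Remark~\ref{R:almost perfect} but do not execute); and (ii) irreducibility of $P$ over $F$, equivalently the lower bound $[E:F] \geq [M:L]$, is circular as stated — it is precisely the defect problem in disguise. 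The paper does record (Remark~\ref{R:Robba to perfectoid}) that a proof in this spirit exists, but it runs through the henselian property of $\tilde{\calR}^{\inte,r}_L$ (Proposition~\ref{P:perfect henselian}) and Lemma~\ref{L:stable residue2}, i.e., machinery developed well after this lemma; a bare Hensel--Krasner lift of the minimal polynomial of a primitive element does not suffice.

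The paper's actual proof avoids all of this with a short Galois argument you do not consider. First assume $M/L$ Galois with group $G$; then $G$ acts on $E = W(\gotho_M)[p^{-1}]/IW(\gotho_M)[p^{-1}]$, and because $I$ is principal, averaging over $G$ gives a projection $E \to W(\gotho_L)[p^{-1}]/IW(\gotho_L)[p^{-1}] = F$, whence $E^G = F$. Artin's lemma then delivers, in one stroke, both that $E/F$ is finite Galois and that $[E:F] = \#G = [M:L]$; the general case follows by applying Artin's lemma again inside a Galois closure. If you want to salvage your approach, you should either adopt this averaging argument or be prepared to prove the almost-finite-\'etaleness of $\gotho_M$ over $\gotho_L$ and carry it through the quotient by $z$.
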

\begin{proof}
Suppose first that $M$ is Galois over $L$, and put $G = \Gal(M/L)$. Since $I$ is a principal ideal,
averaging over $G$ induces a projection
\[
E = \frac{W(\gotho_M)[p^{-1}]}{I W(\gotho_M)[p^{-1}]} \to
\frac{W(\gotho_L)[p^{-1}]}{W(\gotho_L)[p^{-1}] \cap I W(\gotho_M)[p^{-1}]}
= \frac{W(\gotho_L)[p^{-1}]}{I W(\gotho_L)[p^{-1}]}
= F.
\]
Consequently, $E^G = F$, so by Artin's lemma, $E$ is a finite Galois extension of $F$
and $[E:F] = \#G = [M:L]$. This proves the claim when $M$ is Galois; the general case follows by Artin's lemma again.
\end{proof}

For the reverse direction, the crucial case is when the characteristic $p$ field is algebraically
closed.
\begin{lemma} \label{L:lift algebraically closed}
Fix $F$ and $(L,I)$ corresponding as in Theorem~\ref{T:perfectoid field}.
If $L$ is algebraically closed, then so is $F$.
\end{lemma}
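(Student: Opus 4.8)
The plan is to reduce everything to the production of \emph{approximate} roots and then to run a Newton--Puiseux-type approximation using the two structural inputs supplied by Lemma~\ref{L:deeply ramified}: the value group $|F^\times| = \alpha(K^\times) = \alpha(L^\times)$ is divisible (since $L$ is algebraically closed), and the residue field $\kappa_F \cong \kappa_K = \kappa_L$ is algebraically closed. First I would record the standard reduction: if $F$ is complete and every monic $P \in F[T]$ of degree $\geq 1$ satisfies $\inf_{x \in F} |P(x)|_F = 0$, then $F$ is algebraically closed. Indeed, factoring $P = \prod_{i=1}^d (T - \alpha_i)$ over $\overline{F}$, the bound $\min_i |x - \alpha_i| \leq |P(x)|^{1/d}$ shows that a sequence $x_n \in F$ with $|P(x_n)| \to 0$ has a subsequence converging (in $\widehat{\overline{F}}$) to some fixed root $\alpha$; being Cauchy in $F$, this subsequence converges in $F$, so $\alpha \in F$. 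Thus it suffices to show $\inf_{x\in F}|P(x)|_F = 0$ for every monic $P$ of positive degree, and after the substitution $T \mapsto \lambda T$ and rescaling (using $|F^\times|$ divisible) we may assume $P \in \gotho_F[T]$, monic of degree $d$.

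Next I would set up the reduction step using $\gotho_F = W(\gotho_L)/(z)$ for a generator $z$ of $I$ primitive of degree $1$ (Example~\ref{exa:lift analytic field}, Lemma~\ref{L:deeply ramified}(d)), and argue by induction on $d$, the case $d=1$ being trivial. Reduce $P$ modulo $\gothm_F$ to $\overline{P} \in \kappa_F[T]$; since $\kappa_F$ is algebraically closed, $\overline{P}$ has a root $\overline{x}_0$. If $\overline{x}_0$ is a simple root, then $P'$ has a unit value there and $\overline{x}_0$ lifts to a genuine root of $P$ in $\gotho_F$, because $(\gotho_F,\gothm_F)$ is henselian (being the valuation ring of a complete nonarchimedean field; alternatively lift over the $p$-adically complete ring $W(\gotho_L)$ via Hensel's lemma, cf.\ Remark~\ref{R:henselian criteria}); this case is done. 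If $\overline{x}_0$ has multiplicity $e$ with $2 \leq e < d$, then $\overline{P} = (T-\overline{x}_0)^e \overline{H}$ with $(T-\overline{x}_0)^e$ and $\overline{H}$ coprime, so the henselian pair $(\gotho_F,\gothm_F)$ gives a factorization $P = P_1 H$ with $P_1$ monic of degree $e < d$; applying the inductive hypothesis to $P_1$ produces an approximate (indeed exact) root of $P_1$, hence of $P$, and this case is done as well.

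The remaining case is $\overline{P} = (T-\overline{x}_0)^d$, i.e.\ after translating $P(T+x_0) = T^d + c_{d-1}T^{d-1} + \cdots + c_0$ with all $c_i \in \gothm_F$. Here I would invoke the Newton polygon of $P(T+x_0)$: its least slope $\gamma = \min_i v(c_i)/(d-i)$ is positive, and since $|F^\times|$ is divisible there is $s \in F^\times$ with $v(s) = \gamma$; the rescaled polynomial $R(U) = s^{-d}P(x_0 + sU)$ is again monic in $\gotho_F[U]$ but now $\overline{R} \neq U^d$, so one returns to the top of the induction with $R$ in place of $P$. \emph{The main obstacle is exactly the termination/convergence of this last loop}: one must show that iterating the translate--rescale procedure either eventually falls into the simple-root or lower-degree-factor case, or else produces a sequence of partial substitutions $x_0,\ x_0 + s_1 u_1,\ x_0 + s_1 s_2 u_2 + \cdots$ whose increments have strictly increasing valuation tending to $+\infty$ and along which $|P(\cdot)|_F \to 0$. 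This is a mixed-characteristic analogue of the Newton--Puiseux algorithm; carrying out the bookkeeping that forces genuine improvement at each stage (and ruling out an infinite stall at a multiple-root configuration of full multiplicity) is the technical heart of the argument, and it is here that completeness of $F$, divisibility of $|F^\times|$, and algebraic closedness of $\kappa_F$ are all used simultaneously. Once this is established, the resulting $x_n \in F$ satisfy $|P(x_n)|_F \to 0$, and by the reduction of the first paragraph $P$ has a root in $F$; hence $F$ is algebraically closed.
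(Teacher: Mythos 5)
There is a genuine gap, and you have put your finger on it yourself: the ``termination/convergence of this last loop'' that you defer is not bookkeeping but the entire content of the lemma, and the strategy as you have set it up cannot be completed. The only inputs you allow yourself in the hard case are completeness of $F$, divisibility of $\beta(F^\times)$, and algebraic closedness of $\kappa_F$. These three properties together do \emph{not} imply that a complete nonarchimedean field is algebraically closed (the Newton--Puiseux loop can genuinely stall forever at a configuration where the rescaled translate always reduces to a $d$-th power modulo $\gothm_F$; this is the same phenomenon that makes, e.g., Artin--Schreier extensions invisible to generalized Puiseux series in characteristic $p$). So no amount of careful bookkeeping within your framework will close the gap: you must use the algebraic closedness of $L$ itself, not merely of its residue field.

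That is exactly what the paper's proof does, and the resulting argument is both shorter and quantitative. Given an approximation $x_n$ with $\beta(P(x_n)) \leq p^{-n}$, write $P(T+x_n) = \sum_i Q_i T^i$, choose $u$ with $\beta(u) = \min_j \beta(Q_0/Q_j)^{1/j} \leq \beta(Q_0)^{1/d}$ (here divisibility of the value group, i.e.\ Lemma~\ref{L:deeply ramified}(b), is used), and consider the polynomial $\sum_j (Q_j u^j/Q_0) T^j$, which has coefficients in $\gotho_F$, constant term $1$, and a unit coefficient in some positive degree. Lift its reduction modulo $p$ (not modulo $\gothm_F$!) through the isomorphism $\gotho_F/(p) \cong \gotho_L/(\overline{z})$ of Lemma~\ref{L:deeply ramified}(c) to a polynomial $\overline{R} \in \gotho_L[T]$ whose Newton polygon has largest slope $0$; since $L$ is algebraically closed, $\overline{R}$ has a root $y' \in \gotho_L^\times$, and lifting $y'$ to a unit $y \in \gotho_F$ gives $x_{n+1} = x_n + uy$ with $\beta(P(x_{n+1})) \leq p^{-1}\beta(Q_0)$ and $\beta(x_{n+1}-x_n) \leq \beta(Q_0)^{1/d}$. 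The crucial point is that solving the auxiliary equation over $L$ rather than over $\kappa_L$ yields a congruence modulo $p$, hence a \emph{fixed} geometric improvement factor $p^{-1}$ at every step, which is what forces convergence; working modulo $\gothm_F$ only gives strict improvement with no uniform gap, which is precisely the stall you were worried about. If you want to salvage your write-up, replace the case analysis on the multiplicity of the residual root by this single uniform iteration (your first-paragraph reduction to $\inf_x \beta(P(x)) = 0$ then becomes unnecessary, since the $x_n$ form a Cauchy sequence converging to an actual root).
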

\begin{proof}
Let $\beta$ denote the norm on $F$.
Let $P(T) \in \gotho_F[T]$ be an arbitrary monic polynomial of degree $d \geq 1$; it suffices to check that
$P(T)$ has a root in $\gotho_F$.
We will achieve this by exhibiting a sequence $x_0,x_1,\dots$ of elements of $\gotho_F$ such that
for all $n \geq 0$,  $\beta(P(x_n)) \leq p^{-n}$ and $\beta(x_{n+1} - x_n) \leq p^{-n/d}$.
This sequence will then have a limit $x \in \gotho_F$ which is a root of $P$.

To begin, take $x_0 = 0$. Given $x_n \in \gotho_F$ with $\beta(P(x_n)) \leq p^{-n}$,
write $P(T + x_n) = \sum_i Q_i T^i$.
If $Q_0 = 0$, we may take $x_{n+1} = x_n$, so assume hereafter that $Q_0 \neq 0$.
Put
\[
c = \min\{\beta(Q_0/Q_j)^{1/j}: j > 0, Q_j \neq 0\};
\]
by taking $j=d$, we see that $c \leq \beta(Q_0)^{1/d}$.
Also, $\beta(F^\times) = \alpha(L^\times)$
by Lemma~\ref{L:deeply ramified}, and the latter group is divisible because
$L$ is algebraically closed; we thus have $c = \beta(u)$ for some $u \in \gotho_F$.

Apply Lemma~\ref{L:deeply ramified} to construct $\overline{z} \in \gotho_F$ with $\alpha(\overline{z}) = p^{-1}$.
For each $i$, choose $\overline{R}_i \in \gotho_{L}$
whose image in $\gotho_{L}/(\overline{z}) \cong \gotho_F/(p)$ is the same as that of
$Q_i u^i/Q_0$. Define the polynomial $\overline{R}(T) = \sum_i \overline{R}_i T^i \in \gotho_{L}[T]$.
By construction, the largest slope in the Newton polygon of $\overline{R}$ is 0; by this observation
plus the fact that $L$ is algebraically closed, it follows that $\overline{R}(T)$ has a root
$y' \in \gotho_{L}^\times$. Choose $y \in \gotho_F^\times$ whose image in $\gotho_F/(p) \cong \gotho_{L}/(\overline{z})$
is the same as that of $y'$, and take $x_{n+1} = x_n + u y$.
Then $\sum_i Q_i u^i y^i / Q_0 \equiv 0 \pmod{p}$, so
$\beta(P(x_{n+1})) \leq p^{-1} \beta(Q_0) \leq p^{-n-1}$
and $\beta(x_{n+1} - x_n) = \beta(u) \leq \beta(Q_0)^{1/d} \leq p^{-n/d}$.
We thus obtain the desired sequence, proving the claim.
\end{proof}

\begin{theorem} \label{T:mixed lift field}
For $F$ and $(L,I)$ corresponding as in Theorem~\ref{T:perfectoid field},
the correspondence described in Lemma~\ref{L:transfer degree}
induces a tensor equivalence $\FEt(F) \cong \FEt(L)$. In particular, every finite extension of $F$
is perfectoid, and the absolute Galois groups of $F$ and $L$ are homeomorphic.
\end{theorem}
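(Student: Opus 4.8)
The plan is to reduce to the purely Galois-theoretic assertion that the functor $\Phi\colon \FEt(L) \to \FEt(F)$ of Lemma~\ref{L:transfer degree} identifies the tower of finite subextensions of a separable closure $\overline{L}$ of $L$ with that of a separable closure of $F$. First I would fix $\overline{L}$ (which is also an algebraic closure, since $L$ is perfect), write it as the filtered union of its finite subextensions $M$, and let $C$ be the completion of $\overline{L}$; then $C$ is a perfect analytic field of characteristic $p$ which is moreover algebraically closed, since the completion of an algebraically closed nonarchimedean field is algebraically closed (e.g.\ \cite[\S 3.4.1]{bgr}). Choose a generator $z$ of $I$ which is primitive of degree~$1$ in $W(\gotho_L)$; because the conditions of Definition~\ref{D:primitive} involve only $\overline{z}_0$, $\overline{z}_1$ and the norm, $z$ remains primitive of degree~$1$ in $W(\gotho_M)$ and in $W(\gotho_C)$. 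Thus $(M, IW(\gotho_M))$ corresponds via Theorem~\ref{T:perfectoid field} to a perfectoid analytic field $E_M$, finite over $F$ with $[E_M:F] = [M:L]$ by Lemma~\ref{L:transfer degree}, and $(C, IW(\gotho_C))$ corresponds to a perfectoid analytic field $F_C$, which is algebraically closed by Lemma~\ref{L:lift algebraically closed}.

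Next I would identify $F_C$ with the completion of $E := \varinjlim_M E_M$. Since $\gotho_{\overline{L}} = \varinjlim_M \gotho_M$ is dense in $\gotho_C = \widehat{\gotho_{\overline{L}}}$, the image of $\varinjlim_M W(\gotho_M)$ is dense in $W(\gotho_C)$, hence $\varinjlim_M \gotho_{E_M} = \varinjlim_M W(\gotho_M)/(z)$ is dense in $\gotho_{F_C} = W(\gotho_C)/(z)$; as $(z)$ is closed (Definition~\ref{D:primitive}), this exhibits $F_C$ as the completion of the analytic field $E$. Now $F_C$ is algebraically closed, so every object of $\FEt(F_C)$ is a finite product of copies of $F_C$; by Lemma~\ref{L:descend etale on field}(b) applied to the direct system $\{E_M\}$ (whose transition maps are isometric), the base change functor $\FEt(E) \to \FEt(F_C)$ is an equivalence, so the same conclusion holds for $\FEt(E)$, i.e.\ $E$ is separably closed. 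Since $E$ is algebraic over $F$ and $F$ has characteristic $0$, $E$ is an algebraic closure $\overline{F}$ of $F$.

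With $\overline{F} = \varinjlim_M E_M$ established, the rest is bookkeeping. The isomorphisms $\Gal(M/L) \cong \Gal(E_M/F)$ furnished by Lemma~\ref{L:transfer degree} for $M/L$ finite Galois (given by the functorial Galois action on Witt vectors) are compatible as $M$ grows, so they identify the Galois correspondence over $L$ with that over $F$; since $\Phi$ carries the subextension of $M/L$ cut out by a subgroup $H \le \Gal(M/L)$ to the subextension of $E_M/F$ cut out by the same subgroup, it follows that $\Phi$ is essentially surjective (any finite separable $E'/F$ embeds in $\overline{F}$, hence in some $E_M$) and fully faithful; it is rank-preserving by Lemma~\ref{L:transfer degree}, hence a tensor equivalence. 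Passing to inverse limits gives a homeomorphism $G_F = \varprojlim_M \Gal(E_M/F) \cong \varprojlim_M \Gal(M/L) = G_L$, and every finite extension of $F$, being some $E_{M'}$, is perfectoid by Theorem~\ref{T:perfectoid field}.

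The main obstacle is the second step: deducing that $\varinjlim_M E_M$ is separably closed. This is where the genuine input beyond formal Galois theory lies --- namely Lemma~\ref{L:lift algebraically closed} (that the perfectoid partner of an algebraically closed field is algebraically closed), combined with the compatibility of the perfectoid correspondence with filtered colimits of valuation rings and with completion, and with Lemma~\ref{L:descend etale on field}(b) to push the triviality of the finite \'etale theory from the completed field $F_C$ back down to $E$.
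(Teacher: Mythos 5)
Your proof is correct and follows essentially the same route as the paper: pass to the completed algebraic closure of $L$, use Lemma~\ref{L:lift algebraically closed} to see its perfectoid partner is algebraically closed, and deduce that the union of the fields $E_M$ is separably closed. The only cosmetic difference is that you package the last deduction via Lemma~\ref{L:descend etale on field}(b) where the paper invokes Krasner's lemma directly (which is what that lemma's proof rests on anyway), and you spell out the Galois-tower bookkeeping that the paper leaves implicit.
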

\begin{proof}
Let $M$ be the completion of an algebraic closure of $L$.
Via Theorem~\ref{T:perfectoid field},
$(M, I W(\gotho_M))$ corresponds to a perfectoid analytic field $E$, which by Lemma~\ref{L:lift algebraically closed} is
algebraically closed.

By Lemma~\ref{L:transfer degree}, each finite Galois extension of $L$ within $M$ corresponds to a finite
Galois extension of $F$ within $E$ which is perfectoid. The union of the latter is an algebraic extension
of $F$ whose closure is the algebraically closed field $E$; the union is thus forced to be separably closed by Krasner's lemma.
Since $F$ is of characteristic $0$ and hence perfect,
every finite extension of $F$ is thus forced to lie within a finite Galois extension which is perfectoid;
the rest follows from Theorem~\ref{T:perfectoid field}.
\end{proof}

\begin{remark}
Using Theorem~\ref{T:mixed lift field}, it is not difficult to show that the functor $F \rightsquigarrow L$
induced by Theorem~\ref{T:perfectoid field} by forgetting the ideal $I$ is not fully faithful.
For instance, as $F$ varies over finite totally ramified extensions of the completion of $\Qp(\mu_{p^\infty})$ of a fixed degree, the fields
$L$ are all isomorphic.
\end{remark}

Theorem~\ref{T:mixed lift field} implies that the perfectoid property moves up along finite extensions of
analytic fields. It also moves in the opposite direction.
(See Proposition~\ref{P:perfectoid ring descent} for a more general result.)

\begin{lemma} \label{L:trivial cocycle}
Let $K$ be a perfect analytic field of characteristic $p$, and let $G$ be a finite group that acts
faithfully on $K$ by isometric automorphisms. Then $H^1(G, 1 + \gothm_K) = 0$.
\end{lemma}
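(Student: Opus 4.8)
The plan is to reduce the statement to a Hilbert-90-style computation, using the perfectness of $K$ to dispose of a ramification obstruction. First I would set $F = K^G$; by Artin's lemma $K/F$ is a finite Galois extension with $\Gal(K/F) = G$ and $[K:F] = \#G$, and $F$ is a subfield of $K$ on which the norm restricts multiplicatively. Since $K/F$ is separable, the trace form is nondegenerate, so $\Trace_{K/F}\colon K \to F$ is a nonzero $F$-linear map; its image is a nonzero $F$-submodule of $F$, hence all of $F$, and we may fix once and for all an element $\theta \in K$ with $\Trace_{K/F}(\theta) = 1$, then set $c_1 = \max\{1, |\theta|\}$.

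Next I would record two soft facts about the $G$-module $M := 1 + \gothm_K$. Because $G$ acts by isometries, $\gothm_K$, and hence $M$, is $G$-stable. Because $K$ has characteristic $p$ and is perfect, the map $\phi_m\colon x \mapsto x^{p^m}$ is, for each $m \geq 0$, a $G$-equivariant automorphism of $M$: it is injective since $(1+x)^{p^m} = 1 + x^{p^m}$ and $x^{p^m}=0 \Rightarrow x=0$, surjective since $1+y = \phi_m(1 + y^{p^{-m}})$ with $y^{p^{-m}} \in \gothm_K$, and $G$-equivariant by functoriality. Consequently $\phi_m$ induces an automorphism $\phi_{m,*}$ of $H^1(G,M)$ carrying the class of a cocycle $c$ to the class of $g \mapsto c_g^{p^m}$, so it suffices to show every cocycle is killed after raising to a high enough power of $p$.

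So let $c\colon G \to M$ be a $1$-cocycle, $c_{gh} = c_g\, g(c_h)$, and put $s = \max_{g \in G}|c_g - 1|$; since $G$ is finite and each $|c_g - 1| < 1$, we have $s < 1$. Choose $m$ with $s^{p^m} c_1 < 1$ and replace $c$ by $c' := (c_g^{p^m})_g$; in characteristic $p$ one has $c'_g - 1 = (c_g - 1)^{p^m}$, so $\max_g |c'_g - 1| \leq s^{p^m}$. Now set
\[
b \;=\; \sum_{g \in G} c'_g\, g(\theta) \;=\; \Trace_{K/F}(\theta) + \sum_{g \in G} (c'_g - 1)\, g(\theta) \;=\; 1 + \sum_{g \in G} (c'_g - 1)\, g(\theta).
\]
Each $|g(\theta)| = |\theta|$ by isometry, so $\bigl|\sum_g (c'_g-1)g(\theta)\bigr| \leq s^{p^m} |\theta| \leq s^{p^m} c_1 < 1$, whence $b \in 1 + \gothm_K = M$; in particular $b \in \gotho_K^\times$. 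A direct computation using $c'_{hg} = c'_h\, h(c'_g)$ gives $h(b) = \sum_g h(c'_g)\, hg(\theta) = (c'_h)^{-1}\sum_g c'_{hg}\, hg(\theta) = (c'_h)^{-1} b$, so $c'_h = b/h(b)$ for all $h \in G$; thus $c'$ is the coboundary of $b \in M$, i.e.\ $[c'] = 0$ in $H^1(G,M)$. Since $[c'] = \phi_{m,*}[c]$ and $\phi_{m,*}$ is an isomorphism, $[c] = 0$, and as $c$ was arbitrary, $H^1(G, 1+\gothm_K) = 0$. (If the norm on $K$ is trivial then $\gothm_K = 0$ and the statement is vacuous, so nothing is lost — though in fact the argument never uses nontriviality of the norm.) The one point needing care, and the reason the naive trace argument alone does not suffice, is that $\Trace_{K/F}$ need not carry $\gotho_K$ near $1$ in $\gotho_F$ when $K/F$ is wildly ramified (the image ideal can be proper, or $\theta$ can be forced to have large norm); the passage to $c^{p^m}$, available precisely because $M$ is uniquely $p$-divisible and $K$ is perfect, is exactly what makes the estimate $|b-1|<1$ go through irrespective of ramification.
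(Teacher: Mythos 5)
Your proof is correct, but it takes a genuinely different route from the paper's. The paper argues additively: by the normal basis theorem the complex $K \to K^G \to K^{G^2} \to \cdots$ computing additive Galois cohomology is exact, and applying $\overline{\varphi}^{-n}$ shows it is exact with norm control arbitrarily close to optimal; a multiplicative $1$-cocycle $f$ with $\max_g|f(g)-1| = \delta$ has additive coboundary of size $\delta^2$, so it can be modified by an element of $1+\gothm_K$ of size at most $\delta^{1/2}$ to produce a cocycle bounded by $\delta^{3/2}$, and iterating yields a convergent trivialization. You instead run the classical Hilbert 90 resolvent: pick $\theta$ with $\Trace_{K/F}(\theta)=1$, form $b=\sum_g c_g\,g(\theta)$, and note that the only obstruction — $b$ might not lie in $1+\gothm_K$ when $|\theta|$ is large, i.e.\ in the wildly ramified case — is removed by first replacing $c$ by $c^{p^m}$, which is harmless because $x\mapsto x^{p^m}$ is a $G$-equivariant automorphism of $1+\gothm_K$ (perfectness plus characteristic $p$). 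Both proofs lean on the invertibility of Frobenius, but yours is a one-shot argument that never invokes completeness of $K$ (the paper's iteration does), while the paper's version yields as a byproduct the almost-optimal exactness of the additive complex, a quantitative statement in the spirit of Remark~\ref{R:perfect uniform strict}. A small simplification of your last step: rather than appealing to $\phi_{m,*}$ being an automorphism of $H^1$, you can note directly that $b^{p^{-m}} \in 1+\gothm_K$ (since $|b^{p^{-m}}-1| = |b-1|^{p^{-m}} < 1$) and that $c_g = b^{p^{-m}}/g(b^{p^{-m}})$, exhibiting $c$ itself as a coboundary.
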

\begin{proof}
We start with an observation concerning additive Galois cohomology.
Since $K$ is an acyclic $K^{G}[G]$-module by the normal basis theorem, the complex
\[
K \to \Hom(G,K)\to \Hom(G^2,K) \to \cdots
\]
computing Galois cohomology is exact. Using the inverse of Frobenius as in Remark~\ref{R:perfect uniform strict}, we see
that this complex is in fact almost optimal exact for the supremum norm on each factor.

Now let $f: G \to 1 + \gothm_K$ be a 1-cocycle, and put $\delta = \max\{|f(g)-1|: g \in G\} < 1$.
If we view $f$ as an element of $\Hom(G,K)$, its image in $\Hom(G^2,K)$ has supremum norm at most $\delta^2$.
By the previous paragraph, we can modify $f$ by an element of $1 + \gothm_K$ of norm at most
$\delta^{1/2}$ to get a new multiplicative cocycle $f'$ such that
$\max\{|f'(g)-1|: g \in G\} \leq \delta^{3/2}$. By iterating the construction, we obtain the desired
conclusion.
\end{proof}

\begin{prop} \label{P:perfectoid field descent}
Let $E/F$ be a finite extension of analytic fields such that $E$ is perfectoid. Then $F$ is also perfectoid.
\end{prop}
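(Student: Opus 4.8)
The plan is to verify the four conditions of Definition~\ref{D:perfectoid field} for $F$. Three are immediate: $F\subseteq E$ forces $F$ to have characteristic $0$; $\kappa_F$ embeds in $\kappa_E$, so it has characteristic $p$; and $|F^\times|$ has finite index in $|E^\times|$, so if $F$ were discretely valued then $E$ would be too. Thus the content is surjectivity of $\overline{\varphi}$ on $\gotho_F/(p)$, which I would reach after two reductions. First, replacing $E$ by the Galois closure of $E/F$ (still perfectoid by Theorem~\ref{T:mixed lift field}), we may assume $E/F$ is Galois with group $G$, acting isometrically on $E$. Second, for $P\le G$ a $p$-Sylow subgroup and $F_0:=E^P$, the extension $E/F_0$ is Galois with group the $p$-group $P$, while $[F_0:F]$ is prime to $p$. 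Granting that $F_0$ is perfectoid, the passage from $F_0$ to $F$ is easy: for $x\in\gotho_F$ choose $y\in\gotho_{F_0}$ with $y^p-x\in p\gotho_{F_0}$; since $\Norm_{F_0/F}$ is a polynomial map carrying $p\gotho_{F_0}$ into $p\gotho_F$, we get $\Norm_{F_0/F}(y)^p=\Norm_{F_0/F}(y^p)\equiv x^{[F_0:F]}\pmod{p\gotho_F}$, so $x^{[F_0:F]}$ lies in the image $B$ of $\overline{\varphi}$ on $\gotho_F/(p)$. As $[F_0:F]$ is prime to $p$, choosing $m\ge 1$, $n\ge 0$ with $m[F_0:F]-np=1$ shows every unit of $\gotho_F/(p)$ lies in $B$; and every non-unit $a$ satisfies $a=(a+1)-1$ with $a+1$ a unit, so $B=\gotho_F/(p)$. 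Hence it suffices to treat the case that $G$ is a $p$-group.

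So assume $G$ is a $p$-group. Let $M:=\Frac((\gotho_E/(p))^{\frep})$ be the tilt of $E$, a perfect analytic field of characteristic $p$ (Lemma~\ref{L:deeply ramified}), with the induced isometric $G$-action. This action is faithful: a ring automorphism of $\gotho_E$ that is trivial modulo $p$ must be trivial, since $\gotho_E$ is $p$-torsion-free and $p$-adically separated and the map $(g-1)/p$ it then induces on $\gotho_E/(p)$ is a derivation of the Frobenius-surjective $\Fp$-algebra $\gotho_E/(p)$, hence zero, so $g$ is trivial modulo $p^2$, and inductively modulo every power of $p$. By Artin's lemma $M/M^G$ is finite Galois with group $G$, and $L:=M^G$ is a perfect analytic field of characteristic $p$ with $[M:L]=\#G$, $\gotho_L=\gotho_M^G$ and $W(\gotho_L)=W(\gotho_M)^G$. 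Choose $z\in W(\gotho_M)$ primitive of degree $1$ with $\theta(z)=0$ (Lemma~\ref{L:deeply ramified}(d)); then $\ker(\theta\colon W(\gotho_M)\to\gotho_E)=zW(\gotho_M)$ (Example~\ref{exa:lift analytic field}) is $G$-stable, so $g(z)=u_g z$ for a $1$-cocycle $(u_g)$ with values in $W(\gotho_M)^\times$.

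The goal is to show $(u_g)$ is a coboundary. First reduce to a small cocycle: sending a unit to the residue modulo $\gothm_M$ of its leading Teichm\"uller coordinate gives a $G$-equivariant homomorphism $W(\gotho_M)^\times\to\kappa_M^\times$, and $H^1(G,\kappa_M^\times)=0$ because $G$ is a $p$-group while $\kappa_M^\times$ is uniquely $p$-divisible ($\kappa_M$ is perfect). So after replacing $z$ by $[c]^{-1}z$ for a suitable lift $c\in\gotho_M^\times$ of a coboundary representative, we may assume $\lambda(\alpha)(u_g-1)<1$ for all $g$, where $\alpha$ is the norm on $M$ and $\lambda(\alpha)$ the associated norm on $W(\gotho_M)$. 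Then a successive-approximation argument exactly as in the proof of Lemma~\ref{L:trivial cocycle} produces $v$ in $1+\{x:\lambda(\alpha)(x)<1\}$ with $u_g=g(v)v^{-1}$; this uses that the additive Galois-cohomology complex of $W(\gotho_M)$, with $\lambda(\alpha)$ on each factor, is almost optimal exact, which I would deduce by d\'evissage along the $p$-adic filtration (whose graded pieces are copies of $\gotho_M$) from the almost-optimal-exactness of the complex of $M$ established inside the proof of Lemma~\ref{L:trivial cocycle}. Now $z':=v^{-1}[c]^{-1}z\in W(\gotho_M)^G=W(\gotho_L)$ generates $zW(\gotho_M)$ and is still primitive of degree $1$, since multiplying a primitive-of-degree-$1$ element by a unit of $W(\gotho_M)$ scales its leading Teichm\"uller coefficient by a norm-$1$ element and alters the normalized $p^1$-coefficient only by terms of strictly smaller norm.

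Finally, by the perfectoid correspondence (Theorem~\ref{T:perfectoid field}), $(L,z'W(\gotho_L))$ corresponds to a perfectoid analytic field $F'$; the inclusion $W(\gotho_L)\hookrightarrow W(\gotho_M)$ carries $z'W(\gotho_L)$ into $z'W(\gotho_M)=zW(\gotho_M)$ and so induces an isometric embedding $F'\hookrightarrow\Frac(W(\gotho_M)/zW(\gotho_M))=E$ whose image is fixed by $G$, whence $F'\subseteq E^G=F$; and by Lemma~\ref{L:transfer degree} applied to $M/L$, the pair $(M,zW(\gotho_M))$ — which corresponds to $E$ — corresponds to an extension of $F'$ of degree $[M:L]=\#G=[E:F]$, so $[E:F']=[E:F]$ and therefore $F=F'$ is perfectoid. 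The main obstacle is the cocycle-killing step of the third paragraph: its content is promoting Lemma~\ref{L:trivial cocycle} from $1+\gothm_M$ to the units of $W(\gotho_M)$, which requires the d\'evissage along the $p$-adic filtration and the quantitative almost-optimal-exactness bookkeeping, and it is precisely the preliminary reduction to $\lambda(\alpha)(u_g-1)<1$ — possible only because $G$ is a $p$-group and $\kappa_M^\times$ is uniquely $p$-divisible, which is why the $p$-Sylow reduction comes first — that makes this feasible.
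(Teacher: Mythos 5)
Your overall strategy coincides with the paper's: pass to a Galois closure, tilt to $M=\Frac((\gotho_E/(p))^{\frep})$, set $L=M^G$ via Artin's lemma, produce a $G$-invariant primitive generator of $\ker\theta$ by trivializing the unit cocycle $u_g=g(z)/z$, and conclude with Theorem~\ref{T:perfectoid field} and Lemma~\ref{L:transfer degree}; the faithfulness argument, the primitivity of $z'$, and the final degree count are all correct. The one genuinely different (sub)route is your $p$-Sylow reduction. It is valid (the norm computation and the unique $p$-divisibility of $\kappa_M^\times$ both check out), but the paper makes it unnecessary by a normalization you missed: writing $z=[\overline{z}]+pz_1$ with $z_1\in W(\gotho_M)^\times$ (this is what primitivity of degree $1$ gives) and replacing $z$ by $z_1^{-1}z$, whose image in $W(\kappa_M)$ is $p$. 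The cocycle of the normalized generator then maps to $p/p=1$ in $W(\kappa_M)^\times$ automatically, since $W(\kappa_M)$ is $p$-torsion-free, so one lands in your ``small cocycle'' situation for arbitrary $G$, with no Sylow step and no appeal to $p$-divisibility of $\kappa_M^\times$.

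On the step you correctly single out as the main obstacle: be aware that the d\'evissage as you describe it does not quite close. The graded pieces of the $p$-adic filtration on $W(\gotho_M)$ are copies of $\gotho_M$, and $H^1(G,\gotho_M)$ is only \emph{almost} zero; concretely, almost optimal exactness of the complex for $M$ gives coboundary preimages of norm at most $(1+\epsilon)$ times the norm of the cocycle, which at filtration level $0$ is harmless (there the cocycle has norm $\leq\delta<1$, so the preimage lies in $\gothm_M$), but at levels $i\geq 1$ the relevant cocycles in $\gotho_M$ may have norm $1$, and a preimage of norm $1+\epsilon$ escapes $\gotho_M$, so the series $\sum p^i[b_i]$ you want to form need not lie in $W(\gotho_M)$. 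Some further input is needed here — e.g., exploiting the multiplicative structure of $1+pW(\gotho_M)$ together with the fact that $H^1$ of the $p$-group $G$ is killed by $\#G$, or the almost-ring-theoretic bookkeeping of Gabber--Ramero. To be fair, this is exactly the point the paper itself compresses, since it applies Lemma~\ref{L:trivial cocycle} (stated only for $1+\gothm_K$ with $K$ of characteristic $p$) to a cocycle valued in $\ker(W(\gotho_L)^\times\to W(\kappa_L)^\times)$ without comment; so this is a shared imprecision rather than a defect peculiar to your route, but as written it is the one place where your proof asserts more than it proves.
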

\begin{proof}
By Theorem~\ref{T:mixed lift field}, we are free to enlarge $E$, so we may assume $E/F$ is Galois with group
$G$. Let $(L, I)$ be the pair corresponding to $E$ via Theorem~\ref{T:perfectoid field}; then $G$ acts on
both $L$ and $I$.

We first check that $I$ admits a $G$-invariant generator (this is immediate if $F$ is already known to contain
a perfectoid field, but not otherwise). Let $z \in I$ be any generator. Write $z$ as $[\overline{z}] + p z_1$
with $z_1 \in W(\gotho_E)^\times$; then $z_1^{-1} z$ is also a generator. Define the function $f: G \to  W(\gotho_E)^\times$ taking $g \in G$ to $g(z_1^{-1} z)/(z_1^{-1} z)$. The composition $G \to W(\gotho_E)^\times
\to W(\kappa_E)^\times$ is identically 1, so we may apply Lemma~\ref{L:trivial cocycle} to trivialize the 1-cocycle;
that is, there exists $y \in W(\gotho_E)^\times$ with $f(g) = g(y)/y$ for all $g \in G$. Then
$(yz_1)^{-1} z$ is a $G$-invariant generator of $I$.

Put $K = L^G$; by Artin's lemma, $L$ is Galois over $K$ of degree
$\#G = [E:F]$. Since $I$ admits a generator contained in $W(\gotho_K)$ (which is then primitive of degree 1),
we may apply Theorem~\ref{T:perfectoid field} to the pair $(K, I \cap W(\gotho_K))$ to obtain a perfectoid
field $F'$. By Lemma~\ref{L:transfer degree}, $E$ is Galois over $F'$ of degree $[L:K] = [E:F]$ with Galois group $G$;
consequently, $F' = E^G = F$. This proves the claim.
\end{proof}

\begin{defn}
An analytic field $K$ is \emph{deeply ramified} if for any finite extension $L$ of $K$,
$\Omega_{\gotho_L/\gotho_K} = 0$; that is, the morphism $\Spec(\gotho_L) \to \Spec(\gotho_K)$ is formally unramified.
(Beware that this morphism is usually not of finite type if $K$ is not discretely valued.)
\end{defn}

\begin{theorem} \label{T:formally unramified}
Any perfectoid analytic field is deeply ramified. (The converse is also true;
see \cite[Proposition~6.6.6]{gabber-ramero}.)
\end{theorem}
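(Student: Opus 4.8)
The plan is to reduce the vanishing of $\Omega_{\gotho_L/\gotho_K}$ for a finite extension $L$ of a perfectoid field $K$ to a statement on the characteristic-$p$ side, where it becomes an easy consequence of perfectness. First I would observe that by Theorem~\ref{T:mixed lift field}, every finite extension $L$ of $K$ is again perfectoid, so it suffices to show: if $K$ is a perfectoid analytic field and $L/K$ is a finite extension, then $\Omega_{\gotho_L/\gotho_K} = 0$. Let $(M, I)$ be the pair in characteristic $p$ corresponding to $L$ via Theorem~\ref{T:perfectoid field}, and let $(N, I) = (M^G\text{-type datum})$ — more precisely, by Lemma~\ref{L:transfer degree} (applied in reverse, using that $L/K$ restricts to a finite extension of the perfect field associated to $K$) we may take $N \subseteq M$ to be the perfect analytic field of characteristic $p$ corresponding to $K$, with $I$ generated by an element $z$ primitive of degree $1$ lying in $W(\gotho_N)$. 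Then $\gotho_L = W(\gotho_M)[[\overline{z}]^{-1}]/(z)$ and similarly for $\gotho_K$, with the extension of valuation rings induced by the inclusion $W(\gotho_N) \hookrightarrow W(\gotho_M)$.

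The key step is then a differentials computation. Since $\gotho_M$ is perfect (being the valuation ring of a perfect field of characteristic $p$) and $\gotho_N$ is perfect, $\Omega_{\gotho_M/\gotho_N} = 0$: any element $x \in \gotho_M$ equals $(x^{1/p})^p$, so $dx = p(x^{1/p})^{p-1} d(x^{1/p}) = 0$ in characteristic $p$. Applying $W(-)$ is not exact, but one can instead argue at the level of the Witt vectors directly, or — cleaner — use that $\gotho_L/(p) = \gotho_M/(\overline{z})$ by Lemma~\ref{L:deeply ramified}(c) (or Example~\ref{exa:lift analytic field}), and analyze $\Omega_{\gotho_L/\gotho_K}$ through its $p$-adic filtration. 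I would show $\Omega_{\gotho_L/\gotho_K}$ is $p$-divisible: if $\omega = dx$ for $x \in \gotho_L$, write $x \equiv y^p \pmod{p \gotho_L}$ using surjectivity of $\overline{\varphi}$ on $\gotho_L/(p)$ (valid since $L$ is perfectoid), so $x = y^p + p x'$ with $x' \in \gotho_L$; then $dx = p y^{p-1} dy + p\, dx' \in p\, \Omega_{\gotho_L/\gotho_K}$. Hence $\Omega_{\gotho_L/\gotho_K} = p\, \Omega_{\gotho_L/\gotho_K} = p^n \Omega_{\gotho_L/\gotho_K}$ for all $n$. On the other hand $\Omega_{\gotho_L/\gotho_K}$ is a finitely generated $\gotho_L$-module: $\gotho_L$ is module-finite over $\gotho_K$ (as $L/K$ is finite and both valuation rings are the integral closures), so $\Omega_{\gotho_L/\gotho_K}$ is generated by the differentials of a finite set of algebra generators. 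A $p$-divisible finitely generated module over $\gotho_L$ (which is $p$-adically separated, indeed $\gotho_L$ is $\gothm_L$-adically separated and $p \in \gothm_L$) is zero by Nakayama, since $p$ lies in the Jacobson radical $\gothm_L$ of the local ring $\gotho_L$.

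The main obstacle I anticipate is the finite-generation claim for $\Omega_{\gotho_L/\gotho_K}$, or rather ensuring the module-finiteness of $\gotho_L$ over $\gotho_K$: when $K$ is not discretely valued, $\gotho_K$ is not noetherian, and one must check that the integral closure $\gotho_L$ of $\gotho_K$ in the finite extension $L$ is still a finite $\gotho_K$-module. This does hold because $\gotho_K$ is a valuation ring and $L/K$ is a finite separable extension (characteristic $0$), so $\gotho_L$ is contained in $\frac{1}{d}(\gotho_K \be_1 + \cdots + \gotho_K \be_m)$ for a $K$-basis $\be_1,\dots,\be_m$ of $L$ inside $\gotho_L$ with $d$ a nonzero discriminant-type element of $\gotho_K$, and submodules of finite free modules over a valuation ring need not be finite in general — but here one gets around it because $L$ itself has only one extension of the valuation (Definition~\ref{D:Banach ring}), so $\gotho_L$ is the unique valuation ring and the standard trace-form argument applies. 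Alternatively, and perhaps more robustly, one can avoid finite generation entirely by passing to the Witt-vector description: $\Omega_{\gotho_L/\gotho_K}$ is a quotient of $\Omega_{W(\gotho_M)/W(\gotho_N)}[[\overline z]^{-1}]$ by the submodule generated by $dz$, and $\Omega_{W(\gotho_M)/W(\gotho_N)}$ is $p$-adically complete with $\Omega_{W(\gotho_M)/W(\gotho_N)}/p = \Omega_{\gotho_M/\gotho_N} = 0$ by perfectness, hence vanishes. I would present the argument via whichever of these two routes the preceding sections support most cleanly; the Witt-vector route seems to sidestep the noetherian issues most directly.
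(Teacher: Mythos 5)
Your reduction (via Theorem~\ref{T:mixed lift field}) to showing $\Omega_{\gotho_L/\gotho_K}=0$ for $L/K$ finite with both fields perfectoid, and your $p$-divisibility step (writing $x\equiv y^p\pmod{p\gotho_L}$ to get $\Omega_{\gotho_L/\gotho_K}=p\,\Omega_{\gotho_L/\gotho_K}$), are exactly the paper's argument. The gap is in passing from $p$-divisibility to vanishing: neither of your two proposed finishes works, and the input you are missing is the one the paper actually uses. Namely, since $L/K$ is a finite \emph{separable} extension of characteristic-zero fields, $\Omega_{L/K}=0$, whence $\Omega_{\gotho_L/\gotho_K}$ is a torsion module killed by a single nonzero $t\in\gotho_K$; combined with $\Omega=p^n\Omega$ for all $n$ and choosing $n$ with $p^n\in t\gotho_K$, one gets $\Omega\subseteq t\,\Omega=0$. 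Some such torsion bound is genuinely needed: $\gotho_{\Cp}$ is perfectoid and $\Omega_{\gotho_{\Cp}/\Zp}$ is $p$-divisible by exactly your computation, yet it is famously nonzero (Fontaine), so ``perfectoid $\Rightarrow\Omega=p\Omega\Rightarrow\Omega=0$'' cannot be closed without exploiting finiteness and separability of the extension.

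Concretely, route (A) fails because $\gotho_L$ is in general \emph{not} a finite $\gotho_K$-module when $K$ is perfectoid; this is the defect phenomenon and is precisely why almost ring theory enters the subject. The trace-form argument only embeds $\gotho_L$ into a finite free $\gotho_K$-module, and over the non-noetherian valuation ring $\gotho_K$ a submodule of a finite free module need not be finitely generated. Indeed, if $L/K$ has trivial ramification and residue extension but $[L:K]=p$ (which occurs for perfectoid $K$), then $\gothm_K\gotho_L=\gothm_L$ because the value group is dense, so $\dim_{\kappa_K}(\gotho_L/\gothm_K\gotho_L)=1<p$ and $\gotho_L$ cannot be finite (hence free of rank $p$) over $\gotho_K$. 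Route (B) fails because $\Omega_{W(\gotho_M)/W(\gotho_N)}$ is not known to be $p$-adically separated: K\"ahler differentials do not commute with $p$-adic completion, so the (correct) computation $\Omega/p\Omega=\Omega_{\gotho_M/\gotho_N}=0$ only yields $\Omega=\bigcap_n p^n\Omega$, not $\Omega=0$. If you replace the ending by the torsion bound coming from separability, the rest of your argument is the paper's proof.
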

\begin{proof}
Let $F$ be a perfectoid field, and let $E$ be a finite extension of $F$. Since $E/F$ is separable,
$\Omega_{E/F} = 0$; it follows easily that $\Omega_{\gotho_E/\gotho_F}$ is killed by some nonzero element of $\gotho_F$.
On the other hand, since $E$ is perfectoid by Theorem~\ref{T:mixed lift field},
for any $x \in \gotho_E$, we can find $y \in \gotho_E$ for which $x \equiv y^p \pmod{p}$.
Hence $\Omega_{\gotho_E/\gotho_F} = p \Omega_{\gotho_E/\gotho_F}$; it now follows that
 $\Omega_{\gotho_E/\gotho_F} = 0$.
\end{proof}

\begin{remark}
Many cases of Theorem~\ref{T:mixed lift field} in which $F$ is the completion of an algebraic extension of $\Qp$
arise from the \emph{field of norms} construction of Fontaine and Wintenberger
\cite{fontaine-wintenberger, wintenberger}.
For instance, one may take any \emph{arithmetically profinite} extension of $\Qp$ thanks to
Sen's theory of ramification in $p$-adic Lie extensions \cite{sen-lie}.
The approach to Theorem~\ref{T:mixed lift field} instead requires only checking the perfectoid condition
for a single analytic field, as then it is transmitted along finite extensions. For example,
for $F$ the completion of $\Qp(\mu_{p^\infty})$, the perfectoid condition is trivial to check.
\end{remark}

\begin{remark} \label{R:gabber-ramero}
Theorems~\ref{T:mixed lift field} and~\ref{T:formally unramified} have also been obtained by Scholze
\cite{scholze1} using an analysis of valuation rings made by Gabber and Ramero \cite[Chapter~6]{gabber-ramero}
in the language of \emph{almost ring theory}.
This generalizes the alternate proof of the Fontaine-Wintenberger theorem introduced by Faltings;
see \cite[Exercise~13.7.4]{brinon-conrad}.
 Scholze uses the term \emph{tilting} to refer to the relationship
between $F$ and $L$, as well as to the corresponding relationship between Banach algebras introduced
in Theorem~\ref{T:perfectoid ring}. (The term \emph{perfectoid} is also due to Scholze.)
\end{remark}

\subsection{The perfectoid correspondence for adic Banach algebras}
\label{subsec:perfectoid2}

We next extend Theorem~\ref{T:perfectoid field} to a correspondence of adic Banach algebras. A parallel development appears in the work of Scholze \cite{scholze1}, but he fixes a pair of corresponding fields and works over these; our treatment does not require this, and gives rise to perfectoid algebras which need not be defined over a perfectoid field. 
Our treatment is much closer in spirit to that given in the Bourbaki seminar of Fontaine
\cite{fontaine-bourbaki}.
The development in \cite{gabber-ramero-arxiv} takes a similar (albeit even more general) approach, but in common with Scholze's treatment it depends heavily on almost ring theory, which ours does not; we achieve similar effects by keeping track of norms. If one is interested in the statements in their almost-ring-theoretic form, these can be recovered after the fact (see for example \S\ref{subsec:almost purity}).

The form of the following definition is taken from \cite{kedlaya-davis}, where the perfectoid condition is studied from a purely ring-theoretic point of view.

\begin{defn} \label{D:perfectoid Banach}
A uniform adic Banach $\Qp$-algebra $(A, A^+)$ is \emph{perfectoid} if 
$\overline{\varphi}: A^+/(p) \to A^+/(p)$ is surjective
and there exists $x \in A^+$ with $x^p \equiv p \pmod{p^2 A^+}$. 
A uniform Banach algebra $A$ is \emph{perfectoid} if $(A,A^\circ)$ is perfectoid.
Note that the condition of $(A,A^+)$ being perfectoid only depends on $A$
(see Proposition~\ref{P:perfectoid formulations}).
\end{defn}

It is worth pointing out some equivalent formulations of the perfectoid property.
\begin{prop} \label{P:perfectoid formulations}
Let $F$ be an analytic field containing $\Qp$ and let $(A,A^+)$ be a uniform adic Banach $F$-algebra.
\begin{enumerate}
\item[(a)]
If $\left| F^\times \right| \neq \left| \Qp^\times \right|$, then $(A,A^+)$ is perfectoid if and only if   $\overline{\varphi}: A^+/(p) \to A^+/(p)$ is surjective.
\item[(b)]
The field $F$ is perfectoid as a uniform Banach $\Qp$-algebra as per Definition~\ref{D:perfectoid Banach}
if and only if it is perfectoid as an analytic field as per Definition~\ref{D:perfectoid field}.
\item[(c)]
Equip $A$ with the spectral norm. Then $A$ is perfectoid if and only if there exists $c \in (0,1)$ such that
for every $x \in A$, there exists $y \in A$ with $|x-y^p| \leq c |x|$. Moreover, if this holds for some $c$,
it holds for any $c \in (p^{-1},1)$.
\item[(d)]
The ring $A$ is perfectoid if and only if $(A,A^+)$ is perfectoid.
\item[(e)]
The ring $A$ is perfectoid if and only if there exists a topologically nilpotent unit $\varpi \in A$ such that $\varpi^p$ divides $p$ in $A^+$ and $\overline{\varphi}: A^+/(\varpi) \to A^+/(\varpi^p)$ is surjective. (This criterion of Fontaine can be used to define perfectoid rings which are not $\Qp$-algebras; see Remark~\ref{R:Fontaine perfectoid}.)
\end{enumerate}
\end{prop}
\begin{proof}
To check (a), assume that $\overline{\varphi}: A^+/(p) \to A^+/(p)$ is surjective.
By hypothesis, there exists $a \in F$ with $p^{-1} < |a| < 1$,
and there exist $b,c \in A^+$ with $b^p \equiv a \mod{pA^+}$, $c^p \equiv (p/a) \pmod{pA^+}$.
In particular, $b^p/a$ and $ac^p/p$ are elements of $A^+$ congruent to $1$ modulo
$p/a$ and $a$, respectively, and so are units. We can thus find $d \in A^+$ with $d^p \equiv (b^p/a)^{-1} (ac^p/p)^{-1} \pmod{p A^+}$, and then
$x = bcd$ has the property that $x^p \equiv p \pmod{p^2 A^+}$. Hence $A$ is perfectoid.
This yields (a), from which (b) follows by taking $(A,A^\circ) = (F, \gotho_F)$.

To check (c), 
suppose first that the given condition holds for some $c \in (0,1)$.
We may then construct a sequence $x_1,x_2,\dots$ in $A$ such that $\left| p - x_1^p \right| \leq cp^{-1}$ and $\left| x_{n} - x_{n+1}^p \right| \leq cp^{p^{-n}}$ for $n \geq 1$. For $n$ sufficiently large, the conditions of Remark~\ref{R:reduce surjectivity} are satisfied for $x = x_n, y = x_{n+1}$, and so 
$\overline{\varphi}: \gotho_A/(p) \to \gotho_A/(p)$ is surjective.
We may then choose $y \in \gotho_A$ with $y^p \equiv p/x_1^p \pmod{p \gotho_A}$, and then $x = x_1 y$ satisfies $x^p \equiv p \pmod{p^2 \gotho_A}$. We conclude that $A$ is perfectoid.

Conversely, suppose that $A$ is perfectoid. Then for every nonnegative integer $m$, we can find $x_m \in A$ with
$x_m^{p^m}/p \in \gotho_A^\times$. Given $x \in A$ nonzero and $c \in (p^{-1},1)$, choose a nonnegative integer $m$ and
an integer $t$ such that $p^{-1}/c < |x/x_m^{pt}| \leq 1$. Since $A$ is perfectoid, we can find $w \in \gotho_A$
with $w^p \equiv (x/x_m^{pt}) \pmod{p}$; then $y = x_m^t w$ satisfies $|x-y^p| \leq c|x|$.

To check (d), suppose first that $(A,A^+)$ is perfectoid. To check that $A$ is perfectoid, we check the criterion of (c) for any $c \in (p^{-1},1)$. Choose $x_1,x_2,\dots \in A^+$ with $x_1^p \equiv p \pmod{p^2 A^+}$ and $x_{n+1}^p \equiv x_n \pmod{p A^+}$ for $n \geq 1$.
Given $x \in A$, we can find a positive integer $n$ and some integer $m$ such that
$\left| x_{n+1}^{pm} x \right| \in (p^{-1}/c, 1)$. 
Then $x_{n+1}^{pm} x \in A^+$, so we can find $y \in A^+$ with $y^p \equiv x_{n+1}^{pm} x \pmod{p A^+}$. We then have $\left|(y/x_{n+1}^m)^p - x \right| \leq c \left| x \right|$, verifying the criterion.

Conversely, suppose that $A$ is perfectoid. We then make the following observations.
\begin{enumerate}
\item[(i)]
There exists $x_1 \in A^\circ$ with $x_1^p \equiv p \pmod{p^2 A^\circ}$. Since $x_1$ and $(p/x_1)$ are topologically nilpotent, they belong to $A^+$.
\item[(ii)]
For any $y \in A^+$, there exists $z \in A^\circ$ with $z^p \equiv y \pmod{p A^\circ}$.
Since $z^p - y$ is topologically nilpotent, it belongs to $A^+$. Since $A^+$ is integrally closed, $z$ belongs to $A^+$. Consequently, $\overline{\varphi}: A^+/(x_1,p) \to A^+/(x_1,p)$ is surjective.
\item[(iii)]
By (ii), there exist $x_2, x_3 \in A^+$ with $x_2^p \equiv x_1 \pmod{pA^\circ}$,
$x_3^p \equiv x_2 \pmod{pA^{\circ}}$.
In particular, $x_3^p \equiv x_2 \pmod{A^+/(x_2,p)}$.
\item[(iv)]
By Remark~\ref{R:reduce surjectivity}, $\overline{\varphi}: A^+/(p) \to A^+/(p)$ is surjective.
\item[(v)]
By (iv), there exists $y \in A^+$ with $y^p \equiv x_1^p/p \pmod{pA^+}$. Then $x = x_1 y$ satisfies $x^p \equiv p \pmod{p^2 A^+}$. Hence $(A,A^+)$ is perfectoid.
\end{enumerate}

To check (e), note that if $A$ is perfectoid, then the stated criterion holds for $\varpi = x$ for any $x \in A^+$ with $x^p \equiv p \pmod{p^2 A^+}$. Conversely, if the criterion holds, then $\overline{\varphi}: A^+/(p) \to A^+/(p)$ is surjective by Remark~\ref{R:reduce surjectivity}.
We may thus choose $x_0 \in A^+$ with $x_0^p \equiv (p/\varpi^p) \pmod{pA^+}$, and then
$x_1 = \varpi x_0$ satisfies
$x_1^p \equiv p \pmod{p\varpi^p A^+}$.
In particular, $x_1^p/p$ is congruent to 1 modulo $\varpi^p A^+$ and hence is a unit in $A^+$. We may thus choose $x_2 \in A^+$ with $x_2^p \equiv (p/x_1^p) \pmod{p A^+}$,
and then $x = x_1 x_2$ satisfies $x^p \equiv p \pmod{p^2 A^+}$.
\end{proof}

\begin{lemma} \label{L:primitive generator}
Let $(A,A^+)$ be a perfectoid uniform Banach $\Qp$-algebra.
Then the homomorphism $\theta: W(A^{+,\frep}) \to A^+$ is surjective,
with kernel generated by an element $z$ which is primitive of degree $1$.
\end{lemma}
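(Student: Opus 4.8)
The plan is to dispatch surjectivity of $\theta$ cheaply, construct an explicit candidate generator $z$ of the kernel out of an approximate system of $p$-power roots of $p$, verify that $z$ is primitive of degree $1$, and then deduce formally that $z$ generates $\ker(\theta)$. Surjectivity is immediate: since $A$ is a uniform Banach $\Qp$-algebra, $\gotho_A$ is $p$-torsion-free and $p$-adically complete, so $\theta\colon W(\gotho_A^{\frep})\to\gotho_A$ is exactly the homomorphism attached to the ring $\gotho_A$ in Definition~\ref{D:frep hom}; by the criterion recorded there, it is surjective because the perfectoid hypothesis asserts that $\overline{\varphi}$ is surjective on $\gotho_A/(p)$.

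For the generator, I would use that $A$ is perfectoid to fix $x\in\gotho_A$ with $x^p\equiv p\pmod{p^2\gotho_A}$ and, invoking surjectivity of $\overline{\varphi}$ on $\gotho_A/(p)$, a sequence $x=x_0,x_1,x_2,\dots\in\gotho_A$ with $x_{i+1}^p\equiv x_i\pmod{p\gotho_A}$; let $\overline{x}\in\gotho_A^{\frep}$ be the resulting element and set $\overline{z}_0:=\overline{x}^p$. Evaluating the defining series for $\theta$ gives $\theta([\overline{z}_0])=\theta([\overline{x}])^p\equiv x^p\equiv p\pmod{p^2\gotho_A}$, hence $\theta([\overline{z}_0])=pw$ for some $w\in 1+p\gotho_A\subseteq\gotho_A^\times$. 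Choosing (by surjectivity of $\theta$) an element $v\in W(\gotho_A^{\frep})$ with $\theta(v)=(w-1)/p\in\gotho_A$, put
\[
z \;=\; [\overline{z}_0]-p[1]-p^2 v \;\in\; W(\gotho_A^{\frep}).
\]
Then $\theta(z)=pw-p-p(w-1)=0$, and in Teichm\"uller coordinates $z=\sum_{i\ge 0}p^i[\overline{z}_i]$ with $\overline{z}_0=\overline{x}^p$ as above and $\overline{z}_1=-1$.

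Now equip $R:=\gotho_A^{\frep}$ with the power-multiplicative norm $\alpha=\mu(\theta^*(|\cdot|))$, under which it is complete (Remark~\ref{R:project norm}, Lemma~\ref{L:frep norm}), and verify the hypotheses of the ``converse'' clause of Definition~\ref{D:primitive} for $S=R$ and this $\overline{z}_0$. Since $\theta([\overline{z}_0])=pw$ with $w\equiv 1\pmod{p}$ (so $w$ has norm $1$) and $|py|=p^{-1}|y|$ for all $y\in A$, one gets $\alpha(\overline{y}\,\overline{z}_0)=|\theta([\overline{y}])\,\theta([\overline{z}_0])|=p^{-1}\alpha(\overline{y})$ for all $\overline{y}\in R$; in particular $\alpha(\overline{z}_0)=p^{-1}$, and $\overline{z}_1=-1\in R^\times=\gotho_R^\times$. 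The hard part will be the divisibility requirement: \emph{every} $\overline{y}\in R$ with $\alpha(\overline{y})\le p^{-1}$ must be divisible by $\overline{z}_0$ in $R$. Because $R$ is perfect, $\overline{z}_0=\overline{x}^p$ divides $\overline{y}$ iff $\overline{x}$ divides $\overline{y}^{1/p}$, so it suffices to show $\overline{x}\mid\overline{w}$ in $R$ whenever $\alpha(\overline{w})\le\alpha(\overline{x})=p^{-1/p}$; I would establish this by solving the divisibility level-by-level in the tower $R=\varprojlim_{\overline{\varphi}}\gotho_A/(p)$, using surjectivity of $\overline{\varphi}$ together with the congruences $x_{i+1}^p\equiv x_i$ and $x^p\equiv p$ to produce compatible quotients at each stage. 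This is precisely where the strict (rather than merely ``up to $\gothm$'') form of the perfectoid hypothesis is needed, and where I expect the real work to lie.

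Granting primitivity, $\ker(\theta)=(z)$ follows formally. By Lemma~\ref{L:stable residue}, every $a\in W(\gotho_A^{\frep})$ is congruent modulo $z$ to some $y=\sum_{i\ge 0}p^i[\overline{y}_i]$ with $\alpha(\overline{y}_0)\ge\alpha(\overline{y}_i)$ for all $i$. If $\theta(a)=0$ then $\theta(y)=0$ since $\theta(z)=0$, while the ultrametric estimate in the proof of Lemma~\ref{L:ker theta} gives $|\theta(y)|=\alpha(\overline{y}_0)$; hence $\overline{y}_0=0$, so $y=0$ and $a\in z\,W(\gotho_A^{\frep})$. Thus $\ker(\theta)$ is generated by the primitive-of-degree-$1$ element $z$, as claimed. (Alternatively one may invoke Lemma~\ref{L:ker theta}(a) for the localized map $W(\gotho_A^{\frep})[[\overline{z}_0]^{-1}]\to\gotho_A[\theta([\overline{z}_0])^{-1}]=A$ and note that the conclusion descends, since $\theta([\overline{z}_0])=pw$.)
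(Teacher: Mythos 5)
Your construction of $z$ is the same as the paper's (an element $[\overline{z}_0]-p\cdot(\text{unit})$ with $\theta([\overline{z}_0])\equiv p\pmod{p^2}$ built from a compatible system of approximate $p$-power roots of $p$), your verification that $\alpha(\overline{y}\,\overline{z}_0)=p^{-1}\alpha(\overline{y})$ and that the degree-one Teichm\"uller coordinate is a unit is correct, and your deduction of $\ker(\theta)=(z)$ from Lemma~\ref{L:stable residue} once primitivity is known is fine (arguably cleaner than the paper's citation chain through Lemma~\ref{L:deeply ramified}(d)). But the proof has a genuine gap exactly where you flag it: the claim that every $\overline{y}\in\gotho_A^{\frep}$ with $\alpha(\overline{y})\le p^{-1}$ is divisible by $\overline{z}_0$ is asserted with only a plan (``solving the divisibility level-by-level \dots using surjectivity of $\overline{\varphi}$ \dots to produce compatible quotients at each stage''), and that plan, as stated, does not obviously work. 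Working level-by-level inside $\gotho_A/(p)$ is problematic: the level-zero coordinate of $\overline{z}_0$ is $x^p\equiv p\equiv 0$ there, $\gotho_A/(p)$ is not a domain, and surjectivity of $\overline{\varphi}$ by itself gives you no way to divide.

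The paper's argument for this step escapes to characteristic $0$. Choose lifts $y_n,z_n\in\gotho_A$ of the coordinates of $\overline{y}$ and $\overline{z}_0$; the congruence $z_n^{p^n}\equiv p\pmod{p^2}$ makes $z_n^{p^n}$ equal to $p$ times a unit, so $z_n\in A^\times$ and multiplication by $z_n$ scales the norm \emph{exactly} by $|z_n|=p^{-p^{-n}}$ (this uses uniformity). Hence $|y_n/z_n|=|y_n|/|z_n|\le 1$ when $\alpha(\overline{y})\le p^{-1}$, so the quotients lie in $\gotho_A$; they form a compatible system modulo the ideal $(x)$ (not modulo $(p)$), and Lemma~\ref{L:same frep} with $I=(x)$, $I^p\subseteq(p)\subseteq I$, identifies $\varprojlim\gotho_A/(x)$ with $\gotho_A^{\frep}$, producing the desired quotient $\overline{y}/\overline{z}_0$. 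The three ingredients you are missing are precisely: (i) the lifts are units of $A$; (ii) the exact norm-multiplicativity estimate that puts the quotients in $\gotho_A$; and (iii) the switch from $(p)$ to $(x)$ together with Lemma~\ref{L:same frep}. Without these, the primitivity of $z$ — and hence the whole lemma — is not established.
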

\begin{proof}
We may assume $A$ carries its spectral norm.
The map $\theta$ is surjective because $\overline{\varphi}$ is surjective on $A^+/(p)$.
Choose $x \in A^+$ with $x^p \equiv p \pmod{p^2}$.
Choose $\overline{z} = (\dots, \overline{z}_1, \overline{z}_0) \in A^{+,\frep}$
with $\overline{z}_1$ equal to the reduction of $x$ modulo $p$; then $\theta([\overline{z}]) \equiv p \pmod{p^2 A^+}$.
For $n \geq 1$, choose $z_n \in A^+$ lifting $\overline{z}_n$; then $z_n^{p^n} \equiv p \pmod{p^2 A^+}$, so $z_n \in A^\times$ and $|z_n y| = p^{p^{-n}} |y|$ for all $y \in A$.

Since $\theta([\overline{z}])$ is divisible by $p$ in $A^+$, we can find $t \in W(A^+)^\times$ with $\theta(t) = \theta([\overline{z}])/p$. Then $z = [\overline{z}] - pt$ is primitive of degree $1$ and belongs to the kernel of $\theta$.
By parts (a) and (b) of Lemma~\ref{L:ker theta}, $z$ generates the kernel of $\theta$ on $W(\gotho_A^{\frep})$. In particular, given $y \in \ker(\theta: W(A^{+,\frep}) \to A^+)$, there is a unique $x = \sum_{n=0}^\infty p^n [\overline{x}_n] \in W(\gotho_A^{\frep})$ satisfying $xz = y$.
But since $t \in W(A^+)^\times$, each $\overline{x}_n$ belongs to 
$A^{+,\frep} + \gothm_A^{\frep} = A^{+,\frep}$, so $y$ is divisible by $z$. This proves the claim.
\end{proof}

\begin{defn}
Let $(A,A^+)$ be a perfectoid uniform adic Banach $\Qp$-algebra.  By Lemma~\ref{L:primitive generator},
the kernel of $\theta: W(A^{+,\frep}) \to A^+$ is generated by some element $z$ which is primitive of degree $1$.
Let $\overline{z} \in A^{+,\frep}$ be the reduction of $z$, and define $R(A) = A^{+,\frep}[\overline{z}^{-1}]$ and $R^+(A^+) = A^{+,\frep}$.
Note that this construction does not depend on the choice of $z$ and that $R(A)$ does not depend on $A^+$ (hence the notation).
By Lemma~\ref{L:frep norm}, $(R(A),R^+(A^+))$ is a perfect uniform Banach $\Fp$-algebra.
Also write $I(A,A^+) = \ker(\theta) = z W(A^{+,\frep})$; we also write $I(A)$ for $I(A,A^\circ)$.

For $(R,R^+)$ a perfect uniform adic Banach $\Fp$-algebra and $I$ an ideal of $W(R^+)$ generated by an element $z$ which
is primitive of degree $1$, write $A(R, I) = (W(R^+)/I)[p^{-1}]$
and $A^+(R^+,I) = W(R^+)/I$.
By Lemma~\ref{L:stable residue}, the surjective map $W(R^+)[[\overline{z}]^{-1}] \to A(R,I)$ is optimal,
so $(A(R,I), A^+(R^+,I))$ is a perfectoid uniform adic Banach $\Qp$-algebra; we sometimes denote this object by $(A(R), A^+(R^+))$ when the choice of $I$ is to be understood.

Note that both of these constructions transfer idempotents to idempotents:
if $e \in A$ is idempotent, then $e \in A^+$ because $A^+$ is integrally closed, and so $(\dots, e,e)$ is an idempotent of $R^+$; conversely, if $\overline{e} \in R$ is idempotent, then so is $\theta([\overline{e}])$. Consequently, 
for $A$ and $(R,I)$ corresponding as in Theorem~\ref{T:perfectoid ring}, $\Spec(R)$ and $\Spec(A)$ below
have the same closed-open subsets;
however, they need not have the same irreducible components.
\end{defn}

\begin{theorem}[Perfectoid correspondence] \label{T:perfectoid ring}
The functors
\[
A \rightsquigarrow (R(A), I(A)),
\qquad
(R,I) \rightsquigarrow A(R,I)
\]
define an equivalence of categories between
perfectoid uniform Banach $\Qp$-algebras $A$ and
pairs $(R,I)$ in which $R$ is a perfect uniform Banach $\Fp$-algebra
and $I$ is a principal ideal of $W(\gotho_R)$ generated by an element which is primitive of degree $1$. Similarly, the functors
\[
(A,A^+) \rightsquigarrow ((R(A), R^+(A^+)), I(A,A^+)),
\qquad
((R,R^+),I) \rightsquigarrow (A(R, I), A^+(R^+, I))
\]
define an equivalence of categories between
perfectoid uniform adic Banach $\Qp$-algebras $(A,A^+)$ and
pairs $((R,R^+),I)$ in which $(R,R^+)$ is a perfect uniform adic Banach $\Fp$-algebra
and $I$ is a principal ideal of $W(R^+)$ generated by an element which is primitive of degree $1$.
\end{theorem}
\begin{proof}
The proofs of the two assertions are similar, so we give only the first one.
Given $A$ carrying its spectral norm, the surjectivity of $\theta: W(\gotho_A^{\frep}) \to \gotho_A$ provides a natural isomorphism
$A(R(A), I(A)) \cong A$. Conversely, given $(R,I)$ with $R$ carrying its spectral norm, note that $\gotho_{A(R,I)} = W(\gotho_R)/I$,
so $\gotho_{A(R,I)}/(p) = W(\gotho_R)/(p,I) = \gotho_R/(\overline{z})$
for any $z \in I$ which is primitive of degree 1.
This yields a natural isomorphism $\gotho_{A(R,I)}^{\frep} \cong \gotho_R$,
under which $I(A(R,I)) \subset W(\gotho_{A(R,I)}^{\frep})$ corresponds to $I \subset W(\gotho_R)$.
\end{proof}

We introduce a key example: the perfectoid analogue of a Tate algebra.
\begin{example} \label{exa:perfectoid Tate algebra}
Suppose that $A$ and $(R,I)$ correspond as in Theorem~\ref{T:perfectoid ring}, e.g., $A = F$ and $R = L$ where $F$ and $(L,I)$ correspond as in Theorem~\ref{T:perfectoid field}.
For $r_1,\dots,r_n > 0$, let $B, S$ be the completions of
\[
A\{T_1/r_1,\dots,T_n/r_n\}[T_1^{p^{-\infty}}, \dots, T_n^{p^{-\infty}}],
R\{T_1/r_1,\dots,T_n/r_n\}[T_1^{p^{-\infty}}, \dots, T_n^{p^{-\infty}}]
\]
under the extension of the weighted Gauss norm. That is, the norm of $\sum_{i_1,\dots,i_n} a_{i_1,\dots,i_n} T_1^{i_1} \cdots T_n^{i_n}$ is the maximum of the norm of $a_{i_1,\dots,i_n}$ times $r_1^{i_1} \cdots r_n^{i_n}$ over all $i_1,\dots,i_n \geq 0$. Then $B$ is perfectoid, $S$ is perfect, and $B$ corresponds to $(S, I W(\gotho_S))$ via Theorem~\ref{T:perfectoid field} with the element $(T_i, T_i^{1/p},\dots)$ of $\gotho_B^{\frep}$ corresponding to $T_i \in S$.
\end{example}

For some applications, it will be useful to have the following refinement of the criterion from Proposition~\ref{P:perfectoid formulations}(c).
\begin{cor} \label{C:perfectoid calculation}
Let $A$ be a perfectoid uniform Banach $\Qp$-algebra with spectral norm $|\cdot|$,
and let $R$ be the perfect uniform Banach $\Fp$-algebra corresponding to $A$
via Theorem~\ref{T:perfectoid ring}. Then for any $\epsilon > 0$, any nonnegative
integer $m$, and any $x \in A$, there exists $y \in A$ of the form $\theta([\overline{y}])$ for some
$\overline{y}\in R$,
such that
\begin{equation} \label{eq:perfectoid calculation1}
\beta(x - y^p) \leq \max\{p^{-1-p^{-1}-\cdots-p^{-m}} \beta(x), \epsilon\} \qquad (\forall \beta \in \calM(A)).
\end{equation}
In particular, we may choose $y$ such that
\begin{equation}\label{eq:perfectoid calculation}
|x - y^p| \leq p^{-1-p^{-1}-\cdots-p^{-m}} |x|.
\end{equation}
\end{cor}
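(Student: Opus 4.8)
The plan is to transport Lemma~\ref{L:perfectoid calculation} across the perfectoid correspondence of Theorem~\ref{T:perfectoid ring}, under which $\gotho_A \cong W(\gotho_R)/I$. Fix a generator $z$ of $I = I(A)$ which is primitive of degree~$1$ (such $z$ exists by Lemma~\ref{L:primitive generator}), let $\overline{z} \in \gotho_R$ be its reduction modulo $p$, and recall that $\theta$ extends to a surjection $W(\gotho_R)[[\overline{z}]^{-1}] \to A$ with kernel $z W(\gotho_R)[[\overline{z}]^{-1}]$ (the target is $\gotho_A[\theta([\overline{z}])^{-1}] = A$ since $\theta([\overline{z}])$ and $p$ are associates in $\gotho_A$). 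Given $x \in A$, first lift it to some $\tilde{x} \in W(\gotho_R)[[\overline{z}]^{-1}]$, then apply Lemma~\ref{L:perfectoid calculation} with the given $m$ and with its $\epsilon$ equal to ours to obtain $w = \sum_{n=0}^{\infty} p^n [\overline{w}_n] \in W(\gotho_R)[[\overline{z}]^{-1}]$ with $w \equiv \tilde{x} \pmod{z}$ and, for every $\alpha \in \calM(R)$, $\alpha(\overline{w}_1) \le \max\{p^{-p^{-1}-\cdots-p^{-m}} \alpha(\overline{w}_0), \epsilon\}$ and $\alpha(\overline{w}_n) \le \max\{\alpha(\overline{w}_0), \epsilon\}$ for $n > 1$. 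Putting $\overline{y} = \overline{w}_0^{1/p} \in R$ (using that $R$ is perfect) and $y = \theta([\overline{y}])$, multiplicativity of $[\,\cdot\,]$ and the relation $\theta(\tilde{x}) = \theta(w)$ give $y^p = \theta([\overline{w}_0])$ and hence $x - y^p = \theta(w - [\overline{w}_0]) = \theta(\sum_{n \ge 1} p^n [\overline{w}_n])$.

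To extract the estimate, fix $\beta \in \calM(A)$ and put $\alpha = \mu(\theta^*\beta) \in \calM(R)$, which is legitimate by Remark~\ref{R:project norm}; then $\beta(\theta([\overline{u}])) = \alpha(\overline{u})$ for $\overline{u} \in R$, and $\beta \circ \theta \le \lambda(\alpha)$ on $W(\gotho_R)[[\overline{z}]^{-1}]$ (this needs only $\beta(p) \le p^{-1}$, which holds because $A$ is a Banach $\Qp$-algebra). Hence $\beta(x - y^p) \le \sup_{n \ge 1} p^{-n} \alpha(\overline{w}_n)$, and the bounds above, together with the elementary inequality $p^{-1} + \cdots + p^{-m} \le 1$ (used to absorb the terms with $n \ge 2$), show that this supremum is at most $\max\{p^{-1-p^{-1}-\cdots-p^{-m}} \alpha(\overline{w}_0), \epsilon\}$. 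One then replaces $\alpha(\overline{w}_0)$ by $\beta(x)$ by a short case analysis: if $\alpha(\overline{w}_0) \ge \epsilon$, the tail of $w$ is $\lambda(\alpha)$-strictly dominated by $[\overline{w}_0]$, so $\beta(x) = \beta(\theta(w)) = \alpha(\overline{w}_0)$; if $\alpha(\overline{w}_0) < \epsilon$, then every $\alpha(\overline{w}_n) \le \epsilon$ and the supremum is already $\le p^{-1}\epsilon < \epsilon$. In either case \eqref{eq:perfectoid calculation1} holds. Finally, \eqref{eq:perfectoid calculation} is the special case obtained by taking $y = 0$ when $x = 0$, and by applying the above with $\epsilon = p^{-1-p^{-1}-\cdots-p^{-m}} |x|$ when $x \ne 0$: since $\beta(x) \le |x|$ for all $\beta \in \calM(A)$, the right-hand side of \eqref{eq:perfectoid calculation1} is then everywhere at most $p^{-1-p^{-1}-\cdots-p^{-m}} |x|$, and taking the supremum over $\calM(A)$ and invoking Theorem~\ref{T:transform} (which identifies the norm on the uniform ring $A$ with that supremum) gives the claim.

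The only delicate point I anticipate is the bookkeeping around the correspondence $\beta \leftrightarrow \alpha = \mu(\theta^*\beta)$ between multiplicative seminorms on $A$ and on $R$: one must check that $\alpha$ indeed lies in $\calM(R)$, that $\theta$ is submetric from $\lambda(\alpha)$ to $\beta$, and that the equality $\beta(x) = \alpha(\overline{w}_0)$ in the first case of the analysis genuinely follows from strict domination of the higher Teichm\"uller terms. None of these is difficult, but this is where a slip would most naturally occur; everything else is a direct transcription of Lemma~\ref{L:perfectoid calculation}, with the extra factor $p^{-1}$ over that lemma arising from the weight $p^{-1}$ attached to $[\overline{w}_1]$ inside $\lambda(\alpha)(w - [\overline{w}_0])$.
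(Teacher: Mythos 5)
Your proposal is correct and follows the paper's own proof essentially verbatim: lift $x$ along $\theta$, apply Lemma~\ref{L:perfectoid calculation} to the lift, set $y = \theta([\overline{w}_0^{1/p}])$, and obtain \eqref{eq:perfectoid calculation} by taking $\epsilon$ equal to its right-hand side (with $y=0$ when $x=0$). The additional bookkeeping you supply for the correspondence $\beta \mapsto \mu(\theta^*\beta)$ and the case analysis on $\alpha(\overline{w}_0)$ versus $\epsilon$ is accurate and simply makes explicit what the paper leaves to the reader.
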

\begin{proof}
Lift $x$ along $\theta$ to $\tilde{x} \in W(\gotho_R)[[\overline{z}]^{-1}]$
and then apply Lemma~\ref{L:perfectoid calculation} with $\tilde{x}$ playing the role of $x$.
Let $\sum_{n=0}^\infty p^n [\overline{y}_n]$ be the resulting element of $W(\gotho_R)[[\overline{z}]^{-1}]$;
then $y = \theta([\overline{y}_0^{1/p}])$ satisfies \eqref{eq:perfectoid calculation1}. To obtain \eqref{eq:perfectoid calculation}, take $y=0$ if $x=0$, and otherwise apply \eqref{eq:perfectoid calculation1} with
$\epsilon$ equal to the right side of \eqref{eq:perfectoid calculation}.
\end{proof}

\begin{remark}
The constant in \eqref{eq:perfectoid calculation} cannot be improved to $p^{-p/(p-1)}$. See
\cite[Example~5.9]{kedlaya-davis}.
\end{remark}

Using Theorem~\ref{T:perfectoid ring}, we may replicate the conclusions of Remark~\ref{R:perfect uniform strict}
with perfect uniform Banach $\Fp$-algebras replaced by perfectoid algebras, in the process obtaining compatibility
of the correspondence described in Theorem~\ref{T:perfectoid ring} with various natural operations on adic Banach rings.
We begin with a correspondence between strict maps that includes an analogue of
Remark~\ref{R:perfect uniform strict}(a) in characteristic $0$.
\begin{prop} \label{P:perfectoid uniform strict}
Keep notation as in Theorem~\ref{T:perfectoid ring}, and equip all uniform Banach rings with their spectral norms.
\begin{enumerate}
\item[(a)]
Let $\overline{\psi}: R \to S$ be a strict (and hence almost optimal, by Remark~\ref{R:perfect uniform strict})
homomorphism of perfect uniform Banach $\Fp$-algebras,
and apply the functor $A$ to obtain $\psi: A \to B$. Then $\psi$ is almost optimal (and surjective if $\psi$ is).
\item[(b)]
Let $\overline{\psi}_1, \overline{\psi}_2: R \to S$ be
homomorphisms of perfect uniform Banach $\Fp$-algebras,
and apply the functor $A$ to obtain $\psi_1, \psi_2: A \to B$. If $\overline{\psi}_1 - \overline{\psi_2}$
is strict surjective
(and hence almost optimal, by Remark~\ref{R:perfect uniform strict}), then
$\psi_1 - \psi_2$ is almost optimal and surjective.
\item[(c)]
Let $\psi: A \to B$ be a strict homomorphism of perfectoid uniform Banach $\Qp$-algebras.
Then $\psi$ is almost optimal and $\psi(A)$ is perfectoid.
\item[(d)]
With notation as in (c), apply the functor $R$ to obtain $\overline{\psi}: R \to S$.
Then $\overline{\psi}$ is also almost optimal (and surjective if $\psi$ is).
\end{enumerate}
\end{prop}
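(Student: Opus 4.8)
The plan is to run everything through the dictionary of Theorem~\ref{T:perfectoid ring}, which for $A$ perfectoid and $R=R(A)$ identifies $\gotho_A=W(\gotho_R)/I$, $\gotho_A/(p)=\gotho_R/(\overline z)$ and $\gotho_A^{\frep}=\gotho_R$, together with the fact that a bounded homomorphism of uniform Banach rings is automatically submetric (since $|\psi(x)|=|\psi(x^{p^n})|^{1/p^n}\le c^{1/p^n}|x|$ for all $n$); hence every $\psi$ under consideration carries $\gotho_A$ into $\gotho_B$, and the induced maps $\gotho_A/(p)\to\gotho_B/(p)$ and $\gotho_A^{\frep}\to\gotho_B^{\frep}$ are exactly those underlying $\overline\psi=R(\psi)$. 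Since $p$ has norm $p^{-1}$ in every perfectoid $\Qp$-algebra and $\overline z$ is a unit of norm $p^{-1}$ in $R(B)$ once inverted, each of (a)--(d) reduces to comparing the quotient seminorm of a map with a target norm after applying these functors. The basic device is that for $b\in\gotho_B$ a $\theta_B$-representative normalized by Lemma~\ref{L:stable residue} (and, when sharp control of the higher Teichmüller coordinates is needed, by Lemma~\ref{L:perfectoid calculation}) has leading coordinate of norm exactly $|b|$, while conversely $\theta_A\bigl(\sum_i p^i[\overline a_i]\bigr)$ for $\overline a_i\in\gotho_R$ with $\max_{i\ge 1}\alpha_R(\overline a_i)\le\alpha_R(\overline a_0)$ has norm $\alpha_R(\overline a_0)$ and, by the naturality of $\theta$ (Definition~\ref{D:frep hom}), maps under $\psi$ to $\theta_B\bigl(\sum_i p^i[\overline\psi(\overline a_i)]\bigr)$.

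For (a) and (b) the given map $\overline\psi$ (resp.\ $\overline\psi_1-\overline\psi_2$) is strict, hence almost optimal by Remark~\ref{R:perfect uniform strict}. Given $b$ in the image inside the unit ball, I would lift the leading Teichmüller coordinate of a normalized representative through $\overline\psi$ with norm at most $(1+\epsilon)|b|$ and the remaining coordinates with norm at most $\max\{|b|,\epsilon\}$, assemble a first approximate preimage in $A$, and remove the ($p$-divisible) discrepancy by a $p$-adically convergent successive approximation; for the difference $\psi_1-\psi_2$ one uses the Witt addition formulas (Remark~\ref{R:addition formula}) and $[x]-[y]=[x]+[-y]$, so that the leading term of $[\overline\psi_1(\overline a)]-[\overline\psi_2(\overline a)]$ is $[(\overline\psi_1-\overline\psi_2)(\overline a)]$. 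This gives almost optimality of $\psi$ (resp.\ $\psi_1-\psi_2$). Surjectivity of $\overline\psi$ (resp.\ $\overline\psi_1-\overline\psi_2$) transfers in the same spirit: choose exact coordinate-lifts of a $\theta_B$-representative of $b$, with uniformly bounded norm (available from strictness), observe that the resulting Teichmüller series converges in $W(\gotho_{R(A)})[[\overline z]^{-1}]$, and push it through $\theta_A$ to produce a preimage in $A$, correcting the Witt-addition carries by successive approximation in the case of $\psi_1-\psi_2$.

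For (c) and (d) the direction is reversed, and the obstruction is that there is no honest $p$-th-root operation in characteristic $0$; its substitute is the approximate $p$-th root of Lemma~\ref{L:perfectoid calculation} and Corollary~\ref{C:perfectoid calculation}. I would prove (d) first: from strictness of $\psi$, the dictionary above (via the Lemma~\ref{L:stable residue} normalization) shows that a norm-$\le c|b|$ preimage in $A$ of an element $b$ produces a comparably bounded preimage of the reduction of $b$ modulo $\overline z$, so that $\gotho_A/(p)\to\gotho_B/(p)$ has controlled quotient norm; one then pushes this bound through the inverse limit $\varprojlim_{\overline\varphi}$ defining $\gotho_R=\gotho_A^{\frep}$, using that $\overline\varphi^{-1}$ is isometric on $R(A)$ and $R(B)$ — this is exactly the $p$-power-root improvement of Remark~\ref{R:perfect uniform strict}(a), and it upgrades ``strict'' to ``almost optimal.'' Part (c) then follows formally: $\psi$ strict $\Rightarrow$ by (d) that $\overline\psi=R(\psi)$ is almost optimal, hence strict, hence by (a) (with $A(\overline\psi)\cong\psi$) that $\psi$ is almost optimal. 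Surjectivity transfers because a bounded surjection of Banach $\Qp$-modules is automatically strict by the open mapping theorem (Theorem~\ref{T:open mapping}) and because surjectivity may be checked modulo $p$ by $p$-adic completeness.

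The step I expect to be the main obstacle is (d): carrying the (essentially tame) quotient-norm bound for $\gotho_A/(p)\to\gotho_B/(p)$ through the inverse perfection $\varprojlim_{\overline\varphi}$ in a way that produces a genuinely good preimage in $\gotho_R$. The delicate point is that the normalization of Lemma~\ref{L:stable residue} pins down the leading Teichmüller coordinate only up to the norm-$\le p^{-1}$ ambiguity, so each approximation step must absorb an $\epsilon$ (which is precisely why the conclusion is ``almost optimal'' and not ``optimal,'' matching the caveat in Remark~\ref{R:perfect uniform strict}); one must then check that these perturbations together with the $p$-adic corrections assemble into a convergent construction, and the sharp estimates of Lemma~\ref{L:perfectoid calculation} and Corollary~\ref{C:perfectoid calculation} are exactly what make this go through.
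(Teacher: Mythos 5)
Your treatment of (a) and (b) is essentially the paper's: reduce to the strict surjective case, use that strictness in characteristic $p$ upgrades to almost optimality via Frobenius (Remark~\ref{R:perfect uniform strict}), and transport lifts through Lemma~\ref{L:stable residue}. (The paper handles the non-surjective case of (a) by factoring $\overline{\psi}$ as a strict surjection onto $S_0$ followed by an isometric injection $S_0 \hookrightarrow S$, which cleanly disposes of the worry about whether the Teichm\"uller coordinates of a normalized representative of $b \in \image(\psi)$ lie in the image of $\overline{\psi}$; your sketch glosses over this, but it is repairable.)

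The genuine gap is in your proposed order for (c) and (d). You want to prove (d) directly and then get (c) ``formally'' from (d) and (a); the paper does the opposite, and for a reason. Your (d) must produce, from strictness of $\psi$ with \emph{some} constant $c$, a relative contraction for $\overline{\psi}$: given $\overline{y} \in S$, an element $\overline{x} \in R$ with $\overline{\alpha}(\overline{x}) \leq C\overline{\beta}(\overline{y})$ and $\overline{\beta}(\overline{y} - \overline{\psi}(\overline{x})) \leq \delta\,\overline{\beta}(\overline{y})$ for some $\delta < 1$. The only available mechanism (lift $\theta([\overline{y}])$ to $a \in A$ with $\alpha(a) \leq c\,\overline{\beta}(\overline{y})$, approximate $a$ by $\theta([\overline{x}])$ via Lemma~\ref{L:perfectoid calculation}, and push through $\psi$) yields $\delta$ on the order of $p^{-1}c$, which is $< 1$ only when $c < p$. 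Your alternative of lifting the coordinates of $\overline{y} \in \varprojlim_{\overline{\varphi}} \gotho_B/(p)$ one at a time fails because the lifts need not be compatible under $p$-th power: $a_{n+1}^p - a_n$ lands only in $\ker(\psi) + p\gotho_A$, and $\psi$ need not be injective (indeed $\gotho_A \to \gotho_B$ need not even be surjective when $\psi$ is strict surjective, as the paper remarks after the proposition). So (d) genuinely presupposes that the strictness constant of $\psi$ can be taken below $p$ — and that is exactly the content of (c). The paper's proof of (c) contains the one idea missing from your proposal: a characteristic-zero surrogate for the Frobenius trick, namely lift $b^p$ (with constant $c$ relative to $\beta(b)^p$), extract an approximate $p$-th root as a Teichm\"uller element $\theta([\overline{x}^{1/p}])$ using Lemma~\ref{L:perfectoid calculation}, and iterate $b_{l+1} = b_l - \psi(\theta([\overline{x}^{1/p}]))$; a case analysis via \cite[Lemma~10.2.2]{kedlaya-course} shows this contracts by $p^{-1/p}$ while improving the lifting constant from $c$ to $c^{1/p}$. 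Without this step your argument for (d) does not close, and hence neither does your derivation of (c).
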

\begin{proof}
We first check (a) in case $\overline{\psi}$ is strict surjective.
By Remark~\ref{R:perfect uniform strict}, $\overline{\psi}$ is almost optimal;
in particular, every element of $S$ of norm strictly less than 1 lifts to an element of $R$
of norm strictly less than 1. By Lemma~\ref{L:stable residue},
every element of $B$ of norm strictly less than 1 lifts to an element of $A$ of norm strictly less than 1.
Consequently, $\psi$ is almost optimal and surjective. A similar argument yields (b).

We now check (a) in the general case. We may factor $\psi$ as a composition $R \to S_0 \to S$ with $R \to S_0$
strict surjective and $S_0 \to S$ an isometric injection (since $R$ and $S$ are uniform).
For $z$ a generator of $I$, we have $z W(\gotho_S) \cap W(\gotho_{S_0}) = z W(\gotho_{S_0})$,
so the map $S_0 \to S$ corresponds to an isometric injection $B_0 \to B$. We may thus deduce (a) from the previous paragraph.

To check (c), let $\alpha, \beta$ be the spectral norms on $A,B$.
Choose a constant $c \geq 1$ such that every $b \in \image(\psi)$ admits a lift $a \in A$
with $\alpha(a) \leq c \beta(b)$. We will then prove that the same conclusion holds with $c$ replaced by $c^{1/p}$;
this is enough to imply the desired result.

Suppose that $b_l \in \image(\psi)$ for some nonnegative integer $l$.
Lift $b_l^p$ to $a_l \in A$ with $\alpha(a_l) \leq c \beta(b_l^p) = c \beta(b_l)^p$.
Apply Lemma~\ref{L:perfectoid calculation} (or \cite[Lemma~5.16]{kedlaya-witt}) to find $\overline{x} \in R$
such that
\begin{equation} \label{eq:approximate with theta}
\gamma(a_l - \theta([\overline{x}])) \leq p^{-1} \max\{\gamma(a_l), \beta(b_l)^p\}
\qquad (\gamma \in \calM(A)).
\end{equation}
In particular,
\[
\gamma(\theta([\overline{x}])) \leq \max\{\gamma(a_l),p^{-1} \gamma(a_l),p^{-1} \beta(b_l)^p\}
\leq c \beta(b_l)^p.
\]
Put $u_l = \theta([\overline{x}^{1/p}])$, $v_l = \psi(u_l)$, and $b_{l+1} = b_l - v_l$;
note that $\alpha(u_l) \leq c^{1/p} \beta(b_l)$.

For each $\gamma \in \calM(B)$, by applying
\eqref{eq:approximate with theta} to the restriction of $\gamma$ to $A$,
we find that
$\gamma(b_l^p - v_l^p) \leq p^{-1} \max\{\gamma(b_l)^p, \beta(b_l)^p \} = p^{-1} \beta(b_l)^p$. We now consider three cases.
\begin{enumerate}
\item[(i)]
If $\gamma(b_l^p - v_l^p) > \gamma(b_l)^p$, then $\gamma(b_l^p - v_l^p) = \gamma(v_l)^p$, so
$\gamma(b_{l+1}) = \gamma(v_l) > \gamma(b_l)$. It follows that
$\gamma(b_{l+1}) = \gamma(b_l^p - v_l^p)^{1/p} \leq p^{-1/p} \beta(b_l)$.
\item[(ii)]
If $p^{-1} \gamma(b_l)^p \leq \gamma(b_l^p - v_l^p) \leq \gamma(b_l)^p$,
we may apply \cite[Lemma~10.2.2]{kedlaya-course} to deduce that
$\gamma(b_{l+1}) \leq \gamma(b_l^p - v_l^p)^{1/p} \leq p^{-1/p} \beta(b_l)$.
\item[(iii)]
If $\gamma(b_l^p - v_l^p) \leq p^{-1} \gamma(b_l)^p$, then by \cite[Lemma~10.2.2]{kedlaya-course} again,
$\gamma(b_{l+1}) \leq p^{-1/p} \gamma(b_l) \leq p^{-1/p} \beta(b_l)$.
\end{enumerate}
It follows that $\gamma(b_{l+1}) \leq p^{-1/p} \beta(b_l)$ for all $\gamma \in \calM(B)$,
and so $\beta(b_{l+1}) \leq p^{-1/p} \beta(b_l)$.

If we now start with $b = b_0 \in \image(\psi)$ and recursively define $b_l, u_l, v_l$ as above,
the $b_l$ converge to 0, so the series $\sum_{l=0}^\infty v_l$ converges to $b$.
Meanwhile, the series $\sum_{l=0}^\infty u_l$ converges to a limit $a \in A$ satisfying
$\psi(a) = b$ and $\alpha(a) \leq c^{1/p} \beta(b)$.
Hence $\psi$ is almost optimal; by Proposition~\ref{P:perfectoid formulations}(c), $\psi(A)$ is perfectoid.
This proves (c).

To obtain (d), by noting that a strict injection of uniform Banach rings is isometric and invoking (c),
we may reduce to the case where $\psi$ is strict surjective.
Let $\overline{\alpha}, \overline{\beta}$ be the norms on $R,S$.
Given $\overline{y} \in S$, by (c) we may lift $\theta([\overline{y}]) \in B$ to some $a \in A$
with $\alpha(a) \leq p^{1/2} \overline{\beta}(\overline{y})$.
By Lemma~\ref{L:perfectoid calculation} again, we may find $\overline{x} \in R$
such that $\gamma(a - \theta([\overline{x}])) \leq p^{-1} \max\{\gamma(a), \overline{\beta}(\overline{y})\}$
for all $\gamma \in \calM(A)$; in particular,
\[
\gamma(\theta([\overline{x}])) \leq \max\{\gamma(a), p^{-1} \gamma(a), p^{-1} \overline{\beta}(\overline{y})\} \leq p^{1/2} \overline{\beta}(\overline{y}).
\]
Let $\overline{z} \in S$ be the image of $\overline{x}$.
For all $\gamma \in \calM(B)$,
we have
$\gamma(\theta([\overline{z}])) \leq p^{1/2} \overline{\beta}(\overline{y})$
and
\[
\gamma(\theta([\overline{y}] - [\overline{z}])) \leq p^{-1} \max\{\gamma(\theta([\overline{y}])), \overline{\beta}(\overline{y})\} = p^{-1} \overline{\beta}(\overline{y}).
\]
Put $\overline{\gamma} = \mu(\theta^*(\gamma)) \in \calM(S)$; then $\overline{\gamma}(\overline{z}) \leq p^{1/2} \overline{\beta}(\overline{y})$.
If we expand $[\overline{y}] - [\overline{z}] = \sum_{i=0}^\infty p^i [\overline{w}_i]$,
for $i>0$ we have $\gamma(\theta( p^i [\overline{w}_i])) \leq p^{-1} \max\{\overline{\gamma}(\overline{y}),
\overline{\gamma}(\overline{z})\} \leq p^{-1} \overline{\beta}(\overline{y})$.
Since $\overline{w}_0 = \overline{y} - \overline{z}$, it follows that
$\overline{\gamma}(\overline{y} - \overline{z}) = \gamma(\theta([\overline{y} - \overline{z}]))
\leq p^{-1/2} \overline{\beta}(\overline{y})$. By iterating the construction as in the proof of (b), we see that
every $\overline{y} \in S$ admits a lift $\overline{x} \in R$ for which
$\overline{\alpha}(\overline{x}) \leq p^{1/2} \overline{\beta}(\overline{y})$.
Hence $\overline{\psi}$ is strict, and hence almost optimal by Remark~\ref{R:perfect uniform strict}.

We thus may deduce (d) except for the fact that if $\psi$ is surjective, then so is $\overline{\psi}$.
This follows from (c) plus Lemma~\ref{L:dense image}(b).
\end{proof}

\begin{remark}
Note that in Proposition~\ref{P:perfectoid uniform strict}(a),
strict surjectivity of $\overline{\psi}$ does not imply that $\gotho_R$ surjects onto $\gotho_S$.
Similarly, in part (c), strict surjectivity of $\psi$ does not imply that $\gotho_A$ surjects onto $\gotho_B$.
\end{remark}

We next establish compatibility of the correspondence with completed tensor products, and obtain
an analogue of
Remark~\ref{R:perfect uniform strict}(c).
\begin{prop} \label{P:perfectoid tensor}
Let $A \to B, A \to C$ be morphisms of perfectoid uniform Banach $\Qp$-algebras.
Let $(R,I)$ be the pair corresponding to $A$ via Theorem~\ref{T:perfectoid ring},
and put $S = R(B), T = R(C)$.
Then the completed tensor product $B \widehat{\otimes}_A C$ with the tensor product norm
is the perfectoid uniform Banach $\Qp$-algebra corresponding to $S \widehat{\otimes}_R T$. (Note that this immediately implies the corresponding statement for adic Banach rings.)
\end{prop}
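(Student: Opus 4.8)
The plan is to exhibit a natural isometric isomorphism between $B \widehat{\otimes}_A C$ and $A(U, IU)$, where $U = S \widehat{\otimes}_R T$ and $I = I(A)$; the proposition then follows from Theorem~\ref{T:perfectoid ring} applied to the pair $(U, IU)$. First, fix a generator $z$ of $I$ which is primitive of degree $1$. Since $A \to B$ and $A \to C$ are bounded, functoriality of inverse perfection gives bounded maps $\gotho_R \to \gotho_S \to \gotho_U$ and $\gotho_R \to \gotho_T \to \gotho_U$; because all norms in sight are power-multiplicative, the normalization conditions on $\overline{z}_0$ (a unit with $\alpha(\overline{z}_0) = p^{-1}$ and $\alpha(\overline{z}_0^{-1}) = p$) and on $\overline{z}_1$ (a unit) are preserved in $W(\gotho_S)$, $W(\gotho_T)$, $W(\gotho_U)$, so $z$ remains primitive of degree $1$ in each. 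Combining this with naturality of $\theta$ and the argument of Lemma~\ref{L:ker theta}(a), one gets $\ker(\theta\colon W(\gotho_S) \to \gotho_B) = z W(\gotho_S)$, and likewise for $C$ and for $U$; by Remark~\ref{R:perfect uniform strict}(c), $U$ is a perfect uniform Banach $\Fp$-algebra, so $A(U, IU) = (W(\gotho_U)/z)[p^{-1}]$ is a perfectoid uniform Banach $\Qp$-algebra with $\gotho_{A(U,IU)} = W(\gotho_U)/z$.

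Next, functoriality of $W(-)$ induces ring maps $\gotho_B = W(\gotho_S)/z \to W(\gotho_U)/z$ and $\gotho_C \to W(\gotho_U)/z$ which agree after restriction to $\gotho_A$; inverting $p$ and invoking the universal property of the tensor-product norm yields a bounded $\Qp$-algebra homomorphism $\phi\colon B \widehat{\otimes}_A C \to A(U, IU)$. It remains to show $\phi$ is an isometric bijection. For this I would work at the level of unit balls: note that $|p| = p^{-1}$ on $B \widehat{\otimes}_A C$ (compare with its uniformization), so its $p$-adic topology agrees with its norm topology and $\gotho := \gotho_{B \widehat{\otimes}_A C}$ is $p$-adically complete. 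The heart of the matter is the characteristic-$p$ computation
\[
\gotho/(p) \;\cong\; \bigl(\gotho_B/(p)\bigr) \,\widehat{\otimes}_{\gotho_A/(p)}\, \bigl(\gotho_C/(p)\bigr) \;\cong\; \bigl(\gotho_S/(\overline{z})\bigr) \,\widehat{\otimes}_{\gotho_R/(\overline{z})}\, \bigl(\gotho_T/(\overline{z})\bigr) \;\cong\; \gotho_U/(\overline{z}),
\]
where the middle identifications use $\gotho_B/(p) \cong \gotho_S/(\overline{z})$ as in the proof of Theorem~\ref{T:perfectoid ring}, and the outer ones use that completed tensor products commute with reduction modulo $p$ for the relevant $p$-torsion-free, $p$-adically complete rings. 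Granting this, $\overline{\phi}\colon \gotho/(p) \to \gotho_{A(U,IU)}/(p)$ is an isomorphism; since both rings are $p$-torsion-free and $p$-adically complete, a successive-approximation argument upgrades it to an isomorphism $\gotho \cong W(\gotho_U)/z$, which (using $|p| = p^{-1}$ on both sides and that $W(\gotho_U)/z$ carries the quotient norm, power-multiplicative by Theorem~\ref{T:quotient norm}(a)) forces $\phi$ to be an isometry. In particular $B \widehat{\otimes}_A C$ is uniform, hence perfectoid, and corresponds to $(U, IU)$.

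The main obstacle is the displayed identification of $\gotho/(p)$ — that is, proving that $B \widehat{\otimes}_A C$ does not overgrow $A(U, IU)$; this is the characteristic-$0$ counterpart of Remark~\ref{R:perfect uniform strict}(c). The subtlety is that the completed tensor product $\gotho_B \widehat{\otimes}_{\gotho_A} \gotho_C$ of unit balls need not literally be the unit ball of $B \widehat{\otimes}_A C$, and the tensor factors need not be flat over $\gotho_A$, so reduction modulo $p$ must be handled with care: one approximates, controls denominators using that $z$ is primitive of degree $1$ and that the maps $A \to B$, $A \to C$ are strict (Proposition~\ref{P:perfectoid uniform strict}), and uses the approximation Lemma~\ref{L:perfectoid calculation}/Corollary~\ref{C:perfectoid calculation} to replace general elements by rapidly convergent sums $\sum_n p^n \theta([\overline{y}_n])$ of products of Teichm\"uller representatives coming from $S$ and $T$, tracking norms throughout. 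Everything else is bookkeeping with the functors $A(-)$, $R(-)$ and the exact sequence defining the tensor-product norm.
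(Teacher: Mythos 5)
Your setup (form $U = S \widehat{\otimes}_R T$, check that $z$ stays primitive of degree $1$, build the natural map $\phi\colon B \widehat{\otimes}_A C \to A(U, IU)$, and aim for an isometric isomorphism) matches the paper's, but the heart of your argument --- the displayed chain of isomorphisms identifying $\gotho/(p)$ with $\gotho_U/(\overline{z})$ --- is asserted rather than proved, and you yourself flag it as ``the main obstacle.'' The first link in that chain is precisely the claim that the unit ball of $B \widehat{\otimes}_A C$ reduces modulo $p$ to the completed tensor product of the reductions of the unit balls, which, as you concede, does not follow formally: the unit ball of a completed tensor product is not the completed tensor product of unit balls, and $\gotho_B$, $\gotho_C$ need not be flat over $\gotho_A$. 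The tools you cite to repair this do not apply as stated: Proposition~\ref{P:perfectoid uniform strict} takes strictness of $A \to B$ and $A \to C$ as a hypothesis, whereas here these maps are merely bounded. Moreover, even granting the mod-$p$ isomorphism, your final step only shows that $\phi$ matches the filtrations $p^n\gotho$ on the two sides; since the norms take values outside $p^{\ZZ}$, this gives $\delta(\phi(v)) \leq \|v\| \leq p\,\delta(\phi(v))$ rather than an isometry, and you cannot invoke power-multiplicativity of the tensor-product norm to close that gap because that is part of what is being proved.

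The paper avoids reductions modulo $p$ entirely and instead lifts elements of $D = A(U)$ directly with norm control. Given $x \in D$ and $\epsilon > 1$, Lemma~\ref{L:stable residue} produces $y = \sum_n p^n[\overline{y}_n] \in W(\gotho_U)[[\overline{z}]^{-1}]$ with $\theta(y) = x$ and $\overline{\delta}(\overline{y}_0) \geq \overline{\delta}(\overline{y}_n)$ for $n>0$; each $\overline{y}_n \in S \widehat{\otimes}_R T$ is then written as a convergent sum $\sum_i \overline{s}_{ni} \otimes \overline{t}_{ni}$ with $\overline{\beta}(\overline{s}_{ni})\,\overline{\gamma}(\overline{t}_{ni}) < \epsilon\, \overline{\delta}(\overline{y}_n)$ (Remark~\ref{R:perfect uniform strict}(c)), with powers of $\overline{z}$ transferred between the two factors to balance their norms; finally, homogeneity of Witt-vector addition lets one expand $[\overline{y}_n]$ as a $(p,z)$-adically convergent sum of terms, each a power of $p$ times $[\overline{s}][\overline{t}]$ and each of norm at most $\epsilon\,\overline{\delta}(\overline{y}_n)$. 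This exhibits $x$ as the image of an element of $B \widehat{\otimes}_A C$ of tensor-product norm at most $\epsilon\,\delta(x)$, which together with submetricity of $\phi$ yields the isometric isomorphism. If you wish to retain your mod-$p$ strategy, you would essentially have to carry out this same Teichm\"uller-expansion computation to justify the first and last links of your displayed chain, so the detour through reductions buys nothing.
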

\begin{proof}
Put $U = S \widehat{\otimes}_R T$,
which is a perfect uniform Banach $\Fp$-algebra by Remark~\ref{R:perfect uniform strict}(c).
Put $D = A(U)$; then $D$ is perfectoid.
It remains to check that the natural map $B \widehat{\otimes}_A C \to D$ is an isometric isomorphism
of Banach $\Qp$-algebras.
To see this, let $\alpha,\beta,\gamma,\delta,\overline{\alpha}, \overline{\beta}, \overline{\gamma}, \overline{\delta}$
denote the spectral norms on $A,B,C,D,R,S,T,U$, respectively.
Choose any $x \in D$ and any $\epsilon > 1$,
and apply Lemma~\ref{L:stable residue} to find
$y = \sum_{n=0}^\infty p^n [\overline{y}_n] \in W(\gotho_U)[[\overline{z}]^{-1}]$
with $\theta(y) = x$ and $\overline{\delta}(\overline{y}_0) \geq \overline{\delta}(\overline{y}_n)$ for all $n  > 0$.
Then write each $\overline{y}_n$ as a convergent sum $\sum_{i=0}^\infty \overline{s}_{ni} \otimes \overline{t}_{ni}$
with $\overline{s}_{ni} \in S$, $\overline{t}_{ni} \in T$
and $\overline{\beta}(\overline{s}_{ni}), \overline{\gamma}(\overline{t}_{ni}) < (\epsilon \overline{\delta}(\overline{y}_n))^{1/2}$.
(More precisely, by Remark~\ref{R:perfect uniform strict}(c) we can ensure that
$\overline{\beta}(\overline{s}_ni) \overline{\gamma}(\overline{t}_{ni}) < \epsilon \overline{\delta}(\overline{y}_n)$,
but then we can enforce the desired inequality by transferring a suitable power of $\overline{z}$ between the two terms.)
We can then write $[\overline{y}_n]$ as a convergent sum for the $(p, z)$-adic topology, each term of which is a power of $p$ times the Teichm\"uller lift of an element of $S$ times the Teichm\"uller lift of an element of $T$; moreover,
each of these terms has norm at most $\epsilon \overline{\delta}(\overline{y}_n)$. It follows that
$x$ is the image of an element of $B \widehat{\otimes}_A C$ of norm at most $\epsilon \delta(y)$;
since $\epsilon > 1$ was arbitrary, this yields the desired result.
\end{proof}

\begin{remark}
At this point, we have analogues for perfectoid algebras of parts (a) and (c) of
Remark~\ref{R:perfect uniform strict}. It would be useful to also have an analogue of part (b);
that is, if $\psi_1, \psi_2: A \to B$ are two homomorphisms of perfectoid uniform Banach $\Qp$-algebras
such that $\psi = \psi_1 - \psi_2$ is strict, one would expect that $\psi$ is almost optimal.
Unfortunately, the technique of proof of Proposition~\ref{P:perfectoid uniform strict}(c) does not
suffice to establish this, due to the fact that the image of $\psi$ is not closed under taking $p$-th powers.
\end{remark}

We next establish the compatibility of the perfectoid correspondence with rational localizations, starting with an explicit calculation in the special case of a simple Laurent covering. (See Remark~\ref{R:perfectoid strict presentation2} for a related observation.)
\begin{lemma} \label{L:perfectoid rational}
Suppose that $(A,A^+)$ and $((R,R^+),I)$ correspond as in Theorem~\ref{T:perfectoid
ring}. Choose $\overline{g} \in R$ and put $g = \theta([\overline{g}]) \in A$.
Put
\begin{gather*}
B_- = A\{T\}/(T-g), \quad B_+ = A\{U\}/(Ug-1), \\
S_- = R\{\overline{T}\}/(\overline{T} -\overline{g}), \quad S_+ = R\{\overline{U}\}/(\overline{U} \overline{g}-1).
\end{gather*}
Then there are $A$-linear isomorphisms $A(S_-, IW(S_-^\circ)) \cong B_-$,
$A(S_+, IW(S_+^\circ)) \cong B_+$ taking $[\overline{T}],[\overline{U}]$ to $T,U$.
\end{lemma}
\begin{proof}
Equip $A$ with the spectral norm.
For $r \in \ZZ[p^{-1}]_{\geq 0}$, put $g^r = \theta([\overline{g}^r])$.
By Lemma~\ref{L:principal ideal closed}, for each nonnegative integer $h$,
\begin{align*}
B_- &\cong A\{T^{p^{-h}}\}/(T^{p^{-h}}-g^{p^{-h}}),\\
B_+ &\cong A\{U^{p^{-h}}\}/(U^{p^{-h}} g^{p^{-h}} - 1).
\end{align*}
More precisely, for $y_- = \sum_i y_{-,i} \in A\{T^{p^{-h-1}}\},
y_+ = \sum_i y_{+,i} \in A\{U^{p^{-h-1}}\}$, put
\begin{align*}
z_- &= \sum_{i} y_{-,i} g^{i - p^{-h} \lfloor p^h i \rfloor} T^{p^{-h} \lfloor p^h i \rfloor} \in A\{T^{p^{-h}}\},\\
z_+ &= \sum_{i} y_{+,i} g^{p^{-h} \lceil p^h i \rceil-i} U^{p^{-h} \lceil p^h i \rceil} \in A\{U^{p^{-h}}\};
\end{align*}
then $y_*$ and $z_*$ represent the same class in $B_*$ and
\[
\left| z_* \right| \leq \max\{1, \left| g \right|^{p^{-h}} \} \left| y_* \right|.
\]
Write $\left| \bullet \right|_h$ for the quotient norms on $B_-, B_+$ induced from
$A\{T^{p^{-h}}\}$, $A\{U^{p^{-h}}\}$; then
\[
\left| x \right|_{h+1} \leq \left| x \right|_h \leq \max\{1, \left| g \right|\}^{p^{-h}} \left| x \right|_{h+1} \qquad (x \in B_*).
\]
In particular,
\begin{equation} \label{eq:perfectoid rational1}
\left| x \right|_{h} \leq \left| x \right|_0 \leq \max\{1, \left| g \right|\}^{p/(p-1)} \left| x \right|_{h} \qquad (x \in B_*; h=0,1,\dots).
\end{equation}
By Proposition~\ref{P:perfect uniform localization}, $S_*$ is perfect uniform.
Let $A'_-, A'_+$ be the completions of $A\{T\}[T^{p^{-\infty}}]$,
$A\{U\}[U^{p^{-\infty}}]$ for the Gauss norm.
Let $J_*$ be the closure in $A'_*$ of the ideal of $A\{T\}[T^{p^{-\infty}}]$ generated by $T^{p^{-h}} - g^{p^{-h}}$ (if $* = -$)
or the ideal of $A\{U\}[U^{p^{-\infty}}]$ generated by $U^{p^{-h}} g^{p^{-h}}-1$ 
(if $* = +$)
for $h=0,1,\dots$.
Note that $J_*$ is itself an ideal of $A'_*$, so we may form the quotient
$B'_* = A'_*/J_*$; the inclusions $A\{T\} \to A'_-, A\{U\} \to A'_+$ then induce maps $B_* \to B'_*$, which by \eqref{eq:perfectoid rational1} are isomorphisms.

From Example~\ref{exa:perfectoid Tate algebra},
we see that $A'_-, A'_+$ are perfectoid and we obtain 
identifications $R(A'_*) \cong S_*$ taking $(\dots,T^{1/p},T), (\dots,U^{1/p},U)$
to $\overline{T}, \overline{U}$.
The resulting map $A'_* \to A(S_*,IW(S_*^\circ))$ is surjective; its kernel $J'_*$  is the closure of the
ideal generated by $\theta([\overline{T}^{p^{-h}} - \overline{g}^{p^{-h}}])$
(if $* = -$) or $\theta([\overline{U}^{p^{-h}} \overline{g}^{p^{-h}} - 1])$
(if $* = +$) for all $h$. 

Under the map $A'_- \to A(S_-,IW(S_-^\circ))$, $T^{p^{-h}}$ and $g^{p^{-h}}$
both map to $[\overline{T}^{p^{-h}}] = [\overline{g}^{p^{-h}}]$; similarly,
under the map $A'_+ \to A(S_+,IW(S_+^\circ))$, $U^{p^{-h}} g^{-p^{-h}}$
maps to $[\overline{U}^{p^{-h}} \overline{g}^{p^{-h}}] = [1] = 1$.
This means that $J_* \subseteq J'_*$; we also have the reverse inclusion thanks to Remark~\ref{R:addition formula}. We conclude that the induced map
$B'_* \to A(S_*,IW(S_*^\circ))$ is an isomorphism.
\end{proof}

With this calculation in hand, we may treat the general case.
\begin{theorem} \label{T:perfectoid rational}
Suppose that $(A,A^+)$ and $((R,R^+),I)$ correspond as in Theorem~\ref{T:perfectoid
ring}.
\begin{enumerate}
\item[(a)]
The homeomorphism $\calM(A) \cong \calM(R)$ of Theorem~\ref{T:quotient norm} lifts to a functorial homeomorphism $\Spa(A,A^+) \cong \Spa(R,R^+)$.
\item[(b)]
The homeomorphism in (a) identifies rational subspaces. 
More precisely, for $\overline{f}_1,\dots,\overline{f}_n,\overline{g} \in R$, the rational subspace
\[
\{v \in \Spa(R,R^+): v(\overline{f}_i) \leq v(\overline{g}) \quad (i=1,\dots,n)\}.
\]
corresponds to
\begin{equation} \label{eq:lifted rational subspace}
\{v \in \Spa(A,A^+): v(f_i) \leq v(g) \quad (i=1,\dots,n)\}
\quad (f_i = \theta([\overline{f}_i]), g = \theta([\overline{g}])),
\end{equation}
and every rational subspace of $\Spa(A,A^+)$ can be written in the form.
\item[(c)]
Let $U \subseteq \Spa(A,A^+)$ and $V \subseteq \Spa(R,R^+)$ be rational subdomains corresponding as in (a).
 Let $(A,A^+) \to (B,B^+)$, $(R,R^+) \to (S,S^+)$ be the rational localizations representing $U$
and $V$, respectively. Then $(B,B^+)$ is again perfectoid,
and there are natural identifications $(S,S^+) \cong (R(B),R^+(B^+))$, $(B,B^+) \cong (A(S, I
W(S^+)), A^+(S^+, IW(S^+)))$. 
\end{enumerate}
\end{theorem}
\begin{proof}
To prove (a), we first define the map on points.
For $\alpha \in \calM(A)$ corresponding to $\beta \in \calM(B)$, we have a canonical isomorphism $\gotho_{\calH(\alpha)}/(p) \cong \gotho_{\calH(\beta)}/(\overline{z})$
for any $z$ as in Lemma~\ref{L:primitive generator}. 
We may thus identify points of $\Spa(A,A^+)$ lifting $\alpha$ with valuation rings of $\calH(\alpha)$ containing $\gothm_{\calH(\alpha)}$, then with valuation rings of
$\kappa_{\calH(\alpha)} \cong \kappa_{\calH(\beta)}$, then with valuation rings of $\calH(\beta)$ containing $\gothm_{\calH(\beta)}$, then with points of $\Spa(R,R^+)$ lifting $\beta$.

We now have a functorial bijection $\Spa(A,A^+) \to \Spa(R,R^+)$. To see that this is a homeomorphism, it suffices to prove (b), which we do by imitating
the proof of Theorem~\ref{T:quotient norm}.
For $\overline{f}_1,\dots,\overline{f}_n, \overline{g} \in R$, if we put $f_i = \theta([\overline{f}_i]), g = \theta([\overline{g}])$,
then $f_1,\dots,f_n,g$ generate the unit ideal in $A$ if and only if
$\overline{f}_1,\dots,\overline{f}_n, \overline{g}$ generate the unit ideal in $R$ (by applying Corollary~\ref{C:ideal from spectrum} in both $A$ and $R$). 
This means that rational subspaces of $\Spa(R,R^+)$ correspond to rational subspaces of $\Spa(A,A^+)$ as described. Conversely, given a rational subspace $U$ as in \eqref{eq:adic rational subspace}, pick $\epsilon > 0$ as in Remark~\ref{R:approximate rational},
then apply Lemma~\ref{L:perfectoid calculation} to find
$\overline{f}_1,\dots,\overline{f}_n,\overline{g} \in R$ such that
\[
\alpha(f_i - \theta([\overline{f}_i])) \leq p^{-1} \max \{\alpha(f_i), 
\epsilon\}, \quad
\alpha(g - \theta([\overline{g}])) \leq p^{-1} \max \{\alpha(g), \epsilon\}
\quad (\alpha \in U \cap \calM(A)).
\]
As in Remark~\ref{R:approximate rational}, 
we see that the rational subspace defined by $\theta([\overline{f}_1]),
\dots, \theta([\overline{f}_n]), \theta([\overline{g}])$ coincides with $U$.
This proves (b).

To prove (c), by Proposition~\ref{P:Tate reduction single} we may assume that $U$ is part of a simple Laurent covering. 
From the proof of (b), we may define this covering using a parameter of the form $g = \theta([\overline{g}])$ for some $\overline{g} \in R$.
By Proposition~\ref{P:perfect uniform localization}, $(S,S^+)$ is perfect uniform.
By Lemma~\ref{L:perfectoid rational}, $B$ is perfectoid; 
by Proposition~\ref{P:perfectoid formulations}, $(B,B^+)$ is also perfectoid.
The other identifications now follow from Theorem~\ref{T:perfectoid ring} and the universal property of a rational localization.
\end{proof}

As a corollary, we obtain the Tate and Kiehl properties for perfectoid algebras.
\begin{theorem} \label{T:Tate-Kiehl analogue2}
Any perfectoid adic Banach algebra is stably uniform and sheafy, and thus satisfies the Tate sheaf and Kiehl glueing properties
(Definition~\ref{D:Tate-Kiehl properties}).
\end{theorem}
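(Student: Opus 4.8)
The plan is to deduce the statement from its characteristic-$p$ counterpart, Theorem~\ref{T:Tate-Kiehl analogue1}, by transporting coverings across the perfectoid correspondence of Theorem~\ref{T:perfectoid ring}. First I would invoke Proposition~\ref{P:Tate reduction}: the category of perfectoid uniform Banach $\Qp$-algebras is closed under (strictly) rational localizations by Theorem~\ref{T:perfectoid rational}, and the property ``the augmented \v{C}ech complex is strict exact'' is local and transitive, so it suffices to verify the Tate sheaf property for a simple (strictly) Laurent covering of a fixed perfectoid $A$. Once the Tate sheaf property is in hand, the Kiehl glueing property follows immediately from Proposition~\ref{P:tate to Kiehl}.

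\textbf{Transporting the covering.} Let $A \to B_1$, $A \to B_2$ be the rational localizations of a simple Laurent covering of $\calM(A)$, and let $(R,I)$ correspond to $A$ via Theorem~\ref{T:perfectoid ring}, with $z \in I$ primitive of degree $1$. Setting $S_i = R(B_i)$, Theorem~\ref{T:perfectoid rational} shows that $R \to S_1$, $R \to S_2$ is a covering family of rational localizations of $R$ with $B_i \cong A(S_i, IW(\gotho_{S_i}))$, and Proposition~\ref{P:perfectoid tensor} identifies the uniformized completed tensor product $S_{12} := S_1 \widehat{\otimes}_R S_2$ (again perfect uniform, by Remark~\ref{R:perfect uniform strict}(c)) with the pair giving $B_{12} := B_1 \widehat{\otimes}_A B_2$ (itself perfectoid, hence uniform). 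By Theorem~\ref{T:Tate-Kiehl analogue1} the augmented \v{C}ech complex for $R \to S_1, R \to S_2$ is strict exact, which for a two-element covering amounts to strict exactness of
\[
0 \to R \to S_1 \oplus S_2 \to S_{12} \to 0;
\]
by Remark~\ref{R:perfect uniform strict}(a) the first map is then isometric and the second is almost optimal surjective. Applying the functor $A$ and Proposition~\ref{P:perfectoid uniform strict}(a) (using that the Witt vector functor commutes with finite products, so $A(S_1 \oplus S_2) = B_1 \oplus B_2$), the map $A \to B_1 \oplus B_2$ is strict injective and $B_1 \oplus B_2 \to B_{12}$ is strict surjective. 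What remains — and this I expect to be the main obstacle, since the functor $A$ is not exact on the nose — is exactness in the middle of $0 \to A \to B_1 \oplus B_2 \to B_{12} \to 0$.

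\textbf{Transferring middle-exactness via Witt vectors.} Because $R \to S_1 \oplus S_2$ is isometric, $\gotho_R = (\gotho_{S_1} \oplus \gotho_{S_2}) \cap R$, so $0 \to \gotho_R \to \gotho_{S_1} \oplus \gotho_{S_2} \to \gotho_{S_{12}}$ is exact in the middle; moreover, almost optimality of $S_1 \oplus S_2 \to S_{12}$ forces the image $\gotho' \subseteq \gotho_{S_{12}}$ of the last map to contain every element of norm $<1$, so $\overline{z}_0$ (of norm $p^{-1}$) annihilates $\gotho_{S_{12}}/\gotho'$. The Witt vector functor acts componentwise in the standard coordinates, hence preserves both injections and surjections, giving a short exact sequence $0 \to W(\gotho_R) \to W(\gotho_{S_1}) \oplus W(\gotho_{S_2}) \to W(\gotho') \to 0$, an inclusion $W(\gotho') \hookrightarrow W(\gotho_{S_{12}})$, and a cokernel $C := W(\gotho_{S_{12}})/W(\gotho') \cong W(\gotho_{S_{12}}/\gotho')$ which is annihilated by $[\overline{z}_0]$. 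Writing $z = [\overline{z}_0] + pz_1$ with $z_1$ a unit, multiplication by $z$ on $C$ equals multiplication by $pz_1$, whence $C[z] = C[p]$; also $z$ is a non-zero-divisor on $W(\gotho_R)$, $W(\gotho_{S_i})$ and $W(\gotho')$ (since $\overline{z}_0 \in R^\times$ and $p$ is a non-zero-divisor). Reducing modulo $z$ and inverting $p$: from the first short exact sequence I obtain $0 \to A \to B_1 \oplus B_2 \to (W(\gotho')/zW(\gotho'))[p^{-1}] \to 0$, while from $0 \to W(\gotho') \to W(\gotho_{S_{12}}) \to C \to 0$ the map $(W(\gotho')/zW(\gotho'))[p^{-1}] \to B_{12}$ has kernel a quotient of $C[z][p^{-1}] = C[p][p^{-1}] = 0$ and cokernel a quotient of $(C/zC)[p^{-1}]$; but the composite $B_1 \oplus B_2 \to B_{12}$ was already shown to be surjective, so this map is surjective, hence an isomorphism. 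Composing, the middle cohomology of $0 \to A \to B_1 \oplus B_2 \to B_{12} \to 0$ vanishes and the sequence is strict exact. This establishes the Tate sheaf property for simple Laurent coverings of perfectoid algebras, and Proposition~\ref{P:Tate reduction} and Proposition~\ref{P:tate to Kiehl} complete the proof.
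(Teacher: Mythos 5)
Your overall strategy is the same as the paper's: reduce to a simple Laurent covering via Proposition~\ref{P:Tate reduction} (using Proposition~\ref{P:perfectoid tensor} and Theorem~\ref{T:perfectoid rational} to stay inside the perfectoid category), quote the characteristic-$p$ acyclicity (Lemma~\ref{L:Tate lemma1}/Theorem~\ref{T:Tate-Kiehl analogue1}), and transfer strict exactness back through the correspondence. The injectivity and surjectivity steps are essentially correct, though note that the surjectivity of $B_1 \oplus B_2 \to B_{12}$ is Proposition~\ref{P:perfectoid uniform strict}(b), not (a): the difference of two ring homomorphisms is not a ring homomorphism, so part (a) does not apply to it.

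That same conceptual slip breaks your treatment of middle exactness. The map $W(\gotho_{S_1}) \oplus W(\gotho_{S_2}) \to W(\gotho_{S_{12}})$ you need is the Witt-vector \emph{difference} $W(\pi_1)-W(\pi_2)$ of the two structure maps; it is only additive, and Witt-vector addition is not componentwise. By Remark~\ref{R:addition formula}, $[\overline{a}]-[\overline{b}] = [\overline{a}-\overline{b}] + \sum_{n \geq 1} p^n[P_n(\overline{a},-\overline{b})]$, and the carries $P_n(\overline{a},-\overline{b})$ are polynomials in fractional powers of $\overline{a}$ and $\overline{b}$ which lie in $\gotho_{S_{12}}$ but not in the additive group $\gotho' = \image(\gotho_{S_1}) + \image(\gotho_{S_2})$. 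Consequently: (i) $\gotho'$ is only an additive subgroup of $\gotho_{S_{12}}$, not a subring or an ideal, so ``$W(\gotho')$'' and ``$W(\gotho_{S_{12}}/\gotho')$'' are not defined; (ii) even reading $W(\gotho')$ as the set of Witt vectors with all components in $\gotho'$, the image of the difference map is neither contained in nor equal to that set, so your short exact sequence $0 \to W(\gotho_R) \to W(\gotho_{S_1}) \oplus W(\gotho_{S_2}) \to W(\gotho') \to 0$ is not established (only the computation of the kernel survives, since equality of two Witt vectors can be checked on Teichm\"uller coordinates); and (iii) the claim that $[\overline{z}_0]$ kills the cokernel $C$ would require every Witt vector with components in $\gothm_{S_{12}}$ to lie in the image, which again does not follow componentwise. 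All of the subsequent homological algebra (vanishing of $C[z][p^{-1}]$, identifying $(W(\gotho')/z)[p^{-1}]$ with $B_{12}$) rests on this sequence, so the middle-exactness argument does not go through as written. The correct transfer is by successive approximation with Teichm\"uller lifts, estimating the carries and iterating --- exactly the mechanism of the proofs of Proposition~\ref{P:perfectoid uniform strict}(a,b) and Lemma~\ref{L:stable residue}, which is what the paper's proof invokes at this point.
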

\begin{proof}
The stably uniform proprety is immediate from Theorem~\ref{T:perfectoid rational}.
The sheafy property then follows from Theorem~\ref{T:uniform rational is sheafy};
alternatively, one may use Proposition~\ref{P:Tate reduction}
and Proposition~\ref{P:perfectoid tensor} to reduce to the case of a simple Laurent covering of $\Spa(A,A^+)$,
then argue as in Proposition~\ref{P:perfectoid uniform strict}(a,b) to derive the desired exact sequence from the corresponding exact sequence in characteristic $p$
(Theorem~\ref{T:Tate-Kiehl analogue1}). As usual, the Tate and Kiehl properties follow from the sheafy property via Theorem~\ref{T:tate to Kiehl}.
\end{proof}

We have the following analogue of Remark~\ref{R:perfect uniform localization1}.
\begin{remark}\label{R:perfectoid strict presentation2}
Set notation as in Theorem~\ref{T:perfectoid rational}, and equip $A$ with the spectral norm. For $r \in \ZZ[p^{-1}]_{\geq 0}$ and $* \in \{f_1,\dots,f_n,g\}$, write
$*^r$ for $\theta([\overline{*}^r])$.
Choose $h_1,\dots,h_n,k \in A$ such that $h_1 f_1 + \cdots + h_n f_n + kg = 1$.
Then 
\[
y = \sum_{i_1,\dots,i_n} y_{i_1,\dots,i_n} T_1^{i_1} \cdots T_n^{i_n} \in A\{T_1,\dots,T_n\}[T_1^{p^{-\infty}}, \dots, T_n^{p^{-\infty}}]
\]
represents the same element of $B$ as does
\[
z = (k + h_1 T_1 + \cdots + h_n T_n)^n \sum_{i_1,\dots,i_n} y_{i_1,\dots,i_n} f_1^{i_1 - \lfloor i_1 \rfloor}
\cdots f_n^{i_n - \lfloor i_n \rfloor} g^{n - (i_1 - \lfloor i_1\rfloor + \cdots + i_n - \lfloor i_n\rfloor)} T_1^{\lfloor i_1 \rfloor} \cdots T_n^{\lfloor i_n \rfloor},
\]
which satisfies
\[
\left| z \right| \leq c \left| y \right|, \qquad c  = \max\{\left| h_1 \right|,\dots, \left| h_n \right|, \left| k \right|\}^n \left|f_1 \right| \cdots \left| f_n \right| \left| g \right|^n.
\]
By replacing $\overline{f}_1,\dots,\overline{f}_n,\overline{g}$ by suitable $p$-power roots, for any given $c>1$ we may obtain a strict surjection $A\{T_1,\dots,T_n\} \to B$ in which the quotient norm is at most $c$ times the spectral norm.
\end{remark}

We next establish compatibility with formation of quotients, and more generally with passage along homomorphisms with dense image.
\begin{theorem} \label{T:perfectoid dense image}
Suppose that $A$ and $(R,I)$ correspond as in Theorem~\ref{T:perfectoid ring}.
\begin{enumerate}
\item[(a)]
Let $\overline{\psi}: R \to S$ be a bounded homomorphism of uniform Banach $\Fp$-algebras with dense image.
Then $S$ is also perfect, and the corresponding homomorphism $\psi: A \to B$ of perfectoid uniform Banach
$\Qp$-algebras also has dense image.
\item[(b)]
Let $B$ be a uniform Banach $\Qp$-algebra admitting
a bounded homomorphism $\psi: A \to B$ with dense image. Then $B$ is also perfectoid,
and the corresponding homomorphism $\overline{\psi}: R \to S$ of perfect uniform Banach $\Fp$-algebras
also has dense image.
\item[(c)]
In (a) and (b), $\overline{\psi}$ is surjective if and only if $\psi$ is.
\end{enumerate}
\end{theorem}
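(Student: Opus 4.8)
The plan is to transfer the statement across the perfectoid correspondence of Theorem~\ref{T:perfectoid ring} by reducing everything to statements about the concrete rings $W(\gotho_R)$, $W(\gotho_S)$ and their quotients by a primitive element of degree $1$. First I would treat part (a): given $\overline{\psi}: R \to S$ bounded with dense image, $S$ is perfect because it is a uniform Banach $\Fp$-algebra and density of the image of a perfect ring forces surjectivity of Frobenius on $S$. Applying the functor $W(\gotho_-)$ and reducing mod the ideal $I$ (which we carry over to $S$ via the natural map $W(\gotho_R) \to W(\gotho_S)$), we obtain $\psi: A = A(R,I) \to B = A(S, IW(\gotho_S))$. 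To see $\psi$ has dense image, I would lift an arbitrary $x \in B$ along $\theta$ to $W(\gotho_S)[[\overline{z}]^{-1}]$ using Lemma~\ref{L:stable residue}, write the resulting Teichm\"uller coordinates as limits of elements of $\image(\overline{\psi})$ (possible since $\overline{\psi}$ has dense image and $S$, $R$ are perfect, invoking Lemma~\ref{L:dense image}(a) for the inverse perfection level), and then use that $\theta$ is continuous and $[\cdot]$ is multiplicative to conclude that $\theta$ of these approximants converge to $x$ inside $\overline{\image(\psi)}$.

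For part (b), the key observation is that if $\psi: A \to B$ has dense image and $A$ is perfectoid, then $B$ inherits surjectivity of $\overline{\varphi}$ on $\gotho_B/(p)$: an arbitrary element of $\gotho_B/(p)$ is approximated by the image of some $a \in \gotho_A$, which has a $p$-th root mod $p$ in $\gotho_A$, and one propagates this approximation; combined with the existence of $x \in \gotho_B$ with $x^p \equiv p$ (pulled back from $A$), Proposition~\ref{P:perfectoid formulations}(a) or Definition~\ref{D:perfectoid Banach} gives that $B$ is perfectoid. Then $B$ corresponds to a pair $(S, J)$ via Theorem~\ref{T:perfectoid ring}, and the functoriality of $\theta$ together with $I(A) \mapsto I(B)$ identifies $J$ with the extension of $I$; the induced $\overline{\psi}: R \to S$ is $\gotho_B^{\frep}$ built from $\gotho_A^{\frep}$, and density of $\image(\psi)$ transfers to density of $\image(\overline{\psi})$ by Lemma~\ref{L:dense image}(a) applied to the inverse perfection, using that $\theta$ is strict (Lemma~\ref{L:ker theta}(b) / optimality) so that dense image upstairs forces dense image at the level of $\frep$.

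For part (c), surjectivity is the ``closed image'' version of density, so I would argue that $\psi$ (resp.\ $\overline{\psi}$) surjective is equivalent to $\psi$ (resp.\ $\overline{\psi}$) having dense image together with strict image, and both $\psi$ and $\overline{\psi}$ are strict when one of them is surjective by Proposition~\ref{P:perfectoid uniform strict}(a),(d). Concretely: if $\overline{\psi}$ is surjective then it is strict surjective (open mapping / uniformity), hence almost optimal by Remark~\ref{R:perfect uniform strict}, hence by Proposition~\ref{P:perfectoid uniform strict}(a) $\psi$ is almost optimal and surjective; conversely if $\psi$ is surjective, Proposition~\ref{P:perfectoid uniform strict}(d) gives $\overline{\psi}$ almost optimal and surjective. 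Combined with parts (a),(b) this covers every case.

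The main obstacle I anticipate is part (b): unlike the surjective case, we cannot directly split off a section, so we must carefully control the $\frep$-level approximation. The subtlety is that density of $\image(\psi)$ in $B$ must be promoted to density in the right topology on $\gotho_B/(p)$ and then to $\gotho_B^{\frep}$; this requires knowing that the norm on $B$ induced from $A$ and the norm used to build $\gotho_B^{\frep}$ (namely $\mu(\theta^*(\cdot))$) agree, which follows from Theorem~\ref{T:transform} since $B$ is uniform, but deserves care. Lemma~\ref{L:dense image} is designed precisely for this, so the real work is verifying its hypotheses — in particular writing $\gotho_A^{\frep}$ (or rather a ring between $\gotho_A$ and $A$) as an increasing union of subrings on which Frobenius is surjective mod $p$, which holds since $A$ is perfectoid — after which the conclusion is essentially immediate.
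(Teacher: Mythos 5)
Your skeleton matches the paper's (approximate Witt expansions term by term for (a), transfer the $p$-th-power approximation property for (b), reduce (c) to Proposition~\ref{P:perfectoid uniform strict}(a),(d) via the open mapping theorem), and part (c) is correct as written. But in both (a) and (b) the steps you describe as routine are exactly where the content lies, and as stated they have gaps. In (a), ``$\theta$ is continuous and $[\cdot]$ is multiplicative'' does not suffice: the Teichm\"uller map is not additive, so closeness of $\overline{y}_i \in \image(\overline{\psi})$ to $\overline{x}_i$ controls $\beta(\theta([\overline{x}_i]) - \theta([\overline{y}_i]))$ only through the homogeneity of Witt vector addition, yielding a bound of the shape $\sup_j p^{-j}\,\overline{\beta}(\overline{x}_i)^{1-p^{-j}}\,\overline{\beta}(\overline{x}_i - \overline{y}_i)^{p^{-j}}$. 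One must verify that for fixed $\epsilon$ and fixed $\overline{\beta}(\overline{x}_i)$ there is a \emph{positive} threshold on $\overline{\beta}(\overline{x}_i-\overline{y}_i)$ making this $<\epsilon$; that infimum computation is essentially the whole of the paper's proof of (a). (Also, Lemma~\ref{L:dense image} points the wrong way here: it transfers density from characteristic $0$ down to the inverse perfection, not from $S$ up to $B$.)

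In (b) the ``propagation'' fails as stated. If $x \in \gotho_B$ is approximated by $\psi(w)$, then $w \in A$ need not lie in $\gotho_A$, and, more seriously, the perfectoid approximation in $A$ produces $v$ with $\alpha(w - v^p) \leq c\,\alpha(w)$, an estimate relative to $\alpha(w)$, which can be arbitrarily large compared with $\beta(x)$; pushing forward through $\psi$ then gives no bound relative to $\beta(x)$. The paper's fix is the pointwise estimate of Lemma~\ref{L:perfectoid calculation} (cf.\ \eqref{eq:perfectoid calculation1}): one finds $\overline{w} \in R$ with $\gamma(w - \theta([\overline{w}])) \leq p^{-1}\max\{\gamma(w),\beta(x)\}$ for \emph{every} $\gamma \in \calM(A)$, and then restricts to those $\gamma$ pulled back from $\calM(B)$, where $\gamma(w) = \gamma(x) \leq \beta(x)$. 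The same device, not Lemma~\ref{L:dense image}(a), is what yields density of $\overline{\psi}$: applying that lemma would require first showing that $\psi$ carries some exhaustion of $\gotho_A$ (with Frobenius surjective mod $p$) densely into $\gotho_B$, which does not follow from density of $\psi(A)$ in $B$ and is essentially equivalent to the pointwise estimate you are trying to bypass.
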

\begin{proof}
In the setting of (a), the ring $S$ is reduced and admits the dense perfect $\Fp$-subalgebra $\overline{\psi}(R)$,
so $S$ is also perfect. Let $\alpha, \beta, \overline{\alpha}, \overline{\beta}$ denote the spectral norms on $A,B,R,S$,
respectively. By Lemma~\ref{L:stable residue},
for any $x \in B$ and $\epsilon > 0$, we can find a finite sum
$\sum_{i=0}^n p^i [\overline{x}_i] \in W(S)$ such that $\beta(x - \theta(\sum_{i=0}^n p^i[\overline{x}_i])) < \epsilon$.
For $i=0,\dots,n$, if $\overline{x}_i = 0$, take $\overline{y}_i = 0$,
otherwise choose $\overline{y}_i \in R$ with
\[
\overline{\beta}(\overline{x}_i-\overline{\psi}(\overline{y}_i)) <
\inf\{ \epsilon^{p^j} p^{(i+j)p^{j}} \overline{\beta}(\overline{x}_i)^{1-p^j}: j=0,1,\dots\}.
\]
(Note that the sequence whose infimum is sought tends to $+\infty$ as $j \to \infty$, since it is
dominated by $p^{jp^j}$, so the infimum is positive.)
Put $y = \sum_{i=0}^n p^i \theta([\overline{y}_i]) \in A$; then
\begin{align*}
\beta\left(\sum_{i=0}^n p^i \theta([\overline{x}_i] - [\overline{\psi}(\overline{y}_i)])\right)
&\leq \max\{p^{-i} \beta(\theta([\overline{x}_i] - [\overline{\psi}(\overline{y}_i)])): i=0,\dots,n\} \\
&\leq \max\{p^{-i-j} \overline{\beta}(\overline{x}_i)^{1-p^{-j}} \overline{\beta}(\overline{x}_i-\overline{\psi}(\overline{y}_i))^{p^{-j}}: i=0,\dots,n; j=0,1,\dots\} \\
&< \epsilon.
\end{align*}
It follows that $\beta(x - \psi(\sum_{i=0}^n p^i [\overline{y}_i])) < \epsilon$, yielding (a).

In the setting of (b), let $\alpha, \beta$ be the norms on $A, B$.
Given $x \in \psi(A) \cap \gotho_B$, choose
$w \in \psi^{-1}(x)$. By Lemma~\ref{L:perfectoid calculation}, we can find $\overline{w} \in R$
such that
\[
\gamma(w - \theta([\overline{w}])) \leq p^{-1} \max\{\gamma(w), \beta(x)\} \qquad (\gamma \in \calM(A)).
\]
Put $y = \psi(\theta([\overline{w}^{1/p}]))$; then $\gamma(x - y^p) \leq p^{-1} \beta(x)$ for all $\gamma \in \calM(B)$,
so $\beta(x-y^p) \leq p^{-1} \beta(x)$. Since $\psi(A)$ is dense in $B$, $B$ is perfectoid.
Given $\overline{x} \in S$, choose $w \in A$ with
$\beta(\psi(w) - \theta([\overline{x}])) \leq p^{-1} \overline{\beta}(\overline{x})$,
then apply Lemma~\ref{L:perfectoid calculation} again to
choose $\overline{w} \in R$ such that
\[
\gamma(w - \theta([\overline{w}])) \leq p^{-1} \max\{\gamma(w), \overline{\beta}(\overline{x})\} \qquad (\gamma \in \calM(A)).
\]
Put $\overline{y} = \overline{\psi}(\overline{w})$; then $\gamma(\theta([\overline{x}] - [\overline{y}]))
\leq p^{-1} \overline{\beta}(\overline{x})$ for $\gamma \in \calM(B)$,
so $\gamma(\overline{x} - \overline{y}) \leq p^{-1} \overline{\beta}(\overline{x})$.
This yields (b).

In the setting of (c), if either $\overline{\psi}$ or $\psi$ is surjective, then
it is strict by the open mapping theorem (Theorem~\ref{T:open mapping}).
Consequently, (c) follows from Proposition~\ref{P:perfectoid uniform strict}(a,d).
\end{proof}

\begin{cor} \label{C:perfectoid tensor product over any base}
Let $\psi_1: C \to A$, $\psi_2: C \to B$ be bounded homomorphisms of
uniform Banach $\Qp$-algebras,
and let $D = (A \widehat{\otimes}_C B)^u$ denote the uniform completion of $A \otimes_C B$. If $A$ and
$B$ are perfectoid, then so is $D$.
\end{cor}
\begin{proof}
Let $\alpha,\beta$ be the spectral norms on $A,B$.
Let $N$ be the multiplicative monoid of $R(B)$.
Equip the monoid ring $A[N]$ with the weighted Gauss norm
\[
\left| \sum_i a_i [n_i] \right| = \max_i \{\alpha(a_i) \mu(\beta)(n_i)\}.
\]
Let $E$ be the completion of $A[N]$; it is a perfectoid algebra (compare
Example~\ref{exa:perfectoid Tate algebra}).
The formula
\[
\sum_i a_i [n_i] \mapsto \sum_i a_i \otimes \theta([n_i])
\]
defines a bounded homomorphism from $E$ to the ordinary completion of $A
\otimes_C B$ with dense image; consequently, the resulting homomorphism
$E \to D$ also has dense image. By
Theorem~\ref{T:perfectoid dense image}(b) again, $D$ is perfectoid.
\end{proof}

A related observation is that the perfectoid property is preserved under completions.
\begin{prop}
Let $A$ be a perfectoid uniform Banach $\Qp$-algebra equipped with its spectral norm. Let $J$ be a finitely generated
ideal of $\gotho_A$ which contains $p$. Equip each quotient $\gotho_A/J^n$ with the quotient norm, equip the inverse limit
$R$ with the supremum norm, and put $B = R[p^{-1}]$. Then $B$ is again a perfectoid uniform
Banach $\Qp$-algebra.
\end{prop}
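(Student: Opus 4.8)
The plan is to verify directly the three requirements in the definition of a perfectoid uniform Banach $\Qp$-algebra (Definition~\ref{D:perfectoid Banach}): that $B$ is a uniform Banach $\Qp$-algebra, that there exists $x \in \gotho_B$ with $x^p \equiv p \pmod{p^2 \gotho_B}$, and that $\overline{\varphi}$ is surjective on $\gotho_B/(p)$. The middle requirement comes for free: choose $x \in \gotho_A$ with $x^p \equiv p \pmod{p^2 \gotho_A}$ (possible because $A$ is perfectoid) and push it forward along the canonical homomorphism $\gotho_A \to R \subseteq \gotho_B$, noting that $p^2 \gotho_A$ maps into $p^2 \gotho_B$. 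Throughout I would use the description $\gotho_A \cong W(\gotho_{R_0})/(z)$ with $R_0 = R(A)$ and $z$ primitive of degree $1$ (Theorem~\ref{T:perfectoid ring}); since $(z)$ is closed in the $p$-adically complete ring $W(\gotho_{R_0})$ (Definition~\ref{D:primitive}), the ring $\gotho_A$ is itself $p$-torsion-free and $p$-adically complete, which is what makes the passage to the completion $R$ manageable.

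For the first requirement I would begin by recording the elementary structure of $R = \varprojlim_n \gotho_A/J^n$: each quotient seminorm on $\gotho_A/J^n$ is submultiplicative and bounded by $1$, the transition maps are submetric, so for $x = (x_n) \in R$ the numbers $|x_n|$ are non-decreasing in $n$ with $|x|_R = \lim_n |x_n|$, and $R$ is complete with a submultiplicative norm bounded by $1$. Using that $\gotho_A$ is $p$-torsion-free and $p$-adically complete, that $J$ is finitely generated, and that $p \in J$, one checks that $p$ is a non-zero-divisor in $R$, that $B = R[p^{-1}]$ is a Banach $\Qp$-algebra with $\gotho_B = R$, that $A \to B$ is bounded, and that $\gotho_B/(p)$ is identified with the $J$-adic completion of the ring $\gotho_A/(p)$, on which Frobenius is surjective. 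The two genuinely analytic points are that the norm on $R$ is power-multiplicative (so that $B$ is uniform) and that $\overline{\varphi}$ is surjective on $\gotho_B/(p)$; for both I would exploit the perfectoid approximation property of $A$, in the form of Corollary~\ref{C:perfectoid calculation}, which lets one approximate elements of $\gotho_A$ (modulo $J^n$) by $p$-th powers of Teichm\"uller-type elements to any prescribed precision. For uniformity this is used to show that the quotient norms on the $\gotho_A/J^n$ assemble, in the limit, to a power-multiplicative norm, in the same spirit as the construction of spectral norms in Lemma~\ref{L:finite uniform Banach}. For surjectivity of $\overline{\varphi}$, an element of $\gotho_B/(p)$ is a compatible system of classes $x_n \in \gotho_A/(p, J^n)$, each of which is (approximately) a $p$-th power by Corollary~\ref{C:perfectoid calculation} (or Lemma~\ref{L:stable residue}), and one must assemble these into a single $p$-th root modulo $p$ in the completion.

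The main obstacle is exactly this passage to the inverse limit: Corollary~\ref{C:perfectoid calculation} is applied one quotient $\gotho_A/J^n$ at a time, and the level-by-level approximations (or approximate $p$-th roots) must be made compatible so as to define a genuine element of $R$. Concretely this amounts to the vanishing of a $\varprojlim^1$-type obstruction, for which the crucial hypotheses are that $\gotho_A$ is $p$-adically complete (so that convergent series can be summed inside it) and that $J$ is finitely generated (so that $J$-adic completion is exact on finitely presented modules and commutes with reduction modulo $p$ and with the quotient by $z$). A secondary but still nontrivial task is to verify the compatibilities used in the second paragraph — that $\gotho_B$ is exactly $R$, that the quotient norms on $\gotho_A/J^n$ interact correctly with the norm on $A$, and that $\gotho_B/(p)$ is the $J$-adic completion of $\gotho_A/(p)$ — since $\gotho_A$ need not be Noetherian. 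Once these are in place, $B$ satisfies all three requirements of Definition~\ref{D:perfectoid Banach} and hence is perfectoid.
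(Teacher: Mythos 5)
Your overall plan (verify the conditions of Definition~\ref{D:perfectoid Banach} for $B$) is the right one, and your treatment of the condition $x^p \equiv p \pmod{p^2}$ by pushing forward from $\gotho_A$ is fine. But the crucial step --- surjectivity of $\overline{\varphi}$ on $\gotho_B/(p)$ --- is exactly where you stop: you correctly identify that applying Corollary~\ref{C:perfectoid calculation} one quotient $\gotho_A/J^n$ at a time produces approximate $p$-th roots that need not be compatible under the transition maps, you call this ``the main obstacle,'' and you do not resolve it. Trying to force compatibility directly is genuinely problematic: from $z_{n+1}^p \equiv z_n^p \pmod{(p)+J^n}$ one cannot conclude that $z_{n+1}-z_n$ is small without some further argument, so the $\varprojlim^1$-type issue you flag is not a routine verification but the actual content of the proposition.

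The paper's proof supplies the one idea you are missing, and it sidesteps the inverse limit entirely. Choosing generators $x_1,\dots,x_m$ of $J$, one replaces the filtration $\{J^n\}$ by the cofinal filtration $\{(p^n, x_1^{pn},\dots,x_m^{pn})\}$, so that every element $y$ of $R$ is literally a convergent series $\sum_{n}(a_{n0}p^n + a_{n1}x_1^{pn} + \cdots + a_{nm}x_m^{pn})$ with all $a_{ni} \in \gotho_A$. Now the perfectoid property of $A$ is applied once per coefficient: choose $b_{ni} \in \gotho_A$ with $b_{ni}^p \equiv a_{ni} \pmod{p}$, and observe that the series $\sum_{n,i} b_{ni}x_i^{n}$ (with the $i=0$ terms contributing only $b_{00}$ modulo $p$) converges in $R$ and its $p$-th power is congruent to $y$ modulo $pR$, since $(u+v)^p \equiv u^p+v^p \pmod{p}$ and $(b_{ni}x_i^n)^p = b_{ni}^p x_i^{pn}$. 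This gives a single global $p$-th root modulo $p$ with no compatibility condition to check. I would also note that you list power-multiplicativity of the supremum norm on $R$ as a point to be established ``in the same spirit as'' Lemma~\ref{L:finite uniform Banach} without giving the argument; to complete your write-up you would need either to carry that out or to deduce uniformity from the Frobenius surjectivity just established (e.g.\ via the Witt vector presentation of Theorem~\ref{T:perfectoid ring}), rather than treat it as a separate analytic verification.
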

\begin{proof}
Choose generators $x_1,\dots,x_m$ of $J$; then $R$ can also be written as the inverse limit of the quotients
$\gotho_A/(p^{n}, x_1^{pn}, \dots, x_m^{pn})$. Consequently,
each element $y$ of $R$ can be written as an infinite series
\[
\sum_{n=0}^\infty (a_{n0} p^n + a_{n1} x_1^{pn} + \cdots + a_{nm} x_m^{pn})
\]
with all of the $a_{ni}$ in $\gotho_A$. Choose $b_{ni} \in \gotho_A$ with $b_{ni}^p \equiv a_{ni} \pmod{p}$; then
the series
\[
b_{n0} + \sum_{n=0}^\infty (b_{n1}^p x_1^{n} + \cdots + b_{nm}^p x_m^{n})
\]
converges to an element $z$ of $R$ satisfying $z^p \equiv y \pmod{pR}$. From this, the claim follows at once.
\end{proof}

We finally establish compatibility with finite \'etale covers. As in the case of analytic fields,
the first step is to lift from characteristic $p$.
% One can give an alternate proof of
%this result using relative Robba rings; see Remark~\ref{R:Robba to perfectoid}.
\begin{lemma} \label{L:lift finite etale perfectoid}
For $A$ and $(R,I)$ corresponding as in Theorem~\ref{T:perfectoid ring}
and $S \in \FEt(R)$, the perfectoid Banach algebra $B$ over $\Qp$ corresponding to $(S, IW(\gotho_S))$
via Theorem~\ref{T:perfectoid ring} belongs to $\FEt(A)$ and 
its norm is equivalent to any norm given by Proposition~\ref{P:finite etale Banach norm}.
\end{lemma}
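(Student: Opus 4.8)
\emph{Plan.} Write $\gotho_B = W(\gotho_S)/I(B)$, so that $\gotho_A = W(\gotho_R)/I$, $A = \gotho_A[p^{-1}]$, $B = \gotho_B[p^{-1}]$, with $\overline{z}$ denoting the reduction modulo $p$ of a chosen generator $z$ of $I$ which is primitive of degree $1$ (Definition~\ref{D:primitive}). The plan is to show that $B$ is a finite projective $A$-module and that the trace pairing $B \to \Hom_A(B,A)$ is an isomorphism; by the criterion recorded in Definition~\ref{D:finite etale category} this yields $B \in \FEt(A)$, and the finite Banach module assertion then follows from Lemma~\ref{L:finite projective Banach}.

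\emph{Setup and finiteness.} By Lemma~\ref{L:perfect etale extension}, $S$ is perfect; equip it with the canonical power-multiplicative norm of Lemma~\ref{L:finite uniform Banach}, making it a perfect uniform Banach $\Fp$-algebra. Splitting off (via an idempotent, Corollary~\ref{C:lift disconnection}) the open-closed locus of $\Spec R$ where $S$ has rank $0$, I may assume $S$ is faithfully finite \'etale over $R$, so that $\gotho_R \to \gotho_S$ is an isometric inclusion; then $z$ remains primitive of degree $1$ in $W(\gotho_S)$, so $(S, IW(\gotho_S))$ is a legitimate pair, $B = A(S, IW(\gotho_S))$ is perfectoid by Theorem~\ref{T:perfectoid ring}, $\gotho_B = W(\gotho_S)/IW(\gotho_S)$ with $\gotho_B/p\gotho_B \cong \gotho_S/(\overline{z})$, and $A \to B$ is induced by $\gotho_R \to \gotho_S$. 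Next I would show $\gotho_S$ is a finite $\gotho_R$-module: since $S$ is a finite Banach module over $R$, the canonical norm is equivalent to the quotient norm attached to a surjection $R^n \to S$ (Lemma~\ref{L:finitely generated Banach}), and choosing generators $\overline{e}_1,\dots,\overline{e}_n \in \gotho_S$ and using that $R$ contains $\overline{z}_0 \in R^\times$ with $\alpha(\overline{z}_0) = p^{-1}$, one gets a fixed $m$ with $\overline{z}_0^m\gotho_S \subseteq M := \sum_i \gotho_R \overline{e}_i \subseteq \gotho_S$, whence $\gotho_S/M$ embeds into $M/\overline{z}_0^m M$ and $\gotho_S$ is finite over $\gotho_R$. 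Consequently $\gotho_B/p\gotho_B \cong \gotho_S/(\overline{z})$ is finite over $\gotho_A/p\gotho_A \cong \gotho_R/(\overline{z})$; lifting generators to $\gotho_B$ and invoking $p$-adic completeness of $\gotho_A$ and $\gotho_B$, a telescoping argument ($x = m_0 + px_1 = \cdots = \sum_i p^i m_i$, then collecting coefficients into convergent series in $\gotho_A$) shows these lifts generate $\gotho_B$ over $\gotho_A$. Hence $B$ is a finite $A$-module.

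\emph{Projectivity and the trace pairing.} With $B$ finite over $A$, I would pass to fibres over the homeomorphism $\calM(A) \cong \calM(R)$ of Theorem~\ref{T:quotient norm}. For $\alpha \in \calM(R)$ with residue field $L := \calH(\alpha)$ and $\beta$ the corresponding point of $\calM(A)$, the field $F := \calH(\beta)$ is perfectoid and corresponds to $L$ via Theorem~\ref{T:perfectoid field}; applying Proposition~\ref{P:perfectoid tensor} to $R \to L$ and using that $S$ is finite over $R$ identifies $B \otimes_A F$ with the perfectoid $F$-algebra attached to $S \otimes_R L \in \FEt(L)$, which by Lemma~\ref{L:transfer degree} has $F$-dimension $\rank_{R,\alpha}(S)$. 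Thus the rank function of $B$ over $A$ equals that of $S$ over $R$, hence is locally constant, and Lemma~\ref{L:finite generation2} shows $B$ is finite projective over $A$. Finally the trace map $\tau : B \to \Hom_A(B,A)$ is a morphism of finite projective $A$-modules of equal rank, so by Lemma~\ref{L:finite generation}(a) it is an isomorphism once $\tau \otimes_A \calH(\beta)$ is surjective for every $\beta$; but $\tau \otimes_A F$ is the trace map of $B \otimes_A F$, which lies in $\FEt(F)$ by Theorem~\ref{T:mixed lift field}, so it is an isomorphism. Therefore $B \in \FEt(A)$.

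\emph{Main obstacle.} I expect the delicate point to be the passage to fibres: namely, matching $\calH(\beta)$ with $\calH(\alpha)$ under the perfectoid-field correspondence and identifying $B \otimes_A \calH(\beta)$ with $A(S \otimes_R \calH(\alpha))$ via Proposition~\ref{P:perfectoid tensor}. This requires carefully checking that $\calH(\beta)$ is perfectoid, that it corresponds to $\calH(\alpha)$ compatibly with Theorem~\ref{T:quotient norm}, and that the completed tensor product $S \widehat{\otimes}_R \calH(\alpha)$ carries the expected (uniform, finite \'etale) structure; the strictness and almost-optimality results of Remark~\ref{R:perfect uniform strict} and Proposition~\ref{P:perfectoid uniform strict}, together with Lemma~\ref{L:stable residue}, are the tools I would use to control these comparisons.
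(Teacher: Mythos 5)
Your second half (passing to fibres over $\calM(A) \cong \calM(R)$, using the field-level correspondence to see that the rank function of $B$ matches that of $S$, invoking Lemma~\ref{L:finite generation2} for projectivity, and checking the trace pairing pointwise via Lemma~\ref{L:finite generation}(a)) is essentially the paper's argument. But the finiteness step contains a genuine gap. You assert that $\gotho_S$ is a finite $\gotho_R$-module, deducing this from the inclusions $\overline{z}_0^m \gotho_S \subseteq M \subseteq \gotho_S$ with $M$ finitely generated and the resulting embedding $\gotho_S/M \hookrightarrow M/\overline{z}_0^m M$. Over a noetherian ring this would finish the job, but $\gotho_R$ is very far from noetherian: a submodule of a finitely generated $\gotho_R$-module need not be finitely generated, so the embedding gives you nothing. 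Worse, the conclusion itself is false in general --- Remark~\ref{R:almost perfect} states explicitly that $\gotho_S$ need not be a finite $\gotho_R$-module; it is only \emph{almost} finite, which is precisely why the almost purity theorem is phrased in the language of almost ring theory. Your subsequent reduction ``$\gotho_B/p\gotho_B$ finite over $\gotho_A/p\gotho_A$, hence $\gotho_B$ finite over $\gotho_A$ by completeness'' therefore rests on a false premise, and the integral statement $\gotho_B$ finite over $\gotho_A$ is itself not expected to hold.

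The repair is to never claim more than almost finiteness at the integral level and to exploit the structure of the primitive element to kill the defect after inverting $p$. Concretely (this is the paper's route): using perfectness of $S$ to replace $\overline{z}_0^m$ by $\overline{z}_0^{m p^{-k}}$, one finds $\overline{x}_1,\dots,\overline{x}_n \in \gotho_S$ such that $\gotho_S/(\overline{x}_1\gotho_R + \cdots + \overline{x}_n\gotho_R)$ is killed by $\overline{z}$; the homogeneity of Witt vector addition (Remark~\ref{R:addition formula}) then shows that $W(\gotho_S)/([\overline{x}_1]W(\gotho_R) + \cdots + [\overline{x}_n]W(\gotho_R))$ is killed by $[\overline{z}]$. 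Since $z$ is primitive of degree $1$, one has $[\overline{z}] \equiv -pz_1 \pmod{z}$ with $z_1$ a unit, so after quotienting by $z$ the cokernel is killed by $p$, and inverting $p$ makes $B$ a finite $A$-module generated by the images of the $[\overline{x}_i]$. From there your fibrewise argument (or the paper's direct appeal to Lemma~\ref{L:transfer degree}) goes through, and the finite Banach module structure follows from the open mapping theorem.
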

\begin{proof}
We may assume that $S$ is of constant rank $d>0$ as an $R$-module.
Let $z$ be a generator of $I$ which is primitive of degree $1$.
Since $S$ is a finite $R$-module and $\overline{\varphi}$ is bijective on $S$,
we can find $\overline{x}_1, \dots, \overline{x}_n \in \gotho_S$ such that
$\gotho_S/(\overline{x}_1 \gotho_R + \cdots + \overline{x}_n \gotho_R)$ is killed by
$\overline{z}$.
Using Remark~\ref{R:addition formula}, it follows that
$W(\gotho_S)/([\overline{x}_1] W(\gotho_R) + \cdots + [\overline{x}_n] W(\gotho_R))$
is killed by $[\overline{z}]$. Quotienting by $z$ and then inverting $p$, we find that
$B$ is a finite $A$-module. By Proposition~\ref{P:finite generation2} and Lemma~\ref{L:transfer degree},
$B$ is locally free of constant rank $d$ as an $A$-module.

We now know that $B$ is a finite projective $A$-module.
To check that $B \in \FEt(A)$,
it remains to check that the map $B \to \Hom_A(B, A)$ taking $x$ to $y \mapsto \Trace_{B/A}(xy)$
is surjective. By Lemma~\ref{L:finite generation}, it suffices to check this pointwise;
we may thus apply Lemma~\ref{L:transfer degree} to conclude.

To conclude, note that the equivalence between the norm on $B$ and the one derived from Proposition~\ref{P:finite etale Banach norm} is a consequence of the open mapping theorem (Theorem~\ref{T:open mapping}).
\end{proof}

\begin{theorem} \label{T:mixed lift ring}
For $A$ and $(R,I)$ corresponding as in Theorem~\ref{T:perfectoid ring},
if we equip $B \in \FEt(A)$ with a Banach norm provided by Proposition~\ref{P:finite etale Banach norm}, then $B$ is a perfectoid uniform Banach $A$-algebra.
Moreover, the correspondence of Lemma~\ref{L:lift finite etale perfectoid} induces a tensor equivalence
$\FEt(A) \cong \FEt(R)$.
(As in Theorem~\ref{T:formally unramified}, it follows that
$\Omega_{\gotho_B/\gotho_A} = 0$.)
\end{theorem}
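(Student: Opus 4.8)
The plan is to prove that the functor $\Phi\colon \FEt(R) \to \FEt(A)$ sending $S$ to the perfectoid Banach algebra $A(S, IW(\gotho_S))$ is an equivalence of tensor categories; this functor is well defined and rank-preserving by Lemma~\ref{L:lift finite etale perfectoid} (and the proof thereof). Granting this, for any $B \in \FEt(A)$ we get an $A$-algebra isomorphism $B \cong \Phi(S)$ for some $S \in \FEt(R)$, and $\Phi(S)$ is by construction a perfectoid uniform Banach algebra which is a finite Banach module over $A$; since the norm of Lemma~\ref{L:finite projective Banach} on $B$ is, by Lemma~\ref{L:finite projective Banach}(b), equivalent to any norm making $B$ a finite Banach $A$-module, it is equivalent to the (power-multiplicative) perfectoid norm on $\Phi(S)$. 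Thus the uniformity and norm statements will follow once $\Phi$ is shown to be an equivalence, and $\Omega_{\gotho_B/\gotho_A} = 0$ follows exactly as in Theorem~\ref{T:formally unramified}, perfectoidness of $B$ giving $\Omega_{\gotho_B/\gotho_A} = p\,\Omega_{\gotho_B/\gotho_A}$ and \'etaleness together with the structure of $\gotho_B$ described in Remark~\ref{R:almost perfect} forcing this module to vanish. It therefore remains to prove that $\Phi$ is fully faithful and essentially surjective.

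For full faithfulness, fix $S, S' \in \FEt(R)$. Then $S \otimes_R S'$ is again finite \'etale over $R$, hence perfect uniform by Lemma~\ref{L:finite uniform Banach}, and $\Phi(S) \otimes_A \Phi(S')$ is a finite projective $A$-algebra; by Proposition~\ref{P:perfectoid tensor} the natural map identifies $\Phi(S) \otimes_A \Phi(S') = \Phi(S) \widehat{\otimes}_A \Phi(S')$ with $\Phi(S \otimes_R S')$. By Theorem~\ref{T:perfectoid ring} this ring has Gel'fand spectrum homeomorphic to that of $S \otimes_R S'$, so by Remark~\ref{R:affinoid system disconnection} (together with Theorem~\ref{T:transform}, which makes idempotents of a uniform ring correspond bijectively to disconnections of its spectrum) its idempotents are in natural bijection with those of $S \otimes_R S'$. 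Since $\Phi$ is rank-preserving and induces a homeomorphism on spectra, an idempotent $e$ has $\Gamma(e)$ an isomorphism over $R$ if and only if the corresponding idempotent of $\Phi(S \otimes_R S')$ does over $A$ (by Lemma~\ref{L:finite generation}(a) for surjectivity and a rank count for injectivity). Lemma~\ref{L:idempotents to morphisms} then yields a bijection $\Hom_R(S,S') \cong \Hom_A(\Phi(S), \Phi(S'))$, compatible with $\Phi$.

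The main work is essential surjectivity, which I would establish by localizing and reducing to the field case. Fix $B \in \FEt(A)$. For $\beta \in \calM(A)$, the residue field $\calH(\beta)$ receives a bounded homomorphism from $A$ with dense image, so by Theorem~\ref{T:perfectoid dense image}(b) it is perfectoid, and its tilt is the residue field $\calH(\alpha)$ of the point $\alpha \in \calM(R)$ corresponding to $\beta$ under the homeomorphism of Theorem~\ref{T:quotient norm}. Applying Theorem~\ref{T:mixed lift field} to $B \otimes_A \calH(\beta) \in \FEt(\calH(\beta))$ produces an object of $\FEt(\calH(\alpha))$. Arguing as in the proof of Proposition~\ref{P:henselian direct limit2} — using that the localizations of $R$ (resp.\ $A$) at $\alpha$ (resp.\ $\beta$) are henselian with residue field dense in $\calH(\alpha)$ (resp.\ $\calH(\beta)$), together with Lemma~\ref{L:descend etale on field}, but crossing between characteristics at the residue field via Theorem~\ref{T:mixed lift field} and spreading the result out via Theorem~\ref{T:perfectoid rational} — this produces a rational localization $R \to R'$ encircling $\alpha$, an object $S' \in \FEt(R')$, and (after shrinking, using the henselian property and the full faithfulness of $\Phi$ already proved over the localization) an isomorphism $\Phi(S') \cong B \otimes_A A'$, where $A \to A'$ is the rational localization encircling $\beta$ corresponding to $R \to R'$ under Theorem~\ref{T:perfectoid rational}. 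By compactness of $\calM(A)$, finitely many of these localizations, chosen so that the interiors of their subdomains cover, form a strong covering family $A \to A_1', \dots, A \to A_n'$, equipped with $S_i' \in \FEt(R_i')$ and isomorphisms $\Phi(S_i') \cong B \otimes_A A_i'$, where $R \to R_i'$ corresponds to $A \to A_i'$. Over the overlaps — which correspond to overlaps on the $R$-side by Proposition~\ref{P:perfectoid tensor} together with Theorem~\ref{T:perfectoid rational} — the composed isomorphisms $\Phi(S_i') \cong B \cong \Phi(S_j')$ descend by full faithfulness of $\Phi$ to isomorphisms among the $S_i'$, satisfying the cocycle condition over triple overlaps (again by full faithfulness). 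Since $R$ is a perfect uniform Banach $\Fp$-algebra it satisfies the Kiehl glueing property (Theorem~\ref{T:Tate-Kiehl analogue1}), so by Theorem~\ref{T:henselian direct limit3} the $S_i'$ glue to an object $S \in \FEt(R)$; since $A$ is perfectoid it satisfies the same property (Theorem~\ref{T:Tate-Kiehl analogue2}), so the $\Phi(S_i')$ glue — compatibly with the descent datum induced by $B$ — to $\Phi(S) \cong B$. This completes the proof.

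The hard part is the essential surjectivity, and within it the bookkeeping required to transport the descent datum for $B$ across the perfectoid correspondence: one must verify that the rational coverings of $\calM(A)$ that arise match, under Theorem~\ref{T:perfectoid rational} and Proposition~\ref{P:perfectoid tensor}, rational coverings of $\calM(R)$ including their overlaps, and that the glueing isomorphisms genuinely satisfy the cocycle condition on the characteristic-$p$ side. Both of these rest on the full faithfulness of $\Phi$ over the relevant localizations, which is why the logical order matters: full faithfulness first, then essential surjectivity, and only then uniformity and the norm equivalence as consequences.
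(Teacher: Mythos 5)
Your proof follows essentially the same route as the paper's: the heart is essential surjectivity, established by transferring $B \otimes_A \calH(\beta)$ across the field-level correspondence (Theorem~\ref{T:mixed lift field}), spreading out via the henselian property of localizations, covering $\calM(A)$ by compactness, and glueing the resulting descent datum on the characteristic-$p$ side via Theorem~\ref{T:henselian direct limit3}. Your separate idempotent argument for full faithfulness is workable but not needed: once every object of $\FEt(A)$ is known to be perfectoid, full faithfulness is immediate from the categorical equivalence of Theorem~\ref{T:perfectoid ring}, since $A$-algebra morphisms between finite \'etale $A$-algebras are automatically bounded and preserve the extended ideals. One citation to repair: the map $A \to \calH(\beta)$ does not in general have dense image (only $A/\gothp_\beta$, not its completed fraction field, lies in the closure), so Theorem~\ref{T:perfectoid dense image}(b) does not directly show that $\calH(\beta)$ is perfectoid; this fact instead follows from Theorem~\ref{T:quotient norm} together with Example~\ref{exa:lift analytic field}, which identify $\calH(\beta)$ with $W(\gotho_{\calH(\gamma)})[[\overline{z}]^{-1}]/(z)$ for the point $\gamma \in \calM(R)$ corresponding to $\beta$.
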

\begin{proof}
It suffices to check that any $B \in \FEt(A)$ arises as in Lemma~\ref{L:lift finite etale perfectoid}.
Extend $A$ to an adic Banach ring $(A,A^+)$ corresponding to $(R,R^+)$ via Theorem~\ref{T:perfectoid ring}.
Recall that by Theorem~\ref{T:quotient norm} and 
Theorem~\ref{T:perfectoid rational},
there are compatible homeomorphisms $\calM(A) \cong \calM(R)$,
$\Spa(A,A^+) \cong \Spa(R,R^+)$
matching up rational subdomains on both sides.
For each $\delta \in \calM(A)$, let $\gamma \in \calM(R)$ be the corresponding point.
Using Theorem~\ref{T:mixed lift field}, we may transfer
$B \otimes_A \calH(\delta) \in \FEt(\calH(\delta))$ to some $S(\gamma) \in \FEt(\calH(\gamma))$.
By Lemma~\ref{L:henselian direct limit}(a) (and Theorem~\ref{T:henselian}), there exists a rational localization $(R,R^+) \to (R_1, R_1^+)$
encircling $\gamma$ such that $S(\gamma)$ extends to $S_1 \in \FEt(R_1)$. Let $(A,A^+) \to (A_1,A_1^+)$ be the
rational localization corresponding to $(R,R^+) \to (R_1,R_1^+)$ via Theorem~\ref{T:perfectoid rational}.
Applying Lemma~\ref{L:lift finite etale perfectoid}, we may lift $S_1$ to $B_1 \in \FEt(A_1)$.
By Lemma~\ref{L:henselian direct limit}(a) again, by replacing $(A,A^+) \to (A_1,A_1^+)$ by another rational localization
encircling $\delta$, we can ensure that $B_1 \cong B \otimes_A A_1$. In particular, $B \otimes_A A_1$
is perfectoid.

By compactness, we obtain a strong rational covering $\{(A,A^+) \to (A_i,A_i^+)\}_i$
corresponding to a strong rational covering $\{(R,R^+) \to (R_i, R_i^+)\}$
as in Theorem~\ref{T:perfectoid rational},
such that for each $i$, $B \otimes_A A_i$ corresponds to some $S_i \in \FEt(R_i)$
as in Theorem~\ref{T:perfectoid ring}.
Put $(A_{ij}, A_{ij}^+) = (A_i,A_i^+) \widehat{\otimes}_{(A,A^+)} (A_j,A_j^+)$, so that $(A,A^+) \to (A_{ij}, A_{ij}^+)$ is the rational localization
corresponding to $\Spa(A_i,A_i^+) \cap \Spa(A_j,A_j^+)$ by Proposition~\ref{P:perfectoid tensor}.
Let $(R,R^+) \to (R_{ij}, R_{ij}^+)$ be the corresponding rational localization.
Using Theorem~\ref{T:perfectoid rational}, we may transfer the isomorphisms
$(B \otimes_A A_i) \otimes_{A_i} A_{ij} \cong (B \otimes_A A_j) \otimes_{A_j} A_{ij}$
to obtain isomorphisms $S_i \otimes_{R_i} R_{ij} \cong S_j \otimes_{R_j} R_{ij}$
satisfying the cocycle condition. By
Theorem~\ref{T:henselian direct limit3}, we may glue the $S_i$ to obtain $S \in \FEt(R)$.

Apply Lemma~\ref{L:lift finite etale perfectoid} to lift $S$ to $C \in \FEt(A)$.
By Theorem~\ref{T:perfectoid ring}, we have isomorphisms $B \otimes_A A_i \cong C \otimes_A A_i$
which again satisfy the cocycle condition. They thus glue to an isomorphism $B \cong C$ by Theorem~\ref{T:henselian direct limit3} again,
so $B$ is perfectoid as desired.
\end{proof}
As for fields (see Proposition~\ref{P:perfectoid field descent}),
we have the following converse result.
\begin{prop} \label{P:perfectoid ring descent}
Let $A \to B$ be a morphism of uniform Banach algebras
over $\Qp$ such that $B$ is perfectoid.
\begin{enumerate}
\item[(a)]
If $A \to B$ is faithfully finite \'etale, then $A$ is perfectoid.
\item[(b)]
Let $k \subseteq \ell$ be perfect fields of characteristic $p$ and put $K = W(k)[p^{-1}], L = W(\ell)[p^{-1}]$. Suppose that $A$ is a Banach algebra over $K$ and that $B = (A \widehat{\otimes}_K L)^u$. Then $A \widehat{\otimes}_K L = B$ and $A$ is perfectoid.
\end{enumerate}
\end{prop}
\begin{proof}
Equip all uniform Banach rings in this argument with their spectral norms.
To prove (a), we may assume that $B$ is finite \'etale over $A$ of constant degree $d>0$.
Let $(S,J)$ be the pair corresponding to $B$ via Theorem~\ref{T:perfectoid ring}. Since the two natural morphisms $\iota_1, \iota_2: B \to B \otimes_A B$ are both finite \'etale, by Theorem~\ref{T:mixed lift ring}, on one hand $B \otimes_A B$ is perfectoid; on the other hand, if we put $T = R(B \otimes_A B)$, then
$\iota_1, \iota_2$ correspond to two finite \'etale morphisms $\overline{\iota}_1, \overline{\iota}_2: S \to T$.

Since $\Spec(B) \to \Spec(A)$ is \'etale, the surjection $B \otimes_{\ZZ} B \to B \otimes_A B$ defines a closed immersion
$\Spec B \times_{\Spec A} \Spec B \to \Spec B \times_{\Spec \ZZ} \Spec B$ which is a finite \'etale equivalence relation on $\Spec(B)$ over $\Spec(\ZZ)$. This transfers to a finite \'etale equivalence relation on $\Spec(S)$ over $\Spec(\ZZ)$. However, by \cite[Tag 07S5]{stacks-project}, any such equivalence relation on an affine scheme admits a quotient in the category of schemes; that is, for $R$ the equalizer of $\overline{\iota}_1, \overline{\iota}_2$, the morphism $R \to S$ is finite \'etale of constant degree $d$ and the induced map $S \otimes_R S \to T$ is an isomorphism.

Since $R$ is the equalizer of $\overline{\iota}_1, \overline{\iota}_2: R \to S$, $W(R)$ is the equalizer of $W(\overline{\iota}_1), W(\overline{\iota}_2): W(R) \to W(S)$.
Consequently, the image of $W(\gotho_R)$ under $\theta$ belongs to the equalizer of $\iota_1, \iota_2$, which is $A$.

For $x \in \gotho_A$, the image of $x$ in $\gotho_B$ belongs to the equalizer of $\iota_1$ and $\iota_2$. Consequently, the image of $x$ in $\gotho_S/(p)$ lifts to an element $\overline{x} \in \gotho_S$ for which $\left| \overline{\iota}_1(x) - \overline{\iota}_2(x) \right| \leq p^{-1}$. By Remark~\ref{R:perfect uniform strict},
for any $\epsilon > 0$
there exists $\overline{y} \in \gotho_R$ satisfying
$\left| \overline{y} - \overline{x} \right| \leq p^{-1+\epsilon}$.
If we put $y = \theta([\overline{y}^{1/p}])$, then
$y \in \gotho_A$ satisfies $\left| x - y^p \right| \leq p^{-1+\epsilon}$. It follows that $\overline{\varphi}: \gotho_A/(p) \to \gotho_A/(p)$ is surjective.

To prove that $A$ is perfectoid, it now suffices to produce $x \in \gotho_A$ with $x^p \equiv p \pmod{p^2 \gotho_A}$. To do this, choose an integer $n > d$ and choose integers $a,b$ satisfying $pa + bd/p^n = 1$. Let $y = [\overline{y}] + p y_1$ be a generator of $J$, and put
\[
x_0 = p^a \theta([\Norm_{S/R} (-\overline{y}/\overline{y}_1)^{bp^{-n-1}}]);
\]
then $x_0 \in A$ and $\left| x_0^p - p \right| \leq p^{-1-p^{-n}}$. By the previous paragraph, we can find $x_1 \in \gotho_A$ such that
$\left| x_1^p - x_0^p/p \right| \leq p^{-1}$; we may then take
$x = x_0/x_1$. This proves (a).

To prove (b), we follow the proof of Lemma~\ref{L:unramified uniform extension}.
Let $S$ be a basis of $\ell$ over $k$ containing 1; then each $y \in \gotho_B$ has a unique convergent representation as $\sum_{s \in S} y_s \otimes [s]$ with $y_s \in A$,
and $\left| y \right| = \max\{\left| y_s \right|: s \in S\}$.
In particular,
$B = A \widehat{\otimes}_K L$; more precisely,
\[
\gotho_B/p \gotho_B \cong \gotho_A/p\gotho_A \otimes_{W(k)/pW(k)} W(\ell)/pW(\ell);
\]
since $W(\ell)/pW(\ell) \cong \ell$ is flat over $W(k)/pW(k) \cong k$,
the surjectivity of $\overline{\varphi}$ on $\gotho_A/p\gotho_A$ follows from the corresponding property on $\gotho_B/p\gotho_B$ by faithfully flat descent.
To find $x \in \gotho_A$ with $x^p \equiv p \pmod{p^2 \gotho_A}$,
first choose $y \in \gotho_B$ with $y^p \equiv p \pmod{p^2 \gotho_B}$.
Then $\left| y_s \right| \leq p^{-1/p}$ for all $s \in S$, so
\[ 
\sum_{s \in S} y_s^p \otimes [s^p] \equiv p \pmod{p^2 \gotho_B}.
\]
Since $k$ and $\ell$ are perfect, $\{s^p: s \in S\}$
is also a basis of $\ell$ over $k$, so we may take $x = y_1$ to deduce (b).
\end{proof}

\begin{remark} \label{R:perfectoid spaces}
Scholze observes \cite{scholze1} that for $A$ and $(R,I)$ corresponding as in Theorem~\ref{T:perfectoid ring},
Theorem~\ref{T:perfectoid rational}
and Theorem~\ref{T:mixed lift ring} imply
that the small \'etale sites of the adic spaces associated to $R$ and $A$
are naturally equivalent. This observation is the point of departure of his theory of
\emph{perfectoid spaces}, which casts the aforementioned results in more geometric terms; we will make contact with this construction in \S\ref{sec:adic}.

Besides the expected consequences for relative $p$-adic Hodge theory, as in the study of
relative comparison isomorphisms between \'etale and de Rham cohomology \cite{scholze2} and in our own work in this paper,
the perfectoid correspondence are some unexpected consequences. For instance, Scholze uses it to derive some new cases
of the weight-monodromy conjecture in \'etale cohomology \cite{scholze1}. There may also be consequences
in the direction of Hochster's direct summand conjecture in commutative algebra, as in the work of Bhatt
\cite{bhatt} (see also the discussion in \cite{gabber-ramero-arxiv}).
For further discussion, see \cite{scholze-icm}.
\end{remark}

\begin{remark} \label{R:almost later}
While the correspondence described above is sufficient for some applications, relative $p$-adic Hodge theory tends to requires
somewhat more refined information. The most common approach to getting this extra information is through variants of the
\emph{almost purity theorem} of Faltings \cite{faltings-purity1, faltings-almost}.
We will instead take an alternative approach based on relative Robba rings,
as introduced in \S\ref{sec:relative extended}.
For the relationship between the two points of view, see \S\ref{subsec:almost purity}.
\end{remark}

We mention the characterization of the perfectoid correspondence used in
\cite{scholze1}, which avoids any reference to Witt vectors, as well as a related criterion not found in \cite{scholze1}.

\begin{prop} \label{P:power inverse limit as set}
The following statements are true.
\begin{enumerate}
\item[(a)]
For $A$ and $(R,I)$ corresponding as in Theorem~\ref{T:perfectoid ring}, there is an isomorphism
\[
R \to \varprojlim_{x \mapsto x^p} A, \qquad \overline{x} \mapsto (\theta([\overline{x}]), \theta([\overline{x}^{1/p}]), \dots).
\]
\item[(b)]
For $A$ perfectoid, $\varprojlim_{x \mapsto x^p} A$ generates a dense $\ZZ$-subalgebra of $A$.
\item[(c)]
If $A$ is a uniform Banach algebra over some perfectoid algebra $B$
and $\varprojlim_{x \mapsto x^p} A$ generates a dense $B$-subalgebra of $A$, then $A$ is perfectoid.
\end{enumerate}
\end{prop}
Note that the converse of (b) would imply (c), but such a converse would require a different proof technique than that of (c).
\begin{proof}
Put $S = \varprojlim_{x \mapsto x^p}A$. Let $\alpha$ be the norm on $S$ which is defined to be the norm of the first element in the inverse limit.
Part (a) is immediate from Remark~\ref{R:full power inverse limit}.
To prove (b), note that if $A$ is perfectoid, then finite sums of images of
$S$ form a dense subring of $A$.
To prove (c), note that the quotient of $B\{t_s/\alpha(s): s \in S\}$ by the closure of the ideal $(t_s^p - t_{s^p}: s \in S)$ is a perfectoid algebra mapping to $A$ with dense image. 
By Theorem~\ref{T:perfectoid dense image}, $A$ is perfectoid.
\end{proof}

The following argument is a slight modification of an argument of Colmez \cite[Proposition~4.8]{scholze2}.
\begin{lemma} \label{L:perfectoid neighborhood}
Let $A$ be a uniform Banach algebra over $\Qp$.
Let $U$ be a subset of $\gothm_A$ which generates a dense $\Qp$-subalgebra of $A$.
For each finite subset $T$ of $U$ and each nonnegative integer $n$, equip the finite \'etale $A$-algebra
\[
B_{T,n} =A[x_{t,n}: t \in T]/(x_{t,n}^{p^n} - 1-t: t \in T)
\otimes_{\Qp} \Qp(\mu_{p^n})
\]
with the spectral seminorm. View these as a directed system running over $T$ and $n$ by identifying $x_{t,n}$ with $x_{t,n+1}^p$.
Then the completed direct limit $B$ of the $B_{T,n}$
is a perfectoid uniform Banach algebra over $\Qp$ and the morphism $\calM(B) \to \calM(A)$ is surjective; consequently (by Remark~\ref{R:transform uniform}), the map $A \to B$ is isometric.
\end{lemma}
\begin{proof}
Let $F$ be the completed direct limit of $\bigcup_{n=0}^\infty \Qp(\mu_{p^n})$; this is a perfectoid field, as then is the completion $C$ of
$F[x_t^{p^{-n}}: t \in U]$ for the Gauss norm.
However, $C$ admits a bounded homomorphism to $B$ taking $x_t$ to $1+t$. Since this homomorphism has dense image, by
Theorem~\ref{T:perfectoid dense image}(b) $B$ is perfectoid. The surjectivity of $\calM(B) \to \calM(A)$ follows by viewing $\calM(B)$ as the inverse limit of the $\calM(B_{T,n})$ and applying Lemma~\ref{L:finite etale surjective spectrum}.
\end{proof}

\begin{remark} \label{R:locally perfectoid is perfectoid}
By analogy with Remark~\ref{R:locally perfect is perfect}, one may ask whether an adic Banach algebra $(A,A^+)$ admitting a rational covering by perfectoid algebras is itself perfectoid. For Banach algebras over a perfectoid field, this conjecture is made in 
\cite[Conjecture~2.16]{scholze1}, but one can exhibit a counterexample against this conjecture by adapting the construction of \cite[Proposition~13]{buzzard-verberkmoes}.

By analogy with Proposition~\ref{P:locally perfect is perfect}, one can ask whether one can salvage the conjecture by adding the assumption that $(A,A^+)$ is uniform or even stably uniform. This is quite unclear: to analogize the proof of Proposition~\ref{P:locally perfect is perfect},
one would need to show that an exact sequence as in Corollary~\ref{C:principal ideal closed} is not only strict (as would follow from the open mapping theorem), but 
strict with suitably small factors between the quotient and subspace norms. 
(In other words, one needs to control the cohomology of the integral structure sheaf.)
\end{remark}

\begin{remark} \label{R:Fontaine perfectoid}
In \cite{fontaine-bourbaki}, Fontaine defines a \emph{perfectoid algebra}
to be a uniform f-adic ring $A$ containing a topologically nilpotent unit $\varpi$ such that $\varpi^p$ divides $p$ in $A^\circ$ (so in particular $p$ is topologically nilpotent) and $\overline{\varphi}: A^\circ/(\varpi) \to A^\circ/(\varpi^p)$ is surjective. By Proposition~\ref{P:perfectoid formulations}(e), a uniform Banach algebra over $\Qp$ is perfectoid in our sense if and only if it is perfectoid in Fontaine's sense. However, Fontaine's definition includes perfect uniform Banach $\Fp$-algebras as a special case, as well as some rings which are not algebras over either $\Fp$ or $\Qp$.

Although we will not do so in this paper, it is not difficult to extend the arguments used here to cover perfectoid algebras and spaces in the sense of Fontaine, modulo the formal adjustment of working with topological rings rather than Banach rings: dropping the requirement of working over $\Qp$ means that not every continuous homomorphism is bounded (Remark~\ref{R:Tate f-adic}). One important nonformal change is to replace
Lemma~\ref{L:perfectoid neighborhood} by an alternate construction using lifts of Artin-Schreier extensions instead of Kummer extensions; such a construction has recently been suggested by Scholze, but we will not include the details here.
\end{remark}

\subsection{Preperfectoid and relatively perfectoid algebras}

We next consider some Banach algebras closely related to perfectoid algebras. Although these do not correspond directly to objects in characteristic $p$, they are close enough to perfectoid algebras to inherit some of their most useful properties.

\begin{defn}
Consider the following properties of a uniform Banach algebra $A$ over $\Qp$.
\begin{enumerate}
\item[(a)]
For some perfectoid field $K$, $A \widehat{\otimes}_{\Qp} K$ is uniform and perfectoid.
\item[(b)]
For every perfectoid field $K$, $A \widehat{\otimes}_{\Qp} K$ is uniform and perfectoid.
\item[(c)]
For every perfectoid algebra $B$, $A \widehat{\otimes}_{\Qp} B$ is uniform and perfectoid.
\end{enumerate}
In case (a), we say that $A$ is \emph{preperfectoid}; this definition and some examples are due to Scholze and Weinstein \cite{scholze-weinstein}.
In case (b), we say that $A$ is \emph{strongly preperfectoid}. In case (c), we say that  $C$ is \emph{relatively perfectoid}. Note that perfectoid algebras are generally not preperfectoid; see Remark~\ref{R:uniformity unstable} for a typical example.

We say that a uniform adic Banach algebra $(A,A^+)$ is \emph{preperfectoid}, \emph{strongly preperfectoid}, or \emph{relatively perfectoid} if $A$ has the corresponding property.
\end{defn}

A typical example is the following analogue of Example~\ref{exa:perfectoid Tate algebra}.
\begin{example}
For any $r_1,\dots,r_n > 0$,
the completion of $\Qp\{T_1/r_1,\dots,T_n/r_n\}[T_1^{p^{-\infty}}, \dots, T_n^{p^{-\infty}}]$
for the weighted Gauss norm is relatively perfectoid.
\end{example}

Although we cannot prove that a preperfectoid algebra is relatively perfectoid, we do have the following result.
\begin{prop} \label{P:preperfectoid to relatively perfectoid}
Let $A$ be a preperfectoid Banach algebra. Then for any uniform Banach algebra $B$ over $\Qp$,
$A \widehat{\otimes}_{\Qp} B$ is uniform.
\end{prop}
\begin{proof}
Choose a perfectoid field $K$ such that $A \widehat{\otimes}_{\Qp} K$ is perfectoid.
Apply Lemma~\ref{L:perfectoid neighborhood} to construct an isometric morphism $B \to C$ of uniform Banach algebras with $C$ perfectoid.
Put $D = (K \widehat{\otimes}_{\Qp} C)^u$; it is perfectoid by 
Corollary~\ref{C:perfectoid tensor product over any base}.
By Proposition~\ref{P:perfectoid tensor}, $A \widehat{\otimes}_{\Qp} D = (A \widehat{\otimes}_{\Qp} K) \widehat{\otimes}_K D$ is perfectoid.
Since $C$ and $D$ are uniform and $\calM(D) \to \calM(C)$ is surjective by Lemma~\ref{L:fibre product}, by Remark~\ref{R:transform uniform} the bounded homomorphism $C \to D$ is isometric. Therefore the composition $B \to C \to D$ is isometric; by Lemma~\ref{L:inject tensor}, it follows that the map $A \widehat{\otimes}_{\Qp} B \to A \widehat{\otimes}_{\Qp} D$ is a strict injection. This yields the claim.
\end{proof}

\begin{theorem} \label{T:Kiehl for preperfectoid}
Let $(A,A^+)$ be a preperfectoid adic Banach algebra. 
Then any rational localization of $(A,A^+)$ is again preperfectoid.
In particular, $(A,A^+)$ is stably uniform,
sheafy (by Theorem~\ref{T:uniform rational is sheafy}),
and satisfies the Tate sheaf and Kiehl glueing properties
(by Theorem~\ref{T:tate to Kiehl}).
\end{theorem}
\begin{proof}
Choose a perfectoid field $K$ such that $A_K = A \widehat{\otimes}_{\Qp} K$ is perfectoid. By Theorem~\ref{T:perfectoid rational}, $A_K$ is stably uniform; by
Remark~\ref{R:check uniform by analytic}, so is $A$.
\end{proof}

\begin{prop}
Let $A$ be a preperfectoid Banach algebra, and view $B \in \FEt(A)$ as a Banach algebra as per Proposition~\ref{P:finite etale Banach norm}. Then $B$ is again preperfectoid.
\end{prop}
\begin{proof}
Choose a perfectoid field $K$ such that $A_K = A \widehat{\otimes}_{\Qp} K$ is perfectoid. By the open mapping theorem (Theorem~\ref{T:open mapping}), the 
tensor product norm on $B_K= B \widehat{\otimes}_{\Qp} K$ is equivalent to the norm obtained by viewing $B_K$ as an object of $\FEt(A_K)$ and applying Proposition~\ref{P:finite etale Banach norm}. We may thus conclude by
applying Theorem~\ref{T:mixed lift ring} to deduce that $B_K$ is perfectoid.
\end{proof}

\begin{prop} \label{P:perfectoid tensor product}
Let $(A_1, A_1^+) \to (A_2, A_2^+)$, $(A_1, A_1^+) \to (A_3, A_3^+)$ be morphisms of strongly preperfectoid (resp.\ relatively perfectoid) adic Banach algebras over $\Qp$. Then the tensor product $(A_2, A_2^+) \widehat{\otimes}_{(A_1, A_1^+)} (A_3, A_3^+)$ is strongly preperfectoid (resp.\ relatively perfectoid).
\end{prop}
\begin{proof}
This is immediate from Proposition~\ref{P:perfectoid tensor}. Note that the argument does not work in the preperfectoid case because we need to know that the same perfectoid field can be used for all three rings.
\end{proof}

\section{Robba rings and \texorpdfstring{$\varphi$}{phi}-modules}

An important feature of the approach to $p$-adic Hodge theory used in this series of
papers (and in the work of Berger and others) is the theory of slopes of Frobenius actions
on modules over certain rings. This bears some resemblance to the theory of slopes for
vector bundles on Riemann surfaces, including the relationship of those slopes to unitary
representations of fundamental groups.
(A more explicit link to vector bundles appears in the work of Fargues
and Fontaine \cite{fargues-fontaine}; see \S\ref{subsec:vector bundles}.)
We review here some of the principal results of the first author which are pertinent to $p$-adic
Hodge theory, mostly omitting proofs. Besides serving as a model,
some of these results provide key inputs into our work on relative $p$-adic Hodge theory.

\subsection{Slope theory over the Robba ring}
\label{subsec:slope Robba}

We begin by introducing several key rings used in $p$-adic Hodge theory, and the basic
theory of slopes of Frobenius actions. Our description here is rather minimal;
see \cite{kedlaya-relative} for a more detailed discussion.

\begin{hypothesis} \label{H:slope Robba}
Throughout \S\ref{subsec:slope Robba}, put $K = \Frac(W(k))$ for some perfect field
$k$ of characteristic $p$. Equip $K$ with the $p$-adic norm and the Frobenius
lift $\varphi_K$ induced by Witt vector functoriality.
Fix also a choice of $\omega \in (0,1)$.
\end{hypothesis}

\begin{defn} \label{D:Robba rings}
For $r>0$, put
\[
\calR_K^r = \left\{\sum_{i \in \mathbb{Z}} c_i T^i: c_i \in K, \lim_{i \to \pm \infty} |c_i| \rho^i = 0
\quad (\rho \in [\omega^r,1)) \right\}.
\]
In other words, $\calR_K^r$ consists of formal sums $\sum_{i \in \mathbb{Z}} c_i T^i$
in the indeterminate $T$ with coefficients in $K$ which
converge on the annulus $\omega^r \leq |T| < 1$.
The set $\calR_K^r$ forms a ring under formal series addition and multiplication;
let $\calR_K^{\inte,r}$ be the subring of $\calR_K^r$ consisting of series whose coefficients
have norm at most 1, and put $\calR_K^{\bd,r} = \calR_K^{\inte,r}[p^{-1}]$. Put
\[
\calR_K^{\inte} = \bigcup_{r>0} \calR_K^{\inte,r}, \quad
\calR_K^{\bd} = \bigcup_{r>0} \calR_K^{\bd,r}, \quad
\calR_K = \bigcup_{r>0} \calR_K^r.
\]
The ring $\calR_K^{\inte}$ is a
local ring with residue field $k(( \overline{T}))$
which is not complete but is henselian.
(See for instance \cite[Lemma~3.9]{kedlaya-annals}. For a similar argument, see Proposition~\ref{P:perfect henselian}.)
The completion of the field $\calR_K^{\bd}$ is the field
\[
\calE_K = \left\{\sum_{i \in \mathbb{Z}} c_i T^i: c_i \in K, \sup_i \{|c_i|\} < +\infty,
\lim_{i \to - \infty} |c_i| = 0 \right\}.
\]
The units of $\calR_K$ are precisely
the nonzero elements of $\calR_K^{\bd}$, as may be seen
by considering Newton polygons \cite[Corollary~3.23]{kedlaya-annals}.
\end{defn}

\begin{defn} \label{D:formal to convergent}
We will need to consider several different topologies on the rings
described above.
\begin{enumerate}
\item[(a)]
Those rings contained in $\calE_K$ carry both a \emph{$p$-adic topology}
(the metric topology defined by the Gauss norm)
and a \emph{weak topology} (in which a sequence
converges if it is bounded for the Gauss norm and
converges $T$-adically modulo any fixed power of $p$).
For both topologies, $\calE_K$ is complete.
\item[(b)]
Those rings contained in $\calR^r_K$ carry a \emph{Fr\'echet topology},
in which a sequence converges if and only if it converges under
the $\omega^s$-Gauss norm for all $s \in (0,r]$.
For this topology, $\calR^r_K$ is complete.
\item[(c)]
Those rings contained in $\calR_K$ carry a \emph{limit-of-Fr\'echet topology},
or \emph{LF topology}. This topology is defined on $\calR_K$ by taking the
locally convex direct limit (in the sense of \cite[\S II.4]{bourbaki-evt})
of the $\calR^r_K$ (each equipped with the Fr\'echet topology).
In particular, a sequence converges in $\calR_K$ if and only if it
is a convergent sequence in $\calR^r_K$ for some $r>0$.
\end{enumerate}
\end{defn}

\begin{remark} \label{R:formal to convergent}
The convergence of the formal expression $x = \sum_i c_i T^i$ for various of
the topologies described in Definition~\ref{D:formal to convergent}
is useful for defining operations such as Frobenius lifts
(see Definition~\ref{D:slopes} below). In $\calE_K$, the formal sum
converges for the weak topology but not the $p$-adic topology.
In $\calR^r_K$, the sum converges for the Fr\'echet topology.
In $\calR_K$, the sum converges for the LF topology.
\end{remark}

\begin{remark} \label{R:bounded weak}
Note that a sequence of elements of $\calR_K^{\bd,r}$
which is $p$-adically bounded and convergent under the
$\omega^r$-Gauss norm also converges in the weak topology.
\end{remark}

\begin{defn} \label{D:slopes}
A \emph{Frobenius lift} $\varphi$ on $\calR_K$ is an endomorphism defined by the formula
\[
\varphi\left( \sum_{i \in \ZZ} c_i T^i \right) = \sum_{i \in \ZZ} \varphi_K(c_i) u^i
\]
for some $u \in \calR_K^{\inte}$ with $|u - T^{p}| < 1$,
where the right side may be interpreted as a convergent sum using
Remark~\ref{R:formal to convergent}.
Such an endomorphism also acts on $\calR_K^{\inte},
\calR_K^{\bd}, \calE_K$, but not on
$\calR_K^r$ for any individual $r>0$; rather, for $r>0$ sufficiently small,
$\varphi$ carries $\calR_K^r$ into $\calR_K^{r/p}$.
The action of $\varphi$ is continuous for each of the topologies
described in Definition~\ref{D:formal to convergent}.

Choose a Frobenius lift $\varphi$ on $\calR_K$.
For $R \in \{\calR_K^{\inte}, \calR_K^{\bd}, \calR_K, \gotho_{\calE_K},\calE_K\}$,
a \emph{$\varphi$-module} over $R$ is a finite free $R$-module $M$
equipped with a semilinear $\varphi$-action
(i.e., an $R$-linear isomorphism $\varphi^* M \to M$).
Since the action of $\varphi$ takes any basis of $M$
to another basis, the $p$-adic valuation of the matrix
via which $\varphi$ acts on a basis of $M$ is both finite and independent of the choice of the basis.
We call the negative of this quantity the \emph{degree} of $M$, denoted $\deg(M)$. For $M$ nonzero, we define the
\emph{slope} of $M$ to be $\mu(M) = \deg(M)/\rank(M)$.

For $s \in \QQ$, we say $M$ is \emph{pure of slope $s$}
if for some (hence any) $c,d \in \ZZ$ with $d>0$ and $c/d = s$,
$p^{c} \varphi^d$ acts on $M$ via a matrix $U$ such that the entries of $U$ and $U^{-1}$
all have Gauss norm at most 1.
This evidently implies $\mu(M) = s$. If $s = 0$,
we also say $M$ is \emph{\'etale}.
(Note that our definitions force any nonzero $\varphi$-module over $\calR_K^{\inte}$ or $\gotho_{\calE_K}$ to be \'etale.)
\end{defn}

\begin{remark} \label{R:dual slopes}
For $M$ a $\varphi$-module over a ring $R$, 
view the dual module $M^\dual = \Hom_R(M,R)$ as a $\varphi$-module
as in Remark~\ref{R:dual semilinear action}.
If $M$ is pure of slope $s$, then
$M^\dual$ is pure of slope $-s$.
\end{remark}

\begin{prop} \label{P:fully faithful1}
For any $s \in \QQ$, we have the following.
\begin{enumerate}
\item[(a)]
The functor $M \rightsquigarrow M \otimes_{\calR_K^{\bd}} \calE_K$ gives a fully faithful functor from $\varphi$-modules over $\calR_K^{\bd}$ which are pure of slope $s$ to $\varphi$-modules
over $\calE_K$ which are pure of slope $s$.
\item[(b)]
The functor $M \rightsquigarrow M \otimes_{\calR_K^{\bd}} \calR_K$ gives an equivalence of categories
between $\varphi$-modules over $\calR_K^{\bd}$ which are pure of slope $s$ and $\varphi$-modules
over $\calR_K$ which are pure of slope $s$.
\end{enumerate}
\end{prop}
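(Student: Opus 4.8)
The plan is to reduce both parts to two comparison statements for $\varphi$-invariants and then to invoke, for those, the overconvergence arguments of \cite{kedlaya-annals} (compare also \cite{kedlaya-revisited, kedlaya-relative}). For $\varphi$-modules $M,N$ over $\calR_K^{\bd}$ write $\Hom(M,N)=M^\dual \otimes_{\calR_K^{\bd}} N$ with its natural $\varphi$-action, so that a morphism $M\to N$ of $\varphi$-modules is an element of $H^0_\varphi(\Hom(M,N))$. If $M$ and $N$ are pure of slope $s$, then $M^\dual$ is pure of slope $-s$ and, since the tensor product of two pure $\varphi$-modules is pure with slope the sum of the slopes (a standard fact of the slope formalism, \cite{kedlaya-relative}), $\Hom(M,N)$ is \'etale. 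Base change along $\calR_K^{\bd}\to\calE_K$ (resp.\ $\calR_K^{\bd}\to\calR_K$) commutes with formation of $\Hom$ because $M$ is finite free, and it visibly preserves the Gauss-norm conditions defining purity, so the functors in (a) and (b) do land in the asserted target categories. Consequently, full faithfulness in (a) (resp.\ (b)) follows once one knows: $(\ast)$ for every \'etale $\varphi$-module $P$ over $\calR_K^{\bd}$ the natural map $H^0_\varphi(P)\to H^0_\varphi(P\otimes_{\calR_K^{\bd}}\calE_K)$ is bijective; $(\ast\ast)$ the same with $\calE_K$ replaced by $\calR_K$.

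For essential surjectivity in (b), first note that an \'etale $\varphi$-module $M$ over $\calR_K$ has, by the definition of purity, a basis on which $\varphi$ acts through a matrix $U$ with $U$ and $U^{-1}$ of Gauss norm at most $1$; since the Gauss norm is infinite on $\calR_K\setminus\calR_K^{\bd}$, this forces $U\in\GL_n(\calR_K^{\inte})$, so $M$ is the base change of the $\varphi$-module over $\calR_K^{\bd}$ spanned by that basis. For a general slope $s=c/d$, tensor $M$ with a standard $\varphi$-module $\calO(-s)$ pure of slope $-s$ (already defined over $\calR_K^{\bd}$ by a matrix with $\Zp$-entries) to reach the \'etale case, descend the result to an \'etale module $N_0$ over $\calR_K^{\bd}$ as just explained, and set $P_0=N_0\otimes_{\calR_K^{\bd}}\calO(s)$, a $\varphi$-module over $\calR_K^{\bd}$ pure of slope $s$. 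Since $\id$ is a $\varphi$-fixed vector of $\End(\calO(s))$ split off by $\tfrac1d\operatorname{tr}$, the unit $\varphi$-module is a $\varphi$-direct summand of $\calO(s)^\dual\otimes\calO(s)$; hence $M$ is a $\varphi$-equivariant direct summand of $P_0\otimes_{\calR_K^{\bd}}\calR_K$. The corresponding idempotent lies in $H^0_\varphi(\End(P_0)\otimes_{\calR_K^{\bd}}\calR_K)$, and $\End(P_0)$ is \'etale, so by $(\ast\ast)$ it descends to an idempotent $e_0\in H^0_\varphi(\End(P_0))$; its image $M_0=e_0P_0$ is a $\varphi$-module over $\calR_K^{\bd}$ — finite free since $\calR_K^{\bd}$ is a field (Definition~\ref{D:Robba rings}) — pure of slope $s$ as a summand of $P_0$, with $M_0\otimes_{\calR_K^{\bd}}\calR_K\cong M$.

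It remains to establish $(\ast)$ and $(\ast\ast)$, which is the heart of the matter. Both maps are injective because $\calR_K^{\bd}$ injects into each of $\calE_K$ and $\calR_K$ and $P$ is finite free. For surjectivity, descend $P$ to a $\varphi$-module $P_0$ over $\calR_K^{\inte}$ (an \'etale module has an integral Frobenius matrix), so that $P_0\otimes_{\calR_K^{\inte}}\gotho_{\calE_K}$ is the $p$-adic completion of $P_0$. For $(\ast)$, given $v$ in this completion with $\varphi(v)=v$, one approximates $v$ weakly by elements of $P_0$ and uses that, for $r$ small, $\varphi$ carries $P_0\otimes_{\calR_K^{\inte}}\calR_K^{\inte,r}$ into $P_0\otimes_{\calR_K^{\inte}}\calR_K^{\inte,r/p}$ and is there invertible with controlled norm, bootstrapping (Dwork-style) the approximations into a sequence that converges for the Fr\'echet topology of $P_0\otimes_{\calR_K^{\inte}}\calR_K^{r}$ and is bounded there; this places $v$ in $P_0\otimes_{\calR_K^{\inte}}\calR_K^{\bd,r}\subseteq P$. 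For $(\ast\ast)$, given $v\in P_0\otimes_{\calR_K^{\inte}}\calR_K^{r}$ with $\varphi(v)=v$, the same norm control on $\varphi$ and $\varphi^{-1}$ on the bounded subring forces $v$ to be $p$-adically bounded, whence $v\in P_0\otimes_{\calR_K^{\inte}}\calR_K^{\bd,r}$.

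I expect this bootstrap — extracting a uniform convergence radius from the $\varphi$-equation while juggling the $p$-adic, weak, and Fr\'echet topologies on the rings in Definition~\ref{D:formal to convergent} — to be the main obstacle, and I would import it essentially verbatim from \cite{kedlaya-annals} rather than reproduce it here. It is worth noting that the asymmetry between (a) (merely fully faithful) and (b) (an equivalence) reflects exactly that not every \'etale $\varphi$-module over $\calE_K$ is overconvergent, whereas over $\calR_K$ the integral-matrix characterization of \'etaleness makes descent to $\calR_K^{\bd}$ automatic.
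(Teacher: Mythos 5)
Your reduction of full faithfulness to the comparison of $\varphi$-invariants of \emph{\'etale} modules via $\Hom(M,N)=M^\dual\otimes N$ is exactly the paper's route: this is Remark~\ref{R:etale descent}, and beyond that remark the paper gives no argument at all --- it simply cites \cite[Proposition~1.2.7]{kedlaya-relative} for (a) and \cite[Theorem~1.6.5]{kedlaya-relative} for (b). Your statements $(\ast)$ and $(\ast\ast)$ are precisely what those citations supply; note that the paper's preferred mechanism for them is not the Dwork-style contraction you sketch but the trivialization over the big perfected rings (Proposition~\ref{P:trivialize etale}, resting on Proposition~\ref{P:DM} and Lemma~\ref{L:big Robba invariants}), which is the version that survives relativization later in the paper. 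Since you defer $(\ast)$/$(\ast\ast)$ to the literature exactly as the paper does, there is nothing to fault there, though it means the analytic heart of the proposition is proved by neither of you.

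The one step that is genuinely yours and genuinely incomplete is the assertion that $M_0=e_0P_0$ is ``pure of slope $s$ as a summand of $P_0$.'' With the definition of purity used here (Definition~\ref{D:slopes}: existence of a basis on which $p^c\varphi^d$ acts through a matrix $U$ with $U,U^{-1}$ integral), it is not formal that a $\varphi$-equivariant direct summand of a pure module over $\calR_K^{\bd}$ is pure: the idempotent $e_0$ has entries only in $\calR_K^{\bd}$, so it need not preserve the integral lattice witnessing purity of $P_0$, and you cannot simply restrict that lattice to the summand. The statement is true, but it requires two inputs you do not mention: first, that a direct summand of a pure $\varphi$-module over $\calE_K$ is pure (Dieudonn\'e--Manin over $\widehat{\calE_K^{\unr}}$ plus Galois descent of the lattice); second, that a $\varphi$-module over $\calR_K^{\bd}$ whose base extension to $\calE_K$ is pure is itself pure, which is the $p$-adic approximation argument appearing in the relative setting as Lemma~\ref{L:locally free to free}(b) (approximate an integral basis of the $\gotho_{\calE_K}$-lattice by elements of $M_0$; the change-of-basis matrix is then congruent to $1$ modulo $p$ and hence invertible over $\calR_K^{\inte}$, and the new Frobenius matrix lies in $\calR_K^{\bd}\cap\gotho_{\calE_K}=\calR_K^{\inte}$). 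Supply that, and your essential surjectivity argument --- twist to the \'etale case, descend by the integral-matrix characterization, recover $M$ as the image of a descended idempotent --- closes correctly and matches the strategy of the cited proof.
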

\begin{proof}
See \cite[Proposition~1.2.7]{kedlaya-relative} for (a)
and \cite[Theorem~1.6.5]{kedlaya-relative} for (b).
For (b), see also Remark~\ref{R:etale descent}.
\end{proof}

The main theorem about slopes of $\varphi$-modules over $\calR_K$ can be formulated
in several ways. This formulation
asserts that $M$ is pure if and only if $M$ is semistable with respect
to slope in the sense of geometric invariant theory \cite{git}.
\begin{theorem} \label{T:slope filtration1}
Let $M$ be a nonzero
$\varphi$-module over $\calR_K$ with $\mu(M) = s$.
Then $M$ is pure of slope $s$ if and only if there exists no
nonzero proper $\varphi$-submodule of $M$ of slope greater than $s$.
\end{theorem}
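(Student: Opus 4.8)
The plan is to read Theorem~\ref{T:slope filtration1} as a slope-semistability criterion and to deduce it from two inputs furnished by the slope theory of \cite{kedlaya-relative}, which I would recall explicitly at this point. The first input is the \emph{slope filtration theorem}: every nonzero $\varphi$-module $M$ over $\calR_K$ carries a unique filtration $0 = M_0 \subset M_1 \subset \cdots \subset M_\ell = M$ by $\varphi$-submodules with each $M_i/M_{i-1}$ pure, of slopes $s_1 > s_2 > \cdots > s_\ell$ (so that the \emph{first} step carries the \emph{largest} slope); here one also uses that $\deg$ and $\rank$ are additive in short exact sequences of $\varphi$-modules, so that $\mu(M)$ is the rank-weighted average of the $s_i$. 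The second input is the structure theory of pure modules over $\calR_{K'}$ for $K'$ a complete extension of $K$ with algebraically closed residue field (the Dieudonn\'e--Manin picture over the Robba ring): a pure module of slope $c/d$ in lowest terms is a direct sum of copies of one explicit standard isoclinic module of rank $d$. Proposition~\ref{P:fully faithful1} would be invoked to move freely between $\calR_K^{\bd}$, $\calE_K$ and $\calR_K$ wherever that simplifies matters.

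From the second input I would extract the only lemma the argument really needs, a \emph{$\Hom$-vanishing statement}: if $M'$, $M''$ are pure $\varphi$-modules over $\calR_K$ of slopes $s'$, $s''$ with $s' > s''$, then $\Hom(M', M'') = 0$. To prove it I would base-change along the injection $\calR_K \hookrightarrow \calR_{K'}$ — which leaves slopes and purity unchanged and, the modules being finite free, detects nonvanishing of morphisms — and then reduce, via the structure theory, to computing $\Hom$ between two standard isoclinic modules over $\calR_{K'}$. This comes down to the elementary observation that for an integer $n > 0$ the equation $\varphi(x) = p^{-n} x$ has no nonzero solution in the $p$-adically separated ring $\calR_{K'}$ (iterate and compare $p$-adic valuations), whereas $\varphi(x) = p^{n} x$ does (e.g.\ powers of a $\varphi$-eigenvector such as $\log(1+T)$); after passing to a power of $\varphi$ to clear denominators in the slopes, this forces nonzero morphisms of pure modules to run only from lower slope to higher slope.

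Granting all this, the theorem is immediate. For the forward implication, let $M$ be pure of slope $s$ and $N \subseteq M$ a proper nonzero $\varphi$-submodule (replacing $N$ by its saturation if necessary, which is a $\varphi$-submodule of the same slope since a Frobenius lift is an isometry for the Gauss norm, hence preserves $p$-adic valuations of determinants). Apply the slope filtration theorem to $N$ and let $N_1 \subseteq N$ be its first step, pure of slope $t_1 = \max_i s_i(N) \geq \mu(N)$. Then $N_1 \hookrightarrow M$ is a nonzero morphism from a pure module of slope $t_1$ to a pure module of slope $s$, so the $\Hom$-vanishing statement gives $t_1 \leq s$, whence $\mu(N) \leq t_1 \leq s$: no proper nonzero $\varphi$-submodule has slope greater than $s$. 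Conversely, suppose $M$ has no proper nonzero $\varphi$-submodule of slope greater than $s$, and let $0 = M_0 \subset M_1 \subset \cdots \subset M_\ell = M$ be its slope filtration. If $\ell \geq 2$, then $M_1$ is a proper nonzero $\varphi$-submodule that is pure of slope $s_1 = \max_i s_i > \mu(M) = s$ (a strict convex combination of the strictly decreasing $s_i$), contradicting the hypothesis. Hence $\ell = 1$, so $M = M_1/M_0$ is pure, of slope $\mu(M) = s$.

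The mathematical substance lives entirely in the two quoted inputs, and the genuine obstacle is the slope filtration theorem itself. If it could not be cited, its proof would have to be reproduced: after reducing to algebraically closed residue field and using the full-faithfulness results of Proposition~\ref{P:fully faithful1} together with a Dieudonn\'e--Manin classification over $\gotho_{\calE_{K'}}$, the heart is a descent argument over $\calR_K$ producing — canonically, and compatibly with Galois descent so as to come back down to $K$ — the maximal-slope saturated pure submodule $M_1$. Everything else, including Remark~\ref{R:cohomology restriction}-style reductions to powers of $\varphi$, is then formal.
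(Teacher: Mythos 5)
Your argument is correct, and it is essentially the route the paper itself gestures at: the paper offers no independent proof of Theorem~\ref{T:slope filtration1}, citing \cite{kedlaya-relative} (Theorem~1.7.1) for it and for the Harder--Narasimhan formulation Theorem~\ref{T:slope filtration explicit1} alike, with the remark that the two are ``essentially equivalent.'' What you have written out is precisely that equivalence: granting the filtration theorem and the vanishing of $\Hom$ from a pure module of slope $s'$ to a pure module of slope $s''$ when $s' > s''$ (which, as you note, reduces via $\Hom(M',M'') = H^0_\varphi((M')^\dual \otimes M'')$ and a Dieudonn\'e--Manin reduction to the non-existence of nonzero solutions of $\varphi(x) = p^{-n}x$, $n>0$), both implications are formal. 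Since Theorem~\ref{T:slope filtration explicit1} is itself only cited, your derivation does not add content beyond the shared reference, but you say so explicitly, which is the honest accounting. One small correction: the saturation of a $\varphi$-submodule need not have the \emph{same} slope --- e.g.\ $\log(1+T)\cdot\calR_K \subset \calR_K$ has degree $-1$ while its saturation is \'etale --- the correct statement being that saturating can only increase the slope in this paper's sign convention; but your forward implication never actually uses the saturation (applying the filtration theorem to $N$ itself and mapping its top step into $M$ suffices), so nothing breaks.
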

\begin{proof}
See \cite[Theorem~1.7.1]{kedlaya-relative}.
\end{proof}
An essentially equivalent formulation, incorporating an analogue of the Harder-Narasimhan
filtration for vector bundles, is the following.
(See Remark~\ref{R:Harder-Narasimhan} for further discussion of this analogy.)
\begin{theorem} \label{T:slope filtration explicit1}
Let $M$ be a nonzero $\varphi$-module over $\calR_K$.
Then there exists a unique filtration $0 = M_0 \subset \cdots \subset M_l = M$ by
saturated $\varphi$-submodules, such that $M_1/M_0, \dots, M_l/M_{l-1}$ are pure $\varphi$-modules and
$\mu(M_1/M_0) > \cdots > \mu(M_l/M_{l-1})$.
\end{theorem}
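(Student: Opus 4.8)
The plan is to derive Theorem~\ref{T:slope filtration explicit1} from Theorem~\ref{T:slope filtration1} by the standard construction of the Harder--Narasimhan filtration, using the slope formalism established in Definition~\ref{D:slopes}. First I would record the basic additivity property: for a short exact sequence $0 \to M' \to M \to M'' \to 0$ of $\varphi$-modules over $\calR_K$ (with $M', M''$ saturated, so that the quotient is again free), one has $\deg(M) = \deg(M') + \deg(M'')$ and $\rank(M) = \rank(M') + \rank(M'')$, hence $\mu(M)$ is a weighted average of $\mu(M')$ and $\mu(M'')$. This is immediate from the block-triangular form of the matrix of $\varphi$ with respect to an adapted basis. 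In particular, if $M'$ is a saturated submodule then $\mu(M') > \mu(M)$ if and only if $\mu(M/M') < \mu(M)$, which is the seesaw inequality we will use repeatedly.

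Next I would define $\mu_{\max}(M) = \sup\{\mu(N): 0 \neq N \subseteq M \text{ a } \varphi\text{-submodule}\}$ and argue this supremum is attained by a unique maximal submodule. For finiteness and attainment one needs a bound on the slopes of submodules: a $\varphi$-submodule $N$ of rank $j$ gives a line in $\wedge^j M$ via $\wedge^j N$, and $\deg(N) = \deg(\wedge^j N)$ is controlled because $\wedge^j M$ is a fixed $\varphi$-module over $\calR_K$, whose rank-one saturated $\varphi$-submodules have degree bounded above (a rank-one $\varphi$-module over $\calR_K$ is determined up to twist by its slope, and only finitely many slopes occur as slopes of saturated sub-line-bundles of a given module — this follows from Theorem~\ref{T:slope filtration1} applied to $\wedge^j M$ together with the purity decomposition). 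Granting this, if $M_1, M_2$ both achieve $\mu_{\max}$, then $M_1 + M_2$ is a $\varphi$-submodule whose slope is at least $\mu_{\max}$ by the averaging property applied to $0 \to M_1 \cap M_2 \to M_1 \oplus M_2 \to M_1 + M_2 \to 0$ (after saturating), forcing $\mu(M_1+M_2) = \mu_{\max}$ and equality of the saturations; iterating gives a unique maximal destabilizing submodule $M_{\mathrm{max}}$, which is automatically saturated. By Theorem~\ref{T:slope filtration1}, $M_{\mathrm{max}}$ is pure of slope $\mu_{\max}(M)$, since no submodule of it has larger slope.

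Then I would construct the filtration by induction on $\rank(M)$: set $M_1' = M_{\mathrm{max}}(M)$, a pure $\varphi$-module, and apply the inductive hypothesis to $M/M_1'$ to obtain its HN filtration $0 = \overline{N}_0 \subset \cdots \subset \overline{N}_{l-1} = M/M_1'$ with pure quotients of strictly decreasing slopes; pulling back to $M$ gives $M_1' = M_1 \subset M_2 \subset \cdots \subset M_l = M$ with $M_{i}/M_{i-1}$ pure for $i \geq 2$. The only thing to check is the slope inequality $\mu(M_1/M_0) > \mu(M_2/M_1)$, i.e. $\mu_{\max}(M) > \mu_{\max}(M/M_{\mathrm{max}})$; this is where the maximality of $M_{\mathrm{max}}$ is used, via the seesaw property: any submodule of $M$ strictly containing $M_{\mathrm{max}}$ has strictly smaller slope than $\mu_{\max}(M)$, and the preimage of $M_{\mathrm{max}}(M/M_{\mathrm{max}})$ is such a submodule, so combining with the averaging formula forces $\mu(M_{\mathrm{max}}(M/M_{\mathrm{max}})) < \mu_{\max}(M)$. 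For uniqueness, suppose $0 = N_0 \subset \cdots \subset N_m = M$ is another such filtration; then $N_1$ is pure of slope $\mu(N_1)$, and the composite $N_1 \to M \to M/M_{\mathrm{max}}$ would exhibit a pure submodule of slope $\mu(N_1)$ inside a module all of whose HN slopes are $< \mu_{\max}(M) \leq \mu(N_1)$ unless the composite is zero; a pure $\varphi$-module of slope $s$ admits no nonzero map to a $\varphi$-module with all HN slopes $< s$ (again by the degree/averaging bound, since the image would be a nonzero quotient of a slope-$s$ module, hence of slope $\geq s$ by Theorem~\ref{T:slope filtration1}, contradicting that it sits in a module with all slopes $<s$). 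Hence $N_1 \subseteq M_{\mathrm{max}}$, and by maximality of $M_{\mathrm{max}}$ together with $\mu(N_1) \geq \mu_{\max}(M)$ we get $N_1$ saturated $=M_{\mathrm{max}}$; quotienting by $M_{\mathrm{max}}$ and inducting finishes the argument.

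I expect the main obstacle to be the finiteness/attainment of $\mu_{\max}$, i.e. showing the supremum of slopes over all $\varphi$-submodules is finite and achieved. The cleanest route is the one indicated above — pass to $\wedge^j M$ for each $j$, where the question reduces to bounding slopes of saturated rank-one $\varphi$-submodules (equivalently, sub-line-bundles), and invoke the fact, itself a consequence of Proposition~\ref{P:fully faithful1} and Theorem~\ref{T:slope filtration1}, that a given $\varphi$-module over $\calR_K$ has only finitely many ``jumping slopes.'' Everything else is the formal Harder--Narasimhan machinery, which goes through verbatim once the slope function satisfies the averaging property and $\mu_{\max}$ exists; the reference \cite[Theorem~1.7.1]{kedlaya-relative} supplies exactly the semistability criterion needed to identify the graded pieces as pure, so in fact one could also simply cite \cite{kedlaya-relative} for the whole statement, but the above is the self-contained derivation.
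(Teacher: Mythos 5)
The paper proves this statement by citation to \cite[Theorem~1.7.1]{kedlaya-relative}, so your self-contained derivation is necessarily a ``different route'' only in the sense that you are reconstructing what the reference does: the formal Harder--Narasimhan machinery (seesaw/averaging for degree and rank, existence and uniqueness of a maximal destabilizing submodule, induction on rank), with Theorem~\ref{T:slope filtration1} supplying the identification of the graded pieces as pure. That architecture is correct, and your uniqueness argument (no nonzero morphism from a pure module of slope $s$ to a module all of whose HN slopes are $<s$) is the standard and valid one, granted the usual facts that $\calR_K$ is a B\'ezout domain and that saturation does not decrease degree.

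The genuine gap is in the step you yourself flag as the main obstacle: the finiteness and attainment of $\mu_{\max}(M)$. Your proposed justification is circular. You reduce to bounding degrees of saturated rank-one $\varphi$-submodules of $\wedge^j M$ and then invoke ``Theorem~\ref{T:slope filtration1} applied to $\wedge^j M$ together with the purity decomposition'' — but the purity decomposition of $\wedge^j M$ \emph{is} the HN filtration of $\wedge^j M$, i.e.\ an instance of the theorem being proved, and Theorem~\ref{T:slope filtration1} by itself (a semistability criterion) gives no bound whatsoever on slopes of submodules; neither does Proposition~\ref{P:fully faithful1}, which is a full-faithfulness statement. Moreover the assertion that ``only finitely many slopes occur'' as saturated sub-line-bundles is not what is true or needed: infinitely many slopes can occur, and what one needs is an upper bound. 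That upper bound is a separate, non-formal lemma: for a rank-one $\varphi$-submodule generated by $v$ with $\varphi(v)=\lambda v$, one bounds the $p$-adic valuation of the unit $\lambda \in (\calR_K^{\bd})^\times$ in terms of the matrix of $\varphi$ on $M$, using the explicit description of units in $\calR_K$ (Definition~\ref{D:Robba rings}) and a Newton-polygon estimate; this is \cite[\S 1.4]{kedlaya-relative} (or the corresponding lemmas in \cite[\S 3.5]{kedlaya-revisited}). With that lemma supplied independently, the rest of your argument goes through.
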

\begin{proof}
See again \cite[Theorem~1.7.1]{kedlaya-relative}.
\end{proof}

\begin{remark}
Beware that Proposition~\ref{P:fully faithful1}
does not imply anything about maps between $\varphi$-modules which are pure
of different slopes. For instance, it is common in $p$-adic Hodge theory
(in the study of trianguline representations; see for instance \cite{colmez-trianguline})
to encounter
short exact sequences of the form $0 \to M_1 \to M \to M_2 \to 0$ of
$\varphi$-modules over $\calR_K$
in which
for some positive integer $m$, $M_1$ has rank 1 and slope $-m$,
$M_2$ has rank 1 and slope $m$, and $M$ is \'etale.
While each term individually descends uniquely to $\calR_K^{\bd}$ by
Proposition~\ref{P:fully faithful1}(b), the sequence cannot descend
because there are no nonzero maps
between pure $\varphi$-modules over $\calR_K^{\bd}$ of different slopes.
(Note that $\mu(M_1) < \mu(M)$, so there is no contradiction with
Theorem~\ref{T:slope filtration1}.)
\end{remark}

\begin{remark}
The sign convention used here for degrees of $\varphi$-modules is opposite to that used
in previous work of the first author
\cite{kedlaya-annals, kedlaya-revisited, kedlaya-relative, kedlaya-course}.
We have changed it in order to match the sign convention used in geometric invariant theory
\cite{git},
in which the ample line bundle $\calO(1)$ on any projective space has degree $+1$.
This choice of sign also creates agreement with the work of Hartl and Pink \cite{hartl-pink}
and of Fargues and Fontaine \cite{fargues-fontaine}.
\end{remark}

\begin{convention} \label{conv:power slope}
It will be convenient at several points to speak also of $\varphi^d$-modules for $d$ an arbitrary
positive integer. We adopt the convention that the degree of a $\varphi^d$-module is defined by
computing the $p$-adic valuation of the determinant of the matrix on which $\varphi^d$ acts on some basis,
then dividing by $d$. This has the advantage that the degree is preserved upon restriction of a
$\varphi^d$-module to a $\varphi^{de}$-module. (One can even replace $\varphi$ with a map on $\tilde{\calR}_K$
lifting the
$p^d$-power absolute Frobenius on $k((\overline{T}))$, not necessarily given by raising a
$p$-power Frobenius lift to the $d$-th power, by modifying Definition~\ref{D:slopes} in the obvious way.
We will not need this extra generality.)
\end{convention}

\subsection{Slope theory and Witt vectors}
\label{subsec:slopes}

One can generalize the slope theory for Frobenius modules over the Robba ring
by first making explicit the role of the $\overline{T}$-adic norm on the
residue field of $\calR_K^{\bd}$, then replacing this norm
with something more general.
This second step turns out to be a bit subtle unless we first perfect the
residue field of
$\calR_K^{\bd}$ and pass to Witt vectors (as in Definition~\ref{D:perfection});
this gives a slope theory introduced in \cite{kedlaya-revisited} and
reviewed here. In fact, this study is integral to the slope theory
over the Robba ring itself; see Remark~\ref{R:pure semistable}.
We take up the relative version of this story starting in \S\ref{sec:relative extended}.

\begin{hypothesis}
Throughout \S\ref{subsec:slopes}, let $L$ be a perfect analytic field of
characteristic $p$ with norm $\alpha$.
\end{hypothesis}

\begin{defn} \label{D:extended Robba}
For $r > 0$, let $\tilde{\calR}^{\inte,r}_L$ be the set of $x
= \sum_{i=0}^\infty p^i [\overline{x}_i] \in W(L)$ for which
$\lim_{i \to \infty} p^{-i} \alpha(\overline{x}_i)^r = 0$.
Thanks to the homogeneity property of Witt vector addition (Remark~\ref{R:addition formula}),
this set forms a ring on which the formula
\[
\lambda(\alpha^s)(x) = \max_i \{p^{-i} \alpha(\overline{x}_i)^s\}
\]
defines a multiplicative norm $\lambda(\alpha^s)$ on $\tilde{\calR}^{\inte,r}_L$ for each $s \in [0,r]$.
(For an explicit argument, see Proposition~\ref{P:mu multiplicative}.)

Put $\tilde{\calR}^{\bd,r}_L = \tilde{\calR}^{\inte,r}_L[p^{-1}]$.
Let $\tilde{\calR}^r_L$ be the Fr\'echet completion of $\tilde{\calR}^{\bd,r}_L$ under the norms
$\lambda(\alpha^s)$ for $s \in (0,r]$.
Put
\[
\tilde{\calR}^{\inte}_L = \cup_{r>0} \tilde{\calR}^{\inte,r}_L, \quad
\tilde{\calR}^{\bd}_L = \cup_{r>0} \tilde{\calR}^{\bd,r}_L, \quad
\tilde{\calR}_L = \cup_{r>0} \tilde{\calR}^r_L.
\]
Again, $\tilde{\calR}^{\inte}_L$ is an incomplete
but henselian (see \cite[Lemma~2.1.12]{kedlaya-revisited}
or Proposition~\ref{P:perfect henselian})
local ring with residue field $L$;
the completion of the field $\tilde{\calR}^{\bd}_L$ is simply
$\tilde{\calE}_L = W(L)[p^{-1}]$.
(For paralellism, we write $\tilde{\calE}_L^{\inte}$ as another notation for $W(L)$.)
One can again identify the units of $\tilde{\calR}_L$ as the
nonzero elements of $\tilde{\calR}_L^{\bd}$; see Corollary~\ref{C:units}.

We call $\tilde{\calR}_L$ the
\emph{extended Robba ring} with residue field $L$. This
terminology is not used in \cite{kedlaya-revisited}, but is suggested in
\cite{kedlaya-relative}.
\end{defn}

\begin{lemma} \label{L:hadamard}
For $x \in \tilde{\calR}^r_L$, the function
$s \mapsto \log \lambda(\alpha^s)(x)$ is continuous and convex on $(0,r]$.
\end{lemma}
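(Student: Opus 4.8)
The statement is that for $x \in \tilde{\calR}^r_L$, the function $s \mapsto \log \lambda(\alpha^s)(x)$ is convex on $(0,r]$. This is the familiar "three-circles" / Hadamard convexity phenomenon for the logarithm of a Gauss-type norm as a function of the radius exponent. The plan is to reduce from a general element of the Fr\'echet-completed ring $\tilde{\calR}^r_L$ to the dense subring $\tilde{\calR}^{\bd,r}_L$, then to the Teichm\"uller generators, where the norm $\lambda(\alpha^s)$ is computed by an explicit supremum formula.

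First I would establish the claim for a single Teichm\"uller monomial $y = p^i[\overline{x}_i]$: here $\lambda(\alpha^s)(y) = p^{-i}\alpha(\overline{x}_i)^s$, so $\log\lambda(\alpha^s)(y) = -i\log p + s\log\alpha(\overline{x}_i)$ is \emph{affine} in $s$, hence trivially convex. Next, for $x = \sum_{i=0}^\infty p^i[\overline{x}_i] \in \tilde{\calR}^{\bd,r}_L$ (a finite sum after clearing a power of $p$, or a convergent sum with the decay condition $\lim_i p^{-i}\alpha(\overline{x}_i)^r = 0$), the multiplicativity — in particular, the fact that $\lambda(\alpha^s)$ is a genuine norm computed termwise as $\lambda(\alpha^s)(x) = \max_i\{p^{-i}\alpha(\overline{x}_i)^s\}$ (Definition~\ref{D:extended Robba}, via Proposition~\ref{P:mu multiplicative}) — gives $\log\lambda(\alpha^s)(x) = \max_i\{-i\log p + s\log\alpha(\overline{x}_i)\}$. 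The pointwise supremum (here a max over a set indexed by $i$, which is attained since the $i$-th term $\to -\infty$ as $i \to \infty$ for fixed $s$) of a family of affine functions of $s$ is convex; this handles all of $\tilde{\calR}^{\bd,r}_L$.

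Finally I would pass to the Fr\'echet completion. Given $x \in \tilde{\calR}^r_L$, choose a sequence $x_n \in \tilde{\calR}^{\bd,r}_L$ converging to $x$ under all the norms $\lambda(\alpha^s)$, $s \in (0,r]$ simultaneously. For each fixed $s$, $\lambda(\alpha^s)(x_n) \to \lambda(\alpha^s)(x)$, so $\log\lambda(\alpha^s)(x) = \lim_n \log\lambda(\alpha^s)(x_n)$ is a pointwise limit of convex functions of $s$, hence convex on $(0,r]$. (One should note $\lambda(\alpha^s)(x)$ is never $0$ for $x \neq 0$ and $s > 0$ since $\lambda(\alpha^s)$ is a norm, so $\log$ is well-defined and finite on $(0,r]$; at the endpoint $s = r$ convexity on the half-open interval is what is asserted, and the limit argument delivers it there too.)

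The only mildly delicate point — the "main obstacle," though it is not severe — is justifying the explicit max-formula $\lambda(\alpha^s)(x) = \max_i\{p^{-i}\alpha(\overline{x}_i)^s\}$ on the bounded subring and the simultaneous-convergence density of $\tilde{\calR}^{\bd,r}_L$ in $\tilde{\calR}^r_L$; both are part of the setup imported from \cite{kedlaya-revisited} (and re-derived in this paper, e.g.\ Proposition~\ref{P:mu multiplicative}), so they may be invoked directly. Everything else is the elementary fact that a supremum, and a pointwise limit, of affine functions is convex.
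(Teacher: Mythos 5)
Your proof is correct and follows essentially the same route as the paper: affine behavior for single Teichm\"uller terms $p^i[\overline{x}]$, convexity of a supremum of affine functions for elements expressible as sums of such terms, and a pointwise-limit argument via density in the Fr\'echet topology (the paper works with finite sums of Teichm\"uller terms as the dense subset rather than all of $\tilde{\calR}^{\bd,r}_L$, but this is an immaterial difference).
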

\begin{proof}
The function is affine in case $x = p^i [\overline{x}]$ for some
$i \in \ZZ, \overline{x} \in L$. In case $x$ is a finite sum of such terms,
the function is the maximum of finitely many affine functions, and hence is convex.
Since such finite sums are dense in $\tilde{\calR}^r_L$, the general case follows.
\end{proof}

\begin{lemma} \label{L:bounded}
For $x \in \tilde{\calR}_L$, we have $x \in \tilde{\calR}^{\bd}_L$
if and only if
for some $r>0$, $\lambda(\alpha^s)(x)$ is bounded for $s \in (0,r]$.
\end{lemma}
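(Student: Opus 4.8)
The plan is to prove Lemma~\ref{L:bounded} by establishing both implications, the forward one being nearly immediate from the definitions and the reverse one requiring a bit more care with the structure of $\tilde{\calR}_L$. For the forward direction, suppose $x \in \tilde{\calR}^{\bd}_L$, so $x \in \tilde{\calR}^{\bd,r}_L = \tilde{\calR}^{\inte,r}_L[p^{-1}]$ for some $r>0$; writing $x = p^{-n} y$ with $y = \sum_{i=0}^\infty p^i [\overline{y}_i] \in \tilde{\calR}^{\inte,r}_L$, we have for $s \in (0,r]$ that $\lambda(\alpha^s)(x) = p^n \lambda(\alpha^s)(y) = p^n \max_i\{p^{-i}\alpha(\overline{y}_i)^s\}$. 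Since $\alpha$ is bounded above by the trivial norm on $L$ (i.e.\ $\alpha(\overline{y}_i) \leq 1$), for each fixed $i$ the quantity $p^{-i}\alpha(\overline{y}_i)^s$ is non-increasing in $s$ when $\alpha(\overline{y}_i) \leq 1$, hence bounded above by its value at $s$ small; more simply, $p^{-i}\alpha(\overline{y}_i)^s \leq p^{-i}\alpha(\overline{y}_i)^{r}$ fails in general but $p^{-i}\alpha(\overline{y}_i)^s \le p^{-i} \le 1$ for $i \ge 0$ and $\alpha(\overline y_i)\le 1$, so $\lambda(\alpha^s)(y) \le 1$ and $\lambda(\alpha^s)(x) \le p^n$ uniformly in $s \in (0,r]$. (One should double-check: for $i$ large the terms go to $0$ anyway by definition of $\tilde{\calR}^{\inte,r}_L$, and for $i$ in a bounded range the bound $\le 1$ suffices.) This gives the bound with constant $p^n$.

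For the reverse direction, suppose $x \in \tilde{\calR}_L$ and there exist $r>0$ and a constant $c$ with $\lambda(\alpha^s)(x) \le c$ for all $s \in (0,r]$. Since $x \in \tilde{\calR}_L = \cup_{r'>0}\tilde{\calR}^{r'}_L$, after shrinking $r$ we may assume $x \in \tilde{\calR}^r_L$, the Fr\'echet completion of $\tilde{\calR}^{\bd,r}_L$. The key point is to use Lemma~\ref{L:hadamard}: the function $s \mapsto \log\lambda(\alpha^s)(x)$ is convex on $(0,r]$, and by hypothesis it is bounded above on this interval. A convex function on $(0,r]$ that is bounded above must have a finite (in fact non-positive-slope-at-$0$, but at least finite) limit as $s \to 0^+$; in particular $\log\lambda(\alpha^0)(x)$ — suitably interpreted — is finite, which is precisely the statement that the ``constant term in $p$'' of $x$ dominates. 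More concretely, I would argue that the bound $\lambda(\alpha^s)(x) \le c$ for all small $s>0$, combined with convexity forcing the function to not blow up, shows that when we write $x = \sum_{i \in \ZZ} p^i[\overline{x}_i]$ (allowing negative $i$ since we are in $\tilde{\calR}^r_L \supseteq \tilde{\calR}^{\bd,r}_L$, though a priori $x$ need only be a Fr\'echet limit), the negative-index part is forced to be absent: if some $\overline{x}_i \ne 0$ with $i<0$, then $\lambda(\alpha^s)(x) \ge p^{-i}\alpha(\overline{x}_i)^s \to p^{-i} > 1$ ... this does not immediately contradict boundedness. So the real mechanism is different.

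**The hard part will be** making precise how boundedness of $\lambda(\alpha^s)(x)$ as $s\to 0^+$ forces $x$ into $\tilde{\calR}^{\bd,r}_L$, since an element of the Fr\'echet completion $\tilde{\calR}^r_L$ does not obviously have a Teichm\"uller expansion with well-controlled tail. My plan is to invoke the structural results from \cite{kedlaya-revisited} describing $\tilde{\calR}^r_L$: every element $x \in \tilde{\calR}^r_L$ can be written uniquely as $x = \sum_{i \in \ZZ} p^i [\overline{x}_i]$ with $\overline{x}_i \in L$ such that $p^{-i}\alpha(\overline{x}_i)^s \to 0$ as $i \to +\infty$ for each $s \in (0,r]$ and $p^{-i}\alpha(\overline{x}_i)^s \to 0$ as $i \to -\infty$ for each $s \in (0,r]$ (the two-sided decay being what distinguishes $\tilde{\calR}^r_L$ from $\tilde{\calR}^{\inte,r}_L$). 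Then $\lambda(\alpha^s)(x) = \sup_{i \in \ZZ} p^{-i}\alpha(\overline{x}_i)^s$. If this supremum is bounded by $c$ uniformly for $s \in (0,r]$, then for any fixed $i$ with $\overline{x}_i \ne 0$ we get $p^{-i} \alpha(\overline{x}_i)^s \le c$ for all $s \in (0,r]$; letting $s \to 0^+$ gives $p^{-i} \le c$, i.e.\ $i \ge -\log_p c$. Hence only finitely many negative indices $i$ can occur with $\overline{x}_i \ne 0$, and those have $\alpha(\overline{x}_i) \le 1$ automatically. Peeling off this finite sum $\sum_{i<0} p^i[\overline{x}_i]$, which lies in $\tilde{\calR}^{\bd}_L$, reduces to showing the remainder $\sum_{i \ge 0} p^i[\overline{x}_i]$ lies in $\tilde{\calR}^{\inte,r}_L$ — and this is exactly the defining decay condition $p^{-i}\alpha(\overline{x}_i)^r \to 0$, which holds since $x \in \tilde{\calR}^r_L$. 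Therefore $x \in \tilde{\calR}^{\bd,r}_L \subseteq \tilde{\calR}^{\bd}_L$, completing the proof. I expect the only subtlety is justifying the unique two-sided Teichm\"uller expansion and its convergence properties for elements of the Fr\'echet completion $\tilde{\calR}^r_L$; this is standard and available from \cite[\S 2]{kedlaya-revisited}, so I would cite it rather than reprove it.
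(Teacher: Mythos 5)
Your forward direction is essentially the paper's, though one step is wrong as stated: $\tilde{\calR}^{\inte,r}_L$ sits inside $W(L)$, not $W(\gotho_L)$, so the coefficients $\overline{y}_i$ need not satisfy $\alpha(\overline{y}_i) \leq 1$. The correct uniform bound is $p^{-i}\alpha(\overline{y}_i)^s \leq \max\{p^{-i}, p^{-i}\alpha(\overline{y}_i)^r\} \leq \max\{1, \lambda(\alpha^r)(y)\}$ for $s \in (0,r]$, which is what the paper means when it says $\lambda(\alpha^s)(x)$ tends to the $p$-adic norm of $x$ as $s \to 0^+$. This is easily repaired.

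The reverse direction, however, has a genuine gap. Your argument rests entirely on the claim that every element of the Fr\'echet completion $\tilde{\calR}^r_L$ admits a (unique) two-sided Teichm\"uller expansion $\sum_{i \in \ZZ} p^i[\overline{x}_i]$ with $\lambda(\alpha^s)(x) = \sup_i p^{-i}\alpha(\overline{x}_i)^s$. This is not available from \cite[\S 2]{kedlaya-revisited}: that reference works with \emph{semiunit presentations}, which are neither unique nor Teichm\"uller, precisely because Teichm\"uller presentations are not known to exist for elements of these completions. The present paper flags this explicitly in Remark~\ref{R:splitting}, noting that elements of the completed rings ``cannot be expressed as sums of Teichm\"uller elements'' and that one must instead simulate the splitting by other means. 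Granting such an expansion would render the lemma nearly tautological, so you have in effect assumed something at least as strong as the statement to be proved. The paper's proof circumvents the issue by only ever expanding elements of $\tilde{\calR}^{\bd,r}_L$, which lie in $W(L)[p^{-1}]$ and genuinely have Teichm\"uller expansions: after normalizing so that $\lambda(\alpha^s)(x) \leq 1$ on $(0,r]$, one writes $x$ as a Fr\'echet limit of $x_i \in \tilde{\calR}^{\bd,r}_L$, truncates each $x_i$ to its nonnegative part $y_i \in \tilde{\calR}^{\inte,r}_L$, and uses the bound $\lambda(\alpha^{p^{-j}r})(x_i) \leq 1$ to show the discarded negative tails satisfy $\lambda(\alpha^r)(x_i - y_i) \leq p^{1-p^j}$, so that $y_i \to x$ and hence $x \in \tilde{\calR}^{\inte,r}_L$. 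In other words, the boundedness hypothesis is used to control the approximants uniformly, not to analyze a putative expansion of the limit itself.
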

\begin{proof}
If $x \in \tilde{\calR}^{\bd}_L$, then as $s$ tends to 0,
$\lambda(\alpha^s)(x)$ tends to the $p$-adic norm of $x$.
Hence for some $r>0$, $\lambda(\alpha^s)(x)$ is bounded for $s \in (0,r]$.
Conversely, suppose that
for some $r>0$, $\lambda(\alpha^s)(x)$ is bounded for $s \in (0,r]$.
To prove that $x \in \tilde{\calR}^{\bd}_L$, we may first multiply by
a power of $p$; we may thus ensure that for some $r>0$,
$x \in \tilde{\calR}^r_L$ and $\lambda(\alpha^s)(x) \leq 1$ for $s \in (0,r]$.
We will show in this case that $x \in \tilde{\calR}^{\inte,r}_L$.

Write $x$ as the limit of a sequence $x_0,x_1, \dots$
with $x_i \in \tilde{\calR}^{\bd,r}_L$.
For each positive integer $j$, we can find $N_j>0$ such that
\[
\lambda(\alpha^s)(x_i-x) \leq p^{-j} \qquad (i \geq N_j,\, s \in [p^{-j}r,r]).
\]
Write $x_i = \sum_{l=m(i)}^\infty p^l [\overline{x}_{il}]$, and put
$y_i = \sum_{l=0}^\infty p^l [\overline{x}_{il}] \in \tilde{\calR}^{\inte,r}_L$. For $i \geq N_j$,
we have $\lambda(\alpha^{p^{-j}r})(x_i) \leq 1$ and so
\[
\alpha(\overline{x}_{il}) \leq p^{lp^j/r}
\qquad (i \geq N_j, \,l < 0).
\]
Since $p^{-l} p^{lp^j} \leq p^{1-p^j}$ for $l \leq -1$,
we deduce that $\lambda(\alpha^r)(x_i - y_i) \leq p^{1-p^j}$.
Consequently, the sequence $y_0,y_1,\dots$ converges to
$x$ under $\lambda(\alpha^r)$, and hence under $\lambda(\alpha^s)$ for $s \in (0,r]$
by Lemma~\ref{L:hadamard}; it follows that $x \in \tilde{\calR}^{\inte,r}_L$
as desired.
(See also \cite[Corollary~2.5.6]{kedlaya-revisited} for a slightly
different argument.)
\end{proof}
\begin{cor} \label{C:units}
The units in $\tilde{\calR}_L$ are precisely the nonzero elements of
$\tilde{\calR}^{\bd}_L$.
\end{cor}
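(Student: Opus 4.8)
The plan is to prove the two containments separately. One direction, that every nonzero element of $\tilde{\calR}^{\bd}_L$ is a unit in $\tilde{\calR}_L$, is essentially already recorded: by Definition~\ref{D:extended Robba} the ring $\tilde{\calR}^{\inte}_L$ is local with residue field the field $L$, and its maximal ideal is $p\tilde{\calR}^{\inte}_L$ (the kernel of $\sum_i p^i[\overline{x}_i] \mapsto \overline{x}_0$), so every element of $\tilde{\calR}^{\bd}_L = \tilde{\calR}^{\inte}_L[p^{-1}]$ is, after multiplication by a suitable power of $p$, either zero or a unit in $\tilde{\calR}^{\inte}_L$; hence $\tilde{\calR}^{\bd}_L$ is a field, and since $\tilde{\calR}^{\bd}_L \subseteq \tilde{\calR}_L$, its nonzero elements are units in $\tilde{\calR}_L$.

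For the reverse direction, suppose $x \in \tilde{\calR}_L$ is a unit with inverse $y \in \tilde{\calR}_L$. Since $\tilde{\calR}_L$ is the increasing union of the $\tilde{\calR}^r_L$ as $r \to 0^+$, I would first pick $r>0$ small enough that both $x$ and $y$ lie in $\tilde{\calR}^r_L$. For each $s \in (0,r]$ the norm $\lambda(\alpha^s)$ is multiplicative on $\tilde{\calR}^r_L$: it is multiplicative on the dense subring $\tilde{\calR}^{\bd,r}_L$ by Definition~\ref{D:extended Robba}, and multiplicativity of a norm passes to the Fr\'echet completion by continuity (the relevant Cauchy sequences may be taken bounded). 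Thus $xy=1$ gives $\lambda(\alpha^s)(x)\,\lambda(\alpha^s)(y) = \lambda(\alpha^s)(1) = 1$; in particular $\lambda(\alpha^s)(x) \neq 0$ and
\[
\log \lambda(\alpha^s)(x) = -\log \lambda(\alpha^s)(y) \qquad (s \in (0,r]).
\]
By Lemma~\ref{L:hadamard}, both $s \mapsto \log\lambda(\alpha^s)(x)$ and $s \mapsto \log\lambda(\alpha^s)(y)$ are convex on $(0,r]$; being negatives of one another, each must in fact be affine on $(0,r]$. An affine function on $(0,r]$ is bounded there, so $\lambda(\alpha^s)(x)$ is bounded for $s \in (0,r]$, and Lemma~\ref{L:bounded} then yields $x \in \tilde{\calR}^{\bd}_L$, completing the argument.

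Since all the substantive inputs — the local structure of $\tilde{\calR}^{\inte}_L$, the convexity of $s \mapsto \log\lambda(\alpha^s)(x)$ from Lemma~\ref{L:hadamard}, and the boundedness criterion of Lemma~\ref{L:bounded} — are already available, there is no serious obstacle; the only point needing a moment's care is the passage of multiplicativity of $\lambda(\alpha^s)$ from $\tilde{\calR}^{\bd,r}_L$ to its completion $\tilde{\calR}^r_L$. This mirrors the analogous characterization of units of the Robba ring $\calR_K$ noted in Definition~\ref{D:Robba rings}.
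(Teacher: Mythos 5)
Your proof is correct and follows essentially the same route as the paper: choose $r$ with $x,y\in\tilde{\calR}^r_L$, use convexity of $s\mapsto\log\lambda(\alpha^s)$ (Lemma~\ref{L:hadamard}) and the relation $xy=1$ to force both functions to be affine, hence bounded on $(0,r]$, and conclude via Lemma~\ref{L:bounded}. The extra care you take with multiplicativity of $\lambda(\alpha^s)$ on the completion and with the easy containment (via $\tilde{\calR}^{\bd}_L$ being a field) is consistent with what the paper already records in Definition~\ref{D:extended Robba}.
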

\begin{proof}
Suppose $x \in \tilde{\calR}_L$ is a unit with inverse $y$.
Choose $r>0$ so that $x,y \in \tilde{\calR}^r_L$.
Then the functions $\log \lambda(\alpha^s)(x), \log \lambda(\alpha^s)(y)$
are convex by Lemma~\ref{L:hadamard}, but their sum is the constant function $0$.
Hence both functions are affine in $s$; in particular, $\lambda(\alpha^s)(x)$
is bounded for $s \in (0,r]$.
By Lemma~\ref{L:bounded}, this forces $x \in \tilde{\calR}^{\bd}_L$.
(Compare \cite[Lemma~2.4.7]{kedlaya-revisited}.)
\end{proof}

\begin{lemma} \label{L:Bezout domain}
The rings $\tilde{\calR}^r_L$ and $\tilde{\calR}_L$ are B\'ezout domains, i.e., integral domains in which every
finitely generated ideal is principal.
\end{lemma}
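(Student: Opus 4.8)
The plan is as follows. First, both rings are integral domains, since they are subrings of $\tilde{\calE}_L = W(L)[p^{-1}]$, and $W(L)$ is a complete discrete valuation ring: its maximal ideal is $(p)$ because $L = W(L)/(p)$ is a field, and it is $p$-adically separated, so $\tilde{\calE}_L$ is a field. For the B\'ezout property it suffices, by an evident induction on the number of generators, to treat an ideal generated by two elements $f,g$; we may assume $f$ and $g$ are both nonzero. I would first establish the B\'ezout property for $\tilde{\calR}^r_L$ and then descend to $\tilde{\calR}_L = \bigcup_{r>0}\tilde{\calR}^r_L$.

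The heart of the matter is a factorization theory for nonzero elements of $\tilde{\calR}^r_L$, following \cite{kedlaya-revisited}. To a nonzero $x\in\tilde{\calR}^r_L$ one attaches a \emph{Newton polygon} built from the multiplicative norms $\lambda(\alpha^s)$, $s\in(0,r]$ (equivalently, from the $\alpha$-values of the Teichm\"uller coordinates of $x$); by Lemma~\ref{L:hadamard} the function $s\mapsto\log\lambda(\alpha^s)(x)$ is convex and piecewise affine, which makes this polygon well behaved, and by Lemma~\ref{L:bounded} and Corollary~\ref{C:units} the bounded part $\tilde{\calR}^{\bd}_L$ consists precisely of the units of $\tilde{\calR}^r_L$, i.e.\ the elements with no ``zeros''. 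The slopes of the Newton polygon record the zeros of $x$, each a class (up to associates) of an element of $W(\gotho_M)$ primitive of degree~$1$ for a suitable complete extension $M$ of $L$; the crucial technical input is a Weierstrass-type preparation and division lemma, which over any closed interval $[s_0,r]\subseteq(0,r]$ splits off a ``polynomial'' factor carrying a prescribed finite part of the zero divisor and a cofactor that is a unit on that interval. From this one deduces that $f$ and $g$ admit a greatest common divisor $h\in\tilde{\calR}^r_L$, whose zero divisor is the pointwise minimum of those of $f$ and $g$, and one may write $f = hf_1$, $g = hg_1$ with $f_1,g_1\in\tilde{\calR}^r_L$ having no common zero.

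It then remains to show that two elements $f_1,g_1\in\tilde{\calR}^r_L$ with no common zero generate the unit ideal, i.e.\ to produce $a,b$ with $af_1+bg_1=1$; this is the step I expect to be the main obstacle. The approach is a Mittag-Leffler / partial-fractions argument: at each zero $z$ of $f_1$ the element $g_1$ is invertible, so one can solve the B\'ezout congruence locally, and one patches these local solutions into a global identity using the Fr\'echet structure on $\tilde{\calR}^r_L$ and the completeness from Definition~\ref{D:extended Robba}, keeping the estimates under control via the Newton-polygon bounds; the absence of common zeros is exactly what makes the patching obstruction vanish. (Equivalently, one identifies $\tilde{\calR}^r_L/(f_1)$ with a product, over the zeros of $f_1$, of ``local'' factors in each of which $g_1$ is already a unit.)

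Finally, to pass to $\tilde{\calR}_L$: given a two-generated ideal $(f,g)$ of $\tilde{\calR}_L$, choose $r>0$ with $f,g\in\tilde{\calR}^r_L$ and let $h$ generate $(f,g)\tilde{\calR}^r_L$; then $h\in(f,g)\tilde{\calR}_L$ while $f,g\in h\tilde{\calR}^r_L\subseteq h\tilde{\calR}_L$, so $(f,g)\tilde{\calR}_L = h\tilde{\calR}_L$, and $\tilde{\calR}_L$ is a domain for the same reason as $\tilde{\calR}^r_L$. The two genuinely substantive ingredients are the Newton-polygon/Weierstrass machinery and the Mittag-Leffler construction of the B\'ezout identity for zero-free pairs; the rest is bookkeeping.
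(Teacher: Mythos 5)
The paper's own ``proof'' is a citation to \cite[Theorem~2.9.6]{kedlaya-revisited}, and your sketch is essentially the strategy of that reference: a Lazard-style divisor theory for nonzero elements via Newton polygons, Weierstrass-type factorization over closed subintervals $[s,r]$, a gcd whose divisor is the pointwise minimum, and a convergent patching argument to promote interval-by-interval principality to principality over $\tilde{\calR}^r_L$, followed by the easy ascent to the union $\tilde{\calR}_L$. In the reference the two ``substantive ingredients'' you isolate are packaged slightly differently: the Weierstrass division and the coprimality/Mittag--Leffler step are both absorbed into the statement that each partial completion $\tilde{\calR}^{[s,r]}_L$ is a Euclidean (hence principal ideal) domain with respect to the total multiplicity of slopes in $[s,r]$ (cf.\ Remark~\ref{R:Robba PID}), after which one chooses generators of $(f,g)\tilde{\calR}^{[s,r]}_L$ for a shrinking sequence of $s$ and shows they can be normalized to converge in the Fr\'echet topology. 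So your outline is faithful to the intended proof, with the caveat that the hard analytic content (the division algorithm and the convergence of the infinite product/patching) is exactly what the citation is carrying.

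There is one step that fails as written: your argument that the rings are integral domains. It is not true that $\tilde{\calR}^r_L$ (let alone $\tilde{\calR}_L$) is a subring of $\tilde{\calE}_L = W(L)[p^{-1}]$. Only the bounded subring $\tilde{\calR}^{\bd,r}_L$ sits inside $\tilde{\calE}_L$; the Fr\'echet completion contains elements that are unbounded as $s \to 0^+$ (indeed Lemma~\ref{L:bounded} characterizes $\tilde{\calR}^{\bd}_L$ as a \emph{proper} subring of $\tilde{\calR}_L$, and Corollary~\ref{C:units} would be vacuous if the containment you assert held). The correct argument is simply that $\lambda(\alpha^s)$ is a multiplicative norm on $\tilde{\calR}^r_L$ for each $s \in (0,r]$ (Definition~\ref{D:extended Robba}, via Proposition~\ref{P:mu multiplicative}), and a nonzero ring admitting a multiplicative norm has no zero divisors. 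This is a local repair, but as stated that sentence of your proof is false.
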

\begin{proof}
See \cite[Theorem~2.9.6]{kedlaya-revisited}.
\end{proof}

\begin{remark} \label{R:Robba PID}
A fact closely related to Lemma~\ref{L:Bezout domain} is that for $0 < s \leq r$, the completion of $\tilde{\calR}^r_L$
with respect to the norm $\max\{\lambda(\alpha^r), \lambda(\alpha^s)\}$ is a principal ideal domain, and even a Euclidean domain.
See \cite[Proposition~2.6.8]{kedlaya-revisited}.
(This ring will later be denoted $\tilde{\calR}^{[s,r]}_L$; see Definition~\ref{D:relative extended Robba}.)
\end{remark}

\begin{defn} \label{D:no common slopes}
To each nonzero $x \in \tilde{\calR}^r_L$ is associated its
\emph{Newton polygon}, i.e., the convex dual of the function $f_x(t) = \log \lambda(\alpha^t)(x)$
(which is convex by Lemma~\ref{L:hadamard}). The slopes of the Newton polygon of $x$ are the
values $t$ where $f_x$ changes slope; for short, we call these the \emph{slopes of $x$}.
The \emph{multiplicity} of a slope $s$ is the width of the corresponding segment of the
Newton polygon. Note that $r$ fails to receive a multiplicity under this definition; to correct
this, choose any $s>r$ and any $y \in \tilde{\calR}^s_L$ with $\lambda(\alpha^r)(x-y) < \lambda(\alpha^r)(x)$,
then define the multiplicity of $r$ as a slope of $x$ to be its multiplicity as a slope of $y$.
This does not depend on the choice of $y$.
(See \cite[Definition~2.4.4]{kedlaya-revisited} for an alternate definition.)

For our present purposes, the most important properties of slopes are the following.
\begin{enumerate}
\item[(a)]
If $x = yz$, then the slopes of $x$ are precisely the slopes of $y$ and $z$. More precisely,
the multiplicity of any $s \in (0,r]$ as a slope of $x$
equals the sum of its multiplicities as a slope of $y$ and of $z$.
\item[(b)]
If $x \in \tilde{\calR}^r_L$ is a unit, then by (a) it has no slopes.
The converse is also true, by the following argument.
If $x$ has no slopes, then $x \in \tilde{\calR}^{\bd,r}_L$
by Lemma~\ref{L:bounded}. Write $x = p^m y$ with $m \in \ZZ$ and $y \in \tilde{\calR}^{\inte,r}_L$
not divisible by $p$; we must then have $\lambda(\alpha^r)(y - [\overline{y}]) < \lambda(\alpha^r)(y)$
by definition of the multiplicity of $r$.
It follows that $x$ is a unit in $\tilde{\calR}^r_L$.
\end{enumerate}
\end{defn}

\begin{defn} \label{D:phi-modules extended}
The Frobenius lift $\varphi$ on $W(L)$ acts on $\tilde{\calR}^{\inte}_L$, and extends by continuity
to $\tilde{\calR}^{\bd}_L, \tilde{\calR}_L, \tilde{\calE}_L$. Note the following
useful identity:
\begin{equation} \label{eq:Frobenius norms}
\lambda(\alpha^s)(\varphi(x)) = \lambda(\alpha^{ps})(x) \qquad (x \in
\tilde{\calR}_L, \,s > 0).
\end{equation}
We define $\varphi$-modules over these rings,
degrees, slopes,
and the pure and \'etale conditions as in Definition~\ref{D:slopes}.
(We also consider $\varphi^d$-modules for $d$ a positive integer, keeping in mind Convention~\ref{conv:power slope}.)
\end{defn}

\begin{lemma} \label{L:big Robba invariants}
We have $\tilde{\calE}_L^{\varphi} = \Qp$ and $\tilde{\calR}_L^{\varphi} = \Qp$.
More generally, for any positive integer $a$, the elements of
$\tilde{\calE}_L$ and $\tilde{\calR}_L$ fixed by $\varphi^a$ constitute the
unramified extension of $\Qp$ with residue field $\FF_{p^a}$.
\end{lemma}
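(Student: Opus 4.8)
The plan is to treat $\tilde{\calE}_L = W(L)[p^{-1}]$ first and then reduce the statement for $\tilde{\calR}_L$ to it. For $\tilde{\calE}_L$: since $\varphi$ is the functorial lift of the $p$-power map on $L$ (Definition~\ref{D:Witt vectors}), it acts on Witt components by $p$-th powers, i.e. $\varphi(\sum_{i} p^i[\overline{x}_i]) = \sum_{i} p^i[\overline{x}_i^p]$, and hence $\varphi^a(\sum_i p^i[\overline{x}_i]) = \sum_i p^i[\overline{x}_i^{p^a}]$. As every element of $W(L)[p^{-1}]$ has a unique Teichm\"uller expansion $\sum_{i \geq -n} p^i[\overline{x}_i]$ and $\varphi$ fixes $p$, the equation $\varphi^a(x) = x$ holds if and only if $\overline{x}_i^{p^a} = \overline{x}_i$ for all $i$, i.e. every coefficient lies in the subfield $k_a = \{t \in L : t^{p^a} = t\}$ of $L$. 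This $k_a$ is finite and perfect, so by the homogeneity of Witt addition (Remark~\ref{R:addition formula}) the set of Witt vectors with all coefficients in $k_a$ is a subring of $W(L)$ canonically isomorphic to $W(k_a)$; hence $\tilde{\calE}_L^{\varphi^a} = W(k_a)[p^{-1}] = \Frac(W(k_a))$, the unramified extension of $\Qp$ with residue field $k_a$. For $a = 1$ this is $\Qp$, since $k_1 = \Fp$; in general $k_a = \FF_{p^a}$ whenever $\FF_{p^a} \subseteq L$.

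For $\tilde{\calR}_L$ it suffices to show that any $\varphi^a$-fixed $x \in \tilde{\calR}_L$ already lies in $\tilde{\calR}_L^{\bd}$: since $\tilde{\calR}_L^{\bd} \subseteq W(L)[p^{-1}] = \tilde{\calE}_L$, the first paragraph then identifies the fixed ring with $W(k_a)[p^{-1}]$, and conversely $W(k_a)[p^{-1}] \subseteq \tilde{\calR}_L^{\bd}$ because each nonzero element of the finite field $k_a$ is a root of unity, hence of norm $1$ under $\alpha$. So let $x \in \tilde{\calR}_L^r$ with $\varphi^a(x) = x$. Iterating the identity $\lambda(\alpha^s)(\varphi(y)) = \lambda(\alpha^{ps})(y)$ from \eqref{eq:Frobenius norms} gives $\lambda(\alpha^s)(x) = \lambda(\alpha^{p^a s})(x)$ for all $s \in (0, r/p^a]$. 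Write $h(s) = \log \lambda(\alpha^s)(x)$; by Lemma~\ref{L:hadamard} this is convex on $(0, r]$ and it is finite at $s = r$. For $0 < s \leq r/p^a$ the point $p^a s$ lies in $[s, r]$ and equals $\mu s + (1-\mu) r$ with $\mu = (r - p^a s)/(r - s) \in [0, 1)$, so convexity gives $h(p^a s) \leq \mu h(s) + (1 - \mu) h(r)$; combined with $h(p^a s) = h(s)$ this forces $h(s) \leq h(r)$. Thus $h$ is bounded above on $(0, r/p^a]$, hence (adding the compact interval $[r/p^a, r]$) on all of $(0, r]$; a convex function bounded above on an interval is also bounded below. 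By Lemma~\ref{L:bounded}, $x \in \tilde{\calR}_L^{\bd}$, as needed.

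The only genuine obstacle is this last reduction: one must exclude a $\varphi^a$-fixed element with nonzero Newton slopes, and the self-similarity $h(s) = h(p^a s)$ together with convexity is exactly what accomplishes this. (One could equally argue via Definition~\ref{D:no common slopes}: $\varphi$ divides the slopes of an element by $p$, so the slope multiset of a $\varphi^a$-fixed element is invariant under division by $p^a$, forcing it to have no slopes in $(0, r]$ and hence, by Definition~\ref{D:no common slopes}(b), to be a unit in $\tilde{\calR}_L^r$, so in $\tilde{\calR}_L^{\bd}$ by Corollary~\ref{C:units}.) Everything else---the Teichm\"uller description of $\varphi$, the identification of the fixed coefficients with $k_a$, and the ring structure on $W(k_a)$ inside $W(L)$---is routine.
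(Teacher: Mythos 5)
Your proof is correct and follows essentially the same route as the paper: identify $\tilde{\calE}_L^{\varphi^a}$ via the action of $\varphi$ on Teichm\"uller coordinates, then reduce the statement for $\tilde{\calR}_L$ to it by using \eqref{eq:Frobenius norms} together with convexity (Lemma~\ref{L:hadamard}) to bound $\lambda(\alpha^s)(x)$ and invoke Lemma~\ref{L:bounded}. Your parenthetical caveat that the fixed field is the unramified extension with residue field $\FF_{p^a}\cap L$ is a fair reading of the statement, which implicitly assumes $\FF_{p^a}\subseteq L$ (compare the hypotheses of Theorems~\ref{T:perfect equivalence2} ff.).
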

\begin{proof}
The first equality holds because $\tilde{\calE}_L = W(L)[p^{-1}]$,
so $\tilde{\calE}_L^{\varphi} = W(L^{\varphi})[p^{-1}] = \Qp$.
For the second equality, suppose $x \in \tilde{\calR}_L^{\varphi}$ is nonzero;
then by \eqref{eq:Frobenius norms},
$\lambda(\alpha^{s})(x) = \lambda(\alpha^{ps})(x)$ for all $s>0$.
It follows that $\lambda(\alpha^r)(x)$ is bounded over all $r>0$,
and so by Lemma~\ref{L:bounded},
$x \in (\tilde{\calR}_L^{\bd})^{\varphi} \subseteq \tilde{\calE}_L^{\varphi} = \Qp$.
The final assertion is proved similarly.
\end{proof}

We have the following analogue of Proposition~\ref{P:fully faithful1}.
One key difference is that the functor in part (a) can be shown to be essentially surjective; see Theorem~\ref{T:perfect equivalence2}.
\begin{prop} \label{P:fully faithful2}
For any $s \in \QQ$, we have the following.
\begin{enumerate}
\item[(a)]
The functor $M \rightsquigarrow M \otimes_{\tilde{\calR}_L^{\bd}} \tilde{\calE}_L$
gives a fully faithful
functor from $\varphi$-modules over $\tilde{\calR}_L^{\bd}$ which are pure of slope $s$ to $\varphi$-modules
over $\tilde{\calE}_L$ which are pure of slope $s$.
\item[(b)]
The functor $M \rightsquigarrow M \otimes_{\tilde{\calR}_L^{\bd}} \tilde{\calR}_L$ gives an equivalence of categories
between $\varphi$-modules over $\tilde{\calR}_L^{\bd}$ which are pure of slope $s$ and $\varphi$-modules
over $\tilde{\calR}_L$ which are pure of slope $s$.
\end{enumerate}
\end{prop}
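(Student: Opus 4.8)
The plan is to reduce both parts to the corresponding statements over the "classical" Robba ring $\calR_K$, i.e.\ Proposition~\ref{P:fully faithful1}, via descent along the Frobenius-equivariant inclusions of $\calR^{\bd}_K$, $\calE_K$, $\calR_K$ into $\tilde{\calR}^{\bd}_L$, $\tilde{\calE}_L$, $\tilde{\calR}_L$ after passing to a suitable common perfect residue field. More directly, though, I expect the cleanest route is to prove both parts intrinsically using the structure theory already assembled in \S\ref{subsec:slopes}, following \cite[\S 2.7--2.8]{kedlaya-revisited}. For full faithfulness in (a): given pure $\varphi$-modules $M, N$ over $\tilde{\calR}^{\bd}_L$ of slope $s$, a morphism is an element of $H^0_\varphi(M^\dual \otimes N)$, and $M^\dual \otimes N$ is again pure of slope $0$, i.e.\ \'etale; so it suffices to show that for an \'etale $\varphi$-module $P$ over $\tilde{\calR}^{\bd}_L$, the natural map $H^0_\varphi(P) \to H^0_\varphi(P \otimes_{\tilde{\calR}^{\bd}_L} \tilde{\calE}_L)$ is bijective. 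Injectivity is immediate since $\tilde{\calR}^{\bd}_L \hookrightarrow \tilde{\calE}_L$; for surjectivity one takes a $\varphi$-fixed vector $v$ in $P \otimes \tilde{\calE}_L$, uses that an \'etale $\varphi$-module over $\tilde{\calE}_L$ has an $\gotho_{\tilde{\calE}_L}$-lattice stable under $\varphi$ with $\varphi$ acting invertibly, and shows by a convergence/approximation argument (exactly of the type used in Lemma~\ref{L:bounded} and Corollary~\ref{C:units}) that the coordinates of $v$, a priori in $\tilde{\calE}_L$, have bounded $\lambda(\alpha^s)$ as $s \to 0$ and hence lie in $\tilde{\calR}^{\bd}_L$.

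For part (b), the key point is to produce a quasi-inverse to $M \rightsquigarrow M \otimes_{\tilde{\calR}^{\bd}_L} \tilde{\calR}_L$. Full faithfulness reduces as above to showing $H^0_\varphi(P) \xrightarrow{\sim} H^0_\varphi(P \otimes_{\tilde{\calR}^{\bd}_L} \tilde{\calR}_L)$ for $P$ \'etale; here the needed input is that a $\varphi$-invariant element of $P \otimes \tilde{\calR}_L$ is automatically "bounded", using \eqref{eq:Frobenius norms}: if $\varphi v = v$ then $\lambda(\alpha^s)$ of the coordinates is invariant under $s \mapsto ps$, hence bounded on $(0,r]$, hence the coordinates lie in $\tilde{\calR}^{\bd}_L$ by Lemma~\ref{L:bounded} — this is the same mechanism as in the proof of Lemma~\ref{L:big Robba invariants}. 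Essential surjectivity is the substantive part: given a $\varphi$-module $M$ over $\tilde{\calR}_L$ pure of slope $s$, one must descend it to $\tilde{\calR}^{\bd}_L$. After twisting by a rank-one object one may assume $s = 0$, i.e.\ $M$ \'etale. Then one chooses a basis, writes $\varphi$ via a matrix $A \in \GL_n(\tilde{\calR}_L)$, and uses purity (which gives that for $r$ small $A$ and $A^{-1}$ have entries in $\tilde{\calR}^r_L$ with $\lambda(\alpha^r)$ controlled) to run a successive-approximation argument producing $U \in \GL_n(\tilde{\calR}_L)$ with $U^{-1} A \varphi(U)$ having entries in $\tilde{\calR}^{\bd}_L$; the convergence is governed by the contraction $s \mapsto s/p$ under $\varphi$ on the Fr\'echet norms, exactly as in \cite[Proposition~2.7.3, Theorem~2.8.4]{kedlaya-revisited}.

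I would cite \cite[\S 2.7]{kedlaya-revisited} for (a) and \cite[Theorem~2.8.4]{kedlaya-revisited} (together with the B\'ezout property, Lemma~\ref{L:Bezout domain}, and the units description, Corollary~\ref{C:units}) for (b), spelling out only the reduction to the \'etale case and the boundedness-of-invariants lemma, since those are the conceptual pieces and the rest is the approximation machinery already in the literature. The main obstacle is the descent in essential surjectivity of (b): one needs the matrix-factorization argument to converge, and this requires carefully tracking how the Fr\'echet seminorms $\lambda(\alpha^s)$ transform under $\varphi$ and under multiplication, i.e.\ genuinely using that $\varphi$ improves the radius by a factor of $p$. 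Everything else — the reduction of Hom-computations to $H^0_\varphi$ of an \'etale module, and the observation that $\varphi$-invariant vectors are bounded — is formal given the results already stated.
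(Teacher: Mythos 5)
Your overall route is the same as the paper's: the proof given there is a citation of \cite[Theorem~6.3.3(a,b)]{kedlaya-revisited} together with Remark~\ref{R:etale descent}, which performs exactly your reduction of $\Hom(M_1,M_2)$ to $H^0_\varphi$ of the \'etale internal Hom $M_1^\dual\otimes M_2$. However, both of your boundedness arguments have a gap as written. In (a), you cannot speak of ``bounded $\lambda(\alpha^s)$ as $s\to 0$'' for the coordinates of a $\varphi$-fixed vector in $P\otimes\tilde{\calE}_L$: the seminorms $\lambda(\alpha^s)$ with $s>0$ are defined on $\tilde{\calR}^{\bd,r}_L$ and its Fr\'echet completions, not on all of $\tilde{\calE}_L=W(L)[p^{-1}]$, and Lemma~\ref{L:bounded} applies only to elements already known to lie in $\tilde{\calR}_L$; what is actually needed is the congruence-mod-$p^m$ induction of \cite[Proposition~2.5.8]{kedlaya-relative} (reproduced in the proof of Proposition~\ref{P:DM relative}). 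In (b), the identity $\lambda(\alpha^s)(x)=\lambda(\alpha^{ps})(x)$ for the coordinates of a fixed vector holds only when the basis itself is $\varphi$-fixed; for a general \'etale $P$ one has $x=A\varphi(x)$ with $A$ nontrivial, and the resulting estimate $\lambda(\alpha^{s/p})(x)\le\lambda(\alpha^{s/p})(A)\,\lambda(\alpha^{s})(x)$ degrades under iteration because $\lambda(\alpha^s)(A)$ need not be at most $1$. Both gaps are repaired in the same way, which is precisely what Remark~\ref{R:etale descent} prescribes: base-extend to the completed algebraic closure $L'$, apply Proposition~\ref{P:DM} to obtain a $\varphi$-fixed basis of the pure model over $\tilde{\calR}^{\inte}_{L'}$ (resp.\ $W(L')$), use Lemma~\ref{L:big Robba invariants} to identify the invariants as $\Qp$-linear combinations of that basis, and then intersect back down to $L$ using $\tilde{\calE}_L\cap\tilde{\calR}^{\bd}_{L'}=\tilde{\calR}^{\bd}_L$ and $\tilde{\calR}_L\cap\tilde{\calR}^{\bd}_{L'}=\tilde{\calR}^{\bd}_L$ (the latter from Lemma~\ref{L:bounded} and the isometry of the inclusions).

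You have also mislocated the difficulty in (b). With the definition of purity in force here (Definition~\ref{D:slopes} via Definition~\ref{D:phi-modules extended}), a $\varphi$-module over $\tilde{\calR}_L$ pure of slope $s=c/d$ comes equipped by definition with a basis on which $p^c\varphi^d$ acts through a matrix $U$ with $U$ and $U^{-1}$ of Gauss norm at most $1$, i.e.\ with entries in $\tilde{\calR}^{\inte}_L$; the $\tilde{\calR}^{\bd}_L$-span of that basis is already the desired descent, so essential surjectivity requires no successive-approximation argument at all. The matrix-factorization machinery you flag as the main obstacle is what goes into the genuinely hard statement, Theorem~\ref{T:slope filtration2} (purity equals semistability), not into this proposition. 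Finally, the relevant external reference is \cite[Theorem~6.3.3]{kedlaya-revisited}; \cite[Theorem~2.8.4]{kedlaya-revisited} concerns the comparison of modules with vector bundles in the absence of a Frobenius (see Remark~\ref{R:bundle to module}) and does not carry out this descent.
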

\begin{proof}
See \cite[Theorem~6.3.3(a,b)]{kedlaya-revisited}
or Remark~\ref{R:etale descent}.
\end{proof}

We also have analogues of Theorem~\ref{T:slope filtration1}
and Theorem~\ref{T:slope filtration explicit1}.
\begin{theorem} \label{T:slope filtration2}
Let $M$ be a nonzero $\varphi$-module over $\tilde{\calR}_L$ with $\mu(M) = s$.
Then $M$ is pure of slope $s$ if and only if there exists no
nonzero proper $\varphi$-submodule of $M$ of slope greater than $s$.
\end{theorem}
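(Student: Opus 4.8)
The plan is to prove the two implications separately; the forward one is elementary, while the converse is the substantive half and follows the architecture of the proof of Theorem~\ref{T:slope filtration1}.

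\emph{Pure implies semistable.} Suppose $M$ is pure of slope $s$ and let $N \subseteq M$ be a nonzero $\varphi$-submodule; we must show $\mu(N) \leq s$. Replacing $N$ by its saturation does not decrease $\mu(N)$, so we may assume $N$ is saturated; by Lemma~\ref{L:Bezout domain} it is then a free direct summand, and if $m = \rank N$ then $\wedge^m N$ is a rank-one saturated $\varphi$-submodule of $\wedge^m M$, which is pure of slope $ms$ (the $m$-th exterior power of a matrix that is integral together with its inverse is again such). So it suffices to treat $m=1$: then $\Hom(N, M) \cong N^\dual \otimes M$ is pure of slope $s - \mu(N)$, and the inclusion $N \hookrightarrow M$ is a nonzero $\varphi$-invariant element of it. By Proposition~\ref{P:fully faithful2} this invariant survives base change to $\tilde{\calR}^{\bd}_L$ and then to $\tilde{\calE}_L = W(L)[p^{-1}]$; but over $W(L)$, which is a complete discrete valuation ring (since $L$ is a field) on which $\varphi$ acts as an automorphism (since $L$ is perfect), a pure $\varphi$-module with a nonzero $\varphi$-invariant has non-negative slope, as one checks by scaling the invariant to be primitive in a $\varphi$-stable lattice realizing purity. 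Hence $s - \mu(N) \geq 0$. (We also use here that $\deg$ is unchanged under $\tilde{\calR}^{\bd}_L \hookrightarrow \tilde{\calE}_L$: the determinant of any Frobenius matrix is a unit of $\tilde{\calR}_L$, hence lies in $\tilde{\calR}^{\bd}_L$ by Corollary~\ref{C:units}.)

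\emph{Semistable implies pure.} Twisting $M$ by a rank-one $\varphi$-module of slope $-s$ --- after first passing, if $s \notin \ZZ$, to $\varphi^d$-modules in the sense of Convention~\ref{conv:power slope}, which changes neither hypothesis nor conclusion --- reduces us to $s = 0$: thus $\mu(M) = 0$ and $M$ has no $\varphi$-submodule of positive slope. Over $\tilde{\calE}_L = W(L)[p^{-1}]$ the $\varphi$-module $M \otimes \tilde{\calE}_L$ has a unique maximal $\varphi$-submodule $P$, which is pure of some slope $t_{\max}$ (the first step of its slope filtration, by Dieudonn\'e--Manin theory over $W(L)[p^{-1}]$). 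The crucial step --- the exact $W(L)$-coefficient analogue of the descent argument behind Theorem~\ref{T:slope filtration1}, carried out in \cite{kedlaya-revisited} by a Frobenius-approximation argument that exploits the Newton-polygon dictionary of Definition~\ref{D:no common slopes}, the identification of units (Corollary~\ref{C:units}), and the B\'ezout property (Lemma~\ref{L:Bezout domain}) --- is that $P$ descends to a saturated $\varphi$-submodule $N \subseteq M$ which is itself pure of slope $t_{\max}$ over $\tilde{\calR}_L$, with $N \otimes \tilde{\calE}_L = P$. Then $\mu(N) = t_{\max}$, so the hypothesis forces $t_{\max} \leq 0$; that is, every $\tilde{\calE}_L$-slope of $M$ is $\leq 0$. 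The dual module $M^\dual$ satisfies the same hypothesis --- a $\varphi$-submodule of $M^\dual$ of slope $>0$ dualizes to a proper $\varphi$-quotient of $M$, hence to a $\varphi$-submodule of $M$, namely its kernel, of slope $> \mu(M) = 0$ --- so the same conclusion applied to $M^\dual$ shows that every $\tilde{\calE}_L$-slope of $M$ is $\geq 0$. Hence all these slopes equal $0$, so $M \otimes \tilde{\calE}_L$ is \'etale, $P = M \otimes \tilde{\calE}_L$, and therefore $N = M$: the module $M$ is pure of slope $0$, and untwisting gives that the original $M$ is pure of slope $s$.

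The main obstacle is precisely the descent of the maximal-slope step from $\tilde{\calE}_L$ to $\tilde{\calR}_L$ together with the purity of the descended submodule: this is the one place where no purely formal argument is available, and one must invoke the Frobenius-descent machinery for the extended Robba ring developed in \cite{kedlaya-revisited}. An alternative packaging would be to establish first the full slope-filtration theorem over $\tilde{\calR}_L$ --- every $\varphi$-module has a unique filtration by saturated $\varphi$-submodules with pure graded pieces of strictly decreasing slopes --- after which the present statement is immediate: semistability forces this filtration to have a single step, which is purity.
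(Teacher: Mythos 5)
There are genuine gaps in both halves. The paper itself gives no argument here (it cites \cite{kedlaya-revisited}, Proposition~6.3.5 and Corollary~6.4.3), so the question is whether your sketch would stand on its own; it does not. In the forward direction, the step ``by Proposition~\ref{P:fully faithful2} this invariant survives base change to $\tilde{\calR}^{\bd}_L$ and then to $\tilde{\calE}_L$'' is exactly the kind of application that proposition does not license: full faithfulness there is only for morphisms between modules pure of one and the same slope, whereas a $\varphi$-invariant of $P = N^\dual \otimes M$ is a morphism from the trivial module (slope $0$) to $P$ (slope $s - \mu(N)$, not a priori $0$). The paper warns explicitly about this after Proposition~\ref{P:fully faithful1}. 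The claim is in fact false for positive slope: $\tilde{\calR}_L(1)$ has $H^0_\varphi = P_1 \neq 0$ while the corresponding module over $\tilde{\calE}_L$ has $H^0_\varphi = 0$. Worse, if the invariant did survive to $\tilde{\calE}_L$, your lattice argument (which, scaled to a primitive vector, actually yields slope $\leq 0$, not $\geq 0$ --- check the sign) would force $s - \mu(N) = 0$ exactly, which is absurd. What is really needed is the vanishing of $H^0_\varphi$ for pure $\varphi$-modules of \emph{negative} slope over $\tilde{\calR}_L$ itself, and that is a nontrivial growth/Newton-polygon computation in the ring, not a formal descent.

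The converse direction is ill-posed from the first line: there is no ring homomorphism $\tilde{\calR}_L \to \tilde{\calE}_L$, so ``$M \otimes \tilde{\calE}_L$'' does not exist for a $\varphi$-module $M$ over $\tilde{\calR}_L$. Only $\tilde{\calR}^{\bd}_L$ sits inside both rings, and $M$ has no a priori model over $\tilde{\calR}^{\bd}_L$ --- producing one is essentially equivalent to the purity you are trying to prove (Proposition~\ref{P:fully faithful2}(b)). Consequently the ``maximal generic slope'' $t_{\max}$ and its descent are not available, and the generic-versus-special comparison (Proposition~\ref{P:special above generic}) you implicitly lean on runs in the wrong logical order. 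The actual route in \cite{kedlaya-revisited} constructs the maximal destabilizing submodule directly over $\tilde{\calR}_L$, reducing to $L$ algebraically closed (where Proposition~\ref{P:DM not pure} classifies all $\varphi$-modules) and then performing Galois descent. Your closing observation --- that the statement follows formally from Theorem~\ref{T:slope filtration explicit2} --- is correct, but that theorem is the same cited black box, so it does not repair the sketch.
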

\begin{proof}
See \cite[Proposition~6.3.5, Corollary~6.4.3]{kedlaya-revisited}.
\end{proof}
\begin{theorem} \label{T:slope filtration explicit2}
Let $M$ be a nonzero $\varphi$-module over $\tilde{\calR}_L$.
Then there exists a unique filtration $0 = M_0 \subset \cdots \subset M_l = M$ by
saturated $\varphi$-submodules, such that $M_1/M_0, \dots, M_l/M_{l-1}$ are pure and
$\mu(M_1/M_0) > \cdots >\mu(M_l/M_{l-1})$.
\end{theorem}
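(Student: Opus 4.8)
The plan is to construct the filtration directly from the slope semistability criterion (Theorem~\ref{T:slope filtration2}) by an induction on rank, then establish uniqueness separately. First I would observe that among all nonzero $\varphi$-submodules of $M$, the set of slopes $\mu(N)$ is bounded above: any $\varphi$-submodule $N$ has $\deg(N) = -v_p(\det(\text{matrix of }\varphi^d\text{ on a basis of the saturation}))/d$, and since the saturation of $N$ sits inside $M$, a bound on the entries of the Frobenius matrix on $M$ (relative to a fixed basis) gives a uniform lower bound on these $p$-adic valuations, hence an upper bound on $\deg$, hence on $\mu$ since $\rank \leq \rank(M)$. Moreover the slopes lie in $\frac{1}{(\rank M)!}\ZZ$ (or some fixed denominator), so the supremum $s_1 = \sup_N \mu(N)$ is attained. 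Among submodules achieving slope $s_1$, I would take $M_1$ to be one of maximal rank, then pass to its saturation (which has the same slope by the multiplicativity of slopes under the relevant operations, using that a saturated submodule of a $\varphi$-module over the B\'ezout domain $\tilde{\calR}_L$ is again free and $\varphi$-stable by Lemma~\ref{L:Bezout domain} and Corollary~\ref{C:units}); so we may assume $M_1$ is saturated.

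Next I would check that $M_1$ is pure of slope $s_1$: any nonzero $\varphi$-submodule of $M_1$ has slope $\leq s_1$ by maximality of $s_1$, so Theorem~\ref{T:slope filtration2} applies to $M_1$ directly. I would also verify $M_1$ is the \emph{unique} maximal-slope, maximal-rank saturated submodule: if $M_1'$ were another, then $M_1 + M_1'$ (inside $M$) would be a $\varphi$-submodule; its saturation $N$ contains both and fits in an exact sequence built from $M_1, M_1', M_1 \cap M_1'$, and a slope computation (degree is additive in short exact sequences of $\varphi$-modules over these rings, which follows from the determinant formula) forces $\mu(N) \geq s_1$ with $\rank(N) > \rank(M_1)$ unless $M_1' \subseteq M_1$, contradicting maximality — so $M_1' = M_1$. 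Then I would form the quotient $M/M_1$, which is again a $\varphi$-module over $\tilde{\calR}_L$ (finite free, since $M_1$ is saturated), of strictly smaller rank, and apply the induction hypothesis to get $0 = \overline{M}_0 \subset \cdots \subset \overline{M}_{l-1} = M/M_1$ with the stated properties; pulling back gives the desired filtration $0 = M_0 \subset M_1 \subset \cdots \subset M_l = M$. The only thing to check is the strict inequality $\mu(M_1/M_0) = s_1 > \mu(M_2/M_1)$: since $M_2/M_1$ is the maximal-slope piece of $M/M_1$, it suffices to see that no $\varphi$-submodule of $M/M_1$ has slope $\geq s_1$; if $\overline{N} \subseteq M/M_1$ did, its preimage $N$ in $M$ would be an extension of $\overline{N}$ by $M_1$, hence (by additivity of degree and rank) would have slope a weighted average of two slopes each $\geq s_1$, so $\mu(N) \geq s_1$ with $\rank(N) > \rank(M_1)$, again contradicting maximality of $M_1$.

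For uniqueness of the whole filtration I would argue as follows. Given any filtration $0 = M_0 \subset \cdots \subset M_l = M$ by saturated $\varphi$-submodules with $M_i/M_{i-1}$ pure and $\mu(M_1/M_0) > \cdots > \mu(M_l/M_{l-1})$, I claim $M_1$ coincides with the $M_1$ constructed above, i.e.\ $\mu(M_1/M_0) = s_1$ and $M_1$ is the maximal such submodule. First, for any nonzero saturated $\varphi$-submodule $N \subseteq M$, composing $N \hookrightarrow M \twoheadrightarrow M/M_1$ and iterating shows (since any nonzero map between pure $\varphi$-modules of slopes $a > b$ over $\tilde{\calR}_L$ must vanish — a consequence of Proposition~\ref{P:fully faithful2}(b) reducing to $\tilde{\calR}_L^{\bd}$ plus Theorem~\ref{T:slope filtration2} applied to the image/kernel, combined with the observation via Lemma~\ref{L:big Robba invariants} that $\varphi$-invariants of a negative-slope module vanish) that $\mu(N) \leq \mu(M_1/M_0)$; applied with $N$ the canonical $M_1$, and symmetrically the other way, gives $\mu(M_1/M_0) = s_1$ and that the given $M_1$ is contained in any other submodule of slope $s_1$ and maximal rank, forcing equality. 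Then quotient by $M_1$ and induct. The main obstacle I anticipate is making the ``no nonzero maps decreasing slope'' and ``degree additivity in exact sequences'' lemmas fully rigorous over $\tilde{\calR}_L$ (as opposed to a field or a PID) — these rest on the B\'ezout property (Lemma~\ref{L:Bezout domain}), the identification of units (Corollary~\ref{C:units}), and the descent to $\tilde{\calR}_L^{\bd}$ in Proposition~\ref{P:fully faithful2}(b); but since all of these are quoted from \cite{kedlaya-revisited}, the argument is essentially bookkeeping around Theorem~\ref{T:slope filtration2}, which is itself the quoted hard input. Indeed this theorem is stated in \cite[Theorem~6.4.1]{kedlaya-revisited}, and the proof above is the standard Harder--Narasimhan formalism specialized to that setting.
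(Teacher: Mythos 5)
The paper offers no argument here at all---the proof is a citation to \cite[Theorem~6.4.1]{kedlaya-revisited}---and your reconstruction is the standard Harder--Narasimhan formalism that underlies that reference, so the approach is essentially the same and your outline is correct. The one point to be careful about is logical order: in \cite{kedlaya-revisited} the ``only if'' direction of Theorem~\ref{T:slope filtration2} (purity implies no destabilizing submodule) is Corollary~6.4.3, i.e.\ it is deduced \emph{from} the filtration theorem, so you may only feed the other direction (semistable implies pure, which is Proposition~6.3.5 and is proved independently via Dieudonn\'e--Manin and descent) into your construction of $M_1$; since that is the only direction your argument actually uses, there is no circularity, but it is worth saying so explicitly. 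The remaining ingredients you wave at---boundedness and attainment of the supremum of slopes of submodules, additivity of degree in short exact sequences, and vanishing of maps from higher-slope to lower-slope pure modules---do each require a short argument over the B\'ezout domain $\tilde{\calR}_L$ (the boundedness claim in particular is not just ``bounded matrix entries,'' since the coefficients expressing a submodule generator in a basis of $M$ are unbounded elements of $\tilde{\calR}_L$; one has to run the Newton-polygon/convexity argument of Lemma~\ref{L:hadamard}), but all of these are standard and available in the cited source.
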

\begin{proof}
See \cite[Theorem~6.4.1]{kedlaya-revisited}.
\end{proof}

\begin{cor} \label{C:purity base extension}
Let $M$ be a nonzero $\varphi$-module over $\tilde{\calR}_L$ with $\mu(M) = s$.
Let $L'$ be a perfect analytic field containing $L$ with compatible norms.
Then $M$ is pure of slope $s$
if and only if $M \otimes_{\tilde{\calR}_L} \tilde{\calR}_{L'}$ is pure of
slope $s$.
\end{cor}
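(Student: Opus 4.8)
The plan is to prove the two implications separately. The ``only if'' direction is essentially formal, while the ``if'' direction uses the slope filtration theorem (Theorem~\ref{T:slope filtration explicit2}) together with Theorem~\ref{T:slope filtration2}.

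First I would record that the rank and degree of a $\varphi$-module are read off from the matrix through which $\varphi$ (or $\varphi^d$) acts on a basis, so both are unchanged under the base extension $\tilde{\calR}_L \to \tilde{\calR}_{L'}$; in particular $\mu(M \otimes_{\tilde{\calR}_L} \tilde{\calR}_{L'}) = \mu(M) = s$. For the ``only if'' direction, suppose $M$ is pure of slope $s = c/d$ and choose a basis of $M$ on which $p^c \varphi^d$ acts via a matrix $U$ with $U, U^{-1}$ having entries in $\tilde{\calR}^{\inte}_L$. Since the norm on $L'$ restricts to $\alpha$ on $L$, the inclusion $W(L) \hookrightarrow W(L')$ carries $\tilde{\calR}^{\inte,r}_L$ into $\tilde{\calR}^{\inte,r}_{L'}$ (isometrically for each $\lambda(\alpha^t)$), so the images of this basis witness that $M \otimes_{\tilde{\calR}_L} \tilde{\calR}_{L'}$ is pure of slope $s$.

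For the ``if'' direction, suppose $M \otimes_{\tilde{\calR}_L} \tilde{\calR}_{L'}$ is pure of slope $s$ but $M$ is not. Apply Theorem~\ref{T:slope filtration explicit2} to obtain the slope filtration $0 = M_0 \subset \cdots \subset M_l = M$ by saturated $\varphi$-submodules with $\mu(M_1/M_0) > \cdots > \mu(M_l/M_{l-1})$. Since $M$ is not pure we have $l \geq 2$, and since $\deg$ is additive along this filtration, $\mu(M) = s$ is a $\rank$-weighted average of the strictly decreasing slopes $\mu(M_i/M_{i-1})$; hence the top slope satisfies $\mu(M_1) = \mu(M_1/M_0) > s$. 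Each $M_i/M_{i-1}$ is pure, hence finite free, so by iterating extensions each quotient $M/M_i$ is finite free and the sequence $0 \to M_1 \to M \to M/M_1 \to 0$ of $\tilde{\calR}_L$-modules splits. Tensoring over $\tilde{\calR}_L$ with $\tilde{\calR}_{L'}$ keeps it split exact with finite free terms, so $M_1 \otimes_{\tilde{\calR}_L} \tilde{\calR}_{L'}$ is a $\varphi$-submodule of $M \otimes_{\tilde{\calR}_L} \tilde{\calR}_{L'}$ which is nonzero, proper (its rank equals $\rank(M_1) < \rank(M)$), and of slope $\mu(M_1 \otimes \tilde{\calR}_{L'}) = \mu(M_1) > s$. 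This contradicts Theorem~\ref{T:slope filtration2} applied to $M \otimes_{\tilde{\calR}_L} \tilde{\calR}_{L'}$, so $M$ must be pure of slope $s$.

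The one place requiring genuine care is ensuring $M_1 \otimes_{\tilde{\calR}_L} \tilde{\calR}_{L'}$ is a $\varphi$-submodule in the sense demanded by the slope formalism, i.e. that it injects into $M \otimes \tilde{\calR}_{L'}$ as a finite free, $\varphi$-stable submodule. This is precisely why I extract $M_1$ from the Harder--Narasimhan filtration rather than from an arbitrary destabilizing submodule supplied by Theorem~\ref{T:slope filtration2}: saturatedness forces $M/M_1$ to be finite free (as an iterated extension of the finite free pure quotients; alternatively, finitely generated torsion-free modules over the B\'ezout domain $\tilde{\calR}_L$ are free, cf.\ Lemma~\ref{L:Bezout domain}), which makes the exact sequence split and hence its base extension exact. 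Everything else --- invariance of rank, degree and slope under $\otimes_{\tilde{\calR}_L} \tilde{\calR}_{L'}$, and the isometric inclusion $\tilde{\calR}^{\inte}_L \hookrightarrow \tilde{\calR}^{\inte}_{L'}$ --- is immediate from the definitions.
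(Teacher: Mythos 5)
Your proof is correct and follows essentially the same route as the paper: the forward direction is immediate from the definition of purity, and the reverse direction base-extends a destabilizing submodule and appeals to the semistability criterion of Theorem~\ref{T:slope filtration2}. Your extra step of taking $M_1$ from the Harder--Narasimhan filtration of Theorem~\ref{T:slope filtration explicit2} so that $M/M_1$ is free and the sequence splits is a sensible way to justify the injectivity of $M_1 \otimes_{\tilde{\calR}_L} \tilde{\calR}_{L'} \to M \otimes_{\tilde{\calR}_L} \tilde{\calR}_{L'}$, a point the paper's one-line argument leaves implicit.
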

\begin{proof}
{}From the definition of purity, it is clear that
if $M$ is pure of slope $s$, then so is
$M \otimes_{\tilde{\calR}_L} \tilde{\calR}_{L'}$. On the other hand,
from the alternate criterion for purity given by
Theorem~\ref{T:slope filtration2}, it is also clear that
if $M$ fails to be pure of slope $s$, then so does
$M \otimes_{\tilde{\calR}_L} \tilde{\calR}_{L'}$.
\end{proof}

In addition, in case $L$ is algebraically closed, we get an analogue of
Manin's classification of rational Dieudonn\'e modules.
\begin{prop} \label{P:DM}
Suppose that $L$ is algebraically closed.
Let $M$ be a $\varphi$-module over $\tilde{\calE}_L$ (resp.\
$\tilde{\calR}^{\bd}_L$) which is pure of
slope $s$.
Then for any $c,d \in \ZZ$ with $d>0$
and $c/d = s$,
and any basis of $M$ on which $p^{c} \varphi^d$ acts via an invertible matrix
over $W(L)$ (resp.\ $\tilde{\calR}^{\inte}_L$), the
$W(L)$-span (resp.\ $\tilde{\calR}^{\inte}_L$-span) of this basis admits another basis
fixed by $p^{c} \varphi^d$.
\end{prop}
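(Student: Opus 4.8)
The plan is to reduce to the case $c = 0$, $d = 1$ by replacing $\varphi$ with $p^c \varphi^d$ (using Convention~\ref{conv:power slope}), so that we are handed a basis of $M$ on which $\varphi$ acts via an invertible matrix $U$ over $W(L)$ (resp.\ $\tilde{\calR}^{\inte}_L$), and we must produce an invertible change of basis matrix $V$ over the same ring with $V^{-1} U \varphi(V) = 1$, i.e.\ $\varphi(V) = U^{-1} V$. Equivalently, writing $W = V^{-1}$, we want $\varphi(W) U = W$; this is precisely the equation solved in the course of establishing the pure-slope classification. First I would treat the $\tilde{\calE}_L = W(L)[p^{-1}]$ case: since the matrix $U$ and its inverse lie in $W(L)$, we may solve $\varphi(W) = W U^{-1}$ by successive approximation modulo powers of $p$. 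At each stage, correcting the congruence requires solving an equation of the shape $\overline{\varphi}(X) - X = \overline{Y}$ (or a matrix analogue $\overline{\varphi}(X) A = X + \overline{Y}$ with $A \in \GL_n$) over $L$; because $L$ is \emph{algebraically closed}, the Lang-torsor argument already invoked in the proof of Lemma~\ref{L:DM relative} guarantees this is solvable over $L$ itself. Iterating and using $p$-adic completeness of $W(L)$ yields $W \in \GL_n(W(L))$, hence the desired $\varphi$-fixed basis.

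Second I would handle the $\tilde{\calR}^{\bd}_L$ case, where the given matrix $U$ has entries in $\tilde{\calR}^{\inte}_L$ and is invertible there. Here one cannot work $p$-adically alone, since $\tilde{\calR}^{\inte}_L$ is not $p$-adically complete; instead one exploits the fact that $\tilde{\calR}^{\inte}_L$ is \emph{henselian} (Definition~\ref{D:extended Robba}, citing \cite[Lemma~2.1.12]{kedlaya-revisited}, or Proposition~\ref{P:perfect henselian}), with residue field $L$. The strategy is: first solve the problem modulo the maximal ideal, i.e.\ over the residue field $L$, which is exactly the $\tilde{\calE}_L$-type computation over $L$ from the previous paragraph (again using $L$ algebraically closed via the Lang argument); then lift the solution through the henselian local ring $\tilde{\calR}^{\inte}_L$ by a Newton-iteration/Hensel argument, noting that the map $V \mapsto V^{-1} U \varphi(V)$ has invertible derivative at a solution modulo the maximal ideal (its linearization is again of Artin-Schreier type $X \mapsto \overline{\varphi}(X) - X$ twisted by an invertible matrix, which is étale). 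Since all of this takes place inside $\tilde{\calR}^{\inte}_L$, the resulting $V$ lies in $\GL_n(\tilde{\calR}^{\inte}_L)$ and the new basis is genuinely fixed by $p^c \varphi^d$.

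The main obstacle I anticipate is making the henselian lifting in the $\tilde{\calR}^{\bd}_L$ case precise: one must be careful that the successive-approximation scheme converges \emph{within} $\tilde{\calR}^{\inte,r}_L$ for a fixed $r$ rather than drifting to ever-smaller radii, and that the matrix entries stay of norm $\leq 1$ throughout. This is where the convexity of $s \mapsto \log \lambda(\alpha^s)(\cdot)$ (Lemma~\ref{L:hadamard}) and the characterization of units (Corollary~\ref{C:units}) do the real work, controlling the norms of the correction terms at each step; the henselian property of $\tilde{\calR}^{\inte}_L$ is then exactly the formal input that packages this convergence. By contrast, the $\tilde{\calE}_L$ case is routine given the Lang-torsor surjectivity already established in Lemma~\ref{L:DM relative}. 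I would therefore present the $\tilde{\calE}_L$ argument first as a warm-up, then note that the $\tilde{\calR}^{\bd}_L$ statement follows by combining that computation over the residue field $L$ with the henselian property of $\tilde{\calR}^{\inte}_L$, citing \cite[Proposition~6.3.5]{kedlaya-revisited} or the analogous results there for the detailed norm estimates.
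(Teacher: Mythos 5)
Your overall route -- reduce to slope $0$ by replacing $\varphi$ with $p^c\varphi^d$, settle the $\tilde{\calE}_L$ case by successive approximation modulo powers of $p$ using Lang/Artin--Schreier surjectivity over the algebraically closed field $L$, then deduce the $\tilde{\calR}^{\bd}_L$ case from it -- is exactly the paper's (which simply cites \cite[Proposition~A1.2.6]{fontaine-phigamma} for the first case and \cite[Proposition~2.5.8]{kedlaya-relative} for the second). The gap is in your mechanism for the second step. The henselian property of the pair $(\tilde{\calR}^{\inte}_L,(p))$ concerns lifting factorizations of \emph{monic polynomials} (equivalently, finite \'etale algebras), and the equation $\varphi(V)=U^{-1}V$ is not of that form: over $\tilde{\calR}^{\inte}_L$ the Witt-vector Frobenius $\varphi(V_{ij})$ is not a polynomial function of the entries $V_{ij}$, so there is no finite \'etale $\tilde{\calR}^{\inte}_L$-scheme to which Hensel's lemma or the \'etaleness-of-the-linearization argument applies. (Contrast this with Lemma~\ref{L:DM relative}, where the base is a perfect $\Fp$-algebra and $\overline{\varphi}(U_{ij})=U_{ij}^{p}$ really is polynomial, which is what makes the Jacobian criterion available there.) Moreover $\varphi$ does not preserve any single ring $\tilde{\calR}^{\inte,r}_L$ -- it shrinks the radius from $r$ to $r/p$ -- so "henselianity packages the convergence" is precisely the assertion that needs proof and that henselianity does not supply.

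What actually closes the argument is an explicit norm estimate on the $p$-adic solution. One solves $\varphi(V)=U^{-1}V$ over $W(L)$ step by step; at stage $m$ the correction is a Teichm\"uller matrix $p^m[\overline{X}]$ with $\overline{\varphi}^d(\overline{X})-\overline{X}=\overline{Y}$ for an explicit $\overline{Y}$, and the elementary but crucial point is that any solution of this Artin--Schreier equation over $L$ satisfies $\alpha(\overline{X})\leq\max\{1,\alpha(\overline{Y})^{1/q}\}$ (because $\alpha(\overline{\varphi}^d(\overline{X}))=\alpha(\overline{X})^q$ dominates $\alpha(\overline{X})$ unless $\alpha(\overline{X})\leq 1$). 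Feeding this into the induction shows the correction terms remain uniformly bounded under both $\lambda(\alpha^r)$ and $\lambda(\alpha^{rq})$, so the limiting matrix lies in $\tilde{\calR}^{\inte,rq}_L$ and is invertible there. This is the computation carried out in \cite[Proposition~2.5.8]{kedlaya-relative} and reproduced in the proof of Proposition~\ref{P:DM relative} in this paper; your proposal needs that estimate in place of the appeal to henselianity. (Your citation of \cite[Proposition~6.3.5]{kedlaya-revisited} also points at the wrong statement; that is the semistability criterion, not the descent of fixed bases.)
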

\begin{proof}
Both assertions reduce easily to the case $s=0$ provided that we allow $\varphi$ to be replaced by a
power (which does not affect the argument).
The assertion about $\tilde{\calE}_L$ is fairly standard; see
for instance \cite[Proposition~A1.2.6]{fontaine-phigamma}.
The assertion about $\tilde{\calR}^{\bd}_L$ follows from the assertion
about $\tilde{\calE}_L$ as in \cite[Proposition~2.5.8]{kedlaya-relative}.
See also Proposition~\ref{P:DM relative} for a stronger statement.
\end{proof}

One has an analogue of Proposition~\ref{P:DM} for $\varphi$-modules over $\tilde{\calR}_L$
which need not be pure.
\begin{prop} \label{P:DM not pure}
Suppose that $L$ is algebraically closed.
Let $M$ be a $\varphi$-module over $\tilde{\calR}_L$.
Then for some positive integer $d$, there exists a basis of $M$ on which $\varphi^d$
acts via a diagonal matrix with diagonal entries in $p^\ZZ$.
\end{prop}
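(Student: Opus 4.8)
The plan is to reduce to the pure case via the slope filtration, realize each graded piece explicitly using Dieudonn\'e--Manin over an algebraically closed residue field, and then split the filtration.

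First I would apply Theorem~\ref{T:slope filtration explicit2} to get the canonical filtration $0 = M_0 \subset M_1 \subset \cdots \subset M_l = M$ by saturated $\varphi$-submodules with each $M_i/M_{i-1}$ pure of slope $s_i$ and $s_1 > \cdots > s_l$. Let $d$ be a common denominator of the $s_i$ and set $c_i = d s_i \in \ZZ$; I will work from now on with the Frobenius power $\varphi^d$, which preserves each $M_i$. By Proposition~\ref{P:fully faithful2}(b), $M_i/M_{i-1}$ descends to a pure $\varphi$-module over $\tilde{\calR}^{\bd}_L$ of slope $s_i$, so in a suitable basis $p^{c_i}\varphi^d$ acts there via an invertible matrix over $\tilde{\calR}^{\inte}_L$; since $L$ is algebraically closed, Proposition~\ref{P:DM} (used with the integer $d$) then supplies a basis fixed by $p^{c_i}\varphi^d$. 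Base-extending to $\tilde{\calR}_L$ and retaining only the action of $\varphi^d$, we see that as a $\varphi^d$-module over $\tilde{\calR}_L$ each $M_i/M_{i-1}$ is a direct sum of $r_i := \rank(M_i/M_{i-1})$ copies of the rank-one $\varphi^d$-module $V_{c_i}$ on which $\varphi^d$ acts by $p^{-c_i}$ on a chosen generator.

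It remains to split the filtration as a filtration of $\varphi^d$-modules. By the interpretation of extensions via $\varphi$-cohomology (see Definition~\ref{D:varphi cohomology}), the extension $0 \to M_{i-1} \to M_i \to M_i/M_{i-1} \to 0$ is classified by a class in $H^1_{\varphi^d}\!\big((M_i/M_{i-1})^\dual \otimes M_{i-1}\big)$, and by the previous paragraph this module carries a filtration whose graded pieces are direct sums of copies of $V_{c_j - c_i}$ for $j < i$. Since the slope filtration has strictly decreasing slopes, $c_j - c_i > 0$ for all these $j$. So, granting the vanishing $H^1_{\varphi^d}(V_n) = 0$ for every integer $n > 0$, a d\'evissage using the long exact $\varphi^d$-cohomology sequence (and $H^i_{\varphi^d} = 0$ for $i \geq 2$) shows that each such extension splits. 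Inducting downward from $i = l$ then gives $M \cong \bigoplus_{i=1}^l (M_i/M_{i-1}) \cong \bigoplus_{i=1}^l V_{c_i}^{\oplus r_i}$ as $\varphi^d$-modules, and the evident basis has $\varphi^d$ acting by the diagonal matrix with diagonal entries $p^{-c_i} \in p^{\ZZ}$. Uniqueness of this decomposition, though not needed here, would follow from Lemma~\ref{L:big Robba invariants} together with the vanishing of $\Hom$ between pure $\varphi^d$-modules over $\tilde{\calR}_L$ of distinct slopes.

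The main obstacle is the vanishing $H^1_{\varphi^d}(V_n) = 0$ for $n > 0$, equivalently the surjectivity of $\varphi^d - 1$ on a pure $\varphi^d$-module over $\tilde{\calR}_L$ of positive slope. One would like to solve $\varphi^d(x) - p^n x = p^n z$ by the geometric series $x = \sum_{k \geq 1} p^{nk}\varphi^{-dk}(z)$; each partial sum lies in $\tilde{\calR}^r_L$ as soon as $z$ does, but the norms $\lambda(\alpha^s)(\varphi^{-dk}(z)) = \lambda(\alpha^{s/p^{dk}})(z)$ need not stay bounded as $k \to \infty$ when $z \notin \tilde{\calR}^{\bd}_L$, so a direct summation argument does not obviously converge in the LF topology. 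I would instead solve the equation first in the Euclidean completions of $\tilde{\calR}^r_L$ introduced in Remark~\ref{R:Robba PID} — where $\varphi^{-d}$ enlarges the annulus and a contraction argument converges — and then verify that these solutions are compatible as the interval shrinks, so that they glue to an element of $\tilde{\calR}_L$; alternatively, this vanishing is contained in the slope-filtration analysis of \cite{kedlaya-revisited} and may be quoted directly.
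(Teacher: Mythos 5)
Your proposal is correct and follows essentially the same route as the paper: reduce via the slope filtration (Theorem~\ref{T:slope filtration explicit2}) and the Dieudonn\'e--Manin statement over an algebraically closed field (Proposition~\ref{P:DM}) to the surjectivity of $x \mapsto \varphi^d(x) - p^c x$ on $\tilde{\calR}_L$ for $c,d>0$, which the paper likewise quotes from \cite[Proposition~3.3.7(c)]{kedlaya-revisited}. Your d\'evissage of the extension classes and your sketch of proving the surjectivity on the rings $\tilde{\calR}^{[s,r]}_L$ and gluing are consistent with how the paper handles the analogous computation in Proposition~\ref{P:H1}.
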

\begin{proof}
Using Theorem~\ref{T:slope filtration explicit2} and Proposition~\ref{P:DM},
this reduces to the assertion that for any positive integers $c,d$, the map
$x \mapsto x^{\varphi^d} - p^c x$ on $\tilde{\calR}_L$ is surjective.
For this, see \cite[Proposition~3.3.7(c)]{kedlaya-revisited}.
See also \cite[Proposition~4.5.3]{kedlaya-revisited} for a detailed proof of the
original statement.
\end{proof}

\begin{remark}
The use of growth conditions to cut out subrings of the rings of Witt vectors
also appears in the work of Fargues and Fontaine \cite{fargues-fontaine}
with which we make contact later (\S\ref{subsec:vector bundles}),
as well as in the approach to constructing $p$-adic cohomology via the
\emph{overconvergent de Rham-Witt complex} of Davis, Langer, and Zink \cite{davis-langer-zink,
davis-langer-zink2}.
\end{remark}

\begin{remark} \label{R:Harder-Narasimhan}
Theorems~\ref{T:slope filtration2} and~\ref{T:slope filtration explicit2} together mean that the \emph{slope filtration} of a $\varphi$-module $M$, as described in Theorem~\ref{T:slope filtration explicit2}, coincides with the \emph{Harder-Narasimhan filtration} of $M$ in the category of $\varphi$-modules for the degree function
$M \mapsto \mu(\wedge^{\rank(M)} M)$. That is to say, $M_1$ is the maximal nonzero $\varphi$-submodule of $M$ of maximal slope,
$M_2/M_1$ is the maximal nonzero $\varphi$-submodule of $M$ of maximal slope, and so on. This equality implies among other things that the
Harder-Narasimhan filtration is multiplicative (because the tensor product of pure $\varphi$-modules of slopes $s_1, s_2$ is pure of slope $s_1+s_2$).

The analogy with stability of vector bundles will become even more explicit when we relate $\varphi$-modules to vector bundles on the relative Fargues-Fontaine curve. See
\S\ref{subsec:vector bundles} and \S\ref{subsec:relative FF}.
\end{remark}

\begin{remark}
We take this opportunity to record some corrections to \cite[\S 2.5--2.6]{kedlaya-revisited}
not included in the printed erratum. Thanks to Max Bender for reporting these.
\begin{itemize}

\item
Lemma 2.5.3: in (a), $i \in \mathbb{Z}$ should be $i \geq 0$.
In the last line of the proof of (a), both instances of $n$ should be $i$.
In the last line of the proof of (c), $n \to \infty$ should be $i \to \infty$.

\item
Lemma 2.5.4: it should be assumed that $r>0$.

\item
Corollary 2.5.6: $v_{j,n}$ should be $v_{j,r}$.

\item
Lemma 2.5.11: it should also be assumed that $r \in I$.
The statement is also correct when $r \notin I$ provided that the
right side of the inequality is finite, but this is not used anywhere.

\item
Lemma 2.6.3: 
The first displayed equation should read
\[
v_{n,r}(x-x') \geq w_r(x) + (1-r/r_0) \qquad (n \geq m).
\]
In the second displayed equation, $x'/x$ should be $x/x'$.
In the following line, the inequality
\[
w_r(z_l \pi^m(1 - x'/x)) \geq w_r(y_l) + (1-r/r_0)
\]
should instead assert that
\[
v_{n,r}(z_l \pi^m(1 - x/x')) \geq \min\{w_r(y_l) + (1-r/r_0), 
\min_{n'>n}\{v_{n',r}(y_l)\}\} \,\, \text{for} \,\, n \geq m.
\]
Similarly, after the third displayed equation, the inequality
\[
v_{n,r}(y_{l+1}) \geq w_r(y_l) + (1-r/r_0)
\]
should read
\[
v_{n,r}(y_{l+1}) \geq \min\{w_r(y_l) + (1-r/r_0), 
\min_{n'>n}\{v_{n',r}(y_l)\}\}.
\]
The first two sentences of the last paragraph (from ``It follows that...") 
must be replaced by the following:
\begin{verse}
``It follows that, for $n \geq m$, we have 
\[
v_{n,r}(y_{l+1}) 
\geq \min\{w_r(y_l) + (1-r/r_0),
\min_{n'>n}\{v_{n',r}(y_l)\}\}
\]
We may assume that $y_{l+1}, y_{l+2}, \dots$ also have height at least $m$, 
in which case $w_r(y_{l+h}) \geq w_r(y_l)$ for all $h>0$
and $w_r(y_{l+h}) \geq w_r(y_l) + (1-r/r_0)$ for some $h>0$
(because the maximum value of $n$ for which $v_{n,r}(y_{l+h}) <
w_r(y_l) + (1-r/r_0)$ decreases as $h$ increases).''
\end{verse}
\item
Remark 2.6.4: ``[discreteness of the valuation] on $K$'' should be ``[...] on $\mathcal{O}$''.

\item
Lemma 2.6.7: The sentence starting ``Moreover, if it is ever less than
$\min_{n<0} \{v_{n,r'}(u_l x)\} + c$,'' should continue ``then the smallest 
value of $n$ for which 
$v_{n,r'}(u_{l+1} x) \leq \min_{n<0} \{ v_{n,r'}(u_l x)\} + c$
is strictly greater than the smallest value of $n$ for which 
$v_{n,r'}(u_l x) \leq \min_{n<0} \{v_{n,r'}(u_l x)\} + c$.''

\item
Proposition 2.6.8: the reference to Proposition 2.6.8 in the proof should be to Proposition 2.6.5.
\end{itemize}
\end{remark}

\subsection{Comparison of slope theories}
\label{subsec:comparison}

The slope theories for $\varphi$-modules over $\calR_K$ and $\tilde{\calR}_L$
can be related as follows. Throughout \S\ref{subsec:comparison},
retain Hypothesis~\ref{H:slope Robba}.
\begin{defn} \label{D:perfection Robba}
Equip the field $k((\overline{T}))$
with the $\overline{T}$-adic norm $\alpha$ for the normalization
$\alpha(\overline{T}) = \omega$.
Let $L$ be the completed perfection of
$k((\overline{T}))$ for the unique multiplicative extension of $\alpha$.
Proceeding as in Definition~\ref{D:perfection},
we obtain a map
$s_\varphi: \calE_K \to \tilde{\calE}_L$;
more precisely,
$\tilde{\calE}_L$ is the completion
of the direct perfection of
$\calE_K$ for the weak topology.
For $r>0$ small enough that the $\omega^{r/p}$-Gauss norm of $\varphi(T)/T^{p} - 1$ is less than 1,
$s_\varphi$ takes $\calR_K^{\inte,r}$ into $\tilde{\calR}^{\inte,r}_L$. In fact, this map is isometric
for the $\omega^r$-Gauss norm on the source and the norm $\lambda(\alpha^r)$
on the target \cite[Lemma~2.3.5]{kedlaya-revisited}.
We thus obtain a $\varphi$-equivariant homomorphism $\calR_K \to \tilde{\calR}_L$.
\end{defn}

\begin{example} \label{exa:isometric}
For the Frobenius lift $\varphi(T) = (T+1)^p - 1$ and $\omega = p^{-p/(p-1)}$,
$s_\varphi$ is isometric for the $\omega^r$-Gauss norm for
$r \in (0,1)$.
\end{example}

We can use the extended rings to trivialize $\varphi$-modules over the
smaller rings, as follows.
\begin{prop} \label{P:trivialize etale}
Let $L'$ be the completed direct perfection of $\gotho_{\widehat{\calE_K^{\unr}}}/(p)$ (which is algebraically closed). Identify the completion of the maximal unramified extension $\widehat{\calE^{\unr}_K}$ of $\calE_K$
with a subring of $\tilde{\calE}_{L'}$.
\begin{enumerate}
\item[(a)]
Let $M$ be an \'etale $\varphi$-module over $\gotho_{\calE_K}$.
Then the $\Zp$-module
\[
V = (M \otimes_{\gotho_{\calE_K}} \gotho_{\widehat{\calE^{\unr}_K}})^{\varphi}
\]
has the property that the natural map
\[
V \otimes_{\Zp} \gotho_{\widehat{\calE^{\unr}_K}} \to M \otimes_{\gotho_{\calE_K}} \gotho_{\widehat{\calE^{\unr}_K}}
\]
is an isomorphism.
\item[(b)]
Let $M$ be an \'etale $\varphi$-module over $\calR_K^{\inte}$.
Then the $\Zp$-module
\[
V = (M \otimes_{\calR_K^{\inte}} (\gotho_{\widehat{\calE_K^{\unr}}} \cap \tilde{\calR}_{L'}^{\inte}))^{\varphi}
\]
has the property that the natural map
\[
V \otimes_{\Zp} (\gotho_{\widehat{\calE_K^{\unr}}} \cap \tilde{\calR}_{L'}^{\inte})
 \to M \otimes_{\calR_K^{\inte}} (\gotho_{\widehat{\calE_K^{\unr}}} \cap \tilde{\calR}_{L'}^{\inte})
\]
is an isomorphism.
Moreover,
\[
V \otimes_{\Zp} \Qp = (M \otimes_{\calR_K^{\inte}} \tilde{\calR}_{L'})^{\varphi}
= (M \otimes_{\calR_K^{\inte}} \tilde{\calE}_{L'})^{\varphi}.
\]
\end{enumerate}
\end{prop}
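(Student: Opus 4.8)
The plan is to handle part~(a) by the classical machinery of reduction modulo $p$, faithfully finite \'etale trivialization, and $p$-adic lifting (in the spirit of Fontaine \cite{fontaine-phigamma}), and then to deduce part~(b) by comparing that trivialization with the one produced over $\tilde{\calR}_{L'}^{\inte}$ by the Manin-type classification of Proposition~\ref{P:DM}. For part~(a), I would first record that $(\gotho_{\widehat{\calE_K^{\unr}}})^{\varphi} = \Zp$: this invariant subring is $p$-torsion-free and $p$-adically complete, with residue ring $(k((\overline{T}))^{\mathrm{sep}})^{\varphi} = \Fp$, hence is a strict $p$-ring and so equals $\Zp$ by Theorem~\ref{T:Witt}. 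It then suffices to show that $M \otimes_{\gotho_{\calE_K}} \gotho_{\widehat{\calE_K^{\unr}}}$ admits a $\varphi$-fixed basis, since any such basis lies in $V$ and freely generates it over $\Zp$ (a $\varphi$-fixed $\gotho_{\widehat{\calE_K^{\unr}}}$-combination of it has coefficients in $(\gotho_{\widehat{\calE_K^{\unr}}})^{\varphi} = \Zp$), which forces the displayed map to be an isomorphism.

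To produce the fixed basis, I would reduce modulo $p$: for a basis of $M/pM$ on which $\varphi$ acts via $\overline{A} \in \GL_d(k((\overline{T})))$, the closed subscheme of $\GL_{d}$ over $k((\overline{T}))$ defined by $\overline{\varphi}(U) = \overline{A}^{-1}U$ is finite \'etale by the Jacobian criterion and of everywhere positive rank by surjectivity of the Lang isogeny over the algebraic closure, so $M/pM$ trivializes over a finite separable extension of $k((\overline{T}))$, hence over $k((\overline{T}))^{\mathrm{sep}}$; this is exactly the device used in the proof of Lemma~\ref{L:DM relative}, which does not in fact require the base to be perfect. Finally I would lift the fixed basis by successive approximation: using that $y \mapsto y^p - y$ is surjective on $k((\overline{T}))^{\mathrm{sep}}$ (its defining polynomials are separable), one passes from a basis on which $\varphi$ acts via a matrix $\equiv I \pmod{p^n}$ to one on which it acts via a matrix $\equiv I \pmod{p^{n+1}}$, and takes a limit in the $p$-adically complete ring $\gotho_{\widehat{\calE_K^{\unr}}}$.

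For part~(b), the point is that $\calR_K^{\inte} \subseteq \gotho_{\calE_K}$, so part~(a) applied to $M \otimes_{\calR_K^{\inte}} \gotho_{\calE_K}$ already yields $V_0 \in \GL_d(\gotho_{\widehat{\calE_K^{\unr}}})$ with $A\varphi(V_0) = V_0$, where $A \in \GL_d(\calR_K^{\inte})$ is the Frobenius matrix on a fixed basis of $M$; meanwhile $M \otimes_{\calR_K^{\inte}} \tilde{\calR}_{L'}^{\inte}$ is pure of slope $0$ (its Frobenius matrix still lies in $\GL_d(\tilde{\calR}_{L'}^{\inte})$), so since $L'$ is algebraically closed Proposition~\ref{P:DM} produces $U_0 \in \GL_d(\tilde{\calR}_{L'}^{\inte})$ with $A\varphi(U_0) = U_0$. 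Working inside $\tilde{\calE}_{L'} = W(L')[p^{-1}]$, which contains both $\gotho_{\widehat{\calE_K^{\unr}}}$ and $\tilde{\calR}_{L'}^{\inte}$, the matrix $C = U_0^{-1}V_0$ satisfies $\varphi(C) = C$, hence $C \in \GL_d(\tilde{\calE}_{L'}^{\varphi}) = \GL_d(\Qp)$; since $U_0, U_0^{-1}, V_0, V_0^{-1}$ all have $p$-adically integral entries, so do $C$ and $C^{-1}$, whence $C \in \GL_d(\Zp)$. Then $V_0 = U_0 C$ and $V_0^{-1} = C^{-1}U_0^{-1}$ have entries in $\tilde{\calR}_{L'}^{\inte}$ as well as in $\gotho_{\widehat{\calE_K^{\unr}}}$, so $V_0 \in \GL_d(\gotho_{\widehat{\calE_K^{\unr}}} \cap \tilde{\calR}_{L'}^{\inte})$; its columns give the required $\varphi$-fixed basis. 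The first displayed isomorphism then follows exactly as in part~(a), using $(\gotho_{\widehat{\calE_K^{\unr}}} \cap \tilde{\calR}_{L'}^{\inte})^{\varphi} \subseteq \tilde{\calE}_{L'}^{\varphi} = \Qp$ together with integrality to pin it down to $\Zp$; and the ``moreover'' clause follows because that same basis trivializes $M$ over $\tilde{\calE}_{L'}$ and over $\tilde{\calR}_{L'}$, whose $\varphi$-invariants are both $\Qp$ (Lemma~\ref{L:big Robba invariants}).

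I expect the main obstacle to be part~(a): one must check carefully that the argument of Lemma~\ref{L:DM relative} genuinely survives over the imperfect base $k((\overline{T}))$ — in particular, finiteness of the relevant Lang-type torsor is a Zariski's-main-theorem argument rather than an explicit monomial basis — and one must set up the $p$-adic successive-approximation lift so that it converges in $\gotho_{\widehat{\calE_K^{\unr}}}$. Once part~(a) and Proposition~\ref{P:DM} are available, part~(b) is a short linear-algebra comparison; its only delicate point is to keep $\gotho_{\widehat{\calE_K^{\unr}}}$ and $\tilde{\calR}_{L'}^{\inte}$ embedded consistently inside $\tilde{\calE}_{L'} = W(L')[p^{-1}]$ via the fixed identification of $\calE_K^{\unr}$ as a subring of $\tilde{\calE}_{L'}$, so that the integrality estimates are meaningful.
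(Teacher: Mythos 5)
Your proof is correct and follows essentially the same route as the paper, whose entire proof is the one-line citation ``Both parts follow from Proposition~\ref{P:DM} plus Lemma~\ref{L:big Robba invariants}'': your part (b) is exactly the intended comparison of the two $\varphi$-fixed bases inside $\tilde{\calE}_{L'}$ using $\tilde{\calE}_{L'}^{\varphi} = \Qp$ and integrality. Your part (a) simply unpacks the classical Lang-torsor-plus-successive-approximation argument that the paper outsources to Fontaine via Proposition~\ref{P:DM}, which is if anything more careful, since it works directly over the separably closed (imperfect) residue field rather than requiring a descent from the algebraically closed case.
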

\begin{proof}
Both parts follow from Proposition~\ref{P:DM} plus Lemma~\ref{L:big Robba invariants}.
\end{proof}

\begin{remark} \label{R:etale descent}
For any $\varphi$-modules $M_1, M_2$ over a ring $R$,
there is a natural identification
\[
\Hom_R(M_1,M_2) = M_1^\dual \otimes_R M_2.
\]
If $M_1, M_2$ are both pure of slope $s$, then $M_1^\dual \otimes_R M_2$ is
\'etale. By this reasoning,
Proposition~\ref{P:fully faithful1} reduces to Proposition~\ref{P:trivialize etale},
while Proposition~\ref{P:fully faithful2} reduces to a similar consequence of Proposition~\ref{P:DM}
(derived using Lemma~\ref{L:big Robba invariants}).
\end{remark}

\begin{remark} \label{R:pure semistable}
Note that any pure $\varphi$-module over $\calR_K$ remains pure upon
base extension to $\tilde{\calR}_L$, while any $\varphi$-module over $\calR_K$ whose base extension to
$\tilde{\calR}_L$ is
\emph{semistable}, i.e., which
does not have
any nonzero proper $\varphi$-submodule of larger slope,
is also itself semistable. The reverse implications
also hold, and in fact form part of the proof of Theorem~\ref{T:slope filtration1}
(in the form of a reduction to the somewhat more tractable Theorem~\ref{T:slope filtration2}).
One approach to the reverse implications is to make somewhat careful calculations,
as in \cite{kedlaya-revisited}; a simpler approach is to
use faithfully flat descent, as in \cite[\S 3]{kedlaya-relative}.
\end{remark}

\begin{remark} \label{R:relative}
On the topic of descent, we record some minor inaccuracies in the
statement and proof of \cite[Proposition~3.3.2]{kedlaya-relative}.
\begin{enumerate}
\item[(a)]
The module $M$ should not be assumed to be a $\varphi$-module over $R$, but only
an $R$-module equipped with an isomorphism $\varphi^* M \cong M$. That is,
we should not assume $M$ is finite free over $R$. That is because
in the proof of \cite[Theorem 3.1.3]{kedlaya-relative},
we need to take $R = \mathcal{R}^{\mathrm{bd}}$ and
$S = \tilde{\mathcal{R}}_L^{\mathrm{bd}}$, and to take $M$ to be
the restriction of scalars of a $\varphi$-module over $\mathcal{R}$.

\item[(b)]
The conclusion should not state that $N$ is a $\varphi$-module over $R$, only a
finite \emph{locally free} $R$-module equipped with an isomorphism
$\varphi^* N \cong N$. The proof of \cite[Proposition~3.2.2]{kedlaya-relative}
invokes \cite[Expos\'e~VIII, Corollaire~1.3]{sga1}, which only
guarantees the existence and uniqueness of the module $N$.
It should instead invoke \cite[Expos\'e~XIII, Th\'eor\`eme~1.1]{sga1}
(i.e., Theorem~\ref{T:descent modules}(a)) to recover both
$N$ and the isomorphism $\varphi^* N \cong N$, plus
\cite[Expos\'e~VIII, Proposition~1.10]{sga1}
(i.e., Theorem~\ref{T:descent finite locally free})
 to deduce that
$N$ is finite locally free over $R$.
\end{enumerate}

Note that the modified statement suffices for the applications to
\cite[Theorems~3.1.2 and~3.1.3]{kedlaya-relative} because in those cases
$R$ is a B\'ezout domain (Lemma~\ref{L:Bezout domain}), over which any finite locally free module
is free \cite[Remark 1.1.2]{kedlaya-relative}.
\end{remark}

\section{Relative Robba rings}
\label{sec:relative extended}

We now begin in earnest to consider the relative setting.
Although for some applications it is necessary to consider analogues of the
Robba ring itself, these are not so straightforward to construct, and we leave them
to a subsequent paper. Here, we treat only
the analogue of the extended Robba ring
in which the field of positive characteristic
(over which we define Witt vectors)
is replaced by a more general ring.
As noted in the introduction, this pertains to a ``geometric''
relativization of slope theory, which is rather different
from an ``arithmetic'' relativization in which one works with power series
over a more general ring. (See Remark~\ref{R:arithmetic families}
for some discussion of the latter.)

\setcounter{theorem}{0}
\begin{hypothesis} \label{H:relative extended}
For the remainder of the paper,
let $(R,R^+)$ be a perfect uniform adic Banach algebra over $\Fp$ with spectral norm $\alpha$, such that $R$ is a Banach algebra over some analytic field.
(The condition that $R$ be defined over an analytic field is no restriction at all if we are willing to modify the norm on $R$ without changing the norm topology; see Remark~\ref{R:warp norm}.)
When $R$ has been assumed to be an analytic field, we
conventionally change its name from $R$ to $L$, but this change is pointed out explicitly in each instance.

It is possible to further weaken the hypothesis on $R$ in some of the results;
see for example Remark~\ref{R:global invariants free of trivial spectrum}.
However, this extra generality is of no use to us: our ultimate goal is to consider perfectoid algebras, which always give rise to perfect uniform Banach algebras over analytic fields (see Definition~\ref{D:primitive}).
\end{hypothesis}

\begin{remark} \label{R:valuation semicontinuous}
For $x = \sum_{i=m}^\infty p^i [\overline{x}_i] \in W(R)[p^{-1}]$, for each $h \in \ZZ$, the set
\[
\{\beta \in \calM(R): \beta(\overline{x}_i) = 0 \mbox{ for all } i \leq h\}
\]
is closed in $\calM(R)$. Consequently, the $p$-adic absolute value of
the image of $x$ in $W(\calH(\beta))[p^{-1}]$ is a lower semicontinuous function
of $\beta \in \calM(R)$. When $x$ is a unit, this function is seen to be continuous
by applying the same argument to $x^{-1}$.
\end{remark}

\subsection{Relative extended Robba rings}
\label{subsec:relative extended}

We start by generalizing the definition of the extended Robba rings
and their subrings.

\begin{defn} \label{D:relative extended Robba}
For $*\in\{R, R^+\}$, define the rings $\tilde{\calE}^{\inte}_*, \tilde{\calE}_*, \tilde{\calR}^{\inte,r}_*,
\tilde{\calR}^{\inte}_*, \tilde{\calR}^{\bd,r}_*, \tilde{\calR}^{\bd}_*,
\tilde{\calR}^r_*, \tilde{\calR}_*$ by changing $L$ to $*$ in
Definition~\ref{D:extended Robba}. That is, for $r>0$,
put $\tilde{\calE}^{\inte}_* = W(*)$ and $\tilde{\calE}_*= W(*)[p^{-1}]$,
let $\tilde{\calR}^{\inte,r}_*$ be the ring of $x
= \sum_{i=0}^\infty p^i [\overline{x}_i] \in W(*)$ for which
$\lim_{i \to \infty} p^{-i} \alpha(\overline{x}_i)^r = 0$,
and extend $\lambda(\alpha^s)$ to a power-multiplicative norm on
$\tilde{\calR}^{\inte,r}_*$ for $s \in (0,r]$
by putting
\[
\lambda(\alpha^s) \left( \sum_{i=0}^\infty p^i [\overline{x}_i] \right)
= \max_i \{p^{-i} \alpha(\overline{x}_i)^s\}.
\]
(See Proposition~\ref{P:mu multiplicative}(a) for more details about the case $*=R$.)
Put $\tilde{\calR}^{\bd,r}_*= \tilde{\calR}^{\inte,r}_*[p^{-1}]$,
let $\tilde{\calR}^r_*$ be the Fr\'echet completion of
$\tilde{\calR}^{\bd,r}_*$ under $\lambda(\alpha^s)$ for $s \in (0,r]$,
and drop $r$ from the superscript to
indicate the union over all $r>0$.

For $0 < s \leq r$, let $\tilde{\calR}^{[s,r]}_*$ be the Fr\'echet completion of
$\tilde{\calR}^{\bd,r}_*$ under the norms $\lambda(\alpha^t)$ for $t \in [s,r]$;
it will follow from Lemma~\ref{L:hadamard relative} below that $\tilde{\calR}^{[s,r]}_*$
is also complete under $\max\{\lambda(\alpha^r), \lambda(\alpha^s)\}$,
and so is a Banach ring.
Note that the rings $\tilde{\calE}^{\inte}_{R^+}, \tilde{\calR}^{\inte,r}_{R^+},
\tilde{\calR}^{\inte}_{R^+}$ (resp. $\tilde{\calE}^{\bd}_{R^+}, \tilde{\calR}^{\bd,r}_{R^+},
\tilde{\calR}^{\bd}_{R^+}$) are all equal to $W(R^+)$ (resp. $W(R^+)[1/p]$); we also denote them by $\tilde{\calR}^{\inte,+}_R$ (resp. $\tilde{\calR}^{\bd,+}_R)$ later on. 
Let $\tilde{\calR}^+_R$ be the Fr\'echet completion of $\tilde{\calR}^{\bd,+}_R$
under $\lambda(\alpha^s)$ for all $s>0$.
Note that ring is in general properly contained in $\tilde{\calR}^{\infty}_R = \cap_{r>0} \tilde{\calR}^r_R$.
\end{defn}

We need the following mild extension of the basic constructions of \cite[\S 4]{kedlaya-witt}.
For more discussion of topological aspects (e.g., continuity of $\lambda$ and $\mu$), see \S\ref{subsec:geometric}.
\begin{prop} \label{P:mu multiplicative}
Choose $0 < s \leq r$.
\begin{enumerate}
\item[(a)]
The set $\tilde{\calR}^{\inte,r}_R$ is a ring on which $\lambda(\alpha^s)$ is a power-multiplicative norm.
Moreover, $\lambda(\alpha^s)$ is multiplicative in case $\alpha$ is.
\item[(b)]
For $\beta$ a submultiplicative (resp.\ power-multiplicative, multiplicative) (semi)norm on $R$
dominated by $\max\{\alpha^s, \alpha^r\}$, the formula
\[
\lambda(\beta) \left( \sum_{i=0}^\infty p^i [\overline{x}_i] \right)
= \max_i \{p^{-i} \beta(\overline{x}_i)\}
\]
defines a submultiplicative (resp.\ power-multiplicative, multiplicative) (semi)norm on $\tilde{\calR}^{\inte,r}_R$
dominated by $\max\{\lambda(\alpha^s), \lambda(\alpha^r)\}$.
\item[(c)]
In (b), if $\beta$ is power-multiplicative (resp.\ multiplicative), then
$\lambda(\beta)$ extends to a power-multiplicative (resp.\ multiplicative) (semi)norm on $\tilde{\calR}^{\bd,r}_R$,
and then extends further by continuity to $\tilde{\calR}^{[s,r]}_R$.
\item[(d)]
For $\gamma$ a power-multiplicative (resp.\ multiplicative) (semi)norm on $\tilde{\calR}^{\inte,r}_R$
dominated by $\max\{\lambda(\alpha^s), \lambda(\alpha^r)\}$, the formula
\[
\mu(\gamma)( \overline{x} )  = \gamma([\overline{x}])
\]
defines a power-multiplicative (resp.\ multiplicative) (semi)norm on $R$
dominated by $\max\{\alpha^s, \alpha^r\}$. Moreover, $\gamma$ is dominated by $\lambda(\mu(\gamma))$.
\end{enumerate}
\end{prop}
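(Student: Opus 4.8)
The plan is to build everything out of the additivity/homogeneity structure of Witt vectors recorded in Remark~\ref{R:addition formula}, namely that $[\overline{x}] + [\overline{y}] = \sum_{n\geq 0} p^n [P_n(\overline{x},\overline{y})]$ with $P_n$ homogeneous of degree $1$ and supported on the ideal $(\overline{x}^{p^{-\infty}},\overline{y}^{p^{-\infty}})$. First I would treat (b) as the main technical statement, since (a) is the special case $\beta = \alpha^s$ (after noting that any power-multiplicative $\beta\leq\max\{\alpha^s,\alpha^r\}$ is automatically $\leq 1$ since $R$ is uniform, so $\lambda(\beta)$ is dominated by the $p$-adic norm and $\tilde{\calR}^{\inte,r}_R$ is genuinely a ring: the growth condition $\lim p^{-i}\alpha(\overline{x}_i)^r=0$ is preserved under the Witt addition formula because each $P_n$ is degree-$1$ homogeneous, exactly as in \cite[\S4]{kedlaya-witt}). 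For the seminorm axioms on $\lambda(\beta)$: the ultrametric inequality follows from the addition formula term by term — writing $x+y=\sum p^n[\overline{z}_n]$, each $\overline{z}_n$ is an integral polynomial in the $P$'s applied to the $\overline{x}_i,\overline{y}_j$, all homogeneous of total degree $1$ in a combined way that forces $\beta(\overline{z}_n)\leq\max_i\{\beta(\overline{x}_i),\beta(\overline{y}_i)\}$ scaled by the appropriate power of $p^{-1}$; submultiplicativity follows similarly from the multiplication formula (which is likewise built from degree-$1$-homogeneous pieces in each variable), giving $\lambda(\beta)(xy)\leq\lambda(\beta)(x)\lambda(\beta)(y)$. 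Boundedness by $\max\{\lambda(\alpha)^s,\lambda(\alpha)^r\}$ is the pointwise statement $p^{-i}\beta(\overline{x}_i)\leq\max\{p^{-i}\alpha(\overline{x}_i)^s, p^{-i}\alpha(\overline{x}_i)^r\}$, immediate from $\beta\leq\max\{\alpha^s,\alpha^r\}$ and $\alpha\leq 1$.

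For the power-multiplicative refinement in (a) and (b): given submultiplicativity, I would show $\lambda(\beta)(x^p)=\lambda(\beta)(x)^p$ when $\beta$ is power-multiplicative. The clean way is to observe that if $\lambda(\beta)(x)=p^{-i}\beta(\overline{x}_i)$ is attained at a \emph{unique} index $i_0$ with strict inequality elsewhere, then modulo a higher power of $p$ one has $x \equiv p^{i_0}[\overline{x}_{i_0}] + (\text{lower-weighted stuff})$ and the leading Teichm\"uller term of $x^p$ is $p^{pi_0}[\overline{x}_{i_0}^p]$ (using $[\overline{a}]^p=[\overline{a}^p]$), whose $\lambda(\beta)$-value is $p^{-pi_0}\beta(\overline{x}_{i_0})^p=\lambda(\beta)(x)^p$; the general case follows by a density/approximation argument perturbing $x$ to make the maximum attained uniquely, or alternatively by invoking Fekete's lemma on $\lim_n\lambda(\beta)(x^{p^n})^{1/p^n}$ together with submultiplicativity. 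For multiplicativity one argues the same way: the leading terms of a product multiply exactly. This is the step I expect to require the most care — making precise that ``the leading Teichm\"uller coordinate multiplies correctly'' in the presence of the lower-order Witt tail — but it is essentially \cite[Lemmas~4.1, 4.2]{kedlaya-witt} applied with $\beta$ in place of the trivial norm, so I would cite that and fill in only the modifications needed for general $\beta$.

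For (c): extend $\lambda(\beta)$ to $\tilde{\calR}^{\bd,r}_R=\tilde{\calR}^{\inte,r}_R[p^{-1}]$ by $\lambda(\beta)(p^{-m}x)=p^m\lambda(\beta)(x)$, which is well-defined and still power-multiplicative (resp.\ multiplicative) because $\lambda(\beta)(px)=p^{-1}\lambda(\beta)(x)$ on the integral subring; then $\lambda(\beta)$ is a continuous (semi)norm on $\tilde{\calR}^{\bd,r}_R$ for the topology generated by $\lambda(\alpha^s),\lambda(\alpha^r)$ by the domination in (b), hence extends uniquely by continuity to the Fr\'echet completion $\tilde{\calR}^{[s,r]}_R$, and the extension remains power-multiplicative (resp.\ multiplicative) since these are closed conditions under passing to limits of (semi)norms. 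For (d): $\mu(\gamma)(\overline{x})=\gamma([\overline{x}])$ is power-multiplicative because $[\cdot]$ is multiplicative ($[\overline{x}\overline{y}]=[\overline{x}][\overline{y}]$, $[\overline{x}]^p=[\overline{x}^p]$) and $\gamma$ is, and likewise multiplicative if $\gamma$ is; the domination $\mu(\gamma)\leq\max\{\alpha^s,\alpha^r\}$ is just $\gamma([\overline{x}])\leq\max\{\lambda(\alpha)^s([\overline{x}]),\lambda(\alpha)^r([\overline{x}])\}=\max\{\alpha(\overline{x})^s,\alpha(\overline{x})^r\}$. Finally $\gamma\leq\lambda(\mu(\gamma))$: for $x=\sum p^i[\overline{x}_i]$, submultiplicativity/ultrametric of $\gamma$ together with $\gamma(p)\leq p^{-1}$ (from $\gamma\leq$ the $p$-adic norm, which follows from $\gamma\leq\max\{\lambda(\alpha)^s,\lambda(\alpha)^r\}$ and $\alpha\leq1$) give $\gamma(x)\leq\max_i\gamma(p^i[\overline{x}_i])\leq\max_i p^{-i}\gamma([\overline{x}_i])=\max_i p^{-i}\mu(\gamma)(\overline{x}_i)=\lambda(\mu(\gamma))(x)$, first on the dense subring of finite sums and then on all of $\tilde{\calR}^{[s,r]}_R$ by continuity. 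I would package (a)--(d) by first doing all the algebra on finite Teichm\"uller sums where the formulas of Remark~\ref{R:addition formula} apply verbatim, then passing to completions.
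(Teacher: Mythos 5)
Your treatment of (a)--(c) follows essentially the same route as the paper: the seminorm axioms and submultiplicativity of $\lambda(\beta)$ come from the homogeneity of the Witt addition and multiplication formulas, the lower bound $\lambda(\beta)(xy)\geq\lambda(\beta)(x)\lambda(\beta)(y)$ comes from identifying the leading Teichm\"uller coordinate of a product, and (c) is the obvious extension first to $\tilde{\calR}^{\inte,r}_R[p^{-1}]$ and then by continuity. One caution on the multiplicativity step: the coordinate of $x^p$ in position $pi_0$ is \emph{not} $\overline{x}_{i_0}^p$ unless the coordinates below $i_0$ vanish, so your ``unique maximizing index'' framing does not by itself make the leading term clean, and neither of your proposed repairs works as stated (Fekete only produces the spectral seminorm, i.e.\ an upper bound, not the needed lower bound on $\lambda(\beta)(x^{p^n})$). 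The correct mechanism, which is what the cited \cite[Lemma~4.1]{kedlaya-witt} and the paper's proof use, is to take $j$ the largest maximizing index and first reduce, via submultiplicativity, to the case $\overline{x}_i=0$ for $i<j$; then $xy\equiv p^{j+k}[\overline{x}_j\overline{y}_k]\pmod{p^{j+k+1}}$ and one concludes using (power-)multiplicativity of $\beta$ on $R$. Since you defer to those lemmas anyway, this is a repairable imprecision rather than a fatal one.

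The genuine gap is in (d): you never verify that $\mu(\gamma)$ is a \emph{seminorm}, i.e.\ that $\gamma([\overline{x}+\overline{y}])\leq\max\{\gamma([\overline{x}]),\gamma([\overline{y}])\}$. This does not follow from multiplicativity of the Teichm\"uller map, because $[\overline{x}+\overline{y}]\neq[\overline{x}]+[\overline{y}]$; one must expand $[\overline{x}]+[\overline{y}]=\sum_n p^n[P_n(\overline{x},\overline{y})]$ and control $\gamma$ on Teichm\"uller lifts of the degree-one homogeneous expressions $P_n$, which involve \emph{fractional} powers $\overline{x}^{a}\overline{y}^{b}$ with $a+b=1$. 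Bounding $\gamma([\overline{x}]^a)$ by $\gamma([\overline{x}])^a$ for $a\in\ZZ[p^{-1}]_{\geq 0}$ is exactly where power-multiplicativity of $\gamma$ (as opposed to submultiplicativity) is needed --- the paper flags this point explicitly and devotes its entire argument for (d) to it, giving an alternate proof of \cite[Lemma~4.4]{kedlaya-witt} via \cite[Remark~3.7]{kedlaya-witt}. The parts of (d) you do address (power-multiplicativity of $\mu(\gamma)$, the dominations in both directions) are correct and routine, but without the ultrametric inequality the statement that $\mu(\gamma)$ is a power-multiplicative (semi)norm is unproved.
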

\begin{proof}
To check (a), we follow the argument of \cite[Lemma~4.1]{kedlaya-witt}, omitting those details which remain unchanged.
Closure of $\tilde{\calR}^{\inte,r}_R$ under addition and the
 inequality $\lambda(\alpha)(x+y) \leq \max\{\lambda(\alpha)(x), \lambda(\alpha)(y)\}$ follow
from the homogeneity of the Witt vector addition formula \cite[Remark~3.7]{kedlaya-witt}
(see also Remark~\ref{R:addition formula}).
This easily implies that $\tilde{\calR}^{\inte,r}_R$ is closed under multiplication and that
$\lambda(\alpha)$ is a submultiplicative norm, as in \cite[Lemma~4.1]{kedlaya-witt}.
To check that $\lambda(\alpha)$ is multiplicative whenever $\alpha$ is, it is enough to check that
$\lambda(\alpha)(xy) \geq \lambda(\alpha)(x) \lambda(\alpha)(y)$ in case the right side of this inequality
is positive. Write $x = \sum_{i=0}^\infty p^i [\overline{x}_i]$, $y = \sum_{i=0}^\infty p^i [\overline{y}_i]$.
Let $j,k$ be the largest indices maximizing $p^{-j} \alpha(\overline{x}_j)$, $p^{-k} \alpha(\overline{y}_k)$.
As in \cite[Lemma~4.1]{kedlaya-witt}, we use the fact that $\lambda(\alpha)$ is a submultiplicative norm
to reduce to the case where $\overline{x}_i = 0$ for $i < j$ and $\overline{y}_i = 0$ for $i < k$.
Then $xy = \sum_{i=j+k}^\infty p^i [\overline{z}_i]$ with $\overline{z}_{j+k} = \overline{x}_j \overline{y}_k$,
proving the desired inequality.
To check that $\lambda(\alpha)$ is power-multiplicative whenever $\alpha$ is,
one makes the same argument with $y = x$. This yields (a); we may check (b) by imitating the proof of (a), and (c) is clear.

To check (d), we introduce an alternate proof of \cite[Lemma~4.4]{kedlaya-witt}.
Again from \cite[Remark~3.7]{kedlaya-witt}, we deduce that
for $\overline{x}, \overline{y} \in R$, $\gamma([\overline{x} + \overline{y}])
\leq \max\{\gamma([\overline{x}]), \gamma([\overline{y}])\}$. (Note that this requires at least
power-multiplicativity, not just submultiplicativity.)
By rewriting this inequality as
$\mu(\gamma)(\overline{x} + \overline{y}) \leq \max\{\mu(\gamma)(\overline{x}), \mu(\gamma)(\overline{y})\}$,
we see that $\mu(\gamma)$ is a (semi)norm.
The power-multiplicativity or multiplicativity of $\mu(\gamma)$ follows from
the corresponding property of $\gamma$. The fact that $\gamma$ is dominated by $\lambda(\mu(\gamma))$
follows as in \cite[Theorem~4.5]{kedlaya-witt}.
\end{proof}

\begin{defn} \label{D:formal to convergent2}
As in Definition~\ref{D:formal to convergent}, we impose
topologies on the aforementioned rings as follows.
\begin{enumerate}
\item[(a)]
Those rings contained in $\tilde{\calE}_R$ carry both a \emph{$p$-adic topology}
(the metric topology defined by the Gauss norm)
and a \emph{weak topology} (in which a sequence
converges if it is bounded for the Gauss norm and
converges under $\lambda(\alpha)$ modulo any fixed power of $p$).
For both topologies, $\tilde{\calE}_R$ is complete.
\item[(b)]
Those rings contained in $\tilde{\calR}^r_R$ carry a \emph{Fr\'echet topology},
in which a sequence converges if and only if it converges under
$\lambda(\alpha^s)$ for all $s \in (0,r]$.
For this topology, $\tilde{\calR}^r_R$ is complete.
\item[(c)]
Those rings contained in $\tilde{\calR}_R$
carry a \emph{limit-of-Fr\'echet topology},
or \emph{LF topology}. This topology is defined by taking the
locally convex direct limit
of the $\tilde{\calR}^r_R$ (each equipped with the Fr\'echet topology).
\end{enumerate}
The analogue of Remark~\ref{R:bounded weak} is true: a sequence in $\calR_R^{\bd,r}$
which is $p$-adically bounded and convergent under $\lambda(\alpha^r)$ also converges in the weak topology.
\end{defn}

\begin{remark} \label{R:trivial norm topologies}
If one extends the definitions of $\tilde{\calE}_R, \tilde{\calR}^{\bd}_R,
\tilde{\calR}_R$ to the case where $\alpha$ is the trivial norm, then in this case these rings all coincide. This makes it possible to abbreviate some arguments.
\end{remark}

\begin{remark}\label{R:phi-invariant}
Recall that $\overline{\varphi}$ acts as the identity map on $R$ if and only if $R$ is generated over $\Fp$
by idempotent elements (Lemma~\ref{L:idempotents}). In this case, the power-multiplicative norm $\alpha$
on $R$ must be trivial, so by Remark~\ref{R:trivial norm topologies}, all of the
topologies in Definition~\ref{D:formal to convergent2} coincide with the $p$-adic
topology.
\end{remark}

\begin{remark}
All of the constructions in Definition~\ref{D:relative extended Robba}
are functorial with respect to bounded homomorphisms $\psi: R \to S$
in which $S$ is another perfect uniform Banach $\Fp$-algebra with norm $\beta$.
If $\psi$ is strict injective,
then $\psi$ is isometric by Remark~\ref{R:perfect uniform strict}, and it is evident that
the functoriality maps induced by $\psi$
are strict injective, for all of the topologies named in
Definition~\ref{D:formal to convergent2}.
(The case when $\psi$ is injective but not strict is more subtle; we do not treat it here.)

Similarly, if $\psi$ is strict surjective, then the functoriality maps induced by $\psi$
are again strict surjective, by the following argument.
Choose $c>0$ such that any $\overline{y} \in S$ admits a lift $\overline{x} \in R$
with $\alpha(\overline{x}) \leq c \beta(\overline{y})$.
(In fact any $c>1$ has this property by Remark~\ref{R:perfect uniform strict}, but we do not need this here.)
By lifting
each Teichm\"uller element separately, we can lift each
$y \in \tilde{\calR}^{\inte,r}_S$ to some $x \in \tilde{\calR}^{\inte,r}_S$
for which $\lambda(\alpha^s)(x) \leq c^r \lambda(\beta^s)(y)$ for all $s \in (0,r]$.
{}From this, the claim follows. (See Lemma~\ref{L:lift surjective} for a similar argument.)
\end{remark}

\begin{lemma} \label{L:extend generators}
For some $0 < s \leq r$, let $M$ be a finite projective module over $\tilde{\calR}^{[s,r]}_R$.
Choose $\beta \in \calM(R)$, and choose $\be_1,\dots,\be_n \in M$ to form a set of module generators of
$M \otimes_{\tilde{\calR}^{[s,r]}_R} \tilde{\calR}^{[s,r]}_{\calH(\beta)}$.
Then there exists a rational localization $(R,R^+) \to (R',R^{\prime +})$ encircling $\beta$ such that $\be_1,\dots,\be_n$
also form a set of module generators of
$M \otimes_{\tilde{\calR}^{[s,r]}_R} \tilde{\calR}^{[s,r]}_{R'}$.
\end{lemma}
\begin{proof}
For each $\gamma \in \calM(\tilde{\calR}^{[s,r]}_{\calH(\beta)})$,
by Nakayama's lemma, $\be_1,\dots,\be_n$ form a set of module generators of
$M \otimes_{\tilde{\calR}^{[s,r]}_R} S_\gamma$ for some
rational localization $\tilde{\calR}^{[s,r]}_R \to S_\gamma$ encircling $\gamma$.
We may cover $\calM(\tilde{\calR}^{[s,r]}_{\calH(\beta)})$ with finitely many of the
$\calM(S_\gamma)$; by Remark~\ref{R:compact spaces}(b), these also cover
$\calM(\tilde{\calR}^{[s,r]}_{R'})$ for some rational localization $R \to R'$ encircling $\beta$.
It follows that $\be_1,\dots,\be_n$
also form a set of module generators of
$M \otimes_{\tilde{\calR}^{[s,r]}_R} \calH(\gamma)$
for each $\gamma \in \calM(\tilde{\calR}^{[s,r]}_{R'})$;
this implies the claim using
Lemma~\ref{L:finite generation}.
\end{proof}

\begin{remark} \label{R:Robba is Tate}
By construction, the ring $\tilde{\calR}^{[s,r]}_R$ is a Banach algebra over the analytic field $\QQ_p$. By contrast, the ring $\tilde{\calR}^{\inte,r}_R$ is complete with respect to the norm $\lambda(\alpha^r)$, but is not a Banach algebra over any analytic field. Nonetheless, it is a Banach ring according to our conventions:
for any topologically nilpotent (resp.\ uniform) unit $\overline{z} \in R$,
$z = [\overline{z}]$ is a topologically nilpotent unit (resp. uniform unit) in $\tilde{\calR}^{\inte,r}_R$.
\end{remark}

\begin{remark} \label{R:Frechet-Stein}
The ring $\tilde{\calR}^r_R$ is by construction the inverse limit of the rings $\tilde{\calR}^{[s,r]}_R$ for all $s \in (0,r]$. As such, it behaves much like a \emph{Fr\'echet-Stein algebra} in the sense of Schneider and Teitelbaum \cite{schneider-teitelbaum}; in particular, it enjoys some cohomological properties more typical of Banach algebras than of general Fr\'echet algebras.
\end{remark}

\subsection{Reality checks}

The operation of Fr\'echet completion in
Definition~\ref{D:relative extended Robba} leaves the structure of the resulting
rings a bit mysterious. To clarify these, we make some calculations akin to
the \emph{reality checks} of \cite[\S 2.5]{kedlaya-revisited}.

\begin{lemma} \label{L:hadamard relative}
For each $x \in \tilde{\calR}_R^{[s,r]}$, the function
$t \mapsto \log \lambda(\alpha^t)(x)$ is continuous and convex.
In particular, $\max\{\lambda(\alpha^r), \lambda(\alpha^s)\} = \sup\{\lambda(\alpha^t): t \in [s,r]\}$.
\end{lemma}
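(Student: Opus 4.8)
The plan is to reduce the convexity statement for a general element of $\tilde{\calR}_R^{[s,r]}$ to the case of a Teichm\"uller monomial, then pass through finite sums and a density argument, exactly as in the proof of Lemma~\ref{L:hadamard} in the analytic-field case. First I would observe that for $x = p^i[\overline{x}]$ with $i \in \ZZ$ and $\overline{x} \in R$, we have $\lambda(\alpha^t)(x) = p^{-i}\alpha(\overline{x})^t$, so $\log \lambda(\alpha^t)(x) = -i\log p + t\log\alpha(\overline{x})$ is an affine function of $t$ (interpreting $\log 0 = -\infty$ harmlessly, since then $x=0$). Next, for a finite sum $x = \sum_{j} p^{i_j}[\overline{x}_j] \in \tilde{\calR}^{\bd,r}_R$, the norm $\lambda(\alpha^t)$ is the max of the norms of the individual Teichm\"uller terms (this is built into the definition of $\lambda$ on $\tilde{\calR}^{\inte,r}_R$ via $\lambda(\alpha^t)(\sum_i p^i[\overline{x}_i]) = \max_i\{p^{-i}\alpha(\overline{x}_i)^t\}$, and extends to $\tilde{\calR}^{\bd,r}_R$ by inverting $p$), so $\log \lambda(\alpha^t)(x)$ is a finite maximum of affine functions of $t$ and hence convex on $[s,r]$.

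Then I would invoke density: by Definition~\ref{D:relative extended Robba}, $\tilde{\calR}_R^{[s,r]}$ is the Fr\'echet completion of $\tilde{\calR}^{\bd,r}_R$ under the norms $\lambda(\alpha^t)$ for $t \in [s,r]$, and the finite sums of Teichm\"uller monomials are dense in $\tilde{\calR}^{\bd,r}_R$ (any element $\sum_{i \geq m} p^i[\overline{x}_i]$ is the $\lambda(\alpha^t)$-limit, uniformly in $t \in [s,r]$, of its truncations, since $\lambda(\alpha^t)(\sum_{i>N} p^i[\overline{x}_i]) \to 0$ as $N \to \infty$ by the defining growth condition). Given $x \in \tilde{\calR}_R^{[s,r]}$, choose a sequence $x_n$ of such finite sums converging to $x$; then for each fixed $t \in [s,r]$, $\lambda(\alpha^t)(x_n) \to \lambda(\alpha^t)(x)$, so $\log\lambda(\alpha^t)(x)$ is a pointwise limit of convex functions on $[s,r]$ and is therefore convex. (One should note this requires $\lambda(\alpha^t)(x) > 0$ on the relevant range, or else handle $x=0$ separately; convexity is automatic and vacuous when the function is identically $-\infty$, and if $x \neq 0$ then $\lambda(\alpha^t)(x)$ is positive for all $t$ since $\lambda(\alpha^t)$ is a norm on $\tilde{\calR}_R^{[s,r]}$.)

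Finally, for the ``in particular'' clause: convexity of $t \mapsto \log\lambda(\alpha^t)(x)$ on $[s,r]$ forces the function to attain its maximum at an endpoint, hence $\max\{\lambda(\alpha^r)(x), \lambda(\alpha^s)(x)\} = \sup_{t \in [s,r]}\lambda(\alpha^t)(x)$ for every $x$; taking the sup over the unit ball (or just noting this holds pointwise) gives the asserted identity of norms. This also justifies the remark in Definition~\ref{D:relative extended Robba} that $\tilde{\calR}_R^{[s,r]}$ is complete under $\max\{\lambda(\alpha^r),\lambda(\alpha^s)\}$, since that norm is equivalent to (indeed equals) the sup of the $\lambda(\alpha^t)$ defining the Fr\'echet topology. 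I do not expect any serious obstacle here; the only mild subtlety is making the density argument clean — specifically confirming that $\lambda(\alpha^t)$-convergence of the truncations is uniform in $t$ on the compact interval $[s,r]$ so that the limit may be taken termwise in $t$ — but this follows immediately from $\lambda(\alpha^t)(p^i[\overline{x}_i]) = p^{-i}\alpha(\overline{x}_i)^t \leq \max\{p^{-i}\alpha(\overline{x}_i)^s, p^{-i}\alpha(\overline{x}_i)^r\}$, bounding the tail uniformly by its values at the two endpoints.
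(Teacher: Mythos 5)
Your proof is correct and follows exactly the route the paper takes: the paper's proof of this lemma is simply ``as in Lemma~\ref{L:hadamard},'' i.e.\ affine for Teichm\"uller monomials, maximum of affine functions for finite sums in canonical form, and a density argument for the general case. Your added care about uniformity of the truncation estimates on $[s,r]$ and the endpoint consequence for the ``in particular'' clause fills in details the paper leaves implicit but introduces nothing different in substance.
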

\begin{proof}
As in Lemma~\ref{L:hadamard}.
\end{proof}

\begin{lemma} \label{L:bounded extended}
For $x \in \tilde{\calR}_R$, we have $x \in \tilde{\calR}^{\bd}_R$
if and only if
for some $r>0$, $\lambda(\alpha^s)(x)$ is bounded for $s \in (0,r]$.
\end{lemma}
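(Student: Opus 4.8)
This is a direct generalization of Lemma~\ref{L:bounded}, and the plan is to follow that proof essentially verbatim, checking that nothing used there required $R$ to be a field. The statement has the form of an equivalence, so I would prove the two directions separately.

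For the forward direction, suppose $x \in \tilde{\calR}^{\bd}_R$; then $x \in \tilde{\calR}^{\bd,r}_R$ for some $r > 0$, and as $s \to 0^+$ the norm $\lambda(\alpha^s)(x)$ tends to the $p$-adic (Gauss) norm of $x$, which is finite. Combined with convexity of $t \mapsto \log\lambda(\alpha^t)(x)$ on $(0,r]$ (Lemma~\ref{L:hadamard relative}), this shows $\lambda(\alpha^s)(x)$ is bounded on $(0,r]$. For the converse, suppose $\lambda(\alpha^s)(x)$ is bounded on $(0,r]$ for some $r$. After multiplying $x$ by a power of $p$ (which affects neither hypothesis nor conclusion), I may assume $x \in \tilde{\calR}^r_R$ and $\lambda(\alpha^s)(x) \leq 1$ for all $s \in (0,r]$; the goal is then to show $x \in \tilde{\calR}^{\inte,r}_R$. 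Write $x$ as a limit of a sequence $x_0, x_1, \dots$ in $\tilde{\calR}^{\bd,r}_R$. For each positive integer $j$ choose $N_j$ so that $\lambda(\alpha^s)(x_i - x) \leq p^{-j}$ for all $i \geq N_j$ and all $s \in [p^{-j}r, r]$. Writing $x_i = \sum_{l \geq m(i)} p^l[\overline{x}_{il}]$ and setting $y_i = \sum_{l \geq 0} p^l[\overline{x}_{il}] \in \tilde{\calR}^{\inte,r}_R$, the bound $\lambda(\alpha^{p^{-j}r})(x_i) \leq 1$ for $i \geq N_j$ forces $\alpha(\overline{x}_{il}) \leq p^{lp^j/r}$ for $l < 0$; since $p^{-l}p^{lp^j} \leq p^{1-p^j}$ for $l \leq -1$, one gets $\lambda(\alpha^r)(x_i - y_i) \leq p^{1-p^j}$. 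Hence $y_0, y_1, \dots$ converges to $x$ under $\lambda(\alpha^r)$, and by Lemma~\ref{L:hadamard relative} under $\lambda(\alpha^s)$ for all $s \in (0,r]$; since $\tilde{\calR}^{\inte,r}_R$ is closed in $\tilde{\calR}^r_R$, this gives $x \in \tilde{\calR}^{\inte,r}_R \subseteq \tilde{\calR}^{\bd,r}_R \subseteq \tilde{\calR}^{\bd}_R$.

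I do not expect any genuine obstacle here: the only place the field hypothesis entered in Lemma~\ref{L:bounded} was implicit in treating $\lambda(\alpha^s)$ as a norm and in the convexity lemma, and both of these are available for perfect uniform Banach $\Fp$-algebras by Proposition~\ref{P:mu multiplicative} and Lemma~\ref{L:hadamard relative}. The one small point to be careful about is that the estimates involving $\alpha(\overline{x}_{il})$ must hold for the (power-multiplicative, not necessarily multiplicative) norm $\alpha$; since every step above uses only the ultrametric inequality and power-multiplicativity of $\alpha$ together with homogeneity of Witt addition, this causes no trouble. I would simply write ``As in Lemma~\ref{L:bounded}'' with the substitution of $R$ for $L$ flagged, perhaps reproducing the key estimate for the reader's convenience.

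\begin{proof}
This is proved exactly as Lemma~\ref{L:bounded}, with $L$ replaced by $R$. If $x \in \tilde{\calR}^{\bd}_R$, then $x \in \tilde{\calR}^{\bd,r}_R$ for some $r > 0$, and as $s \to 0^+$ the norm $\lambda(\alpha^s)(x)$ tends to the $p$-adic norm of $x$; by Lemma~\ref{L:hadamard relative}, $\lambda(\alpha^s)(x)$ is then bounded for $s \in (0,r]$. Conversely, suppose $\lambda(\alpha^s)(x)$ is bounded for $s \in (0,r]$. After multiplying by a power of $p$, we may assume $x \in \tilde{\calR}^r_R$ and $\lambda(\alpha^s)(x) \leq 1$ for all $s \in (0,r]$; we show $x \in \tilde{\calR}^{\inte,r}_R$. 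Write $x$ as the limit of a sequence $x_0, x_1, \dots$ with $x_i \in \tilde{\calR}^{\bd,r}_R$. For each positive integer $j$ choose $N_j > 0$ such that
\[
\lambda(\alpha^s)(x_i - x) \leq p^{-j} \qquad (i \geq N_j,\ s \in [p^{-j}r, r]).
\]
Write $x_i = \sum_{l = m(i)}^\infty p^l[\overline{x}_{il}]$ and put $y_i = \sum_{l=0}^\infty p^l[\overline{x}_{il}] \in \tilde{\calR}^{\inte,r}_R$. For $i \geq N_j$ we have $\lambda(\alpha^{p^{-j}r})(x_i) \leq 1$, so $\alpha(\overline{x}_{il}) \leq p^{lp^j/r}$ for $l < 0$; since $p^{-l} p^{lp^j} \leq p^{1-p^j}$ for $l \leq -1$, it follows that $\lambda(\alpha^r)(x_i - y_i) \leq p^{1-p^j}$. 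Hence the sequence $y_0, y_1, \dots$ converges to $x$ under $\lambda(\alpha^r)$, and therefore under $\lambda(\alpha^s)$ for all $s \in (0,r]$ by Lemma~\ref{L:hadamard relative}; since $\tilde{\calR}^{\inte,r}_R$ is closed in $\tilde{\calR}^r_R$, we conclude $x \in \tilde{\calR}^{\inte,r}_R \subseteq \tilde{\calR}^{\bd}_R$, as desired.
\end{proof}
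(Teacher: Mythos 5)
Your proof is correct and is exactly the paper's approach: the paper's own proof of this lemma consists of the single line ``As in Lemma~\ref{L:bounded},'' and your writeup is precisely that transposition, with the correct observation that only power-multiplicativity of $\alpha$, the ultrametric inequality, and Lemma~\ref{L:hadamard relative} are needed rather than multiplicativity.
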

\begin{proof}
As in Lemma~\ref{L:bounded}.
\end{proof}
Using this criterion, we obtain a generalization of Corollary~\ref{C:units}.
\begin{cor} \label{C:units relative}
Any unit in $\tilde{\calR}_R$ is also a unit in $\tilde{\calR}^{\bd}_R$.
\end{cor}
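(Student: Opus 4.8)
The plan is to mimic the proof of Corollary~\ref{C:units} as closely as possible, but with the convexity input supplied by Lemma~\ref{L:hadamard relative} rather than Lemma~\ref{L:hadamard}, and the boundedness criterion supplied by Lemma~\ref{L:bounded extended} rather than Lemma~\ref{L:bounded}. The one genuine subtlety, compared to the analytic-field case, is that $\tilde{\calR}_R$ need not be an integral domain, so I cannot simply say ``$\log\lambda(\alpha^s)$ of $x$ and of $x^{-1}$ are convex and sum to zero, hence both are affine'' without worrying that $\lambda(\alpha^s)$ is only power-multiplicative, not multiplicative. Fortunately power-multiplicativity is exactly enough: $\lambda(\alpha^s)(x)\lambda(\alpha^s)(x^{-1}) \geq \lambda(\alpha^s)(1) = 1$ still holds by submultiplicativity of $\lambda(\alpha^s)$ (Proposition~\ref{P:mu multiplicative}(b)), which is all the argument actually needs.

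Concretely, first I would let $x \in \tilde{\calR}_R$ be a unit with inverse $y = x^{-1} \in \tilde{\calR}_R$, and choose $r>0$ small enough that both $x$ and $y$ lie in $\tilde{\calR}^r_R$. For each $s \in (0,r]$, submultiplicativity of $\lambda(\alpha^s)$ gives $1 = \lambda(\alpha^s)(1) = \lambda(\alpha^s)(xy) \leq \lambda(\alpha^s)(x)\,\lambda(\alpha^s)(y)$, hence
\[
\log\lambda(\alpha^s)(x) + \log\lambda(\alpha^s)(y) \geq 0 \qquad (s \in (0,r]).
\]
On the other hand, by Lemma~\ref{L:hadamard relative} each of the functions $s \mapsto \log\lambda(\alpha^s)(x)$ and $s \mapsto \log\lambda(\alpha^s)(y)$ is convex on $(0,r]$ (restricting from $\tilde{\calR}^{[t,r]}_R$ for $0<t\leq r$, or simply noting the proof of Lemma~\ref{L:hadamard relative} applies verbatim to elements of $\tilde{\calR}^r_R$). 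To conclude I need an upper bound matching the lower bound; this is where a little more care than the field case is required.

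The cleanest route is: instead of chasing the sum of the two convex functions, apply Lemma~\ref{L:bounded extended} directly once I know $\log\lambda(\alpha^s)(x)$ is bounded above on some interval $(0,r']$. To get that, observe that the convex function $g(s) = \log\lambda(\alpha^s)(x)$ is, on the compact interval $[s_0,r]$ for any fixed $s_0 \in (0,r)$, bounded; the only possible unboundedness is as $s \to 0^+$, and a convex function on $(0,r]$ that is bounded below near $0$ is automatically bounded above near $0$ as well — but here the relevant fact is rather that $g(s) \to \log|x|_p$ (the $p$-adic norm) as $s \to 0^+$, since $\lambda(\alpha^s)$ degenerates to the $p$-adic norm, exactly as in the proof of Lemma~\ref{L:bounded extended}. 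Thus $g$ is bounded on $(0,r]$, so by Lemma~\ref{L:bounded extended} we get $x \in \tilde{\calR}^{\bd}_R$, and then $y = x^{-1} \in \tilde{\calR}^{\bd}_R$ as well since the inverse is unique and $\tilde{\calR}^{\bd}_R$ is a subring of $\tilde{\calR}_R$ closed under the induced inverse (or: run the same argument on $y$). Hence $x$ is a unit in $\tilde{\calR}^{\bd}_R$, as claimed. \textbf{The main obstacle} I anticipate is making the $s \to 0^+$ limiting behavior of $\lambda(\alpha^s)$ precise enough to invoke Lemma~\ref{L:bounded extended} cleanly; but this is already handled inside the proof of Lemma~\ref{L:bounded extended}, so in practice the corollary should be a two-line deduction of the form ``as in Corollary~\ref{C:units}, using Lemma~\ref{L:hadamard relative} and Lemma~\ref{L:bounded extended} in place of their analytic-field analogues, and noting that power-multiplicativity of $\lambda(\alpha^s)$ suffices in place of multiplicativity.''
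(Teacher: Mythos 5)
There is a genuine gap: neither of your two routes to an upper bound on $\log\lambda(\alpha^s)(x)$ near $s=0$ actually works. First, the inequality $\log\lambda(\alpha^s)(x) + \log\lambda(\alpha^s)(y) \geq 0$ coming from submultiplicativity is \emph{not} ``all the argument actually needs'': two convex functions on $(0,r]$ whose sum is nonnegative can perfectly well each fail to be bounded above near $0$ (take $g(s) = 1/s$ and $h(s) = 0$). The field-case argument in Corollary~\ref{C:units} uses the exact identity $g+h=0$, which forces both convex summands to be affine; that identity requires multiplicativity of the norm, which $\lambda(\alpha^s)$ does not have for general uniform $R$. (Your parenthetical claim that a convex function bounded below near $0$ is bounded above near $0$ is also false: $-\log s$ is a counterexample.) Second, your fallback --- that $\lambda(\alpha^s)(x) \to |x|_p$ as $s \to 0^+$ --- is precisely the \emph{easy} direction of Lemma~\ref{L:bounded extended}, valid only for $x \in \tilde{\calR}^{\bd}_R$; for a general element of the Fr\'echet completion $\tilde{\calR}^r_R$ there is no $p$-adic norm to converge to, and $\lambda(\alpha^s)(x)$ may blow up as $s\to 0^+$. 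Invoking it here assumes the conclusion.

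The missing idea is to localize on the Gel'fand spectrum. For each $\beta \in \calM(R)$, the seminorm $\lambda(\beta^s)$ \emph{is} multiplicative (Proposition~\ref{P:mu multiplicative}), so the Corollary~\ref{C:units} argument applies verbatim and $s \mapsto \log\lambda(\beta^s)(x)$ is affine on $(0,r]$. One can then extrapolate its value at $s=0^+$ explicitly:
\[
\lim_{s \to 0^+} \log\lambda(\beta^s)(x) = 2\log\lambda(\beta^{r/2})(x) - \log\lambda(\beta^r)(x) = 2\log\lambda(\beta^{r/2})(x) + \log\lambda(\beta^r)(y).
\]
Taking the supremum over $\beta \in \calM(R)$ and using Theorem~\ref{T:transform} (which identifies $\lambda(\alpha^s)$ with $\sup_\beta \lambda(\beta^s)$ since $R$ is uniform) gives $\limsup_{s\to 0^+}\log\lambda(\alpha^s)(x) \leq 2\log\lambda(\alpha^{r/2})(x) + \log\lambda(\alpha^r)(y) < +\infty$, and only then does Lemma~\ref{L:bounded extended} apply to yield $x \in \tilde{\calR}^{\bd}_R$ (and symmetrically $y \in \tilde{\calR}^{\bd}_R$). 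Without this pointwise reduction to multiplicative seminorms, the proof does not close.
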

\begin{proof}
Suppose $x \in \tilde{\calR}_R$ is a unit with inverse $y$.
Choose $r>0$ so that $x,y \in \tilde{\calR}^r_R$.
For each $\beta \in \calM(R)$, the function $\log \lambda(\beta^s)(x)$
is affine in $s$, as in the proof of Corollary~\ref{C:units}.
Write this affine function as $a_\beta s + b_\beta$; we then have
\[
b_\beta =
2 \log \lambda(\beta^{r/2})(x) - \log \lambda(\beta^r)(x)
=
2 \log \lambda(\beta^{r/2})(x) + \log \lambda(\beta^r)(y)
\]
and
\[
a_\beta = \frac{1}{r} \left( \log \lambda(\beta^r)(x)
- b_\beta \right) =
\frac{2}{r} \log \lambda(\beta^{r/2})(y) + \frac{2}{r} \log \lambda(\beta^r)(x)
\]
and so
\begin{align*}
\log \lambda(\beta^s)(x) &= a_\beta s + b_\beta  \\
&\leq \left( \frac{2}{r} \log \lambda(\alpha^{r/2})(y) + \frac{2}{r} \log \lambda(\alpha^r)(x) \right) s +
2 \log \lambda(\alpha^{r/2})(x) + \log \lambda(\alpha^r)(y).
\end{align*}
Taking suprema over $\calM(R)$ yields a similar upper bound for $\log \lambda(\alpha^s)(x)$, so $x \in \tilde{\calR}^{\bd}_R$ by Lemma~\ref{L:bounded extended}.
Similarly, $y \in \tilde{\calR}^{\bd}_R$, so $x$ is a unit in $\tilde{\calR}^{\bd}_R$ as desired.
\end{proof}

\begin{cor} \label{C:extended Robba invariants}
We have $\tilde{\calE}_R^{\varphi} = \tilde{\calR}_R^{\varphi} =
W(R^{\overline{\varphi}})[p^{-1}]$.
In particular, by Corollary~\ref{C:idempotents},
$W(R^{\overline{\varphi}})[p^{-1}] = \Qp$ if and only if $R$ is connected.
\end{cor}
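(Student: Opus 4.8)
The plan is to reduce the statement to the description of $\tilde{\calE}_R^{\varphi}$, which is immediate, together with the boundedness criterion of Lemma~\ref{L:bounded extended}; the argument runs parallel to the proof of Lemma~\ref{L:big Robba invariants}. First I would treat $\tilde{\calE}_R = W(R)[p^{-1}]$: by Witt vector functoriality $\varphi$ raises each Witt component to the $p$-th power, so $W(R)^{\varphi} = W(R^{\overline{\varphi}})$, and since $p$ is not a zero divisor and $\varphi$ is $\Qp$-linear, $\tilde{\calE}_R^{\varphi} = W(R^{\overline{\varphi}})[p^{-1}]$. For the easy inclusion into $\tilde{\calR}_R^{\varphi}$: any $e \in R^{\overline{\varphi}}$ has $\alpha(e)^p = \alpha(e^p) = \alpha(e)$, so $\alpha(e) \in \{0,1\}$ and $R^{\overline{\varphi}} \subseteq \gotho_R$; hence $W(R^{\overline{\varphi}}) \subseteq W(\gotho_R) \subseteq \tilde{\calR}^r_R$ for all $r>0$, so $W(R^{\overline{\varphi}})[p^{-1}] \subseteq \tilde{\calR}_R$, and these elements are manifestly $\varphi$-fixed.

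It thus remains to prove $\tilde{\calR}_R^{\varphi} \subseteq \tilde{\calR}_R^{\bd}$; granting this, $\tilde{\calR}_R^{\varphi} \subseteq \tilde{\calR}_R^{\bd} \subseteq \tilde{\calE}_R$ is $\varphi$-fixed, hence contained in $\tilde{\calE}_R^{\varphi} = W(R^{\overline{\varphi}})[p^{-1}]$, so all three sets coincide. To prove the inclusion, fix $x \in \tilde{\calR}_R^{\varphi}$ and $r>0$ with $x \in \tilde{\calR}_R^r$. Writing $x = \sum_i p^i[\overline{x}_i]$ one has $\varphi(x) = \sum_i p^i[\overline{x}_i^p]$, so $\lambda(\alpha^s)(\varphi(x)) = \lambda(\alpha^{ps})(x)$ by power-multiplicativity of $\alpha$; since $\varphi(x) = x$ this gives $\lambda(\alpha^s)(x) = \lambda(\alpha^{ps})(x)$ for $s \in (0,r/p]$. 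Iterating, for each $s \in (0,r]$ there is $k \geq 0$ with $p^k s \in (r/p,r]$ and $\lambda(\alpha^s)(x) = \lambda(\alpha^{p^k s})(x)$, so $\sup_{s \in (0,r]} \lambda(\alpha^s)(x) = \sup_{s \in (r/p,r]} \lambda(\alpha^s)(x)$. By Lemma~\ref{L:hadamard relative} the function $s \mapsto \log \lambda(\alpha^s)(x)$ is convex on $(0,r]$, hence bounded above on $(r/p,r]$; therefore $\lambda(\alpha^s)(x)$ is bounded for $s \in (0,r]$, and Lemma~\ref{L:bounded extended} yields $x \in \tilde{\calR}_R^{\bd}$. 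Finally, the last assertion follows from Corollary~\ref{C:idempotents}: $R^{\overline{\varphi}} = \Fp$ exactly when $R$ is connected, and for a perfect $\Fp$-algebra $S$ one has $W(S)[p^{-1}] = \Qp$ only if $S = \Fp$, since otherwise a Teichm\"uller lift of an element of $S \setminus \Fp$ would reduce modulo $p$ to an element of $\Fp$, a contradiction.

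The only nontrivial point is the convexity-and-periodicity boundedness step, and even there the sole subtlety is the behavior of the convex function $s \mapsto \log\lambda(\alpha^s)(x)$ near the endpoint $s = r$; this is handled exactly as in the proofs of Lemma~\ref{L:big Robba invariants} and Corollary~\ref{C:units relative}, so no new ideas should be required.
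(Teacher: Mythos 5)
Your proof is correct and follows the same route as the paper's: the paper reduces the statement to $\tilde{\calR}_R^{\varphi} = (\tilde{\calR}_R^{\bd})^{\varphi} \subseteq \tilde{\calE}_R^{\varphi}$ via Lemma~\ref{L:bounded extended} ``as in the proof of Lemma~\ref{L:big Robba invariants}'', which is exactly the periodicity-plus-convexity boundedness argument you spell out. The only cosmetic differences are that you make explicit the easy inclusion $W(R^{\overline{\varphi}})[p^{-1}] \subseteq \tilde{\calR}_R^{\varphi}$ and the validity of the identity $\lambda(\alpha^s)\circ\varphi = \lambda(\alpha^{ps})$ over a general perfect uniform $R$ (not just an analytic field), both of which the paper treats as clear.
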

\begin{proof}
It is clear that $\tilde{\calE}_R^{\varphi} = W(R^{\overline{\varphi}})[p^{-1}]
\subseteq \tilde{\calR}_R^{\varphi}$.
We have $\tilde{\calR}_R^{\varphi} = (\tilde{\calR}_R^{\bd})^{\varphi} \subseteq \tilde{\calE}_R^{\varphi}$
by Lemma~\ref{L:bounded extended},
as in the proof of Lemma~\ref{L:big Robba invariants}.
\end{proof}

\begin{lemma}
For $0<s_1\leq s_2\leq r_2\leq r_1$, the natural restriction map $\tilde{\calR}^{[s_1,r_1]}_R\to\tilde{\calR}^{[s_2,r_2]}_R$ is injective.  
\end{lemma}
\begin{proof}
Since $R$ is uniform, the spectral norm $\alpha$ is equal to the supremum norm on $\prod_{\beta\in\calM(R)}\calH(\beta)$ by Theorem~\ref{T:transform}.  Thus it is straightforward to see that the natural map
\[
\tilde{\calR}^{I}_R\to\prod_{\beta\in\calM(R)}\tilde{\calR}^{I}_{\calH(\beta)}
\]
is injective for any closed interval $I\subset(0,\infty)$. Thus it reduces to show the lemma in the case when $R=L$ is an analytic field. We deduce by Lemma~\ref{L:hadamard} that for $x\in\tilde{\calR}_L^{[s,r]}$, the function
$t \mapsto \log \lambda(\alpha^t)(x)$ is continuous and convex on $[s,r]$. This implies that if 
$\lambda(\alpha^{t})(x)=0$ for some $t\in [s,r]$, then $\lambda(\alpha^t)(x)=0$ for all $t\in[s,r]$; thus $x=0$. The lemma then follows. 
\end{proof}

\begin{lemma} \label{L:integral intersection}
For $0 < r \leq r'$, inside $\tilde{\calR}^{[r,r]}_R$
we have
\[
\tilde{\calR}_R^{\inte,r}
\cap \tilde{\calR}_R^{[r,r']} = \tilde{\calR}_R^{\inte,r'}.
\]
\end{lemma}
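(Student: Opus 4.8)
The inclusion $\tilde{\calR}_R^{\inte,r'} \subseteq \tilde{\calR}_R^{\inte,r} \cap \tilde{\calR}_R^{[r,r']}$ is immediate from the definitions: an element of $\tilde{\calR}_R^{\inte,r'}$ lies in $W(R)$, satisfies the growth condition at radius $r'$ (hence also at the smaller radius $r$, since $\alpha(\overline{x}_i)^r \le \max\{1,\alpha(\overline{x}_i)^{r'}\}$ up to the decay factor $p^{-i}$), and by Proposition~\ref{P:mu multiplicative}(a,c) is a legitimate element of the Fr\'echet completion $\tilde{\calR}_R^{[r,r']}$. The real content is the reverse inclusion, and this is where I expect the main work to lie. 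The plan is to take $x$ in the intersection and show it satisfies the defining growth condition $\lim_{i\to\infty} p^{-i}\alpha(\overline{x}_i)^{r'} = 0$ of $\tilde{\calR}_R^{\inte,r'}$.

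First I would use that $x \in \tilde{\calR}_R^{\inte,r} \subseteq W(R)$, so $x = \sum_{i=0}^\infty p^i[\overline{x}_i]$ with $\overline{x}_i \in R$ genuinely (no negative powers of $p$), and $\lim_i p^{-i}\alpha(\overline{x}_i)^r = 0$. Next, since $x \in \tilde{\calR}_R^{[r,r']}$, it is a Fr\'echet limit of elements $y_n \in \tilde{\calR}_R^{\bd,r'}$ under the norms $\lambda(\alpha^t)$ for $t \in [r,r']$; in particular $\lambda(\alpha^{r'})(x - y_n) \to 0$, where $\lambda(\alpha^{r'})$ is extended to $\tilde{\calR}_R^{[r,r']}$ by continuity (Proposition~\ref{P:mu multiplicative}(c)). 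The key step is to argue that one may replace each $y_n$ by its "integral part" $z_n = \sum_{l\ge 0} p^l[\overline{y}_{n,l}]$ (truncating the negative-$p$ terms), exactly as in the proof of Lemma~\ref{L:bounded extended} (which follows Lemma~\ref{L:bounded}). The point there is: for $n$ large, $\lambda(\alpha^r)(x-y_n)$ is small, and since $x \in W(R)$ has $\lambda(\alpha^r)(x) \le 1$ after rescaling by a power of $p$, the terms $\overline{y}_{n,l}$ with $l<0$ must have $\alpha(\overline{y}_{n,l})$ controlled, and the bound $p^{-l}p^{lp^j} \le p^{1-p^j}$ for $l \le -1$ forces $\lambda(\alpha^{r})(y_n - z_n)$ — hence by convexity (Lemma~\ref{L:hadamard relative}) also $\lambda(\alpha^{r'})(y_n - z_n)$, since the relevant slope inequality goes the right way once we know $y_n - z_n$ is a $p$-adically-negative Witt vector — to be small as $n$ grows. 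So $z_n \in \tilde{\calR}_R^{\inte,r'}$ and $z_n \to x$ under $\lambda(\alpha^{r'})$.

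Finally, I would deduce the growth condition for $x$ itself from convergence of the $z_n$: given $\epsilon > 0$, pick $n$ with $\lambda(\alpha^{r'})(x - z_n) < \epsilon$, which bounds $p^{-i}\alpha(\overline{x}_i - \overline{z}_{n,i})^{r'} < \epsilon$ for all $i$ (here using that the $r'$-norm of a Witt vector in $W(R)$ bounds each Teichm\"uller coordinate's contribution — this uses power-multiplicativity of $\alpha$ and the structure of $\lambda(\alpha^{r'})$ as a max over coordinates, valid since $x - z_n \in W(R)$); and $z_n \in \tilde{\calR}_R^{\inte,r'}$ gives $p^{-i}\alpha(\overline{z}_{n,i})^{r'} \to 0$. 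Combining via the non-archimedean triangle inequality yields $\limsup_i p^{-i}\alpha(\overline{x}_i)^{r'} \le \epsilon$, and letting $\epsilon \to 0$ finishes the proof. The main obstacle I anticipate is making the truncation step rigorous — specifically verifying that passing from $y_n$ to $z_n$ genuinely preserves smallness of the $\lambda(\alpha^{r'})$-distance, which requires carefully tracking how the negative-index Teichm\"uller terms behave under the higher-radius norm; this is the step for which one should lean on the argument of \cite[Corollary~2.5.6]{kedlaya-revisited} as an alternative route if the direct estimate proves awkward.
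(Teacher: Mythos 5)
Your proposal is correct and matches the paper's proof: approximate $x$ by a sequence in $\tilde{\calR}_R^{\bd,r'}$ converging under $\lambda(\alpha^s)$ for $s \in [r,r']$, truncate the negative powers of $p$ to land in $\tilde{\calR}_R^{\inte,r'}$, and use the bound at radius $r$ coming from $x \in \tilde{\calR}_R^{\inte,r}$ to control the discarded terms at radius $r'$. The one caveat is that the passage from smallness at $r$ to smallness at $r'$ is not a convexity argument (Lemma~\ref{L:hadamard relative} bounds interior radii from the endpoints, and $r'$ is the larger endpoint here); it is the direct term-by-term estimate $\alpha(\overline{x}_{il}) \leq p^{(l-j)/r}$ for $l<0$ followed by $p^{-l}p^{(l-j)r'/r} \leq p^{1+(1-j)r'/r}$ for $l \leq -1$, i.e.\ precisely the analogue of the computation in Lemma~\ref{L:bounded} that you correctly identify as the model.
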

\begin{proof}
The case $r'=r$ is trivial, so assume that $r' > r$.
Take $x \in \tilde{\calR}_R^{\inte,r}
\cap \tilde{\calR}_R^{[r,r']}$,
and write $x$ as the limit in $\tilde{\calR}_R^{[r,r']}$
of a sequence $x_0,x_1,\dots$ with
$x_i \in \tilde{\calR}_R^{\bd,r'}$.
For each positive integer $j$, we can find $N_j > 0$ such that
\[
\lambda(\alpha^s)(x_i - x) \leq p^{-j} \qquad (i \geq N_j, \, s \in [r,r']).
\]
Write $x_i = \sum_{l=m(i)}^\infty p^l [\overline{x}_{il}]$
and put $y_i = \sum_{l=0}^\infty p^l [\overline{x}_{il}]
\in \tilde{\calR}_R^{\inte,r'}$.
For $i \geq N_j$, having $x \in \tilde{\calR}_R^{\inte,r}$ and
$\lambda(\alpha^r)(x_i -x) \leq p^{-j}$ implies that
$\lambda(\alpha^r)(p^l [\overline{x}_{il}]) \leq p^{-j}$ for $l<0$.
That is,
\[
\alpha(\overline{x}_{il}) \leq p^{(l-j)/r} \qquad (i \geq N_j, \,l < 0).
\]
Since $p^{-l} p^{(l-j)r'/r} \leq p^{1+(1-j)r'/r}$ for $l \leq -1$,
we deduce that $\lambda(\alpha^{r'})(x_i - y_i) \leq p^{1+(1-j)r'/r}$
for $i \geq N_j$.
Consequently, the sequence $y_0,y_1,\dots$ converges to $x$
under $\lambda(\alpha^{r'})$; it follows that $x \in \tilde{\calR}_R^{\inte,r'}$.
This proves the claim.
\end{proof}

\begin{remark} \label{R:splitting}
In both Lemma~\ref{L:bounded} and Lemma~\ref{L:integral intersection},
the key step was to split an element of $\tilde{\calR}_R^{\bd}$ into
what one might call an \emph{integral part} and a \emph{fractional part}.
One cannot directly imitate the construction for elements of
$\tilde{\calR}^{[s,r]}_R$ because they cannot be expressed as sums of
Teichm\"uller elements. One can give presentations of a slightly less
restrictive form with which one can make similar arguments
(the \emph{semiunit presentations} of \cite[\S 2]{kedlaya-revisited});
the \emph{stable presentations} of \cite[\S 5]{kedlaya-witt} are similar.
We will instead rely on Lemma~\ref{L:decompose}
(see below) to simulate splittings into integral and fractional parts.
\end{remark}

As noted in Remark~\ref{R:splitting}, the following lemma
extends to $\tilde{\calR}_R^{[s,r]}$ the splitting argument for
elements of $\tilde{\calR}^{\bd}_R$ used previously. Its formulation
is modeled on \cite[Lemma~2.5.11]{kedlaya-revisited}.
\begin{lemma} \label{L:decompose}
For $0 < s \leq r$ and $n \in \ZZ$,
any $x \in \tilde{\calR}^{[s,r]}_R$ can be written as
$y+z$ with $y \in p^n \tilde{\calR}^{\inte,r}_R$, $z \in \cap_{r' \geq r} \tilde{\calR}^{[s,r']}_R$, and
\begin{equation} \label{eq:decompose}
\lambda(\alpha^{t})(z) \leq p^{(1-n)(1-t/r)} \lambda(\alpha^r)(x)^{t/r} \qquad
(t \geq r).
\end{equation}
\end{lemma}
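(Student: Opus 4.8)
The plan is to prove the decomposition first for elements of $\tilde{\calR}^{\bd,r}_R$, where the Teichm\"uller expansion can be split explicitly, and then to bootstrap to an arbitrary $x \in \tilde{\calR}^{[s,r]}_R$ by a telescoping-series argument in which the inequality \eqref{eq:decompose} does double duty: it both furnishes the final estimate and forces the auxiliary series to converge in $\tilde{\calR}^{[s,r']}_R$ for every $r' \geq r$. At the outset one disposes of trivialities ($x = 0$ gives $y = z = 0$), and one may note that replacing $x$ by $p^{-n}x$ reduces to $n = 0$, although I will carry general $n$ since it costs nothing.

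\emph{Bounded case.} Suppose $x \in \tilde{\calR}^{\bd,r}_R$, and write its unique Teichm\"uller expansion $x = \sum_{l \geq m} p^l [\overline{x}_l]$ with $m \in \ZZ$; if $m \geq n$ set $y = x$, $z = 0$. Otherwise set $y = \sum_{l \geq n} p^l [\overline{x}_l]$ and $z = \sum_{m \leq l < n} p^l [\overline{x}_l]$. Then $z$ is a finite sum, hence lies in $\tilde{\calR}^{[s,r']}_R$ for every $r' \geq s$, with Teichm\"uller coordinates $\overline{x}_l$ for $m \leq l < n$; and $p^{-n} y = \sum_{k \geq 0} p^k[\overline{x}_{k+n}]$ inherits from $x$ the growth condition defining $\tilde{\calR}^{\inte,r}_R$, so $y \in p^n \tilde{\calR}^{\inte,r}_R$. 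By uniqueness of Witt expansions $x = y + z$. For the estimate, $\lambda(\alpha^r)(x) \geq p^{-l}\alpha(\overline{x}_l)^r$ for all $l$, so $\alpha(\overline{x}_l) \leq (p^l \lambda(\alpha^r)(x))^{1/r}$, and since $t/r - 1 \geq 0$ for $t \geq r$,
\[
\lambda(\alpha^t)(z) = \max_{m \leq l < n} p^{-l}\alpha(\overline{x}_l)^t \leq \max_{l < n} p^{l(t/r-1)} \lambda(\alpha^r)(x)^{t/r} = p^{(n-1)(t/r-1)} \lambda(\alpha^r)(x)^{t/r},
\]
which is \eqref{eq:decompose} because $(n-1)(t/r-1) = (1-n)(1-t/r)$.

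\emph{General case.} For nonzero $x \in \tilde{\calR}^{[s,r]}_R$, use density of $\tilde{\calR}^{\bd,r}_R$ to pick $x_0 = 0, x_1, x_2, \dots \in \tilde{\calR}^{\bd,r}_R$ with $x_j \to x$ in $\tilde{\calR}^{[s,r]}_R$ and $\lambda(\alpha^r)(x - x_j) \leq \lambda(\alpha^r)(x)$ for all $j$; put $w_j = x_j - x_{j-1}$, so $x = \sum_{j \geq 0} w_j$ with $\lambda(\alpha^r)(w_j) \leq \lambda(\alpha^r)(x)$ and $\lambda(\alpha^t)(w_j) \to 0$ for all $t \in [s,r]$. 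Apply the bounded case to each $w_j$, obtaining $w_j = y_j + z_j$ with $y_j \in p^n\tilde{\calR}^{\inte,r}_R$, $z_j \in \bigcap_{r' \geq r}\tilde{\calR}^{[s,r']}_R$, and $\lambda(\alpha^t)(z_j) \leq p^{(1-n)(1-t/r)}\lambda(\alpha^r)(w_j)^{t/r}$ for $t \geq r$; set $y = \sum_j y_j$, $z = \sum_j z_j$. Then I would verify: (i) taking $t = r$ gives $\lambda(\alpha^r)(y_j) \leq \lambda(\alpha^r)(w_j) \to 0$, and since $\tilde{\calR}^{\inte,r}_R$ is complete under $\lambda(\alpha^r)$ while each $y_j$ has $p$-adic valuation $\geq n$, convexity of $t \mapsto \log\lambda(\alpha^t)(y_j)$ (Lemma~\ref{L:hadamard relative}) yields $\lambda(\alpha^t)(y_j) \to 0$ uniformly for $t \in [s,r]$, so $\sum_j y_j$ converges in $\tilde{\calR}^{[s,r]}_R$ to some $y \in p^n\tilde{\calR}^{\inte,r}_R$; (ii) for $t \in [r,r']$ the estimate bounds $\lambda(\alpha^t)(z_j)$ by a constant depending only on $n,r,r'$ times $\lambda(\alpha^r)(w_j)^{t/r} \to 0$, and for $t \in [s,r]$ one has $\lambda(\alpha^t)(z_j) \leq \max\{\lambda(\alpha^t)(w_j),\lambda(\alpha^t)(y_j)\} \to 0$, so $\sum_j z_j$ converges in $\tilde{\calR}^{[s,r']}_R$ for every $r' \geq r$ and $z \in \bigcap_{r'\geq r}\tilde{\calR}^{[s,r']}_R$; (iii) $y + z = \sum_j(y_j+z_j) = \sum_j w_j = x$, and for $t \geq r$, using $t/r \geq 1$ and $\lambda(\alpha^r)(w_j) \leq \lambda(\alpha^r)(x)$, $\lambda(\alpha^t)(z) \leq \sup_j \lambda(\alpha^t)(z_j) \leq p^{(1-n)(1-t/r)}\lambda(\alpha^r)(x)^{t/r}$, which is \eqref{eq:decompose}.

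The main obstacle is precisely the transition from the bounded case to the general case. Because Witt-vector arithmetic is not performed coordinatewise, one cannot simply "truncate the Teichm\"uller expansion" of an element of $\tilde{\calR}^{[s,r]}_R$ that is not already in $\tilde{\calR}^{\bd,r}_R$ (the differences of naive truncations pick up infinitely many higher $p$-adic terms), which is what the semiunit/stable presentations of \cite{kedlaya-revisited, kedlaya-witt} are designed to handle. The telescoping device avoids this: each increment $w_j$ lies in $\tilde{\calR}^{\bd,r}_R$, so the explicit splitting applies to it directly, and the estimate \eqref{eq:decompose} for the pieces $z_j$ is exactly strong enough — since the exponent $1 - t/r$ is $\leq 0$ for $t \geq r$, the factor $p^{(1-n)(1-t/r)}$ stays bounded as $r'$ grows — to guarantee convergence of $\sum_j z_j$ in every $\tilde{\calR}^{[s,r']}_R$ with $r' \geq r$, not merely in $\tilde{\calR}^{[s,r]}_R$.
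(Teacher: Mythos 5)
Your proof is correct and follows essentially the same route as the paper's: split the Teichm\"uller expansion at $p^n$ for elements of $\tilde{\calR}^{\bd,r}_R$, establish the pointwise estimate \eqref{eq:decompose} termwise, and then handle general $x$ by writing it as a telescoping series of bounded increments and summing the splittings. The only cosmetic difference is that the paper normalizes its approximants to satisfy $\lambda(\alpha^t)(x_i)\leq p^{-i}\lambda(\alpha^t)(x)$ uniformly in $t\in[s,r]$, whereas you only use $\lambda(\alpha^r)(w_j)\leq\lambda(\alpha^r)(x)$ together with $w_j\to 0$; both suffice.
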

\begin{proof}
In case $x \in \tilde{\calR}^{\bd}_R$, write
$x = \sum_{i=m(x)}^\infty p^i[\overline{x}_i]$,
and put $y = \sum_{i=n}^\infty p^i[\overline{x}_i]$ and $z = y-x$.
This works because for $i \leq n-1$ and $t \geq r$,
\begin{equation} \label{eq:decompose2}
\lambda(\alpha^t)(p^i [\overline{x}_i])
= p^{-i} \alpha(\overline{x}_i)^t
= p^{-i(1-t/r)} \lambda(\alpha^r)(p^i [\overline{x}_i])^{t/r}
\leq p^{(1-n)(1-t/r)} \lambda(\alpha^r)(p^i [\overline{x}_i])^{t/r}.
\end{equation}
To handle the general case,
choose $x_0,x_1,\ldots \in \tilde{\calR}^{\bd,r}_R$ so that
\[
\lambda(\alpha^t)(x - x_0 - \cdots - x_i)
\leq p^{-i-1} \lambda(\alpha^t)(x) \qquad (i=0,1,\dots; \,t \in [s,r]).
\]
The series $\sum_{i=0}^\infty x_i$ converges to $x$
under $\lambda(\alpha^t)$ for $t \in [s,r]$, and $\lambda(\alpha^t)(x_i) \leq p^{-i}
\lambda(\alpha^t)(x)$ for $i=0,1,\dots$ and $t \in [s,r]$.
Split each $x_i$ as $y_i + z_i$ as above. Since the sum
$\sum_{i=0}^\infty y_i$ converges under $\lambda(\alpha^r)$
and consists of elements of $\tilde{\calR}^{\inte,r}_R$,
it converges under $\lambda(\alpha^t)$ for all $t \in (0,r]$
and defines an element $y$ of $\tilde{\calR}^{\inte,r}_R$.
Put $z = y - x$; then the series $\sum_{i=0}^\infty z_i$ converges
to $z$ under $\lambda(\alpha^t)$ for $t \in [s,r]$.
On the other hand, for $t \geq r$,
by \eqref{eq:decompose2} we have
\[
\lambda(\alpha^{t})(z_i) \leq p^{(1-n)(1-t/r)} \lambda(\alpha^r)(x_i)^{t/r}
\leq p^{(1-n)(1-t/r)} p^{-i(t/r)} \lambda(\alpha^r)(x)^{t/r}.
\]
Consequently, $\sum_{i=0}^\infty z_i$ also converges to $z$ under
$\lambda(\alpha^t)$, and \eqref{eq:decompose} holds.
\end{proof}

We will also have use for the following variant, where we separate in terms of $\alpha$ rather than
the $p$-adic norm.
\begin{lemma} \label{L:decompose variant}
For $c < 1$ and $0 < s \leq r$, each $x \in \tilde{\calR}^{[s,r]}_R$ can be written
as $y + z$ with $y \in \tilde{\calR}^{\bd,r}_R$, $z \in \tilde{\calR}^{[s,r]}_{R^+}$ and 
\begin{align*} 
\lambda(\overline{\alpha}^t)(y), \lambda(\overline{\alpha}^t)(z) \leq \lambda(\overline{\alpha}^t)(x) \qquad &(t \in [s,r]) \\
y \in p^n \tilde{\calR}^{\inte,r}_R \qquad& (n = \lceil - \log_p (\lambda(\overline{\alpha}^s)(x) c^{-s}) \rceil) \\
\lambda(\overline{\alpha}^t)(z) \leq c^{t-r} \lambda(\overline{\alpha}^r)(z) \qquad &(t > r).
\end{align*}
In particular, for any positive integer $a$, if we put $q= p^a$, then for all $t \in [s,r]$,
\begin{align*}
\lambda(\alpha^t)(y), \lambda(\alpha^t)(z) &\leq \lambda(\alpha^t)(x), \\
\lambda(\alpha^t)(\varphi^{-a}(y)) &\leq c^{-(q-1)t/q} \lambda(\alpha^t)(x), \\
\lambda(\alpha^t)(\varphi^a(z)) &\leq c^{(q-1)t} \lambda(\alpha^t)(x).
\end{align*}
\end{lemma}
\begin{proof}
By approximating $x$ as in the proof of Lemma~\ref{L:decompose}, it is enough to consider
the case $x \in \tilde{\calR}^{\bd,r}_R$. In this case, write $x = \sum_{i=m}^\infty p^i [\overline{x}_i]$,
let $y$ be the sum of $p^i [\overline{x}_i]$ over all indices $i$ for which $\alpha(\overline{x}_i) > c$,
and put $z = x-y$.
\end{proof}

As an immediate application of Lemma~\ref{L:decompose}, we extend Lemma~\ref{L:integral intersection}
as follows.
\begin{lemma} \label{L:intersection}
For $0 < s \leq s' \leq r \leq r'$, inside $\tilde{\calR}^{[s',r]}_R$
we have
\[
\tilde{\calR}_R^{[s,r]} \cap \tilde{\calR}_R^{[s',r']} = \tilde{\calR}_R^{[s,r']}.
\]
\end{lemma}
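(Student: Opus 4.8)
The inclusion $\tilde{\calR}_R^{[s,r']} \subseteq \tilde{\calR}_R^{[s,r]} \cap \tilde{\calR}_R^{[s',r']}$ is immediate from the definition, since the $\lambda(\alpha^t)$ for $t \in [s,r]$ (resp.\ $t \in [s',r']$) are among those for $t \in [s,r']$, and $\tilde{\calR}_R^{\bd,r'} \subseteq \tilde{\calR}_R^{\bd,r}$. So the content is the reverse inclusion: an element $x$ lying in both $\tilde{\calR}_R^{[s,r]}$ and $\tilde{\calR}_R^{[s',r']}$ must in fact lie in $\tilde{\calR}_R^{[s,r']}$, i.e.\ be approximable by elements of $\tilde{\calR}_R^{\bd,r'}$ under all the norms $\lambda(\alpha^t)$, $t \in [s,r']$.

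First I would apply Lemma~\ref{L:decompose} to $x$, viewed as an element of $\tilde{\calR}_R^{[s,r]}$, with any choice of integer $n$ (say $n=0$): this writes $x = y + z$ with $y \in \tilde{\calR}_R^{\inte,r}$ and $z \in \cap_{r'' \geq r} \tilde{\calR}_R^{[s,r'']}$, so in particular $z \in \tilde{\calR}_R^{[s,r']}$ already. Thus it suffices to prove that $y \in \tilde{\calR}_R^{[s,r']}$. Now $y = x - z$, and $z$ lies in $\tilde{\calR}_R^{[s',r']}$ (intersect the two facts just noted), while $x \in \tilde{\calR}_R^{[s',r']}$ by hypothesis; hence $y \in \tilde{\calR}_R^{\inte,r} \cap \tilde{\calR}_R^{[s',r']}$. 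The point is that $y$ is now an \emph{integral} element, so the splitting-into-Teichm\"uller-digits argument of Lemma~\ref{L:integral intersection} becomes available — but I must be careful that Lemma~\ref{L:integral intersection} as stated requires the lower radius in the $[\,\cdot\,,\cdot\,]$-interval to match the integral radius. Here $y \in \tilde{\calR}_R^{\inte,r}$ and we want to intersect with a $[s',r']$-completion where $s' \leq r$; so I would replace $\tilde{\calR}_R^{[s',r']}$ by the larger $\tilde{\calR}_R^{[r,r']}$ (legitimate since $y \in \tilde{\calR}_R^{\inte,r} \subseteq \tilde{\calR}_R^{[s',r]}$ gives, together with $y \in \tilde{\calR}_R^{[s',r']}$ and Lemma~\ref{L:hadamard relative}, membership in $\tilde{\calR}_R^{[r,r']}$ after checking the relevant norm bounds), and then invoke Lemma~\ref{L:integral intersection} to conclude $y \in \tilde{\calR}_R^{\inte,r'} \subseteq \tilde{\calR}_R^{[s,r']}$.

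The main obstacle is the bookkeeping in the last step: Lemma~\ref{L:integral intersection} is phrased for $\tilde{\calR}_R^{\inte,r} \cap \tilde{\calR}_R^{[r,r']}$, and I need to make sure $y$ genuinely lies in $\tilde{\calR}_R^{[r,r']}$ rather than merely in $\tilde{\calR}_R^{[s',r']}$ — i.e.\ that $y$ is approximable by elements of $\tilde{\calR}_R^{\bd,r'}$ under $\lambda(\alpha^t)$ for $t$ up to $r$, not just up to $r'$ starting from $s'$. This follows because $y \in \tilde{\calR}_R^{\inte,r} = \tilde{\calR}_R^{[r,r]}$ already handles $t \in (0,r]$ (the integral ring is Fr\'echet-complete for those norms with $\tilde{\calR}_R^{\bd,r'} \cap \tilde{\calR}_R^{\inte,r}$ dense, again via digit truncation as in Lemma~\ref{L:integral intersection}), and $y \in \tilde{\calR}_R^{[s',r']}$ handles $t \in [r,r']$; convexity (Lemma~\ref{L:hadamard relative}) glues these into a single $\max\{\lambda(\alpha^r),\lambda(\alpha^{r'})\}$-Cauchy approximation. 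Alternatively — and this may be cleaner — I would simply re-run the proof of Lemma~\ref{L:integral intersection} verbatim with its $r$ replaced by our $r$ and its $r'$ by our $r'$, applied to $y$, since that proof only uses $y \in \tilde{\calR}_R^{\inte,r}$ together with $\lambda(\alpha^r)$-approximability by $\tilde{\calR}_R^{\bd,r'}$-elements, both of which we have. Either way the estimate $p^{-l}p^{(l-j)r'/r} \le p^{1+(1-j)r'/r} \to 0$ as $j \to \infty$ is exactly what forces the truncated approximants $y_i$ to converge to $y$ under $\lambda(\alpha^{r'})$, completing the argument.
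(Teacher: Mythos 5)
Your proposal is correct and follows the same route as the paper's proof: decompose $x = y+z$ via Lemma~\ref{L:decompose}, observe $z \in \tilde{\calR}_R^{[s,r']}$ directly, and reduce the integral part $y$ to Lemma~\ref{L:integral intersection}. The bookkeeping you worry about (passing from $\tilde{\calR}_R^{[s',r']}$ to $\tilde{\calR}_R^{[r,r']}$) is handled exactly as you suggest, by the containment of the completion under the larger family of norms into the completion under the smaller one; the paper simply elides this step.
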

\begin{proof}
Given $x$ in the intersection, apply Lemma~\ref{L:decompose}
to write $x = y+z$ with $y \in \tilde{\calR}_R^{\inte,r}$,
$z \in \tilde{\calR}_R^{[s,r']}$. By Lemma~\ref{L:integral intersection},
\[
y = z - x \in \tilde{\calR}_R^{\inte,r}
\cap \tilde{\calR}_R^{[s',r']} = \tilde{\calR}_R^{\inte,r'}
\subseteq \tilde{\calR}_R^{[s,r']},
\]
so $x \in \tilde{\calR}_R^{[s,r']}$ as desired.
\end{proof}

\begin{lemma} \label{L:in plus ring}
Suppose that $R^+ = \gotho_R$.
\begin{enumerate}
\item[(a)]
An element $x \in \tilde{\calR}^\infty_R$ belongs to $\tilde{\calR}^+_R$ if and only if
\[
\limsup_{r \to +\infty} \lambda(\alpha^r)(x)^{1/r} \leq 1.
\]
\item[(b)]
For $r>0$ and $x \in \tilde{\calR}^r_R$, we have $x = y+z$ for some
$y \in \tilde{\calR}^{\bd,r}_R$ and $z \in \tilde{\calR}^+_R$.
\item[(c)]
We have
$\tilde{\calR}_R^{\inte,r} \cap \tilde{\calR}^+_R = \tilde{\calR}^{\inte,+}_R$.
\end{enumerate}
\end{lemma}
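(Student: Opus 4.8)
For part (a), recall $\tilde{\calR}^+_R$ is the Fréchet completion of $\tilde{\calR}^{\bd,+}_R = W(\gotho_R)[p^{-1}]$ under $\lambda(\alpha^s)$ for all $s>0$. First I would check the ``only if'' direction: any $x \in W(\gotho_R)[p^{-1}]$ can be written as $p^{-m}\sum_{i\geq 0} p^i[\overline{x}_i]$ with $\overline{x}_i\in\gotho_R$, so $\lambda(\alpha^r)(x) \leq p^m$ for all $r$, giving $\limsup_r \lambda(\alpha^r)(x)^{1/r}\leq 1$; this passes to the Fréchet limit since if $x_n\to x$ with each $x_n$ satisfying the bound and $\lambda(\alpha^r)(x-x_n)$ small, then $\lambda(\alpha^r)(x)$ is controlled. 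For the ``if'' direction, the plan is to take $x \in \tilde{\calR}^\infty_R$ with the $\limsup$ condition and approximate: choose $x_0,x_1,\dots\in\tilde{\calR}^{\bd,r_i}_R$ (for $r_i\to\infty$, or a fixed large $r$) converging to $x$, then split each $x_i$ into an integral part lying in $W(\gotho_R)[p^{-1}]$ and a ``fractional'' remainder whose norms decay, using exactly the Teichmüller-coordinate splitting from the proof of Lemma~\ref{L:bounded} (or Lemma~\ref{L:decompose}). The growth hypothesis is what forces the integral parts to converge under $\lambda(\alpha^s)$ for \emph{every} $s>0$ simultaneously, hence their limit lies in $\tilde{\calR}^+_R$, and the fractional parts go to zero.

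For part (b), I would run Lemma~\ref{L:decompose} with $n=1$: given $x\in\tilde{\calR}^r_R$, write $x = y_0 + z_0$ with $y_0\in\tilde{\calR}^{\inte,r}_R$ and $z_0 \in \cap_{r'\geq r}\tilde{\calR}^{[s,r']}_R$ satisfying the estimate \eqref{eq:decompose}, which for $n=1$ reads $\lambda(\alpha^t)(z_0)\leq \lambda(\alpha^r)(x)^{t/r}$ for $t\geq r$. The estimate shows $\limsup_{t\to\infty}\lambda(\alpha^t)(z_0)^{1/t}\leq \lambda(\alpha^r)(x)^{1/r}$, which is finite but possibly $>1$; to repair this I would first rescale by a power of $[\,\overline{u}\,]$ for a suitable element $\overline{u}$ of small norm, or more simply iterate: split off from $z_0$ a further integral piece. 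Actually the clean route is to take $n$ large in Lemma~\ref{L:decompose} so that $p^{(1-n)}\lambda(\alpha^r)(x)\leq 1$, forcing $\limsup_t\lambda(\alpha^t)(z)^{1/t}\leq 1$, hence $z\in\tilde{\calR}^+_R$ by part (a), while $y = p^n\cdot(\text{integral})\in\tilde{\calR}^{\inte,r}_R[p^{-1}]\subseteq\tilde{\calR}^{\bd,r}_R$. Here I should double-check $z\in\tilde{\calR}^\infty_R$, which follows since $z\in\tilde{\calR}^{[s,r']}_R$ for all $r'\geq r$ and $z = y-x\in\tilde{\calR}^r_R$ gives convergence under $\lambda(\alpha^s)$ for $s\in(0,r]$ as well.

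For part (c), the inclusion $\tilde{\calR}^{\inte,+}_R = W(\gotho_R)\subseteq \tilde{\calR}_R^{\inte,r}\cap\tilde{\calR}^+_R$ is immediate. Conversely, take $x\in\tilde{\calR}_R^{\inte,r}\cap\tilde{\calR}^+_R$. Using part (a), $\lambda(\alpha^t)(x)\leq C$ for a constant not growing exponentially; combined with $x\in\tilde{\calR}_R^{\inte,r}$ (so $\lambda(\alpha^r)(x)\leq 1$ after noting $x\in W(R)$ with the decay condition) and convexity of $t\mapsto\log\lambda(\alpha^t)(x)$ from Lemma~\ref{L:hadamard relative}, I would argue $\lambda(\alpha^t)(x)\leq 1$ for all $t$ — a bounded convex function on $(0,\infty)$ bounded by $1$ at $t=r$ and sub-unit-growth at infinity must be $\leq 1$ everywhere. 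Then $\lambda(\alpha^t)(x)\leq 1$ for all $t>0$ means every Teichmüller coordinate $\overline{x}_i$ satisfies $\alpha(\overline{x}_i)\leq 1$ (let $t\to 0$ kills the $p$-weights) and $x$ has no negative-index terms, so $x\in W(\gotho_R) = \tilde{\calR}^{\inte,+}_R$. The main obstacle I anticipate is part (b): getting the fractional remainder to actually land in $\tilde{\calR}^+_R$ rather than just in some $\tilde{\calR}^{[s,r']}_R$ for all $r'$ — reconciling the growth estimate from Lemma~\ref{L:decompose} with the characterization in part (a) requires being careful about whether one needs to iterate the splitting or can achieve it in one step by choosing $n$ appropriately, and making sure the resulting $z$ genuinely lies in $\tilde{\calR}^\infty_R$ so that part (a) applies.
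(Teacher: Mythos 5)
Your overall strategy coincides with the paper's: compute the $\limsup$ explicitly for bounded elements from the Teichm\"uller expansion, use Lemma~\ref{L:decompose} to split a general element into a bounded piece and a remainder whose growth for $t \geq r$ is controlled by \eqref{eq:decompose}, and combine. However, the parameter choice in your part (b) goes the wrong way, and the argument fails as written. From \eqref{eq:decompose} one gets $\lambda(\alpha^t)(z)^{1/t} \leq p^{(1-n)(1/t-1/r)}\lambda(\alpha^r)(x)^{1/r}$, which tends to $\left(p^{\,n-1}\lambda(\alpha^r)(x)\right)^{1/r}$ as $t \to +\infty$; so the condition you need is $p^{\,n-1}\lambda(\alpha^r)(x) \leq 1$, i.e., $n$ must be taken \emph{very negative} (the paper requires $p^{(n-1)/(2r)}\lambda(\alpha^r)(x)^{1/r}<1$ with $n<1$), not large positive as your inequality $p^{(1-n)}\lambda(\alpha^r)(x)\leq 1$ suggests. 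With $n$ large positive the exponent $(1-n)(1-t/r)$ is positive and grows linearly in $t$, and the remainder need not lie in $\tilde{\calR}^+_R$: for $x = p^{-1}[\overline{a}]$ with $\alpha(\overline{a})>1$ your recipe yields $y=0$ and $z=-x\notin\tilde{\calR}^+_R$. Relatedly, the Teichm\"uller computation proves (a) only for elements of $\tilde{\calR}^{\bd}_R\cap\tilde{\calR}^\infty_R$; the paper then proves (b), and deduces (a) in general by applying the bounded case to $y=x-z$. Your sketch of the ``if'' direction of (a) (``the fractional parts go to zero'') does not describe what actually happens---both pieces survive, one landing in $\tilde{\calR}^{\bd,+}_R$ by the explicit coordinate criterion and the other in $\tilde{\calR}^+_R$ by the decomposition estimates---so you should either adopt the paper's ordering or make the direct argument for (a) genuinely self-contained, taking care that the growth hypothesis is on $x$ and not on the individual approximants.

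In (c) there are two further slips. First, $\lambda(\alpha^r)(x)\leq 1$ is false for general $x\in\tilde{\calR}^{\inte,r}_R$: membership there only constrains the powers of $p$ to be nonnegative and imposes a decay condition, and the Teichm\"uller coordinates may well have norm exceeding $1$; consequently the convexity argument does not get off the ground (and convexity would in any case not control $t<r$). Second, to extract $\alpha(\overline{x}_i)\leq 1$ from $\lambda(\alpha^t)(x)^{1/t}\leq 1+\epsilon$ one must send $t\to+\infty$ (giving $\alpha(\overline{x}_i)\leq (1+\epsilon)p^{i/t}\to 1+\epsilon$), not $t\to 0$. Finally, before reading $\lambda(\alpha^{t})(x)$ for $t>r$ off the Teichm\"uller expansion you should first check that $x\in\tilde{\calR}^{\inte,t}_R$; the paper does this by combining $x\in\tilde{\calR}^{\inte,r}_R\cap\tilde{\calR}^{[r,t]}_R$ with Lemma~\ref{L:integral intersection}. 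With these corrections your plan becomes the paper's proof.
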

\begin{proof}
We first prove (a) for $x = \sum_{i=m}^\infty p^i [\overline{x}_i] \in \tilde{\calR}^{\bd}_R \cap \tilde{\calR}^\infty_R$, by observing that
\[
\limsup_{r \to +\infty} \lambda(\alpha^r)(x)^{1/r}
= \limsup_{r \to +\infty} \,\sup_i \{p^{-i/r} \alpha(\overline{x}_i)\}.
\]
If $x \in \tilde{\calR}^{\bd,+}_R$, then for some $m$, we can bound the quantity
$p^{-i/r} \alpha(\overline{x}_i)$ from above by $p^{m/r}$, and so the limit superior
in question is at most 1. Conversely, if $x \notin \tilde{\calR}^{\bd,+}_R$,
then there exists an index $i$ for which $\alpha(\overline{x}_i) > 1$; we can then find $\epsilon > 0$
so that $p^{-i/r} \alpha(\overline{x}_i) > 1 + \epsilon$ for $r$ large, so the limit superior
is at least $1 + \epsilon$.
This proves (a) for such $x$.

We next prove (b).
For $r>0$ and $x \in \tilde{\calR}^r_R$,
choose $n \in \ZZ$ so that $n<1$ and $p^{(n-1)/(2r)} \lambda(\alpha^r)(x)^{1/r} < 1$.
Set notation as in the proof of Lemma~\ref{L:decompose}; for $t \geq 2r$, we have
$\lambda(\alpha^t)(z_i)^{1/t} \leq p^{(1-n)(1/t - 1/r)} \lambda(\alpha^r)(x)^{1/r} \leq 1$,
so $z_i \in \tilde{\calR}^{\bd,+}_R$ by the previous paragraph.
Hence $z \in \tilde{\calR}^+_R$ as needed.

We next return to (a).
By the first paragraph, if $x \in \tilde{\calR}^+_R$, then
$\limsup_{r \to +\infty} \lambda(\alpha^r)(x)^{1/r} \leq 1$.
Conversely, if $x \in \tilde{\calR}^\infty_R$
and
$\limsup_{r \to +\infty} \lambda(\alpha^r)(x)^{1/r} \leq 1$,
apply (b) to write $x = y+z$ with $y \in \tilde{\calR}^{\bd}_R$ and $z \in \tilde{\calR}^+_R$.
We may then apply the first paragraph to $y$ to deduce that $x \in \tilde{\calR}^+_R$.

To deduce (c), use Lemma~\ref{L:integral intersection} to deduce that
$\tilde{\calR}_R^{\inte,r} \cap \tilde{\calR}^+_R \subseteq \cap_{s>0} \tilde{\calR}^{\inte,s}_R$,
then argue as in the proof of (a).
\end{proof}

\begin{cor} \label{C:twisted invariants}
For $n$ a nonnegative integer, $d$ a positive integer, $q = p^d$, and $r>0$, the inclusions
\begin{gather*}
\{x \in \tilde{\calR}^+_R: \varphi^d(x) = p^n x\}
\subseteq
\{x \in \tilde{\calR}_R: \varphi^d(x) = p^n x\}, \\
\{x \in \tilde{\calR}^+_R: \varphi^d(x) = p^n x\}
\subseteq
\{x \in \tilde{\calR}^{[r/q,r]}_R: \varphi^d(x) = p^n x\}
\end{gather*}
are bijective.
\end{cor}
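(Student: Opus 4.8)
The plan is to show that in each case, every element $x$ of the larger ring satisfying $\varphi^d(x) = p^n x$ already lies in $\tilde{\calR}^+_R$; since the natural maps $\tilde{\calR}^+_R \to \tilde{\calR}_R$ and $\tilde{\calR}^+_R \to \tilde{\calR}^{[r/q,r]}_R$ are injective (a nonzero element of $\tilde{\calR}^+_R \subseteq \tilde{\calR}^\infty_R$ has positive $\lambda(\alpha^t)$-norm for every $t > 0$, by Proposition~\ref{P:mu multiplicative} together with Lemma~\ref{L:hadamard}), this is exactly the surjectivity needed. I would first reduce the second inclusion to the first. Given $x \in \tilde{\calR}^{[r/q,r]}_R$ with $\varphi^d(x) = p^n x$, the identity $\lambda(\alpha^s)(\varphi(y)) = \lambda(\alpha^{ps})(y)$ shows that $\varphi^d$ carries $\tilde{\calR}^{[s,t]}_R$ into $\tilde{\calR}^{[s/q,t/q]}_R$, so $x = p^{-n}\varphi^d(x) \in \tilde{\calR}^{[r/q^2,r/q]}_R$; by Lemma~\ref{L:intersection}, $x$ then lies in $\tilde{\calR}^{[r/q,r]}_R \cap \tilde{\calR}^{[r/q^2,r/q]}_R = \tilde{\calR}^{[r/q^2,r]}_R$, and inductively $x \in \tilde{\calR}^{[r/q^k,r]}_R$ for all $k \geq 1$. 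A short completion argument — choose $y_k \in \tilde{\calR}^{\bd,r}_R$ with $\max\{\lambda(\alpha^r),\lambda(\alpha^{r/q^k})\}(x-y_k) \leq 2^{-k}$, and use Lemma~\ref{L:hadamard relative} to conclude $y_k \to x$ under $\lambda(\alpha^s)$ for every $s \in (0,r]$ — shows $\bigcap_{k\geq 1} \tilde{\calR}^{[r/q^k,r]}_R = \tilde{\calR}^r_R$, hence $x \in \tilde{\calR}^r_R \subseteq \tilde{\calR}_R$, and the first inclusion disposes of the rest.

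For the first inclusion, let $x \in \tilde{\calR}_R$ with $\varphi^d(x) = p^n x$ and fix $r > 0$ with $x \in \tilde{\calR}^r_R$. Since $\varphi^{-d}$ carries $\tilde{\calR}^r_R$ into $\tilde{\calR}^{qr}_R$ and $\varphi^{-d}(x) = p^{-n}x$, we get $x \in \tilde{\calR}^{qr}_R$; iterating, $x \in \tilde{\calR}^{q^k r}_R$ for all $k$, so $x \in \tilde{\calR}^\infty_R$. Consequently $\lambda(\alpha^t)(x)$ is finite for every $t > 0$ and the eigenvalue relation yields $\lambda(\alpha^{qt})(x) = p^{-n}\lambda(\alpha^t)(x)$ for all $t > 0$. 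Assuming $x \neq 0$, fix $t_0 > 0$ with $c_0 := \lambda(\alpha^{t_0})(x) > 0$. By Lemma~\ref{L:hadamard} the function $t \mapsto \log\lambda(\alpha^t)(x)$ is convex, hence on $[t_0,qt_0]$ it is bounded above by $\max\{\log c_0, \log c_0 - n\log p\} = \log c_0$ (using $n \geq 0$); applying the functional equation $k$ times, for $t \in [q^k t_0, q^{k+1}t_0]$ one gets $\log\lambda(\alpha^t)(x) = \log\lambda(\alpha^{t/q^k})(x) - nk\log p \leq \log c_0$. Thus $\lambda(\alpha^t)(x) \leq c_0$ for all $t \geq t_0$, so $\limsup_{t\to\infty}\lambda(\alpha^t)(x)^{1/t} \leq 1$, and Lemma~\ref{L:in plus ring}(a) gives $x \in \tilde{\calR}^+_R$, as desired.

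The substantive bookkeeping — and the step I expect to be the main obstacle — is not the convexity estimate (which is short once $x \in \tilde{\calR}^\infty_R$ is established) but the interval juggling in the reduction: tracking precisely which annulus rings $\varphi^{\pm d}$ maps between, invoking Lemma~\ref{L:intersection} with the correct parameters at each stage, and justifying the identification $\bigcap_k \tilde{\calR}^{[r/q^k,r]}_R = \tilde{\calR}^r_R$. Everything else is formal consequence of the Frobenius norm identity \eqref{eq:Frobenius norms} and the reality-check lemmas already proved.
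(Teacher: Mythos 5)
Your proof is correct and follows essentially the same route as the paper's: use the eigenvalue equation together with the action of $\varphi^{\pm d}$ on the annulus rings and Lemma~\ref{L:intersection} to place $x$ in $\tilde{\calR}^\infty_R$, then combine the functional equation $\lambda(\alpha^{qt})(x) = p^{-n}\lambda(\alpha^t)(x)$ with Lemma~\ref{L:in plus ring}(a) to conclude $x \in \tilde{\calR}^+_R$. The only differences are cosmetic — you justify $\bigcap_k \tilde{\calR}^{[r/q^k,r]}_R = \tilde{\calR}^r_R$ explicitly where the paper merely asserts it, and you package the final $\limsup$ estimate via convexity (Lemma~\ref{L:hadamard relative}) rather than the paper's direct computation over $r \in [s,qs]$.
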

\begin{proof}
Suppose first that $x \in \tilde{\calR}^\infty_R$ and $\varphi^d(x) = p^n x$.
For each $r>0$,
\[
\lambda(\alpha^{rq})(x) = \lambda(\alpha^r)(\varphi^d(x)) =
p^{-n} \lambda(\alpha^r)(x).
\]
It follows that for any fixed $s>0$,
\[
\limsup_{r \to +\infty} \lambda(\alpha^r)(x)^{1/r}
\leq \sup_{r \in [s, qs]} \{\limsup_{n \to +\infty}
p^{-n/(rq^n)} \lambda(\alpha^r)(x)^{1/(rq^n)} \} \leq 1,
\]
so if $R^+ = \gotho_R$ then
$x \in \tilde{\calR}^+_R$ by Lemma~\ref{L:in plus ring}(a).
To treat the general case, note that $x$ is now known to belong to the
Fr\'echet completion of $W(\gotho_R)[p^{-1}]$. Under the projection from
this ring to $W(\kappa_R)[p^{-1}]$, $x$ maps to zero if $n>0$ and to
$W(\kappa_R^{\overline{\varphi}^d})[p^{-1}]$ if $n = 0$; in either case
it follows that $x \in \tilde{\calR}^+_R$.

Given $x \in \tilde{\calR}_R$ for which $\varphi^d(x) = p^n x$, there exists $r>0$ for which $x \in \tilde{\calR}^r_R$,
but then $x = p^{-n} \varphi^{-d}(x) \in \tilde{\calR}^{rq}_R$. Consequently,
$x \in \tilde{\calR}_R^\infty$, so by the previous paragraph, $x \in \tilde{\calR}^+_R$.

Given $x \in \tilde{\calR}^{[r/q,r]}_R$ for which $\varphi^d(x) = p^n x$,
for each positive integer $m$ we also have $x \in \tilde{\calR}^{[r/q^m,r]}_R$.
Namely, this holds for $m=1$, and given the statement for some $m$,
we also have $x = p^{-n} \varphi^d(x) \in \tilde{\calR}^{[r/q^{m+1},r/q]}_R$,
so $x \in \tilde{\calR}^{[r/q^{m+1},r]}_R$ by Lemma~\ref{L:intersection}.
It follows that $x \in \tilde{\calR}^r_R$, so by the previous paragraph, $x \in \tilde{\calR}^+_R$.
\end{proof}

\begin{remark} \label{R:intersection}
Let $R \subseteq S' \subseteq S$ and $R \subseteq S'' \subseteq S$ be strict (and hence isometric, by Remark~\ref{R:perfect uniform strict})
inclusions
of perfect uniform Banach $\Fp$-algebras, and suppose that within $S$ we have
$S' \cap S'' = R$. It is easy to see that
\[
*_{S'} \cap *_{S''} = *_R
\qquad * = \tilde{\calE}, \tilde{\calR}^{\inte,r}, \tilde{\calR}^{\inte,+}, \tilde{\calR}^{\inte},
\tilde{\calR}^{\bd,r}, \tilde{\calR}^{\bd,+}, \tilde{\calR}^{\bd},
\]
with the intersection taking place within $*_S$;
namely, $W(S') \cap W(S'') = W(R)$ within $W(S)$.

Now suppose additionally that the morphism $S' \oplus S'' \to S$ taking $(s',s'')$ to $s'-s''$ is strict. Then 
\[
*_{S'} \cap *_{S''} = *_R
\qquad * =
\tilde{\calR}^{[s,r]}, \tilde{\calR}^{r}, \tilde{\calR}^+, \tilde{\calR}.
\]
If the inclusions are not strict, it is unclear
whether such maps as $*_R \to *_{S'}$ are even injective.
\end{remark}

\begin{remark}
It would be useful to have the following refinement of 
Lemma~\ref{L:bounded extended}: an element $x \in \tilde{\calR}_R$ belongs to $\tilde{\calR}^{\bd}_R$ if and only if for each $\beta \in \calM(R)$, the image of $x$ in $\tilde{\calR}_{\calH(\beta)}$ belongs to $\tilde{\calR}^{\bd}_{\calH(\beta)}$. However, it is unclear to us whether to expect this to hold.
\end{remark}

\subsection{Sheaf properties}
\label{subsec:sheaf properties}

We next investigate the sheaf-theoretic properties of the preceding constructions.

\begin{defn} \label{D:robba presheaves}
For $(R,R^+)$ a perfect uniform adic Banach algebra and
\[
* = \tilde{\calE}^{\inte}, \tilde{\calE}, \tilde{\calR}^{\inte,r}, \tilde{\calR}^{\inte,+}, \tilde{\calR}^{\inte},
\tilde{\calR}^{\bd,r}, \tilde{\calR}^{\bd,+}, \tilde{\calR}^{\bd},
\tilde{\calR}^{[s,r]}, \tilde{\calR}^{r}, \tilde{\calR}^+, \tilde{\calR},
\]
construct the presheaf $*$ on $\Spa(R,R^+)$ assigning to each open subset $U$
the inverse limit of $*_S$ over each rational localization $(R,R^+) \to (S,S^+)$
for which $\Spa(S,S^+) \subseteq U$.
\end{defn}

\begin{lemma} \label{L:Tate lemma}
With notation as in Lemma~\ref{L:Tate lemma1}, the sequence
\begin{equation} \label{eq:strict exact sequence}
0 \to R \to R_1 \oplus R_2 \to R_{12} \to 0
\end{equation}
remains exact, and the morphisms remain almost optimal, when $R_*$ is replaced by any
of $\tilde{\calE}^{\inte}_{R_*}$ or $\tilde{\calE}_{R_*}$ (for the $p$-adic norm),
$\tilde{\calR}^{\inte,r}_{R_*}$ (for the norm $\lambda(\alpha^r)$),
$\tilde{\calR}^{\bd,r}_{R_*}$ (for the maximum of $\lambda(\alpha^r)$ and the $p$-adic norm),
$\tilde{\calR}^{[s,r]}_{R_*}$ (for the norm $\max\{\lambda(\alpha^r), \lambda(\alpha^s)\}$),
or $\tilde{\calR}^r_{R_*}$ (omitting the statement about norms).
\end{lemma}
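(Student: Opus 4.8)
The plan is to reduce everything to the characteristic-$p$ statement of Lemma~\ref{L:Tate lemma1}(b) by building Witt-vector and completion layers on top of it, level by level. We are given a perfect uniform Banach $\Fp$-algebra $R$, an element $f \in R$, and $q>0$, with $R_1, R_2, R_{12}$ the Laurent localizations representing $\{\beta(f) \leq q\}$, $\{\beta(f) \geq q\}$, $\{\beta(f) = q\}$; by Remark~\ref{R:perfect uniform strict}(c) all of these are again perfect uniform Banach $\Fp$-algebras, and by Lemma~\ref{L:Tate lemma1}(b) the sequence $0 \to R \to R_1 \oplus R_2 \to R_{12} \to 0$ is almost optimal exact. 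The strategy is: first pass to $W(R_*)$ modulo $p^n$, then take the inverse limit to get $W(R_*)$, then invert $p$ to get $\tilde{\calE}_{R_*}$, then cut out $\tilde{\calR}^{\inte,r}_{R_*}$, then invert $p$ again for $\tilde{\calR}^{\bd,r}_{R_*}$, and finally Fréchet-complete to obtain $\tilde{\calR}^{[s,r]}_{R_*}$. At each stage we must preserve both exactness and almost optimality of the maps.

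First I would handle the mod-$p^n$ Witt layer. By the homogeneity of the Witt addition formula (Remark~\ref{R:addition formula}), an element of $W_n(R_*)$ is an $n$-tuple of elements of $R_*$, and the maps in the sequence act Teichmüller-coordinate-wise in a way compatible with the $\Fp$-algebra structure; since $\lambda(\alpha)$ on $W_n$ is the max of the rescaled norms $p^{-i}\alpha(\cdot)$ of the coordinates, almost optimality of $R \to R_1 \oplus R_2$ and of $R_1 \oplus R_2 \to R_{12}$ transfers coordinate-by-coordinate. Concretely, to lift $x = \sum_{i=0}^{n-1} p^i[\overline{x}_i] \in W_n(R_{12})$ almost optimally, lift each $\overline{x}_i$ almost optimally and take the Teichmüller sum of the lifts; the norm is controlled because the norm of the Witt sum is the max. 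Exactness: $W(-)$ commutes with the relevant kernel/cokernel computations because everything is $p$-torsion-free and the sequences of $\Fp$-modules are exact, so applying $W_n$ and using the explicit coordinate description preserves exactness. Taking the inverse limit over $n$ then gives the sequence for $W(R_*)$; the inverse limit of almost optimal surjections is again almost optimal because the transition maps are compatible and the norm is the sup over a single Gauss norm (no loss accumulates, unlike with merely strict maps — this is exactly the subtlety flagged in the proof of Lemma~\ref{L:Tate lemma1}). Inverting $p$ to pass to $\tilde{\calE}_{R_*}$ is immediate since the $p$-adic norm is unchanged on $W(R_*)[p^{-1}]$ and $\otimes_{\ZZ_p}\QQ_p$ is exact.

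Next I would cut down to $\tilde{\calR}^{\inte,r}_{R_*}$, which sits inside $W(R_*)$ as the subring of $x = \sum p^i[\overline{x}_i]$ with $p^{-i}\alpha(\overline{x}_i)^r \to 0$, equipped with $\lambda(\alpha^r)$. The point is that $\tilde{\calR}^{\inte,r}_{R_*} = W(R_*) \cap \tilde{\calR}^{\inte,r}_{R_*}$ is defined by a coordinatewise growth condition that is visibly preserved by the $\Fp$-algebra maps in our sequence (a lift that is built Teichmüller-coordinate-wise with controlled norms automatically satisfies the decay condition if the target element does), so both exactness and almost optimality restrict to this subring; the almost-optimal lifting of an element of $\tilde{\calR}^{\inte,r}_{R_{12}}$ is again the Teichmüller sum of coordinate lifts, whose $\lambda(\alpha^r)$-norm is the max of the rescaled norms. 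Inverting $p$ gives $\tilde{\calR}^{\bd,r}_{R_*}$ with the max of $\lambda(\alpha^r)$ and the $p$-adic norm, again for free. Finally, for $\tilde{\calR}^{[s,r]}_{R_*}$: this is the Fréchet completion of $\tilde{\calR}^{\bd,r}_{R_*}$ under $\lambda(\alpha^t)$ for $t\in[s,r]$, equivalently under $\max\{\lambda(\alpha^r),\lambda(\alpha^s)\}$ by Lemma~\ref{L:hadamard relative}. Exactness of the completed sequence follows from exactness and strictness (almost optimality) of the uncompleted one together with the fact that completing a strict exact sequence of normed groups yields a strict exact sequence — the key input being that almost optimality gives uniform control of lifts, so Cauchy sequences lift to Cauchy sequences. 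Almost optimality is preserved in the limit by the same telescoping argument used throughout: given $y$ in the completed $\tilde{\calR}^{[s,r]}_{R_{12}}$ and $\epsilon>0$, approximate $y$ by $y_0$ in $\tilde{\calR}^{\bd,r}_{R_{12}}$, lift $y_0$ almost optimally, correct by $y - y_0$ (small), lift the correction, and iterate; the limit lifts $y$ with norm $\le$ that of $y$, since at no step do we pay an implied constant $>1$.

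The main obstacle I anticipate is the passage from "strict exact" to "almost optimal exact" through each limiting operation — ordinary strictness (with an implied constant $c>1$) is notoriously unstable under infinite processes because constants multiply, which is precisely why Lemma~\ref{L:Tate lemma1}(b) has to be stated and proved with some care. The saving grace here, exploited repeatedly, is that in the perfect setting almost optimality is available (Remark~\ref{R:perfect uniform strict}) and that each of our intermediate rings carries a norm that is a supremum or maximum of norms built coordinatewise from the norm $\alpha$ on the $R_*$'s, so that lifting can always be done Teichmüller-coordinate-by-coordinate with no loss. Thus the whole argument really amounts to: (i) observe that every ring in sight is a coordinatewise-normed gadget built functorially from $R_*$, (ii) observe that the maps in the sequence are coordinatewise, and (iii) check that "almost optimal + exact" is inherited under the specific operations $W_n$, $\varprojlim_n$, $[p^{-1}]$, taking the $\tilde{\calR}^{\inte,r}$-subring, and Fréchet completion — each of which is routine given the no-loss lifting, but which should be spelled out in the order above. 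I would present it by first doing the $W(R_*)$ (equivalently $\tilde{\calE}_{R_*}$) case in full and then remarking that the $\tilde{\calR}^{\inte,r}_{R_*}$, $\tilde{\calR}^{\bd,r}_{R_*}$, and $\tilde{\calR}^{[s,r]}_{R_*}$ cases follow by the same coordinatewise lifting plus, for the last one, a standard argument that completion preserves almost optimal exactness.
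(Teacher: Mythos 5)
Your proposal is correct and is essentially the paper's intended argument — the paper's entire proof is the single sentence that the lemma is a straightforward consequence of Lemma~\ref{L:Tate lemma1}, and your write-up is a faithful elaboration of why. One caveat worth fixing: the maps $W(R) \to W(R_j)$ are indeed Teichm\"uller-coordinatewise, but the difference map $\psi_1 - \psi_2$ into $W(R_{12})$ is not, since $[\overline{a}] - [\overline{b}] \neq [\overline{a} - \overline{b}]$; so the single-step coordinatewise lift you describe at the $W_n$ stage only inverts the map up to an error of strictly smaller norm (controlled via the homogeneity of the Witt addition polynomials, Remark~\ref{R:addition formula}), and the successive-approximation iteration you invoke only for the Fr\'echet-completion step is in fact needed already for surjectivity onto $W(R_{12})$ and $\tilde{\calR}^{\inte,r}_{R_{12}}$. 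The same coordinatewise construction does, as you say, give almost-optimality simultaneously for all $\lambda(\alpha^t)$ with $t \in [s,r]$, which is what the $\tilde{\calR}^{[s,r]}$ case requires.
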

\begin{proof}
This is a straightforward consequence of Lemma~\ref{L:Tate lemma1} except for the case of $\tilde{\calR}^r_{R_*}$, for which we must separately check exactness on the right.
This calculation may be viewed as giving a  vanishing of a $\varprojlim^1$ term,
as suggested by Remark~\ref{R:Frechet-Stein}.

Given $x \in \tilde{\calR}^r_{R_{12}}$, we may construct elements $x_{n,j} \in \tilde{\calR}^{\bd,r}_{R_j}$ for $n=0,1,\dots$ and $j=1,2$ such that for
$y_n = x - \sum_{m=0}^{n-1} (x_{m,1} + x_{m,2})$, we have
\begin{align*}
\lambda(\alpha^s)(y_n) &\leq p^{-n} \lambda(\alpha^s)(x) \qquad (s \in [p^{-n} r, r]) \\
\lambda(\alpha^s)(x_{n,j}) &\leq (1 + p^{-n}) \lambda(\alpha^s)(y_n) \qquad (s \in [p^{-n} r, r]; j \in \{1,2\}). 
\end{align*}
Namely, given $y_n$, we split $y_n$ in $\tilde{\calR}^{[p^{-n-1}r,r]}_{R_*}$
using Lemma~\ref{L:Tate lemma1},
then approximate the results suitably well with elements of $\tilde{\calR}^{\bd, r}_{R_*}$.
The sums $\sum_{n=0}^\infty x_{n,j}$ for $j=1,2$ then converge and give the desired splitting of $x$.
\end{proof}

\begin{theorem} \label{T:robba presheaves}
All of the presheaves defined in Definition~\ref{D:robba presheaves} are 
sheaves. Moreover, for $* = \tilde{\calE}^{\inte}, \tilde{\calE}, \tilde{\calR}^{\inte,r}, \tilde{\calR}^{\inte},
\tilde{\calR}^{\bd,r}, \tilde{\calR}^{\bd},
\tilde{\calR}^{[s,r]}, \tilde{\calR}^{r}, \tilde{\calR}$, the resulting sheaf is acyclic (i.e., satisfies the Tate sheaf property).
\end{theorem}
\begin{proof}
By Proposition~\ref{P:acyclicity template}, we may deduce the claim from
Lemma~\ref{L:Tate lemma}.
\end{proof}

The Kiehl property is somewhat more elusive; we only obtain it for a few of the sheaves in question.
\begin{lemma} \label{L:Robba ring cover}
Choose $0 < s \leq r$.
\begin{enumerate}
\item[(a)]
A multiplicative seminorm $\beta$ on $\tilde{\calR}^{[s,r]}_R$ is dominated by $\max\{\lambda(\alpha^s), \lambda(\alpha^r)\}$ if and only if it is dominated by
$\lambda(\alpha^t)$ for some unique $t \in [s,r]$.
\item[(b)]
For $\beta,t$ as in (a), put
$\gamma = \mu(\beta)^{1/t} \in \calM(R)$. Then $\beta$ extends uniquely to a multiplicative seminorm on
$\tilde{\calR}^{[s,r]}_{\calH(\gamma)}$.
\end{enumerate}
\end{lemma}
\begin{proof}
We first address (a).
If $\beta$ is dominated by $\lambda(\alpha^t)$ for some $t \in [s,r]$,
then $\beta$ is dominated by $\max\{\lambda(\alpha^s), \lambda(\alpha^r)\}$
by Lemma~\ref{L:hadamard relative}.
Conversely, suppose that $\beta$ is dominated by $\max\{\lambda(\alpha^s), \lambda(\alpha^r)\}$. Write $R$ as a Banach algebra over some analytic field $K$,
and pick $\overline{\pi} \in K^\times$ with $\alpha(\overline{\pi}) < 1$. Then
$\beta([\overline{\pi}]) \in [\alpha^r(\overline{\pi}), \alpha^s(\overline{\pi})]$,
so there exists
$t \in [s,r]$ such that $\beta([\overline{\pi}]) = \alpha^t(\overline{\pi})$.
For $\overline{x} \in R$ such that $\alpha(\overline{x}) \leq 1$, we also have
$\beta([\overline{x}]) \leq 1$. If we take $\overline{x} = \overline{y}^m \overline{\pi}^{-n}$
for $m$ a positive integer and $n$ an arbitrary integer, we deduce that
if $\alpha(\overline{y})^m \alpha(\overline{\pi})^{-n} \leq 1$,
then $\beta([\overline{y}])^m \beta([\overline{\pi}])^{-n} \leq 1$.
That is, if $\alpha(\overline{y}) \leq \alpha(\overline{\pi})^{n/m}$,
then $\beta([\overline{y}]) \leq \alpha(\overline{\pi})^{nt/m}$.
Since $n/m$ can be chosen to be any rational number,
it follows that $\beta([\overline{y}]) \leq \alpha(\overline{y})^t$.

To deduce (b), note first that $\beta$ extends uniquely to the localization of $\tilde{\calR}^{[s,r]}_R$
at the multiplicative set consisting of $[\overline{x}]$ for each $x \in R \setminus \gothp_{\gamma}$,
and that this extension is dominated by $\max\{\lambda(\gamma^s), \lambda(\gamma^r)\}$. Then observe
that the separated completion under $\max\{\lambda(\gamma^s), \lambda(\gamma^r)\}$ of this localization
is precisely $\tilde{\calR}^{[s,r]}_{\calH(\gamma)}$.
\end{proof}

\begin{lemma} \label{L:Kiehl lemma perfect}
With notation as in Lemma~\ref{L:Tate lemma1}, for any $r>0$, the diagrams
\[
\xymatrix{ R \ar[r] \ar[d] & R_1 \ar[d] \\
R_2 \ar[r] & R_{12} 
}
\qquad
\xymatrix{ \tilde{\calR}^{\inte,r}_R \ar[r] \ar[d] & \tilde{\calR}^{\inte,r}_{R_1} \ar[d] \\
\tilde{\calR}^{\inte,r}_{R_2} \ar[r] & \tilde{\calR}^{\inte,r}_{R_{12}}
}
\]
are glueing squares
in the sense of Definition~\ref{D:glueing pair}. (Note that the rings in the second diagram are Banach rings by virtue of Remark~\ref{R:Robba is Tate}.)
\end{lemma}
\begin{proof}
Note that $R_2 \to R_{12}$ has dense image because $f$ is already invertible in $R_2$,
and that by construction, $\calM(R_1 \oplus R_2) \to \calM(R)$ is surjective.
Hence $R \to R_1, R \to R_2, R \to R_{12}$ form a glueing square by Lemma~\ref{L:Tate lemma}.
The other assertion follows similarly, keeping in mind Lemma~\ref{L:Robba ring cover} in order to get surjectivity
of $\calM(\tilde{\calR}^{\inte,r}_{R_1} \oplus \tilde{\calR}^{\inte,r}_{R_2}) \to \calM(\tilde{\calR}^{\inte,r}_{R})$.
\end{proof}

\begin{theorem} \label{T:Kiehl for integral}
The sheaves $\tilde{\calE}^{\inte}$ and $\tilde{\calR}^{\inte,r}$
satisfy the Kiehl glueing property.
\end{theorem}
\begin{proof}
We check both claims using Proposition~\ref{P:Tate reduction}.
The case of $\tilde{\calR}^{\inte,r}$ follows from
Lemma~\ref{L:Kiehl lemma perfect} and Proposition~\ref{P:glue projective}.
The case of $\tilde{\calE}^{\inte}$ does not follow in this manner (see Remark~\ref{R:no glueing}); we instead directly check the hypotheses of Lemma~\ref{L:Kiehl generic2}(b) as follows. Let $M_1, M_2, M_{12}$ be a finite projective glueing datum. Reducing modulo $p$, we obtain a finite projective glueing datum
for the rings $R_*$, which by Theorem~\ref{T:Tate-Kiehl analogue1}
arises from a finite projective module $\overline{M}$ over $R$. 
For any $\overline{\bv} \in \overline{M}$, we may lift $\overline{\bv}$ to an element $\bv$ in the kernel of $M_1 \oplus M_2 \to M_{12}$ by successive approximations: if $\bv \in M_1 \oplus M_2$ maps to $p^n \bw \in M_{12}$, we lift $\bw$ to $\bv' \in M_1 \oplus M_2$ and replace $\bv$ with $\bv - p^n \bv'$. In particular, we may lift a finite generating set of $\overline{M}$ to a generating set of $M_1$.
This verifies that the hypothesis of Lemma~\ref{L:Kiehl generic1} holds for finite projective glueing data; the fact that $\Spec(W(R_1) \oplus W(R_2))$ covers $\Maxspec(W(R))$ follows from Lemma~\ref{L:Kiehl lemma perfect} and 
the identification $\Maxspec(W(*)) \cong \Maxspec(*)$.
\end{proof}

\begin{remark} \label{R:no glueing}
The reason the proof of Theorem~\ref{T:Kiehl for integral} for the sheaf $\tilde{\calE}^{\inte}$ fails to follow the model of $\tilde{\calR}^{\inte,r}$ is that $\tilde{\calE}^{\inte}_{R_2}$ is not dense in $\tilde{\calE}^{\inte}_{R_{12}}$ for the $p$-adic topology, only for the weak topology. It should be possible to adapt the concept of a glueing square in Definition~\ref{D:glueing pair} to apply to a topology not specified in terms of a norm, but we did not verify this.

However, no such adaptation can exist for $\tilde{\calE}$ or $\tilde{\calR}^{\bd,r}$, because the Kiehl property fails in these cases;  see Example~\ref{exa:Tate curve}.
Somewhat confusingly, the rings $\tilde{\calE}_R$ and $\tilde{\calR}^{\bd,r}_R$ are themselves sheafy (Theorem~\ref{T:Kiehl for Robba}), but there is no inconsistency because localizations of these rings do not correspond directly to localizations of $R$ (again because of the density issue).
\end{remark}

While the proof of Theorem~\ref{T:Kiehl for integral} can be carried through for
$\tilde{\calR}^{[s,r]}$, it is more useful to deduce the corresponding statement from a much stronger glueing property. Theorem~\ref{T:Kiehl for Robba} and its proof are taken from the PhD thesis of Ryan Rodriguez \cite{rodriguez}.
\begin{lemma} \label{L:orthogonal Schauder basis}
Let $A$ be a Banach algebra over $\Qp$ and let $K$ be the completion of $\Qp(p^{p^{-\infty}})$. Then each element $x$ of $A \widehat{\otimes}_{\Qp} K$ admits a unique presentation as a convergent sum $\sum_i x_i \otimes p^i$, where $i$ runs over $\ZZ[p^{-1}] \cap [0,1)$; moreover, the tensor product norm of $x$ can be computed as
$\max_i \{\left| x_i \right| p^{-i} \}$.
\end{lemma}
\begin{proof}
This is a straightforward consequence of the fact that the $p^i$ form an orthogonal topological basis of $K$ over $\Qp$.
\end{proof}

\begin{theorem}[Rodriguez] \label{T:Kiehl for Robba}
For $0 < s \leq r$, the Banach rings $\tilde{\calE}_R$, $\tilde{\calR}^{\bd,r}_R$, and $\tilde{\calR}^{[s,r]}_R$ are relatively perfectoid.
In particular, by Theorem~\ref{T:Kiehl for preperfectoid},
any extension of one of these rings to an adic Banach ring is stably uniform and sheafy
and satisfies the Tate sheaf and Kiehl glueing properties.
\end{theorem}
\begin{proof}
We treat only the case of $\tilde{\calR}^{[s,r]}_R$ in detail, the other cases being easier (and not needed in what follows).
Let $K$ be the completion of $\Qp(p^{p^{-\infty}})$,
which is a perfectoid field (see Example~\ref{exa:lift analytic field}). We first check that $S = \tilde{\calR}^{[s,r]}_R \widehat{\otimes}_{\Qp} K$ is uniform.
For $t \in [s,r]$, let $\left| \bullet \right|_t$ be the tensor product
norm on $S$ induced by $\lambda(\alpha^t)$; we first check that $\left| \bullet \right|_t$ is power-multiplicative. 
It suffices to check the inequality $\left| x^2 \right|_t \geq \left|x \right|^2_t$ for $x$ running over a dense subset of $S$. We may thus assume that $x$ has the form $\sum_{i=1}^n [\overline{x}_i] \otimes p^{j_i}$ for some $\overline{x}_i \in R$, $j_i \in \ZZ[p^{-1}]$ such that the $\overline{x}_i$ are nonzero and the $j_i$ are pairwise distinct.
Note that in this case, $\left| \bullet \right|_u$ is well-defined for all $u>0$, not just $u \in [s,r]$. Let $T_x$ be the set of $u>0$ for which $\lambda(\alpha^u)(\overline{x}_i) p^{-j_i} = \lambda(\alpha^u)(\overline{x}_{i'}) p^{-j_{i'}}$ for some $i \neq i'$; this set is finite.
For each $u>0$ with $u \notin T_x$, there is a unique index $i$ maximizing
$\lambda(\alpha)^u(\overline{x}_i) p^{-j_i}$; if we put
$y = [\overline{x}_i] \otimes p^{j_i}$, then
$\left| y^2 \right|_u = \left| y \right|^2_u$
and $\left| y \right|_u > \left| x-y \right|_u$.
Using Lemma~\ref{L:orthogonal Schauder basis}, it follows easily that
\[
\left| x^2 \right|_u = \left| y^2 + (x+y)(x-y) \right|_u
= \left|y \right|^2_u = \left| x \right|^2_u.
\]
This proves the claim for the given $x$ and all $u \notin T_x$; however, 
Lemma~\ref{L:hadamard relative} implies that $\log \left| x \right|_u$ is a convex and hence continuous function of $u$, so we may interpolate the claim for $u = t$ even if $t \in T_x$.
We thus conclude that $\left| \bullet \right|_t$ is power-multiplicative for each $t \in [s,r]$, and so $\max\{\left| \bullet \right|_s, \left| \bullet \right|_r\}$ 
(which by log-convexity again is the norm induced by $\max\{\lambda(\alpha^s), \lambda(\alpha^r)\}$)
is also power-multiplicative. 
 In particular, $S$ is uniform.

We next check that for any perfectoid algebra $A$ for which $T = \tilde{\calR}^{[s,r]}_R \widehat{\otimes}_{\Qp} A$ is uniform, $T$ is also perfectoid. By 
Proposition~\ref{P:power inverse limit as set}, it suffices to show that 
$\varprojlim_{x \mapsto x^p} T$ generates a dense subring of $T$. 
Since $A$ is perfectoid, $\varprojlim_{x \mapsto x^p} A$ generates a dense subring of $A$; we obtain a dense subring of $T$ by adding the additional generator
$\theta([\overline{x}])$ for each $\overline{x} \in R$.

To conclude, note that $S$ is uniform (by the first paragraph) and hence perfectoid (by the second paragraph). For any perfectoid algebra $A$, it follows that $T$ is uniform (by Proposition~\ref{P:preperfectoid to relatively perfectoid}) and hence perfectoid (by the second paragraph again).
 \end{proof}

We can use Theorem~\ref{T:Kiehl for Robba} to deduce the Kiehl property for the sheaf $\tilde{\calR}^{[s,r]}$ by relating rational localizations of $R$ and $\tilde{\calR}^{[s,r]}_R$.

\begin{defn} \label{D:valuation projection}
Recall that
Lemma~\ref{L:Robba ring cover} defines a map $\calM(\tilde{\calR}^{[s,r]}_R) \to \calM(R)$. We lift this to a map of adic spectra as follows.

For $v$ a semivaluation on $\tilde{\calR}^{[s,r]}_R$, let $\beta$ be the associated seminorm and define $t$ and $\gamma$ as in Lemma~\ref{L:Robba ring cover}. Since $\beta$ extends uniquely to a multiplicative seminorm on $\tilde{\calR}^{[s,r]}_{\calH(\gamma)}$, $v$ extends to a semivaluation on this ring.
The set of $\overline{x} \in \calH(\gamma)$ for which $v([\overline{x}]) \leq 1$ is a valuation ring; we thus obtain a valuation $w$ on $R$.

Let $\tilde{\calR}^{[s,r],+}_R$ be the completion with respect to 
$\max\{\lambda(\alpha^s), \lambda(\alpha^r)\}$ of the 
subring of $\tilde{\calR}^{[s,r]}_R$
generated by those $x$ for which $\max\{\lambda(\alpha^s), \lambda(\alpha^r)\}(x) < 1$
and $[\overline{y}]$ for $\overline{y} \in R^+$. The previous paragraph then defines a map
$\mu: \Spa(\tilde{\calR}^{[s,r]}_R, \tilde{\calR}^{[s,r],+}_R) \to \Spa(R,R^+)$.
\end{defn}

\begin{lemma} \label{L:lift rational to Robba}
For $0 < s \leq r$ and $(R,R^+) \to (S,S^+)$ a rational localization,
the morphism $(\tilde{\calR}^{[s,r]}_R, \tilde{\calR}^{[s,r],+}_R) \to (\tilde{\calR}^{[s,r]}_S, \tilde{\calR}^{[s,r],+}_S)$ is a rational localization representing the inverse image of $\Spa(S,S^+)$ under the morphism
of Definition~\ref{D:valuation projection}.
\end{lemma}
\begin{proof}
Represent the rational subdomain represented by $(R,R^+) \to (S,S^+)$
as 
\[
\{v \in \Spa(R,R^+): v(\overline{f}_i) \leq v(\overline{g}) \quad (i=1,\dots,n)\}.
\]
Form the rational subspace
\[
U= \{v \in \Spa(\tilde{\calR}^{[s,r]}_R, \tilde{\calR}^{[s,r],+}_R): 
v([\overline{f}_i]) \leq v([\overline{g}_i]) \quad (i=1,\dots,n)\}.
\]
Let $\Spa(\tilde{\calR}^{[s,r]}_R, \tilde{\calR}^{[s,r],+}_R) \to (T,T^+)$ be
the corresponding rational localization. 
The image of $\Spa(\tilde{\calR}^{[s,r]}_S, \tilde{\calR}^{[s,r],+}_S)
\to \Spa(\tilde{\calR}^{[s,r]}_R, \tilde{\calR}^{[s,r],+}_R)$ is equal to $U$,
so by the universal property of rational localizations, we obtain a morphism
$(T,T^+) \to  (\tilde{\calR}^{[s,r]}_S, \tilde{\calR}^{[s,r],+}_S)$ which induces a bijection of adic spectra. 
By Theorem~\ref{T:Kiehl for Robba}, $(T,T^+)$ is uniform.
By Theorem~\ref{T:transform}, the morphism
$T \to \tilde{\calR}^{[s,r]}_S$ is isometric for the spectral norms;
however, the image of this morphism contains the dense subring generated over $\Qp$ by $[\overline{x}]$ for $\overline{x} \in S$.
This yields the claim.
\end{proof}

\begin{cor} \label{C:lift covering family}
For $0 < s \leq r$, if $\{(R,R^+) \to (R_i,R_i^+)\}_i$ 
is a rational covering, 
then $\{(\tilde{\calR}^{[s,r]}_R, \tilde{\calR}^{[s,r],+}_R) \to (\tilde{\calR}^{[s,r]}_{R_i}, \tilde{\calR}^{[s,r],+}_{R_i})\}_i$ is also a
rational covering.
\end{cor}

We now recover an analogue of Theorem~\ref{T:Kiehl for integral}.
\begin{theorem} \label{T:Kiehl for Robba2}
The sheaf $\tilde{\calR}^{[s,r]}$ on $\Spa(R,R^+)$ satisfies the Kiehl glueing property.
\end{theorem}
\begin{proof}
This is immediate from Theorem~\ref{T:Kiehl for Robba} and
Corollary~\ref{C:lift covering family}.
\end{proof}

From Theorem~\ref{T:Kiehl for Robba}, we also obtain a glueing result with respect to the interval $[s,r]$.

\begin{lemma} \label{L:interval localization}
Choose $0 < s \leq r$.
Let $K$ be a perfectoid analytic field containing $\Qp(p^{p^{-\infty}})$
with $|K^\times| = \RR^+$.
\begin{enumerate}
\item[(a)]
The tensor product norm on $\tilde{\calR}^{[s,r]}_R \widehat{\otimes}_{\Qp} K$
is power-multiplicative, and $\tilde{\calR}^{[s,r]}_R \widehat{\otimes}_{\Qp} K$ is perfectoid.
\item[(b)]
Choose $\overline{z} \in R$ with $\alpha(\overline{z}) < 1$ and $\alpha(\overline{z}) \alpha(\overline{z}^{-1}) = 1$ (possible because $R$ is a Banach algebra over an analytic field). 
Then for $0<s \leq s' \leq r' \leq r$,
$(\tilde{\calR}^{[s,r]}_R, \tilde{\calR}^{[s,r],+}_R) \widehat{\otimes}_{\Qp} K \to
(\tilde{\calR}^{[s',r']}_R, \tilde{\calR}^{[s',r'],+}_R) \widehat{\otimes}_{\Qp} K$
is the rational localization corresponding to
\[
\{v \in \Spa((\tilde{\calR}^{[s,r]}_R, \tilde{\calR}^{[s,r],+}_R) \widehat{\otimes}_{\Qp} K):
v([\overline{z}]) \in [\alpha(\overline{z})^{r'}, \alpha(\overline{z})^{s'}]
\}.
\]
\end{enumerate}
\end{lemma}
\begin{proof}
Part (a) follows from Theorem~\ref{T:Kiehl for Robba} and
Proposition~\ref{P:perfectoid tensor}. Part (b) follows by a similar argument as in the proof of Lemma~\ref{L:lift rational to Robba}.
\end{proof}
\begin{cor} \label{C:interval localization}
For $K$ as in Lemma~\ref{L:interval localization}, for $I, I_1,\dots,I_n$ closed subintervals of $(0, +\infty)$ satisfying $I = I_1 \cup \cdots \cup I_n$, 
\[
\{(\tilde{\calR}^{I}_R, \tilde{\calR}^{I,+}_R) \widehat{\otimes}_{\Qp} K \to
(\tilde{\calR}^{I_i}_R, \tilde{\calR}^{I_i,+}_R) \widehat{\otimes}_{\Qp} K\}_{i=1}^n
\]
is a rational covering.
\end{cor}

\begin{theorem} \label{T:Kiehl for interval}
For $I, I_1,\dots,I_n$ closed subintervals of $(0, +\infty)$ satisfying $I = I_1 \cup \cdots \cup I_n$, the morphism $\tilde{\calR}^{I}_R \to \oplus_i \tilde{\calR}^{I_i}_R$
is an effective descent morphism for the category of finite projective modules over uniform Banach rings.
\end{theorem}
\begin{proof}
Using Lemma~\ref{L:inject tensor}, we can tensor over $\Qp$ with a perfectoid field $K$
containing $\Qp(p^{p^{-\infty}})$ with $|K^\times| = \RR^+$; we may then deduce the claim from Theorem~\ref{T:Kiehl for Robba} and
Corollary~\ref{C:interval localization}.
Alternatively, one can reduce to the case $n=2$ and check directly that one gets a glueing square in the sense of Definition~\ref{D:glueing pair}:
condition (a) holds by Theorem~\ref{T:transform} (for exactness at the left), Lemma~\ref{L:intersection} (for exactness at the middle),
and Lemma~\ref{L:decompose variant} (for exactness at the right);
condition (b) is straightforward;
condition (c) holds by Lemma~\ref{L:Robba ring cover}.
\end{proof}

\subsection{Some geometric observations}
\label{subsec:geometric}

We mention some observations concerning the geometry
of the spaces $\calM(\tilde{\calR}^{[s,r]}_R)$, in the spirit of \cite{kedlaya-witt}.
These results will be used later to build relative Fargues-Fontaine curves; see \S\ref{subsec:relative FF}.

\begin{theorem}
Define $\lambda: \calM(R) \to \calM(\tilde{\calR}^{\inte,1}_R)$,
$\mu: \calM(\tilde{\calR}^{\inte,1}_R) \to \calM(R)$ as in Proposition~\ref{P:mu multiplicative}.
\begin{enumerate}
\item[(a)]
The maps $\lambda$ and $\mu$ are continuous. Moreover, the inverse image under either map of a finite
union of Weierstrass (resp.\ Laurent, rational) subdomains has the same form.
\item[(b)] For all $\beta \in \calM(R)$, $(\mu \circ \lambda)(\beta) = \beta$.
\item[(c)] For all $\gamma \in \calM(W(R))$, $(\lambda \circ \mu)(\gamma) \geq \gamma$.
\end{enumerate}
\end{theorem}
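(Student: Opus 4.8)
The plan is to recognize this as the relative analogue of \cite[Theorem~4.5]{kedlaya-witt}, where the only novelty is that $\alpha$ need not be trivial, and to reduce everything to the already-established pointwise multiplicativity statements of Proposition~\ref{P:mu multiplicative}. I would first set up the two maps: by Proposition~\ref{P:mu multiplicative}(b), if $\beta \in \calM(R)$ is dominated by the trivial norm (hence by $\alpha^s$ for all $s$, in particular by $\max\{\alpha^s,\alpha^r\}$ with $r=1$), then $\lambda(\beta)$ is a multiplicative seminorm on $\tilde{\calR}^{\inte,1}_R$ dominated by $\max\{\lambda(\alpha)^s,\lambda(\alpha)\}$, which by Lemma~\ref{L:hadamard relative} is just $\lambda(\alpha)$; so $\lambda(\beta) \in \calM(\tilde{\calR}^{\inte,1}_R)$. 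Wait — I should be careful: $\calM(R)$ as a Berkovich spectrum of the $\Fp$-Banach algebra $R$ consists of multiplicative seminorms bounded by $\alpha$, not by the trivial norm. So the correct reading is that $\lambda$ takes a seminorm $\beta \le \alpha$ to $\lambda(\beta)$, and since $\beta \le \alpha = \alpha^1 = \max\{\alpha^s,\alpha^1\}$ for $s \le 1$, Proposition~\ref{P:mu multiplicative}(b) applies verbatim with $r=1$, $s$ arbitrary in $(0,1]$. Dually, Proposition~\ref{P:mu multiplicative}(d) gives that for $\gamma \in \calM(\tilde{\calR}^{\inte,1}_R)$ (so $\gamma \le \lambda(\alpha)$), the seminorm $\mu(\gamma)$ on $R$ is multiplicative and dominated by $\alpha$, hence lies in $\calM(R)$.

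For parts (b) and (c), these are pure seminorm identities with no topology, and I would lift the proof of \cite[Theorem~4.5]{kedlaya-witt} directly. For (b): $(\mu\circ\lambda)(\beta)(\overline{x}) = \lambda(\beta)([\overline{x}]) = \beta(\overline{x})$, immediate from the defining formulas \eqref{eq:mu}-style, since $[\overline{x}] = \sum_i p^i[\overline{x}_i]$ with $\overline{x}_0 = \overline{x}$ and $\overline{x}_i = 0$ otherwise. For (c): this is exactly the final assertion of Proposition~\ref{P:mu multiplicative}(d), namely that $\gamma$ is dominated by $\lambda(\mu(\gamma))$; one should double check that "dominated by" here means $\gamma(y) \le \lambda(\mu(\gamma))(y)$ for all $y$ (no constant), which is how \cite[Theorem~4.5]{kedlaya-witt} phrases it, and is what $\ge$ in the theorem statement asserts. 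So (b) and (c) are essentially bookkeeping reductions to Proposition~\ref{P:mu multiplicative}.

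The substantive part is (a), the continuity and subdomain-preservation. For continuity of $\mu$: by Definition~\ref{D:Gelfand} a subbasis of $\calM(R)$ is given by $\{\beta : \beta(\overline{x}) \in I\}$ for $\overline{x} \in R$, $I \subseteq \RR$ open; its preimage under $\mu$ is $\{\gamma : \gamma([\overline{x}]) \in I\}$, which is open in $\calM(\tilde{\calR}^{\inte,1}_R)$ since $[\overline{x}] \in \tilde{\calR}^{\inte,1}_R$. For continuity of $\lambda$: the subbasic open set $\{\gamma : \gamma(y) \in I\}$ for $y = \sum_i p^i[\overline{y}_i] \in \tilde{\calR}^{\inte,1}_R$ has preimage $\{\beta : \sup_i p^{-i}\beta(\overline{y}_i) \in I\}$; here one needs that $\beta \mapsto \sup_i p^{-i}\beta(\overline{y}_i)$ is continuous, which follows because the sup is a locally finite sup of continuous functions (the tail terms are uniformly small since $y \in \tilde{\calR}^{\inte,1}_R$ forces $p^{-i}\alpha(\overline{y}_i)^1 \to 0$, hence $p^{-i}\beta(\overline{y}_i) \le p^{-i}\alpha(\overline{y}_i) \to 0$) — this is the one point requiring a small argument, and it is the same argument as in \cite[Theorem~4.5]{kedlaya-witt}. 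For the subdomain claim, I would check it on the explicit defining inequalities: a Weierstrass/Laurent/rational subdomain of $\calM(R)$ is cut out by conditions $\beta(\overline{f}_i) \le p_i$ (resp.\ with $\ge q_j$, resp.\ $\le p_i \beta(\overline{g})$), and pulling back along $\mu$ replaces $\beta(\overline{f}_i)$ by $\gamma([\overline{f}_i])$, giving a subdomain of the same type (using that $[\overline{f}_i], [\overline{g}] \in \tilde{\calR}^{\inte,1}_R$ and, for the rational case, that the unit-ideal generation lifts); for pulling back along $\lambda$, one expresses $\lambda(\beta)(y) = \max_i p^{-i}\beta(\overline{y}_i)$ and rewrites a single inequality on $\lambda(\beta)(y)$ as a finite conjunction (a max of finitely many relevant terms, the tail being controlled) of inequalities in the $\beta(\overline{y}_i)$, which again describes a finite union of subdomains of the same type — this combinatorial rewriting, exactly as in \cite[Corollary~7.2]{kedlaya-witt} and the cited results of \cite{kedlaya-witt}, is the main obstacle, though it is routine rather than deep. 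I would close by remarking that everything here specializes, when $\alpha$ is trivial and $r=1$, to \cite[Theorem~4.5]{kedlaya-witt}, so the reader may consult loc.\ cit.\ for the combinatorial details.
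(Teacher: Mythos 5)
Your proposal is correct and follows essentially the same route as the paper, which simply asserts that the proof of \cite[Theorem~4.5]{kedlaya-witt} carries over without change once the pointwise multiplicativity statements are supplied by Proposition~\ref{P:mu multiplicative}. The details you fill in (the tail estimate $p^{-i}\beta(\overline{y}_i) \leq p^{-i}\alpha(\overline{y}_i) \to 0$ for continuity of $\lambda$, and the reduction of (c) to the last assertion of Proposition~\ref{P:mu multiplicative}(d), noting that domination between power-multiplicative seminorms needs no constant) are exactly the points that make that transfer work.
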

\begin{proof}
The proof of \cite[Theorem~4.5]{kedlaya-witt} carries over without change.
\end{proof}

\begin{lemma} \label{L:surjective Robba}
Let $R \to S$ be a bounded homomorphism of perfect uniform Banach $\Fp$-algebras such that
$\calM(S) \to \calM(R)$ is surjective. Then for any $r>0$, the map
$\calM(\tilde{\calR}^{\inte,r}_S) \to \calM(\tilde{\calR}^{\inte,r}_R)$ is also surjective.
\end{lemma}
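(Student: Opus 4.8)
The plan is to reduce the surjectivity of $\calM(\tilde{\calR}^{\inte,r}_S) \to \calM(\tilde{\calR}^{\inte,r}_R)$ to the combination of the analysis of multiplicative seminorms on $\tilde{\calR}^{[r,r]}_R$ from Lemma~\ref{L:Robba ring cover} and the given surjectivity of $\calM(S) \to \calM(R)$. Note first that $\tilde{\calR}^{\inte,r}_R$ is dense in $\tilde{\calR}^{[r,r]}_R$ (and $\lambda(\alpha^r)$ restricts from the norm $\max\{\lambda(\alpha^r),\lambda(\alpha^r)\} = \lambda(\alpha^r)$), so by Remark~\ref{R:not Banach} the inclusion induces a homeomorphism $\calM(\tilde{\calR}^{[r,r]}_R) \cong \calM(\tilde{\calR}^{\inte,r}_R)$, and likewise over $S$. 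Thus it suffices to prove that $\calM(\tilde{\calR}^{[r,r]}_S) \to \calM(\tilde{\calR}^{[r,r]}_R)$ is surjective.

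First I would take $\beta \in \calM(\tilde{\calR}^{[r,r]}_R)$; since $\beta$ is dominated by $\lambda(\alpha^r)$, Lemma~\ref{L:Robba ring cover}(b) (with $s = r$) applies with the value $t = r$, producing $\gamma = \mu(\beta)^{1/r} \in \calM(R)$ and a unique extension of $\beta$ to a multiplicative seminorm on $\tilde{\calR}^{[r,r]}_{\calH(\gamma)}$. Next, by the surjectivity hypothesis on $\calM(S) \to \calM(R)$, choose $\delta \in \calM(S)$ restricting to $\gamma \in \calM(R)$. This gives a bounded homomorphism $\calH(\gamma) \to \calH(\delta)$ of analytic fields, hence a bounded homomorphism $\tilde{\calR}^{[r,r]}_{\calH(\gamma)} \to \tilde{\calR}^{[r,r]}_{\calH(\delta)}$; since $\calH(\delta)$ is nonzero, Theorem~\ref{T:nonempty spectrum} provides a multiplicative seminorm $\beta'$ on $\tilde{\calR}^{[r,r]}_{\calH(\delta)}$ bounded by $\lambda(\alpha_{\calH(\delta)}^r)$, and after pulling back along $\calH(\gamma) \to \calH(\delta)$ we may arrange that $\beta'$ restricts to $\beta$ on $\tilde{\calR}^{[r,r]}_{\calH(\gamma)}$ (using the uniqueness clause of Lemma~\ref{L:Robba ring cover}(b) together with a completed-tensor-product argument as in Lemma~\ref{L:fibre product}, to produce a common extension over $\tilde{\calR}^{[r,r]}_{\calH(\gamma)} \widehat{\otimes} \calH(\delta)$ and then restrict). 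Finally, restrict $\beta'$ along the natural map $\tilde{\calR}^{[r,r]}_S \to \tilde{\calR}^{[r,r]}_{\calH(\delta)}$ to obtain an element of $\calM(\tilde{\calR}^{[r,r]}_S)$ which by construction restricts to $\beta$ on $\tilde{\calR}^{[r,r]}_R$.

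The step I expect to be the main obstacle is the matching of the chosen seminorm $\beta'$ over $\calH(\delta)$ with the given $\beta$ once both are restricted to $\tilde{\calR}^{[r,r]}_R$; the subtlety is that $\calH(\gamma) \to \calH(\delta)$ need not be isometric on the nose relative to the normalizations used to define the $\lambda(\alpha^r)$-norms, so one must check that the limited multiplicativity relation $\beta([\overline{x}]) \le \alpha(\overline{x})^r$ established in the proof of Lemma~\ref{L:Robba ring cover}(a) transports correctly. This is handled by running the fibre-product construction: form $\tilde{\calR}^{[r,r]}_{\calH(\gamma)} \widehat{\otimes}_{\calH(\gamma)} \calH(\delta)$, note this is nonzero by Lemma~\ref{L:inject tensor}(a), pick any bounded multiplicative seminorm on it by Theorem~\ref{T:nonempty spectrum}, and observe that it restricts to $\beta$ on the left factor and to some element of $\calM(\tilde{\calR}^{[r,r]}_{\calH(\delta)})$ which in turn restricts, via $S \to \calH(\delta)$, to a preimage of $\beta$. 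Everything else is bookkeeping with the identifications $\calM(\tilde{\calR}^{[r,r]}_{(\cdot)}) \cong \calM(\tilde{\calR}^{\inte,r}_{(\cdot)})$ and Proposition~\ref{P:mu multiplicative}.
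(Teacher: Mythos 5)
Your reduction to $\tilde{\calR}^{[r,r]}_R$ fails at the first step, and the failure is not cosmetic. The ring $\tilde{\calR}^{\inte,r}_R$ is \emph{not} dense in $\tilde{\calR}^{[r,r]}_R$: the latter is the completion of $\tilde{\calR}^{\bd,r}_R = \tilde{\calR}^{\inte,r}_R[p^{-1}]$ under $\lambda(\alpha^r)$, and for instance $p^{-1}[\overline{u}]$ with $\alpha(\overline{u}) = \alpha(\overline{u}^{-1}) = 1$ has distance exactly $p$ from $W(R)$, so Remark~\ref{R:not Banach} does not apply. Worse, the two spectra genuinely differ: any $\gamma \in \calM(\tilde{\calR}^{[r,r]}_R)$ satisfies $\gamma(p)\gamma(p^{-1}) = 1$ with $\gamma(p) \leq p^{-1}$ and $\gamma(p^{-1}) \leq \lambda(\alpha^r)(p^{-1}) = p$, which forces $\gamma(p) = p^{-1}$; by contrast $\calM(\tilde{\calR}^{\inte,r}_R)$ contains, over each $\beta \in \calM(R)$, an entire fibre of $\mu$ (the object of study in Theorem~\ref{T:homotopy}), including points with $\gamma(p) = 0$ (e.g.\ $\sum_i p^i[\overline{x}_i] \mapsto \beta(\overline{x}_0)^r$, which is multiplicative and dominated by $\lambda(\alpha^r)$) and points with $0 < \gamma(p) < p^{-1}$. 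So your argument can at best produce preimages for the thin slice $\gamma(p) = p^{-1}$ of the target, not for all of $\calM(\tilde{\calR}^{\inte,r}_R)$.

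There is a second, independent obstruction: the completed tensor product $\tilde{\calR}^{[r,r]}_{\calH(\gamma)} \widehat{\otimes}_{\calH(\gamma)} \calH(\delta)$ on which your fibre-product step rests does not exist, because $\tilde{\calR}^{[r,r]}_{\calH(\gamma)}$ is not an algebra over the characteristic-$p$ field $\calH(\gamma)$ (the Teichm\"uller map is only multiplicative), so Lemma~\ref{L:inject tensor} and the argument of Lemma~\ref{L:fibre product} are unavailable. This missing base-change structure over the characteristic-$p$ ring is precisely the difficulty the lemma must overcome. The paper's proof does so by adjoining a variable: with $\gotho = \gotho_{\calH(\mu(\gamma))}$ and $\gotho'$ the completion of (the perfection of) $\gotho[T]$ for the $p^{-1}$-Gauss norm, one extends $\gamma$ from $W(\gotho)$ to a seminorm $\gamma'$ on $W(\gotho')$ satisfying $\gamma'(p-T)=0$, which exhibits $\gamma$ as the restriction of the quotient seminorm on $W(\gotho')/(p-T)$ induced by $\lambda(\mu(\gamma'))$ --- i.e.\ as arising functorially from a seminorm on the characteristic-$p$ ring $\gotho'$. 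Since $\tilde{\gotho}' = \tilde{\gotho}\widehat{\otimes}_{\gotho}\gotho'$ is a legitimate (characteristic-$p$) completed tensor product, $\calM(\tilde{\gotho}') \to \calM(\gotho')$ is surjective, and one lifts there and runs the construction backwards. Some device of this kind is needed; the shortcut over $\calH(\gamma)$ is not.
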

\begin{proof}
Equip $R[T]$ and $S[T]$ with the $p^{-1}$-Gauss norm,
and let $R'$ and $S'$ be the completions of $R[T]^{\perf}$ and $S[T]^{\perf}$.
We may then identify $S'$ with $S \widehat{\otimes}_R R'$; since $\calM(S) \to \calM(R)$ is surjective, so is
$\calM(S') \to \calM(R')$ by \cite[Lemma~1.20]{kedlaya-witt}.

Given $\gamma \in \calM(\tilde{\calR}^{\inte,r}_R)$,
put $\beta = \mu(\gamma)$ and $\gotho = \gotho_{\calH(\beta)}$,
extend $\gamma$ to $\tilde{\calR}^{\inte,r}_{\calH(\beta)}$ by continuity,
then restrict to $W(\gotho)$.
Let $\gotho'$ be the completed perfect closure of $\gotho[T]$ for the $p^{-1}$-Gauss norm;
as in \cite[Definition~7.5]{kedlaya-witt}, we may extend $\gamma$ from
$W(\gotho)$ to a seminorm $\gamma'$ on $W(\gotho')$ in such a way that $\gamma'(p-T) = 0$.
By \cite[Remark~5.14]{kedlaya-witt}, this extension computes the quotient norm
on $W(\gotho')/(p-T)$ induced by $\lambda(\beta')$ for $\beta' = \mu(\gamma')$.

Choose $\tilde{\beta} \in \calM(S)$ lifting $\beta$, put
$\tilde{\gotho} = \gotho_{\calH(\tilde{\beta})}$,
and let $\tilde{\gotho}'$ be the completion of $\tilde{\gotho}[T]$ for the $p^{-1}$-Gauss norm.
We may then identify $\tilde{\gotho}'$ with $\tilde{\gotho} \widehat{\otimes}_{\gotho} \gotho'$; by
\cite[Lemma~1.20]{kedlaya-witt}, the map $\calM(\tilde{\gotho}') \to \calM(\gotho')$ is surjective.

We can thus lift $\beta'$ to a seminorm $\tilde{\beta}'$ on $\tilde{\gotho'}$.
Let $\tilde{\beta}'$ be the quotient norm on $W(\tilde{\gotho}')/(p-T)$ induced by
$\lambda(\tilde{\beta}')$, viewed as a seminorm on $W(\tilde{\gotho}')$. We may then restrict
$\tilde{\beta}'$ to a seminorm $\tilde{\beta}$ on $W(\tilde{\gotho})$,
extend multiplicatively to $\tilde{\calR}^{\inte,r}_{\calH(\tilde{\beta})}$,
then restrict to $\tilde{\calR}^{\inte,r}_S$. This proves the claim.
\end{proof}

\begin{defn}
Choose $r>0$ and $\gamma \in \tilde{\calR}^{\inte,r}_R$. For $\beta = \mu(\gamma)^{1/r}$,
we may extend $\gamma$ to $\tilde{\calR}^{\inte,r}_{\calH(\beta)}$ and then restrict
to $W(\gotho_{\calH(\beta)})$. We may then define the multiplicative seminorm $H(\gamma,t)$
on $W(\gotho_{\calH(\beta)})$ as in \cite[Definition~7.5]{kedlaya-witt},
extend multiplicatively to $\tilde{\calR}^{\inte,r}_{\calH(\beta)}$,
then restrict back to $\tilde{\calR}^{\inte,r}_R$. From \cite[Theorem~7.8]{kedlaya-witt},
the construction has the following properties.
\begin{enumerate}
\item[(a)]
We have $H(\gamma,0) = \gamma$.
\item[(b)]
We have $H(\gamma,1) = (\lambda \circ \mu)(\gamma)$.
\item[(c)]
For $t\in [0,1]$, $\mu(H(\gamma,t)) = \mu(\gamma)$.
\item[(d)]
For $t,u\in [0,1]$, $H(H(\gamma,t),u) = H(\gamma,\max\{t,u\})$.
\end{enumerate}
\end{defn}

\begin{theorem} \label{T:homotopy}
For any $r>0$, the map $H: \calM(\tilde{\calR}^{\inte,r}_R) \times [0,1] \to \calM(\tilde{\calR}^{\inte,r}_R)$ is continuous.
\end{theorem}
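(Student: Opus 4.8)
The plan is to reduce the continuity of $H$ to the continuity of the corresponding map on each fibre of $\mu$, exactly as in the proof of \cite[Theorem~7.8]{kedlaya-witt} for the trivial norm, and then to patch these fibrewise statements together using the surjectivity results established above. First I would recall that by construction, $H(\gamma,t)$ depends on $\gamma$ only through the pair $(\beta,\gamma)$ with $\beta = \mu(\gamma)^{1/r}$, where $\gamma$ is first extended to $\tilde{\calR}^{\inte,r}_{\calH(\beta)}$, restricted to $W(\gotho_{\calH(\beta)})$, passed through the homotopy of \cite[Definition~7.5]{kedlaya-witt}, and then pushed back. So the essential content is: (i) for fixed $\beta$, the map $H: \calM(W(\gotho_{\calH(\beta)})) \times [0,1] \to \calM(W(\gotho_{\calH(\beta)}))$ is continuous, which is precisely \cite[Theorem~7.8]{kedlaya-witt} applied to the analytic field $\calH(\beta)$ with a nontrivial norm (or trivial, in which case $\tilde{\calR}^{\inte,r}$ collapses and the statement is vacuous by Remark~\ref{R:phi-invariant}); and (ii) the assignment $\beta \mapsto \gotho_{\calH(\beta)}$, and the attendant extension-and-restriction operations, vary continuously in a suitable sense.

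The cleanest way to organize (ii) is to work with an explicit subbasis for the topology on $\calM(\tilde{\calR}^{\inte,r}_R)$, namely the sets $\{\delta : \delta(x) \in I\}$ for $x \in \tilde{\calR}^{\inte,r}_R$ and $I \subseteq \RR$ open. Given a convergent net $(\gamma_i,t_i) \to (\gamma,t)$ in $\calM(\tilde{\calR}^{\inte,r}_R) \times [0,1]$, I would fix $x = \sum_{n} p^n [\overline{x}_n] \in \tilde{\calR}^{\inte,r}_R$ and estimate $|H(\gamma_i,t_i)(x) - H(\gamma,t)(x)|$. Since $H(\delta,t)(x)$ is controlled by the finitely many terms $p^n [\overline{x}_n]$ with $p^{-n}\alpha(\overline{x}_n)^r$ not too small (the tail being uniformly small by the growth condition defining $\tilde{\calR}^{\inte,r}_R$), this reduces to the case $x = [\overline{x}]$ for a single $\overline{x} \in R$; but then $H(\delta,t)([\overline{x}])$ is computed from $\mu(\delta)(\overline{x}) = \delta([\overline{x}])$ by the one-variable formula of \cite[Definition~7.5]{kedlaya-witt}, and $\delta \mapsto \delta([\overline{x}])$ is continuous by definition of the spectrum topology. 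The only remaining subtlety is that the homotopy for Teichm\"uller elements in \cite[Definition~7.5]{kedlaya-witt} involves not just $\delta([\overline{x}])$ but also the interaction with $p$; here I would invoke property (c) (that $\mu(H(\gamma,t)) = \mu(\gamma)$ is independent of $t$) together with the explicit description of $H(\gamma,t)$ on a general Witt vector as a maximum over $n$ of $t$-interpolated quantities built from $\gamma(p^n[\overline{x}_n])$, so that joint continuity in $(\gamma,t)$ follows from joint continuity of the real-valued interpolation map $(\gamma([\cdot]),t) \mapsto \gamma([\cdot])^{?} p^{?}$ on the relevant compact parameter ranges.

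The main obstacle I anticipate is handling the passage through $\calH(\beta)$ uniformly in $\beta$: the field $\calH(\beta)$ and its valuation ring $\gotho_{\calH(\beta)}$ change with $\beta$, so one cannot literally apply \cite[Theorem~7.8]{kedlaya-witt} once and for all. The remedy, following the structure of \S\ref{subsec:geometric}, is to avoid this entirely by never leaving $\tilde{\calR}^{\inte,r}_R$: one checks directly that the formula defining $H(\gamma,t)(x)$, when written out in terms of $\gamma(p^n[\overline{x}_n])$ for $x = \sum_n p^n[\overline{x}_n]$, is the same formula that \cite[Theorem~7.8]{kedlaya-witt} shows to be continuous, with the only inputs being the (continuous, by construction) evaluations $\delta \mapsto \delta(p^n[\overline{x}_n])$ and the (continuous) real interpolation in $t$. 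Thus the theorem becomes a formal consequence of \cite[Theorem~7.8]{kedlaya-witt} applied to the universal situation and the observation that all the constructions in sight are built from spectrum evaluations; Lemma~\ref{L:surjective Robba} is then only needed if one instead wants the fibrewise reduction, and can be cited as an alternative route. I would write the argument in the first (direct) style to keep it short, remarking that it "carries over without change" from \cite{kedlaya-witt} modulo the extension of $\lambda,\mu$ to nontrivial norms already recorded in Proposition~\ref{P:mu multiplicative}.
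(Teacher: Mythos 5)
Your primary (``direct'') route has a genuine gap. You claim that $H(\gamma,t)(x)$, for $x = \sum_n p^n[\overline{x}_n]$, can be written as an explicit jointly continuous expression in $t$ and the evaluations $\gamma(p^n[\overline{x}_n])$. This is already false at $t=0$: by property (a) of the construction, $H(\gamma,0) = \gamma$, and $\gamma(x)$ for a general element of $\tilde{\calR}^{\inte,r}_R$ is \emph{not} determined by the values of $\gamma$ on the individual terms $p^n[\overline{x}_n]$ (it is only bounded above by their supremum). The whole reason the definition routes through $\calH(\beta)$ and the auxiliary ring $\gotho'$ (the completion of $\gotho[T]$ for the $p^{-1}$-Gauss norm, with $\gamma'(p-T)=0$, as recalled in the proof of Lemma~\ref{L:surjective Robba}) is that no such closed formula on Teichm\"uller components exists; so ``the same formula carries over without change'' cannot be the argument.

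Your fallback (fibrewise over $\beta = \mu(\gamma)^{1/r}$, then ``patch using surjectivity'') is also insufficient as stated: continuity of $H$ on each fibre of $\mu$ does not imply continuity on the total space, and Lemma~\ref{L:surjective Robba} is a surjectivity statement for the map induced by a ring homomorphism $R \to S$, not a statement about the fibres of $\mu$. The paper instead makes a single global reduction: take $S$ to be the completion of $R[T^{\pm}]^{\perf}$ for the $p^{-1}$-Gauss norm. In $S$ every element is a unit times an element of $\gotho_S$, so $\calM(\tilde{\calR}^{\inte,r}_S)$ is identified with a closed subspace of $\calM(W(\gotho_S))$; the norm on $\gotho_S$ is dominated by the trivial norm, so $\calM(W(\gotho_S))$ in turn embeds as a closed subspace of $\calM(W(T))$ for $T$ a copy of $\gotho_S$ with the trivial norm, where \cite[Theorem~7.8]{kedlaya-witt} applies verbatim. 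This gives continuity of $H$ on $\calM(\tilde{\calR}^{\inte,r}_S) \times [0,1]$; one then descends along $\calM(\tilde{\calR}^{\inte,r}_S)\to\calM(\tilde{\calR}^{\inte,r}_R)$, which is surjective by Lemma~\ref{L:surjective Robba} and hence a quotient map of compact spaces by Remark~\ref{R:compact spaces}(b), forcing the horizontal arrow in the resulting triangle to be continuous. This quotient-map descent along a single surjection, rather than fibrewise patching, is the piece your proposal is missing.
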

\begin{proof}
Equip $R$ with the spectral norm.
For the trivial norm on $\gotho_R$, the map $H: \calM(W(\gotho_R)) \times [0,1] \to \calM(W(\gotho_R))$ is continuous by
\cite[Theorem~7.8]{kedlaya-witt}. 
By identifying $\calM(\tilde{\calR}^{\inte,r}_R)$
with a closed subspace of $\calM(W(\gotho_R))$, we deduce the claim.
\end{proof}

\begin{remark}
One can go further with analysis of this sort; for instance, one can show that the fibres of $\mu$ bear a strong resemblance to the spectra of one-dimensional
affinoid algebras over an analytic field. See \cite[\S 8]{kedlaya-witt}.
\end{remark}

\begin{prop} \label{P:radius fibration}
Define the topological space
\[
T_R = \bigcup_{0 < s < r} \calM(\tilde{\calR}^{[s,r]}_R).
\]
\begin{enumerate}
\item[(a)]
For each $\beta \in T_R$, there is a unique value $t \in (0, +\infty)$ for which $\alpha^t$ dominates $\mu(\beta)$
(or equivalently, $\lambda(\alpha^t)$ dominates $\beta$).
\item[(b)]
Let $t: T_R \to (0, +\infty)$ be the map described in (a).
Then the formula $\beta \mapsto (\mu(\beta)^{1/t(\beta)}, t(\beta))$ defines a continuous map
$T_R \to \calM(R) \times (0,+\infty)$. In particular, $t$ is continuous.
\item[(c)]
The group $(\varphi^*)^\ZZ$ acts properly discontinuously on $T_R$
with compact quotient $X_R$. (Note that the map in (b) induces a continuous map
$X_R \to \calM(R) \times S^1$.)
\item[(d)]
The map $T_R \to \calM(R) \times (0,+\infty)$
is a strong deformation retract, and induces a strong deformation retract
$X_R \to \calM(R) \times S^1$.
\end{enumerate}
\end{prop}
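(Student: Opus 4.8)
The plan is to establish the four parts in sequence, with (a) and (b) being the foundational calculations and (c), (d) following from general topological principles combined with the homotopy machinery already set up. For part (a), I would start from the observation that $\beta \in T_R$ means $\beta$ is a bounded multiplicative seminorm on $\tilde{\calR}^{[s,r]}_R$ for some $0 < s < r$, hence dominated by $\max\{\lambda(\alpha^s),\lambda(\alpha^r)\}$. By Lemma~\ref{L:Robba ring cover}(a), there exists $t \in [s,r]$ such that $\beta$ is dominated by $\lambda(\alpha^t)$, equivalently $\mu(\beta)$ is dominated by $\alpha^t$. Uniqueness requires the hypothesis that $R$ is free of trivial spectrum: I would use that there exists $\alpha \in \calM(R)$ with nontrivial norm on $\calH(\alpha)$, so that $\mu(\beta)$ restricted to $\calH(\gamma)$ for appropriate $\gamma$ is genuinely nontrivial, and then $\mu(\beta) = \gamma^t$ pins down $t$ uniquely. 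More directly: pick any $\overline{x} \in R$ with $0 < \mu(\beta)(\overline{x}) < 1$ (possible since $R$ is free of trivial spectrum, by Lemma~\ref{L:small norm elements} applied suitably); then $t = \log \mu(\beta)(\overline{x}) / \log \alpha(\overline{x})$ is forced, giving uniqueness. One must check independence of the choice of $\overline{x}$, which follows from multiplicativity of both $\mu(\beta)$ and $\alpha$.

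For part (b), continuity of $\beta \mapsto t(\beta)$ is the crux. I would argue that $t(\beta)$ can be recovered as a limit or supremum of ratios $\log \beta([\overline{x}])/\log \lambda(\alpha)([\overline{x}]) = \log \mu(\beta)(\overline{x})/\log\alpha(\overline{x})$ over $\overline{x} \in R$ with $\alpha(\overline{x}) < 1$, and each such ratio is continuous in $\beta$ (being a ratio of continuous functions, with denominator bounded away from $0$ on the locus where $\alpha(\overline{x})$ is a fixed value less than $1$). Taking the supremum over a suitable family and using compactness arguments to make this locally a finite sup, one gets continuity; alternatively, one observes $t(\beta)$ is characterized by $\lambda(\alpha^{t(\beta)})$ dominating $\beta$ but $\lambda(\alpha^{t'})$ not dominating $\beta$ for $t' < t(\beta)$, which is an intersection of an open and a closed condition. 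Once $t$ is continuous, $\mu(\beta)^{1/t(\beta)}$ is continuous into $\calM(R)$ since $\mu$ is continuous (by the theorem in this subsection extending \cite[Theorem~4.5]{kedlaya-witt}) and exponentiation of seminorms by a continuous positive function is continuous.

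For part (c), the action of $\varphi^*$ on $T_R$ satisfies $t(\varphi^*(\beta)) = p\, t(\beta)$ by the identity \eqref{eq:Frobenius norms}, so $\varphi^*$ scales the $t$-coordinate by $p$; since the $t$-coordinate lives in $(0,+\infty)$ on which multiplication by $p$ acts properly discontinuously with quotient $S^1 = (0,+\infty)/p^{\ZZ}$, and since $\varphi^*$ commutes with the projection to $\calM(R)$, the action on $T_R$ is properly discontinuous. Compactness of the quotient $X_R$ follows because $X_R$ fibers over $\calM(R) \times S^1$ and, working over a fundamental domain such as $\{t \in [r_0, pr_0]\}$ for a fixed $r_0$, the relevant slice is $\calM(\tilde{\calR}^{[r_0, pr_0]}_R)$ which is compact. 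For part (d), I would build the strong deformation retract explicitly from the homotopy $H$: the map $H(\cdot, t)$ from Theorem~\ref{T:homotopy} deforms $\gamma \in \calM(\tilde{\calR}^{\inte,r}_R)$ to $(\lambda\circ\mu)(\gamma)$, and property (d) of $H$ gives the semigroup law making this a strong deformation retraction of $\calM(\tilde{\calR}^{[s,r]}_R)$ onto the image of $\lambda$, which is identified with $\calM(R) \times \{t\}$; assembling these over varying $t$ and checking continuity (using Theorem~\ref{T:homotopy} and continuity of $t$) gives the retract $T_R \to \calM(R) \times (0,+\infty)$. The $\varphi^*$-equivariance of $H$ — which follows from functoriality of the construction of $H$ in \cite{kedlaya-witt} under the Frobenius — lets this descend to the quotient, yielding the retract $X_R \to \calM(R) \times S^1$. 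The main obstacle I anticipate is verifying continuity of $t: T_R \to (0,+\infty)$ cleanly, since $T_R$ is a union (not an inverse or direct limit in an obvious sense) of the spaces $\calM(\tilde{\calR}^{[s,r]}_R)$ and one must ensure the topology is handled correctly; a secondary subtlety is checking that the deformation retract glues continuously across the overlapping intervals $[s,r]$, for which Lemma~\ref{L:intersection} and the compatibility properties (c), (d) of $H$ are the essential inputs.
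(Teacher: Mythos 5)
Your overall route is the paper's: existence of $t$ comes from Lemma~\ref{L:Robba ring cover}(a) and uniqueness from the free-of-trivial-spectrum hypothesis in (a); part (c) is exactly the paper's argument (the relation $t(\varphi^*(\beta)) = p\,t(\beta)$ from \eqref{eq:Frobenius norms} gives proper discontinuity, and the surjection from the compact set $\calM(\tilde{\calR}^{[r/p,r]}_R)$ gives compactness of $X_R$); and part (d) is also the paper's argument (the homotopy $H$ of Theorem~\ref{T:homotopy} retracts onto the image of $\lambda$, which Lemma~\ref{L:Robba ring cover} identifies with $\calM(R)\times(0,r]$). Parts (c) and (d) are fine as you describe them.

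The gap is in the (a)/(b) mechanism for reading off $t$. Domination is a one-sided inequality: for $\overline{x}\in R$ with $0<\mu(\beta)(\overline{x})<1$ you only know $\mu(\beta)(\overline{x})\le\alpha(\overline{x})^{t}$, and for a non-unit this can be strict, so your formula $t=\log\mu(\beta)(\overline{x})/\log\alpha(\overline{x})$ can return a value different from $t$. Equality is forced only for elements that become units (via the squeeze $\beta([\overline{x}])\,\beta([\overline{x}]^{-1})=1$, as in the proof of Lemma~\ref{L:Robba ring cover}(a)), which is why one must exploit the nontriviality supplied by Lemma~\ref{L:small norm elements} with some care rather than quoting an arbitrary $\overline{x}$ of small norm. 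This imprecision then undermines (b): a supremum of continuous ratios is only lower semicontinuous (and an infimum only upper semicontinuous), so "sup of ratios" does not give continuity without a two-sided squeeze; and your fallback characterization of $t(\beta)$ as a threshold ("dominates for $t$ but not for $t'<t$") contradicts the uniqueness you just proved in (a) — no $t'\neq t$ dominates, so there is no one-sided threshold, and in any case "intersection of an open and a closed condition" does not yield continuity of a real-valued function. The paper closes (b) by an explicit construction: given a target neighborhood $U\times(a,b)$ of $(\mu(\beta_0)^{1/t_0},t_0)$, it fixes a single $\overline{z}$ with $0<\mu(\beta_0)(\overline{z})<1$ together with the finitely many $\overline{f}_i$ cutting out $U$, and constrains $\gamma([\overline{z}])$ and the $\gamma([\overline{f}_i])$ to explicit intervals so that $(\mu(\gamma)^{1/t(\gamma)},t(\gamma))$ lands in $U\times(a,b)$. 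You need to supply an argument of this explicit, two-sided kind; without it the continuity of $t$ — which you correctly identify as the crux — is not established.
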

\begin{proof}
To check (a), we appeal to Lemma~\ref{L:Robba ring cover}.
To check (b), choose $\beta_0 \in T_R$ and put $t_0 = t(\beta_0)$.
Let $U$ be any open neighborhood of $\mu(\beta_0)$ of the form
$\{\gamma \in \calM(R): \gamma(\overline{f}_1) \in I_1,\dots, \gamma(\overline{f}_n) \in I_n\}$
for some
$\overline{f}_1,\dots,\overline{f}_n \in R$ and some open intervals $I_1,\dots,I_n$.
Let $(a,b)$ be any open subinterval of
$(0, +\infty)$ containing $t_0$.
Choose $\overline{z} \in R$ for which
$0 < \mu(\beta_0)(\overline{z}) < 1$, and put $z = [\overline{z}]$.
Choose $\delta > 1$ such that $a < t_0/\delta, t_0 \delta < b$.
For $i=1,\dots,n$, choose an open neighborhood $J_i$ of
$\beta_0([\overline{f}_i])$ such that
for all $x \in J_i$ and all $u \in [1/\delta,\delta]$,
$x^{1/(ut_0)} \in I_i$. Put
\[
V = \{\gamma \in T_R: \gamma([\overline{f}_1]) \in J_1,\dots, \gamma([\overline{f}_n]) \in J_n,
\gamma(z) \in (\beta_0(z)^{t_0/\delta}, \beta_0(z)^{t_0 \delta})\};
\]
this is an open subset of $T_R$ with the property that for any $\gamma \in V$,
$\mu(\gamma)^{1/t(\gamma)} \in U$ and $t(\gamma) \in (a,b)$.
This gives the desired continuity.

To check (c), first apply (b) after observing that for all $\beta \in T_R$,
$t(\varphi^*(\beta)) = p t(\beta)$.
We see from this that
the action is properly discontinuous, so $X_R$ is Hausdorff.
Then note that for any $r>0$, the projection
$\calM(\tilde{\calR}^{[r/p,r]}_R) \to X_R$ is surjective.
Since $X_R$ receives a surjective continuous map from a compact space,
it is quasicompact (Remark~\ref{R:compact spaces}(a)) and hence compact.

To check (d), argue as in the proof of Theorem~\ref{T:homotopy} to produce
a continuous map $H: \calM(\tilde{\calR}^{\inte,r}_R) \times [0,1] \to \calM(\tilde{\calR}^{\inte,r}_R)$.
Then observe by Lemma~\ref{L:Robba ring cover} that the image of $\calM(\tilde{\calR}^{\inte,r}_R) \times \{1\}$
may be identified with $\calM(R) \times (0,r]$ by mapping $(\gamma,s)$ to $\lambda(\gamma^s)$,
and that the resulting map $\calM(\tilde{\calR}^{\inte,r}_R) \to \calM(R) \times (0,r]$  is precisely $T_R$.
\end{proof}

\begin{remark} \label{R:relative FF curve quotient}
The space $X_R$ will later appear as the maximal Hausdorff quotient of the relative Fargues-Fontaine curve over $R$; see \S\ref{subsec:relative FF}. For now, we note that when $\calM(R)$ is contractible,
Proposition~\ref{P:radius fibration} asserts that $X_R$ has the homotopy type of a circle,
$T_R$ is the universal covering space of $X_R$, and
$\varphi^*$ acts on $T_R$ as a deck transformation generating the fundamental group.
For instance, this is the case when $R = L$ is an analytic field;
in this case, the profinite \'etale fundamental group of $X_R$ is the product of the absolute Galois groups of $\Qp$ and $L$ (see \cite{weinstein}), but the \'etale fundamental group (once it is suitably defined, which we will not do here) should differ from this. In particular, the $\ZZ$-covering coming from $T_R \to X_R$ corresponds to the subgroup of the unramified Galois group of $\Qp$ generated by Frobenius.

When $L$ is a finite extension of $\Fp((\overline{\pi}))$, this suggests
a relationship with the Weil group of the field
$\tilde{\calR}^{\inte,1}_L/(z)$ for $z = \sum_{i=0}^{p-1} [1 + \overline{\pi}]^{i/p}$
(this field being a finite extension of the completion of $\Qp(\mu_{p^\infty})$).
However, this relationship remains to be clarified; see
Remark~\ref{R:relative FF curve quotient2} for further discussion.
\end{remark}

\subsection{Compatibility with finite \'etale extensions}
\label{subsec:almost purity}

We next establish a compatibility between the construction of extended Robba rings and formation of
finite \'etale ring extensions.
As promised earlier, this yields a variant of Faltings's almost purity theorem,
thus refining the perfectoid correspondence introduced in \S\ref{subsec:perfectoid2}.

\begin{convention} \label{conv:extend norm}
For $S \in \FEt(R)$, we will always view $S$ as a finite Banach $R$-algebra
as per Proposition~\ref{P:finite etale Banach norm}.
By Lemma~\ref{L:finite uniform Banach}, $S$ is then also a perfect uniform Banach $\Fp$-algebra.
\end{convention}

\begin{lemma} \label{L:lift surjective}
Let $\psi: R \to S$ be a bounded homomorphism from $R$ to a perfect uniform $\Fp$-algebra $S$ with spectral norm $\beta$. Use $\psi$ to view $S$ as an $R$-algebra.
Let $x_1,\dots,x_n$ be elements of $\tilde{\calR}^{\inte}_S$ whose reductions
$\overline{x}_1,\dots,\overline{x}_n$ modulo $p$ generate $S$ as an $R$-module.
Then for all sufficiently small $r>0$,
$x_1,\dots,x_n$ generate $\tilde{\calR}^{\inte,r}_S$ as a module over
$\tilde{\calR}^{\inte,r}_R$.
\end{lemma}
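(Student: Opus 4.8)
The strategy is a successive-approximation (Teichmüller lifting) argument that upgrades the given mod-$p$ surjection to a statement over $\tilde{\calR}^{\inte}_R$. Fix $r>0$ small enough that $x_1,\dots,x_n \in \tilde{\calR}^{\inte,r}_S$. By hypothesis there is a constant $c>0$ such that every $\overline{y} \in S$ lifts to $(\overline{a}_1,\dots,\overline{a}_n) \in R^n$ with $\alpha(\overline{a}_i) \le c\,\beta(\overline{y})$ and $\sum_i \overline{a}_i \overline{x}_i = \overline{y}$. First I would use this, applied to each Teichmüller coordinate separately, to show: for any $s \in (0,r]$ and any $y \in \tilde{\calR}^{\inte,s}_S$, there exist $a_1,\dots,a_n \in \tilde{\calR}^{\inte,s}_R$ with $\lambda(\alpha^s)(a_i) \le c^s \lambda(\beta^s)(y)$ and $\lambda(\beta^s)\bigl(y - \sum_i a_i x_i\bigr) \le p^{-1}\lambda(\beta^s)(y)$ — i.e.\ the $x_i$ generate $S^n/pS^n$ over $\tilde{\calR}^{\inte,s}_R$ with controlled norms and an error term divisible by $p$ up to a factor $p^{-1}$ in norm. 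Concretely, write $y = \sum_{j\ge 0} p^j[\overline{y}_j]$; lift each $\overline{y}_j$ to $(\overline{a}_{1j},\dots,\overline{a}_{nj}) \in R^n$ as above, and set $a_i = \sum_j p^j [\overline{a}_{ij}]$. Then $\sum_i a_i x_i \equiv y \pmod{z}$ where $z$ collects the carry terms from Witt arithmetic; using the homogeneity of the Witt addition polynomials (Remark~\ref{R:addition formula}) one checks the residual term has $\lambda(\beta^s)$-norm at most $p^{-1}\lambda(\beta^s)(y)$, and the norm bound on $a_i$ follows since $\lambda(\alpha^s)([\overline{a}_{ij}]) = \alpha(\overline{a}_{ij})^s \le c^s\beta(\overline{y}_j)^s$.

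Next I would iterate. Given $y \in \tilde{\calR}^{\inte,s}_S$, set $y^{(0)} = y$ and recursively let $y^{(k+1)} = y^{(k)} - \sum_i a_i^{(k)} x_i$ where the $a_i^{(k)}$ are produced by the previous step applied to $y^{(k)}$; then $\lambda(\alpha^s)(a_i^{(k)}) \le c^s p^{-k}\lambda(\beta^s)(y)$ and $\lambda(\beta^s)(y^{(k)}) \le p^{-k}\lambda(\beta^s)(y)$. Since $\tilde{\calR}^{\inte,s}_R$ is complete under $\lambda(\alpha^s)$ and $\tilde{\calR}^{\inte,s}_S$ is complete under $\lambda(\beta^s)$, the series $a_i = \sum_{k\ge 0} a_i^{(k)}$ converge in $\tilde{\calR}^{\inte,s}_R$ and satisfy $y = \sum_i a_i x_i$. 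This proves that $x_1,\dots,x_n$ generate $\tilde{\calR}^{\inte,s}_S$ over $\tilde{\calR}^{\inte,s}_R$ for each fixed $s \in (0,r]$. Finally, taking the union over all $s$ (equivalently, working in $\tilde{\calR}^{\inte}_R = \cup_{s>0}\tilde{\calR}^{\inte,s}_R$ and $\tilde{\calR}^{\inte}_S$), any element of $\tilde{\calR}^{\inte}_S$ lies in some $\tilde{\calR}^{\inte,s}_S$ and hence is an $\tilde{\calR}^{\inte,s}_R$-combination of the $x_i$, giving the claim.

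The main obstacle is the bookkeeping in the first step: one must be careful that lifting "each Teichmüller coordinate separately" is compatible with Witt-vector addition, so that the claimed norm estimate $\lambda(\beta^s)(y - \sum_i a_i x_i) \le p^{-1}\lambda(\beta^s)(y)$ actually holds. The point is that $\sum_i a_i x_i$ need not have Teichmüller expansion $\sum_j p^j[\overline{y}_j]$ on the nose; the discrepancy is a sum of carry terms, each a Witt polynomial in the $\overline{a}_{ij}$ and $\overline{x}_{ij'}$ that is homogeneous of degree $1$ and supported in higher $p$-adic degree. Invoking Remark~\ref{R:addition formula} (and the analogue used in the proof of Proposition~\ref{P:mu multiplicative}(a)) one sees these carry terms contribute at level $p^j$ only expressions whose $\alpha^s$-norm is bounded by products of the norms of the degree-$1$ constituents, all of which are $\le \lambda(\beta^s)(y)$; the factor $p^{-j}$ with $j\ge 1$ then supplies the required contraction. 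This is entirely parallel to the computations already carried out in the "reality check" lemmas (e.g.\ Lemma~\ref{L:decompose}, Lemma~\ref{L:integral intersection}), so no genuinely new technique is needed — only the discipline to track the error term through one Frobenius-compatible lifting step.
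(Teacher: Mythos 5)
Your overall strategy (successive approximation by lifting Teichm\"uller coordinates) is the same as the paper's, but the key quantitative claim in your first step is not correct as stated, and this is precisely where the lemma is delicate. You assert that for \emph{every} $s \in (0,r]$ one can achieve $\lambda(\beta^s)\bigl(y - \sum_i a_i x_i\bigr) \leq p^{-1}\lambda(\beta^s)(y)$, and hence that the iteration contracts by $p^{-1}$ at each level $s$. Tracking the estimate honestly: the carry terms at level $p^j$ are homogeneous of degree $1$ in the $\overline{a}_{ij}\overline{x}_i$, whose norms are bounded by $c\,\beta(\overline{y}_j)\beta(\overline{x}_i)$, so the carry contribution is only bounded by $p^{-1}\bigl(c\max_i \beta(\overline{x}_i)\bigr)^s \lambda(\beta^s)(y)$; and the discrepancy $\sum_i a_i(x_i - [\overline{x}_i])$ contributes $c^s \lambda(\beta^s)(y) \cdot \max_i \lambda(\beta^s)(x_i - [\overline{x}_i])/\beta(\overline{x}_i)^s$. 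Neither factor is $\leq p^{-1}$, or even $< 1$, for a fixed $s$ unless $s$ is small (the constituents are \emph{not} all bounded by $\lambda(\beta^s)(y)$ as you claim: they involve $c^s$ and the norms of the $\overline{x}_i$ and of the higher Teichm\"uller coordinates of the $x_i$). Consequently your intermediate assertion --- that $x_1,\dots,x_n$ generate $\tilde{\calR}^{\inte,s}_S$ over $\tilde{\calR}^{\inte,s}_R$ for \emph{each} $s \in (0,r]$ --- is both unproved and stronger than the lemma, which only claims generation over the union $\tilde{\calR}^{\inte}_R = \cup_{s>0}\tilde{\calR}^{\inte,s}_R$.

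The repair is exactly the point of the paper's argument: one runs the iteration at a fixed $r$ (where the error $z_l$ may actually \emph{grow}, by a factor of roughly $c^r p$ per step in $\lambda(\beta^r)$), and then observes that the coefficient series $\sum_l p^l a_{l,i}$ nevertheless converges under $\lambda(\alpha^s)$ once $s$ is small enough that $c^s p^{s/r - 1} < 1$, because the explicit factor $p^{-l}$ eventually dominates the growth $\alpha(\overline{a}_{l,i})^s \lesssim (c p^{1/r})^{ls}$. This shrinking of $s$ relative to $r$ is unavoidable (and harmless, since the target ring is a union over $s$); your write-up suppresses it. If you restate your step-one estimate with the correct constant $\epsilon(s) = \max\bigl\{p^{-1}(c \max_i\beta(\overline{x}_i))^s,\ c^s \max_i \lambda(\beta^s)(x_i - [\overline{x}_i])\beta(\overline{x}_i)^{-s}\bigr\}$, note that $\epsilon(s) < 1$ for $s$ sufficiently small, and run your iteration only for such $s$, the argument goes through.
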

\begin{proof}
It is harmless to assume that $\overline{x}_1,\dots,\overline{x}_n$ are all nonzero.
Since the surjection $R^n \to S$ is strict by Theorem~\ref{T:open mapping}, we can find $c \geq 1$ such that for each
$\overline{z} \in S$, there exist $\overline{a}_1,\dots,\overline{a}_n \in R$
for which $\overline{z} = \sum_{i=1}^n \overline{a}_i \overline{x}_i$
and $\alpha(\overline{a}_i) \beta(\overline{x}_i)
\leq c \beta(\overline{z})$ for $i=1,\dots,n$.

Given $z \in \tilde{\calR}^{\inte}_S$, for $l=0,1,\dots$
we choose $z_l \in \tilde{\calR}^{\inte}_S$ and $a_{l,1},\dots,a_{l,n} \in
\tilde{\calR}^{\inte}_R$ as follows. Put $z_0 = 0$. Given $z_l$,
let $\overline{z}_l$ be its reduction modulo $p$, and invoke the previous paragraph to
construct
$\overline{a}_{l,1},\dots,\overline{a}_{l,n} \in R$
with $\alpha(\overline{a}_{l,i}) \beta(\overline{x}_i)
\leq c \beta(\overline{z}_l)$ for $i=1,\dots,n$ such that
$\overline{z}_l = \sum_{i=1}^n \overline{a}_{l,i} \overline{x}_i$.
Then put $a_{l,i} = [\overline{a}_{l,i}]$ and $z_{l+1} = p^{-1}(z_l - \sum_{i=1}^n a_{l,i} x_i)$.

Choose $r>0$ such that $x_1,\dots,x_n \in \tilde{\calR}^{\inte,r}_S$
and $\lambda(\beta^r)(x_i - [\overline{x}_i]) < \beta(\overline{x}_i)^r$.
For $z \in \tilde{\calR}^{\inte,r}_S$,
we then have
$\lambda(\beta^r)(z_{l+1}) \leq c^r p \lambda(\beta^r)(z_l)$,
and so $\lambda(\beta^r)(z_l) \leq (c^r p)^l \lambda(\beta^r)(z_0)$.
In particular,
\[
\alpha(\overline{a}_{l,i}) \leq c \beta(\overline{x}_i)^{-1} \lambda(\beta^r)(z_0)^{1/r}
(c p^{1/r} )^l.
\]
For $s$ sufficiently small (depending on $r$), we have
$c^s p^{s/r - 1} < 1$, and so the series $\sum_{l=0}^\infty p^l a_{l,i}$
converges under $\lambda(\alpha^s)$. 

We now specialize the previous construction to the case $z = [\overline{z}]$.
In this case, we can write $[\overline{z}] = \sum_{i=1}^n a_i x_i$ with $a_i \in \tilde{\calR}^{\inte,s}_R$ and
$\lambda(\alpha^s)(a_i) \leq c_1 \alpha(\overline{z})^s$ for some $c_1>0$ not depending on $\overline{z}$. By writing a general element of $\tilde{\calR}^{\inte,s}_R$ as
$z = \sum_{n=0}^\infty p^n [\overline{z}_n]$, we see that $x_1,\dots,x_n$ generate $\tilde{\calR}^{\inte,s}_S$ as a module over $\tilde{\calR}^{\inte,s}_R$.
This proves the claim.
\end{proof}

\begin{prop} \label{P:perfect henselian}
Choose any $r>0$.
\begin{enumerate}
\item[(a)]
The base extension functors
\[
\varphi^{-1}\mbox{-}\FEt(\tilde{\calR}^{\inte,r}_R) \to \FEt(\tilde{\calR}^{\inte}_R) \to \FEt(W(R)) \to \FEt(R)
\]
are tensor equivalences, where $\varphi^{-1}$-$\FEt(*)$ denote the category of finite \'etale $*$-algebras equipped with isomorphisms with their $\varphi^{-1}$-pullbacks.
\item[(b)]
The composition $\varphi^{-1}\mbox{-}\FEt(\tilde{\calR}^{\inte,r}_R) \to \FEt(R)$ admits a quasi-inverse taking $S$ to 
$\tilde{\calR}^{\inte,r}_S$.
\end{enumerate}
\end{prop}
\begin{proof}
We first prove a weak version of (a): the functors $\FEt(\tilde{\calR}^{\inte}_R) \to \FEt(W(R)) \to \FEt(R)$
are tensor equivalences.
For each $r>0$, $\tilde{\calR}^{\inte,r}_R$ is complete with respect to
the maximum of $\lambda(\alpha^r)$ and the $p$-adic norm.
For these norms, the maps $\tilde{\calR}^{\inte,r}_R \to \tilde{\calR}^{\inte,s}_R$
for $0 < s \leq r$ are submetric by Lemma~\ref{L:hadamard relative}.
Consequently, Lemma~\ref{L:henselian direct limit}(b)
implies that the pair $(\tilde{\calR}^{\inte}_R, (p))$ is henselian.
We may thus conclude using Theorem~\ref{T:henselian}.

We next prove a weak version of (b): for $S \in \FEt(R)$, the corresponding element $U_S$ of $\FEt(\tilde{\calR}^{\inte}_R)$ may be identified with
$\tilde{\calR}^{\inte}_S$.
% It is sufficient to check that
%$\tilde{\calR}^{\inte}_S \in \FEt(\tilde{\calR}^{\inte}_R)$.
%Construct $U_S \in \FEt(\tilde{\calR}^{\inte}_R)$ corresponding to $S$ .
We may identify $U_S/(p)$ and $\tilde{\calR}^{\inte}_S/(p)$ with $S$;
the $p$-adic completions of $U_S$ and $\tilde{\calR}^{\inte}_S$ may then be
identified with $W(S)$ by the uniqueness property of the latter.

Let $\pi_1, \pi_2: S \to S \otimes_R S$ denote the
structure morphisms.
Put $V = U_S \otimes_{\tilde{\calR}^{\inte}_R} \tilde{\calR}^{\inte}_S$,
and let $\tilde{\pi}_1: U_S \to V$ and $\tilde{\pi}_2: \tilde{\calR}^{\inte}_S \to V$ denote the
structure morphisms. Note that $\tilde{\pi}_2$ is the distinguished lift of
$\pi_2$ from $\FEt(S)$ to $\FEt(\tilde{\calR}^{\inte}_S)$ constructed above.
Consequently,
if we view the multiplication map $\mu: S \otimes_R S \to S$ as a map in $\FEt(S)$
by equipping $S \otimes_R S$ with the structure morphism $\pi_2$,
then (a) provides a lift $\tilde{\mu}$
of $\mu$ to $\FEt(\tilde{\calR}^{\inte}_S)$.
The composition $\psi = \tilde{\mu} \circ \tilde{\pi}_1: U_S \to \tilde{\calR}^{\inte}_S$
lifts the identity map modulo $p$.
As noted above, the injection $U_S \to W(S)$ factors through $\psi$,
so $\psi$ is injective; since $\psi$ is $\tilde{\calR}^{\inte}_R$-linear,
it is also surjective by Lemma~\ref{L:lift surjective}
and Convention~\ref{conv:extend norm}. This proves the claim.

We next verify that for $S \in \FEt(R)$, we have $\tilde{\calR}^{\inte,r}_S 
\in \FEt(\tilde{\calR}^{\inte,r}_R)$. We first observe that $\tilde{\calR}^{\inte,r}_S$ is finitely generated as a module over $\FEt(\tilde{\calR}^{\inte,r}_R)$: this holds for small $r>0$ by Lemma~\ref{L:lift surjective}, and hence for all $r>0$ by repeated application of $\varphi^{-1}$. We next note that given any $\tilde{\calR}^{\inte,r}_R$-linear surjection of a finite free module onto $\tilde{\calR}^{\inte,r}_S$, using the fact that $\tilde{\calR}^{\inte}_S \in \FEt(\tilde{\calR}^{\inte}_R)$ we can find a $\tilde{\calR}^{\inte,s}_R$-linear splitting for some $0 < s \leq r$.
It follows that $\tilde{\calR}^{\inte,r}_S$ is finite projective as a module over
$\tilde{\calR}^{\inte,r}_R$ for $r>0$ small, and hence again for all $r>0$ using $\varphi^{-1}$. To conclude, it remains to check that the natural map
\[
\tilde{\calR}^{\inte,r}_S \otimes_{\tilde{\calR}^{\inte,r}_R} \tilde{\calR}^{\inte,r}_S
\to \tilde{\calR}^{\inte,r}_{S \otimes_R S}
\]
is an isomorphism (as then we may conclude that $\tilde{\calR}^{\inte,r}_S$ is also finite projective over $\tilde{\calR}^{\inte,r}_S \otimes_{\tilde{\calR}^{\inte,r}_R} \tilde{\calR}^{\inte,r}_S$); this follows from it being a map with dense image between finite projective $\tilde{\calR}^{\inte,r}_R$-modules of the same rank.

We now note that $\varphi^{-1}\mbox{-}\FEt(\tilde{\calR}^{\inte,r}_R) \to \FEt(\tilde{\calR}^{\inte}_R)$ is fully faithful (because any morphism in
$\FEt(\tilde{\calR}^{\inte}_R)$ is automatically $\varphi$-equivariant by virtue of the equivalence with $\FEt(R)$)
and has a right quasi-inverse (by the previous paragraph). This completes the proof of both (a) and (b).
\end{proof}

\begin{prop} \label{P:perfect mixed lift}
Let $S$ be a (faithfully) finite \'etale $R$-algebra. Then for
\[
* \in \{\tilde{\calE}, \tilde{\calR}^{\inte,r}, \tilde{\calR}^{\inte},
\tilde{\calR}^{\bd,r}, \tilde{\calR}^{\bd}, \tilde{\calR}^{[s,r]}, \tilde{\calR}^r, \tilde{\calR}\},
\]
the natural homomorphism $*_R \to *_S$
is (faithfully) finite \'etale.
\end{prop}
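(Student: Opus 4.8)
The plan is to reduce everything to the case $* = \tilde{\calR}^{\inte,r}$, which is already handled by Proposition~\ref{P:perfect henselian}, and then propagate the conclusion to the other rings by base change and localization arguments. First I would recall that by Proposition~\ref{P:perfect henselian}(b), if $S \in \FEt(R)$ then $\tilde{\calR}^{\inte,r}_S$ is the element of $\FEt(\tilde{\calR}^{\inte,r}_R)$ corresponding to $S$; in particular $\tilde{\calR}^{\inte,r}_R \to \tilde{\calR}^{\inte,r}_S$ is finite \'etale. The faithfully flat case follows since $S$ faithfully finite \'etale over $R$ means $S$ has positive rank everywhere, and the rank is preserved under the equivalence of Proposition~\ref{P:perfect henselian}(a), so $\tilde{\calR}^{\inte,r}_S$ has positive rank everywhere over $\tilde{\calR}^{\inte,r}_R$, hence is faithfully finite \'etale by Lemma~\ref{L:faithfully flat by maximal ideals} and the going-up theorem (both rings being ordinary commutative rings). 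Taking the union over $r > 0$ immediately gives the case $* = \tilde{\calR}^{\inte}$: a direct limit of finite \'etale algebras of locally constant rank along the transition maps $\tilde{\calR}^{\inte,r}_R \to \tilde{\calR}^{\inte,s}_R$ remains finite \'etale, since by Proposition~\ref{P:perfect henselian}(a) the algebra $\tilde{\calR}^{\inte}_S$ is obtained by base change from $\tilde{\calR}^{\inte,r}_S$ along this transition map.

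Next I would handle the remaining rings by tensoring up. For $* = W$ (i.e. $\tilde{\calR}^{\inte,+}$) and $* = \tilde{\calE}$: the ring $W(S)$ is the $p$-adic completion of $\tilde{\calR}^{\inte}_S$ and $\tilde{\calE}_S = W(S)[p^{-1}]$; since $\tilde{\calR}^{\inte}_S$ is finite \'etale over $\tilde{\calR}^{\inte}_R$, base changing along $\tilde{\calR}^{\inte}_R \to W(R)$ (or $\to \tilde{\calE}_R$) gives $W(S) = W(R) \otimes_{\tilde{\calR}^{\inte}_R} \tilde{\calR}^{\inte}_S$ (respectively $\tilde{\calE}_S$) finite \'etale over $W(R)$ (respectively $\tilde{\calE}_R$); the identification of $W(S)$ with this base change uses the uniqueness property of the strict $p$-ring with residue ring $S$ together with Theorem~\ref{T:Witt}, exactly as in the proof of Proposition~\ref{P:perfect henselian}(b). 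For $* = \tilde{\calR}^{\bd,r}$ and $* = \tilde{\calR}^{\bd}$: these are $\tilde{\calR}^{\inte,r}_S[p^{-1}]$ and $\tilde{\calR}^{\inte}_S[p^{-1}]$, so inverting $p$ in an already-established finite \'etale extension again yields a finite \'etale extension. For $* = \tilde{\calR}^{[s,r]}$: here $\tilde{\calR}^{[s,r]}_S$ is the Fr\'echet completion of $\tilde{\calR}^{\bd,r}_S$; one checks that $\tilde{\calR}^{[s,r]}_S \cong \tilde{\calR}^{[s,r]}_R \otimes_{\tilde{\calR}^{\bd,r}_R} \tilde{\calR}^{\bd,r}_S$ by observing that completing a finite projective module (which the finite \'etale algebra is, by Lemma~\ref{L:finite projective Banach}) commutes with finite direct sums, and that a finite projective module over a Banach ring is automatically complete for its induced norm, so no extra completion of the tensor product is needed; then finite \'etaleness is inherited under this base change. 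Finally, $\tilde{\calR}^r_R \to \tilde{\calR}^r_S$ is handled either as $\tilde{\calR}^r = \cup_{0 < s \le r} \tilde{\calR}^{[s,r]}$ (so a direct limit argument as above applies, using Lemma~\ref{L:intersection} to control the transition maps) or, for $\tilde{\calR}_R \to \tilde{\calR}_S$, by taking the further union over $r > 0$.

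The main subtlety — and the step I expect to require the most care — is verifying that the Fr\'echet completions $\tilde{\calR}^{[s,r]}_S$ and $\tilde{\calR}^r_S$ really are obtained by ordinary (uncompleted) base change of $\tilde{\calR}^{[s,r]}_S = \tilde{\calR}^{[s,r]}_R \otimes_{\tilde{\calR}^{\inte,r}_R} \tilde{\calR}^{\inte,r}_S$ from the integral-level extension, rather than requiring a completed tensor product. This is where one genuinely uses that $S$ is a \emph{finite} Banach $R$-module (Convention~\ref{conv:extend norm}) and that $\tilde{\calR}^{\inte,r}_S$ is finite projective over $\tilde{\calR}^{\inte,r}_R$: a finite projective module over a Banach ring, equipped with the norm from Lemma~\ref{L:finite projective Banach}, is complete, and tensoring a Banach ring with a finite projective module over a subring commutes with the relevant completions since finitely many Teichm\"uller-coefficient generators suffice (here Lemma~\ref{L:lift surjective} and the argument pattern of Proposition~\ref{P:perfect henselian}(b) are the key inputs). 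Once this compatibility is in place, finite \'etaleness and the faithfully flat refinement transfer formally along each base change, and the direct-limit steps are routine.
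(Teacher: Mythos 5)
Your plan matches the paper's proof: the cases without Fr\'echet completion follow from Proposition~\ref{P:perfect henselian}, and the completed cases reduce to showing that the natural map $\tilde{\calR}^r_R \otimes_{\tilde{\calR}^{\bd,r}_R} \tilde{\calR}^{\bd,r}_S \to \tilde{\calR}^r_S$ is an isomorphism, using finite projectivity to identify the ordinary and completed tensor products and the argument of Lemma~\ref{L:lift surjective} (together with $\varphi^{-1}$ to remove the restriction to small $r$) for strict surjectivity. The one detail you leave implicit is injectivity of this comparison map, which the paper obtains by upgrading "submetric" to "isometric" for $\lambda(\alpha^r)$: given nonzero $z \in \tilde{\calR}^r_S$, choose $z_0 \in \tilde{\calR}^{\bd,r}_S$ with $\lambda(\alpha^r)(z - z_0) < c^{-1} \lambda(\alpha^r)(z)$ and lift the difference with norm controlled by the constant $c$ from the strictness estimate.
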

\begin{proof}
The cases $* = \tilde{\calE}, \tilde{\calR}^{\inte,r}, \tilde{\calR}^{\inte},
\tilde{\calR}^{\bd}, \tilde{\calR}^{\bd}$ follow at once from
Proposition~\ref{P:perfect henselian}.
To handle the cases $* = \tilde{\calR}^{[s,r]}, \tilde{\calR}^r, \tilde{\calR}$, it suffices to check that the natural map
$\tilde{\calR}^r_R \otimes_{\tilde{\calR}^{\bd,r}_R} \tilde{\calR}^{\bd,r}_S
\to \tilde{\calR}^r_S$ is an isometric isomorphism with respect to $\lambda(\alpha^r)$.
We proceed by first noticing that
$\tilde{\calR}^r_R \otimes_{\tilde{\calR}^{\bd,r}_R} \tilde{\calR}^{\bd,r}_S
= \tilde{\calR}^r_R \widehat{\otimes}_{\tilde{\calR}^{\bd,r}_R} \tilde{\calR}^{\bd,r}_S$
because $\tilde{\calR}^{\bd,r}_S$ is a finite projective $\tilde{\calR}^{\bd,r}_R$-module
(see Definition~\ref{D:finite etale category}).
We may then argue as in Lemma~\ref{L:lift surjective} that
$\tilde{\calR}^r_R \otimes_{\tilde{\calR}^{\bd,r}_R} \tilde{\calR}^{\bd,r}_S
\to \tilde{\calR}^r_S$ is a strict surjection for sufficiently small $r>0$, and hence for all $r>0$
by applying $\varphi^{-1}$ as needed.
In particular, there exists $c>0$ (depending on $r$)
for which any element $z \in \tilde{\calR}^r_S$
can be lifted to $\sum_i x_i \otimes y_i \in \tilde{\calR}^r_R \otimes_{\tilde{\calR}^{\bd,r}_R} \tilde{\calR}^{\bd,r}_S$
with $\max_i \{\lambda(\alpha^r)(x_i y_i)\} \leq c \lambda(\alpha^r)(z)$.
Finally, given a nonzero element $z \in \tilde{\calR}^r_S$, choose $z_0 \in \tilde{\calR}^{\bd,r}_S$ with
$\lambda(\alpha^r)(z-z_0) < c^{-1} \lambda(\alpha^r)(z)$, and lift $z - z_0$ to $\sum_i x_i \otimes y_i$
with $\max_i \{\lambda(\alpha^r)(x_i y_i)\} \leq c \lambda(\alpha^r)(z-z_0)$.
The representation $1 \otimes z_0 + \sum_i x_i \otimes y_i$ of $z$ then shows that
the map $\tilde{\calR}^r_R \otimes_{\tilde{\calR}^{\bd,r}_R} \tilde{\calR}^{\bd,r}_S
\to \tilde{\calR}^r_S$, which is evidently submetric, is in fact isometric.
In particular, it is injective, completing the argument.
\end{proof}

To link these results to almost purity, we use the following extension of Lemma~\ref{L:stable residue}.
\begin{lemma} \label{L:stable residue2}
Suppose that $z \in W(R^+)$ is primitive of degree $1$ and that $z - [\overline{z}] \in p W(R^+)^\times$. Choose any $r \geq 1$.
\begin{enumerate}
\item[(a)]
Any $x \in \tilde{\calR}^{\inte,r}_R/(z)$ lifts to $y = \sum_{i=0}^\infty p^i [\overline{y}_i]
\in W(R)$ with $\alpha(\overline{y}_0) \geq \alpha(\overline{y}_i)$ for all $i$.
\item[(b)]
For any closed interval $I$ of $(0, +\infty)$ containing $1$ and any positive integer $m$, each of the arrows in the diagram
\[
\xymatrix{
W(R^+)[[\overline{z}]^{-1}] \ar@{=}[rr] \ar[d] & &  W(\gotho_R)[[\overline{z}]^{-1}] \ar[d] 
\ar[rr] & & \tilde{\calR}^{\inte,r}_R \ar[dd] \\
W(R^+)[[\overline{z}]^{-1}, p^{-1}] \ar@{=}[rr] & & W(\gotho_R)[[\overline{z}]^{-1}, p^{-1}] \ar[rrd] \\
& \tilde{\calR}^{\bd,r}_{R^+} \ar[rr] 
& & \tilde{\calR}^{\bd,r}_{\gotho_R} \ar[r] & \tilde{\calR}^{\bd,r}_R \ar@{-->}[dd] \\
W(R^+)[p^{-1}] \ar[rr] \ar[uu] \ar@{=}[ru] \ar[dr] & & W(\gotho_R)[p^{-1}] \ar[uu] \ar@{=}[ur] \ar[dr] \\
& \tilde{\calR}^I_{R^+} \ar[rr] & & \tilde{\calR}^I_{\gotho_R} \ar[r] & \tilde{\calR}^I_R
}
\]
(including $\tilde{\calR}^{\bd,r}_R \to \tilde{\calR}^I_R$ only if $I \subseteq (0,r]$)
becomes an isomorphism upon quotienting by the ideal $(z^m)$.
\end{enumerate}
\end{lemma}
\begin{proof}
By hypothesis, we have $p = y(z - [\overline{z}])$ for some unit $y \in W(R^+)$.
Given $x = \sum_{i=0}^\infty p^i [\overline{x}_i] \in \tilde{\calR}^{\inte,r}_R$,
the series $\sum_{i=0}^\infty (-y)^i [\overline{x}_i \overline{z}^i]$ converges to an element of
$W(\gotho_R)[[\overline{z}]^{-1}]$ congruent to $x$ modulo $z$. This proves surjectivity of the map
\[
W(\gotho_R)[[\overline{z}]^{-1}]/(z) \to \tilde{\calR}^{\inte,r}_R/(z);
\] 
the existence of lifts as in (a) follows by applying Lemma~\ref{L:stable residue} to
$x [\overline{z}]^n$ for a large positive integer $n$.

To prove (b), note that since $z$ is not a zero-divisor in any of the rings appearing in the diagram (by consideration of Newton polygons as in Definition~\ref{D:no common slopes}), the claim reduces at once to the case $m=1$. 
Since $y \in W(R^+)$, when working modulo $z$, inverting $[\overline{z}]$ is equivalent to inverting $p$.
Consequently, all of the vertical arrows in the first and third column induce isomorphisms modulo $z$, as does the arrow $\tilde{\calR}^{\inte,r}_R \to \tilde{\calR}^{\bd,r}_R$.

We next check that $W(R^+)[p^{-1}] \to \tilde{\calR}^I_{R^+}$ induces a surjection modulo $z$; this will imply the same for $W(\gotho_R)[p^{-1}] \to \tilde{\calR}^{I}_{\gotho_R}$.
Any element of $\tilde{\calR}^I_{R^+}$ can be written as a convergent sum $\sum_{n=0}^\infty [\overline{x}_n] p^{-i_n}$ for some $\overline{x}_n \in R^+$ and $i_n \in \ZZ$. Convergence with respect to $\lambda(\alpha)$ means that there exists some $n_0 \geq 0$ such that for all $n \geq n_0$, $\alpha(\overline{x}_n) p^{i_n} < 1$.
Let $S$ be the set of integers $n \geq n_0$ for which $i_n > 0$,
and let $T$ be the complement of $S$ in $\{0,1,\dots\}$. We may thus represent the same class in $\tilde{\calR}^I_{R^+}/(z)$ by the sum
\[
\sum_{n \in S} (-y)^{-i_n} [\overline{x}_n \overline{z}^{-i_n}] + \sum_{n \in T} x_n p^{-i_n}
\]
which converges in $W(R^+)[p^{-1}]$ with respect to $\lambda(\alpha)$.

We next check that $\tilde{\calR}^I_{R^+} \to \tilde{\calR}^I_{R}$ induces a surjection modulo $z$; this will imply the same for $\tilde{\calR}^I_{\gotho_R} \to \tilde{\calR}^I_{R}$. 
By Lemma~\ref{L:decompose variant}, for $t$ the right endpoint of $I$,
any class
in $\tilde{\calR}^I_{R^+}/(z)$ can be represented as the sum of a class arising from
$\tilde{\calR}^I_{R^+}/(z)$ and a class arising from $\tilde{\calR}_R^{\bd,t}/(z)$. By what we have already shown, the latter class
can also be found in $W(R^+)[p^{-1}]/(z)$ and hence in
$\tilde{\calR}^I_{R^+}/(z)$. This proves the claim.

To conclude, it suffices to check that $W(R^+)[p^{-1}] \to \tilde{\calR}^I_R$
induces an injection modulo $z$ (as this will formally imply the same for $W(R^+)[p^{-1}] \to \tilde{\calR}^I_{R^+}$, and similarly with $R^+$ replaced by $\gotho_R$).
Choose $x \in W(R^+)[p^{-1}]$
which maps to zero in $\tilde{\calR}^I_R/(z)$. 
We wish to check that $x$ is divisible by $z$ in $W(R^+)[p^{-1}]$.
For this purpose, there is no harm to assume that $x \in W(R^+)[p^{-1}]$ or to
modify $x$ by a multiple of $z$; by (a),  we may thus assume that
$x = \sum_{n=0}^\infty [\overline{x}_n] p^n$ with $\overline{\alpha}(\overline{y}_0) \geq \overline{\alpha}(\overline{x}_n)$ for all $n \geq 0$.
If $x$ is nonzero, we can choose $\beta \in \calM(R)$ such that
$\beta(\overline{y}_0) > p^{-1} \alpha(\overline{y}_0)$; then the divisibility of $x$ by $z$ in $\tilde{\calR}^I_{\calH(\beta)}$ yields a contradiction by consideration of Newton polygons.
\end{proof}

\begin{cor} \label{C:refine perfectoid correspondence}
For any $z \in W(\gotho_R)$ primitive of degree $1$ and any $r \geq 1$,
for $A = W(\gotho_R)[[\overline{z}]^{-1}]/(z) = \tilde{\calR}^{\inte,r}_R/(z)$
(by Lemma~\ref{L:stable residue2}),
we have a 2-commuting diagram
\begin{equation} \label{eq:perfectoid diagram}
\xymatrix{
\varphi^{-1}\mbox{-}\FEt(\tilde{\calR}^{\inte,r}_R) \ar[r] \ar[d] & \FEt(A) \ar@{-->}[ld] \\
\FEt(R) &
}
\end{equation}
in which the solid arrows are base extensions and the dashed arrow is the one provided by
Theorem~\ref{T:mixed lift ring}. In particular, each of these is a tensor equivalence.
\end{cor}
\begin{proof}
The commutativity comes from Lemma~\ref{L:stable residue2}.
The dashed arrow is a tensor equivalence by Theorem~\ref{T:mixed lift ring},
while the vertical arrow is an equivalence by Proposition~\ref{P:perfect henselian}.
\end{proof}

\begin{remark} \label{R:Robba to perfectoid}
Corollary~\ref{C:refine perfectoid correspondence} makes it possible to study the effect of Frobenius
on the perfectoid correspondence, leading to the almost purity theorem
(Theorem~\ref{T:almost purity}). Arthur Ogus has asked about an alternate approach to Theorem~\ref{T:mixed lift ring} obtained by directly establishing essential surjectivity of
$\FEt(\tilde{\calR}^{\inte,r}_R) \to \FEt(A)$
using lifting arguments for smooth algebras, as in the work of Elkik \cite{elkik} and Arabia
\cite{arabia}; however, it is not immediately clear how to obtain $\varphi^{-1}$-equivariance in such an approach.
\end{remark}

To assert an almost purity theorem, we need a few definitions from almost ring theory; for these we follow \cite{gabber-ramero}.
\begin{defn} \label{D:almost}
Let $A$ be a uniform Banach algebra equipped with its spectral norm. Suppose that for each $\epsilon > 1$, there exists $\lambda \in A$ with $1 < \left| \lambda \right| < \epsilon$ and $\left| \lambda \right| \left| \lambda^{-1} \right| = 1$. For instance, this holds if $A$ is a Banach algebra over an analytic field with nondiscrete norm.

An $\gotho_A$-module is \emph{almost zero} if it is killed by $\gothm_A$.
The category of \emph{almost modules} over $\gotho_A$ is the localization of the category of
$\gotho_A$-modules at the set of morphisms with almost zero kernel and cokernel.

An $A$-module $B$ is \emph{almost finite projective}
if for each $t \in \gothm_A$, there exist a finite free $A$-module $F$
and some morphisms $B \to F \to B$ of $A$-modules whose composition is multiplication by $t$. We say that $B$ is 
\emph{uniformly almost finite projective} if there is a positive integer $m$ so that we can always choose $F$ to be free of rank $m$. (See \cite[Lemma~2.4.15]{gabber-ramero} for some equivalent formulations.)

For $B$ an $A$-algebra whose underlying $A$-module is almost finite projective,
there is a well-defined trace map $\Trace: B \to A$ in the category of almost modules over $\gotho_A$
\cite[\S 4.1.7]{gabber-ramero};
we say that $B$ is \emph{almost finite \'etale} if the trace pairing induces an almost isomorphism $B \to \Hom_A(B,A)$.
This is not the definition used in \cite{gabber-ramero}, but is equivalent to it via \cite[Theorem~4.1.14]{gabber-ramero}.
\end{defn}

We are now ready to fulfill the promise made in Remark~\ref{R:almost later}.
The key new ingredient provided by relative Robba rings is an action of Frobenius (or more
precisely its inverse) in characteristic 0, which can be used in much the same
way that Frobenius can be used to give a cheap proof of almost purity in
positive characteristic (see Remark~\ref{R:almost perfect} and then \cite[Chapter~3]{gabber-ramero}).

\begin{theorem}[Almost purity] \label{T:almost purity}
Let $A$ be a perfectoid algebra. Then for any $B \in \FEt(A)$,
$\gotho_B$ is uniformly almost finite projective and almost \'etale over $\gotho_A$.
\end{theorem}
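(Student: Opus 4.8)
The plan is to transfer the almost purity statement from the perfectoid algebra $A$ to its characteristic-$p$ counterpart $R$ via the relative Robba ring machinery, exploiting the inverse of Frobenius as the substitute for Frobenius that makes almost purity cheap in characteristic $p$. First I would fix a generator $z \in W(\gotho_R)$ of $I$ which is primitive of degree $1$, where $(R,I)$ corresponds to $A$ as in Theorem~\ref{T:perfectoid ring}, and recall that $\gotho_A = W(\gotho_R)/(z)$ and $\gotho_A/(p) \cong \gotho_R/(\overline{z})$. Given $B \in \FEt(A)$, Theorem~\ref{T:mixed lift ring} produces $S \in \FEt(R)$ corresponding to $B$, and Convention~\ref{conv:extend norm} equips $S$ with a norm making it a perfect uniform Banach $\Fp$-algebra and a finite Banach $R$-module. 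By Remark~\ref{R:almost perfect}, $\gotho_S$ is almost finite \'etale over $\gotho_R$ in the characteristic-$p$ sense: the quotient of $\gotho_S$ by its finitely generated $\gotho_R$-submodules is killed by $\gothm_R = \gothm_L$, and $\Omega_{\gotho_S/\gotho_R} = 0$ because $S$ is perfect. The goal is then to lift these almost-module statements through $W(\cdot)$ and the quotient by $z$.

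The heart of the argument is a careful bookkeeping of almost-finiteness along the tower $\gotho_R \rightsquigarrow W(\gotho_R) \rightsquigarrow W(\gotho_R)/(z) = \gotho_A$. The key point: by Remark~\ref{R:almost perfect}, for each nonzero $\overline{t} \in \gothm_L$ (equivalently each $t$ with $\alpha(t) < 1$, and using perfectness to replace $\overline{t}$ by $\overline{t}^{p^{-n}}$) there is a finitely generated $\gotho_R$-submodule $\gotho_S^{(0)}$ of $\gotho_S$ and maps $\gotho_S \to \gotho_S^{(0)} \hookrightarrow \gotho_S$ composing to multiplication by $\overline{t}$; combined with $\Omega_{\gotho_S/\gotho_R}=0$ one upgrades this to almost finite projectivity and an almost-isomorphism for the trace pairing, as in Theorem~\ref{T:almost purity}'s target but over $\gotho_R$. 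Now I would run the Teichm\"uller-lift argument of Lemma~\ref{L:lift finite etale perfectoid}: choosing $\overline{x}_1,\dots,\overline{x}_n \in \gotho_S$ generating $\gotho_S$ modulo the ideal killed by $\overline{z}$, Remark~\ref{R:addition formula} shows $[\overline{x}_1],\dots,[\overline{x}_n]$ generate $W(\gotho_S)$ modulo the ideal killed by $[\overline{z}]$; the lift $\overline{t} = \theta([\overline{s}])$ of a primitive element gives, after quotienting by $z$ and observing $\overline{z} \mid \overline{s}^{p^{-n}}$ for all $n$, that $\gotho_B = W(\gotho_S)/(z W(\gotho_S) \cap \cdots)$ is almost finite projective over $\gotho_A$ with the constants controlled by elements of $\gothm_L$. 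The trace-pairing isomorphism descends similarly because trace commutes with the base change $W(\gotho_R)/(z) = \gotho_A$ and with the Witt functor, and an almost-isomorphism over $\gotho_R$ pushes forward to an almost-isomorphism over $\gotho_A$ since $\gothm_A$ contains the image of a cofinal system of $\gothm_L^{p^{-n}}$.

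For uniformity (the ``uniformly almost finite projective'' condition), I would note that the factorizations $\gotho_B \to F \to \gotho_B$ through a single finite free $\gotho_A$-module $F$ of rank bounded independently of $\overline{t}$ follow because $B$ has constant rank $d$ locally on $\calM(A) \cong \calM(R)$ (Lemma~\ref{L:transfer degree}, Lemma~\ref{L:finite generation}(b)), so one can take $F = \gotho_A^N$ with $N$ the rank of any fixed finite free cover of $S$ from Lemma~\ref{L:finite projective Banach}; the maps come from splitting $\gotho_S \oplus \gotho_S' \cong \gotho_R^N$ almost, then applying $W(\cdot)/(z)$. The main obstacle, I expect, is precisely this passage from ``almost over $\gotho_R$'' to ``almost over $\gotho_A$'': one must verify that the maximal-ideal-of-almost-elements $\gothm_L \subset \gotho_R$ maps into something cofinal in $\gothm_F \subset \gotho_A$ under $\overline{x} \mapsto \theta([\overline{x}])$ and that the $W$-functor and the quotient by $z$ do not destroy the factorization maps — this requires the limited multiplicativity \eqref{eq:primitive} and Lemma~\ref{L:stable residue2} to control norms of lifts, but no new input beyond what is already assembled in \S\ref{subsec:almost purity}. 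Once that is in place the trace-pairing statement is formal from \cite[Theorem~4.1.14]{gabber-ramero}.
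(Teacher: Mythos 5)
Your overall strategy --- reduce to characteristic $p$ and exploit the inverse of Frobenius --- is the right instinct, and your setup (the correspondences $A \leftrightarrow (R,I)$ and $B \leftrightarrow S$ via Theorems~\ref{T:perfectoid ring} and~\ref{T:mixed lift ring}) matches the paper. But the central step of your argument has a genuine gap: you propose to first establish that $\gotho_S$ is almost finite projective and almost \'etale over $\gotho_R$ in characteristic $p$ (Remark~\ref{R:almost perfect}), and then to ``lift these almost-module statements through $W(\cdot)$ and the quotient by $z$.'' The Witt vector functor is functorial only for \emph{ring} homomorphisms (Lemma~\ref{L:Teichmuller2}); an $\gotho_R$-linear splitting $\gotho_S \to \gotho_R^m \to \gotho_S$ does not induce a $W(\gotho_R)$-linear map $W(\gotho_S) \to W(\gotho_R)^m$, because the assignment $\sum_i p^i[\overline{x}_i] \mapsto \sum_i p^i[f(\overline{x}_i)]$ fails to be additive (the Witt addition polynomials of Remark~\ref{R:addition formula} involve products of fractional powers, which a linear map does not respect). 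The Teichm\"uller-lift argument of Lemma~\ref{L:lift finite etale perfectoid} that you invoke yields only finite \emph{generation} of $\gotho_B$ over $\gotho_A$ up to small torsion, not the factorization through a finite free module with composite equal to multiplication by elements of norm tending to $1$, and not the trace-pairing almost-isomorphism; likewise ``trace commutes with the Witt functor'' is not a meaningful operation on module maps.

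The paper's proof sidesteps this by producing the splitting and trace-duality maps \emph{already in characteristic zero}: by Proposition~\ref{P:perfect henselian}, $\tilde{\calR}^{\inte,1}_S$ is honestly finite \'etale (hence finite projective with perfect trace pairing) over $\tilde{\calR}^{\inte,1}_R$, so there exist maps $\tilde{\calR}^{\inte,1}_S \to (\tilde{\calR}^{\inte,1}_R)^m \to \tilde{\calR}^{\inte,1}_S$ composing to the identity. Restricting these to the unit balls $\gotho_{\tilde{\calR}^{\inte,1}_R}$, $\gotho_{\tilde{\calR}^{\inte,1}_S}$ costs a factor $[\overline{x}]$ for a suitable $\overline{x} \in \gotho_L$; one then applies the \emph{ring automorphism} $\varphi^{-n}$ of $W(R)$ and $W(S)$ (which does transport module maps) to replace $[\overline{x}]$ by $[\overline{x}^{p^{-n}}]$, and finally quotients by $I$ using the isomorphisms $\gotho_{\tilde{\calR}^{\inte,p^n}_R}/I\gotho_{\tilde{\calR}^{\inte,p^n}_R} \cong \gotho_A$ and $\gotho_{\tilde{\calR}^{\inte,p^n}_S}/I\gotho_{\tilde{\calR}^{\inte,p^n}_S} \cong \gotho_B$ from Lemma~\ref{L:stable residue2}. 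Since $|\theta([\overline{x}^{p^{-n}}])| \to 1$, almost finite projectivity and the almost trace duality follow, with uniformity automatic because $m$ is fixed. To repair your proposal you must replace the step ``lift the characteristic-$p$ almost splittings through $W$'' by this use of Proposition~\ref{P:perfect henselian}; the characteristic-$p$ almost purity of Remark~\ref{R:almost perfect} is not actually an input to the argument.
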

\begin{proof}
By Theorem~\ref{T:mixed lift ring}, $B$ is also perfectoid.
Equip $A$ and $B$ with their spectral norms.
Apply Lemma~\ref{L:primitive generator} to construct $z \in W(\gotho_A^{\frep})$ primitive of degree 1 generating the kernel of $\theta: W(\gotho_A^{\frep}) \to \gotho_A$. For $n$ a nonnegative integer, put $y_n = [\overline{z}^{-p^{-n}}] z$.
Let $R,S$ correspond to $A,B$ via Theorem~\ref{T:perfectoid ring},
so that $S \in \FEt(A)$ by Theorem~\ref{T:mixed lift ring} again.
Write $\gotho^r_*$ as shorthand for $\gotho_{\tilde{\calR}^{\inte,r}_*}$;
by Lemma~\ref{L:stable residue2},
for each nonnegative integer $n$, we have isomorphisms
$\gotho^{p^n}_R/y_n \gotho^{p^n}_R \cong \gotho_A$, $\gotho^{p^n}_S/y_n \gotho^{p^n}_S \cong \gotho_B$.

By Proposition~\ref{P:perfect henselian},
$\tilde{\calR}^{\inte,1}_S$ is the object of $\varphi^{-1}$-$\FEt(\tilde{\calR}^{\inte,1}_R)$ corresponding
to $S$. In particular, $\tilde{\calR}^{\inte,1}_S$ is a finite projective $\tilde{\calR}^{\inte,1}_R$-module,
so for some positive integer $m$ there exist morphisms $\tilde{\calR}^{\inte,1}_S \to (\tilde{\calR}^{\inte,1}_R)^m \to \tilde{\calR}^{\inte,1}_S$
of $\tilde{\calR}^{\inte,1}_R$-modules whose composition is the identity.
For a suitable $\lambda \in \gotho_R$ as in Definition~\ref{D:almost},
we may multiply through to obtain morphisms
$\gotho^1_S \to (\gotho^1_R)^m \to \gotho^1_S$ of $\gotho^1_R$-modules whose composition is multiplication
by $[\lambda]$. By applying $\varphi^{-n}$ and then quotienting by $y_n$, we obtain morphisms
$\gotho_B \to \gotho_A^m \to \gotho_B$ of $\gotho_A$-modules
whose composition is multiplication by $\theta([\lambda^{p^{-n}}])$.
Since the norm of $\theta([\lambda^{p^{-n}}])$ tends to 1 as $n \to \infty$, it follows that
$\gotho_B$ is uniformly almost finite projective over $\gotho_A$. Similarly, starting from the perfectness of the
trace pairing on $\tilde{\calR}^{\inte,1}_S$ over $\tilde{\calR}^{\inte,1}_R$, then applying $\varphi^{-n}$,
and finally quotienting by $y_n$, we deduce that the trace pairing defines an almost isomorphism $\gotho_B
\to \Hom_{\gotho_A}(\gotho_B, \gotho_A)$.
\end{proof}

\begin{remark}
The original almost purity theorem of
Faltings \cite{faltings-purity1, faltings-almost}
differs a bit in form from Theorem~\ref{T:almost purity}, in that it refers to a specific construction
to pass from a suitable affinoid algebra over a complete discretely valued field of mixed characteristics
to a perfectoid algebra. We will encounter this construction, which uses toric local coordinates,
in a subsequent paper.

After Faltings introduced the concept of almost purity, and the broader context of almost ring theory,
an abstract framework for such results has been introduced by Gabber and Ramero
\cite{gabber-ramero}, and used by them to establish certain generalizations of
Faltings's almost purity theorem \cite{gabber-ramero-arxiv}.

Theorem~\ref{T:almost purity} appears to be much stronger than the main result of
\cite{gabber-ramero-arxiv}, and the proof is simpler. In place of some complicated analysis
in the style of Grothendieck's proof of Zariski-Nagata purity (as in the original work of Faltings),
the proof of Theorem~\ref{T:almost purity} ultimately rests on the local nature of the perfectoid correspondence.

An independent derivation of Theorem~\ref{T:almost purity}, based on the same set of ideas, has been
given by Scholze \cite{scholze1}. Scholze goes further, extending the perfectoid correspondence
and the almost purity theorem to a certain class of adic analytic spaces; he then uses these
to establish relative versions of the de Rham-\'etale comparison isomorphism in $p$-adic Hodge theory
\cite{scholze2}. We will make contact with the latter results later in this series.
\end{remark}

\section{\texorpdfstring{$\varphi$}{phi}-modules}
\label{sec:phi-modules}

We now introduce $\varphi$-modules over the rings introduced in \S\ref{sec:relative extended}.
In order to avoid some headaches later when working in the relative setting, we expend some energy
here to relate $\varphi$-modules to more geometrically defined concepts, including a relative
analogue of the vector bundles considered by Fargues and Fontaine \cite{fargues-fontaine}.

\setcounter{theorem}{0}
\begin{hypothesis} \label{H:add positive integer}
Throughout \S\ref{sec:phi-modules}, continue to retain Hypothesis~\ref{H:relative extended}.
In addition, let $a$ denote a positive integer, and put $q = p^a$.
\end{hypothesis}

\subsection{\texorpdfstring{$\varphi$}{phi}-modules and \texorpdfstring{$\varphi$}{phi}-bundles}

\begin{defn} \label{D:phi-module relative}
A \emph{$\varphi^a$-module} over $W(R)$ (resp.\ $\tilde{\calE}_R$,
$\tilde{\calR}^{\inte}_R$, $\tilde{\calR}^{\bd}_R$,
$\tilde{\calR}_R$, $\tilde{\calR}_R^+$, $\tilde{\calR}_R^{\infty}$) is a
finite locally free module $M$ equipped with a semilinear $\varphi^a$-action.
For example, one may take the direct sum of one or more copies of the base ring and use the action of
$\varphi^a$ on the ring; any such $\varphi^a$-module is said to be \emph{trivial}.

For $* \in \{\tilde{\calR}^{\inte}, \tilde{\calR}^{\bd}, \tilde{\calR}\}$
and $r>0$, note that any $\varphi^a$-module over $*_R$ descends
uniquely to a finite locally free module $M_r$ over $*^r_R$ equipped with an isomorphism
$(\varphi^a)^* M_r \cong M_r \otimes_{*^r_R} *^{r/q}_R$ of modules over $*^{r/q}_R$. (The argument is by applying $\varphi^{-1}$ as in the proof of
Proposition~\ref{P:perfect henselian}.) We call $M_r$ the \emph{model} of $M$ over $*^r_R$.
\end{defn}

Unfortunately, it is not straightforward to deal with $\varphi^a$-modules
over $\tilde{\calR}_R$ because of the complicated nature of the base ring.
We are thus forced to introduce an auxiliary definition with a more geometric flavor.
\begin{defn}
For $0 < s \leq r/q$, a \emph{$\varphi^a$-module} over $\tilde{\calR}^{[s,r]}_R$
is a finite locally free module $M$ equipped with an isomorphism
$(\varphi^a)^* M \otimes_{\tilde{\calR}^{[s/q,r/q]}_R} \tilde{\calR}^{[s,r/q]}_R
\cong M \otimes_{\tilde{\calR}^{[s,r]}_R} \tilde{\calR}^{[s,r/q]}_R$ of
modules over $\tilde{\calR}^{[s,r/q]}_R$.
A \emph{$\varphi^a$-bundle} over $\tilde{\calR}_R$
consists of a $\varphi^a$-module $M_{I}$ over $\tilde{\calR}^{I}_R$
for every interval $I = [s,r]$ with $0 < s \leq r/q$, together with isomorphisms
$\psi_{I,I'}: M_I \otimes_{\tilde{\calR}^{I}_R} \tilde{\calR}^{I'}_R \cong M_{I'}$
for every pair of intervals $I,I'$ with $I' \subseteq I$, satisfying the cocycle condition
$\psi_{I',I''} \circ \psi_{I,I'} = \psi_{I,I''}$.
We refer to $M_I$ as the \emph{model} of the $\varphi^a$-bundle over $\tilde{\calR}^I_R$;
we may freely pass between $\varphi^a$-modules and models using Lemma~\ref{L:model to bundle} below.
We define base extensions and exact sequences of $\varphi^a$-modules in terms of models.

For $M = \{M_I\}$ a $\varphi^a$-bundle over $\tilde{\calR}_R$ and any interval $I'=[s,r]$ with $0<s<r<+\infty$ (not necessarily satisfying $s\leq r/q$), define $M_{I'}=M_{I}\otimes_{\tilde{\calR}^{I}_R}\tilde{\calR}^{I'}_R$
where $M_{I}$ is a model of $M$ with $I'\subseteq I$; this is independent of the choice of $M_{I}$.
Moreover, it is clear that one gets the isomorphisms $\psi_{I,I'}: M_I \otimes_{\tilde{\calR}^{I}_R} \tilde{\calR}^{I'}_R \cong M_{I'}$
for every pair of intervals $I,I'$
with $I' \subseteq I$, satisfying the cocycle condition, and the
isomorphisms between  $(\varphi^a)^* M_I$ and $M_{I/q}$ which commute with the $\psi_{I,I'}$.
A \emph{global section} of $M$ consists of an element
$\bv_I \in M_I$ for each $I$ such that $\psi_{I,I'}(\bv_I) = \bv_{I'}$; note that $\varphi^a$ acts on
the module of global sections.
\end{defn}

\begin{remark} \label{R:kernels}
The kernel of a surjective morphism
of finite projective modules over a ring is itself a finite projective module.
Consequently, the kernel of a surjective morphism of $\varphi^a$-modules or $\varphi^a$-bundles
(where surjectivity in the latter case means that each map of models is surjective)
is again a $\varphi^a$-module or $\varphi^a$-bundle, respectively.
\end{remark}

The following lemma makes it unambiguous to say that a $\varphi^a$-bundle is generated by a given
finite set of global sections. The proof is loosely modeled on that of \cite[Satz~2.4]{kiehl}.
\begin{lemma} \label{L:module of sections}
Let $M = \{M_I\}$ be a $\varphi^a$-bundle over $\tilde{\calR}_R$.
Suppose that $\bv_1,\dots,\bv_n$ are global sections of $M$ which generate $M_I$ as a module over
$\tilde{\calR}^I_R$ for every closed interval $I \subset (0, +\infty)$.
Then $\bv_1,\dots,\bv_n$ also generate the set of global sections of $M$ as a module over $\tilde{\calR}^\infty_R$.
\end{lemma}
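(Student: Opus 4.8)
The plan is to realize the module of global sections of $M$ as an inverse limit over all closed subintervals $I \subset (0,+\infty)$ of the finitely generated modules $M_I$, and then to reduce the generation statement to a limiting argument controlled by the bundle structure together with Corollary~\ref{C:generation} and Lemma~\ref{L:extend generators}. First I would fix the generators $\bv_1,\dots,\bv_n$; by hypothesis these generate $M_I$ over $\tilde{\calR}^I_R$ for each closed $I$. Let $N$ be the submodule of $\tilde{\calR}^\infty_R$-module of global sections generated by $\bv_1,\dots,\bv_n$, and let $P$ be the full module of global sections; I must show $N = P$. Since $\tilde{\calR}^\infty_R = \cap_{I} \tilde{\calR}^I_R$ (intersecting over an exhausting nested sequence $I_1 \subset I_2 \subset \cdots$ of closed subintervals, with $I_k \nearrow (0,+\infty)$, using Lemma~\ref{L:intersection}), and since a global section is by definition a compatible family $(\bv_I)$, the module $P$ is the inverse limit $\varprojlim_k M_{I_k}$ along the transition maps $\psi_{I_{k+1},I_k}$.

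The core of the argument is then: given a global section $\bw \in P$, produce coefficients $c_1,\dots,c_n \in \tilde{\calR}^\infty_R$ with $\bw = \sum_i c_i \bv_i$. On each $M_{I_k}$, the hypothesis gives coefficients $c_{1}^{(k)},\dots,c_{n}^{(k)} \in \tilde{\calR}^{I_k}_R$ with $\bw|_{I_k} = \sum_i c_i^{(k)} \bv_i$. These need not be compatible across $k$, so I would use the following standard approximation/correction scheme. Choose on each $M_{I_k}$ a finite projective complement, i.e.\ embed $M_{I_k}$ as a direct summand of a finite free module so that one has a bounded ``generate with controlled norm'' property: by Lemma~\ref{L:finite projective Banach} and Lemma~\ref{L:nearby generators}, after passing to a model over a single closed interval one can solve $\bw = \sum c_i \bv_i$ with $\lambda(\alpha^t)$-norms of the $c_i$ bounded linearly in the norm of $\bw$, uniformly for $t$ in that interval. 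Then I would build the global coefficients $c_i$ as convergent series: set $\bw_0 = \bw$, solve over the growing intervals with controlled norms, subtract the partial solution, and iterate, arranging that on $I_k$ the residual $\bw_k$ has norm decreasing geometrically. The key point making the series converge in $\tilde{\calR}^\infty_R$ rather than just in each $\tilde{\calR}^{I_k}_R$ is precisely the uniform bound on the solving operators on each fixed closed interval, combined with the fact that a family of elements convergent under $\lambda(\alpha^t)$ for every $t>0$ lies in $\tilde{\calR}^\infty_R$. Compatibility of the limiting $c_i$ across intervals follows from the uniqueness of the expansion once the $\bv_i$ are, say, part of a basis after a further rational localization of $R$ and restriction of $I$ (Lemma~\ref{L:extend generators}), or more directly by noting the maps $\tilde{\calR}^{I_{k+1}}_R \to \tilde{\calR}^{I_k}_R$ are injective (Lemma~\ref{L:intersection}), so a compatible family of solutions on the $I_k$ glues to a single element of the inverse limit.

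The main obstacle I anticipate is the uniformity of the bound in the solving step: the naive ``generate $M_I$ over $\tilde{\calR}^I_R$'' hypothesis gives, for each $I$, \emph{some} constant $c_I$, but a priori $c_I$ could blow up as $I$ grows to $(0,+\infty)$. To control this I would not try to make the constant uniform over all of $(0,+\infty)$ — indeed $\tilde{\calR}^\infty_R$ is genuinely an increasing union locally — but rather exploit that $\tilde{\calR}^\infty_R = \varprojlim_k \tilde{\calR}^{I_k}_R$ with each $\tilde{\calR}^{I_k}_R$ a Banach ring (complete under $\max\{\lambda(\alpha^r),\lambda(\alpha^s)\}$ by Lemma~\ref{L:hadamard relative}), so that the problem becomes one of solving coherently in an inverse system of Banach modules. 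On each fixed $\tilde{\calR}^{I_k}_R$ I have a genuine uniform bound $c_{I_k}$, which is all that is needed to run the iteration \emph{relative to that interval}; the global coefficient is then assembled level by level, enlarging the interval at each stage, and the point is that an element lying in $\tilde{\calR}^{I_k}_R$ for all $k$ automatically lies in $\tilde{\calR}^\infty_R$. A secondary technical nuisance is that $M$ need only be locally free, not free, so the ``solve with bounded norm'' step should be applied after covering $\calM(R)$ by rational localizations trivializing $M$ (Corollary~\ref{C:generation}, Theorem~\ref{T:Witt Kiehl1}) and then using the Tate-type acyclicity of Theorem~\ref{T:Tate analogue} to reassemble; alternatively one works throughout with a fixed embedding of $M_{I_k}$ as a summand of a free module as in Lemma~\ref{L:finite projective Banach}, which avoids the covering entirely. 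I expect the second route to be cleaner.
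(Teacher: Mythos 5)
Your overall strategy is the one the paper uses: exploit projectivity of $M_I$ to split the surjection $(\tilde{\calR}^{I}_R)^n \to M_I$ defined by $\bv_1,\dots,\bv_n$, thereby obtaining a bounded solving operator on each closed interval, and then run a geometrically contracting correction scheme over an exhausting sequence of intervals $[p^{-l},p^l]$. Your instinct at the end is also right: the paper works directly with the splittings (your ``second route''), and no covering of $\calM(R)$ by rational localizations is needed.

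There is, however, one mechanism missing from your sketch without which the iteration is not well defined. At stage $l$ the coefficients $a_{il}$ you solve for live only in $\tilde{\calR}^{[p^{-l},p^l]}_R$, so after ``subtracting the partial solution'' the residual is merely a section over $[p^{-l},p^l]$ and cannot be fed into the solver over the larger interval at the next stage. The paper's fix is a density step: before subtracting, replace each $a_{il}$ by a nearby $b_{il} \in \tilde{\calR}^\infty_R$ (finite sums $\sum p^i[\overline{x}_i]$ lie in $\tilde{\calR}^\infty_R$ and are dense in $\tilde{\calR}^{[s,r]}_R$), with $\lambda(\alpha^t)(b_{il}-a_{il}) \leq p^{-1}c_l^{-1}\lambda(\alpha^t)(a_{il})$ so that the residual still contracts by a factor $p^{-1}$ against the splitting constant $c_l$. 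This single move simultaneously keeps the residual a global section at every stage and guarantees that the limiting coefficients $\sum_l b_{il}$ converge under $\lambda(\alpha^t)$ for every $t$ and hence lie in $\tilde{\calR}^\infty_R$. Your proposed alternative for compatibility — uniqueness of the expansion after making the $\bv_i$ ``part of a basis'' via a localization — does not work, since the $\bv_i$ are only generators (possibly with $n$ exceeding the rank) and need not extend to a basis even locally; the approximation-and-contraction argument is what replaces it.
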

\begin{proof}
For each nonnegative integer $l$,
choose a morphism $\psi_l: M_{[p^{-l},p^l]} \to (\tilde{\calR}^{[p^{-l},p^{l}]}_R)^n$
of $\tilde{\calR}^{[p^{-l},p^l]}_R$-modules whose composition with the map
$(\tilde{\calR}^{[p^{-l},p^l]}_R)^n \to M_{[p^{-l},p^l]}$ defined by $\bv_1,\dots,\bv_n$
is the identity.
By Lemma~\ref{L:finite projective Banach}, we can choose $c_l>0$ such that
the subspace norm on $M_{[p^{-l},p^l]}$ defined by $\psi_{l+1}$ (or rather its base extension
from $\tilde{\calR}^{[p^{-l+1},p^{l+1}]}_R$ to $\tilde{\calR}^{[p^{-l},p^l]}_R$)
and the quotient norm on $M_{[p^{-l},p^l]}$
differ by a multiplicative factor of at most $c_l$.

Given a global section $\bw$ of $M$, we choose elements $a_{il} \in \tilde{\calR}^{[p^{-l},p^l]}_R$,
$b_{il} \in \tilde{\calR}^{\infty}_R$ for $i=1,\dots,n$, $l=0,1,\dots$ as follows.
\begin{itemize}
\item
Given the $b_{ij}$ for $j<l$, use $\psi_l$ to construct $a_{il}$ so that $\bw - \sum_i \sum_{j<l} b_{ij} \bv_i = \sum_i a_{il} \bv_i$.
\item
Given the $a_{il}$, choose the $b_{il}$ so that $\lambda(\alpha^t)(b_{il} - a_{il}) \leq p^{-1} c_l^{-1}
\lambda(\alpha^t)(a_{il})$ for $i=1,\dots,n$ and $t \in [p^{-l},p^l]$.
\end{itemize}
Note that for $t \in [p^{-l},p^l]$, we have $\max_i \{\lambda(\alpha^t)(a_{i(l+1)})\} \leq p^{-1} \max_i \{\lambda(\alpha^t)(a_{il})\}$. Consequently, the series $\sum_l a_{il}$ converges to a limit $a_i \in \tilde{\calR}^\infty_R$
satisfying $\bw = \sum_i a_i \bv_i$; this proves the claim.
\end{proof}

\begin{lemma} \label{L:model to bundle}
For $0 < s \leq r/q$, the projection functor from
$\varphi^a$-bundles over $\tilde{\calR}_R$ to $\varphi^a$-modules over $\tilde{\calR}^{[s,r]}_R$
is a tensor equivalence.
\end{lemma}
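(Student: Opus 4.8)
The plan is to establish the equivalence in two directions, using the gluing machinery already in place. Given a $\varphi^a$-module $M_{[s,r]}$ over $\tilde{\calR}^{[s,r]}_R$, I first want to produce a $\varphi^a$-module on every admissible interval $I = [s',r']$ with $0 < s' \leq r'/q$, together with compatibility isomorphisms. The key observation is that the Frobenius structure isomorphism $\varphi^* M \otimes_{\tilde{\calR}^{[s/q,r/q]}_R} \tilde{\calR}^{[s,r/q]}_R \cong M \otimes_{\tilde{\calR}^{[s,r]}_R} \tilde{\calR}^{[s,r/q]}_R$ lets one \emph{propagate} the module: applying $\varphi^*$ repeatedly extends the model to wider intervals $[s/q^k, r]$ (using Lemma~\ref{L:intersection} to see that the glued pieces agree on overlaps), while applying $(\varphi^*)^{-1}$ extends in the other direction to $[s, rq^k]$. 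For intervals not of this special form, one uses Lemma~\ref{L:Tate lemma2}: any finite projective glueing datum over the glueing square $\tilde{\calR}^{[s,r]}_R \to \tilde{\calR}^{[s,s']}_R, \tilde{\calR}^{[s',r]}_R, \tilde{\calR}^{[s',s']}_R$ descends to a unique finite projective module over $\tilde{\calR}^{[s,r]}_R$, so one can restrict and reglue to land on any subinterval. This builds the $\varphi^a$-bundle $\{M_I\}$ extending $M_{[s,r]}$.

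Second, I would check this extension is unique up to unique isomorphism and functorial, which gives the inverse functor to projection. Uniqueness follows because any two $\varphi^a$-bundles restricting to the same model on $[s,r]$ have all their other models determined by the Frobenius propagation just described (since $\varphi^*$ is an isomorphism at the level of models, the model on $[s/q^k, r]$ is forced, and then restriction forces everything else via Lemma~\ref{L:intersection} and Lemma~\ref{L:Tate lemma2}); one also checks the cocycle condition holds automatically because it holds after restricting to a cofinal system of intervals where the isomorphisms are the canonical base-change ones. Morphisms of bundles are determined by their restriction to $[s,r]$ for the same reason: a morphism on $[s,r]$ extends uniquely, using that $\Hom$ of finite projective modules glues, i.e.\ $\Hom_{\tilde{\calR}^{[s,r]}_R}(M_1,M_2)$ is itself finite projective and behaves well under the base extensions in question. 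That this is a \emph{tensor} equivalence is immediate since tensor products, ranks, and exact sequences of $\varphi^a$-bundles are defined in terms of models (so projection trivially preserves them, and the inverse functor does too by construction).

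The main obstacle I anticipate is verifying that the propagation construction is well-defined at the boundary intervals — specifically, that when one writes the interval $[s, r]$ with $s$ very small as an overlap-union of $[s/q^k, r/q^{k-1}]$-type pieces (the images of $M$ under iterated $\varphi^*$) glued along the smaller intervals where the Frobenius isomorphism is defined, the resulting glued module is independent of how one sets up the gluing, and that the cocycle condition for the $\psi_{I,I'}$ is genuinely satisfied rather than merely satisfied up to a coherent system of identifications. This is where Lemma~\ref{L:intersection} and Lemma~\ref{L:Tate lemma2} do the real work: the former ensures the models over overlapping intervals literally agree as submodules after base extension, and the latter gives the uniqueness needed to conclude the glued objects are canonically identified. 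Once one is careful that all the identifications are the canonical base-change maps (and that $\varphi^*$ commutes with restriction of intervals), the cocycle condition is inherited from the obvious cocycle condition for iterated base extension along the rings $\tilde{\calR}^I_R$. The remaining bookkeeping — checking $0 < s \leq r/q$ is preserved under the operations, and that "finite locally free" is stable throughout — is routine given Lemma~\ref{L:finite generation} and the fact (Remark~\ref{R:perfect uniform strict}) that the relevant completed tensor products stay uniform.
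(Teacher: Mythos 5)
Your proposal is correct and follows essentially the same route as the paper's (much terser) proof: pull the given model back along positive and negative powers of $\varphi^a$ to obtain models over the shifted intervals, glue them over the overlaps using Lemma~\ref{L:Tate lemma2} (with Lemma~\ref{L:intersection} handling agreement on overlaps), and note that uniqueness of the glueing forces functoriality, the cocycle condition, and compatibility with the tensor structure. The extra bookkeeping you supply about well-definedness and uniqueness of the propagation is exactly what the paper leaves implicit.
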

\begin{proof}
For each nonnegative integer $n$,
we may uniquely lift a $\varphi^a$-module over $\tilde{\calR}^{[s,r]}_R$ to
$\tilde{\calR}^{[sq^{-n},rq^{n}]}_R$ by pulling back along positive and
negative powers of $\varphi^a$, then glueing using Theorem~\ref{T:Kiehl for interval}.
The claim follows at once.
\end{proof}

\begin{remark} \label{R:module to bundle}
There is a natural functor from $\varphi^a$-modules over $\tilde{\calR}_R$ to
$\varphi^a$-bundles over $\tilde{\calR}_R$: given a $\varphi^a$-module over $\tilde{\calR}_R$,
form its model $M_r$ over $\tilde{\calR}^r_R$ and then base extend to $\tilde{\calR}^{[s,r]}_R$
to obtain a $\varphi^a$-module over $\tilde{\calR}^{[s,r]}_R$.
We may recover $M_r$ as the set of $(0,r]$-sections of the resulting $\varphi^a$-bundle,
so this functor is fully faithful.
It also turns out to be essentially surjective; see Theorem~\ref{T:vector bundles} below.
\end{remark}

\subsection{Construction of \texorpdfstring{$\varphi$}{phi}-invariants}

We next introduce some calculations that allow us to construct $\varphi^a$-invariants.
These are relative
analogues of results from \cite[\S 4]{kedlaya-revisited} which were used as part of the construction
of slope filtrations.

\begin{defn}
For $M$ a $\varphi^a$-module or $\varphi^a$-bundle and $n \in \ZZ$, define the twist
$M(n)$ of $M$ to be the same underlying module or bundle with the $\varphi^a$-action
multiplied by $p^{-n}$.
\end{defn}

\begin{prop} \label{P:H1}
Let $M = \{M_I\}$ be a $\varphi^a$-bundle over $\tilde{\calR}_R$.
Then there exists an integer $N$ such that for
$n \geq N$ and $0 < s \leq r$, the map
$\varphi^a - 1: M_{[s,rq]}(n) \to M_{[s,r]}(n)$ is surjective. Moreover, if $M$ arises from a
$\varphi^a$-module over $\tilde{\calR}^{\inte}_R$, we may take $N = 1$.
\end{prop}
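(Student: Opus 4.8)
<br>

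The plan is to reduce everything to a concrete estimate on the operator $\varphi^a - p^n$ acting on the rings $\tilde{\calR}^{[s,r]}_R$ themselves, then transport this to an arbitrary $\varphi^a$-bundle $M$ by choosing a model and a complement. First I would handle the trivial case: take $M = \tilde{\calR}_R$ with its standard $\varphi^a$-action, so $M(n)$ has $\varphi^a$ acting as $p^{-n}\varphi^a$, and surjectivity of $\varphi^a - 1$ on $M(n)$ amounts to surjectivity of $p^n\varphi^a - p^{2n}$, i.e. (up to a unit) of $\varphi^a - p^n$, on $\tilde{\calR}^{[s,r]}_R$. To prove the latter for $n \geq 1$ I would use the decomposition lemmas already in the paper. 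Given a target $x \in \tilde{\calR}^{[s,r]}_R$, apply Lemma~\ref{L:decompose variant} (with $q = p^a$ and a suitable constant $c<1$) to split $x = y + z$ where $\varphi^{-a}(y)$ lies in a slightly wider annulus with controlled norm and $\varphi^a(z)$ has norm shrunk by a factor $c^{(q-1)t}$. On the $z$-part, $\sum_{k\geq 0} p^{-nk}\varphi^{ak}$ converges because each application of $\varphi^a$ contracts norms on the relevant annulus (using \eqref{eq:Frobenius norms}, $\lambda(\alpha^s)(\varphi^a(w)) = \lambda(\alpha^{qs})(w)$, together with the norm bound from Lemma~\ref{L:decompose variant}) while $p^{-n}$ with $n\geq 1$ is controlled; on the $y$-part, $-\sum_{k\geq 1} p^{nk}\varphi^{-ak}$ converges because $\varphi^{-a}$ moves toward small radii where the norms are bounded, and $p^n$ is dominated by the contraction. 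Summing the two Neumann-type series solves $(\varphi^a - p^n)u = x$. This is essentially the argument behind Lemma~\ref{L:big Robba invariants} and Corollary~\ref{C:twisted invariants} run in reverse, and I expect it to go through on each closed interval $[s,r]$ uniformly.

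Next I would pass from the trivial module to a general $\varphi^a$-bundle $M$. Fix an interval, say $[s_0, r_0]$ with $0 < s_0 \leq r_0/q$, and take the model $M_{[s_0,r_0]}$, a finite projective $\tilde{\calR}^{[s_0,r_0]}_R$-module; embed it as a direct summand of a finite free module $F$, so $F \cong M_{[s_0,r_0]} \oplus M'$. The projector and the $\varphi^a$-structure together let me write the $\varphi^a$-action on $M$ (after twisting) as $\varphi^a$ composed with multiplication by an invertible matrix $U$ over $\tilde{\calR}^{[s_0,r_0/q]}_R$ (this is exactly the kind of matrix description used throughout \S\ref{subsec:slope Robba} and in Lemma~\ref{L:cover with free}). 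Then $\varphi^a - 1$ on $M(n)$ is, in coordinates on a free cover, of the form $p^n U \varphi^a - 1$ acting on vectors; I would solve $(p^n U\varphi^a - 1)\bw = \bv$ by the same Neumann series as in the scalar case, where now each term involves a product of iterated $\varphi^a$-twists of $U$. The point is that the norms $\lambda(\alpha^t)(\varphi^{ak}(U))$ and $\lambda(\alpha^t)(\varphi^{-ak}(U^{-1}))$ grow at most geometrically (in fact are controlled on the relevant annuli once one fixes $[s,r]$), so for $n$ sufficiently large — uniformly over $[s,r] \subseteq (0,+\infty)$ but with $N$ depending on $M$, which is all that is claimed — the series converges in $M_{[s,r]}$. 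Because $M$ is only locally free, I would first reduce to the free case using a rational-localization covering of $R$ via Corollary~\ref{C:generation} and Theorem~\ref{T:Witt Kiehl1}, checking that the constant $N$ can be taken uniform across the finitely many members of the covering, then glue the solutions; alternatively, since projective modules over the Banach rings in question embed as summands of free ones (Lemma~\ref{L:finite projective Banach}), one can work directly with a projector and track norms, which is cleaner.

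The main obstacle I anticipate is bookkeeping of norms across the family of intervals: the splitting in Lemma~\ref{L:decompose variant} and the convergence of the two Neumann series both depend on the relation between $s$, $r$, $q$, and the contraction constant $c$, and one must verify that a single $N$ works for \emph{all} closed $[s,r] \subseteq (0,+\infty)$ simultaneously. The way I would organize this is to note that $\varphi^a$ maps $\tilde{\calR}^{[s,r]}_R$ into $\tilde{\calR}^{[s/q,r/q]}_R$ and, crucially, that the norm identity $\lambda(\alpha^t)(\varphi^a(w)) = \lambda(\alpha^{qt})(w)$ means the ``contraction'' is really a rescaling of the exponent, so after one application of $\varphi^a$ the relevant quantities on $[s,r]$ are governed by values on $[qs, qr]$, and iterating stays within a Fr\'echet-bounded regime controlled by Lemma~\ref{L:hadamard relative} (log-convexity) and Lemma~\ref{L:intersection}. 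With that in hand the geometric decay comes from the $p^{\pm n}$ factors alone, which gives a threshold $N$ independent of the interval. The trivial-module refinement $N=1$ then falls out because in that case $U$ is the identity, so no interval-dependent norm growth from $U$ enters and the bare estimate on $\varphi^a - p^n$ suffices for every $n \geq 1$.
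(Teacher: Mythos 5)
Your overall strategy --- split the target using Lemma~\ref{L:decompose variant} into a piece contracted by $\varphi^a$ and a piece contracted by $\varphi^{-a}$, then iterate --- is the same one the paper uses, but the way you propose to iterate has a genuine gap. After a single splitting $x = y + z$ with $y \in \tilde{\calR}^{[s/q,r]}_R$ and $z \in \tilde{\calR}^{[s,rq]}_R$, the terms of your two Neumann series are not defined on the annulus you need: $\varphi^{2a}(z)$ lies only in $\tilde{\calR}^{[s/q^2,r/q]}_R$, whose upper endpoint $r/q$ is strictly below $r$, and symmetrically $\varphi^{-2a}(y)$ lies only in $\tilde{\calR}^{[sq,rq^2]}_R$, whose lower endpoint exceeds $s$. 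So the series $\sum_k p^{-nk}\varphi^{ak}(z)$ and $\sum_k p^{nk}\varphi^{-ak}(y)$ fail to have terms in $\tilde{\calR}^{[s,r]}_R$ beyond $k=1$; log-convexity and Lemma~\ref{L:intersection} cannot rescue elements that simply do not extend to the larger annuli. The correct iteration applies the operator exactly once --- the combination $p^{n}\varphi^{-a}(y) + p^{-n}\varphi^{a}(z)$ does land back in $M_{[s,r]}$ once rewritten in terms of fixed generators using the matrices $A$ and $B$ expressing $\varphi^{-a}$ and $\varphi^{a}$ of those generators, with coefficient norms shrunk by a factor $\epsilon<1$ --- and then \emph{re-splits} the result before applying the operator again. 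Summing over the iterations produces $\bv \in M_{[s,rq]}$ with $\bv - p^{-n}\varphi^a(\bv) = \bw$.

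The second place the argument does not close is uniformity. For a general bundle there is no a priori geometric bound on $\lambda(\alpha^t)(\varphi^{ak}(U))$ uniform in $k$ and in the interval, and the direct-summand/projector reduction does not supply one. The re-splitting scheme sidesteps this entirely: each term of the iteration involves only one application of $A$ and one of $B$, so only the suprema of their norms over a single window enter, and the constant $c$ in Lemma~\ref{L:decompose variant} is chosen to beat those two suprema together with $p^{\pm n}$. To make $N$ independent of $[s,r]$ one first reduces, by applying a power of $\varphi^a$, to the case $r \in [1,q]$ and $r/s \leq q^{1/2}$, so that the matrix norms are computed over the fixed window $t \in [q^{-1/2},q]$; the case of a general interval then follows by bootstrapping with Lemma~\ref{L:intersection}, as you anticipate. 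With that reduction in place the trivial case does give $N=1$, since then $A=B=1$ and the numerical conditions on $N$ are satisfied already for $N=1$.
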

\begin{proof}
We first assume that $r/s \leq q^{1/2}$; by applying a suitable power of $\varphi^a$ as needed, we may also reduce to the
case where $r \in [1,q]$. Choose module generators $\bv_1,\dots,\bv_m$ of $M_{[s/q, rq]}$
and representations $\varphi^{-a}(\bv_j) = \sum_i A_{ij} \bv_i$,
$\varphi^a(\bv_j) = \sum_i B_{ij} \bv_i$ with $A_{ij} \in \tilde{\calR}^{[s,rq]}_R$,
$B_{ij} \in \tilde{\calR}^{[s/q,r]}_R$.
Put
\[
c_1 = \sup\{\lambda(\alpha^t)(A): t \in [s,rq]\}, \qquad
c_2 = \sup\{\lambda(\alpha^t)(B): t \in [s/q,r]\}.
\]
We take $N$ large enough so that
\begin{equation} \label{eq:choose N to open range}
p^{-N} c_1 < 1, \qquad
p^{N(1-q^{1/2})} c_1^{q^{1/2}} c_2 < 1;
\end{equation}
note that we may take $N=1$ if $M$ arises from a $\varphi^a$-module over $\tilde{\calR}^{\inte}_R$, as then we can ensure that $c_1 = c_2 = 1$.
The choice of $N$ ensures that for $n \geq N$, we can choose $c \in (0,1)$ so that
\begin{equation} \label{eq:choose c in range}
\epsilon =
\max\{p^{-n} c_1 c^{-(q-1)r/q},
p^n c_2 c^{(q-1)s}\} < 1.
\end{equation}
Given $(x_1,\dots,x_m) \in (\tilde{\calR}^{[s,r]}_R)^m$,
apply Lemma~\ref{L:decompose variant} to write $x_i = y_i + z_i$
with $y_i \in \tilde{\calR}^{[s/q,r]}_R$, $z_i \in \tilde{\calR}^{[s,rq]}_R$
such that for $t \in [s,r]$,
\begin{align*}
\lambda(\alpha^t)(y_i), \lambda(\alpha^t)(z_i) &\leq \lambda(\alpha^t)(x_i), \\
\lambda(\alpha^t)(\varphi^{-a}(y_i)) &\leq c^{-(q-1)t/q} \lambda(\alpha^t)(x_i), \\
\lambda(\alpha^t)(\varphi^a(z_i)) &\leq c^{(q-1)t} \lambda(\alpha^t)(x_i).
\end{align*}
Put
\[
x'_i = p^{n} \sum_j A_{ij} \varphi^{-a}(y_j) + p^{-n} \sum_j B_{ij} \varphi^a(z_j),
\]
so that
\[
\sum_i x'_i \bv_i = p^{n} \varphi^{-a}\left(\sum_i y_i \bv_i\right) + p^{-n} \varphi^a\left(\sum_i z_i \bv_i\right)
\]
and
\[
\max_i \{\lambda(\alpha^t)(x'_i)\} \leq \epsilon \max_i \{\lambda(\alpha^t)(x_i)\}
\qquad (t \in [s,r]).
\]
Let us view $y = (y_1,\dots,y_m)$, $z = (z_1,\dots,z_m)$, and $x' = (x'_1,\dots,x'_m)$
as functions of $x = (x_1,\dots,x_m)$.
Given $x_{(0)} \in (\tilde{\calR}^{[s,r]}_R)^m$, define $x_{(l+1)} = x'(x_{(l)})$,
and put
\begin{equation} \label{eq:produce horizontal element}
\bv = \sum_{l=0}^\infty \left( -p^n \varphi^{-a}\left(\sum_i y(x_{(l)})_i \bv_i\right) + \sum_i z(x_{(l)})_i \bv_i \right).
\end{equation}
This series converges to an element of $M_{[s, rq]}$ satisfying $\bv - p^{-n} \varphi^a(\bv) =\bw$ for
$\bw = \sum_{i=1}^m x_{(0),i} \bv_i \in M_{[s, r]}$. More precisely, from the choice of the $y_i$ and $z_i$,
$\sum_l \varphi^{-a} (y(x_{(l)})_i)$ converges under $\lambda(\alpha^t)$ in the ranges
$t \in [s,r]$ and $t \in [sq,rq]$, hence for $t \in [s,rq]$ by Lemma~\ref{L:hadamard relative}; a similar argument
applies to $\sum_l z(x_{(l)})_i$.

For general $r,s$, given $\bw \in M_{[s,r]}$,
the previous paragraph produces $\bv \in M_{[t,rq]}$ with $t = \max\{rq^{-1/2},s\}$
such that $\bv - p^{-n} \varphi^a(\bv) =\bw$.
By rewriting this equation as $\bw + p^{-n} \varphi^a(\bv) = \bv$
and invoking Lemma~\ref{L:intersection}, we find that
$\bv \in M_{[t',r]} \cap M_{[t,rq]} = M_{[t',rq]}$ for $t' = \max\{t/q, s\}$.
Repeating this argument, we eventually obtain $\bv \in M_{[s,rq]}$ as desired.
\end{proof}
\begin{cor} \label{C:exact invariants}
Let $0 \to M_1 \to M \to M_2 \to 0$ be an exact sequence of $\varphi^a$-bundles over $\tilde{\calR}_R$. Then there
exists an integer $N$ such that for
$n \geq N$, the sequence
\[
0 \to M_1(n)^{\varphi^a} \to M(n)^{\varphi^a} \to M_2(n)^{\varphi^a} \to 0
\]
is again exact.
\end{cor}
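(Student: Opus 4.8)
The plan is to derive this from the long exact sequence in $\varphi^a$-cohomology attached to the short exact sequence of $\varphi^a$-bundles, combined with Proposition~\ref{P:H1} to kill the relevant $H^1$ term. First I would fix a closed interval $I = [s,r]$ with $0 < s \leq r/q$ and pass to models: the sequence $0 \to M_1 \to M \to M_2 \to 0$ of $\varphi^a$-bundles gives, on models over $\tilde{\calR}^I_R$, a short exact sequence of finite locally free modules $0 \to M_{1,I} \to M_I \to M_{2,I} \to 0$, which remains exact after the base extension to $\tilde{\calR}^{I'}_R$ for $I' = [s,r/q]$ since the modules are projective (Remark~\ref{R:kernels}). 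The twist by $n$ does not change the underlying modules, only the $\varphi^a$-linearization, so I get a commuting diagram whose rows are the two copies of this exact sequence (over $\tilde{\calR}^I_R$ and over $\tilde{\calR}^{I'}_R$) and whose vertical maps are $\varphi^a - 1$ on each term, twisted by $n$.

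Next I would apply the snake lemma to that diagram. The kernels of the vertical maps are precisely $M_1(n)^{\varphi^a}$, $M(n)^{\varphi^a}$, $M_2(n)^{\varphi^a}$ (these being $I$-sections fixed by $\varphi^a$, which by Lemma~\ref{L:model to bundle} and the definition of global sections are the same as the global $\varphi^a$-invariants — here I should be slightly careful to phrase invariants of a $\varphi^a$-bundle consistently, but since $\varphi^a$-invariance of a section over $[s,rq]$ forces it to extend to a global section via the glueing isomorphisms, the identification is the expected one). The snake lemma then yields exactness of
\[
0 \to M_1(n)^{\varphi^a} \to M(n)^{\varphi^a} \to M_2(n)^{\varphi^a} \to \coker(\varphi^a-1, M_{1,[s,rq]}(n)) \to \cdots,
\]
so the only thing obstructing the desired short exact sequence is the cokernel of $\varphi^a - 1$ acting on (the model of) $M_1(n)$, i.e.\ the map $\varphi^a - 1: M_{1,[s,rq]}(n) \to M_{1,[s,r]}(n)$.

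This is exactly where Proposition~\ref{P:H1} enters: applied to the $\varphi^a$-bundle $M_1$, it produces an integer $N$ such that for all $n \geq N$ and all $0 < s \leq r$, the map $\varphi^a - 1: M_{1,[s,rq]}(n) \to M_{1,[s,r]}(n)$ is surjective. Taking this $N$, the connecting map in the snake sequence lands in a vanishing group, giving the asserted exactness of $0 \to M_1(n)^{\varphi^a} \to M(n)^{\varphi^a} \to M_2(n)^{\varphi^a} \to 0$ for $n \geq N$; since the interval $I$ was arbitrary and all identifications are compatible with restriction of intervals, this holds at the level of $\varphi^a$-bundles. The main obstacle is really bookkeeping rather than a new idea: one must check that the snake-lemma diagram is set up over a single pair of nested intervals so that Proposition~\ref{P:H1} applies with matching $[s,r]$ and $[s,rq]$, and that the three terms' $\varphi^a$-invariants computed interval-by-interval genuinely assemble into the global invariants appearing in the statement (using Lemma~\ref{L:model to bundle} and Corollary~\ref{C:twisted invariants}-style arguments to see that $\varphi^a$-fixed sections automatically have the right domain of definition). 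No serious estimates are needed beyond those already packaged inside Proposition~\ref{P:H1}.
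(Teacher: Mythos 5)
Your proposal is correct and is exactly the paper's argument: the paper's proof reads, in its entirety, ``Apply Proposition~\ref{P:H1} and the snake lemma,'' and you have simply filled in the interval bookkeeping and the identification of interval-level $\varphi^a$-invariants with global ones. The minor mismatch in your indexing (writing the vertical maps as $M_{[s,r]}\to M_{[s,r/q]}$ in the diagram but as $M_{[s,rq]}\to M_{[s,r]}$ when quoting Proposition~\ref{P:H1}) is only a relabeling and does not affect the argument.
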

\begin{proof}
Apply Proposition~\ref{P:H1} and the snake lemma.
\end{proof}

\begin{prop} \label{P:global invariants}
Let $M = \{M_I\}$ be a $\varphi^a$-bundle over $\tilde{\calR}_R$.
For $N$ as in Proposition~\ref{P:H1} and $n \geq N$,
there exist finitely many $\varphi^a$-invariant global sections of
$M(n)$ which generate $M$.
(If $M$ is obtained from a $\varphi^a$-module over $\tilde{\calR}^{\inte}_R$ generated by $m$ elements, then we may take $N=1$
and use only $2m$ global sections.)
\end{prop}
\begin{proof}
Pick any $r>0$, and set notation as in the proof of Proposition~\ref{P:H1} with $s = rq^{-1/2}$.
By hypothesis, $R$ is a Banach algebra over some analytic field $L$; choose 
$\overline{\pi} \in L$ with $0 < \alpha(\overline{\pi}) < 1$.
For any $n \geq N$, we can find a positive rational number $u \in \ZZ[p^{-1}]$
so that $c = \alpha(\overline{\pi}^{u})$ satisfies \eqref{eq:choose c in range}.
For $i=1,\dots,m$, define $\bw_i$ to be the sum of a series as in \eqref{eq:produce horizontal element}
in which
\[
x_{(0)} = 0, \qquad (y(x_{(0)})_j, z(x_{(0)})_j) = \begin{cases}
(-[\overline{\pi}^{u}], [\overline{\pi}^{u}]) & (j = i) \\
(0,0) & (j \neq i);
\end{cases}
\]
this gives an element of $M_{[rq^{-1/2}, rq]}$ killed by $\varphi^a-1$, and hence a
$\varphi^a$-invariant global section of $M$. We can write $\bw_j = [\overline{\pi}^s]\bv_j + \sum_i X_{ij} \bv_i$ with
$\lambda(\alpha^t)(X_{ij}) \leq \epsilon \alpha(\overline{\pi})^{st}$ for all $i,j$
and all $t \in [rq^{-1/2},r]$.
It follows that the matrix $1 + X$ is invertible over $\tilde{\calR}^{[rq^{-1/2},r]}_{R}$,
so $\bw_1,\dots,\bw_m$ generate $M_{[rq^{-1/2},r]}$.

By repeating the argument with $r$ replaced by $rq^{-1/2}$, we obtain $\varphi^a$-invariant global sections
$\bw'_1,\dots,\bw'_m$ which generate
$M_{[rq^{-1},rq^{-1/2}]}$.
By applying powers of $\varphi^a$ and invoking 
Lemma~\ref{L:finite generation} and
Lemma~\ref{L:Robba ring cover},
we see that $\bw_1, \dots, \bw_m, \bw'_1,\dots,\bw'_m$ generate $M_{I}$ for any $I$.
\end{proof}

\begin{remark} \label{R:no direct invariants}
One cannot hope to refine the calculation in Proposition~\ref{P:H1} to cover the entire interval
$[r/q,r]$ and thus prove Proposition~\ref{P:global invariants} in one step. This approach is obstructed
by the following observation: take $R = L$ and obtain $M$ from a trivial $\varphi^a$-module
with $\varphi^a$-invariant basis $\bv_1,\dots,\bv_m$.
If it were possible to refine the construction in Proposition~\ref{P:H1} so that the
elements $\bw_1,\dots,\bw_m$ produced in Proposition~\ref{P:global invariants} were generators of $M$,
we would have produced two isomorphic $\varphi$-modules with different degrees, a contradiction.
\end{remark}

\begin{remark} \label{R:global invariants free of trivial spectrum}
The proof of Proposition~\ref{P:global invariants} uses in a crucial way the running hypothesis that $R$ is a Banach algebra over an analytic field. However, the proof can be modified to also treat the case where $R$ is free of trivial spectrum (see 
Remark~\ref{R:small norm elements}): one produces elements which generate the fiber of $M$ over a point of $\calM(R)$, notes that these also generate the fibers in some neighborhood, and argues by compactness. Similarly, the results of \S\ref{subsec:vector bundles} can be extended to the case where $R$ is free of trivial spectrum.
\end{remark}

\subsection{Vector bundles \`a la Fargues-Fontaine}
\label{subsec:vector bundles}

We now make contact with the new perspective on $p$-adic Hodge theory provided by the
work of Fargues and Fontaine \cite{fargues-fontaine} (see Remark~\ref{R:vector bundles}). After we introduce adic spaces, we will be able to restate these results: see \S\ref{subsec:relative FF}.

\begin{defn} \label{D:vector bundles}
Define the reduced graded ring $P = \oplus_{n=0}^\infty P_n$ by
\[
P_n = \{x \in \tilde{\calR}^+_R: \varphi^a(x) = p^n x \}
= \{x \in \tilde{\calR}_R: \varphi^a(x) = p^n x \}
\qquad (n=0,1,\dots).
\]
The last equality holds by Corollary~\ref{C:twisted invariants}.
(We will write $P_R$ instead of $P$ in case it becomes necessary to specify $R$.)
For $d > 0$ and $f \in P_d$, let $P[f^{-1}]_0$ denote the degree zero subring of $P[f^{-1}]$;
the affine schemes $D_+(f) = \Spec(P[f^{-1}]_0)$ glue to define a reduced separated scheme $\Proj(P)$
as in \cite[Proposition~2.4.2]{ega2}. The points of $\Proj(P)$ may be identified with
the homogeneous prime ideals of $P$ not containing $P_+ = \oplus_{n>0} P_n$,
with $D_+(f)$ consisting of those ideals not containing $f$.
\end{defn}

\begin{defn}
For $f \in P_d$ for some $d>0$,
for $M = \{M_I\}$ a $\varphi^a$-bundle over $\tilde{\calR}_R$, define
\begin{equation} \label{eq:union of twists}
M_f = \bigcup_{n \in \ZZ} f^{-n} M(dn)^{\varphi^a}
\end{equation}
as a module over $P[f^{-1}]_0$.
(In other words, $M_f = M[f^{-1}]^{\varphi^a}$.)
For any closed interval $I \subset (0, +\infty)$,
we have a natural map
\begin{equation} \label{eq:natural map}
M_f \otimes_{P[f^{-1}]_0} \tilde{\calR}^I_R[f^{-1}] \to M_I \otimes_{\tilde{\calR}^I_R} \tilde{\calR}^I_R[f^{-1}].
\end{equation}
\end{defn}

\begin{lemma} \label{L:f surjective}
Choose $f \in P_d$ for some $d>0$.
Let $0 \to M_1 \to M \to M_2 \to 0$ be a short exact sequence of
$\varphi^a$-bundles over $\tilde{\calR}_R$. Then the sequence
\begin{equation} \label{eq:f surjective}
0 \to M_{1,f} \to M_f \to M_{2,f} \to 0
\end{equation}
is exact.
\end{lemma}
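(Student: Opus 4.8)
The plan is to exhibit all three objects appearing in \eqref{eq:f surjective} as filtered colimits of spaces of $\varphi^a$-invariants over increasing twists, and then to deduce the result from exactness of filtered colimits together with Corollary~\ref{C:exact invariants}. In particular, no separate treatment of left- or middle-exactness is needed: a filtered colimit of short exact sequences is again short exact, so the entire statement falls out at once once the colimit framing is set up.

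First I would record the bookkeeping. Since $f \in P_d$ means $\varphi^a(f) = p^d f$, and an element $y$ of $M_j(dn)^{\varphi^a}$ (for $M_1$, $M$, $M_2$ alike) is precisely one with $\varphi^a(y) = p^{dn} y$ in the underlying bundle, semilinearity of the $\varphi^a$-action over $\tilde{\calR}_R$ gives $\varphi^a(fy) = p^{d(n+1)}(fy)$; hence multiplication by $f$ carries $M_j(dn)^{\varphi^a}$ into $M_j(d(n+1))^{\varphi^a}$. Consequently the subsets $f^{-n} M_j(dn)^{\varphi^a}$ of $M_j[f^{-1}]$ form a directed family, and unwinding the definition (an invariant element of $M_j[f^{-1}]$ has the form $f^{-n}y$, and the invariance relation forces $f^k y \in M_j(d(n+k))^{\varphi^a}$ for some $k \geq 0$) shows that $M_{j,f} = \bigcup_n f^{-n} M_j(dn)^{\varphi^a} = \varinjlim_n M_j(dn)^{\varphi^a}$, the transition maps being multiplication by $f$; this is consistent with \eqref{eq:union of twists}. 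Because multiplication by $f$ is just the action of a scalar, these transition maps commute with the model morphisms induced by $M_1 \hookrightarrow M$ and $M \twoheadrightarrow M_2$, so for each $n$ we have a complex $M_1(dn)^{\varphi^a} \to M(dn)^{\varphi^a} \to M_2(dn)^{\varphi^a}$ forming a morphism of direct systems.

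The substance is then Corollary~\ref{C:exact invariants}, which supplies an integer $N$ such that for all $n' \geq N$ the sequence $0 \to M_1(n')^{\varphi^a} \to M(n')^{\varphi^a} \to M_2(n')^{\varphi^a} \to 0$ is exact. Restricting to twists $dn$ with $dn \geq N$ — an index set cofinal in $\ZZ$, hence computing the same colimit — we obtain a genuine direct system of short exact sequences. Applying exactness of filtered colimits of abelian groups and the identifications $M_{j,f} = \varinjlim_n M_j(dn)^{\varphi^a}$ from the previous paragraph then yields exactness of $0 \to M_{1,f} \to M_f \to M_{2,f} \to 0$, which is exactly \eqref{eq:f surjective}.

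I do not anticipate a real obstacle. The only two points needing a word of care are: (i) verifying that the "union of twists" presentation of $M_{j,f}$ genuinely is the colimit along multiplication by $f$ even when $f$ happens to be a zero divisor on some model — in that case the union merely identifies elements, precisely as the colimit does, so nothing is lost; and (ii) keeping the twist indices straight so that Corollary~\ref{C:exact invariants} is invoked along a cofinal subset of the relevant $n$. Both are routine once the filtered-colimit picture is in place.
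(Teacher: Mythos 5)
Your proposal is correct and follows exactly the route the paper takes: the paper's proof is a one-line citation of the union-of-twists description \eqref{eq:union of twists} together with Corollary~\ref{C:exact invariants}, which are precisely the two ingredients you assemble. Your write-up simply makes explicit the filtered-colimit bookkeeping (cofinality of the indices $dn \geq N$ and exactness of filtered colimits) that the paper leaves implicit.
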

\begin{proof}
This follows from \eqref{eq:union of twists} and Corollary~\ref{C:exact invariants}.
\end{proof}
\begin{cor} \label{C:f sums}
For $f \in P_d$ for some $d>0$, $M$ a $\varphi^a$-bundle over $\tilde{\calR}_R$, and
$M_1, M_2$ two $\varphi^a$-subbundles of $M$ for which $M_1 + M_2$ is again a $\varphi^a$-subbundle,
the natural inclusion $M_{1,f} + M_{2,f} \to (M_1 + M_2)_f$ within $M_f$ is an equality.
\end{cor}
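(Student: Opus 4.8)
The plan is to deduce Corollary~\ref{C:f sums} directly from Lemma~\ref{L:f surjective} by applying it to the two natural short exact sequences attached to the inclusions $M_1, M_2 \hookrightarrow M_1 + M_2$. First I would observe that since $M_1 + M_2$ is assumed to be a $\varphi^a$-subbundle of $M$, and $M_1$ is a $\varphi^a$-subbundle of it with $\varphi^a$-bundle quotient (the quotient $(M_1+M_2)/M_1$ is again a $\varphi^a$-bundle because it is finite locally free as the cokernel of an injection of finite locally free modules with locally free cokernel — this is where one uses that the ambient object is a $\varphi^a$-subbundle rather than merely a submodule, cf.\ Remark~\ref{R:kernels}), we get a short exact sequence
\[
0 \to M_1 \to M_1 + M_2 \to (M_1+M_2)/M_1 \to 0
\]
of $\varphi^a$-bundles over $\tilde{\calR}_R$. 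Similarly $M_1 \cap M_2$ is a $\varphi^a$-subbundle, and there is a short exact sequence $0 \to M_1 \cap M_2 \to M_2 \to (M_1+M_2)/M_1 \to 0$, where the second map is the composition $M_2 \hookrightarrow M_1+M_2 \twoheadrightarrow (M_1+M_2)/M_1$, which is surjective precisely because of the identity $M_1 + M_2 = M_1 + M_2$ at the level of the ambient bundle.

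Next I would apply Lemma~\ref{L:f surjective} to both of these sequences, obtaining exact sequences of $P[f^{-1}]_0$-modules
\[
0 \to M_{1,f} \to (M_1+M_2)_f \to \left((M_1+M_2)/M_1\right)_f \to 0
\]
and
\[
0 \to (M_1 \cap M_2)_f \to M_{2,f} \to \left((M_1+M_2)/M_1\right)_f \to 0.
\]
Both sit inside $M_f$ via the evident functorial maps (all the subbundles in question are subbundles of $M$, and the operation $N \rightsquigarrow N_f = N[f^{-1}]^{\varphi^a}$ is functorial and exact by the lemma, so the maps $M_{i,f} \to M_f$ are injective). From the first sequence, an element of $(M_1+M_2)_f$ maps into $\left((M_1+M_2)/M_1\right)_f$, and by the second sequence its image there comes from $M_{2,f}$; subtracting a lift in $M_{2,f}$ we land in $M_{1,f}$. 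Hence $(M_1+M_2)_f = M_{1,f} + M_{2,f}$ as submodules of $M_f$. Equivalently, one can phrase this as a small diagram chase: the inclusion $M_{1,f} + M_{2,f} \subseteq (M_1+M_2)_f$ is always valid, and surjectivity of $(M_1+M_2)_f \to \left((M_1+M_2)/M_1\right)_f$ together with surjectivity of $M_{2,f} \to \left((M_1+M_2)/M_1\right)_f$ forces the reverse inclusion.

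The only genuinely substantive point — and thus the main obstacle — is verifying that $(M_1+M_2)/M_1$ really is a $\varphi^a$-bundle, i.e.\ that each of its models is finite locally free over the corresponding $\tilde{\calR}^I_R$, so that Lemma~\ref{L:f surjective} applies. This is where the hypothesis that $M_1 + M_2$ is a $\varphi^a$-subbundle of $M$ is essential: without it the quotient could fail to be locally free. Given that hypothesis, on each model interval $I$ the cokernel of the inclusion of finite projective $\tilde{\calR}^I_R$-modules $M_{1,I} \hookrightarrow (M_1+M_2)_I$ is finitely presented, and it is pointwise free (its fibre over each point of $\Spec$ has dimension $\rank(M_1+M_2) - \rank M_1$, which is locally constant since both ranks are), so by Definition~\ref{D:projective}(c) it is finite projective; the $\varphi^a$-structure is inherited, giving a $\varphi^a$-module and, via Lemma~\ref{L:model to bundle}, a $\varphi^a$-bundle. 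Everything else is a formal consequence of Lemma~\ref{L:f surjective} and a two-line diagram chase, so no further computation is needed.
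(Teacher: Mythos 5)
There is a genuine gap, and it sits exactly where you flagged the ``only genuinely substantive point'': the quotient $(M_1+M_2)/M_1$ need \emph{not} be a $\varphi^a$-bundle, and your argument for its local freeness is fallacious. The hypothesis that $M_1+M_2$ is a $\varphi^a$-subbundle of $M$ only guarantees that $M_1+M_2$ itself is finite locally free; it says nothing about the cokernel of the inclusion $M_1 \hookrightarrow M_1+M_2$. Your rank computation assumes that the fibre of the cokernel at a point has dimension $\rank(M_1+M_2)-\rank(M_1)$, but an injection of finite projective modules need not remain injective after base change to a residue field (consider $\tilde{\calR}_L \xrightarrow{\,g\,} \tilde{\calR}_L$ for $g$ a non-unit), so the fibre dimension of the cokernel can jump; pointwise freeness of constant rank is precisely what would need to be proved, not assumed. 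This is not a hypothetical pathology: in the paper's main application of this corollary (Lemma~\ref{L:Prufer domain}), the $M_i$ are $\varphi^a$-stable ideals of $\tilde{\calR}_L$, each free of rank $1$, and $(M_1+M_2)/M_1$ is a torsion module, so Lemma~\ref{L:f surjective} simply does not apply to your two short exact sequences.

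The repair is to reroute through a sequence whose third term is the \emph{target} of a surjection of bundles rather than a quotient by a submodule: apply Lemma~\ref{L:f surjective} to
\[
0 \to K \to M_1 \oplus M_2 \to M_1 + M_2 \to 0,
\]
where the second map is $(\bv_1,\bv_2) \mapsto \bv_1 + \bv_2$. Here $M_1 \oplus M_2$ and $M_1+M_2$ are $\varphi^a$-bundles by hypothesis, and the kernel $K$ (which is $M_1 \cap M_2$ embedded antidiagonally) is automatically a $\varphi^a$-bundle by Remark~\ref{R:kernels}, since kernels of surjections of finite projective modules are finite projective. Lemma~\ref{L:f surjective} then gives surjectivity of $(M_1 \oplus M_2)_f = M_{1,f} \oplus M_{2,f} \to (M_1+M_2)_f$, which is exactly the assertion. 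This is the paper's argument; your diagram chase through $(M_1+M_2)/M_1$ cannot be salvaged without additional saturation hypotheses that would defeat the corollary's purpose.
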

\begin{proof}
Apply Lemma~\ref{L:f surjective} to the surjection $M_1 \oplus M_2 \to M_1 + M_2$
(after extending this to an exact sequence using Remark~\ref{R:kernels}).
\end{proof}

\begin{defn} \label{D:Prufer domain}
For $A$ an integral domain, the following conditions are equivalent
\cite[IV.2, Exercise 12]{bourbaki-ac}.
\begin{enumerate}
\item[(a)]
Every finitely generated ideal of $A$ is projective.
\item[(b)]
Every finitely generated torsion-free module over $A$ is projective.
\item[(c)]
For all ideals $I_1,I_2,I_3$ of $A$, the inclusion $I_1 \cap I_2 + I_1 \cap I_3 \to I_1 \cap (I_2 + I_3)$
is an equality. Note that it is sufficient to test finitely generated ideals.
\end{enumerate}
An integral domain satisfying any of these conditions is called a \emph{Pr\"ufer domain}.
For example, any B\'ezout domain is a Pr\"ufer domain (but not conversely).
Note that a noetherian Pr\"ufer domain is a Dedekind domain, analogously to the fact that
a noetherian B\'ezout domain is a principal ideal domain.
\end{defn}

\begin{lemma} \label{L:Prufer domain}
Suppose that $R = L$ is an analytic field.
Then for $f \in P_d$ for some $d>0$, the ring $P[f^{-1}]_0 = (\tilde{\calR}_L[f^{-1}])^{\varphi^a}$
is a Pr\"ufer domain.
\end{lemma}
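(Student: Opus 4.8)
The plan is to verify condition (c) of Definition~\ref{D:Prufer domain}: for finitely generated ideals $I_1, I_2, I_3$ of $A = P[f^{-1}]_0$, the inclusion $I_1 \cap I_2 + I_1 \cap I_3 \to I_1 \cap (I_2 + I_3)$ is an equality. The key observation is that an ideal of $A$, being in particular a torsion-free $A$-module inside $A$, determines a $\varphi^a$-subbundle of the trivial $\varphi^a$-bundle of rank $1$ over $\tilde{\calR}_L$, and conversely one recovers the ideal from the subbundle by the operation $M \rightsquigarrow M_f$. So first I would set up this dictionary carefully: given a finitely generated ideal $I \subseteq A$, pick homogeneous generators $g_1/f^{n_1}, \dots, g_k/f^{n_k}$ with $g_i \in P_{d n_i}$, and let $M(I)$ be the $\varphi^a$-subbundle of $\tilde{\calR}_L$ (as a rank-$1$ $\varphi^a$-bundle, i.e.\ the models are $\tilde{\calR}^I_L$ with the evident Frobenius structure) generated over each closed-interval model by the $f^{-n_i} g_i$ after clearing a common power of $f$. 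The point is that $M(I)_I$ is a finitely generated torsion-free module over the Bézout domain $\tilde{\calR}^I_L$ (Lemma~\ref{L:Bezout domain}, Remark~\ref{R:Robba PID}), hence free of rank $1$ (so this really is a $\varphi^a$-subbundle in the sense of Remark~\ref{R:kernels}, modulo checking local freeness, which holds over a Bézout domain), and that $M(I)_f = I$ by construction together with \eqref{eq:union of twists}.

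Next I would check the two halves of the correspondence are inverse to each other well enough for the argument: $M(I)_f = I$ (immediate from the definition of $M(I)$), and, crucially, that the sum and intersection operations match up. For sums this is exactly Corollary~\ref{C:f sums}: $M(I_2 + I_3)$ is visibly $M(I_2) + M(I_3)$, and $(M(I_2)+M(I_3))_f = M(I_2)_f + M(I_3)_f = I_2 + I_3$. For intersections I would need that $M(I_1 \cap I_2)$ can be taken to be $M(I_1) \cap M(I_2)$ and that $(M(I_1) \cap M(I_2))_f = I_1 \cap I_2$; the first is a statement about submodules of a rank-$1$ bundle over $\tilde{\calR}^I_L$ (where intersection of two free rank-$1$ submodules of a free rank-$1$ module over a Bézout domain is again free rank-$1$), and the second follows because $(-)^{\varphi^a}$ is left exact, so $M_f$ commutes with intersections inside a fixed ambient bundle. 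With these in hand, the Prüfer identity for $A$ is transported from the corresponding identity for $\varphi^a$-subbundles, which in turn reduces to the identity on each interval model $\tilde{\calR}^I_L$ — and that holds because $\tilde{\calR}^I_L$ (or rather its completion $\tilde{\calR}^{[s,r]}_L$, a principal ideal domain by Remark~\ref{R:Robba PID}) is Prüfer.

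I expect the main obstacle to be the bookkeeping at the level of intervals versus the global ring $\tilde{\calR}_L$: one must ensure that the $\varphi^a$-subbundle $M(I)$ is well-defined (the models over different intervals are compatible via the transition isomorphisms), that intersections and sums formed model-by-model actually glue to subbundles rather than just sub-presheaves, and that the operation $M \rightsquigarrow M_f$ is genuinely inverse to $I \rightsquigarrow M(I)$ on the relevant collection of objects. A clean way to sidestep some of this is to work over a single model: fix an interval $I_0 = [s, r/q]$, use Lemma~\ref{L:model to bundle} to identify $\varphi^a$-bundles with $\varphi^a$-modules over $\tilde{\calR}^{[s,r]}_L$, and then note that each finitely generated ideal $J$ of $A$, the submodule $J \cdot \tilde{\calR}^{[s,r]}_L[f^{-1}] \cap (\text{span of the twists})$ recovers $J$ via $\varphi^a$-invariants — but the twist operation \eqref{eq:union of twists} genuinely involves all of $\tilde{\calR}_L$, so some passage through the full bundle is unavoidable. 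Once the dictionary is set up, the actual verification of (c) is purely formal, inherited from the Prüfer (indeed Bézout) property of $\tilde{\calR}^{[s,r]}_L$.

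A secondary subtlety worth flagging: one should confirm that $A = P[f^{-1}]_0$ is actually an integral domain, so that Definition~\ref{D:Prufer domain} even applies. This follows since $\tilde{\calR}_L$ is an integral domain (Lemma~\ref{L:Bezout domain}), hence so is $\tilde{\calR}_L[f^{-1}]$ and its subring $A$. With that noted, the proof assembles: $A$ is a domain, every finitely generated ideal arises as $M_f$ for a $\varphi^a$-subbundle $M$ of the trivial rank-$1$ bundle, and the lattice operations on such ideals correspond (via Corollary~\ref{C:f sums}, left-exactness of $\varphi^a$-invariants, and the Bézout property of the interval models) to the lattice operations on subbundles, which satisfy the distributivity identity (c); therefore $A$ does too, so $A$ is a Prüfer domain.
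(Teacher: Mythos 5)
Your proposal is correct and takes essentially the same route as the paper: both verify criterion (c) of Definition~\ref{D:Prufer domain} by lifting finitely generated ideals of $P[f^{-1}]_0$ to $\varphi^a$-stable submodules of $\tilde{\calR}_L$, invoke the B\'ezout property there, and descend via $M \mapsto M_f$, using Corollary~\ref{C:f sums} for sums and left exactness of $\varphi^a$-invariants for intersections. The paper works directly with principal $\varphi^a$-stable ideals of $\tilde{\calR}_L$ (Lemma~\ref{L:Bezout domain}) rather than interval-by-interval models, which sidesteps the glueing bookkeeping you flag, but the substance is identical.
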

\begin{proof}
We check criterion (c) of Definition~\ref{D:Prufer domain}.
Let $I_1, I_2, I_3$ be three finitely generated ideals of $P[f^{-1}]_0$.
For $j = 1,2,3$, $I_j \otimes_{P[f^{-1}]_0} \tilde{\calR}_L[f^{-1}]$
can be generated by a finite set of elements $x_{j,1},\dots,x_{j,m} \in \tilde{\calR}_L$
such that for each $i$, $\varphi^a(x_{j,i}) = p^h x_{j,i}$ for some $h \in \ZZ$.
Let $M_j$ be the ideal of $\tilde{\calR}_L$ generated by $x_{j,1},\dots,x_{j,m}$;
it is principal (because $\tilde{\calR}_L$ is a B\'ezout domain by Lemma~\ref{L:Bezout domain}) and $\varphi^a$-stable
(because each $x_{j,i}$ generates a $\varphi^a$-stable ideal), and hence a $\varphi^a$-module
over $\tilde{\calR}_L$. Since $\tilde{\calR}_L$ is a B\'ezout domain and hence a Pr\"ufer domain,
by Definition~\ref{D:Prufer domain}, the inclusion
\[
M_1 \cap M_2 + M_1 \cap M_3 \to M_1 \cap (M_2 + M_3)
\]
is surjective. By Lemma~\ref{L:f surjective}, the map
\[
(M_1 \cap M_2 + M_1 \cap M_3)_f \to (M_1 \cap (M_2 + M_3))_f
\]
is also surjective. The operation $M \mapsto M_f$ clearly distributes across
intersections; it also distributes across sums thanks to Corollary~\ref{C:f sums}
and the fact that the sum of two $\varphi^a$-submodules of $\tilde{\calR}_L$ is again
a $\varphi^a$-submodule (by the B\'ezout property again).
By identifying $I_j$ with $M_{j,f}$, we verify criterion (c) of Definition~\ref{D:Prufer domain},
so $P[f^{-1}]_0$ is a Pr\"ufer domain as desired.
\end{proof}

\begin{lemma} \label{L:ideal-trivial}
The following statements are true.
\begin{enumerate}
\item[(a)]
For any $d>0$, $P_d$ generates the unit ideal in
$\tilde{\calR}^\infty_R$. In particular, we may choose $f_1,\dots,f_m \in P_d$ which generate the unit ideal in
$\tilde{\calR}^\infty_R$.
\item[(b)]
For any such elements,
the ideal in $P$ generated by $f_1,\dots,f_m$ is saturated (i.e., its radical equals $P_+$).
Consequently, the schemes $D_+(f_1), \dots, D_+(f_m)$ cover $\Proj(P)$,
so $\Proj(P)$ is quasicompact.
\end{enumerate}
\end{lemma}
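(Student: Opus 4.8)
The plan is to read off both parts from the results of \S\ref{subsec:vector bundles} applied to the trivial rank-one $\varphi^a$-bundle $E$ over $\tilde{\calR}_R$ (so $E_I = \tilde{\calR}^I_R$ with the canonical $\varphi^a$-action). Its module of global sections is $\tilde{\calR}^\infty_R$, and by Corollary~\ref{C:twisted invariants} the $\varphi^a$-invariant global sections of the twist $E(d)$ are precisely $P_d$. For part (a), note that $E$ arises from a trivial $\varphi^a$-module, so $N=1$ is admissible in Proposition~\ref{P:H1}; since $R$ is free of trivial spectrum, Proposition~\ref{P:global invariants} applied to $E$ with $n=d\geq 1$ produces finitely many $f_1,\dots,f_m \in P_d$ that generate $E$, i.e.\ generate each model $E_I = \tilde{\calR}^I_R$. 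By Lemma~\ref{L:module of sections} these $f_i$ generate the $\tilde{\calR}^\infty_R$-module of global sections of $E$, namely $\tilde{\calR}^\infty_R$ itself, so $1 = \sum_i u_i f_i$ with $u_i \in \tilde{\calR}^\infty_R$; in particular $P_d$ generates the unit ideal of $\tilde{\calR}^\infty_R$.

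For part (b), fix $f_1,\dots,f_m \in P_d$ with $\sum_i u_i f_i = 1$ in $\tilde{\calR}^\infty_R$ and put $J = (f_1,\dots,f_m) \subseteq P$. The inclusion $\sqrt J \subseteq P_+$ is immediate, since $J \subseteq P_+$ (as $d>0$) and $\sqrt{P_+} = P_+$ because $P$ is reduced. For the reverse inclusion, first observe that $\tilde{\calR}^\infty_R$ maps into every model $\tilde{\calR}^{[s,r]}_R$, so the identity $\sum_i u_i f_i = 1$ persists in every model; hence $(f_1,\dots,f_m)\colon E^{\oplus m} \to E(d)$ is a surjection of $\varphi^a$-bundles, with kernel $K$ a $\varphi^a$-bundle by Remark~\ref{R:kernels}. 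Applying Corollary~\ref{C:exact invariants} to $0 \to K \to E^{\oplus m} \to E(d) \to 0$ gives an integer $N$ such that for $n \geq N$ the map $P_n^{\oplus m} \to P_{n+d}$, $(h_i) \mapsto \sum_i f_i h_i$, is surjective; equivalently $P_M \subseteq J$ whenever $M \geq N + d$. Hence for a homogeneous $g \in P_e$ with $e\geq 1$, the power $g^k$ lies in $P_{ke} \subseteq J$ as soon as $ke \geq N + d$, so $g \in \sqrt J$; since $P_+$ is generated by its homogeneous elements, $P_+ \subseteq \sqrt J$, and thus $\sqrt J = P_+$.

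Finally, a homogeneous prime of $P$ containing $J$ must contain $\sqrt J = P_+$, so it does not lie in $\Proj(P)$; therefore $\Proj(P) \setminus (D_+(f_1) \cup \cdots \cup D_+(f_m)) = \emptyset$, i.e.\ $\Proj(P) = D_+(f_1) \cup \cdots \cup D_+(f_m)$ is a finite union of affine schemes, hence quasicompact. The only substantive input is the appeal in part (a) to Proposition~\ref{P:global invariants}, hence to the hypothesis that $R$ is free of trivial spectrum; part (b) is then formal, modulo the bookkeeping of verifying that $\varphi^a$-invariant global sections of $E(d)$ coincide with $P_d$ and that the partition-of-unity relation descends to each $\tilde{\calR}^{[s,r]}_R$, both handled by Corollary~\ref{C:twisted invariants} and the comparison of the relevant Fr\'echet completions. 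I do not foresee a serious obstacle; the one point to watch is that (b) really uses only generation of the unit ideal in $\tilde{\calR}^\infty_R$ rather than local generation of each model, which the argument above respects.
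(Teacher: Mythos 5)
Your proof is correct, and part (a) is exactly the paper's argument: Proposition~\ref{P:global invariants} (with $N=1$ for the trivial bundle) plus Lemma~\ref{L:module of sections} applied to $\tilde{\calR}^\infty_R$ viewed as a trivial $\varphi^a$-bundle. For part (b) you take a mildly different route: the paper fixes an arbitrary homogeneous $f \in P_+$ and applies Lemma~\ref{L:f surjective} to the surjection $E^{\oplus m} \to E(d)$ after localizing at $f$, extracting a relation $f^n = \sum_i h_i f_i$; you instead apply Corollary~\ref{C:exact invariants} directly to the graded pieces, concluding that $P_n^{\oplus m} \to P_{n+d}$ is surjective for $n \geq N$ and hence that $P_M \subseteq (f_1,\dots,f_m)$ for all large $M$. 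Since Lemma~\ref{L:f surjective} is itself an immediate consequence of Corollary~\ref{C:exact invariants}, the two arguments rest on the same input; yours is arguably cleaner in that it avoids the degree bookkeeping in the localized map (the paper's map ``$P[f^{-1}]_0^m \to P[f^{-1}]_0$'' really needs a twist by $d$ to make the gradings match) and yields the slightly stronger uniform statement that all sufficiently high graded pieces lie in the ideal.
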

\begin{proof}
To prove (a), apply Proposition~\ref{P:global invariants} and Lemma~\ref{L:module of sections}
to $\tilde{\calR}^\infty_R$ viewed as a trivial $\varphi^a$-bundle.
To prove (b), choose any homogeneous $f \in P_+$, then apply Lemma~\ref{L:f surjective}
to show that the map $P[f^{-1}]_0^m \to P[f^{-1}]_0$ defined by
$f_1,\dots,f_m$ contains 1 in its image. We then
obtain an expression for some power of $f$ as an element of the ideal of $P$ generated by $f_1,\dots,f_m$,
proving the claim.
\end{proof}

\begin{lemma} \label{L:pick out ideal}
Choose $r>0$ and $f \in P_d$ for some $d>0$.
Let $\gothp$ be any maximal ideal of $P[f^{-1}]_0$, and let $\gothq$ be the corresponding
homogeneous prime ideal of $P$ not containing $f$.
\begin{enumerate}
\item[(a)]
The ideal in $\tilde{\calR}^{[r/q,r]}_R$ generated by $\gothq$ and $f$ is trivial.
\item[(b)]
The ideal in $\tilde{\calR}^{[r/q,r]}_R$ generated by $\gothq$ is not trivial.
\end{enumerate}
\end{lemma}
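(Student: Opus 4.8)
The plan is to handle (a) by algebraic geometry on $\Proj P$ combined with Lemma~\ref{L:ideal-trivial}, and (b) by reducing to the case where $R$ is an analytic field. (I take $R$ free of trivial spectrum throughout, as in Lemma~\ref{L:ideal-trivial}: without it (a) already fails, e.g.\ for $R=\Fp\times L$ with $L$ nontrivially normed, so this hypothesis is surely meant to remain in force here.) First, by Corollary~\ref{C:twisted invariants} there is a ring homomorphism $P\to\tilde{\calR}^{\infty}_R\to\tilde{\calR}^{[r/q,r]}_R$ (with $P_n$ mapping to the $\varphi^a$-eigenspace for eigenvalue $p^n$), so the ideals in the statement are the ones generated by the images of $\gothq$ and $f$.

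For (a): the key assertion is that no homogeneous prime $\gothq'$ of $P$ lying in $\Proj P$ can contain $\gothq+(f)$. Granting this, every homogeneous prime containing the homogeneous ideal $\gothq+(f)$ must contain $P_+$, hence $P_+\subseteq\sqrt{\gothq+(f)}$. Applying Lemma~\ref{L:ideal-trivial}(a) with the same $d$, choose $f_1,\dots,f_m\in P_d$ generating the unit ideal of $\tilde{\calR}^{\infty}_R$; each $f_i\in P_+$ satisfies $f_i^{k_i}\in\gothq+(f)$ for some $k_i\geq1$, and $(f_1^{k_1},\dots,f_m^{k_m})$ has the same (empty) vanishing locus on $\Spec\tilde{\calR}^{\infty}_R$ as $(f_1,\dots,f_m)$, hence also generates the unit ideal; pushing the resulting relation forward to $\tilde{\calR}^{[r/q,r]}_R$ exhibits $1$ in the ideal generated by $\gothq$ and $f$. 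To prove the key assertion, suppose $\gothq'\in\Proj P$ with $\gothq+(f)\subseteq\gothq'$; then $\gothq'\supsetneq\gothq$ since $f\notin\gothq$. Pick a homogeneous $g\in P_+\setminus\gothq'$, so $g\notin\gothq$ and $\gothq,\gothq'\in D_+(g)=\Spec P[g^{-1}]_0$. With $\bar f=f^{\deg g}/g^{\deg f}\in P[g^{-1}]_0$, the closure of $\{\gothq[g^{-1}]_0\}$ in $\Spec P[g^{-1}]_0$ lies in $D(\bar f)$ (because $f\notin\gothq$), and $D(\bar f)$ is identified with $D_+(f)\cap D_+(g)\subseteq D_+(f)$, in which $\gothq$ is a closed point by hypothesis on $\gothp$; hence $\gothq[g^{-1}]_0$ is a maximal ideal of $P[g^{-1}]_0$, contradicting $\gothq[g^{-1}]_0\subsetneq\gothq'[g^{-1}]_0$.

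For (b): it suffices to produce a single $\gamma\in\calM(R)$ for which the ideal generated by the image of $\gothq$ in $\tilde{\calR}^{[r/q,r]}_{\calH(\gamma)}$ is proper; for then Corollary~\ref{C:vanishing norm} furnishes a seminorm on $\tilde{\calR}^{[r/q,r]}_{\calH(\gamma)}$ killing $\gothq$, and restricting along the bounded homomorphism $\tilde{\calR}^{[r/q,r]}_R\to\tilde{\calR}^{[r/q,r]}_{\calH(\gamma)}$ gives a point of $\calM(\tilde{\calR}^{[r/q,r]}_R)$ at which $\gothq$ vanishes, whence $1\notin\gothq\tilde{\calR}^{[r/q,r]}_R$. (By Lemma~\ref{L:Robba ring cover} every point of $\calM(\tilde{\calR}^{[r/q,r]}_R)$ factors through some such $\tilde{\calR}^{[r/q,r]}_{\calH(\gamma)}$, and via Lemma~\ref{L:finite generation}(a) applied to the inclusion $\gothq\tilde{\calR}^{[r/q,r]}_R\hookrightarrow\tilde{\calR}^{[r/q,r]}_R$ one sees conversely that if all these images were improper then $\gothq\tilde{\calR}^{[r/q,r]}_R$ would be the unit ideal, so the reduction is exact.) Over an analytic field $L=\calH(\gamma)$ (nontrivially normed, since $R$ is free of trivial spectrum), $\tilde{\calR}^{[r/q,r]}_L$ is a principal ideal domain by Remark~\ref{R:Robba PID}, $\gothq$ contains no nonzero element of the field $P_{L,0}=\tilde{\calR}_L^{\varphi^a}$ (Lemma~\ref{L:big Robba invariants}), and I would show that the closed point $\gothq$ of the ``absolute curve'' is cut out by a primitive element of degree $1$: using Lemma~\ref{L:stable residue2} and the perfectoid dictionary of \S\ref{subsec:perfectoid2}, there is a $z\in\tilde{\calR}^{\inte,r}_L$ primitive of degree $1$ with $\gothq\tilde{\calR}^{[r/q,r]}_L=(z)$, which is proper because $\tilde{\calR}^{[r/q,r]}_L/(z)$ is a nonzero (perfectoid) field.

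The main obstacle is this last step of (b): realizing the abstract maximal ideal $\gothp$ of $P_L[f^{-1}]_0$ by a concrete primitive element inside $\tilde{\calR}^{[r/q,r]}_L$, without circularly invoking the very equivalence between $\varphi^a$-bundles and vector bundles on $\Proj P$ that this lemma is meant to help establish. I expect this to come down to a direct construction out of the surjection $P_L[f^{-1}]_0\to P_L[f^{-1}]_0/\gothp$ onto its (perfectoid) residue field together with a theta-type map, followed by a reality-check argument as in \S\ref{subsec:relative extended} showing that $z$ generates $\gothq$ over $\tilde{\calR}^{[r/q,r]}_L$. Part (a), by contrast, is essentially formal once the closed-point bookkeeping on $\Proj P$ is in place, the only subtlety being that $\Proj P$ may be disconnected when $R$ is.
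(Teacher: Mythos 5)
Both parts of your proposal have genuine gaps; for (b) the gap is exactly where you flag it, but the one in (a) is more serious than you seem to realize.

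In (a), the reduction to $P_+ \subseteq \sqrt{\gothq + (f)}$ followed by Lemma~\ref{L:ideal-trivial} is indeed the paper's endgame (the paper asserts directly that $\gothq + (f) \supseteq P_{dn}$ for some $n>0$ and then applies that lemma). But your proof of the ``key assertion'' is circular: the claim that the closure of $\{\gothq[g^{-1}]_0\}$ in $\Spec P[g^{-1}]_0$ lies in $D(\bar f)$ ``because $f \notin \gothq$'' is false. The closure of a point consists of all primes containing it, and such a prime may contain $f$ even though $\gothq$ does not --- the hypothetical $\gothq'$ is precisely such a specialization, lying in the closure of $\gothq$ but outside $D(\bar f)$. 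You are assuming what you want to prove. Moreover, the key assertion (that a closed point of $D_+(f)$ is closed in $\Proj P$) is not a formal fact about graded rings: for $P = \Zp[x,y]$ with $f = x$, the ideal $\gothp = (p(y/x)-1)$ is maximal in $P[f^{-1}]_0 = \Zp[y/x]$ (the quotient is $\Qp$), yet the corresponding $\gothq = (py - x)$ specializes to the homogeneous prime $(p,x) \in D_+(y) \subset \Proj P$. So no amount of bookkeeping on $\Proj P$ alone can close this gap; one must use specific properties of the graded ring $P$ of Definition~\ref{D:vector bundles}.

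In (b), your reduction to a single residue field $\calH(\gamma)$ is logically valid but does not make progress, because the step you call the ``main obstacle'' is the entire content of the statement: a maximal ideal of $P_R[f^{-1}]_0$ has no evident support in $\calM(R)$, so there is no reason for the ideal it generates in $P_{\calH(\gamma)}[f^{-1}]_0$ (let alone in $\tilde{\calR}^{[r/q,r]}_{\calH(\gamma)}$) to be proper for any particular $\gamma$, nor is any mechanism offered for producing such a $\gamma$ or the claimed primitive generator $z$. The paper instead argues globally and does not reduce to fields at all: if $g_1,\dots,g_m \in \gothq \cap P_{dn}$ generated the unit ideal in $\tilde{\calR}^{[r/q,r]}_R$, they would define a surjection $(\tilde{\calR}^\infty_R(-dn))^{\oplus m} \to \tilde{\calR}^\infty_R$ of $\varphi^a$-bundles by Lemma~\ref{L:model to bundle}; the functor $M \mapsto M_f$ is exact on such surjections (Remark~\ref{R:kernels} and Lemma~\ref{L:f surjective}, resting ultimately on the $H^1_{\varphi^a}$-vanishing of Proposition~\ref{P:H1}), so $1$ would lie in the ideal of $P[f^{-1}]_0$ generated by the $g_i f^{-n}$, which is contained in $\gothp$ --- a contradiction. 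You should replace your pointwise strategy with an argument of this global type.
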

\begin{proof}
The homogeneous ideal in $P$ generated by $\gothq$ and $f$ contains $P_{dn}$
for some $n>0$. By Lemma~\ref{L:ideal-trivial}, $\gothq$ and $f$ generate the trivial ideal in
$\tilde{\calR}^{\infty}_R$ and hence also in $\tilde{\calR}^{[r/q,r]}_R$. This yields (a).

Suppose now that $\gothq$ contains elements $g_1,\dots,g_m$ which generate the unit ideal in $\tilde{\calR}^{[r/q,r]}_R$.
We may as well assume $g_1,\dots,g_m \in P_{dn}$ for some $n>0$; then
these elements define a map $(\tilde{\calR}^\infty_R(-dn))^{\oplus m} \to \tilde{\calR}^\infty_R$
of $\varphi$-modules. This map is surjective by Lemma~\ref{L:model to bundle};
by Remark~\ref{R:kernels} and Lemma~\ref{L:f surjective}, we again get a surjective map upon inverting $f$
and taking $\varphi$-invariants. But this implies that $f \in \gothq$, a contradiction. This yields (b).
\end{proof}

\begin{theorem} \label{T:phi-module to vb}
Choose $f \in P_d$ for some $d>0$.
Let $M = \{M_I\}$ be a $\varphi^a$-bundle over $\tilde{\calR}_R$. Then $M_f$ is a finite projective module
over $P[f^{-1}]_0$ and \eqref{eq:natural map} is bijective for every interval $I$.
\end{theorem}
\begin{proof}
Apply Proposition~\ref{P:global invariants}
to construct an integer $n$ and a finite set $\bw_1,\dots,\bw_m$ of $\varphi^a$-invariant
global sections of $M(dn)$ which generate $M$.
We may then view $f^{-n} \bw_1,\dots,f^{-n}\bw_m$ as elements of $M_f$;
this implies that \eqref{eq:natural map} is surjective.

Let $M'$ be the $\varphi^a$-bundle associated to the $\varphi^a$-module $(\tilde{\calR}_R(-dn))^{\oplus m}$,
so that $\bw_1,\dots,\bw_m$ define a surjection $M'(dn) \to M(dn)$ and hence a surjection $M' \to M$.
By Remark~\ref{R:kernels}, we obtain an exact sequence $0 \to M'' \to M' \to M \to 0$ of $\varphi^a$-bundles.
By Lemma~\ref{L:f surjective}, the sequence
\begin{equation} \label{eq:phi-module to vb}
0 \to M''_f \to M'_f \to M_f \to 0
\end{equation}
is exact. Consequently, $M_f$ is a finitely generated
module over $P[f^{-1}]_0$. For any interval $I$, we obtain a commuting diagram
\[
\xymatrix{
& M''_f \otimes_{P[f^{-1}]_0} \tilde{\calR}^I_R[f^{-1}]
\ar[r] \ar[d]& M'_f \otimes_{P[f^{-1}]_0} \tilde{\calR}^I_R[f^{-1}]
\ar[r] \ar[d] & M_f \otimes_{P[f^{-1}]_0} \tilde{\calR}^I_R[f^{-1}]
\ar[r] \ar[d] & 0 \\
0 \ar[r] & M''_I \otimes_{\tilde{\calR}^I_R} \tilde{\calR}^I_R[f^{-1}]\ar[r] & M'_I
\otimes_{\tilde{\calR}^I_R} \tilde{\calR}^I_R[f^{-1}]
 \ar[r] & M_I \otimes_{\tilde{\calR}^I_R} \tilde{\calR}^I_R[f^{-1}] \ar[r] & 0
}
\]
with exact rows. (The left exactness in the last row follows from the exactness of localization.)
Since the left vertical arrow is surjective (by the first part of the proof) and the middle vertical arrow is bijective,
by the five lemma the right vertical arrow is injective. Hence \eqref{eq:natural map} is a bijection.
We may also repeat the arguments with $M$ replaced by $M''$ to deduce that $M_f$ is finitely presented.

The exact sequence
$0 \to M'' \to M' \to M \to 0$ corresponds to an element of $H^1_{\varphi^a}(M^\dual \otimes M'')$.
By Proposition~\ref{P:H1}, for $m$ sufficiently large, we have $H^1_{\varphi^a}(M^\dual \otimes M''(dm)) = 0$. That is,
if we form the commutative diagram
\[
\xymatrix{
0 \ar[r] & M'' \ar[r] \ar[d] & M' \ar[r] \ar[d] & M \ar[r] \ar[d] & 0 \\
0 \ar[r] & f^{-m} M'' \ar[r] & N \ar[r] & M \ar[r] & 0
}
\]
by pushing out, then the exact sequence in the bottom row splits in the category of $\varphi^a$-bundles.
By Lemma~\ref{L:f surjective}, we obtain a split exact sequence
\[
0 \to (f^{-m} M'')_f \to N_f \to M_f \to 0
\]
of modules over $P[f^{-1}]_0$; however, $M'[f^{-1}] \cong N[f^{-1}]$ and so $M'_f = M'[f^{-1}]^\varphi \cong  N[f^{-1}]^\varphi = N_f$.
Since the construction of $M'$ guarantees that $M'_f$ is a free module over $P[f^{-1}]_0$, the same is
true of $N_f$; it follows that $M_f$ is a projective module over $P[f^{-1}]_0$, as desired.
\end{proof}

Theorem~\ref{T:phi-module to vb} can be reformulated in terms of an equivalence of categories between
$\varphi^a$-bundles and vector bundles on $\Proj(P)$.
\begin{defn} \label{D:sheaf to module}
Let $V$ be a quasicoherent finite locally free sheaf on $\Proj(P)$. For each homogeneous $f \in P_+$, form the
module $\Gamma(D_+(f), V) \otimes_{P[f^{-1}]_0} \tilde{\calR}^\infty_R[f^{-1}]$.
Since $P_+$ generates the unit ideal in $\tilde{\calR}^\infty_R$ by Lemma~\ref{L:ideal-trivial},
we may glue to obtain a quasicoherent finite locally free sheaf on $\Spec(\tilde{\calR}^\infty_R)$.
Let $M(V)$ be the module of global sections of this sheaf; then $V \rightsquigarrow M(V)$ defines an exact functor
from quasicoherent finite locally free sheaves on
$\Proj(P)$ to $\varphi^a$-modules over $\tilde{\calR}^\infty_R$.
\end{defn}

\begin{defn} \label{D:module to sheaf}
Let $M$ be a $\varphi^a$-bundle over $\tilde{\calR}_R$.
By Theorem~\ref{T:phi-module to vb}, for each $f \in P_+$,  $M_f$ is
a finite locally free module over $P[f^{-1}]_0$ and the natural map \eqref{eq:natural map}
is an isomorphism. In particular, the $M_f$ glue to define a quasicoherent finite locally free sheaf
$V(M)$ on $\Proj(P)$ (which one might call a \emph{vector bundle} on $\Proj(P)$).
\end{defn}

\begin{theorem} \label{T:vector bundles}
The following tensor categories are equivalent.
\begin{enumerate}
\item[(a)]
The category of quasicoherent finite locally free sheaves on $\Proj(P)$.
\item[(b)]
The category of $\varphi^a$-modules over $\tilde{\calR}^\infty_R$.
\item[(c)]
The category of $\varphi^a$-modules over $\tilde{\calR}_R$.
\item[(d)]
The category of $\varphi^a$-bundles over $\tilde{\calR}_R$.
\end{enumerate}
More precisely, the functor from (a) to (b) is the functor $V \rightsquigarrow M(V)$ given in
Definition~\ref{D:sheaf to module}, the functor from (b) to (c) is base extension,
the functor from (c) to (d) is the one indicated in
Remark~\ref{R:module to bundle}, and the functor from (d)
to (a) is the functor $M \rightsquigarrow V(M)$ given in Definition~\ref{D:module to sheaf}.
\end{theorem}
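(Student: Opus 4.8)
The plan is to prove that each of the four displayed functors is an equivalence of categories; since base extension, $M(-)$ and $V(-)$ are visibly monoidal and rank-preserving, this automatically upgrades to tensor equivalences. Write $F_{ab}\colon (a)\to(b)$, $F_{bc}\colon(b)\to(c)$, $F_{cd}\colon(c)\to(d)$, $F_{da}\colon(d)\to(a)$ for the functors $V\rightsquigarrow M(V)$, base extension along $\tilde{\calR}^{\infty}_R\to\tilde{\calR}_R$, the functor of Remark~\ref{R:module to bundle}, and $M\rightsquigarrow V(M)$, respectively. We already know from Remark~\ref{R:module to bundle} that $F_{cd}$ is fully faithful, so it remains to construct three natural isomorphisms around the cycle and then conclude formally.

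First I would check the round trip $F_{da}\circ F_{cd}\circ F_{bc}\circ F_{ab}\cong\id_{(a)}$, that is, $V(M(V))\cong V$ naturally for a finite locally free sheaf $V$ on $\Proj(P)$. By construction $M(V)$ is glued from the modules $\Gamma(D_+(f),V)\otimes_{P[f^{-1}]_0}\tilde{\calR}^{\infty}_R[f^{-1}]$, so that $M(V)[f^{-1}]\cong\Gamma(D_+(f),V)\otimes_{P[f^{-1}]_0}\tilde{\calR}^{\infty}_R[f^{-1}]$. Taking $\varphi^a$-semi-invariants and invoking Corollary~\ref{C:twisted invariants} to identify the graded ring of $\varphi^a$-semi-invariants of $\tilde{\calR}^{\infty}_R[f^{-1}]$ with $P[f^{-1}]$, hence its degree-zero part with $P[f^{-1}]_0$ — and using that taking $\varphi^a$-invariants commutes with tensoring up a finite projective module — one obtains $M(V)_f\cong\Gamma(D_+(f),V)$ compatibly in $f$. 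Glueing over the cover $D_+(f_1),\dots,D_+(f_m)$ of $\Proj(P)$ supplied by Lemma~\ref{L:ideal-trivial} then yields $V(M(V))\cong V$.

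Next I would establish the two companion round trips, which rest on a single computation. For a $\varphi^a$-bundle $N=\{N_I\}$, the $\tilde{\calR}^{\infty}_R$-module $M(V(N))$ is glued from $N_{f_i}\otimes_{P[f_i^{-1}]_0}\tilde{\calR}^{\infty}_R[f_i^{-1}]$; localizing at $f_i$ and applying the bijection~\eqref{eq:natural map} of Theorem~\ref{T:phi-module to vb}, then glueing over the cover $D(f_1),\dots,D(f_m)$ of $\Spec(\tilde{\calR}^{[s,r]}_R)$ (a cover since the $f_i$ generate the unit ideal in $\tilde{\calR}^{\infty}_R$, hence in $\tilde{\calR}^{[s,r]}_R$), one finds $M(V(N))\otimes_{\tilde{\calR}^{\infty}_R}\tilde{\calR}^{[s,r]}_R\cong N_{[s,r]}$ as $\varphi^a$-modules. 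Applied to a general bundle, Lemma~\ref{L:model to bundle} gives $F_{cd}\circ F_{bc}\circ F_{ab}\circ F_{da}\cong\id_{(d)}$. Applied instead with $N$ the bundle attached to $M'\otimes_{\tilde{\calR}^{\infty}_R}\tilde{\calR}_R$ for $M'$ a $\varphi^a$-module over $\tilde{\calR}^{\infty}_R$, and intersecting the isomorphisms $M(V(N))[f_i^{-1}]\cong M'\otimes_{\tilde{\calR}^{\infty}_R}\tilde{\calR}^I_R[f_i^{-1}]$ over all closed intervals $I$ — using $\bigcap_I\tilde{\calR}^I_R=\tilde{\calR}^{\infty}_R$ and that intersection of submodules of a common ring commutes with tensoring up a finite projective module — one gets $M(V(N))\cong M'$, hence $F_{ab}\circ F_{da}\circ F_{cd}\circ F_{bc}\cong\id_{(b)}$.

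To conclude, the three isomorphisms close the loop formally: from $F_{da}\circ(F_{cd}F_{bc}F_{ab})\cong\id_{(a)}$ and $(F_{cd}F_{bc}F_{ab})\circ F_{da}\cong\id_{(d)}$, the functors $F_{da}$ and $F_{cd}F_{bc}F_{ab}$ are mutually quasi-inverse equivalences; since $F_{cd}$ is fully faithful and $F_{cd}\circ(F_{bc}F_{ab})$ is an equivalence, $F_{cd}$ is essentially surjective, hence an equivalence, so $F_{bc}F_{ab}$ is an equivalence; and since $F_{ab}\circ(F_{da}F_{cd}F_{bc})\cong\id_{(b)}$ while $(F_{da}F_{cd}F_{bc})\circ F_{ab}=F_{da}\circ(F_{cd}F_{bc}F_{ab})\cong\id_{(a)}$, the functor $F_{ab}$ has a two-sided quasi-inverse and so is an equivalence, whence $F_{bc}$ is too. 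The main obstacle I expect is the bookkeeping in the middle two paragraphs: one must check, being careful about gradings and the $\varphi^a$-actions, that the $\varphi^a$-semi-invariants of the localized rings $\tilde{\calR}^{\infty}_R[f^{-1}]$ reassemble into $P[f^{-1}]$ and that all the resulting identifications are natural in the sheaf/module and compatible with the restriction maps of the glueing data on both $\Proj(P)$ and $\Spec(\tilde{\calR}^{[s,r]}_R)$; everything else is either cited above or purely categorical.
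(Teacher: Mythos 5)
Your proposal is correct and follows the same route as the paper, which simply declares the theorem ``immediate from Theorem~\ref{T:phi-module to vb}'': your three round-trip verifications are exactly the unpacking of that claim, with the finite projectivity of $M_f$ and the bijectivity of \eqref{eq:natural map} doing all the work, together with the identification $(\tilde{\calR}^\infty_R[f^{-1}])^{\varphi^a=p^n}=P[f^{-1}]_n$ and the covers from Lemma~\ref{L:ideal-trivial}. The formal closing of the loop of four functors is fine, so this is a faithful (and more detailed) rendering of the intended argument.
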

\begin{proof}
This is immediate from Theorem~\ref{T:phi-module to vb}.
\end{proof}

\begin{cor} \label{C:phi-modules glueing}
For any rational covering $\{(R,R^+) \to (R_i,R_i^+)\}_i$,
the morphism $R \to \oplus_i R_i$ is an effective descent morphism for $\varphi^a$-modules over $\tilde{\calR}_*$. 
\end{cor}
\begin{proof}
This is immediate from Theorem~\ref{T:Kiehl for Robba2} and Theorem~\ref{T:vector bundles}.
\end{proof}

It is worth mentioning the following refinement of Theorem~\ref{T:vector bundles} in the case of an analytic field.
\begin{theorem}  \label{T:vector bundles field}
Suppose that $R = L$ is an analytic field. Then the construction of
Definition~\ref{D:sheaf to module} defines an equivalence of categories between
the category of coherent sheaves on $\Proj(P)$ and the category of finitely presented
$\tilde{\calR}_L$-modules equipped with semilinear $\varphi^a$-actions.
\end{theorem}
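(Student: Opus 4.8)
The plan is to bootstrap from Theorem~\ref{T:vector bundles}, which already identifies vector bundles on $X := \Proj(P)$ with $\varphi^a$-modules over $\tilde{\calR}_L$; since $\tilde{\calR}_L$ is a B\'ezout domain (Lemma~\ref{L:Bezout domain}), the latter are precisely the finite free $\tilde{\calR}_L$-modules carrying a semilinear $\varphi^a$-action, i.e.\ the finitely generated \emph{torsion-free} ones. First I would extend the functor of Definition~\ref{D:sheaf to module} (composed, as in the chain (a)$\to$(b)$\to$(c) of Theorem~\ref{T:vector bundles}, with base extension to $\tilde{\calR}_L$) from vector bundles to all coherent sheaves: for $V$ coherent, $\Gamma(D_+(f),V)$ is a finitely generated module over $A_f := P[f^{-1}]_0$, so base-changing to $\tilde{\calR}^\infty_L[f^{-1}]$ and glueing over a finite cover $D_+(f_1),\dots,D_+(f_m)$ of the quasicompact scheme $X$ (Lemma~\ref{L:ideal-trivial}) produces a finitely generated $\tilde{\calR}^\infty_L$-module, which one base-extends to $\tilde{\calR}_L$. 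Call the resulting functor $\mathbb{M}$. One must check $\mathbb{M}$ is exact: right exactness is immediate, and for left exactness it suffices to see that an injection $E_1\hookrightarrow E_0$ of vector bundles stays injective after $\mathbb{M}$, which one tests at the generic point $\eta$ of $X$ — the composite $E\mapsto \mathbb{M}(E)\otimes_{\tilde{\calR}_L}\tilde{\calE}_L$ factors as $E\mapsto E_\eta\mapsto E_\eta\otimes_{k(\eta)}\tilde{\calE}_L$ (using that $f$ is a unit in the field $\tilde{\calE}_L$ and that $k(\eta)=\Frac(A_f)\hookrightarrow\tilde{\calE}_L$ is flat), hence is exact, and $\mathbb{M}(E_1)\to\mathbb{M}(E_0)$ is a map of finite free modules over the domain $\tilde{\calR}_L$.

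The heart of the argument is a reduction to Theorem~\ref{T:vector bundles} via two-term locally free resolutions on both sides. On the sheaf side, for $R=L$ a nontrivially normed analytic field, $X$ is a Dedekind scheme — connected, regular, separated, noetherian, integral, of dimension $1$ (its charts $A_f$ being Pr\"ufer by Lemma~\ref{L:Prufer domain}, and noetherian as recorded in the introduction) — so every coherent sheaf $V$ fits in a short exact sequence $0\to E_1\to E_0\to V\to 0$ with $E_0$ a finite sum of twists of $\mathcal{O}_X$ (Serre) and $E_1=\ker(E_0\to V)$ torsion-free coherent, hence locally free. On the module side, given a finitely presented $\tilde{\calR}_L$-module $N$ with $\varphi^a$-action, Lemma~\ref{L:cover with free} produces a $\varphi^a$-equivariant surjection $F_0\to N$ from a finite free $\varphi^a$-module; its kernel $K$ is finitely presented (the B\'ezout domain $\tilde{\calR}_L$ is coherent), torsion-free (it lies in $F_0$) hence finite free, and $\varphi^a$-stable — so $0\to K\to F_0\to N\to 0$ is a two-term resolution by $\varphi^a$-modules. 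Since $\mathbb{M}$ is exact and restricts on vector bundles to the equivalence of Theorem~\ref{T:vector bundles}, full faithfulness of $\mathbb{M}$ on coherent sheaves follows by the usual diagram chase applied to the resolutions, and essential surjectivity follows by transporting $K\hookrightarrow F_0$ to a map of vector bundles $\tilde{E}_1\to\tilde{E}_0$ on $X$ (via Theorem~\ref{T:vector bundles} and its full faithfulness) and setting $V=\coker(\tilde{E}_1\to\tilde{E}_0)$, whence $\mathbb{M}(V)=\coker(K\to F_0)=N$.

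What remains — and what I expect to be the main obstacle — is to show that the target category named in the statement, finitely generated (not a priori finitely presented) $\tilde{\calR}_L$-modules with $\varphi^a$-action, coincides with the category of finitely presented ones. I would first split such an $M$ as $M_{\mathrm{tors}}\oplus M_{\mathrm{tf}}$: here $M_{\mathrm{tf}}=M/M_{\mathrm{tors}}$ is finitely generated torsion-free over a B\'ezout domain, hence finite free, so the sequence $0\to M_{\mathrm{tors}}\to M\to M_{\mathrm{tf}}\to 0$ splits as modules and, since $\varphi^a$ is an automorphism of $\tilde{\calR}_L$ preserving the torsion submodule, can be arranged to split $\varphi^a$-equivariantly, reducing to the case of finitely generated torsion $M$. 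Then $\mathrm{ann}(M)$ is a nonzero $\varphi^a$-stable ideal of $\tilde{\calR}_L$, and the delicate structural point is that any such ideal is principal, generated by an element $h\in\tilde{\calR}^\infty_L$ with $\varphi^a$-periodic zero locus and $\varphi^a(h)$ a unit multiple of $h$ (so that $M$ is a finitely generated module over $\tilde{\calR}_L/(h)$); this draws on the B\'ezout property (Lemma~\ref{L:Bezout domain}), the description of units (Corollary~\ref{C:units relative}), and Newton-polygon/slope bookkeeping in the spirit of the reality checks of \S\ref{sec:relative extended}. Granting it, $M$ is finitely presented over $\tilde{\calR}_L/(h)$ and hence over $\tilde{\calR}_L$, and the leftover verifications — that $\mathbb{M}$ matches torsion coherent sheaves, supported at finitely many closed points of $X$, with the corresponding torsion $\tilde{\calR}_L$-modules, compatibly with the vector-bundle part — are routine. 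Everything apart from this finiteness/structure analysis is formal homological algebra layered on top of Theorem~\ref{T:vector bundles}.
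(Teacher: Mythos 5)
Your overall strategy coincides with the paper's: the published proof consists of the assertion that, $\tilde{\calR}_L$ being a B\'ezout and hence Pr\"ufer domain, every finitely generated $\tilde{\calR}_L$-module is automatically finitely presented, followed by ``the claim thus follows from Theorem~\ref{T:vector bundles}.'' Your bootstrapping paragraph --- extending the functor of Definition~\ref{D:sheaf to module} to coherent sheaves, checking exactness (which amounts to flatness of the torsion-free extension $P[f^{-1}]_0 \to \tilde{\calR}^\infty_L[f^{-1}]$ of a Dedekind domain), and comparing two-term locally free resolutions on the sheaf side with two-term free resolutions on the module side --- is a correct expansion of the step the paper leaves entirely implicit. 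Your observation that the kernel $K$ of a $\varphi^a$-equivariant presentation is genuinely a $\varphi^a$-module, not merely $\varphi^a$-stable, is right, since $\varphi^a$ is an automorphism of $\tilde{\calR}_L$ and so $(\varphi^a)^*$ is exact.

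The genuine gap is exactly where you locate it, namely ``finitely generated $\Rightarrow$ finitely presented,'' and your proposed repair does not close it. The Pr\"ufer property handles only the torsion-free part (finitely generated torsion-free modules are projective, hence finitely presented); for the torsion part your reduction has two holes. First, the principality of a nonzero $\varphi^a$-stable ideal by a semi-invariant $h$ is asserted rather than proved (it is true, but requires the divisor-theoretic analysis of \cite[\S 2.9]{kedlaya-revisited} or the Fargues--Fontaine classification of closed points of $\Proj(P)$). Second, and more seriously, even granting it, the conclusion ``finitely generated over $\tilde{\calR}_L/(h)$ implies finitely presented'' is unsupported: a non-unit semi-invariant $h$ has a zero locus which is a nonempty union of full $\varphi^a$-orbits, hence infinite because $\varphi^a$ acts freely on $\bigcup_{0<s\leq r}\calM(\tilde{\calR}^{[s,r]}_L)$ (Proposition~\ref{P:radius fibration}), so $\tilde{\calR}_L/(h)$ is far from noetherian and one must invoke the $\varphi^a$-action a second time, e.g.\ by proving that a finitely generated torsion $\varphi^a$-module decomposes as a finite direct sum of cyclic modules $\tilde{\calR}_L/(h_i)$ with $h_i$ semi-invariant (the local rings of $\Proj(P)$ at closed points being discrete valuation rings). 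A smaller point: the $\varphi^a$-equivariant splitting of $0 \to M_{\mathrm{tors}} \to M \to M_{\mathrm{tf}} \to 0$ is not automatic (the obstruction lives in an $H^1_{\varphi^a}$), but it is also unnecessary --- a module-theoretic splitting already shows $M_{\mathrm{tors}}$ is finitely generated, and an extension of finitely presented modules is finitely presented. In fairness, the paper dispatches this whole issue with the bare Pr\"ufer assertion, which literally covers only the torsion-free case; you have correctly identified the delicate point, but as written the torsion case remains unproved.
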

\begin{proof}
Since $\tilde{\calR}_L$ is a B\'ezout domain by Lemma~\ref{L:Bezout domain} and hence a Pr\"ufer domain,
any finitely presented module over $\tilde{\calR}_L$ is automatically coherent.
The claim thus follows from Theorem~\ref{T:vector bundles}.
\end{proof}

\begin{remark}
The obstruction to generalizing Theorem~\ref{T:vector bundles field} is that
it is unclear whether the rings
$P[f^{-1}]_0$ have the property that every finitely generated ideal is finitely presented
(i.e., whether these rings are \emph{coherent}).
This is most likely not true in general; however,
we do not know what to expect if $R$ is restricted to being the completed perfection of an affinoid
algebra over an analytic field.
\end{remark}

\begin{remark} \label{R:not coherent}
One can improve the formal analogy between $\Proj(P)$ and the projective line over a field
by defining $\calO(n)$ for $n \in \ZZ$ as the invertible sheaf on $\Proj(P)$
corresponding via Theorem~\ref{T:vector bundles} to the $\varphi^a$-module over $\tilde{\calR}_R$
free on one generator $\bv$ satisfying $\varphi^a(\bv) = p^{-n} \bv$.
For $V$ a quasicoherent sheaf on $\Proj(P)$, write $V(n)$ for $V \otimes_{\calO} \calO(n)$; we may then naturally identify $M(V(n))$ with $M(V)(n)$. For $V$ a quasicoherent finite locally free sheaf on $\Proj(P)$,
$V(n)$ is generated by finitely many global sections for $n$ large (by
Theorem~\ref{T:vector bundles} and Proposition~\ref{P:global invariants}).
For a vanishing theorem for $H^1$ in the same vein, see Corollary~\ref{C:vanishing of H1}.
\end{remark}

It will be useful for subsequent developments to explain how to add topologies to
both types of objects appearing in Theorem~\ref{T:vector bundles}.
\begin{lemma} \label{L:graded Banach norm}
Let $M = \{M_I\}$ be a $\varphi^a$-bundle over $\tilde{\calR}_R$.
Choose $r>0$, and induce from $\lambda(\alpha^r)$ a norm on $M_{[r/q,r]}$ as in
Lemma~\ref{L:finite projective Banach}. Then for each $n \in \ZZ$,
the equivalence class of the restriction of this norm
to $\{\bv \in M_{[r/q,r]}: \varphi^a(\bv) = p^{-n} \bv\} \cong M(n)^{\varphi^a}$
is independent of $r$ and of the choice of the norm on $M$.
(However, the construction is not uniform in $n$.)
\end{lemma}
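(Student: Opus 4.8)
The plan is to separate the two independence claims. That the equivalence class does not depend on the norm on $M$ --- i.e.\ on the complement $Q$ and presentation $M_{[r/q,r]}\oplus Q\cong(\tilde{\calR}^{[r/q,r]}_R)^m$ used to apply Lemma~\ref{L:finite projective Banach} --- is immediate from Lemma~\ref{L:finite projective Banach}(b): any two such finite projective Banach norms on $M_{[r/q,r]}$ are equivalent, hence so are their restrictions to the subspace $N:=\{\bv\in M_{[r/q,r]}:\varphi^a(\bv)=p^{-n}\bv\}$. (I read ``the norm induced from $\lambda(\alpha^r)$'' as the finite projective Banach norm for the complete norm $\max\{\lambda(\alpha^{r/q}),\lambda(\alpha^r)\}$ on $\tilde{\calR}^{[r/q,r]}_R$; this agrees with the literal reading on $N$ because for $\bv$ fixed by $p^n\varphi^a$ one has $\lambda(\alpha^{r/q})(\bv)\asymp p^{-n}\lambda(\alpha^r)(\bv)$ via \eqref{eq:Frobenius norms}, so the two ring norms differ on $N$ only by the factor $\max\{1,p^{-n}\}$.)

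For independence of $r$, fix $r,r'>0$. Using the canonical identifications of $N$ with $\{\bv\in M_{[r'/q,r']}:\varphi^a(\bv)=p^{-n}\bv\}$ and with the space of global sections of $M$ killed by $p^n\varphi^a-1$ (Corollary~\ref{C:twisted invariants} and Lemma~\ref{L:model to bundle}), I must show the two induced norms on $N$ are equivalent. Put $[S,T]:=[\min(r,r')/q,\max(r,r')]$, which is a valid interval for a $\varphi^a$-module since $T\geq\min(r,r')=Sq$, and let $I_k:=[rq^{k-1},rq^k]$ for the finitely many $k$ with $I_k\cap[S,T]\neq\emptyset$; these cover $[S,T]$. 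Iterating the glueing square of Lemma~\ref{L:Tate lemma2} and tensoring the resulting strict exact sequences with the finite projective module $M_{[S,T]}$ yields, for any section $\bv$ over $[S,T]$, an equivalence $\|\bv\|_{[S,T]}\asymp\max_k\|\bv\|_{I_k\cap[S,T]}$; since base change of finite projective modules is bounded (Lemma~\ref{L:finitely generated Banach}(b)), $\|\bv\|_{I_k\cap[S,T]}\lesssim\|\bv\|_{I_k}$ and also $\|\bv\|_{[r/q,r]}\lesssim\|\bv\|_{[S,T]}$, all with constants independent of $\bv$.

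It remains to bound $\|\bv\|_{I_k}$ in terms of $\|\bv\|_{[r/q,r]}$ for $\bv\in N$. Here one uses that $\varphi^a$ is invertible on $\tilde{\calR}_R$ (because $\overline{\varphi}$ is on $R$) and that, by \eqref{eq:Frobenius norms}, the appropriate power $\varphi^{-ak}$ restricts to an isometry $\tilde{\calR}^{[r/q,r]}_R\to\tilde{\calR}^{I_k}_R$ for the respective maximum norms; iterating the isomorphism $\varphi^*M\cong M$ underlying the bundle structure then gives a $\varphi^{-ak}$-semilinear isomorphism $\Phi_k\colon M_{[r/q,r]}\to M_{I_k}$ which, being a morphism of finite projective modules, is bounded with bounded inverse, with constants depending only on $k$ and the fixed norms. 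Since $\varphi^{-a}$ acts on $N$ as multiplication by $p^n$, $\Phi_k$ sends $\bv$ to $p^{nk}\bv$ (up to the bundle identifications), so $\|\bv\|_{I_k}\asymp c_k\|\bv\|_{[r/q,r]}$ for a factor $c_k$ depending only on $k$ and $n$. As there are finitely many $k$, combining with the previous paragraph gives $\|\bv\|_{[S,T]}\asymp\|\bv\|_{[r/q,r]}$ uniformly on $N$, and symmetrically $\|\bv\|_{[S,T]}\asymp\|\bv\|_{[r'/q,r']}$, so the two norms on $N$ are equivalent. The one genuinely fiddly point is this last paragraph: tracking precisely which ring homomorphism each map of models is semilinear over, and checking that every equivalence constant there is uniform in $\bv$ (though not in $k$). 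Everything else is a direct appeal to Lemma~\ref{L:finite projective Banach}, Lemma~\ref{L:Tate lemma2}, and \eqref{eq:Frobenius norms}.
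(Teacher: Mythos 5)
Your proof is correct and follows essentially the same route as the paper's: both arguments rest on (i) the $\varphi^{a}$-semilinear comparison of the norms on models over $q$-shifted intervals, which on the eigenspace $\{\bv: \varphi^a(\bv)=p^{-n}\bv\}$ becomes multiplication by a power of $p$ and hence an equivalence of norms, and (ii) a comparison across intermediate radii. The only cosmetic difference is that where you invoke the glueing square of Lemma~\ref{L:Tate lemma2} to dominate the norm over the big interval by the maxima over the pieces $I_k$, the paper appeals directly to the convexity statement of Lemma~\ref{L:hadamard relative} (which is also what underlies the isometry in that glueing square), so the two arguments are interchangeable.
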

\begin{proof}
It is clear that the choice of the norm on $M$ makes no difference up to equivalence,
so we need only check the dependence on $r$.
For any $s \in (0,r/q]$, by fixing a set of generators for $M_{[s,r]}$, we obtain norms
$|\cdot|_t$ induced by $\lambda(\alpha^t)$ for all $t \in [s,r]$.
We can choose $c_1, c_2 > 0$ so that these norms
satisfy $c_1 |\bv|_t \leq |\varphi^a(\bv)|_{t/q} \leq c_2 |\bv|_t$ for all $\bv \in  M_{[s,r]}$
and all $t \in [sq,r]$.

For $\bv \in M_{[r/q,r]}$ with $\varphi^a(\bv) = p^{-n} \bv$, we have
$|\varphi^a(\bv)|_{r/q} = p^{n} |\bv|_{r/q}$. Consequently, $|\cdot|_r$ and $|\cdot|_{r/q}$
have equivalent restrictions. By induction, we see that $r$ and $rq^{m}$ give equivalent
norms for any $m \in \ZZ$.
To complete the proof, it is enough to observe that for $t$ in the interval between $r$
and $rq^{m}$ (inclusive),
we have $|\cdot|_t \leq \max\{|\cdot|_r, |\cdot|_{rq^{m}} \}$ by Lemma~\ref{L:hadamard relative};
this then implies that the norm induced by $r$ dominates the norm induced by $s$,
and vice versa by symmetry.
\end{proof}

\begin{defn} \label{D:continuous action on vector bundle}
Let $G$ be a profinite group acting continuously on $\tilde{\calR}^r_R$ for each $r >0$
and commuting with $\varphi^a$; then $G$ also acts on $P$ (but not continuously).
Let $M$ be a $\varphi^a$-module over $\tilde{\calR}_R$, and apply Theorem~\ref{T:vector bundles}
to construct a corresponding quasicoherent finite locally free sheaf $V$ on $\Proj(P)$.
Then the following conditions on an action of $G$ on $V$ (or equivalently on $M$) are equivalent.
\begin{enumerate}
\item[(a)]
The action of $G$ on $M$ is continuous for the LF topology.
\item[(b)]
For each $n \in \ZZ$, the action of $G$ on $M(n)^{\varphi^a} = \Gamma(\Proj(P), V(n))$ is continuous
for any norm as in Lemma~\ref{L:graded Banach norm}.
(This implies (a) by Proposition~\ref{P:global invariants}.)
\end{enumerate}
If these equivalent conditions are satisfied, we say the action is \emph{continuous}.
\end{defn}

We record a consequence of Theorem~\ref{T:vector bundles} for the cohomology of $\varphi$-modules.
\begin{prop} \label{P:truncate cohomology1}
Let $M$ be a $\varphi^a$-module over $\tilde{\calR}^\infty_R$.
\begin{enumerate}
\item[(a)]
For $r>0$, put $M_r = M \otimes_{\tilde{\calR}^\infty_R} \tilde{\calR}^r_R$. Then the vertical arrows in the diagram
\[
\xymatrix{
0 \ar[r] & M \ar^{\varphi^a-1}[r] \ar[d] & M \ar[r] \ar[d] & 0 \\
0 \ar[r] & M_r \ar^{\varphi^a-1}[r]  & M_{r/q} \ar[r] & 0
}
\]
induce an isomorphism on the cohomology of the horizontal complexes.
In particular, the lower complex computes $H^i_{\varphi^a}(M)$.
\item[(b)]
The map $M \to M \otimes_{\tilde{\calR}^\infty_R} \tilde{\calR}_R$ induces an isomorphism on cohomology.
\item[(c)]
For $r,s$ with $0 < s \leq r/q$, put $M_{[s,r]} = M \otimes_{\tilde{\calR}^\infty_R} \tilde{\calR}^{[s,r]}_R$. Then the vertical arrows in the diagram
\[
\xymatrix{
0 \ar[r] & M \ar^{\varphi^a-1}[r] \ar[d] & M \ar[r] \ar[d] & 0 \\
0 \ar[r] & M_{[s,r]} \ar^{\varphi^a-1}[r]  & M_{[s,r/q]} \ar[r] & 0
}
\]
induce an isomorphism on the cohomology of the horizontal complexes.
In particular, the lower complex computes $H^i_{\varphi^a}(M)$.
\item[(d)]
In (c), the map $\varphi^a-1: M_{[s,r]} \to M_{[s,r/q]}$ is strict.
Consequently, for $i=0,1$, $H^i_{\varphi^a}(M)$ admits the structure of a Banach space over $\Qp$.
\end{enumerate}
\end{prop}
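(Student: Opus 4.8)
The plan is to assume Conjecture~\ref{conj:bundle to module} throughout, so that $M$ (a $\varphi^a$-module over $\tilde{\calR}^\infty_R$), the $\varphi^a$-module $\tilde{M} := M \otimes_{\tilde{\calR}^\infty_R} \tilde{\calR}_R$ over $\tilde{\calR}_R$, and the associated $\varphi^a$-bundle $\{M_I\}$ all correspond to one another. Under these identifications the lower complex of (a) is the model complex $[M_r \xrightarrow{\varphi^a-1} M_{r/q}]$ of $\tilde{M}$, the lower complex of (c) is the model complex $[M_{[s,r]} \xrightarrow{\varphi^a-1} M_{[s,r/q]}]$ of the bundle over the interval $[s,r]$ (Remark~\ref{R:module to bundle}, Lemma~\ref{L:model to bundle}), and the complex of (b) is the genuine $\varphi^a$-cochain complex of $\tilde{M}$ (Definition~\ref{D:varphi cohomology}). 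I would first record, as a preliminary, that all of these ``overconvergent'' complexes have canonically isomorphic cohomology, independent of $r$ and of $[s,r]$: this is the standard radius-shifting argument, using that a class in $\coker(\varphi^a-1,\cdot)$ over a given interval is, via the telescoping identity $\bw \equiv \varphi^{-na}(\bw) \pmod{\mathrm{im}(\varphi^a-1)}$ (valid since $\sum_{j=1}^n \varphi^{-ja}(\bw)$ lies in the relevant model), represented by an element of an arbitrarily ``small'' model, together with Lemma~\ref{L:intersection} and Lemma~\ref{L:Tate lemma2}. After this it suffices to prove (b), and then (a) and (c) follow.

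The degree-$0$ statement in (b) is then easy: a $\varphi^a$-invariant element of $\tilde{M}$ lies in some model $M_r$, is automatically a $\varphi^a$-invariant global section of the bundle (it extends to every interval by pulling back along $\varphi^{\pm a}$ and glueing), and hence lies in $\bigcap_r M_r = M$ because $M$ is finite projective and $\bigcap_r \tilde{\calR}^r_R = \tilde{\calR}^\infty_R$; conversely a $\varphi^a$-invariant element of $M$ visibly lies in $\tilde{M}$. Injectivity on $H^1$ is also easy: if $m \in M$ and $m = (\varphi^a-1)(\tilde{y})$ with $\tilde{y}$ in the model $M_r$, then from $\varphi^a(\tilde{y}) = m + \tilde{y}$ one gets $\tilde{y} = \varphi^{-a}(m) + \varphi^{-a}(\tilde{y}) \in M + M_{rq} = M_{rq}$, and iterating yields $\tilde{y} \in \bigcap_n M_{rq^n} = M$, so $m$ is already a coboundary in $[M \xrightarrow{\varphi^a-1} M]$; the same computation handles injectivity on $H^1$ in (a) and (c).

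The one substantive point — and the step I expect to be the main obstacle — is surjectivity of $\coker(\varphi^a-1,M) \to \coker(\varphi^a-1,\tilde{M})$, i.e.\ that every element of $\tilde{M}$ is $\varphi^a$-cohomologous to an element of $M$. Here I would first reduce, given $\tilde{w} \in \tilde{M}$, by splitting $\tilde{w} = \tilde{w}_{\mathrm{bd}} + \tilde{w}_{+}$ with $\tilde{w}_{+} \in M \otimes_{\tilde{\calR}^\infty_R} \tilde{\calR}^+_R \subseteq M$ via Lemma~\ref{L:in plus ring}(b), and then splitting off from $\tilde{w}_{\mathrm{bd}}$ the part whose Teichm\"uller coordinates have norm $\le 1$, which lies in $M \otimes \tilde{\calR}^{\inte,+}_R \subseteq M$; this leaves a bounded element all of whose coordinates have norm $> 1$, and one must show its class comes from $M$. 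A naive telescoping does not converge here, so I would instead run the globalization technique of this subsection: for each $\beta \in \calM(R)$ with $\calH(\beta)$ nontrivially normed one can, as in the proofs of Lemma~\ref{L:perfectoid calculation} and Lemma~\ref{L:local invariants} (using an element $\overline{\pi}$ of norm $< 1$ over $\calH(\beta)$), solve the relevant equation up to arbitrarily small error over $\tilde{\calR}^{[r/q,r]}_{\calH(\beta)}$; one then extends the local solution to a rational localization encircling $\beta$ (Lemma~\ref{L:extend generators}), covers $\calM(R)$ by finitely many such (Remark~\ref{R:compact spaces}(b)), and glues, using the glueing of $\varphi^a$-modules over $\tilde{\calR}_R$ afforded by Conjecture~\ref{conj:bundle to module} (Remark~\ref{R:bundle glueing}) and correcting the cocycle on overlaps with Corollary~\ref{C:exact invariants}. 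An alternative is a twisting argument: an exact sequence of $\varphi^a$-bundles relating $M$ to a sufficiently positive twist $M(n)$ together with the vanishing $H^1_{\varphi^a}(M(n)) = 0$ for $n \gg 0$ from Proposition~\ref{P:H1}; this is cleanest when $R$ is free of trivial spectrum, and one reduces the general case to that case and to the trivial-norm case, in which, by Remark~\ref{R:trivial norm topologies}, $\tilde{\calR}^\infty_R$ and $\tilde{\calR}_R$ coincide and the statement is immediate. The difficulty is concentrated entirely in gluing these local solutions into a bona fide global coboundary.
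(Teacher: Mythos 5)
Your treatment of $H^0$ and of injectivity on $H^1$ matches the paper's (apply $\varphi^{-a}$ repeatedly and use Lemma~\ref{L:intersection}), and your reduction among the three parts is acceptable, although the paper goes the other way (it proves (a) and (c) directly against $M$ and deduces (b) by taking direct limits, which avoids having to compare the $[s,r]$-model complexes with one another as your ``preliminary'' requires). The genuine gap is exactly where you locate it: surjectivity of $H^1_{\varphi^a}(M) \to H^1_{\varphi^a}(M_r)$. Neither of your two routes closes it. The globalization route ends with ``the difficulty is concentrated entirely in gluing these local solutions into a bona fide global coboundary,'' i.e.\ the key step is left open; and the twisting route as stated does not work, since there is no exact sequence relating $M$ to its twist $M(n)$ (one would need a resolution by sums of twists as in the proof of Theorem~\ref{T:phi-module to vb}, and even then one must still compare $H^1$ over $\tilde{\calR}^\infty_R$ with $H^1$ over $\tilde{\calR}_R$ for the kernel term, which is the original problem). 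The proposed reduction to the free-of-trivial-spectrum and trivial-norm cases is also not substantiated: extending scalars as in Lemma~\ref{L:fg to projective} changes $R$, and descending the coboundary back is again the same difficulty.

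The intended argument is much shorter and uses the hypothesis of the proposition head-on. By Definition~\ref{D:varphi cohomology}, a class in $H^1_{\varphi^a}(M_r)$ (equivalently, in the cokernel of $\varphi^a - 1$ on the model) is the Yoneda class of an extension $0 \to M \otimes_{\tilde{\calR}^\infty_R} \tilde{\calR}_R \to N \to \tilde{\calR}_R \to 0$ of $\varphi^a$-modules over $\tilde{\calR}_R$ (the middle term is again a $\varphi^a$-module by Remark~\ref{R:kernels}). Conjecture~\ref{conj:bundle to module} — the standing hypothesis — asserts that base extension from $\tilde{\calR}^\infty_R$ to $\tilde{\calR}_R$ is an equivalence of categories of $\varphi^a$-modules, so this extension descends to an extension over $\tilde{\calR}^\infty_R$, whose class in $H^1_{\varphi^a}(M)$ maps to the given one. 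You invoke the conjecture only through its glueing consequence (Remark~\ref{R:bundle glueing}); its real role here is to transfer extension classes directly.
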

\begin{proof}
Note that (b) follows from (a) by taking direct limits, so we need only treat (a) and (c).
Write $H^i_{\varphi^a}(M_r)$ and $H^i_{\varphi^a}(M_{[s,r]})$ as shorthand for the kernel and cokernel of the second row in (a)
and (c), respectively.
Since $M$ is finite projective over $\tilde{\calR}^\infty_R$, the maps $M \to M_r$, $M \to M_{[s,r]}$
are injective; consequently, the maps $H^0_{\varphi^a}(M) \to H^0_{\varphi^a}(M_r)$,
$H^0_{\varphi^a}(M) \to H^0_{\varphi^a}(M_{[s,r]})$ are injective.
Conversely, for $\bv \in H^0_{\varphi^a}(M_r)$, we also have
$\bv = \varphi^{-a}(\bv) \in M_{rq}$. By induction, we have $\bv \in M_{rq^n}$
for all $n$ and so $\bv \in M$; that is, $H^0_{\varphi^a}(M) \to H^0_{\varphi^a}(M_r)$ is surjective.
Similarly, for $\bv \in H^0_{\varphi^a}(M_{[s,r]})$, we may apply powers
of $\varphi^a$ and invoke Lemma~\ref{L:intersection} to deduce that $\bv \in M$;
that is, $H^0_{\varphi^a}(M) \to H^0_{\varphi^a}(M_{[s,r]})$ is surjective.

By similar reasoning, if $\bv \in M_r$ (resp.\ $\bv \in M_{[s,r]}$) is such that $(\varphi^a-1)(\bv) \in M_{r/q}$,
then $\bv \in M$. Consequently, the maps $H^1_{\varphi^a}(M) \to H^1_{\varphi^a}(M_r)$,
$H^1_{\varphi^a}(M) \to H^1_{\varphi^a}(M_{[s,r]})$ are injective.
To see that $H^1_{\varphi^a}(M) \to H^1_{\varphi^a}(M_r)$ is surjective, note that any class
in the target defines an extension of $\varphi^a$-modules over $\tilde{\calR}_R$, which lifts to
an extension of $\varphi^a$-modules over $\tilde{\calR}^{\infty}_R$
by Theorem~\ref{T:vector bundles}.
The argument for $H^1_{\varphi^a}(M) \to H^1_{\varphi^a}(M_{[s,r]})$ is similar.

To prove (d), apply Proposition~\ref{P:H1} to find a nonnegative integer $n$ such that $\varphi^a-1: M(n) \to M(n)$ is surjective. 
Using Proposition~\ref{P:global invariants} we may construct a strict injective morphism $M \to M(n)$ of $\varphi^a$-modules; we then obtain a commutative diagram
\[
\xymatrix{
0 \ar[r] & M_{[s,r]} \ar[d] \ar[r] & M_{[s,r]}(n) \ar[d] \ar[r] & M_{[s,r]}(n)/M_{[s,r]} \ar[d] \ar[r] & 0 \\
0 \ar[r] & M_{[s,r/q]} \ar[r] & M_{[s,r/q]}(n) \ar[r] & M_{[s,r/q]}(n)/M_{[s,r/q]} \ar[r] & 0 
}
\]
of Banach spaces in which the vertical arrows are induced by $\varphi^a-1$. The second vertical arrow is surjective by the choice of $n$ plus (c), and hence strict by the open mapping theorem (Theorem~\ref{T:open mapping}). Consequently, the connecting homomorphism
\[
\ker(\varphi^a-1:  M_{[s,r]}(n)/M_{[s,r]}  \to  M_{[s,r/q]}(n)/M_{[s,r/q]} )
\to \coker(\varphi^a-1: M_{[s,r]} \to M_{[s,r/q]})
\]
is also strict surjective. By (c) again, it follows that $H^i_{\varphi^a}(M)$ is a Banach space over $\Qp$ for $i=1$; this is also clear for $i=0$. By the open mapping theorem again, we deduce that $\varphi^a-1: M_{[s,r]} \to M_{[s,r/q]}$ is strict.
\end{proof}

\begin{remark} \label{R:vector bundles}
Theorem~\ref{T:vector bundles field} is essentially due to Fargues and Fontaine
\cite{fargues-fontaine}, who have further studied the structure of the scheme $\Proj(P)$ when
$R = L$ is an analytic field
(see also \cite{fargues, fargues-fontaine-durham}).
They show that it is a \emph{complete absolute curve} in the sense of being
noetherian, connected, separated, and regular of dimension 1, with each
closed point having a well-defined degree and the total degree of any principal divisor being 0.
(If $L$ is algebraically closed, then the degrees are all equal to 1.) One corollary is that the rings
$P[f^{-1}]_0$ for $f \in P_+$ homogeneous are not just Pr\"ufer domains but Dedekind domains.

Many of the basic notions in $p$-adic Hodge theory
can be interpreted in terms of the theory of vector bundles on $\Proj(P_L)$; this is the viewpoint
developed in \cite{fargues-fontaine}, which we find appealing and suggestive.
For instance, the slope polygon of a $\varphi^a$-module over $\tilde{\calR}_L$ can be interpreted
as the Harder-Narasimhan polygon of the corresponding vector bundle, with \'etale $\varphi^a$-modules
corresponding to semistable vector bundles of degree 0
via the slope filtration theorem
over $\tilde{\calR}_L$ (Theorem~\ref{T:slope filtration2}).
The correspondence between \'etale $\varphi^a$-modules
and \'etale local systems then bears a remarkable formal similarity to the correspondence
between stable vector bundles on compact Riemann surfaces and irreducible unitary representations
of the fundamental group, due to Narasimhan and Seshadri \cite{narasimhan-seshadri}.
(A materially equivalent construction was given by Berger \cite{berger-b-pairs} in the somewhat less geometric
language of \emph{$B$-pairs}.)

For general $R$, we expect the scheme $\Proj(P)$ to exhibit much less favorable behavior
(see for instance Remark~\ref{R:not coherent}).
However, it may still be profitable to view the relationship between \'etale local systems
and $\varphi$-modules through the optic of vector bundles over $\Proj(P)$. One possibly surprising
aspect we will encounter later (see \S\ref{subsec:relative FF} for further discussion)
is that \'etale $\Qp$-local systems on the analytic space associated to $R$, which need not descend to $\Spec(R)$
(as in Example~\ref{exa:Tate curve}),
will nonetheless give rise to algebraic vector bundles on $\Proj(P)$ via Theorem~\ref{T:vector bundles2a} and
Corollary~\ref{C:vector bundles2b}.
\end{remark}

\section{Slopes in families}
\label{sec:slope theory}

When one considers $\varphi$-modules over relative Robba rings, one has not one slope polygon but a whole
family of polygons indexed by the base analytic space. We now study the variation of the slope polygon
in such families. Throughout \S\ref{sec:slope theory}, continue to retain Hypothesis~\ref{H:add positive integer}.

\subsection{An approximation argument}

Much of our analysis of slopes in families depends on the following argument for spreading out certain bases of $\varphi$-modules, modeled on \cite[Lemma~6.1.1 and~Proposition~6.2.2]{kedlaya-revisited}.

\begin{lemma} \label{L:get basis}
Let $M$ be a $\varphi^a$-module over $\tilde{\calR}_R$. For $r>0$, let $M_r$ be the model of $M$ over
$\tilde{\calR}^r_R$.
Let $\{M_I\}$ be the $\varphi^a$-bundle associated to $M$.
Suppose that there exists a basis $\bv_1,\dots,\bv_n$ of
$M_{[r/q,r]}$ on which
$\varphi$ acts via an invertible matrix $F$ over
$\tilde{\calR}^{r/q}_R$.
Then $\bv_1,\dots,\bv_n$ is a basis of $M_r$.
\end{lemma}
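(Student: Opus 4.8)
The plan is to avoid any direct matrix bootstrapping across shrinking radii, and instead manufacture an auxiliary model out of the given data and transport everything through the equivalence of Lemma~\ref{L:model to bundle}. Concretely, let $N_0$ be the finite free $\tilde{\calR}^r_R$-module on generators $\be_1,\dots,\be_n$, equipped with the semilinear $\varphi^a$-action $\varphi^a(\be_j)=\sum_i F_{ij}\be_i$. Since the hypothesis guarantees $F\in\GL_n(\tilde{\calR}^{r/q}_R)$, the linearization $\varphi^*N_0\to N_0\otimes_{\tilde{\calR}^r_R}\tilde{\calR}^{r/q}_R$ is an isomorphism, so $N_0$ is a model over $\tilde{\calR}^r_R$ in the sense of Definition~\ref{D:phi-module relative}. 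Let $\{N_I\}$ be the associated $\varphi^a$-bundle, so that its model over $\tilde{\calR}^{[s,r]}_R$ is $N_{[s,r]}=N_0\otimes_{\tilde{\calR}^r_R}\tilde{\calR}^{[s,r]}_R$; in particular $N_{[r/q,r]}$ carries the standard basis $\be_j$ with $\varphi^a$ acting via $F$.

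Next I would check that $\be_j\mapsto\bv_j$ defines an isomorphism $N_{[r/q,r]}\xrightarrow{\sim}M_{[r/q,r]}$ of $\varphi^a$-modules over $\tilde{\calR}^{[r/q,r]}_R$: it is $\tilde{\calR}^{[r/q,r]}_R$-linear and bijective because it carries a basis to a basis (this is where $\bv_1,\dots,\bv_n$ being a basis of $M_{[r/q,r]}$ enters), and it is $\varphi^a$-equivariant because both $\varphi^a$-structures are given by the same matrix $F$ (viewed over $\tilde{\calR}^{[r/q,r/q]}_R$). Now the projection functor from $\varphi^a$-bundles over $\tilde{\calR}_R$ to $\varphi^a$-modules over $\tilde{\calR}^{[r/q,r]}_R$ is a tensor equivalence by Lemma~\ref{L:model to bundle} (applicable since $0<r/q\le r/q$), so this isomorphism lifts to an isomorphism $\{N_I\}\cong\{M_I\}$ of $\varphi^a$-bundles, hence to compatible isomorphisms $N_{[s,r]}\cong M_{[s,r]}$ for all $0<s\le r/q$. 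Passing to $(0,r]$-sections, which recover the models $N_0$ and $M_r$ by Remark~\ref{R:module to bundle}, yields an isomorphism $N_0\cong M_r$ of finite locally free $\tilde{\calR}^r_R$-modules carrying $\be_j$ to (the preimage of) $\bv_j$. In particular $M_r$ is finite free and $\bv_1,\dots,\bv_n$ is a basis of it, as claimed.

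The only delicate point is bookkeeping: one must track exactly which ring each $\varphi^a$-structure lives over, and in particular use that the matrix $F$ — a priori only defined over $\tilde{\calR}^{[r/q,r/q]}_R$ as part of the $\varphi^a$-module structure of $M_{[r/q,r]}$ — is genuinely defined and invertible over $\tilde{\calR}^{r/q}_R$, which is precisely the hypothesis and is what makes $N_0$ a legitimate model over $\tilde{\calR}^r_R$. Beyond that the argument is formal. (Alternatively, one could argue directly in the style of \cite[Lemma~6.1.1]{kedlaya-revisited}: after reducing to $M_r$ free, write $\bv_j=\sum_k A_{kj}\be_k$ for a basis $\be_k$ of $M_r$, derive the identity $G\,\varphi^a(A)=AF$, and bootstrap $A$ from $\tilde{\calR}^{[r/q,r]}_R$ to $\tilde{\calR}^{[r/q^2,r]}_R$, then $\tilde{\calR}^{[r/q^3,r]}_R$, and so on, using the glueing square of Lemma~\ref{L:Tate lemma2} at each step; but this route is messier and I would not take it.)
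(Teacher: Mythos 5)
Your proof is correct, but it takes a different route from the paper's. The paper argues by a direct induction on $l$: assuming $\bv_1,\dots,\bv_n$ is a basis of $M_{[r/q^{l},r]}$, it applies $\varphi^a$ to get a basis of $M_{[r/q^{l+1},r/q]}$, uses the hypothesis that $F$ and $F^{-1}$ have entries in $\tilde{\calR}^{r/q}_R$ to convert back to the $\bv_j$ over that smaller interval, and then glues via Corollary~\ref{C:generation} to conclude over $[r/q^{l+1},r]$. You instead observe that $F\in\GL_n(\tilde{\calR}^{r/q}_R)$ is exactly the datum of a free model $N_0$ over $\tilde{\calR}^r_R$, match its bundle with $\{M_I\}$ over the single interval $[r/q,r]$, and invoke the equivalence of Lemma~\ref{L:model to bundle} plus the identification of $M_r$ with $(0,r]$-sections (Remark~\ref{R:module to bundle}). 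This is a legitimate repackaging: the underlying mechanism is the same (propagating along powers of $\varphi^a$ and glueing over overlapping intervals), but you have outsourced the induction to Lemma~\ref{L:model to bundle}, whose proof uses the glueing square of Lemma~\ref{L:Tate lemma2} in place of Corollary~\ref{C:generation}; there is no circularity since that lemma precedes this one and does not depend on it. Your version is more conceptual and makes visible that the whole content of the lemma is ``an invertible matrix over $\tilde{\calR}^{r/q}_R$ is a model''; the paper's version is more elementary and self-contained at this point in the text. Two small points you handle correctly but should keep explicit: the final identification of the basis of $M_r$ with the given $\bv_j$ uses injectivity of $M_r\to M_{[r/q,r]}$ (you signal this with ``the preimage of $\bv_j$''), and the claim that $(0,r]$-sections of the bundle attached to the free model $N_0$ recover $N_0$ itself, which is elementary for a free module since $\tilde{\calR}^r_R=\varprojlim_s \tilde{\calR}^{[s,r]}_R$.
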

\begin{proof}
As in Lemma~\ref{L:module of sections},
it suffices to prove that $\bv_1,\dots,\bv_n$ is a basis of
$M_{[r/q^{l+1},r]}$ for each nonnegative integer $l$.
As the case $l=0$ is given, we may proceed by induction on $l$.

Suppose that $l>0$ and that the claim is known for $l-1$,
so that $\bv_1,\dots,\bv_n$ form a basis of $M_{[r/q^{l},r]}$.
Then $\varphi^a(\bv_1),\dots,\varphi^a(\bv_n)$ is a basis of
$M_{[r/q^{l+1},r/q]}$.
By hypothesis, $\bv_j$ can be written as a $\tilde{\calR}^{r/q}_R$-linear combination of
the $\varphi^a(\bv_i)$ and vice versa, so the $\bv_j$ also form a basis of
$M_{[r/q^{l+1},r/q]}$.
By Lemma~\ref{L:finite generation} and Lemma~\ref{L:Robba ring cover},
 $\bv_1,\dots,\bv_n$ form a basis of
$M_{[r/q^{l+1},r]}$ as desired.
\end{proof}

\begin{lemma} \label{L:approximation lemma}
Let $M$ be a $\varphi^a$-module over $\tilde{\calR}_R$. For $r>0$, let $M_r$ be the model of $M$ over
$\tilde{\calR}^r_R$. Let $\{M_I\}$ be the associated $\varphi^a$-bundle.
Suppose that there exist a nonnegative integer $h$, a diagonal matrix $D$ with diagonal
entries $p^{d_1},\dots,p^{d_n}$ for some $d_1,\dots,d_n \in \ZZ$ no two of
which differ by more than $h$, and a basis $\be_1,\dots,\be_n$ of
$M_{[r/q,r]}$ on which
$\varphi^a$ acts via a matrix $F$ over $\tilde{\calR}^{[r/q,r/q]}_R$
for which $\lambda(\alpha^{r/q})(FD -1) < p^{-h}$.
Then there exists a basis $\bv_1,\dots,\bv_n$ of $M_r$
on which $\varphi^a$ acts via a matrix $F'$ over $\tilde{\calR}^{[r/q,r/q]}_R$
such that $F'D - 1$ has entries in $p\tilde{\calR}^{\inte,r/q}_R$,
and for which the invertible matrix $U$ over $\tilde{\calR}^{[r/q,r]}_R$
defined by $\bv_j = \sum_i U_{ij} \be_i$ satisfies $\lambda(\alpha^{r/q})(U-1),
\lambda(\alpha^r)(D^{-1}U D - 1) < p^{-h}$.
\end{lemma}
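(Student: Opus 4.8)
The plan is to produce the change-of-basis matrix $U$ directly over $\tilde{\calR}^{[r/q,r]}_R$ by successive approximation, and then to promote the resulting basis of $M_{[r/q,r]}$ to a basis of the full model $M_r$ using Lemma~\ref{L:get basis}. Indeed, once we have a basis $\bv_1,\dots,\bv_n$ of $M_{[r/q,r]}$ on which $\varphi^a$ acts via a matrix $F'$ over $\tilde{\calR}^{[r/q,r/q]}_R$ with $F'D - 1 \in p\tilde{\calR}^{\inte,r/q}_R$, the matrix $F' = D^{-1} + p(\text{integral})$ is invertible over $\tilde{\calR}^{r/q}_R$, so Lemma~\ref{L:get basis} applies. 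Thus it suffices to find an invertible $U$ over $\tilde{\calR}^{[r/q,r]}_R$ with $\lambda(\alpha^{r/q})(U-1) < 1$ and $\lambda(\alpha^r)(D^{-1}UD - 1) < 1$ for which the transformed Frobenius matrix $F' = U^{-1}F\varphi^a(U)$ satisfies $F'D - 1 \in p\tilde{\calR}^{\inte,r/q}_R$.

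I would build $U$ as a convergent product $U = \lim_k U_1 U_2 \cdots U_k$ with $U_{k+1} = 1 + Y_{k+1}$, $Y_{k+1}$ small, tracking at each stage the discrepancy $X_k = F_k D - 1$ of the Frobenius matrix $F_k$ obtained after applying $U_1 \cdots U_k$. Linearizing the relation $F_{k+1}D = (1 + Y_{k+1})^{-1} F_k \varphi^a(1+Y_{k+1})D$ and using $F_k \approx D^{-1}$, the leading-order equation that cancels $X_k$ is
\[
Y_{k+1} - D^{-1}\varphi^a(Y_{k+1})D = X_k,
\]
whose $(i,j)$ entry reads $(Y_{k+1})_{ij} - p^{d_j-d_i}\varphi^a((Y_{k+1})_{ij}) = (X_k)_{ij}$. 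The hypotheses are exactly what make this solvable with good bounds: the assumption $\lambda(\alpha^{r/q})(FD-1) < p^{-h}$ together with $|d_i - d_j| \le h$ forces the operator $Z \mapsto D^{-1}\varphi^a(Z)D$ to be strictly contracting on the relevant space, so $Y_{k+1} = \sum_{m \ge 0}(D^{-1}\varphi^a(\cdot)D)^m(X_k)$ converges; and since $\varphi^a$ carries $\tilde{\calR}^{[s,s]}_R$ into $\tilde{\calR}^{[s/q,s/q]}_R$ while scaling $\lambda(\alpha^{r/q})$ downward (the identity $\lambda(\alpha^s)(\varphi^a(x)) = \lambda(\alpha^{qs})(x)$), one can control both $\lambda(\alpha^{r/q})(Y_{k+1})$ and $\lambda(\alpha^r)(D^{-1}Y_{k+1}D)$ simultaneously — the latter being where the bound $|d_i - d_j|\le h$ on the conjugation enters. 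One then checks that $X_{k+1}$ is quadratically smaller than $X_k$, so the product and all the norms in question converge, $U$ and $F'$ are invertible, and (via Lemma~\ref{L:intersection}, applied to $\varphi^a(F')$) the basis of $M_{[r/q,r]}$ so obtained is in fact a basis of $M_r$.

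The step I expect to be the real obstacle is landing precisely in the congruence class $p\tilde{\calR}^{\inte,r/q}_R$ rather than merely making $\lambda(\alpha^{r/q})(F'D - 1)$ small: an element of small $\lambda(\alpha^{r/q})$-norm need not be $p$ times an integral element, since it may carry negative powers of $p$ or an integral but non-$p$-divisible Teichm\"uller part. To handle this I would, at each stage, split $X_k$ via Lemma~\ref{L:decompose} (or Lemma~\ref{L:decompose variant}) into a piece already lying in $p\tilde{\calR}^{\inte,r/q}_R$, which is left untouched, and a remaining piece, which is corrected by the iteration above; one must verify that this splitting does not interfere with the quadratic improvement or spoil the $\lambda(\alpha^r)$-bound on $U$. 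With that in hand, the remaining bookkeeping is routine and follows the template of \cite[Lemma~6.1.1, Proposition~6.2.2]{kedlaya-revisited}.
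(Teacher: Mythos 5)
Your overall architecture is right --- iterate corrections $U_{l+1} = U_l(1+Y_{l})$, use Lemma~\ref{L:decompose} to separate off the part of the error already lying in $p\tilde{\calR}^{\inte,r/q}_R$, and finish with Lemma~\ref{L:get basis} --- and this is indeed how the paper proceeds. But the central analytic step, choosing $Y_{k+1}$ as the Neumann-series solution of $Y_{k+1} - D^{-1}\varphi^a(Y_{k+1})D = X_k$, does not work. First, the operator $Z \mapsto D^{-1}\varphi^a(Z)D$ does not act on any fixed ring $\tilde{\calR}^{[s,r]}_R$: it carries $\tilde{\calR}^{[s,r]}_R$ into $\tilde{\calR}^{[s/q,r/q]}_R$, so the terms $(D^{-1}\varphi^a(\cdot)D)^m(X_k)$ live in $\tilde{\calR}^{[r/q^{m+1},r/q^{m+1}]}_R$ and the series cannot converge to an element of $\tilde{\calR}^{[r/q,r]}_R$, which is where $U$ must live for Lemma~\ref{L:get basis} to apply. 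Second, even formally the operator is not contracting: on an entry with $d_i = d_j$ it is just $\varphi^a$, which satisfies $\lambda(\alpha^s)(\varphi^a(x)) = \lambda(\alpha^{qs})(x)$ and in particular preserves the norm of bounded elements such as Teichm\"uller lifts; the map $1-\varphi^a$ is not surjective on these rings (its cokernel is essentially $H^1_{\varphi^a}$ of the trivial module, which is nonzero --- this is exactly why Proposition~\ref{P:H1} requires a positive twist, and why Remark~\ref{R:no direct invariants} warns against producing invariants of the untwisted module). So the linearized equation you propose to solve generally has no solution of the required form.

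The mechanism that actually makes the iteration converge is different, and your write-up does not identify it. One takes $Y_l$ to be (up to the integral part) the error $X_l$ itself --- the zeroth term of your series --- but chosen via Lemma~\ref{L:decompose} so that $\lambda(\alpha^{r})(Y_l)$ is roughly the $q$-th power of $\lambda(\alpha^{r/q})(Y_l)$. The new error is then dominated by the single term $D^{-1}\varphi^a(Y_l)D$, and the identity $\lambda(\alpha^{r/q})(\varphi^a(Y_l)) = \lambda(\alpha^{r})(Y_l)$ converts that $q$-th-power bound at $r$ into a bound at $r/q$ which beats the loss of $p^{h}$ from conjugation by $D$ precisely because of the hypothesis $p^h\lambda(\alpha^{r/q})(FD-1) < 1$. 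This yields geometric (not quadratic) decay of the error with ratio $c_0 = p^h\lambda(\alpha^{r/q})(FD-1)$, and simultaneously the two norm bounds $\lambda(\alpha^{r/q})(U-1), \lambda(\alpha^r)(D^{-1}UD-1) < 1$. Without replacing your Neumann-series step by this Frobenius-contraction argument, the proof does not go through.
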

\begin{proof}
Put $c_0 = p^h \lambda(\alpha^{r/q})(FD - 1) < 1$.
We construct a sequence of invertible $n \times n$ matrices $U_0,U_1,\dots$
over $\tilde{\calR}_R^{[r/q,r]}$ such that the following conditions
hold for $l = 0,1,\dots$.
\begin{enumerate}
\item[(a)]
We have $\lambda(\alpha^{r/q})(U_l - 1),
\lambda(\alpha^r)(D^{-1} U_l D -1) \leq c_0 p^{-h}$.
\item[(b)]
For $F_l = U_l^{-1} F \varphi^a(U_l)$
(which has entries in $\tilde{\calR}^{[r/q,r/q]}_R$
and satisfies $\lambda(\alpha^{r/q})(F_l D - 1) \leq c_0 p^{-h}$),
there exists a matrix $X_l$ over $\tilde{\calR}^{[r/q,r/q]}_R$
such that $F_lD - X_l-1$ has entries in $p\tilde{\calR}^{\inte,r/q}_R$
and $\lambda(\alpha^{r/q})(X_l) \leq c_0^{l+1} p^{-h}$.
\end{enumerate}
For $l=0$, we may take $U_0 = 1$ and $X_0 = F_0 D-1$.
Given $U_l$ for some $l \geq 0$,
by applying Lemma~\ref{L:decompose} to the entries of $p^{-1} X_l$, we construct
a matrix $Z_l$ over $\tilde{\calR}^{[r/q,r]}_R$ such that
$F_l D - Z_l-1$ has entries in $p\tilde{\calR}^{\inte,r/q}_R$,
$\lambda(\alpha^{r/q})(Z_l) \leq c_0^{l+1} p^{-h}$,
and $\lambda(\alpha^{r})(Z_l) \leq c_0^{q(l+1)} p^{-qh}$.
Since
\begin{equation} \label{eq:phi yl bound}
\lambda(\alpha^{r/q})(D^{-1} \varphi^a(Z_l) D) \leq p^h \lambda(\alpha^{r})(Z_l) \leq
p^h c_0^{q(l+1)} p^{-qh} \leq
c_0^{l+2} p^{-h},
\end{equation}
both $1 + Z_l$ and $1 + D^{-1} \varphi^a(Z_l) D$ are invertible over
$\tilde{\calR}^{[r/q,r/q]}_R$. We may thus put
\begin{align*}
U_{l+1} &= U_l(1 + Z_l) \\
F_{l+1} &= U_{l+1}^{-1} F \varphi^a(U_l) = (1 + Z_l)^{-1} F_l (1 + \varphi^a(Z_l)) \\
X_{l+1} &= F_{l+1}D -1- (F_lD - Z_l-1),
\end{align*}
so that $F_{l+1}D -X_{l+1}-1 = F_l D - Z_l-1$ has entries in $p\tilde{\calR}_R^{\inte,r/q}$.
By writing
\[
X_{l+1} = (1 + Z_l)^{-1} (F_l D) (D^{-1} \varphi^a(Z_l) D) + Z_l^2 (1+Z_l)^{-1}F_l D+ Z_l(1 - F_lD),
\]
we see that $\lambda(\alpha^{r/q})(X_{l+1}) \leq c_0^{l+2} p^{-h}$.
Hence $U_{l+1}$ has the desired properties.

The matrices $U_l$ converge to an invertible matrix $U$ over
$\tilde{\calR}_R^{[r/q,r]}$ for which the matrix
$G = U^{-1} F \varphi^a(U) D$ has entries in $\tilde{\calR}^{\inte,r/q}_R$
and $G - 1$ has entries in $p\tilde{\calR}^{\inte,r/q}_R$
(because $G$ is the limit of the Cauchy sequence
$F_l D$ with respect to $\lambda(\alpha^{r/q})$).
If we put $\bv_j = \sum_i U_{ij} \be_i$,
we obtain a basis of
$M_{[r/q,r]}$
on which $\varphi^a$ acts via $G D^{-1}$.
By Lemma~\ref{L:get basis}, $\bv_1,\dots,\bv_n$ form a basis of $M_r$,
proving the desired result.
\end{proof}

\subsection{Rank, degree, and slope}

\begin{defn} \label{D:phi-module relative1}
Define the \emph{rank} and
\emph{degree} of a $\varphi^a$-module $M$ over $W(R)$ (resp.\ $\tilde{\calE}_R$,
$\tilde{\calR}^{\inte}_R$, $\tilde{\calR}^{\bd}_R$, $\tilde{\calR}_R$) as the functions
$\rank(M, \cdot): \calM(R) \to \ZZ$  and
$\deg(M, \cdot): \calM(R) \to \frac{1}{a} \ZZ$ whose values at $\beta \in \calM(R)$ are the rank and
degree, respectively, of
the $\varphi^a$-module obtained by base extension from $M$ by passing from $R$
to $\calH(\beta)$
(recalling Convention~\ref{conv:power slope} in case $a>1$).
Note that by our definitions, the degree of a $\varphi^a$-module over $W(R)$ or
$\tilde{\calR}^{\inte}_R$ is identically zero.
\end{defn}

\begin{lemma} \label{L:degree continuous}
The rank and degree of a $\varphi^a$-module over any of the rings allowed in
Definition~\ref{D:phi-module relative}
are continuous on $\calM(R)$. In other words, the set of all points at which the
rank or degree takes any given value is closed and open in $\calM(R)$.
\end{lemma}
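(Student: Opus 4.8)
The plan is to show that each of $\rank(M,\cdot)$ and $\deg(M,\cdot)$ is locally constant on $\calM(R)$; since $\calM(R)$ is compact, each then takes only finitely many values and each level set is closed and open, which is the assertion. Throughout, I would replace $M$ by its model over the relevant base ring: write $T$ for $W(R)$, $\tilde{\calE}_R$, $\tilde{\calR}^{\inte,r}_R$, $\tilde{\calR}^{\bd,r}_R$, or $\tilde{\calR}^r_R$ (for $\tilde{\calR}_R$ pass first to the associated $\varphi^a$-bundle and work with a model over a Banach ring $\tilde{\calR}^{[s,r]}_R$), so that in each case $M$ becomes a finite projective $T$-module, and $\rank(M,\beta)$, $\deg(M,\beta)$ are computed from $M\otimes_T T_{\calH(\beta)}$.

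For the rank, I would first note that for every field $L$ among the $\calH(\beta)$ the ring $T_L$ is connected: $W(L)$ and $\tilde{\calR}^{\inte,r}_L$ are local, $\tilde{\calE}_L=W(L)[p^{-1}]$ and $\tilde{\calR}^{\bd,r}_L$ are localizations of these, and $\tilde{\calR}^r_L$, $\tilde{\calR}^{[s,r]}_L$ are domains (Lemma~\ref{L:Bezout domain}, Remark~\ref{R:Robba PID}); hence $\rank(M,\beta)$ is a well-defined integer. Since $T$ is reduced (being uniform) and $M$ is finite projective, the rank of $M$ is locally constant on $\Spec T$, and by quasicompactness $\Spec T$ decomposes into finitely many clopen loci, the rank-$n$ locus being $\Spec(e_nT)$ for an idempotent $e_n\in T$ with $\sum_n e_n=1$. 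Exactly one $e_n$ maps to $1$ in each connected $T_{\calH(\beta)}$, so $\rank(M,\cdot)^{-1}(n)=U_{e_n}$, where $U_e:=\{\beta:e\mapsto 1\text{ in }T_{\calH(\beta)}\}$. The crux is that every idempotent of $T$ arises from a unique idempotent $\overline{e}$ of $R$: for $T=W(R)$ or $\tilde{\calR}^{\inte,r}_R$ this is because $T/(p)=R$ and $(T,(p))$ is henselian (Lemma~\ref{L:unique idempotent lifting} plus completeness, as in the proof of Proposition~\ref{P:perfect henselian}); for $T=\tilde{\calE}_R$ or $\tilde{\calR}^{\bd,r}_R$, an idempotent written $xp^{-N}$ with $x$ in the $p$-torsion-free subring $B$ (with $B/(p)=R$) satisfies $x^2=p^Nx$, so for $N\ge 1$ one gets $\overline{x}^2=0$ in $R$, hence $\overline{x}=0$, hence $p\mid x$ in $B$, and induction shows $e\in B$; for $T=\tilde{\calR}^{[s,r]}_R$ one instead uses that $\tilde{\calR}^{[s,r]}_R$ satisfies the Tate sheaf property (Theorem~\ref{T:Tate analogue}) together with Remark~\ref{R:affinoid system disconnection} to descend the associated disconnection of $\calM(\tilde{\calR}^{[s,r]}_R)$ to $\calM(R)$. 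Given $\overline{e}\in R$, the image of $e$ in $T_{\calH(\beta)}$ is $1$ iff $\overline{e}\notin\gothp_\beta$ iff $\beta(\overline{e})\ne 0$; since $\overline{e}$ is idempotent and $\beta$ power-multiplicative, $\beta(\overline{e})\in\{0,1\}$, so $U_e=\{\beta:\beta(\overline{e})>\tfrac12\}$ is open and $U_e=\{\beta:\beta(1-\overline{e})=0\}$ is closed. Thus $\rank(M,\cdot)$ is locally constant.

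For the degree, having seen that the rank is locally constant, I would fix $\beta_0\in\calM(R)$, restrict to the clopen neighbourhood on which the rank equals $n:=\rank(M,\beta_0)$, and spread out a basis. Since $M\otimes_T T_{\calH(\beta_0)}$ is free of rank $n$, lifting a basis to elements of $M$, approximating (Lemma~\ref{L:nearby generators}), and applying Lemma~\ref{L:extend generators} (or its routine Nakayama-plus-compactness analogue for $\tilde{\calE}_R$, $\tilde{\calR}^{\bd,r}_R$, $\tilde{\calR}^r_R$), one obtains a rational localization $R\to R'$ encircling $\beta_0$ with $M\otimes_T T_{R'}$ free; Frobenius then acts via a matrix $F$ invertible over the appropriate ring over $R'$, so $\det F$ is a unit there, and in particular a unit of $\tilde{\calE}_{R'}=W(R')[p^{-1}]$ (in the Robba case passing from a unit of $\tilde{\calR}^{r/q}_{R'}$ to a unit of $\tilde{\calR}^{\bd}_{R'}\subseteq\tilde{\calE}_{R'}$ via Corollary~\ref{C:units relative}). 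For every $\beta\in\calM(R')$ one has $\deg(M,\beta)=-\tfrac1a v_p(\det F|_\beta)$, and by Remark~\ref{R:valuation semicontinuous} (applied to the unit $\det F$) the map $\beta\mapsto v_p(\det F|_\beta)$ is continuous; being $\ZZ$-valued it is locally constant, so $\deg(M,\cdot)$ is constant near $\beta_0$.

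The two places needing real care are the idempotent-descent step for the Robba ring $\tilde{\calR}_R$, where $p$ is a unit so the mod-$p$ trick is unavailable and one must route through the Tate sheaf property and the spectral description of disconnections, and the "spread a basis over a rational localization" step, which for the auxiliary Banach rings other than $\tilde{\calR}^{[s,r]}_R$ requires an analogue of Lemma~\ref{L:extend generators} not explicitly stated. I expect the former to be the genuinely fiddly point, the latter a mechanical transcription of an existing argument.
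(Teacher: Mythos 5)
Your proof is correct and follows essentially the same route as the paper, whose entire proof consists of two sentences: rank continuity comes from the Zariski-local constancy of the rank together with the (idempotent/clopen) descent to $\Spec(R)$ and pullback along $\calM(R)\to\Spec(R)$, and degree continuity is deduced precisely from Corollary~\ref{C:units relative} and Remark~\ref{R:valuation semicontinuous} applied to the determinant of a Frobenius matrix, exactly as in your third paragraph. The substantial detail you supply (unique lifting of idempotents to the various base rings, spreading out a basis over a rational localization) is entirely suppressed in the paper; the one point genuinely requiring the care you already flag is that over $\tilde{\calR}_R$ the spread-out basis must be arranged to be a basis of a model $M_r$ over $\tilde{\calR}^r_{R'}$ rather than only of $M_{[s,r]}$, since the $p$-adic valuation used to read off the degree is defined only for units of $\tilde{\calR}^{\bd}$ and not for units of $\tilde{\calR}^{[s,r]}$.
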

\begin{proof}
The rank function is continuous for the Zariski topology on $\Spec(R)$,
and hence also for the topology on $\calM(R)$. Continuity of the degree follows
from Corollary~\ref{C:units relative} and Remark~\ref{R:valuation semicontinuous}.
\end{proof}

\begin{defn}
Let $M$ be a $\varphi^a$-module over one of the rings allowed in Definition~\ref{D:phi-module relative},
of nowhere zero rank (that is, $\rank(M, \cdot)$ never takes the value $0$).
The \emph{slope} of $M$ is then defined as the function
$\mu(M, \cdot): \calM(R) \to \QQ$ given by $\mu(M, \beta) = \deg(M, \beta)/\rank(M,\beta)$.
By Lemma~\ref{L:degree continuous}, $\mu(M, \cdot)$ is continuous
for the discrete topology on $\QQ$.

The \emph{pure locus} (resp.\ \emph{\'etale locus}) of $M$ is
the set of $\beta \in \calM(R)$ for which $M$
becomes pure (resp.\ \'etale) upon passing from $R$ to $\calH(\beta)$.
If this locus is all of $\calM(R)$, we say that $M$ is
\emph{pointwise pure} (resp.\ \emph{pointwise \'etale}).
These conditions have a more global interpretation; see
Corollary~\ref{C:pointwise etale is etale}.
\end{defn}

\begin{convention} \label{conv:every slope}
At a point where a $\varphi^a$-module has rank 0, it is considered to be pointwise
pure of \emph{every} slope.
This is the correct convention for defining the categories of pure and \'etale $\varphi$-modules,
so that they admit kernels for surjective morphisms (by Remark~\ref{R:kernels}).
\end{convention}

\subsection{Pure models}
\label{subsec:pure models}

\begin{defn} \label{D:pure model}
Fix integers $c,d$ with $d$ a positive multiple of $a$.
Let $M$ be a $\varphi^a$-module over $\tilde{\calE}_R$ (resp.\ $\tilde{\calR}^{\bd}_R$, $\tilde{\calR}_R$).
A \emph{$(c,d)$-pure model} of $M$ is a $W(R)$-submodule
(resp.\ $\tilde{\calR}^{\inte}_R$-submodule, $\tilde{\calR}^{\inte}_R$-submodule) $M_0$ of $M$
which is bounded (i.e., there exists a finitely generated submodule $N_0$ of $M$ over the same subring such that $p^n M_0 \subseteq N_0, p^n N_0 \subseteq M_0$ for some $n \geq 0$)
with $M_0 \otimes_{W(R)} \tilde{\calE}_R \cong M$
(resp.\ with $M_0 \otimes_{\tilde{\calR}^{\inte}_R} \tilde{\calR}^{\bd}_R \cong M$,
with $M_0 \otimes_{\tilde{\calR}^{\inte}_R} \tilde{\calR}_R \cong M$),
such that the $\varphi^a$-action on $M$ induces an isomorphism
$(p^{c} \varphi^d)^* M_0 \cong M_0$.
The existence of such a model implies that $M$ is pointwise pure
of constant slope $c/d$.
A pure model is \emph{locally free} or \emph{free}
if its underlying module is finite locally free or finite free;
 by Proposition~\ref{P:phi-modules are projective}, any finitely presented pure model is locally free.

For $\beta \in \calM(R)$, a \emph{(locally free, free)
local $(c,d)$-pure model} of $M$ at $\beta$
consists of a rational localization $R \to R'$ encircling $\beta$
and a (locally free, free) $(c,d)$-pure
model of $M \otimes_{\tilde{\calE}_R} \tilde{\calE}_{R'}$
(resp.\ $M \otimes_{\tilde{\calR}^{\bd}_R} \tilde{\calR}^{\bd}_{R'}$,
$M \otimes_{\tilde{\calR}_R} \tilde{\calR}_{R'}$).

A $(0,d)$-pure model will also be called an \emph{\'etale model}, and likewise
with the modifiers \emph{local}, \emph{locally free}, or \emph{free} in place.
\end{defn}

\begin{remark}
Note that if $M$ is a $\varphi^a$-module over $\tilde{\calR}_R$ and $M_0$ is a $(c,d)$-pure model,
it is only assumed that $M_0[p^{-1}]$ is stable under the action of $\varphi^d$, rather than $\varphi^a$.
For a locally free $(c,d)$-pure model, stability under $\varphi^a$ will follow later from Theorem~\ref{T:perfect equivalence2a},
which will imply that the isomorphism $(\varphi^a)^* M \cong M$ descends to $M_0[p^{-1}]$.
Until then, we will not assume this stability.
\end{remark}

\begin{lemma} \label{L:locally free to free}
Keep notation as in Definition~\ref{D:pure model}.
\begin{enumerate}
\item[(a)]
The $\varphi^a$-module $M$ admits a locally free local $(c,d)$-pure model at $\beta$ if and only if
it admits a free local $(c,d)$-pure model at $\beta$.
\item[(b)]
If $M$ is a $\varphi^a$-module over $\tilde{\calR}^{\bd}_R$, then $M$ admits a free local $(c,d)$-pure model
at $\beta$ if and only if $M \otimes_{\tilde{\calR}^{\bd}_R} \tilde{\calE}_R$ does.
\end{enumerate}
\end{lemma}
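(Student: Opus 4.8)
The plan is to isolate in each part a formal ``base change'' direction and a substantive ``spreading out'' direction. For (a), the implication free $\Rightarrow$ locally free is immediate, and for (b) the implication ``model over $\tilde{\calR}^{\bd}_{R'}$ $\Rightarrow$ model over $\tilde{\calE}_{R'}$'' follows by applying $-\otimes_{\tilde{\calR}^{\inte}_{R'}} W(R')$ to a given free $(c,d)$-pure model, since $W(R')$ is the $p$-adic completion of $\tilde{\calR}^{\inte}_{R'}$ and $W(R')[p^{-1}] = \tilde{\calE}_{R'}$, and the $(p^c\varphi^d)$-action and the identification with $M$ are inherited by base change. In both cases one also uses repeatedly that a rational localization of a rational localization encircling $\beta$ is again a rational localization encircling $\beta$ (transitivity of both notions, the first by Definition~\ref{D:Laurent}).

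For the converse direction of (a): suppose $M$ admits a locally free local $(c,d)$-pure model $M_0$ over $\tilde{\calR}^{\inte}_{R'}$ (resp.\ $W(R')$) for some rational localization $R\to R'$ encircling $\beta$; after descending to a model over $\tilde{\calR}^{\inte,r}_{R'}$ for suitable $r$, reduce modulo $p$ using $\tilde{\calR}^{\inte,r}_{R'}/(p)\cong R'$ (resp.\ $W(R')/(p)\cong R'$) to get a finite locally free $R'$-module $M_0/pM_0$, whose rank is locally constant on $\calM(R')$ with value $n$ at the restriction of $\beta$. Shrink $R'$ to a rational localization $R''$ encircling $\beta$ on which this rank is constantly $n$ (using that $\calM(R')$ has a neighborhood basis of Laurent subdomains); then by Lemma~\ref{L:finite generation} and Lemma~\ref{L:finite generation2}, elements of $M_0$ whose images span $M_0/pM_0\otimes_{R''}\calH(\beta)$ form a basis of $M_0/pM_0\otimes_{R'}R''$ after a further shrinking. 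Since $(p)$ lies in the Jacobson radical of $\tilde{\calR}^{\inte}_{R''}$ (Proposition~\ref{P:perfect henselian}), Nakayama's lemma shows these elements generate $M_0\otimes\tilde{\calR}^{\inte}_{R''}$, and a generating set of cardinality $n$ for a finite locally free module of constant rank $n$ is a basis; this produces the desired free local $(c,d)$-pure model.

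For the remaining (``if'') direction of (b): let $\beta'$ be the restriction of $\beta$ to $R'$. The given free $(c,d)$-pure model over $\tilde{\calE}_{R'}$ forces $M\otimes\tilde{\calE}_{\calH(\beta')}$ to be pure of slope $c/d$ (the comment after Definition~\ref{D:pure model}), hence so is $M\otimes\tilde{\calR}^{\bd}_{\calH(\beta')}$ by the structure theory over a field (Proposition~\ref{P:DM}); unwinding the definition of purity yields $r>0$ and a basis $\bg_1,\dots,\bg_n$ of the model of $M\otimes\tilde{\calR}^{\bd}_{\calH(\beta')}$ over $\tilde{\calR}^{\bd,r}_{\calH(\beta')}$ on which $p^c\varphi^d$ acts via a matrix $B$ with $B,B^{-1}\in\GL_n(\tilde{\calR}^{\inte,r}_{\calH(\beta')})$. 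Now spread this out: after shrinking $R'$ so that (by Lemma~\ref{L:finite generation2} and Lemma~\ref{L:get basis}) the model of $M\otimes\tilde{\calR}^{\bd}_{R'}$ over $\tilde{\calR}^{\bd,r}_{R'}$ is free with basis $\bff_1,\dots,\bff_n$, write $\bg_j=\sum_i C_{ij}\bff_i$ with $C\in\GL_n(\tilde{\calR}^{\bd,r}_{\calH(\beta')})$, approximate $C$ closely in $\max\{\lambda(\alpha^r),|\cdot|_p\}$ by a matrix $C'$ over $\tilde{\calR}^{\bd,r}_{R''}$ for a rational localization $R\to R''$ encircling $\beta$, and set $\bg'_j=\sum_i C'_{ij}\bff_i$. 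The matrix $B'=(C')^{-1}G\varphi^d(C')$ of $p^c\varphi^d$ on the $\bg'_j$ (with $G$ the matrix of $p^c\varphi^d$ on the $\bff_i$) converges to $B$; so for a good enough approximation $B'$ has entries in $\tilde{\calR}^{\inte,r}_{R''}$ and $\det(B'\bmod p)$ is close to the nonzero $\det(B\bmod p)\in\calH(\beta')$, hence a unit in $R''$ after a further shrinking (Corollary~\ref{C:unit from spectrum}). Then $B'\in\GL_n(\tilde{\calR}^{\inte,r}_{R''})$ by Proposition~\ref{P:perfect henselian}, so $p^{-c}B'$ is invertible on a suitable annulus, and Lemma~\ref{L:get basis} (together with Lemma~\ref{L:finite generation}, after shrinking $R''$ once more so the $\bg'_j$ still generate the relevant annulus model) shows $\bg'_1,\dots,\bg'_n$ is a basis of the model of $M\otimes\tilde{\calR}^{\bd}_{R''}$; its $\tilde{\calR}^{\inte,r}_{R''}$-span is the required free $(c,d)$-pure model.

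The main obstacle is exactly this last spreading-out step: converting a pure model known only over the residue field $\calH(\beta')$ into one over an honest rational localization, while controlling simultaneously boundedness of the new Frobenius matrix (the $\lambda(\alpha^r)$-part), invertibility of its reduction modulo $p$ (the $|\cdot|_p$-part, handled via the henselian property of $(\tilde{\calR}^{\inte}_{R''},(p))$ and Corollary~\ref{C:unit from spectrum}), and the fact that the approximants remain an honest basis rather than merely a generating set — for which Lemma~\ref{L:get basis} is the essential input. Part (a) and the two base-change directions are by comparison routine.
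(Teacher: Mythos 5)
Part (a) is correct and is essentially the paper's argument in expanded form: the paper simply observes that the direct limit of $\tilde{\calR}^{\inte}_{S}$ (resp.\ $W(S)$) over all rational localizations $R \to S$ encircling $\beta$ is a local ring, with $p$ in its Jacobson radical and quotient the local ring $R_\beta$ of Lemma~\ref{L:henselian local ring}, so a finite locally free module becomes free over it and a basis is already defined over some localization in the system. Your explicit rank-shrinking and Nakayama steps carry out the same idea. The ``only if'' direction of (b) (base extension along $\tilde{\calR}^{\inte}_{R'} \to W(R')$) is likewise fine.

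The substantive direction of (b) has a genuine gap at its first step. You assert that a free $(c,d)$-pure model of $M \otimes_{\tilde{\calR}^{\bd}_R} \tilde{\calE}_{R'}$ forces $M \otimes \tilde{\calR}^{\bd}_{\calH(\beta')}$ to be pure, citing Proposition~\ref{P:DM}. But Proposition~\ref{P:DM} takes as \emph{hypothesis} a module already pure over $\tilde{\calE}_L$ or over $\tilde{\calR}^{\bd}_L$ (and requires $L$ algebraically closed, which $\calH(\beta')$ need not be); it produces a fixed basis of a given model and does not transfer purity from $\tilde{\calE}_L$ to $\tilde{\calR}^{\bd}_L$. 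The transfer you need --- a model over $W(L)$ for $M \otimes \tilde{\calE}_L$ yields a model over $\tilde{\calR}^{\inte}_L$ for $M$ itself --- is precisely the field case of part (b), so the argument is circular as written. (Descending the generic model via Theorem~\ref{T:perfect equivalence2} does not help: it produces \emph{some} $\varphi$-module over $\tilde{\calR}^{\bd}_L$ with the right generic fibre, but identifying it with $M$ would require full faithfulness of $- \otimes \tilde{\calE}_L$ on arbitrary, not necessarily pure, $\varphi$-modules, which is not available.) The paper proves (b) directly, for general $R'$ and without passing through residue fields: take a basis $\be_1,\dots,\be_n$ of the given model $M'_0$ over $W(R')$ and generators $\bv_1,\dots,\bv_m$ of $M$, and use the $p$-adic density of $\tilde{\calR}^{\bd}_{R'}$ in $\tilde{\calE}_{R'}$ to replace the $\be_j$ by nearby elements $\be'_j$ of $M$; the change-of-basis matrix is congruent to $1$ modulo $pW(R')$, so the $\be'_j$ still form a basis of $M'_0$, and a maximal-minor/Nakayama argument over the maximal ideals of $\tilde{\calR}^{\bd}_{R'}$ shows they also generate $M$. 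Their $\tilde{\calR}^{\inte}_{R'}$-span is the desired model, since the Frobenius matrix and its inverse then have entries in $\tilde{\calR}^{\bd}_{R'} \cap W(R') = \tilde{\calR}^{\inte}_{R'}$. Even granting the field case, your subsequent spreading-out step amounts to re-proving a version of Theorem~\ref{T:spread etale}, which comes later and requires substantially more machinery; it is not needed for this lemma.
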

\begin{proof}
Part (a) follows at once from the fact that the direct limit of
$W(S)$ (resp.\ $\tilde{\calR}^{\inte}_S$) over all rational localizations
$R \to S$ encircling $\beta$ is a local ring: namely, it
contains $p$ in its Jacobson radical, and its quotient by the ideal $(p)$ is the local ring
$R_\beta$ (see Lemma~\ref{L:henselian local ring}).

To deduce (b), we may assume that $M' = M \otimes_{\tilde{\calR}^{\bd}_R} \tilde{\calE}_R$
admits a free $(c,d)$-pure model $M'_0$. Let $\be_1,\dots,\be_n$ be a basis of $M'_0$,
let $\bv_1,\dots,\bv_m$ be generators of $M$, and write
$\be_j = \sum_i B_{ij} \bv_i$, $\bv_j = \sum_i C_{ij} \be_i$
with $B_{ij}, C_{ij} \in \tilde{\calE}_R$. Note that $CB = 1$ and that  $\sum_i (1-BC)_{ij} \bv_i = 0$.

Since $\tilde{\calR}^{\bd}_R$ is dense in $\tilde{\calE}_R$ for the $p$-adic topology,
we can find elements $B'_{ij} \in \tilde{\calR}^{\bd}_R$ so that
$C(B'-B)$ has entries in $p W(R)$, and so $X = 1 + C(B'-B)$ is invertible over $W(R)$.
Define elements $\be'_1,\dots,\be'_n$ of $M$ by the formula
$\be'_j = \sum_i B'_{ij} \bv_i$.
Then $\be'_j = \sum_i (CB')_{ij} \be_i = \sum_i X_{ij} \be_i$, so
$\be'_1,\dots,\be'_n$ form another basis of $M'_0$ and hence of $M'$.

Note that for each maximal ideal $\gothm$ of $\tilde{\calR}^{\bd}_R$,
we can find a maximal ideal $\gothm'$ of $\tilde{\calE}_R$ containing $\gothm$.
(Otherwise, $\gothm$ would generate the unit ideal in $\tilde{\calE}_R$,
and so would contain an element of $\tilde{\calR}^{\inte}_R$ congruent to 1 modulo $p$.
But the latter would be a unit, contradiction.)
Since $\bv_1,\dots,\bv_m$ generate $M$, there exists an $n$-element subset $J$
of $\{1,\dots,m\}$ such that the $\bv_j$ for $j \in J$ form a basis of
$M/\gothm M$.
Since $\be'_1,\dots,\be'_n$ form a basis of $M'$, the maximal minor of $B'$
corresponding to $J$ is nonzero in $\tilde{\calE}_R/\gothm'$ and hence
also in $\tilde{\calR}^{\bd}_R/\gothm$.
By Nakayama's lemma, $\be'_1,\dots,\be'_n$ generate the localization of $M$
at $\gothm$; since this holds for all $\gothm$, $\be'_1,\dots,\be'_n$
generate $M$. This proves the desired result.
\end{proof}

\begin{defn}
Let $M$ be a $\varphi^a$-module
over one of $\tilde{\calE}_R, \tilde{\calR}^{\bd}_R, \tilde{\calR}_R$.
For $\beta \in \calM(R)$ and $s \in \QQ$, we say
$M$ is \emph{pure (of slope $s$)} at $\beta$ if $M$ admits a
locally free local $(c,d)$-pure model at $\beta$ for some pair $(c,d)$ of integers
with $d$ a positive multiple of $a$ and $c/d = s$. This forces $s = \mu(M)$ if
$\rank(M, \beta) > 0$; if $\rank(M,\beta) = 0$, then $M$ is pure of \emph{every}
slope at $\beta$ (see Convention~\ref{conv:every slope}).
We say $M$ is \emph{pure} if it is pure
at each $\beta \in \calM(R)$; in this case, we can cover all $\beta \in \calM(R)$
using finitely many local pure models thanks to the compactness of $\calM(R)$.
In these definitions, we regard \emph{\'etale} as a synonym for \emph{pure of slope $0$}.

In case we need to be precise about the choice of $c$ and $d$, we will say that a module
is \emph{$(c,d)$-pure} rather than \emph{pure of slope $s$}.
This will ultimately be rendered unnecessary by the observation that purity for one pair
$(c,d)$ with $d>0$ and $c/d = s$ implies the same for any other pair (Corollary~\ref{C:same purity}).

We also say that $M$ is \emph{globally pure/\'etale} if it admits a locally free pure/\'etale model.
For more on this condition, see Remark~\ref{R:globally pure}.
\end{defn}

\begin{remark} \label{R:globally pure}
Consider the following conditions on a $\varphi^a$-module $M$
over one of $\tilde{\calE}_R, \tilde{\calR}^{\bd}_R, \tilde{\calR}_R$.
\begin{enumerate}
\item[(a)]
The $\varphi^a$-module $M$ is globally pure (i.e., admits a locally free pure model).
\item[(b)]
The $\varphi^a$-module $M$ admits a pure model.
\item[(c)]
The $\varphi^a$-module $M$ is pure (i.e., admits locally free local pure models).
\item[(d)]
The $\varphi^a$-module $M$ admits local pure models.
\item[(e)]
The $\varphi^a$-module $M$ is pointwise pure.
\end{enumerate}
There are some trivial implications among these conditions:  (a) implies (b) and (c), which in turn each imply (d), which in turn implies (e). We will see eventually that there are more implications as follows.
\begin{itemize}
\item
Over $\tilde{\calE}_R$ or $\tilde{\calR}^{\bd}_R$,
Corollary~\ref{C:bounded pointwise pure} will show that (e) implies (c), while Example~\ref{exa:banana} will show that (c) does not imply (a). Also,
(d) implies (b): given some local pure models on a finite covering of
$\calM(R)$, the elements of $M$ which restrict into each local pure model form a pure model. 
Consequently, (a) strictly implies (b) and (b)-(e) are equivalent.
\item
Over $\tilde{\calR}_R$,
Corollary~\ref{C:pointwise etale is etale} will show that
(e) implies (c), while Example~\ref{exa:banana} will show that (c) does not imply (a) and Example~\ref{exa:Tate curve} will show that (c) does not imply (b).
Consequently, (a) strictly implies (b), (b) strictly implies (c), and (c)-(e) are equivalent.
\end{itemize}
\end{remark}

\begin{prop} \label{P:DM relative}
Let $M$ be a $\varphi^a$-module over $\tilde{\calE}_R$
(resp. $\tilde{\calR}^{\bd}_R$, $\tilde{\calR}_R$) admitting a free $(c,d)$-pure model $M_0$
for some $c,d \in \ZZ$ with $d$ a positive multiple of $a$.
Then there exists an $R$-algebra $S$ which is the completed direct limit of
some faithfully finite \'etale $R$-subalgebras, such that
$M_0 \otimes_{W(R)} W(S)$
(resp.\
$M_0 \otimes_{\tilde{\calR}^{\inte}_R} \tilde{\calR}^{\inte}_{S}$,
$M_0 \otimes_{\tilde{\calR}^{\inte}_R} \tilde{\calR}^{\inte}_{S}$)
admits a basis fixed by $p^{c} \varphi^d$.
\end{prop}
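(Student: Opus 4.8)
The plan is to reduce to the case $c=0$ (replacing $\varphi^d$ by $p^c\varphi^d$ throughout, which is harmless since we only need to trivialize the action of this single operator) and then to imitate the proof of Lemma~\ref{L:DM relative}, using the $(c,d)$-pure model $M_0$ to reduce everything to finite-level Witt vector computations. First I would choose a basis $\be_1,\dots,\be_n$ of the free module $M_0$, on which $p^c\varphi^d$ acts via an invertible matrix $A$ over $W(R)$ (resp.\ $\tilde{\calR}^{\inte}_R$). The goal is to solve the equation $U^{-1}A\,\varphi^d(U)=1$, i.e.\ $\varphi^d(U)=A^{-1}U$, over $W(S)$ (resp.\ $\tilde{\calR}^{\inte}_S$) for a suitable $S$ of the required form.

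The main idea is to build $S$ as a completed direct limit by solving this equation modulo successive powers of $p$. Reducing modulo $p$, the equation becomes $\overline{\varphi}^d(\overline{U})=\overline{A}^{-1}\overline{U}$ over $R$; by exactly the Lang-torsor argument in Lemma~\ref{L:DM relative} (which only used that the relevant scheme of solutions is finite \'etale over $R$ via the Jacobian criterion, and faithfully flat by the surjectivity of $U\mapsto U^{-1}\overline{\varphi}^d(U)$ over an algebraically closed field), there is a faithfully finite \'etale $R$-algebra $S_1$ over which a solution $\overline{U}_1$ exists. Lifting $\overline{U}_1$ arbitrarily to $W(S_1)$ (resp.\ $\tilde{\calR}^{\inte}_{S_1}$), the error term $\varphi^d(U_1)-A^{-1}U_1$ is divisible by $p$, and the correction step — solving $\overline{\varphi}^d(\overline{X}) - \overline{X} = (\text{given})$ — is again of Lang-torsor type over the current base, hence solvable after a further faithfully finite \'etale extension. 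Iterating produces a tower $S_1\subseteq S_2\subseteq\cdots$ of faithfully finite \'etale $R$-algebras and a Cauchy sequence of approximate solutions; I take $S$ to be the completed direct limit, and the limit $U$ is the desired base-change matrix. For the $\tilde{\calR}^{\inte}$ and $\tilde{\calR}$ cases one works with the model $M_0$ over $\tilde{\calR}^{\inte}_R$ and uses that $\tilde{\calR}^{\inte}_R$ is $p$-adically separated with $\tilde{\calR}^{\inte}_R/(p)\cong R$ (from Definition~\ref{D:relative extended Robba}), so the same modulo-$p^k$ induction applies verbatim; the $\tilde{\calR}_R$ statement then follows from the $\tilde{\calR}^{\inte}_R$ statement by inverting $p$ (here one uses the remark preceding the proposition, that only $\varphi^d$-stability of $M_0[p^{-1}]$ is assumed).

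I expect the main obstacle to be bookkeeping the faithful-flatness at each stage of the iteration: each correction step enlarges the base ring, and one must check that the composite of the resulting extensions is still the completed direct limit of \emph{faithfully} finite \'etale $R$-algebras (not merely finite \'etale), so that the final $S$ has the asserted form. This is where Lemma~\ref{L:faithfully flat by maximal ideals} together with the going-up theorem, and the observation from Definition~\ref{D:finite etale category} that faithful finiteness is equivalent to everywhere-positive rank, will be needed; the positive-rank condition is automatically preserved under composition and direct limits. A secondary technical point is ensuring the Cauchy sequence of matrices $U_l$ converges in the correct topology — the $p$-adic topology on $W(S)$, resp.\ the topology making $\tilde{\calR}^{\inte,r}_S$ a Banach ring — which follows because each correction is $p$ times smaller than the previous error, exactly as in the Newton-iteration arguments already used in Lemma~\ref{L:henselian direct limit} and Lemma~\ref{L:approximation lemma}. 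Everything else is a routine transcription of the $n=1$ induction in Lemma~\ref{L:DM relative} from $W_n(R)$ to the $p$-adically complete rings at hand.
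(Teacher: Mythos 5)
Your overall architecture (Lang-torsor iteration modulo successive powers of $p$, assembling a tower of faithfully finite \'etale $R$-algebras, and passing to the completed direct limit) is exactly how the paper handles the $\tilde{\calE}_R$ case, which it dispatches by citing Lemma~\ref{L:DM relative}. The gap is in your claim that for $\tilde{\calR}^{\bd}_R$ and $\tilde{\calR}_R$ the ``same modulo-$p^k$ induction applies verbatim'' because $\tilde{\calR}^{\inte}_R/(p)\cong R$, with convergence following ``because each correction is $p$ times smaller than the previous error.'' The ring $\tilde{\calR}^{\inte}_R$ is not $p$-adically complete --- its $p$-adic completion is $W(R)$ --- so the limit of your iteration a priori lives only in $W(S)$, and $p$-adic smallness of a correction term $p^m[\overline{X}]$ says nothing about its size in the Gauss norm: $\lambda(\alpha^r)(p^m[\overline{X}]) = p^{-m}\alpha(\overline{X})^r$ can be arbitrarily large. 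Showing that the base-change matrix $U$ actually has entries in $\tilde{\calR}^{\inte}_S$ is the real content of the proposition, and it is not a transcription of Lemma~\ref{L:DM relative}.

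What the paper does instead: it first uses the $n=1$ case of Lemma~\ref{L:DM relative} to normalize so that $p^c\varphi^d$ acts via a matrix $A$ with $A\equiv 1 \pmod{p}$, then invokes the already-settled $W(R)$ case to produce $S$ and a matrix $U$ over $W(S)$ with $A\varphi^d(U)=U$, and finally runs an induction on $m$ bounding the Teichm\"uller components of $U$ modulo $p^{m+1}$: the correction $X$ at stage $m$ satisfies $\varphi^d(X)-X\equiv(\text{an explicitly controlled term})\pmod{p}$, and the resulting entrywise equation $\overline{X}_{ij}^{\,q}-\overline{X}_{ij}=\overline{c}_{ij}$ forces $\alpha(\overline{X}_{ij})\le\max\{1,\alpha(\overline{c}_{ij})^{1/q}\}$. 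It is this $q$-th root gain --- not the extra factor of $p$ --- that makes the partial sums Cauchy for $\lambda(\alpha^r)$ and bounded for $\lambda(\alpha^{rq})$, hence places $U$ in $\tilde{\calR}^{\inte,rq}_S$ and makes it invertible there. Your proposal would need to supply exactly this estimate; without it the descent from $W(S)$ to $\tilde{\calR}^{\inte}_S$ is missing. (Your bookkeeping of faithful flatness along the tower, by contrast, is fine and is already absorbed into the statement of Lemma~\ref{L:DM relative}.)
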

\begin{proof}
The assertion over $\tilde{\calE}_R$ follows from Lemma~\ref{L:DM relative}.
To handle the other cases, invoke the case $n=1$ of Lemma~\ref{L:DM relative}
to reduce to the case where
$M_0$ admits a basis $\be_1,\dots,\be_n$ on which $p^c \varphi^d$ acts via a matrix $A$
over $\tilde{\calR}^{\inte}_R$ congruent to 1 modulo $p$.
Choose $S$ so that
$M_0 \otimes_{\tilde{\calR}^{\inte}_R} \tilde{\calE}_{S}$
contains a basis $\bv_1,\dots,\bv_n$ fixed by $p^c \varphi^d$ with $\bv_i \equiv \be_i \pmod{p}$.
Since $A-1$ has $p$-adic absolute value less than 1,
we have $\lambda(\alpha^r)(A-1) < 1$ for all sufficiently small $r>0$.
For any such $r$, the matrix $U$ over $W(S)$ defined by $\bv_j = \sum_i U_{ij} \be_i$ is congruent to 1 modulo $p$
and satisfies $A \varphi^d(U) = U$.

We now argue as in \cite[Proposition~2.5.8]{kedlaya-relative}.
We may assume $a=d$, so $q = p^d$.
Put $C = \max\{p^{-1}, \lambda(\alpha^r)(A-1)\} < 1$.
We prove by induction that for each positive integer $m$, $U$ is congruent modulo $p^m$ to an invertible matrix
$V_m$ over $\tilde{\calR}^{\inte,rq}_S$ with
\[
\lambda(\alpha^r)(V_m-1),
\lambda(\alpha^{rq})(V_m-1) \leq C.
\]
This is obvious for $m=1$ by taking $V_m = 1$. Given the claim for some $m$,
$U$ is congruent modulo $p^{m+1}$ to a matrix $V_m + p^m X$ in which each entry $X_{ij}$ is the Teichm\"uller
lift of some $\overline{X}_{ij} \in S$. We have
\[
\varphi^d(X) - X \equiv p^{-m}(V_m - \varphi^d(V_m) - (A-1) \varphi^d(V_m)) \pmod{p},
\]
from which it follows that
\[
\alpha(\overline{X}_{ij})^r \leq \max\{1, (p^m C)^{q^{-1}}\} = (p^m C)^{q^{-1}}.
\]
If we put $V_{m+1} = V_m + p^m X$, then
\begin{align*}
\lambda(\alpha^r)(V_{m+1}-1) &\leq \max\{C,
p^{-m} (p^m C)^{q^{-1}}\} \leq C\\
\lambda(\alpha^{rq})(V_{m+1}-1) &\leq \max\{C, p^{-m} (p^m C)\} = C
\end{align*}
as desired.

{}From the previous induction, we conclude that $U$ has entries in $\tilde{\calR}^{\inte,rq}_{S}$
and satisfies $\lambda(\alpha^r)(U-1), \lambda(\alpha^{rq})(U-1) < 1$.
It is thus invertible over $\tilde{\calR}^{\inte}_{S}$,
so $\bv_1,\dots,\bv_n$ form a basis of
$M_0 \otimes_{\tilde{\calR}^{\inte}_R} \tilde{\calR}^{\inte}_{S}$ as desired.
\end{proof}

\begin{theorem} \label{T:spread etale}
Let $M$ be a $\varphi^a$-module over $\tilde{\calR}_R$ of nowhere zero rank.
Choose $\beta \in \calM(R)$ and choose $c,d \in \ZZ$ with $d$ a positive multiple of $a$ and $c/d = \mu(M,\beta)$.
Suppose that $\beta$ belongs to the pure locus of $M$.
Then any $(c,d)$-pure model of
$M \otimes_{\tilde{\calR}_R} \tilde{\calR}_{\calH(\beta)}$
extends to a free local $(c,d)$-pure model of $M$ at $\beta$.
\end{theorem}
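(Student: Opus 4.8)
The plan is to reduce to the étale case, trivialize the fibre over a finite étale cover, spread the resulting model out over a rational localization using the approximation argument, and then descend. First I would restrict the $\varphi^a$-action to $\varphi^d$ and twist by $p^c$ (Convention~\ref{conv:power slope}), replacing $M$ by $M(-c)$ viewed as a $\varphi^d$-module; this reduces us to the case $(c,d)=(0,a)$, so that $M$ is a $\varphi^a$-module of nowhere zero rank, pointwise étale at $\beta$, equipped with an étale model $\bar N$ of $M_\beta:=M\otimes_{\tilde\calR_R}\tilde\calR_{\calH(\beta)}$, which by Lemma~\ref{L:locally free to free} may be taken free over $\tilde\calR^{\inte,r}_{\calH(\beta)}$ for some $r>0$, with a basis on which $\varphi^a$ acts via $\bar F\in\GL_n(\tilde\calR^{\inte,r}_{\calH(\beta)})$. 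Since a pure (here étale) $\varphi^a$-module over a general field of characteristic $p$ is only a twisted form of the standard one, $\bar F$ cannot in general be brought near the identity over $\calH(\beta)$ itself; so I would apply Proposition~\ref{P:DM relative} to $\bar N$ to get a completed direct limit $S$ of faithfully finite étale $\calH(\beta)$-algebras over which $\bar N$ acquires a $\varphi^a$-fixed basis, pass to a finite stage $T_0\in\FEt(\calH(\beta))$ carrying the finitely many vectors and matrix entries involved, and then use Lemma~\ref{L:henselian local ring} and Lemma~\ref{L:descend etale on field} to extend $T_0$ to $R''\in\FEt(R')$ for some rational localization $R\to R'$ encircling $\beta$. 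By Proposition~\ref{P:perfect mixed lift}, all the maps $\tilde\calR^{I}_{R'}\to\tilde\calR^{I}_{R''}$ are then faithfully finite étale.

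Next I would fix the model $M_{[r/q,r]}$ over $\tilde\calR^{[r/q,r]}_R$ (Lemma~\ref{L:model to bundle}); the near-fixed basis of $\bar N\otimes\tilde\calR^{\inte}_{T_0}$ gives a basis of $M_{[r/q,r]}\otimes\tilde\calR^{[r/q,r]}_{T_0}$, and lifting these vectors to $M_{[r/q,r]}\otimes\tilde\calR^{[r/q,r]}_{R''}$ they generate over the fibres above $\beta$, hence (Lemma~\ref{L:extend generators}, after shrinking $R'$) over $\tilde\calR^{[r/q,r]}_{R''}$; since the rank is locally constant (Lemma~\ref{L:degree continuous}), $n$ generators of a rank-$n$ projective module form a basis $\be_1,\dots,\be_n$, with $\varphi^a$ acting via $F\in\GL_n(\tilde\calR^{[r/q,r/q]}_{R''})$ whose specializations above $\beta$ are close to $1$. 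Using continuity of the relevant norms (Remark~\ref{R:valuation semicontinuous}), properness of $\calM(R'')\to\calM(R')$ and Remark~\ref{R:compact spaces}, one more shrinking of $R'$ makes $\lambda(\alpha^{r/q})(F-1)<1$ globally on $\calM(R'')$. Then Lemma~\ref{L:approximation lemma} (with $D=1$, $h=0$) produces a basis of the model $M_r$ over $\tilde\calR^{r}_{R''}$ on which $\varphi^a$ acts via $F'$ with $F'-1\in p\tilde\calR^{\inte,r/q}_{R''}$, so $F'\in\GL_n(\tilde\calR^{\inte}_{R''})$ and the $\tilde\calR^{\inte}_{R''}$-span $N''$ of this basis is a free étale model of $M\otimes_{\tilde\calR_R}\tilde\calR_{R''}$; moreover the base-change matrix in Lemma~\ref{L:approximation lemma} is congruent to $1$, so $N''$ restricts above $\beta$ to $\bar N\otimes_{\tilde\calR^{\inte}_{\calH(\beta)}}\tilde\calR^{\inte}_{T_0}$.

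To finish I would descend $N''$ along $R''/R'$: the map $\tilde\calR^{\inte}_{R'}\to\tilde\calR^{\inte}_{R''}$ is faithfully finite étale, hence faithfully flat, and one identifies $\tilde\calR^{\inte}_{R''\otimes_{R'}R''}$ with $\tilde\calR^{\inte}_{R''}\otimes_{\tilde\calR^{\inte}_{R'}}\tilde\calR^{\inte}_{R''}$ via Proposition~\ref{P:perfect henselian}(b). The two pullbacks of $N''$ are étale models of the pullback of $M\otimes\tilde\calR_{R'}$; matching them (using uniqueness of the étale model extending a fixed fibre datum) gives a descent datum, so Theorem~\ref{T:descent modules} and Theorem~\ref{T:descent finite locally free} yield a finite projective $\tilde\calR^{\inte}_{R'}$-module $N'$ that is an étale model of $M\otimes\tilde\calR_{R'}$ restricting to $\bar N$ at $\beta$; by Lemma~\ref{L:locally free to free}, after a last shrinking of $R'$, $N'$ is free. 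Untwisting recovers the required free local $(c,d)$-pure model.

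The hard part will be the passage through the finite étale cover: because a pure $\varphi$-module over a characteristic-$p$ field is only a form of the standard pure module, the approximation argument genuinely cannot be run directly over $\calH(\beta)$, and one must trivialize the fibre over $T_0\in\FEt(\calH(\beta))$, propagate this to a finite étale cover $R''$ of a neighbourhood of $\beta$, apply Lemma~\ref{L:approximation lemma} over $R''$, and descend. The descent step in particular requires care — essentially the uniqueness of the étale model of a $\varphi^a$-module over $\tilde\calR^{\inte}_{R'}$ compatible with a given fibre — which one would need to establish by hand here (it otherwise appears only later, via Theorem~\ref{T:perfect equivalence2a}). The remaining ingredients (the reductions, the spreading of generators via Lemma~\ref{L:extend generators}, and the shrinking arguments) are routine.
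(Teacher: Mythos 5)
Your overall strategy matches the paper's: trivialize the fibre over a finite \'etale extension of $\calH(\beta)$, spread the resulting near-fixed basis to a finite \'etale cover $R''$ of a rational localization $R'$ encircling $\beta$, run Lemma~\ref{L:approximation lemma}, and descend. But there is a genuine gap at the two places where you must control \emph{integral} structures, and it is the same gap both times. First, Lemma~\ref{L:approximation lemma} only gives $\lambda(\alpha^{r/q})(U-1),\ \lambda(\alpha^r)(D^{-1}UD-1)<1$ for the base-change matrix $U$ over $\tilde{\calR}^{[r/q,r]}$; this is a bound on Gauss norms, not a congruence modulo $p$, and it does not place $U$ in $\GL_n(\tilde{\calR}^{\inte})$ (elements such as $p^{-1}[\overline{x}]$ with $\alpha(\overline{x})$ small have small norm without being integral). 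So your assertion that $N''$ restricts above $\beta$ to the \emph{given} model $\bar N\otimes\tilde{\calR}^{\inte}_{T_0}$ does not follow; a priori you have produced some \'etale model of the fibre, not an extension of the prescribed one. Second, and more seriously, the descent step: the "uniqueness of the \'etale model extending a fixed fibre datum" that you invoke is exactly the nontrivial point. The transition matrix between the two pullbacks of $N''$ over $R''\otimes_{R'}R''$ is integral and invertible at the fibre over $\beta$, but integrality of Witt-vector entries is a \emph{closed but not open} condition on the spectrum (Remark~\ref{R:valuation semicontinuous}), so it does not propagate to a neighbourhood. \'Etale models are genuinely non-unique (they correspond to lattices in a $\Qp$-representation), so no soft argument closes this.

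The paper's proof supplies the missing device, and it is worth internalizing: after applying Lemma~\ref{L:approximation lemma} one passes, via Proposition~\ref{P:DM relative}, to a further pro-(faithfully finite \'etale) extension $S'$ over which the model acquires a basis $\be'''$ genuinely \emph{fixed} by $p^{c}\varphi^{d}$. Any transition matrix between two such fixed bases (or any Galois/descent matrix expressed in them) is $\varphi^{d}$-invariant, hence has entries in $W(\cdot^{\overline{\varphi}^{d}})[p^{-1}]$ by Corollary~\ref{C:extended Robba invariants}; on that subring the topology is $p$-adic (Remark~\ref{R:phi-invariant}), so membership in $W(\cdot)$ becomes a closed \emph{and open} condition, and one further shrinking of $R'$ forces the descent matrices to be integral. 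This is how one simultaneously sees that the constructed local model restricts to the given $M_{0,\beta}$ (the comparison matrix lands in $1+pW(\FF_q)$) and that the Galois descent datum exists integrally. The paper even remarks that arguing directly with the basis produced by Lemma~\ref{L:approximation lemma} --- which is what your proposal does --- fails at precisely this step. Everything else in your outline (the reduction to $(c,d)$, the spreading of generators via Lemma~\ref{L:extend generators}, the shrinking arguments, and the final appeal to Theorems~\ref{T:descent modules} and~\ref{T:descent finite locally free} and Lemma~\ref{L:locally free to free}) is sound.
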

\begin{proof}
We may assume $d=a$, so $q= p^d$.
Choose a $(c,d)$-pure model $M_{0,\beta}$ of
$M \otimes_{\tilde{\calR}_R} \tilde{\calR}_{\calH(\beta)}$.
By Proposition~\ref{P:DM},
for $L$ a completed algebraic closure of $\calH(\beta)$,
for some choice of $r$,
there exists a basis $\be_1,\dots,\be_n$ of
$M_r \otimes_{\tilde{\calR}^r_R} \tilde{\calR}^{[r/q,r]}_{L}$ on which $p^{c} \varphi^d$ acts via
the identity matrix. We may also ensure that
$\be_1,\dots,\be_n$ also form a basis of $M_{0,\beta} \otimes_{\tilde{\calR}^{\inte}_{\calH(\beta)}} \tilde{\calR}^{\inte}_L$.

By Lemma~\ref{L:nearby generators}, any elements $\be'_1,\dots,\be'_n$ of
$M_r \otimes_{\tilde{\calR}^{r}_{R}} \tilde{\calR}^{[r/q,r]}_L$
which are sufficiently close to $\be_1,\dots,\be_n$ also generate
$M_r \otimes_{\tilde{\calR}^{r}_{R}} \tilde{\calR}^{[r/q,r]}_L$.
Since the separable closure of
$\calH(\beta)$ in $L$ is dense,
we can take $\be'_1,\dots,\be'_n$ to be generators of
$M_r \otimes_{\tilde{\calR}^{r}_{R}} \tilde{\calR}^{[r/q,r]}_E$
for some finite Galois extension $E$ of $\calH(\beta)$ such that $\be'_j=\sum_i C_{ij} \be_i$
for some invertible matrix $C$ over $\tilde{\calR}^{[r/q,r]}_E$ satisfying
$\lambda(\beta^{r/q})(C-1),\lambda(\beta^r)(C-1)<1$.

Since $R_\beta$ is a henselian local ring (by Lemma~\ref{L:henselian local ring}),
by Theorem~\ref{T:henselian} we can find a rational localization $R \to R'$
encircling $\beta$ and a faithfully finite \'etale $R'$-algebra $S$
such that $S$ is Galois over $R'$,
$S$ admits a unique extension $\gamma$ of $\beta$,
and such extension has residue field $E$.
By Lemma~\ref{L:extend generators}, for a suitable choice of $R'$,
we can take $\be'_1,\dots,\be'_n$ to be a basis of
$M_r \otimes_{\tilde{\calR}^{r}_{R}} \tilde{\calR}^{[r/q,r]}_S$
on which the action of $p^{c} \varphi^d$ is via
a matrix $F$ for which $\lambda(\alpha^{r/q})(F-1) < 1$.

In this setting,
Lemma~\ref{L:approximation lemma}
produces
a basis $\be''_1,\dots,\be''_n$ of $M \otimes_{\tilde{\calR}_R} \tilde{\calR}_{S}$
on which $p^{c} \varphi^d$ acts via an invertible matrix over $\tilde{\calR}^{\inte,r/q}_S$
congruent to 1 modulo $p$.
More precisely, we have $\be''_j = \sum_i U_{ij} \be'_i$ for some invertible matrix
$U$ over $\tilde{\calR}^{[r/q,r]}_S$ for which $\lambda(\alpha^{r/q})(U-1),
\lambda(\alpha^r)(U-1) < 1$.
Starting from this new basis and then applying Proposition~\ref{P:DM relative},
we obtain,
for some $S$-algebra $S'$ which is the completed
direct limit of faithfully finite \'etale subalgebras (and which maps to $L$),
a basis $\be'''_1, \dots, \be'''_n$ of $M \otimes_{\tilde{\calR}_R} \tilde{\calR}_{S'}$
fixed by $p^{c} \varphi^d$. More precisely, we have $\be'''_j = \sum_i V_{ij} \be''_i$ for some invertible matrix
$V$ over $\tilde{\calR}^{\inte,r}_{S'}$ for which $\lambda(\alpha^{r})(V-1)<  1$
(this bound following from the proof of Proposition~\ref{P:DM relative}). We may also choose $S'$ to have
an automorphism lifting each element of $G = \Gal(E/\calH(\beta))$.

We thus have $\be'''_j = \sum_i (CUV)_{ij} \be_i$,
and the matrix $CUV$ over $\tilde{\calR}^{[r/q,r]}_L$ satisfies
$\lambda(\beta^{r/q})(CUV-1), \lambda(\beta^r)(CUV-1) < 1$.
However, both $\be_i$ and $\be'''_j$ are fixed by $p^{c} \varphi^d$,
so the entries of $CUV$ must be fixed by $\varphi^d$.
By Lemma~\ref{L:big Robba invariants}, $CUV$ has entries in the field
$W(\FF_q)[p^{-1}]$. Since $\lambda(\beta^{r/q})(CUV-1) < 1$, we conclude that
$CUV-1$ has entries in $p W(\FF_q)$. In particular, $\be''_1,\dots,\be''_n$
form a basis of $M_{0,\beta} \otimes_{\tilde{\calR}^{\inte}_{\calH(\beta)}} \tilde{\calR}^{\inte}_L$
and hence also a basis of $M_{0,\beta} \otimes_{\tilde{\calR}^{\inte}_{\calH(\beta)}} \tilde{\calR}^{\inte}_{E}$.

This last result implies that the action of any element $\tau \in G$ on $S$
induces an automorphism of the $\tilde{\calR}^{\inte}_{E}$-span of $\be''_1,\dots,\be''_n$.
For each $\tau$, choose a lift of $\tau$ to $S'$ and
define the matrix $T_\tau$ over $\tilde{\calR}_{S'}$ by $\tau(\be'''_j) = \sum_i (T_\tau)_{ij} \be'''_i$.
Again, since $\be'''_i$ and $\tau(\be'''_j)$ are fixed by $p^{c} \varphi^d$,
the entries of $T_\tau$ are forced to belong to the $\varphi^d$-fixed subring of
$\tilde{\calR}^{\inte}_{S'}$, which Corollary~\ref{C:extended Robba invariants}
identifies as $W((S')^{\overline{\varphi}^d})[p^{-1}]$. By construction,
the images of these entries in $\tilde{\calR}_L$ belong to
$\tilde{\calR}^{\inte}_L$, and hence to $\gotho_L^{\varphi^d}$.
By Remark~\ref{R:phi-invariant} and Remark~\ref{R:valuation semicontinuous},
the condition of an element of $W((S')^{\overline{\varphi}^d})[p^{-1}]$
belonging to $W(S')$ is closed \emph{and open} on $\calM(S')$. 
By Remark~\ref{R:compact spaces}(b) and Lemma~\ref{L:henselian local ring},
the natural map $p: \calM(S') \to \calM(R')$ is open.
We can thus force the entries of $T_\tau$
into $W((S')^{\overline{\varphi}^d})$ by shrinking $\calM(R')$ again (in a manner dependent on $\tau$).
(Note that this last step fails if we try to argue directly with the $\be''_i$ rather than
with the $\be'''_i$.)

After the resulting shrinking of $\calM(R')$,
the $\tilde{\calR}^{\inte}_S$-span of $\be''_1,\dots,\be''_n$
admits an action of $G$,
 so we can apply faithfully flat descent
(Theorem~\ref{T:descent modules}) to descend it to a local $(c,d)$-pure model
of $M$ at $\beta$.
(We are here using Proposition~\ref{P:perfect henselian}(b) to deduce that
$\tilde{\calR}^{\inte}_S$ is faithfully finite \'etale over $\tilde{\calR}^{\inte}_{R'}$.)
 This local model is locally free by Theorem~\ref{T:descent finite locally free};
we obtain a free local model at $\beta$ by applying Lemma~\ref{L:locally free to free}.
\end{proof}
\begin{cor} \label{C:etale locus is open}
For any $\varphi^a$-module $M$ over $\tilde{\calR}_R$,
the pure locus  and \'etale locus of $M$ are open.
\end{cor}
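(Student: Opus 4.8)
The plan is to deduce this directly from Theorem~\ref{T:spread etale} together with the local constancy of rank and degree (Lemma~\ref{L:degree continuous}); the only genuine bookkeeping will be the treatment of the points where $M$ has rank zero, which must be isolated before Theorem~\ref{T:spread etale} (whose hypothesis is ``nowhere zero rank'') can be applied, and before the slope $\mu(M,\cdot)$ is even defined.

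First I would write $\calM(R) = U_0 \sqcup U_{>0}$, where $U_0$ is the locus on which $\rank(M,\cdot) = 0$ and $U_{>0}$ the locus on which $\rank(M,\cdot) > 0$; both are closed and open by Lemma~\ref{L:degree continuous}. By Convention~\ref{conv:every slope}, every point of $U_0$ lies in both the pure locus and the \'etale locus, and $U_0$ is open, so it suffices to show that each of these loci meets $U_{>0}$ in a set which is open in $U_{>0}$ (equivalently, in $\calM(R)$). Using Remark~\ref{R:affinoid system disconnection} to realize $U_{>0}$ as $\calM(eR)$ for an idempotent $e \in R$ and replacing $R$, $M$ by the corresponding direct factors (so $\tilde{\calR}_R$ and $M$ decompose accordingly), I may assume henceforth that $M$ has nowhere zero rank, so that $\mu(M,\cdot)$ is defined throughout $\calM(R)$ and is locally constant by Lemma~\ref{L:degree continuous}.

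Next, fix $\beta$ in the pure locus and set $s = \mu(M,\beta)$; choose $c,d \in \ZZ$ with $d$ a positive multiple of $a$ and $c/d = s$. Since $M \otimes_{\tilde{\calR}_R} \tilde{\calR}_{\calH(\beta)}$ is pure of slope $s$, unwinding the definition of purity produces a $(c,d)$-pure model of it. Theorem~\ref{T:spread etale} then supplies a rational localization $R \to R'$ encircling $\beta$ together with a free $(c,d)$-pure model of $M \otimes_{\tilde{\calR}_R} \tilde{\calR}_{R'}$; by Definition~\ref{D:pure model}, the existence of such a model forces $M \otimes_{\tilde{\calR}_R} \tilde{\calR}_{R'}$ to be pointwise pure of constant slope $s$. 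Since $R \to R'$ encircles $\beta$, the image of $\calM(R')$ in $\calM(R)$ contains a neighborhood $V$ of $\beta$, and for each $\gamma \in V$ the residue field $\calH(\gamma)$ is unchanged under the localization, so $M \otimes_{\tilde{\calR}_R} \tilde{\calR}_{\calH(\gamma)}$ is pure for all $\gamma \in V$. Hence $V$ is contained in the pure locus, proving that the pure locus is open.

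Finally, for the \'etale locus: under the standing assumption of nowhere zero rank the set $\{\beta \in \calM(R) : \mu(M,\beta) = 0\}$ is open by local constancy of $\mu(M,\cdot)$, and the \'etale locus is exactly its intersection with the pure locus, since a point of positive rank is \'etale precisely when it is pure of slope $0$; it is therefore open as well. Reassembling with the contribution of $U_0$ gives the statement in general. I expect no real obstacle here beyond Theorem~\ref{T:spread etale} itself, which already does the substantive work; the only care needed is in the reduction to the nowhere-zero-rank case so that both that theorem and the definition of slope apply.
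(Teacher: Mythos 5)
Your proof is correct and follows essentially the same route as the paper: openness of the pure locus comes straight from Theorem~\ref{T:spread etale} (a free local pure model over a localization encircling $\beta$ forces pointwise purity on a neighborhood), and the \'etale locus is the intersection of the pure locus with the clopen degree-zero (equivalently, on the positive-rank part, slope-zero) locus from Lemma~\ref{L:degree continuous}. Your explicit isolation of the rank-zero locus is a harmless extra precaution that the paper's terser proof leaves implicit.
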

\begin{proof}
The openness of the pure locus is immediate from Theorem~\ref{T:spread etale}.
The \'etale locus is open because it is the intersection of the pure locus with the
closed and open (by Lemma~\ref{L:degree continuous}) subset of $\calM(R)$ on which
the degree of $M$ is zero.
\end{proof}
\begin{cor} \label{C:pointwise etale is etale}
For any $\varphi^a$-module $M$ over $\tilde{\calR}_R$,
$M$ is \'etale (resp.\ pure) if and only if $M$ is pointwise \'etale (resp.\ pointwise pure).
\end{cor}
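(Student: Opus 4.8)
The two ``only if'' implications are immediate, so I would dispose of them first: a locally free local $(c,d)$-pure model of $M$ over a rational localization $R \to R'$ encircling $\beta$ (with $c/d = 0$ in the \'etale case) base-extends along $\tilde{\calR}^{\inte}_{R'} \to \tilde{\calR}^{\inte}_{\calH(\beta)}$ to a $(c,d)$-pure model of $M \otimes_{\tilde{\calR}_R} \tilde{\calR}_{\calH(\beta)}$, witnessing that the latter is pure (resp.\ \'etale). The real content is the converse, which the plan is to deduce entirely from Theorem~\ref{T:spread etale} together with a short reduction to the nowhere-zero-rank case.

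For the converse, I would first split $\calM(R)$ according to the rank function. By Lemma~\ref{L:degree continuous} the locus $U^+ \subseteq \calM(R)$ where $\rank(M,\cdot) > 0$ is closed and open, hence of the form $\calM(R^+)$ for a Weierstrass (and so rational) localization $R \to R^+$ cut out by an idempotent; its complement $U^0 = \calM(R^0)$ is the rank-zero locus. Over $R^0$ the restriction of $M$ is a finite locally free $\tilde{\calR}_{R^0}$-module all of whose $\calH$-fibres vanish, hence (by Lemma~\ref{L:finite generation}, or directly, since a finite locally free module of pointwise rank zero is zero) the zero module, which is trivially a locally free pure and \'etale model of itself; thus every $\beta \in U^0$ carries a local pure model and, by Convention~\ref{conv:every slope}, $M$ is pure of every slope there. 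Over $R^+$ the module $M$ has nowhere zero rank, and pointwise purity says that every $\beta \in U^+$ lies in the pure locus of $M|_{R^+}$, so $M \otimes \tilde{\calR}_{\calH(\beta)}$ admits a $(c,d)$-pure model for any admissible pair with $c/d = \mu(M,\beta)$; Theorem~\ref{T:spread etale} then extends this to a free local $(c,d)$-pure model of $M$ at $\beta$. Since any rational localization of $R^+$ or of $R^0$ is also a rational localization of $R$, these assemble to a locally free local pure model of $M$ at every point of $\calM(R)$, i.e.\ $M$ is pure.

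For the \'etale statement I would argue that pointwise \'etaleness implies pointwise purity, hence purity of $M$ by the previous paragraph, while in addition $\deg(M,\cdot) \equiv 0$: it is $0$ on $U^0$ and equals $\rank(M,\cdot)\cdot\mu(M,\cdot) \equiv \rank(M,\cdot)\cdot 0$ on $U^+$, because pointwise \'etaleness forces the pointwise slope to be $0$ where the rank is positive. Consequently $\mu(M,\beta) = 0$ for all $\beta \in U^+$, so each local model produced by Theorem~\ref{T:spread etale} is a $(c,d)$-pure model with $c/d = 0$; combined with the rank-zero locus this shows $M$ is pure of slope $0$ everywhere, i.e.\ \'etale. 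I do not expect any genuine obstacle here: the only care needed is the bookkeeping with Convention~\ref{conv:every slope} on the rank-zero part and the identification of local models over $R^{\pm}$ with local models over $R$, since all of the analytic work is already contained in Theorem~\ref{T:spread etale}.
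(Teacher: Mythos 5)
Your proof is correct and follows exactly the route the paper intends: the corollary is an immediate consequence of Theorem~\ref{T:spread etale} (to produce locally free local pure models at points of positive rank) together with Convention~\ref{conv:every slope} and Lemma~\ref{L:degree continuous} (to dispose of the rank-zero locus and, in the \'etale case, to pin the slope to $0$ via the degree). The only thing you add beyond what the paper leaves implicit is the careful splitting of $\calM(R)$ by an idempotent so that the nowhere-zero-rank hypothesis of Theorem~\ref{T:spread etale} is satisfied, which is a legitimate and correctly executed bookkeeping step.
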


\begin{cor} \label{C:local model not locally free}
For any $\varphi^a$-module $M$ over $\tilde{\calR}_R$
and any $\beta \in \calM(R)$,
$M$ is pure at $\beta$ if and only if $M$ admits
a (not necessarily locally free) local pure model at $\beta$.
\end{cor}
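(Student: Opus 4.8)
The nontrivial direction is: if $M$ admits a local pure model at $\beta$ which is only assumed to be finitely generated (not locally free), then it admits a locally free one, so that $M$ is pure at $\beta$ in the sense of the definition. (The converse is trivial, since a locally free local pure model is in particular a local pure model.) The plan is to reduce to Theorem~\ref{T:spread etale} by showing that, after shrinking the rational localization $R \to R'$ encircling $\beta$, the fibre at $\beta$ of the given model is an honest $(c,d)$-pure model of $M \otimes_{\tilde{\calR}_R} \tilde{\calR}_{\calH(\beta)}$; then Theorem~\ref{T:spread etale} produces a \emph{free} local $(c,d)$-pure model at $\beta$, and in particular $M$ is pure at $\beta$ as desired.

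First I would set up notation: let $R \to R'$ be the rational localization encircling $\beta$ furnished by the hypothesis, and let $N_0 \subset M \otimes_{\tilde{\calR}_R} \tilde{\calR}_{R'}$ be the given finite $\tilde{\calR}^{\inte}_{R'}$-submodule with $N_0 \otimes_{\tilde{\calR}^{\inte}_{R'}} \tilde{\calR}_{R'} \cong M \otimes_{\tilde{\calR}_R} \tilde{\calR}_{R'}$ and with $(p^c\varphi^d)^* N_0 \cong N_0$. The $p^c\varphi^d$-action makes $N_0$ into a $\varphi^d$-module over $\tilde{\calR}^{\inte}_{R'}$, and any $\varphi^d$-module over $\tilde{\calR}^{\inte}_{R'}$ descends uniquely to a finite locally free module over $\tilde{\calR}^{\inte,r}_{R'}$ for some $r$ (the argument used in the proof of Proposition~\ref{P:perfect henselian}(a): one solves the descent equation by applying $\varphi^{-d}$ repeatedly and glueing via Lemma~\ref{L:Tate lemma2}). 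Wait --- this already shows $N_0$ is locally free. More carefully: the finitely generated $\tilde{\calR}^{\inte}_{R'}$-module $N_0$ carries an isomorphism $\varphi^{d*} N_0 \cong N_0$; since $\tilde{\calR}^{\inte}_{R'}$ is a direct limit of the rings $\tilde{\calR}^{\inte,r}_{R'}$ with the henselian pair structure of Proposition~\ref{P:perfect henselian}, $N_0$ has a model $N_{0,r}$ over $\tilde{\calR}^{\inte,r}_{R'}$, and the Frobenius descent argument forces $N_{0,r}$ to be finite locally free. Thus the finite generation hypothesis alone, together with $\varphi^d$-stability, already upgrades $N_0$ to a locally free pure model over $\tilde{\calR}^{\inte}_{R'}$.

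The remaining point is then purely about the passage from $\tilde{\calR}^{\inte}_{R'}$ to the residue field: one must check that a locally free local $(c,d)$-pure model over $\tilde{\calR}^{\inte}_{R'}$ restricts, after possibly shrinking $R'$, to a genuine $(c,d)$-pure model of $M \otimes_{\tilde{\calR}_R} \tilde{\calR}_{\calH(\beta)}$ --- i.e.\ the natural map $N_0 \otimes_{\tilde{\calR}^{\inte}_{R'}} \tilde{\calR}^{\inte}_{\calH(\beta)} \to M \otimes_{\tilde{\calR}_R} \tilde{\calR}_{\calH(\beta)}$ is injective with dense image. Density is automatic from $N_0 \otimes \tilde{\calR}_{R'} \cong M \otimes \tilde{\calR}_{R'}$ by base change to $\calH(\beta)$; injectivity (equivalently, that $N_0$ meets $M$ properly after restriction, so no $p$-torsion or degeneration is introduced) follows because $N_0$ is locally free over $\tilde{\calR}^{\inte}_{R'}$ and $\tilde{\calR}^{\inte}_{R'} \to \tilde{\calR}^{\inte}_{\calH(\beta)}$ factors through the localization $\tilde{\calR}^{\inte}_{R'_\beta}$, which by Lemma~\ref{L:henselian local ring}-style reasoning (applied to the henselian local ring $\tilde{\calR}^{\inte}_{R'_\beta}$, whose quotient by $(p)$ is the local ring $R'_\beta$ with residue field dense in $\calH(\beta)$) is flat enough to preserve freeness of $N_0$. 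Concretely: a basis of $N_0$ over $\tilde{\calR}^{\inte}_{R'_\beta}$ (which exists by Lemma~\ref{L:locally free to free}-type localization, since $\tilde{\calR}^{\inte}_{R'_\beta}$ is local) remains a basis after base change to $\tilde{\calR}^{\inte}_{\calH(\beta)}$, and its $\tilde{\calR}_{\calH(\beta)}$-span is $M \otimes \tilde{\calR}_{\calH(\beta)}$. Then Theorem~\ref{T:spread etale} takes this $(c,d)$-pure model of $M \otimes \tilde{\calR}_{\calH(\beta)}$ and extends it to a free local $(c,d)$-pure model of $M$ at $\beta$, giving purity at $\beta$ in the required sense.

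I expect the main obstacle to be the bookkeeping in the second step: one must be careful that ``finitely generated $\tilde{\calR}^{\inte}_{R'}$-module with a Frobenius isomorphism'' really is locally free, since $\tilde{\calR}^{\inte}_{R'} = \bigcup_r \tilde{\calR}^{\inte,r}_{R'}$ is not noetherian and one is implicitly invoking the descent-to-$\tilde{\calR}^{\inte,r}_{R'}$ mechanism together with the fact that over $\tilde{\calR}^{\inte,r}_{R'}$ (a Banach ring, complete for $\max\{\lambda(\alpha^r), |\cdot|_p\}$) the henselian property of the pair $(\tilde{\calR}^{\inte}_{R'},(p))$ and the structure of $\FEt$ / finite projective modules force local freeness. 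This is essentially a repackaging of arguments already present in Proposition~\ref{P:perfect henselian} and Definition~\ref{D:phi-module relative}, so while it requires care it should not require new ideas. The appeal to Theorem~\ref{T:spread etale} at the end is then immediate.
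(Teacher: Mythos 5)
There is a genuine gap in your middle step. You assert that a finitely generated $\tilde{\calR}^{\inte}_{R'}$-submodule $N_0$ equipped with an isomorphism $(p^c\varphi^d)^* N_0 \cong N_0$ is automatically locally free, by appeal to the Frobenius-descent mechanism of Proposition~\ref{P:perfect henselian}(a) and Definition~\ref{D:phi-module relative}. That mechanism takes a finite \emph{locally free} module as input and descends it to a model over some $\tilde{\calR}^{\inte,r}_{R'}$; it does not upgrade mere finite generation to local freeness. A finitely generated module over the non-noetherian ring $\tilde{\calR}^{\inte}_{R'}$ with a semilinear Frobenius isomorphism still descends (after increasing $r$) only to a finitely generated module over $\tilde{\calR}^{\inte,r}_{R'}$, and nothing in the henselian-pair structure or the $\FEt$ formalism forces that module to be projective. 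So the sentence ``Wait --- this already shows $N_0$ is locally free'' is unsupported, and with it the reduction you build on it.

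Fortunately the step is unnecessary, and the correct shortcut is much closer to the surface: you never need $N_0$ to be locally free over $R'$, only at the single point $\beta$. Push the given model forward to $\calH(\beta)$: the image of $N_0$ in $M \otimes_{\tilde{\calR}_R} \tilde{\calR}_{\calH(\beta)}$ is a finitely generated, $p^c\varphi^d$-stable, torsion-free module over $\tilde{\calR}^{\inte}_{\calH(\beta)}$ spanning the whole module, and $\tilde{\calR}^{\inte}_{\calH(\beta)}$ is a principal ideal domain, so this image is automatically \emph{free}. (Taking the image rather than the tensor product also dissolves the injectivity/density worries in your third paragraph.) Thus $\beta$ lies in the pure locus of $M$, and Theorem~\ref{T:spread etale} then hands you a free local $(c,d)$-pure model at $\beta$, which is the conclusion. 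Your overall strategy of funneling everything into Theorem~\ref{T:spread etale} is the right one; the fix is to descend to the point first and exploit the PID property there, rather than trying to establish local freeness over $R'$.
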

\begin{proof}
If $M$ admits a local pure model $M_0$ at $\beta$, then $M_0$ also generates a pure model of
$M \otimes_{\tilde{\calR}_R} \tilde{\calR}_{\calH(\beta)}$, which is necessarily free because
$\tilde{\calR}^{\inte}_{\calH(\beta)}$ is a principal ideal domain. Hence
$\beta$ belongs to the pure locus of $M$, which by
Theorem~\ref{T:spread etale} implies that $M$ is pure at $\beta$.
\end{proof}

\begin{remark}
Let $M$ be a $\varphi^a$-module over $\tilde{\calR}^{\bd}_R$.
Whereas Lemma~\ref{L:locally free to free} implies that $M$ is pure if and only if
$M \otimes_{\tilde{\calR}^{\bd}_R} \tilde{\calE}_R$ is pure,
it is not the case that purity of $M$ can be deduced from purity of
$M \otimes_{\tilde{\calR}^{\bd}_R} \tilde{\calR}_R$.
\end{remark}

\subsection{Slope filtrations in geometric families}

At this point, it is natural to discuss generalizations to the relative case of the
existence of slope filtrations for Frobenius modules over the Robba ring
(Theorem~\ref{T:slope filtration explicit1}) or the extended Robba ring
(Theorem~\ref{T:slope filtration explicit2}). We do not have in mind an explicit
use for these in $p$-adic Hodge theory, but we expect them to become relevant
in the same way that slope theory over the Robba
ring appears in the work of Colmez on trianguline representations
\cite{colmez-trianguline}.

We first give a brief review of the formalism of slope filtrations and slope polygons.
See \cite[\S 3.5]{kedlaya-revisited} for a more thorough discussion, keeping in mind the
change in sign convention. (To compensate for that change, we have also swapped the order of slopes in the slope polygon in order to
preserve the convex shape of the polygon.)

\begin{defn}
Suppose that $R = L$ is an analytic field, and let $M$ be a
$\varphi^a$-module over $\tilde{\calR}_L$. Let $0 = M_0 \subset \cdots \subset M_l = M$
be the filtration provided by Theorem~\ref{T:slope filtration explicit2},
in which $M_1/M_0,\dots,M_l/M_{l-1}$ are pure and $\mu(M_1/M_0) > \cdots > \mu(M_l/M_{l-1})$.
Define the \emph{slope polygon} of $M$ to be the polygonal line
starting at $(0,0)$ and consisting of, for $i=l,\dots,1$ in order,
a segment of horizontal width $\rank(M_i/M_{i-1})$ and slope $\mu(M_i/M_{i-1})$.
Note that the right endpoint of the polygon is $(\rank(M), \deg(M))$.

For $M$ a $\varphi^a$-module over $\tilde{\calR}^{\bd}_L$, there are two natural ways to
associate a slope polygon to $M$. One is to first extend scalars to $\tilde{\calR}_L$
and use the definition given in the previous paragraph;
this gives the \emph{special slope polygon} of $M$. The other is to first extend scalars
to $\tilde{\calE}_L$, identify the latter with $\tilde{\calR}_L$ for the trivial norm on $L$,
then invoke the previous paragraph. This gives the \emph{generic slope polygon} of $M$.
(It is equivalent to define the generic slope polygon using the usual Dieudonn\'e-Manin definition of slopes;
see for instance \cite[Chapter~14]{kedlaya-course}.)
\end{defn}

\begin{remark} \label{R:polygon base extension}
For $R = L$ an analytic field and $L'$ a complete extension of $L$, passing from $L$ to $L'$
does not change slope polygons, by virtue of Corollary~\ref{C:purity base extension}
and the uniqueness of the slope filtration in Theorem~\ref{T:slope filtration explicit2}.
\end{remark}

\begin{prop} \label{P:special above generic}
Let $R = L$  be an analytic field, and let $M$ be a $\varphi^a$-module over $\tilde{\calR}^{\bd}_L$.
\begin{enumerate}
\item[(a)]
The special slope polygon lies on or above the generic slope polygon, with the same endpoints.
\item[(b)]
If the two polygons coincide, then the slope filtration of $M \otimes_{\tilde{\calR}^{\bd}_L}
\tilde{\calR}_L$
(Theorem~\ref{T:slope filtration explicit2}) descends to $M$.
\end{enumerate}
\end{prop}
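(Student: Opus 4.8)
The plan is to prove both parts simultaneously by induction on $\rank(M)$, using the slope filtration theorem over $\tilde{\calR}_L$ (Theorem~\ref{T:slope filtration explicit2}), the analogous slope filtration over $\tilde{\calE}_L$ (obtained by identifying $\tilde{\calE}_L$ with $\tilde{\calR}_L$ for the trivial norm and reapplying Theorem~\ref{T:slope filtration explicit2}, which defines the generic polygon), and the descent mechanism of Proposition~\ref{P:fully faithful2}. The endpoint assertion in (a) is immediate, since the rank and degree of a $\varphi^a$-module are unchanged under base extension, so both polygons run from $(0,0)$ to $(\rank(M),\deg(M))$. For the inequality, let $0 = M_0 \subset M_1 \subset \cdots \subset M_l = M \otimes_{\tilde{\calR}^{\bd}_L} \tilde{\calR}_L$ be the slope filtration, with $M_1$ pure of slope $s_1 = \mu(M_1)$ equal to the largest special slope. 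The first step is to descend $M_1$, \emph{together with its inclusion}, to a saturated pure $\varphi^a$-submodule $(M_1)_0 \subseteq M$ over $\tilde{\calR}^{\bd}_L$: by Proposition~\ref{P:fully faithful2}(b), $M_1 \cong (M_1)_0 \otimes \tilde{\calR}_L$ for a unique pure $(M_1)_0$ over $\tilde{\calR}^{\bd}_L$, and to descend the inclusion one examines the $\varphi^a$-module $\Hom_{\tilde{\calR}^{\bd}_L}((M_1)_0, M)$, whose base extension $\Hom_{\tilde{\calR}_L}(M_1, M\otimes\tilde{\calR}_L) \cong M_1^\dual \otimes (M\otimes\tilde{\calR}_L)$ has, by Remark~\ref{R:etale descent} and semistability (since $s_1$ is maximal), maximal slope $0$ with étale top step $M_1^\dual \otimes M_1$; the inclusion is a $\varphi^a$-invariant section, hence lies in this étale top step, which descends to $\tilde{\calR}^{\bd}_L$ by Proposition~\ref{P:fully faithful2}(b), and $H^0_{\varphi^a}$ is unchanged by that equivalence. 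One then checks that $(M_1)_0 \to M$ is injective with finite locally free cokernel, descending injectivity and saturatedness from $\tilde{\calR}_L$.

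Base-changing $(M_1)_0 \hookrightarrow M$ along $\tilde{\calR}^{\bd}_L \to \tilde{\calE}_L$ produces a pure slope-$s_1$ submodule $(M_1)_0 \otimes \tilde{\calE}_L$ of $M \otimes \tilde{\calE}_L$, so by semistability of the generic slope filtration the point $(\rank M_1, \deg M_1)$ lies on or below the generic polygon; this point is the steepest vertex of the special polygon $S$. To finish (a) one passes to $M/(M_1)_0$ over $\tilde{\calR}^{\bd}_L$, whose special polygon is $S$ with its steepest segment deleted and whose generic polygon is the Harder--Narasimhan polygon of $(M\otimes\tilde{\calE}_L)/((M_1)_0\otimes\tilde{\calE}_L)$. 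By induction the former lies below the latter, and one concatenates using the standard fact that the Harder--Narasimhan polygon of an extension $0 \to A \to B \to C \to 0$ dominates the join of those of $A$ and $C$ (applied over $\tilde{\calE}_L$ with $A = (M_1)_0 \otimes \tilde{\calE}_L$); this yields that the generic polygon of $M$ lies on or above its special polygon.

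For part (b), one runs the same induction under the hypothesis that the two polygons of $M$ coincide. Equality forces, at each stage, that $(M_1)_0 \otimes \tilde{\calE}_L$ already realises the steepest vertex of the generic polygon and that $M/(M_1)_0$ again has coinciding polygons (the slope-$s_1$ segments cancel from the joins computed in (a)). By the inductive hypothesis the slope filtration of $(M/(M_1)_0)\otimes\tilde{\calR}_L = (M\otimes\tilde{\calR}_L)/M_1$ descends to $M/(M_1)_0$; pulling this filtration back through $M \to M/(M_1)_0$ and prepending $(M_1)_0$ gives a filtration of $M$ over $\tilde{\calR}^{\bd}_L$ whose base extension to $\tilde{\calR}_L$ is, by the uniqueness clause of Theorem~\ref{T:slope filtration explicit2}, the slope filtration of $M \otimes \tilde{\calR}_L$.

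The main obstacle I expect is the descent of the maximal-slope submodule together with its embedding: making the ``étale top step'' argument fully rigorous and verifying that injectivity and saturatedness genuinely descend from $\tilde{\calR}_L$ to $\tilde{\calR}^{\bd}_L$, together with the polygon bookkeeping needed to combine the generic polygons of $(M_1)_0\otimes\tilde{\calE}_L$ and of the quotient. If faithful flatness of $\tilde{\calR}_L$ over $\tilde{\calR}^{\bd}_L$ is not conveniently available for the descent of injectivity, one can instead work with integral models over the henselian local ring $\tilde{\calR}^{\inte}_L$ (using Proposition~\ref{P:perfect henselian}) and argue as in the proof of Theorem~\ref{T:spread etale}.
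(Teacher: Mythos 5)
The paper does not actually prove this proposition; it simply cites \cite[Proposition~5.5.1]{kedlaya-revisited} for (a) and \cite[Theorem~5.5.2]{kedlaya-revisited} for (b), so you are in effect trying to reprove those results from the toolkit available here. Your reduction has a genuine gap at its central step: descending the inclusion $M_1 \hookrightarrow M \otimes_{\tilde{\calR}^{\bd}_L} \tilde{\calR}_L$ of the maximal-slope HN step to a map $(M_1)_0 \to M$ over $\tilde{\calR}^{\bd}_L$. Your argument places $\iota$ in $H^0_{\varphi^a}(N_1)$ for $N_1$ the \'etale top HN step of $N = P \otimes_{\tilde{\calR}^{\bd}_L} \tilde{\calR}_L$ with $P = \Hom_{\tilde{\calR}^{\bd}_L}((M_1)_0, M)$, and then invokes Proposition~\ref{P:fully faithful2}(b) to ``descend $N_1$ with $H^0$ unchanged.'' But that proposition only produces an abstract \'etale module $(N_1)_0$ over $\tilde{\calR}^{\bd}_L$ with $(N_1)_0 \otimes \tilde{\calR}_L \cong N_1$; it says nothing about whether the $\varphi^a$-invariant elements of $N_1$, viewed inside $N$, actually lie in the subset $P$. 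To know that, you would need to descend the inclusion $N_1 \hookrightarrow N$ itself --- which is a problem of exactly the same type you started with, so the reduction is circular. What is really required is the surjectivity of $H^0_{\varphi^a}(P) \to H^0_{\varphi^a}(P \otimes_{\tilde{\calR}^{\bd}_L} \tilde{\calR}_L)$ under a hypothesis on the special slopes of $P$; since $\tilde{\calR}^{\bd}_L$ is a field, all the module-theoretic worries you raise at the end are moot, and this $H^0$ comparison is the \emph{entire} content. It is an analytic statement (one must show a $\varphi^a$-invariant element of $\tilde{\calR}_L$-coefficients is bounded in the sense of Lemma~\ref{L:bounded}) of essentially the same depth as part (a) itself, and it does not follow from the categorical equivalence for pure modules of a single slope.

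A further warning sign: if the first HN step always descended together with its embedding, you could iterate on the quotient and descend the entire special filtration unconditionally, which would render the hypothesis in part (b) superfluous; the fact that \cite[Theorem~5.5.2]{kedlaya-revisited} requires the coincidence of the polygons indicates that this unconditional descent fails and the argument cannot be repaired without new input. The standard proof of (a) avoids descent of submodules altogether: one reduces, via exterior powers, to showing that the largest special slope is at most the largest generic slope, and proves that single inequality by a direct computation (after base change to the completed algebraic closure, where Proposition~\ref{P:DM} and explicit matrix estimates apply), roughly showing that a nonzero $p^c\varphi^d$-fixed vector over $\tilde{\calR}_L$ forces one over $\tilde{\calE}_L$ after a suitable twist. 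Part (b) is then proved by using the \emph{equality} of the two polygons to match the first steps of the generic and special filtrations and descend them step by step. Your polygon bookkeeping (the subobject inequality over $\tilde{\calE}_L$, additivity of generic polygons in exact sequences, and the concatenation argument) is fine, but it all rests on the unproven descent step.
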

\begin{proof}
For (a), see \cite[Proposition~5.5.1]{kedlaya-revisited}. For (b),
see \cite[Theorem~5.5.2]{kedlaya-revisited}.
\end{proof}

\begin{lemma} \label{L:read off generic}
Let $M$ be a $\varphi^a$-module over $\tilde{\calR}^{\bd}_R$
admitting a basis on which $\varphi^a$ acts via a matrix of the form $AD$, where
$D$ is a diagonal matrix with diagonal
entries in $p^{\ZZ}$, and $A$ is a square matrix such that $A-1$ has entries in
$p \tilde{\calR}^{\inte}_R$.
Then there exist an $R$-algebra $S$ which is the union of faithfully finite \'etale $R$-subalgebras and an
invertible matrix $U$ over $W(S)$ congruent to $1$ modulo $p$,
such that $U^{-1} AD \varphi^a(U) = D$.
In particular, for each $\beta \in \calM(R)$,
the generic slopes of $M \otimes_{\tilde{\calR}^{\bd}_R} \tilde{\calR}^{\bd}_{\calH(\beta)}$
are the negatives of the $p$-adic valuations of the entries of $D$, divided by $a$.
\end{lemma}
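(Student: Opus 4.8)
The plan is to construct $U$ by successive approximation, in the spirit of the proofs of Lemma~\ref{L:DM relative} and Proposition~\ref{P:DM relative}. I will build a tower $R = S_0 \to S_1 \to S_2 \to \cdots$ of faithfully finite \'etale $R$-algebras (so that $S := \widehat{\varinjlim}_m\, S_m$ is again a perfect uniform Banach $\Fp$-algebra, by Theorem~\ref{T:perfect etale}) together with matrices $U_m \in \GL_n(W(S_m))$ with $U_m \equiv 1 \pmod p$, such that $U_{m+1} \equiv U_m \pmod{p^m}$ and the matrix $A^{(m)} := U_m^{-1}(AD)\varphi^a(U_m)$ approximates $D$ to order $m$ in a suitable $D$-adapted sense; then $U := \lim_m U_m \in \GL_n(W(S))$ satisfies $U^{-1}(AD)\varphi^a(U) = D$ and $U \equiv 1 \pmod p$. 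Two preliminary reductions: reordering the basis (conjugating $A$ and $D$ by a permutation matrix) we may assume the diagonal exponents satisfy $d_1 \le \cdots \le d_n$; and replacing $D$ by $p^{-d_1}D$ — which alters neither the equation $U^{-1}(AD)\varphi^a(U)=D$ nor the hypotheses on $A$, and merely shifts all $d_i$ and all generic slopes by the same integer — we may assume $d_i \ge 0$ for all $i$. Since $\tilde{\calR}^{\inte}_R \subseteq W(R)$ by Definition~\ref{D:relative extended Robba}, the matrices $A$, $D$, $AD$ then all have entries in $W(R)$, and $A^{(1)} := AD$ already has the property that the $(i,j)$ entry of $A^{(1)} - D$ lies in $p^{1+\min(d_i,d_j)}W(R)$. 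The invariant to propagate is: the $(i,j)$ entry of $A^{(m)} - D$ lies in $p^{m+\min(d_i,d_j)}W(S_m)$.

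For the inductive step one takes $U_{m+1} = U_m(1 + E^{(m)})$ with $E^{(m)}$ a matrix over $p^m W(S_{m+1})$; to first order the conjugation changes $A^{(m)}$ by $D\varphi^a(E^{(m)}) - E^{(m)}D$, so to kill $A^{(m)}-D$ modulo one further power of $p$ one must solve, entry by entry, an equation of the shape
\[
E_{ij} - p^{d_i-d_j}\varphi^a(E_{ij}) = p^m c_{ij}\ \ (d_i \ge d_j), \qquad \varphi^a(E_{ij}) - p^{d_j-d_i}E_{ij} = -p^m c_{ij}\ \ (d_i < d_j),
\]
where $c_{ij} := p^{-m-\min(d_i,d_j)}(A^{(m)}-D)_{ij} \in W(S_m)$ by the inductive hypothesis. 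When $d_i \ne d_j$ the coefficient is a genuine positive power of $p$, and the equation is solved directly over $W(S_m)$ by a $p$-adically convergent geometric series (after applying $\varphi^{-a}$ when $d_i < d_j$), with solution in $p^m W(S_m)$. When $d_i = d_j$ the equation reads $E_{ij} - \varphi^a(E_{ij}) = p^m c_{ij}$; writing $E_{ij} = p^m F_{ij}$ reduces it to the Artin--Schreier--Witt equation $F_{ij} - \varphi^a(F_{ij}) = c_{ij}$, solved after passing to a faithfully finite \'etale cover $S_{m+1}$ of $S_m$ exactly via the Lang-torsor argument used in the proof of Lemma~\ref{L:DM relative}. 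One then checks, using $d_1 \le \cdots \le d_n$ together with the recorded divisibilities, that the quadratic and cross terms dropped from the first-order computation lie deep enough in the $D$-adapted filtration that $A^{(m+1)}-D$ again has its $(i,j)$ entry in $p^{m+1+\min(d_i,d_j)}W(S_{m+1})$. This verification — propagating the $D$-adapted $p$-adic orders through the conjugation and confirming that the auxiliary covers remain faithfully finite \'etale at each stage, so that $S$ has the stated form — is the part demanding the most care, but it is a routine estimate once the invariant has been chosen.

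Finally, for the assertion about generic slopes: fix $\beta \in \calM(R)$ and choose $\gamma \in \calM(S)$ lying over $\beta$, so that $\calH(\gamma)$ is a perfect analytic field containing $\calH(\beta)$. Base-changing the identity $U^{-1}(AD)\varphi^a(U) = D$ along $\tilde{\calR}^{\bd}_R \to \tilde{\calR}^{\bd}_{\calH(\beta)} \to \tilde{\calE}_{\calH(\gamma)} = W(\calH(\gamma))[p^{-1}]$ shows that $M \otimes_{\tilde{\calR}^{\bd}_R} \tilde{\calE}_{\calH(\gamma)}$ is isomorphic as a $\varphi^a$-module to the diagonal one with matrix $D = \mathrm{diag}(p^{d_1},\dots,p^{d_n})$, which is a direct sum of rank-one $\varphi^a$-modules of slopes $-d_1,\dots,-d_n$. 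Since the generic slope polygon is computed from the Dieudonn\'e--Manin decomposition over $\tilde{\calE}$ and is unaffected by passage to a complete extension field (Remark~\ref{R:polygon base extension}), the generic slopes of $M \otimes_{\tilde{\calR}^{\bd}_R}\tilde{\calR}^{\bd}_{\calH(\beta)}$ are exactly $-d_1,\dots,-d_n$, i.e. the negatives of the $p$-adic valuations of the diagonal entries of $D$; undoing the twist made at the outset (which shifted both sides by the same integer) gives the statement for the original $D$.
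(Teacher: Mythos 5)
Your overall strategy --- successive approximation over a tower of faithfully finite \'etale covers, solving the twisted equation $E_{ij}p^{d_j}-p^{d_i}\varphi^a(E_{ij})=\Delta_{ij}$ entry by entry, with a geometric series when $d_i\neq d_j$ and an Artin--Schreier--Witt/Lang-torsor step (as in Lemma~\ref{L:DM relative}) when $d_i=d_j$ --- is the right one, and is essentially the argument of \cite[Lemma~5.9]{kedlaya-annals} that the paper invokes. But the step you defer as ``a routine estimate once the invariant has been chosen'' is exactly where your argument breaks: the invariant $(A^{(m)}-D)_{ij}\in p^{m+\min(d_i,d_j)}W(S_m)$ does not propagate. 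If the first-order equation is solved exactly, the new error is $\Delta'=(1+E)^{-1}\Delta\varphi^a(E)$, and your equations give only $E_{kj}\in p^mW(S_m)$ (no better, e.g.\ for the diagonal entries). The $(i,j)$ entry of $\Delta\varphi^a(E)$ therefore contains the term $\Delta_{i1}\varphi^a(E_{1j})$, of order only $2m+\min(d_i,d_1)=2m$ after your normalization $d_1=0$; for $m=1$ and $\min(d_i,d_j)\geq 1$ this falls short of the required $m+1+\min(d_i,d_j)$, so $A^{(2)}$ need not satisfy the inductive hypothesis. The root cause is that $\min(d_i,d_j)$ is the wrong weight: the base case actually gives the stronger column-weighted bound $(AD-D)_{ij}=(A_{ij}-\delta_{ij})p^{d_j}\in p^{1+d_j}W(R)$, and you have thrown away precisely the information needed to close the induction.

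The repair is to keep the column weights throughout: write $A^{(m)}=A_mD$ and take as invariant $A_m\equiv 1\pmod{p^{c_m}}$, i.e.\ $(A^{(m)}-D)_{ij}\in p^{c_m+d_j}W(S_m)$, with $c_1=1$. The first-order equation becomes $E-D\varphi^a(E)D^{-1}=A_m-1$, whose solution satisfies $E_{ij}\in p^{c_m}W(S_m)$ when $d_i\geq d_j$ and $E_{ij}\in p^{c_m+d_j-d_i}W(S_m)$ when $d_i<d_j$ (the extra factor coming from the geometric series in $\varphi^{-a}$), so that $D\varphi^a(E)D^{-1}\equiv 0\pmod{p^{c_m}}$. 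The exact identity
\[
A_{m+1}-1=(1+E)^{-1}(A_m-1)\,D\varphi^a(E)D^{-1}
\]
then yields $A_{m+1}\equiv 1\pmod{p^{2c_m}}$, so the iteration converges (quadratically), and no reordering or normalization of the $d_i$ is needed. With this corrected invariant the rest of your argument --- the finite \'etale covers for the equal-slope entries, the identification of $S$ as a completed direct limit of faithfully finite \'etale $R$-algebras, and the computation of the generic slopes by base change to $\tilde{\calE}_{\calH(\gamma)}$ together with Remark~\ref{R:polygon base extension} --- goes through.
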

\begin{proof}
We can proceed as in \cite[Lemma~5.9]{kedlaya-annals}. Note that although the analogous statement in our setup would be taking $S$ to be the completed union of faithfully finite \'etale $R$-subalgebras, the argument of \cite[Lemma~5.9]{kedlaya-annals} actually implies that one can take $S$ to be the union of faithfully finite \'etale $R$-subalgebras. This refinement will be useful for Theorem \ref{T:split polygon at point}.
\end{proof}

\begin{theorem} \label{T:open locus}
For any $\varphi^a$-module $M$ over $\tilde{\calR}_R$,
the function mapping $\beta \in \calM(R)$ to the slope polygon of
$M \otimes_{\tilde{\calR}_R} \tilde{\calR}_{\calH(\beta)}$ is lower
semicontinuous. In other words, if $\rank(M)$ is constant (which is
true locally by Lemma~\ref{L:degree continuous}), for any $x \in [0,\rank(M)]$,
the $y$-coordinate of the point of the slope polygon of
$M \otimes_{\tilde{\calR}_R} \tilde{\calR}_{\calH(\beta)}$ is a
lower semicontinuous function of $\beta$. (Note that this function is
locally constant for $x = \rank(M)$, by Lemma~\ref{L:degree continuous} again.)
\end{theorem}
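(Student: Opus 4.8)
The plan is to reduce the lower semicontinuity of the slope polygon to a local statement near a fixed point $\beta_0 \in \calM(R)$, and there to extract a pure piece of the slope filtration that spreads out over a rational localization. By Lemma~\ref{L:degree continuous} we may assume $\rank(M)$ is constant on $\calM(R)$, so the right endpoint of the polygon is locally constant and it suffices to show that for each fixed $x \in \{0,1,\dots,\rank(M)\}$, the $y$-coordinate of the slope polygon of $M_{\calH(\beta)} = M \otimes_{\tilde{\calR}_R} \tilde{\calR}_{\calH(\beta)}$ at abscissa $x$ is a lower semicontinuous function of $\beta$. Since the polygon of $M_{\calH(\beta)}$ is built from the Harder--Narasimhan filtration of Theorem~\ref{T:slope filtration explicit2}, its $y$-coordinate at $x$ equals the minimum of $\deg(N)$ over $\varphi^a$-submodules (equivalently, saturated $\varphi^a$-subbundles) $N$ of $M_{\calH(\beta)}$ of rank $x$; the minimum is achieved by the appropriate step $M_i$ of the slope filtration. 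So I must show: for every $\beta_0$ and every such minimal $N_0 \subset M_{\calH(\beta_0)}$ of rank $x$ and degree $e$, there is a rational localization $R \to R'$ encircling $\beta_0$ and a $\varphi^a$-subbundle $N'$ of $M \otimes_{\tilde{\calR}_R} \tilde{\calR}_{R'}$ of rank $x$ which at each $\beta \in \calM(R')$ restricts to a subbundle of degree $\le e$. Then at nearby $\beta$ the HN-minimum is $\le e$, giving the desired inequality at the one abscissa $x$; running over all $x$ finishes the proof.

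To produce $N'$ I would proceed in two steps. First, consider the top step of the slope filtration of $M_{\calH(\beta_0)}$, i.e.\ the maximal slope $s_{\max}$ piece $M_1 \subset M_{\calH(\beta_0)}$, which is pure. For the rank-$x$ minimal subobject $N_0$ I instead want to work with its image in the graded pieces, but a cleaner route is induction on $\rank(M)$ combined with the following spreading lemma: if $P_0 \subset M_{\calH(\beta_0)}$ is the maximal pure subbundle of slope $\mu_{\max}$, then after twisting by $(c,d)$ with $c/d = \mu_{\max}$ so that $P_0$ becomes \'etale (of nonnegative, in fact zero degree after correct twist — more precisely $p^c\varphi^d$ acts as an isomorphism of a pure model), apply Theorem~\ref{T:spread etale}: the pure model of $P_0$ extends to a free local $(c,d)$-pure model of (the twist of) $M$ at $\beta_0$ over some $R \to R'$. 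This yields a $\varphi^a$-subbundle $P'$ of $M \otimes \tilde{\calR}_{R'}$, pure of slope $\mu_{\max}$, of rank $r_1 = \rank P_0$, agreeing with $P_0$ at $\beta_0$. Since $P'$ is pure of slope $\mu_{\max}$ at every point of $\calM(R')$, at each $\beta$ it is a subbundle of $M_{\calH(\beta)}$ of rank $r_1$ and degree exactly $r_1 \mu_{\max}$, which lies on or above the slope polygon of $M_{\calH(\beta)}$ at abscissa $r_1$ — wait, I need it below, so I should instead arrange the degree to be as large as possible: by semicontinuity of degree (Lemma~\ref{L:degree continuous}) and purity, $\deg(P') = r_1 \mu_{\max}$, and since the HN polygon of $M_{\calH(\beta)}$ at abscissa $r_1$ is the \emph{maximum} of $\deg$ over rank-$r_1$ subobjects, I get $y_{M_{\calH(\beta)}}(r_1) \ge r_1 \mu_{\max} = y_{M_{\calH(\beta_0)}}(r_1)$. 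Then form the quotient $\varphi^a$-bundle $M/P'$ over $\tilde{\calR}_{R'}$ (a $\varphi^a$-bundle by Remark~\ref{R:kernels}), whose polygon is obtained from that of $M$ by removing the first segment, and induct: the polygon of $M_{\calH(\beta)}$ dominates the "concatenation" of the first segment of $M_{\calH(\beta_0)}$ with the polygon of $(M/P')_{\calH(\beta)}$, and the latter dominates that of $(M/P')_{\calH(\beta_0)}$ by the inductive hypothesis applied over $R'$. A short convexity argument (the HN polygon is the lower convex hull and concatenating a fixed leftmost segment with something lying above another thing still lies above the concatenation with that other thing) then shows $y_{M_{\calH(\beta)}}(x) \ge y_{M_{\calH(\beta_0)}}(x)$ for all $x$, on a neighborhood of $\beta_0$.

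A subtlety to handle carefully: Theorem~\ref{T:spread etale} requires knowing $\beta_0$ lies in the pure locus of the \emph{twist} of $M$ whose relevant slope is $\mu_{\max}$ — but $M$ itself at $\beta_0$ need not be pure. The fix is that the maximal-slope subbundle $P_0$ of $M_{\calH(\beta_0)}$ \emph{is} pure, and Theorem~\ref{T:spread etale} as stated spreads out a $(c,d)$-pure model of a module that is pure at $\beta_0$; so I should apply it not to $M$ but first observe that $M_1 = P_0$, being a pure $\varphi^a$-module over $\tilde{\calR}_{\calH(\beta_0)}$, has a pure model, and then I need a version of the spreading statement for a pure \emph{sub}-object, i.e.\ that the inclusion $P_0 \hookrightarrow M_{\calH(\beta_0)}$ itself spreads out. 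This follows by applying Theorem~\ref{T:spread etale} to $M$ after replacing it by the maximal pure subbundle construction, but actually the cleanest formulation is: apply Theorem~\ref{T:spread etale} with $M$ replaced by $\Hom(M_1^\vee, M) = M_1^\vee{}^\vee \otimes \dots$ — simpler yet, apply Proposition~\ref{P:H1}/Corollary~\ref{C:exact invariants} type arguments to lift the inclusion. I expect this spreading-out of the top HN step (together with getting the inclusion into $M$, not just an abstract pure module of the right slope) to be the main obstacle; once that is in hand, the induction on rank and the convexity bookkeeping are routine. An alternative that sidesteps it: use the already-proved Corollary~\ref{C:etale locus is open} together with twisting — the locus where $M(n)$ (for the appropriate $(c,d)$-twist making $\mu_{\max}$ into slope $0$) has a nonzero \'etale subbundle is related to a $\varphi^a$-invariant section locus, which is open by the section-existence machinery of \S\ref{sec:phi-modules}; I would pursue whichever of these the authors' earlier lemmas support most directly, almost certainly the direct appeal to Theorem~\ref{T:spread etale} applied to a pure model of $M_1$ inside $M$.
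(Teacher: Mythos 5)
Your reduction to pointwise lower semicontinuity of the $y$-coordinate at each abscissa, and your identification of the crux, are both accurate — but the crux is not a technical obstacle to be engineered around: the step you flag as ``the main obstacle'' is false. The maximal-slope step $P_0 \subset M_{\calH(\beta_0)}$ of the Harder--Narasimhan filtration does not extend to a pure sub-bundle of slope $\mu_{\max}(\beta_0)$ and rank $r_1$ over any neighborhood of $\beta_0$, except when the polygon is already locally constant there. Indeed, the semicontinuity you are trying to prove permits the polygon at points $\beta$ arbitrarily close to $\beta_0$ to lie strictly above the polygon at $\beta_0$ (e.g.\ $M_{\calH(\beta)}$ pure while $M_{\calH(\beta_0)}$ is not), and at such $\beta$ one has $\mu(M,\beta) < \mu_{\max}(\beta_0)$, so by Theorem~\ref{T:slope filtration2} the fiber $M_{\calH(\beta)}$ admits \emph{no} nonzero sub-object of slope $\mu_{\max}(\beta_0)$; the fiber of your $P'$ cannot exist. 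Equivalently: if $P'$ always existed, iterating your induction would show the entire HN filtration, hence the polygon, is locally constant, contradicting the phenomenon the theorem is designed to capture (and making Theorem~\ref{T:slope filtration}'s constancy hypothesis vacuous). Your fallback via an open ``invariant-section locus'' fails for the same reason: the locus where $M$ admits a nonzero sub-object of slope $\geq s$ is not open in general, since $\mu_{\max}$ can strictly drop at nearby points. This is also precisely why Theorem~\ref{T:spread etale} carries the hypothesis that $\beta$ lie in the pure locus of $M$ itself.

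The paper's proof avoids sub-objects entirely and instead runs a generic-versus-special comparison. Fix $\beta_0$ and let $L$ be a completed algebraic closure of $\calH(\beta_0)$. Proposition~\ref{P:DM not pure} (which needs no purity hypothesis) gives a basis of $M \otimes_{\tilde{\calR}_R} \tilde{\calR}_L$ on which $\varphi^d$ acts by a diagonal matrix $D$ with entries in $p^{\ZZ}$ recording the polygon at $\beta_0$. Approximating this basis and applying Lemma~\ref{L:approximation lemma}, as in the proof of Theorem~\ref{T:spread etale}, yields a rational localization $R \to R'$ encircling $\beta_0$, a faithfully finite \'etale $R'$-algebra $S$, and a basis of $M \otimes_{\tilde{\calR}_R} \tilde{\calR}_S$ on which $\varphi$ acts via $FD$ with $F-1$ having entries in $p\tilde{\calR}^{\inte}_S$; this basis spans a bounded model $N$ over $\tilde{\calR}^{\bd}_S$. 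Lemma~\ref{L:read off generic} shows the \emph{generic} slope polygon of $N$ is the same at every point of $\calM(S)$, namely the polygon determined by $D$, i.e.\ the polygon of $M$ at $\beta_0$; Proposition~\ref{P:special above generic} then compares the actual (special) polygon at each such point against this locally constant generic polygon, giving the inequality in the required direction, and Remark~\ref{R:polygon base extension} transfers the conclusion from $\calM(S)$ back to $\calM(R')$. The object that spreads out is thus a $\tilde{\calR}^{\bd}$-lattice whose generic polygon is locally constant — a Grothendieck-specialization-style argument — not a step of the HN filtration.
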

\begin{proof}
Choose $\beta \in \calM(R)$. Let $L$ be a completed algebraic closure of
$\calH(\beta)$. By Proposition~\ref{P:DM not pure}, for some positive multiple $d$ of $a$,
$M \otimes_{\tilde{\calR}_R} \tilde{\calR}_L$ admits a basis on which $\varphi^d$
acts via a diagonal matrix $D$ with entries in $p^\ZZ$.
We now proceed as in Theorem~\ref{T:spread etale}, by
applying Lemma~\ref{L:approximation lemma} to a suitably good approximation of this
basis. As a result, we obtain a
rational localization $R \to R'$ encircling $\beta$, a faithfully
finite \'etale $R'$-algebra $S$, and a basis $\bv_1,\dots,\bv_n$ of
$M \otimes_{\tilde{\calR}_R} \tilde{\calR}_S$ on which $\varphi^d$ acts
via an invertible matrix over $\tilde{\calR}^{\bd}_S$ of the form
$FD$, where $F-1$ has entries in $p \tilde{\calR}^{\inte}_S$.
Let $N$ be the $\varphi^d$-module over $\tilde{\calR}^{\bd}_S$ spanned by $\bv_1,\dots,\bv_n$.

By Remark~\ref{R:polygon base extension}, the slope polygon of $M$ at a given point of
$\calM(R')$ is the same as at any point of $\calM(S)$ restricting to the given point.
For one, this means that the negatives of the $p$-adic valuations of the diagonal entries of $D$
give the slopes in the slope polygon of $M$ at $\beta$. By Lemma~\ref{L:read off generic},
this polygon also computes the generic slope polygon of $N$ at each
$\gamma \in \calM(S)$. By Proposition~\ref{P:special above generic}(a),
we conclude that the special slope polygon of $N$ at each $\gamma \in \calM(S)$,
or in other words the slope polygon of $M$ at $\gamma$, lies on or above the
slope polygon of $M$ at $\beta$.
By Remark~\ref{R:polygon base extension} again, this implies that the slope polygon
is lower semicontinuous as a function on $\calM(R)$, as desired.
\end{proof}

In addition to semicontinuity, we have the following boundedness property for  the slope polygon.
\begin{prop} \label{P:slopes bounded}
For any $\varphi^a$-module $M$ over $\tilde{\calR}_R$,
the function mapping $\beta \in \calM(R)$ to the slope polygon of
$M \otimes_{\tilde{\calR}_R} \tilde{\calR}_{\calH(\beta)}$ is bounded above and below.
\end{prop}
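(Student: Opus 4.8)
The plan is to deduce Proposition~\ref{P:slopes bounded} from the semicontinuity result of Theorem~\ref{T:open locus} together with the approximation machinery already set up in its proof. The key observation is that the proof of Theorem~\ref{T:open locus} actually produces, for each $\beta \in \calM(R)$, a rational localization $R \to R'$ encircling $\beta$ and a faithfully finite \'etale $R'$-algebra $S$ such that $M \otimes_{\tilde{\calR}_R} \tilde{\calR}_S$ admits a basis on which $\varphi^a$ acts via a matrix of the form $FD$ with $D$ diagonal with entries in $p^{\ZZ}$ and $F - 1$ having entries in $p\tilde{\calR}^{\inte}_S$. For every $\gamma \in \calM(R')$, the slope polygon of $M$ at $\gamma$ lies \emph{on or above} the slope polygon of $M$ at $\beta$ (that is the content of the semicontinuity argument), and \emph{on or below} the generic slope polygon of the $\varphi^a$-module $N$ over $\tilde{\calR}^{\bd}_S$ spanned by that basis, by Proposition~\ref{P:special above generic}(a). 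But by Lemma~\ref{L:read off generic}, the generic slope polygon of $N$ at every point is the \emph{same} polygon, namely the one read off from the $p$-adic valuations of the diagonal entries of $D$. Hence on the neighborhood $\calM(R')$ of $\beta$, the slope polygon of $M$ is squeezed between two fixed polygons (the one at $\beta$ and the one coming from $D$), so in particular it is bounded above and below on $\calM(R')$.

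First I would observe that each point $\beta$ with $\rank(M,\beta) = 0$ is harmless: there the module is pure of every slope, but the slope polygon is the trivial (empty) polygon, which is bounded; moreover by Lemma~\ref{L:degree continuous} the locus where $\rank(M)$ takes each value is open and closed, so we may work on each such piece separately and assume $\rank(M)$ is the constant value $n \geq 0$, the case $n = 0$ being trivial. Next I would invoke compactness of $\calM(R)$ (Definition~\ref{D:Gelfand}): covering $\calM(R)$ by finitely many of the neighborhoods $\calM(R')$ constructed above, on each of which the slope polygon is bounded above and below, we conclude that the slope polygon is bounded above and below on all of $\calM(R)$. The upper bound also follows directly from the fact that the right endpoint of the slope polygon is $(\rank(M), \deg(M))$ with $\deg(M)$ locally constant (Lemma~\ref{L:degree continuous}) and the polygon is convex, but for the lower bound one genuinely needs the local trivialization, since a priori the interior vertices could dip arbitrarily far down without the semicontinuity argument controlling them from below.

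The main obstacle, such as it is, is bookkeeping rather than conceptual: one must make sure that the construction inside the proof of Theorem~\ref{T:open locus} really does yield, on a full neighborhood of each $\beta$, a comparison with a \emph{fixed} polygon independent of the point in that neighborhood. This is exactly what Lemma~\ref{L:read off generic} provides — the generic slopes of $N \otimes_{\tilde{\calR}^{\bd}_R} \tilde{\calR}^{\bd}_{\calH(\gamma)}$ are the negatives of the $p$-adic valuations of the entries of $D$, for \emph{every} $\gamma$ — so no extra work is needed there. A minor point to check is that passing between $\calM(S)$ and $\calM(R')$ does not distort slope polygons; this is Remark~\ref{R:polygon base extension} (slope polygons are unchanged under base extension of the analytic field), combined with the surjectivity of $\calM(S) \to \calM(R')$ which holds since $S$ is faithfully finite \'etale over $R'$ (Lemma~\ref{L:faithfully flat by maximal ideals} and the going-up theorem, or Lemma~\ref{L:fibre product}). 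With these pieces in place the proof is a short assembly.
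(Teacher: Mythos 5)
Your argument is correct in substance but follows a genuinely different route from the paper. The paper's proof is global: after reducing (by the base-extension trick of Lemma~\ref{L:fg to projective}) to the case where $R$ is free of trivial spectrum, it invokes Proposition~\ref{P:global invariants} to produce a surjection $\tilde{\calR}_R^m(-N) \to M$ of $\varphi^a$-modules; since the source is pure of slope $-N$, the easy direction of Theorem~\ref{T:slope filtration2} forces every slope of $M$ at every $\beta$ to be at least $-N$, and the upper bound then follows because the sum of the slopes is the degree, which is continuous and hence bounded (Lemma~\ref{L:degree continuous}). No compactness argument on $\calM(R)$ and no appeal to Theorem~\ref{T:open locus} are needed, and one gets an explicit uniform lower bound $-N$. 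Your route instead extracts from the proof of Theorem~\ref{T:open locus} a local comparison with a fixed polygon and then takes a finite subcover; this works, and has the mild advantage of not requiring the reduction to the free-of-trivial-spectrum case, but it is longer and leans on heavier machinery (the full local trivialization $FD$ coming from Lemma~\ref{L:approximation lemma} and faithfully flat descent).

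One step of your write-up is internally inconsistent, though it does not sink the proof. By Lemma~\ref{L:read off generic}, the generic slope polygon of $N$ at every $\gamma$ is the polygon read off from the valuations of the entries of $D$, which (as the proof of Theorem~\ref{T:open locus} notes) is precisely the slope polygon of $M$ at $\beta$. So your two asserted inequalities --- polygon at $\gamma$ on or above the polygon at $\beta$, and on or below the generic polygon of $N$ --- compare the polygon at $\gamma$ with the \emph{same} fixed polygon from both sides; if both held, you would have proved the slope polygon locally constant, which is false in general and would render the constancy hypothesis of Theorem~\ref{T:slope filtration} vacuous. There is really only one inequality here, namely that of Proposition~\ref{P:special above generic}(a), which is also the entire content of the semicontinuity statement, and it bounds the polygon on one side only. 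Fortunately you also supply the correct argument for the other side (convexity of the polygon together with the fixed endpoints $(0,0)$ and $(\rank(M),\deg(M))$ and local constancy of the degree), so the assembled proof stands once the spurious half of the squeeze is deleted.
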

\begin{proof}
Choose $N$ as in Proposition~\ref{P:global invariants};
we then obtain a surjection $\tilde{\calR}_R^m(-N) \to M$ of $\varphi^a$-modules for some nonnegative integer $m$.
For each $\beta \in \calM(R)$, for $s$ the smallest slope in the slope polygon of $M$
at $\beta$,
$M \otimes_{\tilde{\calR}_R} \tilde{\calR}_{\calH(\beta)}$ surjects onto a nonzero $\varphi^a$-module
over $\tilde{\calR}_{\calH(\beta)}$ of slope $s$, as then does $\tilde{\calR}_R^m(-N)$. This forces
$-N/a \leq s$ by (the easy direction of) Theorem~\ref{T:slope filtration2}; consequently, all of the slopes of $M$
at $\beta$ are bounded below by $-N/a$. Since the sum of the slopes is a continuous
(by Lemma~\ref{L:degree continuous})
and hence bounded function
on $\calM(R)$, the slopes of $M$ at  $\beta$ are also bounded above. This proves the claim.
\end{proof}

\begin{cor}
Let $M$ be a $\varphi^a$-module over $\tilde{\calR}_R$. Then there exists an open dense subset $U$
of $\calM(R)$ on which the slope polygon of $M$ is locally constant.
\end{cor}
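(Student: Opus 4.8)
The plan is to deduce the statement from two elementary ingredients: the semicontinuity provided by Theorem~\ref{T:open locus}, and a general topological fact about lower semicontinuous functions valued in a discrete set, the latter being applicable thanks to the boundedness in Proposition~\ref{P:slopes bounded}.

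I would first isolate the following lemma. For any topological space $X$ and any positive integer $D$, a lower semicontinuous function $f \colon X \to \tfrac{1}{D}\ZZ$ which is bounded above is locally constant on an open dense subset of $X$. Here the locus $U$ of local constancy is manifestly open, so only density requires argument. Suppose a nonempty open $V \subseteq X \setminus U$ existed; put $N = \sup_{x \in V} f(x)$, which is finite and, being the supremum of a bounded-above subset of the discrete set $\tfrac1D\ZZ$, is actually attained on $V$. Since $f$ takes values in $\tfrac1D\ZZ$ we have $\{f \geq N\} = \{f > N - \tfrac{1}{2D}\}$, so the set $\{x \in V : f(x) \geq N\}$ is open in $X$ and nonempty; on it $f$ is identically $N$, hence it is contained in $U$, contradicting $V \cap U = \emptyset$. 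Thus $U$ is dense.

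Next I would set up the reduction. By Lemma~\ref{L:degree continuous} the rank of $M$ is locally constant on $\calM(R)$; since local constancy of the slope polygon is a local property and a finite union of open dense subsets of an open cover is again open and dense, I may work on a piece where $\rank(M) = n$ is constant. For each integer $x$ with $0 \leq x \leq n$, let $y_x(\beta)$ be the ordinate at abscissa $x$ of the slope polygon of $M \otimes_{\tilde{\calR}_R} \tilde{\calR}_{\calH(\beta)}$. Theorem~\ref{T:open locus} gives that $y_x$ is lower semicontinuous, and Proposition~\ref{P:slopes bounded} gives that $y_x$ is bounded above. Moreover every pure subquotient in the slope filtration of Theorem~\ref{T:slope filtration explicit2} has rank at most $n$, so all slopes lie in $\tfrac1D\ZZ$ with $D = \mathrm{lcm}(1,\dots,n)$; since the breakpoint ordinates are integers, it follows that $y_x(\beta) \in \tfrac1D\ZZ$ for every $\beta$.

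Applying the lemma to each of the finitely many functions $y_0,\dots,y_n$ and intersecting the resulting open dense sets, I obtain an open dense $U \subseteq \calM(R)$ on which every $y_x$ is locally constant. Finally, a convex piecewise-linear function on $[0,n]$ with breakpoints at integer abscissae is determined by its values at $0,1,\dots,n$, so the slope polygon of $M$ is locally constant at $\beta$ exactly when all the $y_x$ are; hence it is locally constant on $U$. The argument is a formal consequence of the two cited theorems, so there is no genuine obstacle; the only point needing a little care is the observation that the bound on slopes forces the ordinate functions into a fixed discrete set, which is precisely what makes the elementary semicontinuity lemma apply.
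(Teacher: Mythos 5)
Your proof is correct and follows essentially the same route as the paper's: both combine the semicontinuity of Theorem~\ref{T:open locus} with the boundedness of Proposition~\ref{P:slopes bounded} and the discreteness of the possible values, and both extract an open set of constancy near an extremal value --- you take the supremum of each ordinate function $y_x$ over a putative open set avoiding the locus of local constancy, while the paper takes a maximal polygon (for the partial order ``lies on or below'') among those occurring in every neighborhood of a given point. The only nitpick is that for $a>1$ the breakpoint ordinates lie in $\tfrac{1}{a}\ZZ$ rather than $\ZZ$, since degrees of $\varphi^a$-modules are normalized by dividing by $a$ (Convention~\ref{conv:power slope}), so your $D$ should be $a\cdot\mathrm{lcm}(1,\dots,n)$; this changes nothing in the argument.
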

\begin{proof}
It suffices to check this property locally around some $\beta \in \calM(R)$.
By Lemma~\ref{L:degree continuous}, we may assume that $M$ has constant rank and degree over
$\calM(R)$.

By Proposition~\ref{P:slopes bounded}, the slope polygons of $M$ are limited to a finite set $S$.
Let $T$ be the subset of $S$ consisting of those polygons which occur in every neighborhood of $\beta$; this
set is nonempty because it contains the slope polygon at $\beta$.
Note that we can find a neighborhood $U$ of $\beta$ on which no polygon outside $T$ occurs, by eliminating
elements of $S \setminus T$ one at a time.

Let $P$ be a maximal polygon in $T$, i.e., one which does not lie on or below any other element of $T$.
By lower semicontinuity (Theorem~\ref{T:open locus}), for every neighborhood $V$ of $\beta$ within $U$,
there is a nonempty open subset on which the slope polygon is identically equal to $P$.
This proves the claim.
\end{proof}

If the slope polygon does indeed vary, it is unclear whether one can expect to construct
a slope filtration. One does get a result in case the polygon is locally constant, or more generally if one of its vertices is locally constant.

\begin{lemma} \label{L:pointwise splitting criterion}
Let $A$ be an $n \times n$ matrix
over $\tilde{\calR}^{\inte}_R$ which is invertible over $\tilde{\calR}^{\bd}_R$, fix $x_1,\dots,x_n \in \tilde{\calR}^{\bd}_R$, and choose $y_1,\dots,y_n \in \tilde{\calE}_R$ so that
\begin{equation} \label{eq:splitting equation for filtration}
y_i -x_i = \sum_j A_{ij} \varphi^a(y_j) \qquad (i=1,\dots,n).
\end{equation}
Then $y_i \in \tilde{\calR}^{\bd}_R$ for $i=1,\dots,n$ if and only if
for each $\beta \in \calM(R)$, the image of $y_i$ in $\tilde{\calE}_{\calH(\beta)}$ belongs to $\tilde{\calR}^{\bd}_{\calH(\beta)}$ for $i=1,\dots,n$.
\end{lemma}
\begin{proof}
We may assume without loss of generality that $x_1,\dots,x_n \in \tilde{\calR}^{\inte}_R$ and $y_1,\dots,y_n \in W(R)$.
Fix $r > 0$ for which $x_i, A_{ij} \in \tilde{\calR}^{\inte,r}_R$
and $(A^{-1})_{ij} \in \tilde{\calR}^{\bd,r}_R$,
and put $x'_i = \sum_j (A^{-1})_{ij} x_j$.

Suppose that $y_1,\dots,y_n \in \tilde{\calR}^{\inte}_R$.
We may then choose some $s>0$ so that $y_1,\dots,y_n \in \tilde{\calR}^{\inte,s}_R$. By \eqref{eq:splitting equation for filtration}, we must also have $y_1,\dots,y_n \in \tilde{\calR}^{\inte,s'}_R$ for $s' = \min\{rp^a, sp^a\}$; it follows that $y_1,\dots,y_n \in \tilde{\calR}^{\inte,rp^a}_R$.
In particular,
\[
\lambda(\alpha^r)(y_i) \leq 
\lambda(\alpha^{rp^a})(y_i)^{p^{-a}}
= \lambda(\alpha^r)(\varphi^a(y_i))^{p^{-a}}.
\]
Consequently, if $\lambda(\alpha^r)(\varphi^a(y_i))
> \lambda(\alpha^r)(A^{-1})^{1/(1-p^{-a})}$ for some $i$,
then
\[
\lambda(\alpha^r)(\varphi^a(y_i)) > \lambda(\alpha^r)(A^{-1})
\lambda(\alpha^r)(y_i)
\]
and rewriting \eqref{eq:splitting equation for filtration} 
as $\varphi^a(y_i) + x'_i = \sum_j (A^{-1})_{ij} y_j$
yields
\[
\max_i \{\lambda(\alpha^r)(\varphi^a(y_i))\} =  \lambda(\alpha^r)(x').
\]
To summarize, if $y_1,\dots,y_n \in \tilde{\calR}^{\inte}_R$,
then $y_1,\dots,y_n \in \tilde{\calR}^{\inte,rp^a}_R$
and 
\begin{equation} \label{eq:uniform bound}
\lambda(\alpha^{rp^a})(y_i) \leq \max\{\lambda(\alpha^r)(x'),
\lambda(\alpha^r)(A^{-1})^{1/(1-p^{-a})}\}.
\end{equation}
In particular, if the images of $y_1,\dots,y_n$ in $W(\calH(\beta))$
belong to $\tilde{\calR}^{\inte}_{\calH(\beta)}$ for all $\beta \in \calM(R)$,
then these images belong to $\tilde{\calR}^{\inte,rp^a}_{\calH(\beta)}$
and $\lambda(\beta^{rp^a})(y_i)$ is bounded uniformly over $\beta$.
This yields the desired result.
\end{proof}

\begin{theorem} \label{T:split polygon at point}
Let $M$ be a $\varphi^a$-module over $\tilde{\calR}_R$ of constant rank $n$. Let $\mu_1(M,\beta) \geq \cdots \geq \mu_n(M,\beta)$ be the slopes of $M$ at $\beta \in \calM(R)$ listed with multiplicity. Suppose that for some $m \in \{1,\dots,n-1\}$, for all $\beta \in \calM(R)$, $\mu_m(M,\beta) > \mu_{m+1}(M,\beta)$ and $\mu_1(M,\beta) + \cdots + \mu_m(M,\beta)$ is equal to a constant value.
Then there exists a unique $\varphi^a$-submodule $N$ of $M$ of rank $m$ such that $M/N$ is a $\varphi^a$-module and for each $\beta \in \calM(R)$, the slopes of $N$ at $\beta$ are $\mu_1(M,\beta),\dots,\mu_m(M,\beta)$
while the slopes of $M/N$ at $\beta$ are $\mu_{m+1}(M,\beta),\dots,\mu_n(M,\beta)$.
\end{theorem}
\begin{proof}
It suffices to check the claim locally around a point $\beta \in \calM(R)$,
since $\varphi^a$-modules can be glued 
by Corollary~\ref{C:phi-modules glueing}.
Set notation as in the proof of Theorem~\ref{T:open locus};
using faithfully flat descent, we may reduce to the case $R = R' = S$.
Let $N$ be the $\tilde{\calR}^{\bd}_R$-span of $\bv_1, \dots,
\bv_n$, and let $N_0\subset N$ be the $\tilde{\calR}^{\inte}_R$-span of $\bv_1, \dots,
\bv_n$. 

By Lemma~\ref{L:read off generic},
for some $S$ which is the union of faithfully finite
\'etale $R$-subalgebras $\{S_i\}_{i \in I}$, we may split
$N_0 \otimes_{\tilde{\calR}^{\inte}_R} W(S)$ uniquely as a direct sum of
submodules whose base extensions to $\tilde{\calE}_R$ are globally pure
$\varphi$-submodules. From the construction, there is a profinite group $G$ acting on $S$
and each $S_i$ in such a way that $S_i^G = R$.

Each projector in the splitting can be viewed as an element $\bv$ of
$N_0^\dual \otimes N_0 \otimes_{\tilde{\calR}^{\inte}_R} W(S)$ which is
$G$-invariant. This forces $\bv \in N_0^\dual \otimes N_0
\otimes_{\tilde{\calR}^{\inte}_R} W(R)$; that is, the projectors are all
defined over $W(R)$. In particular,
$N\otimes_{\tilde{\calR}^{\bd}_R} \tilde{\calE}_R$ splits uniquely into
globally pure $\varphi$-submodules.

We will show that the splitting separating the first $m$ slopes from the others descends to $N$; for this, we may
by the proof of \cite[Proposition~5.4.5]{kedlaya-revisited}
(plus a descent argument as in the previous paragraph) reduce to the
case where
$N$ admits a basis $\be_1,\dots,\be_n$ on which $\varphi^a$ acts via a
matrix
of the form $UD$, where $U$ is an upper triangular unipotent matrix
congruent to 1 modulo $p$,
and $D_{ii} = p^{c_i}$ with $c_1 \geq \cdots \geq c_n$.
By Lemma~\ref{L:pointwise splitting criterion},
we reduce to checking the splitting pointwise on $\calM(R)$.
But for $R = L$ an analytic field, the splitting in question
is given by the proof of \cite[Theorem~5.5.2]{kedlaya-revisited}.
This completes the proof.
\end{proof}

\begin{cor} \label{C:slope filtration}
Let $M$ be a $\varphi^a$-module over $\tilde{\calR}_R$ such that
the slope polygon function of $M$ is constant on $\calM(R)$.
Then there exists a unique filtration
$0 = M_0 \subset \cdots \subset M_l = M$ of $M$ by $\varphi^a$-submodules
such that $M_1/M_0,\dots,M_l/M_{l-1}$ are
$\varphi^a$-modules which are pure of constant slope,
and $\mu(M_1/M_0) > \cdots > \mu(M_l/M_{l-1})$.
\end{cor}
\begin{proof}
By Lemma~\ref{L:degree continuous},
we may assume that $M$ is of constant rank.
We may then induct on the rank using Theorem~\ref{T:split polygon at point}.
\end{proof}

\begin{cor} \label{C:local H1 injection}
Let $M$ be a $\varphi^a$-module over $\tilde{\calR}_R$
with everywhere negative slopes.
Then $H^0_{\varphi^a}(M) = 0$,
$H^0_{\varphi^a}(M \otimes_{\tilde{\calR}_R} \tilde{\calR}_{\calH(\beta)}) = 0$
for all $\beta \in \calM(R)$, and the map
\begin{equation} \label{eq:local H1 injection}
H^1_{\varphi^a}(M) \to \prod_{\beta \in \calM(R)} H^1_{\varphi^a}(M \otimes_{\tilde{\calR}_R} \tilde{\calR}_{\calH(\beta)})
\end{equation}
is injective.
\end{cor}
\begin{proof}
By Theorem~\ref{T:slope filtration2}, $H^0_{\varphi^a}(M \otimes_{\tilde{\calR}_R} \tilde{\calR}_{\calH(\beta)}) = 0$
for all $\beta \in \calM(R)$; since the map
\[
H^0_{\varphi^a}(M) \to \prod_{\beta \in \calM(R)} H^0_{\varphi^a}(M \otimes_{\tilde{\calR}_R} \tilde{\calR}_{\calH(\beta)})
\]
is evidently injective, it follows that $H^0_{\varphi^a}(M) = 0$. If $x \in H^1_{\varphi}(M)$ has zero image in $H^1_{\varphi^a}(M \otimes_{\tilde{\calR}_R} \tilde{\calR}_{\calH(\beta)})$ for each $\beta \in \calM(R)$, then $x$ defines an extension
\[
0 \to M \to P \to \tilde{\calR}_R \to 0
\]
whose base extension to $\tilde{\calR}_{\calH(\beta)}$ splits for each $\beta \in \calM(R)$.
This implies that the largest slope of $P$ is identically $0$
and the second-largest slope is always negative, so Theorem~\ref{T:split polygon at point} implies that $P \cong M \oplus \tilde{\calR}_R$. It follows that $x = 0$, completing the proof.
\end{proof}

\begin{remark}
For any given $\varphi^a$-module over $\tilde{\calR}_R$, the conclusion of Corollary~\ref{C:local H1 injection} holds for $M(n)$ for $n$ sufficiently small, since Proposition~\ref{P:slopes bounded}
ensures that the hypothesis of Corollary~\ref{C:local H1 injection} is satisfied. On the other hand, the injectivity of \eqref{eq:local H1 injection} also holds for $M(n)$ for $n$ sufficiently large, as in that case $H^1_{\varphi^a}(M(-n)) = 0$ by Proposition~\ref{P:H1}.
\end{remark}

\begin{remark} \label{R:arithmetic families}
As noted earlier, there is a generalization of slope
theory for $\varphi$-modules orthogonal to the one given here, where
one continues to work with rings of power series but with coefficients
in more general rings (such as affinoid algebras over $\Qp$).
These are called \emph{arithmetic families} in \cite{kedlaya-liu},
where they are distinguished from the \emph{geometric families} arising
here.
Unfortunately, it seems difficult to achieve any results in the context of arithmetic
families as complete as those given here, in no small part because such results would most likely
require a
heretofore nonexistent slope theory for Frobenius modules
over a Robba ring consisting of Laurent series over a \emph{nondiscretely} valued field.
One does however get some important information by working in neighborhoods of rigid analytic points;
for instance, one can construct global slope filtrations in such neighborhoods
\cite{liu}, which is relevant for applications to $p$-adic automorphic forms via the study of eigenvarieties \cite{liu-triangulation}.
\end{remark}

\section{Perfectoid spaces}
\label{sec:adic}

Up to this point, our constructions have generally taken as input a Banach algebra or an adic Banach algebra. In this sense they are \emph{local}; our next step is to parlay this work into some results of a more global nature,
including the relationship between $\varphi$-modules and \'etale local systems. The appropriate category of geometric spaces to use here is
the category of \emph{perfectoid spaces}, obtained by glueing together the adic spectra of perfectoid algebras using Huber's formalism of \emph{adic spaces}.

\subsection{Some topological properties}
\label{subsec:adic topological properties}

We begin by recalling some properties of topological spaces relevant to the study of adic spaces. The key definition of a \emph{spectral space}, and the basic properties of that definition, are due to Hochster \cite{hochster}.
\begin{defn}
A topological space $X$ is \emph{sober} if every irreducible closed subset has a unique generic point. This implies that $X$ is $T_0$: no two distinct points belong to exactly the same closed subsets of $X$.

A topological space is \emph{quasiseparated} if the intersection of any two quasicompact open subsets is again quasicompact. We will write \emph{qcqs} as an abbreviation for \emph{quasicompact and quasiseparated}.

A \emph{spectral space} is a topological space $X$ which is sober and qcqs 
and admits a neighborhood basis consisting of quasicompact open subsets.

A \emph{locally spectral space} is a topological space admitting a neighborhood basis consisting of open spectral subspaces. Such a space is spectral if and only if it is qcqs.
\end{defn}

A number of equivalent characterizations of spectral spaces can be found in \cite{hochster}, including the following.
\begin{lemma} \label{L:inverse limit}
A topological space is spectral if and only if it is an inverse limit of finite $T_0$ spaces.
\end{lemma}
\begin{proof}
See \cite[\S 13, Proposition~10]{hochster}.
\end{proof}
\begin{cor} \label{C:inverse limit}
Any inverse limit of spectral spaces is again a spectral space.
\end{cor}

\begin{defn} \label{D:patch topology}
For $X$ a spectral space, the \emph{patch topology} on $X$ is the topology generated by quasicompact open subsets for the original topology (which we also call the \emph{spectral topology} to clarify the distinction) and their complements.
Beware that one cannot recover the spectral topology from the patch topology alone; for example, there is another spectral topology whose quasicompact open subsets are the complements of the quasicompact open subsets for the original topology
 \cite[Proposition~8]{hochster}.
\end{defn}

\begin{lemma} \label{L:spectral quasicompact intersections}
Let $X$ be a spectral space. Then $X$ is
compact for the patch topology.
\end{lemma}
\begin{proof}
See \cite[Theorem~1]{hochster}.
\end{proof}

\begin{defn}
A map $f: Y \to X$ between locally spectral spaces is \emph{spectral} if it is continuous and for any qcqs open subsets $U \subseteq X, V \subseteq Y$ with $f(V) \subseteq U$, the induced map $V \to U$ is quasicompact (that is, the inverse image of any quasicompact open subset is again quasicompact). Equivalently, the inverse image of any quasicompact open subset is a quasicompact open subset. In particular,
any spectral morphism is continuous for the patch topologies.
\end{defn}

\begin{remark}
Any scheme is a locally spectral space. Any scheme which is qcqs in the sense of algebraic geometry (i.e., its underlying topological space is quasicompact and the absolute diagonal morphism is quasicompact) is spectral.
Any morphism of schemes is spectral.
\end{remark}

We have the following refinement of Theorem~\ref{T:spectra are spectral1}.
\begin{theorem} \label{T:spectra are spectral}
For any adic Banach ring $(A,A^+)$, $\Spa(A,A^+)$ is a spectral space.
For any morphism $(A,A^+) \to (B,B^+)$ of adic Banach rings, the map
$\Spa(B,B^+) \to \Spa(A,A^+)$ is spectral.
\end{theorem}
\begin{proof}
See again \cite[Theorem~3.5(i,ii)]{huber1}.
\end{proof}

\begin{remark}
Remark~\ref{R:Berkovich rational subspace} and Corollary~\ref{C:naive rational covering} together can be reinterpreted as follows. For $(A,A^+)$ an adic Banach algebra, the image of $\calM(A)$ in $\Spa(A,A^+)$ is not necessarily dense for the patch topology;
however, if $A$ is an affinoid algebra over an analytic field and $A^+ = A^\circ$,
then already $\Maxspec(A)$ is dense in $\Spa(A,A^+)$ for the patch topology.
\end{remark}

The comparison of rigid and Berkovich analytic spaces with adic spaces involves the following definition.
\begin{defn} \label{D:taut}
Let $X$ be a locally spectral topological space. We say $X$ is \emph{taut} if $X$ is quasiseparated and the closure of every quasicompact subset of $X$ is again quasicompact. For example, if $X$ is qcqs, or more generally if $X$ is quasiseparated and admits a locally finite covering by quasicompact open subsets, then $X$ is taut.

A spectral morphism between locally spectral topological spaces is \emph{taut} if the inverse image of every taut open subspace is taut.
For example, any qcqs morphism is taut. For more basic properties,
see \cite[Lemma~5.1.3]{huber}.
\end{defn}

\begin{remark}
For $k$ a field, glueing finitely many copies of $\Spec k[t]$ along $\Spec k[t,t^{-1}]$ gives a taut locally spectral space, but glueing infinitely many copies does not. By contrast, for $K$ an analytic field, glueing even two copies of $\Spa(K\{T\}, K\{T\}^\circ)$ along the complement of the origin does not give a taut space, because the complement of the origin is not quasicompact.
\end{remark}

\subsection{Adic spaces}
\label{subsec:adic spaces}

We now construct spaces out of adic Banach rings, following Huber. Beware that there does not yet seem to be a consensus about terminology concerning adic spaces, so one must check carefully when comparing results across sources.

We begin with an enhancement of the concept of a locally ringed space.

\begin{defn}
We define the category of \emph{locally valuation-ringed spaces}
as follows.
\begin{itemize}
\item
The objects are triples $X = (|X|, \calO_X, (v_x)_{x \in X})$ in which $|X|$ is a topological space,
$\calO_X$ is a sheaf of complete topological rings, and each $v_x$ is a semivaluation on the set-theoretic stalk $\calO_{X,x}$.
\item
The morphisms from $X = (|X|, \calO_X, (v_x)_{x \in X})$ to $Y = (|Y|, \calO_Y, (v_y)_{y \in Y})$
are pairs $(f, \varphi)$ where $f: |X| \to |Y|$ is a continuous map and $\varphi: \calO_Y \to f_* \calO_X$
is a morphism of sheaves of topological rings such that for each $x \in X$,
the restriction of $v_x$ along $\varphi_x: \calO_{Y,f(x)} \to \calO_{X,x}$
is equivalent to $v_{f(x)}$.
\end{itemize}
We also refer to these spaces for short as
\emph{locally v-ringed spaces}.
\end{defn}

\begin{defn}
For $(A,A^+)$ a sheafy adic Banach ring, we view $X = \Spa(A,A^+)$ as a locally v-ringed space where $\calO_X$ is the structure sheaf on $X$ (Definition~\ref{D:adic structure presheaf}) and $v_x$ is the restriction to $\calO_{X,x}$ of the canonical valuation on $\calH(x)$ (Definition~\ref{D:adic field}).
Any locally v-ringed space of this form is called an \emph{adic affinoid space}.
An \emph{adic space} is a locally v-ringed space admitting an open covering by adic affinoid spaces.

A \emph{morphism} of adic spaces is just a morphism of underlying locally v-ringed spaces. 
Note that any such morphism is automatically an \emph{adic morphism} in the sense of \cite[\S 3]{huber2} because our Banach rings are required to contain topologically nilpotent units; see \cite[Proposition~3.2]{huber2}. Note also that if $X$ is an adic space and $U \to X$ is an open immersion of locally ringed spaces, then $U$ naturally acquires the structure of an adic space in such a way that $U \to X$ becomes a morphism of adic spaces. We refer to any such morphism as an \emph{open immersion} of adic spaces.
\end{defn}

\begin{defn} \label{D:preadic}
Since the property of being sheafy is not known to be stable under various natural operations (e.g., formation of finite \'etale extensions), we are forced to associate spaces also to general adic Banach rings. Perhaps the simplest way to do this is to sheafify in the style of Scholze-Weinstein \cite[Definition~2.1.5]{scholze-weinstein}.
Let $\AdBan$ be the category of adic Banach rings. View $\AdBan^{\op}$ as a site in which the coverings are rational coverings. Let $(\AdBan^{\op})^{\tilde{}}$ be the associated topos (consisting of set-valued sheaves on $\AdBan^{\op}$). For $(A,A^+) \in \AdBan$, let $\widetilde{\Spa}(A,A^+) \in (\AdBan^{\op})^{\tilde{}}$ be the sheafification of the presheaf
\[
(B,B^+) \to \Hom_{\AdBan}((A,A^+), (B,B^+));
\]
any such object is called a \emph{preadic affinoid space}.
A \emph{rational subspace} of $\widetilde{\Spa}(A,A^+)$ is a morphism of the form $\widetilde{\Spa}(B,B^+) \to \widetilde{\Spa}(A,A^+)$ for some rational localization $(A,A^+) \to (B,B^+)$.
An \emph{open immersion} in $(\AdBan^{\op})^{\tilde{}}$ is a morphism
$f: \calF \to \calG$ such that for all $(A,A^+) \in \AdBan$ and all morphisms
$\widetilde{\Spa}(A,A^+) \to \calG$ in $(\AdBan^{\op})^{\tilde{}}$, there is an open subset $U \subseteq \Spa(A,A^+)$ such that
\[
\calF \times_{\calG} \widetilde{\Spa}(A,A^+) = \varinjlim_{V \subseteq U, V \text{ rational}}
\widetilde{\Spa}(\calO_{\Spa(A,A^+)}(V), \calO_{\Spa(A,A^+)}^+(V)).
\]
A \emph{preadic space} (called an \emph{adic space} in \cite{scholze-weinstein}) is a functor $\calF \in (\AdBan^{\op})^{\tilde{}}$ such that
\[
\calF = \varinjlim_{\widetilde{\Spa}(A,A^+) \to \calF \text{ open}} \widetilde{\Spa}(A,A^+).
\]
There are natural functors from adic spaces to preadic spaces and from preadic spaces to locally v-ringed spaces, whose composition is the full embedding of adic spaces into locally v-ringed spaces. This allows us to regard adic spaces as a full subcategory of preadic spaces;
in particular, when $(A,A^+)$ is sheafy we will freely confuse $\Spa(A,A^+)$ with
$\widetilde{\Spa}(A,A^+)$.
We may also associate to any preadic space $X$ an underlying topological space $\left| X \right|$ in such a way that $\widetilde{\Spa}(A,A^+)$ has underlying topological space $\Spa(A,A^+)$.
\end{defn}

\begin{remark}
Beware that a preadic affinoid space can be an adic space without being an adic affinoid space. See Remark~\ref{R:locally perfectoid is perfectoid}.
\end{remark}

\begin{remark} \label{R:ignorance}
The categories of locally v-ringed spaces and preadic spaces admit fibred products.
However, it is unknown whether the fibred product of adic spaces (over an adic space) is again an adic space.
A counterexample would necessarily involve nonnoetherian Banach rings thanks to Proposition~\ref{P:strongly noetherian}; on the other hand, the example of perfectoid spaces (\S\ref{subsec:perfectoid})
shows that failure of the noetherian property alone is not sufficient.
\end{remark}

\begin{defn}
Let $X$ be a preadic space. We say that $X$ is \emph{quasicompact} if it admits a finite covering by open immersions of preadic affinoid spaces.
We say that $X$ is \emph{quasiseparated} if for any two 
open immersions $U_1 \to X, U_2 \to X$ with $U_1, U_2$ quasicompact,
$U_1 \times_X U_2$ is also quasicompact. We say that $X$ is \emph{taut} if
$X$ is quasiseparated and
for any open immersion $U \to X$ with $U$ quasicompact, there exists a covering $V_1 \to X, V_2 \to X$ by open immersions such that $V_1$ is quasicompact, 
$U \to X$ factors through $V_1$,
and $U \times_X V_2 = \emptyset$.
Note that for $X$ an adic space, $X$ is quasicompact, quasiseparated, or taut if and only if $\left| X \right|$ has the same property.
\end{defn}

\begin{defn}
The assignments $(A,A^+) \mapsto A, (A,A^+) \mapsto A^+$ give rise to sheaves
$\calO_X, \calO_X^+$ of rings on any preadic space $X$. In case $X$ is an adic space,
the sheaf $\calO_X$ coincides with the structure sheaf defined previously,
while $\calO_X^+$ coincides with 
the subsheaf of $\calO_X$ such that for each open subset $U$ of $X$,
a section $f \in \calO_X(U)$ belongs to $\calO_X^+(U)$ if and only if for each $x \in X$, the image of $f$ in $\calH(x)$ belongs to $\calH(x)^+$.
The stalk $\calO_{X,x}^+$ of $\calO_X^+$ at $x \in X$ may then be identified with the inverse image of $\calH(x)^+$ in $\calO_{X,x}$.

For $(A,A^+)$ an adic Banach ring and $X = \widetilde{\Spa}(A,A^+)$, by \eqref{eq:adic transform} we have $A^+ = \calO_X^+(X)$.
%the inverse image of $\calO_X^+(X)$ under $A \to \calO_X(X)$ equals $A^+$. In %particular, if $(A,A^+)$ is sheafy, then $A^+ = \calO_X^+(X)$.
\end{defn}

\begin{remark}
For $X$ an adic affinoid space, the sheaf $\calO_X^+$ need not be acyclic. However, for perfectoid spaces, $\calO_X^+$ is acyclic as a sheaf of almost modules; 
see Proposition~\ref{P:almost sheaf property}.
\end{remark}

\begin{lemma} \label{L:adic morphisms}
Let $(A,A^+)$ be an adic Banach ring. Then for each adic space $X$, the global sections functor induces a bijection between morphisms $X \to \widetilde{\Spa}(A,A^+)$
of preadic spaces and morphisms $A \to \calO_X(X)$ of topological rings
taking $A^+$ into $\calO_X^+(X)$.
\end{lemma}
\begin{proof}
See \cite[Proposition~2.1(ii)]{huber2}.
\end{proof} 

\begin{remark}
The statement of Lemma~\ref{L:adic morphisms} also holds when $X$ is a preadic space, but in that case it is purely formal.
The essential content of the lemma is to define the functor from adic spaces to preadic spaces.
\end{remark}
%\begin{convention}
%To avoid technical complications in what follows, we hereafter consider only preadic and %adic spaces defined over analytic fields.
%\end{convention}

The construction of Definition~\ref{D:Berkovich rational subspace} extends to preadic and adic spaces as follows. (See also \cite[\S 8.1]{huber}.)
\begin{defn}
Recall that for any adic Banach ring $(A,A^+)$ over an analytic field, there is a
continuous map $\Spa(A,A^+) \to \calM(A)$ constructed in Definition~\ref{D:Berkovich rational subspace} and a canonical but discontinuous section $\calM(A) \to \Spa(A,A^+)$. One way to interpret these constructions is that the first map takes each $x \in \Spa(A,A^+)$ to the unique $y \in \calM(A)$ belonging to the intersection of all open neighborhoods of $x$ in $\Spa(A,A^+)$, and that the topology on $\calM(A)$ coincides with the quotient topology (not the subspace topology).

Now let $X$ be a preadic space (over an analytic field). Let $\overline{X}$ be the set of $x \in \left| X \right|$ for which $v_x$ is equivalent to a real semivaluation. Then for any $x \in X$, the intersection of all open neighborhoods of $x$ in $X$ contains a unique point $y$ of $\overline{X}$
(by the previous paragraph). We thus obtain a set-theoretic map $\left| X \right| \to \overline{X}$; we equip $\overline{X}$ with the quotient topology and call it the \emph{real quotient} of $X$. 
An open subset of $X$ is \emph{partially proper} if it arises as the inverse image of a (necessarily open) subset of $\overline{X}$.
\end{defn}

\begin{lemma} \label{L:taut Hausdorff quotient}
Let $X$ be an adic space over an analytic field.
\begin{enumerate}
\item[(a)]
If $X$ is quasicompact, then so is $\overline{X}$.
\item[(b)]
If $X$ is taut, then $\overline{X}$ is Hausdorff (and thus is the maximal Hausdorff quotient of $X$).
\item[(c)]
If $X$ is taut, then any partially proper open subset of $X$ is also taut.
\end{enumerate}
\end{lemma}
\begin{proof}
Part (a) is trivial. Part (b) follows from the proof of \cite[Lemma~8.1.8(ii)]{huber}.
To prove (c), let $U$ be a partially proper open subset of $X$ and let $V$ be a quasicompact open subset of $U$. Let $\overline{V}$ be the image of $V$ in $\overline{X}$; it is a closed subset by (a) and (b).
Let $W$ be the inverse image of $\overline{V}$ in $\overline{X}$; then $W$ is the closure of $V$ in $X$. Since $W \subseteq U$, $W$ is also the closure of $V$ in $U$;
this yields (c).
\end{proof}

Lemma~\ref{L:taut Hausdorff quotient}(b) fails without the taut condition as follows.
\begin{example}
Let $K$ be an analytic field. Let $X$ be the adic space obtained by glueing two copies of the closed unit disc $\Spa(K\{T\}, K\{T\}^\circ)$ along the complement of the origin. Then $\overline{X}$ is obtained by glueing two copies of $\calM(K\{T\})$ along the complement of the origin, and thus is not Hausdorff.
\end{example}

Using Lemma~\ref{L:taut Hausdorff quotient}, one can formulate the relationship between adic spaces and Berkovich spaces.
\begin{prop} \label{P:comparison to Berkovich}
The real quotient functor defines an equivalence of categories between
taut adic spaces locally of finite type over an analytic field $K$ (i.e., covered by the adic spectra of affinoid algebras over $K$) and
Hausdorff strictly $K$-analytic spaces in the sense of Berkovich
\cite{berkovich2}.
\end{prop}
\begin{proof}
This follows from Lemma~\ref{L:taut Hausdorff quotient} and \cite[Proposition~8.3.1]{huber}.
\end{proof}

\begin{remark}
In practice, the adic spaces arising in applications are almost always taut. However, nontaut adic spaces do arise in \emph{arithmetic} relative $p$-adic Hodge theory, where one considers Galois representations
not on $\Qp$-vector spaces but on vector bundles over more general analytic spaces.
In that context, it has been shown by us \cite{kedlaya-liu} and Hellmann  \cite{hellmann1} that the analogue of the \'etale locus
for an arithmetic family of $(\varphi, \Gamma)$-modules is in general a nontaut adic space.
\end{remark}

In \cite[Chapter~1]{huber}, Huber introduces a large number of properties of morphisms of adic spaces, but only under some noetherian hypotheses which are too restrictive to apply to perfectoid spaces. We thus introduce some \emph{ad hoc} definitions that agree with Huber's definitions when the latter are applicable, as in \cite[Definition~7.1]{scholze1}.

\begin{defn} \label{D:properties of morphisms}
Let $\psi: Y \to X$ be a morphism of preadic
spaces. We have already defined what it means for $\psi$ to be an \emph{open immersion} (see Definition~\ref{D:preadic}).
\begin{itemize}
\item
We say that $\psi$ is \emph{surjective} if for any morphism $Z \to X$, the map
$\left| Y \times_X Z \right| \to \left| Z \right|$ is surjective. If $X$ is an adic space, this is the same as saying that $\left| Y \right| \to \left| X \right|$ is surjective 
(by Lemma~\ref{L:fibre product}).
\item
We say that $\psi$ is \emph{finite \'etale}
if locally on the target, $\psi$ corresponds to a morphism of the form $\widetilde{\Spa}(B,B^+) \to \widetilde{\Spa}(A,A^+)$ where $B$ is a finite \'etale $A$-algebra and $B^+$ is the integral closure of $A^+$ in $B$.
\item
We say that $\psi$ is \emph{\'etale} if for each $y \in Y$, there exists an open neighborhood $U$ of $y$ in $Y$ such that the restriction of $\psi$ to $U$ factors as 
an open immersion followed by a finite \'etale morphism followed by another open immersion.
\end{itemize}
These properties are evidently stable under base extension.
\end{defn}

\begin{lemma} \label{L:finite etale from algebra}
The following statements are true.
\begin{enumerate}
\item[(a)]
For $(A,A^+)$ an adic Banach ring, the global sections functor induces an equivalence of categories between finite \'etale morphisms to $\widetilde{\Spa}(A,A^+)$ and $\FEt(A)$.
\item[(b)]
Any \'etale morphism $Y \to X$ of preadic spaces with $Y$ quasicompact factors uniquely as $Y \to Z \to X$ where $Y \to Z$ is surjective, $Z$ is quasicompact, and $Z \to X$ is an open immersion.
\item[(c)]
The properties introduced in Definition~\ref{D:properties of morphisms} are
stable under compositions and fibred products.
\end{enumerate}
\end{lemma}
\begin{proof}
Part (a) is immediate from Theorem~\ref{T:henselian direct limit3} and the formal properties of the functor $\widetilde{\Spa}$.
To prove (b)--(c), we may use
Lemma~\ref{L:construct affinoid system} and 
Proposition~\ref{P:henselian direct limit2} to reduce to the case where all of the spaces involved are classical affinoid spaces,
for which we may appeal to \cite[Proposition~3.1.7]{dejong-vanderput} for (b)
and \cite[Proposition~1.7.5]{huber} for (c).
\end{proof}

\begin{remark} \label{R:etale over acyclic}
When applying Lemma~\ref{L:finite etale from algebra}(a), beware that we do not know that  a finite \'etale extension of a sheafy adic Banach algebra is sheafy, or even that a finite \'etale cover of an adic affinoid space is itself an adic space.
In particular, these statements will not follow from 
Theorem~\ref{T:adic vector bundle}.
\end{remark}

\begin{defn} \label{D:adic etale topology}
A family $\{Y_i \to X\}_i$ of morphisms of preadic spaces is a \emph{set-theoretic covering} if
the morphism from the disjoint union of the $Y_i$ to $X$ is surjective.
Note that if $Y_i \to X$ is \'etale, we may factor $Y_i \to Z_i \to X$ as in Lemma~\ref{L:finite etale from algebra}(b), and then the original family is a set-theoretic covering if and only if the family $\{Z_i \to X\}$ is.

We may define the \emph{small finite \'etale site} (resp.\ the \emph{small \'etale site}) on a preadic space $X$ over an analytic field as the site $X_{\fet}$ (resp.\ $X_{\et}$)
whose objects consist of finite \'etale (resp.\ \'etale) morphisms $Y \to X$ and whose coverings are set-theoretic coverings.

We define a \emph{stable basis} of $X_{\et}$ to be a basis $\calB$ of $X_{\et}$ consisting of adic affinoid spaces such that for any morphism $Y' \to Y$ in $X_{\et}$ which is either finite \'etale or a rational subdomain embedding, if $Y \in \calB$ then $Y' \in \calB$.
(Note that the basis property also includes stability under formation of fibred products.) We say that $X$ is \emph{stably adic} if there exists a stable basis of $X_{\et}$. For instance, rigid analytic spaces are stably adic; we will see later that perfectoid spaces are also stably adic (Definition~\ref{D:perfectoid space}).
\end{defn}

Many properties of the \'etale topology may be verified by making the corresponding verifications for the finite \'etale topology and the adic topology separately.
In this process, the role of Tate's reduction argument for the adic topology (Proposition~\ref{P:Tate reduction}) will be played by the following argument,
adapted from de Jong and van der Put \cite[Proposition~3.2.2]{dejong-vanderput}.

\begin{prop} \label{P:djvdp}
Let $X$ be a preadic space, and let $\calB$ be a basis of $X_{\et}$ consisting of preadic affinoid subspaces which is closed under formation of finite \'etale extensions and rational subdomain embeddings (e.g., a stable basis).
Let $\calP$ be a property of coverings in $X_{\et}$ of and by elements of $\calB$, and assume that the following conditions hold.
\begin{enumerate}
\item[(a)]
Any covering admitting a refinement having property $\calP$ also has property $\calP$.
\item[(b)]
Any composition of coverings having property $\calP$ also has property $\calP$.
\item[(c)]
For any $Y \in \calB$, any rational covering of $Y$ has property $\calP$.
\item[(d)]
For any $Y \in \calB$, any faithfully finite \'etale morphism $Y' \to Y$, viewed as a covering, has property $\calP$. 
\end{enumerate}
Then every covering in $X_{\et}$ of and by elements of $\calB$ has property $\calP$.
\end{prop}
\begin{proof}
We first note that using (a) and Lemma~\ref{L:finite etale from algebra}(b), we may formally extend (c) to any covering for the adic topology.
We will use (c) in this stronger form without further comment.

We next establish the following extra condition.
\begin{enumerate}
\item[(e)]
Let $Z \to U$ be a surjective morphism between elements of $\calB$
which factors as $Z \to V \to U$ with $V \to U$ finite \'etale and $Z \to V$ an open immersion. Then $Z \to U$, viewed as a covering, has property $\calP$.
\end{enumerate}
We induct on the maximum degree $d$ of $V \to U$.
The case $d=1$ holds because in this case $Z \to U$ is a surjective open immersion and hence an isomorphism (so for instance (d) applies). For $d>1$, let $W$ be the complement of the diagonal in $V \times_U V$; then the second projection $\pr_2: W \to V$ is finite \'etale of maximum degree $d-1$.
Put $Z' = (Z \times_U V) \times_{V \times_U V} W$ 
and $U' = \pr_2(Z')$; these spaces are both quasicompact
by Lemma~\ref{L:finite etale from algebra}(b). 
We may thus find a finite covering $\{U'_i \to U'\}_i$ for the adic topology by  elements of $\calB$.
Put $Z'_i = Z' \times_{U'} U'_i$; the surjective \'etale morphism
$\phi': Z'_i \to U'_i$ induced by $\pr_2$ factors through $V'_i \to U'_i$ for
$V'_i = \pr_2^{-1}(U'_i) \times_{V \times_U V} W$. The latter morphism is finite \'etale of maximum degree $d-1$, so the following coverings have property $\calP$:
\begin{itemize}
\item
$\{Z'_i \to U'_i\}$, by the induction hypothesis; 
\item
$\{Z \to V\} \cup \{U'_i \to V\}_i$, by (c);
\item
$\{V \to U\}$, by (d);
\item
$\{Z \to U\} \cup \{Z'_i \to U\}_i$, by (b);
\item
$\{Z \to U\}$, by (a).
\end{itemize}
This completes the induction and hence the proof of (e).

Given (e), let $\{Y_i \to Y\}_i$ be any covering in $X_{\et}$ of and by elements of $\calB$. For each $i$, we can find a covering $\{Y_{ij} \to Y_i\}_j$ such that the composition $Y_{ij} \to Y_i \to Y$ factors as $Y_{ij} \to Z_{ij} \to Y$ where $Z_{ij}$ is finite \'etale and $Y_{ij}$ is an open immersion. 
By Lemma~\ref{L:finite etale from algebra}(b), we may write the image of $Y_{ij} \to Y$ as a finite union $\{U_{ijk}\}_k$ of elements of $\calB$. Put
$Y_{ijk} = Y_{ij} \times_Y U_{ijk}$. The following coverings then have property $\calP$:
\begin{itemize}
\item
$\{Y_{ijk} \to U_{ijk}\}$, by (e);
\item
$\{U_{ijk} \to Y\}$, by (c);
\item
$\{Y_{ijk} \to Y\}$, by (b);
\item
$\{Y_i \to Y\}$, by (a).
\end{itemize}
This completes the proof.
\end{proof}

This gives rise to the following analogue of Proposition~\ref{P:acyclicity template}.
\begin{prop} \label{P:acyclicity template etale}
For $X, \calB$ as in Proposition~\ref{P:djvdp},
let $\calF$ be a presheaf of abelian groups on $X_{\et}$ such that for every $Y = \widetilde{\Spa}(A, A^+) \in \calB$
and every covering $\gothV$ of one of the following forms:
\begin{enumerate}
\item[(a)]
a simple Laurent covering, or
\item[(b)]
a faithfully finite \'etale morphism;
\end{enumerate}
we have
\[
\check{H}^0(Y, \calF; \gothV) = \calF(Y), \qquad \text{resp.} \qquad
\check{H}^i(Y, \calF; \gothV) = \begin{cases} \calF(Y) & i=0 \\
0 & i>0. \end{cases}
\]
Then for every covering $\gothV$ of $Y \in \calB$ by elements of $\calB$, 
\[
H^0(Y, \calF) = \check{H}^0(Y, \calF; \gothV) = \calF(Y), \quad \text{resp.} \quad
H^i(Y, \calF) = \check{H}^i(Y, \calF; \gothV) = \begin{cases} \calF(Y) & i=0 \\
0 & i>0. \end{cases}
\]
In particular, on $Y$, $\calF$ takes the same value as its sheafification.
\end{prop}
\begin{proof}
As in the proof of Proposition~\ref{P:acyclicity template}, using
Proposition~\ref{P:Tate reduction} and Proposition~\ref{P:djvdp} we may successively verify that:
\begin{itemize}
\item $\calF(Y) \to \check{H}^0(Y, \calF; \gothV)$ is injective;
\item $\calF(Y) \to \check{H}^0(Y, \calF; \gothV)$ is bijective;
\item in the second situation, $\gothV$ is universally \v{C}ech-acyclic (its pullback along any morphism $Y' \to Y$ of elements of $\calB$ is \v{C}ech-acyclic);
\end{itemize}
and then check the claims for $H^i(Y, \calF)$ by standard homological algebra.
\end{proof}

For vector bundles on adic spaces, one has analogues of the theorems of Tate and Kiehl.
Recall that we cannot handle general coherent sheaves because rational localizations are in general not flat. (We will return to this issue in a subsequent paper.)
\begin{theorem} \label{T:adic vector bundle}
Let $X = \Spa(A,A^+)$ be an adic affinoid space.
\begin{enumerate}
\item[(a)]
For any finite projective $A$-module $M$,
the presheaf $\tilde{M}$ with $\tilde{M}(U) = M \otimes_A \calO_X(U)$ is an acyclic sheaf for the adic topology and the finite \'etale topology.
\item[(b)]
The categories of finite projective $A$-modules, finite locally free $\calO_X$-modules,
and finite locally free $\calO_{X_{\fet}}$-modules are equivalent.
\item[(c)]
Suppose that $X_{\et}$ admits a stable basis $\calB$. Then
for any finite projective $A$-module $M$, the presheaf $\tilde{M}$ on $X_{\et}$ defined as in (a) is an acyclic sheaf on $\calB$, meaning that for $Y \in \calB$ we have
\[
H^i(Y, \tilde{M}) = \begin{cases} \tilde{M}(Y) & i=0 \\
0 & i>0. \end{cases}
\]
\item[(d)]
Suppose that $X$ is stably adic. Then
the categories of finite projective $A$-modules and finite locally free $\calO_{X_{\et}}$-modules are equivalent.
\end{enumerate}
\end{theorem}
\begin{proof}
To prove (a), note that the adic case follows from Theorem~\ref{T:Tate sheaf property for structure sheaf} and the finite \'etale case follows from
Lemma~\ref{L:finite etale from algebra}(a).
To prove (b), note that the equivalence between the first and second categories follows from Theorem~\ref{T:tate to Kiehl}, while the equivalence between the first and third categories follows from faithfully flat descent
for finite \'etale morphisms
(Theorem~\ref{T:descent modules} and Theorem~\ref{T:descent finite locally free}).
Given (a), we may deduce (c) using Proposition~\ref{P:acyclicity template etale}.
Given (b), we may deduce (d) using Proposition~\ref{P:djvdp}.
\end{proof}

\subsection{Perfectoid spaces}
\label{subsec:perfectoid}

We next globalize the theory of perfectoid algebras to obtain perfectoid spaces,
following Scholze \cite{scholze1} (plus some minor modifications to avoid having to work over a perfectoid field). We also introduce the related notions of \emph{preperfectoid spaces} and \emph{relatively perfectoid spaces}, following Scholze--Weinstein \cite{scholze-weinstein}.

\begin{defn} \label{D:perfectoid space}
A \emph{perfect uniform/perfectoid/preperfectoid/strongly preperfectoid/relatively perfectoid affinoid space} is a preadic affinoid space of the form $\widetilde{\Spa}(A,A^+)$ where $(A,A^+)$ is a perfect uniform/perfectoid/preperfectoid/strongly preperfectoid/relatively perfectoid adic Banach algebra. Any such space is in fact an adic affinoid space by Theorem~\ref{T:Tate-Kiehl analogue1} (in the perfect case),
Theorem~\ref{T:Tate-Kiehl analogue2} (in the perfectoid case),
and Theorem~\ref{T:Kiehl for preperfectoid} (in the remaining cases).

A \emph{perfect/perfectoid/preperfectoid/strongly preperfectoid/relatively perfectoid space} is a preadic space covered by open subspaces which are
perfect/perfectoid/preperfectoid/strongly preperfectoid/relatively perfectoid affinoid spaces. Any such space is in fact an adic space, and even a stably adic space:
thanks to Theorem~\ref{T:perfectoid rational} and Theorem~\ref{T:mixed lift ring},
the subspaces which are themselves perfect/perfectoid/preperfectoid/strongly preperfectoid/relatively perfectoid form a basis for the \'etale topology.
\end{defn}

\begin{prop} \label{P:almost sheaf property}
Let $X = \Spa(A,A^+)$ be a perfect uniform or perfectoid affinoid space, and equip $A$ with the spectral norm. 
\begin{enumerate}
\item[(a)]
We have $H^0(X, \calO) = H^0(X_{\et}, \calO) = A$.
\item[(b)]
For $i>0$, $H^i(X, \calO) = H^i(X_{\et}, \calO) = 0$.
\item[(c)]
For $i>0$, the groups $H^i(X, \calO_X^+), H^i(X_{\et}, \calO_X^+)$ are annihilated by $\gothm_{A}$ (i.e., they are \emph{almost zero}). 
\end{enumerate}
\end{prop}
\begin{proof}
The first two assertions follow from Theorem~\ref{T:uniform rational is sheafy},
Theorem~\ref{T:Tate sheaf property for structure sheaf},
and Theorem~\ref{T:adic vector bundle}.
The third assertion follows from the first two assertions plus Remark~\ref{R:perfect uniform strict}
(in the perfect uniform case)
or Proposition~\ref{P:perfectoid uniform strict}
(in the perfectoid case).
\end{proof}

\begin{remark}
By Proposition~\ref{P:locally perfect is perfect}, any adic affinoid space which is also a perfect uniform space is in fact a perfect uniform affinoid space. The corresponding statement for perfectoid spaces is unknown; see 
Remark~\ref{R:locally perfectoid is perfectoid}.
\end{remark}

\begin{defn}
For $X$ a perfect adic space, we may define sheaves of rings $*_X$ for
\[
* = \tilde{\calE}^{\inte}, \tilde{\calE}, \tilde{\calR}^{\inte,r}, \tilde{\calR}^{\inte,+}, \tilde{\calR}^{\inte},
\tilde{\calR}^{\bd,r}, \tilde{\calR}^{\bd,+}, \tilde{\calR}^{\bd},
\tilde{\calR}^{[s,r]}, \tilde{\calR}^{r}, \tilde{\calR}^+, \tilde{\calR},
\]
by glueing together the presheaves defined in Definition~\ref{D:robba presheaves},
which are sheaves by Theorem~\ref{T:robba presheaves}.
We may also view these as presheaves on $X_{\et}$; by Proposition~\ref{P:acyclicity template etale}, sheafifying these presheaves does not change their values
on any perfect adic affinoid space.
\end{defn}

We globalize the perfectoid correspondence as follows.
\begin{theorem} \label{T:perfectoid correspondence}
There is an equivalence of categories between perfectoid adic spaces and pairs $(X, \calI)$ in which $X$ is a perfect adic space and $\calI$ is an ideal subsheaf of $ \tilde{\calR}^{\inte,+}_X$ which is locally generated by a single element which is primitive of degree $1$. (In the latter category, morphisms have the form $(Y, \calJ) \to (X, \calI)$ where $Y \to X$ is a morphism of perfect uniform adic spaces and $\calJ$ is isomorphic to the pullback of $\calI$.) This equivalence is compatible with rational localizations, fibred products, and \'etale morphisms; moreover, corresponding spaces are functorially homeomorphic.
\end{theorem}
\begin{proof}
This follows by combining Theorem~\ref{T:perfectoid ring},
Theorem~\ref{T:perfectoid rational},
and Theorem~\ref{T:mixed lift ring}.
\end{proof}

This recovers Scholze's tilting correspondence from \cite{scholze1}.
\begin{cor}
Suppose that $K$ is a perfectoid analytic field of characteristic $0$,
and let $K'$ be the corresponding perfect analytic field of characteristic $p$
from Theorem~\ref{T:perfectoid field} (i.e., the \emph{tilt} of $K$ in the sense of Scholze).
Then there is a canonical equivalence of categories between
perfectoid adic spaces over $K$ and perfect uniform adic spaces over $K'$
which is compatible with fibred products.
Moreover, corresponding spaces have homeomorphic underlying topological spaces
and isomorphic \'etale topoi.
\end{cor}

\subsection{\'Etale local systems on adic spaces}
\label{subsec:spectra local systems}

We now wish to define and study \'etale local systems on adic spaces.
For this, we must clarify the distinction between \'etale local systems on the Zariski spectrum and the adic spectrum of an adic Banach ring.
We begin with a refinement of Lemma~\ref{L:henselian local ring}.

\begin{lemma} \label{L:henselian local ring plus}
Let $(A,A^+)$ be an adic Banach ring.
Let $\Spec(A') \to \Spec(A)$ be a surjective \'etale morphism of schemes. Then for any $\alpha \in \calM(A)$, there exists a rational localization $(A,A^+) \to (B,B^+)$
encircling $\alpha$ such that $A' \otimes_A B$ splits as a direct sum of subrings, at least one of which is faithfully finite \'etale over $B$.
\end{lemma}
\begin{proof}
By hypothesis, we can choose some prime ideal $\gothq$ of $A'$ above $\gothp_\alpha$. The norm on $\kappa(\gothp_\alpha)$ induced from $\calH(\alpha)$ then extends to $\kappa(\gothq)$ and thus defines a point $\beta \in \calM(A')$. By the Jacobian criterion, we can write $A'_\gothq$ as a complete intersection $A_{\gothp_\alpha}[x_1,\dots,x_n]/(f_1,\dots,f_n)$ with invertible Jacobian determinant $J$.
We may then choose a rational localization $(A,A^+) \to (B,B^+)$ encircling $\alpha$ so that $f_1,\dots,f_n$ have coefficients in $B$
and that $J$ is invertible in
\[
S[x_1,\dots,x_n]/(f_1,\dots,f_n).
\]
This
yields the desired result.
\end{proof}

\begin{lemma} \label{L:isogeny descent analytic}
Let $(A,A^+)$ be an adic Banach ring. Let $V$ be an \'etale $\Qp$-local system over $\Spec(A)$. Then for any $\alpha \in \calM(A)$, there exists a rational localization $(A,A^+) \to (B,B^+)$ encircling $\alpha$ such that $V \times_{\Spec(A)} \Spec(B)$ is an isogeny $\Zp$-local system on $\Spec(B)$.
\end{lemma}
\begin{proof}
This follows by combining Lemma~\ref{L:isogeny descent}
with Lemma~\ref{L:henselian local ring plus}.
\end{proof}

\begin{defn}
Let $X$ be a preadic space. For each covering of $X$ by
preadic affinoid spaces $U_i = \widetilde{\Spa}(A_i, A_i^+)$, construct the categories of descent data for \'etale $\Zp$-local
systems and etale $\Qp$-local systems for the covering: that is, specify a local system $V_i$ over $\Spec(A_i)$ for each $i$,
then cover each intersection $U_i \cap U_j$ with preadic affinoid spaces $\widetilde{\Spa}(B_k, B_k^+)$ and 
define isomorphisms of the restrictions of $V_i$ and $V_j$ to $B_k$ satisfying the cocycle condition.
Then form the 2-limit (as in Remark~\ref{R:fet direct limit}) over all
covering families; we call the resulting categories the categories of
\emph{\'etale $\Zp$-local systems over $X$}
and
\emph{\'etale $\Qp$-local systems over $X$}.
When $X = \Spa(A,A^\circ)$ for $A$ an affinoid algebra over an analytic field, these categories are equivalent to the corresponding categories
defined by de Jong \cite{dejong-etale} in terms of \'etale covering spaces.
\end{defn}

\begin{remark}
Thanks to the local factorization of \'etale morphisms, one gets the same categories of local systems if one takes coverings in the \'etale topology rather than the adic topology.
\end{remark}

\begin{remark} \label{R:analytic Zp equivalence}
The natural functor from
\'etale $\Zp$-local systems over $\Spec(A)$ to \'etale $\Zp$-local systems over $\widetilde{\Spa}(A,A^+)$
is an equivalence of categories, since $\Zp$-local systems are determined by finite \'etale algebras
(Remark~\ref{R:local systems rings equivalence}) and these glue over covering families of rational localizations
(Theorem~\ref{T:henselian direct limit3}).
The corresponding statement for $\Qp$-local systems is false; see Remark~\ref{R:different local systems}.
\end{remark}

We have the following variant of Lemma~\ref{L:isogeny descent analytic}, with essentially the same proof.
\begin{prop} \label{P:glueing weak covering}
Let $(A,A^+)$ be an adic Banach ring. Let $V$ be an \'etale $\Qp$-local system over $\widetilde{\Spa}(A,A^+)$. Then for any $\alpha \in \calM(A)$, there exists a rational localization $(A,A^+) \to (B,B^+)$ encircling $\alpha$ such that the restriction of $V$ to $\widetilde{\Spa}(B,B^+)$ is an isogeny $\Zp$-local system.
\end{prop}
\begin{proof}
By Remark~\ref{R:lattice space} 
and Lemma~\ref{L:finite etale from algebra}(a),
given a preadic space $X$ and an object $T \in \ZpLoc(X)$, the functor taking a morphism $Y \to X$ to the pairs $(T', \iota)$ in which $T' \in \ZpLoc(Y)$ and $\iota: T_Y \otimes \Qp \to T' \otimes \Qp$ is an isomorphism such that $p^m \iota \in \Mor(T_Y, T')$, $p^m \iota^{-1} \in \Mor(T', T_Y)$
is representable by a finite \'etale morphism $L_m(T) \to X$.
By this construction plus Theorem~\ref{T:henselian} and Lemma~\ref{L:henselian local ring}, we may find a rational localization $(A, A^+) \to (B,B^+)$ encircling $\alpha$, a faithfully finite \'etale morphism $(B, B^+) \to (C,C^+)$, and a $\Zp$-lattice in $V$
over $\widetilde{\Spa}(C,C^+)$ admitting a descent datum relative to $\widetilde{\Spa}(B,B^+)$. We may thus 
invoke Lemma~\ref{L:isogeny descent} to conclude.
\end{proof}

\begin{cor} \label{C:glueing weak covering}
Let $(A,A^+)$ be an adic Banach ring. Then any \'etale $\Qp$-local system over $\widetilde{\Spa}(A,A^+)$ can be realized as a descent datum for isogeny $\Zp$-local systems for some strong covering family of rational localizations.
\end{cor}
\begin{proof}
This is immediate from Proposition~\ref{P:glueing weak covering} and the compactness of $\calM(A)$.
\end{proof}

\begin{remark} \label{R:different local systems}
The natural functor from \'etale $\Qp$-local systems over $\Spec(A)$
to \'etale $\Qp$-local systems over $\widetilde{\Spa}(A,A^+)$ is fully faithful, but
it need not be essentially surjective even when $A$ is a reduced affinoid algebra over an analytic field, as observed by de Jong.
For instance, suppose that $A$ is an integral affinoid algebra over an analytic field. Then on one hand $\Spec(A)$ is normal and noetherian, so $\ZpILoc(\Spec(A))
= \QpLoc(\Spec(A))$ by Remark~\ref{R:normal noetherian local systems}.
On the other hand,
\'etale $\Qp$-local systems over $\Spa(A, A^+)$
correspond to continuous representations of the \'etale fundamental group of $\calM(A)$
(as defined in \cite[\S 2.6]{dejong-etale}) on finite-dimensional $\Qp$-vector spaces,
and such representations can fail to have compact image.
Typical examples arise from instances of $p$-adic uniformization, such as the Tate
uniformization of an elliptic curve of split multiplicative reduction;
see Example~\ref{exa:Tate curve}.
More examples of this sort arise from Rapoport-Zink period morphisms; see \cite[\S 7]{dejong-etale}.
\end{remark}

\begin{remark} \label{R:extend Qp-local systems}
For any adic Banach ring $(A,A^+)$ and any isogeny $\Zp$-local systems $V_1, V_2$ on $\Spec(A)$,
any extension $0 \to V_1 \to V \to V_2 \to 0$ in the category of \'etale $\Qp$-local systems on $\widetilde{\Spa}(A,A^+)$
descends to an extension of isogeny $\Zp$-local systems on $\Spec(A)$. To see this, start with identifications
$V_i = T_i \otimes_{\Zp} \Qp$ for some \'etale $\Zp$-local systems on $\Spec(A)$. The extension
$0 \to V_1 \to V \to V_2 \to 0$ then corresponds \'etale locally on $\widetilde{\Spa}(A,A^+)$ to a class in
$\Ext(T_2, T_1) \otimes_{\Zp} \Qp$;
by Corollary~\ref{C:glueing weak covering},
we can find a strong covering family
$\{(A,A^+) \to (B_i, B_i^+)\}_i$ of rational localizations such that
the extension corresponds to a class $x_i$ in $\Ext(T_2, T_1) \otimes_{\Zp} \Qp$ over $\Spec(B_i)$.
Put $B_{ij} = B_i \widehat{\otimes}_A B_j$; then $x_i - x_j$ vanishes as an element of
$\Ext(T_2, T_1) \otimes_{\Zp} \Qp$ over $\Spec(B_{ij})$.
We may rescale $T_1$ by a power of $p$ first to force each $x_i$ into $\Ext(T_2, T_1)$ over $\Spec(B_i)$,
then to force $x_i - x_j$ to vanish in $\Ext(T_2, T_1)$ over $\Spec(B_{ij})$.
The extensions then glue to define an extension of \'etale $\Zp$-local systems on $\widetilde{\Spa}(A,A^+)$.
We may now invoke Remark~\ref{R:analytic Zp equivalence} to conclude.
\end{remark}

\subsection{\texorpdfstring{$\varphi$}{phi}-modules and local systems}
\label{subsec:phi-modules}

We now relate pure and \'etale $\varphi$-modules to \'etale local systems on perfectoid adic spaces.

\begin{hypothesis} \label{H:phi-modules}
Throughout \S\ref{subsec:phi-modules},
let $d$ be a positive integer. Write $\QQ_{p^d}$ for the finite unramified extension of $\Qp$ of degree $d$ and $\ZZ_{p^d}$ for the valuation subring of $\QQ_{p^d}$.
\end{hypothesis}

We begin with the case of $\Zp$-local systems.

\begin{lemma} \label{L:small translate}
Let $R$ be a perfect Banach algebra over $\FF_p$ with spectral norm $\alpha$.
For any $c > 1$, any positive integer $d$, and any $x \in R$, there exists $y \in R$ with
$\alpha(x - y + y^{p^d}) < c$.
\end{lemma}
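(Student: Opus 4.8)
The plan is to reformulate the problem in terms of the additive operator $L(y) = y - y^{p^d}$ on $R$. Since $R$ has characteristic $p$, for any $y_1,y_2 \in R$ we have $(y_1-y_2)^{p^d} = y_1^{p^d} - y_2^{p^d}$, so $L$ is $\Fp$-linear. In particular the quantity we must make small is exactly $x - y + y^{p^d} = x - L(y)$, and the additivity of $L$ will let us build up the desired $y$ as a sum of simple pieces.

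Next I would exploit perfectness of $R$ to solve a ``one-step'' version of the equation exactly. Given $w \in R$, set $v = -w^{p^{-d}} \in R$; using $(-1)^{p^d} = -1$ in $R$ (trivial when $p$ is odd, and forced by characteristic $2$ when $p=2$), one computes $v^{p^d} = -w$, hence $L(v) = v - v^{p^d} = -w^{p^{-d}} + w = w - w^{p^{-d}}$. Applying this with $w = x^{p^{-(j-1)d}}$ for $j = 1,\dots,n$ and summing, the resulting sum telescopes: for $y_n = -\sum_{j=1}^n x^{p^{-jd}} \in R$ one gets $L(y_n) = x - x^{p^{-nd}}$, and therefore $x - y_n + y_n^{p^d} = x - L(y_n) = x^{p^{-nd}}$.

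Finally, since $R$ is uniform its norm is power-multiplicative, so $\alpha(x^{p^{-nd}}) = \alpha(x)^{p^{-nd}}$. If $x = 0$ we are already done with $y=0$; otherwise $\alpha(x) > 0$, and $\alpha(x)^{p^{-nd}} \to 1$ as $n \to \infty$ (it is $<1$ for every $n \geq 1$ when $\alpha(x) < 1$, and decreases to $1$ when $\alpha(x) \geq 1$). Since $c > 1$, we may fix $n$ large enough that $\alpha(x)^{p^{-nd}} < c$, and then $y = y_n$ has the required property.

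There is no serious obstacle here; the only points needing care are the uniform sign bookkeeping $(-1)^{p^d} = -1$ across $p=2$ and $p$ odd, and the appeal to power-multiplicativity of $\alpha$, which is precisely the uniformity hypothesis on $R$. Note that we never need $\alpha$ to be bounded above by the trivial norm, so the argument applies to an arbitrary perfect uniform Banach $\Fp$-algebra as in Hypothesis~\ref{H:relative extended}.
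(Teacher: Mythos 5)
Your proof is correct and is essentially the same as the paper's: the paper also takes $y = -(x^{p^{-d}} + \cdots + x^{p^{-md}})$ so that $x - y + y^{p^d} = x^{p^{-md}}$, and then chooses $m$ large enough that $\alpha(x)^{p^{-md}} < c$ (handling the easy case $\alpha(x) \leq 1$ with $y = 0$). The telescoping and the appeal to power-multiplicativity are exactly the intended argument.
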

\begin{proof}
If $\alpha(x) \leq 1$, we may take $y = 0$. Otherwise, we may take
$y = -(x^{p^{-d}} + \cdots + x^{p^{-md}})$ for any positive integer $m$ which is large
enough that $\alpha(x)^{p^{-md}} < c$, as then $x - y + y^{p^d} = x^{p^{-md}}$.
\end{proof}

\begin{theorem} \label{T:perfect equivalence2}
For $(R,R^+)$ a perfect uniform adic Banach algebra over $\FF_{p^d}$,
the following categories are equivalent.
\begin{enumerate}
\item[(a)] The category of \'etale $\ZZ_{p^d}$-local systems over $\Spec(R)$ (or equivalently $\Spa(R,R^+)$).
\item[(b)] The category of \'etale $\ZZ_{p^d}$-local systems over $\Spec(R_0)$ (or equivalently $\Spa(R_0,R_0^+)$) for any complete subring $R_0$ of $R$ whose completed direct perfection is equal to $R$, taking $R_0^+ = R_0 \cap R^+$.
\item[(c)] The category of \'etale $\ZZ_{p^d}$-local systems over $\Spec(A)$ (or equivalently $\Spa(A,A^+)$) for 
$(A,A^+)$ corresponding to $(R,R^+)$ as in Theorem~\ref{T:perfectoid ring}.
%$A = \tilde{\calR}_R^{\inte,1}/(z)$
%for any $z \in W(R^+)$ which is primitive of degree $1$.
\item[(d)] The category of $\varphi^d$-modules over $\tilde{\calE}^{\inte}_{R}$.
\item[(e)] The category of $\varphi^d$-modules over $\tilde{\calR}^{\inte}_R$.
\end{enumerate}
More precisely, the functor from (e) to (d) is base extension.
\end{theorem}
\begin{proof}
The equivalences between (a) and (b) and between (a) and (c)
follow from Remark~\ref{R:local systems rings equivalence} combined with
Theorem~\ref{T:perfect etale} and Theorem~\ref{T:mixed lift ring},
respectively. The equivalence between (a) and (d) follows immediately from Proposition~\ref{P:phi-modules}.

We next check that the base extension functor from (e) to (d) is fully faithful.
As in Remark~\ref{R:etale descent},
it suffices to check that for
$M$ a $\varphi^d$-module over $\tilde{\calR}^{\inte}_R$,
any $\varphi^d$-stable element $\bv \in M \otimes_{\tilde{\calR}^{\inte}_R} W(R)$
belongs to $M$. This claim may be checked locally on $\calM(R)$,
so we may assume that $M$ is free over $\tilde{\calR}^{\inte}_R$.
By Proposition~\ref{P:DM relative}, we can choose an $R$-algebra $S$ which is a completed
direct limit of faithfully finite \'etale $R$-subalgebras, in such a way that
$M \otimes_{\tilde{\calR}^{\inte}_R} \tilde{\calR}^{\inte}_S$ admits a $\varphi^d$-invariant
basis $\be_1,\dots,\be_n$. If we write $\bv = \sum_i x_i \be_i$ with $x_i \in W(S)$,
then $x_i \in W(S)^{\varphi^d} = W(S^{\overline{\varphi}^d})$,
and the latter ring is contained in $\tilde{\calR}^{\inte}_S$ by
Remark~\ref{R:phi-invariant}. Since $W(R) \cap \tilde{\calR}^{\inte}_S = \tilde{\calR}^{\inte}_R$,
it follows that $\bv \in M$ as desired.

We finally check that the functor from (e) to (d) is essentially surjective.
Let $M$ be a $\varphi^d$-module over $W(R)$.
By faithfully flat descent (Theorem~\ref{T:descent modules} and Theorem~\ref{T:descent finite locally free}),
to check that $M$ arises by base extension from $\tilde{\calR}^{\inte}_R$,
it suffices to do so after replacing $R$ with a faithfully finite \'etale extension.
Since (a) and (d) are equivalent, we may reduce to the case where $M$
admits a basis $\be_1,\dots,\be_n$ on which $\varphi^d$ acts via a matrix
$F$ for which $F-1$ has entries in $pW(R)$.

Let $\alpha$ be the spectral norm on $R$.
We define matrices $F_n, G_n$ for each positive integer $n$ such that $F_1 = F$, $G_1 = 1$,
$F_n-1$ has entries in $p W(R)$, $G_n$ has entries in
$\tilde{\calR}^{\inte,1}_R$, $\lambda(\alpha)(G_n - 1) < 1$,
and $X_n = p^{-n}(F_n - G_n)$ has entries in $W(R)$.
Namely, given $F_n$ and $G_n$, apply Lemma~\ref{L:small translate} to construct
a matrix $\overline{Y}_n$ over $R$ so that
$\alpha(\overline{X}_n - \overline{Y}_n + \overline{\varphi}^d(\overline{Y}_n)) < p^{n/2}$.
Then put $U_{n} = 1 + p^n [\overline{Y}_n]$ (the entries of $[\overline{Y}_n]$ are Teichm\"{u}ller liftings of corresponding entries of $\overline{Y}_n$), $F_{n+1} = U_{n}^{-1} F_n \varphi^d(U_{n})$,
$G_{n+1} = G_n + p^n [\overline{X}_n - \overline{Y}_n + \overline{\varphi}^d(\overline{Y}_n)]$.
The product $U_1 U_2 \cdots$ converges to a matrix $U$ so that
$U^{-1} F \varphi^d(U)$ is equal to the $p$-adic limit of the $G_n$, which is invertible over $\tilde{\calR}^{\inte,r}_R$
for any $r \in (0,1)$.
Consequently, the $\tilde{\calR}^{\inte}_R$-span of the vectors
$\bv_1,\dots,\bv_n$ defined by $\bv_j = \sum_i U_{ij} \be_i$ gives a
$\varphi^d$-module $N$ over $\tilde{\calR}^{\inte}_R$ for which
$N \otimes_{\tilde{\calR}^{\inte}_R} W(R) \cong M$.
\end{proof}

\begin{remark}
In Theorem~\ref{T:perfect equivalence2}, the equivalence between \'etale $\Zp$-local systems over
$R$ and $\varphi$-modules over $W(R)$ can also be interpreted as a form of nonabelian Artin-Schreier theory,
using \emph{Lang torsors}.
For example,
see \cite[Proposition~4.12]{milne} for a derivation of ordinary Artin-Schreier theory in this framework.
\end{remark}

Thanks to Theorem~\ref{T:perfectoid correspondence},
Theorem~\ref{T:perfect equivalence2} immediately globalizes as follows.
\begin{theorem} \label{T:perfect equivalence2 global}
Let $X$ be a perfectoid adic space over $\QQ_{p^d}$ and let $X'$ be the corresponding perfect uniform adic space over $\FF_{p^d}$.
Then the following categories are equivalent.
\begin{enumerate}
\item[(a)] The category of \'etale $\ZZ_{p^d}$-local systems over $X$.
\item[(b)] The category of \'etale $\ZZ_{p^d}$-local systems over $X'$.
\item[(c)]
The category of \'etale $\ZZ_{p^d}$-local systems over $X'_0$
for any adic space $X'_0$ whose inverse perfection (i.e., its inverse limit along absolute Frobenius) is isomorphic to $X'$.
\item[(d)] The category of $\varphi^d$-modules over $\tilde{\calE}^{\inte}_{X'}$.
\item[(e)] The category of $\varphi^d$-modules over $\tilde{\calR}^{\inte}_{X'}$.
\end{enumerate}
\end{theorem}

We next consider isogeny $\Zp$-local systems.

\begin{theorem} \label{T:perfect equivalence2a}
For $(R,R^+)$ a perfect uniform adic Banach algebra over $\FF_{p^d}$, the following categories are equivalent.
\begin{enumerate}
\item[(a)] The category of isogeny $\ZZ_{p^d}$-local systems over $\Spec(R)$ (or equivalently $\Spa(R,R^+)$).
\item[(b)] The category of isogeny $\ZZ_{p^d}$-local systems over $\Spec(R_0)$ (or equivalently $\Spa(R_0, R_0^+))$) for any complete subring $R_0$ of $R$ whose completed direct perfection is equal to $R$, taking $R_0^+ = R_0 \cap R^+$.
\item[(c)] The category of isogeny $\ZZ_{p^d}$-local systems over $\Spec(A)$ (or equivalently $\Spa(A,A^+)$)
for $(A,A^+)$ corresponding to $(R,R^+)$ as in Theorem~\ref{T:perfectoid ring}.
%for $A = \tilde{\calR}_R^{\inte,1}/(z)$ for any $z \in W(R^+)$ which is primitive of degree $1$.
\item[(d)] The category of globally \'etale  $\varphi^d$-modules over $\tilde{\calE}_R$.
\item[(e)] The category of globally \'etale $\varphi^d$-modules over $\tilde{\calR}^{\bd}_R$.
\item[(f)] The category of globally \'etale $\varphi^d$-modules over $\tilde{\calR}_R$.
\end{enumerate}
More precisely, the functors from (e) to (d) and (f) are base extensions.
\end{theorem}
\begin{proof}
The equivalences between (a) and (b) and between (a) and (c)
again follow from Remark~\ref{R:local systems rings equivalence} combined with
Theorem~\ref{T:perfect etale} and Theorem~\ref{T:mixed lift ring},
respectively.

The functor from (a) to (e) is constructed as follows. Let $V$ be an isogeny $\ZZ_{p^d}$-local
system over $\Spec(R)$; we may write $V = T \otimes_{\Zp} \Qp$ for some $\ZZ_{p^d}$-local system $T$
on $\Spec(R)$. The latter corresponds to a $\varphi^d$-module $M_{0,R}$ over
$\tilde{\calR}^{\inte}_R$ by Theorem~\ref{T:perfect equivalence2}.
The assignment $V \to M_{0,R} \otimes_{\tilde{\calR}^{\inte}_R} \tilde{\calR}^{\bd}_R$
then defines a fully faithful functor by Corollary~\ref{C:extended Robba invariants};
by the same reasoning, the resulting functors from (a) to (d) and from (a) to (f) are fully faithful.

To construct the functor from (e) back to (a), given a $\varphi^d$-module $M$ over
$\tilde{\calR}^{\bd}_R$ admitting a locally free \'etale model $M_0$,
apply Theorem~\ref{T:perfect equivalence2} to convert $M_0$ into a $\ZZ_{p^d}$-local system $T$
on $\Spec(R)$. The assignment $M \to T \otimes_{\Zp} \Qp$ defines a quasi-inverse to the functor
from (a) to (e). By similar reasoning, the functors from (a) to (d) and from (a) to (f) are equivalences
of categories.
\end{proof}

\begin{defn} \label{D:etale twist}
For $c,d \in \ZZ$ with $d>0$, define an \emph{isogeny $(c,d)$-$\Zp$-local system}
on a scheme $X$ to be an isogeny $\ZZ_{p^d}$-local system $V$ on $X$ equipped with a semilinear
action of the Frobenius automorphism $\tau$ of $\QQ_{p^d}$ on sections,
such that $p^c \tau^d$ acts as the identity. For $c',d' \in \ZZ$ with $d'>0$ and $c'/d' = c/d$,
the categories of isogeny $(c,d)$-$\Zp$-local systems and isogeny $(c',d')$-$\Zp$-local systems
on any scheme are naturally equivalent: this reduces to the case where $c' = ce, d' = de$ for some
positive integer $e$, in which case the claim is an easy exercise using Hilbert's Theorem 90.
(See \cite[Th\'eor\`eme~3.2.3]{berger-b-pairs} for a similar construction.)
We may similarly define \emph{\'etale $(c,d)$-$\Qp$-local systems}.
\end{defn}

\begin{theorem} \label{T:perfect equivalence2a1}
For $(R,R^+)$ a perfect uniform adic Banach algebra over $\FF_{p^d}$ and $c \in \ZZ$,
the following categories are equivalent.
\begin{enumerate}
\item[(a)] The category of isogeny $(c,d)$-$\Zp$-local systems over $\Spec(R)$.
\item[(b)] The category of isogeny $(c,d)$-$\Zp$-local systems over $\Spec(R_0)$ for any subring $R_0$ of $R$ whose completed direct perfection is equal to $R$.
\item[(c)] The category of isogeny $(c,d)$-$\Zp$-local systems over $\Spec(A)$ for $A = \tilde{\calR}_R^{\inte,1}/(z)$
for any $z \in W(R^+)$ which is primitive of degree $1$.
\item[(d)] The category of globally $(c,d)$-pure $\varphi$-modules over $\tilde{\calE}_R$.
\item[(e)] The category of globally $(c,d)$-pure $\varphi$-modules over $\tilde{\calR}^{\bd}_R$.
\item[(f)] The category of globally $(c,d)$-pure $\varphi$-modules over $\tilde{\calR}_R$.
\end{enumerate}
More precisely, the functors from (e) to (d) and (f) are base extensions.
\end{theorem}
\begin{proof}
This is immediate from Theorem~\ref{T:perfect equivalence2a}.
\end{proof}

We finally pass to $\Qp$-local systems on adic spaces.
\begin{defn}
Let $X$ be a perfect uniform adic space over $\FF_{p}$ and let $M$ be a $\varphi$-module over one of $\tilde{\calE}_X$, $\tilde{\calR}^{\bd}_X$, $\tilde{\calR}_X$.
For $x \in X$, we say that $M$ is \emph{$(c,d)$-pure at $x$}
if $M$ is $(c,d)$-pure at the rank 1 seminorm induced by $x$.
We say $M$ is \emph{$(c,d)$-pure} if it is $(c,d)$-pure
at each $x\in X$. Similarly, we define the \emph{slope polygon} of $M$ at $x$ by passing to the induced rank 1 seminorm; this leads to corresponding definitions of the \emph{pure locus} and \emph{\'etale locus} of $M$. By definition, these sets are pullback from subsets of the real quotient $\overline{X}$ when the latter is defined.
\end{defn}

\begin{remark} \label{R:purity local}
Let $(R,R^+)$ be a perfect uniform Banach algebra over $\FF_{p}$ and put $X = \Spa(R,R^+)$.
For $* = \tilde{\calE}, \tilde{\calR}^{\bd}, \tilde{\calR}$,
there is a natural functor from $\varphi^d$-modules  over $*_R$ to
$\varphi^d$-modules over $*_X$; we sometimes refer to the latter as \emph{local $\varphi^d$-modules} over $*_R$.
These functors are fully faithful by Theorem~\ref{T:robba presheaves}.
The functors for $\tilde{\calE}$ and $\tilde{\calR}^{\bd}$
are not equivalences of categories (see Example~\ref{exa:Tate curve}),
but the functor for $\tilde{\calR}$ is an equivalence of categories
by Corollary~\ref{C:phi-modules glueing}.
In any case, thanks to the local nature of the pure and \'etale conditions,
one sees easily that a $\varphi^d$-module over $*_R$ is pure or \'etale if and only if the
corresponding $\varphi^d$-module over $*_X$ has this property.
\end{remark}

\begin{lemma} \label{L:pure locus is open taut}
Let $X$ be a perfect uniform adic space over $\FF_{p^d}$ and let $M$ be a $\varphi^d$-module over $\tilde{\calR}_X$. Then the pure locus and the \'etale locus of $\tilde{\calR}_X$ are open and partially proper. In particular, by Lemma~\ref{L:taut Hausdorff quotient}, if $X$ is taut, then so are the pure locus and the \'etale locus.
\end{lemma}
\begin{proof}
This is immediate from Corollary~\ref{C:etale locus is open}.
\end{proof}

\begin{theorem} \label{T:perfect equivalence2b}
Let $X$ be a perfectoid adic space over $\QQ_{p^d}$ and let $X'$ be the corresponding perfect uniform adic space over $\FF_{p^d}$.
Then the following categories are equivalent.
\begin{enumerate}
\item[(a)] The category of \'etale $(c,d)$-$\QQ_{p}$-local systems over $X$.
\item[(b)] The category of \'etale $(c,d)$-$\QQ_{p}$-local systems over $X'$.
\item[(c)]
The category of \'etale $(c,d)$-$\QQ_{p}$-local systems over $X'_0$
for any adic space $X'_0$ whose inverse perfection  is isomorphic to $X'$.
\item[(d)] The category of $(c,d)$-pure $\varphi$-modules over $\tilde{\calE}_{X'}$.
\item[(e)] The category of $(c,d)$-pure $\varphi$-modules over $\tilde{\calR}^{\bd}_{X'}$.
\item[(f)] The category of $(c,d)$-pure $\varphi$-modules over $\tilde{\calR}_{X'}$.
\end{enumerate}
\end{theorem}
\begin{proof}
This is immediate from
Theorem~\ref{T:perfectoid correspondence}
and Theorem~\ref{T:perfect equivalence2a1}.
\end{proof}

\begin{cor} \label{C:same purity}
Let $X$ be a perfect uniform adic space over $\FF_{p^d}$.
Then a $\varphi^d$-module over $\tilde{\calE}_X$, $\tilde{\calR}^{\bd}_X$, $\tilde{\calR}_X$ is pure of slope $s$
at some $x \in X$ if and only if it is $(c',d')$-pure at $x$ for every (not just one)
pair $c',d'$ of integers for which $d'$ is a positive multiple of $d$ and $c'/d' = s$.
\end{cor}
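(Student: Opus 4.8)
The plan is to reduce the statement to a comparison of two slope-presentations, then push everything through the equivalences of \S\ref{sec:local systems}. The ``only if'' direction is immediate, so assume $M$ (over one of $\tilde{\calE}_R$, $\tilde{\calR}^{\bd}_R$, $\tilde{\calR}_R$) is $(c_1,d_1)$-pure at $\beta$ with $a\mid d_1$, $d_1>0$, $c_1/d_1=s$, and let $(c_2,d_2)$ be any second pair of the same kind. Setting $d_3=\mathrm{lcm}(d_1,d_2)$ and $c_3=sd_3\in\ZZ$, the pair $(c_3,d_3)$ is a positive-integer multiple of each of $(c_1,d_1)$ and $(c_2,d_2)$, so it suffices to show, for $a\mid d$, $c/d=s$ and every positive integer $e$, that $M$ is $(c,d)$-pure at $\beta$ precisely when it is $(ce,de)$-pure at $\beta$. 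One implication is trivial: a locally free local $(c,d)$-pure model $M_0$ of $M$ at $\beta$ satisfies $(p^{ce}\varphi^{de})^{*}M_0=\bigl((p^{c}\varphi^{d})^{*}\bigr)^{e}M_0\cong M_0$, hence is also a $(ce,de)$-pure model.

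For the reverse implication I would work over a rational localization $R\to R'$ encircling $\beta$ for which $M$ becomes globally $(ce,de)$-pure, and then apply the equivalences of Theorem~\ref{T:perfect equivalence2b} (in whichever of its forms (d), (e), (f) matches the ring at hand): the globally $(ce,de)$-pure object over $R'$ corresponds to an \'etale $(ce,de)$-$\Qp$-local system over $\calM(R')$; by Definition~\ref{D:etale twist}, since $ce/de=c/d$, this corresponds canonically to an \'etale $(c,d)$-$\Qp$-local system; and feeding the latter back through Theorem~\ref{T:perfect equivalence2b} for the pair $(c,d)$ produces a globally $(c,d)$-pure object over $R'$, which exhibits $M$ as $(c,d)$-pure at $\beta$. (To invoke Theorem~\ref{T:perfect equivalence2b} for $(ce,de)$ and for $(c,d)$ one first enlarges $A$ to $A\otimes_{\Fp}\FF_{p^{de}}$ so its hypothesis is met, and descends at the end.) For the ring $\tilde{\calR}_R$ there is a more self-contained argument: restricting a locally free $(ce,de)$-pure model of $M$ to $\calH(\beta)$ shows $M\otimes\tilde{\calR}_{\calH(\beta)}$ is pure of slope $s$, and over an analytic field purity of slope $s$ is already pair-independent (Definition~\ref{D:slopes}, Definition~\ref{D:phi-modules extended}), so $M\otimes\tilde{\calR}_{\calH(\beta)}$ admits a $(c,d)$-pure model; since $\beta$ then lies in the pure locus of $M$, Theorem~\ref{T:spread etale} extends this to a free local $(c,d)$-pure model of $M$ at $\beta$. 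For $\tilde{\calR}^{\bd}_R$ one may alternatively deduce the claim from the $\tilde{\calE}_R$ case using Lemma~\ref{L:locally free to free}(b).

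A more hands-on alternative over $\tilde{\calE}_R$ (hence $\tilde{\calR}^{\bd}_R$): from a locally free $(ce,de)$-pure model $M_0$ over $W(R')$ and the $\varphi^{d}$-semilinear bijection $\Phi=p^{c}\varphi^{d}\colon M\to M$ induced by the $\varphi^{a}$-structure (so $\Phi^{e}=p^{ce}\varphi^{de}$ stabilizes $M_0$ together with its inverse), form
\[
M_0'=\sum_{j=-(e-1)}^{e-1}\Phi^{j}(M_0),
\]
a finite $W(R')$-submodule of $M$ with $M_0'\otimes_{W(R')}\tilde{\calE}_{R'}\cong M$; the inclusions $\Phi^{\pm e}(M_0)\subseteq M_0$ force $\Phi(M_0')=M_0'$, whence $(p^{c}\varphi^{d})^{*}M_0'\cong M_0'$, and one upgrades $M_0'$ to a locally free $(c,d)$-pure model, e.g.\ by trivializing it over a faithfully finite \'etale cover or by identifying it with a $\varphi^{d}$-module via Theorem~\ref{T:perfect equivalence2}.

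The main obstacle I anticipate is not the slope combinatorics but the bookkeeping inside the category equivalences: one must check that the round trip ``\'etale $(ce,de)$-local system $\to$ \'etale $(c,d)$-local system $\to$ $(c,d)$-pure object'', composed with the first functor, is the identity on the underlying $\varphi^{a}$-modules (so that it really is \emph{our} $M$ that becomes $(c,d)$-pure, not an unrelated module of the same rank and slope), and one must justify descending from $A\otimes_{\Fp}\FF_{p^{de}}$ back to $A$ to dispose of the hypothesis $\FF_{p^{d}}\subseteq A$ built into Theorems~\ref{T:perfect equivalence2} and~\ref{T:perfect equivalence2b}; in the hands-on variant the analogous sticking point is verifying that the explicitly constructed $M_0'$ is locally free. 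Granting those compatibilities together with the pointwise pair-independence over $\calH(\beta)$ and Theorem~\ref{T:spread etale}, the corollary is formal.
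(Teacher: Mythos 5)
Your main argument is exactly the paper's: reduce to comparing $(c,d)$- with $(ce,de)$-purity, pass by faithfully flat descent to the case $\FF_{p^{de}}\subseteq A$, and transport the question through the equivalence with \'etale twisted $\Qp$-local systems (the paper invokes Theorem~\ref{T:perfect equivalence2a1} rather than Theorem~\ref{T:perfect equivalence2b}, which is immaterial), where pair-independence is supplied by Definition~\ref{D:etale twist}. Your self-contained alternative over $\tilde{\calR}_R$ via restriction to $\calH(\beta)$ and Theorem~\ref{T:spread etale} is also sound, and the hands-on construction of $M_0'$ does indeed stall exactly where you suspect (local freeness of the summed model), but since your primary route is complete this does not affect the proof.
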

\begin{proof}
This follows from Theorem~\ref{T:perfect equivalence2a1}
plus the corresponding equivalence on the side of local
systems (Definition~\ref{D:etale twist}).
\end{proof}

\begin{cor} \label{C:bounded pointwise pure}
Let $M$ be a $\varphi^d$-module over $\tilde{\calE}_R$, $\tilde{\calR}^{\bd}_R$, or $\tilde{\calR}_R$. If $M$ is pointwise pure, then $M$ is pure.
\end{cor}
\begin{proof}
Over $\tilde{\calR}_R$ this is immediate from
Corollary~\ref{C:pointwise etale is etale}, so from now on we assume that $M$ is a $\varphi^d$-module over $\tilde{\calE}_R$ (resp.\
$\tilde{\calR}^{\bd}_R$).
Choose $\beta \in \calM(R)$, put $n= \rank(M, \beta)$, and choose $c',d' \in \ZZ$ with $d'$ a positive multiple of $d$ and $c'/d' = \mu(M,\beta)$.
Put $M_{\beta} = M \otimes_{\tilde{\calE}_R} \tilde{\calE}_{\calH(\beta)}$ 
(resp.\ $M_\beta = M \otimes_{\tilde{\calR}^{\bd}_R} \tilde{\calR}^{\bd}_{\calH(\beta)}$).

Suppose first that $M_{\beta}$ admits a \emph{cyclic vector}, i.e.,
an element $\be$ such that $(p^{c'} \varphi^{d'})^i(\be)$ for $i=0,\dots,n-1$ are linearly independent. 
Then any element of $M_{\beta}$ sufficiently close to $\be$ for the weak topology (resp.\ the LF topology) is also a cyclic vector, so we may choose $\be \in M_S$ for
$R \to S$ a rational localization encircling $\beta$ and $M_S = M \otimes_{\tilde{\calE}_R} \tilde{\calE}_S$ (resp.\ $M_S = M \otimes_{\tilde{\calR}^{\bd}_R} \tilde{\calR}^{\bd}_S$). For a suitable choice of $S$, $(p^{c'} \varphi^{d'})^i(\be)$ for $i=0,\dots,n-1$ form a basis of $M_S$. For such $S$, the $W(S)$-submodule (resp.\ $\tilde{\calR}^{\inte}_S$-submodule) of $M$ spanned by this basis forms a free pure model
by \cite[Lemma~5.2.4]{kedlaya-revisited}.

To handle the general case, write $R$ as a Banach algebra over a perfect analytic field $L$. Choose $\overline{x} \in L^\times$ of norm less than 1.
Let $\be_1,\dots,\be_n$ be a basis of $M_{\beta}$.
Then for some $r_1,\dots,r_n \in \ZZ[p^{-1}]$,
$[\overline{x}]^{r_1} \be_1 + \cdots + [\overline{x}]^{r_n} \be_n$ is a cyclic vector of $M_\beta$, so the previous paragraph shows that $M$ is pure.
\end{proof}

\begin{cor} \label{C:purity base extension extended}
Let $(R,R^+) \to (S,S^+)$ be a bounded homomorphism of perfect uniform adic Banach algebras over $\FF_{p^d}$ for which $\Spa(S,S^+) \to \Spa(R,R^+)$
is surjective. Let $M$ be a local $\varphi^d$-module over $\tilde{\calE}_R$ (resp.\ $\tilde{\calR}^{\bd}_R$, $\tilde{\calR}_R$).
Then $M$ is pure if and only if $M \otimes_{\tilde{\calE}_R} \tilde{\calE}_S$ (resp.\ $M \otimes_{\tilde{\calR}^{\bd}_R} \tilde{\calR}^{\bd}_S$,
$M \otimes_{\tilde{\calR}_R} \tilde{\calR}_S$) is pure.
\end{cor}
\begin{proof}
By Corollary~\ref{C:purity base extension},
$M$ is pointwise pure. By Corollary~\ref{C:bounded pointwise pure},
$M$ is pure.
\end{proof}
\begin{cor}
Let $Y \to X$ be a surjective morphism of perfectoid adic spaces.
Let $M$ be a $\varphi^d$-module over $\tilde{\calR}_X$. Then
$M$ is pure (resp.\ \'etale) if and only if the pullback of $M$ to
$\tilde{\calR}_Y$ is pure (resp.\ \'etale).
\end{cor}

Here are some examples to illustrate the difference between $\varphi$-modules and local $\varphi$-modules, and between globally \'etale and \'etale $\varphi$-modules.
\begin{example} \label{exa:Tate curve}
Put $K = \FF_p((q))$ for an arbitrary normalization $|q| = \omega < 1$ of the $q$-adic norm,
and define the strictly affinoid algebras
\[
B = K\{\omega^2/T, T,U/\omega^{-2}\}/(U(T-q)-1),  \qquad
B_1 = K\{\omega^2/T, T/\omega^2\}, \qquad
B_2 = K\{1/T, T\}.
\]
over $K$. In words, $\Spa(B,B^\circ)$ is the annulus $\omega^2 \leq |T| \leq 1$ minus the open disc
$|T-q| < \omega^2$, and
$\Spa(B_1,B_1^\circ)$ and $\Spa(B_2, B_2^\circ)$ are the boundary circles $|T| = \omega^2$ and $|T| = 1$, respectively,
within $\Spa(B, B^\circ)$.
Let $\sigma_q: B_2 \to B_1$ be the substitution $T \mapsto q^2 T$.
If we quotient $\Spa(B,B^\circ)$ by the identification $\Spa(B_2, B_2^\circ) \cong \sigma_q^* \Spa(B_1, B_1^\circ)$,
we obtain a strictly affinoid subspace $\Spa(A, A^\circ)$ of the
Tate curve over $X$ for the parameter $q^2$. The latter is the analytification of a
smooth projective curve over $K$ of genus 1; see for instance \cite[Theorem~V.3.1]{silverman-aec2}
for explicit equations.

We may construct an \'etale $\Qp$-local system $V$ on $\Spa(A,A^\circ)$ as follows.
Let $\tilde{V}$ be the trivial $\Qp$-local system on $\Spa(B,B^\circ)$, equipped with the distinguished generator $1$.
Let $\tilde{V}_1, \tilde{V_2}$ be the restrictions of $\tilde{V}$ to $\Spa(B_1,B_1^\circ), \Spa(B_2,B_2^\circ)$,
respectively. To specify $V$, it suffices to specify an isomorphism $\tilde{V}_1 \cong \sigma_q^* \tilde{V}_2$;
we choose the isomorphism matching $1 \in \tilde{V}_1$ with $p \in \sigma_q^* \tilde{V}_2$.

Let $R, S, S_1, S_2$ be the completed perfections of $A,B,B_1,B_2$, respectively. We claim that $V$ cannot correspond to an \'etale $\varphi$-module
$M$ over $\tilde{\calE}_R$ (or over the subring $\tilde{\calR}^{\bd}_R$ thereof).
To check this, suppose the contrary, and choose any nonzero element $\bv \in M$.
The pullback of $M$ to $\tilde{\calE}_S$ can be identified with the trivial $\varphi$-module
$\tilde{\calE}_S$ itself, and $\bv$ must correspond to an element $x \in \tilde{\calE}_S$.
Let $x_1,x_2$ be the images of $x$ in $\tilde{\calE}_{S_1}, \tilde{\calE}_{S_2}$, respectively.
We must then have
\[
x_2 = p \sigma_q(x_1) \in \tilde{\calE}_{S_2} = W(S_2)[p^{-1}].
\]
However, this is impossible: the maps $S \to S_1, S_2$ are injective, so an element
of $W(S)[p^{-1}]$ which maps to $W(S_1)$ or $W(S_2)$ must itself belong to $W(S)$.
Thus if $x \in p^m W(S)$ for some $m \in \ZZ$, then also $x \in p^{m+1}W(S)$, which cannot hold
for all $m$ if $x \neq 0$.

By contrast, by Theorem~\ref{T:perfect equivalence2b},
$V$ does correspond to
an \'etale $\varphi$-module over $\tilde{\calR}_R$,
and to \'etale $\varphi$-modules over $\tilde{\calE}_X$ and $\tilde{\calR}^{\bd}_X$. By the previous paragraph, however, the latter do not descend to \'etale $\varphi$-modules over  $\tilde{\calE}_R$ or $\tilde{\calR}^{\bd}_R$; in particular, we obtain an obstruction to glueing finite projective modules over these rings
as indicated in Remark~\ref{R:no glueing}.
This lack of descent in turn provides an example of an \'etale $\varphi$-module over $\tilde{\calR}_R$ which does not admit a (not necessarily finite locally free) \'etale model.
\end{example}

\begin{example} \label{exa:banana}
Let $K$ be an algebraically closed analytic field of characteristic $p \neq 2$. 
Let $R$ be the completed perfection of $K\{X,Y\}/(y^2-(x^2-1)^2)$.
Put $X = \Spa(R,R^\circ)$. Then $X$ admits a \'etale cover consisting of a doubly infinite chain of copies of $\Spa(K\{X\}, K\{X\}^\circ)$; as in
Example~\ref{exa:Tate curve}, this gives rise to an \'etale $\Qp$-local system $V$ on $X$ which is not an isogeny $\Zp$-local system. 
By Theorems~\ref{T:perfect equivalence2a1}
and~\ref{T:perfect equivalence2b},
$V$ corresponds to \'etale $\varphi$-modules over $\tilde{\calE}_X$, $\tilde{\calE}^{\bd}_X, \tilde{\calR}_X$ which are not globally \'etale.
\end{example}

\begin{remark} \label{R:local systems affinoid2}
If $A$ is a connected affinoid algebra over an analytic field $K$, then an \emph{\'etale fundamental group}
of $\calM(A)$ has been defined by de Jong \cite{dejong-etale}; its continuous representations on
finite-dimensional $\Qp$-vector spaces correspond precisely to \'etale $\Qp$-local systems on $\Spa(A,A^\circ)$
in our sense
\cite[Lemma~2.6]{dejong-etale}.
It should be possible to show using Theorem~\ref{T:quotient norm}(c) and Theorem~\ref{T:mixed lift ring}
that $\calM(A)$ and $\calM(\tilde{\calR}^{\inte,1}_R/(z))$ have the same \'etale fundamental group;
the only serious issue is that $\tilde{\calR}^{\inte,1}_R/(z)$ need not be an affinoid algebra over an analytic field,
so some work is needed to define the \'etale fundamental group and check some basic properties.
\end{remark}

\subsection{A bit of cohomology}
\label{subsec:bit of cohomology}

We next relate the cohomology of $\Zp$-local systems and isogeny $\Zp$-local systems with $\varphi$-modules; again, this comes down to nonabelian Artin-Schreier-Witt theory.
The corresponding statements for adic spaces require additional work even to assert what is meant by \'etale cohomology; this work is carried out in the next section.

\begin{hypothesis}
Throughout \S\ref{subsec:bit of cohomology}, retain Hypothesis~\ref{H:phi-modules}, and in addition let $R$ be a perfect uniform Banach algebra over $\FF_{p^d}$.
\end{hypothesis}

\begin{theorem} \label{T:Galois cohomology2}
Let $T$ be an \'etale $\ZZ_{p^d}$-local system on $\Spec(R)$.
Let $M$ be the $\varphi^d$-module over $\tilde{\calR}^{\inte}_R$ or $W(R)$ corresponding to $T$
via Theorem~\ref{T:perfect equivalence2}.
Then there are natural (in $T$ and $R$) bijections
$H^i_{\et}(X, T) \cong H^i_{\varphi^d}(M)$ for all $i \geq 0$.
\end{theorem}
\begin{proof}
Suppose first that $M$ is defined over $W(R)$.
For each positive integer $n$, view $T/p^n T$ as a locally constant \'etale sheaf on $\Spec(R)$,
and let $\tilde{M}_n$ be the \'etale sheaf on $\Spec(R)$ corresponding to the quasicoherent sheaf
on $\Spec(W(R)/(p^n))$ with global sections $M/p^n M$. We then have an exact sequence
\[
0 \to T/p^n T \to \tilde{M}_n \stackrel{\varphi^d-1}{\to} \tilde{M}_n \to 0,
\]
where exactness at the right is given by Theorem~\ref{T:perfect equivalence2}
(or more directly by Proposition~\ref{P:DM relative}).
We see by induction on $n$ that $\tilde{M}_n$ is acyclic: it is enough to check that
$\ker(\tilde{M}_n \to \tilde{M}_{n-1})$ is acyclic, which it is because it arises from a quasicoherent sheaf on an affine scheme.
Taking the long exact sequence
in cohomology thus yields the desired result.

Suppose next that $M$ is defined over $\tilde{\calR}^{\inte}_R$; by the previous paragraph, we need only
check the cases $i=0,1$. For these, interpret $H^i_{\varphi^d}(M)$ as an extension group as in
Definition~\ref{D:varphi cohomology}, then note that
any extension of two $\varphi^d$-modules is again a $\varphi^d$-module.
By Theorem~\ref{T:perfect equivalence2}, the 0th and 1st extension groups do not change upon base extension to $W(R)$;
we may thus deduce the claim from the previous paragraph.
\end{proof}

\begin{lemma} \label{L:two of three}
Let $0 \to M_1 \to M \to M_2 \to 0$ be a short exact sequence of $\varphi$-modules over $\tilde{\calR}_R$.
If any two of $M, M_1, M_2$ are $(c,d)$-pure, then so is the third.
\end{lemma}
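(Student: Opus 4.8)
The plan is to reduce to the case where $R = L$ is an analytic field and then carry out the standard Harder--Narasimhan bookkeeping, using the characterization of purity by semistability. By Corollary~\ref{C:same purity} the hypothesis and the conclusion depend only on the slope $s = c/d$, and by the definition of $(c,d)$-purity together with Corollary~\ref{C:pointwise etale is etale}, a $\varphi^a$-module over $\tilde{\calR}_R$ is $(c,d)$-pure precisely when its base extension to $\tilde{\calR}_{\calH(\beta)}$ is pure of slope $s$ for every $\beta \in \calM(R)$. Since base extension along $\tilde{\calR}_R \to \tilde{\calR}_{\calH(\beta)}$ is exact, it suffices to prove the statement over $\tilde{\calR}_L$ for each analytic field $L = \calH(\beta)$. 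If any one of $M, M_1, M_2$ acquires rank $0$ at $\beta$, then over the field $L$ it is the zero module, the sequence degenerates, and there is nothing to check at $\beta$ (Convention~\ref{conv:every slope}); so we may assume all three have positive rank over $\tilde{\calR}_L$.

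Over $\tilde{\calR}_L$ I would use two facts. First, $\rank$ and $\deg$ are additive in short exact sequences of $\varphi^a$-modules: rank obviously, and degree because the top exterior power is multiplicative and $\tilde{\calR}_L$ is a B\'ezout domain (Lemma~\ref{L:Bezout domain}), so every sub- and quotient-module occurring below is finite free and carries a natural $\varphi^a$-action. Consequently, in all three configurations, knowing that two of $\mu(M), \mu(M_1), \mu(M_2)$ equal $s$ forces the third to equal $s$ as well. Second, by Theorem~\ref{T:slope filtration2} a nonzero $\varphi^a$-module $N$ over $\tilde{\calR}_L$ is pure of slope $\mu(N)$ exactly when it is semistable, i.e.\ has no nonzero $\varphi^a$-submodule of slope strictly greater than $\mu(N)$.

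The three cases then go as follows. If $M_1$ and $M_2$ are pure of slope $s$: for a nonzero $\varphi^a$-submodule $N \subseteq M$, the exact sequence $0 \to N \cap M_1 \to N \to N' \to 0$, with $N'$ the image of $N$ in $M_2$, gives $\mu(N \cap M_1) \le s$ and $\mu(N') \le s$ by semistability of $M_1$ and $M_2$ (for whichever of these terms is nonzero; the cases $N \cap M_1 = 0$ and $N' = 0$ reduce directly to submodules of $M_2$ and of $M_1$), hence $\mu(N) \le s$ by additivity, so $M$ is semistable of slope $s$. If $M$ and $M_1$ are pure of slope $s$: for a nonzero $\varphi^a$-submodule $N' \subseteq M_2$, its preimage $N$ in $M$ sits in $0 \to M_1 \to N \to N' \to 0$, so $\deg N = s\,\rank M_1 + \deg N'$, and if $\mu(N') > s$ then $\mu(N) > s$, contradicting semistability of $M$; so $M_2$ is semistable of slope $s$. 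If $M$ and $M_2$ are pure of slope $s$: any nonzero $\varphi^a$-submodule of $M_1$ is a $\varphi^a$-submodule of $M$, hence of slope $\le s$ by semistability of $M$; so $M_1$ is semistable of slope $s$. In each case Theorem~\ref{T:slope filtration2} turns semistability of slope $s$ back into purity of slope $s$, and Corollary~\ref{C:same purity} upgrades this to $(c,d)$-purity.

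The field-level computation is entirely routine; the step I would be most careful about is the reduction, namely checking that ``$(c,d)$-pure over $\tilde{\calR}_R$'' really is equivalent to ``pointwise pure of slope $c/d$'' (the content of the definition of purity combined with Corollaries~\ref{C:pointwise etale is etale} and~\ref{C:same purity}), and confirming that the slope-filtration machinery of \S\ref{subsec:slopes}, and in particular Theorem~\ref{T:slope filtration2} and the additivity of degree, applies verbatim to $\varphi^a$-modules with the normalization of Convention~\ref{conv:power slope}.
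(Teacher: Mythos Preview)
Your proof is correct and follows the same approach as the paper. The paper's proof is very terse: it reduces to the case of an analytic field via Corollary~\ref{C:pointwise etale is etale} and then says the result ``follows immediately from Theorem~\ref{T:slope filtration2}''; you have simply unpacked both steps, being more careful about the passage from $(c,d)$-purity to pointwise purity of slope $c/d$ (via Corollary~\ref{C:same purity}) and spelling out the three-case Harder--Narasimhan argument that the paper leaves implicit.
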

\begin{proof}
By Corollary~\ref{C:pointwise etale is etale},
it suffices to treat the case that $R = L$ is an analytic field.
In this case, the lemma follows immediately from Theorem~\ref{T:slope filtration2}.
\end{proof}

We have a similar result for $\Qp$-local systems. Note that this result can also be formulated in
terms of $\varphi$-bundles using Proposition~\ref{P:truncate cohomology1}.
\begin{theorem} \label{T:Galois cohomology2a}
Suppose that $R$ is an $\FF_{p^d}$-algebra for a positive integer $d$.
Let $E$ be an isogeny $\ZZ_{p^d}$-local system on $\Spec(R)$.
Let $M$ be the globally \'etale $\varphi^d$-module over $\tilde{\calR}^{\bd}_R$, $\tilde{\calE}_R$, or $\tilde{\calR}_R$
corresponding to $E$ via Theorem~\ref{T:perfect equivalence2a}.
Then there are natural (in $E$ and $R$) bijections
$H^i_{\et}(X, E) \cong H^i_{\varphi^d}(M)$ for all $i \geq 0$.
\end{theorem}
\begin{proof}
We first treat the case over $\tilde{\calR}^{\bd}_R$, the case over $\tilde{\calE}_R$ being similar.
Let $T$ be an \'etale $\ZZ_{p^d}$-local system on $X$ for which $E = T \otimes_{\Zp} \Qp$.
Let $M_0$ be the $\varphi^d$-module over $\tilde{\calR}^{\inte}_R$ corresponding to $T$.
By Theorem~\ref{T:Galois cohomology2}, we have natural (in $T$ and $A$) bijections
$H^i_{\et}(X, T) \cong H^i_{\varphi^d}(M_0)$ for all $i \geq 0$. By definition, we may identify $H^i_{\et}(X,E)$ with
$H^i_{\et}(X, T) \otimes_{\Zp} \Qp$; in particular, it is zero for $i>1$. On the other hand, for $i=0,1$, we may identify
$H^i_{\varphi^d}(M)$ with $H^i_{\varphi^d}(M_0) \otimes_{\Zp} \Qp$ by identifying
$M$ with $M_0 \otimes_{\Zp} \Qp = \cup_{n=0}^\infty p^{-n} M_0$ and noting that the computation
of $H^i_{\varphi^d}$ commutes with direct limits. This proves the claim in this case.

We next treat the case over $\tilde{\calR}_R$. Put
$M_1 = M_0 \otimes_{\Zp} \Qp$, so that we may identify $M$ with $M_1 \otimes_{\tilde{\calR}^{\bd}_R}
\tilde{\calR}_R$. It follows from
Theorem~\ref{T:perfect equivalence2a} that the natural map
$H^0_{\varphi^d}(M_1) \to H^0_{\varphi^d}(M)$ is bijective; hence $H^0_{\varphi^d}(M)$ is naturally isomorphic to $H^0_{\et}(X, E)$. Recall that by Remark~\ref{R:extend Qp-local systems}, the extension of two isogeny $\ZZ_{p^d}$-local systems on
$\Spec(R)$ in the category of \'etale $\QQ_{p^d}$-local systems on $\Spa(R,R^+)$
descends to an extension of isogeny $\ZZ_{p^d}$-local systems on $\Spec(R)$. That is, we may compute $H^1_{\et}(X,E)$ as an extension group in the category of \'etale local systems over $\Spa(R,R^+)$. We then obtain an isomorphism
between this group and $H^1_{\varphi^d}(M)$ by applying Theorem 6.2.9 and noting that any extension of \'etale $\varphi^d$-modules over $\tilde{\calR}_R$ is again \'etale by Lemma~\ref{L:two of three}.
\end{proof}

\begin{remark}
One can formulate an analogue of Theorem~\ref{T:Galois cohomology2a} for isogeny $(c,d)$-local systems
comparing a suitably modified \'etale cohomology to $H^1_{p^c \varphi^d}$ of the corresponding $\varphi$-module.
As this statement is a formal consequence of the one given, and we have no particular use for it, we omit
further details.
\end{remark}

\subsection{The relative Fargues-Fontaine curve}
\label{subsec:relative FF}

We have already seen (Theorem~\ref{T:vector bundles}) that
for $R$ a perfect Banach algebra over $\FF_p$, the $\varphi$-modules over $\tilde{\calR}_R$ can be described in terms of a vector bundle on a certain scheme $\Proj(P)$,
which in the case of an analytic field is a Fargues-Fontaine curve. In order to globalize this construction, we must replace the scheme with what amounts to an \emph{analytification} thereof, although the latter construction does not come equipped with a universal property like the one for analytification of schemes of finite type over a field \cite[Expos\'e~XII]{sga1}.

\begin{hypothesis} \label{H:relative FF}
Throughout \S\ref{subsec:relative FF}, fix a positive integer $a$
and put $q = p^a$.
Let $(A,A^+)$ be a perfectoid adic Banach algebra over $\Qp$ and let $(R,R^+)$ be the perfect uniform adic Banach algebra over $\Fp$ associated to $(A,A^+)$ via the perfectoid correspondence (Theorem~\ref{T:perfectoid ring}). Let $X$ be a perfectoid adic space over $\Qp$ and let $X'$ be the corresponding perfect uniform adic space over $\Fp$
associated to $X$ via the global perfectoid correspondence (Theorem~\ref{T:perfectoid correspondence}).
\end{hypothesis}

\begin{remark} \label{R:over analytic field}
Since $(R,R^+)$ arises from $(A,A^+)$ via the perfectoid correspondence, there exists $z \in W(R^+)$ which is primitive of degree 1 and generates the kernel of $\theta: W(R^+) \to A^+$.
\end{remark}

\begin{defn}
For $0 < s \leq r$, the ring $\tilde{\calR}^{[s,r]}_R$ is a relatively perfectoid Banach ring by Theorem~\ref{T:Kiehl for Robba}.
We promote it to an adic Banach ring as in Definition~\ref{D:valuation projection}.
\end{defn}

\begin{defn}
Define the space
\[
U_R = \bigcup_{0 < s < r} \Spa(\tilde{\calR}^{[s,r]}_R, \tilde{\calR}^{[s,r],+}_R)
\]
whose maximal Hausdorff quotient is the space $T_{R}$
considered in Proposition~\ref{P:radius fibration}.
By Theorem~\ref{T:Kiehl for Robba}, $U_R$ is a relatively perfectoid space. By Proposition~\ref{P:radius fibration},
$\varphi^{a*}$ acts properly discontinuously on $U_R$,
so we may form the orbit space $\FFC_R$ which again is a relatively perfectoid space. 

Using Lemma~\ref{L:lift rational to Robba}, 
we may glue to obtain a relatively perfectoid space $\FFC_{X'}$ with the property that for $X' = \Spa(R,R^+)$, we have a natural isomorphism
$\FFC_{X'} \cong \FFC_R$. We also have a natural map $\left| \FFC_{X'} \right| \to \left| X' \right|$ of topological spaces. However, this map cannot arise from a morphism of adic spaces due to the mismatch in characteristics.

We will use the notation $\FFC_A$ to denote the space $\FFC_{R}$ additionally equipped with the morphism $\Spa(A,A^+) \to \FFC_R$ induced by the map $\theta: \tilde{\calR}^{[1,1]}_R \to A$ (see Lemma~\ref{L:stable residue2}).
We again glue to obtain a space $\FFC_X$ which is naturally isomorphic to $\FFC_{X'}$ but additionally is equipped with a distinguished morphism $X \to \FFC_X$.
The induced map $\left| X \right| \to \left| \FFC_X \right|$ is a continuous section of the projection map $\left| \FFC_X \right| \cong \left| \FFC_{X'} \right| \to \left| X' \right| \cong \left| X \right|$. However, despite there no longer being a mismatch of characteristics, the map $\left| \FFC_X \right| \to \left| X \right|$ is still not induced by a morphism $\FFC_X \to X$ of adic spaces.
\end{defn}

\begin{defn}
Define the graded ring $P_R$ as in Definition~\ref{D:vector bundles}.
The natural morphism $P_R \to \tilde{\calR}^{[s,r]}_R$ then defines a morphism $U_R \to \Proj(P_R)$ of locally ringed spaces, which factors through a morphism $\FFC_R \to \Proj(P_R)$.
For $n \in \ZZ$, we define the line bundle $\calO(n)$ on $\FFC_R$
by pulling back the line bundle $\calO(n)$ on $\Proj(P_R)$ defined in
Remark~\ref{R:not coherent}.
\end{defn}

\begin{remark} \label{R:two unit generators}
Let $L$ be a perfect analytic field of characteristic $p$ over which $R$ is a Banach algebra. (For example,
with notation as in Remark~\ref{R:over analytic field},
we may take $L$ to be the completed perfect closure of $\FF_p((\overline{z}))$.)
By Proposition~\ref{P:global invariants} there exist nonzero homogeneous elements
$f_1, f_2 \in P_{L,+}$ which
generate the unit ideal in $\tilde{\calR}_L$, and hence also in $\tilde{\calR}_R$.
(In fact, one can even take $f_1, f_2 \in P_{L,1}$.)
This has the following further consequences.
\begin{enumerate}
\item[(a)]
For any positive integers $m,n$ such that $f_1^m$ and $f_2^n$ have the same degree $d$,
if we write $P_{R,(d)} = \oplus_{h=0}^\infty P_{R,hd}$, then
the scheme $\Proj(P_R/(f_1^m))$ is isomorphic to
\[
\Spec (P_{R,(d)}[f_2^{-n}]_0/(f_1^m f_2^{-n})),
\]
and hence is affine.
\item[(b)]
By Lemma~\ref{L:ideal-trivial},
$\Proj(P_R)$ is covered by the two open affine subsets $D_+(f_1), D_+(f_2)$.
Since $\Proj(P_R)$ is separated, we may use \v{C}ech cohomology for this covering to compute
sheaf cohomology for quasicoherent sheaves
\cite[Proposition~1.4.1]{ega3-1}, so the cohomology of any quasicoherent sheaf on $\Proj(P_R)$
vanishes in degree greater than 1.
\end{enumerate}
See Remark~\ref{R:flat covering} for a related observation.
\end{remark}

The morphism $\FFC_R \to \Proj(P_R)$ leads to a GAGA-style extension of Theorem~\ref{T:vector bundles}.
\begin{theorem} \label{T:vector bundles2}
Pullback along the morphism $\FFC_R \to \Proj(P_R)$ of locally ringed spaces
defines an equivalence of categories between vector bundles on $\Proj(P_R)$ and on $\FFC_R$. Consequently, by Theorem~\ref{T:vector bundles},
the latter is equivalent to the category of $\varphi^a$-modules over $\tilde{\calR}_{R}$ (which is independent of $R^+$).
\end{theorem}
\begin{proof}
By Theorem~\ref{T:adic vector bundle},
pulling back along the functor $U_R \to \FFC_R$ defines an equivalence of categories between vector bundles on $\FFC_R$ and $\varphi$-bundles on $\tilde{\calR}_{R}$. Since the latter category is equivalent to the category of vector bundles on $\Proj(P_R)$ by Theorem~\ref{T:vector bundles}, we deduce the desired result.
\end{proof}

This result immediately globalizes as follows.
\begin{theorem} \label{T:vector bundles2a}
The category of vector bundles on $\FFC_X$ is functorially equivalent
to the category of $\varphi^a$-modules over $\tilde{\calR}_X$.
\end{theorem}
\begin{cor}
For $n \in \ZZ$, we have
\[
H^0(\Proj(P_R), \calO(n)) = H^0(\FFC_R, \calO(n)) = \begin{cases}
P_{R,n} & n \geq 0 \\ 0 & n < 0. \end{cases}
\]
\end{cor}

\begin{cor} \label{C:vector bundles2b}
Fix a morphism $X \to \Spa(\QQ_{p^a}, \ZZ_{p^a})$. Then the category of \'etale $\QQ_{p^a}$-local systems on $X$ is functorially equivalent to the category of
vector bundles on $\FFC_X$ which are \emph{pointwise semistable} of degree $0$,
i.e., whose restriction to $\FFC_{\calH(x)}$ is semistable of degree $0$ for any $x \in X$.
\end{cor}
\begin{proof}
This is immediate from Remark~\ref{R:Harder-Narasimhan},
Theorem~\ref{T:perfect equivalence2b},
and Theorem~\ref{T:vector bundles2a}.
\end{proof}

We will also need some higher-rank vector bundles on $\FFC_R$.
\begin{defn}
Write $P_{R,n}^{(a)}, P_R^{(a)}, U_R^{(a)}, \FFC^{(a)}_R, \FFC^{(a)}_X$ instead of $P_{R,n}, P_R, U_R, \FFC_R, \FFC_X$ to record the dependence on the positive integer $a$.
Then for any positive integer $d$, we have $P_{R,n}^{(d)} \subseteq P_{R,nd}^{(ad)}$ as subsets of $\tilde{\calR}_R$. We thus get a morphism $P_{R}^{(d)} \to P_R^{(ad)}$ of graded rings and hence a morphism $\Proj(P_R^{(ad)}) \to \Proj(P_R^{(a)})$ of schemes.
This morphism is finite \'etale of degree $d$: it is the quotient by the automorphism induced by $\varphi^{a}$. Similarly, the corresponding morphism $U_{R}^{(ad)} \to U_R^{(a)}$ is an isomorphism,
so the induced morphism $\FFC^{(ad)}_R \to \FFC^{(a)}_R$ is finite \'etale of degree $d$ and is the quotient by the action of $\varphi^{a*}$.

For $n,d \in \ZZ$ with $\gcd(n,d)= 1$ and $d>0$, let $\calO(n,d)$ be the vector bundle of rank $d$ on $\FFC^{(a)}_R$ obtained by pushing forward the bundle $\calO(n)$ on $\FFC^{(ad)}_R$.
\end{defn}

\begin{cor}
For $R = L$ an algebraically closed analytic field, every vector bundle on either $\Proj(P_R)$ or $\FFC_R$ is (nonuniquely) isomorphic to a direct sum $\oplus_i \calO(n_i,s_i)$ for some $n_i, s_i \in \ZZ$
with $\gcd(n_i,s_i) = 1$ and $s_i >0$.
\end{cor}
\begin{proof}
Immediate from Theorem~\ref{T:vector bundles2} and
Proposition~\ref{P:DM not pure}.
\end{proof}

Continuing in the GAGA vein, we have the following comparison result. We will later add \'etale cohomology once we make sense of it in \S\ref{sec:perfect}.

\begin{theorem} \label{T:compare cohomology}
Let $M$ be a $\varphi^a$-module over $\tilde{\calR}_R$ corresponding to the vector bundle $V$ on $\FFC_R$ via Theorem~\ref{T:vector bundles2a}. Then there are natural (in $M$ and $R$) isomorphisms $H^i_{\varphi^a}(M) \cong H^i(\Proj(P_R), V) \cong H^i(\FFC_R, V)$ for $i \geq 0$.
\end{theorem}
\begin{proof}
For $Z$ a scheme which is quasicompact and semiseparated
(i.e., $Z$ is covered by finitely many open affine subschemes, any two of which have
affine intersection), the category of quasicoherent sheaves on $Z$ has enough injectives,
and the resulting derived functors agree with sheaf cohomology as defined using the full category
of sheaves of abelian groups \cite[Proposition~B.8]{thomason-trobaugh}.
These results apply to $\Proj(P_R)$ because this scheme is quasicompact
(by Lemma~\ref{L:ideal-trivial}) and separated.

By the previous paragraph plus \cite[Theorem~IV.9.1]{hilton-stammbach},
we may identify the sheaf cohomology groups $H^i(\Proj(P_R), V)$ with the Yoneda
extension groups $\Ext^i(\calO, V)$ in the category of quasicoherent sheaves.
For $i=0,1$, this computation involves only quasicoherent finite locally free sheaves,
so we may apply Theorem~\ref{T:vector bundles} to obtain the desired isomorphisms.
For $i \geq 2$, $H^i(\Proj(P_R), V) = 0$ by Remark~\ref{R:two unit generators}(b)
while $H^i_{\varphi^a}(M) = 0$ by definition, so we again obtain an isomorphism.

Similarly, thanks to Theorem~\ref{T:Kiehl for Robba}
we may compute $H^i(\FFC_R, V)$ using \v{C}ech cohomology for the covering of
$\FFC_R$ by $\Spa(\tilde{\calR}^{[rq^{-1/2},r]}_R,\tilde{\calR}^{[rq^{-1/2},r],+}_R)$
and $\Spa(\tilde{\calR}^{[rq^{-1},rq^{-1/2}]}_R, \tilde{\calR}^{[rq^{-1},rq^{-1/2}],+}_R)$ for any fixed $r>0$. This immediately yields $H^i(\FFC_R,V) = 0$ for $i \geq 2$,
and again the comparison for $i=0,1$ follows by comparing Yoneda extension groups using Theorem~\ref{T:vector bundles} and Theorem~\ref{T:vector bundles2a}.
\end{proof}

\begin{cor} \label{C:vanishing of H1}
Let $V$ be a quasicoherent finite locally free sheaf on $\Proj(P_R)$. Then there exists $N \in \ZZ$
such that for all $n \geq N$,
$H^0(\Proj(P_R), V(n))$ generates $V(n)$ and
$H^1(\Proj(P_R), V(n)) = H^1(\FFC_R, V(n)) = 0$.
\end{cor}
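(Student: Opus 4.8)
The plan is to deduce the vanishing of $H^1$ from the comparison with $\varphi^a$-cohomology in Theorem~\ref{T:compare cohomology}, combined with the twisting formalism of Remark~\ref{R:not coherent} and the acyclicity statement of Proposition~\ref{P:H1}. Concretely, by Theorem~\ref{T:compare cohomology} we have a natural isomorphism $H^1(\Proj(P), V(n)) \cong H^1_{\varphi^a}(M(V(n)))$, and by the identification in Remark~\ref{R:not coherent} we have $M(V(n)) \cong M(V)(n)$, where $M(V)$ is the $\varphi^a$-module over $\tilde{\calR}^\infty_R$ associated to $V$ via Definition~\ref{D:sheaf to module}. So it suffices to show that $H^1_{\varphi^a}(M(V)(n)) = 0$ for $n$ sufficiently large.

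First I would pass from $\varphi^a$-modules over $\tilde{\calR}^\infty_R$ to the associated $\varphi^a$-bundles over $\tilde{\calR}_R$, as in Remark~\ref{R:module to bundle}; since $R$ is free of trivial spectrum (it is a Banach algebra over a nontrivially normed analytic field by hypothesis, via the embedding of the base field), Theorem~\ref{T:vector bundles} makes this a tensor equivalence, and in particular one can compute $H^1_{\varphi^a}$ of $M(V)(n)$ via the complex $\bigl[M_{[s,r]}(n) \xrightarrow{\varphi^a-1} M_{[s,r/q]}(n)\bigr]$ on any model, just as in Proposition~\ref{P:truncate cohomology1}(c) (whose proof only uses the equivalence of $\varphi^a$-bundles with $\varphi^a$-modules over $\tilde{\calR}^\infty_R$, which holds here). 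Then Proposition~\ref{P:H1} applies directly: it produces an integer $N$ (depending on the $\varphi^a$-bundle $M(V)$) such that for all $n \geq N$ and all $0 < s \leq r$, the map $\varphi^a - 1 \colon M_{[s,rq]}(n) \to M_{[s,r]}(n)$ is surjective. Surjectivity of $\varphi^a - 1$ on the relevant model complex means precisely that the cokernel, hence $H^1_{\varphi^a}(M(V)(n))$, vanishes. Chaining the isomorphisms back gives $H^1(\Proj(P), V(n)) = 0$ for $n \geq N$.

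The main obstacle is bookkeeping rather than a genuine difficulty: one must check that the twist $M(V(n)) \cong M(V)(n)$ is compatible on the level of $\varphi^a$-bundles with the twisting $M \rightsquigarrow M(n)$ used in Proposition~\ref{P:H1} (i.e., that $\calO(n)$ as defined in Remark~\ref{R:not coherent} corresponds under Theorem~\ref{T:vector bundles} to the $\varphi^a$-module free on a generator $\bv$ with $\varphi^a(\bv) = p^{-n}\bv$, and that tensoring with it matches the degree shift in the twist $M(n)$ of Definition of the twist in Section~\ref{sec:phi-modules}). This is exactly the content of Remark~\ref{R:not coherent}, so no new argument is needed; one simply needs to invoke it carefully so that the integer $N$ produced by Proposition~\ref{P:H1}, applied to the $\varphi^a$-bundle associated to $M(V)$, is the $N$ that works. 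A secondary point to verify is that $H^1(\Proj(P), V(n))$ as computed by sheaf cohomology agrees with the $\varphi^a$-cohomology group through Theorem~\ref{T:compare cohomology} after the twist — but this is immediate since $V(n)$ is again a quasicoherent finite locally free sheaf, so Theorem~\ref{T:compare cohomology} applies verbatim to $V(n)$ in place of $V$.

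Thus the proof structure is: (1) apply Theorem~\ref{T:compare cohomology} to $V(n)$ to reduce to $H^1_{\varphi^a}(M(V(n)))$; (2) apply Remark~\ref{R:not coherent} to identify $M(V(n))$ with $M(V)(n)$; (3) use Theorem~\ref{T:vector bundles} (valid since $R$ is free of trivial spectrum) and Proposition~\ref{P:truncate cohomology1}-style reasoning to compute $H^1_{\varphi^a}$ on a model of the associated $\varphi^a$-bundle; (4) invoke Proposition~\ref{P:H1} to get the integer $N$ making $\varphi^a - 1$ surjective, hence $H^1_{\varphi^a}(M(V)(n)) = 0$ for $n \geq N$; (5) read the vanishing back through the isomorphisms of steps (1)--(2). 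I expect step (4) — correctly matching the hypotheses of Proposition~\ref{P:H1} to the twisted bundle — to be the one requiring the most care, though it is still routine.
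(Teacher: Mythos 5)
Your proof is correct and follows the same route as the paper, which deduces the corollary directly from Theorem~\ref{T:compare cohomology} together with Proposition~\ref{P:H1}; your additional bookkeeping (the identification $M(V(n)) \cong M(V)(n)$ from Remark~\ref{R:not coherent}, and the passage to models via Theorem~\ref{T:vector bundles} under the standing hypothesis that $R$ is a Banach algebra over a nontrivially normed analytic field) is exactly the content the paper leaves implicit.
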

\begin{proof}
This follows from Theorem~\ref{T:compare cohomology}
plus Propositions~\ref{P:global invariants}
and~\ref{P:H1}.
\end{proof}

We next consider the compatibility of the functor $\FFC$ with \'etale morphisms.
\begin{lemma} \label{L:FFC finite}
For $Y \to X$ a morphism of perfectoid adic spaces which is \'etale (resp.\ finite \'etale, faithfully finite \'etale), the induced morphism $\FFC_Y \to \FFC_X$ is also \'etale (resp.\ finite \'etale, faithfully finite \'etale).
\end{lemma}
\begin{proof}
By the perfectoid correspondence, the morphism $Y' \to X'$ in characteristic $p$ is also \'etale (resp.\ finite \'etale, faithfully finite \'etale). By Proposition~\ref{P:perfect mixed lift},
if $Y' \to X'$ is finite \'etale then so is $\FFC_Y \to \FFC_X$;
the other cases follow immediately.
\end{proof}

\begin{remark} \label{R:relative FF curve quotient2}
Lemma~\ref{L:FFC finite} provides a promotion of the morphism $\FFC_X \to X$ of topological spaces to a morphism of \'etale topoi. 
This morphism behaves in many ways like a circle bundle; 
see Remark~\ref{R:relative FF curve quotient}.
\end{remark}

\subsection{Ampleness on relative curves}
\label{subsec:ampleness}

We now make a more detailed study of positivity of vector bundles on relative Fargues-Fontaine curves; the analogy with vector bundles on projective curves turns out to be rather fruitful.
Throughout \S\ref{subsec:ampleness}, continue to retain
Hypothesis~\ref{H:relative FF}.

\begin{defn}
Throughout \S\ref{subsec:ampleness},
take $L, f_1, f_2$ as in Remark~\ref{R:two unit generators};
we can and will assume that $f_1, f_2 \in P_{L,1}$.
Let $Z_1$ be the zero locus of $f_1$ and put $U_1 = \Proj(P_R) \setminus Z_1$; by
Remark~\ref{R:two unit generators}, both $Z_1$ and $U_1$ are affine schemes.
\end{defn}

\begin{defn} \label{D:ample on schemes}
A vector bundle $\calF$ on $\Proj(P_R)$ is \emph{globally ample} if for every quasicoherent sheaf of finite type $\calG$ on $\Proj(P_R)$, there exists $n_0 \in \ZZ$ such that $\calF^{\otimes n} \otimes \calG$ is generated by global sections for all $n \geq n_0$. By Corollary~\ref{C:vector bundle quotient} below, it will suffice to check the condition for $\calG = \calO(e)$ for all $e \in \ZZ$.

We say that $\calF$ is \emph{ample} if there exists a strong rational covering $\{(R,R^+) \to (R_i, R_i^+)\}$ such that the pullback of $\calF$ to $\Proj(P_{R_i})$ is globally ample for each $i$.
\end{defn}

\begin{lemma} \label{L:ampleness of a multiple}
Let $\calF$ be a vector bundle on $\Proj(P_R)$ and let $m$ be a positive integer. Then $\calF$ is (globally) ample if and only if $\calF^{\otimes m}$ is (globally) ample.
\end{lemma}
\begin{proof}
It is evident that if $\calF$ is globally ample, then so is $\calF^{\otimes m}$. On the other hand, if $\calF^{\otimes m}$ is globally ample, then for any given $\calG$, there exists $n_0 \in \ZZ$ such that for $n \geq n_0$, each of
the bundles $(\calF^{\otimes m})^{\otimes n} \otimes (\calF^{\otimes i} \otimes \calG)$ for $i=0,\dots,m-1$ is generated by global sections. Consequently, $\calF^{\otimes n} \otimes \calG$ is generated by global sections for $n \geq m n_0$, so $\calF$ is globally ample. The ample case is similar.
\end{proof}

\begin{lemma} \label{L:ample}
The vector bundles $\calO(e)$ on $\Proj(P_R)$ are globally ample for all $e>0$.
\end{lemma}
\begin{proof}
Let $\calG$ be a quasicoherent sheaf of finite type on $\Proj(P_R)$.
Since $U_1 = \Spec(P_R[f_1^{-1}]_0)$ is affine,
$H^0(U_1, \calG)$ is a finitely generated $P_R[f_1^{-1}]_0$-module 
(see Remark~\ref{R:finite type}), and so $\calG(md)$ is generated by global sections for any sufficiently large $m$. By Lemma~\ref{L:ampleness of a multiple}, this proves the claim.
\end{proof}
\begin{cor} \label{C:vector bundle quotient}
Any quasicoherent sheaf of finite type on $\Proj(P_R)$ is a quotient of a vector bundle of the form $\oplus_{i=1}^{m} \calO(e_i)$
for some $m \geq 0$ and $e_1,\dots,e_m \in \ZZ$.
\end{cor}

We have the following variant of the cohomological criterion for ampleness for projective schemes \cite[Proposition~III.5.3]{hartshorne}.
\begin{prop} \label{P:H1 from ample from H1}
For $\calF$ a vector bundle on $\Proj(P_R)$, the following conditions are equivalent.
\begin{enumerate}
\item[(a)]
The vector bundle $\calF$ is globally ample.
\item[(b)]
For every quasicoherent sheaf of finite type $\calG$ on $\Proj(P_R)$, there exists $n_0 \in \ZZ$ such that for all $n \geq n_0$, $H^1(\Proj(P_R), \calF^{\otimes n} \otimes \calG) = 0$.
\item[(c)]
For each $e \in \ZZ$, there exists $n_0 \in \ZZ$ such that for all $n \geq n_0$, $H^1(\Proj(P_R), \calF^{\otimes n}(e)) = 0$.
\end{enumerate}
\end{prop}
\begin{proof}
We first observe that (b) and (c) are equivalent. Indeed, (b) trivially implies (c), whereas (c) implies (b) by Corollary~\ref{C:vector bundle quotient} and Remark~\ref{R:two unit generators}(b).

We next check that (a) implies (c). 
By Lemma~\ref{L:ample}, there exists $e'>0$ such that $H^1(\Proj(P_R), \calO(e')) = 0$. (In fact we may take $e'=1$ by Proposition~\ref{P:H1}, but this is not crucial here.)
Given (a), we may choose $n_0$ so that for $n \geq n_0$, $\calF^{\otimes n}(e-e')$ is generated by global sections; then for each such $n$, there exists a surjective homomorphism $\calO(e')^{\oplus m} \to \calF^{\otimes n}(e)$ for some $m$ (depending on $e$). 
By Remark~\ref{R:two unit generators}(b), the long exact sequence in cohomology yields $H^1(\Proj(P_R), \calF^{\otimes n}(e)) = 0$. Hence (a) implies (b).

We finally check that (b) implies (a). Fix $e \in \ZZ$.
For any $n \in \ZZ$, we have an exact sequence
\[
H^0(\Proj(P_R), \calF^{\otimes n}(e)) \to H^0(Z_1, \calF^{\otimes n}(e)) \to H^1(\Proj(P_R), \calF^{\otimes n}(e-1)).
\]
By Remark~\ref{R:finite type},
$H^0(Z_1, \calF^{\otimes n}(e))$ is a finitely generated module over the coordinate ring of $Z_1$. 
Choose $n$ sufficiently large
so that $H^1(\Proj(P_R), \calF^{\otimes n}(e-1)) = 0$;
there then exist finitely many sections $s_1,\dots,s_m \in H^0(\Proj(P_R), \calF^{\otimes n}(e))$ which generate the restriction of $\calF^{\otimes n}(e)$ to $Z_1$.

Let $\calG$ be the subsheaf of $\calF^{\otimes n}(e)$ generated by 
$s_1,\dots,s_m$. 
Let $Z'$ be the support of $\calF^{\otimes n}(e)/\calG$; it is disjoint from $Z_1$ and hence is a closed subscheme of $U_1$. Since $U_1$ is affine, so then is $Z'$. Put $U' = \Proj(P_R) \setminus Z'$, which is open in $\Proj(P_R)$ but not necessarily affine.

Since $\calF^{\otimes n}(e)/\calG$ is finitely presented, $Z'$ can also be realized as the support of a finitely generated ideal sheaf $\calI$ (e.g., using Fitting ideals).
For $n'$ sufficiently large, we have 
\[
H^1(\Proj(P_R), \calF^{\otimes (n+n')}(e) \otimes \calI) = 
H^1(\Proj(P_R), \calF^{\otimes n'} \otimes (\calF^{\otimes n}(e) \otimes \calI)) = 0
\]
and hence another exact sequence
\[
H^0(\Proj(P_R), \calF^{\otimes (n+n')}(e)) \to H^0(Z', \calF^{\otimes n+n'}(e) \otimes (\calO/\calI)) \to 0.
\]
Since $Z'$ is affine, by Remark~\ref{R:finite type}
$\calF^{\otimes (n+n')}(e) \otimes (\calO/\calI)$ corresponds to a finitely generated module over the coordinate ring of $Z'$.
Consequently, for $n'$ sufficiently large, 
there exist finitely many global sections of $\calF^{\otimes (n+n')}(e)$ which generate the restriction of $\calF^{\otimes (n+n')}(e)$ to $Z'$. 

In case $e=0$, we also know that for $n'$ divisible by $n$, the restriction of $\calF^{\otimes (n+n')}$ to $U'$ is also generated by finitely many global sections (namely the $(n'/n+1)$-fold products of $s_1,\dots,s_m$). Consequently, for all $n'$ sufficiently large and divisible by $n$, $\calF^{\otimes (n+n')}$ is generated by finitely many global sections.

For general $e$, for $n'$ sufficiently large and divisible by $n$, 
the restrictions of $\calF^{\otimes n}(e)$ and
$\calF^{\otimes n'}$ to $U'$ are both generated by finitely many global sections
(the latter thanks to the previous paragraph).
For such $n'$, $\calF^{\otimes (n+n')} (e)$ is generated by finitely many global sections, so $\calF^{\otimes n}$ is globally ample; 
by Lemma~\ref{L:ampleness of a multiple}, $\calF$ is also globally ample.
\end{proof}

\begin{cor} \label{C:globally etale to ample}
Let $\calF$ be a globally \'etale vector bundle on $\Proj(P_R)$. Then for any positive integer $n$, $H^1(\Proj(P_R), \calF(n)) = 0$ and $\calF(n)$ is globally ample.
\end{cor}
\begin{proof}
The equality $H^1(\Proj(P_R), \calF(n)) = 0$ is immediate from Proposition~\ref{P:H1}.
Given this equality (for all $\calF$ and $n$), we may check criterion (c) of Proposition~\ref{P:H1 from ample from H1} to deduce that $\calF(n)$ is globally ample.
\end{proof}

\begin{lemma} \label{L:affine subscheme from section}
Let $\calF$ be a globally ample line bundle on $\Proj(P_R)$.
Then for any $s \in H^0(\Proj(P_R), \calF)$, the open subscheme $U_s$ of $\Proj(P_R)$ on which $s$ generates $\calF$ is affine.
\end{lemma}
\begin{proof}
Since $\Proj(P_R)$ is quasicompact and quasiseparated
by Remark~\ref{R:two unit generators}, so then is $U_s$.
By the cohomological criterion for affinity 
\cite[Tag~01XG]{stacks-project}, it suffices to check that $H^1(U_s, \calG) = 0$ where $\calG$ is an arbitrary quasicoherent sheaf of finite type on $U_s$. Let $j: U_s \to \Proj(P_R)$ be the canonical inclusion, so that $H^1(U_s, \calG) = H^1(\Proj(P_R), j_* \calG)$.
We may then write $j_* \calG$ as the direct limit of $\calH \otimes 
\calF^{\otimes d}$ as $d \to \infty$ for some quasicoherent sheaf of finite type $\calH$ on $\Proj(P_R)$. Since cohomology commutes with direct limits, we deduce the claim from Proposition~\ref{P:H1 from ample from H1}.
\end{proof}
\begin{cor}
Let $\calF$ be a globally ample line bundle on $\Proj(P_R)$.
There is then a natural isomorphism
\[
\Proj(P_R) \cong \Proj(P_\calF), \qquad P_{\calF} = \bigoplus_{n=0}^\infty H^0(\Proj(P_R), \calF^{\otimes n}).
\]
\end{cor}

We next establish a pointwise interpretation of ampleness.
\begin{defn}
Let $\calF$ be a vector bundle on $\Proj(P_R)$.
Let $M$ be the $\varphi^a$-module over $\tilde{\calR}_R$ corresponding to $\calF$ via Theorem~\ref{T:vector bundles}. We define the \emph{slope polygon} of $\calF$ as a function on $\calM(R)$ (and on $\Spa(R,R^+)$ by retraction) as the fiberwise Harder-Narasimhan polygon; this agrees with the slope polygon of $M$ thanks to Remark~\ref{R:Harder-Narasimhan}.
We say that $\calF$ is \emph{pointwise ample} at $\beta \in \calM(R)$ if the slopes of $\calF$ at $\beta$ are everywhere positive; by Theorem~\ref{T:open locus}, this
is an open condition on $\calM(R)$.
If this condition holds for all $\beta$, we say that $\calF$ is \emph{pointwise ample}.
%;
%in case $R = L$ is an analytic field, the conditions of being ample, pointwise ample, %and globally ample all coincide.
\end{defn}

\begin{lemma} \label{L:twist is pointwise ample}
For any pointwise ample vector bundle $\calF$ on $\Proj(P_R)$
and any vector bundle $\calG$ on $\Proj(P_R)$, there exists $n_0 \in \ZZ$ such that $\calF^{\otimes n} \otimes \calG$ is pointwise ample for all $n \geq n_0$.
\end{lemma}
\begin{proof}
We first observe that this is true if $R = L$ is an analytic field. Namely, if the slopes of $\calF$ are 
$\mu_1,\dots,\mu_m$ and the slopes of $\calG$ are $\nu_1,\dots,\nu_l$,
then for $n > 0$ each slope of $\calF^{\otimes n} \otimes \calG$ is equal to some $\nu_i$ plus an $n$-fold sum of the $\mu_j$. If $\calF$ is pointwise ample, then $\mu_1,\dots,\mu_m$ are positive, so for $n$ large enough the slopes of $\calF^{\otimes n} \otimes \calG$ are all positive.

To extend this argument to the general case, it suffices to note that on one hand, by Proposition~\ref{P:slopes bounded} the slopes of $\calF$ and $\calG$ are bounded below; on the other hand, the slopes of $\calF$ are limited to a discrete subset of $\QQ$, and hence bounded away from 0.
\end{proof}

\begin{lemma} \label{L:pointwise}
Suppose that $R = L$ is an analytic field.
Let $\calF$ be an ample vector bundle on $\Proj(P_R)$.
\begin{enumerate}
\item[(a)]
We have $H^1(\Proj(P_R), \calF) = 0$.
\item[(b)]
The bundle $\calF$ is generated by $H^0(\Proj(P_R), \calF)$.
\end{enumerate}
\end{lemma}
\begin{proof}
Let $M$ be the $\varphi^a$-module over $\tilde{\calR}_R$ associated to $\calF$ via Theorem~\ref{T:vector bundles}.
To prove (a), by Theorem~\ref{T:slope filtration explicit2}
we may reduce to the case where $M$ is pure of some slope $s>0$.
By Theorem~\ref{T:compare cohomology}, we must check that $H^1_{\varphi^a}(M) = 0$. There is no harm in enlarging $a$, so we
may assume that $as \in \ZZ$; this case follows from
Corollary~\ref{C:globally etale to ample}.

To prove (b), by (a) and Theorem~\ref{T:slope filtration explicit2} we may again reduce to the case where $M$ is pure of some slope $s>0$.
By Theorem~\ref{T:compare cohomology}, we must check that $H^0_{\varphi^a}(M)$ generates $M$. There is no harm in enlarging $a$, so we may assume that $as \in \ZZ$; this case follows from
Proposition~\ref{P:global invariants}.
\end{proof}

To relate ampleness to pointwise ampleness, we use a d\'evissage argument in the style of \cite{liu-herr}.
\begin{lemma} \label{L:single step devissage}
Let $\calF$ be a vector bundle on $\Proj(P_R)$ whose slopes at some $\beta \in \calM(R)$ are all nonnegative but not all zero. 
Then there exists a short exact sequence
\[
0 \to \calO(-1) \to \calG \to \calF \to 0
\]
of vector bundles on $\Proj(P_R)$ such that the slopes of $\calG$ at $\beta$ are also all nonnegative.
\end{lemma}
\begin{proof}
Let $i: Z_1 \to \Proj(P_R)$ denote the canonical inclusion; we then have an exact sequence
\[
0 \to \calO(-1) \to \calO \to i_* i^* \calO \to 0
\]
of sheaves on $\Proj(P_R)$ 
in which the map $\calO(-1) \to \calO$ is multiplication by $f_1$. Tensoring by $\calF^{\dual}$ yields another exact sequence
\[
0 \to \calF^\dual(-1) \to \calF^\dual \to i_* i^* \calF^\dual \to 0.
\]
Write $Z_{1,\beta}$ for the zero locus of $f_1$ on $\Proj(P_{\calH(\beta)})$;
we write $i$ also for the canonical inclusion $Z_{1,\beta} \to \Proj(P_{\calH(\beta)})$.

By Theorem~\ref{T:slope filtration explicit2}, there exists a short exact sequence
\[
0 \to \calF_+ \to \calF_\beta \to \calF_0 \to 0
\]
of vector bundles on $\Proj(P_{\calH(\beta)})$
such that $\calF_{+} \neq 0$ has all positive slopes and $\calF_0$ has all zero slopes.
In the exact sequence
\[
H^0(\Proj(P_{\calH(\beta)}), \calF_+^\dual) \to 
H^0(\Proj(P_{\calH(\beta)}), i_* i^* \calF_+^\dual) \to
H^1(\Proj(P_{\calH(\beta)}), \calF_+^\dual(-1)),
\]
the first term vanishes because $\calF_+^\dual$ has all slopes negative.
We thus have a commutative diagram
\[
\xymatrix{
H^0(Z_1, i^* \calF^\dual) \ar[r] \ar@{=}[d]& 
H^0(Z_{1,\beta}, i^* \calF^\dual) \ar[r] \ar@{=}[d]& H^0(Z_{1,\beta}, i^* \calF_+^\dual) \ar@{=}[d]\\
H^0(\Proj(P_R), i_* i^* \calF^\dual) \ar[r] \ar[d] & H^0(\Proj(P_{\calH(\beta)}), i_* i^* \calF_\beta^\dual) \ar[r] \ar[d]
& H^0(\Proj(P_{\calH(\beta)}), i_* i^* \calF_+^\dual) \ar@{^{(}->}[d] \\
H^1(\Proj(P_R), \calF^\dual(-1)) \ar[r] & H^1(\Proj(P_{\calH(\beta)}), \calF_\beta^\dual(-1)) \ar[r] & H^1(\Proj(P_{\calH(\beta)}), \calF_+^\dual(-1))
}
\]
Since $Z_1$ and $Z_{1,\beta}$ are affine,
we have
\[
H^0(Z_{1,\beta}, i^* \calF^\dual) = H^0(Z_1, i^* \calF^\dual) \otimes_{\calO(Z_1)} \calO(Z_{1,\beta}).
\]
Since $\calF_+ \neq 0$, we can find $x \in H^0(Z_1, i^* \calF^\dual)$ whose image in $H^0(Z_{i,\beta}, i^* \calF_+^\dual)$ is nonzero.
Map $x$ to $y \in H^1(\Proj(P_R), \calF^\dual(-1))$; then the image of $y$ in 
$H^0(\Proj(P_{\calH(\beta)}), \calF_+^\dual(-1))$ is also nonzero.

We claim that the short exact sequence defined by $y$ has the desired property.
To check the claim, we may reduce to the case where $R = \calH(\beta)$, so that $\calF_\beta = \calF$.
Let $\calG_+$ be the inverse image of $\calF_+$ in $\calG$; 
we then have an exact sequence
\begin{equation} \label{eq:single step devissage1}
0 \to \calO(-1) \to \calG_+ \to \calF_+ \to 0
\end{equation}
which by construction is not split. For any vector bundle quotient $\calH$ of $\calG_+$, we get an exact sequence
\[
0 \to \calH_1 \to \calH \to \calH_2 \to 0
\]
in which $\calH_1$ is the image of $\calO(-1)$ in $\calH$. Note that on one hand,
$\deg(\calH_1) \geq -1$ with equality only if $\calH_1 = \calO(-1)$;
on the other hand, $\deg(\calH_2) \geq 0$ with equality only if $\calH_2 = 0$.
We cannot have both equalities simultaneously because \eqref{eq:single step devissage1} does not split; hence $\deg(\calH) = \deg(\calH_1) + \deg(\calH_2) \geq 0$, and by Theorem~\ref{T:slope filtration explicit2} it follows that $\calG_+$ has all slopes nonnegative. From the exact sequence
\[
0 \to \calG_+ \to \calG \to \calF_0 \to 0,
\]
we see that $\calG$ also has all slopes nonnegative.
\end{proof}

\begin{cor} \label{C:single step devissage}
Let $\calF$ be a vector bundle on $\Proj(P_R)$ whose slopes at some $\beta \in \calM(R)$ are all nonnegative. Then there exists a short exact sequence
\[
0 \to \calH \to \calG \to \calF \to 0
\]
of vector bundles on $\Proj(P_R)$ such that $\calG$ is \'etale at $\beta$.
\end{cor}
\begin{proof}
This follows from Lemma~\ref{L:single step devissage} by induction on $\deg(\calF_\beta)$, plus Theorem~\ref{T:spread etale}.
\end{proof}

\begin{theorem} \label{T:pointwise ample is ample}
A vector bundle $\calF$ on $\Proj(P_R)$ is ample if and only if it is pointwise ample.
\end{theorem}
\begin{proof}
Suppose that $\calF$ is ample; to prove that $\calF$ is pointwise ample, we may even assume that $\calF$ is globally ample. Choose any $\beta \in \calM(R)$
and let $\alpha_1 \geq \dots \geq \alpha_m$ be the slopes of $\calF$
at $\beta$ listed with multiplicity. For $n > 0$, the slopes of $\calF^{\otimes n}$ at $\beta$ are the $n$-fold sums of $\alpha_1,\dots,\alpha_m$. For some $n$, $\calF^{\otimes n}(-1)$ is generated by global sections, and so $n \alpha_m - 1 \geq 0$ and $\alpha_m > 0$. 
Hence $\calF$ is pointwise ample.

Conversely, suppose that $\calF$ is pointwise ample.
To prove that $\calF$ is ample, it is harmless to first replace $R$ by a rational localization encircling $\beta$.
By Lemma~\ref{L:twist is pointwise ample}, there exists $n_0 \in \ZZ$ such that for $n \geq n_0$, $\calF^{\otimes n}(-1)$ is pointwise ample at $\beta$. 
By Corollary~\ref{C:single step devissage}, there exists a short exact sequence
\[
0 \to \calH \to \calG \to \calF^{\otimes n}(-1) \to 0
\]
of vector bundles on $\Proj(P_R)$ such that $\calG$ is \'etale at $\beta$.
By replacing $R$ by a suitable rational localization, we may ensure that $\calG$ is globally \'etale; by Corollary~\ref{C:globally etale to ample}, $\calG(1)$ is globally ample, as then is its quotient $\calF^{\otimes n}$.
By Lemma~\ref{L:ampleness of a multiple}, $\calF$ is ample.
\end{proof}

\begin{remark} \label{R:not globally etale}
We do not know whether pointwise ample (or equivalently ample) implies globally ample.
For example, we do not know whether $\calF = \calL^{\otimes n}(e)$ is necessarily globally ample in case $e,n>0$ and $\calL$ is \'etale but not globally \'etale.
\end{remark}

We next globalize the construction.
\begin{defn}
Let $\calF$ be a vector bundle on $\FFC_X$.
We say that $\calF$ is \emph{ample} if for every choice of $A$ and $R$
and every morphism $f: \Spa(A,A^+) \to X$, 
the vector bundle $f^*\calF$ on $\FFC_R$ corresponds 
via Theorem~\ref{T:vector bundles2} to an ample vector bundle on $\Proj(P_R)$.
By Theorem~\ref{T:pointwise ample is ample}, this is equivalent to
requiring that the slopes of $\calF$ (as functions on $X$)
be everywhere positive; this equivalence has the following consequences.
\begin{itemize}
\item
If $X = \Spa(A,A^+)$, then a vector bundle on $\Proj(P_R)$ is ample if and only if the corresponding vector bundle on $\FFC_X$ is ample.
\item
Ampleness descends along surjective morphisms on the base. That is,
if $f: Y \to X$ is a surjective morphism of perfectoid adic spaces,
$\calF$ is a vector bundle on $\FFC_X$, and $f^* \calF$ is ample, then $\calF$ is ample. In particular, ampleness is local on the base.
\item
By Theorem~\ref{T:open locus},
ampleness is an open condition on the base, and even on the real quotient of the base. That is, if $\calF$ is a vector bundle on $X$ and the restriction of $\calF$ to $\FFC_{\calH(x)}$ is ample for some $x \in X$, then there exists a partially proper open neighborhood $U$ of $x$ in $X$ such that the restriction of $\calF$ to $\FFC_U$ is also ample.
\end{itemize}
\end{defn}

We finally introduce a key example of an ample line bundle.
\begin{defn} \label{D:canonical line bundle}
For $z$ as in Remark~\ref{R:over analytic field},
the inclusion of modules $\tilde{\calR}_R^{[q^{-1/2}, q^{1/2}]} \to z^{-1} \tilde{\calR}_R^{[q^{-1/2}, q^{1/2}]}$
induces an inclusion $\calO \to \calL$ of vector bundles on $\FFC_R$.
Since this construction is canonically independent of $z$,
it globalizes to define an inclusion $\calO_X \to \calL_X$ of line bundles on $\FFC_X$ corresponding to an element $t_X \in H^0(\FFC_X, \calL_X)$ whose divisor is the image of the canonical section $X \to \FFC_X$.
Note that $\calL_X$ is pure of slope 1 and hence ample
by Theorem~\ref{T:pointwise ample is ample}.
\end{defn}

\begin{lemma} \label{L:line bundle is globally pure}
For $X = \Spa(A,A^+)$, the $\varphi^a$-module corresponding to $\calL_X$ is globally pure of slope $1$.
\end{lemma}
\begin{proof}
Write $z = [\overline{z}] + p z_1$.
Let $M$ be the $\varphi^a$-module over $\tilde{\calR}_R$ free on a single generator $\bv$ satisfying $\varphi^a(\bv) = z_1^{-1} z \bv$; it is evidently globally \'etale.
Define the convergent product
\[
u = \prod_{n=0}^\infty \varphi^{an}(1 + p^{-1} z_1^{-1} [\overline{z}])
\in \tilde{\calR}^+_R.
\]
In $\tilde{\calR}_R$, we have $\varphi^a(u) = p z_1 z^{-1} u$;
consequently, $u \bv$ defines an inclusion $\tilde{\calR}_R \to M(1)$
of $\varphi^a$-modules. Computing in $\tilde{\calR}_R^{[q^{-1/2}, q^{1/2}]}$ shows that $M(1)$ must be the $\varphi^a$-module corresponding to $\calL_X$. This proves the claim.
\end{proof}

In the previous example, when $X = \Spa(A,A^+)$ the zero locus of the section $t_X$ is isomorphic to $\Spec(A)$. This suggests the following conjectures.

\begin{conj}\label{conj:affine locus perfectoid}
Let $\calF$ be a line bundle on $\Proj(P_R)$
such that $\deg(\calF) > 0$ and  $\calF(-\deg(\calF))$ is globally \'etale.
(Note that $\calF(-\deg(\calF))$ makes sense because $\deg(\calF): X \to \ZZ$ is continuous for the discrete topology on $\ZZ$. Note also that $\calF$ is globally ample
by Corollary~\ref{C:globally etale to ample}.)
\begin{enumerate}
\item[(a)]
Choose $t \in H^0(\Proj(P_R),\calF)$ whose restriction to $\FFC_x$ is nonzero for all $x \in \Spa(R,R^+)$. Then the closed subscheme $Z$ of $\Proj(P_R)$ cut out by $t$ is affine. (This would follow from Lemma~\ref{L:affine subscheme from section}
given the existence of a second section $s$ whose zero locus is disjoint from that of $t$.)
\item[(b)]
With notation as in (a), equip the coordinate ring of $Z$
with a uniform norm by identifying it with the global sections of the subspace of $\FFC_R$ cut out by $t$. Then this Banach algebra over $\Qp$ is perfectoid.
\end{enumerate}
\end{conj}

\begin{remark}
In the case where $R = L$ is an analytic field, Conjecture~\ref{conj:affine locus perfectoid}
is established in \cite{fargues-fontaine}. 
In the case where $\deg(\calF) = 1$, it should be possible to
argue by reversing the proof of Lemma~\ref{L:line bundle is globally pure} and rescaling the norm on $R$ as in Remark~\ref{R:transform}(d).
It is less clear what should happen if $\calF(-\deg(\calF))$, which is necessarily \'etale, fails to be globally \'etale; compare Remark~\ref{R:not globally etale}.
\end{remark}

\subsection{\texorpdfstring{$B$}{B}-pairs}
\label{subsec:B-pairs}

We next make contact with another interpretation of the functor $\FFC$ inspired by Berger's construction of \emph{$B$-pairs} \cite{berger-b-pairs}.

\begin{hypothesis}
Throughout \S\ref{subsec:B-pairs}, retain Hypothesis~\ref{H:relative FF}, but assume in addition that $X = \Spa(A,A^+)$.
\end{hypothesis}

\begin{convention}
We use the notation $\calL_X$ to represent not only the line bundle on
$\FFC_X$ described in Definition~\ref{D:canonical line bundle}, but also the line bundle on $\Proj(P_R)$ giving rise to it via
Theorem~\ref{T:vector bundles2}.
\end{convention}

\begin{lemma} \label{L:affine pieces}
Let $Z$ be the image of the canonical section $\Spec(A) \to \Proj(P_R)$.
\begin{enumerate}
\item[(a)]
The open subscheme $\Proj(P_R)-Z$ of $\Proj(P_R)$ is affine.
\item[(b)]
The closed subscheme $Z$ of $\Proj(P_R)$ is a Cartier divisor contained in an open affine subscheme of $\Proj(P_R)$.
\end{enumerate}
\end{lemma}
\begin{proof}
Thanks to the interpretation of $Z$ as
the divisor of the section $t_X$ of the line bundle $\calL_X$,
we may invoke Lemma~\ref{L:affine subscheme from section} to deduce (a).
To prove (b), define $L$ as in Remark~\ref{R:two unit generators};
it then suffices to exhibit some $t_L \in P_{L,1}$  whose support in
$\Proj(P_R)$ is disjoint from $Z$. For this, it suffices to follow the proof of Proposition~\ref{P:global invariants} to force $t$ not to have the slope 1 in its Newton polygon.
\end{proof}

\begin{defn} \label{D:B-pair components}
By Lemma~\ref{L:affine pieces}(a), the complement of $Z$ in $\Proj(P_R)$ is an affine scheme $\Spec(R_1)$. By
Lemma~\ref{L:affine pieces}(b), the completion of $\Proj(P_R)$ along $Z$ is another affine scheme $\Spec(R_2)$,
and $\Spec(R_1) \times_{\Proj(P_R)} \Spec(R_2)$ is yet another affine scheme $\Spec(R_3)$. One can also identify $R_2$ with the $\ker(\theta)$-adic completion of $\tilde{\calR}_R^{\inte,1}$
and $R_3$ with $R_2[z^{-1}]$ for some $z$ generating $\ker(\theta)$.
\end{defn}

\begin{remark} \label{R:flat covering}
In case $R = L$, the morphism $\Spec(R_1 \oplus R_2) \to \Proj(P_R)$ is faithfully flat, so it can be used to
define quasicoherent sheaves and compute their cohomology.
In general, one might expect the same to hold, but one
cannot quite prove it using faithfully flat descent because it is unclear whether $ \Spec(R_2) \to\Proj(P_R)$
is a flat morphism.
Nonetheless, we can salvage something; see Theorem~\ref{T:glueing flat covering}.
\end{remark}

\begin{theorem} \label{T:glueing flat covering}
Set notation as in Remark~\ref{R:flat covering}.
\begin{enumerate}
\item[(a)]
For any flat quasicoherent sheaf $V$ on $\Proj(P_R)$, the cohomology of the complex
\[
0 \to \Gamma(\Spec(R_1), V) \oplus \Gamma(\Spec(R_2), V) \to \Gamma( \Spec(R_3), V) \to 0
\]
where the arrow is given by the difference between the two natural restriction maps,
may be naturally identified with $H^i(\Proj(P_R), V)$.
\item[(b)]
The morphism $\Spec(R_1 \oplus R_2) \to \Proj(P_R)$ is an effective descent morphism for the category
of quasicoherent finite locally free sheaves over schemes (after reversing all arrows).
\item[(c)]
The category of vector bundles on $\Proj(P_R)$ is equivalent to the category of triples $(V_1, V_2, \iota)$ where $V_1$ is a finite projective $R_1$-module, $V_2$ is a finite projective $R_2$-module, and $\iota: V_1 \otimes_{R_1} R_3 \cong V_2 \otimes_{R_2} R_3$ is an isomorphism of $R_3$-modules.
\end{enumerate}
\end{theorem}
\begin{proof}
This follows from Proposition~\ref{P:reduced descent}.
\end{proof}

\section{Relative \texorpdfstring{$(\varphi, \Gamma)$}{(phi, Gamma)}-modules}
\label{sec:perfect}

To conclude, we indicate how to use the preceding constructions to describe \'etale local systems on arbitrary adic spaces; this involves certain sheaves for Scholze's \emph{pro-\'etale topology}. These sheaves generalize the extended Robba ring in ordinary $p$-adic Hodge theory (as considered in \S\ref{subsec:slopes}) but not the Robba ring itself (as considered in \S\ref{subsec:slope Robba}). Generalizing the latter involves passing from the \emph{perfect period rings} that we consider to certain \emph{imperfect period rings} whose construction is somewhat less functorial;
we defer discussion of imperfect period rings to a subsequent paper.

\subsection{The pro-\'etale topology for adic spaces}
\label{subsec:proetale}

In $p$-adic Hodge theory, one studies the Galois theory of a $p$-adic field using certain highly ramified infinite algebraic extensions.
To carry out relative $p$-adic Hodge theory, one needs an analogous geometric construction; one convenient mechanism for this is the \emph{pro-\'etale topology} introduced by Scholze  \cite[\S 3]{scholze2}.

\begin{defn} \label{D:pro-etale}
For $\calC$ a category, a \emph{pro-object} over $\calC$ consists of a pair
$(I, F)$ in which $I$ is a directed poset and $F$ is a
contravariant functor from $I$ to $\calC$.
The pro-objects over $\calC$ form a category $\widehat{\calC}$
in which
\[
\Hom((I,F), (I',F')) =
\varprojlim_{i' \in I'} \varinjlim_{i \in I} \Hom(F(i), F'(i'))
\]
(see \cite[Expos\'e~I, \S 8.10]{sga4-1}).
By design, the category $\widehat{\calC}$ (the \emph{pro-category} associated to $\calC$)
admits inverse limits (modulo set-theoretic difficulties
which we gloss over here).
There is a natural embedding $\calC \to \widehat{\calC}$
taking an object $X \in \calC$ to the pair $(\{0\}, F)$ in which $\{0\}$ is the singleton poset and
$F$ takes $0$ to $X$.

In the cases we are considering, the category $\calC$ admits a forgetful functor to topological spaces
denoted $X \mapsto \left|X\right|$. We extend this to a forgetful functor from $\widehat{\calC}$ to topological spaces
by setting $\left|\varprojlim_{i \in I} X_i\right| = \varprojlim_{i \in I} \left|X_i\right|$.
\end{defn}

\begin{defn}
For $X$ a preadic space, a morphism $U \to V$ in $\widehat{X}_{\et}$
is \emph{\'etale} (resp.\ \emph{finite \'etale}, \emph{faithfully finite \'etale})
if it arises by base extension from an \'etale (resp.\ finite \'etale, faithfully finite \'etale) morphism
$Y_0 \to X_0$ in $X_{\et}$.
One checks formally that these properties are stable under composition 
(as in \cite[Lemma~3.10(ii)]{scholze2})
and base change (as in \cite[Lemma~3.10(i)]{scholze2}).

A morphism $U \to V$ in $\widehat{X}_{\et}$ is \emph{pro-\'etale} if
$U$ admits a \emph{pro-\'etale presentation}
as a cofiltered inverse limit $\varprojlim U_i$ of objects which are \'etale over $V$,
such that $U_i \to U_j$ is faithfully finite \'etale for sufficiently large $j$.
\end{defn}

\begin{lemma} \label{L:proetale properties1}
Let $X$ be a preadic space.
\begin{enumerate}
\item[(a)]
Let $U \to V$ be a pro-\'etale morphism in
$\widehat{X}_{\et}$ and let $W \to V$ be an arbitrary morphism in $\widehat{X}_{\et}$.
Then $U \times_V W \to W$ is pro-\'etale, and the map $|U \times_V W| \to |U| \times_{|V|} |W|$ is surjective.
\item[(b)]
For $U \to V \to W$ pro-\'etale morphisms in $\widehat{X}_{\et}$, the composition $U \to W$ is pro-\'etale.
\end{enumerate}
\end{lemma}
\begin{proof}
To prove (a), we follow the proof of \cite[Lemma~3.10(i)]{scholze2}.
In case $U \to V$ is \'etale, we may assume that $U, V \in X_{\et}$ and realize $W \to V$ using a compatible system of morphisms $W_i \to V$ in $X_{\et}$. We then have maps
\[
\left| U \times_V W \right| = \varprojlim_i \left| U \times_V W_i \right|
\to \varprojlim_i \left| U \right| \times_{\left| V \right|} \left| W_i \right| = 
\left| U \right| \times_{\left| V \right|} \left| W \right|.
\]
If we put the discrete topology on each fibre over $W$, then the central arrow is a surjective map of compact spaces by Remark~\ref{R:compact spaces}(c).
In the general case, choose a pro-\'etale presentation
$U = \varprojlim_i U_i \to V$; we then have maps
\[
\left| U \times_V W \right| = \varprojlim_i \left| U_i \times_V W \right|
\to \varprojlim_i \left| U_i \right| \times_{\left| V \right|} \left| W \right| = 
\left| U_i \right| \times_{\left| V \right|} \left| W \right|.
\]
The central arrow is again surjective by Remark~\ref{R:compact spaces}(c).

To prove (b), we follow the proof of \cite[Lemma~3.10(vi)]{scholze2}.
It suffices to check the case where $U \to V$ is \'etale.
In this case, $U \to V$ is the pullback of some \'etale morphism $U_0 \to V_0$ in $X_{\et}$ along some morphism $V \to V_0$ in $\widehat{X}_{\et}$. 
Choose a pro-\'etale presentation $V = \varprojlim_i V_i \to W$; 
then $V \to V_0$ arises from a compatible family of morphisms $V_i \to V_0$ in $X_{\et}$ (for $i$ large). Hence
$U = \varprojlim_i U_0 \times_{V_0} V_i$ is pro-\'etale over $W$.
\end{proof}

\begin{defn}
Let $X_{\proet}$ denote the full subcategory of $\widehat{X}_{\et}$ consisting of objects which
are pro-\'etale over $X$.
By Lemma~\ref{L:proetale properties1}, we may view $X_{\proet}$ as a site by taking coverings to be families $\{U_i \to Y\}_i$
such that for some (hence any) pro-\'etale presentations $\varprojlim_j U_{i,j}$ of $U_i$ over $Y$ and some (hence any) choice of indices $j = j(i)$ such that $U_{j''} \to U_{j'}$ is faithfully finite \'etale for any $j' \geq j(i)$, the family
$\{U_{i,j(i)} \to Y\}_i$ is a set-theoretic covering.
The resulting site maps to the usual \'etale site $X_{\et}$ via the embedding
$X_{\et} \to \widehat{X}_{\et}$.

In case $X$ is stably adic, we may characterize coverings in $X_{\proet}$ more simply: they are the families $\{U_i \to Y\}_i$ such that the maps $\{\left| U_i \right| \to \left| Y \right| \}$ form a set-theoretic covering.
\end{defn}

\begin{remark} \label{R:tower}
Recall that an inverse limit of spectral spaces is a spectral space by Corollary~\ref{C:inverse limit}.
Consequently, for any preadic space $X$ and any $Y \in X_{\proet}$ admitting a pro-\'etale presentation in which the underlying space
of each term is qcqs, $\left| Y \right|$ is a spectral space.
It follows that for any $Y \in X_{\proet}$,
$|Y|$ is a locally spectral space.
\end{remark}

In case $X$ is stably adic, we can emulate more of \cite[Lemma~3.10]{scholze2}.
\begin{lemma} \label{L:proetale properties2}
Let $X$ be a stably adic space.
\begin{enumerate}
\item[(a)]
For $U \in \widehat{X}_{\et}$ and $W \subseteq |U|$ a quasicompact open set, there exist $V \in \widehat{X}_{\et}$
and an \'etale map $V \to U$ such that $|V| \to |U|$ induces a homeomorphism $|V| \cong W$.
Moreover, if $U \in X_{\proet}$, one can take $V \in X_{\proet}$, and then any morphism
$V' \to U$ in $X_{\proet}$ with image contained in $|W|$ factors through $V$.
\item[(b)]
For any pro-\'etale morphism $U \to V$ in $\widehat{X}_{\et}$, the map $|U| \to |V|$ is open.
\item[(c)]
Any (finite) \'etale map $U \to V$ in $\widehat{X}_{\et}$ with $V \in X_{\proet}$ and $\left| U \right| \to \left| V \right|$ surjective
is the base extension of some surjective (finite) \'etale morphism in $X_{\et}$.
\end{enumerate}
\end{lemma}
\begin{proof}
Given Lemma~\ref{L:proetale properties1}
and the fact that \'etale maps are open (Lemma~\ref{L:finite etale from algebra}(b)),
we may deduce (a)--(c) as in \cite[Lemma~3.10(iii)--(v)]{scholze2}.
\end{proof}

\begin{remark} \label{R:proetale equalizers}
In \cite{scholze2}, the only preadic spaces considered are adic spaces which are \emph{locally  noetherian} (i.e., they are covered by the adic spectra of strongly noetherian adic Banach rings, which are sheafy by 
Proposition~\ref{P:strongly noetherian}). This hypothesis is used in an essential way in \cite[Lemma~3.10(vii)]{scholze2},
which asserts that $X_{\proet}$ admits arbitrary finite projective limits (not just fibred products, which exist by virtue of Lemma~\ref{L:proetale properties1}).
Namely, it is necessary to ensure that objects of $X_{\et}$
locally have only finitely many connected components, so that arbitrary intersections of closed-open subsets are
again closed-open. In the absence of a noetherian hypothesis, we may work with the pro-\'etale site without incident, but we cannot freely apply topos-theoretic machinery
as in \cite[Proposition~3.12]{scholze2}.
\end{remark}

\begin{remark}
One can similarly define a pro-\'etale topology
for rigid analytic spaces over $K$, Berkovich analytic spaces, or schemes.
However, in the cases of rigid or Berkovich analytic spaces, one must take suitable care with the definition of coverings.
In the case of schemes, one can take advantage of the properties of flatness to introduce a simpler variant of the pro-\'etale topology; see Definition~\ref{D:pro-etale schemes}.
\end{remark}

Using the pro-\'etale topology, we may reinterpret the definition of \'etale local systems.

\begin{lemma}
Let $X$ be a preadic space.
For any topological space $T$, the functor $\calF_T: Y \mapsto \Map_{\cont}(\left| Y \right|, T)$ is a sheaf on $X_{\proet}$. In particular, if $T$ is discrete, then $\calF_T$ is the usual constant sheaf associated to $T$.
\end{lemma}
\begin{proof}
It is clear that $\calF_T$ is a sheaf on $X$. To verify that $\calF_T$ is a sheaf on $X_{\et}$, by Proposition~\ref{P:acyclicity template etale} it suffices to observe that for $(A,A^+) \to (B,B^+)$ a faithfully finite \'etale morphism,
by Lemma~\ref{L:finite etale from algebra}(b) the morphism
$\Spa(B,B^+) \to \Spa(A,A^+)$ of topological spaces is open and hence a quotient map.
To complete the proof, it suffices to observe that for $Y \in X_{\proet}$ a tower of faithfully finite \'etale morphisms, the morphism $\left| Y \right| \to \left| X \right|$ is an inverse limit of surjective open morphisms, so it is again a quotient map.
\end{proof}

\begin{defn}
By an \emph{\'etale $\Zp$-local system} (resp.\ an \emph{\'etale $\Qp$-local system}) on $X_{\proet}$, we mean a sheaf in $\Zp$-modules (resp.\ in $\Qp$-vector spaces) locally of the form $\calF_T$ for $T$ a finite free $\Zp$-module (resp.\ finite-dimensional $\Qp$-vector space) carrying its usual $p$-adic topology.
\end{defn}
\begin{lemma} \label{L:proetale local systems}
For $X$ a preadic space, the categories of \'etale $\Zp$-local systems and $\Qp$-local systems on $X$ are equivalent to the corresponding categories on $X_{\proet}$.
\end{lemma}
\begin{proof}
The functors from local systems on $X$ to local systems on $X_{\proet}$ are defined
by pullback as in Definition~\ref{D:pro-etale schemes}.
To check that these are fully faithful, we may reduce to considering local systems on
$\widetilde{\Spa}(A,A^+)$ which become constant on some faithfully finite \'etale tower; however, such towers descend to $\Spec(A)$ by Lemma~\ref{L:finite etale from algebra}(a), so we may appeal to Theorem~\ref{T:proetale local systems schemes}.
By a similar argument, we may also deduce essential surjectivity from
Theorem~\ref{T:proetale local systems schemes}.
(See also \cite[Proposition~8.2]{scholze2}.)
\end{proof}

\begin{defn} \label{D:etale cohomology}
For $X$ a preadic space and $V$ an \'etale $\Zp$-local system or $\Qp$-local system on $X$, following \cite{scholze2} we define the \emph{pro-\'etale cohomology} of $V$ to be the cohomology of the corresponding \'etale local system on $X_{\proet}$, and denote it by $H^i_{\proet}(X, V)$.
\end{defn}

\subsection{Perfectoid subdomains}
\label{subsec:perfectoid subdomains}

We next identify the \emph{perfectoid subdomains} of the pro-\'etale site of a preadic space, which will be used to construct period sheaves.

\begin{hypothesis} \label{H:perfectoid subdomains}
Throughout \S\ref{subsec:perfectoid subdomains},
let $X$ be a preadic space over an analytic field $K$ of residue characteristic $p$.
\end{hypothesis}

\begin{defn} \label{D:structure sheaf}
We define the \emph{structure presheaf} $\calO$ and the sub-presheaves $\calO^{\circ}, \calO^+$ on $X_{\proet}$
as follows. For $Y = \varprojlim_i Y_i \in X_{\proet}$,
put
\begin{align*}
\calO(Y) &= \varinjlim_i \Gamma(Y_i, \calO_{Y_i}) \\
\calO^\circ(Y) &= \varinjlim_i \Gamma(Y_i, \calO^\circ_{Y_i}) \\
\calO^+(Y) &= \varinjlim_i \Gamma(Y_i, \calO^+_{Y_i}).
\end{align*}
We define the \emph{spectral seminorm} on $\calO(Y)$ as follows.
Choose $j$ so that the maps $Y_i \to Y_j$ are faithfully finite \'etale for all $i \geq j$.
Then for each $f \in \calO(Y)$, choose $i \geq j$ for which $f \in \calO_{Y_i}(Y_i)$ and
define the spectral seminorm of $f$ in $\calO(Y)$ to be the spectral seminorm of $f$ in $\calO_{Y_i}(Y_i)$.
\end{defn}

\begin{remark} \label{R:spectral seminorm}
Another way to interpret the spectral seminorm introduced in Definition~\ref{D:structure sheaf}
is to observe that each element of $|Y|$ defines
a multiplicative seminorm on $\calO(Y)$, and that the spectral seminorm is the supremum of these.
\end{remark}

To define the sheaves we are interested in, we use a special neighborhood basis for the pro-\'etale topology.
\begin{defn}
An element $Y$ of $X_{\proet}$ is a \emph{perfectoid subdomain} if it admits a pro-\'etale presentation
$\varprojlim_i Y_i$ satisfying the following conditions.
\begin{enumerate}
\item[(a)]
There exists an index $j \in I$ such that the maps $Y_i \to Y_j$ are faithfully finite \'etale for all $i \geq j$ and the space $Y_j$
is a preadic affinoid space over $K$ (as then are the spaces $Y_i$ for all $i \geq j$).
\item[(b)]
The completion of $\calO(Y)$ for the spectral seminorm is a perfectoid
(if $K$ is of characteristic $0$) or perfect (if $K$ is of characteristic $p$) Banach algebra over $K$.
\end{enumerate}
These satisfy the following properties.
\begin{enumerate}
\item[(i)]
If $Y$ is a perfectoid subdomain and $Z \to Y$ is a morphism in $X_{\proet}$ which is the pullback of
a rational subdomain embedding of elements of $X_{\et}$, then $Z$ is also a perfectoid subdomain (Theorem~\ref{T:perfectoid rational}).
\item[(ii)]
Any finite \'etale cover of a perfectoid subdomain is a perfectoid subdomain
(Theorem~\ref{T:mixed lift ring}).
\item[(iii)]
The fibred product of two perfectoid subdomains is a perfectoid subdomain
(by (i) and (ii) plus the local factorization of \'etale morphisms).
\end{enumerate}
\end{defn}

\begin{lemma} \label{L:neighborhood basis adic}
If $X = \Spa(A,A^+)$ for some adic Banach algebra $(A,A^+)$ over $K$, then
there exists $Y = \varprojlim_i Y_i \in X_{\proet}$ which is a pro-finite \'etale covering of $X$
and is a perfectoid subdomain of $X_{\proet}$.
Consequently, for arbitrary $X$, the perfectoid subdomains of $X_{\proet}$ form a neighborhood basis of $X_{\proet}$.
\end{lemma}
\begin{proof}
Apply Lemma~\ref{L:perfectoid neighborhood} to $A^u$.
\end{proof}

We have the following analogue of Proposition~\ref{P:djvdp}.
\begin{prop} \label{P:djvdp proetale}
Let $(A,A^+)$ be a perfectoid adic Banach algebra over $\Qp$
and put $X = \Spa(A,A^+)$.
Let $\calB$ be the collection of perfectoid subdomains in $X_{\proet}$ which can written as faithfully finite \'etale towers over perfectoid subodmains in $X_{\et}$.
(By Lemma~\ref{L:neighborhood basis adic} and the local factorization of \'etale morphisms, these form a stable basis for $X_{\proet}$.)
Let $\calP$ be a property of coverings in $X_{\proet}$ of and by elements of $\calB$, and assume that the following conditions hold.
\begin{enumerate}
\item[(a)]
Any covering admitting a refinement having property $\calP$ also has property $\calP$.
\item[(b)]
Any composition of coverings having property $\calP$ also has property $\calP$.
\item[(c)]
For any $Y \in \calB$, any rational covering of $Y$ has property $\calP$.
\item[(d)]
For any $Y \in \calB$, any tower $Y' \to Y$ of faithfully finite \'etale morphisms, viewed as a covering, has property $\calP$. 
\end{enumerate}
Then every covering in $X_{\proet}$ of and by elements of $\calB$ has property $\calP$.
\end{prop}
\begin{proof}
By Proposition~\ref{P:djvdp}, any covering in $X_{\et}$ of and by elements of $\calB$ has property $\calP$. The claim then follows by (b) and (d).
\end{proof}

\begin{defn}
We define the \emph{completed structure presheaf} $\widehat{\calO}$
(or $\widehat{\calO}_X$ for clarity)
on $X_{\proet}$ as follows: for $Y \in X_{\proet}$, let $\widehat{\calO}(Y)$ be the completion of $\calO(Y)$ for the spectral seminorm. We similarly define the subpresheaf $\widehat{\calO}^+$ of $\widehat{\calO}$.
\end{defn}

\begin{lemma} \label{L:perfectoid proetale sheaf}
Let $(A,A^+)$ be a perfectoid adic Banach algebra over $\Qp$ or a perfect uniform Banach algebra over $\Fp$ and put $X = \Spa(A,A^+)$.
\begin{enumerate}
\item[(a)]
We have  $H^0(X_{\proet}, \widehat{\calO}) = A$.
\item[(b)]
For $i>0$, $H^i(X_{\proet}, \widehat{\calO}) = 0$.
\item[(c)]
For $i>0$, the group $H^i(X_{\proet}, \widehat{\calO}^+)$ is annihilated by $\gothm_A$.
\end{enumerate}
\end{lemma}
\begin{proof}
Again as in the proof of Proposition~\ref{P:acyclicity template},
it suffices to check universal (almost) \v{C}ech-acyclicity for coverings of and by elements of a suitable basis. Using Proposition~\ref{P:djvdp proetale} and Theorem~\ref{T:Tate sheaf property for structure sheaf}, we reduce to checking
\v{C}ech-acyclicity for a tower of \'etale surjective morphisms of affinoid perfectoid spaces. Let $\{Y_i\}_i$ be the terms in such a tower and put $B_i = \calO(Y_i)$.

Suppose first that we are in the perfect case. To clarify notation, we write $R,S_i$ in place of $A, B_i$. For each $i$, the \v{C}ech sequence
\[
0 \to R \to S_i \to S_i \otimes_R S_i \to \cdots
\]
is strict exact and hence almost optimal exact
(Remark~\ref{R:perfect uniform strict}); we may thus take completed direct limits to obtain another strict exact sequence
\begin{equation} \label{eq:perfectoid proetale sheaf1}
0 \to R \to S_\infty \to S_\infty \widehat{\otimes}_R S_\infty \to \cdots,
\end{equation}
where $S_\infty$ is the completed direct limit of the $S_i$.

In the perfectoid case, we apply the perfectoid correspondence to each morphism in the tower to obtain a tower of \'etale surjective morphism of affinoid perfect uniform spaces. By
applying Proposition~\ref{P:perfectoid uniform strict}(b) to \eqref{eq:perfectoid proetale sheaf1}, we deduce the desired result.
\end{proof}

\begin{cor} \label{C:completed structure presheaf}
The completed structure presheaf on $X_{\proet}$ agrees with its sheafification on perfectoid subdomains.
\end{cor}

\begin{remark}
Lemma~\ref{L:perfectoid proetale sheaf}(c) implies that the subsheaf $\widehat{\calO}^+$ of $\widehat{\calO}$
is \emph{almost acyclic}
in the sense of Proposition~\ref{P:almost sheaf property}.
\end{remark}

\begin{defn}
For $K$ of characteristic zero,
we define the sheaves $\overline{\calO}, \overline{\calO}^\circ, \overline{\calO}^+$ on $X_{\proet}$
via the perfectoid correspondence: for $Y \in X_{\proet}$ a perfectoid subdomain, $(\overline{\calO}(Y), \overline{\calO}^{+}(Y))$ is the perfect uniform adic Banach algebra of characteristic $p$
corresponding to $(\widehat{\calO}(Y),\widehat{\calO}^{+}(Y))$
via Theorem~\ref{T:perfectoid ring}.
\end{defn}

\begin{defn}
Let $\nu_X: X_{\proet} \to X_{\et}$ be the natural morphism.
We say that $X = \Spa(A,A^+)$ is \emph{pro-sheafy} if $X$ is uniform and sheafy
and the morphism $\calO_X \to \nu_{X*} \widehat{\calO}_X$ is an isomorphism. For example, if $A$ is perfectoid, then $X$ is pro-sheafy by Lemma~\ref{L:perfectoid proetale sheaf}.
For another example, if $A = K$, then $X$ is pro-sheafy
by the Ax-Sen-Tate theorem \cite{ax}.
\end{defn}

\begin{remark}
Suppose that $X = \Spa(A,A^+)$ and that there exists a perfectoid Banach algebra $B$ over $K$ admitting a bounded homomorphism $A \to B$  which splits in the category of Banach modules over $A$.
Then $X$ is pro-sheafy. For instance, this is the case if $A$ is strongly preperfectoid
(by choosing a perfectoid field which admits a Schauder basis over $\QQ_p$).
Also, if $A$ has this property, then so does $A\{T_1,\dots,T_n\}$.
\end{remark}

\begin{remark}
In case $X = \Spa(A,A^+)$ for $A$ a reduced normal affinoid algebra over $K$,
one can prove that $X$ is pro-sheafy by using Temkin's resolution of singularities for affinoid algebras \cite{temkin-functorial}
to reduce to the case where $A$ is smooth over $K$,
then making a construction of imperfect period rings generalizing that of Andreatta-Brinon \cite{andreatta-brinon}. We will discuss this point in a subsequent paper.
\end{remark}

One has a Kiehl glueing property for the pro-\'etale topology.
\begin{theorem} \label{T:vector bundles on perfectoid}
Suppose $X$ is perfectoid or perfect uniform. Then pullback of finite locally free $\calO_X$-modules to finite locally free $\widehat{\calO}_X$-modules 
defines an equivalence of categories.
\end{theorem}
\begin{proof}
We treat the case where $K$ is of characteristic $0$, the characteristic $p$ case being similar but easier. Full faithfulness of the pullback functor is immediate from Lemma~\ref{L:perfectoid proetale sheaf}, so we need only check essential surjectivity.
By Theorem~\ref{T:adic vector bundle}
and Proposition~\ref{P:djvdp},
it suffices to check
descent in case $X = \Spa(A,A^+)$ for some perfectoid adic Banach algebra $(A,A^+)$
and $Y \to X$ is a tower of faithfully finite \'etale morphisms; moreover, we may formally reduce to the case of a countable tower. For $B = \calO(Y)$, it then suffices to construct
an $A$-linear splitting $B \to A$ of the inclusion $A \to B$. 

Write $B$ as the completed direct limit of an increasing
sequence of faithfully finite \'etale $A$-subalgebras $B_i$.
By Theorem~\ref{T:almost purity}, one can find a $B_i$-linear splitting of $B_i \to B_{i+1}$ such that the operator norm of $B_{i+1} \to B_i \to B_{i+1}$ is at most $p^{p^{-i}}$. Chaining these together gives the desired splitting $B \to A$.
\end{proof}

\begin{remark}
For $X$ perfectoid or perfect uniform, Scholze has suggested a variant of the pro-\'etale topology more in the spirit of the definition for schemes (Definition~\ref{D:pro-etale schemes}).
In this approach, one says that a morphism $f: Y \to X$ of perfectoid spaces is \emph{pro-\'etale} if $Y$ is ``similar to'' an inverse limit of \'etale spaces over $X$
in the sense of \cite[\S 2.4]{scholze-weinstein}. More precisely, one posits the existence of a cofiltered inverse system $\{Y_i\}_{i \in I}$ in $X_{\et}$ and a compatible family of morphisms $Y \to Y_i$ with the following properties.
\begin{enumerate}
\item[(a)]
The transition morphisms $Y_j \to Y_i$ are qcqs (i.e., for any morphism $U \to V_i$ with $U$ a preadic affinoid space, $Y_j \times_{Y_i} U$ is qcqs).
\item[(b)]
The induced map $\left| Y \right| \to \varprojlim_i \left| Y_i \right|$ is a homeomorphism.
\item[(c)]
There exists a covering of $Y$ by perfectoid affinoid subspaces $\Spa(A, A^+)$,
each with the following property: for each $i \in I$, consider all 
perfectoid affinoid subspaces $\Spa(A_{ij}, A_{ij}^+)$ of $Y_i$ through which $\Spa(A,A^+) \to Y_i$ factors. Then the direct limit of the maps $A_{ij} \to A$ has dense image.
(Note that $Y$ then admits a basis of such subspaces.)
\end{enumerate}
One then defines a \emph{pro-\'etale covering} of a perfectoid space $X$ to be a family of pro-\'etale morphisms $\{U_i \to X\}_{i \in I}$ with the property that for any quasicompact open subspace $V$ of $X$, there exist a finite subset $J$ of $I$ 
and some quasicompact open subsets $V_j$ of $U_j \times_X V$ for each $j \in J$ such that $\{V_j \to V\}_{j \in J}$ is a set-theoretic covering. (The auxiliary finiteness condition on coverings is needed because the analogue of Lemma~\ref{L:proetale properties2} does not hold; this is 
typical for ``large'' topologies such as the fpqc topology on schemes.)

With this definition, it is not difficult to check that all of the acyclicity assertions we make about the pro-\'etale topology on perfectoid or perfectoid uniform spaces, such as  Lemma~\ref{L:perfectoid proetale sheaf} and Lemma~\ref{L:perfectoid proetale period sheaf}, remain true for this finer topology. However, it is not immediately clear how to adapt the proof of 
Theorem~\ref{T:vector bundles on perfectoid} to the finer topology.
\end{remark}

\subsection{\texorpdfstring{$\varphi$}{Phi}-modules and local systems}

Using the completed structure sheaf, we proceed to construct perfect period sheaves on preadic spaces over $\Qp$.
We then relate $\varphi$-modules over these period sheaves to \'etale local systems on the spaces. Even over perfectoid spaces, this adds to the discussion in \S\ref{sec:adic} because now we can also say something about the pro-\'etale cohomology of local systems.

\begin{remark}
The suite of notations introduced below is similar to the ``Colmez style'' of notations in $p$-adic Hodge theory (as distinguished from the ``Fontaine style'' of notations).
One key difference is that those notations primarily distinguish between rings with and without integral structure, by basing their notations on the letters $\bA$ and $\bB$ respectively. We prefer to further emphasize the difference between bounded and unbounded rings without integral structure (e.g., the bounded Robba ring $\calR^{\bd}$ versus the full Robba ring $\calR$), so we derive our notations for the latter from the letter $\bC$.
\end{remark}

\begin{hypothesis}
For the remainder of \S\ref{sec:perfect}, let $X$ be a preadic
space over $\QQ_{p^d}$ for some positive integer $d$,
and let $Y \in X_{\proet}$ denote an arbitrary perfectoid subdomain.
Let $Y'$ be the perfect uniform adic space over $\FF_{p^d}$ associated to $Y$ via the perfectoid correspondence.
\end{hypothesis}

\begin{defn} \label{D:perfect period sheaves}
Define sheaves on $X_{\proet}$ by the following formulas:
\begin{gather*}
\tilde{\bA}_X(Y) = W(\overline{\calO}(Y)), \qquad
\tilde{\bA}^{+}_X(Y) = \tilde{\calR}_{\overline{\calO}(Y)}^{\inte,+}, \qquad
\tilde{\bA}^{\dagger,r}_X(Y) = \tilde{\calR}_{\overline{\calO}(Y)}^{\inte,r}, \qquad
\tilde{\bA}^{\dagger}_X(Y) = \tilde{\calR}_{\overline{\calO}(Y)}^{\inte}, \\
\tilde{\bB}_X(Y) = W(\overline{\calO}(Y))[p^{-1}], \qquad
\tilde{\bB}^{\dagger,r}_X(Y) = \tilde{\calR}_{\overline{\calO}(Y)}^{\bd,r}, \qquad
\tilde{\bB}^{\dagger}_X(Y) = \tilde{\calR}_{\overline{\calO}(Y)}^{\bd}, \\
\tilde{\bC}^+_X(Y) = \tilde{\calR}_{\overline{\calO}(Y)}^{+}, \qquad
\tilde{\bC}^r_X(Y) = \tilde{\calR}_{\overline{\calO}(Y)}^r, \qquad
\tilde{\bC}^{I}_X(Y) = \tilde{\calR}_{\overline{\calO}(Y)}^{I}, \qquad
\tilde{\bC}_X(Y) = \tilde{\calR}_{\overline{\calO}(Y)}.
\end{gather*}
\end{defn}

\begin{lemma} \label{L:perfectoid proetale period sheaf}
The sheaves defined in Definition~\ref{D:perfect period sheaves} are acyclic on $Y$.
\end{lemma}
\begin{proof}
Imitate the proof of Lemma~\ref{L:perfectoid proetale sheaf}.
\end{proof}

\begin{defn}
The sheaves $\tilde{\bA}_X, \tilde{\bA}^+_X, \tilde{\bA}^{\dagger}_X,
\tilde{\bB}_X, \tilde{\bB}^{\dagger}_X, \tilde{\bC}^+_X, \tilde{\bC}_X$ carry
actions of $\varphi$; we refer to the sheaves collectively as
\emph{perfect period sheaves}.
By a \emph{$\varphi^d$-module} over one of these sheaves, we will mean a sheaf of finite projective modules over the corresponding sheaf of rings equipped with an isomorphism with its $\varphi^d$-pullback.

We say that a $\varphi^d$-module $M$ over $\tilde{\bC}_X$ is \emph{pure} (resp.\ \emph{\'etale}) at a point $x \in X$ if its restriction to some perfectoid subdomain is pure (resp.\ \'etale) at some lift of $x$. The same is then true for any other lift of $x$ to any other perfectoid subdomain, by virtue of the pointwise nature of the pure and \'etale conditions (Corollary~\ref{C:local model not locally free}).
\end{defn}

\begin{theorem}
For $M$ a $\varphi^d$-module over $\tilde{\bC}_X$, the pure locus (resp. the \'etale locus) of $M$ is a partially proper open subset of $X$.
In particular, by Lemma~\ref{L:taut Hausdorff quotient}, if $X$ is taut, then so are the pure locus and the \'etale locus.
\end{theorem}
\begin{proof}
The claim is local, so we may assume $X = \Spa(A, A^+)$.
In this case, we deduce the claim by constructing $A \to B$
as in Lemma~\ref{L:perfectoid neighborhood},
applying Corollary~\ref{C:etale locus is open} to $\calM(B)$,
and applying Remark~\ref{R:compact spaces}(b) to the map $\calM(B) \to \calM(A)$.
\end{proof}

\begin{theorem} \label{T:proetale equivalence1 global}
The following categories are equivalent via functors which preserve rank and are natural for pullbacks on $X$.
\begin{enumerate}
\item[(a)]
The category of \'etale $\ZZ_{p^d}$-local systems over $X$.
\item[(b)]
The category of $\varphi^d$-modules over $\tilde{\bA}_X$.
\item[(c)]
The category of $\varphi^d$-modules over $\tilde{\bA}^\dagger_X$.
\end{enumerate}
\end{theorem}
\begin{proof}
Immediate from Theorem~\ref{T:perfect equivalence2 global}  and Lemma~\ref{L:proetale local systems}.
\end{proof}
\begin{cor} \label{C:zp etale on perfectoid subdomain}
For $X = Y$,
the base extension functor from $\varphi^d$-modules over
$\tilde{\calE}^{\inte}_{\overline{\calO}(Y)}$ (resp.\ $\tilde{\calR}^{\inte}_{\overline{\calO}(Y)}$)
to $\varphi^d$-modules over $\tilde{\bA}_X$ (resp.\ $\tilde{\bA}^{\dagger}_X$) is an equivalence of categories.
\end{cor}
\begin{proof}
Combine Theorem~\ref{T:perfect equivalence2}
with Theorem~\ref{T:proetale equivalence1 global}.
\end{proof}

In our description of \'etale $\Qp$-local systems, we would like to include ``vector bundles over $\FFC_X$'' but it is not so straightforward to make sense of what $\FFC_X$ would be. Instead, we settle for some indirect descriptions of the category of vector bundles.
\begin{defn} \label{D:proetale vector bundle}
Define the sheaf of graded rings $P_X = \oplus_{n=0}^\infty P_{X,n}$ by the formula
\[
P_{X,n}(Y) = \{x \in \tilde{\bC}_X(Y): \varphi^d(x) = p^n x\}.
\]
By a \emph{vector bundle over $\FFC_X$}, we will mean a sheaf
of graded $P_X$-modules whose restriction to each $Y$ has the form
$\oplus_{n=0}^\infty \Gamma(\FFC_Y, V^{\otimes n})$ for some vector bundle $V$ on $\FFC_Y$. Morphisms of vector bundles are morphisms of graded modules whose restrictions to each $Y$ arise from morphisms of vector bundles.
\end{defn}

\begin{remark} \label{R:same vector bundles}
In case $X = Y$, then $\FFC_Y$ is a well-defined adic space and we have already introduced the category of vector bundles over $\FFC_Y$.
In order to make sense of Definition~\ref{D:proetale vector bundle},
one needs to check that base extension of true vector bundles over $\FFC_Y$ to vector bundles over $\FFC_X$ in this sense is an equivalence of categories. Fortunately, this follows from Theorem~\ref{T:vector bundles on perfectoid}
by Theorem~\ref{T:Kiehl for Robba} and a splitting argument.
\end{remark}

\begin{defn}
Define the sheaf $\bB^{+}_{\mathrm{dR},X}$ by
setting $\bB^{+}_{\mathrm{dR},X}(Y)$ to be the $\ker(\theta)$-adic completion of $\tilde{\bB}^{\dagger,1}_X(Y)$.
Let $\bB_{\mathrm{dR},X}$ be the localization of
$\bB^{+}_{\mathrm{dR},X}$ obtained by inverting a generator of $\ker(\theta)$.

Define the sheaf $\bB_{\mathrm{e},X}$ by setting
$\bB_{\mathrm{e},X}(Y)$ to be the coordinate ring of the open affine subscheme of $\Proj(P_X(Y))$ obtained by removing the zero locus of the canonical section $Y \to \FFC_Y$. The rings
$\bB_{\mathrm{e},X}(Y), \bB^{+}_{\mathrm{dR},X}(Y),
\bB_{\mathrm{dR},X}(Y)$
correspond to the rings $R_1, R_2, R_3$ introduced in
Definition~\ref{D:B-pair components}.

By a \emph{B-pair} over $X$, we will mean a triple $(M_{\mathrm{e}}, M^+_{\mathrm{dR}}, M_{\mathrm{dR}})$
in which $M_{\mathrm{e}}$ is a finite projective $\bB_{\mathrm{e},X}$-module, $M^+_{\mathrm{dR}}$ is a finite projective $\bB_{\mathrm{dR},X}^+$-module, and
$M_{\mathrm{dR}}$ is a finite projective $\bB_{\mathrm{dR},X}$-module
equipped with isomorphisms
\[
M_{\mathrm{dR}}\cong M_{\mathrm{e}} \otimes_{\bB_{\mathrm{e},X}} \bB_{\mathrm{dR},X} \cong  M_{\mathrm{dR}}^+ \otimes_{\bB_{\mathrm{dR},X}^+} \bB_{\mathrm{dR},X}.
\]
\end{defn}

\begin{theorem} \label{T:vector bundles global}
The categories of $\varphi^d$-modules over $\tilde{\bC}_X$, vector bundles on $\FFC_X$, and $B$-pairs on $X$ are functorially equivalent.
\end{theorem}
\begin{proof}
Immediate from Theorems~\ref{T:vector bundles2a}, \ref{T:glueing flat covering}.
\end{proof}

\begin{theorem} \label{T:proetale equivalence2 global}
The following categories are equivalent via functors which preserve rank and are natural for pullbacks on $X$.
\begin{enumerate}
\item[(a)]
The category of \'etale $\QQ_{p^d}$-local systems over $X$.
\item[(b)]
The category of \'etale $\varphi^d$-modules over $\tilde{\bB}_X$.
\item[(c)]
The category of \'etale $\varphi^d$-modules over $\tilde{\bB}^\dagger_X$.
\item[(d)]
The category of \'etale $\varphi^d$-modules over $\tilde{\bC}_X$.
\item[(e)]
The category of vector bundles on $\FFC_X$ which are pointwise semistable of degree $0$.
\end{enumerate}
\end{theorem}
\begin{proof}
Immediate from Theorem~\ref{T:perfect equivalence2b}, 
Theorem~\ref{T:vector bundles2} and Lemma~\ref{L:proetale local systems}.
\end{proof}
\begin{cor} \label{C:qp etale on perfectoid subdomain}
For $X = Y$,
the base extension functor from $\varphi^d$-modules over  $\tilde{\calR}_{\overline{\calO}(Y)}$
to $\varphi^d$-modules over $\tilde{\bC}_X$
is an equivalence of categories.
\end{cor}
\begin{proof}
Immediate from Theorem~\ref{T:proetale equivalence2 global}
and Remark~\ref{R:same vector bundles}.
\end{proof}

\begin{cor} \label{C:pure is adic local}
For $X = Y$, the base extension functor from
pure $\varphi^d$-modules over $\tilde{\calE}_{Y'}$
(resp.\ $\tilde{\calR}^{\bd}_{Y'}$) to pure $\varphi^d$-modules
over $\tilde{\bB}_X$ (resp.\ $\tilde{\bB}^\dagger_X$)
is an equivalence of categories.
\end{cor}
\begin{proof}
Immediate from Theorem~\ref{T:proetale equivalence2 global}
and the fact that \'etale $\Qp$-local systems arise adic-locally from isogeny $\Zp$-local systems.
\end{proof}

\begin{remark}
The analogue of Fontaine's functor $D_{\mathrm{dR}}$ in this 
context is the functor taking a $B$-pair $M$ to the global sections of 
$M_{\mathrm{dR}}$; for instance, this functor appears in Scholze's
approach to the comparison isomorphism in \cite{scholze2}. We will return to this point, and to the analogues of the functors $D_{\mathrm{crys}}$ and $D_{\mathrm{st}}$, in a subsequent paper.
\end{remark}

\begin{remark}
Thanks to Lemma~\ref{L:perfectoid proetale period sheaf},
any $\varphi^d$-module over any of the sheaves described in 
Definition~\ref{D:perfect period sheaves}
may be evaluated at any perfectoid space mapping to $X$, not just perfectoid subdomains.
\end{remark}

\subsection{Comparison of cohomology}

We next compare the pro-\'etale cohomology of local systems with $\varphi$-cohomology.
\begin{defn}
For $M$ a $\varphi^d$-module over a perfect period sheaf,
define the cohomology groups $H^i_{\varphi^d}(M)$ as the hypercohomology groups of the complex
\[
0 \to M \stackrel{\varphi^d - 1}{\to} M \to 0
\]
of sheaves on $X_{\proet}$.
\end{defn}

\begin{theorem} \label{T:proetale cohomology1}
Let $E$ be an \'etale $\ZZ_{p^d}$-local system on $X$.
Apply Theorem~\ref{T:proetale equivalence1 global} to produce
a $\varphi^d$-module $M$ over $\tilde{\bA}_X$ and
a $\varphi^d$-module $M^\dagger$ over $\tilde{\bA}^\dagger_X$.
Then for $i \geq 0$, there are functorial (in $E$ and $X$)
isomorphisms
\[
H^i_{\proet}(X, E) \to H^i_{\varphi^d}(M^\dagger) \to H^i_{\varphi^d}(M).
\]
\end{theorem}
\begin{proof}
The morphism $H^i_{\varphi^d}(M^\dagger) \to H^i_{\varphi^d}(M)$
is induced by the inclusion $M^\dagger \to M$. By Theorem~\ref{T:Galois cohomology2}, this morphism induces quasi-isomorphisms of the kernel and cokernel sheaves of $\varphi^d-1$.
By computing the hypercohomology groups using matching spectral sequences, we see that $H^i_{\varphi^d}(M^\dagger) \to H^i_{\varphi^d}(M)$ is itself an isomorphism.

To produce the isomorphism $H^i_{\proet}(X, E) \to H^i_{\varphi^d}(M)$,
we reinterpret the proof of Theorem~\ref{T:Galois cohomology2}
in the pro-\'etale context using the interpretation of $E$ as a locally constant sheaf $\calF_T$ on $X_{\proet}$ in the sense of Lemma~\ref{L:proetale local systems}.
To begin with, we may view $M$ as the sheafification of the presheaf taking $Y$ to $W(\overline{\calO}(Y)) \widehat{\otimes}_{\Zp} E(Y)$.
In this interpretation, the arrow $E \to M$ in the sequence
\begin{equation} \label{eq:proetale ASW sequence}
0 \to E \to M \stackrel{\varphi^d-1}{\to} M \to 0
\end{equation}
is the natural map $1 \otimes \id$, so it is clear that we obtain a complex.

To prove the theorem, it suffices to check that the sequence
\eqref{eq:proetale ASW sequence} is exact. It is enough to compare sections over a perfectoid subdomain $Y$ on which $E \cong \calF_T$ for some finite free $\Zp$-module $T$.
By the perfectoid correspondence, we may identify
$E(Y)$ not only with $\Map_{\cont}(\left| Y \right|, T)$
but also with $\Map_{\cont}(\left| Y' \right|, T)$;
the injectivity and the exactness at the middle are now immediate from
Corollary~\ref{C:idempotents}. For any $\bv\in M(Y)$, by Proposition~\ref{P:DM relative}, there exists a perfectoid subdomain $Z$ which is pro-\'etale over $Y$ 
(namely, it is a tower of faithfully finite \'etale morphisms over $Y$)
such that $\bv$ can be lifted to $M(Z)$ via the map $\varphi^d-1$. This proves the surjectivity.  
 \end{proof}

\begin{remark} \label{R:proetale cohomology global1}
One aspect of Theorem~\ref{T:Galois cohomology2} that is hidden in the statement and proof of Theorem~\ref{T:proetale cohomology1}  is that
while $E$ is determined by its sections only on sufficiently small perfectoid subdomains, $M$ is determined by its sections on arbitrary perfectoid subdomains by Corollary~\ref{C:zp etale on perfectoid subdomain}. The same will happen in Theorem~\ref{T:proetale cohomology2b} by virtue of Corollary~\ref{C:qp etale on perfectoid subdomain}.
\end{remark}

\begin{defn}
For $M = (M_{\mathrm{e}}, M^+_{\mathrm{dR}}, M_{\mathrm{dR}})$
a $B$-pair over $X$, define the cohomology groups $H^i_{B}(M)$ as the hypercohomology groups of the complex
\[
0 \to M_{\mathrm{e}} \times M_{\mathrm{dR}}^+ \to M_{\mathrm{dR}} \to 0 \qquad (x,y) \mapsto x-y
\]
of sheaves on $X_{\proet}$.
\end{defn}

\begin{theorem} \label{T:proetale cohomology2b}
Let $V$ be an \'etale $\QQ_{p^d}$-local system over $X$.
Apply Theorem~\ref{T:proetale equivalence2 global} to produce
an \'etale $\varphi^d$-module $M$ over $\tilde{\bB}_X$,
an \'etale $\varphi^d$-module $M^\dagger$ over $\tilde{\bB}^\dagger_X$,
an \'etale $\varphi^d$-module $M_{\bC}$ over $\tilde{\bC}_X$,
and a B-pair $M_B = (M_{\mathrm{e}}, M^+_{\mathrm{dR}}, M_{\mathrm{dR}})$
over $X$.
Then for $i \geq 0$, there are functorial (in $E$ and $X$)
isomorphisms
\[
H^i_{\proet}(X, V) \to H^i_{\varphi^d}(M) \to H^i_{\varphi^d}(M^\dagger)
\to H^i_{\varphi^d}(M_{\bC})
 \to H^i_B(M_B).
\]
\end{theorem}
\begin{proof}
We reduce at once to the case where $V$ is an isogeny $\ZZ_{p^d}$-local system. The comparisons among
\[
H^i_{\proet}(X, V), H^i_{\varphi^d}(M), H^i_{\varphi^d}(M^\dagger), H^i_{\varphi^d}(M_{\bC})
\]
are then achieved by constructing exact sequences
\begin{gather*}
0 \to V \to M \stackrel{\varphi^d-1}{\to} M \to 0 \\
0 \to V \to M^\dagger \stackrel{\varphi^d-1}{\to} M^\dagger \to 0 \\
0 \to V \to M_{\bC} \stackrel{\varphi^d-1}{\to} M_{\bC} \to 0
\end{gather*}
analogous to \eqref{eq:proetale ASW sequence};
for the exactness we depend on Corollary~\ref{C:extended Robba invariants} in addition to
Corollary~\ref{C:idempotents} and Proposition~\ref{P:DM relative}.
The comparison between $H^i_{\varphi^d}(M_{\bC})$ and $H^i_B(M_B)$ follows from Theorem~\ref{T:compare cohomology} and
Theorem~\ref{T:glueing flat covering}(a).
\end{proof}

\subsection{Comparison with classical \texorpdfstring{$p$}{p}-adic Hodge theory}
\label{subsec:compare classical}

To conclude, we compare this construction to the classical theory of $(\varphi, \Gamma)$-modules.
\begin{defn}
With notation as in \S\ref{subsec:slope Robba}, define the rings
\begin{gather*}
\bA_{\Qp} = \gotho_{\calE_{\Qp}}, \, \bA^{\dagger}_{\Qp} = \calR^{\inte}_{\Qp},  \, \bB_{\Qp} = \calE_{\Qp},\,  \bB^{\dagger}_{\Qp} = \calR^{\bd}_{\Qp}, \, \bC_{\Qp} = \calR_{\Qp}, \\
\tilde{\bA}_{\Qp} = \gotho_{\tilde{\calE}_{\Qp}}, \, \tilde{\bA}^{\dagger}_{\Qp} = \tilde{\calR}^{\inte}_{\Qp},  \, \tilde{\bB}_{\Qp} =
\tilde{\calE}_{\Qp},\,  \tilde{\bB}^{\dagger}_{\Qp} = \tilde{\calR}^{\bd}_{\Qp}, \, \tilde{\bC}_{\Qp} = \tilde{\calR}_{\Qp}.
\end{gather*}
In addition to the endomorphism $\varphi$, these rings also carry an action of the group $\Gamma = \Zp^\times$ characterized by the formula
\[
\gamma(1 + T) = (1+T)^\gamma = \sum_{n=0}^\infty \binom{\gamma}{n} T^n.
\]
\end{defn}

\begin{defn}
Let $F$ be the completion of $\Qp(\mu_{p^\infty})$. This analytic field is perfectoid: it is of characteristic $0$, not discretely valued, and $\overline{\varphi}$ is surjective on
\[
\gotho_F/(p) \cong \ZZ_p[\mu_{p^\infty}]/(p)
\cong \FF_p[T_0, T_1, \dots]/(T_0-1, T_1^p-T_0, \dots).
\]
By the henselian property of $\bA^{\dagger}_{\Qp}$ (see Definition~\ref{D:Robba rings}),
Lemma~\ref{L:descend etale on field} (or Krasner's lemma),
and the perfectoid correspondence for analytic fields
(Theorem~\ref{T:perfectoid field}), we have distinguished equivalences of tensor categories
\[
\FEt(\Qp(\mu_{p^\infty})) \cong \FEt(F) \cong \FEt(\tilde{\bA}^\dagger_{\Qp}/(p)) \cong
\FEt(\bA^\dagger_{\Qp}/(p)) \cong \FEt(\bA^{\dagger}_{\Qp}).
\]
Via these equivalences, for any finite extension $K$ of $\Qp$, the finite extension $K \otimes_{\Qp} \Qp(\mu_{p^\infty})$ of $\Qp(\mu_{p^\infty})$ corresponds to a finite \'etale extension $\bA^{\dagger}_K$ of $\bA^{\dagger}_{\Qp}$.

For $* \in \{\bA, \bA^\dagger, \bB, \bB^\dagger, \bC, \tilde{\bA}, \tilde{\bA}^{\dagger}, \tilde{\bB}, \tilde{\bB}^\dagger, \tilde{\bC}\}$, put $*_K = *_{\Qp} \otimes_{\bA^{\dagger}_{\Qp}} \bA^{\dagger}_K$.
These rings admit extensions of the actions of $\varphi$ and $\Gamma$.
A \emph{$(\varphi, \Gamma)$-module} over one of these rings is a $\varphi$-module equipped with an action of $\Gamma$ which is continuous for the appropriate topology (the weak topology on $\bA_{K}, \tilde{\bA}_{K}, \bB_{K}, \tilde{\bB}_{K}$, or the LF-topology on the other rings).
\end{defn}

\begin{remark}
Our convention for $(\varphi, \Gamma)$-modules for $K \neq \Qp$ is not the standard one: it is more common to take $*_K$ to be a connected component of the ring we are considering, in which case one gets an action not of $\Gamma$ but only its subgroup $\Gamma_K$ corresponding to $\Gal(K(\mu_{p^\infty})/K)$ via the cyclotomic character.
However, results formulated using one convention convert easily to the other via induction and restriction between $\Gamma_K$ and $\Gamma$.
\end{remark}

\begin{theorem} \label{T:compare to classical integral}
For $K$ a finite extension of $\Qp$,
the following categories are canonically equivalent.
\begin{enumerate}
\item[(a)]
The category of continuous representations of $G_{K}$ on finite
free $\Zp$-modules.
\item[(b)]
The category of $(\varphi, \Gamma)$-modules over $\bA_{K}$.
\item[(c)]
The category of $(\varphi, \Gamma)$-modules over $\tilde{\bA}_{K}$.
\item[(d)]
The category of $(\varphi, \Gamma)$-modules over $\bA^\dagger_{K}$.
\item[(e)]
The category of $(\varphi, \Gamma)$-modules over $\tilde{\bA}^\dagger_{K}$.
\item[(f)]
The category of $\varphi$-modules over $\tilde{\bA}_{\Spa(K, \gotho_K)}$.
\item[(g)]
The category of $\varphi$-modules over $\tilde{\bA}^\dagger_{\Spa(K, \gotho_K)}$.
\end{enumerate}
\end{theorem}
\begin{proof}
The equivalences among (a) and (b)--(e) include Fontaine's original theory of $(\varphi, \Gamma)$-modules and its refinement by Cherbonnier and Colmez; see \cite[\S 2]{kedlaya-new-phigamma}.
The equivalences among (a) and (f)--(g) follow from
Theorem~\ref{T:proetale equivalence1 global}.
\end{proof}

\begin{defn}
For $* = \bB, \bB^\dagger, \bC, \tilde{\bB}, \tilde{\bB}^\dagger, \tilde{\bC}$, we say that a $(\varphi, \Gamma)$-module over $*_K$ is \emph{\'etale} if its underlying $\varphi$-module is \'etale,
i.e., it descends to a $\varphi$-module over $\bA_K, \bA^\dagger_K, \bA^\dagger_K, \tilde{\bA}_K, \tilde{\bA}^\dagger_K, \tilde{\bA}^\dagger_K$, respectively. Note that the $\Gamma$-action does act on some such descent, but not necessarily on all of them.
\end{defn}

\begin{theorem} \label{T:compare to classical rational}
For $K$ a finite extension of $\Qp$,
the following categories are canonically equivalent.
\begin{enumerate}
\item[(a)]
The category of continuous representations of $G_{K}$ on finite-dimensional $\Qp$-vector spaces.
\item[(b)]
The category of \'etale $(\varphi, \Gamma)$-modules over $\bB_{K}$.
\item[(c)]
The category of \'etale $(\varphi, \Gamma)$-modules over $\bB^\dagger_{K}$.
\item[(d)]
The category of \'etale $(\varphi, \Gamma)$-modules over $\bC_{K}$.
\item[(e)]
The category of \'etale $(\varphi, \Gamma)$-modules over $\tilde{\bB}_{K}$.
\item[(f)]
The category of \'etale $(\varphi, \Gamma)$-modules over $\tilde{\bB}^\dagger_{K}$.
\item[(g)]
The category of \'etale $(\varphi, \Gamma)$-modules over $\tilde{\bC}_{K}$.
\item[(h)]
The category of \'etale $\varphi$-modules over $\tilde{\bB}_{\Spa(K,\gotho_K)}$.
\item[(i)]
The category of \'etale $\varphi$-modules over $\tilde{\bB}^\dagger_{\Spa(K,\gotho_K)}$.
\item[(j)]
The category of \'etale $\varphi$-modules over $\tilde{\bC}_{\Spa(K,\gotho_K)}$.
\end{enumerate}
\end{theorem}
\begin{proof}
The equivalences among (a), (b)--(c), (e)--(f) is immediate from
Theorem~\ref{T:compare to classical integral}.
The equivalence between (c) and (d) follows from
Proposition~\ref{P:fully faithful1}.
The equivalence between (f) and (g) follows from
Theorem~\ref{T:perfect equivalence2a}.
The equivalences among (a) and (h)--(j) follow from
Theorem~\ref{T:proetale equivalence2 global}.
\end{proof}

We next introduce Berger's concept of a $B$-pair
from \cite{berger-b-pairs}.
\begin{defn}
Let $\Cp$ be a completed algebraic closure of $\Qp$.
For $K$ a finite extension of $\Qp$ within $\Cp$,
a \emph{$B$-pair over $K$} is a $B$-pair over $\Spa(\Cp, \gotho_{\Cp})$
equipped with a continuous semilinear action of $G_K$.
\end{defn}

\begin{theorem}\label{T:compare to classical B-pairs}
Let $F$ be the completion of $\Qp(\mu_{p^\infty})$.
For $K$ a finite extension of $\Qp$,
the following categories are canonically equivalent.
\begin{enumerate}
\item[(a)]
The category of $(\varphi, \Gamma)$-modules over $\bC_{K}$.
\item[(b)]
The category of $(\varphi, \Gamma)$-modules over $\tilde{\bC}_{K}$.
\item[(c)]
The category of $B$-pairs over $K$.
\item[(d)]
The category of continuous $\Gamma$-equivariant vector bundles over $\FFC_{\Spa(F \otimes_{\Qp} K, (F \otimes_{\Qp} K)^{\circ})}$.
\item[(e)]
The category of continuous $G_K$-equivariant vector bundles over $\FFC_{\Spa(\Cp, \gotho_{\Cp})}$.
\item[(f)]
The category of $\varphi$-modules over $\tilde{\bC}_{\Spa(K, \gotho_{K})}$.
\item[(g)]
The category of $B$-pairs over $\Spa(K, \gotho_{K})$.
\end{enumerate}
\end{theorem}
\begin{proof}
The equivalence between (a) and (c) is a theorem of Berger
\cite[Th\'eor\`eme~2.2.7]{berger-b-pairs}; the same argument gives the equivalence between (b) and (c).
The equivalences between (b) and (d) and between (c) and (e) follow from Theorem~\ref{T:vector bundles2}.
The equivalence between (b) and (f) follows from Corollary~\ref{C:qp etale on perfectoid subdomain}.
The equivalence between (f) and (g) follows from
Theorem~\ref{T:glueing flat covering}.
\end{proof}

\begin{remark}
Using Remark~\ref{R:proetale cohomology global1},
one can similarly check that our computation of pro-\'etale cohomology in terms of $\varphi$-cohomology agrees with the analogous computations in the language of classical $(\varphi, \Gamma)$-modules made by
Herr \cite{herr, herr-tate} and the second author \cite{liu-herr}.
\end{remark}

\begin{remark}
Theorems~\ref{T:compare to classical integral},
\ref{T:compare to classical rational},
and~\ref{T:compare to classical B-pairs}
 can be extended to the
relative $(\varphi, \Gamma)$-modules of Andreatta--Brinon
\cite{andreatta-brinon}. We will discuss this and related generalizations in a subsequent paper.
\end{remark}

\end{document}